\newcommand{\nat}{\mathbb{N}}
\newcommand{\zed}{\mathbb{Z}}
\newcommand{\C}{\mathbb{C}}
\newcommand{\Hom}{\mathrm{Hom}}
\newcommand{\im}{\mathrm{Im}}
\newcommand{\Sym}{\mathrm{Sym}}
\newcommand{\qb}[2]{\genfrac{[}{]}{0pt}{}{#1}{#2}}
\newcommand{\ve}{\varepsilon}
\newcommand{\wbar}{\overline}
\newcommand{\id}{\mathrm{id}}
\newcommand{\Id}{\mathrm{Id}}
\newcommand{\gdim}{\mathrm{gdim}}
\newcommand{\mf}{\mathrm{mf}}
\newcommand{\MF}{\mathrm{MF}}
\newcommand{\hmf}{\mathrm{hmf}}
\newcommand{\HMF}{\mathrm{HMF}}
\newcommand{\ch}{\mathsf{Ch}^{\mathsf{b}}}
\newcommand{\hch}{\mathsf{hCh}^{\mathsf{b}}}
\newcommand{\Tr}{\mathrm{Tr}}
\newcommand{\Kom}{\mathrm{Kom}}
\newcommand{\tc}{\mathrm{tc}}
\theoremstyle{plain}
\newtheorem{theorem}{Theorem}[section]
\newtheorem{lemma}[theorem]{Lemma}
\newtheorem{proposition}[theorem]{Proposition}
\newtheorem{corollary}[theorem]{Corollary}
\theoremstyle{definition}
\newtheorem{definition}[theorem]{Definition}
\newtheorem{acknowledgments}{Acknowledgments\ignorespaces}
\theoremstyle{remark}
\newtheorem{remark}[theorem]{Remark}
\numberwithin{equation}{section}
\begin{document}

\title{A colored $\mathfrak{sl}(N)$ homology for links in $S^3$}

\author{Hao Wu}

\address{Department of Mathematics, The George Washington University, Monroe Hall, Room 240, 2115 G Street, NW, Washington DC 20052}

\email{haowu@gwu.edu}

\subjclass[2000]{Primary 57M27}

\keywords{Reshetikhin-Turaev $\mathfrak{sl}(N)$ link invariant, Khovanov-Rozansky homology, matrix factorization, symmetric polynomial}

\begin{abstract}
Fix an integer $N\geq 2$. To each diagram of a link colored by $1,\dots,N$, we associate a chain complex of graded matrix factorizations. We prove that the homotopy type of this chain complex is invariant under Reidemeister moves. When every component of the link is colored by $1$, this chain complex is isomorphic to the chain complex defined by Khovanov and Rozansky in \cite{KR1}. The homology of this chain complex decategorifies to the Reshetikhin-Turaev $\mathfrak{sl}(N)$ polynomial of links colored by exterior powers of the defining representation.
\end{abstract}

\maketitle

\tableofcontents

\section{Introduction}\label{sec-intro}

\subsection{Background} In the early 1980s, Jones \cite{Jones} defined the Jones polynomial, which was generalized to the HOMFLYPT polynomial in \cite{HOMFLY,PT}. Later, Reshetikhin and Turaev \cite{Resh-Tur1} constructed a large family of polynomial invariants for framed links whose components are colored by finite dimensional representations of a complex semisimple Lie algebra. The HOMFLYPT polynomial is a special example of the Reshetikhin-Turaev invariants corresponding to the defining representation of $\mathfrak{sl}(N;\C)$. 

In general, the Reshetikhin-Turaev invariants for links are rather abstract. But, when the Lie algebra is $\mathfrak{sl}(N;\C)$ and every component of the link is colored by an exterior power of the defining representation, Murakami, Ohtsuki and Yamada \cite{MOY} gave a state sum formula for the corresponding $\mathfrak{sl}(N)$ Reshetikhin-Turaev invariant. Their construction comes with a set of graphical relations, which is known as the MOY calculus.

If every component of the link is colored by the defining representation, then the construction in \cite{MOY} recovers the uncolored $\mathfrak{sl}(N)$ HOMFLYPT polynomial. Modeling on this, Khovanov and Rozansky \cite{KR1} categorified the uncolored $\mathfrak{sl}(N)$ HOMFLYPT polynomial using matrix factorizations.  Their construction generalizes the Khovanov homology \cite{K1}.

\subsection{Some conventions}\label{subsec-conventions} Throughout this paper, $N$ is a fixed integer not less than $2$. 

All links and tangles in this paper are oriented and colored. That is, every component of the link or tangle is assigned an orientation and an element of $\{0,1,\dots,N\}$, which we call the color\footnote{In this paper, instead of saying that an object is colored by the $k$-fold exterior power of the defining representation of $\mathfrak{sl}(N;\C)$, we simply say that it is colored by $k$.} of this component. A link that is completely colored by $1$ is called uncolored.

Following the convention in \cite{KR1}, the degree of a polynomial in this paper is twice its usual degree.

\subsection{The colored $\mathfrak{sl}(N)$ link homology}\label{subsec-main-results} Our goal is to generalize Khovanov and Rozansky's construction in \cite {KR1} to categorify the Reshetikhin-Turaev $\mathfrak{sl}(N)$ polynomial of links colored by exterior powers of the defining representation. The following are our main results.

\begin{theorem}\label{main}
Let $D$ be a diagram of a tangle whose components are colored by elements of $\{0,1,\dots,N\}$, and $C(D)$ the chain complex defined in Definition \ref{complex-knotted-MOY-def}. Then:
\begin{enumerate}[(i)]
	\item $C(D)$ is a bounded chain complex over a homotopy category of graded matrix factorizations.
	\item $C(D)$ is $\zed_2\oplus\zed\oplus\zed$-graded, where the $\zed_2$-grading is the $\zed_2$-grading of the underlying matrix factorization, the first $\zed$-grading is the quantum grading of the underlying matrix factorization, and the second $\zed$-grading is the homological grading.
	\item The homotopy type of $C(D)$, with its $\zed_2\oplus\zed\oplus\zed$-grading, is invariant under Reidemeister moves. 
	\item If every component of $D$ is colored by $1$, then $C(D)$ is isomorphic to the chain complex defined by Khovanov and Rozansky in \cite{KR1}.
\end{enumerate}
\end{theorem}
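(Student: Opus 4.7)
The statement packages four claims: boundedness of $C(D)$, the triple grading, invariance under the colored Reidemeister moves, and agreement with the Khovanov--Rozansky complex of \cite{KR1} when every color is $1$. Boundedness and the $\zed_2\oplus\zed\oplus\zed$-grading are essentially formal consequences of the construction: a tangle diagram has finitely many crossings, so the cube of resolutions producing $C(D)$ from Definition \ref{complex-knotted-MOY-def} is a finite iterated mapping cone, and the three gradings come respectively from the $\zed_2$ and quantum gradings on the matrix factorizations attached to each MOY resolution and from the cube coordinate.

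For the comparison with \cite{KR1}, the plan is to verify that when every color is $1$ each MOY resolution of a crossing reduces to one of the two Khovanov--Rozansky local pictures (a pair of oriented arcs or a single wide edge), that the matrix factorizations attached to them specialize to the Koszul-type factorizations of \cite{KR1}, and that the $\chi$-type morphisms between adjacent resolutions coincide with those of \cite{KR1} up to the normalizations fixed in the construction. This reduces to an essentially direct comparison of definitions, local at each crossing.

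The heart of the theorem is Reidemeister invariance. My plan is to reduce it, using the locality of the construction together with the independence of $C(D)$ from markings and internal resolutions established earlier, to a finite list of local equivalences in $\hch(\hmf_{R,w})$, one for each colored version of each Reidemeister move. The strategy for producing these equivalences is to categorify the MOY skein calculus: I will first prove a family of ``categorified MOY relations'' giving explicit homotopy equivalences between the matrix factorizations assigned to pairs of MOY graphs related by a local move (bigon removal, square decomposition, fork-slide, etc.). These MOY isomorphisms will then be combined with Gaussian elimination (cancellation of acyclic subcomplexes in the cube of resolutions) to collapse the cube of the left-hand side of each Reidemeister move to that of the right-hand side. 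Reidemeister I and II are relatively light; Reidemeister III requires a careful bookkeeping of signs and grading shifts but, with the MOY III isomorphism in hand, also fits into the same pattern.

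The principal obstacle is the combinatorial explosion in the colored setting. Reidemeister III alone has many variants indexed by triples $(i,j,k)$ with $1\le i,j,k\le N$, and each would naively require constructing explicit chain homotopies between bounded complexes of matrix factorizations whose ranks grow rapidly with the colors. The workable approach is not to attack each variant by hand but to isolate a small, universal set of categorified MOY isomorphisms from which every colored Reidemeister move can be derived by iterated cancellation. Building that universal MOY calculus, and in particular proving that each of its constituent local isomorphisms is a genuine homotopy equivalence in $\hmf_{R,w}$, is where the bulk of the technical work will lie.
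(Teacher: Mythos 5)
Your treatment of boundedness, the triple grading, and the comparison with \cite{KR1} in the all-colors-$1$ case is accurate and matches the paper: the first two are formal consequences of the construction, and the third follows from the explicit identification of $\chi^0,\chi^1$ with the Khovanov--Rozansky morphisms when both strands are colored $1$ (see Corollary \ref{explicit-differential-1-n-crossings--res} and Remark \ref{generalizing-KR-homology}).

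For Reidemeister invariance, however, you are proposing a genuinely different and substantially harder route than the paper takes, and I think you are underestimating where it breaks down. You plan to verify each colored Reidemeister move directly, by first establishing a library of categorified MOY isomorphisms and then collapsing the cube on each side via Gaussian elimination. The paper never does this; in particular it never produces, for arbitrary colors $i,j,k$, an explicit chain homotopy between the two sides of Reidemeister III. Instead it proves a single key lemma --- invariance of the chain complex under \emph{fork sliding} (Theorem \ref{fork-sliding-invariance-general}), which is a statement about sliding a trivalent vertex through a crossing --- and then uses the ``sliding bi-gon'' method of \cite{MOY}: attach a bi-gon splitting a strand of color $m+1$ into a $1$-colored strand and an $m$-colored strand, slide the resulting fork through the crossings, apply the \emph{induction hypothesis} (invariance for colors $\le m$) to the resulting picture, and finally cancel the quantum factor $[m+1]$ using the Krull--Schmidt property (Yonezawa's lemma, Proposition \ref{yonezawa-lemma-hmf}). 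The base case of the induction is Khovanov and Rozansky's invariance theorem for color $1$, which is cited, not reproved. Your remark that ``Reidemeister III \dots with the MOY III isomorphism in hand, also fits into the same pattern'' as II is where the plan would stall: the colored cubes for the two sides of RIII are large, the correct MOY relations are Decompositions (IV) and (V) (proved in Sections \ref{sec-MOY-IV}--\ref{sec-MOY-V}), and matching the cube differentials across both sides is not a bookkeeping exercise but an open-ended one without the reduction. Your proposed ``universal MOY calculus'' would need to contain exactly the fork-sliding statement as its \emph{central} element, used for an inductive reduction to color $1$, not as one local move among several. The practical advantage of the paper's route is precisely that it replaces an unbounded family of colored verifications by one fork-sliding lemma plus a finite induction; the price is that the fork-sliding lemma itself is quite involved (it occupies most of Section \ref{sec-inv-fork}, using Decompositions (IV) and (V), the explicit $\chi$-morphisms, and Gaussian elimination).
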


Since the homotopy category $\hmf_{R,w}$ of graded matrix factorizations is not abelian, we can not directly define the homology of $C(D)$. But, as in \cite{KR1}, we can still construct a homology $H(D)$ from $C(D)$. Recall that each matrix factorization comes with a differential map $d_{mf}$. If $D$ is a link diagram, then the base ring $R$ is $\C$, and the potential $w=0$. So all the matrix factorizations in $C(D)$ are actually cyclic chain complexes. Taking the homology with respect to $d_{mf}$, we change $C(D)$ into a chain complex $(H(C(D),d_{mf}), d^\ast)$ of finite dimensional graded vector spaces, where $d^\ast$ is the differential map induced by the differential map $d$ of $C(D)$. We define 
\begin{equation}\label{def-homology-link}
H(D)= H(H(C(D),d_{mf}), d^\ast). 
\end{equation}
If $D$ is a diagram of a tangle with end points, then $R$ is a graded polynomial ring with homogeneous indeterminates of positive gradings, and $w$ is in the maximal homogeneous ideal $\mathfrak{I}$ of $R$ generated by all the indeterminates. So $(C(D)/\mathfrak{I}\cdot C(D), d_{mf})$ is a cyclic chain complex. Its homology $(H(C(D)/\mathfrak{I}\cdot C(D), d_{mf}),d^\ast)$ is a chain complex of finite dimensional graded vector spaces, where $d^\ast$ is the differential map induced by the differential map $d$ of $C(D)$. We define
\begin{equation}\label{def-homology-tangle}
H(D)= H(H(C(D)/\mathfrak{I}\cdot C(D),d_{mf}), d^\ast).
\end{equation}
In either case, $H(D)$ inherits the $\zed_2\oplus\zed\oplus\zed$-grading of $C(D)$. We call $H(D)$ the colored $\mathfrak{sl}(N)$ homology of $D$. The corollary below follows easily from Theorem \ref{main}.

\begin{corollary}\label{homology-inv-main}
Let $D$ be a diagram of a tangle whose components are colored by elements of $\{0,1,\dots,N\}$. Then $H(D)$ is a finite dimensional $\zed_2\oplus\zed\oplus\zed$-graded vector space over $\C$. Reidemeister moves of $D$ induce isomorphisms of $H(D)$ preserving its $\zed_2\oplus\zed\oplus\zed$-grading.
\end{corollary}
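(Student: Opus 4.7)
The plan is to exhibit the passage $C(D) \mapsto H(D)$ as a functor from the homotopy category $\hch(\hmf_{R,w})$ of bounded chain complexes over $\hmf_{R,w}$ to the category of finite dimensional $\zed_2\oplus\zed\oplus\zed$-graded $\C$-vector spaces. Once this is in place, Theorem \ref{main} immediately supplies the desired grading-preserving isomorphisms on $H(D)$ induced by Reidemeister moves, and finite dimensionality is then a separate, direct check on chain objects.

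First I would verify finite dimensionality. In the link case, $R = \C$ and $w = 0$, so every matrix factorization occurring as a chain object of $C(D)$ is just a $\zed_2\oplus\zed$-graded chain complex of finite dimensional $\C$-vector spaces; its $d_{mf}$-homology is therefore finite dimensional and bigraded. Because $C(D)$ is bounded by Theorem \ref{main}, the complex $(H(C(D), d_{mf}), d^\ast)$ has only finitely many nonzero terms, each finite dimensional and bigraded, and taking $d^\ast$-homology preserves these properties. In the tangle case, applying $-/\mathfrak{I}\cdot -$ first specializes $R$ to $R/\mathfrak{I}=\C$ and $w$ to $0$, reducing us to the link case.

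Next I would establish functoriality of the construction at the homotopy level. Term-by-term $d_{mf}$-homology is well defined on $\hmf_{R,w}$ since $d_{mf}$-homotopic chain maps induce identical maps on $d_{mf}$-homology, and so the induced $d^\ast$ does not depend on the choice of representatives of the $d_{mf}$-homotopy classes comprising the differential $d$ of $C(D)$; it squares to zero because $d^2 = 0$ holds in $\hmf_{R,w}$, which specializes to a strict identity on $d_{mf}$-homology. The same observation shows that a morphism in $\hch(\hmf_{R,w})$ descends to an honest chain map of the associated complexes of graded $\C$-vector spaces, and a chain homotopy in $\hch(\hmf_{R,w})$ descends to an honest chain homotopy. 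Hence the pair of mutually inverse homotopy equivalences in $\hch(\hmf_{R,w})$ provided by Theorem \ref{main} for Reidemeister-related diagrams descends to mutually inverse homotopy equivalences of bounded chain complexes of finite dimensional $\zed_2\oplus\zed$-graded $\C$-vector spaces, and taking $d^\ast$-homology yields the required $\zed_2\oplus\zed\oplus\zed$-graded isomorphism of $H(D)$.

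The only delicate point, and as close to an obstacle as the argument comes, is the bookkeeping of the two nested layers of homotopy: $d_{mf}$-homotopy of chain maps of matrix factorizations on the one hand, and chain homotopy of chain complexes of matrix factorizations on the other. The guiding principle is that each relation which holds only up to $d_{mf}$-homotopy at the chain level becomes a strict identity after passing to $d_{mf}$-homology, so the apparent indirection collapses to a routine unwinding of definitions rather than requiring spectral-sequence machinery.
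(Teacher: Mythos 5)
Your functoriality argument is sound: morphisms in $\hmf_{R,w}$ are homotopy classes, taking $d_{mf}$-homology (after quotienting by $\mathfrak{I}$) kills the homotopy ambiguity, $d^2\simeq 0$ becomes $d^\ast{}^2=0$ strictly, and a homotopy equivalence in $\hch(\hmf_{R,w})$ descends to a homotopy equivalence of the resulting complexes of graded vector spaces. That part unwinds correctly and matches the paper's ``follows easily from Theorem \ref{main}.''

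However, your finite dimensionality argument contains a genuine error. You claim that in the link case ``every matrix factorization occurring as a chain object of $C(D)$ is just a $\zed_2\oplus\zed$-graded chain complex of finite dimensional $\C$-vector spaces.'' This is false. Each chain object $C(\Gamma)$ is built by tensoring Koszul matrix factorizations of the pieces of $\Gamma$ over the partial symmetric polynomial ring $\Sym(\mathbb{W}_1|\cdots|\mathbb{W}_m)$ of the \emph{internal} marked alphabets; it is a free graded module over that polynomial ring and hence infinite dimensional over $\C$ (for a single circle, $C(\bigcirc_m)$ is free of rank $2^m$ over $\C[X_1,\ldots,X_m]$). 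Passing to $C(D)/\mathfrak{I}\cdot C(D)$ in the tangle case only kills the end point variables; the internal ones survive, so the same objection applies. What \emph{is} finite dimensional is the $d_{mf}$-homology $H(\Gamma)$ of each chain object, but this is not automatic: it is the content of Lemma \ref{MOY-object-of-hmf}, proved via Lemma \ref{schur-null-homotopic} (multiplication by sufficiently large Schur polynomials is null-homotopic, so the action of the internal ring on $H(\Gamma)$ factors through a finite dimensional quotient) together with Noetherianness. You should replace the assertion about finite dimensional chain groups with a citation of Lemma \ref{MOY-object-of-hmf}; the rest of the argument (boundedness of $C(D)$ from Theorem \ref{main}, hence finite dimensionality of $H(D)$) then goes through.
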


For a tangle $T$, denote by $H^{\ve,i,j}(T)$ the subspace of $H(T)$ of homogeneous elements of $\zed_2$-degree $\ve$, quantum degree $i$ and homological degree $j$. The Poincar\'e polynomial $\mathrm{P}_T (\tau, q, t)$ of $H(T)$ is defined to be 
\begin{equation}\label{def-poincare-polynomial}
\mathrm{P}_T (\tau, q, t) = \sum_{\ve,i,j} \tau^\ve q^i t^j \dim H^{\ve,i,j}(T) ~\in \C[\tau,q,t]/(\tau^2-1).
\end{equation}

Based on the construction by Murakami, Ohtsuki and Yamada \cite{MOY}, we give in Definition \ref{MOY-poly-def} a re-normalization $\mathrm{RT}_L(q)$ of the Reshetikhin-Turaev $\mathfrak{sl}(N)$ polynomial for links colored by non-negative integers. For a link $L$ colored by non-negative integers, the graded Euler characteristic of $H(L)$ is equal to $\mathrm{RT}_L(q)$. More precisely, we have the following theorem.

\begin{theorem}\label{euler-char-main}
Let $L$ be a link colored by non-negative integers. Then 
\[
\mathrm{P}_L (1, q, -1) = \mathrm{RT}_L(q).
\]

Moreover, define the total color $\tc(L)$ of $L$ to be the sum of the colors of the components of $L$. Then $H^{\ve,i,j}(L) =0$ if $\ve-\tc(L)= 1 \in \zed_2$ and therefore
\[
\mathrm{P}_L (\tau, q, t) = \tau^{\tc(L)} \sum_{i,j}  q^i t^j \dim H^{\tc(L),i,j}(L).
\]
\end{theorem}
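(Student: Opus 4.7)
The plan is to identify the specialization $\mathrm{P}_L(1,q,-1)$ with the graded Euler characteristic of $C(L)$, and then match the latter, term by term, with the MOY state-sum definition of $\RT_L(q)$ given in Section \ref{sec-euler-char}.

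First I would observe that $H(L)$ is built from $C(L)$ by two successive cohomology operations: first with respect to $d_{mf}$ (for a closed link $w=0$, so this really is cohomology of a cyclic complex) and then with respect to the induced differential $d^*$. Euler characteristic is additive on short exact sequences, so passing to cohomology at either stage leaves the alternating sum
\[
\sum_{\varepsilon,i,j} (-1)^j q^i \dim H^{\varepsilon,i,j}(L)
\]
unchanged. This sum is exactly $\mathrm{P}_L(1,q,-1)$. Hence the problem reduces to computing the bigraded Euler characteristic (in $q$, with $t=-1$) of $C(L)$ itself.

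The main step is a local computation: for every MOY graph $\Gamma$ arising as a resolution of $L$, I would show that the graded dimension over $\C$ of $H(C(\Gamma),d_{mf})$ equals the MOY bracket $\langle \Gamma \rangle$ of \cite{MOY}. The natural approach is induction on the complexity of $\Gamma$, using local isomorphisms $C(\Gamma) \cong \bigoplus_i C(\Gamma_i)\{s_i\}$ in $\hmf_{R,w}$ that categorify each elementary MOY skein relation. Iterating these reduces $C(\Gamma)$ to a direct sum of matrix factorizations of disjoint colored circles, for which the graded dimension of $d_{mf}$-cohomology is a Koszul computation matching the evaluation of $\langle \cdot \rangle$ on a colored unknot. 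Verifying these categorified MOY relations for all elementary trivalent configurations is the heart of the argument and the main obstacle, as it requires delicate Jacobian-ring and Koszul manipulations with symmetric functions. Once this is in hand, the differential of $C(D)$ at each crossing --- defined in Definition \ref{complex-knotted-MOY-def} as a short complex of matrix factorizations of the MOY resolutions of that crossing with precisely the $q$-shifts and signs appearing in the MOY expansion of a colored crossing --- ensures that summing over resolutions recovers the MOY state sum. Thus the graded Euler characteristic of $C(L)$ equals $\RT_L(q)$, proving the first identity.

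For the $\zed_2$-vanishing statement: the $\zed_2$-degree of the matrix factorization attached to a MOY graph $\Gamma$, and hence of its $d_{mf}$-cohomology, is controlled by a simple combinatorial invariant of $\Gamma$ --- essentially the parity of the sum of edge colors meeting any generic horizontal section. All MOY resolutions of a fixed link diagram $D$ share this parity, and for a closed diagram it equals $\tc(L) \pmod 2$. Since the differentials of $C(L)$ and the two subsequent cohomology operations preserve the $\zed_2$-grading, both $C(L)$ and $H(L)$ are concentrated in $\zed_2$-degree $\tc(L)$, and the asserted factorization of $\mathrm{P}_L(\tau,q,t)$ is immediate.
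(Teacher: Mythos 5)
Your argument for the Euler-characteristic identity is in the right spirit: the key step is to show $\gdim(C(\Gamma))|_{\tau=1}=\langle\Gamma\rangle_N$ for every MOY resolution $\Gamma$, and this is done by an induction that peels off edges of the highest color using the categorified MOY decompositions. The paper organizes this as a double induction on the highest color $m$ present and the number of edges of color $m$ (Theorem \ref{MOY-gdim-rt}), applying a local modification that converts a high-color edge into a configuration handled by Decompositions (II) and (IV), with the Kauffman--Vogel uniqueness result and the $\mathfrak{sl}(N)$ Khovanov--Rozansky theory supplying the $m\le 2$ base case. Your observation that passing to $d_{mf}$-cohomology and then $d^*$-cohomology preserves the Euler characteristic is correct, as is the matching of crossing differentials against the MOY expansion.

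However, your $\zed_2$-grading argument contains a genuine error. You claim the $\zed_2$-degree of $H(\Gamma)$ is governed by ``the parity of the sum of edge colors meeting any generic horizontal section.'' For any \emph{closed} MOY graph that quantity is identically zero: by color conservation at each trivalent vertex it is independent of the chosen section, and a section above the entire diagram meets nothing. In particular your invariant predicts $\zed_2$-degree $0$ for a circle colored $m$, whereas Corollary \ref{circle-dimension} gives $H(\bigcirc_m)\cong C(\emptyset)\{\qb{N}{m}\}\left\langle m\right\rangle$, concentrated in $\zed_2$-degree $m$ --- nonzero whenever $m$ is odd. The correct invariant is the colored rotation number $\mathrm{cr}(\Gamma)$: replace each edge of color $m$ by $m$ parallel strands, turning $\Gamma$ into a disjoint union of planar circles, and add the rotation numbers $\pm1$ of those circles. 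For $\bigcirc_m$ this gives $\pm m$, matching the $\zed_2$-degree, and Theorem \ref{MOY-gdim-rt} establishes $H^{\ve,i}(\Gamma)=0$ unless $\ve\equiv\mathrm{cr}(\Gamma)\pmod2$. To finish one must then identify $\mathrm{cr}(\Gamma)\pmod 2$ (which the paper adjusts by a per-crossing correction $\mathsf{a}(c)$ coming from the normalization in Definition \ref{complex-colored-crossing-def}) with $\tc(L)\pmod 2$; the paper does this by checking parity invariance under Reidemeister moves and crossing changes, reducing to a trivial diagram. So the scaffold of your $\zed_2$ argument is salvageable, but the named combinatorial invariant is wrong and would make the proof fail on the very first example.
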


\subsection{Deformations and applications} The construction of the colored $\mathfrak{sl}(N)$ link homology $H$ is based on matrix factorizations associated to MOY graphs with potentials induced by $X^{N+1}$. One can modify this construction by considering matrix factorizations with potentials induced by 
\[
f(X)=X^{N+1} + \sum_{k=1}^N B_{k} X^{N+1-k},
\]
where $B_{k}$ is a homogeneous indeterminate of degree $2k$. This gives an equivariant $\mathfrak{sl}(N)$ link homology $H_f$. $H_f$ is a finitely generated $\zed_2\oplus\zed\oplus\zed$-graded $\C[B_1,\dots,B_N]$-module. The construction of $H_f$ and the proof of its invariance are given in \cite{Wu-color-equi}, which generalizes the work of Krasner \cite{Krasner} in the uncolored case.

For any $b_1,\dots,b_N \in \C$, one can perform the above construction using matrix factorizations associated to MOY graphs with potentials induced by
\[
P(X)=X^{N+1} + \sum_{k=1}^N b_{k} X^{N+1-k},
\]
which gives a deformed $\mathfrak{sl}(N)$ link homology $H_P$. For any link $L$, $H_P(L)$ is a finitely dimensional $\zed_2\oplus\zed$-graded and $\zed$-filtered vector space over $\C$. The quotient map 
\[
\pi:\C[B_1,\dots,B_N] \rightarrow \C~(\cong \C[B_1,\dots,B_N]/(B_1-b_1,\dots,B_N-b_N))
\] 
given by $\pi(B_k)=b_k$ induces a functor $\varpi$ of between categories of matrix factorizations. Using this functor, one can easily show that the invariance of $H_f$ implies the invariance of $H_P$. As in the uncolored case, the filtration of $H_P$ induces a spectral sequence converging to $H_P$ with $E_1$-page isomorphic to the undeformed $\mathfrak{sl}(N)$ link homology $H$. Proofs of these results can be found in \cite{Wu-color-equi}.

When $P(X)$ is generic, that is, when $P'(X)$ has $N$ distinct root in $\C$, $H_P(L)$ admits a basis that generalizes the basis given by Lee \cite{Lee2} and Gornik \cite{Gornik}. See \cite{Wu-color-ras} for the construction. \cite{Wu-color-ras} also contains the definition of the colored $\mathfrak{sl}(N)$ Rasmussen invariants and the bounds for slice genus and self linking number given by these invariants.

The $\mathfrak{sl}(N)$ link homology $H$ itself also gives new bounds for the self linking number and the braid index. (See \cite{Wu-color-MFW}.) These bounds generalize the well known Morton-Franks-Williams inequality \cite{FW,Mo}.

\subsection{Other approaches to the colored $\mathfrak{sl}(N)$ link homology}  The Reshetikhin-Turaev $\mathfrak{sl}(N)$ polynomial of links colored by exterior powers of the defining representation has been categorified via several different approaches. Next we quickly review some recent results in this direction.

Using matrix factorizations, Yonezawa \cite{Yonezawa3} defined essentially the Poincar\'e polynomial $\mathrm{P}_T$ of the colored $\mathfrak{sl}(N)$ link homology $H$.

Stroppel \cite{Stroppel} gave a Lie-theoretic construction of the Khovanov homology, which is proved in \cite{Brundan-Stroppel} to be isomorphic to Khovanov's original construction. (See also \cite[Section 5]{Stroppel2}.) Mazorchuk and Stroppel \cite{Mazorchuk-Stroppel} described a Koszul dual construction for the $\mathfrak{sl}(N)$ link homology.

Mackaay, Stosic and Vaz \cite{Mackaay-Stosic-Vaz2} constructed a $\zed^{\oplus 3}$-graded HOMFLYPT homology for $1,2$-colored links, which generalizes Khovanov and Rozansky's construction in \cite{KR2}. Webster and Williamson \cite{Webster-Williamson} further generalized this homology to links colored by any non-negative integers using the equivariant cohomology of general linear groups and related spaces.

Cautis and Kamnitzer \cite{Cautis-Kamnitzer-1,Cautis-Kamnitzer-2} constructed a link homology using the derived category of coherent sheaves on certain flag-like varieties. Their homology is conjectured to be isomorphic to the $\mathfrak{sl}(N)$ Khovanov-Rozansky homology in \cite{KR1}. Using $\mathfrak{sl}(2)$ actions on certain categories of D-modules and coherent sheaves, they \cite{Cautis-talk} also categorified the $\mathfrak{sl}(N)$ polynomial for links in $S^3$ colored by exterior powers of the defining representation.

Using categorifications of the tensor products of integrable representations of Kac-Moody algebras and quantum groups, Webster \cite{Webster1,Webster2} categorified, for any simple complex Lie algebra $\mathfrak{g}$, the quantum $\mathfrak{g}$ invariant for links colored by any finite dimensional representations of $\mathfrak{g}$. All the aforementioned categorifications of the colored Reshetikhin-Turaev $\mathfrak{sl}(N)$ polynomial are expected to agree with Webster's categorification for $\mathfrak{g}=\mathfrak{sl}(N;\C)$.

\subsection{Outline of the proof} The present paper contains all the background knowledge needed to understand the construction of the colored $\mathfrak{sl}(N)$ link homology. Next we explain the structure of this paper and outline our proof. 

We review in Section \ref{sec-MOY-polynomial} the Murakami-Ohtsuki-Yamada construction of the Reshetikhin-Turaev $\mathfrak{sl}(N)$ polynomial for links colored by non-negative integers. In particular, we demonstrate that the $\mathfrak{sl}(N)$ MOY graph polynomial is uniquely determined by the MOY relations. 

Sections \ref{sec-mf} to \ref{sec-sym-poly} are reviews of algebraic structures used in our categorification. In Section \ref{sec-mf}, we recall the definition and properties of graded matrix factorizations. Then, in Section \ref{sec-mf-polynomial}, we take a closer look at graded matrix factorizations over polynomial rings. Section \ref{sec-sym-poly} is devoted to rings of symmetric polynomials, which serve as base rings in our construction.

Next, we define and study matrix factorizations associated to MOY graphs in Sections \ref{mf-MOY} to \ref{sec-MOY-V}. In particular, we prove direct sum decompositions (I-V), among which decompositions (I, II, IV, V) are essential in our construction of the colored $\mathfrak{sl}(N)$ homology.\footnote{Decomposition (III) is not explicitly used in the construction of the colored $\mathfrak{sl}(N)$ homology. The reader can skip this decomposition and its proof, that is, Subsections \ref{subsec-saddle-move}-\ref{subsec-2nd-composition} and Section \ref{sec-MOY-III}.} Decompositions (I-IV) are generalizations of the corresponding decompositions in \cite{KR1}. We prove these four decompositions by explicit constructions.\footnote{Using similar techniques, Yonezawa \cite{Yonezawa2} independently proved decompositions (I-III) and a special case of (IV).} Decomposition (V) is a further generalization of (IV) and is far more complex. We prove this decomposition by an induction based on decomposition (IV) using the Krull-Schmidt property of graded matrix factorizations. Note that these MOY decompositions decategorify to the corresponding MOY relations of the $\mathfrak{sl}(N)$ MOY graph polynomial. Thus, the graded dimension of our graph homology satisfies all the MOY relations and must be equal to the $\mathfrak{sl}(N)$ MOY graph polynomial.

The chain complex associated to a knotted MOY graph is defined in Section \ref{sec-chain-complex-def}. We resolve each knotted MOY graph into a collection of MOY graphs as in \cite{MOY} and then build a chain complex using the matrix factorizations associated to these MOY graphs. The homology of this chain complex is the $\mathfrak{sl}(N)$ homology of the knotted MOY graph. We observe that the graded Euler characteristic of the $\mathfrak{sl}(N)$ homology of a colored link is equal to its re-normalized Reshetikhin-Turaev $\mathfrak{sl}(N)$ polynomial. 

We prove in Sections \ref{sec-inv-fork} and \ref{sec-inv-reidemeister} that the homotopy type of the above chain complex is invariant under Reidemeister moves, which implies the invariance of the $\mathfrak{sl}(N)$ homology. In Section \ref{sec-inv-fork}, we prove the invariance of the homotopy type of our chain complex under fork sliding. With this in hand, we prove the colored invariance theorem in Section \ref{sec-inv-reidemeister} by reducing it to Khovanov and Rozansky's uncolored case \cite{KR1} using ``sliding bi-gons".

\begin{acknowledgments}
I would like to thank Mikhail Khovanov, Ben Webster and Yasuyoshi Yonezawa for interesting and helpful discussions. I am grateful to Yasuyoshi Yonezawa for sharing his lemma about graded Krull-Schmidt categories (see Lemma \ref{yonezawa-lemma} below) and to Mikhail Khovanov for suggesting an approach to understanding the Euler characteristic and the $\zed_2$-grading of the $\mathfrak{sl}(N)$ homology for colored links. (The proof of Theorem \ref{MOY-poly-skein-unique} below uses this approach.)

Most of the above mentioned discussions happened during Knots in Washington Conferences. I would like to thank the National Science Foundation and the George Washington University for supporting the Knots in Washington Conference Series.  
\end{acknowledgments}

\section{The MOY Calculus}\label{sec-MOY-polynomial}

\subsection{The HOMFLYPT polynomial} The HOMFLYPT polynomial defined in \cite{HOMFLY,PT} is an invariant for oriented links in $S^3$ in the form of a two variable polynomial $\mathsf{P}$. We normalize the HOMFLYPT polynomial using the following skein relations.
\[
\begin{cases}    
x\mathsf{P}(\setlength{\unitlength}{.5pt}
\begin{picture}(65,20)(50,0)
\put(100,-20){\vector(-1,1){40}}

\put(60,-20){\line(1,1){15}}

\put(85,5){\vector(1,1){15}}

\end{picture})-x^{-1}\mathsf{P}(\setlength{\unitlength}{.5pt}
\begin{picture}(65,20)(-110,0)
\put(-100,-20){\vector(1,1){40}}

\put(-60,-20){\line(-1,1){15}}

\put(-85,5){\vector(-1,1){15}}

\end{picture})= y\mathsf{P}(\setlength{\unitlength}{.5pt}
\begin{picture}(65,20)(50,0)
\put(100,-20){\vector(0,1){40}}

\put(60,-20){\vector(0,1){40}}

\end{picture}), &\\
   & \\
\mathsf{P}(\text{unknot})=\frac{x-x^{-1}}{y}. &
\end{cases}
\]

The specialization $\mathsf{P}_N=\mathsf{P}|_{x=q^N,~y=q-q^{-1}}$ is the $\mathfrak{sl}(N)$ HOMFLYPT polynomial and is determined by the skein relations
\[
\begin{cases}    
q^N\mathsf{P}_N()-q^{-N}\mathsf{P}_N()= (q-q^{-1})\mathsf{P}_N(), &\\
   & \\
\mathsf{P}_N(\text{unknot})=\frac{q^N-q^{-N}}{q-q^{-1}}. &
\end{cases}
\]
$\mathsf{P}_N$ is a re-normalization of the Reshetikhin-Turaev polynomial of links colored by $1$, that is, the defining representation of $\mathfrak{sl}(N;\C)$. 

The general definition of the Reshetikhin-Turaev polynomials is rather abstract. In \cite{MOY}, Murakami, Ohtsuki and Yamada gave a combinatorial construction of the Reshetikhin-Turaev $\mathfrak{sl}(N)$ polynomial for links colored by non-negative integers, that is, exterior powers of the defining representation of $\mathfrak{sl}(N;\C)$. The construction of our colored $\mathfrak{sl}(N)$ link homology $H$ is modeled on their construction. 

In the remainder of this section, we review Murakami, Ohtsuki and Yamada's construction. Our notations and normalizations are slightly different from those used in \cite{MOY}.

\subsection{MOY graphs} 
\begin{definition}\label{MOY-graph-def}
An abstract MOY graph is an oriented graph with each edge colored by a non-negative integer such that, for every vertex $v$ with valence at least $2$, the sum of the colors of the edges entering $v$ is equal to the sum of the colors of the edges leaving $v$. We call this common sum the width of $v$.

A vertex of valence $1$ in an abstract MOY graph is called an end point. A vertex of valence greater than $1$ is called an internal vertex. An abstract MOY graph $\Gamma$ is said to be closed if it has no end points. We say that an abstract MOY graph is trivalent if all of its internal vertices have valence $3$.

A MOY graph is an embedding of an abstract MOY graph into $\mathbb{R}^2$ such that, through each internal vertex $v$, there is a straight line $L_v$ so that all the edges entering $v$ enter through one side of $L_v$ and all edges leaving $v$ leave through the other side of $L_v$.
\end{definition}

\begin{figure}[ht]

\setlength{\unitlength}{1pt}

\begin{picture}(360,80)(-180,-40)

%top

\put(0,0){\vector(-1,1){15}}

\put(-15,15){\line(-1,1){15}}

\put(-23,25){\tiny{$i_1$}}

\put(0,0){\vector(-1,2){7.5}}

\put(-7.5,15){\line(-1,2){7.5}}

\put(-11,25){\tiny{$i_2$}}

\put(3,25){$\cdots$}

\put(0,0){\vector(1,1){15}}

\put(15,15){\line(1,1){15}}

\put(31,25){\tiny{$i_k$}}

%middle

\put(4,-2){$v$}

\multiput(-50,0)(5,0){19}{\line(1,0){3}}

\put(-70,0){$L_v$}

\put(45,0){\tiny{$i_1+i_2+\cdots +i_k = j_1+j_2+\cdots +j_l$}}

%bottom

\put(-30,-30){\vector(1,1){15}}

\put(-15,-15){\line(1,1){15}}

\put(-26,-30){\tiny{$j_1$}}

\put(-15,-30){\vector(1,2){7.5}}

\put(-7.5,-15){\line(1,2){7.5}}

\put(-13,-30){\tiny{$j_2$}}

\put(3,-30){$\cdots$}

\put(30,-30){\vector(-1,1){15}}

\put(15,-15){\line(-1,1){15}}

\put(31,-30){\tiny{$j_l$}}

\end{picture}

\caption{An internal vertex of a MOY graph}\label{general-MOY-vertex-poly-figure}

\end{figure}
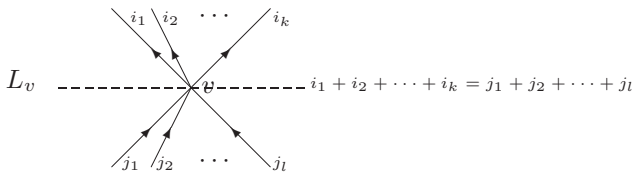

\subsection{The MOY graph polynomial} To each closed trivalent MOY graph, Murakami, Ohtsuki and Yamada \cite{MOY} associated a polynomial, which we call the MOY graph polynomial. They express the colored Reshetikhin-Turaev $\mathfrak{sl}(N)$ polynomial as a combination of MOY graph polynomials. We review the MOY graph polynomial in this subsection.

Define $\mathcal{N}=\{-N+1, -N+3,\cdots, N-3, N-1\}$ and $\mathcal{P(N)}$ to be the set of subsets of $\mathcal{N}$. For a finite set $A$, denote by $\#A$ the cardinality of $A$. Define a function $\pi:\mathcal{P(N)} \times \mathcal{P(N)} \rightarrow \zed_{\geq 0}$ by
\[
\pi (A_1, A_2) = \# \{(a_1,a_2) \in A_1 \times A_2 ~|~ a_1>a_2\} \text{ for } A_1,~A_2 \in \mathcal{P(N)}.
\]

\begin{figure}[ht]
$
\xymatrix{
\input{tri-vertex-s} & \text{or} & \input{tri-vertex-m}
} 
$
\caption{}\label{tri-vertex} 

\end{figure}

Let $\Gamma$ be a closed trivalent MOY graph, and $E(\Gamma)$ the set of edges of $\Gamma$. Denote by $\mathrm{c}:E(\Gamma) \rightarrow \nat$ the color function of $\Gamma$. That is, for every edge $e$ of $\Gamma$, $\mathrm{c}(e) \in \nat$ is the color of $e$. A state of $\Gamma$ is a function $\sigma: E(\Gamma) \rightarrow \mathcal{P(N)}$ such that
\begin{enumerate}[(i)]
	\item For every edge $e$ of $\Gamma$, $\#\sigma(e) = \mathrm{c}(e)$.
	\item For every vertex $v$ of $\Gamma$, as depicted in Figure \ref{tri-vertex}, we have $\sigma(e)=\sigma(e_1) \cup \sigma(e_2)$. (In particular, this implies that $\sigma(e_1) \cap \sigma(e_2)=\emptyset$.)
\end{enumerate}

For a state $\sigma$ of $\Gamma$ and a vertex $v$ of $\Gamma$ (as depicted in Figure \ref{tri-vertex}), the weight of $v$ with respect to $\sigma$ is defined to be 
\[
\mathrm{wt}(v;\sigma) = q^{\frac{\mathrm{c}(e_1)\mathrm{c}(e_2)}{2} - \pi(\sigma(e_1),\sigma(e_2))}.
\]

Given a state $\sigma$ of $\Gamma$, replace each edge $e$ of $\Gamma$ by $\mathrm{c}(e)$ parallel edges, assign to each of these new edges a different element of $\sigma(e)$ and, at every vertex, connect each pair of new edges assigned the same element of $\mathcal{N}$. This changes $\Gamma$ into a collection $\mathcal{C}$ of embedded oriented circles, each of which is assigned an element of $\mathcal{N}$. By abusing notation, we denote by $\sigma(C)$ the element of $\mathcal{N}$ assigned to $C\in \mathcal{C}$. Note that: 
\begin{itemize}
	\item There may be intersections between different circles in $\mathcal{C}$. But, each circle in $\mathcal{C}$ is embedded, that is, without self-intersections or self-tangency.
	\item There may be more than one way to do this. But if we view $\mathcal{C}$ as a virtue link and the intersection points between different elements of $\mathcal{C}$ as virtual crossings, then the above construction is unique up to purely virtual regular Reidemeister moves.
\end{itemize}
For each $C\in \mathcal{C}$, define the rotation number $\mathrm{rot}(C)$ the usual way. That is,
\begin{equation}\label{eq-def-rot-usual}
\mathrm{rot}(C) = 
\begin{cases}
1 & \text{if } C \text{ is counterclockwise,} \\
-1 & \text{if } C \text{ is clockwise.}
\end{cases}
\end{equation}
The rotation number $\mathrm{rot}(\sigma)$ of $\sigma$ is then defined to be
\[
\mathrm{rot}(\sigma) = \sum_{C\in \mathcal{C}} \sigma(C) \mathrm{rot}(C).
\]

The $\mathfrak{sl}(N)$ MOY polynomial of $\Gamma$ is defined to be
\begin{equation}\label{MOY-bracket-def}
\left\langle \Gamma \right\rangle_N := \sum_{\sigma} (\prod_v \mathrm{wt}(v;\sigma)) q^{\mathrm{rot}(\sigma)} \in \zed_{\geq 0}[q,q^{-1}],
\end{equation}
where $\sigma$ runs through all states of $\Gamma$ and $v$ runs through all vertices of $\Gamma$.

\subsection{The MOY calculus} Murakami, Ohtsuki and Yamada \cite{MOY} established a set of graphical relations for the $\mathfrak{sl}(N)$ MOY polynomial, which is known as the MOY calculus. The MOY calculus plays an important role in guiding us through the construction of the colored $\mathfrak{sl}(N)$ homology. 

Before stating the MOY calculus, we need to introduce our normalization of quantum integers.

\begin{definition}\label{def-quantum-integers}
Quantum integers are elements of $\zed[q,q^{-1}]$. In this paper, we use the normalization
\begin{eqnarray*}
[j] & := & \frac{q^j-q^{-j}}{q-q^{-1}}, \\ 
{[j]}! & := & [1] \cdot [2] \cdots [j], \\
\qb{j}{k} & := & \frac{[j]!}{[k]!\cdot [j-k]!}.
\end{eqnarray*}
\end{definition}

The following theorem is the MOY calculus.

\begin{theorem}\cite{MOY}\label{MOY-poly-skein}
The $\mathfrak{sl}(N)$ MOY graph polynomial $\left\langle \ast\right\rangle_N$ for close trivalent MOY graphs satisfies:
\begin{enumerate}
  \item $\left\langle \bigcirc_m \right\rangle_N = \qb{N}{m}$, where $\bigcirc_m$ is a circle colored by $m$.
  \item $\left\langle \setlength{\unitlength}{1pt}
\begin{picture}(50,50)(-80,20)

\put(-60,10){\vector(0,1){10}}

\put(-60,20){\vector(-1,1){20}}

\put(-60,20){\vector(1,1){10}}

\put(-50,30){\vector(-1,1){10}}

\put(-50,30){\vector(1,1){10}}

\put(-75,3){\tiny{$i+j+k$}}

\put(-55,21){\tiny{$j+k$}}

\put(-80,42){\tiny{$i$}}

\put(-60,42){\tiny{$j$}}

\put(-40,42){\tiny{$k$}}

\end{picture} \right\rangle_N = \left\langle \setlength{\unitlength}{1pt}
\begin{picture}(50,50)(40,20)

\put(60,10){\vector(0,1){10}}

\put(60,20){\vector(1,1){20}}

\put(60,20){\vector(-1,1){10}}

\put(50,30){\vector(1,1){10}}

\put(50,30){\vector(-1,1){10}}

\put(45,3){\tiny{$i+j+k$}}

\put(38,21){\tiny{$i+j$}}

\put(80,42){\tiny{$k$}}

\put(60,42){\tiny{$j$}}

\put(40,42){\tiny{$i$}}

\end{picture} \right\rangle_N$.
	\item $\left\langle \input{v-vector-m+n-bubble-slide}\right\rangle_N = \qb{m+n}{n} \cdot\left\langle \setlength{\unitlength}{.75pt}
\begin{picture}(55,80)(-20,40)
\put(0,0){\vector(0,1){80}}
\put(5,75){\tiny{$_{m+n}$}}
\end{picture}\right\rangle_N $.
	\item $\left\langle \setlength{\unitlength}{.75pt}
\begin{picture}(60,80)(-30,40)
\put(0,0){\vector(0,1){30}}
\put(0,30){\vector(0,1){20}}
\put(0,50){\vector(0,1){30}}

\put(-1,40){\line(1,0){2}}

\qbezier(0,30)(25,20)(25,30)
\qbezier(0,50)(25,60)(25,50)
\put(25,50){\vector(0,-1){20}}

\put(5,75){\tiny{$_{m}$}}
\put(5,5){\tiny{$_{m}$}}
\put(-30,38){\tiny{$_{m+n}$}}
\put(14,60){\tiny{$_{n}$}}
\end{picture}\right\rangle_N = \qb{N-m}{n} \cdot \left\langle \setlength{\unitlength}{.75pt}
\begin{picture}(40,80)(-20,40)
\put(0,0){\vector(0,1){80}}
\put(5,75){\tiny{$_{m}$}}
\end{picture}\right\rangle_N$.
	\item $\left\langle \input{decomp-III-1-slide}\right\rangle_N = \left\langle \setlength{\unitlength}{.75pt}
\begin{picture}(60,60)(-30,30)

\put(-20,0){\vector(0,1){60}}

\put(20,60){\vector(0,-1){60}}

\put(-25,30){\tiny{$_1$}}

\put(22,30){\tiny{$_m$}}
\end{picture}\right\rangle_N + [N-m-1] \cdot \left\langle \setlength{\unitlength}{.75pt}
\begin{picture}(60,60)(100,30)

\put(110,0){\vector(1,1){20}}

\put(130,20){\vector(1,-1){20}}

\put(130,40){\vector(0,-1){20}}

\put(130,40){\vector(-1,1){20}}

\put(150,60){\vector(-1,-1){20}}

\put(105,0){\tiny{$_1$}}

\put(105,55){\tiny{$_1$}}

\put(152,0){\tiny{$_m$}}

\put(152,55){\tiny{$_m$}}

\put(132,30){\tiny{$_{m-1}$}}

\end{picture}\right\rangle_N$.
	\item $\left\langle \input{decomp-IV-1-slide}\right\rangle_N = \qb{m-1}{n} \cdot \left\langle \setlength{\unitlength}{.75pt}
\begin{picture}(100,90)(-30,45)

\put(-20,0){\vector(0,1){45}}

\put(-20,45){\vector(0,1){45}}

\put(20,0){\vector(0,1){45}}

\put(20,45){\vector(0,1){45}}

\put(20,45){\vector(-1,0){40}}

\put(-27,20){\tiny{$_1$}}

\put(23,20){\tiny{$_{m+l-1}$}}

\put(-27,65){\tiny{$_l$}}

\put(23,65){\tiny{$_m$}}

\put(-5,38){\tiny{$_{l-1}$}}

\end{picture}\right\rangle_N  + \qb{m-1}{n-1} \cdot \left\langle \setlength{\unitlength}{.75pt}
\begin{picture}(80,90)(110,45)

\put(110,0){\vector(2,3){20}}

\put(150,0){\vector(-2,3){20}}

\put(130,30){\vector(0,1){30}}

\put(130,60){\vector(-2,3){20}}

\put(130,60){\vector(2,3){20}}

\put(117,20){\tiny{$_1$}}

\put(140,20){\tiny{$_{m+l-1}$}}

\put(117,65){\tiny{$_l$}}

\put(140,65){\tiny{$_m$}}

\put(133,42){\tiny{$_{m+l}$}}
\end{picture} \right\rangle_N$.\vspace{.5cm}	
	\item $\left\langle \input{decomp-V-1-slide}\right\rangle_N = \sum_{j=\max\{m-n,0\}}^m \qb{l}{k-j} \cdot \left\langle \input{decomp-V-2-slide} \right\rangle_N$. \vspace{.5cm}
\end{enumerate}
The above equations remain true if we reverse the orientation of the MOY graph or the orientation of $\mathbb{R}^2$.
\end{theorem}

The equations in Theorem \ref{MOY-poly-skein} actually uniquely determine the MOY graph polynomial. Some experts apparently knew this. But I did not find a written proof of this. So we include a proof here for the convenience of the reader.

\begin{theorem}\label{MOY-poly-skein-unique}
Equations in Theorem \ref{MOY-poly-skein} uniquely determine the $\mathfrak{sl}(N)$ MOY graph polynomial $\left\langle \ast\right\rangle_N$.
\end{theorem}

\begin{proof}
We prove this theorem by a double induction on the highest color of edges of $\Gamma$ and on the number of edges of $\Gamma$ with the highest color. 

Assume that $\left\langle \left\langle \ast \right\rangle\right\rangle_N$ also satisfies all the equations in Theorem \ref{MOY-poly-skein}. Kauffman and Vogel \cite{Kauffman-Vogel} proved that, for closed trivalent MOY graphs colored by $1,2$, the polynomial satisfying all the relations in Theorem \ref{MOY-poly-skein} is unique. That is, $\left\langle \left\langle \Gamma \right\rangle\right\rangle_N= \left\langle \Gamma \right\rangle_N$ if all edges of the MOY graph $\Gamma$ are colored by $1$ or $2$. 

Now assume that, for some $m\geq 2$, $\left\langle \left\langle \Gamma \right\rangle\right\rangle_N= \left\langle \Gamma \right\rangle_N$ if all edges of the MOY graph $\Gamma$ are colored by positive integers no greater than $m$. We use this to prove that $\left\langle \left\langle \Gamma \right\rangle\right\rangle_N= \left\langle \Gamma \right\rangle_N$ if all edges of $\Gamma$ are colored by positive integers no greater than $m+1$. To do this we induct on the number of edges colored by $m+1$ in $\Gamma$. 

Rephrasing our induction hypothesis, we can say that $\left\langle \left\langle \Gamma \right\rangle\right\rangle_N= \left\langle \Gamma \right\rangle_N$ if all edges of $\Gamma$ are colored by positive integers no greater than $m+1$ and exactly $0$ edges of $\Gamma$ is colored by $m+1$. Assume that, for some $k \geq 0$, $\left\langle \left\langle \Gamma \right\rangle\right\rangle_N= \left\langle \Gamma \right\rangle_N$ whenever all edges of $\Gamma$ are colored by positive integers no greater than $m+1$ and exactly $k$ edges of $\Gamma$ is colored by $m+1$.

Let $\Gamma$ be a MOY graph such that
\begin{itemize}
	\item all edge of $\Gamma$ are colored by positive integers no greater than $m+1$,
	\item exactly $k+1$ edges of $\Gamma$ is colored by $m+1$.
\end{itemize}
We claim that $\left\langle \left\langle \Gamma \right\rangle\right\rangle_N= \left\langle \Gamma \right\rangle_N$.

\textit{Case 1.} Assume that there is a circle $\bigcirc_{m+1}$ colored by $m+1$ in $\Gamma$. Let $\tilde{\Gamma}$ be $\Gamma$ with $\bigcirc_{m+1}$ removed. Then all edge of $\tilde{\Gamma}$ are colored by positive integers no greater than $m+1$, and exactly $k$ edges of $\tilde{\Gamma}$ is colored by $m+1$. So $\left\langle \left\langle \tilde{\Gamma} \right\rangle\right\rangle_N= \left\langle \tilde{\Gamma} \right\rangle_N$ and therefore $\left\langle \left\langle \Gamma \right\rangle\right\rangle_N= \qb{N}{m} \left\langle \left\langle \tilde{\Gamma} \right\rangle\right\rangle_N =\qb{N}{m} \left\langle \tilde{\Gamma} \right\rangle_N  = \left\langle \Gamma \right\rangle_N$.

\begin{figure}[ht]

$\input{high-color-m+1}$
\caption{}\label{high-color-nbhd} 

\end{figure}

\begin{figure}[ht]
$
\xymatrix{
\input{high-color-m+1-mod0} && \input{high-color-m+1-mod} && \input{high-color-m+1-mod2}
} 
$
\caption{}\label{high-color-nbhd-mod} 

\end{figure}

\textit{Case 2.} Assume there are no circles colored by $m+1$ in $\Gamma$. Then every edge in $\Gamma$ colored by $m+1$ is of the form in Figure \ref{high-color-nbhd}, where $1 \leq j,l \leq m$. Let $e$ be such an edges of $\Gamma$ as depicted in Figure \ref{high-color-nbhd}. We modify $\Gamma$ locally near $e$ as in Figure \ref{high-color-nbhd-mod}. This gives us new MOY graphs $\Gamma_0$, $\Gamma_1$ and $\Gamma_2$, which are identical to $\Gamma$ except in the neighborhoods shown in Figure \ref{high-color-nbhd-mod}. Note that each of $\Gamma_0$ and $\Gamma_2$ has exactly $k$ edges colored by $m+1$ and therefore $\left\langle \left\langle \Gamma_0 \right\rangle\right\rangle_N= \left\langle \Gamma_0 \right\rangle_N$ and $\left\langle \left\langle \Gamma_2 \right\rangle\right\rangle_N= \left\langle \Gamma_2 \right\rangle_N$. Using equations (2), (3) and (6) in Theorem \ref{MOY-poly-skein}, we get
\begin{eqnarray}
\label{MOY-gdim-rt-eq3-fake} \left\langle \left\langle \Gamma_1\right\rangle\right\rangle_N & = & [j]\cdot[l] \cdot \left\langle \left\langle \Gamma \right\rangle\right\rangle_N, \\
\label{MOY-gdim-rt-eq4-fake} \left\langle \left\langle \Gamma_0\right\rangle\right\rangle_N & = & \left\langle \left\langle \Gamma_1\right\rangle\right\rangle_N + [m-1] \cdot \left\langle \left\langle \Gamma_2\right\rangle\right\rangle_N, \\
\label{MOY-gdim-rt-eq3} \left\langle \Gamma_1\right\rangle_N & = & [j]\cdot[l] \cdot\left\langle \Gamma \right\rangle_N, \\
\label{MOY-gdim-rt-eq4} \left\langle \Gamma_0\right\rangle_N & = & \left\langle \Gamma_1\right\rangle_N + [m-1] \cdot \left\langle \Gamma_2\right\rangle_N.
\end{eqnarray}
It clearly follows that $\left\langle \left\langle \Gamma \right\rangle\right\rangle_N= \left\langle \Gamma \right\rangle_N$. This completes the double induction and proves Theorem \ref{MOY-poly-skein-unique}.
\end{proof}

\subsection{The colored Reshetikhin-Turaev $\mathfrak{sl}(N)$ polynomial}

\begin{definition}\cite{MOY}\label{MOY-poly-def}
For a link diagram $D$ colored by non-negative integers, define $\left\langle D \right\rangle_N$ by applying the following at every crossing of $D$.
\[
\left\langle \setlength{\unitlength}{1pt}
\begin{picture}(40,40)(-20,0)

\put(-20,-20){\vector(1,1){40}}

\put(20,-20){\line(-1,1){15}}

\put(-5,5){\vector(-1,1){15}}

\put(-11,15){\tiny{$_m$}}

\put(9,15){\tiny{$_n$}}

\end{picture} \right\rangle_N = \sum_{k=\max\{0,m-n\}}^{m} (-1)^{m-k} q^{k-m}\left\langle \input{square-m-n-k-left-poly}\right\rangle_N,
\]
\[
\left\langle \setlength{\unitlength}{1pt}
\begin{picture}(40,40)(-20,0)

\put(20,-20){\vector(-1,1){40}}

\put(-20,-20){\line(1,1){15}}

\put(5,5){\vector(1,1){15}}

\put(-11,15){\tiny{$_m$}}

\put(9,15){\tiny{$_n$}}

\end{picture} \right\rangle_N = \sum_{k=\max\{0,m-n\}}^{m} (-1)^{k-m} q^{m-k}\left\langle \input{square-m-n-k-left-poly}\right\rangle_N.
\]
\vspace{.5cm}

Also, for each crossing $c$ of $D$, define the shifting factor $\mathsf{s}(c)$ of $c$ by
\[
\mathsf{s}\left(\right) = 
\begin{cases}
(-1)^{-m} q^{m(N+1-m)} & \text{if } m=n,\\
1 & \text{if } m \neq n,
\end{cases}
\]
\[
\mathsf{s}\left(\right) = 
\begin{cases}
(-1)^m q^{-m(N+1-m)} & \text{if } m=n,\\
1 & \text{if } m \neq n.
\end{cases}
\]

The re-normalized Reshetikhin-Turaev $\mathfrak{sl}(N)$ polynomial $\mathrm{RT}_D(q)$ of $D$ is defined to be
\[
\mathrm{RT}_D(q) = \left\langle D \right\rangle_N \cdot \prod_c \mathsf{s}(c),
\]
where $c$ runs through all crossings of $D$.
\end{definition}

\begin{theorem}\cite{MOY}\label{MOY-poly-inv}
$\left\langle D \right\rangle_N$ is invariant under regular Reidemeister moves. $\mathrm{RT}_D(q)$ is invariant under all Reidemeister moves and is a re-normalization of the Reshetikhin-Turaev $\mathfrak{sl}(N)$ polynomial for links colored by non-negative integers.
\end{theorem}

\section{Graded Matrix Factorizations}\label{sec-mf}

In this section, we review the definition and properties of graded matrix factorizations over graded $\C$-algebras, most of which can be found in \cite{KR1,KR2,KR3,Ras2,Wu7}. Some of these properties are stated slightly more precisely here for the convenience of later applications. 

\subsection{$\zed$-pregraded and $\zed$-graded linear spaces} Let $V$ be a $\C$-linear space. A $\zed$-pregrading of $V$ is a collection $\{V^{(i)}|i\in\zed\}$ of $\C$-linear spaces, such that there exist injective $\C$-linear maps $\bigoplus_{i\in \zed} V^{(i)}\hookrightarrow V$ and $V\hookrightarrow \prod_{i\in \zed} V^{(i)}$ that make diagram \eqref{pregrade-def} commutative, where the horizontal map is the standard inclusion map from the direct sum to the direct product.
\begin{equation}\label{pregrade-def}
\xymatrix{
\bigoplus_{i\in \zed} V^{(i)} \ar@{^{(}->}[dr] \ar@{^{(}->}[rr]  & & \prod_{i\in \zed} V^{(i)} \\
& V \ar@{^{(}->}[ur] & \\
}
\end{equation}
From now on, we will identify $V^{(i)}$ with its image in $V$. An element $v$ of $V^{(i)}$ is called a homogeneous element of $V$ of degree $i$. In this case, we write $\deg v =i$.

A $\zed$-pregrading $\{V^{(i)}|i\in\zed\}$ of $V$ is called a $\zed$-grading if the $\C$-linear map $\bigoplus_{i\in \zed} V^{(i)}\hookrightarrow V$ is an isomorphism.

We say that a $\zed$-pregrading $\{V^{(i)}|i\in\zed\}$ of $V$ is bounded from below (resp. above) if there is an $m\in \zed$ such that $V^{(i)}=0$ whenever $i<m$ (resp. $i>m$.) We call a $\zed$-pregrading bounded if it is bounded from both below and above.

Let $V$ and $W$ be $\zed$-pregraded linear spaces with pregradings $\{V^{(i)}\}$ and $\{W^{(i)}\}$. A $\C$-linear map $f:V\rightarrow W$ is called homogeneous of degree $k$ if and only if $f(V^{(i)})\subset W^{(i+k)}$ for all $i\in\zed$.

Let $V$ be $\zed$-pregraded linear spaces with pregrading $\{V^{(i)}\}$. For any $j\in \zed$, define $V\{q^j\}$ to be $V$ with the pregrading shifted by $j$. That is, the pregrading $\{V\{q^j\}^{(i)}\}$ of $V\{q^j\}$ is defined by $V\{q^j\}^{(i)}=V^{(i-j)}$. More generally, for 
\[
F(q)=\sum_{j=k}^l a_j q^j ~\in \zed_{\geq0}[q,q^{-1}],
\]
we define the $\zed$-pregraded linear space $V\{F(q)\}$ to be
\[
V\{F(q)\} = \bigoplus_{j=k}^l (\underbrace{V\{q^j\}\oplus\cdots\oplus V\{q^j\}}_{a_j-\text{fold}})
\]
with the obvious pregrading $\{V\{F(q)\}^{(i)}\}$ given by
\[
V\{F(q)\}^{(i)} = \bigoplus_{j=k}^l (\underbrace{V^{(i-j)}\oplus\cdots\oplus V^{(i-j)}}_{a_j-\text{fold}})
\]

\subsection{Graded modules over a graded $\C$-algebra} In the rest of this section, $R$ will be a graded commutative unital $\C$-algebra, where ``graded" means that we fix a $\zed$-grading $\{R^{(i)}\}$ on the underlying $\C$-space of $R$ that satisfies $R^{(i)} \cdot R^{(j)} \subset R^{(i+j)}$. It is easy to see that $1\in R^{(0)}$. 

A $\zed$-grading of an $R$-module $M$ is a $\zed$-grading $\{M^{(i)}\}$ of its underlying $\C$-space satisfying $R^{(i)} \cdot M^{(j)} \subset M^{(i+j)}$. A graded $R$-module is an $R$-module with a fixed $\zed$-grading. For a graded $R$-module $M$ and $F(q)\in \zed_{\geq0}[q,q^{-1}]$, $M\{F(q)\}$ is defined as above.

\begin{lemma}\label{pregrade-hom}
Let $M_1$ and $M_2$ be graded $R$-modules. Then $\Hom_R(M_1,M_2)$ has a natural $\zed$-pregrading. If $M_1$ is finitely generated over $R$, then this pregrading is a grading.
\end{lemma}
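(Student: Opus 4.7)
The natural candidate for the pregrading is to let $\Hom_R(M_1,M_2)^{(i)}$ be the $\C$-linear subspace of $R$-module maps $f:M_1\to M_2$ that are homogeneous of degree $i$, i.e.\ satisfy $f(M_1^{(j)})\subset M_2^{(i+j)}$ for every $j\in\zed$. The plan is to produce the two injections required by diagram \eqref{pregrade-def}, and then, under the finite-generation hypothesis on $M_1$, to upgrade this pregrading to a grading by showing that the direct-sum inclusion is surjective.

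First I would build the injection $\bigoplus_i \Hom_R(M_1,M_2)^{(i)}\hookrightarrow \Hom_R(M_1,M_2)$: if a finite sum $\sum_i f_i$ of homogeneous maps of distinct degrees vanishes, then for each homogeneous $m\in M_1^{(j)}$ the elements $f_i(m)\in M_2^{(i+j)}$ lie in distinct graded components of $M_2$ and must vanish individually; since the homogeneous pieces span $M_1$ as a $\C$-space, this forces each $f_i=0$. Next I would build the injection $\Hom_R(M_1,M_2)\hookrightarrow \prod_i \Hom_R(M_1,M_2)^{(i)}$ by decomposing, for each $f\in\Hom_R(M_1,M_2)$ and each homogeneous $m\in M_1^{(j)}$, the image $f(m)=\sum_k f(m)^{(k)}$ in $M_2=\bigoplus_k M_2^{(k)}$ and setting $f_i(m):=f(m)^{(j+i)}$, then extending $\C$-linearly to all of $M_1=\bigoplus_j M_1^{(j)}$. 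The one nontrivial check is $R$-linearity of $f_i$: for homogeneous $r\in R^{(a)}$ and $m\in M_1^{(j)}$, $f_i(rm)$ is the degree-$(a+j+i)$ component of $f(rm)=r\cdot f(m)$, which equals $r\cdot f(m)^{(j+i)}=r\cdot f_i(m)$ because $R^{(a)}\cdot M_2^{(j+i)}\subset M_2^{(a+j+i)}$; $\C$-bilinearity of the $R$-action then handles inhomogeneous arguments. The assignment $f\mapsto (f_i)_i$ is injective (if every $f_i$ vanishes on homogeneous $m$, so does $f$) and by construction compatible with the standard map $\bigoplus\hookrightarrow\prod$, giving commutativity of diagram \eqref{pregrade-def}.

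For the grading claim, assume $M_1$ is finitely generated. The standard fact that a finitely generated graded module over a graded ring admits a finite homogeneous generating set produces elements $e_1,\dots,e_n\in M_1$ of degrees $d_1,\dots,d_n$. For any $f\in\Hom_R(M_1,M_2)$ each $f(e_k)$ has only finitely many nonzero graded components in $M_2$, so $f_i(e_k)=f(e_k)^{(d_k+i)}$ is nonzero for only finitely many $i\in\zed$; since $R$-linearity forces each $f_i$ to be determined by its values on the generators, only finitely many $f_i$ are nonzero, and hence $f=\sum_i f_i$ is a genuine finite sum lying in $\bigoplus_i \Hom_R(M_1,M_2)^{(i)}$. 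Thus the direct-sum inclusion is surjective and the pregrading is a grading. The step most deserving of care is the $R$-linearity verification of the $f_i$ in the second paragraph; the surjectivity step then reduces to a clean finiteness argument once homogeneous generators are in hand.
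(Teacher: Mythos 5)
Your proof is correct and takes essentially the same approach as the paper: you use the same definition of the homogeneous components $\Hom_R(M_1,M_2)^{(i)}$, define the same maps $f_i$ via graded projections, check $R$-linearity the same way, and in the finitely generated case bound the range of nonzero $f_i$ from the images of the generators. The paper phrases the pregrading verification as two conditions (existence and uniqueness of the decomposition $f=\sum_k f_k$) while you directly exhibit the two injections required by diagram \eqref{pregrade-def}, but these are the same thing unpacked.
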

\begin{proof}
Let $\{M_1^{(i)}\}$ and $\{M_2^{(i)}\}$ be the gradings of $M_1$ and $M_2$. Define 
\[
\Hom_R(M_1,M_2)^{(k)}=\{f\in\Hom_R(M_1,M_2) ~|~ f(M_1^{(i)}) \subset M_2^{(i+k)}\}.
\]
We claim that $\{\Hom_R(M_1,M_2)^{(k)}\}$ is a $\zed$-pregrading of $\Hom_R(M_1,M_2)$. To prove this, we only need to show that:
\begin{enumerate}[(i)]
	\item Any $f\in\Hom_R(M_1,M_2)$ can be uniquely expressed as a sum $\sum_{k=-\infty}^{\infty} f_k$, where $f_k$ is in $\Hom_R(M_1,M_2)^{(k)}$ and is called the homogeneous part of degree $k$ of $f$.
	\item For $f,g\in\Hom_R(M_1,M_2)$, $f=g$ only if all of their corresponding homogeneous part are equal.
\end{enumerate}
(ii) and the uniqueness part of (i) are simple and left to the reader. We only check the existence part of (i). 

For $l=1,2$, let $J_l^{(i)}:M_l^{(i)}\rightarrow M_l$ and $P_l^{(i)}:M_l\rightarrow M_l^{(i)}$ be the inclusion and projection in
\[
M_l = \bigoplus_{i\in\zed} M_l^{(i)}.
\] 
For $k\in\zed$, define a $\C$-linear map $f_k:M_1\rightarrow M_2$ by $f_k|_{M_1^{(i)}}= P_2^{(i+k)}\circ f \circ J_1^{(i)}$. Let $m\in M_1$. Then there exist $i_1 \leq i_2$ and $j_1 \leq j_2$ such that $m \in \bigoplus_{i=i_1}^{i_2} M_1^{(i)}$ and $f(m) \in \bigoplus_{j=j_1}^{j_2} M_2^{(j)}$. It is easy to see that $f_k(m)=0$ if $k>j_2-i_1$ or $k<j_1-i_2$. So the sum $\sum_{k=-\infty}^{\infty} f_k(m)$ is a finite sum for any $m\in M_1$. Thus the infinite sum $\sum_{k=-\infty}^{\infty} f_k$ is a well defined $\C$-linear map from $M_1$ to $M_2$. One can easily see that $f=\sum_{k=-\infty}^{\infty} f_k$ as $\C$-linear maps, and that $f_k$ is a homogeneous $\C$-linear map of degree $k$. It remains to check that $f_k$ is an $R$-module map for every $k$. Let $m\in M_1^{(i)}$ and $r\in R^{(j)}$. Then $rm\in M_1^{(i+j)}$ and 
\begin{eqnarray*}
f_k(rm) & = & P_2^{(i+j+k)}(f(rm)) = P_2^{(i+j+k)}(r f(m)) \\
& = & P_2^{(i+j+k)}(r \cdot \sum_{n=-\infty}^{\infty} f_n(m)) = rf_k(m).
\end{eqnarray*}
This implies that $f_k$ is an $R$-module map and, therefore, $f_k\in\Hom_R(M_1,M_2)^{(k)}$. Thus, $\{\Hom_R(M_1,M_2)^{(k)}\}$ is a $\zed$-pregrading of $\Hom_R(M_1,M_2)$.

Now assume that $M_1$ is generated by a finite subset $\{m_1,\dots,m_p\}$. For any $f\in\Hom_R(M_1,M_2)$, there exist $i_1 \leq i_2$, $j_1 \leq j_2$ such that $m_1,\dots,m_p \in \bigoplus_{i=i_1}^{i_2} M_1^{(i)}$ and $f(m_1),\dots,f(m_p) \in \bigoplus_{j=j_1}^{j_2} M_2^{(j)}$. It follows easily that $f_k=0$ if $k>j_2-i_1$ or $k<j_1-i_2$. So 
\[
f=\sum_{k=j_1-i_2}^{j_2-i_1} f_k \in \bigoplus_{k=-\infty}^{\infty} \Hom_R(M_1,M_2)^{(k)}.
\]
Thus, 
\[
\Hom_R(M_1,M_2) = \bigoplus_{k=-\infty}^{\infty} \Hom_R(M_1,M_2)^{(k)},
\]
which implies that $\{\Hom_R(M_1,M_2)^{(k)}\}$ is a $\zed$-grading of $\Hom_R(M_1,M_2)$.
\end{proof}

In the present paper, we are especially interested in free graded modules over $R$. Note that a free graded module need not have a basis consisting of homogeneous elements. Following \cite[Chapter 13]{Passman-book}, we introduce the following definition.

\begin{definition}\label{graded-free-def}
A graded module $M$ over $R$ is called graded-free if and only if it is a free module over $R$ with a homogeneous basis.
\end{definition}

All the modules involved in the construction of the $\mathfrak{sl}(N)$ homology are modules over polynomial rings. We will see in Section \ref{sec-mf-polynomial} that, if the grading of a free graded module over a polynomial ring is bounded below, then this module is graded-free.

\subsection{Graded matrix factorizations}\label{graded-matrix-factorizations} Recall that, $N$ $(\geq 2)$ is a fixed integer throughout the present paper. (It is the ``$N$" in ``$\mathfrak{sl}(N)$".) 

Let $R$ be a graded commutative unital $\C$-algebra, and $w$ a homogeneous element of $R$ of degree $2N+2$. A graded matrix factorization $M$ over $R$ with potential $w$ is a collection of two free graded $R$-modules $M_0$, $M_1$ and two homogeneous $R$-module maps $d_0:M_0\rightarrow M_1$, $d_1:M_1\rightarrow M_0$ of degree $N+1$, called differential maps, such that 
\[
d_1 \circ d_0=w\cdot\id_{M_0}, \hspace{1cm}  d_0 \circ d_1=w\cdot\id_{M_1}.
\]
We usually write $M$ as
\[
M_0 \xrightarrow{d_0} M_1 \xrightarrow{d_1} M_0.
\]
$M$ has two gradings: a $\zed_2$-grading that takes value $\ve$ on $M_\ve$ and a quantum grading, which is the $\zed$-grading of the underlying graded $R$-module. We denote by ``$\deg_{\zed_2}$" the degree from the $\zed_2$-grading and by ``$\deg$" the degree from the quantum grading.

Following \cite{KR1}, we denote by $M\left\langle 1\right\rangle$ the matrix factorization
\[
M_1 \xrightarrow{d_1} M_0 \xrightarrow{d_0} M_1,
\]
and write $M\left\langle j\right\rangle = M \underbrace{\left\langle 1\right\rangle\cdots\left\langle 1\right\rangle}_{j \text{ times }}$.

For graded matrix factorizations $M$ with potential $w$ and $M'$ with potential $w'$, the tensor product $M\otimes M'$ is the graded matrix factorization with 
\begin{eqnarray*}
(M\otimes M')_0 & = & (M_0\otimes M'_0)\oplus (M_1\otimes M'_1), \\
(M\otimes M')_1 & = & (M_1\otimes M'_0)\oplus (M_1\otimes M'_0),
\end{eqnarray*}
and the differential is given by the signed Leibniz rule. That is, for $m\in M_\ve$ and $m'\in M'$,
\[
d(m\otimes m')=(dm)\otimes m' + (-1)^\ve m \otimes (dm').
\]
The potential of $M\otimes M'$ is $w+w'$.

\begin{definition}\label{koszul-mf-def}
If $a_0,a_1\in R$ are homogeneous elements with $\deg a_0 +\deg a_1=2N+2$, then denote by $(a_0,a_1)_R$ the matrix factorization $R \xrightarrow{a_0} R\{q^{N+1-\deg{a_0}}\} \xrightarrow{a_1} R$, which has potential $a_0a_1$. More generally, if $a_{1,0},a_{1,1},\dots,a_{k,0},a_{k,1}\in R$ are homogeneous with $\deg a_{j,0} +\deg a_{j,1}=2N+2$, then denote by 
\[
\left(%
\begin{array}{cc}
  a_{1,0}, & a_{1,1} \\
  a_{2,0}, & a_{2,1} \\
  \dots & \dots \\
  a_{k,0}, & a_{k,1}
\end{array}%
\right)_R
\]
the tenser product 
\[
(a_{1,0},a_{1,1})_R \otimes_R (a_{2,0},a_{2,1})_R \otimes_R \cdots \otimes_R (a_{k,0},a_{k,1})_R,
\] 
which is a graded matrix factorization with potential $\sum_{j=1}^k a_{j,0} a_{j,1}$, and is call the Koszul matrix factorization associated to the above matrix. We drop``$R$" the notation when it is clear from the context. Note that the above Koszul matrix factorization is finitely generated over $R$.
\end{definition}

Since the Koszul matrix factorizations we use in this paper are more complex than those in \cite{KR1,KR2,Ras2,Wu7}, it is generally harder to compute them. So it is more important to keep good track of the signs. For this reason, we introduce the following notations.

\begin{definition}\label{ve-notation}
\begin{itemize}
	\item Let $I=\{0,1\}$. Define $\wbar{1}=0$ and $\wbar{0}=1$.
	\item For $\ve=(\ve_1,\dots,\ve_k) \in I^k$, define $|\ve|=\sum_{j=1}^{k} \ve_j$, and for $1\leq i \leq k$, define $|\ve|_i=\sum_{j=1}^{i-1} \ve_j$. Also define $\wbar{\ve}= (\wbar{\ve_1},\dots,\wbar{\ve_k})$ and $\ve'= (\ve_k,\ve_{k-1},\dots,\ve_1)$.
	\item In $(a_0,a_1)_R$, denote by $1_0$ the unit element of the copy of $R$ with $\zed_2$-grading $0$, and by $1_1$ the unit element of the copy of $R$ with $\zed_2$-grading $1$. Note that $\{1_0, 1_1\}$ is an $R$-basis for $(a_0,a_1)_R$.
	\item In 
\[
M = \left(%
\begin{array}{cc}
  a_{1,0}, & a_{1,1} \\
  \dots & \dots \\
  a_{k,0}, & a_{k,1}
\end{array}%
\right)_R,
\]
for any $\ve=(\ve_1,\dots,\ve_k) \in I^k$, define $1_{\ve}=1_{\ve_1}\otimes\cdots\otimes1_{\ve_1}$ in the tensor product
\[
(a_{1,0},a_{1,1})_R \otimes_R \cdots \otimes_R (a_{k,0},a_{k,1})_R.
\]
Note that $\{1_\ve~|~\ve\in I^k\}$ is an $R$-basis for $M$, and $1_\ve$ is a homogeneous element with $\zed_2$-degree $|\ve|$ and quantum degree $\sum_{j=1}^k \ve_j(N+1-\deg a_{j,0})$. In the above notations, the differential of $M$ is given by 
\begin{equation}\label{eq-diff-Koszul}
d (1_{\ve}) = \sum_{j=1}^k (-1)^{|\ve|_j}a_{j,\ve_j}\cdot 1_{(\ve_1,\dots,\ve_{j-1},\wbar{\ve_j},\ve_{j+1},\dots\ve_k)}.
\end{equation}
\end{itemize}
\end{definition}

\begin{remark}
In many cases, only the parity of $\ve_j$ matters and $I$ can be viewed as $\zed_2$. But, in some situations, we need more information and $I$ can not be identified with $\zed_2$.
\end{remark}

\subsection{Morphisms of graded matrix factorizations} Given two graded matrix factorizations $M$ with potential $w$ and $M'$ with potential $w'$ over $R$, consider the $R$-module $\Hom_R(M,M')$. It admits a $\zed_2$-grading that takes value 
\[
\left\{%
\begin{array}{l}
    0 \text{ on } \Hom^0_R(M,M')=\Hom_R(M_0,M'_0)\oplus\Hom_R(M_1,M'_1), \\ 
    1 \text{ on } \Hom^1_R(M,M')=\Hom_R(M_1,M'_0)\oplus\Hom_R(M_0,M'_1). 
\end{array}%
\right.
\]
By Lemma \ref{pregrade-hom}, it also admits a quantum pregrading induced by the quantum gradings of homogeneous elements. Moreover, $\Hom_R(M,M')$ has a differential map $d$ given by
\[
d(f)=d_{M'} \circ f -(-1)^\ve f \circ d_M \text{ for } f \in \Hom^\ve_R(M,M').
\]
Note that $d$ is homogeneous of degree $N+1$ and satisfies that 
\[
d^2=(w'-w) \cdot \id_{\Hom_R(M,M')}.
\]
Following \cite{KR1}, we write $M_\bullet = \Hom_R(M,R)$.

In general, $\Hom_R(M,M')$ is not a graded matrix factorization since $\Hom_R(M,M')$ is not necessarily a free $R$-module and its quantum pregrading is not necessarily a grading. But we have the following easy lemma. 

\begin{lemma}\label{hom-finite-gen}
Let $M$ and $M'$ be as above. Assume that $M$ is finitely generated over $R$. Then $\Hom_R(M,M')$ is a graded matrix factorization over $R$ of potential $w'-w$. In particular, $M_\bullet = \Hom_R(M,R)$ is a graded matrix factorization over $R$ of potential $-w$. 
\end{lemma}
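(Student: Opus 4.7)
The plan is to check the four defining conditions for $\Hom_R(M,M')$ to be a graded matrix factorization over $R$ with potential $w' - w$. Three of these (the $\zed_2$-decomposition, the homogeneity of $d$ of quantum degree $N+1$, and the identity $d^2 = (w'-w)\id$) have already been recorded in the paragraph preceding the lemma. So the real content is to upgrade the quantum pregrading on $\Hom_R(M,M')$ to a genuine $\zed$-grading and to verify that the two pieces $\Hom^0_R(M,M')$ and $\Hom^1_R(M,M')$ are free graded $R$-modules.

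The quantum grading is handled by Lemma \ref{pregrade-hom}: since $M$ is finitely generated over $R$, the natural quantum pregrading on $\Hom_R(M,M')$ is in fact a $\zed$-grading. The $\zed_2$-decomposition $\Hom_R(M,M') = \Hom^0_R(M,M') \oplus \Hom^1_R(M,M')$ arises from the orthogonal direct sum decompositions $M = M_0 \oplus M_1$ and $M' = M'_0 \oplus M'_1$, so each summand $\Hom^\ve_R(M,M')$ inherits a $\zed$-grading from the full Hom. To check freeness, I would pick finite homogeneous $R$-bases $\{e_i\}$ of $M_0$ and $\{f_j\}$ of $M_1$ (finiteness from the finitely generated hypothesis, homogeneity from the free graded structure of $M$). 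Evaluation on these bases gives, for each pair $(\delta,\delta') \in \{0,1\}^2$, a grading-preserving isomorphism
\[
\Hom_R(M_\delta, M'_{\delta'}) \;\cong\; \bigoplus_{k} M'_{\delta'}\{q^{-\deg x_k}\},
\]
where $\{x_k\}$ is the chosen homogeneous basis of $M_\delta$. Since $M'_{\delta'}$ is free graded by hypothesis, each such Hom is free graded, and so is each $\Hom^\ve_R(M,M')$. The remaining two conditions on $d$ amount to a one-line Leibniz-rule calculation: $d(f) = d_{M'} \circ f - (-1)^\ve f \circ d_M$ is manifestly $R$-linear of quantum degree $N+1$ and $\zed_2$-degree $1$, and the sign $(-1)^\ve$ forces the cross terms in $d^2(f)$ to cancel, leaving $d_{M'}^2 \circ f - f \circ d_M^2 = (w' - w) f$.

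The statement about $M_\bullet = \Hom_R(M, R)$ is then the special case $M' = R$, regarded as the graded matrix factorization $R \xrightarrow{0} 0 \xrightarrow{0} R$ with potential $0$. The only technical wrinkle, and the place where the finitely generated hypothesis does genuine structural work beyond Lemma \ref{pregrade-hom}, is in ensuring that $M_0$ and $M_1$ admit finite homogeneous $R$-bases so that the Hom modules emerge free graded rather than merely free; in the polynomial-ring settings used throughout the paper this is automatic by the graded-free criterion alluded to after Definition \ref{graded-free-def}.
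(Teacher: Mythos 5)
Your proof takes essentially the same route as the paper's (Lemma \ref{pregrade-hom} to upgrade the quantum pregrading to a grading; finite generation of $M$ to get a free module), but you introduce an unnecessary and, in the general setting of this section, unavailable ingredient: homogeneous $R$-bases of $M_0$ and $M_1$. The definition of graded matrix factorization asks only that each $M_\ve$ be a graded $R$-module that is free as an $R$-module (``free graded''), not graded-free in the sense of Definition \ref{graded-free-def}; indeed the whole point of Lemma \ref{homogeneous-basis-exists} later on is that these two notions coincide only under extra hypotheses (polynomial base ring, grading bounded below). For the present lemma, over an arbitrary graded $\C$-algebra $R$, you should not assume $M_\delta$ admits a homogeneous basis. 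Fortunately you do not need it: choosing \emph{any} finite $R$-basis of $M_\delta$ gives an ungraded isomorphism $\Hom_R(M_\delta, M'_{\delta'}) \cong (M'_{\delta'})^n$, which is free because $M'_{\delta'}$ is; the quantum grading on this Hom is then supplied independently by Lemma \ref{pregrade-hom}, with no compatibility to verify between the basis and the grading. This is exactly how the paper's two-line proof works, and it silently repairs the ``technical wrinkle'' you correctly flag at the end of your proposal. Said differently: your worry that the Hom modules might ``emerge free graded rather than merely free'' rests on reading ``free graded'' as ``graded-free,'' and once that conflation is dropped the homogeneous-basis construction can be deleted and the proof becomes the paper's.
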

\begin{proof}
Since $M$ is finitely generated, we know that $\Hom_R(M,M')$ is a free $R$-module and, by Lemma \ref{pregrade-hom}, the quantum pregrading is a grading.
\end{proof}

\begin{definition}\label{cong-and-sim}
Let $M$ and $M'$ be two graded matrix factorizations over $R$ with potential $w$. Then $\Hom_R(M,M')$, with the above differential map $d$, is a chain complex with a $\zed_2$ homological grading. 
\begin{enumerate}
\item We say that an $R$-module map $f:M\rightarrow M'$ is a morphism of matrix factorizations if $df=0$. Or, equivalently, for $f\in\Hom^\ve_R(M,M')$, $f$ is a morphism of matrix factorizations if $d_{M'}\circ f = (-1)^\ve f\circ d_M$. 
\item $f$ is called an isomorphism of matrix factorizations if it is a morphism of matrix factorizations and an isomorphism of the underlying $R$-modules. 
\item We say that $M,M'$ are isomorphic as graded matrix factorizations, or $M\cong M'$, if there is a homogeneous isomorphism $f:M\rightarrow M'$ that preserves the $\zed_2\oplus\zed$-grading.
\item Two morphisms $f,g:M\rightarrow M'$ of $\zed_2$-degree $\ve$ are homotopic if $f-g$ is a boundary element in $\Hom_R(M,M')$, that is, if $\exists ~h\in \Hom^{\ve+1}_R(M,M')$ such that $f-g = d(h) = d_{M'} \circ h -(-1)^{\ve+1} h \circ d_M$. 
\item We say that $M,M'$ are homotopic as graded matrix factorizations, or $M\simeq M'$, if there are homogeneous morphisms $f:M\rightarrow M'$ and $g:M'\rightarrow M$ preserving the $\zed_2\oplus\zed$-grading such that $g\circ f \simeq \id_M$ and $f\circ g \simeq \id_{M'}$. 
\end{enumerate}
\end{definition}

\begin{lemma}\label{rewrite-hom-finite-gen}
Let $M$ and $M'$ be two graded matrix factorizations over $R$ with potentials $w$ and $w'$. Assume that $M$ is finitely generated over $R$. Then the natural $R$-module isomorphism 
\[
M' \otimes M_\bullet = M' \otimes \Hom_R(M,R) \xrightarrow{\cong} \Hom_R(M,M')
\] 
is a homogeneous isomorphism preserving the $\zed_2\oplus\zed$-grading. 

In particular, $\Hom_R(M,M') \cong M' \otimes M_\bullet$ as graded matrix factorizations.
\end{lemma}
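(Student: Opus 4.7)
My plan is to construct the natural map explicitly, verify it is an isomorphism of graded $R$-modules by reducing to the free case, and then check that it intertwines the two differentials up to the usual Koszul signs.

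First, I would write down the natural evaluation map
\[
\Phi:M'\otimes_R M_\bullet \longrightarrow \Hom_R(M,M'),\qquad \Phi(m'\otimes \varphi)(m)=(-1)^{\deg_{\zed_2}\varphi\cdot \deg_{\zed_2}m'}\varphi(m)\cdot m',
\]
with the sign chosen so that $\Phi$ commutes with the differentials. Since $M$ is finitely generated, the hypothesis that its underlying $R$-module is free means we may pick a finite homogeneous basis $\{e_1,\ldots,e_n\}$ of $M$ (in the sense of Definition \ref{graded-free-def}, applied after invoking the observation from Section \ref{sec-mf-polynomial} referenced in the paper). For such a basis, $M_\bullet$ is free on the dual basis $\{e_1^\ast,\ldots,e_n^\ast\}$, and $\Phi$ sends $m'\otimes e_i^\ast$ to the map $e_j\mapsto \delta_{ij}\,m'$ (up to the sign above). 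This is the standard isomorphism and its inverse sends $f\in\Hom_R(M,M')$ to $\sum_i f(e_i)\otimes e_i^\ast$.

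Next I would verify the grading claims. For the $\zed_2$-grading: the $\zed_2$-degree of $m'\otimes \varphi$ in $M'\otimes M_\bullet$ is $\deg_{\zed_2}m'+\deg_{\zed_2}\varphi$, while $\Phi(m'\otimes \varphi)$ sends $M_\delta$ into $M'_{\delta+\deg_{\zed_2}\varphi+\deg_{\zed_2}m'}$, matching the $\zed_2$-grading on $\Hom_R(M,M')$ defined in the paragraph preceding Lemma \ref{hom-finite-gen}. For the quantum grading: if $\varphi$ is homogeneous of quantum degree $k$ (so $\varphi(M^{(i)})\subset R^{(i+k)}$) and $m'\in (M')^{(l)}$, then $\Phi(m'\otimes\varphi)$ is a homogeneous $R$-linear map of quantum degree $k+l$, which matches the quantum degree of $m'\otimes \varphi$. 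Hence $\Phi$ is homogeneous of bidegree $(0,0)$. Combined with the module-level bijectivity, this proves $\Phi$ is an isomorphism of graded $R$-modules.

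The main obstacle, and the step that needs real care, is verifying that $\Phi$ is a chain map, i.e.\ compatible with the two differentials. The differential on $M'\otimes M_\bullet$ is the signed Leibniz differential from Subsection \ref{graded-matrix-factorizations}, while the differential $d$ on $\Hom_R(M,M')$ is given by $d(f)=d_{M'}\circ f-(-1)^{\deg_{\zed_2}f}f\circ d_M$. The sign inserted in the definition of $\Phi$ above is precisely what is needed to make the two expressions agree: applied to a $\zed_2$-homogeneous generator $m'\otimes\varphi$ and evaluated on $m\in M$, both sides expand to $(d_{M'}m')\varphi(m)\cdot(\text{sign})+m'\cdot(d_{M_\bullet}\varphi)(m)\cdot(\text{sign})$, and a direct check of the Koszul signs (tracking $\deg_{\zed_2}m'$, $\deg_{\zed_2}\varphi$, and $\deg_{\zed_2}m$) shows the two sides match. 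Since $\Phi$ is a homogeneous isomorphism of $R$-modules commuting with the differentials, it is an isomorphism of graded matrix factorizations of potential $w'-w$, which is the content of the lemma and yields the ``in particular'' clause directly.
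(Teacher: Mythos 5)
Your overall approach mirrors the paper's: write down the evaluation map $M'\otimes M_\bullet\to\Hom_R(M,M')$, use finite generation and freeness of $M$ to get bijectivity, then verify the grading and differential compatibilities. However, the Koszul sign you inserted in the definition of $\Phi$ is \emph{wrong} under the paper's conventions, and the step you defer, namely ``a direct check of the Koszul signs $\ldots$ shows the two sides match,'' is exactly where the error would surface if carried out. The isomorphism the paper uses is the \emph{unsigned} evaluation map $F(m'\otimes\varphi)(m)=\varphi(m)\cdot m'$, and that is the one that commutes with the differentials.

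To see the problem, set $a=\deg_{\zed_2}m'$, $b=\deg_{\zed_2}\varphi$. With the paper's conventions $d(m'\otimes\varphi)=(d_{M'}m')\otimes\varphi+(-1)^{a}m'\otimes(d_{M_\bullet}\varphi)$, $d_{M_\bullet}\varphi=-(-1)^{b}\,\varphi\circ d_M$, and $d(f)=d_{M'}\circ f-(-1)^{\deg_{\zed_2}f}f\circ d_M$, a direct computation shows that both $d(F(m'\otimes\varphi))(m)$ and $F(d(m'\otimes\varphi))(m)$ equal $\varphi(m)\,d_{M'}m'-(-1)^{a+b}\varphi(d_Mm)\,m'$, so the unsigned $F$ is already a chain map. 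Your $\Phi$ differs from $F$ by the factor $(-1)^{ab}$; since $d_{M'}m'$ and $d_{M_\bullet}\varphi$ have $\zed_2$-degrees $a+1$ and $b+1$, the expression $\Phi(d(m'\otimes\varphi))(m)$ acquires an extra $(-1)^{b}$ on the first summand and an extra $(-1)^{a}$ on the second relative to $d(\Phi(m'\otimes\varphi))(m)$, so the two sides disagree whenever $a$ or $b$ is odd. The rank-one example $M=(a,b)_R$, $M'=(c,d)_R$ already exhibits the discrepancy: writing $E_{ij}$ for the elementary map $1_j\mapsto 1_{(i)}$, $1_{1-j}\mapsto 0$, one finds $d(E_{01})=cE_{11}+aE_{00}$, whereas $\Phi\bigl(d(1_{(0)}\otimes 1_1^\ast)\bigr)=-cE_{11}+aE_{00}$. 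Delete the sign from your $\Phi$ and the remainder of your argument (the free-module bijectivity and the quantum- and $\zed_2$-grading checks) goes through exactly as you describe, matching the paper's proof.
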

\begin{proof}
By Lemma \ref{hom-finite-gen}, $M' \otimes M_\bullet$ and $\Hom_R(M,M')$ are both graded matrix factorizations over $R$ with potential $w'-w$. The natural isomorphism $F$ between them is given by $F(m'\otimes f)(m)=f(m)\cdot m'$ for all $m'\in M'$, $f\in M_\bullet$ and $m\in M$. It is easy to check that $F$ preserves the $\zed_2\oplus\zed$-grading and commutes with the differential maps.
\end{proof}

The following lemma specifies the sign convention we use when tensoring two morphisms of matrix factorizations.

\begin{lemma}\label{morphism-sign}
Let $R$ be a graded commutative unital $\C$-algebra, and $M,~M',~\mathcal{M}, ~\mathcal{M}'$ graded matrix factorizations over $R$ such that $M,\mathcal{M}$ have the same potential and $M',\mathcal{M}'$ have the same potential. Assume that $f:M \rightarrow \mathcal{M}$ and $f':M'\rightarrow \mathcal{M}'$ are morphisms of matrix factorizations of $\zed_2$-degrees $j$ and $j'$. Define $F: M\otimes M' \rightarrow \mathcal{M}\otimes \mathcal{M}'$ by $F(m\otimes m') = (-1)^{i\cdot j'} f(m)\otimes f'(m')$ for $m \in M_i$ and $m'\in M'$. Then $F$ is a morphism of matrix factorizations of $\zed_2$-degree $j+j'$. In particular, if $f$ or $f'$ is homotopic to $0$, then so is $F$.

From now on, we will write $F = f \otimes f'$.
\end{lemma}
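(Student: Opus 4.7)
The plan is to verify two claims directly from the definitions in Subsection \ref{graded-matrix-factorizations}. First, the degree claim is immediate: for $m \in M_i$ and $m' \in M'_{i'}$, the element $m \otimes m'$ lies in $(M \otimes M')_{i+i'}$, while $F(m \otimes m') = (-1)^{ij'} f(m) \otimes f'(m')$ has $f(m) \in \mathcal{M}_{i+j}$ and $f'(m') \in \mathcal{M}'_{i'+j'}$, so $F$ shifts the $\zed_2$-grading by $j+j'$. Second, I must verify that $d \circ F = (-1)^{j+j'} F \circ d$, which is precisely the condition for $F$ to be a morphism of matrix factorizations of $\zed_2$-degree $j+j'$.

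The verification of the differential identity is a direct computation. Applying $d$ to $F(m \otimes m')$ via the signed Leibniz rule, and then using the hypotheses $d_\mathcal{M} \circ f = (-1)^j f \circ d_M$ and $d_{\mathcal{M}'} \circ f' = (-1)^{j'} f' \circ d_{M'}$, one obtains
\[
(-1)^{ij'+j}\, f(d_M m) \otimes f'(m') + (-1)^{ij'+i+j+j'}\, f(m) \otimes f'(d_{M'} m').
\]
Applying $(-1)^{j+j'} F$ to $d(m \otimes m') = d_M m \otimes m' + (-1)^i m \otimes d_{M'} m'$, and recording that $d_M m \in M_{i+1}$ so that the definition of $F$ contributes $(-1)^{(i+1)j'}$ on the first summand, one obtains
\[
(-1)^{j+j'+(i+1)j'}\, f(d_M m) \otimes f'(m') + (-1)^{j+j'+i+ij'}\, f(m) \otimes f'(d_{M'} m').
\]
Since $(i+1)j' + j' \equiv ij' \pmod{2}$, the two expressions coincide term by term. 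In other words, the prefactor $(-1)^{ij'}$ in the definition of $F$ is exactly the Koszul sign needed; this step is pure parity bookkeeping and presents no real obstacle.

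For the homotopy claim, if $f \simeq 0$ then there exists $h \in \Hom^{j+1}_R(M, \mathcal{M})$ with $f = d_\mathcal{M} \circ h - (-1)^{j+1} h \circ d_M$. I define $H : M \otimes M' \to \mathcal{M} \otimes \mathcal{M}'$ by the same sign convention, namely $H(m \otimes m') = (-1)^{ij'} h(m) \otimes f'(m')$ for $m \in M_i$. The analogous expansion, now with $h$ of $\zed_2$-degree $j+1$ in place of $f$, produces the identity
\[
d \circ H - (-1)^{(j+1)+j'} H \circ d = F,
\]
because the two ``$f'(d_{M'} m')$'' terms cancel using $d_{\mathcal{M}'} f' = (-1)^{j'} f' d_{M'}$, leaving precisely $(-1)^{ij'}[d_\mathcal{M} h(m) - (-1)^{j+1} h(d_M m)] \otimes f'(m') = F(m\otimes m')$. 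The case $f' \simeq 0$ is symmetric, using instead a homotopy of the form $f \otimes h'$ with the same sign rule. The only genuine subtlety in the whole lemma is fixing the Koszul convention in the definition of $F$; once that is done, every subsequent identity reduces to parity arithmetic.
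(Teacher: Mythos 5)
Your proof is correct and follows essentially the same route as the paper's: both verify the morphism identity by a direct Koszul-sign computation and prove the homotopy claim by tensoring the contracting homotopy $h$ (resp. $h'$) with $f'$ (resp. $f$), using the analogous sign rule. One small point worth noting if you write it out in full: in the $f'\simeq 0$ case the ``same sign rule'' must use the degree $j'+1$ of $h'$ in the Koszul prefactor (i.e. $(-1)^{i(j'+1)}$), and the resulting identity is $d(H')=(-1)^j F$ rather than $F$ on the nose, which still gives $F\simeq 0$ after an overall sign adjustment.
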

\begin{proof}
\begin{eqnarray*}
F\circ d(m \otimes m') & = & F( (dm)\otimes m' + (-1)^i m\otimes (dm')) \\
& = & (-1)^{(i+1)j'} f(dm)\otimes f'(m') + (-1)^{i+ij'} f(m) \otimes f'(dm'),
\end{eqnarray*}
\begin{eqnarray*}
d\circ F(m \otimes m') & = & (-1)^{ij'}d( f(m)\otimes f'(m')) \\
& = & (-1)^{ij'} ( d (f(m)) \otimes f'(m') +(-1)^{i+j} f(m) \otimes d (f'(m'))) \\
& = & (-1)^{ij'+j} f(dm)\otimes f'(m') + (-1)^{ij'+i+j+j'}f(m) \otimes f'(dm').
\end{eqnarray*}
So $F\circ d = (-1)^{j+j'} d\circ F$, that is, $F$ is a morphism of matrix factorizations of $\zed_2$-degree $j+j'$.

If $f$ is homotopic to $0$. Then there exits $h\in \Hom^{j+1}_R(M,\mathcal{M})$ such that 
\[
f= d(h) = d \circ h -(-1)^{j+1} h \circ d.
\]
Define $H \in \Hom^{j+j'+1}_R(M\otimes M',\mathcal{M}\otimes \mathcal{M}')$ by 
\[
H(m\otimes m') := (-1)^{ij'} h(m) \otimes f'(m'), \text{ for } m \in M_i \text{ and } m'\in M'.
\]
Then $d(H)= d\circ H -(-1)^{j+j'+1}H\circ d = F$. So $F$ is homotopic to $0$.

If $f'$ is homotopic to $0$. Then there exits $h'\in \Hom^{j'+1}_R(M',\mathcal{M}')$ such that 
\[
f'= d(h') = d \circ h' -(-1)^{j'+1} h' \circ d.
\] 
Define $H' \in \Hom^{j+j'+1}_R(M\otimes M',\mathcal{M}\otimes \mathcal{M}')$ by 
\[
H'(m\otimes m') := (-1)^{i(j'+1)} f(m) \otimes h'(m'), \text{ for } m \in M_i \text{ and } m'\in M'.
\]
Then $d(H')= d\circ H' -(-1)^{j+j'+1}H'\circ d = (-1)^{j}F$. So $F$ is homotopic to $0$.
\end{proof}

The following lemma is \cite[Proposition 2]{KR1}.

\begin{lemma}\cite[Proposition 2]{KR1}\label{entries-null-homotopic}
Let $R$ be a graded commutative unital $\C$-algebra, and $a_{1,0}, a_{1,1}, \dots, a_{k,0}, a_{k,1}$ homogeneous elements of $R$ with $\deg a_{j,0}+\deg a_{j,1}=2N+2$ $\forall~ j$. Let
\[
M = \left(%
\begin{array}{ll}
  a_{1,0}, & a_{1,1} \\
  a_{2,0}, & a_{2,1} \\
  \dots & \dots \\
  a_{k,0}, & a_{k,1}
\end{array}%
\right)_R
\]
If $x$ is an element of the ideal $(a_{1,0}, a_{1,1}, \dots, a_{k,0}, a_{k,1})$ of $R$, then multiplication by $x$ is a null-homotopic endomorphism of $M$.
\end{lemma}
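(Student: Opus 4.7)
The plan is to reduce the lemma, by $R$-linearity of the homotopy relation, to the statement that for each single generator $a_{j,\ve}$ of the ideal, multiplication by $a_{j,\ve}$ is null-homotopic on $M$. Write $x=\sum_{j=1}^k (r_{j,0}a_{j,0}+r_{j,1}a_{j,1})$ with $r_{j,\ve}\in R$. Since $R$ is commutative and the differential of $M$ is $R$-linear, if $m_{a_{j,\ve}}:M\rightarrow M$ (multiplication by $a_{j,\ve}$) satisfies $m_{a_{j,\ve}}=d\circ h_{j,\ve}+h_{j,\ve}\circ d$ for some $h_{j,\ve}\in\Hom^1_R(M,M)$, then $r_{j,\ve}\cdot h_{j,\ve}$ is a homotopy witnessing that multiplication by $r_{j,\ve}a_{j,\ve}$ is null-homotopic, and summing these homotopies yields a homotopy from multiplication by $x$ to $0$.

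Next I would treat the simplest case, namely a single Koszul factor $(a_0,a_1)_R = R\cdot 1_0\xrightarrow{a_0}R\cdot 1_1\xrightarrow{a_1}R\cdot 1_0$. Define $h^{(1)}\in\Hom^1_R((a_0,a_1),(a_0,a_1))$ by $h^{(1)}(1_0)=1_1$, $h^{(1)}(1_1)=0$, and $h^{(0)}$ by $h^{(0)}(1_0)=0$, $h^{(0)}(1_1)=1_0$. A direct check on the basis $\{1_0,1_1\}$ shows $d\circ h^{(1)}+h^{(1)}\circ d=m_{a_1}$ and $d\circ h^{(0)}+h^{(0)}\circ d=m_{a_0}$, so multiplication by either $a_0$ or $a_1$ is null-homotopic on $(a_0,a_1)_R$.

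To pass from a single factor to the full Koszul matrix factorization $M=\bigotimes_{i=1}^k (a_{i,0},a_{i,1})_R$, I would use Lemma \ref{morphism-sign}: for fixed $j$ and $\ve\in\{0,1\}$, the multiplication map $m_{a_{j,\ve}}:M\to M$ equals the tensor product $\id_{(a_{1,0},a_{1,1})}\otimes\cdots\otimes m_{a_{j,\ve}}\otimes\cdots\otimes\id_{(a_{k,0},a_{k,1})}$ up to sign conventions; since one tensor factor is null-homotopic by the previous paragraph, Lemma \ref{morphism-sign} gives that the tensor is null-homotopic on $M$. This produces the required homotopies $h_{j,\ve}$, and combining with the first paragraph completes the proof.

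The main obstacle is not conceptual but bookkeeping: I need to verify that the tensor-product homotopy produced by Lemma \ref{morphism-sign} indeed represents multiplication by $a_{j,\ve}$ (and not $\pm a_{j,\ve}$) on $M$, because the tensor-product differential carries the Leibniz sign $(-1)^\ve$. The cleanest way around this is to remark that null-homotopy is preserved under scaling by $\pm1$, so any sign discrepancy is harmless; after that the proof reduces to the elementary basis computations above.
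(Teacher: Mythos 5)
Your proof is correct and follows essentially the same route as the paper's: reduce to a single generator, exhibit the null-homotopy on a single Koszul factor $(a_0,a_1)_R$ by hand, and use Lemma \ref{morphism-sign} to tensor with identities (the paper compresses this into one sentence, while you spell out the computations). One small remark: the sign worry at the end is moot, since multiplication by a ring element has $\zed_2$-degree $0$, so the Koszul sign $(-1)^{ij'}$ in Lemma \ref{morphism-sign} is always $+1$ and the tensor of identities with $m_{a_{j,\ve}}$ is literally $m_{a_{j,\ve}}$ on $M$, no scaling needed.
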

\begin{proof}
(Following \cite{KR1}.) Multiplications by $a_{i,0}$ and $a_{i,1}$ are null-homotopic endomorphisms of $(a_{1,0},a_{1,1})$, and therefore, by Lemma \ref{morphism-sign}, are null-homotopic endomorphisms of $M$.
\end{proof}

Next we give precise definitions of several isomorphisms used in \cite{KR1}, which allow us to keep track of signs in later applications.

\begin{lemma}\label{bullet}
Let $R$ be a graded commutative unital $\C$-algebra, and $a_{1,0}, a_{1,1}, \dots, a_{k,0}, a_{k,1}$ homogeneous elements of $R$ with $\deg a_{j,0}+\deg a_{j,1}=2N+2$ $\forall~ j$. Let
\[
M = \left(%
\begin{array}{ll}
  a_{1,0}, & a_{1,1} \\
  a_{2,0}, & a_{2,1} \\
  \dots & \dots \\
  a_{k,0}, & a_{k,1}
\end{array}%
\right)_R
~\text{ and }~
M' = \left(%
\begin{array}{ll}
  -a_{k,1}, & a_{k,0} \\
  -a_{k-1,1}, & a_{k-1,0} \\
  \dots & \dots \\
  -a_{1,1}, & a_{1,0}
\end{array}%
\right)_R.
\]
Denote by $\{1^\ast_\ve~|~\ve\in I^k\}$ the basis of $M_\bullet$ dual to $\{1_\ve~|~\ve\in I^k\}$, that is, $1^\ast_\ve(1_\ve)=1$ and $1^\ast_\ve(1_\sigma)=0$ if $\sigma \neq \ve$. Recall that, by Definition \ref{ve-notation}, $\ve'=(\ve_k,\ve_{k-1},\dots,\ve_1)$ for $\ve=(\ve_1,\ve_2,\dots,\ve_k) \in I^k$. Then the $R$-homomorphism $F:M_\bullet \rightarrow M'$ given by $F(1^\ast_{\ve}) = 1_{\ve'}$ is an isomorphism of matrix factorizations that preserves the $\zed_2\oplus\zed$-grading. 

In particular, $M_\bullet \cong M'$ as graded matrix factorizations.
\end{lemma}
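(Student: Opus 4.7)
The plan is to verify four things about the map $F$ defined by $F(1^\ast_\ve) = 1_{\ve'}$: (a) it is an $R$-module isomorphism, (b) it preserves the $\zed_2$-grading, (c) it preserves the quantum grading, and (d) it commutes with the differentials. Properties (a)--(c) are essentially bookkeeping, while (d) is where the sign conventions in the definition of $M'$ are forced.

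For (a), observe that $\{1^\ast_\ve \mid \ve\in I^k\}$ is the $R$-basis of $M_\bullet$ dual to $\{1_\ve\}$ and $\{1_{\ve'} \mid \ve\in I^k\} = \{1_\sigma \mid \sigma\in I^k\}$ is an $R$-basis of $M'$, so $F$ sends basis to basis and is automatically an $R$-isomorphism. For (b), $1^\ast_\ve$ has $\zed_2$-degree $|\ve|$ and $1_{\ve'}$ has $\zed_2$-degree $|\ve'| = |\ve|$. For (c), recall from Definition \ref{ve-notation} that $1_\ve \in M$ has quantum degree $\sum_j \ve_j(N+1-\deg a_{j,0})$, so $1^\ast_\ve$ has quantum degree $-\sum_j \ve_j(N+1-\deg a_{j,0})$; on the other hand, the $i$-th tensor factor of $M'$ is $(-a_{k+1-i,1},a_{k+1-i,0})$, so $1_{\ve'}$ has quantum degree $\sum_i \ve'_i(N+1-\deg a_{k+1-i,1}) = \sum_j \ve_j(N+1-\deg a_{j,1})$. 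The sum of the two degrees is $\sum_j \ve_j[2(N+1)-\deg a_{j,0}-\deg a_{j,1}] = 0$ by the hypothesis $\deg a_{j,0}+\deg a_{j,1}=2N+2$.

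The real content is (d). Using $d_{M_\bullet}(f) = -(-1)^{|\ve|} f\circ d_M$ for $f\in\Hom^{|\ve|}_R(M,R)$ together with the explicit formula for $d_M$ from Definition \ref{ve-notation}, I would compute
\[
d_{M_\bullet}(1^\ast_\ve) \;=\; -(-1)^{|\ve|}\sum_{j=1}^{k}(-1)^{|\ve|_j}\, a_{j,\bar\ve_j}\,1^\ast_{\ve^{(j)}},
\]
where $\ve^{(j)}$ denotes $\ve$ with the $j$-th entry flipped. On the other side, the entries of the $i$-th factor of $M'$ are $b_{i,0}=-a_{k+1-i,1}$ and $b_{i,1}=a_{k+1-i,0}$, which can be written uniformly as $b_{i,\tau_i} = (-1)^{\bar{\tau_i}}\,a_{k+1-i,\bar{\tau_i}}$, so with $\tau := \ve'$,
\[
d_{M'}(1_{\ve'}) \;=\; \sum_{i=1}^{k}(-1)^{|\tau|_i+\bar{\tau_i}}\,a_{k+1-i,\bar{\tau_i}}\,1_{\tau^{(i)}}.
\]
The comparison is then made by the substitution $i=k+1-j$, for which $(\ve^{(j)})' = \tau^{(i)}$, $\bar\ve_j = \bar{\tau_i}$, and $a_{j,\bar\ve_j}=a_{k+1-i,\bar{\tau_i}}$. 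The one combinatorial identity that does the work is
\[
|\ve|_j + |\tau|_i \;\equiv\; |\ve|-\ve_j \pmod 2,
\]
proved by noting that $(|\ve|_j + \ve_j) + (|\tau|_i + \tau_i) = |\ve|+\tau_i$ and $\ve_j=\tau_i$. Combined with $(-1)^{\bar{\tau_i}}=-(-1)^{\tau_i}$, this transforms $F(d_{M_\bullet}(1^\ast_\ve))$ into exactly $d_{M'}(1_{\ve'})$, establishing $F\circ d_{M_\bullet} = d_{M'}\circ F$.

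The main obstacle is (d), and specifically the sign bookkeeping: the index-reversal $\ve\mapsto \ve'$ shuffles the partial-sum signs $(-1)^{|\ve|_j}$ nontrivially, and the extra minus signs placed on the first entries $-a_{k+1-i,1}$ in the definition of $M'$ are exactly what is needed to absorb the overall sign $-(-1)^{|\ve|}$ coming from dualization. The identity $\deg a_{j,0}+\deg a_{j,1}=2N+2$ reappears in (c) as the reason the duality shift is invisible at the level of quantum degree, so no extra $\{q^\ast\}$ shift is required in the statement.
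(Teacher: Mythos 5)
Your proposal is correct and takes essentially the same approach as the paper's proof: check the isomorphism, check both gradings via the bookkeeping you describe, and then compute $d_{M_\bullet}(1^\ast_\ve)$ and $d_{M'}(1_{\ve'})$ directly and match them term by term under $j\mapsto k+1-j$. You in fact show slightly more of the sign bookkeeping than the paper, which just states both differentials in re-indexed form and observes that they agree. One small slip: in your grading check the quantity $\sum_j \ve_j[2(N+1)-\deg a_{j,0}-\deg a_{j,1}]$ is the \emph{difference} $\deg(1_{\ve'})-\deg(1^\ast_\ve)$, not the sum, but since it vanishes the conclusion that $F$ preserves quantum grading is still right.
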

\begin{proof}
$F$ is clearly an isomorphism of $R$-modules. We only need to prove that $F$ is a morphism of matrix factorizations preserving the $\zed_2\oplus\zed$-grading. To simplify expressions, we use the notations $|\ve|$, $|\ve|_j$ and $\wbar{\ve_j}$ introduced in Definition \ref{ve-notation}. 

The element $1^\ast_\ve$ of $M_\bullet$ has $\zed_2$-grading $|\ve|$ and quantum grading $-\sum_{j=1}^k \ve_j(N+1-\deg a_{j,0})=\sum_{j=1}^k \ve_j(N+1-\deg a_{j,1}).$ And the element $1_{\ve'}$ of $M'$ has $\zed_2$-grading $|\ve'|=|\ve|$ and quantum grading $\sum_{j=1}^k \ve_j(N+1-\deg a_{j,1})$. So $F$ preserves the $\zed_2\oplus\zed$-grading. It remains to show that $F$ is a morphism of matrix factorizations. For $\ve=(\ve_1,\dots,\ve_k)\in I^k$, a straightforward calculation shows that 
\[
d(1^\ast_\ve)= \sum_{j=1}^{k} (-1)^{|\ve|-|\ve|_j+1}a_{j,\wbar{\ve_j}} \cdot 1^\ast_{(\ve_1,\dots,\ve_{j-1},\wbar{\ve_j},\ve_{j+1},\dots,\ve_k)},
\]
\[
d(1_{\ve'})= \sum_{j=1}^{k} (-1)^{|\ve|-|\ve|_j+1}a_{j,\wbar{\ve_j}} \cdot 1_{(\ve_k,\dots,\ve_{j+1},\wbar{\ve_j},\ve_{j-1},\dots,\ve_1)}.
\]
So $d_{M'}\circ F=F\circ d_{M_\bullet}$.
\end{proof}

\begin{lemma}\label{row-reverse-signs}
Let $R$ be a graded commutative unital $\C$-algebra, and $a_{1,0}, a_{1,1}, \dots, a_{k,0}, a_{k,1}$ homogeneous elements of $R$ with $\deg a_{j,0}+\deg a_{j,1}=2N+2$ $\forall~ j$. Let
\[
M = \left(%
\begin{array}{ll}
  a_{1,0}, & a_{1,1} \\
  a_{2,0}, & a_{2,1} \\
  \dots & \dots \\
  a_{k,0}, & a_{k,1}
\end{array}%
\right)_R
~\text{ and }~
M' = \left(%
\begin{array}{ll}
  a_{k,0}, & a_{k,1} \\
  a_{k-1,0}, & a_{k-1,1} \\
  \dots & \dots \\
  a_{1,0}, & a_{1,1}
\end{array}%
\right)_R.
\]
Define an $R$-homomorphism $F:M \rightarrow M'$ by $F(1_\ve)=(-1)^{\frac{|\ve|(|\ve|-1)}{2}} 1_{\ve'}$  $\forall ~\ve \in I^k$. (See Definition \ref{ve-notation} for the definitions of $|\ve|$ and $\ve'$.) Then $F$ is an isomorphism of matrix factorizations that preserves the $\zed_2\oplus\zed$-grading.

In particular, $M \cong M'$ as graded matrix factorizations.
\end{lemma}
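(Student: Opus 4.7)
The plan is to verify the three properties of $F$ in turn: that it is an $R$-module isomorphism, that it preserves both the $\zed_2$-grading and the quantum grading, and that it commutes with the differentials. The first two are essentially bookkeeping, while the third requires a careful sign calculation.

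For the $R$-module iso and the grading statements, I would simply observe that $F$ sends the basis $\{1_\ve\}$ to the basis $\{1_{\ve'}\}$ up to a unit sign, and that since $|\ve'|=|\ve|$ the $\zed_2$-grading is preserved. For the quantum grading, a re-indexing $i = k+1-l$ shows
\[
\sum_{l=1}^{k} \ve'_l (N+1-\deg a_{k+1-l,0}) \;=\; \sum_{i=1}^{k} \ve_i (N+1-\deg a_{i,0}),
\]
which matches $\deg(1_\ve)$ in $M$, using the formula for quantum degrees from Definition \ref{ve-notation}.

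The main step is to check $F\circ d_M = d_{M'}\circ F$ on each basis element $1_\ve$. Using the formula in Definition \ref{ve-notation}, expand both sides and match the terms that produce the basis element $1_{(\sigma^{(j)})'}$, where $\sigma^{(j)}$ denotes $\ve$ with its $j$-th coordinate flipped. On the LHS the relevant term contributes $(-1)^{|\ve|_j} a_{j,\ve_j}\,(-1)^{|\sigma^{(j)}|(|\sigma^{(j)}|-1)/2}$, and on the RHS, identifying $l = k+1-j$, it contributes $(-1)^{|\ve|(|\ve|-1)/2} (-1)^{|\ve'|_{k+1-j}} a_{j,\ve_j}$. Thus the whole lemma reduces to a single sign identity:
\[
|\ve|_j + \tfrac{|\sigma^{(j)}|(|\sigma^{(j)}|-1)}{2} \;\equiv\; \tfrac{|\ve|(|\ve|-1)}{2} + |\ve'|_{k+1-j} \pmod{2}.
\]

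I would prove this identity by a short case split on $\ve_j\in\{0,1\}$. Since $|\sigma^{(j)}| = |\ve|+1-2\ve_j$, an elementary computation gives
\[
\tfrac{|\sigma^{(j)}|(|\sigma^{(j)}|-1)}{2} - \tfrac{|\ve|(|\ve|-1)}{2} \;\equiv\; |\ve| + \ve_j \pmod{2}
\]
in both cases. Combined with the combinatorial identity $|\ve|_j + \ve_j + |\ve| \equiv \sum_{i>j}\ve_i = |\ve'|_{k+1-j} \pmod 2$, this finishes the calculation. The main obstacle, as usual in Koszul-type sign manipulations, is this last parity check; but once reduced to the two cases it is immediate.
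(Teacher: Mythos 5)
Your proof is correct, and it is structured differently from the paper's. The paper verifies the $k=2$ case by writing out the two matrix factorizations as a commutative diagram and checking it by inspection, then disposes of general $k$ with "a straightforward induction using Lemma \ref{morphism-sign}" — that is, it builds the reversal by repeated adjacent transpositions and relies on the tensor-product rule to control signs, leaving the reader to assemble the induction. You instead verify the morphism identity $d_{M'}\circ F = F\circ d_M$ directly for all $k$ at once: expand both sides against the basis $\{1_{(\sigma^{(j)})'}\}$, reduce to the single parity congruence
\[
|\ve|_j + \tfrac{|\sigma^{(j)}|(|\sigma^{(j)}|-1)}{2} \equiv \tfrac{|\ve|(|\ve|-1)}{2} + |\ve'|_{k+1-j} \pmod 2,
\]
and settle it by the identity $\tfrac{|\sigma^{(j)}|(|\sigma^{(j)}|-1)}{2} - \tfrac{|\ve|(|\ve|-1)}{2} \equiv |\ve| + \ve_j$ (two cases) combined with $|\ve|_j + \ve_j + |\ve| \equiv |\ve'|_{k+1-j}$. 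I checked these: both case computations and the final parity cancellation are correct, and the index translation $l = k+1-j$ correctly matches $|\ve'|_l = \sum_{m>j}\ve_m$ and the degree bookkeeping for the quantum grading. The trade-off is that your argument is self-contained and makes the sign $(-1)^{|\ve|(|\ve|-1)/2}$ transparently forced, whereas the paper's route is shorter on the page but leans on the reader to carry out an induction whose sign-tracking is not entirely obvious (a full reversal is $\binom{k}{2}$ adjacent transpositions, and one must see how those signs accumulate to the quadratic exponent).
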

\begin{proof}
$F$ is clearly an isomorphism of $R$-modules and preserves the $\zed_2\oplus\zed$-grading. It remains to show that $F$ is a morphism of matrix factorizations. When $k=1$, there is nothing to prove. When $k=2$, $F$ is given by the following diagram
\[
\begin{CD}
\left(%
\begin{array}{l}
  R\cdot 1_{(0,0)} \\
  R\cdot 1_{(1,1)}  
\end{array}%
\right)
@>{\left(%
\begin{array}{ll}
  a_{1,0} & -a_{2,1} \\
  a_{2,0} & a_{1,1}
\end{array}%
\right)}>>
\left(%
\begin{array}{l}
  R\cdot 1_{(1,0)} \\
  R\cdot 1_{(0,1)}  
\end{array}%
\right)
@>{\left(%
\begin{array}{ll}
  a_{1,1} & a_{2,1} \\
  -a_{2,0} & a_{1,0}
\end{array}%
\right)}>> 
\left(%
\begin{array}{l}
  R\cdot 1_{(0,0)} \\
  R\cdot 1_{(1,1)}  
\end{array}%
\right) \\
@VV{\left(%
\begin{array}{ll}
  1 & 0 \\
  0 & -1
\end{array}%
\right)}V 
@VV{\left(%
\begin{array}{ll}
  0 & 1 \\
  1 & 0
\end{array}%
\right)}V 
@VV{\left(%
\begin{array}{ll}
  1 & 0 \\
  0 & -1
\end{array}%
\right)}V \\
\left(%
\begin{array}{l}
  R\cdot 1_{(0,0)} \\
  R\cdot 1_{(1,1)}  
\end{array}%
\right)
@>{\left(%
\begin{array}{ll}
  a_{2,0} & -a_{1,1} \\
  a_{1,0} & a_{2,1}
\end{array}%
\right)}>>
\left(%
\begin{array}{l}
  R\cdot 1_{(1,0)} \\
  R\cdot 1_{(0,1)}  
\end{array}%
\right)
@>{\left(%
\begin{array}{ll}
  a_{2,1} & a_{1,1} \\
  -a_{1,0} & a_{2,0}
\end{array}%
\right)}>> 
\left(%
\begin{array}{l}
  R\cdot 1_{(0,0)} \\
  R\cdot 1_{(1,1)}  
\end{array}%
\right)
\end{CD}
\]
where the first row is $M$, the second row is $M'$, and $F$ is given by the vertical arrows. A direct computation shows that $F$ is a morphism. The general $k \geq2$ case follows from the $k=2$ case by a straightforward induction using Lemma \ref{morphism-sign}.
\end{proof}

\begin{lemma}\label{column-reverse-signs}
Let $R$ be a graded commutative unital $\C$-algebra, and $a_{1,0}, a_{1,1}, \dots, a_{k,0}, a_{k,1}$ homogeneous elements of $R$ with $\deg a_{j,0}+\deg a_{j,1}=2N+2$ $\forall~ j$. Let
\[
M = \left(%
\begin{array}{ll}
  a_{1,0}, & a_{1,1} \\
  a_{2,0}, & a_{2,1} \\
  \dots & \dots \\
  a_{k,0}, & a_{k,1}
\end{array}%
\right)_R
~\text{ and }~
M' =  \left(%
\begin{array}{ll}
  a_{1,1}, & a_{1,0} \\
  a_{2,1}, & a_{2,0} \\
  \dots & \dots \\
  a_{k,1}, & a_{k,0}
\end{array}%
\right)_R.
\]
For $\ve=(\ve_1,\dots,\ve_k)\in I^k$, write $s(\ve)=\sum_{j=1}^{k-1}(k-j)\ve_j$. Define an $R$-homomorphism $F:M \rightarrow M'$ by $F(1_\ve)=(-1)^{|\ve|+s(\ve)} 1_{\wbar{\ve}}$  $\forall ~\ve \in I^k$. (See Definition \ref{ve-notation} for the definitions of $|\ve|$ and $\wbar{\ve}$.) Then $F$ is an isomorphism of matrix factorizations of $\zed_2$-degree $k$ and quantum degree $\sum_{j=1}^k (N+1-\deg a_{j,1})$.
\end{lemma}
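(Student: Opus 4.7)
The plan is to verify the three things the statement asserts---that $F$ is an $R$-module isomorphism, that the $\zed_2$- and quantum-degree shifts are as claimed, and that $F$ intertwines the differentials---by induction on $k$, in the spirit of the proof of Lemma \ref{row-reverse-signs}.

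Bijectivity is immediate, since $\ve\mapsto\wbar{\ve}$ is a bijection of $I^k$ and the signs are units. The $\zed_2$-degree is $k$ because $|\wbar{\ve}|-|\ve|=k-2|\ve|\equiv k\pmod 2$. The quantum degree is $\sum_{j=1}^k(N+1-\deg a_{j,1})$: the element $1_{\wbar{\ve}}$ of $M'$ has quantum degree $\sum_j(1-\ve_j)(N+1-\deg a_{j,1})$, and using $\deg a_{j,0}+\deg a_{j,1}=2N+2$ to write $N+1-\deg a_{j,0}=-(N+1-\deg a_{j,1})$, the difference from the quantum degree of $1_\ve$ in $M$ collapses to the claimed sum.

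For the morphism property I would argue inductively on $k$. When $k=1$ the map reduces to $1_0\mapsto 1_1$ and $1_1\mapsto -1_0$, and a direct check on the two-term diagram confirms $d_{M'}\circ F=-F\circ d_M$. For the inductive step, decompose $M=M_{(k-1)}\otimes(a_{k,0},a_{k,1})$ and $M'=M'_{(k-1)}\otimes(a_{k,1},a_{k,0})$, and let $F_{(k-1)}$ and $F_1$ be the maps furnished by the induction hypothesis and the base case. By Lemma \ref{morphism-sign}, $F_{(k-1)}\otimes F_1$ is a morphism of matrix factorizations of $\zed_2$-degree $k$ with the right quantum degree. Writing $\ve=(\ve',\ve_k)$ with $\ve'\in I^{k-1}$, Lemma \ref{morphism-sign} yields
\[
(F_{(k-1)}\otimes F_1)(1_\ve)=(-1)^{|\ve'|}\cdot(-1)^{|\ve'|+s_{k-1}(\ve')}\cdot(-1)^{\ve_k}\,1_{\wbar{\ve}}=(-1)^{s_{k-1}(\ve')+\ve_k}\,1_{\wbar{\ve}},
\]
where $s_{k-1}(\ve')=\sum_{i=1}^{k-2}(k-1-i)\ve'_i$. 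Identifying this with $F$ reduces to the parity identity $|\ve|+s(\ve)\equiv s_{k-1}(\ve')+\ve_k\pmod 2$, which follows from the telescoping $s(\ve)-s_{k-1}(\ve')=\sum_{i=1}^{k-1}\ve_i=|\ve'|$.

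The main obstacle is keeping track of Koszul signs: the factor $(-1)^{|\ve|+s(\ve)}$ is exactly what accumulates when tensoring $k$ copies of the basic swap under the $(-1)^{ij'}$ convention fixed in Lemma \ref{morphism-sign}, and the specific form $s(\ve)=\sum_{i=1}^{k-1}(k-i)\ve_i$ is dictated by this convention---any other bookkeeping of the sign would alter the formula. As a sanity check, one can verify the morphism property directly by writing $d_{M'}(1_{\wbar{\ve}})=\sum_j(-1)^{(j-1)-|\ve|_j}a_{j,\ve_j}\,1_{\wbar{\ve^{(j)}}}$, matching it term by term against $F\circ d_M(1_\ve)$, and using $|\ve^{(j)}|\equiv|\ve|+1$ together with $s(\ve^{(j)})\equiv s(\ve)+(k-j)\pmod 2$ to confirm $d_{M'}\circ F=(-1)^k F\circ d_M$.
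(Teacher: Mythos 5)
Your proposal is correct and follows essentially the same approach as the paper's proof: establish the $R$-module isomorphism and grading shifts directly, verify the morphism property in the $k=1$ base case, and handle general $k$ by induction via the tensor-product sign convention of Lemma \ref{morphism-sign}. Your explicit working-out of the sign identity $s(\ve)-s_{k-1}(\ve')=|\ve'|$ is a useful addition to what the paper leaves implicit.
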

\begin{proof}
$F$ is clearly an isomorphism of $R$-modules. And the claims about its two gradings are easy to verify. One only needs to check that $F$ is a morphism of matrix factorization. This is easy when $k=1$. The general $k\geq 1$ case follows from the $k=1$ case by a straightforward induction using Lemma \ref{morphism-sign}.
\end{proof}

\subsection{Elementary operations on Koszul matrix factorizations} Khovanov and Rozansky \cite{KR1,KR2} and Rasmussen \cite{Ras2} introduced several elementary operations on Koszul matrix factorizations that give isomorphic or homotopic graded matrix factorizations. In this subsection, we recall these operations.

\begin{lemma}\cite{Ras2,Wu7}\label{general-twist}
Let $M$ be the graded matrix factorization
\[
M_0 \xrightarrow{d_0} M_1 \xrightarrow{d_1} M_0.
\]
over $R$ with potential $w$. Suppose that $H_i:M_i\rightarrow M_i$ are graded homomorphisms with $H_i^2=0$. Define $\tilde{d}_i:M_i\rightarrow M_{i+1}$ by 
\[
\tilde{d}_i= (\id_{M_{i+1}}-H_{i+1})\circ d_i \circ (\id_{M_{i}}+H_{i}),
\]
and $\widetilde{M}$ by 
\[
M_0 \xrightarrow{\tilde{d}_0} M_1 \xrightarrow{\tilde{d}_1} M_0.
\]
Then $\widetilde{M}$ is also a graded matrix factorization over $R$ with potential $w$. And $M\cong\widetilde{M}$.
\end{lemma}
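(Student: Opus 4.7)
The plan is to exhibit an explicit isomorphism $M \xrightarrow{\cong} \widetilde{M}$ built directly from the $H_i$. The key algebraic observation is that the hypothesis $H_i^2=0$ forces $\id_{M_i}+H_i$ and $\id_{M_i}-H_i$ to be mutually inverse:
\[
(\id_{M_i}+H_i)(\id_{M_i}-H_i) = \id_{M_i}-H_i^2 = \id_{M_i},
\]
and symmetrically in the other order. Here ``graded homomorphism'' is read as a homogeneous $R$-module map of quantum degree $0$ (and, since $H_i:M_i\to M_i$, of $\zed_2$-degree $0$), so $\id_{M_i}\pm H_i$ are homogeneous $R$-module automorphisms of $M_i$ preserving both gradings.

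Setting $A_i := \id_{M_i}+H_i$, the twisted differential can be rewritten as $\tilde d_i = A_{i+1}^{-1}\circ d_i \circ A_i$ (subscripts mod $2$). From this presentation I would verify that $\widetilde{M}$ is a graded matrix factorization over $R$ with potential $w$: since $d_i$ is homogeneous of quantum degree $N+1$ and $A_i^{\pm 1}$ are homogeneous of quantum degree $0$, $\tilde d_i$ is homogeneous of quantum degree $N+1$ and has the correct $\zed_2$-parity; and
\[
\tilde d_{i+1}\circ\tilde d_i \;=\; A_{i}^{-1}\circ d_{i+1}\circ A_{i+1}\circ A_{i+1}^{-1}\circ d_i\circ A_i \;=\; A_i^{-1}\circ(w\cdot\id_{M_i})\circ A_i \;=\; w\cdot\id_{M_i},
\]
using $d_{i+1}\circ d_i = w\cdot\id_{M_i}$ and the $R$-linearity of $A_i$.

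For the isomorphism $M\cong\widetilde{M}$, I would take $F_i := A_i^{-1} = \id_{M_i}-H_i : M_i\to M_i$. These are homogeneous $R$-module isomorphisms of bidegree $(0,0)$ with inverses $\id_{M_i}+H_i$, so they constitute an isomorphism of the underlying graded $R$-modules preserving both gradings. The intertwining condition is immediate from the very definition of $\tilde d_i$:
\[
\tilde d_i\circ F_i \;=\; A_{i+1}^{-1}\circ d_i\circ A_i\circ A_i^{-1} \;=\; A_{i+1}^{-1}\circ d_i \;=\; F_{i+1}\circ d_i,
\]
so $F$ is a morphism, hence an isomorphism, of graded matrix factorizations in the sense of Definition \ref{cong-and-sim}(a).

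There is no real obstacle here: once the formal inverse $A_i^{-1}=\id-H_i$ is written down, both the matrix factorization identities for $\widetilde{M}$ and the intertwining property of $F$ reduce to one-line cancellations. The only point requiring a brief remark is the interpretation of ``graded homomorphism'' as degree $0$ in both gradings, which is needed for $\tilde d_i$ to have quantum degree $N+1$ and for $F$ to preserve the gradings.
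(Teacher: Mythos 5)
Your proof is correct, and it is the standard argument: since the paper omits the proof of this lemma (referring instead to \cite{Ras2,Wu7}), there is no competing in-paper argument to compare against. The observation that $H_i^2=0$ makes $\id_{M_i}\pm H_i$ mutually inverse, so that $\tilde d_i = A_{i+1}^{-1} d_i A_i$ with $A_i = \id_{M_i}+H_i$, is exactly the right idea, and both the matrix-factorization identity for $\widetilde M$ and the intertwining condition $\tilde d_i\circ F_i = F_{i+1}\circ d_i$ collapse to one-line cancellations as you show. Your remark that ``graded homomorphism'' must mean quantum-degree-$0$ (with $\zed_2$-degree $0$ forced by $H_i:M_i\to M_i$) is the correct reading and is precisely what is needed for $\tilde d_i$ to stay homogeneous of degree $N+1$; this is also the interpretation under which the Koszul twists of Corollaries \ref{twist} and \ref{row-op} arise as instances of this lemma.
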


\begin{corollary}\cite{Ras2}\label{twist}
Suppose $a_{1,0},a_{1,1},a_{2,0},a_{2,1},k$ are homogeneous elements in $R$ satisfying $\deg a_{j,0}+\deg a_{j,1}=2N+2$ and $\deg k = \deg a_{1,0} +\deg a_{2,0} -2N-2$. Then
\[
\left(%
\begin{array}{cc}
  a_{1,0} & a_{1,1} \\
  a_{2,0} & a_{2,1} 
\end{array}%
\right)_R
\cong
\left(%
\begin{array}{cc}
  a_{1,0}+ka_{2,1} & a_{1,1} \\
  a_{2,0}-ka_{1,1} & a_{2,1} 
\end{array}%
\right)_R.
\]
\end{corollary}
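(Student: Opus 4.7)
My plan is to derive Corollary \ref{twist} directly from Lemma \ref{general-twist} by choosing an appropriate homotopy on the $\zed_2$-degree $0$ part. The guiding observation is a degree count: under the hypothesis $\deg k = \deg a_{1,0} + \deg a_{2,0} - 2N - 2$, multiplication by $k$ has precisely the degree needed to send the generator $1_{(0,0)}$ of $M_0$ (quantum degree $0$) to a scalar multiple of $1_{(1,1)}$ (quantum degree $(N+1-\deg a_{1,0}) + (N+1-\deg a_{2,0})$) by a homogeneous map of degree $0$. This is the natural candidate for the twist.

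Concretely, I would set
\[
M = \left(\begin{array}{cc} a_{1,0} & a_{1,1} \\ a_{2,0} & a_{2,1} \end{array}\right)_R,
\]
define $H_0:M_0\to M_0$ by $H_0(1_{(0,0)}) = -k\cdot 1_{(1,1)}$ and $H_0(1_{(1,1)}) = 0$, and set $H_1 = 0$. Then $H_0$ is homogeneous of quantum degree $0$ and $\zed_2$-degree $0$, and $H_0^2 = 0$ is immediate from $H_0(1_{(1,1)}) = 0$. Lemma \ref{general-twist} then produces a graded matrix factorization $\widetilde{M}$ isomorphic to $M$ with twisted differentials $\tilde{d}_0 = d_0\circ(\id + H_0)$ and $\tilde{d}_1 = (\id - H_0)\circ d_1$.

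The remaining step is a direct computation using the differential formula from Definition \ref{ve-notation}, which gives $d(1_{(0,0)}) = a_{1,0}\cdot 1_{(1,0)} + a_{2,0}\cdot 1_{(0,1)}$ and $d(1_{(1,1)}) = -a_{2,1}\cdot 1_{(1,0)} + a_{1,1}\cdot 1_{(0,1)}$. Applying $\tilde{d}_0$ to $1_{(0,0)}$ yields
\[
(a_{1,0}+ka_{2,1})\cdot 1_{(1,0)} + (a_{2,0}-ka_{1,1})\cdot 1_{(0,1)},
\]
while $\tilde{d}_0(1_{(1,1)}) = d_0(1_{(1,1)})$ is unchanged, matching the second column of the target Koszul matrix. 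Analogous evaluations on $1_{(1,0)}$ and $1_{(0,1)}$ confirm that $\tilde{d}_1$ agrees with the differential of the twisted Koszul factorization, so $\widetilde{M}$ equals the right-hand side of the claimed isomorphism on the nose.

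The main subtlety here is simply bookkeeping for the sign $(-1)^{|\ve|_2}$ appearing in the second row of the differential, which forces the choice $H_0(1_{(0,0)}) = -k\cdot 1_{(1,1)}$ rather than $+k\cdot 1_{(1,1)}$; with that sign chosen correctly, everything else is mechanical. No induction, no non-trivial algebraic identity, and no case analysis is needed.
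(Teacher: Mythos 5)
Your proposal is correct and is the intended derivation: you apply Lemma \ref{general-twist} with $H_1 = 0$ and $H_0$ sending $1_{(0,0)} \mapsto -k\cdot 1_{(1,1)}$ (a quantum-degree-$0$, $\zed_2$-degree-$0$ map with $H_0^2 = 0$ since $H_0(1_{(1,1)}) = 0$), and the sign in $d(1_{(1,1)}) = a_{1,1}\, 1_{(0,1)} - a_{2,1}\, 1_{(1,0)}$ forces the minus sign you chose, producing exactly the twisted Koszul factorization on the right. The paper omits the proof and defers to \cite{KR1,KR2,Ras2,Wu7}, but since the statement is framed as a corollary of Lemma \ref{general-twist}, your argument is the one the text expects and every step (degree check, $H_0^2 = 0$, both $\tilde{d}_0$ and $\tilde{d}_1$ computations) checks out.
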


\begin{corollary}\cite{KR1,Ras2}\label{row-op} 
Suppose $a_{1,0},a_{1,1},a_{2,0},a_{2,1},c$ are homogeneous elements in $R$ satisfying $\deg a_{j,0}+\deg a_{j,1}=2N+2$ and $\deg c = \deg a_{1,0} -\deg a_{2,0}$. Then 
\[
\left(%
\begin{array}{cc}
  a_{1,0} & a_{1,1} \\
  a_{2,0} & a_{2,1} 
\end{array}%
\right)_R
\cong
\left(%
\begin{array}{cc}
  a_{1,0}+ca_{2,0} & a_{1,1} \\
  a_{2,0} & a_{2,1}-ca_{1,1} 
\end{array}%
\right)_R.
\]
\end{corollary}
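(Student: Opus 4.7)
The plan is to apply Lemma \ref{general-twist} with an explicit choice of homotopy. Write $M$ for the Koszul matrix factorization on the left-hand side, so that $M_0 = R \cdot 1_{(0,0)} \oplus R \cdot 1_{(1,1)}$ and $M_1 = R \cdot 1_{(1,0)} \oplus R \cdot 1_{(0,1)}$, with differential computed from Definition \ref{ve-notation}. Take $H_0 = 0$ and define $H_1 : M_1 \to M_1$ by
\[
H_1(1_{(1,0)}) = 0, \qquad H_1(1_{(0,1)}) = -c \cdot 1_{(1,0)}.
\]
The hypothesis $\deg c = \deg a_{1,0} - \deg a_{2,0}$ says exactly that $c \cdot 1_{(1,0)}$ has the same quantum degree as $1_{(0,1)}$, so $H_1$ is a homogeneous $R$-linear endomorphism of quantum degree $0$. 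Clearly $H_1^2 = 0$, since the image of $H_1$ is contained in its kernel.

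Next, I would compute the twisted differentials $\tilde{d}_0 = (\id_{M_1} - H_1) \circ d_0$ and $\tilde{d}_1 = d_1 \circ (\id_{M_1} + H_1)$ on the basis elements. Using the explicit formulas for $d_0$ and $d_1$ from Definition \ref{ve-notation}, a direct calculation yields
\begin{align*}
\tilde{d}_0(1_{(0,0)}) &= (a_{1,0} + c a_{2,0}) \cdot 1_{(1,0)} + a_{2,0} \cdot 1_{(0,1)},\\
\tilde{d}_0(1_{(1,1)}) &= -(a_{2,1} - c a_{1,1}) \cdot 1_{(1,0)} + a_{1,1} \cdot 1_{(0,1)},\\
\tilde{d}_1(1_{(1,0)}) &= a_{1,1} \cdot 1_{(0,0)} - a_{2,0} \cdot 1_{(1,1)},\\
\tilde{d}_1(1_{(0,1)}) &= (a_{2,1} - c a_{1,1}) \cdot 1_{(0,0)} + (a_{1,0} + c a_{2,0}) \cdot 1_{(1,1)}.
\end{align*}
These are precisely the differentials one obtains by applying Definition \ref{ve-notation} to the right-hand Koszul matrix factorization with entries $\tilde{a}_{1,0} = a_{1,0} + c a_{2,0}$, $\tilde{a}_{1,1} = a_{1,1}$, $\tilde{a}_{2,0} = a_{2,0}$, and $\tilde{a}_{2,1} = a_{2,1} - c a_{1,1}$. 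Thus Lemma \ref{general-twist} identifies the original factorization with the right-hand one via the identity map $1_\varepsilon \mapsto 1_\varepsilon$ on the underlying graded free $R$-module, which automatically preserves both gradings.

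There is no real obstacle here; the argument is a sign-careful computation that parallels the proof of Corollary \ref{twist}, and the degree hypothesis on $c$ enters only to guarantee that $H_1$ is homogeneous of quantum degree $0$. The form of $H_1$ is essentially forced by the goal: since we want $\tilde{d}_0(1_{(0,0)})$ to pick up the extra term $c a_{2,0} \cdot 1_{(1,0)}$ while leaving the coefficient of $1_{(0,1)}$ unchanged, the homotopy must send $1_{(0,1)}$ to $-c \cdot 1_{(1,0)}$ and vanish on $1_{(1,0)}$, and with this choice the two extra terms appearing in the second column ($-c a_{1,1}$ in $\tilde{d}_0(1_{(1,1)})$ and $-c a_{1,1}$ in $\tilde{d}_1(1_{(0,1)})$) fall out automatically.
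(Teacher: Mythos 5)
Your proposal is correct, and it uses exactly the route the paper intends: the corollary is stated immediately after Lemma~\ref{general-twist} and, like Corollary~\ref{twist}, is meant to be deduced from it by an explicit choice of homotopy. Your choice $H_0 = 0$, $H_1(1_{(1,0)}) = 0$, $H_1(1_{(0,1)}) = -c\cdot 1_{(1,0)}$ is the standard one, your sign computations agree with Definition~\ref{ve-notation} (and with the explicit $k=2$ matrices displayed in the proof of Lemma~\ref{row-reverse-signs}), and the degree hypothesis on $c$ is used in the right place to ensure $H_1$ is graded of degree $0$.
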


The proofs of the above can be found in \cite{KR1,KR2,Ras2,Wu7} and are omitted.

\begin{definition}\label{regular-sequence}
Let $R$ be a commutative ring, and $a_1,\dots,a_k \in R$. The sequence $\{a_1,\dots,a_k\}$ is called $R$-regular if $a_1$ is not a zero divisor in $R$ and  $a_j$ is not a zero divisor in $R/(a_1,\dots,a_{j-1})$ for $j=2,\dots,k$.
\end{definition}

The next lemma is \cite[Theorem 2.1]{KR3} and a generalization of \cite[Lemma 3.10]{Ras2}.

\begin{lemma}\cite{KR3,Ras2}\label{freedom}
Let $R$ be a graded commutative unital $\C$-algebra. Suppose that $\{a_1,\dots,a_k\}$ is an $R$-regular sequence of homogeneous elements of $R$ with $\deg a_j \leq 2N+2$ $\forall ~j=1,\dots,k$. Assume that $f_1,\dots,f_k,g_1,\dots,g_k$ are homogeneous elements of $R$ such that $\deg f_j = \deg g_j = 2N+2 -\deg a_j$ and $\sum_{j=1}^k f_ja_j=\sum_{j=1}^kg_ja_j$. Then
\[
\left(%
\begin{array}{ll}
  f_1, & a_1 \\
  \dots & \dots \\
  f_k, & a_k
\end{array}%
\right)_R
\cong
\left(%
\begin{array}{ll}
  g_1, & a_1 \\
  \dots & \dots \\
  g_k, & a_k
\end{array}%
\right)_R.
\]
\end{lemma}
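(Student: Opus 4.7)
The plan is to reduce Lemma \ref{freedom} to an iterated application of Corollary \ref{twist}, using the exactness of the Koszul complex on a regular sequence as the key algebraic input.

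First I would set $h_j:=f_j-g_j$, which is homogeneous of degree $2N+2-\deg a_j$ and satisfies $\sum_{j=1}^{k}h_j a_j=0$ in $R$. The goal then becomes to construct an isomorphism between the Koszul matrix factorizations built from $(f_j,a_j)_{j=1}^k$ and $(g_j,a_j)_{j=1}^k$ by repeatedly modifying pairs of entries of the first column, each modification supplied by Corollary \ref{twist}.

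The central step is to produce a graded Koszul syzygy: since $\{a_1,\dots,a_k\}$ is an $R$-regular sequence of homogeneous elements, the Koszul complex on this sequence is exact in the graded category, so every homogeneous solution of $\sum h_j a_j=0$ is a homogeneous $R$-linear combination of the trivial Koszul syzygies $a_j e_i - a_i e_j$. Concretely, I would produce homogeneous elements $c_{ij}\in R$ for $1\le i<j\le k$, of degree $2N+2-\deg a_i-\deg a_j$, such that
\[
h_l \;=\; \sum_{j>l} c_{lj}\,a_j \;-\; \sum_{i<l} c_{il}\,a_i \qquad (l=1,\dots,k).
\]
This can be proved by induction on $k$ using that $a_k$ is a non-zero-divisor on $R/(a_1,\dots,a_{k-1})$ (the base case being that $a_1$ is a non-zero-divisor in $R$), or simply quoted from \cite[Theorem~2.1]{KR3}; homogeneity of the $c_{ij}$ comes from taking homogeneous components. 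The degree of $c_{ij}$ matches precisely the degree required by Corollary \ref{twist} with $a_{1,0}=f_i$, $a_{1,1}=a_i$, $a_{2,0}=f_j$, $a_{2,1}=a_j$.

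Finally, for each pair $i<j$ I would apply Corollary \ref{twist} with parameter $-c_{ij}$ to the two-row Koszul factorization in rows $i$ and $j$, extending it to an isomorphism of the full $k$-row factorization via the associativity and commutativity of the tensor product of matrix factorizations together with Lemma \ref{morphism-sign}. Each such twist replaces $f_i$ by $f_i-c_{ij}a_j$ and $f_j$ by $f_j+c_{ij}a_i$, and fixes all other rows. Composing these isomorphisms over all pairs $i<j$, the net change at row $l$ is $-\sum_{j>l}c_{lj}a_j+\sum_{i<l}c_{il}a_i=-h_l$, so the first column becomes $(g_1,\dots,g_k)$. I expect the main obstacle to be the graded Koszul-syzygy step: while the ungraded version is classical, some care is needed to ensure the coefficients $c_{ij}$ can be chosen simultaneously homogeneous and of exactly the right degree, so that each elementary move is a genuine application of Corollary \ref{twist}; once that is in place, the rest of the argument is a mechanical composition of two-row twists.
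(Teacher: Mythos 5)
Your proposal is correct and amounts to the same argument as the paper's: the paper's induction on $k$, unwound, produces exactly the family of homogeneous coefficients $c_{ij}$ and pairwise applications of Corollary \ref{twist} that you describe. The only difference is organizational — you package the syzygy step as a single appeal to exactness of the graded Koszul complex and then compose all $\binom{k}{2}$ twists at once, whereas the paper peels off the last row at each inductive step using the non-zero-divisor property of $a_{m+1}$ in $R/(a_1,\dots,a_m)$ directly.
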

\begin{proof}
Induct on $k$. If $k=1$, then $a_1$ is not a zero divisor in $R$ and $(f_1-g_1)a_1=0$. So $f_1=g_1$ and $(f_1,a_1)_R=(g_1,a_1)_R$. Assume that the lemma is true for $k=m$. Consider the case $k=m+1$. $a_{m+1}$ is not a zero divisor in $R/(a_1\dots,a_m)$. But 
\[
(f_{m+1}-g_{m+1})a_{m+1}=\sum_{j=1}^m (g_j-f_j)a_j \in (a_1\dots,a_m).
\]
So $f_{m+1}-g_{m+1} \in (a_1\dots,a_m)$, that is, there exist $c_1,\dots,c_m \in R$ such that 
\[
f_{m+1}-g_{m+1} = \sum_{j=1}^m c_j a_j.
\]
Thus, by Corollary \ref{twist},
\[
\left(%
\begin{array}{ll}
  f_1, & a_1 \\
  \dots & \dots \\
  f_m, & a_m \\
  f_{m+1}, & a_{m+1}
\end{array}%
\right)_R
\cong
\left(%
\begin{array}{ll}
  f_1+c_1a_{m+1}, & a_1 \\
  \dots & \dots \\
  f_m+c_ma_{m+1}, & a_m \\
  g_{m+1}, & a_{m+1}
\end{array}%
\right)_R.
\]
It is easy to see that
\[
\sum_{j=1}^m (f_j+c_ja_{m+1})a_j = \sum_{j=1}^m g_ja_j.
\]
By induction hypothesis,
\[
\left(%
\begin{array}{ll}
  f_1+c_1a_{m+1}, & a_1 \\
  \dots & \dots \\
  f_m+c_ma_{m+1}, & a_m 
\end{array}%
\right)_R
\cong
\left(%
\begin{array}{ll}
  g_1, & a_1 \\
  \dots & \dots \\
  g_m, & a_m
\end{array}%
\right)_R.
\]
Therefore,
\[
\left(%
\begin{array}{ll}
  f_1, & a_1 \\
  \dots & \dots \\
  f_m, & a_m \\
  f_{m+1}, & a_{m+1}
\end{array}%
\right)_R
\cong
\left(%
\begin{array}{ll}
  f_1+c_1a_{m+1}, & a_1 \\
  \dots & \dots \\
  f_m+c_ma_{m+1}, & a_m \\
  g_{m+1}, & a_{m+1}
\end{array}%
\right)_R
\cong
\left(%
\begin{array}{ll}
  g_1, & a_1 \\
  \dots & \dots \\
  g_m, & a_m \\
  g_{m+1}, & a_{m+1}
\end{array}%
\right)_R.
\]
\end{proof}

Next we give six versions of \cite[Proposition 9]{KR1}, which give a method of simplifying matrix factorizations. Their proofs also give a method of finding cycles representing a given homology class in some chain complexes and finding morphisms of matrix factorizations representing a given homotopy class, which is important for our purpose. So we include their full proofs here.

\begin{proposition}[strong version]\label{b-contraction}
Let $R$ be a graded commutative unital $\C$-algebra, and $x$ a homogeneous indeterminate with $\deg x \leq 2N+2$. Let $P:R[x]\rightarrow R$ be the evaluation map at $x=0$, that is, $P(f(x))=f(0)$ $\forall ~ f(x) \in R[x]$.

Suppose that $a_1,\dots,a_k,b_1,\dots,b_k$ are homogeneous elements of $R[x]$ such that 
\begin{itemize}
	\item $\deg a_j +\deg b_j = 2N+2$ $\forall~j=1,\dots,k$,
	\item $\sum_{j=1}^k a_jb_j \in R$,
	\item $\exists~ i\in \{1,\dots,k\}$ such that $b_i=x$.
\end{itemize}
Then
\[
M=\left(%
\begin{array}{cc}
  a_1 & b_1 \\
  a_2 & b_2 \\
  \dots & \dots \\
  a_k & b_k
\end{array}%
\right)_{R[x]}
\text{ and }
M'=\left(%
\begin{array}{cc}
  P(a_1) & P(b_1) \\
  P(a_2) & P(b_2) \\
  \dots & \dots \\
  P(a_{i-1}) & P(b_{i-1}) \\
  P(a_{i+1}) & P(b_{i+1}) \\
  \dots & \dots \\
  P(a_k) & P(b_k)
\end{array}%
\right)_{R}
\]
are homotopic as graded matrix factorizations over $R$.
\end{proposition}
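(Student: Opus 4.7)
The plan is to use the row and twist operations of Corollaries~\ref{row-op} and~\ref{twist} to eliminate the $x$-dependence of every row except row $i$, reducing $M$ to an isomorphic Koszul factorization whose row $i$ is $(0,x)$, and then to contract the trivial factor $(0,x)_{R[x]}$ by an explicit homotopy equivalence with the trivial factorization $R$ viewed as a matrix factorization over $R$ with potential zero.

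For each $j \neq i$, I would write $b_j = P(b_j) + x c_j$ with $c_j := (b_j - P(b_j))/x \in R[x]$; a short degree count gives $\deg c_j = \deg a_i - \deg a_j$, which is precisely the homogeneous degree demanded by Corollary~\ref{row-op} applied to rows $i$ and $j$ with coefficient $c_j$. The operation replaces row $j$ with $(a_j, P(b_j))$ and alters row $i$ to $(a_i + c_j a_j,\, x)$. Running this over all $j \neq i$ puts every $b_j$ ($j \neq i$) in $R$. A second round, writing $a_j = P(a_j) + x\tilde{c}_j$ and invoking Corollary~\ref{twist} with parameter $-\tilde{c}_j$ on rows $i$ and $j$, replaces each $a_j$ ($j \neq i$) with $P(a_j) \in R$, while the cumulative effect on row $i$ leaves it in the form $(\tilde{\tilde{a}}_i,\, x)$ for some $\tilde{\tilde{a}}_i \in R[x]$.

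Because row and twist operations preserve the potential, the resulting potential $\sum_{j \neq i} P(a_j) P(b_j) + \tilde{\tilde{a}}_i \cdot x$ equals the original $\sum_j a_j b_j \in R$. Since the first sum already lies in $R$, we obtain $\tilde{\tilde{a}}_i \cdot x \in R$; as $x$ is a free indeterminate over $R$, comparing coefficients in $R[x]$ forces $\tilde{\tilde{a}}_i = 0$. Using Lemma~\ref{row-reverse-signs} to move row $i$ to the last position, $M$ is then isomorphic over $R[x]$ to $(M' \otimes_R R[x]) \otimes_{R[x]} (0,x)_{R[x]}$, which equals $M' \otimes_R (0,x)_{R[x]}$ as a matrix factorization over $R$.

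To finish, I would observe that $(0,x)_{R[x]}$, regarded as a matrix factorization over $R$ with potential zero, is homotopy equivalent to the trivial factorization $R$: the evaluation $P: R[x] \to R$ and inclusion $\iota: R \hookrightarrow R[x]$ satisfy $P \circ \iota = \id_R$, while $\iota \circ P - \id$ equals $dh + hd$ for the $\zed_2$-degree $1$ homotopy $h$ given by $h_0(f(x)) = -(f(x)-f(0))/x$ and $h_1 = 0$. Tensoring this equivalence with $M'$ yields $M \simeq M' \otimes_R (0,x)_{R[x]} \simeq M'$ over $R$, as required. The main source of difficulty is careful bookkeeping: verifying that the coefficients $c_j, \tilde{c}_j$ sit in exactly the homogeneous degrees required by Corollaries~\ref{row-op} and~\ref{twist}, tracking the accumulated changes to row $i$ across successive transformations, and exhibiting explicit null-homotopies throughout, since these explicit formulas (and not merely their existence) will be needed in later applications to construct morphisms of matrix factorizations up to homotopy.
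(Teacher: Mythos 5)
Your proposal is correct and takes essentially the same approach as the paper: both use Corollaries~\ref{twist} and~\ref{row-op} together with the potential argument to reduce $M$ over $R[x]$ to a Koszul factorization whose non-$i$ rows lie in $R$ and whose $i$-th row is $(0,x)$, and both then contract that row via the evaluation/inclusion homotopy retract of $R[x]$ onto $R$. The only difference is presentational---you package the contraction as a tensor with $(0,x)_{R[x]}$ and exhibit $\pi$, $\sigma$, $h$ on that single factor, whereas the paper writes the induced maps $F$, $G$ and homotopy $h$ directly on $M''$, which is where the Koszul sign $(-1)^{\sum_{j<i}\ve_j}$ (exactly the sign Lemma~\ref{morphism-sign} would produce in your tensoring step) comes from.
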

\begin{proof}
For $j\neq i$, Write $a'_j=P(a_j)\in R$ and $b'_j=P(b_j)\in R$. Then $\exists! ~c_j, k_j\in R[x]$ such that $a_j=a'_j+k_jx$ and $b_j=b'_j+c_jx$. By Corollaries \ref{twist} and \ref{row-op},
\[
M \cong M'' := 
\left(%
\begin{array}{cc}
  a'_1 & b'_1 \\
  \dots & \dots \\
  a'_{i-1} & b'_{i-1} \\
  a & x \\
  a'_{i+1} & b'_{i+1} \\
  \dots & \dots \\
  a'_k & b'_k
\end{array}%
\right)_{R[x]},
\]
where $a= a_i + \sum_{j\neq i}k_jb_j + \sum_{j\neq i} c_ja'_j$. Since $M,M''$ have the same potential, we know that $ax= \sum_{j=1}^k a_jb_j -\sum_{j \neq i}a'_jb'_j \in R$. So $a=0$. Thus, 
\[
M'' = 
\left(%
\begin{array}{cc}
  a'_1 & b'_1 \\
  \dots & \dots \\
  a'_{i-1} & b'_{i-1} \\
  0 & x \\
  a'_{i+1} & b'_{i+1} \\
  \dots & \dots \\
  a'_k & b'_k
\end{array}%
\right)_{R[x]}.
\]

Define $R$-module homomorphisms $F:M''\rightarrow M'$ and $G:M'\rightarrow M''$ by
\[
F(f(x)1_\ve) = \left\{%
\begin{array}{ll}
    f(0) 1_{(\ve_1,\dots,\ve_{i-1},\ve_{i+1},\dots,\ve_k)} & \text{if } \ve_i=0, \\
    0 & \text{if } \ve_i = 1, 
\end{array}%
\right.
\]
for $f(x)\in R[x]$ and $\ve=(\ve_1,\dots,\ve_k)\in I^k$ (see Definition \ref{ve-notation} for the definition of $1_\ve$) and
\[
G(r 1_{(\ve_1,\dots,\ve_{i-1},\ve_{i+1},\dots,\ve_k)}) = r 1_{(\ve_1,\dots,\ve_{i-1},0,\ve_{i+1},\dots,\ve_k)}
\] 
for $r\in R$ and $(\ve_1,\dots,\ve_{i-1},\ve_{i+1},\dots,\ve_k) \in I^{k-1}$. 

One can easily check that $F$ and $G$ are morphisms of matrix factorizations preserving the $\zed_2\oplus\zed$-grading and $F \circ G= \id_{M'}$. Note that $M''=\ker F \oplus \im G$ and
\begin{eqnarray*}
G\circ F|_{\ker F} & = & 0, \\
G\circ F|_{\im G} & = & \id_{\im G}.
\end{eqnarray*}
Define an $R$-module homomorphism $h:M''\rightarrow M''$ by 
\begin{eqnarray*}
h(1_{(\ve_1,\dots,\ve_{i-1},1,\ve_{i+1},\dots,\ve_k)}) & = & 0, \\
h((r+xf(x)) 1_{(\ve_1,\dots,\ve_{i-1},0,\ve_{i+1},\dots,\ve_k)}) & = & (-1)^{\sum_{j=1}^{i-1}\ve_j} f(x) 1_{(\ve_1,\dots,\ve_{i-1},1,\ve_{i+1},\dots,\ve_k)}
\end{eqnarray*}
for $r\in R$, $f(x) \in R[x]$ and $\ve_1,\dots,\ve_{i-1},\ve_{i+1},\dots,\ve_k \in I$. A straightforward computation shows that
\begin{eqnarray*}
(d\circ h + h \circ d)|_{\ker F} & = & \id_{\ker F}, \\
(d\circ h + h \circ d)|_{\im G} & = & 0.
\end{eqnarray*}
So $\id_{M''} - G\circ F = d\circ h + h \circ d$. Thus, we have $M'' \simeq M'$ and, therefore, $M \simeq M'$ as graded matrix factorizations over $R$.
\end{proof}

\begin{proposition}[weak version]\label{b-contraction-weak}
Let $R$ be a graded commutative unital $\C$-algebra, and $a_1,\dots,a_k,b_1,\dots,b_k$ homogeneous elements of $R$ such that $\deg a_j +\deg b_j = 2N+2$ and $\sum_{j=1}^k a_jb_j=0$. Then the matrix factorization 
\[
M=\left(%
\begin{array}{cc}
  a_1 & b_1 \\
  a_2 & b_2 \\
  \dots & \dots \\
  a_k & b_k
\end{array}%
\right)_R
\]
is a chain complex with a $\zed_2$ homological grading. Assume that, for a given $i\in \{1,\dots,k\}$, $b_i$ is not a zero divisor in $R$. Define $R'=R/(b_i)$, which inherits the grading of $R$. Let $P:R\rightarrow R'$ be the standard projection. Then 
\[
M'=\left(%
\begin{array}{cc}
  P(a_1) & P(b_1) \\
  P(a_2) & P(b_2) \\
  \dots & \dots \\
  P(a_{i-1}) & P(b_{i-1}) \\
  P(a_{i+1}) & P(b_{i+1}) \\
  \dots & \dots \\
  P(a_k) & P(b_k)
\end{array}%
\right)_{R'}
\]
is also a chain complex with a $\zed_2$ homological grading. And $H(M)\cong H(M')$ as $\zed_2\oplus\zed$-graded $R$-modules.
\end{proposition}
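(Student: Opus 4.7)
The strategy parallels the proof of Proposition \ref{b-contraction}: I will construct an explicit $R$-linear quasi-isomorphism $\phi: M \to M'$ (viewing $M'$ as an $R$-module through $P$) and show that its kernel is contractible. Define
\[
\phi(r \cdot 1_\ve) = \begin{cases} P(r) \cdot 1_{\ve'} & \text{if } \ve_i = 0, \\ 0 & \text{if } \ve_i = 1, \end{cases}
\]
where $\ve' \in I^{k-1}$ denotes $\ve$ with the $i$-th coordinate deleted. Using the differential formula from Definition \ref{ve-notation}, one verifies that $\phi$ is a bigrading-preserving chain map: for $\ve_i = 0$ the $j \neq i$ terms of $\phi\circ d$ and $d\circ\phi$ match (the signs $(-1)^{|\ve|_j}$ are unaffected by removing a zero coordinate), while the $j = i$ term of $\phi\circ d(1_\ve)$ lies in the span of the $1_\sigma$ with $\sigma_i = 1$ and is killed; for $\ve_i = 1$ the only term of $\phi\circ d(1_\ve)$ surviving $\phi$ has coefficient a multiple of $P(b_i) = 0$. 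Clearly $\phi$ is surjective.

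Because $b_i$ is not a zero divisor in $R$, the kernel decomposes as
\[
\ker \phi \;=\; \bigoplus_{\ve:\, \ve_i = 0} b_i R \cdot 1_\ve \,\oplus\, \bigoplus_{\ve:\, \ve_i = 1} R \cdot 1_\ve,
\]
and on the first summand the assignment $b_i s \mapsto s$ is unambiguous. Writing $e_i \in I^k$ for the $i$-th unit vector, define an $R$-linear endomorphism $K$ of $\ker\phi$ of $\zed_2$-degree $1$ by
\[
K(b_i s \cdot 1_\ve) = (-1)^{|\ve|_i} s \cdot 1_{\ve + e_i} \quad (\ve_i = 0), \qquad K(r \cdot 1_\ve) = 0 \quad (\ve_i = 1).
\]

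The heart of the argument is the identity $dK + Kd = \id_{\ker \phi}$, verified by a direct computation modelled on the null-homotopy of multiplication by $b_i$ on $M$ (cf.\ Lemma \ref{entries-null-homotopic}). The $j = i$ contributions supply the identity on the two summands, respectively: $dK$ does so on the first summand (via $(\ve+e_i)+e_i = \ve$), and $Kd$ on the second. For each $j \neq i$, the two contributions to the coefficient of $1_{\ve + e_i + e_j}$ pick up opposite signs and cancel, because inserting a $1$ at position $i$ changes $|\cdot|_j$ by $1$ exactly when $j > i$, while removing a $1$ at position $i$ does so exactly when $j < i$. Once $\ker \phi$ is acyclic, the long exact sequence attached to $0 \to \ker \phi \to M \xrightarrow{\phi} M' \to 0$ yields the desired graded $R$-module isomorphism $H(M) \cong H(M')$. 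The only subtle point is the sign bookkeeping in this last cancellation, which is dictated by the convention $|\ve|_j = \sum_{l < j} \ve_l$ from Definition \ref{ve-notation}.
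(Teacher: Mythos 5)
Your proof is correct and follows exactly the same route as the paper: the map $\phi$ is the paper's $F$, your homotopy $K$ on $\ker\phi$ is the paper's $h$ (including the sign $(-1)^{|\ve|_i} = (-1)^{\sum_{l<i}\ve_l}$), and the conclusion via the long exact sequence is identical. The sign-cancellation discussion in your final paragraph is a correct account of why $dK+Kd=\id_{\ker\phi}$ holds, which the paper simply asserts.
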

\begin{proof}
Define an $R$-module homomorphism $F:M\rightarrow M'$ by
\[
F(r1_\ve) = \left\{%
\begin{array}{ll}
    P(r) 1_{(\ve_1,\dots,\ve_{i-1},\ve_{i+1},\dots,\ve_k)} & \text{if } \ve_i=0, \\
    0 & \text{if } \ve_i = 1, 
\end{array}%
\right.
\]
for $r\in R$ and $\ve=(\ve_1,\dots,\ve_k)\in I^k$. (See Definition \ref{ve-notation} for the definition of $1_\ve$.) It is easy to check that $F$ is a surjective morphism of matrix factorizations preserving the $\zed_2\oplus\zed$-grading. The kernel of $F$ is the subcomplex
\[
\ker F = \bigoplus_{(\ve_1,\dots,\ve_{i-1},\ve_{i+1},\dots,\ve_k) \in I^{k-1}}(R \cdot 1_{(\ve_1,\dots,\ve_{i-1},1,\ve_{i+1},\dots,\ve_k)} \oplus b_iR \cdot 1_{(\ve_1,\dots,\ve_{i-1},0,\ve_{i+1},\dots,\ve_k)}).
\] 
Since $b_i$ is not a zero divisor, the division map $\varphi:b_iR\rightarrow R$ given by $\varphi(b_ir)=r$ is well defined. Define an $R$-module homomorphism $h:\ker F \rightarrow \ker F$ by 
\begin{eqnarray*}
h(1_{(\ve_1,\dots,\ve_{i-1},1,\ve_{i+1},\dots,\ve_k)}) & = & 0, \\
h(b_i 1_{(\ve_1,\dots,\ve_{i-1},0,\ve_{i+1},\dots,\ve_k)}) & = & (-1)^{\sum_{j=1}^{i-1}\ve_j} 1_{(\ve_1,\dots,\ve_{i-1},1,\ve_{i+1},\dots,\ve_k)}.
\end{eqnarray*}
Then 
\[
d|_{\ker F} \circ h+h \circ d|_{\ker F}=\id_{\ker F},
\] 
where $d$ is the differential map of $M$. In particular, this means that $H(\ker F)=0$. Then, using the long exact sequence induced by
\[
0\rightarrow \ker F \rightarrow M \xrightarrow{F} M' \rightarrow 0,
\]
it is easy to see that $F$ is a quasi-isomorphism.
\end{proof}

\begin{remark}\label{reverse-b-contraction}
The above proof of Proposition \ref{b-contraction-weak} also gives a method of finding cycles in $M$ whose image under $F$ is a given cycle in $M'$. Indeed, for every cycle $\alpha$ in $M'$, one can find an element $\beta \in M$ such that $F(\beta)=\alpha$. Then $F(d \beta)=d' F(\beta)=d' \alpha=0$, where $d'$ is the differential map of $M'$. So $d \beta \in \ker F$ and $d \beta = d h(d \beta)+hd(d \beta)=d h(d \beta)$. Thus, $\beta - h(d\beta)$ is a cycle in $M$. By definition, it clear that $F\circ h=0$. So $F(\beta - h(d\beta))=\alpha$. This observation is useful in finding cycles representing a given homology class and morphisms representing a given homotopy class. (In the situation in Proposition \ref{b-contraction}, one can also do the same by explicitly computing the morphism $M' \xrightarrow{\simeq} M'' \xrightarrow{\cong} M$, which is usually not any easier in practice.) This method also applies to the situation in corollaries \ref{a-contraction} and \ref{a-contraction-weak}, that is, contracting the matrix factorization using an entry in the left column. 
\end{remark}

Next we give the dual version of Proposition \ref{b-contraction-weak}.

\begin{corollary}[dual version]\label{b-contraction-dual}
Let $R$ be a graded commutative unital $\C$-algebra, and $\hat{R}$ a graded commutative unital sub-algebra of $R$ such that $R$ is a free $\hat{R}$-module. Suppose that $a_1,\dots,a_k,b_1,\dots,b_k$ are homogeneous elements of $R$ such that $\deg a_j +\deg b_j = 2N+2$ and $\sum_{j=1}^k a_jb_j = w \in \hat{R}$. Assume that, for a given $i\in \{1,\dots,k\}$, $b_i$ is not a zero divisor in $R$ and $R'=R/(b_i)$ is also a free $\hat{R}$-module. Define
\[
M=\left(%
\begin{array}{cc}
  a_1 & b_1 \\
  a_2 & b_2 \\
  \dots & \dots \\
  a_k & b_k
\end{array}%
\right)_R
\]
and
\[
M'=\left(%
\begin{array}{cc}
  P(a_1) & P(b_1) \\
  P(a_2) & P(b_2) \\
  \dots & \dots \\
  P(a_{i-1}) & P(b_{i-1}) \\
  P(a_{i+1}) & P(b_{i+1}) \\
  \dots & \dots \\
  P(a_k) & P(b_k)
\end{array}%
\right)_{R'},
\]
where $P:R\rightarrow R'$ is the standard projection. Then, for any matrix factorization $M''$ over $\hat{R}$ with potential $w$, there is a homogeneous quasi-isomorphism 
\[
\Hom_{\hat{R}}(M',M'') \rightarrow \Hom_{\hat{R}}(M,M'')
\]
preserving both the $\zed_2$-grading and the quantum pregrading.
\end{corollary}
\begin{proof}
Define an $R$-module homomorphism $F:M\rightarrow M'$ by
\[
F(r1_\ve) = \left\{%
\begin{array}{ll}
    P(r) 1_{(\ve_1,\dots,\ve_{i-1},\ve_{i+1},\dots,\ve_k)} & \text{if } \ve_i=0, \\
    0 & \text{if } \ve_i = 1, 
\end{array}%
\right.
\]
for $r\in R$ and $\ve=(\ve_1,\dots,\ve_k)\in I^k$. (See Definition \ref{ve-notation} for the definition of $1_\ve$.) Then $F$ is a surjective morphism of matrix factorizations preserving the $\zed_2\oplus\zed$-grading. So we have a short exact sequence
\[
0 \rightarrow \ker F \rightarrow M \xrightarrow{F} M' \rightarrow 0.
\]
Note that $\ker F$ and $M$ are free $R$-modules and $M'$ is a free $R'$-module. Thus, the above is a short exact sequence of free $\hat{R}$-modules. This implies that
\[
0 \rightarrow \Hom_{\hat{R}}(M',M'') \xrightarrow{F^{\sharp}} \Hom_{\hat{R}}(M,M'') \rightarrow \Hom_{\hat{R}}(\ker F,M'') \rightarrow 0
\]
is also exact. Similar to the proof of Proposition \ref{b-contraction-weak}, there exists an $R$-module map $h:\ker F \rightarrow \ker F$ of $\zed_2$-degree $1$ such that $\id_{\ker F} = d_M|_{\ker F}\circ h + h\circ d_M|_{\ker F}$. Define 
\[
H:\Hom_{\hat{R}}(\ker F,M'') \rightarrow \Hom_{\hat{R}}(\ker F,M'')
\] 
by $H(f)=(-1)^jf\circ h$ if $f$ has $\zed_2$-degree $j$. Then $H$ has $\zed_2$-degree $1$ and, for $f\in \Hom_{\hat{R}}(\ker F,M'')$ of $\zed_2$-degree $j$, 
\begin{eqnarray*}
& & (d\circ H + H \circ d) (f) \\
& = & d(H(f))+ H(d(f)) \\
& = & (-1)^j d(f\circ h) + (-1)^{j+1} (df) \circ h \\ 
& = & (-1)^j(d_{M''} \circ f \circ h -(-1)^{j+1}f \circ h \circ d_M|_{\ker F}) + (-1)^{j+1} (d_{M''} \circ f \circ h - (-1)^j f \circ d_M|_{\ker F} \circ h) \\
& = & f \circ (d_M|_{\ker F}\circ h + h\circ d_M|_{\ker F}) =f.
\end{eqnarray*}
This shows that $d\circ H + H \circ d = \id_{\Hom_{\hat{R}}(\ker F,M'')}$. Thus, $\Hom_{\HMF}(\ker F,M'')=0$ and, therefore, 
\[
F^{\sharp}: \Hom_{\hat{R}}(M',M'') \rightarrow \Hom_{\hat{R}}(M,M'')
\]
is a quasi-isomorphism preserving the $\zed_2\oplus\zed$-grading.
\end{proof}

\begin{remark}\label{reverse-b-contraction-dual}
Note that $F^{\sharp}: \Hom_{\hat{R}}(M',M'') \rightarrow \Hom_{\hat{R}}(M,M'')$ maps a morphism of matrix factorizations to a morphism of matrix factorizations. By successively using this map, we can sometimes find morphisms representing a given homotopy class. This method also applies to Corollary \ref{a-contraction-dual}.
\end{remark}

The following three corollaries describe how to contract a Koszul matrix factorization using an entry in the left column. Their proofs are very close to that of propositions \ref{b-contraction}, \ref{b-contraction-weak} and \ref{b-contraction-dual}, and are omitted.

\begin{corollary}[strong version]\label{a-contraction}
Let $R$ be a graded commutative unital $\C$-algebra, and $x$ a homogeneous indeterminate with $\deg x \leq 2N+2$. Let $P:R[x]\rightarrow R$ be the evaluation map at $x=0$, that is, $P(f(x))=f(0)$ $\forall ~ f(x) \in R[x]$.

Suppose that $a_1,\dots,a_k,b_1,\dots,b_k$ are homogeneous elements of $R[x]$ such that 
\begin{itemize}
	\item $\deg a_j +\deg b_j = 2N+2$ $\forall~j=1,\dots,k$,
	\item $\sum_{j=1}^k a_jb_j \in R$,
	\item $\exists~ i\in \{1,\dots,k\}$ such that $a_i=x$.
\end{itemize}
Then
\[
M=\left(%
\begin{array}{cc}
  a_1 & b_1 \\
  a_2 & b_2 \\
  \dots & \dots \\
  a_k & b_k
\end{array}%
\right)_{R[x]}
\text{ and }
M'=\left(%
\begin{array}{cc}
  P(a_1) & P(b_1) \\
  P(a_2) & P(b_2) \\
  \dots & \dots \\
  P(a_{i-1}) & P(b_{i-1}) \\
  P(a_{i+1}) & P(b_{i+1}) \\
  \dots & \dots \\
  P(a_k) & P(b_k)
\end{array}%
\right)_{R}\{q^{N+1-\deg x}\}\left\langle 1\right\rangle
\]
are homotopic as graded matrix factorizations over $R$.
\end{corollary}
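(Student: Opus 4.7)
The plan is to reduce Corollary \ref{a-contraction} to Proposition \ref{b-contraction} by a pair of column swaps: first use Lemma \ref{column-reverse-signs} to swap the columns of $M$ so that the entry $a_i=x$ moves into the right column, then apply Proposition \ref{b-contraction} to kill this $x$, then use Lemma \ref{column-reverse-signs} a second time to swap back. All the homotopy-theoretic content (the explicit null-homotopy that contracts $x$) is already packaged in Proposition \ref{b-contraction}; what remains here is only the bookkeeping of the grading shifts contributed by the two column reversals.

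Concretely, let $N'$ denote the Koszul factorization over $R$ with rows $(P(a_j),P(b_j))$ for $j\neq i$, so that the $M'$ in the statement equals $N'\{q^{N+1-\deg x}\}\left\langle 1\right\rangle$. Let $M^\ast$ be the column-swap of $M$ (rows $(b_j,a_j)$), and let $M^{\ast\ast}$ be the output of Proposition \ref{b-contraction} applied to $M^\ast$ (rows $(P(b_j),P(a_j))$ for $j\neq i$); then the column-swap of $M^{\ast\ast}$ is exactly $N'$. Lemma \ref{column-reverse-signs} gives the first swap as an iso of $\zed_2$-degree $k$ and quantum degree $\sum_{j=1}^{k}(N+1-\deg b_j)$, and the second as an iso of $\zed_2$-degree $k-1$ and quantum degree $\sum_{j\neq i}(N+1-\deg a_j)$. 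Since a map of positive quantum degree $s$ becomes a degree-preserving iso precisely after applying a $\{q^{-s}\}$ shift to the target, composing the three steps gives
\[
M\simeq N'\left\langle 2k-1\right\rangle\{q^{-\sum_{j=1}^{k}(N+1-\deg b_j)-\sum_{j\neq i}(N+1-\deg a_j)}\}.
\]

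The exponent collapses using $\deg a_j+\deg b_j=2N+2$: every term with $j\neq i$ cancels in pairs, leaving $-(N+1-\deg b_i)=\deg b_i-N-1=N+1-\deg x$; simultaneously $\langle 2k-1\rangle=\langle 1\rangle$ because the $\zed_2$-shift has order two. Hence $M\simeq N'\{q^{N+1-\deg x}\}\left\langle 1\right\rangle=M'$, as required. The one place where care is needed is the direction convention for shifts induced by maps of nonzero bi-degree (whether a map of positive degree shifts the source up or the target down), but there is no genuine analytic obstacle. An alternative would be to mimic the proof of Proposition \ref{b-contraction} directly, constructing explicit maps $F$, $G$, and a homotopy $h$ after first using Corollaries \ref{twist} and \ref{row-op} to eliminate the $x$-dependence of all entries outside the $i$-th row; the $\{q^{N+1-\deg x}\}\left\langle 1\right\rangle$ shift then emerges from the fact that $x$ sits in the left rather than the right column of its row.
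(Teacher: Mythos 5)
Your proof is correct, and it takes a genuinely different route from the paper's. The paper simply declares Corollaries \ref{a-contraction}--\ref{a-contraction-dual} to ``be very close to'' Propositions \ref{b-contraction}--\ref{b-contraction-dual}, i.e.\ one is meant to repeat the row operations and construct the explicit maps $F$, $G$ and the contracting homotopy $h$ with $x$ now in the left column; the grading shift appears because the deleted row is $(x,0)$ rather than $(0,x)$. You instead treat Proposition \ref{b-contraction} as a black box and transport it by conjugating with the column-swap isomorphism of Lemma \ref{column-reverse-signs}, and your bookkeeping is right: the two swaps contribute $\zed_2$-degrees $k$ and $k-1$ and quantum degrees $\sum_{j=1}^k(N+1-\deg b_j)$ and $\sum_{j\neq i}(N+1-\deg a_j)$; under the paper's convention $V\{q^j\}^{(i)}=V^{(i-j)}$, a degree-$s$ isomorphism $M\to N'$ indeed gives $M\cong N'\{q^{-s}\}$, and telescoping the $j\neq i$ terms against $\deg a_j+\deg b_j=2N+2$ leaves exactly $-(N+1-\deg b_i)=N+1-\deg x$ with $\zed_2$-shift $2k-1\equiv 1$. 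Your route is shorter and avoids reconstructing the null-homotopy; the paper's implicit route is only slightly more informative in that it exhibits the homotopy equivalence directly (exploited in Remark \ref{reverse-b-contraction}), but your composition $M\xrightarrow{\cong}M^\ast\xrightarrow{\simeq}M^{\ast\ast}\xrightarrow{\cong}N'$ furnishes such a map as well, so nothing is lost.
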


\begin{corollary}[weak version]\label{a-contraction-weak}
Let $R$ be a graded commutative unital $\C$-algebra, and $a_1,\dots,a_k,b_1,\dots,b_k$ homogeneous elements of $R$ such that $\deg a_j +\deg b_j = 2N+2$ and $\sum_{j=1}^k a_jb_j=0$. Then the matrix factorization 
\[
M=\left(%
\begin{array}{cc}
  a_1 & b_1 \\
  a_2 & b_2 \\
  \dots & \dots \\
  a_k & b_k
\end{array}%
\right)_R
\]
is a chain complex with a $\zed_2$ homological grading. Assume that, for a given $i\in \{1,\dots,k\}$, $a_i$ is not a zero divisor in $R$. Define $R'=R/(a_i)$, which inherits the grading of $R$. Let $P:R\rightarrow R'$ be the standard projection. Then 
\[
M'=\left(%
\begin{array}{cc}
  P(a_1) & P(b_1) \\
  P(a_2) & P(b_2) \\
  \dots & \dots \\
  P(a_{i-1}) & P(b_{i-1}) \\
  P(a_{i+1}) & P(b_{i+1}) \\
  \dots & \dots \\
  P(a_k) & P(b_k)
\end{array}%
\right)_{R'}
\]
is also a chain complex with a $\zed_2$ homological grading. And $H(M) \cong H(M')\{q^{N+1-\deg a_i}\}\left\langle 1\right\rangle$ as $\zed_2\oplus\zed$-graded $R$-modules.
\end{corollary}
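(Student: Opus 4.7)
The plan is to follow the proof of Proposition~\ref{b-contraction-weak} verbatim, swapping the role of $b_i$ in the $i$-th tensor factor of the Koszul matrix factorization for $a_i$. Inside $(a_i,b_i)_R$, multiplication by $a_i$ sends $1_0$ to $1_1$, whereas $b_i$ previously sent $1_1$ to $1_0$, so the basis elements that survive the projection are now those with $\ve_i=1$ rather than $\ve_i=0$. This parity swap is exactly what produces the $\{q^{N+1-\deg a_i}\}\left\langle 1\right\rangle$ shift on $M'$: the basis vector $1_\ve$ with $\ve_i=1$ carries quantum degree higher by $N+1-\deg a_i$ and $\zed_2$-degree higher by $1$ than $1_{(\ve_1,\dots,\ve_{i-1},\ve_{i+1},\dots,\ve_k)}$.

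Concretely, $M$ is a chain complex because $d_M^2 = \bigl(\sum_j a_jb_j\bigr)\cdot \id_M=0$ by hypothesis, and $M'$ is a chain complex because $\sum_{j\neq i} P(a_j)P(b_j) = P(-a_ib_i) = 0$ in $R'=R/(a_i)$. I would then define a surjective morphism of chain complexes
\[
F\colon M \longrightarrow M'\{q^{N+1-\deg a_i}\}\left\langle 1\right\rangle
\]
that sends $r\cdot 1_\ve$ to $0$ if $\ve_i=0$ and to an $\ve$-dependent sign times $P(r)\cdot 1_{(\ve_1,\dots,\ve_{i-1},\ve_{i+1},\dots,\ve_k)}$ if $\ve_i=1$, the sign being fixed by the requirement that $F$ commute with the differentials. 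Using $d(1_\ve)=\sum_j (-1)^{|\ve|_j}a_{j,\ve_j}\cdot 1_{\ve[j]}$ from Definition~\ref{ve-notation}, one sees directly that on a $\ve_i=1$ basis vector the $j=i$ term of $d$ lands in the $\ve_i=0$ subspace and is killed by $F$; on a $\ve_i=0$ basis vector it carries an $a_i$ factor killed by $P$; and the remaining $j\neq i$ terms reproduce $d_{M'}$ once the sign is correctly chosen to absorb the reindexing of $|\ve|_j$ that takes place when $j>i$ and position $i$ is removed. Preservation of both gradings is the degree matching recorded in the previous paragraph.

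Since $a_i$ is not a zero divisor in $R$,
\[
\ker F \;=\; \bigoplus_{\ve\in I^k,\ \ve_i=0} R\cdot 1_\ve \;\oplus\; \bigoplus_{\ve\in I^k,\ \ve_i=1} (a_iR)\cdot 1_\ve,
\]
and, exactly mirroring the homotopy in Proposition~\ref{b-contraction-weak}, I would define $h\colon\ker F\to\ker F$ by $h(1_\ve)=0$ when $\ve_i=0$ and $h(a_i\cdot 1_\ve)=\pm 1_{(\ve_1,\dots,\ve_{i-1},0,\ve_{i+1},\dots,\ve_k)}$ when $\ve_i=1$, using the fact that division by $a_i$ on $a_iR$ is well defined. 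A sign computation identical in structure to the $b$-contraction case, in which the cross terms from the $j\neq i$ pieces of $dh$ and $hd$ cancel in pairs, gives $d\circ h+h\circ d=\id_{\ker F}$, so $H(\ker F)=0$. The long exact sequence of
\[
0\to\ker F\to M\to M'\{q^{N+1-\deg a_i}\}\left\langle 1\right\rangle\to 0
\]
then yields the asserted graded isomorphism $H(M)\cong H(M')\{q^{N+1-\deg a_i}\}\left\langle 1\right\rangle$. The only care point, which I expect to be the main if minor obstacle, is pinning down the sign in $F$ that encodes the $\left\langle 1\right\rangle$ shift, since the reindexing of $|\ve|_j$ for $j>i$ does not occur in the $b$-contraction proof; once this sign is in place, everything else is mechanical bookkeeping borrowed directly from Proposition~\ref{b-contraction-weak}.
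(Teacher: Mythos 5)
Your proposal is correct and matches the approach the paper intends: the paper omits this proof outright, remarking only that Corollaries \ref{a-contraction}--\ref{a-contraction-dual} ``are very close to that of propositions \ref{b-contraction}, \ref{b-contraction-weak} and \ref{b-contraction-dual},'' and you faithfully reconstruct the $a$-column analogue of the $b$-column argument. You also correctly identify the one genuinely new wrinkle, namely that removing position $i$ when $\ve_i=1$ shifts $|\ve|_j$ by $1$ for all $j>i$, so $F$ needs a $\ve$-dependent sign; the choice $F(1_\ve)=(-1)^{|\ve|_i}1_{\hat\ve}$ (where $|\ve|_i=\sum_{l<i}\ve_l$) makes $F$ anticommute with the differentials, which is the correct condition for a $\zed_2$-degree-$1$ morphism under the paper's sign convention ($d_{M'}\circ f=(-1)^\ve f\circ d_M$) — the word ``commute'' in your writeup is a small terminological slip, but the sign you would be forced into is exactly the one that works, and either a commuting or anticommuting $F$ induces the stated isomorphism $H(M)\cong H(M')\{q^{N+1-\deg a_i}\}\left\langle 1\right\rangle$ in the end.
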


\begin{corollary}[dual version]\label{a-contraction-dual}
Let $R$ be a graded commutative unital $\C$-algebra, and $\hat{R}$ a graded commutative unital sub-algebra of $R$ such that $R$ is a free $\hat{R}$-module. Suppose that $a_1,\dots,a_k,b_1,\dots,b_k$ are homogeneous elements of $R$ such that $\deg a_j +\deg b_j = 2N+2$ and $\sum_{j=1}^k a_jb_j = w \in \hat{R}$. Assume that, for a given $i\in \{1,\dots,k\}$, $a_i$ is not a zero divisor in $R$ and $R'=R/(a_i)$ is also a free $\hat{R}$-module. Define
\[
M=\left(%
\begin{array}{cc}
  a_1 & b_1 \\
  a_2 & b_2 \\
  \dots & \dots \\
  a_k & b_k
\end{array}%
\right)_R
\]
and
\[
M'=\left(%
\begin{array}{cc}
  P(a_1) & P(b_1) \\
  P(a_2) & P(b_2) \\
  \dots & \dots \\
  P(a_{i-1}) & P(b_{i-1}) \\
  P(a_{i+1}) & P(b_{i+1}) \\
  \dots & \dots \\
  P(a_k) & P(b_k)
\end{array}%
\right)_{R'},
\]
where $P:R\rightarrow R'$ is the standard projection. Then, for any matrix factorization $M''$ over $\hat{R}$ with potential $w$, there is a homogeneous quasi-isomorphism 
\[
\Hom_{\hat{R}}(M',M'') \rightarrow \Hom_{\hat{R}}(M,M'')
\]
of $\zed_2$-degree $1$ and quantum degree $\deg a_i - N -1$.
\end{corollary}

\subsection{Categories of homotopically finite graded matrix factorizations} $R$ is again a graded commutative unital $\C$-algebra in this subsection. 

\begin{definition}\label{homotopically-finite-def}
Let $M$ be a graded matrix factorization over $R$ with potential $w$. We say that $M$ is homotopically finite if there exists a finitely generated graded matrix factorization $\mathcal{M}$ over $R$ with potential $w$ such that $M\simeq \mathcal{M}$. 
\end{definition}

\begin{definition}\label{hom-all-gradings-def}
Let $M$ and $M'$ be any two graded matrix factorizations over $R$ with potential $w$. Denote by $d$ the differential map of $\Hom_R(M,M')$.

$\Hom_{\MF}(M,M')$ is defined to be the submodule of $\Hom_R(M,M')$ consisting of morphisms of matrix factorizations from $M$ to $M'$. Or, equivalently, $\Hom_{\MF}(M,M') := \ker d$.

$\Hom_{\HMF}(M,M')$ is defined to be the $R$-module of homotopy classes of morphisms of matrix factorizations from $M$ to $M'$. Or, equivalently,
$\Hom_{\HMF}(M,M')$ is the homology of the chain complex $(\Hom_R(M,M'),d)$.
\end{definition}

It is clear that $\Hom_{\MF}(M,M')$ and $\Hom_{\HMF}(M,M')$ inherit the $\zed_2$-grading of $\Hom_R(M,M')$. Recall that $\Hom_R(M,M')$ has a natural quantum pregrading, and $d$ is homogeneous (with $\deg d =N+1$.) So $\Hom_{\MF}(M,M')$ and $\Hom_{\HMF}(M,M')$ also inherit the quantum pregrading from $\Hom_R(M,M')$.

\begin{definition}\label{hom-zero-grading-def}
Let $M$ and $M'$ be as in Definition \ref{hom-all-gradings-def}

$\Hom_{\mf}(M,M')$ is defined to be the $\C$-linear subspace of $\Hom_{\MF}(M,M')$ consisting of homogeneous morphisms with $\zed_2$-degree $0$ and quantum degree $0$.

$\Hom_{\hmf}(M,M')$ is defined to be the $\C$-linear subspace of $\Hom_{\HMF}(M,M')$ consisting of homogeneous elements with $\zed_2$-degree $0$ and quantum degree $0$.
\end{definition}

Now we introduce four categories of homotopically finite graded matrix factorizations relevant to our construction. We require the grading of the base ring to be bounded below. We will be mainly concerned with the homotopy categories $\HMF_{R,w}$ and $\hmf_{R,w}$.

\begin{definition}\label{categories-def}
Let $R$ a graded commutative unital $\C$-algebra, whose grading is bounded below. Let $w\in R$ be an homogeneous element of degree $2N+2$.  We define $\MF_{R,w}$, $\HMF_{R,w}$, $\mf_{R,w}$ and $\hmf_{R,w}$ by the following table. 

\begin{center}
\small{
\begin{tabular}{|c|c|c|}
\hline
Category & Objects & Morphisms \\
\hline
$\MF_{R,w}$ & all homotopically finite graded matrix factorizations over   & $\Hom_{\MF}$ \\
 &  $R$ of potential $w$ with quantum gradings bounded below &  \\
\hline
$\HMF_{R,w}$ & all homotopically finite graded matrix factorizations over    & $\Hom_{\HMF}$ \\
 & $R$ of potential $w$ with quantum gradings bounded below &  \\
\hline
$\mf_{R,w}$ & all homotopically finite graded matrix factorizations over    & $\Hom_{\mf}$ \\
 & $R$ of potential $w$ with quantum gradings bounded below &  \\
\hline
$\hmf_{R,w}$ & all homotopically finite graded matrix factorizations over    & $\Hom_{\hmf}$ \\
 & $R$ of potential $w$ with quantum gradings bounded below &  \\
\hline
\end{tabular}
}
\end{center}
\end{definition}

\begin{remark} 
\begin{enumerate}[(i)]
  \item The above categories are additive.
	\item The definitions of these categories here are slightly different from those in \cite{KR1}.
  \item The grading of a finitely generated graded matrix factorization over $R$ is bounded below. So finitely generated graded matrix factorizations are objects of the above categories.
  \item Comparing Definition \ref{categories-def} to Definition \ref{cong-and-sim}, one can see that, for any objects $M$ and $M'$ of the above categories, $M \cong M'$ means they are isomorphic as objects of $\mf_{R,w}$, and $M \simeq M'$ means they are isomorphic as objects of $\hmf_{R,w}$.
\end{enumerate}
\end{remark}

\begin{lemma}\label{hom-all-gradings-homo-finite}
Let $M$ and $M'$ be any two graded matrix factorizations over $R$ with potential $w$. Assume that $M$ is homotopically finite. Then the quantum pregrading on $\Hom_{\HMF}(M,M')$ is a grading.

In particular, if the grading of $R$ is bounded below and $M$ and $M'$ are objects of $\MF_{R,w}$, then $\Hom_{\HMF}(M,M')$ has a quantum grading.
\end{lemma}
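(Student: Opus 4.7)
The plan is to reduce the statement to Lemma \ref{pregrade-hom} by transporting everything along the homotopy equivalence supplied by homotopical finiteness. Choose a finitely generated graded matrix factorization $\mathcal{M}$ over $R$ with potential $w$ together with morphisms $f:M\to\mathcal{M}$, $g:\mathcal{M}\to M$ of $\zed_2$- and quantum-degree $0$ satisfying $g\circ f\simeq\id_M$ and $f\circ g\simeq\id_{\mathcal{M}}$.

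First, I would invoke Lemma \ref{pregrade-hom} to upgrade the natural pregrading on $\Hom_R(\mathcal{M},M')$ to a genuine $\zed$-grading, using the finite generation of $\mathcal{M}$. Since the differential on $\Hom_R(\mathcal{M},M')$ is homogeneous of quantum degree $N+1$, both $\ker d$ and $\im d$ are graded submodules, so the quotient $\Hom_{\HMF}(\mathcal{M},M')$ inherits a genuine $\zed$-grading.

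Next, pre-composition with $f$ and $g$ gives quantum-degree-preserving chain maps $\Hom_R(\mathcal{M},M')\leftrightarrows\Hom_R(M,M')$, and the homotopy equivalences $g\circ f\simeq\id_M$, $f\circ g\simeq\id_{\mathcal{M}}$ force the induced maps on homology to be mutually inverse $R$-linear isomorphisms $f^\ast,g^\ast$. Given any $[\phi]\in\Hom_{\HMF}(M,M')$, apply $g^\ast$ to land in $\Hom_{\HMF}(\mathcal{M},M')$, use the genuine grading there to write $g^\ast[\phi]=\sum_k[\psi_k]$ as a finite sum with each $\psi_k\in\Hom_R(\mathcal{M},M')^{(k)}\cap\ker d$, then apply $f^\ast$ to obtain $[\phi]=\sum_k[\psi_k\circ f]$. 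Each $\psi_k\circ f$ lies in $\Hom_R(M,M')^{(k)}\cap\ker d$, so this expresses $[\phi]$ as a finite sum of pregraded-homogeneous classes in $\Hom_{\HMF}(M,M')$. For injectivity of the direct sum map, if $\sum_k[\alpha_k]=0$ with each $\alpha_k\in\Hom_R(M,M')^{(k)}\cap\ker d$, then $\sum_k[\alpha_k\circ g]=0$ in the genuinely graded module $\Hom_{\HMF}(\mathcal{M},M')$, forcing each $[\alpha_k\circ g]=0$ and hence $[\alpha_k]=f^\ast[\alpha_k\circ g]=0$. The ``in particular'' statement is then immediate, since objects of $\MF_{R,w}$ are homotopically finite by Definition \ref{categories-def}.

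The main obstacle is purely notational bookkeeping: one must verify that pre-composition with $f$ and $g$ respects the pregrading at the chain level (which it does, since $f$ and $g$ have quantum degree $0$) and that the ``only finitely many terms nonzero'' feature of a genuine grading on the finitely generated side persists after transport via $f^\ast$. No deeper conceptual difficulty arises; the argument is essentially the standard observation that homotopy equivalence with a finitely generated object inherits the homological consequences of finite generation.
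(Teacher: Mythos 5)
Your proposal is correct and follows essentially the same route as the paper: both reduce to the finitely generated case via a homotopy equivalence $M\simeq\mathcal{M}$, note that pre-composition with $f,g$ gives pregrading-preserving chain maps inducing mutually inverse isomorphisms on $\Hom_{\HMF}$, and transport the genuine grading (supplied on the $\mathcal{M}$-side by Lemma~\ref{pregrade-hom}, or equivalently Lemma~\ref{hom-finite-gen}) back to the $M$-side. The only difference is cosmetic: the paper writes out the explicit chain homotopy $H$ witnessing $f^\sharp\circ g^\sharp\simeq\id_{\Hom_R(M,M')}$, whereas you defer that verification to the standard fact that a chain-level homotopy equivalence induces a pregrading-preserving isomorphism on homology, and instead spell out the transfer-of-grading bookkeeping (which the paper leaves implicit).
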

\begin{proof}
Since $M$ is homotopically finite, there is a finitely generated graded matrix factorization $\mathcal{M}$ over $R$ with potential $w$ such that $M\simeq \mathcal{M}$. That is, there exit homogeneous morphisms $f:M\rightarrow \mathcal{M}$ and $g:\mathcal{M} \rightarrow M$ preserving both the $\zed_2$-grading and the quantum grading such that $g\circ f \simeq \id_M$ and $f \circ g \simeq \id_{\mathcal{M}}$. 

Denote by $d_M$, $d_{M'}$, $d$ the differential maps of $M$, $M'$ and $\Hom_R(M,M')$. Let $f^\sharp:\Hom_R(\mathcal{M},M') \rightarrow \Hom_R(M,M')$ and $g^\sharp:\Hom_R(M,M') \rightarrow \Hom_R(\mathcal{M},M')$ be the $R$-module maps induced by $f$ and $g$. One can easily check that $f^\sharp$ and $g^\sharp$ are chain maps. Since $g\circ f \simeq \id_M$, we know that there exist a homogeneous $R$-module map $h:M\rightarrow M$ of $\zed_2$-degree $1$ and quantum degree $-N-1$ such that 
\[
g\circ f - \id_M = d_M \circ h + h \circ d_M.
\] 
Define an $R$-module map $H:\Hom_R(M,M')\rightarrow \Hom_R(M,M')$ so that, for any $\alpha \in\Hom_R(M,M')$ with $\zed_2$-degree $\ve$, $H(\alpha)=(-1)^{\ve} \alpha \circ h$. Then, for such an $\alpha$, we have 
\begin{eqnarray*}
&   & (dH+Hd)(\alpha) \\
& = & (-1)^{\ve} d(\alpha \circ h) + (-1)^{\ve+1} (d\alpha)\circ h \\
& = & (-1)^{\ve} (d_{M'}\circ\alpha \circ h -(-1)^{\ve+1} \alpha \circ h \circ d_M) + (-1)^{\ve+1} (d_{M'}\circ\alpha  -(-1)^\ve \alpha \circ d_M) \circ h \\ 
& = & \alpha \circ (h \circ d_M + d_M \circ h) = \alpha \circ (g\circ f - \id_M) \\
& = & f^\sharp \circ g^\sharp (\alpha) - \alpha.
\end{eqnarray*}
This shows that $f^\sharp \circ g^\sharp \simeq \id_{\Hom_R(M,M')}$. Similarly, $g^\sharp \circ f^\sharp \simeq \id_{\Hom_R(\mathcal{M},M')}$. Thus, $\Hom_R(M,M') \simeq \Hom_R(\mathcal{M},M')$ and this homotopy equivalence preserves both the $\zed_2$-grading and the quantum pregrading. So $\Hom_{\HMF}(M,M') \cong \Hom_{\HMF}(\mathcal{M},M')$ and the isomorphism preserves both the $\zed_2$-grading and the quantum pregrading. But, by Lemma \ref{hom-finite-gen}, the quantum pregrading of $\Hom_R(\mathcal{M},M')$ is a grading. So the quantum pregrading of $\Hom_{\HMF}(M,M') \cong \Hom_{\HMF}(\mathcal{M},M')$ is also a grading.
\end{proof}

\subsection{Categories of chain complexes} Now we introduce our notations for categories of chain complexes.

\begin{definition}\label{categories-of-complexes}
Let $\mathcal{C}$ be an additive category. We denote by $\ch(\mathcal{C})$ the category of bounded chain complexes over $\mathcal{C}$. More precisely,
\begin{itemize}
	\item An object of $\ch(\mathcal{C})$ is a chain complex 
	\begin{equation}\label{chain-complex-form}
	\xymatrix{
	\cdots  \ar[r]^{d_{i-1}} & A_i \ar[r]^{d_{i}} & A_{i+1} \ar[r]^{d_{i+1}} & A_{i+2} \ar[r]^{d_{i+2}} & \cdots 
	}
	\end{equation}
	where $A_i$'s are objects of $\mathcal{C}$, $d_i$'s are morphisms of $\mathcal{C}$ such that $d_{i+1} \circ d_i =0$ for $i\in \zed$, and there exists integers $k\leq K$ such that $A_i =0$ if $i>K$ or $i<k$.
	\item A morphism $f$ of $\ch(\mathcal{C})$ is a commutative diagram
	\[
	\xymatrix{
	\cdots  \ar[r]^{d_{i-1}} & A_i \ar[r]^{d_{i}} \ar[d]_{f_{i}} & A_{i+1} \ar[r]^{d_{i+1}} \ar[d]_{f_{i+1}} & A_{i+2} \ar[r]^{d_{i+2}} \ar[d]_{f_{i+2}} & \cdots \\
		\cdots  \ar[r]^{d'_{i-1}} & A'_i \ar[r]^{d'_{i}} & A'_{i+1} \ar[r]^{d'_{i+1}} & A'_{i+2} \ar[r]^{d'_{i+2}}  & \cdots
	},
	\]
	where each row is an object of $\ch(\mathcal{C})$ and vertical arrows are morphisms of $\mathcal{C}$.
\end{itemize}
Chain homotopy in $\ch(\mathcal{C})$ is defined the usual way.

We denote by $\hch(\mathcal{C})$ the homotopy category of chain complexes over $\mathcal{C}$, or simply the homotopy category of $\mathcal{C}$. $\hch(\mathcal{C})$ is defined by 
\begin{itemize}
	\item An object of $\hch(\mathcal{C})$ is an object of $\ch(\mathcal{C})$.
	\item For any two objects $A$ and $B$ of $\hch(\mathcal{C})$, $\Hom_{\hch(\mathcal{C})}(A,B)$ is $\Hom_{\ch(\mathcal{C})}(A,B)$ modulo the subgroup of null homotopic morphisms.
\end{itemize}

An isomorphism in $\ch(\mathcal{C})$ is denoted by ``$\cong$". An isomorphism in $\hch(\mathcal{C})$ is commonly known as a homotopy equivalence and denoted by ``$\simeq$". 

Let $A$ be the object of $\ch(\mathcal{C})$ (and $\hch(\mathcal{C})$) given in \eqref{chain-complex-form}. Then $A$ admits an obvious bounded homological grading $\deg_h$ with $\deg_h A_i =i$.  Morphisms of $\ch(\mathcal{C})$ and $\hch(\mathcal{C})$ preserve this grading. Denote by $A\| k \|$ the object of $\ch(\mathcal{C})$ obtained by shifting the homological grading by $k$. That is, $A\| k \|$ is the same chain complex as $A$ except that $\deg_h A_i =i+k$ in $A\| k \|$.
\end{definition}

Let us try to understand how to compute  $\Hom_{\ch(\mathcal{C})}(A,B)$ and $\Hom_{\hch(\mathcal{C})}(A,B)$ for objects $A,~B$ of $\ch(\mathcal{C})$.

\begin{definition}\label{hom-kom-def}
Let $\mathcal{C}$ be an additive category, and $(A,d)$, $(B,d')$ objects of $\ch(\mathcal{C})$. Let $\Kom^0 (A,B)$ be the set of diagrams of the form
\[
	\xymatrix{
	\cdots  \ar[r]^{d_{i-1}} & A_i \ar[r]^{d_{i}} \ar[d]_{f_{i}} & A_{i+1} \ar[r]^{d_{i+1}} \ar[d]_{f_{i+1}} & A_{i+2} \ar[r]^{d_{i+2}} \ar[d]_{f_{i+2}} & \cdots \\
		\cdots  \ar[r]^{d'_{i-1}} & B_i \ar[r]^{d'_{i}} & B_{i+1} \ar[r]^{d'_{i+1}} & B_{i+2} \ar[r]^{d'_{i+2}}  & \cdots
	},
\]
where vertical arrows are morphisms of $\mathcal{C}$, and we do not require any commutativity. Note that $\Kom^0 (A,B)$ is an abelian group. 

For any $k\in\zed$, define $\Kom^k (A,B):= \Kom^0 (A\| k \|,B)$. Note that, if $f \in \Kom^k (A,B)$, then $D_k f:= f\circ d - (-1)^k d' \circ f$ is an element of $\Kom^{k+1} (A,B)$. Clearly, 
\[
(\Kom (A,B):= \bigoplus_{k\in\zed} \Kom^{k} (A,B), ~D:= \bigoplus_{k\in\zed} D_k)
\]
is a bounded chain complex of abelian groups with an obvious homological grading, in which $\Kom^k (A,B)$ has grading $k$.
\end{definition}

The following lemma is obvious from the definitions of $\Hom_{\ch(\mathcal{C})}(A,B)$ and $\Hom_{\hch(\mathcal{C})}(A,B)$.

\begin{lemma}\label{computing-hom-ch-hch}
Using notations from Definition \ref{hom-kom-def}, we have 
\begin{eqnarray*}
\Hom_{\ch(\mathcal{C})}(A,B) & = & \ker D_0, \\
\Hom_{\hch(\mathcal{C})}(A,B) & = & H^0(\Kom (A,B), D) =\ker D_0 / \im D_{-1}.
\end{eqnarray*}
\end{lemma}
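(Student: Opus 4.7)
The plan is to unwind the definitions. Since $\Hom_{\ch(\mathcal{C})}(A,B)$ consists of chain maps and $\Hom_{\hch(\mathcal{C})}(A,B)$ is chain maps modulo null-homotopic ones, everything reduces to recognizing these sets inside the chain complex $(\Kom(A,B),D)$ and matching the sign conventions.

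First, I would verify briefly that $(\Kom(A,B), D)$ is indeed a chain complex, i.e.\ that $D^2 = 0$. For $f \in \Kom^k(A,B)$,
\[
D_{k+1} D_k f = (f\circ d - (-1)^k d'\circ f)\circ d - (-1)^{k+1} d'\circ (f\circ d - (-1)^k d'\circ f),
\]
and expanding, the $d'\circ f \circ d$ terms cancel in pairs while the $d^2$ and $(d')^2$ terms vanish because $A$ and $B$ are chain complexes. So $D^2 = 0$, and $\Kom(A,B)$ is well defined as a bounded chain complex of abelian groups.

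Next, for the first identity, I observe that an element $f \in \Kom^0(A,B)$ is exactly the data of a collection of morphisms $f_i : A_i \to B_i$ in $\mathcal{C}$ (with no commutativity imposed). Such an $f$ is a morphism of $\ch(\mathcal{C})$ precisely when the squares commute, i.e.\ $f_{i+1}\circ d_i = d'_i \circ f_i$ for every $i$, which is exactly the identity $f\circ d = d'\circ f$, i.e.\ $D_0 f = 0$. Thus $\Hom_{\ch(\mathcal{C})}(A,B) = \ker D_0$.

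For the second identity, I would use the standard characterization of null-homotopic chain maps. A chain map $f : A\to B$ is null-homotopic iff there exists a family $h_i : A_i \to B_{i-1}$, i.e.\ an element $h \in \Kom^{-1}(A,B)$, with $f_i = d'_{i-1}\circ h_i + h_{i+1}\circ d_i$ for every $i$. Because $D_{-1} h = h\circ d - (-1)^{-1} d'\circ h = h\circ d + d'\circ h$, this is exactly the condition $f = D_{-1} h$, so the null-homotopic chain maps are precisely the image of $D_{-1}$. Combining with the previous step,
\[
\Hom_{\hch(\mathcal{C})}(A,B) = \Hom_{\ch(\mathcal{C})}(A,B)/(\text{null-homotopic}) = \ker D_0 / \operatorname{im} D_{-1} = H^0(\Kom(A,B), D).
\]
There is no real obstacle here; the only care needed is to keep the signs in $D_k$ straight so that $D_0$ indeed detects commutativity of squares and $D_{-1}$ indeed produces the chain-homotopy formula with the correct sign.
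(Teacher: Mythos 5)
The paper gives no proof at all, stating only that the lemma ``is obvious from the definitions'' of $\Hom_{\ch(\mathcal{C})}$ and $\Hom_{\hch(\mathcal{C})}$; your careful unwinding of the sign conventions in $D_0$ and $D_{-1}$, together with the check that $D^2=0$, is exactly the routine verification the paper is implicitly appealing to. Your argument is correct and matches the intended (unwritten) proof.
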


\section{Graded Matrix Factorizations over a Polynomial Ring}\label{sec-mf-polynomial}

In this section, we review the algebraic properties of graded matrix factorizations over polynomial rings. Most of these properties can be found in \cite{KR1}.

In the rest of this section, we assume $R=\C[X_1,\dots,X_m]$ is a polynomial ring over $\C$, where $X_1,\dots,X_m$ are homogeneous indeterminates of positive integer degrees. There is a natural grading $\{R^{(i)}\}$ of $R$. It is clear that, for each $i$, $R^{(i)}$ is finite dimensional. In particular, $R^{(i)}=0$ if $i<0$ and $R^{(0)}=\C$. Also, $R$ has a unique maximal homogeneous ideal $\mathfrak{I} = (X_1,\dots,X_m)$.

\begin{definition}
For a homogeneous element $w\in \mathfrak{I}$ of degree $2N+2$, the Jacobian ideal of $w$ is defined to be $I_w=(\frac{\partial w}{\partial X_1},\dots, \frac{\partial w}{\partial X_m})$. We call $w$ non-degenerate if the Jacobian algebra $R_w:= R/I_w$ is finite dimensional over $\C$. Otherwise, we call $w$ degenerate.\footnote{In \cite{KR1}, the word ``potential" refers to a non-degenerate element of $\mathfrak{I}^2$. We adopt a more relaxed convention here. A potential can be any element of $\mathfrak{I}$, degenerate or non-degenerate.} 
\end{definition}

Note that, for any homogeneous element $w\in \mathfrak{I}$ of degree $2N+2$, Euler's formula gives that
\[
w = \frac{1}{2N+2} \sum_{i=1}^m (\deg X_i) \cdot X_i\frac{\partial w}{\partial X_i}.
\]
Thus, $w$ is in its Jacobian ideal.

\begin{lemma}\cite[Propositions 5]{KR1}\label{action-factor-thru-jacobian-ring}
Let $M$ and $M'$ be objects of $\HMF_{R,w}$. Then the action of $R$ on $\Hom_{\HMF}(M,M')$ factors through the Jocobian ring $R_w$.
\end{lemma}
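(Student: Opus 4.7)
The plan is to show directly that every generator of the Jacobian ideal $I_w$ annihilates $\Hom_{\HMF}(M,M')$, so that the $R$-action descends to $R_w = R/I_w$. By Euler's formula $w \in I_w$, it suffices to show that each partial derivative $\partial w/\partial X_i$ acts as zero. The key tool is the standard ``differentiate the factorization identity'' trick: from $d_M^2 = w \cdot \id_M$ I expect to extract a contracting homotopy for multiplication by $\partial w/\partial X_i$ on $M$ itself.

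First, I would pick homogeneous $R$-bases for $M_0$ and $M_1$. Since $M$ is in $\HMF_{R,w}$, its quantum grading is bounded below, and the forthcoming result in Section \ref{sec-mf-polynomial} ensures that free graded $R$-modules with grading bounded below are graded-free, so such bases exist. With respect to these bases, $d_M$ is represented by a (possibly infinite) matrix with polynomial entries, each column having finite support, so the entry-wise partial derivative $h_i := \partial d_M / \partial X_i$ is a well-defined $R$-linear endomorphism of $M$ of $\zed_2$-degree $1$ and quantum degree $N+1 - \deg X_i$. Applying $\partial/\partial X_i$ to the identity $d_M \circ d_M = w \cdot \id_M$ yields
\[
d_M \circ h_i + h_i \circ d_M \;=\; \frac{\partial w}{\partial X_i} \cdot \id_M,
\]
which is precisely the statement that $D(h_i) = (\partial w/\partial X_i)\cdot \id_M$ in the chain complex $(\Hom_R(M,M), D)$.

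Given any morphism $f \colon M \to M'$ of matrix factorizations of $\zed_2$-degree $\ve$, the composition $f \circ h_i$ lies in $\Hom_R^{\ve+1}(M,M')$, and a short computation (using $d_{M'}\circ f = (-1)^\ve f \circ d_M$) gives
\[
D(f\circ h_i) \;=\; (-1)^\ve\, f \circ (d_M\circ h_i + h_i \circ d_M) \;=\; (-1)^\ve\,\frac{\partial w}{\partial X_i}\cdot f.
\]
Hence $(\partial w/\partial X_i)\cdot f$ is a $D$-boundary, so its class in $\Hom_{\HMF}(M,M')$ is zero. (Alternatively, one could invoke Lemma \ref{morphism-sign} applied to the null-homotopic endomorphism $(\partial w/\partial X_i)\cdot \id_M$ of $M$.) Running this for all $i$ kills the generators $\partial w/\partial X_1,\dots,\partial w/\partial X_m$ of $I_w$, and Euler's formula kills $w$ as well, so the $R$-action on $\Hom_{\HMF}(M,M')$ indeed factors through $R_w$.

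The only potential obstacle is ensuring that $\partial d_M/\partial X_i$ is honestly well defined when $M$ is not finitely generated, but this is handled by graded-freeness (basis + finite-support columns). If one prefers to avoid this, a preliminary reduction via homotopic finiteness replaces $M$ by a finitely generated $\mathcal{M}$ with $M\simeq\mathcal{M}$ and uses that this homotopy equivalence induces an isomorphism $\Hom_{\HMF}(M,M')\cong \Hom_{\HMF}(\mathcal{M},M')$ of $R$-modules, reducing everything to the finitely generated case where the matrix computation is unambiguous.
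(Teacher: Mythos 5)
Your proof is correct and takes essentially the same approach as the paper: differentiate $d_M^2 = w\cdot\id_M$ with respect to $X_i$ to produce a contracting homotopy for multiplication by $\partial w/\partial X_i$, then conclude that these generators of $I_w$ (and hence $w$, by Euler's formula) annihilate $\Hom_{\HMF}(M,M')$. Your extra care about graded-freeness and finite-support columns, and the fallback reduction to the finitely generated case, are sensible elaborations of the same argument rather than a genuinely different route.
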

\begin{proof} (Following \cite{KR1}.)
Choose a basis for $M$ and express the differential $d$ of $M$ as a matrix $D$. Differentiating $D^2=w\cdot \id$ by $X_i$, we get $\frac{\partial D}{\partial X_i} \circ D + D \circ \frac{\partial D}{\partial X_i} = \frac{\partial w}{\partial X_i} \cdot \id$. So multiplication by $\frac{\partial w}{\partial X_i}$ on $M$ is a morphism homotopic to $0$. Thus multiplication by $\frac{\partial w}{\partial X_i}$ on $\Hom_{\HMF}(M,M')$ is the zero map.
\end{proof}

\subsection{Homogeneous basis} In general, a free graded module over a graded ring is not necessarily graded-free, that is, need not have a basis consisting of homogeneous elements. (See Definition \ref{graded-free-def}.) However, if the base ring is $R$, and the grading on the free module is bounded below, then the module has a homogeneous basis. We prove this using argument in \cite[Chapter 13]{Passman-book}. First, we introduce the following definition from \cite[Chapter 13]{Passman-book}.

\begin{definition}\label{graded-proj-def}
Let $P$ be a graded $R$-module. We say that $P$ is graded-projective if, whenever we have a diagram
\[
\xymatrix{
& P \ar[d]^\beta &  \\
V \ar[r]^\alpha & W \ar[r] & 0 \\
}
\]
of graded $R$-modules with exact row, where $\alpha$ and $\beta$ are homogeneous $R$-module maps preserving the grading, there exists a homogeneous $R$-module map $\gamma: P \rightarrow V$ that preserves the grading and makes the following diagram commutative.
\[
\xymatrix{
& P \ar[d]^\beta \ar@{-->}[dl]_\gamma &  \\
V \ar[r]^\alpha & W \ar[r] & 0 \\
}
\]
\end{definition}

\begin{lemma}\cite{Passman-book}\label{homogeneous-basis-exists}
Let $M$ be a free graded $R$-module whose grading is bounded below. Then $M$ is graded-free over $R$, that is, $M$ admits a homogeneous basis over $R$.

In particular, for any homogeneous element $w\in \mathfrak{I}$ of degree $2N+2$, every object of $\MF_{R,w}$ admits a homogeneous basis.
\end{lemma}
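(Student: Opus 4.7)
The plan is to lift a homogeneous $\C$-basis of $M/\mathfrak{I}M$ to $M$ and then upgrade this lift to an actual basis of $M$ using the freeness hypothesis. First, up to a degree shift (which does not affect the existence of a homogeneous basis), I may assume $M^{(i)}=0$ for $i<\ell$ for some $\ell\in\zed$. Since $R/\mathfrak{I}=R^{(0)}=\C$, the quotient $V:=M/\mathfrak{I}M$ is a graded $\C$-vector space whose grading is bounded below. Choose a homogeneous $\C$-basis $\{\bar e_\alpha\}$ of $V$ and lift each $\bar e_\alpha$ to a homogeneous element $e_\alpha\in M$ of the same degree. Let $F:=\bigoplus_\alpha R\{q^{\deg e_\alpha}\}$ with standard homogeneous generators $\tilde e_\alpha$, and define the graded $R$-module map $\phi:F\rightarrow M$ by $\phi(\tilde e_\alpha)=e_\alpha$; by construction the induced graded $\C$-linear map $\bar\phi:F/\mathfrak{I}F\rightarrow M/\mathfrak{I}M$ is an isomorphism.

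Next I check that $\phi$ is surjective by a graded Nakayama argument. Let $N:=\phi(F)$, so that $M=N+\mathfrak{I}M$. If $N\neq M$, pick the minimal $d$ with $N^{(d)}\subsetneq M^{(d)}$ and $x\in M^{(d)}\setminus N^{(d)}$; then $x\equiv\sum_i r_iy_i\pmod{N}$ with $r_i\in\mathfrak{I}$ and $y_i\in M$ homogeneous, and since each $X_j$ has positive degree, every nonzero homogeneous $r_i$ satisfies $\deg r_i\geq 1$, forcing $\deg y_i<d$. By minimality of $d$ all such $y_i$ lie in $N$, so $x\in N$, a contradiction. Hence $\phi$ is surjective.

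For injectivity I now use that $M$ is free as an ungraded $R$-module: the surjection $\phi$ admits an $R$-module section $s:M\rightarrow F$ (which need not be homogeneous), giving an ungraded direct sum decomposition $F=s(M)\oplus K$ with $K:=\ker\phi$. Tensoring with $\C=R/\mathfrak{I}$ produces $F/\mathfrak{I}F\cong s(M)/\mathfrak{I}s(M)\oplus K/\mathfrak{I}K$ as $\C$-vector spaces, and since $\phi|_{s(M)}$ is the inverse of $s$ while $\phi|_K=0$, the fact that $\bar\phi$ is an isomorphism forces $K/\mathfrak{I}K=0$, i.e.\ $K=\mathfrak{I}K$. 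Since $K$ is a graded submodule of $F$, it inherits the bounded-below grading of $F$, so a second application of the same graded Nakayama argument (now to $K$ itself) gives $K=0$. Thus $\phi$ is an isomorphism and $\{e_\alpha\}$ is the desired homogeneous basis. The ``in particular'' statement is then immediate, since by Definition~\ref{categories-def} the underlying modules $M_0,M_1$ of an object of $\MF_{R,w}$ are free graded $R$-modules with gradings bounded below.

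The main obstacle is the injectivity step. Surjectivity is a direct Nakayama-type argument, but injectivity requires more: one cannot just compare ranks, since the rank may be infinite, and one should not expect to find a homogeneous section of $\phi$ directly, as doing so would essentially amount to the statement being proved. The resolution is to use the ungraded freeness of $M$ to split $\phi$ in the category of $R$-modules, transport the splitting across the quotient by $\mathfrak{I}$, and then apply graded Nakayama a second time to the resulting kernel. The bounded-below hypothesis on the grading is essential for both applications of Nakayama; without it the argument collapses.
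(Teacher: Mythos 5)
Your proof is correct, but it takes a genuinely different route from the paper's. The paper's argument is a two-line citation chain to Passman's book: a free graded module is graded and projective, hence graded-projective by \cite[Lemma 13.3]{Passman-book}, and a graded-projective module over a graded ring whose degree-zero part is a field is graded-free by \cite[Exercise 3, page 130]{Passman-book}. Your argument is instead a self-contained construction: you lift a homogeneous $\C$-basis of $M/\mathfrak{I}M$ to homogeneous elements of $M$, build the comparison map $\phi:F\to M$ from the corresponding graded-free model, prove surjectivity by graded Nakayama, and then handle injectivity by splitting $\phi$ in the \emph{ungraded} category (using that $M$ is free, hence projective, as an ungraded $R$-module), transporting the splitting across $-\otimes_R \C$ to conclude $K=\mathfrak{I}K$ for the kernel $K$, and finishing with a second graded Nakayama on the graded submodule $K\subset F$. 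This is clean and correct: the key maneuver of extracting an ungraded section to avoid needing a homogeneous section before the theorem is proved is exactly the right way to close the loop, and you correctly note that $K=\ker\phi$ is automatically graded because $\phi$ is homogeneous. What your version buys is transparency — it makes visible that the bounded-below hypothesis is used precisely in the two Nakayama steps — at the cost of length; what the paper's version buys is brevity, at the cost of opacity (the role of the hypothesis is buried inside the cited results). Both are legitimate.
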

\begin{proof}
Since $M$ is graded and free, it is graded and projective. By \cite[Lemma 13.3]{Passman-book}, $M$ is graded-projective. Recall that $R^{(0)}=\C$ and any $\C$-linear space has a basis. So, according to \cite[Page 130, Exercise 3]{Passman-book}, $M$ is graded-free. 

By definition, the quantum grading of every object of $\MF_{R,w}$ is bounded below. So the above argument applies to objects of $\MF_{R,w}$.
\end{proof}

\subsection{Homology of graded matrix factorizations over $R$}\label{homology-homotopy} Let $w\in \mathfrak{I}$ be a homogeneous element of degree $2N+2$, and $M$ a graded matrix factorization over $R$ with potential $w$. Note that $M/\mathfrak{I} M$ is a chain complex over $\C$, and it inherits the gradings of $M$.

\begin{definition}\label{homology-matrix-factorization-def}
$H_R(M)$ is defined to be the homology of $M/\mathfrak{I} M$. It inherits the gradings of $M$. If $R$ is clear from the context, we drop it from the notation. 

Denote by $H_R^{\ve,i}(M)$ the subspace of $H_R(M)$ consisting of homogeneous elements of $\zed_2$-degree $\ve$ and quantum degree $i$. If $\dim H_R^{\ve,i}(M) < \infty$ $\forall ~\ve,i$ , we define the graded dimension of $M$ to be
\[
\gdim_R(M) = \sum_{\ve,i} \tau^{\ve} q^i \dim_\C H_R^{\ve,i}(M) ~\in ~\zed[[q]][\tau]/(\tau^2-1).
\]
Again, if $R$ is clear from the context, we drop it from the notation.
\end{definition}

\begin{remark}
One needs to be careful when dropping $R$ from notations. For example, when $w=0$, $M$ is itself a chain complex. Denote by $H_\C(M)$ the usual homology of $M$. In general, $H_R(M)\ncong H_\C (M)$.  Carelessly dropping $R$ from notations in such situations may lead to confusion.
\end{remark}

Any homogeneous morphism of graded matrix factorizations induces a homogeneous homomorphism of the homology, and homotopic morphisms induce the same homomorphism of the homology. In particular, $f:M \xrightarrow{\simeq} M'$ being a homotopy equivalence implies that the induces map $f_\ast:H_R(M) \xrightarrow{\cong} H_R(M')$ is an isomorphism. Surprisingly, according to \cite[Proposition 8]{KR1}, the converse is also true. Next we review properties of the homology of matrix factorizations given in \cite{KR1}.

\begin{lemma}\label{free-module-quotient-tensor}
Let $M$ be a free graded $R$-module, whose grading is bounded below. Let $V=M/\mathfrak{I} M$. Then there is a homogeneous $R$-module map $F: V \otimes_\C R \rightarrow M$ preserving the grading. In particular, if $\{v_\beta | \beta \in \mathcal{B}\}$ is a homogeneous $\C$-basis for $V$, then $\{F(v_\beta \otimes 1) | \beta \in \mathcal{B}\}$ is a homogeneous $R$-basis for $M$.
\end{lemma}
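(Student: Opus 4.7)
The plan is to combine Lemma \ref{homogeneous-basis-exists} with a Nakayama-type argument that exploits the special features $R^{(0)}=\C$ and $\mathfrak{I}\cap R^{(0)}=0$. I would not try to construct $F$ intrinsically; instead I would define $F$ via a choice of homogeneous $R$-basis of $M$ and then verify that the resulting isomorphism sends any homogeneous $\C$-basis of $V$ to a homogeneous $R$-basis of $M$.

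First I would invoke Lemma \ref{homogeneous-basis-exists} to fix a homogeneous $R$-basis $\{m_\alpha \mid \alpha\in\mathcal{A}\}$ of $M$, and let $\bar m_\alpha$ denote the class of $m_\alpha$ in $V=M/\mathfrak{I}M$. The central preliminary step is to check that $\{\bar m_\alpha\}$ is a homogeneous $\C$-basis of $V$. Spanning is immediate from the decomposition $R=\C\oplus\mathfrak{I}$: expanding any $m\in M$ in the basis $\{m_\alpha\}$, the $\mathfrak{I}$-parts of the coefficients vanish modulo $\mathfrak{I}M$, leaving a $\C$-linear combination of the $\bar m_\alpha$. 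For linear independence, a relation $\sum c_\alpha \bar m_\alpha=0$ with $c_\alpha\in\C$ means $\sum c_\alpha m_\alpha=\sum_\beta r_\beta m_\beta$ with $r_\beta\in\mathfrak{I}$, and uniqueness of coordinates in the $R$-basis $\{m_\alpha\}$ forces $c_\alpha=r_\alpha\in\C\cap\mathfrak{I}=0$.

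Next I would define $F:V\otimes_\C R\to M$ as the unique $R$-linear extension of $\bar m_\alpha\otimes 1\mapsto m_\alpha$. Since $\bar m_\alpha$ and $m_\alpha$ have the same degree and $\mathfrak{I}$ is a homogeneous ideal, $F$ is manifestly homogeneous of degree $0$. It is an $R$-module isomorphism because it sends the $R$-basis $\{\bar m_\alpha\otimes 1\}$ of $V\otimes_\C R$ bijectively to the $R$-basis $\{m_\alpha\}$ of $M$. For the \emph{in particular} clause, if $\{v_\beta\mid \beta\in\mathcal{B}\}$ is any homogeneous $\C$-basis of $V$, then $\{v_\beta\otimes 1\}$ is a homogeneous $R$-basis of $V\otimes_\C R$, and applying the grading-preserving $R$-module isomorphism $F$ transports it to a homogeneous $R$-basis $\{F(v_\beta\otimes 1)\}$ of $M$.

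There is no serious obstacle in this argument; the one point requiring care is the verification that $\{\bar m_\alpha\}$ is actually a $\C$-basis of $V$, which is where the hypothesis that the grading of $M$ is bounded below enters (through Lemma \ref{homogeneous-basis-exists}) together with the elementary but essential fact that $\C\cap\mathfrak{I}=0$. Everything else is bookkeeping with gradings.
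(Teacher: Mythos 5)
Your proof is correct and follows essentially the same route as the paper: invoke Lemma~\ref{homogeneous-basis-exists} to obtain a homogeneous $R$-basis $\{m_\alpha\}$ of $M$, observe that the reductions $\{\bar m_\alpha\}$ give a homogeneous $\C$-basis of $V$, and let $F$ be the resulting graded $R$-isomorphism $V\otimes_\C R\to M$. The paper's proof simply compresses the step you spell out (that $\{\bar m_\alpha\}$ is a $\C$-basis of $V$) into the assertion $V\cong\bigoplus_\alpha\C\cdot e_\alpha$; your explicit Nakayama-style verification using $\C\cap\mathfrak{I}=0$ is the correct justification for that assertion.
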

\begin{proof}
By Lemma \ref{homogeneous-basis-exists}, $M$ has a homogeneous basis $\{e_\alpha|\alpha\in\mathcal{A}\}$. Then, as graded vector spaces, $V \cong \bigoplus_{\alpha\in\mathcal{A}} \C \cdot e_\alpha$. So, as graded $R$-modules,
\[
M \cong \bigoplus_{\alpha\in\mathcal{A}} R \cdot e_\alpha \cong V \otimes_\C R.
\]
This proves the existence of $F$. The second part of the lemma follows easily.
\end{proof}

The next proposition is a reformulation of \cite[Proposition 7]{KR1}. For the convenience of the reader, we give a detailed proof here.

\begin{proposition}\cite[Proposition 7]{KR1}\label{contractible-essential-decomp}
Let $M$ be a graded matrix factorization over $R$ with homogeneous potential $w\in\mathfrak{I}$ of degree $2N+2$. Assume the quantum grading of $M$ is bounded below. Then there exist graded matrix factorizations $M_c$ and $M_{es}$ over $R$ with potential $w$ such that
\begin{enumerate}[(i)]
	\item $M \cong M_c \oplus M_{es}$,
	\item $M_c \simeq 0$ and, therefore, $M\simeq M_{es}$,
	\item $M_{es} \cong H_R(M)\otimes_\C R$ as graded $R$-modules, and $H_R(M) \cong M_{es}/\mathfrak{I}M_{es}$ as graded $\C$-spaces.
\end{enumerate}
\end{proposition}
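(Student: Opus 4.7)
The plan is to descend modulo $\mathfrak{I}$, decompose over $\C$, and lift back using Lemma~\ref{free-module-quotient-tensor}. Because $w \in \mathfrak{I}$, the differential $d_M$ induces a differential $\bar d$ on $V := M/\mathfrak{I} M$ with $\bar d^2 = 0$, making $V$ a $\zed_2 \oplus \zed$-graded chain complex of $\C$-vector spaces whose homology is, by definition, $H_R(M)$. Over the field $\C$, I would choose a homogeneous basis of $V$ of the form $\{h_\alpha\} \cup \{c_\beta, b_\beta\}$, where the classes $[h_\alpha]$ form a basis of $H_R(M)$, the $c_\beta$ span a homogeneous complement of $\ker \bar d$, and $b_\beta := \bar d c_\beta$.

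Next, I would apply Lemma~\ref{free-module-quotient-tensor} to pull this basis back to a homogeneous $R$-basis $\{\tilde h_\alpha, \tilde c_\beta, \tilde b_\beta\}$ of $M$. Replace each $\tilde b_\beta$ by $d_M \tilde c_\beta$; since this reduction still equals $b_\beta$ in $V$, the resulting family remains an $R$-basis by a Nakayama argument. Let $M_c$ be the $R$-span of $\{\tilde c_\beta, d_M \tilde c_\beta\}_\beta$ and $\tilde M_{es}$ the $R$-span of $\{\tilde h_\alpha\}_\alpha$, so $M = M_c \oplus \tilde M_{es}$ as graded $R$-modules. Since $d_M^2 \tilde c_\beta = w \tilde c_\beta$, the submodule $M_c$ is closed under $d_M$ and splits as a direct sum $\bigoplus_\beta \bigl(R \tilde c_\beta \oplus R \, d_M \tilde c_\beta\bigr)$ of Koszul factorizations of the form $(1, w)_R$ (up to appropriate quantum-grading shift). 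Each such factorization is contractible via the explicit homotopy $h_1 = 1$, $h_0 = 0$, so $M_c \simeq 0$.

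It remains to promote $\tilde M_{es}$ to an honest subfactorization. Writing $d_M$ in block form relative to $M_c \oplus \tilde M_{es}$, stability of $M_c$ gives
\[
d_M = \begin{pmatrix} d_c & d_{mix} \\ 0 & d_{es} \end{pmatrix},
\]
and $d_M^2 = w \cdot \id$ forces $d_{es}^2 = w \cdot \id$ together with the twist relation $d_c d_{mix} + d_{mix} d_{es} = 0$. With the contracting homotopy $h$ on $M_c$ (homogeneous of $\zed_2$-degree $1$ and quantum degree $-N-1$, assembled from the factorwise homotopies), define $\phi := -h \circ d_{mix} : \tilde M_{es} \to M_c$. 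The identity $d_c h + h d_c = \id_{M_c}$ combined with the twist relation gives $d_c \phi - \phi d_{es} = -d_{mix}$, so $s := \iota + \phi : \tilde M_{es} \to M$ is a homogeneous chain map preserving both gradings. Setting $M_{es} := s(\tilde M_{es})$ yields $M = M_c \oplus M_{es}$ as graded matrix factorizations, proving (i) and (ii). For (iii), $M_{es}/\mathfrak{I} M_{es}$ maps isomorphically onto the $\C$-span of the $[\tilde h_\alpha]$, i.e.\ $H_R(M)$, and a second application of Lemma~\ref{free-module-quotient-tensor} identifies $M_{es}$ with $H_R(M) \otimes_\C R$ as graded $R$-modules.

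The main technical obstacle I anticipate is the simultaneous bookkeeping of the $\zed_2$ and quantum gradings throughout: the basis choice in $V$, the lift, the factorwise Koszul homotopies, and the perturbation $\phi$ must all be verified to be homogeneous of the correct bidegrees. This is not automatic from the underlying $R$-module splitting, and it is precisely the point at which the boundedness-below of the quantum grading of $M$ enters (ensuring, together with the positivity of $\deg X_i$, that each homogeneous component of $V$ is well-behaved enough for the decomposition to be carried out degree-by-degree).
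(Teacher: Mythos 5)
Your proof is correct, and while it shares the paper's overall skeleton (descend modulo $\mathfrak{I}$, split the $\C$-complex $V$, lift via Lemma~\ref{free-module-quotient-tensor}), the mechanism for producing the subfactorizations is genuinely different. The paper uses Zorn's lemma to extract a maximal ``good set'' of pairs $(\hat{u}_\alpha,\hat{v}_\alpha)$ with $\hat{d}_0\hat{u}_\alpha = \hat{v}_\alpha$, lifts, and then performs an explicit two-round change of basis (first peeling off the $(1,w)$-summands from $M$, then the $(w,1)$-summands from the complement $M'$), with convergence of the base change controlled by the degree-increasing chain sets $C^k_{\beta\alpha}$. You instead choose at once the homogeneous decomposition $V = H \oplus C \oplus \bar{d}C$, which treats both $\zed_2$-parities in a single pass, build $M_c$ directly from $\{\tilde{c}_\beta, d_M\tilde{c}_\beta\}$, and sidestep the basis-straightening altogether via a homological-perturbation trick: $M_{es}$ is the graph of $\phi = -h\circ d_{mix}$ for $h$ the contracting homotopy on $M_c$. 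The sign verification $d_c\phi - \phi d_{es} = -d_{mix}$, from $d_c d_{mix}+d_{mix}d_{es}=0$ and $d_c h + h d_c = \id_{M_c}$, goes through, and the $\zed_2$- and quantum-degree bookkeeping is consistent. Your approach buys conceptual economy and a one-pass treatment of $(1,w)$- and $(w,1)$-type summands; the paper's approach yields explicit basis formulas that it reuses downstream (e.g.\ in the proof of Lemma~\ref{idempotent-lifting}). The one point you gesture at rather than argue is that $\{\tilde{h}_\alpha,\tilde{c}_\beta, d_M\tilde{c}_\beta\}$ remains an $R$-basis; calling this a ``Nakayama argument'' is imprecise, since $M$ need not be finitely generated and Nakayama's lemma does not literally apply. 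What is really needed is that a homogeneous, column-finite base change $I + F$ with $F$ having entries in $\mathfrak{I}$ is invertible over the bounded-below graded free module --- precisely the content of the paper's $t_{\beta\alpha}$-construction, whose termination is forced because any strictly degree-increasing chain ending below a fixed degree has bounded length. You correctly flag boundedness below as the crux, but this step should be an argument, not a name.
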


\begin{proof}
(Following \cite{KR1}.) Write $M$ as $M_0 \xrightarrow{d_0} M_1 \xrightarrow{d_1} M_0$. Then the chain complex $V:=M/\mathfrak{I}M$ is given by $V_0 \xrightarrow{\hat{d}_0} V_1 \xrightarrow{\hat{d}_1} V_0$, where $V_\ve=M_\ve/\mathfrak{I}M_\ve$ for $\ve=0,1$. By Lemma \ref{homogeneous-basis-exists}, $M_\ve$ has a homogeneous basis $\{e_\sigma|\sigma\in \mathcal{S}_\ve\}$, which induces a homogeneous $\C$-basis $\{\hat{e}_\sigma|\sigma\in\mathcal{S}_\ve\}$ for $V_\ve$. Under this homogeneous basis, the entries of matrices of $d_0$ and $d_1$ are homogeneous polynomials. And the matrices of $\hat{d}_0$ and $\hat{d}_1$ are obtained by letting $X_1=\cdots=X_m=0$ in the matrices of $d_0$ and $d_1$, which preserves scalar entries and kills entries with positive degrees.

We call $\{(\hat{u}_\rho,\hat{v}_\rho)|\rho\in\mathcal{P}\}$ a ``good" set if
\begin{itemize}
	\item $\{\hat{u}_\rho|\rho\in\mathcal{P}\}$ is set of linearly independent homogeneous elements in $V_0$,
	\item $\{\hat{v}_\rho|\rho\in\mathcal{P}\}$ is set of linearly independent homogeneous elements in $V_1$,
	\item $\hat{d}_0(\hat{u}_\rho) =\hat{v}_\rho$ and $\hat{d}_1(\hat{v}_\rho) =0$.
\end{itemize}
Using Zorn's Lemma, we find a maximal ``good" set $G=\{(\hat{u}_\alpha,\hat{v}_\alpha)|\alpha\in\mathcal{A}\}$. Using Zorn's Lemma again, we extend $\{\hat{u}_\alpha|\alpha\in\mathcal{A}\}$ into a homogeneous basis $\{\hat{u}_\alpha|\alpha\in\mathcal{A}\cup \mathcal{B}_0\}$ for $V_0$, and $\{\hat{v}_\alpha|\alpha\in\mathcal{A}\}$ into a homogeneous basis $\{\hat{v}_\alpha|\alpha\in\mathcal{A}\cup \mathcal{B}_1\}$ for $V_1$. For each $\beta \in \mathcal{B}_0$, we can write $\hat{d}_0 \hat{u}_\beta = \sum_{\alpha\in\mathcal{A}\cup \mathcal{B}_1} c_{\alpha\beta} \cdot \hat{v}_\alpha$, where $c_{\alpha\beta}\in \C$, and the right hand side is a finite sum. 

By Lemma \ref{free-module-quotient-tensor}, there is a homogeneous isomorphism $F_\ve:V_\ve \otimes_\C R \xrightarrow{\cong}M_\ve$ preserving the $\zed_2\oplus\zed$-grading. Let $u_\alpha = F_0(\hat{u}_\alpha \otimes 1)$ and  $v_\alpha = F_1(\hat{v}_\alpha \otimes 1)$. Then $\{u_\alpha|\alpha\in\mathcal{A}\cup \mathcal{B}_0\}$ and $\{v_\alpha|\alpha\in\mathcal{A}\cup \mathcal{B}_1\}$ are homogeneous $R$-bases for $M_0$ and $M_1$. Under this basis, we have that, for any $\alpha \in \mathcal{A}$,
\[
d u_\alpha = v_\alpha + \sum_{\beta \in \mathcal{A}\cup \mathcal{B}_1,~\beta \neq \alpha} f_{\beta \alpha} v_\beta,
\]
where $f_{\beta \alpha} \in \mathfrak{I}$ and the sum on the right hand side is a finite sum. That is, for each $\alpha$,
\begin{equation}\label{contractible-essential-decomp-finite-sum}
f_{\beta \alpha} = 0 \text{ for all but finitely many } \beta.
\end{equation}
Also, using that $\hat{d}_0(\hat{u}_\alpha) =\hat{v}_\alpha$ for $\alpha \in \mathcal{A}$, one can see that 
\begin{equation}\label{contractible-essential-decomp-vanish}
f_{\beta \alpha}=0 \text{ if } \beta \neq \alpha \text{ and } \deg v_\beta \geq \deg v_\alpha.
\end{equation}
For $\alpha\in \mathcal{A}$ and $k>0$, let 
\[
C_{\ast\alpha}^k =\{(\gamma_0,\dots,\gamma_k)\in \mathcal{A}^{k+1} | ~\gamma_k = \alpha, ~  \deg v_{\gamma_0} < \cdots <\deg v_{\gamma_k},~f_{\gamma_0\gamma_1}\cdots f_{\gamma_{k-1}\gamma_k} \neq0\}.
\] 
By \eqref{contractible-essential-decomp-finite-sum}, $C_{\ast\alpha}^k$ is a finite set. For each $\alpha$, $C_{\ast\alpha}^k = \emptyset$ for large $k$'s since the quantum grading of $M$ is bounded below. For $\alpha, \beta \in \mathcal{A}$ and $k>0$, let 
\[
C_{\beta\alpha}^k =\{(\gamma_0,\dots,\gamma_k)\in C_{\ast\alpha}^k | \gamma_0 = \beta\}.
\] 
Then $\cup_{\beta \in \mathcal{A}} C_{\beta\alpha}^k = C_{\ast\alpha}^k$. So each $C_{\beta\alpha}^k$ is finite. And, for each $k$, $C_{\beta\alpha}^k\neq \emptyset$ for only finitely many $\beta$. Also, by definition, it is easy to see that $C_{\beta\alpha}^k=\emptyset$ if $\deg v_\beta \geq \deg v_\alpha$. Moreover, for each $\alpha$, there is a $k_0>0$ such that $C_{\beta\alpha}^k=\emptyset$ for any $\beta$ whenever $k>k_0$.

Now define $t_{\beta\alpha} \in R$ by
\[
t_{\beta\alpha} = 
\begin{cases}
1 & \text{if } \beta=\alpha, \\
0 & \text{if } \beta \neq \alpha, ~\deg v_\beta \geq \deg v_\alpha, \\
\sum_{k\geq 1} (-1)^k \sum_{(\gamma_0,\dots,\gamma_k) \in C_{\beta\alpha}^k} f_{\gamma_0\gamma_1}\cdots f_{\gamma_{k-1}\gamma_k}   & \text{if } \deg v_\beta < \deg v_\alpha.
\end{cases}
\]
From the above discussion, we know that the sum on the right hand side is always finite. So $t_{\beta\alpha}$ is well defined. Furthermore, given an $\alpha \in \mathcal{A}$, $t_{\beta\alpha}=0$ for all but finitely many $\beta$. So, for $\alpha \in \mathcal{A}$, $u_\alpha':=\sum_{\beta\in \mathcal{A}} t_{\beta\alpha} u_\beta$ is well defined. And $\{u_\alpha'|\alpha\in\mathcal{A}\}\cup \{u_\beta|\beta\in\mathcal{B}_0\}$ is also a homogeneous $R$-basis for $M_0$. One can check that, for $\alpha \in \mathcal{A}$, 
\[
d u_\alpha' = v_\alpha + \sum_{\beta \in \mathcal{B}_1} f_{\beta \alpha}' v_\beta,
\]
where the right hand side is a finite sum, and $f_{\beta \alpha}' \in \mathfrak{I}$. Now let
\[
v_\alpha'=
\begin{cases}
v_\alpha + \sum_{\beta \in \mathcal{B}_1} f_{\beta \alpha}' v_\beta & \text{if } \alpha \in \mathcal{A}, \\
v_\alpha & \text{if } \alpha \in \mathcal{B}_1.
\end{cases}
\]
Then $\{v_\alpha'|\alpha\in\mathcal{A}\cup \mathcal{B}_1\}$ is a homogeneous $R$-basis for $M_1$. Under this basis, we have
\[
\begin{cases}
d u_\alpha' = v_\alpha' & \text{if } \alpha \in \mathcal{A}, \\
d u_\beta = \sum_{\alpha \in \mathcal{A}} g_{\alpha\beta} v_\alpha' + \sum_{\gamma \in \mathcal{B}_1} g_{\gamma\beta} v_\gamma' & \text{if } \beta \in \mathcal{B}_0,
\end{cases}  
\]
where the sums on the right hand side are finite. For $\beta \in \mathcal{B}_0$, we let 
\[
u_\beta'= u_\beta - \sum_{\alpha \in \mathcal{A}} g_{\alpha\beta} u_\alpha'.
\]
Then $\{u_\alpha'|\alpha\in\mathcal{A}\cup\mathcal{B}_0\}$ is again a homogeneous $R$-basis for $M_0$, and 
\[
\begin{cases}
d u_\alpha' = v_\alpha' & \text{if } \alpha \in \mathcal{A}, \\
d u_\beta' = \sum_{\gamma \in \mathcal{B}_1} g_{\gamma\beta}' v_\gamma' & \text{if } \beta \in \mathcal{B}_0,
\end{cases}  
\]
where the sum on the right hand side is finite. Using that $d^2 = w \id_M$, one can check that
\[
\begin{cases}
d v_\alpha' = w\cdot v_\alpha' & \text{if } \alpha \in \mathcal{A}, \\
d v_\beta' = \sum_{\gamma \in \mathcal{B}_0} g_{\gamma\beta}'' u_\gamma' & \text{if } \beta \in \mathcal{B}_1,
\end{cases}  
\]
where the sum on the right hand side is finite.

Define $M_{(1,w)}$ to be the submodule of $M$ spanned by $\{u_\alpha'|\alpha\in\mathcal{A}\} \cup \{v_\alpha'|\alpha\in\mathcal{A}\}$, and $M'$ the submodule of $M$ spanned by $\{u_\beta'|\beta\in\mathcal{B}_0\} \cup \{v_\beta'|\beta\in\mathcal{B}_1\}$. Then $M_{(1,w)}$ and $M'$ are both graded matrix factorizations and $M=M_{(1,w)} \oplus M'$. Note that
\begin{enumerate}[(a)]
	\item $M_{(1,w)}$ is a direct sum of components of the form $(1,w)_R \{q^k\}$,
	\item Under the standard projection $M\rightarrow M/\mathfrak{I}M$, we have, for $\alpha \in \mathcal{A}$, $u_\alpha' \mapsto \hat{u}_\alpha$ and $v_\alpha' \mapsto \hat{v}_\alpha$.
\end{enumerate}
In particular, (b) above means that $M'$ does not have direct sum components of the form $(1,w)_R \{q^k\}$. Otherwise, we can enlarge the ``good" set $G$, which contradicts the fact that $G$ is maximal. We then apply a similar argument to $M'$ and find a decomposition $M' = M_{(w,1)} \oplus M_{es}$ of graded matrix factorizations satisfying
\begin{itemize}
	\item $M_{(w,1)}$ is a direct sum of components of the form $(w,1)_R \{q^k\}$,
	\item $M_{es}$ has no direct sum component of the form $(1,w)_R \{q^k\}$ or $(w,1)_R \{q^k\}$.
\end{itemize}
Let $M_c = M_{(1,w)} \oplus M_{(w,1)}$. Then $M = M_c \oplus M_{es}$. Since $(1,w)_R \{q^k\}$ and $(w,1)_R \{q^k\}$ are both homotopic to $0$, $M_c \simeq 0$. So $M\simeq M_{es}$. It is clear that, under any homogeneous basis for $M_{es}$, all entries of the matrices representing the differential map of $M_{es}$ must be in $\mathfrak{I}$. Otherwise, a simple change of basis would show that $M_{es}$ has a component of the form $(1,w)_R \{q^k\}$ or $(w,1)_R \{q^k\}$. Therefore, $H_R(M) \cong H_R(M_{es}) \cong M_{es}/\mathfrak{I}M_{es}$. So, by Lemma \ref{free-module-quotient-tensor}, $M_{es} \cong H_R(M) \otimes_\C R$ as graded modules.
\end{proof}

The following corollaries are from \cite{KR1}.

\begin{corollary}\cite[Proposition 8]{KR1}\label{homotopy-equal-isomorphism-on-homology}
Let $M$ and $M'$ be graded matrix factorizations over $R$ with homogeneous potential $w\in\mathfrak{I}$ of degree $2N+2$. Assume the quantum gradings of $M$ and $M'$ are bounded below. Suppose that $f:M\rightarrow M'$ is a homogeneous morphism preserving the $\zed_2\oplus\zed$-grading. Then $f$ is a homotopy equivalence if and only if it induces an isomorphism of the homology $f_\ast: H_R(M) \rightarrow H_R(M')$.
\end{corollary}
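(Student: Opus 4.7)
The \emph{only if} direction is routine: homotopy equivalence in $\MF_{R,w}$ descends to chain homotopy equivalence of $M/\mathfrak{I}M$ and $M'/\mathfrak{I}M'$, and hence induces an isomorphism on $H_R$. So the content is in the \emph{if} direction, and the plan is to reduce everything to the essential summand produced by Proposition \ref{contractible-essential-decomp} and then run a graded Nakayama-style argument.

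First, apply Proposition \ref{contractible-essential-decomp} to both $M$ and $M'$ to obtain decompositions $M\cong M_c\oplus M_{es}$ and $M'\cong M'_c\oplus M'_{es}$, where $M_c,M'_c$ are contractible and $M_{es},M'_{es}$ have differentials whose matrix entries in any homogeneous basis lie in $\mathfrak{I}$; moreover $M_{es}\cong H_R(M)\otimes_\C R$ and $M'_{es}\cong H_R(M')\otimes_\C R$ as graded $R$-modules, with $H_R(M_{es})=M_{es}/\mathfrak{I}M_{es}\cong H_R(M)$ and similarly for $M'_{es}$. Compose $f$ with the inclusion $M_{es}\hookrightarrow M$ and the projection $M'\twoheadrightarrow M'_{es}$ to obtain a homogeneous morphism $\widetilde f:M_{es}\to M'_{es}$ of matrix factorizations preserving both gradings. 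Since the inclusions and projections involved are homotopy equivalences between $M$ and $M_{es}$ (resp. $M'$ and $M'_{es}$), $\widetilde f_\ast$ is identified with $f_\ast$ up to the isomorphisms $H_R(M_{es})\cong H_R(M)$ and $H_R(M'_{es})\cong H_R(M')$. In particular $\widetilde f_\ast$ is an isomorphism, and $f$ is a homotopy equivalence iff $\widetilde f$ is.

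The main step is to show that $\widetilde f$ is then an isomorphism of graded $R$-modules, after which the inverse is automatically a morphism of matrix factorizations (since $\widetilde f$ is) and the proof is complete. For this, fix homogeneous $R$-bases $\{u_\alpha\}$ of $M_{es}$ and $\{v_\beta\}$ of $M'_{es}$ that descend to homogeneous $\C$-bases of $H_R(M)$ and $H_R(M')$ (available by Lemma \ref{homogeneous-basis-exists} combined with Lemma \ref{free-module-quotient-tensor}). Write $\widetilde f(u_\alpha)=\sum_\beta A_{\beta\alpha}\,v_\beta$ with $A_{\beta\alpha}\in R$ homogeneous of degree $\deg u_\alpha-\deg v_\beta$. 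Split $A_{\beta\alpha}=c_{\beta\alpha}+a_{\beta\alpha}$ with $c_{\beta\alpha}\in\C$ (the degree-$0$ part) and $a_{\beta\alpha}\in\mathfrak{I}$. The matrix $(c_{\beta\alpha})$ is precisely that of the induced isomorphism $\widetilde f_\ast$; hence, restricted to each bi-degree block, it is invertible.

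The remaining step, which I expect to be the main technical point, is a degree-by-degree inversion of $\widetilde f$. Since the quantum gradings of $M_{es}$ and $M'_{es}$ are bounded below and $R^{(i)}$ is finite-dimensional with $R^{(i)}=0$ for $i<0$, in any fixed quantum degree $d$ only finitely many basis elements $u_\alpha$ (resp. $v_\beta$) contribute, and the equation $\widetilde f(x)=y$ for a given homogeneous $y$ of degree $d$ reduces to a finite linear system whose leading term is the invertible constant matrix $(c_{\beta\alpha})$ and whose remaining terms raise degrees (because entries from $\mathfrak{I}$ have strictly positive degree). One then solves order by order in $\mathfrak{I}$, producing a unique homogeneous $x\in M_{es}$ degree-by-degree; the resulting assignment $y\mapsto x$ assembles into a well-defined homogeneous two-sided inverse $\widetilde g:M'_{es}\to M_{es}$ of $\widetilde f$ preserving both gradings. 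Thus $\widetilde f$ is an isomorphism of graded $R$-modules, and combining the isomorphism $\widetilde f$ with the contractibility of $M_c$ and $M'_c$, the original $f$ is a homotopy equivalence in $\HMF_{R,w}$.
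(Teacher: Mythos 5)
Your argument follows the same route as the paper's proof: reduce via Proposition~\ref{contractible-essential-decomp} to the induced map $\widetilde f\colon M_{es}\to M'_{es}$, observe that $M_{es}\cong H_R(M)\otimes_\C R$ and $M'_{es}\cong H_R(M')\otimes_\C R$ with $\widetilde f$ reducing mod $\mathfrak{I}$ to the isomorphism $f_\ast$, and conclude that $\widetilde f$ is an isomorphism of graded $R$-modules by a graded-Nakayama argument. That is exactly the paper's proof; the paper simply asserts the last step (``so $f_{es}$ is an isomorphism since $f_\ast$ is an isomorphism'') without elaboration, so your instinct to spell it out is sound.

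One correction on the final step. The corollary does not assume that $H_R(M)$ is finite-dimensional in each quantum degree (nothing forces $M$ to be homotopically finite here), so the claim that ``in any fixed quantum degree $d$ only finitely many basis elements $u_\alpha$ contribute'' — and hence that one faces a ``finite linear system'' — is unjustified as stated. The clean fix avoids matrices: set $F=f_\ast\otimes\mathrm{id}_R$, a homogeneous $R$-module isomorphism $M_{es}\to M'_{es}$, and put $\widetilde g = F^{-1}\circ\widetilde f - \mathrm{id}$. Then $\widetilde g$ is $R$-linear of degree $0$ with image contained in $\mathfrak{I}\cdot M_{es}$, so on a homogeneous element it strictly lowers the degree of the $H_R(M)$-factor in $H_R(M)\otimes_\C R$. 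Since $H_R(M)$ is bounded below, $\widetilde g^k$ vanishes on any fixed homogeneous element once $k$ is large enough, so $\sum_{k\geq 0}(-\widetilde g)^k$ is a well-defined $R$-linear two-sided inverse of $\mathrm{id}+\widetilde g$, and $\widetilde f^{-1}=\bigl(\sum_{k\geq 0}(-\widetilde g)^k\bigr)\circ F^{-1}$. This yields a finitely supported preimage for each element without any local-finiteness hypothesis, and the rest of your argument goes through unchanged.
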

\begin{proof} (Following \cite{KR1}.)
If $f$ is a homotopy equivalence, then $f_\ast$ is clearly an isomorphism. Let us now prove the converse. Assume $f_\ast$ is an isomorphism. Let $M = M_c \oplus M_{es}$ and $M' = M'_c \oplus M'_{es}$ be decompositions of $M$ and $M'$ given by Proposition \ref{contractible-essential-decomp}. So $f$ induces a morphism $f_{es}:M_{es} \rightarrow M'_{es}$. Note that $H_R(M) \cong M_{es}/\mathfrak{I}M_{es}$, $H_R(M') \cong M'_{es}/\mathfrak{I}M'_{es}$, $M_{es} = H_R(M) \otimes_\C R$ and $M'_{es} = H_R(M') \otimes_\C R$. So $f_{es}$ is an isomorphism since $f_\ast$ is an isomorphism. It follows that $f$ is a homotopy equivalence.
\end{proof}

\begin{corollary}\cite[Proposition 7]{KR1}\label{homology-detects-homotopy}
Let $M$ be a graded matrix factorization over $R$ with homogeneous potential $w\in\mathfrak{I}$ of degree $2N+2$. Assume the quantum grading of $M$ is bounded below. Then 
\begin{enumerate}[(i)]
	\item $M\simeq 0$ if and only if $H_R(M)=0$ or, equivalently, $\gdim_R (M)=0$;
	\item $M$ is homotopically finite if and only if $H_R(M)$ is finite dimensional over $\C$ or, equivalently, $\gdim_R (M)$ is a well defined element of $\zed[q,\tau]/(\tau^2-1)$.
\end{enumerate}
\end{corollary}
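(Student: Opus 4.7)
The plan is to derive both parts directly from Proposition \ref{contractible-essential-decomp}, which gives the canonical decomposition $M \cong M_c \oplus M_{es}$ with $M_c \simeq 0$ and $M_{es} \cong H_R(M)\otimes_\C R$ as graded $R$-modules.

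For part (i), the easy direction is that if $M \simeq 0$ then $H_R(M)=0$, since a homotopy equivalence induces an isomorphism on $H_R$ and clearly $H_R(0)=0$. For the converse, assume $H_R(M)=0$. Then $M_{es} \cong H_R(M)\otimes_\C R = 0$, so the proposition gives $M \simeq M_{es} = 0$. The equivalence with $\gdim_R(M)=0$ is immediate from the definition of $\gdim$ as a generating function for $\dim_\C H_R^{j,\ve}(M)$.

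For part (ii), first suppose $H_R(M)$ is finite dimensional over $\C$. Then $M_{es} \cong H_R(M)\otimes_\C R$ is a finitely generated free $R$-module, and the differential of $M_{es}$ makes it a finitely generated graded matrix factorization over $R$ of potential $w$. Since $M \simeq M_{es}$, this witnesses that $M$ is homotopically finite per Definition \ref{homotopically-finite-def}. Conversely, if $M$ is homotopically finite, choose a finitely generated graded matrix factorization $\mathcal{M}$ with $M \simeq \mathcal{M}$. Then $H_R(M) \cong H_R(\mathcal{M})$, and $H_R(\mathcal{M})$ is a subquotient of the finite-dimensional $\C$-space $\mathcal{M}/\mathfrak{I}\mathcal{M}$ (finite-dimensional because each $R^{(i)}$ is finite-dimensional and $\mathcal{M}$ has only finitely many homogeneous generators, so $\mathcal{M}/\mathfrak{I}\mathcal{M}$ is a finite direct sum of copies of $R/\mathfrak{I} = \C$ with shifts). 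Hence $H_R(M)$ is finite dimensional. The equivalence with $\gdim_R(M) \in \zed[q,\tau]/(\tau^2-1)$ holds because $\gdim_R(M)$ is a Laurent polynomial in $q$ precisely when $H_R(M)$ has finite total dimension over $\C$.

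There is no real obstacle here; the whole corollary is a packaging of Proposition \ref{contractible-essential-decomp}. The only minor point to be careful about is verifying that $\mathcal{M}/\mathfrak{I}\mathcal{M}$ is finite-dimensional over $\C$ for a finitely generated graded matrix factorization $\mathcal{M}$: one uses Lemma \ref{homogeneous-basis-exists} to pick a finite homogeneous $R$-basis, so that $\mathcal{M}/\mathfrak{I}\mathcal{M}$ is a finite-dimensional $\C$-space with an induced grading, whence its homology is likewise finite-dimensional.
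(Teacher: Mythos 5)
Your proposal is correct and follows essentially the same approach as the paper: both parts are deduced directly from Proposition \ref{contractible-essential-decomp}, using $M \simeq M_{es} \cong H_R(M)\otimes_\C R$ in both directions. The paper's proof is slightly more terse and does not spell out the finite-dimensionality of $\mathcal{M}/\mathfrak{I}\mathcal{M}$, but the content is the same.
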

\begin{proof}
For (i), we have 
\[
M\simeq 0~\Rightarrow~H_R(M) =0~\Rightarrow~M_{es}\cong H_R(M) \otimes_\C R=0~\Rightarrow~M\simeq 0.
\]

Now consider (ii). If $M$ is homotopically finite, then there is a finitely generated graded matrix factorization $\mathcal{M}$ such that $M\simeq \mathcal{M}$. Note that $\mathcal{M}/\mathfrak{I}\mathcal{M}$ is finite dimensional over $\C$. This implies that $H_R(M)\cong H_R(\mathcal{M})$ is finite dimensional over $\C$. If $H_R(M)$ is finite dimensional over $\C$, then $M_{es}\cong H_R(M) \otimes_\C R$ is finitely generated over $R$. But $M\simeq M_{es}$. So $M$ is homotopically finite.
\end{proof}

\subsection{The Krull-Schmidt property} In this subsection, we review the Krull-Schmidt property of matrix factorizations and chain complexes of matrix factorizations. We follow the approach in \cite[Section 1]{Drozd} and \cite[Section 5]{KR1}.

\begin{definition}\cite{Drozd}\label{Krull-Schmidt-category-def}
An additive category $\mathcal{C}$ is called a $\C$-category if all morphism sets $\Hom_\mathcal{C}(A,B)$ are $\C$-linear spaces and the composition of morphisms is $\C$-bilinear. 

A $\C$-category $\mathcal{C}$ is called fully additive if every idempotent morphism of $\mathcal{C}$ splits, that is, defines a decomposition into a direct sum.

A $\C$-category $\mathcal{C}$ is called locally finite dimensional if, for every pair $A,B$ of objects of $\mathcal{C}$, $\Hom_\mathcal{C}(A,B)$ is finite dimensional over $\C$.

A $\C$-category $\mathcal{C}$ is called Krull-Schmidt if 
\begin{itemize}
	\item every object of $\mathcal{C}$ is isomorphic to a finite direct sum $A_1\oplus\cdots\oplus A_n$ of indecomposable objects of $\mathcal{C}$;
	\item and, if $A_1\oplus\cdots\oplus A_n \cong A'_1\oplus\cdots\oplus A'_l$, where $A_1,\dots\,A_n, A'_1,\dots,A'_l$ are indecomposable objects of $\mathcal{C}$, then $n=l$ and there is a permutation $\sigma$ of $\{1,\dots,n\}$ such that $A_i \cong A'_{\sigma(i)}$ for $i=1,\dots,n$.
\end{itemize}
\end{definition}

Note that, for any homogeneous potential $w\in\mathfrak{I}$ of degree $2N+2$, the categories $\MF_{R,w}$, $\HMF_{R,w}$, $\mf_{R,w}$ and $\hmf_{R,w}$ are all $\C$-categories. Moreover, if $\mathcal{C}$ is a $\C$-category, then $\ch(\mathcal{C})$ and $\hch(\mathcal{C})$ are also $\C$-categories. Then following lemma is from \cite[Section 1]{Drozd}.

\begin{lemma}\cite[Section 1]{Drozd}\label{falf-implies-KS}
If $\mathcal{C}$ is a fully additive and locally finite dimensional $\C$-category, then $\mathcal{C}$ is Krull-Schmidt.

Moreover, if $\mathcal{C}$ is a fully additive and locally finite dimensional $\C$-category, then $\ch(\mathcal{C})$ and $\hch(\mathcal{C})$ are both fully additive, locally finite dimensional and, therefore, Krull-Schmidt.
\end{lemma}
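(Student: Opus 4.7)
The plan is to reduce everything to the following fundamental observation: in a fully additive, locally finite dimensional $\C$-category $\mathcal{C}$, every endomorphism algebra $\End_\mathcal{C}(A)$ is a finite-dimensional unital $\C$-algebra. Once this is in hand, the Krull-Schmidt property follows from standard finite-dimensional algebra technology (Fitting's lemma and Azumaya's theorem).

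For the first assertion, I would first prove existence of an indecomposable decomposition by induction on $\dim_\C \End_\mathcal{C}(A)$. If $A \cong B \oplus C$ with $B, C \neq 0$, then $\End_\mathcal{C}(A) \cong \End(B) \oplus \Hom(B,C) \oplus \Hom(C,B) \oplus \End(C)$ and both $\dim \End(B)$ and $\dim \End(C)$ are strictly smaller than $\dim \End(A)$ (since $\id_B, \id_C \neq 0$), so induction applies; the splitting of the decomposition itself uses full additivity (an idempotent projecting $A$ onto $B$ splits). For uniqueness, observe that an object $A$ is indecomposable iff $\End_\mathcal{C}(A)$ contains no idempotents other than $0$ and $\id_A$; since $\End_\mathcal{C}(A)$ is finite dimensional, this forces $\End_\mathcal{C}(A)$ to be local (its quotient by the Jacobson radical is a finite-dimensional semisimple $\C$-algebra with no nontrivial idempotents, hence a division ring, and idempotents lift modulo the radical). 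Azumaya's theorem (Krull-Schmidt for objects with local endomorphism rings) then gives the uniqueness clause.

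For the second assertion about $\ch(\mathcal{C})$, local finite dimensionality follows directly from Lemma \ref{computing-hom-ch-hch}: $\Hom_{\ch(\mathcal{C})}(A,B) = \ker D_0$ sits inside $\Kom^0(A,B) = \prod_{n} \Hom_\mathcal{C}(A_n, B_n)$, which is a \emph{finite} product of finite-dimensional spaces by boundedness of $A$ and $B$. Full additivity is equally direct: given an idempotent chain map $e: X \to X$, each component $e_n: X_n \to X_n$ is idempotent in $\mathcal{C}$ and hence splits as $X_n \cong Y_n \oplus Z_n$; the relation $d\circ e = e\circ d$ forces $d$ to restrict to $Y_\bullet$ and $Z_\bullet$, yielding $X \cong Y \oplus Z$ in $\ch(\mathcal{C})$. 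The first part of the lemma then applies.

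The case of $\hch(\mathcal{C})$ is analogous for local finite dimensionality, since $\Hom_{\hch(\mathcal{C})}(A,B) = H^0(\Kom(A,B), D)$ is a subquotient of the same finite-dimensional space $\Kom^0(A,B)$. The main obstacle is full additivity: an idempotent up to homotopy $\bar e \in \End_{\hch(\mathcal{C})}(X)$ need not come from a strict idempotent in $\ch(\mathcal{C})$. The plan is to exploit the fact that $\End_{\ch(\mathcal{C})}(X)$ is a finite-dimensional $\C$-algebra and that the kernel of the surjection $\End_{\ch(\mathcal{C})}(X) \twoheadrightarrow \End_{\hch(\mathcal{C})}(X)$ (the null-homotopic endomorphisms) is a two-sided ideal; idempotents lift modulo any two-sided ideal in a finite-dimensional $\C$-algebra (standard argument: lift modulo the Jacobson radical using nilpotence, then combine with the semisimple lift), yielding a strict idempotent $e \in \End_{\ch(\mathcal{C})}(X)$ representing $\bar e$. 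Applying the already-established full additivity of $\ch(\mathcal{C})$ splits $e$ in $\ch(\mathcal{C})$, and this splitting descends to $\hch(\mathcal{C})$, splitting $\bar e$. With full additivity and local finite dimensionality of $\hch(\mathcal{C})$ in place, the first part of the lemma delivers Krull-Schmidt.
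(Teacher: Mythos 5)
Your proof is correct, and for the most part it runs parallel to the paper's sketch, but it takes a genuinely different route on the one non-trivial step, the full additivity of $\hch(\mathcal{C})$. The paper's argument never lifts idempotents from $\hch(\mathcal{C})$ to $\ch(\mathcal{C})$: instead it first decomposes an object $A$ into a direct sum $A_1\oplus\cdots\oplus A_m$ of $\ch(\mathcal{C})$-summands whose $\ch$-endomorphism rings are local, and then observes that each $\End_{\hch(\mathcal{C})}(A_i)$, being a quotient of the local ring $\End_{\ch(\mathcal{C})}(A_i)$, is again local or zero (zero forcing $A_i\simeq 0$); this makes $\hch(\mathcal{C})$ a ``local'' category and full additivity is then quoted as a general property of such categories. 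Your route instead lifts an idempotent $\bar e$ from the quotient algebra $\End_{\hch(\mathcal{C})}(X)$ back to the finite-dimensional algebra $\End_{\ch(\mathcal{C})}(X)$, splits it there, and pushes the splitting down. Both arguments are ultimately exploiting that these endomorphism algebras are semiperfect, but yours is more hands-on and arguably more self-contained, while the paper's delegates the heavy lifting to Bass's theorem on local categories. For the first assertion (Krull-Schmidt for $\mathcal{C}$ itself) you flesh out what the paper dismisses as ``one can check'', via the clean induction on $\dim_\C\End(A)$ and the local-endomorphism characterization of indecomposables; that matches the intent of the paper's sketch.

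One small caution about the parenthetical justification you give for idempotent lifting modulo the null-homotopy ideal $I$. The two-step sketch ``lift modulo the Jacobson radical, then combine with the semisimple lift'' only controls $e$ modulo the preimage of $J(A/I)$, which is generally strictly larger than $I$, so by itself it produces an idempotent $e\in\End_{\ch(\mathcal{C})}(X)$ whose class in $\End_{\hch(\mathcal{C})}(X)$ is merely conjugate to $\bar e$, not equal to it. For your purpose this is harmless, since conjugate idempotents split an object in isomorphic ways; but if you want an exact lift, the cleaner argument is to pass to the commutative finite-dimensional subalgebra $\C[f]\subseteq\End_{\ch(\mathcal{C})}(X)$ generated by any lift $f$ of $\bar e$, decompose it by CRT into local factors, and observe that the image of $f$ in $\C[f]/(I\cap\C[f])$ (which is a product of local rings) is a tuple of $0$'s and $1$'s, each of which lifts trivially. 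Alternatively one may simply cite that semiperfect rings are exchange rings, so idempotents lift modulo every one-sided ideal. Either way the step is sound.
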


\begin{proof}[Sketch of proof](Following \cite{Drozd}.)
A $\C$-category $\mathcal{C}$ is called local if every object of $\mathcal{C}$ decomposes into a finite direct sum of objects with local endomorphism rings. One can check that $\mathcal{C}$ is local if it is fully additive and locally finite dimensional. By \cite[Theorem 3.6]{Bass-book}, local $\C$-categories are Krull-Schmidt. So fully additive locally finite dimensional $\C$-categories are Krull-Schmidt.

If $\mathcal{C}$ is a fully additive and locally finite dimensional $\C$-category, then $\ch(\mathcal{C})$ is also fully additive and locally finite dimensional. So $\ch(\mathcal{C})$ is local and, therefore, Krull-Schmidt. For every pair $(A,B)$ of objects of $\hch(\mathcal{C})$, $\Hom_{\hch(\mathcal{C})}(A,B)$ is a quotient space of $\Hom_{\ch(\mathcal{C})}(A,B)$. Thus, $\hch(\mathcal{C})$ is also locally finite dimensional. Since $\ch(\mathcal{C})$ is local, any object $A$ of $\hch(\mathcal{C})$ decomposes into 
\[
A \cong A_1 \oplus \cdots\oplus A_m,
\] 
where $\Hom_{\ch(\mathcal{C})}(A_i,A_i)$ is a local ring for each $i=1,\dots,m$. But $\Hom_{\hch(\mathcal{C})}(A_i,A_i)$ is a quotient ring of $\Hom_{\ch(\mathcal{C})}(A_i,A_i)$. So, for each $i$, $\Hom_{\hch(\mathcal{C})}(A_i,A_i)$ is either a local ring or $0$. In the latter case, $A_i$ is homotopic to $0$. This shows that $\hch(\mathcal{C})$ is local and, therefore, Krull-Schmidt. Since local $\C$-categories are fully additive, $\hch(\mathcal{C})$ is also fully additive.
\end{proof}

\begin{remark}
In \cite[Section 1]{Drozd}, the above lemma is actually proved for categories over any complete local Noetherian ring. (It is trivial to verify that $\C$ is a complete local Noetherian ring.)
\end{remark}

In the rest of this subsection, we assume that $w$ is a homogeneous element of $\mathfrak{I}$ with $\deg w =2N+2$. The next lemma is the lifting idempotent property from \cite[Section 5]{KR1}.

\begin{lemma}\cite[Section 5]{KR1}\label{idempotent-lifting}
Let $M$ be a finitely generated graded matrix factorization over $R$ with potential $w$. If a homogeneous morphism $f:M\rightarrow M$ of matrix factorizations preserves the $\zed_2\oplus\zed$-grading of $M$ and satisfies $f\circ f \simeq f$, then there is a homogeneous morphism $g:M\rightarrow M$ of matrix factorizations preserving the $\zed_2\oplus\zed$-grading of $M$ such that $g\simeq f$ and $g\circ g=g$.
\end{lemma}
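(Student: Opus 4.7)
The plan is a three-step reduction: first use the essential-contractible decomposition of Proposition \ref{contractible-essential-decomp} to reduce to the minimal model $M_{es}$, then show that on this minimal model the ideal of null-homotopic endomorphisms is nilpotent, and finally apply the classical Newton iteration for lifting idempotents modulo a nilpotent ideal.

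First, I would reduce to the case $M = M_{es}$. Write $M \cong M_{es} \oplus M_c$ with $M_c \simeq 0$, and let $i_{es}, p_{es}$ be the inclusion and projection. Decomposing $f$ into $2 \times 2$ blocks, the identity $(f^2)_{es,es} = f_{es,es}^2 + f_{es,c} \circ f_{c,es}$ together with the fact that every morphism of matrix factorizations having $M_c$ as source or target is null-homotopic (since $M_c \simeq 0$ forces $\Hom_{\hmf}(M_c, -) = \Hom_{\hmf}(-, M_c) = 0$) shows that $f_{es,es}^2 - f_{es,es}$ is null-homotopic in $\Hom_{\mf}(M_{es}, M_{es})$. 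Given an idempotent $g_{es,es}$ in $\Hom_{\mf}(M_{es}, M_{es})$ homotopic to $f_{es,es}$, the composite $g := i_{es} \circ g_{es,es} \circ p_{es}$ is idempotent on $M$ (using $p_{es} \circ i_{es} = \id_{M_{es}}$), and $g - f$ is null-homotopic block by block, so $g \simeq f$.

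Second, for $M = M_{es}$, I would prove that the two-sided ideal $J$ of null-homotopic degree-$(0,0)$ endomorphisms is nilpotent in the finite-dimensional $\C$-algebra $A := \Hom_{\mf}(M, M)$. Finite-dimensionality of $A$ uses the existence of a finite homogeneous $R$-basis of $M$ (Lemma \ref{homogeneous-basis-exists}) together with the fact that each $R^{(k)}$ is finite-dimensional over $\C$. The crucial observation is that for $M = M_{es}$ the differential $d_M$ has all matrix entries in $\mathfrak{I}$, so the same holds for every $\eta = d_M \circ h + h \circ d_M$ with $\eta \in J$. Ordering a homogeneous basis $e_1,\ldots,e_n$ by non-decreasing quantum degree, a degree-$(0,0)$ matrix entry $a_{ij}$ of $\eta$ lies in $R^{(\deg e_i - \deg e_j)} \cap \mathfrak{I}$, which vanishes whenever $\deg e_i \leq \deg e_j$. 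Thus the matrix of every $\eta \in J$ is strictly triangular in this fixed basis, and $J^n = 0$.

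Third, with $J$ nilpotent in $A$, I would apply the standard polynomial $p(t) = 3t^2 - 2t^3$ to lift $[f]$ to an idempotent of $A$. The identity
\[
p(t)\bigl(1 - p(t)\bigr) = t^2 (1-t)^2 (3 + 4t - 4t^2)
\]
guarantees that iterating $p$ sends the ``error'' $f_k(1 - f_k)$ into $J^{2^k}$, which vanishes for $k$ large. The resulting $g := p^{\circ k}(f)$ satisfies $g^2 = g$ in $A$, and since $p(f) - f \in J$ by direct inspection, we have $g - f \in J$, i.e., $g \simeq f$. The main obstacle is the nilpotence of $J$ in Step 2, which genuinely depends on the reduction to $M = M_{es}$: for general $M$ the ideal $J$ contains the identity of the contractible summand whenever $M_c \neq 0$, and so is not nil; the essential-contractible decomposition is thus indispensable to the argument.
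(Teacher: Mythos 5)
Your proof is correct and takes essentially the same route as the paper: reduce via Proposition~\ref{contractible-essential-decomp} to the minimal model $M_{es}$, observe that the ideal of null-homotopic endomorphisms is nilpotent because the entries of the minimal differential lie in $\mathfrak{I}$ (the paper phrases this as $(\ker\beta)^k=0$ via $\mathfrak{I}^k$ and bounded quantum degrees; your strict-triangularity phrasing is the same observation in sharper form), and then lift the idempotent. The only genuine difference is that you make the lifting step self-contained with the Newton iteration $p(t)=3t^2-2t^3$, whereas the paper simply invokes \cite[Theorem 1.7.3]{Benson-book}.
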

\begin{proof} (Following \cite{KR1}.)
Let $P: M \rightarrow M_{es}$ and $J:M_{es} \rightarrow M$ be the projection and inclusion from the decomposition in Proposition \ref{contractible-essential-decomp}. Then $f$ induces a morphism $f_{es}=P\circ f\circ J: M_{es} \rightarrow M_{es}$, which satisfies $f_{es} \circ f_{es} \simeq f_{es}$.

Let 
\[
\alpha:\Hom_{\mf}(M_{es},M_{es}) \rightarrow \Hom_{\hmf}(M_{es},M_{es})
\] 
be the natural projection taking each morphism to its homotopy class, and 
\[
\beta:\Hom_{\mf}(M_{es},M_{es}) \rightarrow \Hom_\C(H_R(M),H_R(M))
\] 
the map taking each morphism to the induced map on the homology.  Then $\ker \alpha$ and $\ker \beta$ are ideals of the ring $\Hom_{\mf}(M_{es},M_{es})$, and $\ker \alpha \subset \ker \beta$. 

Choose a homogeneous basis $\{e_1,\dots,e_n\}$ for $M_{es}$. For any $h \in \ker \beta$, let $H$ be its matrix under this basis. By Proposition \ref{contractible-essential-decomp}, $H_R(M) \cong M_{es}/\mathfrak{I}M_{es}$. Since $\beta (h)=0$, we know that entries of $H$ are elements of $\mathfrak{I}$. This implies that, if $h \in (\ker \beta)^k$, then entries of $H$ are elements of $\mathfrak{I}^k$. But the matrix of a homogeneous morphism preserving the quantum grading can not have entries of arbitrarily large degrees. Thus, $(\ker \beta)^k =0$ for $k\gg0$ and, therefore, $(\ker \alpha)^k =0$ for $k\gg0$. This shows that $\ker \alpha$ is a nilpotent ideal of $\Hom_{\mf}(M_{es},M_{es})$. By \cite[Theorem 1.7.3]{Benson-book}, nilpotent ideals have the lifting idempotents property. Thus, there is a homogeneous morphism $g_{es}:M_{es}\rightarrow M_{es}$ of matrix factorizations preserving the $\zed_2\oplus\zed$-grading of $M_{es}$ that satisfies $g_{es}\simeq f_{es}$ and $g_{es}\circ g_{es}=g_{es}$.

Now define a morphism $g:M\rightarrow M$ by $g=J\circ g_{es} \circ P$. It is easy to check that $g$ preserves the $\zed_2\oplus\zed$-grading of $M$ and satisfies $g\simeq f$ and $g\circ g=g$.
\end{proof}

\begin{lemma}\cite[Proposition 24]{KR1}\label{fully-additive-hmf}
$\hmf_{R,w}$ is fully additive.
\end{lemma}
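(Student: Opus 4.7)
The plan is to reduce splitting a homotopy-idempotent to Lemma \ref{idempotent-lifting}, and then to split the resulting strict idempotent at the level of graded $R$-modules. Given $M \in \hmf_{R,w}$ and an idempotent $f \in \Hom_{\hmf}(M,M)$ (so $f$ is a homogeneous morphism preserving both gradings with $f \circ f \simeq f$), I pick by definition of $\hmf_{R,w}$ a finitely generated graded matrix factorization $\mathcal{M}$ together with homogeneous homotopy equivalences $\alpha: M \to \mathcal{M}$ and $\beta: \mathcal{M} \to M$. Setting $\tilde f := \alpha \circ f \circ \beta$ produces an idempotent in $\Hom_{\hmf}(\mathcal{M},\mathcal{M})$, and Lemma \ref{idempotent-lifting} upgrades it to a homogeneous morphism $g:\mathcal{M} \to \mathcal{M}$ of matrix factorizations with $g \simeq \tilde f$ and $g \circ g = g$ strictly.

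Because $g$ has $\zed_2$-degree $0$ and is a genuine morphism of matrix factorizations, $g \circ d_\mathcal{M} = d_\mathcal{M} \circ g$. Combined with $g^2 = g$, this yields the direct sum decomposition
\[
\mathcal{M} = g\mathcal{M} \oplus (\id_\mathcal{M} - g)\mathcal{M}
\]
as graded $R$-modules, with both summands stable under $d_\mathcal{M}$. Each summand inherits the $\zed_2$-grading and a quantum grading bounded below, so each will be an object of $\mf_{R,w}$ as soon as its underlying graded $R$-module is graded-free and finitely generated. Finite generation is automatic since each summand is a quotient of $\mathcal{M}$, and transporting the decomposition back through $\alpha,\beta$ will then give a splitting $M \simeq g\mathcal{M} \oplus (\id_\mathcal{M} - g)\mathcal{M}$ in $\hmf_{R,w}$ realizing $f$ as the projection onto the first summand, which is exactly what is needed for full additivity.

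The one genuine subtlety is the graded-freeness of the two summands. Each is a finitely generated graded projective $R$-module, and $R = \C[X_1,\ldots,X_m]$ is graded-local with unique maximal homogeneous ideal $\mathfrak{I}$. I plan to handle this by a graded Nakayama argument: lifting a homogeneous $\C$-basis of $P/\mathfrak{I}P$ to homogeneous elements of $P$ gives a surjection $R^k \twoheadrightarrow P$ whose kernel is a graded direct summand of $R^k$ (since $P$ is projective) contained in $\mathfrak{I}R^k$, and any such summand must vanish by graded Nakayama applied to finitely generated graded modules whose grading is bounded below. Applying this to $P = g\mathcal{M}$ and $P = (\id_\mathcal{M}-g)\mathcal{M}$ completes the proof that $\hmf_{R,w}$ is fully additive; everything outside of this graded-freeness step is a direct application of Lemma \ref{idempotent-lifting} and the observation that a degree-zero strict idempotent commutes with the differential of $\mathcal{M}$.
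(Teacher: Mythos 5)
Your proof is correct, and it follows the same overall skeleton as the paper's: pass to a finitely generated representative, lift the homotopy-idempotent to a strict idempotent via Lemma \ref{idempotent-lifting}, split the module, and then verify that the two summands are graded-free so that they qualify as objects of $\hmf_{R,w}$. The one place where you genuinely diverge is the graded-freeness step. The paper invokes the Quillen--Suslin theorem to conclude that the finitely generated projective summands $g\mathcal{M}$ and $(\id-g)\mathcal{M}$ are free over $R=\C[X_1,\dots,X_m]$, and then (implicitly via Lemma \ref{homogeneous-basis-exists}) that a free graded module with grading bounded below is graded-free. You instead give a self-contained graded Nakayama argument: lift a homogeneous $\C$-basis of $P/\mathfrak{I}P$ to a homogeneous surjection $R^k\twoheadrightarrow P$, observe that projectivity makes the kernel a graded direct summand contained in $\mathfrak{I}R^k$, and kill it by graded Nakayama on bounded-below modules. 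This directly produces a homogeneous basis and sidesteps Quillen--Suslin entirely, which is a nontrivial commutative-algebra input; the elementary fact you are really using is that a finitely generated graded projective module over a connected nonnegatively graded $\C$-algebra is graded-free. Your route is cleaner from first principles, while the paper's is shorter to state because it leans on a named theorem plus the earlier Lemma \ref{homogeneous-basis-exists}. Both are valid; the only thing I would tighten in your writeup is to state explicitly that the surjectivity of $R^k\to P$ also follows from graded Nakayama applied to the cokernel, and that finite generation plus $\mathcal{M}$ being finitely generated free guarantees the gradings of the summands are bounded below so that the Nakayama argument applies.
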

\begin{proof} (Following \cite{KR1}.)
Let $M$ be an object of $\hmf_{R,w}$, and $f:M\rightarrow M$ a homogeneous morphism of matrix factorizations preserving the $\zed_2\oplus\zed$-grading of $M$ and satisfying $f\circ f \simeq f$. By the definition of $\hmf_{R,w}$, $M$ is homotopically finite. So, by Proposition \ref{contractible-essential-decomp} and Corollary \ref{homology-detects-homotopy}, $M_{es}$ is finitely generated over $R$. Note that $f$ induces a morphism $f_{es}:M_{es}\rightarrow M_{es}$ such that $f_{es}\circ f_{es} \simeq f_{es}$. By the lifting idempotent property (Lemma \ref{idempotent-lifting}), there is a morphism $g:M_{es} \rightarrow M_{es}$ preserving the $\zed_2\oplus\zed$-grading of $M_{es}$ such that $g\simeq f_{es}$ and $g\circ g =g$. Now $g$ induces a decomposition of graded $R$-modules $M_{es} = gM_{es} \oplus (\id-g)M_{es}$. In particular, $gM_{es}$ and $(\id-g)M_{es}$ are both projective modules over $R$. Recall that $R=\C[X_1,\dots,X_m]$ is a polynomial ring. The well known Quillen-Suslin Theorem tells us that any projective $R$-module is a free $R$-module. So $gM_{es}$ and $(\id-g)M_{es}$ are finitely generated graded free $R$-modules. Since $g$ is a morphism of matrix factorizations, the differential map on $M_{es}$ preserves $gM_{es}$ and $(\id-g)M_{es}$, which makes them objects of $\hmf_{R,w}$ and the above decomposition a decomposition of graded matrix factorizations. Altogether, we have $M \simeq M_{es} = gM_{es} \oplus (\id-g)M_{es}$ as graded matrix factorizations.
\end{proof}

In \cite{KR1}, Khovanov and Rozansky proved that $\HMF_{R,w}$ is locally finite dimensional under the assumption that $w$ is non-degenerate. Since we only need $\hmf_{R,w}$ to be locally finite dimensional, the assumption of non-degeneracy is not necessary. The following is a modified version of \cite[Proposition 6]{KR1}.

\begin{lemma}\cite[Propositions 6]{KR1}\label{locally-finite-hmf}
$\hmf_{R,w}$ is locally finite dimensional.
\end{lemma}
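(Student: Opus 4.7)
The plan is to reduce to the case where both matrix factorizations are finitely generated and then exploit the fact that the polynomial ring $R=\C[X_1,\dots,X_m]$ has finite-dimensional homogeneous pieces in every degree.

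First, I would reduce to the finitely generated case. Since $M,M'\in\hmf_{R,w}$, they are homotopically finite, so there exist finitely generated graded matrix factorizations $\mathcal{M},\mathcal{M}'$ with potential $w$ such that $M\simeq\mathcal{M}$ and $M'\simeq\mathcal{M}'$. The argument in the proof of Lemma \ref{hom-all-gradings-homo-finite} (applied in the first variable) together with its obvious symmetric analogue (applied in the second variable, using $\alpha\mapsto f'\circ\alpha$ where $f':M'\to\mathcal{M}'$ is a homotopy equivalence) shows that $\Hom_R(M,M')\simeq\Hom_R(\mathcal{M},\mathcal{M}')$ via homotopy equivalences of chain complexes that preserve both the $\zed_2$-grading and the quantum pregrading. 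Passing to homology yields $\Hom_{\HMF}(M,M')\cong\Hom_{\HMF}(\mathcal{M},\mathcal{M}')$ as bigraded $\C$-spaces, so it suffices to prove $\Hom_{\hmf}(\mathcal{M},\mathcal{M}')$ is finite-dimensional over $\C$.

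Second, I would compute the bigraded pieces of the $\Hom$ complex explicitly. By Lemma \ref{homogeneous-basis-exists}, $\mathcal{M}$ admits a finite homogeneous $R$-basis $\{e_1,\dots,e_n\}$ and $\mathcal{M}'$ admits a finite homogeneous $R$-basis $\{e'_1,\dots,e'_{n'}\}$. Using the dual basis $\{e_i^\ast\}$ of $\mathcal{M}_\bullet$ together with Lemma \ref{rewrite-hom-finite-gen}, we get an $R$-module decomposition
\[
\Hom_R(\mathcal{M},\mathcal{M}')\;\cong\;\bigoplus_{i,j} R\{q^{\deg e'_j-\deg e_i}\},
\]
so the quantum-degree-$k$ piece equals $\bigoplus_{i,j}R^{(k-\deg e'_j+\deg e_i)}$. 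Since every generator $X_l$ of $R$ has positive degree, each homogeneous component $R^{(l)}$ is spanned by finitely many monomials of weighted degree $l$, hence is finite-dimensional over $\C$. The sum above has only $n\cdot n'$ terms, so each quantum-graded piece of $\Hom_R(\mathcal{M},\mathcal{M}')$ is finite-dimensional over $\C$; a fortiori so is each $(\zed_2,\zed)$-bigraded piece.

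Finally, since the differential on $\Hom_R(\mathcal{M},\mathcal{M}')$ is homogeneous (of $\zed_2$-degree $1$ and quantum degree $N+1$), its kernel and image are bigraded submodules, and the homology $\Hom_{\HMF}(\mathcal{M},\mathcal{M}')$ inherits the bigrading with each bigraded piece being a subquotient of the corresponding finite-dimensional piece of $\Hom_R(\mathcal{M},\mathcal{M}')$. Taking the $(0,0)$-bigraded piece gives $\Hom_{\hmf}(\mathcal{M},\mathcal{M}')$, which is therefore finite-dimensional. I do not expect a serious obstacle: the only point worth checking carefully is the symmetric version of the reduction step, which follows by the same homotopy-transfer argument as in Lemma \ref{hom-all-gradings-homo-finite} but applied to the covariant functor $\Hom_R(N,-)$.
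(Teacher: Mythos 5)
Your proof is correct, and it takes a genuinely different route than the paper's. The paper reduces to $\Hom_\HMF(\mathcal{M},\mathcal{M}')$ as you do, but then invokes the Noetherian property of $R$: since $\Hom_R(\mathcal{M},\mathcal{M}')$ is a finitely generated $R$-module, so is its subquotient $\Hom_\HMF(\mathcal{M},\mathcal{M}')$; taking a finite set of homogeneous generators $v_1,\dots,v_k$ with $a=\min_i\deg v_i$, the degree-$0$ piece sits inside $(\bigoplus_i\C v_i)\otimes_\C(\bigoplus_{j=0}^{-a}R^{(j)})$, which is finite dimensional. You instead bypass Noetherianness entirely by writing $\Hom_R(\mathcal{M},\mathcal{M}')\cong\bigoplus_{i,j}R\{q^{\deg e'_j-\deg e_i}\}$ over a finite homogeneous basis and observing that each graded piece of a polynomial ring with positive-degree generators is finite dimensional, hence each bigraded piece of $\Hom_R(\mathcal{M},\mathcal{M}')$ and of its homology is finite dimensional. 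Your argument is more elementary (no Noetherian input) and proves the slightly stronger statement that \emph{every} bigraded piece of $\Hom_\HMF(M,M')$ is finite dimensional, not just the $(0,0)$ one; the paper's argument is shorter to state because it leans on a standard commutative-algebra fact. Your explicit note on the covariant version of the homotopy-transfer step is correct and closes the one spot that needed care.
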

\begin{proof}
Let $M$ and $M'$ be objects of $\hmf_{R,w}$. Then there exists finitely generated graded matrix factorizations $\mathcal{M}$ and $\mathcal{M}'$ over $R$ of potential $w$ such that $M\simeq \mathcal{M}$ and $M'\simeq \mathcal{M}'$. So $\Hom_{\HMF}(M,M') \cong \Hom_{\HMF}(\mathcal{M},\mathcal{M}')$. Recall that $R$ is a polynomial ring and, therefore, Noetherian. Thus, $\Hom_{\HMF}(\mathcal{M},\mathcal{M}')$ is finitely generated over $R$ since $\Hom_R(\mathcal{M},\mathcal{M}')$ is finitely generated over $R$. Let $v_1,\dots v_k$ be a finite set of homogeneous generators of $\Hom_{\HMF}(M,M')$ over $R$ and $a=\min_{i=1,\dots,k} \deg v_i$. Then $\Hom_{\hmf}(M,M')$ is a quotient space of a subspace of the finite dimensional space
\[
(\bigoplus_{i=1}^k \C \cdot v_i) \otimes_\C (\bigoplus_{j=0}^{-a} R^{(j)}),
\]
where $R^{(j)}$ is the $\C$-subspace of $R$ of homogeneous elements of degree $j$. Therefore, $\Hom_{\hmf}(M,M')$ is finite dimensional over $\C$.
\end{proof}

The Krull-Schmidt property follows easily from Lemmas \ref{falf-implies-KS}, \ref{fully-additive-hmf} and \ref{locally-finite-hmf}.

\begin{proposition}\cite[Proposition 25]{KR1}\label{hmf-is-KS}
Assume that $w$ is a homogeneous element of $\mathfrak{I}$ with $\deg w =2N+2$. Then $\hmf_{R,w}$, $\ch(\hmf_{R,w})$ and $\hch(\hmf_{R,w})$ are all Krull-Schmidt.
\end{proposition}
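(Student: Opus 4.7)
The plan is to simply assemble the three preceding lemmas. The proposition is a three-fold statement about $\hmf_{R,w}$, $\ch(\hmf_{R,w})$ and $\hch(\hmf_{R,w})$, and the Krull-Schmidt property for all three will follow from the general criterion in Lemma \ref{falf-implies-KS}, which says that any fully additive, locally finite dimensional $\C$-category is Krull-Schmidt, and that passing to $\ch$ and $\hch$ preserves these two properties.

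First I would observe that $\hmf_{R,w}$ is a $\C$-category in the sense of Definition \ref{Krull-Schmidt-category-def}: by construction each morphism set $\Hom_{\hmf}(M,M')$ is a $\C$-linear subspace of $\Hom_{\HMF}(M,M')$ (the degree-$(0,0)$ part), and composition of homotopy classes is $\C$-bilinear. Next, Lemma \ref{fully-additive-hmf} gives that $\hmf_{R,w}$ is fully additive, and Lemma \ref{locally-finite-hmf} gives that it is locally finite dimensional. Applying the first part of Lemma \ref{falf-implies-KS} then yields that $\hmf_{R,w}$ is Krull-Schmidt.

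For the other two categories, I would invoke the second part of Lemma \ref{falf-implies-KS}, which asserts that whenever $\mathcal{C}$ is fully additive and locally finite dimensional, so are $\ch(\mathcal{C})$ and $\hch(\mathcal{C})$, whence both are Krull-Schmidt. Specializing $\mathcal{C} = \hmf_{R,w}$ finishes the proof.

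There is essentially no obstacle in this argument: all the real content sits in Lemmas \ref{fully-additive-hmf} and \ref{locally-finite-hmf}, whose proofs have already been given (the former using lifting of idempotents and the Quillen--Suslin theorem to split off a free summand, and the latter using the Noetherian property of the polynomial ring $R$ together with boundedness below of the quantum grading). The present proposition is a one-line corollary of these inputs together with the general categorical fact in Lemma \ref{falf-implies-KS}.
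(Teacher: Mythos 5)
Your proposal is correct and is exactly the paper's argument: the paper notes that the proposition ``follows easily from Lemmas \ref{falf-implies-KS}, \ref{fully-additive-hmf} and \ref{locally-finite-hmf},'' which is the same three-lemma assembly you carry out.
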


\subsection{Yonezawa's lemma} Yonezawa \cite{Yonezawa-notes} introduced a lemma about isomorphisms in a graded Krull-Schmidt category that is very useful in the proof of the invariance of the colored $\mathfrak{sl}(N)$ homology. Next we review this lemma and show that it applies to $\hmf_{R,w}$ and $\hch(\hmf_{R,w})$. Our statement of Yonezawa's lemma is slightly different from the original version in \cite{Yonezawa-notes}.

First, we recall a simple property of Krull-Schmidt categories.

\begin{lemma}\label{KS-oplus-cancel}
Let $\mathcal{C}$ be a Krull-Schmidt category, and $A,B,C$ objects of $\mathcal{C}$. If $A \oplus C \cong B \oplus C$, then $A \cong B$.
\end{lemma}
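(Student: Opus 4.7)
The plan is to reduce the claim to the uniqueness clause in the definition of a Krull--Schmidt category by passing to indecomposable decompositions on both sides and cancelling the contributions coming from $C$.

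First, I would invoke the Krull--Schmidt property to decompose each of $A$, $B$, $C$ into finite direct sums of indecomposable objects: write $A \cong A_1 \oplus \cdots \oplus A_a$, $B \cong B_1 \oplus \cdots \oplus B_b$, and $C \cong C_1 \oplus \cdots \oplus C_c$, with each $A_i$, $B_j$, $C_k$ indecomposable. Substituting these into the given isomorphism $A \oplus C \cong B \oplus C$ yields
\[
A_1 \oplus \cdots \oplus A_a \oplus C_1 \oplus \cdots \oplus C_c \;\cong\; B_1 \oplus \cdots \oplus B_b \oplus C_1 \oplus \cdots \oplus C_c,
\]
which presents the same object of $\mathcal{C}$ as a direct sum of indecomposables in two ways.

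Next, I would apply the uniqueness part of the Krull--Schmidt property: the two indexed families of indecomposable summands must agree up to a permutation and isomorphisms of the summands. Concretely, $a + c = b + c$, so $a = b$, and there is a bijection between $\{A_1, \dots, A_a, C_1, \dots, C_c\}$ and $\{B_1, \dots, B_b, C_1, \dots, C_c\}$ matching isomorphic indecomposables. The key combinatorial step is then to cancel the $C_k$'s from both sides of this matching. To do this cleanly, I would argue by induction on $c$: it suffices to handle the case $c = 1$ of cancelling a single indecomposable $C_1$, and then iterate. In the case $c = 1$, the matching pairs $C_1$ on the left with some indecomposable on the right isomorphic to $C_1$; if it is paired with $B_j$ then $B_j \cong C_1$, and removing these two matched summands leaves a bijection between the $A_i$'s and the remaining $B$-indecomposables, giving $A \cong B$.

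The induction step just repeats this cancellation. No step is really a serious obstacle, since everything is forced by the Krull--Schmidt uniqueness of indecomposable decompositions; the only mild subtlety is that we must be careful not to assume an overly strong ``isomorphism as indexed tuples'' but only the permutation-with-isomorphism matching actually provided by the definition, which is exactly what the inductive cancellation above uses.
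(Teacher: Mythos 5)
Your proof is correct and takes the same approach as the paper, which simply says to decompose both sides into indecomposables and compare the components; you have supplied the standard cancellation argument that the paper leaves implicit.
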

\begin{proof}
Decompose both sides of $A \oplus C \cong B \oplus C$ into direct sums of indecomposable objects and compare the components of these direct sums.
\end{proof}

\begin{definition}\label{strongly-non-periodic-def}
Let $\mathcal{C}$ be an additive category, and $F:\mathcal{C} \rightarrow \mathcal{C}$ an autofunctor with inverse functor $F^{-1}$. We say that $F$ is strongly non-periodic if, for any object $A$ of $\mathcal{C}$ and $k \in \zed$, $F^k(A) \cong A$ implies that either $A\cong 0$ or $k=0$.

Denote by $\zed_{\geq 0}[F,F^{-1}]$ the ring of formal Laurent polynomials of $F$ whose coefficients are non-negative integers. Each $G=\sum_{i=k}^l b_i F^i \in \zed_{\geq 0}[F,F^{-1}]$ admits a natural interpretation as an endofunctor on $\mathcal{C}$, that is, for any object $A$ of $\mathcal{C}$, 
\[
G(A) = \bigoplus_{i=k}^l (\underbrace{F^i(A)\oplus\cdots\oplus F^i(A)}_{b_i \text{ fold}}).
\] 
\end{definition}

The following is Yonezawa's lemma.

\begin{lemma}\cite{Yonezawa-notes}\label{yonezawa-lemma}
Let $\mathcal{C}$ be a Krull-Schmidt category, and $F:\mathcal{C} \rightarrow \mathcal{C}$ a strongly non-periodic autofunctor. Suppose that $A$, $B$ are objects of $\mathcal{C}$, and there exists a $G \in \zed_{\geq 0}[F,F^{-1}]$ such that $G\neq 0$ and $G(A) \cong G(B)$. Then $A \cong B$.
\end{lemma}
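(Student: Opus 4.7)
The plan is to encode Krull-Schmidt multiplicities of indecomposables by Laurent polynomials, translate the isomorphism $G(A) \cong G(B)$ into a polynomial identity in $\zed[t,t^{-1}]$, and then cancel $G$ using that $\zed[t,t^{-1}]$ is an integral domain.

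First I would use strong non-periodicity to organize the indecomposable objects into $F$-orbits. Since $F$ is an autofunctor, it sends nonzero indecomposables to nonzero indecomposables, so it induces a bijection on the set $\mathcal{I}$ of isomorphism classes of nonzero indecomposable objects of $\mathcal{C}$. Strong non-periodicity says that $F^k(X) \cong X$ for $X \in \mathcal{I}$ forces $k=0$, so every $F$-orbit in $\mathcal{I}$ is in natural bijection with $\zed$. Fix a set $\{X_\beta\}$ of orbit representatives.

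Next, by the Krull-Schmidt property of $\mathcal{C}$, write uniquely
\[
A \cong \bigoplus_\beta \bigoplus_{i \in \zed} F^i(X_\beta)^{\oplus a_{\beta,i}}, \qquad B \cong \bigoplus_\beta \bigoplus_{i \in \zed} F^i(X_\beta)^{\oplus b_{\beta,i}},
\]
where, for each $\beta$, only finitely many multiplicities are nonzero. Encode these data by the Laurent polynomials $P_A^\beta(t) = \sum_i a_{\beta,i} t^i$ and $P_B^\beta(t) = \sum_i b_{\beta,i} t^i$ in $\zed_{\geq 0}[t,t^{-1}]$, and encode $G = \sum_j c_j F^j$ by $\widetilde{G}(t) = \sum_j c_j t^j$. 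Since $F$ commutes with finite direct sums, the Krull-Schmidt multiplicity of $F^n(X_\beta)$ in $G(A)$ equals $\sum_{i+j=n} c_j a_{\beta,i}$, i.e.\ the coefficient of $t^n$ in $\widetilde{G}(t)\cdot P_A^\beta(t)$, and similarly for $G(B)$. Krull-Schmidt uniqueness of decomposition, together with the hypothesis $G(A) \cong G(B)$, therefore gives
\[
\widetilde{G}(t)\cdot P_A^\beta(t) \;=\; \widetilde{G}(t)\cdot P_B^\beta(t) \qquad \text{in } \zed[t,t^{-1}]
\]
for every $\beta$ (distinct orbits contribute pairwise non-isomorphic indecomposables, so the identity can be read off one orbit at a time).

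Finally, since $G \neq 0$ in $\zed_{\geq 0}[F,F^{-1}]$, the polynomial $\widetilde{G}(t)$ is nonzero in the integral domain $\zed[t,t^{-1}]$, so cancellation yields $P_A^\beta(t) = P_B^\beta(t)$ for every $\beta$; hence $a_{\beta,i} = b_{\beta,i}$ for all $\beta,i$, and therefore $A \cong B$. The only point requiring real care is the orbit bookkeeping: it is strong non-periodicity of $F$ that places the encoding in $\zed[t,t^{-1}]$ itself rather than in a quotient such as $\zed[t,t^{-1}]/(t^k-1)$, where cancellation can genuinely fail; without this hypothesis the conclusion would be false, so it is essential rather than cosmetic.
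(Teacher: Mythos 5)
Your proof is correct. It uses the same underlying bookkeeping as the paper (decompose $A$ and $B$ into $F$-orbits of indecomposables, treating multiplicities as coefficients of a Laurent polynomial in $F$), but the cancellation step is carried out by a genuinely different mechanism. The paper performs an induction on the length $L(A)$: at each step it matches the top-degree summand in an orbit, strips off one indecomposable from both sides using the additive cancellation Lemma \ref{KS-oplus-cancel}, and invokes the inductive hypothesis. You instead package each orbit's multiplicity sequence into $P_A^\beta, P_B^\beta \in \zed_{\geq 0}[t,t^{-1}]$, note that $G(A)\cong G(B)$ and Krull--Schmidt uniqueness yield $\widetilde G\cdot P_A^\beta = \widetilde G\cdot P_B^\beta$ orbit by orbit, and cancel $\widetilde G\neq 0$ outright because $\zed[t,t^{-1}]$ is an integral domain. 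Your route is shorter and more transparent: it isolates exactly what strong non-periodicity is buying you, namely that the encoding lands in $\zed[t,t^{-1}]$ rather than in a quotient $\zed[t,t^{-1}]/(t^k-1)$ where zero divisors would wreck cancellation. The paper's induction has the minor advantage of staying entirely inside the category without invoking commutative-algebra facts, and it makes explicit the $\deg f_1 = \deg g_1$ degree-matching step that your proof absorbs silently into the integral-domain cancellation; but these are presentational rather than substantive differences. One small point worth keeping in mind when writing this up: you should note explicitly (as you implicitly use) that only finitely many orbits $\beta$ appear in the decompositions of $A$ and $B$, and that $G(A)\cong G(B)$ forces the two sets of contributing orbits to coincide, so the orbit-by-orbit comparison is exhaustive.
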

\begin{proof}
For any objects $C$ and $C'$, we say that they are in the same orbit if $C \cong F^k(C')$ for some $k\in \zed$. If $C$ and $C'$ are in the same orbit, and $C\ncong0$, then we can define a relative degree so that $\deg (C,C')=k$ if $C \cong F^k(C')$. This relative degree is well defined since $F$ is strongly non-periodic.

Clearly, $F$ preserves direct sum decompositions, maps isomorphic objects to isomorphic objects and maps indecomposable objects to indecomposable objects. 

For any object $C$ of $\mathcal{C}$, if $C\cong C_1\oplus \cdots \oplus C_l$, where $C_1,\dots,C_l$ are indecomposable objects of $\mathcal{C}$, then we call $l$ the length of $C$ and denote it by $L(C)$. Since $\mathcal{C}$ is Krull-Schmidt, $L(C)$ is well defined. Clearly, $L(C)=L(F(C))=L(F^k(C))$. More generally, for any $X\in\zed_{\geq 0}[F,F^{-1}]$, let $X(1)=X|_{F=1} \in \zed_{\geq 0}$. Then, $L(X(C))=X(1)L(C)$ for any object $C$. 

If $X\neq 0$, define the degree $\deg X$ of $X$ to be the maximal $k$ so that the coefficient of $F^k$ in $X$ is non-zero.

We prove the lemma by inducting on the length of $A$. If $L(A)=0$, then $A\cong 0$ and $L(B)G(1)=L(A)G(1)=0$. Since $G(1)>0$, this implies that $L(B)=0$ and, therefore, $B\cong0$. So $A\cong B$. Assume that the lemma is true if $L(A)=l-1$. Now suppose $L(A)=l$. Decompose $G(A) \cong G(B)$ into indecomposable objects and find all the orbits of indecomposable objects that appear in this decomposition. This gives us
\[
G(A) \cong G(B) \cong G_1(C_1) \oplus \cdots \oplus G_k(C_k),
\]
where $G_1,\dots,G_k$ are non-zero elements of $\zed_{\geq 0}[F,F^{-1}]$, and $C_1,\dots,C_k$ are indecomposable objects in disjoint orbits. Thus,
\begin{eqnarray*}
A & \cong & f_1(C_1) \oplus \cdots \oplus f_k(C_k), \\
B & \cong & g_1(C_1) \oplus \cdots \oplus g_k(C_k),
\end{eqnarray*}
where $f_1,\dots,f_k,g_1,\dots,g_k$ are non-zero elements of $\zed_{\geq 0}[F,F^{-1}]$. Compare $\deg f_1$ and $\deg g_1$. Using the strong non-periodicity of $F$ and the uniqueness of the decomposition into indecomposable objects, it is easy to conclude that $\deg f_1 + \deg G = \deg G_1 = \deg g_1 +\deg G$. So $\deg f_1 = \deg g_1 \triangleq d$. Define $\hat{f}_1 := f_1 - F^d,~\hat{g}_1 := g_1 - F^d \in \zed_{\geq 0}[F,F^{-1}]$. Let
\begin{eqnarray*}
\hat{A} & = & \hat{f}_1(C_1) \oplus f_2(C_2) \oplus \cdots \oplus f_k(C_k), \\
\hat{B} & = & \hat{g}_1(C_1) \oplus g_2(C_2) \oplus \cdots \oplus g_k(C_k).
\end{eqnarray*}
Then 
\[
G(\hat{A}) \oplus (F^d\cdot G)(C_1) \cong G(A) \cong G(B) \cong G(\hat{B}) \oplus (F^d\cdot G)(C_1).
\]
By Lemma \ref{KS-oplus-cancel}, we have that $G(\hat{A}) \cong G(\hat{B})$. But $L(\hat{A}) = l-1$. So, by induction hypothesis, $\hat{A} \cong \hat{B}$. Thus, $A \cong \hat{A} \oplus F^d(C_1) \cong \hat{B} \oplus F^d(C_1) \cong B$.
\end{proof}

Note that the quantum grading shift functor $\{q\}$ on $\hmf_{R,w}$ induces a quantum grading shift functor on $\hch(\hmf_{R,w})$, which we again denote by $\{q\}$. The following is an easy consequence of Lemma \ref{yonezawa-lemma} and is very useful later in our construction.

\begin{proposition}\label{yonezawa-lemma-hmf}
Assume that $w$ is a homogeneous element of $\mathfrak{I}$ with $\deg w =2N+2$. The functor $\{q\}$ is strongly non-periodic on both $\hmf_{R,w}$ and $\hch(\hmf_{R,w})$. Therefore, for any non-zero element $f(q) \in \zed_{\geq 0}[q,q^{-1}]$,
\begin{itemize}
	\item if $M$ and $M'$ are objects of $\hmf_{R,w}$, and $M\{f(q)\} \simeq M'\{f(q)\}$, then $M \simeq M'$;
	\item if $C$ and $C'$ are objects of $\hch(\hmf_{R,w})$, and $C\{f(q)\} \simeq C'\{f(q)\}$, then $C \simeq C'$.
\end{itemize}
\end{proposition}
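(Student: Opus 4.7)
The proposition has two independent non-periodicity claims plus the two cancellation corollaries; by Proposition \ref{hmf-is-KS} both $\hmf_{R,w}$ and $\hch(\hmf_{R,w})$ are Krull-Schmidt, so once strong non-periodicity of $\{q\}$ is known, the cancellation statements follow from Yonezawa's Lemma \ref{yonezawa-lemma} applied with $F=\{q\}$ and $G=f(q)\in\zed_{\geq 0}[q,q^{-1}]$. All the work is therefore in verifying that $\{q\}$ is strongly non-periodic in each category.

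For $\hmf_{R,w}$, the plan is to pass to the homology functor $H_R$. If $M\{q^k\}\simeq M$ in $\hmf_{R,w}$, then since $H_R$ is a homotopy invariant we obtain an isomorphism of graded $\C$-vector spaces $H_R(M)\{q^k\}\cong H_R(M)$. By Corollary \ref{homology-detects-homotopy}(ii), $H_R(M)$ is finite-dimensional over $\C$, and its quantum grading is bounded below (inherited from $M$). A non-zero finite-dimensional bounded-below graded vector space cannot be isomorphic to a non-trivial shift of itself, so either $k=0$ or $H_R(M)=0$, and in the latter case Corollary \ref{homology-detects-homotopy}(i) gives $M\simeq 0$.

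For $\hch(\hmf_{R,w})$, I would promote the above argument via the iterated homology
\[
\mathcal{K}(C) := H\bigl(H_R(C_\bullet),\, d^\ast\bigr),
\]
a triply-graded $\C$-vector space which is finite-dimensional in total, since $C$ is bounded and each $H_R(C_i)$ is finite-dimensional by Corollary \ref{homology-detects-homotopy}(ii). Three properties are needed: (a) $H_R$ descends to a functor from $\hch(\hmf_{R,w})$ to the homotopy category of bounded complexes of graded $\C$-vector spaces (because a morphism of matrix factorizations induces a well-defined map on $H_R$ depending only on its homotopy class), so $\mathcal{K}$ is invariant under $\simeq$ in $\hch(\hmf_{R,w})$; (b) $\mathcal{K}(C\{q^k\})=\mathcal{K}(C)\{q^k\}$ by construction; and (c) if $C\not\simeq 0$ in $\hch(\hmf_{R,w})$, then $\mathcal{K}(C)\neq 0$. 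Granted (a)--(c), the identity $C\{q^k\}\simeq C$ yields $\mathcal{K}(C)\{q^k\}\cong\mathcal{K}(C)$, and the finite-dimensional bounded-below argument used in $\hmf_{R,w}$ forces $k=0$ unless $\mathcal{K}(C)=0$, in which case (c) gives $C\simeq 0$.

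The hard step is property (c), which I would prove by induction on the length of $C$. The base case (length one) is Corollary \ref{homology-detects-homotopy}(i). For the inductive step, replace each $C_i$ by its essential part via Proposition \ref{contractible-essential-decomp} so that all matrix-factorization differentials have entries in $\mathfrak{I}$; then $\mathcal{K}(C)=0$ implies the leftmost edge map $(d_0)_\ast\colon H_R(C_0)\hookrightarrow H_R(C_1)$ is injective. A graded $\C$-linear splitting of this injection can be lifted to a morphism in $\hmf_{R,w}$ and promoted to a genuine idempotent via Lemma \ref{idempotent-lifting}, producing a direct summand of $C_1$ on which the restriction of $d_0$ induces an isomorphism on $H_R$; that restriction is then a homotopy equivalence of matrix factorizations by Corollary \ref{homotopy-equal-isomorphism-on-homology}. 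This splits off a contractible two-term summand of $C$ in $\hch(\hmf_{R,w})$, strictly shortening the complex while preserving vanishing of $\mathcal{K}$, and the induction closes. The main technical care is in controlling the signs and the interaction between the matrix-factorization and chain differentials during this Gaussian-elimination step; once (c) is in place, Yonezawa's lemma finishes the proof.
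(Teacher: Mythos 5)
Your treatment of $\hmf_{R,w}$ is correct and genuinely different from the paper's: you pass through $H_R$, invoke Corollary \ref{homology-detects-homotopy}(ii) for finite-dimensionality of $H_R(M)$, and observe that a non-zero finite-dimensional graded $\C$-space cannot be isomorphic to a non-trivial quantum shift of itself. The paper's argument is instead a direct degree count on a finitely generated model $\mathcal{M}$ of $M$: from $M\simeq M\{q^k\}$ one gets $\mathcal{M}\simeq\mathcal{M}\{q^{ak}\}$ for all $a>0$, and once $ak$ exceeds the spread of generator degrees of $\mathcal{M}$ the degree $-ak$ morphism realizing the equivalence vanishes on every generator, forcing $\id_\mathcal{M}\simeq 0$. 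Both arguments work, and yours is arguably the more conceptual one for this half.

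For $\hch(\hmf_{R,w})$, however, there is a gap in your proposal. The paper handles this case by noting the generator-degree argument applies unchanged: a bounded complex $C$ of finitely generated matrix factorizations has finitely many generators, a chain map $C\{q^{ak}\}\to C$ is a finite family of homogeneous morphisms of matrix factorizations, and for $ak$ large these all vanish, so $\id_C\simeq 0$. Your route instead goes through the double homology $\mathcal{K}(C)=H(H_R(C_\bullet),d^\ast)$ and rests on property (c), whose proof sketch contains an unjustified step: ``a graded $\C$-linear splitting of this injection can be lifted to a morphism in $\hmf_{R,w}$.'' The splitting $p\colon H_R((C_1)_{es})\to H_R((C_0)_{es})$ is only a graded $\C$-linear map; a lift $\tilde p\colon (C_1)_{es}\to(C_0)_{es}$ must also commute with the matrix-factorization differentials of the essential parts (which are non-trivial, with entries in $\mathfrak{I}$), and $H_R$ is not full, so such a lift need not exist. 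Injectivity of $(d_0)_\ast$ does give $d_0$ a splitting as a map of graded free $R$-modules, via Lemma \ref{free-module-quotient-tensor} and a Nakayama-type argument, but that $R$-module splitting is in general not a morphism of matrix factorizations, so you cannot feed it into Lemma \ref{idempotent-lifting} or Corollary \ref{homotopy-equal-isomorphism-on-homology}. Until this is repaired, (c) — which is harder than the proposition being proved — is not established. The cleanest fix is to drop $\mathcal{K}$ and run the degree argument directly on the bounded complex, as the paper does.
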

\begin{proof}
We only need to show that $\{q\}$ is strongly non-periodic on both $\hmf_{R,w}$ and $\hch(\hmf_{R,w})$. The second half of the proposition follows from this and Proposition \ref{hmf-is-KS} and Lemma \ref{yonezawa-lemma}.

Let $M$ be any object of $\hmf_{R,w}$. Assume that $M\simeq M\{q^k\}$ for some $k\neq 0$. Without loss of generality, assume $k>0$. Since $M$ is homotopically finite, there exists a finitely generated object $\mathcal{M}$ of $\hmf_{R,w}$ such that $M \simeq \mathcal{M}$. So $\mathcal{M} \simeq \mathcal{M}\{q^k\}$, and, therefore $\mathcal{M} \simeq \mathcal{M}\{q^{ak}\}$ for any $a\in \zed_{>0}$. Let $\{e_1,\dots,e_n\}$ be a homogeneous basis for $\mathcal{M}$. Set $u=\max_{1 \leq i \leq n} \deg e_i$ and $l=\min_{1 \leq i \leq n} \deg e_i$. Note that $l$ is the lowest grading for any non-vanishing homogeneous elements of $\mathcal{M}$. Choose an $a\in \zed_{>0}$ such that $ak > u-l$. Then $\mathcal{M} \simeq \mathcal{M}\{q^{ak}\}$ implies that there are homogeneous morphisms $f:\mathcal{M} \rightarrow \mathcal{M}$ of degree $-ak$ and $g:\mathcal{M} \rightarrow \mathcal{M}$ of degree $ak$ such that $f \circ g \simeq g \circ f \simeq \id_{\mathcal{M}}$. Note that $\deg f(e_i) \leq -ak+u <l$ $\forall~i=1,\dots, n$, which implies that $f(e_i)=0$ $\forall~i=1,\dots, n$. So $f=0$ and, therefore, $\id_{\mathcal{M}}\simeq0$. Thus, $M \simeq \mathcal{M} \simeq 0$. This shows that $\{q\}$ is strongly non-periodic on $\hmf_{R,w}$.

Note that any object of $\hch(\hmf_{R,w})$ is isomorphic to an object whose underlying $R$-module is finitely generated, and any morphism of $\hch(\hmf_{R,w})$ can be realized as a finite collection of homogeneous morphisms of graded matrix factorizations. So the above argument works for $\hch(\hmf_{R,w})$ too. Thus, $\{q\}$ is also strongly non-periodic on $\hch(\hmf_{R,w})$.
\end{proof}

\section{Symmetric Polynomials}\label{sec-sym-poly}

In this section, we review properties of symmetric polynomials used in this paper. Most of these materials can be found in, for example, \cite{Fulton-notes,Fulton-Harris,Lascoux-book,Lascoux-notes,Macdonald-book,Zhou-notes}.

\subsection{Notations and basic examples}\label{basic-sym-poly} In this paper, an alphabet means a finite collection of homogeneous indeterminates of degree $2$. For an alphabet $\mathbb{X}=\{x_1,\dots,x_m\}$, we denote by $\C[\mathbb{X}]$ the polynomial ring $\C[x_1,\dots,x_m]$ and by $\Sym(\mathbb{X})$ the ring of symmetric polynomials over $\C$ in $\mathbb{X}=\{x_1,\dots,x_m\}$. Note that the grading on $\C[\mathbb{X}]$ (and $\Sym(\mathbb{X})$) is given by $\deg x_j =2$. For $k=1,2,\dots,m$, we denote by $X_k$ the $k$-th elementary symmetric polynomial in $\mathbb{X}$. That is,
\[
X_k := \sum_{1\leq i_1<i_2<\cdots<i_k\leq m} x_{i_1}x_{i_1}\cdots x_{i_k}.
\]
$X_k$ is a homogeneous symmetric polynomial of degree $2k$. It is well known that $X_1,\cdots,X_m$ are independent and $\Sym(\mathbb{X})=\C[X_1,\dots,X_m]$. For convenience, we define 
\[
X_0=1 \text{ and } X_k=0  \text{ if } k<0 \text{ or } k>m. 
\]
There are two more relevant families of basic symmetric polynomials. The power sum symmetric polynomials $\{p_k(\mathbb{X})~|~k\in\zed\}$ given by
\[
p_k(\mathbb{X})=
\left\{%
\begin{array}{ll}
    \sum_{i=1}^{m} x_i^k & \text{if } k\geq0, \\
    0 & \text{if } k<0, 
\end{array}%
\right.
\]
and the complete symmetric polynomials $\{h_k(\mathbb{X})~|~k\in\zed\}$ given by
\[
h_k(\mathbb{X})=
\left\{%
\begin{array}{ll}
    \sum_{1\leq i_1\leq i_2 \leq \cdots \leq i_k\leq m} x_{i_1}x_{i_1}\cdots x_{i_k} & \text{if } k>0, \\
    1 & \text{if } k=0, \\
    0 & \text{if } k<0. 
\end{array}%
\right.
\]

Consider the generating functions of $\{X_k\}$, $\{p_k(\mathbb{X})\}$ and $\{h_k(\mathbb{X})\}$, that is, the power series
\begin{eqnarray*}
E(t) & = & \sum_{k=0}^{m} (-1)^k X_k t^k = \prod_{i=1}^{m} (1-x_it), \\
P(t) & = & \sum_{k=0}^{\infty} p_{k+1}(\mathbb{X}) t^k = \sum_{i=1}^{m} \frac{x_i}{1-x_it}, \\
H(t) & = & \sum_{k=0}^{\infty} h_k(\mathbb{X}) t^k = \prod_{i=1}^{m} (1-x_it)^{-1}.
\end{eqnarray*}
It is easy to see that $E(t)\cdot H(t)=1$, $E'(t)\cdot H(t) = -P(t)$ and $E(t)\cdot P(t)=-E'(t)$. Hence,

\begin{eqnarray}
\label{complete-recursion} \sum_{k=0}^l (-1)^k X_k h_{l-k}(\mathbb{X}) & = & 
\left\{%
\begin{array}{ll}
    0 & \text{if } l>0, \\ 
    1 & \text{if } l=0, 
\end{array}%
\right.  \\
\label{complete-power} \sum_{k=1}^l (-1)^{k-1} kX_k h_{l-k}(\mathbb{X}) & = & p_{l}(\mathbb{X}), \\
\label{newton} \sum_{k=0}^{l-1} (-1)^k X_k p_{l-k}(\mathbb{X}) & = & (-1)^{l+1} lX_l,
\end{eqnarray}
where \eqref{newton} is known as Newton's Identity.

Since $\Sym(\mathbb{X})=\C[X_1,\dots,X_m]$, $p_k(\mathbb{X})$ and $h_k(\mathbb{X})$ can be uniquely expressed as polynomials in $X_1,\cdots,X_m$. In fact, we know that 
\begin{equation}\label{power}
p_k(\mathbb{X}) = p_{m,k}(X_1,\dots,X_m) = \left|%
\begin{array}{cccccc}
    X_1 & X_2 & X_3 & \cdots & X_{k-1} & kX_k \\
    1 & X_1 & X_2 & \cdots & X_{k-2} & (k-1)X_{k-1} \\
    0 & 1 & X_1 & \cdots & X_{k-3} & (k-2)X_{k-2} \\
    \cdots & \cdots & \cdots & \cdots & \cdots & \cdots \\
    0 & 0 & 0 & \cdots & X_1 & 2X_2 \\
    0 & 0 & 0 & \cdots & 1 & X_1
\end{array}%
\right|,
\end{equation}
and
\begin{equation}\label{complete}
h_k(\mathbb{X}) = h_{m,k}(X_1,\dots,X_m) = \left|%
\begin{array}{cccccc}
    X_1 & X_2 & X_3 & \cdots & X_{k-1} & X_k \\
    1 & X_1 & X_2 & \cdots & X_{k-2} & X_{k-1} \\
    0 & 1 & X_1 & \cdots & X_{k-3} & X_{k-2} \\
    \cdots & \cdots & \cdots & \cdots & \cdots & \cdots \\
    0 & 0 & 0 & \cdots & X_1 & X_2 \\
    0 & 0 & 0 & \cdots & 1 & X_1
\end{array}%
\right|.
\end{equation}
Equations \eqref{power} and \eqref{complete} can be proved inductively using equations \eqref{complete-recursion} and \eqref{newton}.

\begin{lemma}\label{power-derive}
\[
\frac{\partial}{\partial X_j} p_{m,l}(X_1,\dots,X_m) = (-1)^{j+1} l h_{m,l-j}(X_1,\dots,X_m).
\]
\end{lemma}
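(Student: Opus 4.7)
The plan is to carry out a generating function computation in $\C[X_1,\dots,X_m][[t]]$, with the series $E(t),H(t),P(t)$ of Subsection \ref{basic-sym-poly} viewed as having coefficients in $\Sym(\mathbb{X})=\C[X_1,\dots,X_m]$ via \eqref{power} and \eqref{complete}. Note that $E(t)$ is a polynomial in $t$ with $E(0)=1$, so it is invertible in this ring, and both $\partial/\partial t$ and $\partial/\partial X_j$ act on the ring in the obvious way.

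First I would record two preliminary identities. Since $E(t)H(t)=1$, differentiating in $t$ yields $H'(t)=-E'(t)H(t)^2=H(t)P(t)$, where the second equality uses $P(t)=-E'(t)H(t)$. Differentiating $E(t)H(t)=1$ in $X_j$ and using $\partial E(t)/\partial X_j=(-1)^j t^j$ gives $\partial H(t)/\partial X_j=(-1)^{j+1}t^j H(t)^2$.

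Then I would differentiate $P(t)=-E'(t)H(t)$ with respect to $X_j$, using $\partial E'(t)/\partial X_j=(-1)^j j t^{j-1}$ and the preliminary identities, to obtain
\[
\frac{\partial P(t)}{\partial X_j}=(-1)^{j+1}j t^{j-1}H(t)+(-1)^{j+1}t^j H(t)P(t)=(-1)^{j+1}t^{j-1}\bigl(jH(t)+tH'(t)\bigr),
\]
where the second equality uses $H(t)P(t)=H'(t)$. Since $jH(t)+tH'(t)=\sum_{r\geq 0}(j+r)h_r t^r$, this equals $(-1)^{j+1}\sum_{r\geq 0}(j+r)h_r t^{j+r-1}$. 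Comparing the coefficient of $t^{l-1}$ on both sides and substituting $r=l-j$ yields $\partial p_l/\partial X_j=(-1)^{j+1} l h_{l-j}$ for $l\geq j$. For $l<j$ both sides vanish: the right-hand side because $h_{l-j}=0$ by convention, and the left-hand side because Newton's identity \eqref{newton} inductively expresses $p_l$ as a polynomial in $X_1,\dots,X_l$ only.

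There is no serious obstacle in this approach; the computation is essentially bookkeeping, and the only subtle point is keeping track of the signs $(-1)^j$ and $(-1)^{j+1}$ through the successive uses of $P(t)=-E'(t)H(t)$. An alternative proof by induction on $l$ using Newton's identity \eqref{newton} directly is possible but more tedious due to the need to rearrange the sum $\sum(-1)^k X_k p_{l-k}$ after differentiation.
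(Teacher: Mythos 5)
Your proof is correct, and it takes a genuinely different route from the paper's. The paper proves Lemma \ref{power-derive} by induction on $l$: after dispatching $l<j$ and the base case $l=j$ (obtained by differentiating Newton's identity), it differentiates Newton's identity \eqref{newton} for $p_{m,n+1}$, applies the inductive hypothesis, and then rearranges via \eqref{complete-power} and \eqref{complete-recursion} — exactly the rearranging you anticipated as "tedious." Your generating-function argument instead does the whole thing in one pass: from $E(t)H(t)=1$ and $P(t)=-E'(t)H(t)$ you get $\partial H/\partial X_j=(-1)^{j+1}t^jH(t)^2$ and $H'=PH$, plug into $\partial P/\partial X_j$, collapse to $(-1)^{j+1}t^{j-1}\bigl(jH+tH'\bigr)$, and read off the coefficient of $t^{l-1}$. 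Both proofs rest on the same underlying identities ($EH=1$, Newton), but the generating-function version packages the induction invisibly into the power-series algebra, trading the paper's explicit (and somewhat fiddly) index shuffling for a small amount of formal calculus on $\C[X_1,\dots,X_m][[t]]$. Your sign-tracking and the handling of $l<j$ are both fine.
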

\begin{proof}
Induct on $l$. If $l<j$, then both sides of the above equation are $0$, and, therefore, the lemma is true. If $l=j$, by Newton's Identity \eqref{newton}, we have 
\[
p_{m,j}+ \sum_{k=1}^{j-1} (-1)^k X_k p_{m,j-k} = (-1)^{j+1} jX_j.
\]
Derive this equation by $X_j$, we get
\[
\frac{\partial}{\partial X_j} p_{m,j} = (-1)^{j+1} j.
\]
So the lemma is true when $l \leq j$.

Assume that $\exists ~n \geq j$ such that the lemma is true $\forall ~l\leq n$. Consider $l=n+1$. Using Newton's Identity \eqref{newton} again, we get
\[
p_{m,n+1} + \sum_{k=1}^{n} (-1)^k X_k p_{m,n+1-k} = (-1)^n (n+1)X_{n+1}.
\] 
Deriving this equation by $X_j$, we get 
\[
\frac{\partial}{\partial X_j} p_{m,n+1} + (-1)^j p_{m,n+1-j} + \sum_{k=1}^{n} (-1)^k X_k \frac{\partial}{\partial X_j}p_{m,n+1-k} =0.
\]
So, by induction hypothesis,
\begin{eqnarray*}
\frac{\partial}{\partial X_j} p_{m,n+1} & = & (-1)^{j+1} p_{m,n+1-j} + \sum_{k=1}^{n+1-j} (-1)^{k+j}(n+1-k) X_k h_{m,n+1-k-j} \\ 
(\text{by \eqref{complete-power}}) & = & \sum_{k=1}^{n+1-j}(-1)^{k+j} (n+1) X_k h_{m,n+1-k-j} \\
(\text{by \eqref{complete-recursion}}) & = & (-1)^{j+1} (n+1) h_{m,n+1-j}.
\end{eqnarray*}
\end{proof}

\subsection{Partitions and linear bases for the space of symmetric polynomials}\label{subsec-partition-schur} A partition $\lambda$ is a finite non-increasing sequence of non-negative integers $(\lambda_1\geq\dots\geq\lambda_m)$. Two partitions are considered the same if one can be changed into the other by adding or removing $0$'s at the end. For a partition $\lambda=(\lambda_1\geq\dots\geq\lambda_m)$, write $|\lambda|=\sum_{j=1}^{m} \lambda_j$ and
$l(\lambda)=\#\{j~|~\lambda_j>0\}$. There is a natural ordering of partitions. For two partitions $\lambda=(\lambda_1\geq\dots\geq\lambda_m)$ and $\mu=(\mu_1\geq\dots\geq\mu_n)$, we say that $\lambda>\mu$ if the first non-vanishing $\lambda_j-\mu_j$ is positive.

It is well known that 
\begin{equation}\label{compute-quantum-binary}
\qb{m+n}{n} =  q^{-mn}\sum_{\lambda:~l(\lambda)\leq m, ~\lambda_1 \leq n} q^{2|\lambda|},
\end{equation}
where $\lambda$ runs through partitions satisfying the given conditions.

The Ferrers diagram of a partition $\lambda=(\lambda_1\geq\dots\geq\lambda_m)$ has $\lambda_i$ boxes in the $i$-th row from the top with rows of boxes lined up on the left. Reflecting this Ferrers digram across the northwest-southeast diagonal, we get the Ferrers diagram of another partition $\lambda'=(\lambda'_1\geq\dots\geq\lambda'_k)$, which is called the conjugate of $\lambda$. Clearly, $\lambda'_i=\#\{j~|~\lambda_j\geq i\}$ and $(\lambda')'=\lambda$.

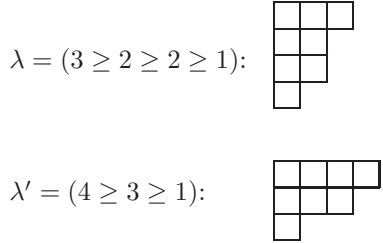
\begin{figure}[ht]

\setlength{\unitlength}{1pt}

\begin{picture}(360,90)(-180,-50)

%left

\put(-80,15){$\lambda=(3\geq2\geq2\geq1)$:}

\put(-80,-35){$\lambda'=(4\geq3\geq1)$:}

%upper right

\put(20,40){\line(1,0){30}}

\put(20,30){\line(1,0){30}}

\put(20,20){\line(1,0){20}}

\put(20,10){\line(1,0){20}}

\put(20,0){\line(1,0){10}}

\put(20,0){\line(0,1){40}}

\put(30,0){\line(0,1){40}}

\put(40,10){\line(0,1){30}}

\put(50,30){\line(0,1){10}}

%lower right

\put(20,-20){\line(1,0){40}}

\put(20,-30){\line(1,0){40}}

\put(20,-40){\line(1,0){30}}

\put(20,-50){\line(1,0){10}}

\put(20,-50){\line(0,1){30}}

\put(30,-50){\line(0,1){30}}

\put(40,-40){\line(0,1){20}}

\put(50,-40){\line(0,1){20}}

\put(60,-30){\line(0,1){10}}

\end{picture}

\caption{Ferrers diagrams of a partition and its conjugate}
\end{figure}

We are interested in partitions because they are used to index linear bases for the space of symmetric polynomials. We are particularly interested in two of such bases -- complete symmetric polynomials and Schur polynomials.

Given an alphabet $\mathbb{X}=\{x_1,\dots,x_m\}$ of $m$ indeterminates and a partition $\lambda=(\lambda_1\geq\dots\geq\lambda_m)$ of length $l(\lambda)\leq m$, define 
\[
h_{\lambda}(\mathbb{X}) = h_{\lambda_1}(\mathbb{X}) \cdot h_{\lambda_2}(\mathbb{X}) \cdots h_{\lambda_m}(\mathbb{X}),
\]
where $h_{\lambda_j}(\mathbb{X})$ is defined as in the previous subsection. $h_{\lambda}(\mathbb{X})$ is called the complete symmetric polynomial in $\mathbb{X}$ associated to $\lambda$. This generalizes the definition of complete symmetric polynomials given in the previous subsection. It is known that the set $\{h_{\lambda}(\mathbb{X})~|~l(\lambda)\leq m\}$ is a $\C$-linear basis for $\Sym(\mathbb{X})$. In particular, $\{h_{\lambda}(\mathbb{X})~|~l(\lambda)\leq m,~|\lambda|=d\}$ is a $\C$-linear basis for the subspace of $\Sym(\mathbb{X})$ of homogeneous symmetric polynomials of degree $2d$. (Recall that our degree is twice the usual degree.)

For the alphabet $\mathbb{X}=\{x_1,\dots,x_m\}$ and a partition $\lambda=(\lambda_1\geq\dots\geq\lambda_m)$ of length $l(\lambda)\leq m$, the Schur polynomial in $\mathbb{X}$ associated to $\lambda$ is
\[
S_\lambda(\mathbb{X}) = \frac{\left|%
\begin{array}{lllll}
x_1^{\lambda_1+m-1} & x_1^{\lambda_2+m-2} & \cdots & x_1^{\lambda_{m-1}+1} & x_1^{\lambda_m} \\
x_2^{\lambda_1+m-1} & x_2^{\lambda_2+m-2} & \cdots & x_2^{\lambda_{m-1}+1} & x_2^{\lambda_m} \\
\cdots & \cdots & \cdots & \cdots & \cdots \\
x_{m-1}^{\lambda_1+m-1} & x_{m-1}^{\lambda_2+m-2} & \cdots & x_{m-1}^{\lambda_{m-1}+1} & x_{m-1}^{\lambda_m} \\
x_m^{\lambda_1+m-1} & x_m^{\lambda_2+m-2} & \cdots & x_m^{\lambda_{m-1}+1} & x_m^{\lambda_m}
\end{array}%
\right|}
{\left|%
\begin{array}{lllll}
x_1^{m-1} & x_1^{m-2} & \cdots & x_1 & 1 \\
x_2^{m-1} & x_2^{m-2} & \cdots & x_2 & 1 \\
\cdots & \cdots & \cdots & \cdots & \cdots \\
x_{m-1}^{m-1} & x_{m-1}^{m-2} & \cdots & x_{m-1} & 1 \\
x_m^{m-1} & x_m^{m-2} & \cdots & x_m & 1
\end{array}%
\right|}.
\]
Note that the denominator here is the Vandermonde polynomial, which equals $\prod_{i<j}(x_i-x_j)$. $S_\lambda(\mathbb{X})$ can also be also computed using the following formulas:
\begin{equation}\label{schur-complete}
S_\lambda(\mathbb{X}) = \det (h_{\lambda_i -i +j}(\mathbb{X})) = \left|%
\begin{array}{llll}
h_{\lambda_1}(\mathbb{X}) & h_{\lambda_1+1}(\mathbb{X}) & \dots & h_{\lambda_1+m-1}(\mathbb{X}) \\
h_{\lambda_2-1}(\mathbb{X}) & h_{\lambda_2}(\mathbb{X}) & \dots & h_{\lambda_2+m-2}(\mathbb{X}) \\
\dots & \dots & \dots & \dots \\
h_{\lambda_m-m+1}(\mathbb{X}) & h_{\lambda_m-m+2}(\mathbb{X}) & \dots & h_{\lambda_m}(\mathbb{X}) 
\end{array}%
\right|,
\end{equation}
and
\begin{equation}\label{schur-elementary}
S_\lambda(\mathbb{X}) = \det (X_{\lambda'_i -i +j}) = \left|%
\begin{array}{cccc}
X_{\lambda'_1} & X_{\lambda'_1+1} & \dots & X_{\lambda'_1+k-1} \\
X_{\lambda'_2-1} & X_{\lambda'_2} & \dots & X_{\lambda'_2+k-2} \\
\dots & \dots & \dots & \dots \\
X_{\lambda'_k-k+1} & X_{\lambda'_k-k+2} & \dots & X_{\lambda'_k} 
\end{array}%
\right|,
\end{equation}
where $\lambda'=(\lambda'_1\geq\dots\geq\lambda'_k)$ is the conjugate of $\lambda$. In particular, for $j\geq 0$, 
\begin{eqnarray*}
h_j(\mathbb{X}) & = & S_{(j)}(\mathbb{X}), \\
X_j & = & S_{(\underbrace{1\geq1\geq\cdots\geq1}_{j \text{ parts}})}(\mathbb{X}).
\end{eqnarray*}
The set $\{S_\lambda(\mathbb{X})~|~l(\lambda)\leq m,~|\lambda|=d\}$ is also a basis for the $\C$-space of homogeneous symmetric polynomials in $\mathbb{X}$ of degree $2d$. (Again, recall that our degree is twice the usual degree.) 

The above two bases for the space of symmetric polynomials are related by 
\begin{equation}\label{kostka-equation}
h_{\lambda}(\mathbb{X}) = \sum_{\mu} K_{\mu \lambda} S_{\mu}(\mathbb{X}),
\end{equation}
where $K_{\mu \lambda}$ is the Kostka number defined by 
\begin{itemize}
	\item $K_{\mu \lambda}=0$ if $|\mu|\neq |\lambda|$;
	\item For partitions $\mu=(\mu_1\geq \cdots \geq \mu_m)$ and $\lambda=(\lambda_1\geq\dots\geq\lambda_m)$ with $|\mu|=|\lambda|$, $K_{\mu \lambda}$ is number of ways to fill boxes of the Ferrers diagram of $\mu$ with $\lambda_1$ $1$'s, $\lambda_2$ $2$'s,..., $\lambda_m$ $m$'s, such that the numbers in each row are nondecreasing from left to right, and the numbers in each column are strictly increasing from top to bottom.
\end{itemize}

\begin{lemma}\label{special-kostka}
$K_{\lambda \lambda}=1$ and $K_{\mu \lambda}=0$ if $\lambda>\mu$, that is, if the first non-vanishing $\lambda_j-\mu_j$ is positive.
\end{lemma}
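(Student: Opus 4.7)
The plan is to analyze the combinatorial constraints on the fillings counted by $K_{\mu\lambda}$ directly. The key preliminary observation is that in any valid filling of the Ferrers diagram of $\mu$ (rows weakly increasing left-to-right, columns strictly increasing top-to-bottom), every entry lying in row $i$ must be $\geq i$. This follows because the $i-1$ entries directly above it in the same column are distinct positive integers all strictly less than the entry in row $i$, so those entries use up at least the values $1,2,\dots,i-1$, forcing the entry in row $i$ to be at least $i$.

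From this I would deduce a dominance-type inequality. Since entries equal to $j$ can occur only in rows $1,\dots,j$, for every $k$ the total number of entries with value $\leq k$, which is $\lambda_1+\cdots+\lambda_k$, is bounded above by the number of boxes in the first $k$ rows of $\mu$, namely $\mu_1+\cdots+\mu_k$. Thus a valid filling can exist only when $\sum_{i=1}^k \lambda_i \leq \sum_{i=1}^k \mu_i$ for all $k$.

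Next, I would convert the lex hypothesis $\lambda>\mu$ into a violation of this inequality. Let $j$ be the smallest index with $\lambda_j\neq\mu_j$; by assumption $\lambda_j>\mu_j$, and $\lambda_i=\mu_i$ for $i<j$, so $\sum_{i=1}^j \lambda_i > \sum_{i=1}^j \mu_i$, contradicting the bound. Hence no valid filling exists and $K_{\mu\lambda}=0$.

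Finally, for $K_{\lambda\lambda}=1$ I would take $\mu=\lambda$ in the inequality above. Since $|\mu|=|\lambda|$, the inequalities $\sum_{i=1}^k \lambda_i \leq \sum_{i=1}^k \mu_i$ for all $k$ together with equality at $k=\text{length}$ force equality at every $k$. Taking differences shows that row $k$ contains exactly $\lambda_k$ entries, each of which must be $\geq k$ by the preliminary observation but also $\leq k$ (since all values $\leq k-1$ are already used up in the first $k-1$ rows). So the unique valid filling is the one where row $k$ consists entirely of $k$'s, giving $K_{\lambda\lambda}=1$. There is no substantive obstacle here; the only care needed is in the step translating lex order into a failure of the partial-sum inequality, which is clean because the first index of difference produces the needed contradiction.
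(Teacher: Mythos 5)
Your proof is correct. The key observation that an entry in row $i$ of a semistandard tableau must be at least $i$ is exactly right, and from it you correctly derive the dominance condition $\sum_{i\le k}\lambda_i \le \sum_{i\le k}\mu_i$ for $K_{\mu\lambda}\ne 0$ (entries $\le k$ can only occupy the first $k$ rows). Translating the lexicographic hypothesis $\lambda>\mu$ into a violation of this inequality at the first index of difference is clean, and the uniqueness argument for $K_{\lambda\lambda}$ is sound, though the phrase ``but also $\le k$ since all values $\le k-1$ are already used up'' deserves a word of care: what really forces row $k$ to have entries $\le k$ is that the equality $\sum_{i\le k}\lambda_i=\sum_{i\le k}\mu_i$ means the entries $\le k$ exactly fill rows $1,\dots,k$. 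Combined with the lower bound $\ge k$, the entries are all equal to $k$, and you can run this as a short induction on $k$ to get the unique filling.

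For comparison with the paper: the paper states this lemma without proof, as a standard fact about Kostka numbers, so there is no proof to compare against. Your argument is a self-contained elementary proof of the standard statement (which is usually phrased in terms of dominance order, of which the lexicographic statement here is a weakening). It is the natural argument and matches what one would find in a reference on symmetric functions.
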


For an alphabet $\mathbb{X}=\{x_1,\dots,x_m\}$, there is also a notion of Schur polynomial in $-\mathbb{X}$, which will be useful in the next subsection. First, for any $j\in \zed$, define 
\[
h_j(-\mathbb{X})=(-1)^jX_j. 
\]
More generally, for any partition $\lambda=(\lambda_1\geq\cdots\geq\lambda_n)$ with $\lambda_1\leq m$, 
\begin{equation}\label{negative-schur-complete}
S_\lambda(-\mathbb{X}) = \det (h_{\lambda_i -i +j}(-\mathbb{X})) = \left|%
\begin{array}{llll}
h_{\lambda_1}(-\mathbb{X}) & h_{\lambda_1+1}(-\mathbb{X}) & \dots & h_{\lambda_1+n-1}(-\mathbb{X}) \\
h_{\lambda_2-1}(-\mathbb{X}) & h_{\lambda_2}(-\mathbb{X}) & \dots & h_{\lambda_2+n-2}(-\mathbb{X}) \\
\dots & \dots & \dots & \dots \\
h_{\lambda_n-n+1}(-\mathbb{X}) & h_{\lambda_n-n+2}(-\mathbb{X}) & \dots & h_{\lambda_n}(-\mathbb{X}) 
\end{array}%
\right|.
\end{equation}
If we write the Schur polynomials in $\mathbb{X}$ as $S_{\lambda}(\mathbb{X})=S_{\lambda}(x_1,\dots,x_m)$, then, by comparing \eqref{negative-schur-complete} to \eqref{schur-elementary}, one can see that the Schur polynomials in $-\mathbb{X}$ is given by 
\begin{equation}\label{positive-negative-schur}
S_\lambda(-\mathbb{X}) = S_{\lambda'}(-x_1,\dots,-x_m),
\end{equation}
where $\lambda'$ is the conjugate of $\lambda$.

See, for example, \cite[Appendix A]{Fulton-Harris} and \cite{Lascoux-notes} for more on partitions and symmetric polynomials.

\subsection{Partially symmetric polynomials} Let $\mathbb{X}=\{x_1,\dots,x_m\}$ and $\mathbb{Y}=\{y_1,\dots,y_n\}$ be two disjoint alphabets. Then $\mathbb{X}\cup\mathbb{Y}$ is also an alphabet. Denote by $\Sym(\mathbb{X}|\mathbb{Y})$ the ring of polynomials in $\mathbb{X}\cup\mathbb{Y}$ over $\C$ that are symmetric in $\mathbb{X}$ and symmetric in $\mathbb{Y}$. Then $\Sym(\mathbb{X}\cup\mathbb{Y})$, the ring of symmetric polynomials over $\C$ in $\mathbb{X}\cup\mathbb{Y}$, is a subring of $\Sym(\mathbb{X}|\mathbb{Y})$. In other words, $\Sym(\mathbb{X}|\mathbb{Y})$ is a $\Sym(\mathbb{X}\cup\mathbb{Y})$-module. The following theorem explains the structure of this module. (See \cite[pages 16-19]{Lascoux-notes} for a detailed discussion.)

\begin{theorem}\cite[Proposition \emph{Gr}5]{Lascoux-notes}\label{part-symm-str}
Let $\mathbb{X}=\{x_1,\dots,x_m\}$ and $\mathbb{Y}=\{y_1,\dots,y_n\}$ be two disjoint alphabets. Then $\Sym(\mathbb{X}|\mathbb{Y})$ is a graded-free $\Sym(\mathbb{X}\cup\mathbb{Y})$-module. 

Denote by $\Lambda_{m,n}$ the set of partitions $\Lambda_{m,n}=\{\lambda~|~l(\lambda)\leq m, ~\lambda_1\leq n\}$. Then 
\[\{S_\lambda(\mathbb{X})~|~ \lambda \in \Lambda_{m,n}\} \text{ and } \{S_\lambda(-\mathbb{Y})~|~ \lambda \in \Lambda_{m,n}\}
\] 
are two homogeneous bases for the $\Sym(\mathbb{X}\cup\mathbb{Y})$-module $\Sym(\mathbb{X}|\mathbb{Y})$.

Moreover, there is a unique $\Sym(\mathbb{X}\cup\mathbb{Y})$-module homomorphism 
\[
\zeta:\Sym(\mathbb{X}|\mathbb{Y}) \rightarrow \Sym(\mathbb{X}\cup\mathbb{Y}),
\] 
called the Sylvester operator, such that, for $\lambda,\mu \in \Lambda_{m,n}$,
\[
\zeta(S_\lambda(\mathbb{X}) \cdot S_\mu(-\mathbb{Y})) = \left\{%
\begin{array}{ll}
    1 & \text{if } \lambda_j + \mu_{m+1-j} =n ~\forall j=1,\dots,m, \\
    0 & \text{otherwise.}  \\
\end{array}%
\right.
\]
\end{theorem}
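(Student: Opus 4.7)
The plan is to establish the three assertions in sequence: the graded-freeness with rank count, verification of the two explicit bases, and construction of the Sylvester operator.

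I would begin with a Poincaré series calculation. Since $\Sym(\mathbb{X}|\mathbb{Y})=\C[X_1,\dots,X_m,Y_1,\dots,Y_n]$ with $\deg X_i=2i$ and $\deg Y_j=2j$, and $\Sym(\mathbb{X}\cup\mathbb{Y})=\C[Z_1,\dots,Z_{m+n}]$ with $\deg Z_k=2k$ (where $Z_k$ is the $k$-th elementary symmetric polynomial in $\mathbb{X}\cup\mathbb{Y}$), the ratio of their Poincaré series equals
\[
\frac{\prod_{k=1}^{m+n}(1-q^{2k})}{\prod_{k=1}^{m}(1-q^{2k})\,\prod_{k=1}^{n}(1-q^{2k})}=q^{mn}\qb{m+n}{m},
\]
which by formula \eqref{compute-quantum-binary} equals $\sum_{\lambda\in\Lambda_{m,n}}q^{2|\lambda|}$. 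This matches exactly the degree generating function of both candidate bases. Hence, once I produce a homogeneous $\Sym(\mathbb{X}\cup\mathbb{Y})$-generating family of $\Sym(\mathbb{X}|\mathbb{Y})$ indexed by $\Lambda_{m,n}$ with the correct degrees, the induced surjection from the appropriate free module is forced to be an isomorphism by equality of graded dimensions in each degree, simultaneously giving freeness and the basis property.

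Next I would prove spanning by exploiting the identity $E(\mathbb{X},t)\,E(\mathbb{Y},t)=E(\mathbb{X}\cup\mathbb{Y},t)$. Equating coefficients gives $Y_j=Z_j-\sum_{i=1}^{\min(j,m)}X_i Y_{j-i}$, which recursively expresses every $Y_j$ as a $\Sym(\mathbb{X}\cup\mathbb{Y})$-polynomial in the $X_i$; this shows that $\Sym(\mathbb{X}|\mathbb{Y})$ is generated over $\Sym(\mathbb{X}\cup\mathbb{Y})$ by $\Sym(\mathbb{X})$. Reading the generating function identity the other way and using $Y_j=0$ for $j>n$ yields the dual recursion $h_j(\mathbb{X})=\sum_{k=1}^{m+n}(-1)^{k+1}Z_k\,h_{j-k}(\mathbb{X})$ for $j>n$, which upon substitution into the Jacobi-Trudi determinant \eqref{schur-complete} reduces any $S_\lambda(\mathbb{X})$ with $\lambda_1>n$ to a $\Sym(\mathbb{X}\cup\mathbb{Y})$-combination of $\{S_\nu(\mathbb{X})\}_{\nu\in\Lambda_{m,n}}$. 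The alternative basis $\{S_\lambda(-\mathbb{Y})\}_{\lambda\in\Lambda_{m,n}}$ then follows from the same argument with $\mathbb{X}$ and $\mathbb{Y}$ interchanged, combined with identity \eqref{positive-negative-schur} and the observation that $\lambda\mapsto\lambda'$ is a bijection between $\Lambda_{m,n}$ and $\Lambda_{n,m}$.

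For the Sylvester operator, I would use the basis $\{S_\lambda(\mathbb{X})\}_{\lambda\in\Lambda_{m,n}}$: any $\Sym(\mathbb{X}\cup\mathbb{Y})$-module homomorphism $\zeta$ is uniquely determined by its values there. Setting $(\lambda,\mu)=((n^m),\emptyset)$ in the pairing formula (so that $S_\mu(-\mathbb{Y})=1$) forces $\zeta(S_{(n^m)}(\mathbb{X}))=1$, and the formula combined with a degree count forces $\zeta(S_\lambda(\mathbb{X}))=\delta_{\lambda,(n^m)}$ for the remaining $\lambda$; I would take this as the definition of $\zeta$, which immediately yields uniqueness. The main obstacle is to verify the full pairing identity: for arbitrary $\lambda,\mu\in\Lambda_{m,n}$, one must expand $S_\lambda(\mathbb{X})\,S_\mu(-\mathbb{Y})$ in the basis $\{S_\nu(\mathbb{X})\}_{\nu\in\Lambda_{m,n}}$ over $\Sym(\mathbb{X}\cup\mathbb{Y})$ and show that the coefficient of $S_{(n^m)}(\mathbb{X})$ is exactly $1$ when $\mu_j=n-\lambda_{m+1-j}$ for all $j$ and vanishes otherwise. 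My approach would be to rewrite $S_\mu(-\mathbb{Y})=S_{\mu'}(-y_1,\dots,-y_n)$ via \eqref{positive-negative-schur}, apply Littlewood-Richardson to the resulting products $S_\lambda(\mathbb{X})\cdot S_\nu(\mathbb{X})$, and use the high-$\lambda_1$ reduction from the previous step to cut everything down modulo $\Sym(\mathbb{X}\cup\mathbb{Y})$. The combinatorial bookkeeping that identifies the surviving top coefficient is the delicate point; I expect the cleanest route to be through a Cauchy-type identity of the form $\prod_{i,j}(x_i-y_j)=\sum_{\lambda\in\Lambda_{m,n}}(-1)^{c(\lambda)}S_\lambda(\mathbb{X})\,S_{\lambda^*}(-\mathbb{Y})$ with $\lambda^*_j=n-\lambda_{m+1-j}$, which should make the collapse transparent and finish the proof.
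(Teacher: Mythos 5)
The paper does not actually prove Theorem \ref{part-symm-str}: it is stated with a citation to Lascoux's notes and used as a black box throughout. So there is no in-paper proof to compare you against; I can only assess your attempt on its own merits.

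Your argument for graded-freeness and the two bases is essentially sound. The Poincar\'e-series comparison $\frac{\prod_{k=1}^{m+n}(1-q^{2k})}{\prod_{k=1}^{m}(1-q^{2k})\prod_{k=1}^{n}(1-q^{2k})} = \sum_{\lambda\in\Lambda_{m,n}} q^{2|\lambda|}$ correctly reduces the problem to exhibiting a homogeneous spanning set indexed by $\Lambda_{m,n}$ with the right degrees, and your spanning argument via the identity $E(\mathbb{X},t)E(\mathbb{Y},t) = E(\mathbb{X}\cup\mathbb{Y},t)$ and the recursion $h_j(\mathbb{X}) = \sum_{k\geq 1}(-1)^{k+1}Z_k h_{j-k}(\mathbb{X})$ for $j>n$ is correct (though the reduction of $S_\lambda(\mathbb{X})$ for $\lambda_1>n$ via Jacobi--Trudi needs a small induction you don't write out). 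The same trick with $\mathbb{X}$ and $\mathbb{Y}$ interchanged, together with \eqref{positive-negative-schur}, handles the second basis.

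The Sylvester-operator part has a genuine gap, and you yourself flag it (``the combinatorial bookkeeping \ldots is the delicate point; I expect \ldots should \ldots finish the proof''). Defining $\zeta$ by $\zeta(S_\lambda(\mathbb{X}))=\delta_{\lambda,(n^m)}$ does give uniqueness and the case $\mu=\emptyset$ for free, but the full duality $\zeta(S_\lambda(\mathbb{X})S_\mu(-\mathbb{Y}))=\delta_{\mu,\hat\lambda}$ does not follow from the finite dual Cauchy identity $\prod_{i,j}(x_i-y_j)=\sum_{\nu\in\Lambda_{m,n}} S_\nu(\mathbb{X})S_{\hat\nu}(-\mathbb{Y})$ alone (which, incidentally, holds with no extra sign: your $(-1)^{c(\lambda)}$ is vacuous). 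Applying $\zeta$ to that identity only produces the anti-diagonal sum $\sum_\nu \zeta(S_\nu(\mathbb{X})S_{\hat\nu}(-\mathbb{Y}))=\binom{m+n}{m}$; it does not isolate the individual entries of the pairing matrix. The real difficulty is the regime $|\lambda|+|\mu|>mn$: there $\zeta(S_\lambda(\mathbb{X})S_\mu(-\mathbb{Y}))$ is a positive-degree element of $\Sym(\mathbb{X}\cup\mathbb{Y})$, so neither a pure degree count (which handles $|\lambda|+|\mu|<mn$) nor reduction modulo the maximal homogeneous ideal (which handles the scalar case $|\lambda|+|\mu|=mn$ via ordinary Poincar\'e duality on $H^*(\mathrm{Gr}(m,m+n))$) kills it, and the Cauchy identity does not touch this case at all. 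One needs an actual argument here --- for instance the explicit symmetrization formula $\zeta(f)=\sum_{w\in S_{m+n}/(S_m\times S_n)} w\cdot\bigl(f/\prod_{i,j}(x_i-y_j)\bigr)$ combined with a fixed-point/localization computation, or a careful anti-triangularity analysis of the change-of-basis matrix between $\{S_\lambda(\mathbb{X})\}$ and $\{S_\mu(-\mathbb{Y})\}$. As written, your third step is an outline, not a proof.
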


Comparing Theorem \ref{part-symm-str} to equation \eqref{compute-quantum-binary}, we get the following corollary.

\begin{corollary}\label{part-symm-grade}
Let $\mathbb{X}=\{x_1,\dots,x_m\}$ and $\mathbb{Y}=\{y_1,\dots,y_n\}$ be two disjoint alphabets. Then, as graded $\Sym(\mathbb{X}\cup\mathbb{Y})$-modules, 
\[
\Sym(\mathbb{X}|\mathbb{Y}) \cong \Sym(\mathbb{X}\cup\mathbb{Y})\{\qb{m+n}{n}\cdot q^{mn}\}.
\]
\end{corollary}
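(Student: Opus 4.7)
The plan is to simply combine Theorem \ref{part-symm-str} with the explicit formula \eqref{compute-quantum-binary} for the quantum binomial coefficient, keeping careful track of the convention $\deg x_i = \deg y_j = 2$.

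First, by Theorem \ref{part-symm-str}, the set $\{S_\lambda(\mathbb{X}) \mid \lambda \in \Lambda_{m,n}\}$ is a homogeneous basis for $\Sym(\mathbb{X}|\mathbb{Y})$ as a $\Sym(\mathbb{X}\cup\mathbb{Y})$-module. Since each $S_\lambda(\mathbb{X})$ is a homogeneous polynomial in $\mathbb{X}$ of total polynomial degree $|\lambda|$, and our grading convention assigns degree $2$ to each indeterminant, the basis element $S_\lambda(\mathbb{X})$ has quantum degree $2|\lambda|$. Therefore, as graded $\Sym(\mathbb{X}\cup\mathbb{Y})$-modules,
\[
\Sym(\mathbb{X}|\mathbb{Y}) \;\cong\; \bigoplus_{\lambda \in \Lambda_{m,n}} \Sym(\mathbb{X}\cup\mathbb{Y})\{q^{2|\lambda|}\}.
\]

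Next, I will invoke equation \eqref{compute-quantum-binary}, which reads
\[
\qb{m+n}{n} = q^{-mn}\sum_{\lambda \in \Lambda_{m,n}} q^{2|\lambda|},
\]
so that $\qb{m+n}{n}\cdot q^{mn} = \sum_{\lambda \in \Lambda_{m,n}} q^{2|\lambda|}$. Substituting this into the definition of the grading shift $\Sym(\mathbb{X}\cup\mathbb{Y})\{\qb{m+n}{n}\cdot q^{mn}\}$ yields exactly the direct sum displayed above, establishing the claimed isomorphism.

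There is no real obstacle here; the only thing to be careful about is the factor of $2$ introduced by the convention $\deg x_i = 2$, which is exactly what makes the exponent $2|\lambda|$ match the exponent appearing in \eqref{compute-quantum-binary}. Nothing beyond Theorem \ref{part-symm-str} and the formula for $\qb{m+n}{n}$ is needed.
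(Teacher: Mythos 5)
Your proof is correct and fills in exactly the comparison the paper invokes: Theorem \ref{part-symm-str} gives the homogeneous basis $\{S_\lambda(\mathbb{X}) \mid \lambda \in \Lambda_{m,n}\}$, you read off the quantum degree $2|\lambda|$ of each basis element (correctly accounting for the $\deg x_i = 2$ convention), and equation \eqref{compute-quantum-binary} converts the resulting sum of grading shifts into $\qb{m+n}{n}\cdot q^{mn}$. This is the same argument the paper is alluding to with its one-line ``Comparing Theorem \ref{part-symm-str} to equation \eqref{compute-quantum-binary}''; you have simply made it explicit.
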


More generally, given a collection $\{\mathbb{X}_1,\dots,\mathbb{X}_l\}$ of pairwise disjoint alphabets, we denote by $\Sym(\mathbb{X}_1|\cdots|\mathbb{X}_l)$ the ring of polynomials in $\mathbb{X}_1\cup\cdots\cup\mathbb{X}_l$ over $\C$ that are symmetric in each $\mathbb{X}_i$, which is naturally a graded-free $\Sym(\mathbb{X}_1\cup\cdots\cup\mathbb{X}_l)$-module. Moreover,
\[
\Sym(\mathbb{X}_1|\cdots|\mathbb{X}_l) \cong \Sym(\mathbb{X}_1) \otimes_{\C} \cdots \otimes_{\C} \Sym(\mathbb{X}_l).
\]

\subsection{The cohomology ring of a complex Grassmannian} Denote by $G_{m,N}$ the complex $(m,N)$-Grassmannian, that is, the manifold of all complex $m$-dimensional subspaces of $\C^N$. The cohomology ring of $G_{m,N}$ is isomorphic to a quotient ring of a ring of symmetric polynomials. See for example \cite[Lecture 6]{Fulton-notes} for more.

\begin{theorem}\label{grassmannian}
Let $\mathbb{X}$ be an alphabet of $m$ independent indeterminates. Then 
$H^\ast(G_{m,N};\C) \cong \Sym(\mathbb{X})/(h_{N+1-m}(\mathbb{X}),h_{N+2-m}(\mathbb{X}),\dots,h_{N}(\mathbb{X}))$ as graded $\C$-algebras. As a graded $\C$-linear space, $H^\ast(G_{m,N};\C)$ has a homogeneous basis 
\[
\{S_\lambda(\mathbb{X})~|~ \lambda=(\lambda_1\geq\dots\geq\lambda_m), ~l(\lambda)\leq m, ~\lambda_1\leq N-m\}.
\]

Under the above basis, the Poincar\'{e} duality of $H^\ast(G_{m,N};\C)$ is given by a $\C$-linear trace map 
\[
\Tr:\Sym(\mathbb{X})/(h_{N+1-m}(\mathbb{X}),h_{N+2-m}(\mathbb{X}),\dots,h_{N}(\mathbb{X})) \rightarrow \C
\] 
satisfying
\[
\Tr(S_\lambda(\mathbb{X}) \cdot S_\mu(\mathbb{X})) = \left\{%
\begin{array}{ll}
    1 & \text{if } \lambda_j + \mu_{m+1-j} =N-m ~\forall j=1,\dots,m, \\
    0 & \text{otherwise.}  \\
\end{array}%
\right.
\]
\end{theorem}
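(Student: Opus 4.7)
The plan is to derive the presentation from the tautological bundle on $G_{m,N}$, establish the Schubert/Schur basis, and then identify the trace map with integration. First I would consider the tautological exact sequence $0 \to S \to \C^N \times G_{m,N} \to Q \to 0$ on $G_{m,N}$, with $S$ of rank $m$ and $Q$ of rank $N-m$. Identifying the Chern roots of $S$ with the indeterminants $x_1,\ldots,x_m$ so that $X_k$ corresponds to $c_k(S)$, the classical Borel presentation expresses $H^{\ast}(G_{m,N};\C)$ as a quotient of $\Sym(\mathbb{X})$. Using the identity $E(t)H(t)=1$ recorded in Subsection \ref{basic-sym-poly}, triviality of the ambient bundle forces $c_k(Q) = (-1)^k h_k(\mathbb{X})$, and since $Q$ has rank $N-m$ these vanish for all $k > N-m$. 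Hence the ideal $(h_{N+1-m}(\mathbb{X}),\ldots,h_N(\mathbb{X}))$ maps to zero, giving a well-defined surjection $\Sym(\mathbb{X})/(h_{N+1-m}(\mathbb{X}),\ldots,h_N(\mathbb{X})) \twoheadrightarrow H^{\ast}(G_{m,N};\C)$.

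Next I would establish the Schubert basis. The Schubert cells indexed by $\lambda \in \Lambda_{m,N-m}$ cut out a CW decomposition of $G_{m,N}$, so their Poincar\'e duals form a $\C$-basis of the cohomology, and the Giambelli formula identifies each Schubert class with the corresponding Schur polynomial $S_\lambda(\mathbb{X})$ under formula \eqref{schur-complete}. The dimension count $|\Lambda_{m,N-m}| = \binom{N}{m} = \dim H^{\ast}(G_{m,N};\C)$, combined with the observation that any Schur polynomial $S_\lambda(\mathbb{X})$ with $\lambda \notin \Lambda_{m,N-m}$ can be rewritten modulo the ideal $(h_{N+1-m}(\mathbb{X}),\ldots,h_N(\mathbb{X}))$ by expanding the Giambelli determinant along a row containing $h_j$ with $j \geq N+1-m$, forces the surjection of the previous paragraph to be an isomorphism. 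This simultaneously yields the ring presentation and the claimed homogeneous basis.

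Finally, for the trace map I would take $\Tr$ to be the cohomological fundamental-class evaluation $\int_{G_{m,N}}$ on the top degree $2m(N-m)$, extended by zero on lower components. The stated formula is then just the classical Schubert duality: $\int_{G_{m,N}} S_\lambda(\mathbb{X}) \cdot S_\mu(\mathbb{X})$ equals $1$ precisely when $\mu$ is complementary to $\lambda$ in the $m \times (N-m)$ rectangle, i.e.\ $\mu_{m+1-j} = (N-m)-\lambda_j$ for every $j$, and vanishes otherwise. The cleanest verification is via the Pieri or Littlewood--Richardson rule, which expands $S_\lambda \cdot S_{\lambda^\vee}$ with coefficient exactly one on the top Schur polynomial $S_{((N-m)^m)}(\mathbb{X})$ representing the class of a point, while any other product $S_\lambda \cdot S_\mu$ with $|\lambda|+|\mu| = m(N-m)$ contains no copy of $S_{((N-m)^m)}(\mathbb{X})$ and thus lies in $\ker \Tr$. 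The main obstacle in this program is the bookkeeping step verifying that $(h_{N+1-m}(\mathbb{X}),\ldots,h_N(\mathbb{X}))$ is actually the entire kernel rather than merely a subideal; I would handle this not by a direct algebraic reduction but through the dimension comparison against the Schubert basis, which is where the rank of $Q$ being precisely $N-m$ is essential.
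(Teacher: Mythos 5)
The paper does not actually prove Theorem \ref{grassmannian}; it records it as a classical fact about the cohomology of the Grassmannian and cites Fulton's notes, so there is no in-paper proof to compare your argument against.

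Your proof is the standard geometric argument and is correct in outline: get the relations from the vanishing of $c_k(Q)$ for $k > \mathrm{rk}\,Q$, get the basis from the Schubert cell decomposition plus Giambelli, and close the dimension gap to upgrade the surjection to an isomorphism, then read off the trace from Poincar\'e duality for Schubert classes. Two small points. First, you take $\mathbb{X}$ to be the Chern roots of the subbundle $S$, which forces $c_k(Q)=(-1)^k h_k(\mathbb{X})$ and hence $\sigma_\lambda = (-1)^{|\lambda|}S_\lambda(\mathbb{X})$ under Giambelli; with that choice the integral $\int_{G_{m,N}} S_\lambda(\mathbb{X}) S_{\lambda^c}(\mathbb{X})$ comes out as $(-1)^{m(N-m)}$, not $1$. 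The sign-free version matching the statement takes $\mathbb{X}$ to be the Chern roots of $S^\ast$ (equivalently identifies $X_k$ with $c_k(S^\ast)$), for which $c_k(Q^\ast)=h_k(\mathbb{X})$, Giambelli gives $\sigma_\lambda = S_\lambda(\mathbb{X})$ on the nose, and $S_{\lambda_{m,N-m}}(\mathbb{X})$ is exactly the point class. Second, your spanning argument, that $S_\lambda(\mathbb{X})$ with $\lambda_1 > N-m$ dies in the quotient by expanding the Jacobi--Trudi determinant along a row whose entries $h_j$ all have $j \geq N+1-m$, needs the supporting observation that $h_j$ for $j>N$ also lies in $(h_{N+1-m},\ldots,h_N)$ (immediate from the recursion \eqref{complete-recursion}), since some of those first-row subscripts can exceed $N$. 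Neither point affects the overall structure of the argument.
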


Comparing Theorem \ref{grassmannian} to equation \eqref{compute-quantum-binary}, we get the following corollary.

\begin{corollary}\label{grassmannian-grade}
As graded $\C$-linear spaces,
\[
H^\ast(G_{m,N};\C) \cong \C \{\qb{N}{m}\cdot q^{m(N-m)}\},
\]
where $\C$ on the right hand side has grading $0$.
\end{corollary}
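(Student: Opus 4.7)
The plan is to read off the graded Poincar\'e polynomial of $H^\ast(G_{m,N};\C)$ directly from the homogeneous basis supplied by Theorem \ref{grassmannian}, and then match it with the quantum binomial formula \eqref{compute-quantum-binary}.

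First I would recall the grading convention of this paper: the indeterminants in $\mathbb{X} = \{x_1,\dots,x_m\}$ have $\deg x_j = 2$, so each Schur polynomial $S_\lambda(\mathbb{X})$ is a homogeneous element of quantum degree $2|\lambda|$. By Theorem \ref{grassmannian}, the set
\[
\{S_\lambda(\mathbb{X}) \mid l(\lambda)\leq m,\ \lambda_1 \leq N-m\}
\]
is a homogeneous $\C$-basis of $H^\ast(G_{m,N};\C)$. Hence the graded dimension of $H^\ast(G_{m,N};\C)$ as a $\C$-vector space equals
\[
\sum_{\lambda:\, l(\lambda)\leq m,\, \lambda_1 \leq N-m} q^{2|\lambda|}.
\]

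Next I would invoke identity \eqref{compute-quantum-binary} with the substitution $n = N-m$, which gives
\[
\qb{N}{N-m} = q^{-m(N-m)} \sum_{\lambda:\, l(\lambda)\leq m,\, \lambda_1 \leq N-m} q^{2|\lambda|}.
\]
Using the symmetry $\qb{N}{N-m} = \qb{N}{m}$ of quantum binomial coefficients and multiplying both sides by $q^{m(N-m)}$, I obtain
\[
\sum_{\lambda:\, l(\lambda)\leq m,\, \lambda_1 \leq N-m} q^{2|\lambda|} = q^{m(N-m)}\qb{N}{m}.
\]
Comparing this with the graded dimension computed above yields the stated isomorphism of graded $\C$-spaces.

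There is no genuine obstacle here; the statement is essentially a bookkeeping consequence of Theorem \ref{grassmannian} combined with equation \eqref{compute-quantum-binary}. The only small point to be careful about is the factor of $2$ coming from the paper's convention that $\deg x_j = 2$ (rather than the classical convention $\deg x_j = 1$), which ensures that the exponents $2|\lambda|$ match the $q^{2|\lambda|}$ appearing in \eqref{compute-quantum-binary}.
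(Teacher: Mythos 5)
Your proposal is correct and matches the paper's intended derivation: the paper simply states that the corollary follows by comparing Theorem \ref{grassmannian} with equation \eqref{compute-quantum-binary}, which is exactly the computation you carry out, with the useful added care about the $\deg x_j = 2$ convention.
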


\section{Matrix Factorizations Associated to MOY Graphs}\label{mf-MOY}

\subsection{Markings of MOY graphs}

\begin{definition}\label{MOY-marking-def}
A marking of a MOY graph $\Gamma$ consists of the following:
\begin{enumerate}
	\item A finite collection of marked points on $\Gamma$ such that
	\begin{itemize}
	\item every edge of $\Gamma$ has at least one marked point;
	\item all the end points (vertices of valence $1$) are marked;
	\item none of the internal vertices (vertices of valence at least $2$) are marked.
  \end{itemize}
  \item An assignment of pairwise disjoint alphabets to the marked points such that the alphabet associated to a marked point on an edge of color $m$ has $m$ independent indeterminates. (Recall that an alphabet is a finite collection of homogeneous indeterminates of degree $2$.)
\end{enumerate}
\end{definition}

\begin{figure}[ht]

\setlength{\unitlength}{1pt}

\begin{picture}(360,80)(-180,-40)

%top

\put(0,0){\vector(-1,1){15}}

\put(-15,15){\line(-1,1){15}}

\put(-23,25){\tiny{$i_1$}}

\put(-33,32){\small{$\mathbb{X}_1$}}

\put(0,0){\vector(-1,2){7.5}}

\put(-7.5,15){\line(-1,2){7.5}}

\put(-11,25){\tiny{$i_2$}}

\put(-18,32){\small{$\mathbb{X}_2$}}

\put(3,25){$\cdots$}

\put(0,0){\vector(1,1){15}}

\put(15,15){\line(1,1){15}}

\put(31,25){\tiny{$i_k$}}

\put(27,32){\small{$\mathbb{X}_k$}}

%middle

\put(4,-2){$v$}

\multiput(-50,0)(5,0){19}{\line(1,0){3}}

\put(-70,0){$L_v$}

\put(45,0){\tiny{$i_1+i_2+\cdots +i_k = j_1+j_2+\cdots +j_l$}}

%bottom

\put(-30,-30){\vector(1,1){15}}

\put(-15,-15){\line(1,1){15}}

\put(-26,-30){\tiny{$j_1$}}

\put(-33,-40){\small{$\mathbb{Y}_1$}}

\put(-15,-30){\vector(1,2){7.5}}

\put(-7.5,-15){\line(1,2){7.5}}

\put(-13,-30){\tiny{$j_2$}}

\put(-18,-40){\small{$\mathbb{Y}_2$}}

\put(3,-30){$\cdots$}

\put(30,-30){\vector(-1,1){15}}

\put(15,-15){\line(-1,1){15}}

\put(31,-30){\tiny{$j_l$}}

\put(27,-40){\small{$\mathbb{Y}_l$}}

\end{picture}

\caption{}\label{general-MOY-vertex}

\end{figure}

\subsection{The matrix factorization associated to a MOY graph} Recall that $N$ is a fixed positive integer. (It is the ``$N$" in ``$\mathfrak{sl}(N)$".) For a MOY graph $\Gamma$ with a marking, cut it at its marked points. This gives a collection of marked MOY graphs, each of which is a star-shaped neighborhood of a vertex in $G$ and is marked only at its endpoints. (If an edge of $\Gamma$ has two or more marked points, then some of these pieces may be oriented arcs from one marked point to another. In this case, we consider such an arc as a neighborhood of an additional vertex of valence $2$ in the middle of that arc.)

Let $v$ be a vertex of $\Gamma$ with coloring and marking around it given as in Figure \ref{general-MOY-vertex}. Set $m=i_1+i_2+\cdots +i_k = j_1+j_2+\cdots +j_l$ (the width of $v$.) Define 
\[
R=\Sym(\mathbb{X}_1|\dots|\mathbb{X}_k|\mathbb{Y}_1|\dots|\mathbb{Y}_l).
\] 
Write $\mathbb{X}=\mathbb{X}_1\cup\cdots\cup \mathbb{X}_k$ and $\mathbb{Y}=\mathbb{Y}_1\cup\cdots\cup \mathbb{Y}_l$. Denote by $X_j$ the $j$-th elementary symmetric polynomial in $\mathbb{X}$ and by $Y_j$ the $j$-th elementary symmetric polynomial in $\mathbb{Y}$. For $j=1,\dots,m$, define
\begin{equation}\label{eq-def-U-j}
U_j = \frac{p_{m,N+1}(Y_1,\dots,Y_{j-1},X_j,\dots,X_m) - p_{m,N+1}(Y_1,\dots,Y_j,X_{j+1},\dots,X_m)}{X_j-Y_j},
\end{equation}
where $p_{m,N+1}$ is the polynomial given by equation \eqref{power} in Subsection \ref{basic-sym-poly}. The matrix factorization associated to the vertex $v$ is
\[
C(v)=\left(%
\begin{array}{cc}
  U_1 & X_1-Y_1 \\
  U_2 & X_2-Y_2 \\
  \dots & \dots \\
  U_m & X_m-Y_m
\end{array}%
\right)_R
\{q^{-\sum_{1\leq s<t \leq k} i_si_t}\},
\]
whose potential is $\sum_{j=1}^m (X_j-Y_j)U_j = p_{N+1}(\mathbb{X})-p_{N+1}(\mathbb{Y})$, where $p_{N+1}(\mathbb{X})$ and $p_{N+1}(\mathbb{Y})$ are the $(N+1)$-th power sum symmetric polynomials in $\mathbb{X}$ and $\mathbb{Y}$. (See Subsection \ref{basic-sym-poly} for the definition.)

\begin{remark}\label{MOY-freedom}
Since 
\[
\Sym(\mathbb{X}|\mathbb{Y})=\C[X_1,\dots,X_m,Y_1,\dots,Y_m]=\C[X_1-Y_1,\dots,X_m-Y_m,Y_1,\dots,Y_m],
\] 
it is clear that $\{X_1-Y_1,\dots,X_m-Y_m\}$ is $\Sym(\mathbb{X}|\mathbb{Y})$-regular. (See Definition \ref{regular-sequence}.) By Theorem \ref{part-symm-str}, $R$ is a free $\Sym(\mathbb{X}|\mathbb{Y})$-module. So $\{X_1-Y_1,\dots,X_m-Y_m\}$ is also $R$-regular. Thus, by Lemma \ref{freedom}, the isomorphism type of $C(v)$ does not depend on the particular choice of $U_1,\dots,U_m$ as long as they are homogeneous with the right degrees and the potential of $C(v)$ remains $\sum_{j=1}^m (X_j-Y_j)U_j = p_{N+1}(\mathbb{X})-p_{N+1}(\mathbb{Y})$. From now on, we will only specify our choice for $U_1,\dots,U_m$ when it is actually used in the computation. Otherwise, we will simply denote them by $\ast$'s. 
\end{remark}

\begin{definition}\label{MOY-mf-def}
\[
C(\Gamma) := \bigotimes_{v} C(v),
\]
where $v$ runs through all the interior vertices of $\Gamma$ (including those additional $2$-valent vertices.) Here, the tensor product is done over the common end points. More precisely, for two sub-MOY graphs $\Gamma_1$ and $\Gamma_2$ of $\Gamma$ intersecting only at (some of) their open end points, let $\mathbb{W}_1,\dots,\mathbb{W}_n$ be the alphabets associated to these common end points. Then, in the above tensor product, $C(\Gamma_1)\otimes C(\Gamma_2)$ is the tensor product $C(\Gamma_1)\otimes_{\Sym(\mathbb{W}_1|\dots|\mathbb{W}_n)} C(\Gamma_2)$.

$C(\Gamma)$ has a $\zed_2$-grading and a quantum grading.

If $\Gamma$ is closed, that is, has no end points, then $C(\Gamma)$ is considered a matrix factorization over $\C$. 

Assume $\Gamma$ has end points. Let $\mathbb{E}_1,\dots,\mathbb{E}_n$ be the alphabets assigned to all end points of $\Gamma$, among which $\mathbb{E}_1,\dots,\mathbb{E}_k$ are assigned to exits and $\mathbb{E}_{k+1},\dots,\mathbb{E}_n$ are assigned to entrances. Then the potential of $C(\Gamma)$ is  
\[
w= \sum_{i=1}^k p_{N+1}(\mathbb{E}_i) - \sum_{j=k+1}^n p_{N+1}(\mathbb{E}_j).
\]
Let $R_\partial=\Sym(\mathbb{E}_1|\cdots|\mathbb{E}_n)$. Although the alphabets assigned to all marked points on $\Gamma$ are used in its construction, $C(\Gamma)$ is viewed as a matrix factorization over $R_\partial$. Note that, in this case, $w$ is a non-degenerate element of $R_\partial$.

We allow the MOY graph to be empty. In this case, we define 
\[
C(\emptyset)=\C\rightarrow 0 \rightarrow \C,
\]
where the $\zed_2$-grading and the quantum grading of $\C$ are both $0$.
\end{definition}

\begin{lemma}\label{marking-independence}
If $\Gamma$ is a MOY graph, then the homotopy type of $C(\Gamma)$ does not depend on the choice of the marking.
\end{lemma}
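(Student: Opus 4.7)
Two markings of $\Gamma$ can be connected by a finite sequence of elementary moves of two types: (1) replacing the alphabet at a single marked point by another alphabet of the same size consisting of fresh independent indeterminates; (2) inserting or removing a single marked point in the interior of an edge (moving the location of a marked point is the combination of an insertion and a removal). It therefore suffices to show that each elementary move induces a homotopy equivalence on $C(\Gamma)$. Type (1) is immediate: renaming the variables gives a graded ring isomorphism of the ambient polynomial rings that carries the old Koszul matrix factorization onto the new one, hence an isomorphism in $\hmf_{R,w}$.

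For a type (2) insertion of a marked point $P$ with alphabet $\mathbb{W}=\{w_1,\dots,w_m\}$ on an edge of color $m$, the change in $C(\Gamma)$ is local. Let $A$ be one of the two neighbors of $P$ along that edge, chosen so that a new $2$-valent-vertex factor is created between $A$ and $P$; let $\mathbb{Q}$ be the alphabet at $A$'s interface with this edge (if $A$ is an interior vertex, $\mathbb{Q}$ is the alphabet that $C(A)$ previously used for this edge). Then, with $W_j, Q_j$ denoting the $j$-th elementary symmetric polynomials in $\mathbb{W}, \mathbb{Q}$, the new arc factor has the form
\[
C_{\mathrm{arc}} \;=\; (u_j,\;\pm(W_j-Q_j))_{j=1}^{m},
\]
the sign depending on the edge orientation. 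The remainder of the local factor, call it $C_{\mathrm{out}}$, is obtained from the original local factor $C_{\mathrm{old}}$ by substituting $W_j$ for $Q_j$ at $A$'s interface wherever it occurs. Thus the post-insertion local factor is $C_{\mathrm{out}}\otimes C_{\mathrm{arc}}$, and its total potential equals that of $C_{\mathrm{old}}$, so in particular involves no letter of $\mathbb{W}$.

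The crucial observation is that $W_1,\dots,W_m$ are algebraically independent generators of $\Sym(\mathbb{W})\cong\C[W_1,\dots,W_m]$ and hence are polynomial indeterminates of the ambient ring. After the change of variables $Z_j:=W_j-Q_j$, the set $\{Z_1,\dots,Z_m\}$ again forms a set of polynomial indeterminates over the subring that omits $\mathbb{W}$, and the total potential involves no $Z_j$. The hypotheses of Proposition~\ref{b-contraction} (strong version) are therefore satisfied with $x=Z_j$ for each $j$. Applying it successively for $j=1,\dots,m$ contracts every row of $C_{\mathrm{arc}}$ out of the tensor product and simultaneously substitutes $W_j\mapsto Q_j$ throughout the surviving rows of $C_{\mathrm{out}}$. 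This substitution converts $C_{\mathrm{out}}$ into a Koszul matrix factorization sharing its right column and its potential with $C_{\mathrm{old}}$; Lemma~\ref{freedom}, applied to the regular sequence forming the right column of $C_{\mathrm{old}}$, then identifies it with $C_{\mathrm{old}}$ up to isomorphism. Tensoring with the identity on the unaltered portion of $C(\Gamma)$ produces the desired homotopy equivalence between the new and old matrix factorizations.

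The main obstacle is verifying the ``substitute $W_j$ for $Q_j$'' description in the case where $A$ is an interior vertex, because then $C_{\mathrm{out}}$ contains the full vertex factor $C(A)$ with its alphabet at this edge systematically replaced by $\mathbb{W}$, and one must confirm that the substitution $W_j\mapsto Q_j$ after contraction truly recovers the original $C(A)$. This is the case because, by Definition~\ref{MOY-mf-def}, $C(A)$ depends on each incident alphabet only through its elementary symmetric polynomials, which are precisely the quantities being substituted.
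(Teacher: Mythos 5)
Your proposal is correct and takes essentially the same route as the paper, whose proof simply reads "we only need to show that adding or removing an extra marked point corresponds to a homotopy equivalence; this follows from Proposition~\ref{b-contraction}." You spell out the iterated application of Proposition~\ref{b-contraction} (using $Z_j = W_j - Q_j$ as the eliminated indeterminates) and the trivial alphabet-renaming step that the paper leaves implicit; the appeal to Lemma~\ref{freedom} at the end is harmless but unnecessary, since the contraction already returns the original vertex factor exactly.
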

\begin{proof}
We only need to show that adding or removing an extra marked point corresponds to a homotopy of matrix factorizations preserving the $\zed_2\oplus\zed$-grading. This follows easily from Proposition \ref{b-contraction}.
\end{proof}

\begin{definition}\label{homology-MOY-def}
Let $\Gamma$ be a MOY graph with a marking. 
\begin{enumerate}[(i)]
	\item If $\Gamma$ is closed, that is, has no open end points, then $C(\Gamma)$ is a chain complex. Denote by $H(\Gamma)$ the homology of $C(\Gamma)$. Note that $H(\Gamma)$ inherits the $\zed_2\oplus\zed$-grading of $C(\Gamma)$.
	\item If $\Gamma$ has end points, let $\mathbb{E}_1,\dots,\mathbb{E}_n$ be the alphabets assigned to all end points of $\Gamma$, and $R_\partial=\Sym(\mathbb{E}_1|\cdots|\mathbb{E}_n)$. Denote by $E_{i,j}$ the $j$-th elementary symmetric polynomial in $\mathbb{E}_i$ and by $\mathfrak{I}$ the maximal homogeneous ideal of $R_\partial$ generated by $\{E_{i,j}\}$. Then $H(\Gamma)$ is defined to be $H_{R_\partial}(C(\Gamma))$, that is, the homology of the chain complex $C(\Gamma)/\mathfrak{I}\cdot C(\Gamma)$. Clearly, $H(\Gamma)$ inherits the $\zed_2\oplus\zed$-grading of $C(\Gamma)$.
\end{enumerate}
Note that (i) is a special case of (ii).
\end{definition}

\begin{lemma}\label{width-cap}
If $\Gamma$ is a MOY graph with a vertex of width greater than $N$, then $C(\Gamma)\simeq 0$.
\end{lemma}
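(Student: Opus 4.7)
The plan is to localize the problem to an offending vertex. Pick a vertex $v$ of $\Gamma$ of width $m>N$, and let $C(v)$ be the Koszul factorization assigned to its star neighborhood as in Definition \ref{MOY-mf-def} (notation as in Figure \ref{general-MOY-vertex}). I will first prove that $C(v)\simeq 0$, and then invoke the tensor product description of $C(\Gamma)$ to deduce $C(\Gamma)\simeq 0$.

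The heart of the argument is a direct computation of the entry $U_{N+1}$ in the defining Koszul matrix of $C(v)$. Because $p_{m,N+1}(Z_1,\dots,Z_m)$ is homogeneous of degree $2(N+1)$ and $\deg Z_{N+1}=2(N+1)$, this polynomial is linear in $Z_{N+1}$; moreover by Lemma \ref{power-derive} the coefficient of $Z_{N+1}$ is exactly the nonzero scalar $(-1)^N(N+1)$, and no other term involves $Z_{N+1}$. Plugging into the telescoping difference quotient defining $U_{N+1}$, everything outside the $(N+1)$-th slot cancels, and I obtain $U_{N+1}=(-1)^N(N+1)\in\C^\times$. Since $U_{N+1}$ appears as an entry of the Koszul matrix for $C(v)$, Lemma \ref{entries-null-homotopic} implies that multiplication by it---hence by $1\in R$, since it is a unit---is null-homotopic on $C(v)$. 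Thus $\id_{C(v)}\simeq 0$ and therefore $C(v)\simeq 0$.

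To propagate this to $\Gamma$, I will decompose $C(\Gamma)\cong C(v)\otimes C(\Gamma-v)$, the tensor product being taken over the rings of symmetric polynomials at the endpoints common to $v$ and the rest of the graph, as in Definition \ref{MOY-mf-def}. The identity morphism of $C(\Gamma)$ factors as $\id_{C(v)}\otimes \id_{C(\Gamma-v)}$, and Lemma \ref{morphism-sign} says that tensoring any morphism with a null-homotopic one produces a null-homotopic morphism. Therefore $\id_{C(\Gamma)}\simeq 0$, which is the desired conclusion.

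The only delicate step is the explicit computation of $U_{N+1}$. It really comes down to the observation that $p_{m,N+1}$ is linear in $X_{N+1}$ with a scalar leading coefficient; this feature is available precisely because $m\geq N+1$ (so that $X_{N+1}$ is an honest elementary symmetric polynomial in $\mathbb{X}$), confirming that the hypothesis $m>N$ is used in an essential way.
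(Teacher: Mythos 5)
Your proof is correct and takes essentially the same approach as the paper's: both identify the nonzero scalar $U_{N+1}=(-1)^N(N+1)$ as the key, and both conclude contractibility from there. The paper invokes the argument of Proposition \ref{b-contraction} directly on $C(\Gamma)$, while you route through Lemma \ref{entries-null-homotopic} on $C(v)$ and then propagate via the tensor structure and Lemma \ref{morphism-sign}---an equivalent packaging of the same idea.
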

\begin{proof}
Suppose the vertex $v$ of $\Gamma$ has width $m>N$. Then, by Newton's Identity \eqref{newton}, it is easy to check that, in \eqref{eq-def-U-j}, 
$U_{N+1}= (-1)^{N} (N+1)$. By Lemma \ref{entries-null-homotopic}, we have $\id_{C(v)} \simeq 0$. This implies that $C(v) \simeq 0$ and, therefore, $C(\Gamma) \simeq 0$.
\end{proof}

Since rectangular partitions come up frequently in this paper, we introduce the following notations.

\begin{definition}\label{rectangular-partition-def}
Denote by  $\lambda_{m,n}$ the partition
\[
\lambda_{m,n} := (\underbrace{n \geq \cdots \geq n}_{m \text{ parts}}),
\] 
and by $\Lambda_{m,n}$ the set of partitions
\[
\Lambda_{m,n} :=  \{\mu=(\mu_1\geq\cdots\geq\mu_m) ~|~ \mu_1 \leq n\}.
\]
\end{definition}

The following is a generalization of \cite[Proposition 2.4]{Gornik}.

\begin{lemma}\label{schur-null-homotopic}
Let $\Gamma$ be a MOY graph, and $\mathbb{X}=\{x_1,\dots,x_m\}$ an alphabet associated to a marked point on an edge of $\Gamma$ of color $m$.  
Suppose that $\mu$ is a partition with $\mu>\lambda_{m,N-m}$, that is, $\mu_1-(N-m)>0$. Then multiplication by $S_{\mu}(\mathbb{X})$ is a null-homotopic endomorphism of $C(\Gamma)$.
\end{lemma}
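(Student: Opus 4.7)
The plan is to show that (i) multiplication by $h_k(\mathbb{X})$ is null-homotopic on $C(\Gamma)$ for every $k \in \{N+1-m,\dots,N\}$, and (ii) $S_\mu(\mathbb{X})$ lies in the ideal $I=(h_{N+1-m}(\mathbb{X}),\dots,h_N(\mathbb{X}))$ of $\Sym(\mathbb{X})$. Since multiplication by any polynomial commutes with the differential, a $\Sym(\mathbb{X})$-linear combination of null-homotopic multiplications is null-homotopic, so (i) and (ii) together give the lemma.

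For (i), I would first use Lemma \ref{marking-independence} to add, if necessary, a second marked point with alphabet $\mathbb{Y}$ on the same edge, yielding a $2$-valent vertex $v$ with incoming alphabet $\mathbb{X}$ and outgoing alphabet $\mathbb{Y}$. The Koszul factorization $C(v)$ has entries $U_1,\dots,U_m,\,X_1-Y_1,\dots,X_m-Y_m$, so by Lemma \ref{entries-null-homotopic} (combined with Lemma \ref{morphism-sign} to pass from $C(v)$ to the tensor product $C(\Gamma)$), multiplication by every element of the ideal $\mathcal{J}=(U_1,\dots,U_m,X_1-Y_1,\dots,X_m-Y_m)$ is null-homotopic on $C(\Gamma)$. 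Writing $U_j$ as a difference quotient in the variable $X_j$ and Taylor-expanding yields
\[
U_j \;\equiv\; \frac{\partial p_{m,N+1}}{\partial X_j}\bigl(Y_1,\dots,Y_m\bigr) \pmod{X_j-Y_j};
\]
Lemma \ref{power-derive} identifies this derivative with $(-1)^{j+1}(N+1)\,h_{N+1-j}(\mathbb{Y})$. Since $h_{N+1-j}(\mathbb{X})\equiv h_{N+1-j}(\mathbb{Y})\pmod{(X_k-Y_k)_k}$, one concludes $h_{N+1-j}(\mathbb{X})\in\mathcal{J}$ for $j=1,\dots,m$, giving (i).

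For (ii), I would first show $h_l(\mathbb{X})\in I$ for every $l\ge N+1-m$. The recursion \eqref{complete-recursion}, together with $X_k=0$ for $k>m$, reads $h_l(\mathbb{X})=\sum_{k=1}^m(-1)^{k-1}X_k\,h_{l-k}(\mathbb{X})$ for $l\ge 1$; a strong induction on $l$, with base case $l\in[N+1-m,N]$, proves the claim (for $l>N$, each index $l-k$ with $1\le k\le m$ still satisfies $l-k\ge N+1-m$). Assuming $l(\mu)\le m$ (else $S_\mu(\mathbb{X})=0$), formula \eqref{schur-complete} expresses $S_\mu(\mathbb{X})$ as $\det(h_{\mu_i-i+j}(\mathbb{X}))_{1\le i,j\le m}$; the first row is $h_{\mu_1+j-1}(\mathbb{X})$ for $j=1,\dots,m$, and the hypothesis $\mu>\lambda_{m,N-m}$ (i.e.\ $\mu_1\ge N+1-m$) places every first-row entry in $I$. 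Cofactor expansion along the first row shows $S_\mu(\mathbb{X})\in I$, completing (ii). The most delicate step is the explicit congruence for $U_j$ in (i); once that is in hand, the rest is a determinant expansion and a Newton-identity induction.
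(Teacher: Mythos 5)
Your proof is correct and follows essentially the same strategy as the paper's: establish that multiplication by $h_{N+1-m}(\mathbb{X}),\dots,h_{N}(\mathbb{X})$ is null-homotopic by locating these polynomials, via Lemma \ref{power-derive}, in the ideal generated by the Koszul entries of a local piece, then express $S_\mu(\mathbb{X})$ in the ideal they generate using \eqref{schur-complete} and \eqref{complete-recursion}. The only variation is in step (i): the paper differentiates the potential of whatever local piece $\Gamma'$ contains the $\mathbb{X}$-marked end point (so the Leibniz rule directly places $\pm(N+1)h_{N+1-j}(\mathbb{X})$ in the ideal of entries, for any type of piece), whereas you first reduce to an arc via an extra marking $\mathbb{Y}$ and reduce $U_j$ modulo $(X_1-Y_1,\dots,X_m-Y_m)$ explicitly (note your displayed congruence should be modulo the full ideal, not just $X_j-Y_j$, but your subsequent use of $\mathcal{J}$ makes this immaterial) -- both routes hinge on Lemmas \ref{power-derive}, \ref{entries-null-homotopic} and \ref{morphism-sign}, and your step (ii) simply spells out the induction and cofactor expansion the paper leaves to the reader.
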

\begin{proof}
Cut $\Gamma$ at all the marked points into local pieces, and let $\Gamma'$ be a local piece containing the point marked by $\mathbb{X}$ (as an end point.) Let $\mathbb{W}_1,\dots,\mathbb{W}_l$ be the alphabets marking the other end points of $\Gamma'$. Then $C(\Gamma')$ is of the form 
\[
C(\Gamma') = \left(%
\begin{array}{ll}
  a_{1,0}, & a_{1,1} \\
  a_{2,0}, & a_{2,1} \\
  \dots & \dots \\
  a_{k,0}, & a_{k,1}
\end{array}%
\right)_{\Sym(\mathbb{X}|\mathbb{W}_1|\cdots|\mathbb{W}_l)},
\]
and has potential 
\[
\pm p_{N+1}(\mathbb{X}) + \sum_{i=1}^l \pm p_{N+1}(\mathbb{W}_i)=\sum_{j=1}^k a_{j,0}a_{j,1}.
\]
Let $X_j$ be the $j$-th elementary symmetric polynomial in $\mathbb{X}$. Derive the above equation by $X_j$. By Lemma \ref{power-derive}, we get
\[
\pm (N+1) h_{N+1-j}(\mathbb{X}) = \sum_{j=1}^k (\frac{\partial a_{j,0}}{\partial X_j}\cdot a_{j,1}+a_{j,0} \cdot \frac{\partial a_{j,1}}{\partial X_j}).
\]
So $h_{N}(\mathbb{X}),h_{N-1}(\mathbb{X}),\dots,h_{N-m+1}(\mathbb{X})$ are in the ideal $(a_{1,0}, a_{1,1},\dots,a_{k,0}, a_{k,1})$ of $\Sym(\mathbb{X}|\mathbb{W}_1|\cdots|\mathbb{W}_l)$. By Lemma \ref{entries-null-homotopic}, multiplications by these polynomials are null-homotopic endomorphisms of $C(\Gamma')$ and, by Lemma \ref{morphism-sign}, of $C(\Gamma)$. By equation \eqref{schur-complete} and recursive relation \eqref{complete-recursion}, if $\mu>\lambda_{m,N-m}$, then $S_{\mu}(\mathbb{X})$ is in the ideal $(h_{N}(\mathbb{X}),h_{N-1}(\mathbb{X}),\dots,h_{N-m+1}(\mathbb{X}))$. So the multiplication by $S_{\mu}(\mathbb{X})$ is null-homotopic.
\end{proof}

\begin{lemma}\label{MOY-object-of-hmf} 
Let $\Gamma$ be a MOY graph, and $\mathbb{E}_1,\dots,\mathbb{E}_n$ the alphabets assigned to all end points of $\Gamma$, among which $\mathbb{E}_1,\dots,\mathbb{E}_k$ are assigned to exits and $\mathbb{E}_{k+1},\dots,\mathbb{E}_n$ are assigned to entrances. (Here we allow $n=0$, that is, $\Gamma$ to be closed.) Write $R_\partial=\Sym(\mathbb{E}_1|\cdots|\mathbb{E}_n)$ and $w= \sum_{i=1}^k p_{N+1}(\mathbb{E}_i) - \sum_{j=k+1}^n p_{N+1}(\mathbb{E}_j)$. Then $C(\Gamma)$ is an object of $\hmf_{R_\partial,w}$.
\end{lemma}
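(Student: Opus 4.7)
The plan is to verify the three requirements of Definition \ref{categories-def}: that $C(\Gamma)$ is a graded matrix factorization over $R$ with potential $w$, that its quantum grading is bounded below, and that it is homotopically finite.

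That $C(\Gamma)$ has potential $w$ is immediate from the construction. Tensor products of matrix factorizations add potentials, and each alphabet associated to an internal marked point of $\Gamma$ appears exactly once as an exit alphabet of one local piece and once as an entrance alphabet of the adjacent local piece, so the corresponding $p_{N+1}$-contributions cancel in pairs, leaving only the end-point contributions summing to $w$. As a Koszul tensor product, $C(\Gamma)$ carries a natural finite homogeneous basis over the enlarged ring $R^{\sharp}:=\Sym(\mathbb{I}_1|\cdots|\mathbb{I}_p|\mathbb{E}_1|\cdots|\mathbb{E}_n)$, where $\mathbb{I}_1,\dots,\mathbb{I}_p$ are the alphabets at internal marked points. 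Since $R^\sharp$ is a polynomial extension of $R$ in variables of positive degree, it is graded-free over $R$ with grading bounded below, so $C(\Gamma)$ is a graded-free $R$-module with quantum grading bounded below.

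For homotopical finiteness, the plan is to eliminate the internal alphabet variables successively using the strong contraction lemma, Proposition \ref{b-contraction}. By Lemma \ref{width-cap} we may assume every vertex of $\Gamma$ has width at most $N$, so in particular every internal alphabet has at most $N$ indeterminates. Fix an internal alphabet $\mathbb{W}=\{w_1,\dots,w_s\}$ with $s\leq N$, and let $v$ be the unique local piece in which $\mathbb{W}$ appears as an exit, say $\mathbb{W}=\mathbb{X}_1$ in the notation around $v$. The generating-function identity $\sum_j (-t)^j X_j = \prod_{i=1}^k \sum_j (-t)^j X_{i,j}$ gives $X_j = W_j + \sum_{a_1<j} W_{a_1}\cdot X_{2,a_2}\cdots X_{k,a_k}$; in particular $X_j$ is linear in $W_j$ with coefficient $1$, and its remaining terms involve only $W_1,\dots,W_{j-1}$ and the elementary symmetric polynomials of $\mathbb{X}_2,\dots,\mathbb{X}_k$. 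Thus replacing $W_j$ by the fresh variable $V_j:=X_j-Y_j$ (of degree $2j\leq 2N$) constitutes a triangular, hence invertible, change of polynomial generators for $R^\sharp$, and after this substitution the $j$-th Koszul row contributed by $C(v)$ becomes $(U_j,V_j)$ with $V_j$ not appearing in the potential $w\in R$. Proposition \ref{b-contraction} then applies and eliminates $V_j$ (equivalently $W_j$). Performing this for $j=s,s-1,\dots,1$ removes the whole alphabet $\mathbb{W}$, and iterating over all internal alphabets $\mathbb{I}_1,\dots,\mathbb{I}_p$ yields a finitely generated Koszul matrix factorization over $R$ that is homotopic to $C(\Gamma)$. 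Hence $C(\Gamma)$ is homotopically finite and is an object of $\hmf_{R,w}$.

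The main technical point is that the iterated changes of variables remain valid throughout the elimination. This is ensured by the structural observation above: contracting $V_j$ only modifies entries that actually involve $W_j$, while the entries $X_{j'}-Y_{j'}$ with $j'<j$ do not involve $W_j$ and therefore remain unchanged, still linear in $W_{j'}$ with unit coefficient. Consequently the procedure goes through cleanly and terminates with the required finitely generated matrix factorization over $R$, so no further combinatorial subtleties arise.
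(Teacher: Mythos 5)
Your treatment of the potential (cancellation of the internal $p_{N+1}$-contributions) and of the boundedness of the quantum grading (via the finitely generated Koszul structure over the enlarged ring $\widetilde{R}=\Sym(\mathbb{I}_1|\cdots|\mathbb{I}_p|\mathbb{E}_1|\cdots|\mathbb{E}_n)$) is correct and coincides with the paper's argument. The gap is in the homotopical-finiteness step.

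Your plan is to peel off every internal alphabet one at a time via Proposition \ref{b-contraction}, after a triangular change of variables turns each row entry $X_j-Y_j$ into a fresh polynomial generator. The triangularity observation itself is fine and is exactly what underlies Lemmas \ref{marking-independence} and \ref{edge-contraction}. But the iteration cannot remove \emph{all} internal alphabets when $\Gamma$ contains a cycle, which is always the case for a closed MOY graph -- the primary object of interest, since $R=\C$ there. Consider a circle colored by $m$ with internal marked points $\mathbb{W}^{(1)},\dots,\mathbb{W}^{(p)}$. The right entries of arc $i$ are $W^{(i+1)}_j-W^{(i)}_j$ (indices mod $p$). Eliminating $\mathbb{W}^{(2)},\dots,\mathbb{W}^{(p)}$ in turn identifies each of them with $\mathbb{W}^{(1)}$, so that the last remaining arc's rows become $(U_j,\,W^{(1)}_j-W^{(1)}_j)=(U_j,0)$. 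No right entry is a fresh variable any more, so Proposition \ref{b-contraction} cannot remove $\mathbb{W}^{(1)}$, and the residual matrix factorization is over $\Sym(\mathbb{W}^{(1)})$ -- not over $R=\C$, hence not finitely generated over $R$. This is precisely the situation recorded in Lemma \ref{circle-rep-one-mark}: $C(\bigcirc_m)$ has all zero right-column entries, and its left entries $h_{N+1-j}(\mathbb{X})$, $j=1,\dots,m$, do not generate $\Sym(\mathbb{X})$ unless $m=N$, so an $a$-contraction via Corollary \ref{a-contraction} is equally unavailable in general. The same obstruction arises around any cycle of $\Gamma$.

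The paper sidesteps syntactic reduction altogether. By Corollary \ref{homology-detects-homotopy}, homotopical finiteness is equivalent to $H(\Gamma)$ being finite-dimensional over $\C$. The complex $C(\Gamma)/\mathfrak{I}C(\Gamma)$ consists of finitely generated modules over the Noetherian polynomial ring $R'=\Sym(\mathbb{W}_1|\cdots|\mathbb{W}_p)$, so $H(\Gamma)$ is finitely generated over $R'$; and Lemma \ref{schur-null-homotopic} (built on the null-homotopy Lemma \ref{entries-null-homotopic}) shows the $R'$-action on $H(\Gamma)$ factors through a finite-dimensional quotient of $R'$. It is this cohomological null-homotopy input, rather than any Koszul-level elimination, that lets one handle cycles; your proof is missing it, and I do not see a way to complete the argument along the lines you propose without it.
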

\begin{proof}
Let $\mathbb{W}_1,\dots,\mathbb{W}_m$ be the alphabets assigned to interior marked points of $\Gamma$. Then $C(\Gamma)$ is a finitely generated Koszul matrix factorization over 
\[
\widetilde{R}=\Sym(\mathbb{W}_1|\cdots|\mathbb{W}_m|\mathbb{E}_1|\cdots|\mathbb{E}_n).
\]
This implies that the quantum grading of $C(\Gamma)$ is bounded below. So, to show that $C(\Gamma)$ is an object of $\hmf_{R_\partial,w}$, it remains to prove that $C(\Gamma)$ is homotopically finite. By Corollary \ref{homology-detects-homotopy}, we only need to demonstrate that $H(\Gamma)$ is finite dimensional. 

Let $\mathfrak{I}$ be the maximal homogeneous ideal of $R_\partial=\Sym(\mathbb{E}_1|\cdots|\mathbb{E}_n)$. Then $C(\Gamma)/\mathfrak{I}C(\Gamma)$ is a chain complex of finitely generated modules over $R'=\Sym(\mathbb{W}_1|\cdots|\mathbb{W}_m)$. Note that $R'$ is a polynomial ring and, therefore, a Noetherian ring. So the homology of $C(\Gamma)/\mathfrak{I}C(\Gamma)$, that is, $H(\Gamma)$, is also finitely generated over $R'$. But Lemma \ref{schur-null-homotopic} implies that the action of $R'$ on $H(\Gamma)$ factors through a finite dimensional quotient ring of $R'$. So $H(\Gamma)$ is finite dimensional over $\C$.
\end{proof}

\begin{figure}[ht]

\begin{picture}(360,90)(-180,-50)

%left top

\put(-130,0){\vector(-3,2){22.5}}

\put(-152.5,15){\line(-3,2){22.5}}

\put(-178,25){\tiny{$i_1$}}

\put(-178,32){\small{$\mathbb{X}_1$}}

\put(-152,15){$\cdots$}

\put(-150,25){\tiny{$i_s$}}

\put(-148,32){\small{$\mathbb{X}_s$}}

\put(-115,25){\tiny{$i_{s+1}$}}

\put(-117,32){\small{$\mathbb{X}_{s+1}$}}

\put(-130,0){\vector(0,1){7.5}}

\put(-130,7.5){\line(0,1){7.5}}

\put(-130,15){\vector(-1,1){7.5}}

\put(-137.5,22.5){\line(-1,1){7.5}}

\put(-130,15){\vector(1,1){7.5}}

\put(-122.5,22.5){\line(1,1){7.5}}

\put(-131,10){\line(1,0){2}}

\put(-129,6){\small{$\mathbb{A}$}}

\put(-119,15){$\cdots$}

\put(-130,0){\vector(3,2){22.5}}

\put(-107.5,15){\line(3,2){22.5}}

\put(-84,25){\tiny{$i_k$}}

\put(-87,32){\small{$\mathbb{X}_k$}}

%left middle

\put(-123,-2){$v$}

%left bottom

\put(-152.5,-15){\line(3,2){22.5}}

\put(-175,-30){\vector(3,2){22.5}}

\put(-178,-27){\tiny{$j_1$}}

\put(-178,-38){\small{$\mathbb{Y}_1$}}

\put(-152,-20){$\cdots$}

\put(-137.5,-15){\line(1,2){7.5}}

\put(-145,-30){\vector(1,2){7.5}}

\put(-150,-27){\tiny{$j_t$}}

\put(-148,-38){\small{$\mathbb{Y}_t$}}

\put(-122.5,-15){\line(-1,2){7.5}}

\put(-115,-30){\vector(-1,2){7.5}}

\put(-115,-27){\tiny{$j_{t+1}$}}

\put(-117,-38){\small{$\mathbb{Y}_{t+1}$}}

\put(-119,-20){$\cdots$}

\put(-107.5,-15){\line(-3,2){22.5}}

\put(-85,-30){\vector(-3,2){22.5}}

\put(-84,-27){\tiny{$j_l$}}

\put(-87,-38){\small{$\mathbb{Y}_l$}}

\put(-130,-50){$\Gamma_1$}

%center top

\put(0,0){\vector(-3,2){22.5}}

\put(-22.5,15){\line(-3,2){22.5}}

\put(-48,25){\tiny{$i_1$}}

\put(-48,32){\small{$\mathbb{X}_1$}}

\put(-22,15){$\cdots$}

\put(0,0){\vector(-1,2){7.5}}

\put(-7.5,15){\line(-1,2){7.5}}

\put(-20,25){\tiny{$i_s$}}

\put(-18,32){\small{$\mathbb{X}_s$}}

\put(0,0){\vector(1,2){7.5}}

\put(7.5,15){\line(1,2){7.5}}

\put(15,25){\tiny{$i_{s+1}$}}

\put(13,32){\small{$\mathbb{X}_{s+1}$}}

\put(11,15){$\cdots$}

\put(0,0){\vector(3,2){22.5}}

\put(22.5,15){\line(3,2){22.5}}

\put(46,25){\tiny{$i_k$}}

\put(43,32){\small{$\mathbb{X}_k$}}

%center middle

\put(7,-2){$v$}

%center bottom

\put(-22.5,-15){\line(3,2){22.5}}

\put(-45,-30){\vector(3,2){22.5}}

\put(-48,-27){\tiny{$j_1$}}

\put(-48,-38){\small{$\mathbb{Y}_1$}}

\put(-22,-20){$\cdots$}

\put(-7.5,-15){\line(1,2){7.5}}

\put(-15,-30){\vector(1,2){7.5}}

\put(-20,-27){\tiny{$j_t$}}

\put(-18,-38){\small{$\mathbb{Y}_t$}}

\put(7.5,-15){\line(-1,2){7.5}}

\put(15,-30){\vector(-1,2){7.5}}

\put(15,-27){\tiny{$j_{t+1}$}}

\put(13,-38){\small{$\mathbb{Y}_{t+1}$}}

\put(11,-20){$\cdots$}

\put(22.5,-15){\line(-3,2){22.5}}

\put(45,-30){\vector(-3,2){22.5}}

\put(46,-27){\tiny{$j_l$}}

\put(43,-38){\small{$\mathbb{Y}_l$}}

\put(0,-50){$\Gamma$}

%right top

\put(130,0){\vector(-3,2){22.5}}

\put(107.5,15){\line(-3,2){22.5}}

\put(82,25){\tiny{$i_1$}}

\put(82,32){\small{$\mathbb{X}_1$}}

\put(108,15){$\cdots$}

\put(130,0){\vector(-1,2){7.5}}

\put(122.5,15){\line(-1,2){7.5}}

\put(110,25){\tiny{$i_s$}}

\put(112,32){\small{$\mathbb{X}_s$}}

\put(130,0){\vector(1,2){7.5}}

\put(137.5,15){\line(1,2){7.5}}

\put(145,25){\tiny{$i_{s+1}$}}

\put(143,32){\small{$\mathbb{X}_{s+1}$}}

\put(141,15){$\cdots$}

\put(130,0){\vector(3,2){22.5}}

\put(152.5,15){\line(3,2){22.5}}

\put(176,25){\tiny{$i_k$}}

\put(173,32){\small{$\mathbb{X}_k$}}

%right middle

\put(137,-2){$v$}

%right bottom

\put(107.5,-15){\line(3,2){22.5}}

\put(85,-30){\vector(3,2){22.5}}

\put(82,-27){\tiny{$j_1$}}

\put(82,-38){\small{$\mathbb{Y}_1$}}

\put(108,-20){$\cdots$}

\put(115,-30){\vector(1,1){7.5}}

\put(122.5,-22.5){\line(1,1){7.5}}

\put(110,-27){\tiny{$j_t$}}

\put(112,-38){\small{$\mathbb{Y}_t$}}

\put(145,-30){\vector(-1,1){7.5}}

\put(137.5,-22.5){\line(-1,1){7.5}}

\put(145,-27){\tiny{$j_{t+1}$}}

\put(143,-38){\small{$\mathbb{Y}_{t+1}$}}

\put(130,-7.5){\line(0,1){7.5}}

\put(130,-15){\vector(0,1){7.5}}

\put(129,-5){\line(1,0){2}}

\put(132,-12){\small{$\mathbb{B}$}}

\put(141,-20){$\cdots$}

\put(152.5,-15){\line(-3,2){22.5}}

\put(175,-30){\vector(-3,2){22.5}}

\put(176,-27){\tiny{$j_l$}}

\put(173,-38){\small{$\mathbb{Y}_l$}}

\put(130,-50){$\Gamma_2$}
\end{picture}

\caption{}\label{edge-contraction-figure}

\end{figure}

\begin{lemma}\label{edge-contraction}
Let $\Gamma$, $\Gamma_1$ and $\Gamma_2$ be the MOY graphs shown in Figure \ref{edge-contraction-figure}. Then $C(\Gamma_1) \simeq C(\Gamma_2) \simeq C(\Gamma)$.
\end{lemma}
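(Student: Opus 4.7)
The strategy is to reduce $C(\Gamma_1)$ (respectively $C(\Gamma_2)$) to $C(\Gamma)$ by eliminating the alphabet $\mathbb{A}$ (respectively $\mathbb{B}$) attached to the interior edge via Proposition \ref{b-contraction}. Since the two cases are symmetric, I will focus on $C(\Gamma_1)\simeq C(\Gamma)$. By Lemma \ref{width-cap} I may assume that the width $m$ at $v$ is at most $N$, so that the degree constraint $\deg A_j\le 2N+2$ required for the contraction is automatic.

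First I would spell out the rows of $C(\Gamma_1)$ explicitly. Write $\Sigma_j$ for the $j$-th elementary symmetric polynomial in $\mathbb{X}_1\cup\cdots\cup\mathbb{X}_s$, $A_j$ for the $j$-th elementary symmetric polynomial in $\mathbb{A}$, $X_j'$ for the $j$-th elementary symmetric polynomial in $\mathbb{A}\cup\mathbb{X}_{s+1}\cup\cdots\cup\mathbb{X}_k$, and $X_j$ for the $j$-th elementary symmetric polynomial in $\mathbb{X}_1\cup\cdots\cup\mathbb{X}_k$. Then $C(\Gamma_1)$ is a Koszul matrix factorization whose rows coming from the upper vertex have the form $(U_j^{\mathrm{up}},\,\Sigma_j-A_j)$ for $j=1,\dots,|\mathbb{A}|$, while the rows coming from the lower vertex have the form $(U_j^{\mathrm{low}},\,X_j'-Y_j)$ for $j=1,\dots,m$. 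Its total potential $\sum_i p_{N+1}(\mathbb{X}_i)-\sum_i p_{N+1}(\mathbb{Y}_i)$ is an element of $R:=\Sym(\mathbb{X}_1|\cdots|\mathbb{X}_k|\mathbb{Y}_1|\cdots|\mathbb{Y}_l)$ and does not involve $\mathbb{A}$.

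The key observation is that the base ring of $C(\Gamma_1)$ is $R[A_1,\dots,A_{|\mathbb{A}|}]=R[A_1',\dots,A_{|\mathbb{A}|}']$ where $A_j':=A_j-\Sigma_j$. Using the standard isomorphism $(f,a)\cong(-f,-a)$, the upper-vertex rows become $(-U_j^{\mathrm{up}},\,A_j')$, which puts us exactly in the setting of Proposition \ref{b-contraction}. I would then apply that proposition iteratively, one variable $A_j'$ at a time, to contract out the $|\mathbb{A}|$ indeterminants. Each contraction substitutes $A_j\mapsto\Sigma_j$ in all remaining entries and discards the corresponding upper-vertex row; after all contractions, $X_j'$ collapses to $X_j$, and the surviving rows are $(U_j^{\mathrm{low}}|_{A_j=\Sigma_j},\,X_j-Y_j)$ over $R$. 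Since $\{X_j-Y_j\}$ is $R$-regular, Lemma \ref{freedom} identifies this factorization with the chosen form of $C(\Gamma)$. A direct check of the quantum shifts confirms the final grading match: the shifts of the two sub-vertices of $\Gamma_1$ combine to
\[
-\!\!\sum_{1\le s'<t'\le s}\!\!i_{s'}i_{t'}\;-\;(i_1{+}\cdots{+}i_s)(i_{s+1}{+}\cdots{+}i_k)\;-\!\!\sum_{s+1\le s'<t'\le k}\!\!i_{s'}i_{t'}\;=\;-\!\!\sum_{1\le s'<t'\le k}\!\!i_{s'}i_{t'},
\]
which is exactly the shift attached to $v$ in $\Gamma$.

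The proof of $C(\Gamma_2)\simeq C(\Gamma)$ is entirely analogous with the roles of ``top'' and ``bottom'' swapped: the rows $(U_j^{\mathrm{low}'},\,B_j-T_j)$ from the extra lower vertex (with $T_j$ the $j$-th elementary symmetric polynomial in $\mathbb{Y}_1\cup\cdots\cup\mathbb{Y}_t$) play the role of the upper-vertex rows above, and Proposition \ref{b-contraction} eliminates each $B_j$ by sending it to $T_j$. The only nontrivial obstacle, aside from staying organized with the notation, is bookkeeping the signs introduced by the change of variables $A_j\mapsto A_j'=A_j-\Sigma_j$ when translating the contraction hypothesis $b_i=x$ of Proposition \ref{b-contraction}; once this is handled as above, the rest of the argument reduces to the routine verification of the grading shifts.
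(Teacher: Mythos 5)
Your proof is correct and follows essentially the same route as the paper: both observe that the enlarged base ring $\widetilde R$ is a polynomial extension $R[A_1',\dots]$ in the difference variables, apply Proposition \ref{b-contraction} to contract them out (substituting the elementary symmetric polynomials back in so that the remaining entries reproduce $C(\Gamma)$), and appeal to the freedom in the left column (Remark \ref{MOY-freedom}/Lemma \ref{freedom}). The only cosmetic divergences are that you orient the extra sub-vertex the other way (forcing a sign flip $(f,a)\cong(-f,-a)$ before invoking the contraction, which the paper avoids by writing the row as $(\ast,\,A_j-\sum_{p+q=j}X_{s,p}X_{s+1,q})$ directly), you explicitly verify the quantum shift which the paper only displays, and you add an unneeded safety net via Lemma \ref{width-cap} (the degree bound on the contracted indeterminants is already forced by the fact that $C(\Gamma_1)$ is a well-formed graded Koszul factorization).
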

\begin{proof}
We only prove that $C(\Gamma_1) \simeq C(\Gamma)$. The proof of $C(\Gamma_2) \simeq C(\Gamma)$ is similar. Set $m=i_1+i_2+\cdots +i_k = j_1+j_2+\cdots +j_l$. Let $R=\Sym(\mathbb{X}_1|\dots|\mathbb{X}_k|\mathbb{Y}_1|\dots|\mathbb{Y}_l)$, and $\widetilde{R}= \Sym(\mathbb{X}_1|\dots|\mathbb{X}_k|\mathbb{Y}_1|\dots|\mathbb{Y}_l|\mathbb{A})$. Set $\mathbb{X}=\mathbb{X}_1\cup\cdots\cup \mathbb{X}_k$ and $\mathbb{Y}=\mathbb{Y}_1\cup\cdots\cup \mathbb{Y}_l$. Denote by $X_j$ the $j$-th elementary symmetric polynomial in $\mathbb{X}$, by $Y_j$ the $j$-th elementary symmetric polynomial in $\mathbb{Y}$, and by $A_j$ the $j$-th elementary symmetric polynomial in $\mathbb{A}$. Moreover, denote by $X'_j$ the $j$-th elementary symmetric polynomial in $\mathbb{X}_1\cup\cdots\cup\mathbb{X}_{s-1}\cup\mathbb{X}_{s+2}\cup\cdots\cup \mathbb{X}_k$, and, for $i=s,s+1$, $X_{i,j}$ the $j$-th elementary symmetric polynomial in $\mathbb{X}_i$. Then 
\[
X_j= \sum_{p+q+r=j}X'_pX_{s,q}X_{s+1,r},
\]
the $j$-th elementary symmetric polynomial in $\mathbb{X}_s\cup\mathbb{X}_{s+1}$ is
\[
\sum_{p+q=j}X_{s,p}X_{s+1,q},
\]
and the $j$-th elementary symmetric polynomial in $\mathbb{X}_1\cup\cdots\cup\mathbb{X}_{s-1}\cup\mathbb{X}_{s+2}\cup\cdots\cup \mathbb{X}_k\cup \mathbb{A}$ is
\[
\sum_{p+q=j}X'_pA_q.
\]
Note that 
\[
\widetilde{R} = R[A_1- X_{s,1} - X_{s+1,1},\dots,A_j - \sum_{p+q=j}X_{s,p}X_{s+1,q},\dots,A_{i_s+i_{s+1}} - X_{s,i_{s}}X_{s+1,i_{s+1}}].
\] 
So, by Proposition \ref{b-contraction}, 
\begin{eqnarray*}
C(\Gamma_1) & \cong & 
\left(%
\begin{array}{ll}
  \ast & X'_1+A_1-Y_1 \\
  \dots & \dots \\
  \ast & \sum_{p+q=j}X'_pA_q-Y_j \\
  \dots & \dots \\
  \ast & X'_{m-i_s-i_{s+1}}A_{i_s+i_{s+1}}-Y_m \\
  \ast & A_1- X_{s,1} - X_{s+1,1} \\
  \dots & \dots \\
  \ast & A_j - \sum_{p+q=j}X_{s,p}X_{s+1,q} \\
  \dots & \dots \\
  \ast & A_{i_s+i_{s+1}} - X_{s,i_{s}}X_{s+1,i_{s+1}} 
\end{array}%
\right)_{\widetilde{R}}
\{q^{-\sum_{1\leq t_1<t_2 \leq k} i_{t_1}i_{t_2}}\} \\
 & \simeq &
\left(%
\begin{array}{ll}
  \ast & X_1-Y_1 \\
  \dots & \dots \\
  \ast & X_m-Y_m 
\end{array}%
\right)_{R}
\{q^{-\sum_{1\leq t_1<t_2 \leq k} i_{t_1}i_{t_2}}\} \\
& \cong & 
C(\Gamma).
\end{eqnarray*}
\end{proof}

Lemma \ref{edge-contraction} implies that the matrix factorization associated to any MOY graph is homotopic to that associated to a trivalent MOY graph. So, theoretically, we do not lose any information by considering only the trivalent MOY graphs. But, in some cases, it is more convenient to use vertices of higher valence.

\begin{figure}[ht]

\setlength{\unitlength}{1pt}

\begin{picture}(360,100)(-180,-50)

%top left 

\put(-100,25){$\Gamma_1$:}

\put(-60,10){\vector(0,1){10}}

\put(-60,20){\vector(-1,1){20}}

\put(-60,20){\vector(1,1){10}}

\put(-50,30){\vector(-1,1){10}}

\put(-50,30){\vector(1,1){10}}

\put(-75,3){\tiny{$i+j+k$}}

\put(-55,21){\tiny{$j+k$}}

\put(-80,42){\tiny{$i$}}

\put(-60,42){\tiny{$j$}}

\put(-40,42){\tiny{$k$}}

%top rihgt

\put(20,25){$\Gamma'_1$:}

\put(60,10){\vector(0,1){10}}

\put(60,20){\vector(1,1){20}}

\put(60,20){\vector(-1,1){10}}

\put(50,30){\vector(1,1){10}}

\put(50,30){\vector(-1,1){10}}

\put(45,3){\tiny{$i+j+k$}}

\put(38,21){\tiny{$i+j$}}

\put(80,42){\tiny{$k$}}

\put(60,42){\tiny{$j$}}

\put(40,42){\tiny{$i$}}

% lower left

\put(-100,-25){$\Gamma_2$:}

\put(-60,-30){\vector(0,-1){10}}

\put(-80,-10){\vector(1,-1){20}}

\put(-50,-20){\vector(-1,-1){10}}

\put(-60,-10){\vector(1,-1){10}}

\put(-40,-10){\vector(-1,-1){10}}

\put(-75,-47){\tiny{$i+j+k$}}

\put(-55,-29){\tiny{$j+k$}}

\put(-80,-8){\tiny{$i$}}

\put(-60,-8){\tiny{$j$}}

\put(-40,-8){\tiny{$k$}}

% lower right

\put(20,-25){$\Gamma'_2$:}

\put(60,-30){\vector(0,-1){10}}

\put(80,-10){\vector(-1,-1){20}}

\put(50,-20){\vector(1,-1){10}}

\put(60,-10){\vector(-1,-1){10}}

\put(40,-10){\vector(1,-1){10}}

\put(45,-47){\tiny{$i+j+k$}}

\put(38,-29){\tiny{$i+j$}}

\put(80,-8){\tiny{$k$}}

\put(60,-8){\tiny{$j$}}

\put(40,-8){\tiny{$i$}}

\end{picture}

\caption{}\label{contract-expand-figure}

\end{figure}

\begin{corollary}\label{contract-expand}
Suppose that $\Gamma_1$, $\Gamma'_1$, $\Gamma_2$ and $\Gamma'_2$ are the MOY graphs shown in Figure \ref{contract-expand-figure}. Then $C(\Gamma_1) \simeq C(\Gamma'_1)$ and $C(\Gamma_2) \simeq C(\Gamma'_2)$.
\end{corollary}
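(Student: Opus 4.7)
The strategy is to realize both $\Gamma_1$ and $\Gamma'_1$ (and similarly both $\Gamma_2$ and $\Gamma'_2$) as the results of different edge-expansions of a single common four-valent MOY graph, and then appeal to Lemma \ref{edge-contraction} twice.

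More precisely, let $\Gamma$ be the MOY graph consisting of a single four-valent vertex $v$ with one edge of color $i+j+k$ entering $v$ and three edges of colors $i$, $j$, $k$ leaving $v$ (in the order dictated by the pictures in Figure \ref{contract-expand-figure}). Then $\Gamma_1$ is obtained from $\Gamma$ by inserting a trivalent vertex on the "outgoing side" which first merges the edges colored $j$ and $k$ into an edge colored $j+k$ before they meet $v$; i.e.\ $\Gamma_1$ has an internal edge colored $j+k$ whose contraction yields $\Gamma$. Similarly, $\Gamma'_1$ has an internal edge colored $i+j$ whose contraction yields the same $\Gamma$. By Lemma \ref{edge-contraction} (in the form where an internal edge between two trivalent vertices is collapsed into a single higher-valence vertex),
\[
C(\Gamma_1) \simeq C(\Gamma) \quad \text{and} \quad C(\Gamma'_1) \simeq C(\Gamma),
\]
so $C(\Gamma_1) \simeq C(\Gamma'_1)$.

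The proof for $\Gamma_2$ and $\Gamma'_2$ is formally identical: define $\Gamma'$ to be the four-valent MOY graph with one edge of color $i+j+k$ leaving a vertex $v'$ and three edges of colors $i$, $j$, $k$ entering $v'$. Then $\Gamma_2$ (resp. $\Gamma'_2$) is obtained from $\Gamma'$ by expanding $v'$ through an intermediate edge colored $j+k$ (resp. $i+j$), and Lemma \ref{edge-contraction} applied to each of these two edges gives $C(\Gamma_2) \simeq C(\Gamma') \simeq C(\Gamma'_2)$.

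There is essentially no obstacle here; the content of the corollary is entirely absorbed into Lemma \ref{edge-contraction}, and all one must check is that the interior vertex configurations are of the shape to which Lemma \ref{edge-contraction} applies, which is immediate from Figure \ref{contract-expand-figure}. The only minor care needed is to verify that the alphabets assigned to the end points match up on both sides of each homotopy equivalence, which is automatic since the end points of $\Gamma_1, \Gamma'_1, \Gamma$ (resp. $\Gamma_2, \Gamma'_2, \Gamma'$) coincide with the same colors and markings.
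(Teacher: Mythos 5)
Your proof is correct and is exactly what the paper does: the paper's proof consists of the single sentence ``This is a special case of Lemma \ref{edge-contraction}.'' Your write-up simply spells out which common four-valent graph both $\Gamma_1$ and $\Gamma'_1$ (resp.\ $\Gamma_2$ and $\Gamma'_2$) contract to, and applies the lemma twice; this is the intended reading.
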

\begin{proof}
This is a special case of Lemma \ref{edge-contraction}.
\end{proof}

\subsection{Direct sum decomposition (II)} We now generalize direct sum decomposition (II) in \cite{KR1}.

\begin{figure}[ht]

\setlength{\unitlength}{1pt}

\begin{picture}(360,70)(-180,-10)

%left 

\put(-60,0){\vector(0,1){15}}

\qbezier(-60,15)(-70,15)(-70,25)

\put(-70,25){\vector(0,1){10}}

\qbezier(-70,35)(-70,45)(-60,45)

\put(-71,26){\line(1,0){2}}

\qbezier(-60,15)(-50,15)(-50,25)

\put(-50,25){\vector(0,1){10}}

\qbezier(-50,35)(-50,45)(-60,45)

\put(-51,26){\line(1,0){2}}

\put(-60,45){\vector(0,1){15}}

\put(-65,55){\tiny{$n$}}

\put(-58,54){\small{$\mathbb{Y}$}}

\put(-65,0){\tiny{$n$}}

\put(-58,0){\small{$\mathbb{X}$}}

\put(-77,30){\tiny{$m$}}

\put(-77,22){\small{$\mathbb{A}$}}

\put(-47,30){\tiny{$n-m$}}

\put(-48,22){\small{$\mathbb{B}$}}

\put(-63,-10){$\Gamma$}

% rihgt

\put(60,0){\vector(0,1){60}}

\put(62,54){\small{$\mathbb{Y}$}}

\put(55,30){\tiny{$n$}}

\put(62,0){\small{$\mathbb{X}$}}

\put(57,-10){$\Gamma_1$}

\end{picture}

\caption{}\label{decomp-II-figure}

\end{figure}

\begin{theorem}[Direction Sum Decomposition II]\label{decomp-II}
Suppose that $\Gamma$ and $\Gamma_1$ are the MOY graphs shown in Figure \ref{decomp-II-figure}, where $n\geq m \geq 0$. Then 
\[
C(\Gamma) \simeq C(\Gamma_1)\{\qb{n}{m}\}.
\]
\end{theorem}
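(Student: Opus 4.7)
The plan is to realize $C(\Gamma)$ as a Koszul matrix factorization over $\widetilde R := \Sym(\mathbb{X}|\mathbb{A}|\mathbb{B}|\mathbb{Y})$, manipulate it by row operations into a tensor product that visibly separates a copy of $C(\Gamma_1)$ from a Koszul complex of a regular sequence, and then recognize the latter as a graded-free $\Sym(\mathbb{X})$-module of graded rank $\qb{n}{m}\,q^{m(n-m)}$.

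Write $X^{\mathbb{A}\mathbb{B}}_j := \sum_{p+q=j} X^{\mathbb{A}}_p X^{\mathbb{B}}_q$. By Definition \ref{MOY-mf-def} and the vertex construction, $C(\Gamma)$ is a $2n$-row Koszul factorization over $\widetilde R$ whose first $n$ rows (from the splitting vertex $v_1$) have second-column entries $X^{\mathbb{A}\mathbb{B}}_j - X^{\mathbb{X}}_j$ and whose last $n$ rows (from the merging vertex $v_2$) have entries $X^{\mathbb{Y}}_j - X^{\mathbb{A}\mathbb{B}}_j$, together with an overall shift $\{q^{-m(n-m)}\}$ coming from $v_1$. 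Apply Corollary \ref{row-op} with $c=-1$ to each pair of rows $(j,n+j)$: this converts the $(n+j)$-th second-column entry to $X^{\mathbb{Y}}_j - X^{\mathbb{X}}_j$ while leaving the $j$-th row's second column untouched. The new sequence $\{X^{\mathbb{A}\mathbb{B}}_j - X^{\mathbb{X}}_j,\,X^{\mathbb{Y}}_j - X^{\mathbb{X}}_j\}_{j=1}^n$ is $\widetilde R$-regular (the first half eliminates the $X^{\mathbb{X}}_j$, after which the second half becomes $\{X^{\mathbb{Y}}_j - X^{\mathbb{A}\mathbb{B}}_j\}_{j=1}^n$, clearly regular in $\Sym(\mathbb{A}|\mathbb{B}|\mathbb{Y})$), so Lemma \ref{freedom} lets us rechoose the first column subject only to the potential constraint; take the first $n$ first-column entries to be $0$ and choose the last $n$ entries $V_j\in \Sym(\mathbb{X}|\mathbb{Y})$ with $\sum_j V_j(X^{\mathbb{Y}}_j - X^{\mathbb{X}}_j) = p_{N+1}(\mathbb{Y}) - p_{N+1}(\mathbb{X})$.

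Because $\widetilde R = \Sym(\mathbb{X}|\mathbb{A}|\mathbb{B}) \otimes_{\Sym(\mathbb{X})} \Sym(\mathbb{X}|\mathbb{Y})$, the resulting Koszul factorization splits as an external tensor product
\[
C(\Gamma) \cong M_a \otimes_{\Sym(\mathbb{X})} C(\Gamma_1)\,\{q^{-m(n-m)}\},
\]
where $M_a := \bigotimes_{j=1}^n (0,\,X^{\mathbb{A}\mathbb{B}}_j - X^{\mathbb{X}}_j)_{\Sym(\mathbb{X}|\mathbb{A}|\mathbb{B})}$ has potential $0$, and the second factor is a Koszul factorization over $\Sym(\mathbb{X}|\mathbb{Y})$ isomorphic to $C(\Gamma_1)$ by Lemma \ref{freedom}. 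Since $(X^{\mathbb{A}\mathbb{B}}_j - X^{\mathbb{X}}_j)_{j=1}^n$ is a regular sequence in $\Sym(\mathbb{X}|\mathbb{A}|\mathbb{B})$, the $\zed_2$-graded complex $M_a$ is the Koszul resolution of $\Sym(\mathbb{X}|\mathbb{A}|\mathbb{B})/(X^{\mathbb{A}\mathbb{B}}_j - X^{\mathbb{X}}_j) \cong \Sym(\mathbb{A}|\mathbb{B})$ (via $X^{\mathbb{X}}_j \mapsto X^{\mathbb{A}\mathbb{B}}_j$). Viewed as complexes of $\Sym(\mathbb{X})$-modules, both $M_a$ and $\Sym(\mathbb{A}|\mathbb{B})$ are bounded complexes of graded-free modules (the latter by Theorem \ref{part-symm-str}), so the augmentation is a homotopy equivalence of $\Sym(\mathbb{X})$-module complexes; moreover Corollary \ref{part-symm-grade} identifies $\Sym(\mathbb{A}|\mathbb{B}) \cong \Sym(\mathbb{X})\{\qb{n}{m}\,q^{m(n-m)}\}$ as graded $\Sym(\mathbb{X})$-modules.

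Tensoring this $\Sym(\mathbb{X})$-linear homotopy equivalence with $C(\Gamma_1)$ over $\Sym(\mathbb{X})$, which preserves homotopies because $C(\Gamma_1)$ is graded-free over $\Sym(\mathbb{X})$, finally gives
\[
C(\Gamma) \simeq C(\Gamma_1)\{\qb{n}{m}\,q^{m(n-m)}\}\{q^{-m(n-m)}\} = C(\Gamma_1)\{\qb{n}{m}\}
\]
in $\hmf_{\Sym(\mathbb{X}|\mathbb{Y}),\,w}$ with $w = p_{N+1}(\mathbb{Y}) - p_{N+1}(\mathbb{X})$. The most delicate point is the last step: one must verify that the $\Sym(\mathbb{X})$-linear homotopy equivalence for $M_a$ genuinely extends, after the external tensor product, to a homotopy equivalence of matrix factorizations in $\hmf_{\Sym(\mathbb{X}|\mathbb{Y}),\,w}$ respecting both the $\zed_2$- and quantum-gradings. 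This amounts to checking that tensoring a null-homotopy over $\Sym(\mathbb{X})$ with a graded-free module produces a null-homotopy of $\Sym(\mathbb{X}|\mathbb{Y})$-linear maps (with the sign conventions of Lemma \ref{morphism-sign}), and that all grading shifts collected along the way combine to leave exactly $\{\qb{n}{m}\}$ at the end.
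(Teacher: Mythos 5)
Your proof is correct but organized noticeably differently from the paper's. The paper performs a base change first: it views the whole $2n$-row Koszul factorization, with right-column entries $Y_j - W_j$ and $W_j - X_j$ (where $\mathbb{W}=\mathbb{A}\cup\mathbb{B}$), as a factorization over the subring $\Sym(\mathbb{X}|\mathbb{Y}|\mathbb{W})$; by Corollary \ref{part-symm-grade} this converts the vertex shift $\{q^{-m(n-m)}\}$ into $\{\qb{n}{m}\}$ in one stroke, and then iterating Proposition \ref{b-contraction} to excise the rows $W_j-X_j$ (treating them as indeterminants over $\Sym(\mathbb{X}|\mathbb{Y})$) lands directly on $C(\Gamma_1)\{\qb{n}{m}\}$. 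You instead decouple the two vertices via row operations (Corollary \ref{row-op}) and a rechoice of the left column (Lemma \ref{freedom}), producing the external tensor product $M_a\otimes_{\Sym(\mathbb{X})}C(\Gamma_1)\{q^{-m(n-m)}\}$, and identify the inert Koszul factor $M_a$ with $\Sym(\mathbb{A}|\mathbb{B})$ in $\zed_2$-degree $0$, whose graded $\Sym(\mathbb{X})$-rank is $\qb{n}{m}\,q^{m(n-m)}$. Both routes rest on the same two ingredients — the graded rank from Theorem \ref{part-symm-str}/Corollary \ref{part-symm-grade}, and a Koszul contraction that the paper packages as Proposition \ref{b-contraction} — but yours makes the decoupling explicit at the cost of a few extra manipulations, whereas the paper's change of base ring is slicker and stays shorter. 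For the ``delicate point'' you flag at the end: rather than appealing to the general fact that a quasi-isomorphism between bounded complexes of graded-free modules is a homotopy equivalence (which requires some care, since $M_a$ is $\zed_2$-graded and one should really fold the $\zed$-graded Koszul resolution), you can close the gap cleanly by applying Proposition \ref{b-contraction} to $M_a$ itself, with $R=\Sym(\mathbb{A}|\mathbb{B})$ and indeterminants $x_j = W_j - X_j$ (noting $\Sym(\mathbb{X}|\mathbb{A}|\mathbb{B}) = R[x_1,\dots,x_n]$); this delivers the graded homotopy equivalence $M_a \simeq \Sym(\mathbb{A}|\mathbb{B})$ over $\Sym(\mathbb{A}|\mathbb{B})$, hence over $\Sym(\mathbb{X})$ by restriction along $X_j\mapsto W_j$, and Lemma \ref{morphism-sign} then carries the null-homotopy through the tensor with $C(\Gamma_1)$.
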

\begin{proof}
Denote by $X_j$ be $j$-th elementary symmetric polynomial in $\mathbb{X}$, and use similar notations for the other alphabets. Let $\mathbb{W}=\mathbb{A}\cup\mathbb{B}$. Then the $j$-th elementary symmetric polynomial in $\mathbb{W}$ is
\[
W_j=\sum_{p+q=j}A_pB_q.
\] 
By Theorem \ref{part-symm-str} and Corollary \ref{part-symm-grade}, 
\[
\Sym(\mathbb{X}|\mathbb{Y}|\mathbb{A}|\mathbb{B}) = \Sym(\mathbb{X}|\mathbb{Y}|\mathbb{W})\{q^{m(n-m)}\qb{n}{m}\}.
\]
So
\begin{eqnarray*}
C(\Gamma) & \cong & 
\left(%
\begin{array}{ll}
  \ast & Y_1 - W_1 \\
  \dots & \dots \\
  \ast & Y_n - W_n \\
  \ast & W_1 - X_1 \\
  \dots & \dots \\
  \ast & W_n - X_m 
\end{array}%
\right)_{\Sym(\mathbb{X}|\mathbb{Y}|\mathbb{A}|\mathbb{B})}
\{q^{-m(n-m)}\}, \\
& \cong & 
\left(%
\begin{array}{ll}
  \ast & Y_1 - W_1 \\
  \dots & \dots \\
  \ast & Y_n - W_n \\
  \ast & W_1 - X_1 \\
  \dots & \dots \\
  \ast & W_n - X_m 
\end{array}%
\right)_{\Sym(\mathbb{X}|\mathbb{Y}|\mathbb{W})}
\{\qb{n}{m}\}, \\
& \simeq & 
\left(%
\begin{array}{ll}
  \ast & Y_1 - X_1 \\
  \dots & \dots \\
  \ast & Y_n - X_n
\end{array}%
\right)_{\Sym(\mathbb{X}|\mathbb{Y})}
\{\qb{n}{m}\}, \\
& \cong & C(\Gamma_1)\{\qb{n}{m}\}.
\end{eqnarray*}
where the homotopy is given by Proposition \ref{b-contraction}. 
\end{proof}

\subsection{Direct sum decomposition (I)} We now generalize direct sum decomposition (I) in \cite{KR1}. We start with a special case. 

\begin{figure}[ht]

\setlength{\unitlength}{1pt}

\begin{picture}(360,70)(-180,-10)

%left 

\put(-120,0){\vector(0,1){15}}

\put(-120,15){\vector(0,1){15}}

\put(-120,30){\line(0,1){15}}

\put(-120,45){\vector(0,1){15}}

\qbezier(-120,15)(-100,5)(-100,25)

\qbezier(-120,45)(-100,55)(-100,35)

\put(-100,35){\vector(0,-1){10}}

\put(-130,55){\tiny{$m$}}

\put(-130,0){\tiny{$m$}}

\put(-130,30){\tiny{$N$}}

\put(-95,30){\tiny{$N-m$}}

\put(-123,-10){$\Gamma$}

% center

\put(0,0){\vector(0,1){60}}

\put(5,54){\small{$\mathbb{Y}$}}

\put(-10,30){\tiny{$m$}}

\put(5,0){\small{$\mathbb{X}$}}

\put(-3,-10){$\Gamma_1$}

% right

\put(110,0){\vector(1,3){10}}

\put(120,30){\vector(-1,3){10}}

\put(120,30){\line(1,3){5}}

\qbezier(125,45)(130,60)(130,45)

\put(120,30){\line(1,-3){5}}

\qbezier(125,15)(130,0)(130,15)

\put(130,45){\vector(0,-1){15}}

\put(130,30){\line(0,-1){15}}

\put(100,0){\tiny{$m$}}

\put(100,55){\tiny{$m$}}

\put(115,54){\small{$\mathbb{Y}$}}

\put(115,0){\small{$\mathbb{X}$}}

\put(129,28){\line(1,0){2}}

\put(135,25){\small{$\mathbb{W}$}}

\put(135,35){\tiny{$N-m$}}

\put(117,-10){$\Gamma'$}

\end{picture}

\caption{}\label{decomp-I-special-figure}

\end{figure}

\begin{lemma}\label{decomp-I-special}
Suppose that $\Gamma$ and $\Gamma_1$ are the MOY graphs shown in Figure \ref{decomp-I-special-figure}. Then 
$C(\Gamma) \simeq C(\Gamma_1)\left\langle N-m \right\rangle$.
\end{lemma}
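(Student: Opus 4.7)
The approach is to reduce $\Gamma$ to the simpler graph $\Gamma'$ (shown on the right of the figure: an $m$-strand with an $(N-m)$-loop attached at a single $4$-valent vertex) via edge contraction, then compute $C(\Gamma')$ by explicit row manipulations. Mark the edges of $\Gamma$ with alphabets: $\mathbb{X}$ at the bottom $m$-endpoint, $\mathbb{Y}$ at the top, $\mathbb{Z}$ on the internal $N$-edge, and $\mathbb{W}$ on the $(N-m)$-loop. Then $C(\Gamma)$ is the tensor product $C(v_1) \otimes_{\Sym(\mathbb{Z}|\mathbb{W})} C(v_2)$ of the matrix factorizations at the two trivalent vertices of $\Gamma$. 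By Lemma \ref{edge-contraction}---or equivalently, by applying Proposition \ref{b-contraction} iteratively to eliminate each indeterminant in $\mathbb{Z}$ using the entries $e_j(\mathbb{X}\cup\mathbb{W}) - Z_j$ of $C(v_1)$---we obtain $C(\Gamma) \simeq C(\Gamma')$. The resulting $C(\Gamma')$ is a Koszul matrix factorization over $R = \Sym(\mathbb{X}|\mathbb{Y}|\mathbb{W})$ whose $j$-th row has right entry
\[
e_j(\mathbb{X}\cup\mathbb{W}) - e_j(\mathbb{Y}\cup\mathbb{W}) = \sum_{p=1}^{\min(j,m)} (X_p - Y_p) W_{j-p},
\]
where $W_0 = 1$ and $X_p, Y_p, W_q$ denote the elementary symmetric polynomials in $\mathbb{X}, \mathbb{Y}, \mathbb{W}$.

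Next, apply Corollary \ref{row-op} iteratively to simplify the right-column entries. For $j = 2, \ldots, m$, reduce the right entry of row $j$ to $X_j - Y_j$ by subtracting suitable $W$-multiples of the earlier rows; then for $j = m+1, \ldots, N$, reduce the right entry of row $j$ to $0$ by subtracting $W$-multiples of rows $1, \ldots, m$. Each such row operation modifies the left entries of the earlier rows in a controlled way. The outcome is that $C(\Gamma')$ splits as a tensor product of two Koszul matrix factorizations over $R$: the first has $m$ rows of the form $(u'_j, X_j - Y_j)$ for $j = 1, \ldots, m$, and the second has $N-m$ rows of the form $(c_j, 0)$ for $j = m+1, \ldots, N$. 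By Lemma \ref{freedom}, the left entries of the first factor can be replaced by elements of $\Sym(\mathbb{X}|\mathbb{Y})$ alone (independent of $\mathbb{W}$), identifying this factor as a base-ring extension of $C(\Gamma_1)$.

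Finally, analyze the second factor. Each $(c_j, 0)_R$ is a cyclic chain complex with potential $0$ whose homology (when $c_j$ is not a zero divisor) is concentrated in $\zed_2$-degree $1$, so the tensor product of $N-m$ such factors contributes an overall $\zed_2$-shift by $N-m$. The main obstacle will be to identify the $c_j$'s explicitly---they arise from tracing the original left entries $u_j$ of the MOY vertex matrix factorizations through the row operations---and to verify that, combined with the $\{q^{-m(N-m)}\}$ quantum shift of $C(v_1)$, the second factor together with the extra $\Sym(\mathbb{W})$ from the first factor cancel to produce exactly $C(\Gamma_1)\langle N-m\rangle$. Concretely, one should exploit the freedom afforded by Lemma \ref{freedom} to choose the initial $u_j$'s so that the Koszul complex on the resulting $c_j$'s in $\Sym(\mathbb{W})$ collapses to the base field $\C$ (up to quantum shift), after which the required homotopy equivalence follows.
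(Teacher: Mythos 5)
Your outline is essentially the paper's strategy: contract the two trivalent vertices into a single four-valent vertex, simplify the right column via row operations to $Y_1-X_1,\dots,Y_m-X_m,0,\dots,0$, and then eliminate the last $N-m$ rows. Two points in your proposal, however, do not hold up without further work.

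First, there is a genuine gap at the step you yourself flag as "the main obstacle." The whole argument hinges on showing that the left-column entries $c_{m+1},\dots,c_N$ of the $(c_j,0)$ rows form a set of \emph{algebraically independent indeterminants} over $\Sym(\mathbb{X}|\mathbb{Y})$ (so that Corollary \ref{a-contraction} applies $N-m$ times, producing the $\langle N-m\rangle$ shift). Your suggested route — ``exploit the freedom afforded by Lemma \ref{freedom} to choose the initial $u_j$'s so that the Koszul complex on the $c_j$'s collapses'' — does not deliver this. Lemma \ref{freedom} guarantees that any two admissible choices of the $U_j$ give isomorphic matrix factorizations, but it gives you no handle on which choice (if any) makes the relevant left entries into indeterminants, and it cannot be applied directly to the second factor because its right entries are all $0$, so the regular-sequence hypothesis fails. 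The paper instead makes the default MOY choice $U_j = \frac{p_{N,N+1}(B_1,\dots,B_{j-1},A_j,\dots,A_N)-p_{N,N+1}(B_1,\dots,B_j,A_{j+1},\dots,A_N)}{A_j-B_j}$ and verifies, by a concrete Newton's-identity computation, that $U_{N+1-j} = (-1)^{N+1}(N+1)W_j + (\text{terms in } W_1,\dots,W_{j-1}, X_\bullet, Y_\bullet)$ for $1 \le j \le N-m$, which directly shows $\Sym(\mathbb{X}|\mathbb{Y}|\mathbb{W}) = \Sym(\mathbb{X}|\mathbb{Y})[U_N,\dots,U_{m+1}]$. This calculation is the heart of the proof and your proposal leaves it unresolved.

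Second, a smaller but useful observation you miss: you worry about ``tracing the original left entries $u_j$ through the row operations,'' but in fact the row operations used to clean up the right column do not change $U_{m+1},\dots,U_N$ at all. The operations on rows $j>m$ only borrow from rows $1,\dots,m$ (since only those have nonzero right entries $Y_i-X_i$ after simplification), and Corollary \ref{row-op} alters the \emph{borrowed-from} row's left entry, not the borrowing row's. So the $c_j$'s you need to understand are exactly the original $U_j$'s, and the Newton-identity step is the only nontrivial piece remaining. Finally, your claim that ``each $(c_j,0)_R$ contributes an overall $\zed_2$-shift by $1$'' is the conclusion of applying Corollary \ref{a-contraction}, not an independent fact — without first knowing $c_j$ is an indeterminant, $(c_j,0)_R$ need not be contractible to a shifted copy of the base.
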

\begin{proof}
By Lemma \ref{edge-contraction}, we have $C(\Gamma) \simeq C(\Gamma')$. So we only need to show that $C(\Gamma') \simeq C(\Gamma_1)\left\langle N-m \right\rangle$. We put markings on $\Gamma'$ and $\Gamma_1$ as in Figure \ref{decomp-I-special-figure}. Denote by $X_j$ the $j$-th elementary symmetric polynomial in $\mathbb{X}$, and use similar notations for the other alphabets. Write $\mathbb{A}=\mathbb{Y}\cup \mathbb{W}$ and $\mathbb{B}=\mathbb{X}\cup \mathbb{W}$. Then the $j$-th elementary symmetric polynomials in $\mathbb{A}$ and $\mathbb{B}$ are
\begin{eqnarray*}
A_j & = & \sum_{k+l=j} Y_kW_l, \\
B_j & = & \sum_{k+l=j} X_kW_l.
\end{eqnarray*}
Define
\[
U_j = \frac{p_{N,N+1}(B_1,\dots,B_{j-1},A_j,\dots,A_m) - p_{N,N+1}(B_1,\dots,B_j,A_{j+1},\dots,A_m)}{A_j-B_j}.
\]
Then
\[
C(\Gamma') =
\left(%
\begin{array}{cc}
  U_1 & A_1-B_1 \\
  U_2 & A_2-B_2 \\
  \dots & \dots \\
  U_N & A_N-B_N
\end{array}%
\right)_{\Sym(\mathbb{X}|\mathbb{Y}|\mathbb{W})}
\{q^{-m(N-m)}\}.
\]
Using the relation $A_j-B_j=\sum_{k+l=j} (Y_k-X_k)W_l$ and, specially, $A_1-B_1 =Y_1-X_1$, we can inductive change the entries in the right column into $Y_1-X_1, Y_2-X_2,\dots, Y_m-X_m,0\dots,0$ by the row operation given in Corollary \ref{row-op}. Note that these row operations do not change $U_{m+1},\dots,U_N$ in the left column. Thus,
\[
C(\Gamma') \cong
\left(%
\begin{array}{cc}
  \ast & Y_1-X_1 \\
  \dots & \dots \\
  \ast & Y_m-X_m \\
  U_{m+1} & 0 \\
  \dots & \dots \\
  U_N & 0
\end{array}%
\right)_{\Sym(\mathbb{X}|\mathbb{Y}|\mathbb{W})}
\{q^{-m(N-m)}\}.
\]
Using Newton's Identity \eqref{newton}, one can verify that
\[
p_{N,N+1}(A_1,\dots,A_N) = f_j + A_{N+1-j}(c_jA_j+g_j),
\]
where $f_j$ is a polynomial in $A_1,\dots,A_{N-j},A_{N+2-j},\dots,A_N$, and $g_j$ is a polynomial in $A_1,\dots,A_{j-1}$, and 
\[
c_j =\left\{%
\begin{array}{ll}
  (-1)^{N+1}\frac{N+1}{2}, & \text{ if } N+1-j=j, \\
  (-1)^{N+1}(N+1), & \text{ if } N+1-j \neq j.
\end{array}%
\right.
\]
Therefore, 
\begin{eqnarray*}
&  & U_{N+1-j}  \\
& & \\
& = & 
\left\{%
\begin{array}{ll}
  (-1)^{N+1}(N+1)B_j + \alpha_j(B_1,\dots,B_{j-1}), & \text{ if } N+1-j>j, \\
  (-1)^{N+1}\frac{N+1}{2}(A_j+B_j) + \beta_j(B_1,\dots,B_{j-1}), & \text{ if } N+1-j=j, \\
  (-1)^{N+1}(N+1)A_j + \gamma_j(B_1,\dots,B_{N+1-j},A_{N+1-j},\dots,A_{j-1}), & \text{ if } N+1-j<j,
\end{array}%
\right.
\end{eqnarray*}
where $\alpha_j,\beta_j,\gamma_j$ are polynomials in the given indeterminates.

So, for $j=1,\dots,N-m$, $U_{N+1-j}$ can be expressed as a polynomial
\[
U_{N+1-j} = (-1)^{N+1}(N+1)W_j + u_j(X_1,\dots,X_m,Y_1,\dots,Y_m,W_1,\dots,W_{j-1}).
\]
This implies that $U_{N},\dots,U_{m+1}$ are independent indeterminates over $\Sym(\mathbb{X}|\mathbb{Y})$, and $\Sym(\mathbb{X}|\mathbb{Y}|\mathbb{W}) = \Sym(\mathbb{X}|\mathbb{Y})[U_{N},\dots,U_{m+1}]$. Hence, by Corollary \ref{a-contraction},
\begin{eqnarray*}
& & \left(%
\begin{array}{cc}
  \ast & Y_1-X_1 \\
  \dots & \dots \\
  \ast & Y_m-X_m \\
  U_{m+1} & 0 \\
  \dots & \dots \\
  U_N & 0
\end{array}%
\right)_{\Sym(\mathbb{X}|\mathbb{Y}|\mathbb{W})}
\{q^{-m(N-m)}\} \\
& \simeq &
\left(%
\begin{array}{cc}
  \ast & Y_1-X_1 \\
  \dots & \dots \\
  \ast & Y_m-X_m 
\end{array}%
\right)_{\Sym(\mathbb{X}|\mathbb{Y})}
\{q^{-m(N-m)+\sum_{j=m+1}^N(N+1-\deg U_j)}\}\left\langle N-m\right\rangle \\
& \cong & C(\Gamma_1)\left\langle N-m\right\rangle.
\end{eqnarray*}
Thus, $C(\Gamma) \simeq C(\Gamma') \simeq C(\Gamma_1)\left\langle N-m \right\rangle$.
\end{proof}

The general case follows easily from Lemma \ref{decomp-I-special}.

\begin{figure}[ht]

\setlength{\unitlength}{1pt}

\begin{picture}(360,160)(-180,-100)

% upper left 

\put(-120,0){\vector(0,1){15}}

\put(-120,15){\vector(0,1){15}}

\put(-120,30){\line(0,1){15}}

\put(-120,45){\vector(0,1){15}}

\qbezier(-120,15)(-100,5)(-100,25)

\qbezier(-120,45)(-100,55)(-100,35)

\put(-100,35){\vector(0,-1){10}}

\put(-130,55){\tiny{$m$}}

\put(-130,0){\tiny{$m$}}

\put(-145,30){\tiny{$m+n$}}

\put(-95,30){\tiny{$n$}}

\put(-123,-10){$\Gamma$}

% upper center

\put(0,0){\vector(0,1){60}}

\put(-10,30){\tiny{$m$}}

\put(-3,-10){$\Gamma_1$}

% upper right

\put(120,0){\vector(0,1){5}}

\put(120,5){\vector(0,1){25}}

\put(120,30){\line(0,1){15}}

\put(120,45){\vector(0,1){15}}

\qbezier(120,5)(170,0)(170,25)

\qbezier(120,55)(170,60)(170,35)

\put(170,35){\vector(0,-1){10}}

\qbezier(120,15)(130,5)(130,25)

\qbezier(120,45)(130,55)(130,35)

\put(130,35){\vector(0,-1){10}}

\put(110,55){\tiny{$m$}}

\put(110,0){\tiny{$m$}}

\put(95,45){\tiny{$m+n$}}

\put(110,30){\tiny{$N$}}

\put(95,10){\tiny{$m+n$}}

\put(130,30){\tiny{$N-m-n$}}

\put(175,30){\tiny{$n$}}

\put(117,-10){$\Gamma_2$}

% lower right

\put(120,-90){\vector(0,1){5}}

\put(120,-85){\vector(0,1){25}}

\put(120,-60){\line(0,1){15}}

\put(120,-45){\vector(0,1){15}}

\qbezier(150,-80)(170,-70)(170,-65)

\qbezier(150,-40)(170,-50)(170,-55)

\put(170,-55){\vector(0,-1){10}}

\qbezier(150,-80)(130,-70)(130,-65)

\qbezier(150,-40)(130,-50)(130,-55)

\put(130,-55){\vector(0,-1){10}}

\put(120,-40){\vector(1,0){30}}

\put(150,-80){\vector(-1,0){30}}

\put(110,-35){\tiny{$m$}}

\put(110,-90){\tiny{$m$}}

\put(110,-60){\tiny{$N$}}

\put(130,-60){\tiny{$N-m-n$}}

\put(125,-38){\tiny{$N-m$}}

\put(125,-87){\tiny{$N-m$}}

\put(175,-60){\tiny{$n$}}

\put(117,-100){$\Gamma_3$}

% lower left 

\put(0,-90){\vector(0,1){15}}

\put(0,-75){\vector(0,1){15}}

\put(0,-60){\line(0,1){15}}

\put(0,-45){\vector(0,1){15}}

\qbezier(0,-75)(20,-85)(20,-65)

\qbezier(0,-45)(20,-35)(20,-55)

\put(20,-55){\vector(0,-1){10}}

\put(-10,-35){\tiny{$m$}}

\put(-10,-60){\tiny{$N$}}

\put(-10,-90){\tiny{$m$}}

\put(25,-60){\tiny{$N-m$}}

\put(-3,-100){$\Gamma_4$}

\end{picture}

\caption{}\label{decomp-I-figure}

\end{figure}

\begin{theorem}[Direction Sum Decomposition I]\label{decomp-I}
Suppose that $\Gamma$ and $\Gamma_1$ are the MOY graphs shown in Figure \ref{decomp-I-figure}. Then 
\[
C(\Gamma) \simeq C(\Gamma_1)\{ \qb{N-m}{n}\}\left\langle n \right\rangle.
\]
\end{theorem}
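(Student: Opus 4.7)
The plan is to reduce the general case to the special case Lemma \ref{decomp-I-special} by using $\Gamma_2$, $\Gamma_3$, $\Gamma_4$ from Figure \ref{decomp-I-figure} as a chain of intermediate graphs. I will establish the chain of homotopy equivalences
\[
C(\Gamma)\;\simeq\;C(\Gamma_2)\left\langle N-m-n\right\rangle\;\simeq\;C(\Gamma_3)\left\langle N-m-n\right\rangle\;\simeq\;C(\Gamma_4)\{\qb{N-m}{n}\}\left\langle N-m-n\right\rangle\;\simeq\;C(\Gamma_1)\{\qb{N-m}{n}\}\left\langle N-m\right\rangle\left\langle N-m-n\right\rangle,
\]
and then use that $\left\langle N-m\right\rangle\left\langle N-m-n\right\rangle=\left\langle 2(N-m)-n\right\rangle=\left\langle n\right\rangle$ as $\zed_2$-shifts to obtain the theorem.

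For the first equivalence, the key observation is that $\Gamma_2$ differs from $\Gamma$ only by an additional inner backtracking bigon whose main inside color is $N$, outer color is $m+n$, and branch color is $N-m-n=N-(m+n)$. This is precisely the configuration of Lemma \ref{decomp-I-special} with the parameter $m$ of that lemma replaced by $m+n$. Applying the lemma to this sub-configuration replaces the inner bigon by a single $(m+n)$-edge, producing $\Gamma$ up to a $\left\langle N-m-n\right\rangle$ shift; since the rest of the graph is unchanged, Lemma \ref{morphism-sign} lifts this local homotopy equivalence to the full matrix factorization.

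The second equivalence is a local reorganization. Near the bottom of $\Gamma_2$, the two trivalent vertices at which the outer $n$-branch and the inner $(N-m-n)$-branch attach to the main edge are connected by a short $(m+n)$-segment. Lemma \ref{edge-contraction} allows us to contract this segment and then re-expand the resulting $4$-valent vertex using an $(N-m)$-intermediate edge that groups the two branches together instead of grouping $m$ with $n$. Performing this reorganization symmetrically at the top of $\Gamma_2$ produces $\Gamma_3$.

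The third equivalence is a direct application of Theorem \ref{decomp-II}: the right-hand portion of $\Gamma_3$ is a genuine parallel bigon with outer color $N-m$ and parallel inner edges of colors $n$ and $N-m-n$, which Decomposition II collapses to a single $(N-m)$-edge at the cost of the factor $\qb{N-m}{n}$. The result is exactly $\Gamma_4$, which is the configuration of Lemma \ref{decomp-I-special}, and a final application of that lemma gives $C(\Gamma_4)\simeq C(\Gamma_1)\left\langle N-m\right\rangle$. Assembling the chain then completes the proof. The main conceptual step is the first one: recognizing that the inner bigon of the auxiliary graph $\Gamma_2$ already falls within the scope of the special case, so that the general case can be bootstrapped from it. Once this is seen, the remaining steps are straightforward applications of Lemma \ref{edge-contraction}, Theorem \ref{decomp-II}, and Lemma \ref{decomp-I-special}, together with careful bookkeeping of the $\zed_2$-shifts.
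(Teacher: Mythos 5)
Your proposal is correct and follows essentially the same route as the paper: both pass through the chain $\Gamma \to \Gamma_2 \to \Gamma_3 \to \Gamma_4 \to \Gamma_1$ using Lemma \ref{decomp-I-special} (twice), Corollary \ref{contract-expand} (a special case of Lemma \ref{edge-contraction}), and Theorem \ref{decomp-II}, with the same final $\zed_2$-shift bookkeeping $\left\langle N-m\right\rangle\left\langle N-m-n\right\rangle=\left\langle n\right\rangle$. Your additional remark about Lemma \ref{morphism-sign} lifting the local equivalence is implicit in the paper's argument but worth making explicit.
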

\begin{proof}
Consider the MOY graphs in Figure \ref{decomp-I-figure}. By Lemma \ref{decomp-I-special}, $C(\Gamma)\simeq C(\Gamma_2)\left\langle N-m-n\right\rangle$. By Corollary \ref{contract-expand}, $C(\Gamma_2)\simeq C(\Gamma_3)$. By Theorem \ref{decomp-II}, $C(\Gamma_3)\simeq C(\Gamma_4)\qb{N-m}{n}$. And by Lemma \ref{decomp-I-special} again, $C(\Gamma_4)\simeq C(\Gamma_1)\left\langle N-m\right\rangle$. Putting everything together, we get $C(\Gamma) \simeq C(\Gamma_1)\{ \qb{N-m}{n}\}\left\langle n \right\rangle$.
\end{proof}

\section{Circles}\label{sec-circles}

In this section, we study matrix factorizations associated to circles. The results will be useful in Section \ref{sec-morph}.

\subsection{Homotopy type} The following describes the homotopy type of the matrix factorization associated to a colored circle and follows easily from Direction Sum Decompositions I and II (Theorems \ref{decomp-I} and \ref{decomp-II}.)

\begin{figure}[ht]

\setlength{\unitlength}{1pt}

\begin{picture}(360,70)(-220,-10)

% far left 

\qbezier(-150,60)(-170,60)(-170,45)

\qbezier(-150,60)(-130,60)(-130,45)

\put(-170,15){\vector(0,1){30}}

\put(-130,15){\line(0,1){30}}

\qbezier(-150,0)(-170,0)(-170,15)

\qbezier(-150,0)(-130,0)(-130,15)

\put(-153,-10){$\Gamma$}

\put(-165,30){\tiny{$m$}}
 
% mid left

\qbezier(-80,60)(-100,60)(-100,45)

\qbezier(-80,60)(-60,60)(-60,45)

\put(-100,15){\vector(0,1){15}}

\put(-100,30){\line(0,1){15}}

\put(-60,45){\vector(0,-1){30}}

\qbezier(-80,0)(-100,0)(-100,15)

\qbezier(-80,0)(-60,0)(-60,15)

\qbezier(-100,45)(-90,60)(-90,45)

\qbezier(-100,15)(-90,0)(-90,15)

\put(-90,45){\vector(0,-1){30}}

\put(-83,-10){$\Gamma_1$}

\put(-110,30){\tiny{$N$}}

\put(-90,30){\tiny{$N-m$}}

\put(-55,30){\tiny{$m$}}

% mid right

\qbezier(-10,60)(-30,60)(-30,45)

\qbezier(-10,60)(10,60)(10,45)

\put(-30,15){\vector(0,1){15}}

\put(-30,30){\line(0,1){15}}

\qbezier(-10,0)(-30,0)(-30,15)

\qbezier(-10,0)(10,0)(10,15)

\qbezier(10,45)(-5,40)(-5,35)

\qbezier(10,45)(25,40)(25,35)

\put(-5,35){\vector(0,-1){10}}

\put(25,35){\vector(0,-1){10}}

\qbezier(10,15)(-5,20)(-5,25)

\qbezier(10,15)(25,20)(25,25)

\put(-40,30){\tiny{$N$}}

\put(-3,30){\tiny{$N-m$}}

\put(27,30){\tiny{$m$}}

\put(-13,-10){$\Gamma_2$} 

% far right 

\qbezier(80,60)(60,60)(60,45)

\qbezier(80,60)(100,60)(100,45)

\put(60,15){\vector(0,1){30}}

\put(100,15){\line(0,1){30}}

\qbezier(80,0)(60,0)(60,15)

\qbezier(80,0)(100,0)(100,15)

\put(77,-10){$\Gamma_3$}

\put(65,30){\tiny{$N$}}

\put(99,30){\line(1,0){2}}

\put(105,28){\small{$\mathbb{X}$}}

\end{picture}

\caption{}\label{circle-dimension-figure}

\end{figure}

\begin{corollary}\label{circle-dimension}
If $\Gamma$ is a circle colored by $m$, then $C(\Gamma) \simeq C(\emptyset)\{\qb{N}{m}\}\left\langle m \right\rangle$, where $C(\emptyset)$ is the matrix factorization $\C\rightarrow 0 \rightarrow \C$. As a consequence, $H(\Gamma) \cong C(\emptyset)\{\qb{N}{m}\}\left\langle m \right\rangle$.
\end{corollary}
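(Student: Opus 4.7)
\medskip
\noindent
\textbf{Proof proposal.} The plan is to follow the chain of MOY graphs $\Gamma\leadsto\Gamma_1\leadsto\Gamma_2\leadsto\Gamma_3$ drawn in Figure \ref{circle-dimension-figure}, using Direct Sum Decompositions I and II to reduce the color-$m$ circle to a color-$N$ circle, which can then be computed directly. The second statement on $H(\Gamma)$ is immediate once $C(\Gamma)$ is identified up to homotopy, since $C(\emptyset)$ has vanishing differential.

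First, I would apply Theorem \ref{decomp-I} to the straight segment of $\Gamma$, taking the auxiliary parameter to be $n=N-m$. Since $\qb{N-m}{N-m}=1$, the decomposition reads
\[
C(\Gamma_1)\simeq C(\Gamma)\langle N-m\rangle,\qquad\text{equivalently}\qquad C(\Gamma)\simeq C(\Gamma_1)\langle m-N\rangle,
\]
where $\Gamma_1$ is the circle with an arc widened to color $N$ and an externally attached $(N-m)$-bubble. Next, $\Gamma_1$ and $\Gamma_2$ differ only by sliding the $(N-m)$-bubble along the color-$N$ loop (so that the two inner $2$-valent trivalent pieces move through each other); this is realized at the level of matrix factorizations by Lemma \ref{edge-contraction} applied to the relevant edges, so $C(\Gamma_1)\simeq C(\Gamma_2)$. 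Then, applying Theorem \ref{decomp-II} to the inner color-$m$ eye of $\Gamma_2$ absorbs that bubble and yields
\[
C(\Gamma_2)\simeq C(\Gamma_3)\{\qb{N}{m}\},
\]
where $\Gamma_3$ is a circle colored by $N$. Combining these three equivalences gives
\[
C(\Gamma)\simeq C(\Gamma_3)\{\qb{N}{m}\}\langle m-N\rangle.
\]

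It remains to establish the base case $C(\Gamma_3)\simeq C(\emptyset)\langle N\rangle$. Mark $\Gamma_3$ at two points with disjoint alphabets $\mathbb{X}$ and $\mathbb{Y}$, each of $N$ indeterminates (using Lemma \ref{marking-independence}). Cutting gives two $2$-valent vertices, so $C(\Gamma_3)$ is the Koszul matrix factorization
\[
\left(\begin{array}{cc} U_j & X_j-Y_j \\ V_j & Y_j-X_j \end{array}\right)_{j=1,\dots,N}
\]
over $\Sym(\mathbb{X}|\mathbb{Y})$ with total potential zero. Contracting the $N$ rows $(U_j,X_j-Y_j)$ via Proposition \ref{b-contraction} substitutes $Y_j=X_j$ throughout and yields $(V_j|_{\mathbb{Y}=\mathbb{X}},0)_{j=1,\dots,N}$ over $\Sym(\mathbb{X})$. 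By Remark \ref{MOY-freedom} and Lemma \ref{power-derive}, we may take $V_j|_{\mathbb{Y}=\mathbb{X}}=(-1)^{j+1}(N+1)h_{N+1-j}(\mathbb{X})$. The sequence $(h_1(\mathbb{X}),\dots,h_N(\mathbb{X}))$ is $\Sym(\mathbb{X})$-regular with quotient $H^\ast(G_{N,N};\C)\cong\C$ by Theorem \ref{grassmannian}. Tracking quantum degrees, each row $(V_j|_{\mathbb{Y}=\mathbb{X}},0)$ contributes the shift $q^{2j-N-1}$, summing to $0$ over $j=1,\dots,N$; the $\zed_2$-degree shift is $N$ coming from the $N$ Koszul factors. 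Hence $H(C(\Gamma_3))\cong\C$ sitting in $\zed_2$-degree $N$ and quantum degree $0$, the same bigraded homology as $C(\emptyset)\langle N\rangle$. By Corollary \ref{homotopy-equal-isomorphism-on-homology}, $C(\Gamma_3)\simeq C(\emptyset)\langle N\rangle$.

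Assembling, $C(\Gamma)\simeq C(\emptyset)\{\qb{N}{m}\}\langle N\rangle\langle m-N\rangle=C(\emptyset)\{\qb{N}{m}\}\langle m\rangle$, completing the proof. The main obstacle I expect is the bookkeeping in the base case: verifying precisely that the iterated Koszul contraction lands in quantum degree $0$ (not merely a shifted degree) and produces exactly $\langle N\rangle$ in $\zed_2$-grading, given that the natural choices of $V_j$ only satisfy the potential equation on the nose after a careful application of Lemma \ref{freedom}.
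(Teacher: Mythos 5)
Your proposal is correct and follows essentially the same route as the paper: reduce the $m$-circle to an $N$-circle via Lemma \ref{decomp-I-special} (equivalently, Theorem \ref{decomp-I} with $n=N-m$) and Theorem \ref{decomp-II}, then compute the base case directly from $\Sym(\mathbb{X})=\C[h_1(\mathbb{X}),\dots,h_N(\mathbb{X})]$. One small misreference worth fixing: the step $C(\Gamma_1)\simeq C(\Gamma_2)$ does not require Lemma \ref{edge-contraction} at all --- $\Gamma_1$ and $\Gamma_2$ are the same abstract MOY graph (two trivalent vertices joined by arcs colored $N$, $N-m$, $m$) drawn differently, so one has the literal equality $C(\Gamma_1)=C(\Gamma_2)$, which is what the paper uses.
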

\begin{proof}
Consider $\Gamma_3$ in Figure \ref{circle-dimension-figure} first, which is the special case when $m=N$. Note that, by Lemma \ref{power-derive},
\[
C(\Gamma_3) \cong \left(%
\begin{array}{cc}
  \frac{\partial p_{N+1}(\mathbb{X})}{\partial X_1} & 0 \\
  \dots & \dots \\
  \frac{\partial p_{N+1}(\mathbb{X})}{\partial X_k} & 0 \\
  \dots & \dots \\
  \frac{\partial p_{N+1}(\mathbb{X})}{\partial X_N} & 0 \\
\end{array}%
\right)_{\Sym(\mathbb{X})}
=
\left(%
\begin{array}{cc}
  (N+1)h_N(\mathbb{X}) & 0 \\
  \dots & \dots \\
  (-1)^{k+1}(N+1)h_{N+1-k}(\mathbb{X}) & 0 \\
  \dots & \dots \\
  (-1)^{N+1}(N+1)h_1(\mathbb{X}) & 0 \\
\end{array}%
\right)_{\Sym(\mathbb{X})}
\]
where $X_k$ is the $k$-th elementary symmetric polynomial in $\mathbb{X}$. But
\[
\Sym(\mathbb{X}) = \C[h_1(\mathbb{X}),\dots,h_N(\mathbb{X})].
\]
So, by applying Corollary \ref{a-contraction} repeatedly, we get $C(\Gamma_3) \simeq C(\emptyset)\left\langle N \right\rangle$. 

For the general case, using Theorem \ref{decomp-II} and Lemma \ref{decomp-I-special}, we have 
\[
C(\Gamma) \simeq C(\Gamma_1)\left\langle N-m\right\rangle = C(\Gamma_2) \left\langle N-m\right\rangle \simeq C(\Gamma_3) \{\qb{N}{m} \}\left\langle N-m\right\rangle.
\]
So $C(\Gamma) \simeq C(\emptyset) \{\qb{N}{m}\}\left\langle m \right\rangle$.
\end{proof}

\subsection{Module structure of the homology} Now Let $\bigcirc_m$ be a circle colored by $m$ and marked by a single point as shown in Figure \ref{circle-module-figure}. Then $H(\bigcirc_m)$ is a graded $\Sym(\mathbb{X})$-module. In this subsection, we show that, after a grading shift, this module is isomorphic to the cohomology of the complex $(m,N)$-Grassmannian $G_{m,N}$. In particular, as a $\Sym(\mathbb{X})$-module, $H(\bigcirc_m)$ is generated by a single generator.

\begin{figure}[ht]

\setlength{\unitlength}{1pt}

\begin{picture}(360,60)(-180,0)

% far left 

\qbezier(0,60)(-20,60)(-20,45)

\qbezier(0,60)(20,60)(20,45)

\put(-20,15){\vector(0,1){30}}

\put(20,15){\line(0,1){30}}

\qbezier(0,0)(-20,0)(-20,15)

\qbezier(0,0)(20,0)(20,15)

\put(19,30){\line(1,0){2}}

\put(-15,30){\tiny{$m$}}

\put(25,25){\small{$\mathbb{X}$}}
 
\end{picture}

\caption{}\label{circle-module-figure}

\end{figure}

We need the following fact about symmetric polynomials to carry out our proof.

\begin{proposition}\label{complete-non-zero-divisor}
Let $\mathbb{X}=\{x_1,\dots,x_m\}$ be an alphabet with $m$ independent indeterminates. If $n \geq m$, then the sequence $\{h_n(\mathbb{X}), h_{n-1}(\mathbb{X}), \dots, h_{n+1-m}(\mathbb{X})\}$ is $\Sym(\mathbb{X})$-regular. (See Definition \ref{regular-sequence}.)
\end{proposition}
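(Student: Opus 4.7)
The plan is to use two facts in tandem. First, a standard commutative algebra fact: if $R = \C[X_1,\dots,X_m]$ is a graded polynomial ring (with $\deg X_j > 0$), then $R$ is Cohen--Macaulay of Krull dimension $m$, and consequently any sequence $\{f_1,\dots,f_m\}$ of $m$ homogeneous elements of positive degree in $R$ is $R$-regular if and only if the quotient $R/(f_1,\dots,f_m)$ is finite-dimensional over $\C$. (The ``only if'' is elementary; the ``if'' is the statement that a homogeneous system of parameters in a Cohen--Macaulay graded ring is a regular sequence.) Note that regularity of a sequence in a Cohen--Macaulay ring is independent of the order, so it makes no difference whether the sequence is indexed as $h_n, h_{n-1}, \dots, h_{n+1-m}$ or $h_{n+1-m}, \dots, h_{n-1}, h_n$.

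Second, a geometric input that has already been recorded in the paper: by Theorem \ref{grassmannian}, applied with $N$ replaced by $n$ (legitimate since $n \geq m$), we have an isomorphism of graded $\C$-algebras
\[
\Sym(\mathbb{X})/(h_{n+1-m}(\mathbb{X}),\,h_{n+2-m}(\mathbb{X}),\,\dots,\,h_n(\mathbb{X})) \;\cong\; H^\ast(G_{m,n};\C).
\]
Since $G_{m,n}$ is a compact complex manifold, the right-hand side is finite-dimensional over $\C$; in fact Corollary \ref{grassmannian-grade} tells us precisely what its graded dimension is.

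Putting these together finishes the proof. The ideal $I := (h_n(\mathbb{X}),\,h_{n-1}(\mathbb{X}),\,\dots,\,h_{n+1-m}(\mathbb{X}))$ is generated by $m$ homogeneous elements of positive degree in the graded polynomial ring $\Sym(\mathbb{X}) = \C[X_1,\dots,X_m]$, and the quotient $\Sym(\mathbb{X})/I$ is finite-dimensional over $\C$ by the previous paragraph. Hence by the Cohen--Macaulay criterion, $\{h_n(\mathbb{X}),\,h_{n-1}(\mathbb{X}),\,\dots,\,h_{n+1-m}(\mathbb{X})\}$ is a $\Sym(\mathbb{X})$-regular sequence.

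The only genuine issue is whether to invoke the Cohen--Macaulay criterion as a black box or to prove it in this special case. A self-contained alternative, which might be preferable given the style of the paper, is to argue directly: one shows that $\Sym(\mathbb{X})$ is a graded-free module over the subalgebra $A := \C[h_{n+1-m}(\mathbb{X}),\dots,h_n(\mathbb{X})]$ by a graded Nakayama argument (lifting a homogeneous $\C$-basis of the finite-dimensional quotient $\Sym(\mathbb{X})/I$ to a homogeneous $A$-basis of $\Sym(\mathbb{X})$, using that the quotient is finite-dimensional and the grading is bounded below). Algebraic independence of the generators of $A$ then follows by comparing Hilbert series, after which regularity of $\{h_n,\dots,h_{n+1-m}\}$ reduces to the obvious regularity of the indeterminate sequence in a polynomial ring. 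The main obstacle, such as it is, lies in cleanly passing from the \emph{finite-dimensionality} of the quotient, which is a dimension-counting statement, to \emph{regularity}, which is a non-vanishing-of-Tor statement; everything else is bookkeeping.
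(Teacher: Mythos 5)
Your argument is correct but proceeds along a genuinely different route from the paper. The paper's proof is a lengthy, self-contained double induction: it sets up auxiliary statements $P_{m,n,j}$ (``$h_{n+1-j}(\mathbb{X})$ is a non-zero-divisor mod $\mathcal{I}_{n,j}$'') and $Q_{m,n,j}$ (``$X_m$ is a non-zero-divisor mod $\mathcal{I}_{n,j}$''), and proves them simultaneously by specializing $x_m = 0$ to reduce to the $(m-1)$-variable case, combined with explicit manipulations of the recursion $\sum_{k}(-1)^k X_k h_{l-k} = 0$. It uses no input beyond elementary symmetric-function identities. You instead invoke two pieces of non-elementary machinery that the paper already has on hand: the fact that a homogeneous system of parameters in a Cohen--Macaulay graded ring is a regular sequence (applied to $\Sym(\mathbb{X}) = \C[X_1,\dots,X_m]$), and the finite-dimensionality of $\Sym(\mathbb{X})/(h_{n+1-m},\dots,h_n) \cong H^\ast(G_{m,n};\C)$, which is precisely Theorem \ref{grassmannian} with $N$ replaced by the generic parameter $n$. (That substitution is legitimate: the theorem is a statement about Grassmannian cohomology for any $n \geq m$, not something tied to the fixed $N$ of the $\mathfrak{sl}(N)$ theory, and it is imported from the literature rather than derived from the proposition at hand, so there is no circularity.) Your observation that regularity is order-independent over a Cohen--Macaulay ring disposes of the decreasing indexing in the statement. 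What the two approaches buy: yours is a one-paragraph proof at the cost of importing the CM criterion as a black box; the paper's is pages long but entirely elementary, which is consistent with its general style of keeping the symmetric-function arguments self-contained. The ``self-contained alternative'' you sketch at the end (graded Nakayama plus Hilbert series) is essentially a by-hand proof of the CM criterion in this case, and it would work, but you have only sketched it; if you want full rigor without the black box, the paper's direct induction is one way to supply the missing details.
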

\begin{proof}
For $n,j\geq 1$, define a ideal $\mathcal{I}_{n,j}$ of $\Sym(\mathbb{X})$ by $\mathcal{I}_{n,1}=\{0\}$ and $\mathcal{I}_{n,j}=(h_n(\mathbb{X}),h_{n-1}(\mathbb{X}),\dots,h_{n+2-j}(\mathbb{X}))$ for $j\geq 2$. For $1\leq j \leq m \leq n$, let $P_{m,n,j}$ and $Q_{m,n,j}$ be the following statements:
\begin{itemize}
	\item $P_{m,n,j}$: ``$h_{n+1-j}(\mathbb{X})$ is not a zero divisor in $\Sym(\mathbb{X})/\mathcal{I}_{n,j}$."
	\item $Q_{m,n,j}$: ``$X_m=x_1\cdots x_m$ is not a zero divisor in $\Sym(\mathbb{X})/\mathcal{I}_{n,j}$."
\end{itemize}
We prove these two statements by induction for all $m,n,j$ satisfying $1\leq j \leq m \leq n$. Note that, by Definition \ref{regular-sequence}, $\{h_n(\mathbb{X}), h_{n-1}(\mathbb{X}), \dots, h_{n+1-m}(\mathbb{X})\}$ is $\Sym(\mathbb{X})$-regular if and only if $P_{m,n,j}$ is true for $1 \leq j \leq m$.

If $m=1$, then $1\leq j \leq m$ forces $j=1$. Since $\mathcal{I}_{n,1}=\{0\}$, $P_{1,n,1}$ and $Q_{1,n,1}$ are trivially true for all $n\geq 1$. Assume that, for some $m\geq 2$, $P_{m-1,n,j}$ and $Q_{m-1,n,j}$ are true for all $n, j$ with $1\leq j \leq m-1 \leq n$. Consider $P_{m,n,j}$ and $Q_{m,n,j}$ for $n,j$ satisfying $1\leq j \leq m \leq n$.

\textbf{(i)} First, we prove $Q_{m,n,j}$ for all $n, j$ with $1\leq j \leq m \leq n$ by induction on $j$. When $j=1$, $\mathcal{I}_{n,j}=\mathcal{I}_{n,1}=\{0\}$. So $Q_{m,n,1}$ is trivially true. Assume that $Q_{m,n,j-1}$ is true for some $j\geq 2$. Assume $g,g_n,\dots,g_{n+2-j} \in \Sym(\mathbb{X})$ satisfy that 
\begin{equation}\label{eq-complete-non-zero-divisor-1}
gX_m = \sum_{k=n+2-j}^n g_k h_k(\mathbb{X}).
\end{equation}
Note that $g,g_n,\dots,g_{n+2-j}$ are polynomials in $X_1,\dots,X_m$. We shall write 
\[
g=g(X_1,\dots,X_m),~g_n=g(X_1,\dots,X_m),\dots,~g_{n+2-j}=g(X_1,\dots,X_m).
\]

Denote by $X'_j$ the $j$-th elementary symmetric polynomial in $\mathbb{X}'=\{x_1,\dots,x_{m-1}\}$. Then $X_j|_{x_m=0}=X'_j$ and $h_j(\mathbb{X})|_{x_m=0}=h_j(\mathbb{X}')$. Plug $x_m=0$ into \eqref{eq-complete-non-zero-divisor-1}. We get
\[
\sum_{k=n+2-j}^n g_k(X'_1,\dots,X'_{m-1},0) h_k(\mathbb{X}')=0.
\]
Specially, 
\[
g_{n+2-j}(X'_1,\dots,X'_{m-1},0) h_{n+2-j}(\mathbb{X}') \in (h_n(\mathbb{X}'),h_{n-1}(\mathbb{X}'),\dots,h_{n+3-j}(\mathbb{X}')) \subset \Sym(\mathbb{X}').
\] 
But Statement $P_{m-1,n,j-1}$ is true. So 
\[
g_{n+2-j}(X'_1,\dots,X'_{m-1},0) \in (h_n(\mathbb{X}'),h_{n-1}(\mathbb{X}'),\dots,h_{n+3-j}(\mathbb{X}')).
\] 
That is, 
\begin{eqnarray*}
g_{n+2-j}(X'_1,\dots,X'_{m-1},0) & = & \sum_{k=n+3-j}^n \alpha_{k}(X'_1,\dots,X'_{m-1})h_k(\mathbb{X}') \\
& = & \sum_{k=n+3-j}^n \alpha_{k}(X'_1,\dots,X'_{m-1}) h_{m,k}(X'_1,\dots,X'_{m-1},0).
\end{eqnarray*}
Note that $X'_1,\dots,X'_{m-1}$ are independent indeterminates over $\C$. So the above equation remains true when we replace $X'_1,\dots,X'_{m-1}$ by any other variables. In particular,
\[
g_{n+2-j}(X_1,\dots,X_{m-1},0) = \sum_{k=n+3-j}^n \alpha_{k}(X_1,\dots,X_{m-1}) h_{m,k}(X_1,\dots,X_{m-1},0),
\]
which implies that there exits $\alpha\in\Sym(\mathbb{X})$ such that
\begin{eqnarray*}
&  & g_{n+2-j}(X_1,\dots,X_{m-1},X_m)  \\ 
& = & \alpha X_m + \sum_{k=n+3-j}^n \alpha_{k}(X_1,\dots,X_{m-1}) h_{m,k}(X_1,\dots,X_{m-1},X_m) \\
& = & \alpha X_m + \sum_{k=n+3-j}^n \alpha_{k}(X_1,\dots,X_{m-1}) h_k(\mathbb{X}).
\end{eqnarray*}
Plug this into \eqref{eq-complete-non-zero-divisor-1}. We get
\[
(g-\alpha h_{n+2-j}(\mathbb{X}))X_m = \sum_{k=n+3-j}^n (g_k+\alpha_k(X_1,\dots,X_{m-1}) h_{n+2-j}(\mathbb{X}))h_k(\mathbb{X}).
\]
But $Q_{m,n,j-1}$ is true. So $g-\alpha h_{n+2-j}(\mathbb{X}) \in \mathcal{I}_{n,j-1}$ and, therefore, $g \in \mathcal{I}_{n,j}$. This proves $Q_{m,n,j}$. Thus, $Q_{m,n,j}$ is true for all $n,j$ satisfying $1\leq j \leq m \leq n$.

\textbf{(ii)} Now we prove $P_{m,n,j}$ for all $n, j$ with $1\leq j \leq m \leq n$.

\emph{Case A.} $1\leq j \leq m-1$. Assume that $h_{n+1-j}(\mathbb{X})$ is a zero divisor in $\Sym(\mathbb{X})/\mathcal{I}_{n,j}$. Define 
\[
\Lambda = \{g \in \Sym(\mathbb{X})~|~g \text{ is homogeneous, } g \notin \mathcal{I}_{n,j},~ gh_{n+1-j}(\mathbb{X}) \in \mathcal{I}_{n,j}\}.
\]
Then $\Lambda \neq \emptyset$. Write $2\nu = \min_{g\in \Lambda} \deg g$. (Recall that we use the degree convention $\deg x_j=2$.) Let $g$ be such that $g\in \Lambda$ and $\deg g =2\nu$. Then there exist $g_n, g_{n-1},\dots,g_{n+2-j} \in \Sym(\mathbb{X})$ such that $\deg g_k = 2(\nu+n+1-j-k)$ and 
\begin{equation}\label{eq-complete-non-zero-divisor-2}
g h_{n+1-j}(\mathbb{X}) = \sum_{k=n+2-j}^n g_k h_k(\mathbb{X}).
\end{equation} 
Note that $g,g_n,\dots,g_{n+2-j}$ are polynomials in $X_1,\dots,X_m$. We shall write 
\[
g=g(X_1,\dots,X_m),~g_n=g(X_1,\dots,X_m),\dots,~g_{n+2-j}=g(X_1,\dots,X_m).
\]
In particular,
\begin{equation}\label{eq-g-expression}
g=g(X_1,\dots,X_m) = \sum_{l=0}^{\left\lfloor \frac{\nu}{m} \right\rfloor} f_l(X_1,\dots,X_{m-1}) X_m^l,
\end{equation}
where $f_l(X_1,\dots,X_{m-1})\in \Sym(\mathbb{X})$ is homogeneous of degree $2(\nu-lm)$.

Plugging $x_m=0$ into \eqref{eq-complete-non-zero-divisor-2}, we get
\[
f_0(X'_1,\dots,X'_{m-1})h_{n+1-j}(\mathbb{X}') = \sum_{k=n+2-j}^n g_k(X'_1,\dots,X'_{m-1},0) h_k(\mathbb{X}'),
\]
where $\mathbb{X}'=\{x_1,\dots,x_{m-1}\}$ and $X'_j$ is the $j$-th elementary symmetric polynomial in $\mathbb{X}'$. But $P_{m-1,n,j}$ is true since $1\leq j \leq m-1 <n$. So 
\[
f_0(X'_1,\dots,X'_{m-1}) \in (h_n(\mathbb{X}'),h_{n-1}(\mathbb{X}'),\dots,h_{n+2-j}(\mathbb{X}')) \subset \Sym(\mathbb{X}').
\]
Thus, 
\begin{eqnarray*}
f_0(X'_1,\dots,X'_{m-1}) & = & \sum_{k=n+2-j}^n \alpha_{k}(X'_1,\dots,X'_{m-1})h_k(\mathbb{X}') \\
& = & \sum_{k=n+2-j}^n \alpha_{k}(X'_1,\dots,X'_{m-1}) h_{m,k}(X'_1,\dots,X'_{m-1},0),
\end{eqnarray*}
where $\alpha_k(X'_1,\dots,X'_{m-1}) \in \Sym(\mathbb{X}')$ is homogeneous of degree $2(\nu-k)$.
But $X'_1,\dots,X'_{m-1}$ are independent indeterminates over $\C$. So the above equation remains true when we replace $X'_1,\dots,X'_{m-1}$ by any other variables. In particular,
\begin{eqnarray}
\label{eq-f-0-sum}&  & f_0(X_1,\dots,X_{m-1})  \\ 
\nonumber & = & \sum_{k=n+2-j}^n \alpha_{k}(X_1,\dots,X_{m-1}) h_{m,k}(X_1,\dots,X_{m-1},0) \\
\nonumber & = & \alpha X_m + \sum_{k=n+2-j}^n \alpha_{k}(X_1,\dots,X_{m-1}) h_k(\mathbb{X}),
\end{eqnarray}
where $\alpha\in\Sym(\mathbb{X})$ is homogeneous of degree $2(\nu-m)$. Plug this in to \eqref{eq-complete-non-zero-divisor-2}. We get
\begin{eqnarray*}
& & X_m (\alpha + \sum_{l=1}^{\left\lfloor \frac{\nu}{m} \right\rfloor} f_l(X_1,\dots,X_{m-1}) X_m^{l-1}) h_{n+1-j}(\mathbb{X}) \\ 
& = & \sum_{k=n+2-j}^n (g_k -\alpha_{k}(X_1,\dots,X_{m-1}) h_{n+1-j}(\mathbb{X})) h_k(\mathbb{X})  \hspace{2pc} \in \mathcal{I}_{n,j}.
\end{eqnarray*}
By $Q_{m,n,j}$, we have $(\alpha + \sum_{l=1}^{\left\lfloor \frac{\nu}{m} \right\rfloor} f_l(X_1,\dots,X_{m-1}) X_m^{l-1}) h_{n+1-j}(\mathbb{X}) \in  \mathcal{I}_{n,j}$. But $\alpha + \sum_{l=1}^{\left\lfloor \frac{\nu}{m} \right\rfloor} f_l(X_1,\dots,X_{m-1}) X_m^{l-1}$ is homogeneous of degree $2(\nu-m)<2\nu$. By the definition of $\nu$, this implies that $\alpha + \sum_{l=1}^{\left\lfloor \frac{\nu}{m} \right\rfloor} f_l(X_1,\dots,X_{m-1}) X_m^{l-1} \in \mathcal{I}_{n,j}$. Then, by \eqref{eq-g-expression} and \eqref{eq-f-0-sum},
\[
g = X_m(\alpha + \sum_{l=1}^{\left\lfloor \frac{\nu}{m} \right\rfloor} f_l(X_1,\dots,X_{m-1}) X_m^{l-1}) +\sum_{k=n+2-j}^n \alpha_{k}(X_1,\dots,X_{m-1}) h_k(\mathbb{X})  \in \mathcal{I}_{n,j}.
\]
This is a contradiction. So $P_{m,n,j}$ is true for all $n,j$ such that $1\leq j \leq m-1$, $m \leq n$.

\emph{Case B.} $j=m$. We induct on $n$. Note that $h_m(\mathbb{X}),h_{m-1}(\mathbb{X}),\dots,h_1(\mathbb{X})$ are independent over $\C$, and $\Sym(\mathbb{X})=\C[h_m(\mathbb{X}),h_{m-1}(\mathbb{X}),\dots,h_1(\mathbb{X})]$. When $n=m$, $h_{n+1-m}(\mathbb{X})=h_1(\mathbb{X})$ and $\Sym(\mathbb{X})/\mathcal{I}_{m,m} \cong \C[h_1(\mathbb{X})]$. So $P_{m,m,m}$ is true. Assume that $P_{m,n-1,m}$ is true for some $n>m$. Suppose that $g_n,\dots,g_{n+1-m}\in \Sym(\mathbb{X})$ satisfy 
\begin{equation}\label{eq-complete-non-zero-divisor-3}
\sum_{k=n+1-m}^n g_k h_k(\mathbb{X})=0.
\end{equation}
By equation \eqref{complete-recursion}, we have 
\[
h_n(\mathbb{X}) = \sum_{k=n-m}^{n-1}(-1)^{n-k+1}X_{n-k}h_k(\mathbb{X}).
\]
Plugging this into \eqref{eq-complete-non-zero-divisor-3}, we get 
\begin{equation}\label{eq-complete-non-zero-divisor-4}
(-1)^{m+1}X_m g_n h_{n-m}(\mathbb{X}) + \sum_{k=n+1-m}^{n-1} (g_k+(-1)^{n-k+1}X_{n-k}g_n) h_k(\mathbb{X}) = 0
\end{equation}
So $X_m g_n h_{n-m}(\mathbb{X}) \in \mathcal{I}_{n-1,m}$. Since $P_{m,n-1,m}$ and $Q_{m,n-1,m}$ are both true, this implies that $g_n \in \mathcal{I}_{n-1,m}$. Hence, there exist $\alpha_{n-1},\dots,\alpha_{n+1-m} \in \Sym(\mathbb{X})$ such that
\begin{equation}\label{eq-complete-non-zero-divisor-5}
g_n = \sum_{k=n+1-m}^{n-1}\alpha_k h_k(\mathbb{X}).
\end{equation}
Plugging this into \eqref{eq-complete-non-zero-divisor-4}, we get
\[
\sum_{k=n+1-m}^{n-1}(g_k+(-1)^{n-k+1}X_{n-k}g_n+(-1)^{m+1}\alpha_k X_m h_{n-m}(\mathbb{X})) h_k(\mathbb{X})=0.
\]
By $P_{m,n-1,m-1}$, this implies
\[
g_{n+1-m}+(-1)^mX_{m-1}g_n+(-1)^{m+1}\alpha_{n+1-m} X_m h_{n-m}(\mathbb{X}) \in \mathcal{I}_{n-1,m-1}.
\]
Comparing this with \eqref{eq-complete-non-zero-divisor-5}, we get
\[
g_{n+1-m}+ \alpha_{n+1-m}((-1)^mX_{m-1} h_{n+1-m}(\mathbb{X})+(-1)^{m+1} X_m h_{n-m}(\mathbb{X})) \in \mathcal{I}_{n-1,m-1}.
\]
Therefore, 
\[
g_{n+1-m}+ \alpha_{n+1-m} h_n(\mathbb{X}) = g_{n+1-m}+ \alpha_{n+1-m}\sum_{k=n-m}^{n-1}(-1)^{n-k+1}X_{n-k}h_k(\mathbb{X}) \in \mathcal{I}_{n-1,m-1}.
\]
Thus, $g_{n+1-m} \in \mathcal{I}_{n,m}$. This proves $P_{m,n,m}$. So $P_{m,n,m}$ is true for all $n\geq m$.

Combining \emph{Case A} and \emph{Case B}, we know that $P_{m,n,j}$ is true for all $n,j$ such that $1\leq j \leq m \leq n$. 

\textbf{(i)} and \textbf{(ii)} show that $P_{m,n,j}$ and $Q_{m,n,j}$ are true for all $n,j$ satisfying $1\leq j \leq m \leq n$. This completes the induction.
\end{proof}

\begin{proposition}\label{circle-module}
Let $\bigcirc_m$ be a circle colored by $m~(\leq N)$ and marked by a single alphabet $\mathbb{X}$ of $m$ indeterminates. Then, as $\zed_2 \oplus \zed$-graded $\Sym(\mathbb{X})$-modules, 
\[
H(\bigcirc_m) \cong \Sym(\mathbb{X})/(h_N(\mathbb{X}),h_{N-1}(\mathbb{X}),\dots,h_{N+1-m}(\mathbb{X})) \{q^{-m(N-m)}\} \left\langle m \right\rangle,
\] 
where $\Sym(\mathbb{X})/(h_N(\mathbb{X}),h_{N-1}(\mathbb{X}),\dots,h_{N+1-m}(\mathbb{X}))$ has $\zed_2$-grading $0$.

In particular, as graded modules over $\Sym(\mathbb{X})$, $H(\bigcirc_m) \cong H^\ast(G_{m,N})\{q^{-m(N-m)}\}$, where $G_{m,N}$ is the complex $(m,N)$-Grassmannian.
\end{proposition}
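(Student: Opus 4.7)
The plan is to compute $C(\bigcirc_m)$ explicitly as a Koszul matrix factorization over $\Sym(\mathbb{X})$ and then compute its homology directly. First, I would add one auxiliary marked point on $\bigcirc_m$, labeled by a fresh alphabet $\mathbb{Y}$ of $m$ indeterminants; by Lemma \ref{marking-independence}, this changes nothing up to homotopy. The circle is then cut into two 2-valent vertices, and $C(\bigcirc_m)$ becomes the tensor product over $\Sym(\mathbb{X}|\mathbb{Y})$ of two Koszul factorizations of the form $(U_j, X_j - Y_j)_{j=1}^m$ and $(V_j, Y_j - X_j)_{j=1}^m$. This tensor product has $2m$ rows, $m$ of whose right-column entries are $X_j - Y_j$.

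Next, I would apply Proposition \ref{b-contraction} iteratively, using these $m$ entries $X_j - Y_j$ to eliminate the $Y_j$'s (i.e., setting $Y_j = X_j$ at each step). This leaves a Koszul matrix factorization over $\Sym(\mathbb{X})$ whose right column is identically zero and whose left column is given by the restrictions $V_j|_{\mathbb{Y}=\mathbb{X}}$. A direct computation using the defining formula of the $V_j$'s (taking $Y_j \to X_j$ inside a difference quotient) shows that these restrictions equal $\partial p_{m,N+1}/\partial X_j$, which by Lemma \ref{power-derive} equal $(-1)^{j+1}(N+1) h_{N+1-j}(\mathbb{X})$. After absorbing the nonzero scalars $(-1)^{j+1}(N+1)$ into a homogeneous change of basis, I obtain
\[
C(\bigcirc_m) \;\simeq\; \left( \begin{array}{cc} h_{N}(\mathbb{X}) & 0 \\ h_{N-1}(\mathbb{X}) & 0 \\ \vdots & \vdots \\ h_{N+1-m}(\mathbb{X}) & 0 \end{array} \right)_{\Sym(\mathbb{X})}
\]
with a quantum and $\zed_2$-grading shift to be tracked.

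The final step is to compute the homology of this Koszul factorization viewed as a cyclic chain complex of $\Sym(\mathbb{X})$-modules. Since $N \geq m$, Proposition \ref{complete-non-zero-divisor} asserts that $\{h_N(\mathbb{X}), h_{N-1}(\mathbb{X}), \ldots, h_{N+1-m}(\mathbb{X})\}$ is $\Sym(\mathbb{X})$-regular. An induction on $m$ (or the standard fact that the Koszul complex of a regular sequence is acyclic except in one degree) then shows that the $\zed_2$-graded homology is concentrated in $\zed_2$-degree $m \pmod 2$ and equals $\Sym(\mathbb{X})/(h_N(\mathbb{X}), \ldots, h_{N+1-m}(\mathbb{X}))$ as a graded $\Sym(\mathbb{X})$-module. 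Bookkeeping the quantum degrees through the contractions produces the shift $\{q^{-m(N-m)}\}$, and the parity of the homology concentration yields $\langle m \rangle$. The identification with $H^\ast(G_{m,N})\{q^{-m(N-m)}\}$ is then immediate from Theorem \ref{grassmannian}.

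The main obstacle, given that all heavy technical ingredients (Proposition \ref{complete-non-zero-divisor}, Lemma \ref{power-derive}, Proposition \ref{b-contraction}) are already established, will be the careful bookkeeping of the quantum grading shift through the contractions and the basis rescaling, together with verifying that the $V_j|_{\mathbb{Y}=\mathbb{X}}$ computation is consistent with whichever homogeneous choice of $U_j, V_j$ is used in Remark \ref{MOY-freedom} (this is harmless since the sequence is $\Sym(\mathbb{X}|\mathbb{Y})$-regular, so Lemma \ref{freedom} applies).
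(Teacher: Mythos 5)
Your proposal is correct and follows essentially the same route as the paper's proof: identify $C(\bigcirc_m)$ with the Koszul factorization $\bigl(\tfrac{\partial}{\partial X_j}p_{m,N+1},\,0\bigr)_{j=1}^m$ over $\Sym(\mathbb{X})$, rewrite the left column via Lemma \ref{power-derive}, invoke regularity of $\{h_N(\mathbb{X}),\dots,h_{N+1-m}(\mathbb{X})\}$ (Proposition \ref{complete-non-zero-divisor}), and then contract row by row (the paper does this with Corollary \ref{a-contraction-weak}) while tracking the $q^{2j-N-1}$ shifts and $\langle 1\rangle$ parity shifts, which sum to $q^{-m(N-m)}\langle m\rangle$. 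The only cosmetic difference is that the paper uses a single marked point from the start, so the matrix factorization above is immediate from the definition; your auxiliary alphabet $\mathbb{Y}$ and the initial contractions via Proposition \ref{b-contraction} are a harmless but unnecessary detour.
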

\begin{proof}
By definition,
\[
C(\bigcirc_m) =
\left(%
\begin{array}{cc}
  U_1 & 0 \\
  \dots & \dots \\
  U_m & 0 
\end{array}%
\right)_{\Sym(\mathbb{X})},
\]
where $U_j=\frac{\partial}{\partial X_j}p_{m,N+1}(X_1,\dots,X_m)$. By Lemma \ref{power-derive}, we know 
\[
U_j=(-1)^{j+1} (N+1) h_{m,N+1-j}(X_1,\dots,X_m).
\]
Then, by Proposition \ref{complete-non-zero-divisor}, $U_j$ is not a zero divisor in $\Sym(\mathbb{X})/(U_1,\dots,U_{j-1})$. Thus, we can apply Corollary \ref{a-contraction-weak} successively to the rows of $C(\bigcirc_m)$ from top to bottom and conclude that 
\[
H(\bigcirc_m) \cong \Sym(\mathbb{X})/(h_N(\mathbb{X}),h_{N-1}(\mathbb{X}),\dots,h_{N+1-m}(\mathbb{X})) \{q^{-m(N-m)}\} \left\langle m \right\rangle.
\] 
The last statement in the proposition follows from Theorem \ref{grassmannian}.
\end{proof}

\begin{definition}\label{def-circ-generator}
From the above proposition, we know that $H(\bigcirc_m)$ is generated, as a $\Sym(\mathbb{X})$-module, by the homology class corresponding to 
\[
1 \in \Sym(\mathbb{X})/(h_N(\mathbb{X}),h_{N-1}(\mathbb{X}),\dots,h_{N+1-m}(\mathbb{X})).
\]
We call this homology class the generating class and denote it by $\mathfrak{G}$. 
\end{definition}

\subsection{Cycles representing the generating class} To understand the action of a morphism of matrix factorizations on the homology of a colored circle, we need to understand its action on the generating class $\mathfrak{G}$. In order to do that, we sometimes need to represent $\mathfrak{G}$ by cycles in a matrix factorization associated to that circle. In particular, we will find such cycles in matrix factorizations associated to a colored circle with one or two marked points. To describe these cycles, we invoke the ``$1_\ve$" notation introduced in Definition \ref{ve-notation}.

\begin{lemma}\label{circle-rep-one-mark}
Let $\bigcirc_m$ be a circle colored by $m~(\leq N)$ and marked by a single alphabet $\mathbb{X}$ of $m$ indeterminates. (See Figure \ref{circle-module-figure}.) Write $U_j=\frac{\partial}{\partial X_j}p_{m,N+1}(X_1,\dots,X_m)$. Then, in 
\[
C(\bigcirc_m) =
\left(%
\begin{array}{cc}
  U_1 & 0 \\
  \dots & \dots \\
  U_m & 0 
\end{array}%
\right)_{\Sym(\mathbb{X})},
\]
the element $1_{(1,1,\dots,1)}$ is a cycle representing (a non-zero scalar multiple of) the generating class $\mathfrak{G} \in H(\bigcirc_m)$.
\end{lemma}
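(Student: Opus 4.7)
The plan is to verify the cycle condition directly from the differential formula and then track the element $1_{(1,\dots,1)}$ through the successive contractions used in the proof of Proposition \ref{circle-module}.

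First I would check that $1_{(1,\dots,1)}$ is a cycle. Using the formula in Definition \ref{ve-notation},
\[
d(1_{(1,\dots,1)}) = \sum_{j=1}^{m} (-1)^{|\ve|_j} a_{j,1} \cdot 1_{(1,\dots,1,0,1,\dots,1)},
\]
and since every right-column entry $a_{j,1}=0$ in $C(\bigcirc_m)$, this sum vanishes. Next I would compute the bigrading of $1_{(1,\dots,1)}$ and confirm it matches that of $\mathfrak{G}$: using $\deg U_j = 2(N+1-j)$ from Lemma \ref{power-derive}, one gets quantum degree $\sum_{j=1}^m (2j-N-1) = -m(N-m)$ and $\zed_2$-degree $m$, which agrees with the grading shift $\{q^{-m(N-m)}\}\langle m\rangle$ in Proposition \ref{circle-module}. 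So a priori $[1_{(1,\dots,1)}]$ lands in the same homogeneous component as $\mathfrak{G}$.

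The main step is to trace the cycle through the chain of quasi-isomorphisms arising from iterated application of Corollary \ref{a-contraction-weak} (which was how Proposition \ref{circle-module} was proven). Concretely, since $U_j = (-1)^{j+1}(N+1)\,h_{N+1-j}(\mathbb{X})$, Proposition \ref{complete-non-zero-divisor} guarantees that $U_j$ is not a zero divisor in $\Sym(\mathbb{X})/(U_1,\dots,U_{j-1})$, so we may contract the rows top to bottom. The contraction map (the left-column analogue of the map $F$ written out in the proof of Proposition \ref{b-contraction-weak}) sends $1_\ve$ to zero whenever $\ve_i = 0$ and to $1_{(\ve_1,\dots,\hat\ve_i,\dots,\ve_k)}$ (up to sign) when $\ve_i = 1$. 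Therefore at each step $1_{(1,\dots,1)}$ is sent to $1_{(1,\dots,1)}$ of the reduced Koszul factorization (up to a sign). After $m$ such contractions we land in
\[
\Sym(\mathbb{X})/(U_1,\dots,U_m) \;=\; \Sym(\mathbb{X})/(h_N(\mathbb{X}),\dots,h_{N+1-m}(\mathbb{X}))
\]
with $1_{(1,\dots,1)}$ mapping to $\pm 1$, which by definition represents the generating class $\mathfrak{G}$. Since each contraction is a quasi-isomorphism, $[1_{(1,\dots,1)}]$ equals $\pm\mathfrak{G}$, in particular a non-zero scalar multiple of $\mathfrak{G}$.

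The only delicate point is bookkeeping: one must confirm the exact form of the contraction map in Corollary \ref{a-contraction-weak} (the left-column version of Proposition \ref{b-contraction-weak}), verify it indeed kills the summand where $\ve_i=0$ and restricts to the identity on the $\ve_i=1$ summand modulo $(U_i)$, and keep track of the $\langle 1\rangle$ and quantum shifts to see that the gradings line up at each stage. These are straightforward but need to be done carefully; I do not foresee a genuine obstacle, as the computation reduces to the sign convention already used in the proof of Proposition \ref{b-contraction-weak} together with the regularity statement of Proposition \ref{complete-non-zero-divisor}.
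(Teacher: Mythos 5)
Your proof is correct and essentially reproduces the paper's argument: the paper lifts the generator $1_1 \in M_m$ backward through the contractions via Remark \ref{reverse-b-contraction}, while you push $1_{(1,\dots,1)}$ forward, but both hinge on the same crucial observation that every right-column entry vanishes, so $1_{(1,\dots,1)}$ is automatically a cycle at every stage and the contraction map (left-column version of Proposition \ref{b-contraction-weak}) sends it to $1_{(1,\dots,1)}$ with one index removed.
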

\begin{proof}
Write 
\[
M_j=
\left(%
\begin{array}{cc}
  U_j & 0 \\
  \dots & \dots \\
  U_m & 0 
\end{array}%
\right)_{\Sym(\mathbb{X})/(U_1,\dots,U_{j-1})}.
\]
Then, the homology of $\Gamma$ is computed by 
\begin{eqnarray*}
H(\bigcirc_m) & = & H(M_1) \\
 & \cong & H(M_2)\{q^{N+1-\deg U_1}\}\left\langle 1\right\rangle \\
 & \cong & \dots \\
 & \cong & H(M_m)\{q^{(m-1)(N+1)- \sum_{j=1}^{m-1}\deg U_j}\} \left\langle m-1\right\rangle \\
 & \cong & \Sym(\mathbb{X})/(h_N(\mathbb{X}),h_{N-1}(\mathbb{X}),\dots,h_{N+1-m}(\mathbb{X})) \{q^{-m(N-m)}\} \left\langle m \right\rangle.
\end{eqnarray*}
It is easy to see that $1_1 \in M_m$ represents $\mathfrak{G}$. Next, we use the method described in Remark \ref{reverse-b-contraction} to inductively construct a cycle in $C(\bigcirc_m)$ representing the generating class. Assume, for some $j$, $1_{(1,1,\dots,1)}\in M_j$ is a cycle representing $\mathfrak{G}$. Note that $1_{(1,1,\dots,1)}\in M_{j-1}$ is mapped to $1_{(1,1,\dots,1)}\in M_j$ by the quasi-isomorphism $M_{j-1}\rightarrow M_j\{q^{N+1-\deg U_{j-1}}\}\left\langle 1\right\rangle$.\footnote{Please see the proof of Proposition \ref{b-contraction-weak} for the definition of this quasi-isomorphism. Note that the setup there is slightly different. In the proof of Proposition \ref{b-contraction-weak}, $b_i$ is in the right column. But here, $U_{j-1}$ is in the left column.} But every entry in the right column of $M_{j-1}$ is $0$. So $d(1_{(1,1,\dots,1)})=0$, and therefore $1_{(1,1,\dots,1)}$ is a cycle representing $\mathfrak{G}$. This shows that $1_{(1,1,\dots,1)} \in M_1 = C(\bigcirc_m)$ is a cycle representing the generating class $\mathfrak{G} \in H(\bigcirc_m)$.
\end{proof}

\begin{figure}[ht]

\setlength{\unitlength}{1pt}

\begin{picture}(360,70)(-180,-10)

% far left 

\qbezier(0,60)(-20,60)(-20,45)

\qbezier(0,60)(20,60)(20,45)

\put(-20,15){\vector(0,1){30}}

\put(20,45){\vector(0,-1){30}}

\qbezier(0,0)(-20,0)(-20,15)

\qbezier(0,0)(20,0)(20,15)

\put(19,30){\line(1,0){2}}

\put(-21,30){\line(1,0){2}}

\put(-3,-10){$\Gamma$}

\put(-15,30){\tiny{$m$}}

\put(25,25){\small{$\mathbb{X}$}}

\put(-30,25){\small{$\mathbb{Y}$}}
 
\end{picture}

\caption{}\label{circle-rep-two-marks-figure}

\end{figure}

\begin{lemma}\label{circle-rep-two-marks}
Let $\bigcirc_m$ be a circle colored by $m~(\leq N)$ and marked by two alphabets $\mathbb{X}$, $\mathbb{Y}$. (See Figure \ref{circle-rep-two-marks-figure}.) Use the definition 
\[
C(\bigcirc_m) =
\left(%
\begin{array}{cc}
  U_1 & X_1-Y_1 \\
  \dots & \dots \\
  U_m & X_m-Y_m \\
  U_1 & Y_1-X_1 \\
  \dots & \dots \\
  U_m & Y_m-X_m \\
\end{array}%
\right)_{\Sym(\mathbb{X}|\mathbb{Y})},
\]
where $X_j$ and $Y_j$ are the $j$-th elementary symmetric polynomials in $\mathbb{X}$ and in $\mathbb{Y}$, and $U_j\in \Sym(\mathbb{X}|\mathbb{Y})$ is homogeneous of degree $2(N+1-j)$ and satisfies 
\[
\sum_{j=1}^m (X_j-Y_j)U_j = p_{N+1}(\mathbb{X}) - p_{N+1}(\mathbb{Y}).
\]
Then the element 
\[
\sum_{\ve=(\ve_1,\dots\ve_m)\in I^m} (-1)^{\frac{|\ve|(|\ve|-1)}{2}+(m+1)|\ve|+\sum_{j=1}^{m-1}(m-j)\ve_j} 1_{\ve}\otimes 1_{\wbar{\ve}} \in C(\bigcirc_m)
\] 
is a cycle representing (a non-zero scalar multiple of) the generating class $\mathfrak{G} \in H(\bigcirc_m)$.
\end{lemma}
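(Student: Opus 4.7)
The plan is to prove this in two steps: (i) verify directly that the proposed element $\alpha := \sum_{\ve \in I^m} c(\ve)\, 1_\ve \otimes 1_{\wbar\ve}$ (with $c(\ve)$ the indicated sign) satisfies $d\alpha = 0$, and then (ii) identify $[\alpha]$ with the generating class $\mathfrak{G}$ by comparing with Lemma \ref{circle-rep-one-mark} via a projection to the matrix factorization of the circle with one marked point. Throughout, denote by $\ve^{(j)}$ the sequence obtained from $\ve$ by flipping its $j$-th component.

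For step (i), note that in the Koszul matrix factorization $C(\bigcirc_m)$ we have $a_{j,0} = a_{m+j,0} = U_j$ and $a_{j,1} = -a_{m+j,1} = X_j - Y_j$ for $1 \leq j \leq m$, so $a_{m+j, \eta} = (-1)^\eta a_{j, \eta}$ for $\eta \in I$. Applying the differential formula of Definition \ref{ve-notation}, each basis element $1_{(\ve^{(j)}, \wbar\ve)}$ in $d\alpha$ receives exactly two contributions, since $\wbar{\ve^{(j)}} = (\wbar\ve)^{(j)}$: one from $1_{(\ve, \wbar\ve)}$ (flipping position $j$) and one from $1_{(\ve^{(j)}, \wbar{\ve^{(j)}})}$ (flipping position $m+j$). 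Collecting signs, the combined coefficient of $1_{(\ve^{(j)}, \wbar\ve)}$ becomes
\[
\bigl[\,c(\ve) + c(\ve^{(j)})\,(-1)^{|\ve^{(j)}| + j - 1 + \ve_j}\,\bigr]\,(-1)^{|\ve|_j}\, a_{j, \ve_j},
\]
so the cycle condition reduces to the recursion $c(\ve^{(j)}) = c(\ve)\,(-1)^{|\ve^{(j)}| + j + \ve_j}$. A short mod-$2$ computation, using $|\ve^{(j)}| \equiv |\ve| + 1$ and $\tfrac{|\ve^{(j)}|(|\ve^{(j)}|-1)}{2} - \tfrac{|\ve|(|\ve|-1)}{2} \equiv |\ve| + \ve_j \pmod 2$, and splitting into the cases $j < m$ and $j = m$ (since the last summand in the exponent of $c$ only involves $\ve_1, \ldots, \ve_{m-1}$), confirms this recursion and hence that $\alpha$ is a cycle.

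For step (ii), write $\Sym(\mathbb{X}|\mathbb{Y}) = \Sym(\mathbb{X})[z_1,\ldots,z_m]$ where $z_j := Y_j - X_j$ is a homogeneous indeterminant, so that $\{z_1, \ldots, z_m\}$ is a regular sequence. Apply Proposition \ref{b-contraction-weak} successively to rows $2m, 2m-1, \ldots, m+1$ of $C(\bigcirc_m)$, using the non-zero divisors $z_m, z_{m-1}, \ldots, z_1$. This produces a surjective quasi-isomorphism $F\colon C(\bigcirc_m) \to C(\bigcirc_m^{(1)})$ onto the matrix factorization of the circle with a single marked point; here Lemma \ref{power-derive} confirms that $U_j|_{Y=X}$ yields the expected entries $(-1)^{j+1}(N+1)h_{N+1-j}(\mathbb{X})$, which by Remark \ref{MOY-freedom} and Lemma \ref{circle-rep-one-mark} are the entries of $C(\bigcirc_m^{(1)})$. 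The map $F$ sends $1_\sigma$ to $1_{(\sigma_1, \ldots, \sigma_m)}$ if $\sigma_{m+1} = \cdots = \sigma_{2m} = 0$ and to $0$ otherwise, so the only surviving term of $F(\alpha)$ is the one with $\wbar\ve = (0, \ldots, 0)$, i.e.\ $\ve = (1, \ldots, 1)$. The exponent of $c(1,\ldots,1)$ equals $\tfrac{m(m-1)}{2} + m(m+1) + \tfrac{m(m-1)}{2} = 2m^2$, so $c(1,\ldots,1) = 1$ and $F(\alpha) = 1_{(1,\ldots,1)}$, which by Lemma \ref{circle-rep-one-mark} represents a non-zero scalar multiple of $\mathfrak{G}$ in $H(\bigcirc_m^{(1)})$. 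Since $F$ is a quasi-isomorphism, $\alpha$ also represents a non-zero scalar multiple of $\mathfrak{G}$ in $H(\bigcirc_m)$.

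The hard part will be the sign bookkeeping in step (i): the exponent in $c(\ve)$ is tailored precisely so that the two source contributions to each $1_{(\ve^{(j)}, \wbar\ve)}$ cancel for every $\ve$ and every $j$, and the asymmetric appearance of $\sum_{i=1}^{m-1}(m-i)\ve_i$ (rather than $\sum_{i=1}^{m}$) in the definition of $c(\ve)$ is what forces the case split $j < m$ versus $j = m$ in verifying the recursion.
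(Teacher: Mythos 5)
Your proof is correct and takes a genuinely different route from the paper's. The paper observes that $\Hom_{\Sym(\mathbb{X}|\mathbb{Y})}(C(\Gamma_1),C(\Gamma_1))\cong C(\bigcirc_m)\{q^{m(N-m)}\}\langle m\rangle$ (for $\Gamma_1$ the oriented arc), then uses the automatic non-triviality of the identity morphism and pushes $\id$ through the explicit isomorphisms of Lemmas \ref{bullet}, \ref{row-reverse-signs}, and \ref{column-reverse-signs}; the sign $(-1)^{\frac{|\ve|(|\ve|-1)}{2}+(m+1)|\ve|+\sum(m-j)\ve_j}$ is \emph{derived} rather than checked, which is why the paper remarks it is cleaner than the ``far more complex'' alternative of inductively lifting the one-marked-point cycle via Remark \ref{reverse-b-contraction}. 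Your argument is a third route: verify $d\alpha=0$ directly from the sign recursion $c(\ve^{(j)}) = c(\ve)(-1)^{|\ve^{(j)}|+j+\ve_j}$ (which I checked holds in both the $j<m$ and $j=m$ cases), then collapse rows $m+1,\dots,2m$ by Proposition \ref{b-contraction-weak} using the regular sequence $\{Y_j-X_j\}$, noting that the contraction maps $\alpha$ to $1_{(1,\dots,1)}$ because $c(1,\dots,1)=(-1)^{2m^2}=1$ and all other summands die. This is more elementary (it uses only the contraction machinery, not the duality/reordering isomorphisms) but presupposes the sign; it also relies on the fact, which you essentially observe, that the constraint $\sum_j(X_j-Y_j)U_j=p_{N+1}(\mathbb{X})-p_{N+1}(\mathbb{Y})$ forces $U_j|_{Y=X}=(-1)^{j+1}(N+1)h_{N+1-j}(\mathbb{X})$ by first-order Taylor expansion, so that the contracted complex literally coincides with the one in Lemma \ref{circle-rep-one-mark} — the invocation of Remark \ref{MOY-freedom} is harmless but not strictly needed for that match. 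One minor caution worth making explicit: $F$ from Proposition \ref{b-contraction-weak} preserves both gradings, so $[\alpha]\mapsto[\mathfrak{G}]$ is indeed degree-compatible and not merely a non-zero map.
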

\begin{proof}
Although this lemma can be proved by the method used in the previous lemma, the computation is much more complex. So here we use a different approach by considering morphisms of matrix factorizations. From Proposition \ref{circle-dimension}, we have $H(\bigcirc_m) \cong C(\emptyset)\{\qb{N}{m}\}\left\langle m \right\rangle$. So the subspace of $H(\bigcirc_m)$ of elements of quantum degree $-m(N-m)$ is $1$-dimensional over $\C$ and is spanned by the generating class $\mathfrak{G}$. So, to prove the lemma, we only need to show that the above element of $C(\bigcirc_m)$ is a homogeneous cycle of quantum degree $-m(N-m)$ representing a non-zero homology class.

\begin{figure}[ht]

\setlength{\unitlength}{1pt}

\begin{picture}(360,40)(-180,20)

\qbezier(0,60)(-20,60)(-20,45)

\qbezier(0,60)(20,60)(20,45)

\put(-20,30){\vector(0,1){15}}

\put(20,45){\vector(0,-1){15}}

\put(-3,20){$\Gamma_1$}

\put(-3,55){\tiny{$m$}}

\put(25,25){\small{$\mathbb{X}$}}

\put(-30,25){\small{$\mathbb{Y}$}}
 
\end{picture}

\caption{}\label{circle-rep-two-marks-figure-2}

\end{figure}

Let $\Gamma_1$ be the oriented arc shown in Figure \ref{circle-rep-two-marks-figure-2}. Then, by Lemmas \ref{bullet}, \ref{row-reverse-signs}, and \ref{column-reverse-signs}, 
\[
\Hom_{\Sym(\mathbb{X}|\mathbb{Y})}(C(\Gamma_1),C(\Gamma_1)) \cong C(\Gamma_1) \otimes_{\Sym(\mathbb{X}|\mathbb{Y})} C(\Gamma_1)_\bullet \cong C(\bigcirc_m)\{q^{m(N-m)}\}\left\langle m \right\rangle.
\]
Consider the identity map $\id:C(\Gamma_1)\rightarrow C(\Gamma_1)$. It is a morphism of matrix factorizations and, therefore, a cycle in $\Hom_{\Sym(\mathbb{X}|\mathbb{Y})}(C(\Gamma_1),C(\Gamma_1))$. Assume $\id$ is homotopic to $0$, that is, there exists $h \in \Hom_{\Sym(\mathbb{X}|\mathbb{Y})}(C(\Gamma_1),C(\Gamma_1))$ of $\zed_2$-degree $1$ such that $\id=d\circ h +h\circ d$. Then, for any cycle $f \in \Hom_{\Sym(\mathbb{X}|\mathbb{Y})}(C(\Gamma_1),C(\Gamma_1))$ of $\zed_2$-degree $i$, we have 
\[
f = f \circ \id = f \circ (d\circ h +h\circ d) = (-1)^i (d \circ (f\circ h) - (-1)^{i+1} (f\circ h) \circ d),
\]
which is a boundary element in $\Hom_{\Sym(\mathbb{X}|\mathbb{Y})}(C(\Gamma_1),C(\Gamma_1))$. This implies that the homology of $\Hom_{\Sym(\mathbb{X}|\mathbb{Y})}(C(\Gamma_1),C(\Gamma_1))$ is $0$, which is a contradiction since $H(\bigcirc_m)\neq 0$. Thus, $\id$ is a cycle representing a non-zero homology class. Under the above isomorphism, $\id$ is mapped to a homogeneous cycle in $C(\bigcirc_m)$ of quantum degree $-m(N-m)$ representing a non-zero homology class. Thus, the image of $\id$ is a cycle representing a non-zero scalar multiple of the generating class $\mathfrak{G}$.

Next, we check that the image of $\id$ is in fact the cycle given in the lemma. Under the homogeneous isomorphism 
\[
\Hom_{\Sym(\mathbb{X}|\mathbb{Y})}(C(\Gamma_1),C(\Gamma_1)) \xrightarrow{\cong} C(\Gamma_1) \otimes_{\Sym(\mathbb{X}|\mathbb{Y})} C(\Gamma_1)_\bullet
\] 
preserving the $\zed_2\oplus\zed$-grading, we have
\[
\id \mapsto \sum_{\ve\in I^m} 1_{\ve}\otimes 1_{\ve}^\ast \in C(\Gamma_1) \otimes_{\Sym(\mathbb{X}|\mathbb{Y})} C(\Gamma_1)_\bullet.
\]
By Lemma \ref{bullet}, under the homogeneous isomorphism 
\[
C(\Gamma_1) \otimes_{\Sym(\mathbb{X}|\mathbb{Y})} C(\Gamma_1)_\bullet \xrightarrow{\cong} M_1 :=
\left(%
\begin{array}{cc}
  U_1 & X_1-Y_1 \\
  \dots & \dots \\
  U_m & X_m-Y_m \\
  Y_m-X_m & U_m \\
  \dots & \dots \\
  Y_1-X_1 & U_1\\
\end{array}%
\right)_{\Sym(\mathbb{X}|\mathbb{Y})}
\]
preserving the $\zed_2\oplus\zed$-grading, we have
\[
\sum_{\ve\in I^m} 1_{\ve}\otimes 1_{\ve}^\ast \in C(\Gamma_1) \mapsto \sum_{\ve=(\ve_1,\dots\ve_m)\in I^m}  1_{\ve}\otimes 1_{(\ve_m,\dots\ve_1)} \in M_1.
\]
By Lemma \ref{row-reverse-signs}, under the homogeneous isomorphism
\[
M_1 \xrightarrow{\cong} M_2 := \left(%
\begin{array}{cc}
  U_1 & X_1-Y_1 \\
  \dots & \dots \\
  U_m & X_m-Y_m \\
  Y_1-X_1 & U_1 \\
  \dots & \dots \\
  Y_m-X_m & U_m\\
\end{array}%
\right)_{\Sym(\mathbb{X}|\mathbb{Y})}
\]
preserving the $\zed_2\oplus\zed$-grading, we have
\[
\sum_{\ve=(\ve_1,\dots\ve_m)\in I^m}  1_{\ve}\otimes 1_{(\ve_m,\dots\ve_1)} \mapsto \sum_{\ve\in I^m} (-1)^{\frac{|\ve|(|\ve|-1)}{2}} 1_{\ve}\otimes 1_{\ve} \in M_2.
\]
And, by Lemmas \ref{morphism-sign} and  \ref{column-reverse-signs}, under the homogeneous isomorphism 
\[
M_2 \rightarrow  
C(\bigcirc_m) =
\left(%
\begin{array}{cc}
  U_1 & X_1-Y_1 \\
  \dots & \dots \\
  U_m & X_m-Y_m \\
  U_1 & Y_1-X_1 \\
  \dots & \dots \\
  U_m & Y_m-X_m \\
\end{array}%
\right)_{\Sym(\mathbb{X}|\mathbb{Y})}
\]
of $\zed_2$-degree $m$ and quantum degree $-m(N-m)$, we have
\[
\sum_{\ve\in I^m} (-1)^{\frac{|\ve|(|\ve|-1)}{2}} 1_{\ve}\otimes 1_{\ve} \mapsto \sum_{\ve=(\ve_1,\dots\ve_m)\in I^m} (-1)^{\frac{|\ve|(|\ve|-1)}{2}+(m+1)|\ve|+\sum_{j=1}^{m-1}(m-j)\ve_j} 1_{\ve}\otimes 1_{\wbar{\ve}} \in C(\bigcirc_m).
\]
Thus,
\[
\sum_{\ve=(\ve_1,\dots\ve_m)\in I^m} (-1)^{\frac{|\ve|(|\ve|-1)}{2}+(m+1)|\ve|+\sum_{j=1}^{m-1}(m-j)\ve_j} 1_{\ve}\otimes 1_{\wbar{\ve}} \in C(\bigcirc_m)
\]
is the image of $\id \in \Hom_{\Sym(\mathbb{X}|\mathbb{Y})}(C(\Gamma_1),C(\Gamma_1))$ under the isomorphism 
\[
\Hom_{\Sym(\mathbb{X}|\mathbb{Y})}(C(\Gamma_1),C(\Gamma_1)) \xrightarrow{\cong} C(\bigcirc_m).
\]
\end{proof}

\section{Morphisms Induced by Local Changes of MOY Graphs}\label{sec-morph}

In this section, we establish several morphisms of matrix factorizations induced by certain local changes of MOY graphs, some of which have implicitly appeared in Sections \ref{mf-MOY} and \ref{sec-circles}. These morphisms are building blocks of more complex morphisms in Direct Sum Decompositions (III-V) and in chain complexes of colored link diagrams. 

\subsection{Terminology} Most morphisms defined in the rest of this paper are defined only up to homotopy and scaling by a non-zero scalar. To simplify our exposition, we introduce the following notations.

\begin{definition}
Suppose that $V$ is a linear space over $\C$ and $u,v\in V$. We write $u \propto v$ if $\exists ~c\in \C\setminus \{0\}$ such that $u = c \cdot v$.

Suppose that $W$ is a chain complex over a $\C$-algebra and $u,v$ are cycles in $W$, we write $u \approx v$ if $\exists ~c\in \C\setminus \{0\}$ such that $u$ is homologous to $c \cdot v$. In particular, if $M,M'$ are matrix factorizations of the same potential over a graded commutative unital $\C$-algebra and $f,g:M\rightarrow M'$ are morphisms of matrix factorizations, we write $f \approx g$ if $\exists ~c\in \C\setminus \{0\}$ such that $f \simeq c \cdot g$.
\end{definition}

Let $\Gamma_1,\Gamma_2$ be two MOY graphs with a one-to-one correspondence $F$ between their end points such that
\begin{itemize}
	\item every exit corresponds to an exit, and every entrance corresponds to an entrance,
	\item edges adjacent to corresponding end points have the same color.
\end{itemize}
Mark $\Gamma_1,\Gamma_2$ so that every pair of corresponding end points are assigned the same alphabet. Assume $\mathbb{X}_1,\mathbb{X}_2,\dots,\mathbb{X}_n$ are the alphabets assigned to the end points of $\Gamma_1$ and $\Gamma_2$.

\begin{definition}
\[
\Hom_F(C(\Gamma_1),C(\Gamma_2)) := \Hom_{\Sym(\mathbb{X}_1|\mathbb{X}_2|\cdots|\mathbb{X}_n)}(C(\Gamma_1),C(\Gamma_2)),
\]
which is a $\zed_2$-graded chain complex, where the $\zed_2$-grading is induced by the $\zed_2$-gradings of $C(\Gamma_1), ~C(\Gamma_2)$. The quantum gradings of $C(\Gamma_1), ~C(\Gamma_2)$ induce a quantum pregrading on $\Hom_F(C(\Gamma_1),C(\Gamma_2))$.

Denote by $\Hom_{\HMF,F}(C(\Gamma_1),C(\Gamma_2))$ the homology of $\Hom_F(C(\Gamma_1),C(\Gamma_2))$, that is, the $\Sym(\mathbb{X}_1|\mathbb{X}_2|\cdots|\mathbb{X}_n)$-module of homotopy classes of morphisms from $C(\Gamma_1)$ to $C(\Gamma_2)$. It inherits the $\zed_2$-grading from $\Hom_F(C(\Gamma_1),C(\Gamma_2))$. And the quantum pregrading of $\Hom_F(C(\Gamma_1),C(\Gamma_2))$ induces a quantum grading on $\Hom_{\HMF,F}(C(\Gamma_1),C(\Gamma_2))$. (See Lemmas \ref{hom-all-gradings-homo-finite} and \ref{MOY-object-of-hmf}.)

We drop $F$ from the above notations if it is clear from the context.
\end{definition}

\begin{lemma}\label{hom-hmf-compute}
$\Hom_{\HMF,F}(C(\Gamma_1),C(\Gamma_2))$ does not depend on the choice of markings.
\end{lemma}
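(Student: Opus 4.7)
The plan is to reduce the independence claim to comparing markings that differ by a single elementary operation, and for each such operation to exhibit a bidegree-preserving homotopy equivalence of the ambient $\Hom$-complexes.

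First I would observe that any two compatible markings of the pair $(\Gamma_1,\Gamma_2)$ can be connected by a finite sequence of moves of two kinds, applied to one graph at a time: (a) renaming an internal alphabet by another one of the same cardinality (the indeterminates have fixed degree $2$, so no grading issue arises); and (b) adding or removing a single internal marked point on an edge. Move (a) induces a tautological isomorphism of graded matrix factorizations that preserves both the $\zed_2$-grading and the quantum grading, hence an isomorphism of the $\Hom$-complex itself. The real content of the lemma is therefore move (b).

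For move (b), let $m$ and $m'$ be markings of $\Gamma_i$ differing by the insertion of one internal marked point carrying alphabet $\mathbb{W}$. The proof of Lemma~\ref{marking-independence} (an application of Proposition~\ref{b-contraction}) yields a homotopy equivalence $f_i : C_m(\Gamma_i) \to C_{m'}(\Gamma_i)$ as graded matrix factorizations over the endpoint ring $R=\Sym(\mathbb{X}_1|\cdots|\mathbb{X}_n)$ with common potential $w_i$; the map $f_i$ is homogeneous of $\zed_2$-degree $0$ and quantum degree $0$, and admits a homotopy inverse $g_i$ of the same bidegree. The key technical point here is that Proposition~\ref{b-contraction} contracts the extra alphabet $\mathbb{W}$ out completely, so the resulting equivalence is genuinely over $R$ and not over some intermediate ring that still remembers $\mathbb{W}$. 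Having this is what allows us to work inside $\Hom_F$.

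With these equivalences in hand, I would define chain maps in both directions by conjugation: $\Phi(\varphi)=f_2\circ\varphi\circ g_1$ and $\Psi(\psi)=g_2\circ\psi\circ f_1$. Each is a chain map of bidegree $(0,0)$, and a direct computation in the spirit of the proof of Lemma~\ref{hom-all-gradings-homo-finite} (using the homotopies $g_i\circ f_i\simeq\id$ and $f_i\circ g_i\simeq\id$, together with Lemma~\ref{morphism-sign} to get the signs right when tensoring with homotopies) shows that $\Psi\circ\Phi$ and $\Phi\circ\Psi$ are chain-homotopic to the identity. Passing to homology gives an isomorphism
\[
\Hom_{\HMF,F}(C_m(\Gamma_1),C_m(\Gamma_2))\;\cong\;\Hom_{\HMF,F}(C_{m'}(\Gamma_1),C_{m'}(\Gamma_2))
\]
preserving both the $\zed_2$-grading and the quantum grading; iterating over the sequence of elementary moves connecting any two markings proves the lemma. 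The only step that requires care is the one flagged above, namely checking that Proposition~\ref{b-contraction} really produces an equivalence over the endpoint ring $R$; once this is in place, the rest is a formal consequence of the standard fact that $\Hom$ in the homotopy category of matrix factorizations is functorial in each argument with respect to homotopy equivalence.
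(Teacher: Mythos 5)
Your argument is correct and essentially matches the paper's approach: the paper's one-line proof invokes Proposition~\ref{b-contraction} together with Corollary~\ref{b-contraction-dual}, and your explicit $\Hom$-functoriality computation (conjugating by the homotopy equivalences from Lemma~\ref{marking-independence}) is precisely what Corollary~\ref{b-contraction-dual} packages. The one point worth making explicit is the one you flagged: the homotopy equivalences produced by Proposition~\ref{b-contraction} are linear over a ring that no longer contains the contracted alphabet and hence still contains the endpoint ring $R$, so the maps and homotopies restrict to $R$-linear data, which is exactly what makes the conjugation well-defined on $\Hom_F$.
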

\begin{proof}
This lemma follows easily from Proposition \ref{b-contraction} and Corollary \ref{b-contraction-dual}.
\end{proof}

\subsection{Bouquet move} First we recall the homotopy equivalence induced by the bouquet moves in Figure \ref{bouquet-move-figure}. From Corollary \ref{contract-expand}, we know bouquet moves induce homotopy equivalence. In this subsection, we show that, up to homotopy and scaling, a bouquet move induces a unique homotopy equivalence.

\begin{figure}[ht]

\setlength{\unitlength}{1pt}

\begin{picture}(360,100)(-180,-50)

%top left 

\put(-100,25){$\Gamma_1$:}

\put(-60,10){\vector(0,1){10}}

\put(-60,20){\vector(-1,1){20}}

\put(-60,20){\vector(1,1){10}}

\put(-50,30){\vector(-1,1){10}}

\put(-50,30){\vector(1,1){10}}

\put(-75,3){\tiny{$i+j+k$}}

\put(-55,21){\tiny{$j+k$}}

\put(-80,42){\tiny{$i$}}

\put(-60,42){\tiny{$j$}}

\put(-40,42){\tiny{$k$}}

%top center

\put(-15,25){$\longleftrightarrow$}

%top rihgt

\put(20,25){$\Gamma'_1$:}

\put(60,10){\vector(0,1){10}}

\put(60,20){\vector(1,1){20}}

\put(60,20){\vector(-1,1){10}}

\put(50,30){\vector(1,1){10}}

\put(50,30){\vector(-1,1){10}}

\put(45,3){\tiny{$i+j+k$}}

\put(38,21){\tiny{$i+j$}}

\put(80,42){\tiny{$k$}}

\put(60,42){\tiny{$j$}}

\put(40,42){\tiny{$i$}}

% lower left

\put(-100,-25){$\Gamma_2$:}

\put(-60,-30){\vector(0,-1){10}}

\put(-80,-10){\vector(1,-1){20}}

\put(-50,-20){\vector(-1,-1){10}}

\put(-60,-10){\vector(1,-1){10}}

\put(-40,-10){\vector(-1,-1){10}}

\put(-75,-47){\tiny{$i+j+k$}}

\put(-55,-29){\tiny{$j+k$}}

\put(-80,-8){\tiny{$i$}}

\put(-60,-8){\tiny{$j$}}

\put(-40,-8){\tiny{$k$}}

%lower center

\put(-15,-25){$\longleftrightarrow$}

% lower right

\put(20,-25){$\Gamma'_2$:}

\put(60,-30){\vector(0,-1){10}}

\put(80,-10){\vector(-1,-1){20}}

\put(50,-20){\vector(1,-1){10}}

\put(60,-10){\vector(-1,-1){10}}

\put(40,-10){\vector(1,-1){10}}

\put(45,-47){\tiny{$i+j+k$}}

\put(38,-29){\tiny{$i+j$}}

\put(80,-8){\tiny{$k$}}

\put(60,-8){\tiny{$j$}}

\put(40,-8){\tiny{$i$}}

\end{picture}

\caption{}\label{bouquet-move-figure}

\end{figure}

\begin{lemma}\label{bouquet-move-lemma}
Suppose that $\Gamma_1$, $\Gamma'_1$, $\Gamma_2$ and $\Gamma'_2$ are the MOY graphs shown in Figure \ref{bouquet-move-figure}. Then, as $\zed_2\oplus\zed$-graded vector spaces over $\C$, 
\begin{eqnarray*}
& & \Hom_\HMF (C(\Gamma_1), C(\Gamma'_1)) \cong \Hom_\HMF (C(\Gamma_2),C(\Gamma'_2)) \\
& \cong &  C(\emptyset)\{\qb{N}{i+j+k}\qb{i+j+k}{k}\qb{i+j}{j}q^{(i+j+k)(N-i-j-k)+ij+jk+ki}\}.
\end{eqnarray*}
In particular, the subspaces of the above spaces of homogeneous elements of quantum degree $0$ are $1$-dimensional.
\end{lemma}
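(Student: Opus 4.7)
I would begin by using Corollary \ref{contract-expand} to reduce the two Hom computations to a single one. Since $\Gamma_1$ and $\Gamma_1'$ are both homotopic to the common $4$-valent MOY graph $\Gamma$ with one entering edge of color $i+j+k$ and three exiting edges of colors $i,j,k$, and similarly $\Gamma_2,\Gamma_2'$ are homotopic to the $4$-valent merging graph $\Gamma^{\mathrm{op}}$, we have
\[
\Hom_{\HMF}(C(\Gamma_1),C(\Gamma_1')) \cong \Hom_{\HMF}(C(\Gamma),C(\Gamma)) \quad \text{and} \quad \Hom_{\HMF}(C(\Gamma_2),C(\Gamma_2')) \cong \Hom_{\HMF}(C(\Gamma^{\mathrm{op}}),C(\Gamma^{\mathrm{op}})).
\]
Under orientation reversal these two computations are manifestly equivalent, so it suffices to compute $\Hom_{\HMF}(C(\Gamma),C(\Gamma))$.

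For that, the plan is to imitate the strategy of Lemma \ref{circle-rep-two-marks}. First, by Lemma \ref{rewrite-hom-finite-gen}, $\Hom_R(C(\Gamma),C(\Gamma)) \cong C(\Gamma)\otimes_R C(\Gamma)_\bullet$. Using Lemmas \ref{bullet}, \ref{row-reverse-signs}, and \ref{column-reverse-signs} to manipulate the Koszul form, I would identify this tensor product with $C(\widehat{\Gamma})$ (up to explicit quantum and $\zed_2$-shifts), where $\widehat{\Gamma}$ is the closed ``dumbbell'' MOY graph obtained by gluing $\Gamma$ to its orientation-reversal along the four endpoints (two $4$-valent vertices joined by parallel edges of colors $i,j,k,i+j+k$). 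The column-swap isomorphism of Lemma \ref{column-reverse-signs} applied to the $i+j+k$ Koszul rows (whose right-column entries $X_j-Y_j$ have quantum degree $2j$) should contribute the shift $\sum_{j=1}^{i+j+k}(N+1-2j) = (i+j+k)(N-i-j-k)$ together with $\langle i+j+k\rangle$, while the vertex shift factor $q^{-(ij+jk+ki)}$ in $C(\Gamma)$ inverts under dualization, adding $q^{+(ij+jk+ki)}$. Altogether this should yield
\[
\Hom_R(C(\Gamma),C(\Gamma)) \cong C(\widehat{\Gamma})\{q^{(i+j+k)(N-i-j-k)+ij+jk+ki}\}\langle i+j+k\rangle.
\]

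Next I would compute $H(\widehat{\Gamma})$ by applying Theorem \ref{decomp-II} (Decomposition II) twice. Expanding each $4$-valent vertex of $\widehat{\Gamma}$ into two trivalent ones via Corollary \ref{contract-expand} --- the splitting vertex as $i+j+k \to (i+j,k)\to(i,j,k)$ and the merging vertex as $(i,j,k)\to(i+j,k)\to i+j+k$ --- produces a bubble of parallel $i$- and $j$-edges between two width-$(i+j)$ trivalent vertices. Decomposition II contracts it with a factor $\qb{i+j}{j}$. The resulting graph then has a new bubble of parallel $(i+j)$- and $k$-edges between two width-$(i+j+k)$ vertices; a second application of Decomposition II gives a factor $\qb{i+j+k}{k}$ and reduces $\widehat{\Gamma}$ to a single circle of color $i+j+k$. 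Corollary \ref{circle-dimension} then supplies $\qb{N}{i+j+k}\langle i+j+k\rangle$, and combining yields
\[
H(\widehat{\Gamma}) \cong C(\emptyset)\{\qb{N}{i+j+k}\qb{i+j+k}{k}\qb{i+j}{j}\}\langle i+j+k\rangle.
\]

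Combining this with the Koszul-duality shift (and noting $\langle 2(i+j+k)\rangle = \langle 0\rangle$ in $\zed_2$) will give the stated formula. The claim about the quantum-degree-$0$ subspace will then be immediate: expanding each of the three quantum binomials using \eqref{compute-quantum-binary}, the shift $q^{(i+j+k)(N-i-j-k)+ij+jk+ki}$ exactly cancels the three negative-power prefactors, leaving $\sum_{\lambda,\mu,\nu} q^{2(|\lambda|+|\mu|+|\nu|)}$ over triples of partitions in suitable rectangular Young diagrams, and only the triple of empty partitions contributes in degree $0$. The main obstacle will be the careful sign and shift bookkeeping in the Koszul-duality step: one must verify that the combined shifts produced by Lemmas \ref{bullet}, \ref{row-reverse-signs}, and \ref{column-reverse-signs}, together with the inversion of the vertex shift under $(-)_\bullet$, combine to exactly $q^{(i+j+k)(N-i-j-k)+ij+jk+ki}\langle i+j+k\rangle$. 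This is a lengthy but routine generalization of the shift calculation in Lemma \ref{circle-rep-two-marks}.
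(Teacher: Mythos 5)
Your proposal is correct and follows essentially the same route as the paper: reduce both $\Hom$-spaces via Corollary \ref{contract-expand} to a single one, translate it via Lemmas \ref{bullet}, \ref{row-reverse-signs}, \ref{column-reverse-signs} into the homology of a closed "dumbbell" MOY graph with an explicit grading shift, and then evaluate that homology by two applications of Decomposition (II) followed by Corollary \ref{circle-dimension}. The only cosmetic difference is that you contract both $\Gamma_1$ and $\Gamma_1'$ to the $4$-valent graph before dualizing, whereas the paper identifies $\Hom_\HMF(C(\Gamma_1),C(\Gamma_1'))$ with $\Hom_\HMF(C(\Gamma_1'),C(\Gamma_1'))$ and proceeds from there; the shift bookkeeping and the final answer are identical.
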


\begin{figure}[ht]

\setlength{\unitlength}{1pt}

\begin{picture}(360,75)(-180,-15)

\qbezier(-40,50)(-40,60)(0,60)

\qbezier(40,50)(40,60)(0,60)

\qbezier(-40,10)(-40,0)(0,0)

\qbezier(40,10)(40,0)(0,0)

\put(40,50){\vector(0,-1){40}}

\qbezier(-40,10)(0,15)(0,30)

\qbezier(-40,50)(0,45)(0,30)

\put(0,30){\vector(0,1){0}}

\qbezier(-40,10)(-60,10)(-60,20)

\qbezier(-40,50)(-60,50)(-60,40)

\put(-60,20){\vector(0,1){0}}

\qbezier(-60,20)(-80,20)(-80,30)

\qbezier(-60,20)(-40,20)(-40,30)

\qbezier(-60,40)(-80,40)(-80,30)

\qbezier(-60,40)(-40,40)(-40,30)

\put(-40,50){\vector(1,0){0}}

\put(-80,30){\vector(0,1){0}}

\put(-40,30){\vector(0,1){0}}

\put(45,30){\tiny{$i+j+k$}}

\put(-75,10){\tiny{$i+j$}}

\put(-75,45){\tiny{$i+j$}}

\put(5,30){\tiny{$k$}}

\put(-35,30){\tiny{$j$}}

\put(-90,30){\tiny{$i$}}

\put(-5,-15){$\Gamma$}

\end{picture}

\caption{}\label{bouquet-move-figure-2}

\end{figure}

\begin{proof}
We only compute $\Hom_\HMF (C(\Gamma_1), C(\Gamma'_1))$. The computation of $\Hom_\HMF (C(\Gamma_2),C(\Gamma'_2))$ is similar. By Corollary \ref{contract-expand} and Lemmas \ref{bullet}, \ref{row-reverse-signs}, \ref{column-reverse-signs}, one can see that 
\begin{eqnarray*}
\Hom_\HMF (C(\Gamma_1), C(\Gamma'_1)) & \cong & \Hom_\HMF (C(\Gamma'_1), C(\Gamma'_1)) \\
& \cong & H(\Gamma)\left\langle i+j+k \right\rangle\{q^{(i+j+k)(N-i-j-k)+ij+jk+ki}\},
\end{eqnarray*}
where $\Gamma$ is the MOY graph in Figure \ref{bouquet-move-figure-2}. Using Decomposition (II) (Theorems \ref{decomp-II}) and Corollary \ref{circle-dimension}, we have that
\[
H(\Gamma) \cong C(\emptyset)\left\langle i+j+k \right\rangle\{\qb{N}{i+j+k}\qb{i+j+k}{k}\qb{i+j}{j}\}.
\]
The lemma follows from these isomorphisms.
\end{proof}

\begin{remark}\label{bouquet-move-remark}
From Corollary \ref{contract-expand} and Lemma \ref{bouquet-move-lemma}, one can see that, up to homotopy and scaling, a bouquet move induces a unique homotopy equivalence. In the rest of this paper, we usually denote such a homotopy equivalence by $h$.
\end{remark}

\subsection{Circle creation and annihilation}

\begin{lemma}\label{circle-empty-hmf}
Let $\bigcirc_m$ be a circle colored by $m$. Then, as $\zed_2\oplus\zed$-graded vector spaces over $\C$,
\[
\Hom_{HMF}(C(\bigcirc_m),C(\emptyset)) \cong \Hom_{HMF}(C(\emptyset),C(\bigcirc_m)) \cong C(\emptyset)\{\qb{N}{m}\}\left\langle m \right\rangle,
\]
where $C(\emptyset)$ is the matrix factorization $\C\rightarrow 0 \rightarrow \C$.

In particular, the subspaces of $\Hom_{HMF}(C(\emptyset),C(\bigcirc_m))$ and $\Hom_{HMF}(C(\bigcirc_m),C(\emptyset))$ of elements of quantum degree $-m(N-m)$ are $1$-dimensional. 
\end{lemma}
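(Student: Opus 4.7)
The plan is to reduce both $\Hom_{HMF}$ computations to the trivial computation $\Hom_{HMF}(C(\emptyset),C(\emptyset))$ by invoking the homotopy equivalence from Proposition \ref{circle-dimension}, namely $C(\bigcirc_m) \simeq C(\emptyset)\{\qb{N}{m}\}\left\langle m \right\rangle$ in $\hmf_{\C,0}$. Since a homotopy equivalence in either argument induces an isomorphism of $\Hom_{HMF}$ modules that preserves both the $\zed_2$-grading and the quantum grading (up to the same shifts as the equivalence), we obtain
\[
\Hom_{HMF}(C(\bigcirc_m), C(\emptyset)) \cong \Hom_{HMF}(C(\emptyset), C(\emptyset))\{\qb{N}{m}\}\left\langle m \right\rangle
\]
and an analogous identity for the other direction. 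Here I use that $\qb{N}{m}$ is palindromic in $q$, so that the sign of the quantum shift is irrelevant.

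The computation of $\Hom_{HMF}(C(\emptyset), C(\emptyset))$ is immediate from the definition $C(\emptyset) = \C \to 0 \to \C$: a morphism of matrix factorizations in $\zed_2$-degree $0$ and quantum degree $k$ is a pair $(f_0,f_1)$ with $f_1:0\to 0$ and $f_0:\C\to \C$ homogeneous of degree $k$; since both differentials vanish there is nothing to check and no homotopies are possible, so the space is one-dimensional when $k=0$ and zero otherwise. In $\zed_2$-degree $1$, the maps are forced to be zero. Hence $\Hom_{HMF}(C(\emptyset),C(\emptyset)) \cong C(\emptyset)$ as a $\zed_2 \oplus \zed$-graded vector space, and feeding this into the displayed formula gives the desired answer.

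Independently, one can verify the second isomorphism directly: a morphism $C(\emptyset) \to C(\bigcirc_m)$ is the same data as an even cycle in $C(\bigcirc_m)$, and homotopy corresponds to being a boundary, so
\[
\Hom_{HMF}(C(\emptyset), C(\bigcirc_m)) \cong H(\bigcirc_m),
\]
which by the second conclusion of Proposition \ref{circle-dimension} equals $C(\emptyset)\{\qb{N}{m}\}\left\langle m \right\rangle$. The other direction can be handled identically by combining Lemma \ref{rewrite-hom-finite-gen} with Lemma \ref{bullet} to identify $\Hom_\C(C(\bigcirc_m),C(\emptyset))$ with $C(\bigcirc_m)_\bullet$, which is homotopy equivalent to the same shifted copy of $C(\emptyset)$.

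There is no serious obstacle here; the only points requiring care are bookkeeping the quantum and $\zed_2$ grading shifts when pulling the equivalence from Proposition \ref{circle-dimension} through the $\Hom_{HMF}$ functor, and observing the palindromic symmetry of $\qb{N}{m}$ so that no sign ambiguity appears in the final shift.
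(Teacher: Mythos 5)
Your proof is correct and uses the same key input as the paper (Corollary~\ref{circle-dimension}); indeed your ``independent verification'' paragraph is essentially word-for-word the paper's own argument. The only mild variation is that your lead argument reduces both computations uniformly to $\Hom_{\HMF}(C(\emptyset),C(\emptyset))$ via the homotopy equivalence, whereas the paper treats the covariant direction via the natural isomorphism $\Hom(C(\emptyset),M)\cong M$ and only uses the equivalence on the contravariant side; both routes are fine, and your palindromicity remark correctly disposes of the $q\mapsto q^{-1}$ inversion on the contravariant argument (note also that Corollary~\ref{circle-dimension} is stated as a corollary, not a proposition).
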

\begin{proof}
The natural isomorphism $\Hom_\C(C(\emptyset),C(\bigcirc_m))\cong C(\bigcirc_m)$ is an isomorphism of matrix factorizations preserving the $\zed_2\oplus\zed$-grading. So, by Corollary \ref{circle-dimension}, 
\[
\Hom_{HMF}(C(\emptyset),C(\bigcirc_m)) \cong H(\bigcirc_m) \cong C(\emptyset)\{\qb{N}{m}\}\left\langle m \right\rangle.
\]
Using Corollary \ref{circle-dimension} again, we have
\[
\Hom_{HMF}(C(\bigcirc_m),C(\emptyset)) \cong \Hom_\C(C(\emptyset)\{\qb{N}{m}\}\left\langle m \right\rangle, C(\emptyset)) \cong C(\emptyset)\{\qb{N}{m}\}\left\langle m \right\rangle.
\]
\end{proof}

Lemma \ref{circle-empty-hmf} leads to the following definitions, which generalize the corresponding definitions in \cite{KR1}.

\begin{definition}
Let $\bigcirc_m$ be a circle colored by $m$. Associate to the circle creation a homogeneous morphism 
\[
\iota: C(\emptyset)(\cong \C) \rightarrow C(\bigcirc_m)
\] 
of quantum degree $-m(N-m)$ not homotopic to $0$.

Associate to the circle annihilation a homogeneous morphism 
\[
\epsilon:C(\bigcirc_m) \rightarrow C(\emptyset) (\cong \C)
\] 
of quantum degree $-m(N-m)$ not homotopic to $0$.

By Lemma \ref{circle-empty-hmf}, $\iota$ and $\epsilon$ are unique up to homotopy and scaling. Both of them have $\zed_2$-degree $m$.
\end{definition}

Using the natural isomorphism $\Hom_\C(C(\emptyset),C(\bigcirc_m))\cong C(\bigcirc_m)$, one can see that 
\begin{equation}\label{iota-def-computation}
\iota(1) \approx \mathfrak{G},
\end{equation}
where $\mathfrak{G}$ is the generating class of $H(\bigcirc_m)$.

Mark $\bigcirc_m$ by a single alphabet $\mathbb{X}$. By Lemma \ref{circle-rep-one-mark}, the element $1_{(1,1,\dots,1)}$ of $C(\bigcirc_m)$ is a cycle representing (a non-zero scalar multiple of) the generating class $\mathfrak{G} \in H(\bigcirc_m)$. From the proof of Proposition \ref{circle-module}, we know that there is a $\Sym(\mathbb{X})$-linear quasi-isomorphism
\[
P: C(\bigcirc_m) \rightarrow \Sym(\mathbb{X})/(h_N(\mathbb{X}),h_{N-1}(\mathbb{X}),\dots,h_{N+1-m}(\mathbb{X})) \{q^{-m(N-m)}\} \left\langle m \right\rangle
\]
satisfying $P(1_{(1,1,\dots,1)})=1$. By Corollary \ref{a-contraction-dual} and Remark \ref{reverse-b-contraction-dual}, $P$ induces a quasi-isomorphism 
\begin{eqnarray*}
 && \Hom_\C(\Sym(\mathbb{X})/(h_N(\mathbb{X}),h_{N-1}(\mathbb{X}),\dots,h_{N+1-m}(\mathbb{X})) \{q^{-m(N-m)}\} \left\langle m \right\rangle,\C) \\ 
& \xrightarrow{P^\sharp} & \Hom_\C(C(\bigcirc_m),C(\emptyset)).
\end{eqnarray*}
Recall that, by Theorem \ref{grassmannian}, there is a $\C$-linear trace map 
\[
\Tr:\Sym(\mathbb{X})/(h_{N+1-m}(\mathbb{X}),h_{N+2-m}(\mathbb{X}),\dots,h_{N}(\mathbb{X})) \rightarrow \C
\] 
satisfying
\[
\Tr(S_\lambda(\mathbb{X}) \cdot S_\mu(\mathbb{X})) = \left\{%
\begin{array}{ll}
    1 & \text{if } \lambda_j + \mu_{m+1-j} =N-m ~\forall j=1,\dots,m, \\
    0 & \text{otherwise,}  \\
\end{array}%
\right.
\]
where $\lambda,\mu\in \Lambda_{m,N-m}=\{(\lambda_1\geq\cdots\geq\lambda_m)~|~\lambda_1\leq N-m\}$ and $S_{\lambda}(\mathbb{X})$ is the Schur polynomial in $\mathbb{X}$ associated to the partition $\lambda$. Note that $P^\sharp(\Tr) = \Tr \circ P:C(\bigcirc_m) \rightarrow C(\emptyset)$ is homogeneous of $\zed_2$-grading $m$ and quantum grading $-m(N-m)$, and 
\begin{eqnarray}
\label{Tr-circ-P-property} && P^\sharp(\Tr)(S_\lambda(\mathbb{X}) \cdot S_\mu(\mathbb{X})\cdot 1_{(1,1,\dots,1)}) \\
\nonumber & = & \Tr(S_\lambda(\mathbb{X}) \cdot S_\mu(\mathbb{X})\cdot P(1_{(1,1,\dots,1)})) = \Tr(S_\lambda(\mathbb{X}) \cdot S_\mu(\mathbb{X})\cdot 1) \\
\nonumber & =&
\begin{cases}    
1 & \text{if } \lambda_j + \mu_{m+1-j} =N-m ~\forall j=1,\dots,m, \\
0 & \text{otherwise.} 
\end{cases}    
\end{eqnarray}
This implies that $P^\sharp(\Tr)$ induces a non-zero homomorphism on the homology. So $P^\sharp(\Tr)$ is homotopically non-trivial. Therefore,
\begin{equation}\label{epsilon-def-computation}
\epsilon \approx P^\sharp(\Tr) = \Tr \circ P.
\end{equation}

\begin{corollary}\label{iota-epsilon-composition}
Denote by $\mathfrak{m}(S_{\lambda}(\mathbb{X}))$ the morphism $C(\bigcirc_m)\rightarrow C(\bigcirc_m)$ induced by the multiplication by $S_{\lambda}(\mathbb{X})$. Then, for any $\lambda, \mu \in \Lambda_{m,N-m}$,
\[
\epsilon \circ \mathfrak{m}(S_{\lambda}(\mathbb{X})) \circ \mathfrak{m}(S_{\mu}(\mathbb{X})) \circ \iota \approx
\begin{cases}
\id_{C(\emptyset)} & \text{if } \lambda_j + \mu_{m+1-j} =N-m ~\forall j=1,\dots,m, \\
0 & \text{otherwise.} 
\end{cases}
\]
\end{corollary}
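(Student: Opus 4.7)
The plan is to simply evaluate the composition on the generator $1 \in C(\emptyset) \cong \C$ and track the image through each map using the explicit descriptions of $\iota$, $\epsilon$, and the generating class $\mathfrak{G}$ already established in this subsection.

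First I would apply $\iota$ to $1$, using \eqref{iota-def-computation} to obtain $\iota(1) \approx \mathfrak{G} \in H(\bigcirc_m)$. Since multiplications by symmetric polynomials are $\Sym(\mathbb{X})$-linear, applying $\mathfrak{m}(S_\mu(\mathbb{X}))$ and then $\mathfrak{m}(S_\lambda(\mathbb{X}))$ gives a cycle representing $S_\lambda(\mathbb{X}) \cdot S_\mu(\mathbb{X}) \cdot \mathfrak{G}$ (up to the same nonzero scalar). Then I would apply $\epsilon$; using \eqref{epsilon-def-computation} this is homotopic to $\Tr \circ P$ up to a nonzero scalar, and since $P$ is $\Sym(\mathbb{X})$-linear with $P(\mathfrak{G}) = 1$, the image is
\[
\epsilon \circ \mathfrak{m}(S_\lambda(\mathbb{X})) \circ \mathfrak{m}(S_\mu(\mathbb{X})) \circ \iota(1) \approx \Tr(S_\lambda(\mathbb{X}) \cdot S_\mu(\mathbb{X}) \cdot 1)
\]
in $\Sym(\mathbb{X})/(h_{N+1-m}(\mathbb{X}), \dots, h_N(\mathbb{X}))$.

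Finally, the explicit evaluation \eqref{Tr-circ-P-property} of the trace on products of Schur polynomials immediately yields that this scalar equals $1$ precisely when $\lambda_j + \mu_{m+1-j} = N-m$ for all $j=1,\dots,m$, and $0$ otherwise. Since $\Hom_{\HMF}(C(\emptyset), C(\emptyset)) \cong \C \cdot \id_{C(\emptyset)}$, the homotopy class of the composition is determined by its value on $1$, giving the stated identity up to a nonzero scalar.

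There is essentially no obstacle here: the statement is an immediate consequence of equations \eqref{iota-def-computation}, \eqref{epsilon-def-computation}, and \eqref{Tr-circ-P-property} together with the $\Sym(\mathbb{X})$-linearity of $P$. The only point that needs a small amount of care is making sure that the ``$\approx$'' relations compose properly, which is clear since each step introduces only an overall nonzero scalar and no homotopy ambiguity that would vanish under the subsequent nonzero maps.
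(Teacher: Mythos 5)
Your proof is correct and is precisely the argument the paper intends: it unwinds $\epsilon \circ \mathfrak{m}(S_\lambda(\mathbb{X})) \circ \mathfrak{m}(S_\mu(\mathbb{X})) \circ \iota$ by applying \eqref{iota-def-computation}, the $\Sym(\mathbb{X})$-linearity of $P$, \eqref{epsilon-def-computation}, and \eqref{Tr-circ-P-property} in sequence. The paper's own proof is the single line ``This corollary follows easily from \eqref{iota-def-computation}, \eqref{Tr-circ-P-property} and \eqref{epsilon-def-computation},'' which is exactly the computation you carried out.
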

\begin{proof}
This corollary follows easily from \eqref{iota-def-computation}, \eqref{Tr-circ-P-property} and \eqref{epsilon-def-computation}.
\end{proof}

\subsection{Edge splitting and merging}\label{subsec-edge-split-morph} Let $\Gamma_0$ and $\Gamma_1$ be the MOY graphs in Figure \ref{edge-splitting}. We call the change $\Gamma_0\leadsto\Gamma_1$ an edge splitting and the change $\Gamma_1\leadsto\Gamma_0$ an edge merging. In this subsection, we define the morphisms $\phi$ and $\overline{\phi}$ associated to edge splitting and merging.

\begin{figure}[ht]

\setlength{\unitlength}{1pt}

\begin{picture}(360,75)(-180,-90)

% left

\put(-67,-45){\tiny{$m+n$}}

\put(-70,-75){\vector(0,1){50}}

\put(-71,-50){\line(1,0){2}}

\put(-67,-30){\small{$\mathbb{X}$}}

\put(-95,-53){\small{$\mathbb{A}\cup\mathbb{B}$}}

\put(-67,-75){\small{$\mathbb{Y}$}}

\put(-75,-90){$\Gamma_0$}

% center

\put(-25,-50){\vector(1,0){50}}

\put(25,-60){\vector(-1,0){50}}

\put(-5,-47){\small{$\phi$}}

\put(-5,-70){\small{$\overline{\phi}$}}

% right

\put(70,-75){\vector(0,1){10}}

\put(70,-35){\vector(0,1){10}}

\qbezier(70,-65)(60,-65)(60,-55)

\qbezier(70,-35)(60,-35)(60,-45)

\put(60,-55){\vector(0,1){10}}

\put(59,-55){\line(1,0){2}}

\qbezier(70,-65)(80,-65)(80,-55)

\qbezier(70,-35)(80,-35)(80,-45)

\put(80,-55){\vector(0,1){10}}

\put(79,-55){\line(1,0){2}}

\put(73,-30){\tiny{$m+n$}}

\put(73,-70){\tiny{$m+n$}}

\put(83,-50){\tiny{$n$}}

\put(51,-50){\tiny{$m$}}

\put(60,-30){\small{$\mathbb{X}$}}

\put(60,-75){\small{$\mathbb{Y}$}}

\put(50,-58){\small{$\mathbb{A}$}}

\put(83,-58){\small{$\mathbb{B}$}}

\put(65,-90){$\Gamma_1$}

\end{picture}

\caption{}\label{edge-splitting}

\end{figure}

\begin{lemma}\label{edge-splitting-lemma}
Let $\Gamma_0$ and $\Gamma_1$ be the MOY graphs in Figure \ref{edge-splitting}. Then, as $\zed_2\oplus\zed$-graded vector spaces over $\C$, 
\[
\Hom_{HMF}(C(\Gamma_0),C(\Gamma_1)) \cong \Hom_{HMF}(C(\Gamma_1),C(\Gamma_0)) \cong C(\emptyset) \{q^{(N-m-n)(m+n)}\qb{N}{m+n} \qb{m+n}{m}\}.
\]
In particular, the lowest quantum gradings of the above spaces are $-mn$, and the subspaces of these spaces of homogeneous elements of quantum grading $-mn$ are $1$-dimensional.
\end{lemma}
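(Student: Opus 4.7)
The plan is to trivialize the bi-gon inside $\Gamma_1$ via Decomposition (II) (Theorem \ref{decomp-II}), reducing both Hom-spaces to $\Hom_{HMF}(C(\Gamma_0),C(\Gamma_0))$, and then to compute the latter using self-duality of $C(\Gamma_0)$ combined with the closed-circle computation in Corollary \ref{circle-dimension}.

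First I would recognize the middle of $\Gamma_1$ as precisely the bi-gon of Theorem \ref{decomp-II}, with the theorem's ``$n$'' and ``$m$'' played by $m+n$ and $m$ respectively. This yields a homotopy equivalence over $\Sym(\mathbb{X}|\mathbb{Y})$
\[
C(\Gamma_1) \simeq C(\Gamma_0)\{\qb{m+n}{m}\}.
\]
Since $\qb{m+n}{m}$ is palindromic in $q$ (each $[j]$, and hence each quantum binomial, is invariant under $q\mapsto q^{-1}$), a direct manipulation of grading shifts gives
\[
\Hom_{HMF}(C(\Gamma_0),C(\Gamma_1)) \cong \Hom_{HMF}(C(\Gamma_1),C(\Gamma_0)) \cong \Hom_{HMF}(C(\Gamma_0),C(\Gamma_0))\{\qb{m+n}{m}\}.
\]

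Next I would compute $\Hom_{HMF}(C(\Gamma_0),C(\Gamma_0))$ by the same technique used in the proof of Lemma \ref{circle-rep-two-marks}. Since $C(\Gamma_0)$ is finitely generated over $\Sym(\mathbb{X}|\mathbb{Y})$, Lemma \ref{rewrite-hom-finite-gen} supplies a bigrading-preserving isomorphism $\Hom(C(\Gamma_0),C(\Gamma_0)) \cong C(\Gamma_0)\otimes_{\Sym(\mathbb{X}|\mathbb{Y})} C(\Gamma_0)_{\bullet}$. Applying Lemma \ref{bullet} to dualize the right factor, followed by the sign-tracking lemmas \ref{row-reverse-signs} and \ref{column-reverse-signs} to reorder the rows, identifies this tensor product with $C(\bigcirc_{m+n})\{q^{(m+n)(N-m-n)}\}\langle m+n\rangle$, where $\bigcirc_{m+n}$ is the circle obtained by identifying the two endpoints of $\Gamma_0$. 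Taking homology and applying Corollary \ref{circle-dimension} then yields
\[
\Hom_{HMF}(C(\Gamma_0),C(\Gamma_0)) \cong C(\emptyset)\{q^{(m+n)(N-m-n)}\qb{N}{m+n}\},
\]
which combined with the first step gives the claimed formula.

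Finally, the lowest-grading assertion is a direct calculation from equation \eqref{compute-quantum-binary}: the lowest power of $q$ appearing in $\qb{a+b}{b}$ is $q^{-ab}$, realized with coefficient one by the empty partition. Hence the lowest power of $q$ in $q^{(N-m-n)(m+n)}\qb{N}{m+n}\qb{m+n}{m}$ is
\[
q^{(N-m-n)(m+n)-(m+n)(N-m-n)-mn}=q^{-mn},
\]
with coefficient one, so the quantum-degree-$(-mn)$ subspace of each Hom-space is one-dimensional. No step poses a real obstacle; the main care goes into bookkeeping the grading shifts through the duality chain in the second step.
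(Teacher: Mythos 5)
Your proposal is correct and follows essentially the same route as the paper: apply Decomposition (II) to trivialize the bi-gon, reduce both Hom-spaces to $\Hom_{HMF}(C(\Gamma_0),C(\Gamma_0))\{\qb{m+n}{m}\}$, and identify the latter with the homology of a colored circle via the duality machinery already used in the proof of Lemma \ref{circle-rep-two-marks}. The only (welcome) addition is your explicit remark that palindromicity of $\qb{m+n}{m}$ is what makes the two Hom-spaces agree after the shift; the paper leaves this tacit.
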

\begin{proof}
By Theorem \ref{decomp-II}, $C(\Gamma_1) \simeq C(\Gamma_0)\{\qb{m+n}{m}\}$. So 
\[
\Hom(C(\Gamma_0),C(\Gamma_1)) \simeq \Hom(C(\Gamma_0),C(\Gamma_0 ))\{\qb{m+n}{m}\} \simeq \Hom(C(\Gamma_1),C(\Gamma_0)).
\]
Denote by $\bigcirc_{m+n}$ the circle colored by $m+n$. Then, from the proof of Lemma \ref{circle-rep-two-marks}, we have 
\begin{eqnarray*}
\Hom(C(\Gamma_0),C(\Gamma_0 )) & \cong & C(\bigcirc_{m+n})\{q^{(N-m-n)(m+n)}\} \left\langle m+n \right\rangle \\
& \simeq & C(\emptyset) \{q^{(N-m-n)(m+n)}\qb{N}{m+n}\}.
\end{eqnarray*}
This proves the lemma.
\end{proof}

\begin{definition}\label{morphism-edge-splitting-merging-def}
Let $\Gamma_0$ and $\Gamma_1$ be the MOY graphs in Figure \ref{edge-splitting}. Associate to the edge splitting a homogeneous morphism 
\[
\phi: C(\Gamma_0) \rightarrow C(\Gamma_1)
\] 
of quantum degree $-mn$ not homotopic to $0$.

Associate to the edge merging a homogeneous morphism 
\[
\overline{\phi}:C(\Gamma_1) \rightarrow C(\Gamma_0)
\] 
of quantum degree $-mn$ not homotopic to $0$.

By Lemma \ref{edge-splitting-lemma}, the morphisms $\phi$ and $\overline{\phi}$ are well defined up to scaling and homotopy, and both of them have $\zed_2$-grading $0$. 
\end{definition}

It is not hard to find explicit forms of these morphisms. In fact, $\phi$ is the composition
\[
C(\Gamma_0) \xrightarrow{\id} C(\Gamma_0)\{q^{-mn}\} \hookrightarrow C(\Gamma_0) \{\qb{m+n}{m}\} \xrightarrow{\simeq} C(\Gamma_1),
\]
and $\overline{\phi}$ is the composition  
\[
C(\Gamma_1) \xrightarrow{\simeq} C(\Gamma_0) \{\qb{m+n}{m}\} \twoheadrightarrow C(\Gamma_0)\{q^{mn}\} \xrightarrow{\id} C(\Gamma_0),
\]
where $\hookrightarrow$ and $\twoheadrightarrow$ are the natural inclusion and projection maps. 

More precisely, from the proof of Theorem \ref{decomp-II}, we know that
\[
C(\Gamma_1) \simeq C(\Gamma_0) \otimes_{\Sym(\mathbb{A}\cup\mathbb{B})} (\Sym(\mathbb{A}|\mathbb{B}))\{q^{-mn}\}.
\]
The natural inclusion map $\Sym(\mathbb{A}\cup\mathbb{B}) \hookrightarrow \Sym(\mathbb{A}|\mathbb{B})$, which is $\Sym(\mathbb{A}\cup\mathbb{B})$-linear and has grading $0$, induces a homogeneous morphism 
\[
C(\Gamma_0) \xrightarrow{\phi'} C(\Gamma_1) ~(\simeq C(\Gamma_0) \otimes_{\Sym(\mathbb{A}\cup\mathbb{B})} (\Sym(\mathbb{A}|\mathbb{B}))\{q^{-mn}\}) 
\]
of $\zed_2$-degree $0$ and quantum degree $-mn$ given by $\phi'(r)= r \otimes 1$. 

From Theorem \ref{part-symm-str}, there is a unique $\Sym(\mathbb{A}\cup\mathbb{B})$-linear homogeneous projection $\zeta:\Sym(\mathbb{A}|\mathbb{B}) \rightarrow \Sym(\mathbb{A}\cup\mathbb{B})$ of degree $-2mn$, called the Sylvester operator, satisfying, for $\lambda,\mu\in \Lambda_{m,n}=\{(\lambda_1\geq\cdots\geq\lambda_m) ~|~\lambda_1\leq n\}$,
\[
\zeta(S_{\lambda}(\mathbb{A})\cdot S_{\mu}(-\mathbb{B})) = \left\{%
\begin{array}{ll}
    1 & \text{if } \lambda_j + \mu_{m+1-j} = n ~\forall j=1,\dots,m, \\ 
    0 & \text{otherwise,}
\end{array}%
\right. 
\]
The Sylvester operator $\zeta$ induces a homogeneous morphism 
\[
(C(\Gamma_0) \otimes_{\Sym(\mathbb{A}\cup\mathbb{B})} (\Sym(\mathbb{A}|\mathbb{B}))\{q^{-mn}\}\simeq)~ C(\Gamma_1) \xrightarrow{\overline{\phi'}} C(\Gamma_0)
\]
of $\zed_2$-degree $0$ and quantum degree $-mn$ given by 
\[
\overline{\phi'} (r \otimes (S_{\lambda}(\mathbb{A})\cdot S_{\mu}(-\mathbb{B}))) = \left\{%
\begin{array}{ll}
    r & \text{if } \lambda_j + \mu_{m+1-j} = n ~\forall j=1,\dots,m, \\ 
    0 & \text{otherwise,}
\end{array}%
\right. 
\]
where $\lambda,\mu\in \Lambda_{m,n}$.

Clearly, $\overline{\phi'}(S_{\lambda_{m,n}}(\mathbb{A}) \cdot \phi'(r)) =r$ $\forall~ r \in C(\Gamma_0)$. So $\phi'$ and $\overline{\phi'}$ are not homotopic to $0$. Thus, $\phi \approx \phi'$ and $\overline{\phi} \approx \overline{\phi'}$. In particular, we have the following lemma.

\begin{lemma}\label{phibar-compose-phi}
Let $\Gamma_0$ and $\Gamma_1$ be the MOY graphs in Figure \ref{edge-splitting}. Then
\[
\overline{\phi} \circ \mathfrak{m}(S_{\lambda}(\mathbb{A})\cdot S_{\mu}(-\mathbb{B})) \circ \phi \approx \left\{%
\begin{array}{ll}
    \id_{C(\Gamma_0)} & \text{if } \lambda_j + \mu_{m+1-j} = n ~\forall j=1,\dots,m, \\ 
    0 & \text{otherwise,}
\end{array}%
\right. 
\]
where $\lambda,\mu\in \Lambda_{m,n}$ and $\mathfrak{m}(S_{\lambda}(\mathbb{A})\cdot S_{\mu}(-\mathbb{B}))$ is the morphism induced by the multiplication of $S_{\lambda}(\mathbb{A})\cdot S_{\mu}(-\mathbb{B})$.
\end{lemma}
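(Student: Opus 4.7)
The plan is to invoke directly the explicit descriptions of $\phi$ and $\overline{\phi}$ that are established in the paragraphs immediately preceding the lemma. Since these morphisms are defined only up to homotopy and non-zero scalar, and the statement is written with ``$\approx$'', it suffices to verify the identity on representative models $\phi'$ and $\overline{\phi'}$. The key point is that, after identifying
\[
C(\Gamma_1) \simeq C(\Gamma_0) \otimes_{\Sym(\mathbb{A}\cup\mathbb{B})} \Sym(\mathbb{A}|\mathbb{B})\{q^{-mn}\},
\]
the morphism $\phi'$ is simply $r \mapsto r \otimes 1$, and $\overline{\phi'}$ is the operator $r \otimes s \mapsto r \cdot \zeta(s)$, where $\zeta$ is the Sylvester operator from Theorem \ref{part-symm-str}.

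First, I would trace through the composition on an arbitrary element $r \in C(\Gamma_0)$:
\[
r \xrightarrow{\phi'} r \otimes 1 \xrightarrow{\mathfrak{m}(S_\lambda(\mathbb{A}) S_\mu(-\mathbb{B}))} r \otimes (S_\lambda(\mathbb{A}) S_\mu(-\mathbb{B})) \xrightarrow{\overline{\phi'}} r \cdot \zeta(S_\lambda(\mathbb{A}) S_\mu(-\mathbb{B})).
\]
Here one uses that multiplication by $S_\lambda(\mathbb{A}) S_\mu(-\mathbb{B}) \in \Sym(\mathbb{A}|\mathbb{B})$ acts on the second tensor factor, which is valid because the $\Sym(\mathbb{A}|\mathbb{B})$-module structure on $C(\Gamma_1)$ commutes with its matrix factorization differential (this is how $C(\Gamma_1)$ is built). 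Then by the defining property of the Sylvester operator,
\[
\zeta(S_\lambda(\mathbb{A}) \cdot S_\mu(-\mathbb{B})) = \begin{cases} 1 & \text{if } \lambda_j+\mu_{m+1-j}=n \text{ for all } j, \\ 0 & \text{otherwise,} \end{cases}
\]
so the composition is either $\id_{C(\Gamma_0)}$ or $0$ on the nose, as required.

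The only subtle point is justifying that passing from $\phi, \overline{\phi}$ to $\phi', \overline{\phi'}$ is harmless: since both triples $(\phi,\overline{\phi})$ and $(\phi',\overline{\phi'})$ are non-trivial homogeneous morphisms of the correct quantum degree, Lemma \ref{edge-splitting-lemma} forces $\phi \approx \phi'$ and $\overline{\phi} \approx \overline{\phi'}$, so the two compositions differ by a non-zero scalar. I do not expect any serious obstacle; the substance of the lemma was already extracted when the explicit description of $\overline{\phi'}$ via the Sylvester operator was set up, and what remains is essentially bookkeeping.
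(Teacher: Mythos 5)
Your proposal is correct and follows the same route the paper takes: the lemma is stated immediately after the explicit models $\phi'$ and $\overline{\phi'}$ are constructed, with $\phi \approx \phi'$ and $\overline{\phi} \approx \overline{\phi'}$ already noted, and the composition formula is then read off from the defining property of the Sylvester operator $\zeta$ exactly as you compute.
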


\subsection{Adjoint Koszul matrix factorizations} Let $\Gamma_0$ and $\Gamma_1$ be the MOY graphs in Figure \ref{KR-chi-maps-fig}. Khovanov and Rozansky \cite{KR1} defined morphisms $C(\Gamma_0) \xrightarrow{\chi^0} C(\Gamma_1)$ and $C(\Gamma_1) \xrightarrow{\chi^1} C(\Gamma_0)$, which play an important role in the construction of their link homology. We generalize these $\chi$-morphisms in two subsections. First we construct morphisms between adjoint Koszul matrix factorizations in this subsection. Then, in next subsection, we apply this construction to matrix factorizations of MOY graphs to define the general $\chi$-morphisms.

\begin{figure}[ht]

\setlength{\unitlength}{1pt}

\begin{picture}(360,75)(-180,-15)

% left

\put(-65,0){\tiny{$1$}}

\put(-18,0){\tiny{$1$}}

\put(-65,55){\tiny{$1$}}

\put(-18,55){\tiny{$1$}}

\put(-38,30){\tiny{$2$}}

\put(-60,0){\vector(1,1){20}}

\put(-20,0){\vector(-1,1){20}}

\put(-40,40){\vector(-1,1){20}}

\put(-40,40){\vector(1,1){20}}

\put(-40,20){\vector(0,1){20}}

\put(-43,-15){$\Gamma_1$}

% center

\put(-15,40){\vector(1,0){30}}

\put(15,20){\vector(-1,0){30}}

\put(-5,44){\small{$\chi^1$}}

\put(-5,10){\small{$\chi^0$}}

% right

\put(25,0){\vector(0,1){60}}

\put(55,0){\vector(0,1){60}}

\put(62,0){\tiny{$1$}}

\put(20,0){\tiny{$1$}}

\put(37,-15){$\Gamma_0$}

\end{picture}

\caption{}\label{KR-chi-maps-fig}

\end{figure}

\begin{definition}\label{def-adjoint-mf}
Let $R$ be a graded commutative unital $\C$-algebra. Suppose that, for $i,j=1,\dots,n$, $a_j$, $b_i$ and $t_{ij}$ are homogeneous elements of $R$ satisfying $\deg a_j +\deg b_i + \deg T_{ij}=2N+2$. Let
\[
A= \left(%
\begin{array}{c}
  a_1 \\
  a_2 \\
  \dots \\
  a_n
\end{array}%
\right),
\hspace{.5cm}
B = \left(%
\begin{array}{c}
  b_1 \\
  b_2 \\
  \dots \\
  b_n
\end{array}%
\right),
\hspace{.5cm}
T = \left(%
\begin{array}{cccc}
  T_{11} & T_{12} & \dots & T_{1n} \\
  T_{21} & T_{22} & \dots & T_{2n} \\
  \dots & \dots & \dots & \dots \\
  T_{n1} & T_{n2} & \dots & T_{nn}
\end{array}%
\right).
\]
Then $M:=(A,T^tB)_R$ and $M':=(TA,B)_R$ are both graded Koszul matrix factorizations over $R$ with potential $w = \sum_{i,j=1}^n a_j b_i T_{ij}$. Here, $T^t$ is the transposition of $T$. We call $M$ and $M'$ adjoint Koszul matrix factorizations and $T$ the relation matrix. 
\end{definition}

Our objective is to construct a pair of morphisms between $M$ and $M'$ with certain properties. The following is the main result of this subsection.

\begin{proposition}\label{general-jumping-factor}
Let $M$ and $M'$ be as in Definition \ref{def-adjoint-mf}. Then there exist morphisms $F:M\rightarrow M'$ and $G: M' \rightarrow M$ satisfying:
\begin{enumerate}[(i)]
	\item $\deg_{\zed_2} F = \deg_{\zed_2} G =0$, $\deg F =0$ and 
	\[
	\deg G = \deg \det(T)=2n(N+1)-\sum_{k=1}^n (\deg a_k + \deg b_k),
	\]
	\item $G \circ F = \det(T) \cdot \id_M$ and $F\circ G = \det(T) \cdot \id_{M'}$.
\end{enumerate}
\end{proposition}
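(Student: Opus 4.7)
The plan is to identify both $M$ and $M'$ with the exterior algebra $\Lambda^\bullet V$ of $V = R^n$ (with basis $e_1,\dots,e_n$), under the correspondence $1_\ve \leftrightarrow e_{j_1}\wedge\cdots\wedge e_{j_k}$ for $\{j_1<\cdots<j_k\} = \{j:\ve_j=1\}$. Under this identification the differential of $M = (A,T^tB)_R$ becomes $d_M = a\wedge(\,\cdot\,) + \iota_{T^tB}$ with $a = \sum_j a_j e_j \in V$ and $T^tB = \sum_j(\sum_i T_{ij}b_i)e_j^* \in V^*$, while the differential of $M' = (TA,B)_R$ becomes $d_{M'} = (Ta)\wedge(\,\cdot\,) + \iota_B$. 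I would then set $F = \Lambda^\bullet T:M\to M'$, so that on $\Lambda^k V$
\[
F(e_{j_1}\wedge\cdots\wedge e_{j_k}) = (Te_{j_1})\wedge\cdots\wedge(Te_{j_k}) = \sum_{|I|=k}\det(T|_{I,J})\,e_I,
\]
where $J=\{j_1,\dots,j_k\}$. The morphism property $F\circ d_M = d_{M'}\circ F$ is immediate from functoriality: $F(a\wedge\omega) = (Ta)\wedge F\omega$ is clear, and $F(\iota_{T^tB}\omega) = \iota_B F\omega$ follows from the elementary identity $(T^tB)(v) = B(Tv)$. The bidegree $\deg_{\zed_2}F = 0 = \deg F$ is verified term-by-term from the balance $\deg T_{ij} = 2N+2 - \deg a_j - \deg b_i$.

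Next I would define $G:M'\to M$ by the cofactor formula
\[
G(e_I) = \sum_{|I'|=|I|}\epsilon(I',I)\,\det(T|_{I^c,(I')^c})\,e_{I'},
\]
where $\epsilon(I',I) = (-1)^{\sum_{i\in I'}i + \sum_{i\in I}i}$ is the standard Laplace sign. A direct degree count on the $(n-k)\times(n-k)$ complementary minor gives $\deg_{\zed_2}G = 0$ and $\deg G = \deg\det T = 2n(N+1) - \sum_k(\deg a_k + \deg b_k)$, matching the stated degree. The two relations $G\circ F = \det T\cdot\id_M$ and $F\circ G = \det T\cdot\id_{M'}$ are then precisely the generalized Laplace expansion
\[
\sum_{|J|=k}\epsilon(P,J)\,\det(T|_{Q,J})\,\det(T|_{P^c,J^c}) = \det T\cdot\delta_{P,Q},
\]
applied at each level $\Lambda^k V$.

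The main technical obstacle will be verifying that $G$ is itself a morphism of matrix factorizations, i.e.\ $G\circ d_{M'} = d_M\circ G$. I would bypass a direct Laplace-type manipulation by a universal specialization argument. Working first over the universal polynomial ring $R_0 = \zed[T_{ij},a_j,b_i]$ (with the same degree assignments), $\det T$ is a non-zero-divisor, so one may localize to $R_0[1/\det T]$, in which $F$ becomes an isomorphism. The equation $GF = \det T\cdot\id$ then uniquely determines $G = \det T\cdot F^{-1}$, which is automatically a morphism since $F$ is one and $\det T$ is central. Because the cofactor formula already presents $G$ with entries in $R_0$ (no $1/\det T$ denominators), the identity $Gd_{M'} = d_MG$ descends from $R_0[1/\det T]$ to $R_0$ and then specializes, via the evident ring map, to any graded commutative $\C$-algebra $R$ satisfying the hypotheses. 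A direct verification by expanding both sides and matching terms via Laplace expansion along a single row is also available, but considerably more laborious.
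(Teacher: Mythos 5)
Your construction is essentially the paper's: identifying $M$ and $M'$ with $\bigwedge R^n$, setting $F = \bigwedge^\bullet T$, and defining $G$ by the complementary-minor (cofactor) formula — which is literally the same map the paper writes as $(-1)^{k(n-k)}\star T^t\star$, as the paper's own computation in the proof shows. The functoriality argument for $F$ and the Jacobi/Laplace identity giving $G\circ F = F\circ G = \det(T)\cdot\id$ are also the paper's, modulo notation. Where you genuinely diverge is the proof that $G$ is a morphism: the paper establishes this by direct computation, deriving a sequence of commutation identities for the Hodge star against $\wedge\alpha$, $\neg\beta$, $T$ and $T^t$ (Lemmas \ref{star-alpha-beta-commute}, \ref{D-alpha-T-commute} and \ref{star-T-alpha-beta-commute}) and then chasing signs at each exterior level. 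You instead argue by universal specialization: over the universal polynomial ring, localize at $\det T$, observe that $GF = \det T\cdot\id$ forces $G = \det T\cdot F^{-1}$, which is automatically a chain map since $F$ is; because $G$ has denominator-free entries and the polynomial ring injects into its localization, the identity $G\circ d_{M'} = d_M\circ G$ descends to the universal ring and then pushes forward to any $R$. This is a correct and genuinely more economical route — it avoids the entire Hodge-star sign chase — at the cost of introducing the auxiliary universal ring. One small caution: the Laplace identity you display sums over the wrong index (it should sum over the ``middle'' subset appearing in both minors), but the intended identity is the standard one and the argument is unaffected. You should also make explicit, as the paper does, that the quantum degree of $\det(T|_{I^c,(I')^c})\,e_{I'}$ in $M$ minus the degree of $e_I$ in $M'$ is independent of $I'$ and equals $\deg\det T$; this independence is what makes $G$ homogeneous, and it is not automatic from the shape of the formula alone.
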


As a special case of Proposition \ref{general-jumping-factor}, we have the following corollary, which was established in \cite[Subsection 2.1]{KR2}.

\begin{corollary}\cite{KR2}\label{jumping-factor}
Let $a,b,t$ be homogeneous elements of $R$ with $\deg a + \deg b + \deg t =2N+2$. Then there exist homogeneous morphisms 
\begin{eqnarray*}
f: (a,tb)_R \rightarrow (ta,b)_R, && \\
g: (ta,b)_R \rightarrow (a,tb)_R, &&
\end{eqnarray*}
such that 
\begin{enumerate}[(i)]
  \item $\deg_{\zed_2} f = \deg_{\zed_2} g =0$, $\deg f =0$ and $\deg g = \deg t$.
	\item $g \circ f = t \cdot \id_{(a,tb)_R}$ and $f \circ g = t \cdot \id_{(ta,b)_R}$.
\end{enumerate}
\end{corollary}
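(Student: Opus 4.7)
The plan is to write $f$ and $g$ down directly, since each Koszul matrix factorization here is just a two-term complex of copies of $R$, and a morphism between them is nothing more than a pair of $R$-module maps making two squares commute. Concretely, I would set
$f_0 = \id_R$, $f_1 = $ multiplication by $t$, giving $f : (a,tb)_R \to (ta,b)_R$, and $g_0 = $ multiplication by $t$, $g_1 = \id_R$, giving $g : (ta,b)_R \to (a,tb)_R$.

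A short grading count using the shifts $\{q^{N+1-\deg a}\}$ and $\{q^{N+1-\deg(ta)}\}$ in Definition \ref{koszul-mf-def} and the identity $\deg(ta)=\deg t+\deg a$ shows that $f$ is homogeneous of $\zed_2$-degree $0$ and quantum degree $0$, while $g$ is homogeneous of $\zed_2$-degree $0$ and quantum degree $\deg t$. Commutativity with the differentials is then automatic: for $f$ the two squares read $ta\cdot \id = t\cdot a$ and $b\cdot t = \id\cdot(tb)$, and for $g$ they read $a\cdot t = \id\cdot(ta)$ and $(tb)\cdot \id = t\cdot b$, all trivial because $R$ is commutative. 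So $f$ and $g$ are morphisms of matrix factorizations with the required gradings.

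Finally I would compute the compositions. On the even slot, $g_0\circ f_0 = t\cdot \id = t$, and on the odd slot, $g_1\circ f_1 = \id\cdot t = t$, so $g\circ f = t\cdot \id_{(a,tb)_R}$; symmetrically $f\circ g = t\cdot \id_{(ta,b)_R}$. I do not expect any real obstacle, since everything is a one-line check once the grading shifts in Definition \ref{koszul-mf-def} are unwound. As a sanity check, this is exactly the $n=1$ case of Proposition \ref{general-jumping-factor}: the relation matrix $T$ collapses to the scalar $t$, and $\det(T)=t$ reproduces both the quantum degree of $g$ and the composition identities $g\circ f = t\cdot\id$, $f\circ g = t\cdot\id$.
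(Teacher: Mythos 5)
Your proof is correct and gives exactly the right maps. In the paper this corollary is not proved directly: it is stated as an immediate special case ($n=1$) of Proposition \ref{general-jumping-factor}, whose proof goes through the exterior-algebra/Hodge-star formalism for adjoint Koszul matrix factorizations. What you have done instead is unwind that machinery in the $1\times 1$ case and verify everything by hand: $f=(\id,\, t\cdot)$ is what the map $T$ specializes to, and $g=(t\cdot,\,\id)$ is what $\pm\star T^{t}\star$ becomes, with $\det T = t$ accounting for both the quantum degree of $g$ and the composition identities $g\circ f = t\cdot\id$, $f\circ g = t\cdot\id$. This buys you a transparent, self-contained argument for the corollary that does not require reading the general proposition; what you lose is the mechanism that produces the correct maps when the relation matrix $T$ is a genuine $n\times n$ matrix, which is exactly the situation where Proposition \ref{general-jumping-factor} is actually used later (e.g.\ in the construction of the $\chi$-morphisms). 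Your grading checks and square-commutativity checks are all correct, including the sign convention ($\zed_2$-degree $0$ morphisms commute with $d$ with no sign), and the closing remark correctly identifies this as the $n=1$ instance of the general statement.
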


Before proving Proposition \ref{general-jumping-factor}, we recall an alternative construction of Koszul matrix factorizations given in \cite[Section 2]{KR1}. 

Let $R^n= \underbrace{R\oplus \cdots \oplus R}_{n-\text{fold}}$, and $e_i=(0,\dots,0,\underbrace{1}_{i-\text{th}},0,\dots,0)^t$. The $\{e_1,\dots,e_n\}$ is an $R$-basis for $R^n$. Define $T: R^n \rightarrow R^n$ by $T(e_j)= \sum_{i=1}^n T_{ij}e_i$. Let $(R^n)^\ast$ be the dual of $R^n$ over $R$, $\{e_1^\ast,\dots,e_n^\ast\}$ the basis of $(R^n)^\ast$ dual to $\{e_1,\dots,e_n\}$, and $T^\ast: (R^n)^\ast \rightarrow (R^n)^\ast$ the dual map of $T$. Then $T^\ast (e_i^\ast)= \sum_{j=1}^n T_{ij}e_j^\ast$.

Set 
\begin{eqnarray*}
\alpha & = & \sum_{i=1}^n a_i e_i =(e_1,\dots,e_n)A \in R^n, \\
\beta & = & \sum_{i=1}^n b_i e_i^\ast = (e_1^\ast,\dots,e_n^\ast) B \in (R^n)^\ast.
\end{eqnarray*}
Then $T\alpha = (e_1,\dots,e_n)TA$ and $T^\ast\beta = (e_1^\ast,\dots,e_n^\ast) T^tB$.

From \cite[Section 2]{KR1}, we know that $M=(A,T^tB)_R$ is the matrix factorization 
\[
\bigwedge_{even} R^n \xrightarrow{\wedge \alpha + \neg T^\ast\beta} \bigwedge_{odd} R^n \xrightarrow{\wedge \alpha + \neg T^\ast\beta} \bigwedge_{even} R^n,
\]
in which, for any $i_1<\cdots<i_k$, $e_{i_1}\wedge\cdots\wedge e_{i_k}$ is homogeneous with $\zed_2$-grading $k$ and quantum grading $k(N+1)-\sum_{l=1}^k \deg a_{i_l}$. 

Similarly, $M'=(TA,B)_R$ is the matrix factorization 
\[
\bigwedge_{even} R^n \xrightarrow{\wedge T\alpha + \neg \beta} \bigwedge_{odd} R^n \xrightarrow{\wedge T\alpha + \neg \beta} \bigwedge_{even} R^n,
\]
in which, for any $i_1<\cdots<i_k$, $e_{i_1}\wedge\cdots\wedge e_{i_k}$ is homogeneous with $\zed_2$-grading $k$ and quantum grading $-k(N+1)+\sum_{l=1}^k \deg b_{i_l}$.

Note that $T$ induces an $R$-algebra endomorphism $T: \bigwedge R^n \rightarrow \bigwedge R^n$ by 
\[
T(e_{i_1}\wedge\cdots\wedge e_{i_k}):= Te_{i_1}\wedge\cdots\wedge Te_{i_k}.
\]

Define $R$-module map $\mathcal{D}:R^n \oplus (R^n)^\ast \rightarrow R^n \oplus (R^n)^\ast$ by $\mathcal{D}(e_i)=e_i^\ast$ and $\mathcal{D}(e_i^\ast)=e_i$. Then $\mathcal{D}^2=\id$. We define $T^t:R^n\rightarrow R^n$ by $T^t=\mathcal{D} \circ T^\ast \circ\mathcal{D}$. Then the matrix of $T^t$ under the basis $\{e_1,\dots,e_n\}$ is the transposition of $T$. $T^t$ induces an $R$-algebra endomorphism $T^t: \bigwedge R^n \rightarrow \bigwedge R^n$ by 
\[
T^t(e_{i_1}\wedge\cdots\wedge e_{i_k}):= T^t e_{i_1}\wedge\cdots\wedge T^t e_{i_k}.
\]

Next we introduce the Hodge $\star$-operator. $\star:\bigwedge R^n \rightarrow \bigwedge R^n$ is an $R$-module map defined so that, for any $i_1<\cdots<i_k$, $\star(e_{i_1}\wedge\cdots\wedge e_{i_k})=e_{j_1}\wedge\cdots\wedge e_{j_{n-k}}$, where $(e_{i_1},\dots, e_{i_k},e_{j_1},\dots,e_{j_{n-k}})$ is an even permutation of $(e_1,\dots,e_n)$. 

To simplify the exposition, we use the following notations in the rest of this subsection. 
\begin{itemize}
	\item $\mathcal{I}_k:=\{I=(i_1,\dots,i_k)|1\leq i_1<\cdots<i_k \leq n\}$.
	\item For any $I=(i_1,\dots,i_k) \in \mathcal{I}_k$, $\bar{I}$ is the unique element $\bar{I}=(j_1,\dots,j_{n-k})\in \mathcal{I}_{n-k}$ such that $\{i_1,\dots,i_k,j_1,\dots,j_{n-k}\}=\{1,\dots,n\}$.
	\item $(I,\bar{I})$ is the parity of the permutation $(i_1,\dots,i_k,j_1,\dots,j_{n-k})$ of $(1,\dots,n)$.
	\item $e_I := e_{i_1}\wedge\cdots\wedge e_{i_k}$. Note that $\star e_I = (-1)^{(I,\bar{I})}e_{\bar{I}}$.
	\item For $I=(i_1,\dots,i_k),~L=(l_1,\dots,l_k) \in \mathcal{I}_k$, we denote by $T_{LI}$ the matrix 
\[
T_{LI} = \left(%
\begin{array}{cccc}
  T_{l_1 i_1} & T_{l_1 i_2} & \dots & T_{l_1 i_k} \\
  T_{l_2 i_1} & T_{l_2 i_2} & \dots & T_{l_2 i_k} \\
  \dots & \dots & \dots & \dots \\
  T_{l_k i_1} & T_{l_k i_2} & \dots & T_{l_k i_k}
\end{array}%
\right).
\]
\end{itemize}

\begin{lemma}\label{T-star-composition}
For any $I=(i_1,\dots,i_k) \in \mathcal{I}_k$,
\[
\star T^t \star T (e_I) = T \star T^t\star (e_I) = (-1)^{k(n-k)}\det(T) \cdot e_I.
\]
\end{lemma}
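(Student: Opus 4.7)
The plan is to reduce the claim to the classical Laplace (cofactor) expansion of $\det T$. All the structure is already in place: $T$ and $T^t$ act on $\bigwedge^k R^n$ as the $k$-th exterior powers of the corresponding matrices, and $\star$ is a signed identification of $\bigwedge^k R^n$ with $\bigwedge^{n-k} R^n$. So the entire claim is really just a bookkeeping exercise plus Laplace expansion. I will focus on the first equality $\star T^t \star T(e_I)=(-1)^{k(n-k)}\det(T)\cdot e_I$; the second then follows by the same argument with the roles of $T$ and $T^t$ interchanged (noting $\det T = \det T^t$), or by dualizing via $\mathcal{D}$.

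First, I would compute $T(e_I)$ in the standard basis. By the Cauchy--Binet / exterior-power formula,
\[
T(e_I)=\sum_{L\in\mathcal{I}_k}\det(T_{LI})\,e_L.
\]
Applying $\star$ gives $\sum_L (-1)^{(L,\bar L)}\det(T_{LI})\,e_{\bar L}$. Next, using the identity $\det((T^t)_{\bar J\bar L})=\det(T_{\bar L\bar J})$ for the transposed minor, I would compute
\[
T^t(e_{\bar L})=\sum_{\bar J\in\mathcal{I}_{n-k}}\det(T_{\bar L\bar J})\,e_{\bar J},
\]
and then apply $\star$ once more to obtain
\[
\star T^t\star T(e_I)=\sum_{J\in\mathcal{I}_k}\Bigl(\sum_{L\in\mathcal{I}_k}(-1)^{(L,\bar L)+(\bar J,J)}\det(T_{LI})\det(T_{\bar L\bar J})\Bigr)e_J.
\]

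The remaining step is to identify the inner sum. I claim that for all $I,J\in\mathcal{I}_k$,
\[
\sum_{L\in\mathcal{I}_k}(-1)^{(L,\bar L)+(\bar J,J)}\det(T_{LI})\det(T_{\bar L\bar J})=(-1)^{k(n-k)}\delta_{IJ}\det(T).
\]
When $I=J$, this is precisely the Laplace expansion of $\det(T)$ along the columns indexed by $I$, rewritten to collect the sign $(-1)^{(L,\bar L)+(\bar I,I)}=(-1)^{k(n-k)}(-1)^{(L,\bar L)+(I,\bar I)}$, where the factor $(-1)^{k(n-k)}$ comes from swapping the two blocks $J,\bar J$ to $\bar J,J$. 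When $I\ne J$, the same expression equals the Laplace expansion of the determinant of the matrix obtained from $T$ by replacing its columns indexed by $\bar J$ with the columns indexed by $\bar I$; this matrix has a repeated column (one of the columns of $I$ also appears in $\bar J$ if $I\ne J$, since $|I|=|J|=k$ forces $I\cap\bar J\ne\emptyset$), so its determinant vanishes. This is the standard ``Cramer'' identity underlying the adjugate formula $T\cdot\operatorname{adj}(T)=\det(T)\cdot\mathrm{Id}$, lifted to $\bigwedge^k$.

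The main obstacle, and really the only place where one has to be careful, is the sign bookkeeping: keeping track of the four contributions $(L,\bar L)$, $(\bar J,J)$, $(I,\bar I)$ (implicit in the Laplace expansion) and the block-swap factor $(-1)^{k(n-k)}$, and verifying they combine to give exactly $(-1)^{k(n-k)}$ in the $I=J$ case. Once that is checked for one value of $I$ (say $I=(1,\dots,k)$, where the signs are easiest), the general case follows by permuting basis vectors, and the second equality $T\star T^t\star(e_I)=(-1)^{k(n-k)}\det(T)\,e_I$ follows either by the symmetric computation or by applying the first equality to the adjoint system obtained from swapping $T\leftrightarrow T^t$ via the involution $\mathcal{D}$.
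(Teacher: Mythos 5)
Your proposal follows essentially the same route as the paper: compute $T$, $\star$, $T^t$, $\star$ step by step in the standard basis of $\bigwedge^k R^n$ (the exterior-power / Cauchy--Binet formula plus the explicit parities of $(L,\bar L)$ and $(\bar J,J)$), and then recognize the resulting double sum as the Laplace expansion of $\det T$ along the columns indexed by $I$, with the off-diagonal ($I\ne J$) terms vanishing as in the adjugate identity. The only minor divergence is in the second equality, where the paper uses the clean trick $\star\star=(-1)^{k(n-k)}\mathrm{id}$ (obtained from the $T=\mathrm{id}$ case) to conjugate the already-proved identity, whereas you propose either redoing the symmetric computation or dualizing through $\mathcal{D}$; both are fine, but the paper's $\star\star$ argument is a touch shorter.
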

\begin{proof}
We first prove
\begin{equation}\label{T-star-composition-1}
\star T^t \star T (e_I) = (-1)^{k(n-k)}\det(T) \cdot e_I.
\end{equation}
Note that
\begin{eqnarray*}
T(e_I) & = & Te_{i_1}\wedge\cdots\wedge Te_{i_k} \\
& = & (\sum_{j_1=1}^n T_{j_1 i_1}e_{j_1})\wedge \cdots \wedge (\sum_{j_k=1}^n T_{j_k i_k}e_{j_k}) \\
& = & \sum_{J \in \mathcal{I}_k} \det (T_{JI}) \cdot e_J, \\
& & \\
\star e_J & = & (-1)^{(J,\bar{J})} e_{\bar{J}}, \\
& & \\
T^t (e_{\bar{J}}) & = & \sum_{L\in \mathcal{I}_{k}} \det (T^t_{\bar{L}\bar{J}}) \cdot e_{\bar{L}} = \sum_{L\in \mathcal{I}_{k}} \det (T_{\bar{J}\bar{L}}) \cdot e_{\bar{L}}, \\
& & \\
\star e_{\bar{L}} & = & (-1)^{(\bar{L},L)} e_L.
\end{eqnarray*}
Also, if we write $J=(j_1,\dots,j_k)$ and $L=(l_1,\dots,l_k)$, then
\begin{eqnarray*}
(J,\bar{J}) & = & \sum_{m=1}^k (j_m-m)= \sum_{m=1}^k j_m -\frac{k(k+1)}{2}, \\
(\bar{L},L) & = & \sum_{m=1}^k (n-k+m-l_m) = k(n-k) + \frac{k(k+1)}{2} - \sum_{m=1}^k l_m.
\end{eqnarray*}
Using the above equations and the Laplace Formula, we get
\begin{eqnarray*}
& & \star T^t \star T (e_I) \\
& = & (-1)^{k(n-k)} \sum_{L\in \mathcal{I}_{k}} \sum_{J \in \mathcal{I}_k} (-1)^{\sum_{m=1}^k j_m- \sum_{m=1}^k l_m} \det (T_{\bar{J}\bar{L}}) \cdot \det (T_{JI}) \cdot e_L \\
& = & (-1)^{k(n-k)} \det (T) \cdot e_I.
\end{eqnarray*}
Thus, \eqref{T-star-composition-1} is true. In particular, if $T=\id$, then $T^t=\id$ and \eqref{T-star-composition-1} implies that
\begin{equation}\label{T-star-composition-2}
\star \star (e_I) = (-1)^{k(n-k)} \cdot e_I.
\end{equation}

Replacing $T$ by $T^t$ in \eqref{T-star-composition-1}, we get
\begin{equation}\label{T-star-composition-3}
\star T \star T^t (e_I) = (-1)^{k(n-k)}\det(T^t) \cdot e_I = (-1)^{k(n-k)}\det(T) \cdot e_I.
\end{equation}
Note that \eqref{T-star-composition-1}, \eqref{T-star-composition-2} and \eqref{T-star-composition-3} are true for all $k$ and all $I\in\mathcal{I}_k$. So we have that
\begin{eqnarray*}
T \star T^t\star (e_I) & = & (-1)^{k(n-k)} \star\star T \star T^t\star (e_I) \\
& = & (-1)^{k(n-k)} \star(\star T \star T^t(\star e_I)) \\
& = & (-1)^{k(n-k)} \cdot (-1)^{k(n-k)} \cdot \det(T)\cdot \star\star e_I \\
& = & (-1)^{k(n-k)} \det(T)\cdot e_I.
\end{eqnarray*}
\end{proof}

\begin{lemma}\label{T-alpha-beta-commute}
\begin{eqnarray}
\label{alpha-T-commute} T\circ (\wedge \alpha) & = & (\wedge T\alpha) \circ T, \\
\label{beta-T-commute} T \circ (\neg T^\ast \beta) & = & (\neg \beta) \circ T.
\end{eqnarray}
\end{lemma}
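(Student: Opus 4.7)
Both identities are essentially formal consequences of the naturality of the exterior algebra construction, so the plan is just to unwind the definitions on basis elements of $\bigwedge R^n$ and match terms.

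For \eqref{alpha-T-commute}, I would simply use that $T\colon\bigwedge R^n\to\bigwedge R^n$ is an $R$-algebra endomorphism by construction. For any $\omega\in\bigwedge R^n$,
\[
T\bigl(\omega\wedge\alpha\bigr) \;=\; T(\omega)\wedge T(\alpha) \;=\; (\wedge T\alpha)\bigl(T\omega\bigr),
\]
which is exactly $(\wedge T\alpha)\circ T$ applied to $\omega$. This is a one-line verification once we recall that the extension of $T$ to $\bigwedge R^n$ is multiplicative.

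For \eqref{beta-T-commute}, the key observation is the defining property of the dual map: $(T^\ast\beta)(e_i)=\beta(Te_i)$ for every basis vector $e_i$. Combined with the fact that a contraction $\neg\gamma$ is an odd derivation on $\bigwedge R^n$, I would evaluate both sides on an arbitrary $e_I=e_{i_1}\wedge\cdots\wedge e_{i_k}$. The left-hand side gives
\[
T\bigl((T^\ast\beta)\neg e_I\bigr) \;=\; \sum_{j=1}^{k}(-1)^{j-1}(T^\ast\beta)(e_{i_j})\,Te_{i_1}\wedge\cdots\wedge\widehat{Te_{i_j}}\wedge\cdots\wedge Te_{i_k},
\]
while the right-hand side gives
\[
\beta\neg T(e_I) \;=\; \sum_{j=1}^{k}(-1)^{j-1}\beta(Te_{i_j})\,Te_{i_1}\wedge\cdots\wedge\widehat{Te_{i_j}}\wedge\cdots\wedge Te_{i_k}.
\]
Substituting $(T^\ast\beta)(e_{i_j})=\beta(Te_{i_j})$ shows the two expressions coincide, and the identity follows by $R$-linearity.

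There really is no substantial obstacle here, which is why the lemma is stated without fanfare: it is the standard naturality of the pair (wedge, contraction) under an endomorphism $T$ of $R^n$, and the only care needed is to track the convention that $T^\ast$ is the transpose and that $\alpha,\beta$ live in dual spaces so that $\wedge T\alpha$ pairs with $\neg\beta$ correctly. Once the two identities are in place, we will be able to use them in the proof of Proposition~\ref{general-jumping-factor} to interchange $T$ with the differentials of $M$ and $M'$, which is the reason we need them.
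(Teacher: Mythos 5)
Your proof is correct and takes essentially the same route as the paper's: \eqref{alpha-T-commute} follows from the multiplicativity of the extended endomorphism $T$ on $\bigwedge R^n$, and \eqref{beta-T-commute} is checked by expanding the contraction on the basis elements $e_I$ and using $(T^\ast\beta)(e_{i_j})=\beta(Te_{i_j})$. The only cosmetic difference is that the paper works with $e_I$ throughout while you treat \eqref{alpha-T-commute} for a general $\omega$, which is equally valid by linearity.
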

\begin{proof}
For any $I=(i_1,\dots,i_k) \in \mathcal{I}_k$, 
\begin{eqnarray*}
T\circ (\wedge \alpha) (e_{i_1}\wedge\cdots\wedge e_{i_k}) & = & T (e_{i_1}\wedge\cdots\wedge e_{i_k}\wedge \alpha) \\
& = & T (e_{i_1}\wedge\cdots\wedge e_{i_k})\wedge T\alpha \\
& = & (\wedge T\alpha) \circ T (e_{i_1}\wedge\cdots\wedge e_{i_k}).
\end{eqnarray*}
So \eqref{alpha-T-commute} is true. 

Similarly,
\begin{eqnarray*}
&  & T \circ (\neg T^\ast \beta) (e_{i_1}\wedge\cdots\wedge e_{i_k}) \\
& = & T(\sum_{m=1}^k (-1)^{m-1} \beta(Te_{i_m}) \cdot e_{i_1}\wedge\cdots \widehat{e_{i_m}}\cdots\wedge e_{i_k}) \\
& = & \sum_{m=1}^k (-1)^{m-1} \beta(Te_{i_m}) \cdot T(e_{i_1})\wedge\cdots \widehat{T(e_{i_m})}\cdots\wedge T(e_{i_k}) \\
& = & (\neg \beta) (T(e_{i_1})\wedge\cdots\wedge T(e_{i_k})) \\
& = & (\neg \beta) \circ T (e_{i_1}\wedge\cdots\wedge e_{i_k}).
\end{eqnarray*}
So \eqref{beta-T-commute} is true.
\end{proof}

\begin{lemma}\label{star-alpha-beta-commute}
\begin{eqnarray}
\label{alpha-star-commute} \star\circ (\wedge \alpha) & = & (\neg \mathcal{D}\alpha) \circ \star, \\
\label{beta-star-commute} \star\circ (\neg \beta) & = & (-1)^{n-1} (\wedge \mathcal{D}\beta) \circ \star.
\end{eqnarray}
\end{lemma}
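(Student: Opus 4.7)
The plan is to reduce both identities to a direct verification on the basis $\{e_I \mid I \in \mathcal{I}_k,\ 0 \leq k \leq n\}$ of $\bigwedge R^n$. Since $\wedge$, $\neg$, $\star$, and $\mathcal{D}$ are all $R$-linear, and since $\alpha = \sum_j a_j e_j$ and $\beta = \sum_j b_j e_j^\ast$, it suffices to check the two identities when $\alpha$ is replaced by a single basis vector $e_j$ and $\beta$ by a single dual basis vector $e_j^\ast$ (so that $\mathcal{D}\alpha$ becomes $e_j^\ast$ and $\mathcal{D}\beta$ becomes $e_j$). Thus I want to establish
\[
\star(e_I\wedge e_j) = (\neg e_j^\ast)(\star e_I)\quad\text{and}\quad \star(\neg e_j^\ast)(e_I) = (-1)^{n-1}(\wedge e_j)(\star e_I)
\]
for every $I\in\mathcal{I}_k$ and every $j\in\{1,\dots,n\}$.

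For the first identity I split into cases. If $j\in I$, then $e_I\wedge e_j=0$ and $j\notin\bar I$, so $(\neg e_j^\ast)(e_{\bar I})=0$, and both sides vanish. If $j\notin I$, let $p=\#\{m\mid i_m<j\}$ so that inserting $j$ into $I$ contributes the sign $(-1)^{k-p}$, and let $q$ denote the position of $j$ in $\bar I$. Both sides reduce to a scalar multiple of $e_{\overline{I\cup\{j\}}}$, and the verification becomes the identity of signs
\[
(-1)^{k-p}(-1)^{(I\cup\{j\},\overline{I\cup\{j\}})} = (-1)^{(I,\bar I)+q-1},
\]
which I will deduce from the explicit formula $(I,\bar I)=\sum_m i_m-k(k+1)/2$ together with the relation $q=j-p$. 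The argument for the second identity is entirely parallel: if $j\notin I$ both sides vanish, and if $j=i_m\in I$ one compares the scalars multiplying $e_{\overline{I\setminus\{j\}}}=e_{\bar I\cup\{j\}}$ using the corresponding sign formulas. Here the extra factor $(-1)^{n-1}$ arises from the reversal in $(\bar I,I)$ versus $(I,\bar I)$ and is responsible for the sign discrepancy between the two identities.

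The only real work is the sign bookkeeping just sketched; there is no conceptual obstacle, and I expect each case to reduce to a short parity computation of the type already carried out in the proof of Lemma~\ref{T-star-composition}. Once the two identities are verified on all basis vectors $e_I$ and for every $j$, restoring $R$-linearity in $\alpha$ and $\beta$ immediately yields \eqref{alpha-star-commute} and \eqref{beta-star-commute}.
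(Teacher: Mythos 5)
Your plan is correct and is essentially the same argument the paper gives: a direct $R$-linear reduction to basis monomials $e_I$ followed by the Hodge-star sign bookkeeping (using the parity formula $(I,\bar I)=\sum_m i_m - k(k+1)/2$). The paper keeps $\alpha$, $\beta$ general and expands over $j_m\in\bar I$ rather than specializing to $\alpha=e_j$ and splitting into $j\in I$ versus $j\notin I$, but this is a cosmetic difference; your sign identity for the first equation checks out, and the $(-1)^{n-1}$ in the second indeed arises from the $k(n-k)$ parity gap between $(I,\bar I)$ and $(\bar I,I)$.
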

\begin{proof}
For any $I=(i_1,\dots,i_k) \in \mathcal{I}_k$, let $\bar{I} =( j_1,\dots,j_{n-k})$. Then
\begin{eqnarray*}
& &   \star\circ (\wedge \alpha) (e_{i_1}\wedge\cdots\wedge e_{i_k}) \\
& = & \star (e_{i_1}\wedge\cdots\wedge e_{i_k}\wedge \alpha) \\
& = & \star(\sum_{m=1}^{n-k} a_{j_m} \cdot e_{i_1}\wedge\cdots\wedge e_{i_k}\wedge e_{j_m}) \\
& = & \sum_{m=1}^{n-k} a_{j_m} \cdot \star(e_{i_1}\wedge\cdots\wedge e_{i_k}\wedge e_{j_m}) \\
& = & \sum_{m=1}^{n-k} a_{j_m} \cdot (-1)^{(I,\bar{I})+m-1} \cdot e_{j_1}\wedge\cdots \widehat{e_{j_m}}\cdots\wedge e_{j_{n-k}} \\
& = & (\neg \mathcal{D}\alpha) \circ \star (e_{i_1}\wedge\cdots\wedge e_{i_k}).
\end{eqnarray*}
So \eqref{alpha-star-commute} is true.

Similarly,
\begin{eqnarray*}
& &   \star\circ (\neg \beta) (e_{i_1}\wedge\cdots\wedge e_{i_k}) \\
& = & \star (\sum_{m=1}^k (-1)^{m-1} b_{i_m} \cdot e_{i_1}\wedge\cdots \widehat{e_{i_m}}\cdots\wedge e_{i_k}) \\
& = & \sum_{m=1}^k (-1)^{m-1} b_{i_m} \cdot \star (e_{i_1}\wedge\cdots \widehat{e_{i_m}}\cdots\wedge e_{i_k}) \\
& = & \sum_{m=1}^k (-1)^{m-1} b_{i_m} \cdot (-1)^{(I,\bar{I})+n-m} \cdot e_{\bar{I}} \wedge e_{i_m} \\
& = & (-1)^{n-1} \star (e_I) \wedge \mathcal{D}\beta \\
& = & (-1)^{n-1} (\wedge \mathcal{D}\beta) \circ \star (e_{i_1}\wedge\cdots\wedge e_{i_k}).
\end{eqnarray*}
So \eqref{beta-star-commute} is true.
\end{proof}

\begin{lemma}\label{D-alpha-T-commute}
\begin{eqnarray}
\label{T-D-commute} \mathcal{D}\circ T^t \circ \mathcal{D} & = & T^\ast, \\
\label{alpha-T-D-commute} \mathcal{D}\circ T (\alpha) & = & (T^t)^\ast \circ \mathcal{D}(\alpha).
\end{eqnarray}
\end{lemma}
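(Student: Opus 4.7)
The plan is to observe that both identities in Lemma \ref{D-alpha-T-commute} are essentially formal consequences of the definitions already on the table, so the proof should be a short unwinding rather than a substantive argument.

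For the first identity, I would simply invoke the definition $T^t := \mathcal{D}\circ T^\ast\circ \mathcal{D}$ given just before the statement, combined with $\mathcal{D}^2 = \id$. Composing on both sides by $\mathcal{D}$ yields
\[
\mathcal{D}\circ T^t\circ \mathcal{D} \;=\; \mathcal{D}\circ\mathcal{D}\circ T^\ast\circ\mathcal{D}\circ\mathcal{D} \;=\; T^\ast,
\]
with no computation needed.

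For the second identity, I would verify it by direct computation on the basis and then apply it to $\alpha = \sum_i a_i e_i$. On basis elements, $T(e_j) = \sum_i T_{ij}e_i$, so
\[
\mathcal{D}\circ T(e_j) \;=\; \sum_{i} T_{ij}\,e_i^\ast.
\]
On the other hand, the matrix of $T^t$ under $\{e_1,\dots,e_n\}$ has entries $(T^t)_{ij} = T_{ji}$, so its dual acts by $(T^t)^\ast(e_i^\ast) = \sum_j (T^t)_{ij}e_j^\ast = \sum_j T_{ji}\,e_j^\ast$, giving
\[
(T^t)^\ast\circ\mathcal{D}(e_j) \;=\; (T^t)^\ast(e_j^\ast) \;=\; \sum_i T_{ij}\,e_i^\ast.
\]
The two expressions agree on each $e_j$, hence agree after $R$-linear extension when applied to $\alpha$. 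The same calculation would in fact establish the operator identity $\mathcal{D}\circ T = (T^t)^\ast\circ\mathcal{D}$ on all of $R^n$; specializing to $\alpha$ gives what is stated.

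There is no real obstacle here: both identities are bookkeeping between the four closely related operators $T$, $T^\ast$, $T^t$, $(T^t)^\ast$ using the involution $\mathcal{D}$ that swaps $R^n$ with $(R^n)^\ast$. The only point where one must be careful is not to confuse $T^t$ (an endomorphism of $R^n$, defined via conjugation by $\mathcal{D}$) with $T^\ast$ (the dual endomorphism on $(R^n)^\ast$); the first identity is precisely the compatibility statement between these two reinterpretations of the transposed matrix.
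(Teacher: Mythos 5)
Your proof of the first identity is exactly the paper's proof: both just conjugate the definition $T^t=\mathcal{D}\circ T^\ast\circ\mathcal{D}$ by $\mathcal{D}$ and use $\mathcal{D}^2=\id$. For the second identity you take a slightly different route: you verify the operator identity $\mathcal{D}\circ T=(T^t)^\ast\circ\mathcal{D}$ by a direct matrix computation on the basis $\{e_j\}$, whereas the paper obtains the same operator identity $\mathcal{D}\circ T\circ\mathcal{D}=(T^t)^\ast$ by substituting $T^t$ for $T$ in the already-proved first identity and using (implicitly) that $(T^t)^t=T$; it then evaluates at $\mathcal{D}(\alpha)$. Your computation is a bit more explicit and has the small advantage of not relying on the unstated fact $(T^t)^t=T$, while the paper's substitution argument is a bit slicker once that fact is granted. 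Both proofs are correct, both in fact establish the stronger operator identity rather than just the pointwise statement for $\alpha$, and either would be acceptable.
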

\begin{proof}
Recall that $T^t$ is defined by $T^t=\mathcal{D}\circ T^\ast \circ \mathcal{D}$ and that $\mathcal{D}^2=\id$. \eqref{T-D-commute} follows immediately. Replace $T$ by $T^t$ in \eqref{T-D-commute}, we get $\mathcal{D}\circ T \circ \mathcal{D} = (T^t)^\ast$. Plugging $\mathcal{D}(\alpha)$ into this equation, we get \eqref{alpha-T-D-commute}.
\end{proof}

\begin{lemma}\label{star-T-alpha-beta-commute}
\begin{eqnarray}
\label{alpha-T-star-commute} (\star T^t \star) \circ (\wedge T\alpha) & = & (-1)^{n-1} (\wedge\alpha) \circ (\star T^t \star), \\
\label{beta-T-star-commute} (\star T^t \star) \circ (\neg \beta) & = & (-1)^{n-1} (\neg T^\ast \beta) \circ (\star T^t \star).
\end{eqnarray}
\end{lemma}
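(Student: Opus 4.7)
The plan is to prove both identities by a direct chain of substitutions, pushing the operators $\wedge T\alpha$ and $\neg \beta$ past $\star T^t \star$ one factor at a time, using Lemmas \ref{T-alpha-beta-commute}, \ref{star-alpha-beta-commute}, and \ref{D-alpha-T-commute}. Heuristically, Lemma \ref{T-star-composition} tells us that $\star T^t \star$ behaves like ``$T^{-1}\cdot(-1)^{k(n-k)}\det(T)$'' on $\bigwedge^k R^n$, so conjugating $\wedge T\alpha$ by it should produce $\wedge \alpha$, and similarly $\neg \beta$ should become $\neg T^\ast\beta$; the only nontrivial bookkeeping is the sign.

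For the first identity, I would start from the outermost $\star$ in $(\star T^t \star)\circ(\wedge T\alpha)$ and push $\wedge T\alpha$ inward. First apply Lemma \ref{star-alpha-beta-commute} to turn $\star\circ(\wedge T\alpha)$ into $(\neg \mathcal{D}(T\alpha))\circ \star$. Then rewrite $\mathcal{D}(T\alpha)=(T^t)^\ast \mathcal{D}\alpha$ by Lemma \ref{D-alpha-T-commute}, so that Lemma \ref{T-alpha-beta-commute} (applied with $T$ replaced by $T^t$ and $\beta$ replaced by $\mathcal{D}\alpha$) converts $T^t\circ (\neg (T^t)^\ast\mathcal{D}\alpha)$ into $(\neg \mathcal{D}\alpha)\circ T^t$. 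Finally apply Lemma \ref{star-alpha-beta-commute} once more to turn $\star\circ(\neg\mathcal{D}\alpha)$ into $(-1)^{n-1}(\wedge \mathcal{D}\mathcal{D}\alpha)\circ \star=(-1)^{n-1}(\wedge\alpha)\circ \star$, collecting the single sign $(-1)^{n-1}$ and producing $(-1)^{n-1}(\wedge\alpha)\circ(\star T^t\star)$ as required.

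The second identity is handled by the mirror computation. Starting from $(\star T^t\star)\circ(\neg\beta)$, use the second part of Lemma \ref{star-alpha-beta-commute} to replace $\star\circ(\neg\beta)$ with $(-1)^{n-1}(\wedge \mathcal{D}\beta)\circ\star$. Next apply Lemma \ref{T-alpha-beta-commute} to move $T^t$ past the wedge, producing $(\wedge T^t\mathcal{D}\beta)\circ T^t$, then rewrite $T^t\mathcal{D}\beta=\mathcal{D}T^\ast\beta$ via Lemma \ref{D-alpha-T-commute}. Finally use the first part of Lemma \ref{star-alpha-beta-commute} to turn $\star\circ(\wedge \mathcal{D}T^\ast\beta)$ into $(\neg T^\ast\beta)\circ\star$, again producing the claimed sign $(-1)^{n-1}$.

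Nothing here is deep; both identities follow by threading the four previous lemmas in the correct order. The main obstacle is purely bookkeeping: one must be careful that each of the intermediate operators ($\wedge T\alpha$, $\neg T^\ast\beta$, $\wedge \mathcal{D}T^\ast\beta$, $\neg \mathcal{D}(T\alpha)$, etc.) is a homogeneous $R$-linear endomorphism of $\bigwedge R^n$ of the correct $\zed_2$-parity so that Lemmas \ref{T-alpha-beta-commute} and \ref{star-alpha-beta-commute} apply as stated, and that the sole sign $(-1)^{n-1}$ enters exactly once per identity (from the single application of the second part of Lemma \ref{star-alpha-beta-commute}). Once set up cleanly, each identity is a short three-step rewrite.
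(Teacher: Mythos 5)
Your proof is correct and follows essentially the same chain of rewrites as the paper: for each identity, apply Lemma \ref{star-alpha-beta-commute} to commute past the outer $\star$, use Lemma \ref{T-alpha-beta-commute} (with $T$ replaced by $T^t$) to commute past $T^t$, and apply Lemma \ref{star-alpha-beta-commute} again to commute past the inner $\star$, with Lemma \ref{D-alpha-T-commute} and $\mathcal{D}^2=\id$ handling the conversion between $\mathcal{D}T\alpha$ and $(T^t)^\ast\mathcal{D}\alpha$ (resp. $T^t\mathcal{D}\beta$ and $\mathcal{D}T^\ast\beta$). The only cosmetic difference is that in the second identity you rewrite via \eqref{T-D-commute} before the final $\star$-commutation rather than after, which is an equivalent reordering.
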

\begin{proof}
Note that Lemmas \ref{T-alpha-beta-commute} through \ref{D-alpha-T-commute} are true for any $\alpha \in R^n$, $\beta \in (R^n)^\ast$ and $T \in \Hom_R(R^n,R^n)$. So
\[
\left.%
\begin{array}{rclcl}
  (\star T^t \star) \circ (\wedge T\alpha) & = & (\star T^t)  \circ (\neg \mathcal{D} T\alpha) \circ \star & \hspace{1cm} & \text{(by \eqref{alpha-star-commute})} \\
 & = & (\star T^t)  \circ (\neg (T^t)^\ast\mathcal{D} \alpha) \circ \star & \hspace{1cm} & \text{(by \eqref{alpha-T-D-commute})} \\
 & = & \star  \circ (\neg \mathcal{D} \alpha) \circ  (T^t\star) & \hspace{1cm} & \text{(by \eqref{beta-T-commute})} \\
 & = & (-1)^{n-1}(\wedge \mathcal{D}^2 \alpha) \circ  (\star T^t\star) & \hspace{1cm} & \text{(by \eqref{beta-star-commute})} \\
 & = & (-1)^{n-1}(\wedge \alpha) \circ  (\star T^t\star) & \hspace{1cm} & \text{since } \mathcal{D}^2=\id. \\
\end{array}%
\right.
\]
This proves \eqref{alpha-T-star-commute}.

Similarly, we have
\[
\left.%
\begin{array}{rclcl}
  (\star T^t \star) \circ (\neg \beta) & = & (-1)^{n-1} (\star T^t)  \circ (\wedge \mathcal{D} \beta) \circ \star & \hspace{1cm} & \text{(by \eqref{beta-star-commute})} \\
 & = & (-1)^{n-1} \star   \circ (\wedge T^t\mathcal{D} \beta) \circ (T^t\star) & \hspace{1cm} & \text{(by \eqref{alpha-T-commute})} \\
 & = & (-1)^{n-1} (\neg \mathcal{D} T^t\mathcal{D} \beta) \circ (\star T^t\star) & \hspace{1cm} & \text{(by \eqref{alpha-star-commute})} \\
 & = & (-1)^{n-1} (\neg T^\ast \beta) \circ (\star T^t\star) & \hspace{1cm} & \text{(by \eqref{T-D-commute})} \\
\end{array}%
\right.
\]
This proves \eqref{beta-T-star-commute}.
\end{proof}

Now we are ready to prove Proposition \ref{general-jumping-factor}.

\begin{proof}[Proof of Proposition \ref{general-jumping-factor}]
Define $F:M\rightarrow M'$ by $F=T:\bigwedge R^n \rightarrow \bigwedge R^n$. Also, define $G:M' \rightarrow M$ by $G(e_I) = (-1)^{k(n-k)} \star F^t \star (e_I)$ $\forall~I=(i_1,\dots,i_k) \in \mathcal{I}_k$. Then Lemmas \ref{T-alpha-beta-commute} and \ref{star-T-alpha-beta-commute} imply that $F$ and $G$ are morphisms of matrix factorizations. Lemma \ref{T-star-composition} implies that $G \circ F = \det(T) \cdot \id_M$ and $F\circ G = \det(T) \cdot \id_{M'}$. It is easy to see that $\deg_{\zed_2} F = \deg_{\zed_2} G =0$. It remains to show that $F$ and $G$ are homogeneous with the correct quantum gradings.

For $I=(i_1,\dots,i_k) \in \mathcal{I}_k$, let  
\begin{eqnarray*}
S(I) & = & \sum_{m=1}^k i_m, \\
S_a(I) & = & \sum_{m=1}^k \deg a_{i_m}, \\
S_b(I) & = & \sum_{m=1}^k \deg b_{i_m}.
\end{eqnarray*}
Recall that, $e_I$ is a homogeneous element of both $M$ and $M'$. As an element of $M$, the quantum grading of $e_I$ is $\deg_M e_I = k(N+1)-S_a(I)$. And, as an element of $M'$, its quantum grading is $\deg_{M'}e_I = S_b(I)-k(N+1)$. It is easy to check that, for $I,J \in \mathcal{I}_k$, $\deg T_{JI}$ is homogeneous with $\deg T_{JI} = 2k(N+1) - S_a(I)-S_b(J)$. So 
\begin{eqnarray*}
\deg_{M'} \det(T_{JI}) e_J & = & 2k(N+1) - S_a(I)-S_b(J) + S_b(J)-k(N+1) \\
& = & k(N+1) - S_a(I) = \deg_M e_I.
\end{eqnarray*}
But 
\[
F(e_I) = T(e_I) = \sum_{J\in \mathcal{I}_k} \det(T_{JI}) e_J. 
\]
This shows that $F$ is homogeneous with quantum degree $0$. 

Similarly, 
\begin{eqnarray*}
G(e_I) & = & (-1)^{k(n-k)} \star T^t \star (e_I) = \sum_{J\in \mathcal{I}_k} (-1)^{S(I)+S(J)} \det(T^t_{\bar{J}\bar{I}}) e_J \\
& = & \sum_{J\in \mathcal{I}_k} (-1)^{S(I)+S(J)} \det(T_{\bar{I}\bar{J}}) e_J.
\end{eqnarray*}
Note that each term $\det(T_{\bar{I}\bar{J}}) e_J$ is homogeneous in $M$ with quantum degree
\begin{eqnarray*}
&   & \deg_M \det(T_{\bar{I}\bar{J}}) e_J \\
& = & 2(n-k)(N+1) - S_a(\bar{J})-S_b(\bar{I}) + k(N+1)-S_a(J) \\
& = & (2n-k)(N+1) - S_a(1,\dots,n) - (S_b(1,\dots,n) - S_b(I)) \\
& = & (2n(N+1) - S_a(1,\dots,n) - S_b(1,\dots,n)) + (S_b(I) -k(N+1)) \\
& = & \deg \det(T) + \deg_{M'} e_I.
\end{eqnarray*}
This shows that $G$ is homogeneous with quantum degree $\deg \det(T)$.
\end{proof}

\begin{remark}
First, note that Lemma \ref{row-reverse-signs} and Corollary \ref{row-op} are both special cases of Proposition \ref{general-jumping-factor}. Second, recall that Rasmussen \cite{Ras2} explained that the $\zed_2$-grading of a Koszul matrix factorization can be lifted to a $\zed$-grading. $F$ and $G$ in Proposition \ref{general-jumping-factor} preserve this $\zed$-grading.
\end{remark}

\subsection{General $\chi$-morphisms} 

\begin{figure}[ht]

\setlength{\unitlength}{1pt}

\begin{picture}(360,75)(-180,-15)

% left

\put(-120,0){\vector(1,1){20}}

\put(-100,20){\vector(1,-1){20}}

\put(-100,40){\vector(0,-1){20}}

\put(-100,40){\vector(1,1){20}}

\put(-120,60){\vector(1,-1){20}}

\put(-101,30){\line(1,0){2}}

\put(-132,45){\tiny{$_{m+n-l}$}}

\put(-115,15){\tiny{$_l$}}

\put(-90,45){\tiny{$_m$}}

\put(-90,15){\tiny{$_n$}}

\put(-95,28){\tiny{$_{n-l}$}}

\put(-130,55){\small{$\mathbb{A}$}}

\put(-75,0){\small{$\mathbb{Y}$}}

\put(-113,27){\small{$\mathbb{D}$}}

\put(-130,0){\small{$\mathbb{B}$}}

\put(-75,55){\small{$\mathbb{X}$}}

\put(-102,-15){$\Gamma_0$}

% center

\put(-30,35){\vector(1,0){60}}

\put(30,25){\vector(-1,0){60}}

\put(-3,40){\small{$\chi^0$}}

\put(-3,15){\small{$\chi^1$}}

% right

\put(60,10){\vector(1,1){20}}

\put(60,50){\vector(1,-1){20}}

\put(80,30){\vector(1,0){20}}

\put(100,30){\vector(1,1){20}}

\put(100,30){\vector(1,-1){20}}

\put(68,45){\tiny{$_{m+n-l}$}}

\put(70,15){\tiny{$_{l}$}}

\put(108,45){\tiny{$_m$}}

\put(106,15){\tiny{$_n$}}

\put(81,32){\tiny{$_{m+n}$}}

\put(50,45){\small{$\mathbb{A}$}}

\put(122,10){\small{$\mathbb{Y}$}}

\put(50,10){\small{$\mathbb{B}$}}

\put(122,45){\small{$\mathbb{X}$}}

\put(88,-15){$\Gamma_1$}

\end{picture}

\caption{}\label{general-general-chi-maps-figure}

\end{figure}

The following proposition is the main result of this subsection.

\begin{proposition}\label{general-general-chi-maps}
Let $\Gamma_0$ and $\Gamma_1$ be the MOY graphs in Figure \ref{general-general-chi-maps-figure}, where $1\leq l\leq n <m+n \leq N$. Then there exist homogeneous morphisms $\chi^0:C(\Gamma_0) \rightarrow C(\Gamma_1)$ and $\chi^1:C(\Gamma_1) \rightarrow C(\Gamma_0)$ satisfying
\begin{enumerate}[(i)]
	\item both $\chi^0$ and $\chi^1$ have $\zed_2$-degree $0$ and quantum degree $ml$.
	\item \begin{eqnarray*}
	\chi^1 \circ \chi^0 & \simeq & (\sum_{\lambda\in\Lambda_{l,m}} (-1)^{|\lambda|} S_{\lambda'}(\mathbb{X}) S_{\lambda^c}(\mathbb{B})) \cdot \id_{C(\Gamma_0)}, \\
	\chi^0 \circ \chi^1 & \simeq & (\sum_{\lambda\in\Lambda_{l,m}} (-1)^{|\lambda|} S_{\lambda'}(\mathbb{X}) S_{\lambda^c}(\mathbb{B})) \cdot \id_{C(\Gamma_1)}, \\
	\end{eqnarray*}
	where $\Lambda_{l,m}= \{\mu=(\mu_1\geq\cdots\geq\mu_l) ~|~ \mu_1 \leq m\}$, $\lambda'\in \Lambda_{m,l}$ is the conjugate of $\lambda$, and $\lambda^c$ is the complement of $\lambda$ in $\Lambda_{l,m}$, that is, if $\lambda=(\lambda_1\geq\cdots\geq\lambda_l)\in \Lambda_{l,m}$, then $\lambda^c = (m-\lambda_l\geq\cdots\geq m-\lambda_1)$.
\end{enumerate}
\end{proposition}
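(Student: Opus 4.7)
The plan is to realize $C(\Gamma_0)$ and $C(\Gamma_1)$ as a pair of adjoint Koszul matrix factorizations and apply Proposition \ref{general-jumping-factor}.

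First, I would mark $\Gamma_0$ with the external alphabets $\mathbb{A},\mathbb{B},\mathbb{X},\mathbb{Y}$ and an internal alphabet $\mathbb{D}$ of size $n-l$ on the middle edge. From the definition at the two trivalent vertices and Lemma \ref{freedom}, $C(\Gamma_0)$ is a Koszul matrix factorization over $R = \Sym(\mathbb{A}|\mathbb{B}|\mathbb{D}|\mathbb{X}|\mathbb{Y})$ whose right column is $\{A_k - \sum_{p+q=k}X_pD_q\}_{k=1}^{m+n-l} \cup \{Y_j - \sum_{p+q=j}B_pD_q\}_{j=1}^n$, with overall quantum shift $\{q^{-l(n-l)}\}$. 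The first $n-l$ entries $A_k - \sum_{p+q=k}X_pD_q$ involve $D_k$ with leading coefficient $1$, so Proposition \ref{b-contraction} eliminates $D_1,\ldots,D_{n-l}$, replacing each $D_k$ by a specific polynomial $D_k^* \in \Sym(\mathbb{A}|\mathbb{X})$. After this contraction, $C(\Gamma_0)$ is presented over $\Sym(\mathbb{A}|\mathbb{B}|\mathbb{X}|\mathbb{Y})$ as a Koszul matrix factorization of size $m+n$. By Lemma \ref{edge-contraction} applied to $\Gamma_1$, $C(\Gamma_1)$ is presented over the same ring as a Koszul matrix factorization of size $m+n$ with right column $\{\sum_{p+q=j}(X_pY_q - A_pB_q)\}_{j=1}^{m+n}$ and quantum shift $\{q^{-(m+n-l)l}\}$; the two shifts differ precisely by $q^{ml}$.

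Next, since both right columns cut out the same locus $\mathbb{X}\cup\mathbb{Y} = \mathbb{A}\cup\mathbb{B}$ (as multisets), each entry of the right column of $C(\Gamma_1)$ lies in the ideal generated by the right column of $C(\Gamma_0)$. I would choose a square matrix $T$ of size $m+n$ with homogeneous polynomial entries such that the right column of $C(\Gamma_1)$ equals $T^t$ applied to the right column of $C(\Gamma_0)$. After aligning the left columns using the freedom in Lemma \ref{freedom}, the two matrix factorizations sit in the adjoint form of Proposition \ref{general-jumping-factor}, which supplies morphisms $\chi^0:C(\Gamma_0)\to C(\Gamma_1)$ and $\chi^1:C(\Gamma_1)\to C(\Gamma_0)$ of $\zed_2$-degree $0$ with $\chi^1\circ\chi^0 \simeq \det(T)\cdot \id_{C(\Gamma_0)}$ and $\chi^0\circ\chi^1\simeq \det(T)\cdot \id_{C(\Gamma_1)}$. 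Accounting for the $q^{ml}$-shift between the two presentations distributes symmetrically between $\chi^0$ and $\chi^1$, yielding the required quantum degree $ml$ for both morphisms.

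It then remains to identify $\det(T)$, which must be a homogeneous element of $\Sym(\mathbb{A}|\mathbb{B}|\mathbb{X}|\mathbb{Y})$ of degree $2ml$. I expect it to equal the bi-resultant $\prod_{i=1}^m\prod_{j=1}^l(b_j - x_i)$, which by a classical Cauchy-type identity admits the expansion
\[
\prod_{i=1}^m\prod_{j=1}^l (b_j - x_i) = \sum_{\lambda \in \Lambda_{l,m}} (-1)^{|\lambda|}\, S_{\lambda'}(\mathbb{X})\, S_{\lambda^c}(\mathbb{B}).
\]
To establish this identification, I would induct on $l$, peeling off one strand of $\mathbb{B}$ at a time using the edge-splitting morphisms $\phi$ and $\overline{\phi}$ of Definition \ref{morphism-edge-splitting-merging-def} together with Lemma \ref{phibar-compose-phi}. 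Each peeling reduces to the subcase with one fewer strand of $\mathbb{B}$ and introduces a factor $\prod_{i=1}^m(b_j - x_i)$ coming from Corollary \ref{jumping-factor} (the $n=1$ case of Proposition \ref{general-jumping-factor}), so the full bi-resultant emerges by iteration. The main obstacle will be the explicit construction and control of $T$: the substituted polynomials in $D^*$ are somewhat unwieldy, so writing the right column of $C(\Gamma_1)$ as an explicit $R$-linear combination of the right column of $C(\Gamma_0)$ is not immediate, and verifying that the inductive peeling respects the adjoint structure --- so that the partial determinants accumulate to the full bi-resultant --- is the step I expect to require the most care.
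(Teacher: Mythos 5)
Your high-level strategy is the same as the paper's: contract the internal markings, present both $C(\Gamma_0)$ and $C(\Gamma_1)$ as size-$(m+n)$ Koszul matrix factorizations over $\Sym(\mathbb{A}|\mathbb{B}|\mathbb{X}|\mathbb{Y})$ with a common left column, apply Proposition \ref{general-jumping-factor} to the resulting adjoint pair, and identify $\det(T)$ with the bi-resultant $\prod_{i=1}^m\prod_{j=1}^l(b_j-x_i)$ via the dual Cauchy identity, which is indeed correct and matches the paper's Lemma \ref{gen-chi-computing-det-Omega-Theta} up to an overall sign.

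The genuine gap is the step you flag yourself: the determination of $\det(T)$. The paper handles this by writing the relation matrix explicitly as $\bigl(\begin{smallmatrix}\Id&0\\0&\Omega\Theta\end{smallmatrix}\bigr)$, computing $\Theta$ in terms of hook Schur polynomials $\tau_{i,j}$ (Lemma \ref{gen-chi-computing-Theta}), simplifying $\Theta U = V$ (Lemma \ref{gen-chi-simplify-Theta-to-V}), and applying the Binet--Cauchy theorem to $\det(\Omega V)$. Your proposed alternative --- inducting on $l$ by splitting off one strand of $\mathbb{B}$ at a time with $\phi,\overline{\phi}$ and extracting a factor $\prod_i(b_j-x_i)$ from Corollary \ref{jumping-factor} --- is not supported by the tools you cite. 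Lemma \ref{phibar-compose-phi} evaluates $\overline{\phi}\circ\mathfrak{m}(\cdot)\circ\phi$ to either $\id$ or $0$; it \emph{projects out} polynomial factors rather than accumulating them, so conjugating the $(l-1,1)$-composite by $\phi$ and $\overline{\phi}$ does not deliver $\det(T_{l-1})\cdot\prod_i(b_l-x_i)$ in an obvious way. What you would actually need is a ``fork-sliding'' factorization of the $\chi$-morphisms themselves --- that $\chi^1$ for color $l$ decomposes as a composition of $\chi^1$ for colors $l-1$ and $1$ intertwined with bouquet moves --- and this is precisely the content of the paper's later Lemma \ref{chi-commute-chi-chi}, whose proof itself invokes the present proposition (via Corollary \ref{general-chi-maps-def}) together with $\Hom$-space dimension counts from Decompositions (I)--(II). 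So although the induction is not literally circular (you only need the $l-1$ and $l=1$ cases as input), the inductive step requires substantially more machinery than $\phi$, $\overline{\phi}$, and Lemma \ref{phibar-compose-phi}, and you have not supplied it; the paper's direct matrix computation avoids this entirely.

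A smaller slip: the quantum shifts you state are wrong. The shift attached to a vertex comes from the \emph{leaving} edges, so the top vertex of $\Gamma_0$ (with $m$ and $n-l$ leaving) contributes $q^{-m(n-l)}$, not $q^{-l(n-l)}$, and the branch vertex of $\Gamma_1$ contributes $q^{-mn}$, not $q^{-(m+n-l)l}$. By coincidence the difference is still $ml$ in both cases, so your conclusion on the quantum degree of $\chi^0,\chi^1$ stands, but the individual presentations are off and should be corrected before applying Proposition \ref{general-jumping-factor}.
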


Before proving Proposition \ref{general-general-chi-maps}, we first simplify $C(\Gamma_0)$ and $C(\Gamma_1)$ to show that they are homotopic to a pair of adjoint Koszul matrix factorizations. 

Let $R=\Sym(\mathbb{X}|\mathbb{Y}|\mathbb{A}|\mathbb{B})$. Denote by $X_i$ the $i$-th elementary symmetric polynomial in $\mathbb{X}$ and so on. Recall that 
\[
C(\Gamma_0) = \left(%
\begin{array}{cc}
  \ast & X_1+D_1-A_1 \\
  \dots & \dots \\
  \ast & \sum_{i=0}^{n-l} X_{k-i}D_{i}-A_k \\
  \dots & \dots \\
  \ast & X_mD_{n-l} - A_{m+n-l} \\
  \ast & Y_1-D_1-B_1 \\
  \dots & \dots \\
  \ast & Y_k-\sum_{i=0}^{n-l} B_{k-i}D_{i} \\
  \dots & \dots \\
  \ast & Y_n-B_lD_{n-l} \\
\end{array}%
\right)_{\Sym(\mathbb{X}|\mathbb{Y}|\mathbb{A}|\mathbb{B}|\mathbb{D})}\{q^{-m(n-l)}\}.
\]
We exclude $\mathbb{D}$ from the base ring by applying Proposition \ref{b-contraction} to the rows
\[
\left(%
\begin{array}{cc}
  \ast & Y_1-D_1-B_1 \\
  \dots & \dots \\
  \ast & Y_{n-l}-\sum_{i=0}^{n-l} B_{n-l-i}D_i \\
\end{array}%
\right).
\]
This gives us 
\begin{equation}\label{gen-chi-Gamma-0-1}
C(\Gamma_0) \simeq \left(%
\begin{array}{cc}
  \ast & X_1+D_1-A_1 \\
  \dots & \dots \\
  \ast & \sum_{i=0}^{n-l} X_{k-i}D_{i}-A_k \\
  \dots & \dots \\
  \ast & X_mD_{n-l} - A_{m+n-l} \\
  \ast & Y_{n-l+1}-\sum_{i=0}^{n-l} B_{n-l+1-i}D_i \\
  \dots & \dots \\
  \ast & Y_{n-l+k}-\sum_{i=0}^{n-l} B_{n-l+k-i}D_i \\
  \dots & \dots \\
  \ast & Y_n-B_lD_{n-l} \\
\end{array}%
\right)_{R}\{q^{-m(n-l)}\},
\end{equation}
where 
\begin{equation}\label{eq-D-k-sum}
D_k= 
\begin{cases}
\sum_{i=0}^k (-1)^i h_i(\mathbb{B})Y_{k-i} & \text{if } k=0,1,\dots n-l, \\
0 & \text{otherwise}. \\
\end{cases}
\end{equation}
Since the above sum will appear repeatedly in this subsection, we set
\begin{equation}\label{eq-def-T-k}
T_k= 
\begin{cases}
\sum_{i=0}^k (-1)^i h_i(\mathbb{B})Y_{k-i} & \text{if } k\geq 0, \\
0 & \text{if } k<0. \\
\end{cases}
\end{equation}

Now consider $Y_{n-l+k}-\sum_{i=0}^{n-l} B_{n-l+k-i}D_i$. For $k=1$, using equation \eqref{complete-recursion},  we get
\begin{eqnarray*}
Y_{n-l+1}-\sum_{i=0}^{n-l} B_{n-l+1-i}D_i
& = & Y_{n-l+1}-\sum_{i=0}^{n-l} B_{n-l+1-i} \sum_{j=0}^i (-1)^{i-j} h_{i-j}(\mathbb{B}) Y_j \\
& = & Y_{n-l+1}-\sum_{j=0}^{n-l}Y_j \sum_{i=j}^{n-l}(-1)^{i-j} h_{i-j}(\mathbb{B})B_{n-l+1-i} \\
& = & Y_{n-l+1}-\sum_{j=0}^{n-l}Y_j \sum_{i=0}^{n-l-j}(-1)^{i} h_{i}(\mathbb{B})B_{n-l+1-j-i} \\
& = & Y_{n-l+1}+\sum_{j=0}^{n-l} (-1)^{n-l+1-j} Y_j h_{n-l+1-j}(\mathbb{B}) \\
& = & T_{n-l+1}.
\end{eqnarray*}
If $k>1$, then
\begin{eqnarray*}
& & Y_{n-l+k}-\sum_{i=0}^{n-l} B_{n-l+k-i}D_i \\
& = & Y_{n-l+k}-\sum_{i=0}^{n-l} B_{n-l+k-i}\sum_{j=0}^i (-1)^{i-j} h_{i-j}(\mathbb{B}) Y_j \\
& = & Y_{n-l+k}-\sum_{j=0}^{n-l}Y_j \sum_{i=j}^{n-l}(-1)^{i-j} h_{i-j}(\mathbb{B})B_{n-l+k-i} \\
& = & Y_{n-l+k}-\sum_{j=0}^{n-l}Y_j \sum_{i=0}^{n-l-j}(-1)^{i} h_{i}(\mathbb{B})B_{n-l+k-j-i} \\
& = & Y_{n-l+k}+\sum_{j=0}^{n-l}Y_j \sum_{i=n-l-j+1}^{n-l-j+k}(-1)^{i} h_{i}(\mathbb{B})B_{n-l+k-j-i} \\
& = & Y_{n-l+k}+\sum_{j=0}^{n-l}Y_j \sum_{i=0}^{k-1}(-1)^{n-l+k-j-i} h_{n-l+k-j-i}(\mathbb{B})B_{i} \\
& = & Y_{n-l+k}+\sum_{i=0}^{k-1}B_{i}\sum_{j=0}^{n-l}Y_j (-1)^{n-l+k-j-i} h_{n-l+k-j-i}(\mathbb{B}) \\
& = & Y_{n-l+k}+\sum_{i=0}^{k-1}B_{i}(T_{n-l+k-i}-\sum_{j=n-l+1}^{n-l+k-i}Y_j (-1)^{n-l+k-j-i} h_{n-l+k-j-i}(\mathbb{B})) \\
& = & Y_{n-l+k}+\sum_{i=0}^{k-1}B_{i}T_{n-l+k-i}-\sum_{i=0}^{k-1}B_{i}\sum_{j=n-l+1}^{n-l+k-i}Y_j (-1)^{n-l+k-j-i} h_{n-l+k-j-i}(\mathbb{B}). \\
\end{eqnarray*}
But, by equation \eqref{complete-recursion},
\begin{eqnarray*}
& & \sum_{i=0}^{k-1}B_{i}\sum_{j=n-l+1}^{n-l+k-i}Y_j (-1)^{n-l+k-j-i} h_{n-l+k-j-i}(\mathbb{B}) \\
& = & \sum_{i=0}^{k-1}B_{i}\sum_{j=0}^{k-1-i}Y_{j+n-l+1} (-1)^{k-1-j-i} h_{k-1-j-i}(\mathbb{B}) \\
& = & \sum_{j=0}^{k-1}Y_{j+n-l+1}\sum_{i=0}^{k-1-j} (-1)^{k-1-j-i} h_{k-1-j-i}(\mathbb{B})B_{i} \\
& = & Y_{k+n-l}.
\end{eqnarray*}
So
\[
Y_{n-l+k}-\sum_{i=0}^{n-l} B_{n-l+k-i}D_i
= Y_{n-l+k}+\sum_{i=0}^{k-1}B_{i}T_{n-l+k-i}-Y_{k+n-l}
= \sum_{i=0}^{k-1}B_{i}T_{n-l+k-i}
\]
and, therefore,
\[
Y_{n-l+k}-\sum_{i=0}^{n-l} B_{n-l+k-i}D_i - \sum_{i=1}^{k-1}B_{i}T_{n-l+k-i} = T_{n-l+k}.
\]
Thus, we can apply Corollary \ref{row-op} successively to the right hand side of \eqref{gen-chi-Gamma-0-1} to get
\begin{equation}\label{gen-chi-Gamma-0-2}
C(\Gamma_0) \simeq \left(%
\begin{array}{cc}
  \ast & X_1+D_1-A_1 \\
  \dots & \dots \\
  \ast & \sum_{i=0}^{n-l} X_{k-i}D_{i}-A_k \\
  \dots & \dots \\
  \ast & X_mD_{n-l} - A_{m+n-l} \\
  \ast & T_{n-l+1} \\
  \dots & \dots \\
  \ast & T_n \\
\end{array}%
\right)_{R}\{q^{-m(n-l)}\}.
\end{equation}

\begin{lemma}\label{gen-chi-T-recursive}
If $k>n$, then $T_k = - \sum_{j=1}^l B_j T_{k-j}$.
\end{lemma}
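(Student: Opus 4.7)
The plan is to prove the stronger identity
\[
\sum_{j=0}^{l} B_j\,T_{k-j} \;=\; Y_k \qquad \text{for all } k\geq 0,
\]
where we set $B_0=1$, and then specialize to $k>n$. Since $\mathbb{Y}$ has only $n$ indeterminants, $Y_k=0$ for $k>n$, and the desired recursion $T_k=-\sum_{j=1}^{l}B_j T_{k-j}$ follows immediately by isolating the $j=0$ term.

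To establish the identity, I would simply unfold the definition $T_{k-j}=\sum_{i=0}^{k-j}(-1)^i h_i(\mathbb{B})\,Y_{k-j-i}$, interchange the order of summation by introducing $s=i+j$, and collect the coefficient of each $Y_{k-s}$. This yields
\[
\sum_{j=0}^{l} B_j\,T_{k-j} \;=\; \sum_{s=0}^{k} Y_{k-s} \Bigl(\sum_{j=0}^{\min(s,l)} (-1)^{s-j}\,B_j\,h_{s-j}(\mathbb{B})\Bigr).
\]
Since $B_j=0$ for $j>l$, the inner sum may be taken over $0\leq j\leq s$, and pulling out the factor $(-1)^s$ (using $(-1)^{-j}=(-1)^j$) gives
\[
\sum_{j=0}^{s}(-1)^{s-j} B_j h_{s-j}(\mathbb{B}) \;=\; (-1)^s \sum_{j=0}^{s}(-1)^{j} B_j h_{s-j}(\mathbb{B}).
\]
By equation \eqref{complete-recursion} applied to the alphabet $\mathbb{B}$, this inner sum is $\delta_{s,0}$. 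Hence $\sum_{j=0}^{l} B_j T_{k-j}=Y_k$, as claimed.

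There is no real conceptual obstacle here; the proof is a direct generating-function–style manipulation, and the only care needed is in tracking signs and in justifying the extension of the upper summation limit from $\min(s,l)$ to $s$ via $B_j=0$ for $j>l$. Once the identity $\sum_{j=0}^{l} B_j T_{k-j}=Y_k$ is in hand, the lemma is a one-line consequence of the vanishing $Y_k=0$ for $k>n$.
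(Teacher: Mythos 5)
Your proof is correct and follows essentially the same route as the paper: both hinge on equation \eqref{complete-recursion} together with a reindexing and interchange of summations. You merely reorganize the argument by first establishing the cleaner convolution identity $\sum_{j=0}^{l} B_j T_{k-j} = Y_k$ valid for all $k\geq 0$ and then specializing to $k>n$ where $Y_k=0$, whereas the paper manipulates $T_k$ directly using the hypothesis $k>n$ from the outset.
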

\begin{proof}
For $k>n$,
\begin{eqnarray*}
T_k & = & \sum_{i=0}^k (-1)^i h_i(\mathbb{B}) Y_{k-i}  = \sum_{i=0}^n (-1)^{k-i} h_{k-i}(\mathbb{B}) Y_i \\
_{(\text{by } \eqref{complete-recursion}, \text{ note that } k>n)}~ & = & -\sum_{i=0}^n (-1)^{k-i} Y_i \sum_{j=1}^l (-1)^j B_j h_{k-i-j}(\mathbb{B}) \\
& = & - \sum_{j=1}^l B_j \sum_{i=0}^n (-1)^{k-i-j} Y_i h_{k-i-j}(\mathbb{B}) \\
& = & - \sum_{j=1}^l B_j T_{k-j}.
\end{eqnarray*}
\end{proof}

\begin{lemma}\label{gen-chi-Gamma-0-lemma}
For any $k\geq 0$, define $W_k = \sum_{i=0}^k T_i X_{k-i}$. Then
\begin{equation}\label{gen-chi-Gamma-0-3}
C(\Gamma_0) \simeq \left(%
\begin{array}{cc}
  \ast & W_1-A_1 \\
  \dots & \dots \\
  \ast & W_k-A_k \\
  \dots & \dots \\
  \ast & W_{m+n-l} - A_{m+n-l} \\
  \ast & T_{n-l+1} \\
  \dots & \dots \\
  \ast & T_n \\
\end{array}%
\right)_{R}\{q^{-m(n-l)}\}.
\end{equation}
\end{lemma}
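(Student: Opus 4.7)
The plan is to derive \eqref{gen-chi-Gamma-0-3} from \eqref{gen-chi-Gamma-0-2} by iterated row operations via Corollary~\ref{row-op}. The strategy is to modify the right-column entries of the top $m+n-l$ rows of \eqref{gen-chi-Gamma-0-2}, replacing $\sum_{i=0}^{n-l} X_{k-i} D_i - A_k$ by $W_k - A_k$, while using the bottom $l$ rows (with right entries $T_{n-l+1}, \dots, T_n$) as ``pivots''.

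First I would unpack the difference. Since $D_i = T_i$ for $0 \le i \le n-l$, and $X_{k-i} = 0$ whenever $i > k$, we have
\[
\sum_{i=0}^{n-l} X_{k-i} D_i \;=\; \sum_{i=0}^{\min(k,\,n-l)} T_i X_{k-i}.
\]
For $k \le n-l$ this already equals $W_k$, so no modification is needed. For $n-l < k \le m+n-l$ the shortfall is
\[
W_k - \sum_{i=0}^{n-l} X_{k-i} D_i \;=\; \sum_{i=n-l+1}^{k} T_i X_{k-i}.
\]

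The crux is the claim that every $T_i$ with $i \ge n-l+1$ lies in the ideal $(T_{n-l+1}, \dots, T_n) \subset R$ with homogeneous coefficients. For $n-l+1 \le i \le n$ this is trivial. For $i > n$, Lemma~\ref{gen-chi-T-recursive} gives $T_i = -\sum_{j=1}^{l} B_j T_{i-j}$, and since $j \le l$ forces $i - j \ge n-l+1$, an induction on $i$ shows that $T_i$ is a homogeneous $R$-linear combination of $T_{n-l+1}, \dots, T_n$. Consequently, for each $k$ with $n-l < k \le m+n-l$ there exist homogeneous $c_{k,1},\dots,c_{k,l} \in R$ with $\deg c_{k,j} = 2(k-n+l-j)$ such that
\[
\sum_{i=n-l+1}^{k} T_i X_{k-i} \;=\; \sum_{j=1}^{l} c_{k,j}\, T_{n-l+j}.
\]

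Given this, Corollary~\ref{row-op} applied iteratively finishes the argument. For each pair $(k,j)$ with $n-l < k \le m+n-l$ and $1 \le j \le l$, pair the relevant top row with the bottom row whose right entry is $T_{n-l+j}$ and add $c_{k,j}\,T_{n-l+j}$ to the right entry of row $k$. Each such step is a homogeneous isomorphism of Koszul matrix factorizations; the left entries (denoted $\ast$) absorb the compensating changes, and the total potential is preserved. After all these operations, row $k$ has right entry $W_k - A_k$ and the bottom rows are untouched, yielding \eqref{gen-chi-Gamma-0-3}. The main work is combinatorial bookkeeping rather than any conceptual obstacle: one must check that the recursion in Lemma~\ref{gen-chi-T-recursive} strictly decreases the index $i$ so that the induction terminates, and that the resulting coefficients $c_{k,j}$ have the correct grading for Corollary~\ref{row-op} to apply in the graded setting. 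Both follow from a direct degree count.
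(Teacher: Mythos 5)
Your proposal is correct and follows essentially the same route as the paper's own proof: split into the cases $k \le n-l$ (no work needed) and $k > n-l$ (identify the shortfall $\sum_{i=n-l+1}^{k} T_i X_{k-i}$, show each $T_i$ with $i \ge n-l+1$ lies in the homogeneous ideal $(T_{n-l+1},\dots,T_n)$ via Lemma~\ref{gen-chi-T-recursive}, and apply Corollary~\ref{row-op} row by row). You spell out the induction on $i$ and the degree bookkeeping slightly more explicitly than the paper does, but there is no substantive difference.
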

\begin{proof}
Consider $\sum_{i=0}^{n-l} X_{k-i}D_{i}-A_k$. If $k\leq n-l$, then, by equations \eqref{eq-D-k-sum} and \eqref{eq-def-T-k},
\[
\sum_{i=0}^{n-l} X_{k-i}D_{i}-A_k = \sum_{i=0}^{k} X_{k-i}T_{i}-A_k = W_k-A_k.
\]
So the row $(\ast,~\sum_{i=0}^{n-l} X_{k-i}D_{i}-A_k)$ in \eqref{gen-chi-Gamma-0-2} is already $(\ast,~W_k-A_k)$.

If $k>n-l$, then, by equations \eqref{eq-D-k-sum} and \eqref{eq-def-T-k}, 
\[
\sum_{i=0}^{n-l} X_{k-i}D_{i}-A_k = \sum_{i=0}^{n-l} X_{k-i}T_{i}-A_k = W_k-A_k - \sum_{i=n-l+1}^{k} X_{k-i}T_{i}.
\]
By Lemma \ref{gen-chi-T-recursive}, if $i\geq n-l+1$, then $T_i$ can be expressed as a combination of $T_{n-l+1},\dots,T_n$. So we can apply Corollary \ref{row-op} to the row $(\ast,~\sum_{i=0}^{n-l} X_{k-i}D_{i}-A_k)$ and the bottom $l$ rows in \eqref{gen-chi-Gamma-0-2} to change the former into $(\ast,~W_k-A_k)$.
\end{proof}

Now consider $\Gamma_1$ in Figure \ref{general-general-chi-maps-figure}. Recall that
\begin{equation}\label{gen-chi-Gamma-1-1}
C(\Gamma_1) \simeq \left(%
\begin{array}{cc}
  \ast & X_1+Y_1-A_1-B_1 \\
  \dots & \dots \\
  \ast & \sum_{i=0}^k X_i Y_{k-i} - \sum_{i=0}^k A_i B_{k-i} \\
  \dots & \dots \\
  \ast & X_m Y_n - A_{m+n-l}B_l \\
\end{array}%
\right)_{R}\{q^{-mn}\}.
\end{equation}

\begin{lemma}\label{gen-chi-Gamma-1-lemma}
\begin{equation}\label{gen-chi-Gamma-1-2}
C(\Gamma_1) \simeq \left(%
\begin{array}{cc}
  \ast & W_1-A_1 \\
  \dots & \dots \\
  \ast & W_k-A_k \\
  \dots & \dots \\
  \ast & W_{m+n-l}-A_{m+n-l} \\
  \ast & W_{m+n-l+1} \\
  \dots & \dots \\
  \ast & W_{m+n} \\
\end{array}%
\right)_{R}\{q^{-mn}\},
\end{equation}
where, as in Lemma \ref{gen-chi-Gamma-0-lemma}, $W_k = \sum_{i=0}^k T_i X_{k-i}$.
\end{lemma}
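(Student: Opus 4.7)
The plan is to transform the right column of \eqref{gen-chi-Gamma-1-1} into the right column of \eqref{gen-chi-Gamma-1-2} by an iterated application of Corollary \ref{row-op}. Let me write $R_k := \sum_{i=0}^k X_iY_{k-i} - \sum_{i=0}^k A_iB_{k-i}$ for the $k$-th original right entry and $R'_k := W_k - A_k$ for the target entry (with the convention $A_k=0$ when $k > m+n-l$, so $R'_k = W_k$ in that range). Both sequences have $m+n$ terms, so no restructuring is needed, only a right-column change.

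The core input I intend to use is the identity
\[
R'_k \;=\; \sum_{i=0}^{k-1}(-1)^i\, h_i(\mathbb{B})\, R_{k-i}, \qquad k=1,\dots,m+n,
\]
equivalently $R'_k-R_k = \sum_{i=1}^{k-1}(-1)^i h_i(\mathbb{B}) R_{k-i}$. The cleanest way to see this is via generating functions: writing $E_{\mathbb{X}}(-t)=\sum_i X_it^i$ and $H_{\mathbb{B}}(-t)=\sum_i(-1)^ih_i(\mathbb{B})t^i$, one checks that $\sum_k T_kt^k = E_{\mathbb{Y}}(-t)H_{\mathbb{B}}(-t)$ and therefore $\sum_k W_k t^k = E_{\mathbb{X}\cup\mathbb{Y}}(-t)H_{\mathbb{B}}(-t)$, so that $\sum_k(W_k-A_k)t^k = H_{\mathbb{B}}(-t)\cdot\sum_k R_k t^k$ (using $E_{\mathbb{A}\cup\mathbb{B}}(-t)H_{\mathbb{B}}(-t)=E_{\mathbb{A}}(-t)$). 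Alternatively, a direct expansion using \eqref{complete-recursion} shows the coefficient of each $A_j$ collapses to $\delta_{j,k}A_k$ and the $X,Y$-coefficients telescope to $W_k$.

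Granted the identity, I would process rows from the bottom up: start with $k=m+n$ and proceed down to $k=1$, where $R'_1=R_1$ so nothing is done. At step $k$, rows $1,\dots,k-1$ still carry their original right entries $R_1,\dots,R_{k-1}$; apply Corollary \ref{row-op} successively between row $k$ and each row $k-i$ ($1\le i\le k-1$) using coefficient $c = (-1)^{i+1}h_i(\mathbb{B})$, which changes the right entry of row $k$ from $R_k$ to $R_k + \sum_{i=1}^{k-1}(-1)^ih_i(\mathbb{B})R_{k-i} = R'_k$ and silently modifies the left entries of the upper rows, which is harmless since we write them as $\ast$. The degree check is automatic: the right entry of row $j$ has quantum degree $2j$, so its paired left entry has degree $2(N+1-j)$; Corollary \ref{row-op} demands a coefficient of degree $2(N+1-(k-i))-2(N+1-k)=2i$, exactly matching $\deg h_i(\mathbb{B})$.

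The main technical obstacle is establishing the key identity neatly. Once that is in hand, the rest is a routine triangular row reduction. The lemma is then immediate by composing the finitely many isomorphisms provided by Corollary \ref{row-op}.
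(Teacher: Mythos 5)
Your proposal is correct and follows essentially the same approach as the paper's: establish the triangular identity $W_k - A_k = \sum_{i=0}^{k-1}(-1)^i h_i(\mathbb{B})\, R_{k-i}$ and then apply Corollary \ref{row-op} successively to rewrite the right column of \eqref{gen-chi-Gamma-1-1} into that of \eqref{gen-chi-Gamma-1-2}. The only difference is that you derive the key identity via generating functions, whereas the paper derives it by swapping the order of summation and invoking \eqref{complete-recursion}; your explicit bottom-up ordering and sign/degree bookkeeping for the row operations is a slightly more careful presentation of the step the paper compresses into ``successively apply Corollary \ref{row-op}.''
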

\begin{proof}
We have
\begin{eqnarray*}
\sum_{j=0}^k(-1)^{k-j} h_{k-j}(\mathbb{B})\sum_{i=0}^j X_iY_{j-i} & = & \sum_{i=0}^k X_i\sum_{j=i}^k (-1)^{k-j} h_{k-j}(\mathbb{B})Y_{j-i} \\
& = &  \sum_{i=0}^k X_i\sum_{j=0}^{k-i} (-1)^{k-i-j} h_{k-i-j}(\mathbb{B})Y_j \\
& = &  \sum_{i=0}^k X_i T_{k-i} = W_k
\end{eqnarray*}
and, by \eqref{complete-recursion},
\begin{eqnarray*}
\sum_{j=0}^k(-1)^{k-j} h_{k-j}(\mathbb{B})\sum_{i=0}^j A_iB_{j-i} & = & \sum_{i=0}^k A_i\sum_{j=i}^k (-1)^{k-j} h_{k-j}(\mathbb{B})B_{j-i} \\
& = &  \sum_{i=0}^k A_i\sum_{j=0}^{k-i} (-1)^{k-i-j} h_{k-i-j}(\mathbb{B})B_j \\
& = &  A_k.
\end{eqnarray*}
Thus, 
\begin{eqnarray*}
& & \sum_{j=1}^k(-1)^{k-j} h_{k-j}(\mathbb{B})(\sum_{i=0}^j X_iY_{j-i}-\sum_{i=0}^j A_iB_{j-i}) \\
& = & \sum_{j=0}^k(-1)^{k-j} h_{k-j}(\mathbb{B})(\sum_{i=0}^j X_iY_{j-i}-\sum_{i=0}^j A_iB_{j-i}) \\
& = & W_k -A_k.
\end{eqnarray*}
This implies that we can get \eqref{gen-chi-Gamma-1-2} by successively applying Corollary \ref{row-op} to the right hand side of \eqref{gen-chi-Gamma-1-1}.
\end{proof}

By the definition of $W_k$ in Lemma \ref{gen-chi-Gamma-0-lemma}, we know that
\[
\left(%
\begin{array}{l}
W_{m+n} \\
W_{m+n-1} \\
\dots \\
W_{m+n-l+1} \\
\end{array}%
\right)
=
\Omega 
\left(%
\begin{array}{l}
T_{m+n} \\
T_{m+n-1} \\
\dots \\
T_{n-l+1} \\
\end{array}%
\right),
\]
where 
\begin{equation}\label{eq-def-matrix-Omega}
\Omega = (X_{j-i})_{l\times (m+l)},
\end{equation}
that is, $\Omega$ is the $l\times (m+l)$ matrix whose $(i,j)$-th entry is $X_{j-i}$. By Lemma \ref{gen-chi-T-recursive}, we have, for $k\geq1$,
\[
\left(%
\begin{array}{l}
T_{n+k} \\
T_{n+k-1} \\
\dots \\
T_{n-l+1} \\
\end{array}%
\right)
=
\Theta_k
\left(%
\begin{array}{l}
T_{n+k-1} \\
T_{n+k-2} \\
\dots \\
T_{n-l+1} \\
\end{array}%
\right),
\]
where $\Theta_k$ is the $(k+l)\times (k+l-1)$ matrix whose first row is $(-B_1,-B_2,\dots,-B_l,0,\dots,0)$ and whose next $(k+l-1)$ rows form the $(k+l-1)\times (k+l-1)$ identity matrix $\Id_{k+l-1}$. Define 
\begin{equation}\label{eq-def-matrix-Theta}
\Theta = \Theta_m \Theta_{m-1}\cdots\Theta_2\Theta_1. 
\end{equation}
Then 
\[
\left(%
\begin{array}{l}
W_{m+n} \\
W_{m+n-1} \\
\dots \\
W_{m+n-l+1} \\
\end{array}%
\right)
=
\Omega \Theta
\left(%
\begin{array}{l}
T_{n} \\
T_{n-1} \\
\dots \\
T_{n-l+1} \\
\end{array}%
\right),
\]
where $\Omega \Theta$ is clearly an $l\times l$ matrix. So 
\begin{equation}\label{gen-chi-relation-matrix}
\left(%
\begin{array}{l}
W_1-A_1 \\
\dots \\
W_{m+n-l}-A_{m+n-l} \\
W_{m+n} \\
\dots \\
W_{m+n-l+1} \\
\end{array}%
\right)
=
\left(%
\begin{array}{ll}
\Id_{m+n-l} & 0 \\
0 & \Omega \Theta\\
\end{array}%
\right)
\left(%
\begin{array}{l}
W_1-A_1 \\
\dots \\
W_{m+n-l}-A_{m+n-l} \\
T_{n} \\
\dots \\
T_{n-l+1} \\
\end{array}%
\right).
\end{equation}

\begin{lemma}\label{gen-chi-adjoint}
$C(\Gamma_0)\{q^{m(n-l)}\}$ and $C(\Gamma_1)\{q^{mn}\}$ are homotopic to a pair of adjoint Koszul matrix factorizations with the relation matrix 
\[
\left(%
\begin{array}{ll}
\Id_{m+n-l} & 0 \\
0 & \Omega \Theta\\
\end{array}%
\right)^t = 
\left(%
\begin{array}{ll}
\Id_{m+n-l} & 0 \\
0 & \Theta^t \Omega^t\\
\end{array}%
\right).
\]
\end{lemma}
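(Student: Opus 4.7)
The plan is to exhibit explicit Koszul presentations of $C(\Gamma_0)\{q^{m(n-l)}\}$ and $C(\Gamma_1)\{q^{mn}\}$ whose right columns agree after multiplication by the matrix $S = \bigl(\begin{smallmatrix} \Id_{m+n-l} & 0 \\ 0 & \Omega\Theta \end{smallmatrix}\bigr)$, and then to choose the left columns so that the two factorizations form an adjoint pair with relation matrix $T = S^t$. The freedom granted by Lemma \ref{freedom} (the regularity version of \cite[Theorem 2.1]{KR3}) lets us match whatever left column we like, as long as the potential identity holds.

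First, I would record the output of Lemmas \ref{gen-chi-Gamma-0-lemma} and \ref{gen-chi-Gamma-1-lemma} as column vectors. Write
\[
B^{(0)} = (W_1-A_1,\ldots, W_{m+n-l}-A_{m+n-l},\, T_{n-l+1},\ldots, T_n)^t,
\]
\[
B^{(1)} = (W_1-A_1,\ldots, W_{m+n-l}-A_{m+n-l},\, W_{m+n-l+1},\ldots, W_{m+n})^t,
\]
so that equation \eqref{gen-chi-relation-matrix} reads $B^{(1)} = S\, B^{(0)}$, equivalently $B^{(1)} = (T^t) B^{(0)}$ with $T = S^t$. Next, I would pick any homogeneous left column $U^{(1)}$ making $(U^{(1)}, B^{(1)})_R$ a Koszul matrix factorization whose potential equals
\[
w = p_{N+1}(\mathbb{X}) + p_{N+1}(\mathbb{Y}) - p_{N+1}(\mathbb{A}) - p_{N+1}(\mathbb{B}),
\]
which exists by Lemma \ref{gen-chi-Gamma-1-lemma}. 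Define $U^{(0)} := T\, U^{(1)} = S^t U^{(1)}$. Then
\[
\sum_j U^{(0)}_j B^{(0)}_j \;=\; (U^{(1)})^t S\, B^{(0)} \;=\; (U^{(1)})^t B^{(1)} \;=\; w,
\]
so $(U^{(0)}, B^{(0)})_R$ is also a Koszul matrix factorization of potential $w$. By construction, the pair $\bigl((U^{(1)}, T^t B^{(0)})_R,\ (T\,U^{(1)}, B^{(0)})_R\bigr)$ is precisely adjoint in the sense of the previous subsection, with relation matrix $T = S^t$.

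To finish, I would invoke Lemma \ref{freedom} to identify $(U^{(0)}, B^{(0)})_R$ with the factorization from \eqref{gen-chi-Gamma-0-3} and $(U^{(1)}, B^{(1)})_R$ with the factorization from \eqref{gen-chi-Gamma-1-2}, matching the grading shifts. The hypothesis of Lemma \ref{freedom} requires the two sequences $B^{(0)}$ and $B^{(1)}$ to be $R$-regular. This is the one point of the argument that is not purely formal and, I expect, will be the main obstacle. For it, I would argue that $R = \Sym(\mathbb{X}|\mathbb{Y}|\mathbb{A}|\mathbb{B})$ is a graded-free module over the subring generated by $\{X_i, Y_j\}$ together with the entries of $B^{(0)}$ (respectively $B^{(1)}$): the entries of $B^{(0)}$ are obtained from the entries $Y_k - \sum B_{k-i} D_i$ and $X_i + D_i - A_i$ of the tensor-product presentation of $C(\Gamma_0)$ after the contractions of Proposition \ref{b-contraction}, each of which is a regular step since the contracted indeterminates appear linearly with unit coefficient; the sequence $B^{(1)}$ is even easier because after reduction it expresses the $A_k$ as polynomials in the remaining variables. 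Once regularity is verified, Lemma \ref{freedom} allows us to replace the $\ast$'s in \eqref{gen-chi-Gamma-0-3} and \eqref{gen-chi-Gamma-1-2} by our chosen $U^{(0)}$ and $U^{(1)}$ up to isomorphism, which completes the identification of the two homotopy classes with the desired adjoint Koszul pair.
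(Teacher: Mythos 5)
Your proposal follows the same route as the paper, whose own proof of this lemma is a single sentence citing \eqref{gen-chi-relation-matrix}, Lemmas \ref{gen-chi-Gamma-0-lemma} and \ref{gen-chi-Gamma-1-lemma}, and Remark \ref{MOY-freedom}; your plan simply spells out the construction of the adjoint pair (setting $U^{(0)} := S^t U^{(1)}$, checking the potentials match, and invoking Lemma \ref{freedom}) that the paper leaves implicit.

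The one place your sketch is off is the regularity check for $B^{(1)}$. The remark that it is ``even easier because after reduction it expresses the $A_k$ as polynomials in the remaining variables'' is misleading: only the first $m+n-l$ entries $W_k - A_k$ eliminate the $A_k$, while the last $l$ entries $W_{m+n-l+1},\dots,W_{m+n}$ involve no $A_k$ at all, so they still have to be shown regular in $\C[X,Y,B]$ after the $A$'s have been killed, and that is not immediate. A cleaner argument: in the presentation \eqref{gen-chi-Gamma-1-1} the right column is $P_1-Q_1,\dots,P_{m+n}-Q_{m+n}$, where $P_j$ and $Q_j$ are the elementary symmetric polynomials of $\mathbb{X}\cup\mathbb{Y}$ and of $\mathbb{A}\cup\mathbb{B}$; since $\Sym(\mathbb{X}|\mathbb{Y}|\mathbb{A}|\mathbb{B})$ is graded-free over $\C[P_1,\dots,P_{m+n},Q_1,\dots,Q_{m+n}]$ by Theorem \ref{part-symm-str}, this sequence is $R$-regular (exactly the mechanism used in Remark \ref{MOY-freedom}). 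The passage to $B^{(1)}$ in the proof of Lemma \ref{gen-chi-Gamma-1-lemma} is a unipotent lower-triangular change of the right column over $R$, hence preserves regularity. For $B^{(0)}$ your direct argument is fine (the $W_k-A_k$ eliminate the $A_k$'s and the $T_{n-l+1},\dots,T_n$ eliminate the top $l$ of the $Y_j$'s, so the quotient is a polynomial ring), though you should phrase it in terms of the post-contraction entries rather than the pre-contraction ones involving $\mathbb{D}$. With these clarifications the proof is complete.
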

\begin{proof}
This lemma follows from \eqref{gen-chi-relation-matrix}, Lemmas \ref{gen-chi-Gamma-0-lemma}, \ref{gen-chi-Gamma-1-lemma} and Remark \ref{MOY-freedom}.
\end{proof}

The morphisms $\chi^0$ and $\chi^1$ in Proposition \ref{general-general-chi-maps} are constructed by applying Proposition \ref{general-jumping-factor} to this pair of adjoint matrix factorizations in Lemma \ref{gen-chi-adjoint}. To prove that $\chi^0$ and $\chi^1$ satisfy all the requirements in Proposition \ref{general-general-chi-maps}, all we need to do is to compute 
\[
\det\left(%
\begin{array}{ll}
\Id_{m+n-l} & 0 \\
0 & \Omega \Theta\\
\end{array}%
\right)^t
=\det(\Omega \Theta).
\]
For this purpose, we introduce some properties of Schur polynomials associated to hook partitions. 

For $i,j\geq 0$, let $L_{i,j}=(i+1\geq\underbrace{1\geq\cdots\geq1}_{j~``1"s})$, the $(i,j)$-hook partition. Note that $L_{i,j}'=L_{j,i}$. So, by equations \eqref{schur-complete} and \eqref{schur-elementary},
\begin{eqnarray}
\label{schur-hook-1} \hspace {1cm} S_{L_{i,j}}(\mathbb{B}) & = & \left|%
\begin{array}{llllll}
h_{i+1}(\mathbb{B}) & h_{i+2}(\mathbb{B}) & h_{i+3}(\mathbb{B}) & \cdots & h_{i+j}(\mathbb{B}) & h_{i+j+1}(\mathbb{B}) \\
1 & h_{1}(\mathbb{B}) & h_{2}(\mathbb{B}) & \cdots & h_{j-1}(\mathbb{B}) & h_{j}(\mathbb{B}) \\
0 & 1 & h_{1}(\mathbb{B}) & \cdots & h_{j-2}(\mathbb{B}) & h_{j-1}(\mathbb{B}) \\
0& 0 & 1 & \cdots & h_{j-3}(\mathbb{B}) & h_{j-2}(\mathbb{B}) \\
\cdots & \cdots & \cdots & \cdots & \cdots & \cdots\\
0 & 0 & 0 & \cdots & 1 & h_{1}(\mathbb{B}) \\
\end{array}%
\right|, \\
\label{schur-hook-2} S_{L_{i,j}}(\mathbb{B}) & = & \left|%
\begin{array}{llllll}
B_{j+1} & B_{j+2} & B_{j+3} & \cdots & B_{j+i} & B_{j+i+1} \\
1 & B_{1}& B_{2} & \cdots & B_{i-1} & B_{i} \\
0 & 1 & B_{1} & \cdots & B_{i-3} & B_{i-1} \\
0 & 0 & 1 & \cdots & B_{i-3} & B_{i-2} \\
\cdots & \cdots & \cdots & \cdots & \cdots &\cdots \\
0& 0 & 0 & \cdots & 1 & B_{1} \\
\end{array}%
\right|.
\end{eqnarray}
Using \eqref{schur-hook-1} and \eqref{schur-hook-2}, we can extend the definition of $S_{L_{i,j}}(\mathbb{B})$ to allow one of $i,j$ to be negative. This gives:
\begin{enumerate}[(i)]
	\item if $j\geq 0$, then
	\[
	S_{L_{i,j}}(\mathbb{B}) = \begin{cases}
	S_{L_{i,j}}(\mathbb{B}) \text{ as in \eqref{schur-hook-1}} & \text{if } i\geq 0, \\
	(-1)^j & \text{if } i=-j-1, \\
	0 & \text{if } i<0 \text{ and } i\neq-j-1, \\
	\end{cases}
	\]
	\item if $i\geq 0$, then
	\[
	S_{L_{i,j}}(\mathbb{B}) = \begin{cases}
	S_{L_{i,j}}(\mathbb{B}) \text{ as in \eqref{schur-hook-2}} & \text{if } j\geq 0, \\
	(-1)^i & \text{if } j=-i-1, \\
	0 & \text{if } j<0 \text{ and } j\neq-i-1. \\
	\end{cases}
	\]
\end{enumerate}

\begin{lemma}\label{schur-hook-recursion}
Define $\tau_{i,j}= (-1)^{i+1}S_{L_{i,j}}(\mathbb{B})$. Then, for $i,j\geq 0$, 
\begin{eqnarray}
\label{schur-hook-recursion-1} B_{i+j+1} & = & -\sum_{k=0}^i B_k \tau_{i-k,j}, \\
\label{schur-hook-recursion-2} (-1)^{i+j+1} h_{i+j+1}(\mathbb{B}) & = & \sum_{k=0}^j (-1)^k h_{k}(\mathbb{B}) \tau_{i,j-k}.
\end{eqnarray}
\end{lemma}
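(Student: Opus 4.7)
The plan is to encode both identities into a single generating function identity. I will work with the formal power series
$$B(t) = \sum_{k\geq 0} B_k\, t^k, \quad H(t) = \sum_{k\geq 0} h_k(\mathbb{B})\, t^k, \quad T(s,t) = \sum_{i,j\geq 0}\tau_{i,j}\, s^i t^j,$$
together with the fundamental relation $B(t)\,H(-t) = 1$, which is just \eqref{complete-recursion} applied to the alphabet $\mathbb{B}$.

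First I will establish the closed form for hook Schur polynomials
$$S_{L_{i,j}}(\mathbb{B}) \;=\; \sum_{k=0}^{j}(-1)^k\, h_{i+k+1}(\mathbb{B})\, B_{j-k} \qquad (i,j\geq 0)$$
by inducting on $j$. Expanding the determinant \eqref{schur-hook-1} along its first column (whose only nonzero entries are $h_{i+1}$ in position $1$ and $1$ in position $2$) recognizes the two resulting minors as $S_{L_{0,j-1}}(\mathbb{B}) = B_j$ and $S_{L_{i+1,j-1}}(\mathbb{B})$, producing the recursion $S_{L_{i,j}} = h_{i+1}B_j - S_{L_{i+1,j-1}}$; combined with the base case $S_{L_{i,0}} = h_{i+1}$ this yields the displayed formula. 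Substituting it into the definition of $T(s,t)$ and grouping by $n = i+k$ telescopes the double sum into
$$T(s,t) \;=\; -B(t)\cdot\frac{H(-t)-H(-s)}{s-t},$$
and $B(t)H(-t)=1$ then simplifies this to $T(s,t) = \dfrac{B(t)-B(s)}{B(s)(s-t)}$.

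Finally I will extract coefficients in two different ways. Multiplying through by $B(s)$ gives
$$T(s,t)\,B(s) \;=\; \frac{B(t)-B(s)}{s-t} \;=\; -\sum_{n\geq 1} B_n \sum_{k=0}^{n-1} s^k t^{n-1-k};$$
reading off the coefficient of $s^i t^j$ on both sides produces $\sum_{k=0}^{i} B_k\,\tau_{i-k,j} = -B_{i+j+1}$, which is \eqref{schur-hook-recursion-1}. Multiplying instead by $H(-t)$ and again using $B(t)H(-t)=1$ gives
$$T(s,t)\,H(-t) \;=\; \frac{H(-s)-H(-t)}{s-t} \;=\; \sum_{n\geq 1}(-1)^n h_n(\mathbb{B})\sum_{k=0}^{n-1} s^k t^{n-1-k},$$
and reading off the coefficient of $s^i t^j$ produces \eqref{schur-hook-recursion-2}. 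No step is delicate: the only real ingredients are the cofactor expansion establishing the hook Schur formula and the elementary Cauchy-style identity $\sum_{i,k\geq 0}(-1)^{i+k}h_{i+k+1}\,s^i t^k = \frac{H(-t)-H(-s)}{s-t}$, which is obtained by summing over $n = i+k$ and recognizing a telescoping difference. I therefore expect no serious obstacle.
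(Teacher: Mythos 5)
Your proof is correct, but it takes a different route from the paper. You first derive a closed-form expansion $S_{L_{i,j}}(\mathbb{B}) = \sum_{k=0}^{j}(-1)^k h_{i+k+1}(\mathbb{B}) B_{j-k}$ by cofactor expansion and induction, then package everything into the generating function $T(s,t) = \sum \tau_{i,j}s^i t^j$, show $T(s,t) = -B(t)\,\frac{H(-t)-H(-s)}{s-t}$, and extract coefficients after multiplying by $B(s)$ or $H(-t)$. The paper instead proceeds directly from the determinant formulas \eqref{schur-hook-1}, \eqref{schur-hook-2}: it applies the Newton-type recursion $h_{i+1}(\mathbb{B}) = \sum_{k\geq 1}(-1)^{k+1}B_k h_{i+1-k}(\mathbb{B})$ (resp.\ its counterpart for $B_{j+1}$) to every entry of the first row of the determinant, uses multilinearity to read off $S_{L_{i,j}} = \sum_{k\geq 1}(-1)^{k+1}B_k S_{L_{i-k,j}}$, and invokes the extended definitions of $S_{L_{a,j}}$ for negative $a$ to identify the boundary term $(-1)^i B_{i+j+1}$. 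The paper's argument is shorter and exploits the extended hook conventions it set up just before the lemma; your argument is longer to state but produces the identity $T(s,t) = (B(t)-B(s))/(B(s)(s-t))$, which unifies \eqref{schur-hook-recursion-1} and \eqref{schur-hook-recursion-2} into a single generating function fact and makes the symmetry between the $B$- and $h$-recursions transparent. Both are valid; the one subtlety in your approach --- that the minor $M_{11}$ of \eqref{schur-hook-1} is $S_{L_{0,j-1}}(\mathbb{B}) = B_j$ and the minor $M_{21}$ is $S_{L_{i+1,j-1}}(\mathbb{B})$ --- checks out.
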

\begin{proof}
We prove \eqref{schur-hook-recursion-1} first. Note that, for any $i\geq 0$, 
\[
h_{i+1}(\mathbb{B}) = \sum_{k=1}^\infty (-1)^{k+1} B_k h_{i+1-k}(\mathbb{B}), 
\]
where the right hand side is in fact a finite sum. Applying this equation to every entry in the first row of \eqref{schur-hook-1}, we get
\begin{eqnarray*}
S_{L_{i,j}}(\mathbb{B}) & = & \sum_{k=1}^\infty (-1)^{k+1} B_k S_{L_{i-k,j}}(\mathbb{B}) \\
_{(\text{by (i) above})}~ & = & (\sum_{k=1}^i (-1)^{k+1} B_k S_{L_{i-k,j}}(\mathbb{B})) + (-1)^iB_{i+j+1}.
\end{eqnarray*}
Equation \eqref{schur-hook-recursion-1} follows from this and the definition of $\tau_{i,j}$.

Now we prove \eqref{schur-hook-recursion-2}. For any $j\geq 0$, 
\[
B_{j+1} = \sum_{k=1}^\infty (-1)^{k+1} h_{k}(\mathbb{B}) B_{j+1-k}, 
\]
where the right hand side is again a finite sum. Apply this equation to every entry in the first row of \eqref{schur-hook-2}, we get
\begin{eqnarray*}
S_{L_{i,j}}(\mathbb{B}) & = & \sum_{k=1}^\infty (-1)^{k+1} h_k(\mathbb{B}) S_{L_{i,j-k}}(\mathbb{B}) \\
_{(\text{by (ii) above})}~ & = & (\sum_{k=1}^j (-1)^{k+1} h_k(\mathbb{B}) S_{L_{i,j-k}}(\mathbb{B})) + (-1)^j h_{i+j+1}(\mathbb{B}).
\end{eqnarray*}
Equation \eqref{schur-hook-recursion-2} follows from this and the definition of $\tau_{i,j}$.
\end{proof}

\begin{lemma}\label{gen-chi-computing-Theta}
Let $\Theta$ be the matrix defined in \eqref{eq-def-matrix-Theta}. Then
\[
\Theta = (\tau_{m-i,j-1})_{(l+m)\times l} = \left(%
\begin{array}{llll}
\tau_{m-1,0} & \tau_{m-1,1} & \cdots &\tau_{m-1,l-1} \\
\tau_{m-2,0} & \tau_{m-2,1} & \cdots &\tau_{m-2,l-1} \\
\cdots & \cdots & \cdots & \cdots \\
\tau_{0,0} & \tau_{0,1} & \cdots &\tau_{0,l-1} \\
1& 0 & \cdots &0 \\
0 & 1 &\cdots &0 \\
\cdots & \cdots & \cdots & \cdots \\
0 & 0 &\cdots & 1 \\
\end{array}%
\right).
\]
\end{lemma}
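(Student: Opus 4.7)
The plan is to induct on $m$, using the recursion \eqref{schur-hook-recursion-1} of Lemma \ref{schur-hook-recursion} as the inductive engine. Write $\Theta^{(m)} := \Theta_m \Theta_{m-1}\cdots\Theta_1$, which is a $(m+l)\times l$ matrix, and let $M^{(m)}$ denote the matrix $(\tau_{m-i,j-1})_{1\leq i\leq m+l,\,1\leq j\leq l}$ appearing in the statement. We claim $\Theta^{(m)} = M^{(m)}$.

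For the base case $m=1$, $\Theta^{(1)} = \Theta_1$ has first row $(-B_1,\dots,-B_l)$ and its lower $l\times l$ block is $\Id_l$. On the other side, the first row of $M^{(1)}$ has entries $\tau_{0,j-1}=-S_{L_{0,j-1}}(\mathbb{B})$; expanding the determinant \eqref{schur-hook-2} for $L_{0,j-1}$ gives $S_{L_{0,j-1}}(\mathbb{B})=B_j$, so the first row is $(-B_1,\dots,-B_l)$. For $i\geq 2$, the entry $\tau_{1-i,j-1}$ uses the extended definition (i): it equals $0$ unless $1-i=-(j-1)-1$, i.e.\ $i=j+1$, in which case $S_{L_{-j,j-1}}(\mathbb{B})=(-1)^{j-1}$ and $\tau_{1-i,j-1}=(-1)^{2-i}(-1)^{j-1}=1$. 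Hence the lower $l\times l$ block of $M^{(1)}$ is $\Id_l$, matching $\Theta^{(1)}$.

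For the inductive step, assume $\Theta^{(m-1)}=M^{(m-1)}$ and compute $\Theta^{(m)}=\Theta_m\Theta^{(m-1)}=\Theta_m M^{(m-1)}$. By the shape of $\Theta_m$, the rows $2,\dots,m+l$ of $\Theta_m M^{(m-1)}$ are simply the rows $1,\dots,m+l-1$ of $M^{(m-1)}$; since the $(i,j)$-entry of $M^{(m-1)}$ is $\tau_{m-1-i,j-1}$, these rows agree with rows $2,\dots,m+l$ of $M^{(m)}$. For the top row, the $(1,j)$-entry is
\[
\sum_{k=1}^{l}(-B_k)\,\tau_{m-1-k,j-1} + \sum_{k=l+1}^{m+l-1}0\cdot(\cdots),
\]
which extends to $-\sum_{k=0}^{m-1}B_k\tau_{m-1-k,j-1}$ because the $k=0$ term would read $-B_0\tau_{m-1,j-1}=-\tau_{m-1,j-1}$ and, for $k>l$, the corresponding $B_k$ vanishes (since $\mathbb{B}$ has $l$ letters), so the added and removed terms combine to give $-\sum_{k=0}^{m-1}B_k\tau_{m-1-k,j-1}-\tau_{m-1,j-1}$; rearranging,
\[
(\Theta_m M^{(m-1)})_{1,j} = -\tau_{m-1,j-1} - \sum_{k=0}^{m-1}B_k\tau_{m-1-k,j-1}.
\]
Applying \eqref{schur-hook-recursion-1} with $(i,j)\leadsto(m-1,j-1)$ rewrites the sum as $B_{m+j-1}$, and combining with the definition $\tau_{i,j}=(-1)^{i+1}S_{L_{i,j}}(\mathbb{B})$ and the determinantal expansion \eqref{schur-hook-2} along the first row, one obtains $(\Theta_m M^{(m-1)})_{1,j}=\tau_{m-1,j-1}$, which is exactly the $(1,j)$-entry of $M^{(m)}$.

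The only real subtlety is the bookkeeping around the extended definition of $\tau_{i,j}$ for negative indices and the fact that $B_k=0$ for $k>l$; these are what allow the clean recursion to produce both the nontrivial top $m$ rows and the bottom $\Id_l$ block of $M^{(m)}$ in one uniform formula. Once this is checked, the inductive step is complete and $\Theta=\Theta^{(m)}=M^{(m)}$ as claimed.
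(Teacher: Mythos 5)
Your approach is the same as the paper's (the paper simply asserts that the lemma follows by an easy induction using \eqref{schur-hook-recursion-1}), and your base case is checked carefully and correctly, including the use of the extended definition of $\tau_{i,j}$ for negative first index to produce the $\Id_l$ block.

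The inductive step, however, has a real gap in the top-row computation. Starting from $(\Theta_m M^{(m-1)})_{1,j}=-\sum_{k=1}^{l}B_k\tau_{m-1-k,j-1}$, you want to bring the range into the form $k=0,\dots,m-1$ appearing in \eqref{schur-hook-recursion-1}. You add the missing $k=0$ term $-B_0\tau_{m-1,j-1}=-\tau_{m-1,j-1}$ and note that $B_k=0$ for $k>l$ fills in the tail when $l<m-1$; but when $l\ge m$ you also need to \emph{drop} the summands with $m\le k\le l$, and those are not all zero. Indeed $\tau_{m-1-k,j-1}=0$ for $k$ in that range \emph{except} at $k=m+j-1$, where the extended definition gives $\tau_{-j,j-1}=(-1)^{1-j}(-1)^{j-1}=1$, so the dropped range contributes $-B_{m+j-1}$ (which also covers the case $m+j-1>l$, since $B_{m+j-1}=0$ there). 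The correct relation is
\[
-\sum_{k=1}^{l}B_k\tau_{m-1-k,j-1}
= -\sum_{k=0}^{m-1}B_k\tau_{m-1-k,j-1}+\tau_{m-1,j-1}-B_{m+j-1},
\]
and after substituting \eqref{schur-hook-recursion-1}, which gives $\sum_{k=0}^{m-1}B_k\tau_{m-1-k,j-1}=-B_{m+j-1}$, the two $B_{m+j-1}$ terms cancel, leaving $\tau_{m-1,j-1}$. Your displayed intermediate formula $(\Theta_m M^{(m-1)})_{1,j}=-\tau_{m-1,j-1}-\sum_{k=0}^{m-1}B_k\tau_{m-1-k,j-1}$ has the wrong sign on $\tau_{m-1,j-1}$ and omits $-B_{m+j-1}$ entirely; it also reports the recursion with the wrong sign (the sum is $-B_{m+j-1}$, not $B_{m+j-1}$). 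Plugging the recursion into your formula gives $-\tau_{m-1,j-1}+B_{m+j-1}$, not $\tau_{m-1,j-1}$, so the derivation as written does not close, even though your stated conclusion is correct.
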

\begin{proof}
Note that $\tau_{0,j-1}=-B_j$. Recall that
\[
\Theta_1 =  \left(%
\begin{array}{llll}
-B_1 & -B_2 & \cdots & -B_l \\
1& 0 & \cdots &0 \\
0 & 1 &\cdots &0 \\
\cdots & \cdots & \cdots & \cdots \\
0 & 0 &\cdots & 1 \\
\end{array}%
\right)
=  \left(%
\begin{array}{llll}
\tau_{0,0} & \tau_{0,1} & \cdots &\tau_{0,l-1} \\
1& 0 & \cdots &0 \\
0 & 1 &\cdots &0 \\
\cdots & \cdots & \cdots & \cdots \\
0 & 0 &\cdots & 1 \\
\end{array}%
\right)
\]
and
\[
\Theta_k = \left(%
\begin{array}{ll}
\Theta_1 & 0 \\
0 & \Id_{k-1}
\end{array}%
\right).
\]
Using equation \eqref{schur-hook-recursion-1} in Lemma \ref{schur-hook-recursion}, it is easy to prove by induction that
\[
\Theta_k\Theta_{k-1}\cdots\Theta_1 = (\tau_{k-i,j-1})_{(l+k)\times l} = \left(%
\begin{array}{llll}
\tau_{k-1,0} & \tau_{k-1,1} & \cdots &\tau_{k-1,l-1} \\
\tau_{k-2,0} & \tau_{k-2,1} & \cdots &\tau_{k-2,l-1} \\
\cdots & \cdots & \cdots & \cdots \\
\tau_{0,0} & \tau_{0,1} & \cdots &\tau_{0,l-1} \\
1& 0 & \cdots &0 \\
0 & 1 &\cdots &0 \\
\cdots & \cdots & \cdots & \cdots \\
0 & 0 &\cdots & 1 \\
\end{array}%
\right).
\]
But $\Theta=\Theta_m\Theta_{m-1}\cdots\Theta_1$. So the lemma is the $k=m$ case of the above equation.
\end{proof}

Define 
\begin{itemize}
	\item $u_{i,j}= (-1)^{j-i} h_{j-i}(\mathbb{B})$ for $1 \leq i,j \leq l$,
	\item $v_{i,j} = (-1)^{j+m-i} h_{j+m-i}(\mathbb{B})$ for $1\leq i \leq l+m$ and $1\leq j\leq l$.
\end{itemize}
Let
\begin{eqnarray*}
U& = &(u_{i,j})_{l \times l} = \left(% 
\begin{array}{lllll}
1 & -h_1(\mathbb{B}) & \cdots & (-1)^{l-2} h_{l-2}(\mathbb{B}) & (-1)^{l-1} h_{l-1}(\mathbb{B}) \\
0 & 1  & \cdots & (-1)^{l-3} h_{l-3}(\mathbb{B}) &(-1)^{l-2} h_{l-2}(\mathbb{B}) \\
\cdots & \cdots & \cdots & \cdots &\cdots \\
0 & 0 & \cdots & 1 &-h_1(\mathbb{B}) \\
0 & 0 & \cdots & 0 & 1 \\
\end{array}%
\right), \\
V & =& (v_{i,j})_{(l+m) \times l} \\
& = &   \left(% 
\begin{array}{lllll}
(-1)^m h_m(\mathbb{B}) & (-1)^{m+1}h_{m+1}(\mathbb{B}) & \cdots &  (-1)^{m+l-2} h_{m+l-2}(\mathbb{B}) & (-1)^{m+l-1} h_{m+l-1}(\mathbb{B}) \\
\cdots & \cdots & \cdots & \cdots &\cdots \\
-h_1(\mathbb{B}) & h_{2}(\mathbb{B}) & \cdots &  (-1)^{l-1} h_{l-1}(\mathbb{B}) & (-1)^{l} h_{l}(\mathbb{B}) \\
1 & -h_1(\mathbb{B}) & \cdots & (-1)^{l-2} h_{l-2}(\mathbb{B}) & (-1)^{l-1} h_{l-1}(\mathbb{B}) \\
0 & 1  & \cdots & (-1)^{l-3} h_{l-3}(\mathbb{B}) &(-1)^{l-2} h_{l-2}(\mathbb{B}) \\
\cdots & \cdots & \cdots & \cdots &\cdots \\
0 & 0 & \cdots & 1 &-h_1(\mathbb{B}) \\
0 & 0 & \cdots & 0 & 1 \\
\end{array}%
\right), \\
\end{eqnarray*}

\begin{lemma}\label{gen-chi-simplify-Theta-to-V}
Let $\Theta$ be the matrix defined in \eqref{eq-def-matrix-Theta}. Then $\Theta U = V$.
\end{lemma}
\begin{proof}
This follows from Lemma \ref{gen-chi-computing-Theta} and equation \eqref{schur-hook-recursion-2} in Lemma \ref{schur-hook-recursion}.
\end{proof}

\begin{lemma}\label{gen-chi-computing-det-Omega-Theta}
Let $\Omega$ and $\Theta$ be the matrices defined in \eqref{eq-def-matrix-Omega} and \eqref{eq-def-matrix-Theta}. Then
\[
\det\left(%
\begin{array}{ll}
\Id_{m+n-l} & 0 \\
0 & \Omega \Theta\\
\end{array}%
\right)^t
=\det(\Omega \Theta) = (-1)^{ml} \sum_{\lambda\in\Lambda_{l,m}} (-1)^{|\lambda|} S_{\lambda'}(\mathbb{X}) S_{\lambda^c}(\mathbb{B}).
\]
\end{lemma}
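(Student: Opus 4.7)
The equality of the two determinants on the left is immediate from expanding along the identity block, so the content is in computing $\det(\Omega\Theta)$. My plan is to reduce this to a direct application of the Cauchy--Binet formula after first simplifying $\Theta$.

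First, I invoke Lemma \ref{gen-chi-simplify-Theta-to-V}: $\Theta U = V$, where the $l\times l$ matrix $U = (u_{i,j})$ is upper triangular with $1$'s on the diagonal, hence $\det(U)=1$. Multiplying on the right by $U$ therefore gives $\det(\Omega\Theta)=\det(\Omega\Theta U)=\det(\Omega V)$. The advantage of $\Omega V$ over $\Omega\Theta$ is that both factors are now Jacobi--Trudi-type rectangular matrices whose entries are (signed) elementary/complete symmetric polynomials; this is exactly the setup Cauchy--Binet is designed to exploit.

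Since $\Omega$ is $l\times(l+m)$ and $V$ is $(l+m)\times l$, Cauchy--Binet yields
\[
\det(\Omega V)=\sum_{S}\det(\Omega_{S})\det(V_{S}),
\]
the sum being over $l$-element subsets $S=\{s_1<\cdots<s_l\}\subseteq\{1,\dots,l+m\}$. I will then use the standard bijection $S\leftrightarrow\lambda\in\Lambda_{l,m}$ defined by $s_i=\lambda_{l+1-i}+i$ (equivalently $s_{l+1-i}=\lambda_i+l+1-i$), under which the range of $S$ is precisely $\Lambda_{l,m}$.

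Next I compute the two minor determinants in terms of Schur polynomials. For $\Omega_{S}$, whose $(i,j)$ entry is $X_{s_j-i}=X_{\lambda_{l+1-j}+j-i}$, reversing the order of the columns (which contributes $(-1)^{l(l-1)/2}$) and then transposing and reversing rows (another $(-1)^{l(l-1)/2}$) puts the matrix in the form $(X_{\lambda'_i-i+j})$, so Jacobi--Trudi \eqref{schur-elementary} gives $\det(\Omega_{S})=S_{\lambda'}(\mathbb{X})$; the two sign factors cancel. For $V_{S}$, whose $(i,j)$ entry is $(-1)^{j+m-s_i}h_{j+m-s_i}(\mathbb{B})$, I factor out the column signs $(-1)^j$, the row signs $(-1)^{m-s_i}$ (using $\sum_i s_i=|\lambda|+l(l+1)/2$), and then reverse both rows and columns so that the resulting matrix has $(i,j)$ entry $h_{(m-\lambda_i)-i+j}(\mathbb{B})=h_{(\lambda^c)_{l+1-i}-i+j}(\mathbb{B})$; one more row-reversal brings it into Jacobi--Trudi form \eqref{schur-complete} for $S_{\lambda^c}(\mathbb{B})$. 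A bookkeeping exercise then shows $\det(V_{S})=(-1)^{ml+|\lambda|}S_{\lambda^c}(\mathbb{B})$. Substituting both into Cauchy--Binet produces the claimed identity.

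The only real obstacle is the sign bookkeeping in the $\det(V_S)$ calculation --- there are several $(-1)^{l(l-1)/2}$ contributions from reversing rows/columns and a $(-1)^{lm+\sum s_i}$ coming from the entrywise signs, and all of these must combine cleanly to $(-1)^{ml+|\lambda|}$ so that only the $(-1)^{ml}$ survives as a global prefactor in the final formula. I plan to verify this on a small test case (e.g.\ $l=m=2$, $\lambda=(2,1)$) as a sanity check before writing up the general computation, precisely to pin down every sign.
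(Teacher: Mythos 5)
Your plan is correct and follows the paper's proof essentially verbatim: reduce to $\det(\Omega V)$ via $\Theta U = V$ with $\det U = 1$, apply Cauchy--Binet, convert the index sum into a sum over $\Lambda_{l,m}$ by your bijection, and identify the two $l\times l$ minors as Schur polynomials via Jacobi--Trudi. The one step you flagged as uncertain --- the sign bookkeeping for $\det V_S$ --- is actually simpler than your sketch suggests and involves no row or column reversals at all: once you factor out the column signs $(-1)^j$ and the row signs $(-1)^{m-s_i}$, the surviving matrix $\bigl(h_{m+j-s_i}(\mathbb{B})\bigr)_{i,j=1}^l$ is \emph{already} the Jacobi--Trudi matrix for $S_{\lambda^c}(\mathbb{B})$, because $s_i = \lambda_{l+1-i}+i$ gives $m+j-s_i = (m-\lambda_{l+1-i})-i+j = (\lambda^c)_i - i + j$; consequently there are no $(-1)^{l(l-1)/2}$ reversal factors to track, and the total sign is just $(-1)^{ml+\frac{l(l+1)}{2}-\sum_i s_i} = (-1)^{ml+|\lambda|}$ since $\sum_i s_i = |\lambda|+\frac{l(l+1)}{2}$. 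Your sketch of this step (reversing rows, then columns, then rows again) introduces an odd number of reversals and would leave a stray $(-1)^{l(l-1)/2}$ that has no natural partner, so drop the reversals in the write-up.
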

\begin{proof}
Note that $\det U=1$. So, by Lemma \ref{gen-chi-simplify-Theta-to-V}, $\det (\Omega \Theta) = \det (\Omega \Theta U) = \det (\Omega V)$. Let 
\[
\mathcal{I}:=\{I=(i_1,\dots,i_l)|1\leq i_1<\cdots<i_l \leq l+m\}.
\]
For any $I=(i_1,\dots,i_l) \in \mathcal{I}$, define
\begin{itemize}
	\item $\Omega_I$ to be the $l\times l$ minor matrix of $\Omega$ consisting of the $i_1,\dots,i_l$-th columns of $\Omega$,
	\item $V_I$ to be the $l\times l$ minor matrix of $V$ consisting of the $i_1,\dots,i_l$-th rows of $V$.
\end{itemize}
Then, by the Binet-Cauchy Theorem, 
\begin{equation}\label{gen-chi-Binet-Cauchy}
\det (\Omega \Theta) = \det (\Omega V) = \sum_{I \in \mathcal{I}} \det \Omega_I \cdot \det V_I.
\end{equation}

Recall that $\Lambda_{l,m}= \{\mu=(\mu_1\geq\cdots\geq\mu_l) ~|~ \mu_1 \leq m\}$. Note that there is a one-to-one correspondence $\jmath:\mathcal{I} \rightarrow \Lambda_{l,m}$ given by 
\[
\jmath(I)=(i_l-l\geq i_{l-1}-l+1 \geq \cdots \geq i_1-1)
\] 
for any $I=(i_1,\dots,i_l) \in \mathcal{I}$. The inverse of $\jmath$ is given by
\[
\jmath^{-1} (\lambda) = (\lambda_l+1,\lambda_{l-1}+2,\dots,\lambda_1+l)
\]
for any $\lambda=(\lambda_1\geq\cdots\geq \lambda_l)\in \Lambda_{l,m}$.

For any $I=(i_1,\dots,i_l) \in \mathcal{I}$, 
\begin{eqnarray*}
\det \Omega_I & = & \left|% 
\begin{array}{lllll}
X_{i_1-1} & X_{i_2-1} & \dots & X_{i_{l-1}-1} & X_{i_l-1} \\ 
X_{i_1-2} & X_{i_2-2} & \dots & X_{i_{l-1}-2} & X_{i_l-2} \\ 
\cdots & \cdots & \cdots & \cdots &\cdots \\
X_{i_1-l+1} & X_{i_2-l+1} & \dots & X_{i_{l-1}-l+1} & X_{i_l-l+1} \\ 
X_{i_1-l} & X_{i_2-l} & \dots & X_{i_{l-1}-l} & X_{i_l-l} \\ 
\end{array}%
\right| \\
& = &
\left|% 
\begin{array}{lllll}
X_{i_l-l} & X_{i_l-l+1} & \dots & X_{i_{l}-2} & X_{i_l-1} \\ 
X_{i_{l-1}-l} & X_{i_{l-1}-l+1} & \dots & X_{i_{l-1}-2} & X_{i_{l-1}-1} \\ 
\cdots & \cdots & \cdots & \cdots &\cdots \\
X_{i_2-l} & X_{i_2-l+1} & \dots & X_{i_{2}-2} & X_{i_2-1} \\ 
X_{i_1-l} & X_{i_1-l+1} & \dots & X_{i_{1}-2} & X_{i_1-1} \\ 
\end{array}%
\right| \\
& = & S_{\jmath(I)'}(\mathbb{X}).
\end{eqnarray*}
To shorten the equation, we let $h_i = h_i(\mathbb{B})$ and $\hbar_{i}=(-1)^i h_i(\mathbb{B})$. Then, for any $I=(i_1,\dots,i_l) \in \mathcal{I}$,
\begin{eqnarray*}
&& \det V_I \\
& = & \left|% 
\begin{array}{lllll}
\hbar_{m+1-i_1} & \hbar_{m+2-i_1} & \dots & \hbar_{m+l-1-i_1} & \hbar_{m+l-i_1} \\
\hbar_{m+1-i_2} & \hbar_{m+2-i_2} & \dots & \hbar_{m+l-1-i_2} & \hbar_{m+l-i_2} \\
\cdots & \cdots & \cdots & \cdots &\cdots \\
\hbar_{m+1-i_{l-1}} & \hbar_{m+2-i_{l-1}} & \dots & \hbar_{m+l-1-i_{l-1}} & \hbar_{m+l-i_{l-1}} \\
\hbar_{m+1-i_l} & \hbar_{m+2-i_l} & \dots & \hbar_{m+l-1-i_l} & \hbar_{m+l-i_l} \\
\end{array}%
\right| \\
& = & (-1)^{ml+\frac{l(l+1)}{2} -\sum_{k=1}^l i_k}
\left|% 
\begin{array}{lllll}
h_{m+1-i_1} & h_{m+2-i_1} & \dots & h_{m+l-1-i_1} & h_{m+l-i_1} \\
h_{m+1-i_2} & h_{m+2-i_2} & \dots & h_{m+l-1-i_2} & h_{m+l-i_2} \\
\cdots & \cdots & \cdots & \cdots &\cdots \\
h_{m+1-i_{l-1}} & h_{m+2-i_{l-1}} & \dots & h_{m+l-1-i_{l-1}} & h_{m+l-i_{l-1}} \\
h_{m+1-i_l} & h_{m+2-i_l} & \dots & h_{m+l-1-i_l} & h_{m+l-i_l} \\
\end{array}%
\right| \\
& = & (-1)^{ml+|\jmath(I)|} S_{\jmath(I)^c}(\mathbb{B}).
\end{eqnarray*}
So \eqref{gen-chi-Binet-Cauchy} gives that
\begin{eqnarray*}
\det (\Omega \Theta) & = & (-1)^{ml}\sum_{I \in \mathcal{I}}  (-1)^{|\jmath(I)|}  S_{\jmath(I)'}(\mathbb{X}) S_{\jmath(I)^c}(\mathbb{B}) \\
& = & (-1)^{ml} \sum_{\lambda\in\Lambda_{l,m}} (-1)^{|\lambda|} S_{\lambda'}(\mathbb{X}) S_{\lambda^c}(\mathbb{B}). \\
\end{eqnarray*}
\end{proof}

\begin{proof}[Proof of Proposition \ref{general-general-chi-maps}]
Proposition \ref{general-general-chi-maps} now follows easily from Proposition \ref{general-jumping-factor}, Lemma \ref{gen-chi-adjoint} and Lemma \ref{gen-chi-computing-det-Omega-Theta}.
\end{proof}

\begin{proposition}\label{general-general-chi-maps-HMF}
Let $\Gamma_0$, $\Gamma_1$, $\chi^0$ and $\chi^1$ be as in Proposition \ref{general-general-chi-maps}. Then $\chi^0$ and $\chi^1$ are homotopically non-trivial. Moreover, up to homotopy and scaling, $\chi^0$ (resp. $\chi^1$) is the unique homotopically non-trivial homogeneous morphism of quantum degree $ml$ from $C(\Gamma_0)$ to $C(\Gamma_1)$ (resp. from $C(\Gamma_1)$ to $C(\Gamma_0)$.)
\end{proposition}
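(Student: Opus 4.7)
The plan is to combine an explicit computation of the bigraded $\Hom$-space with the compositional identity from Proposition \ref{general-general-chi-maps}. First I would show that, as a bigraded $\C$-vector space, $\Hom_{HMF}(C(\Gamma_0),C(\Gamma_1))$ vanishes in every quantum degree strictly smaller than $ml$ and is one-dimensional in quantum degree $ml$; the same for $\Hom_{HMF}(C(\Gamma_1),C(\Gamma_0))$. Once this is established, uniqueness up to homotopy and scaling of any homotopically nontrivial homogeneous morphism of quantum degree $ml$ is automatic, and combined with the non-triviality of $\chi^0$ and $\chi^1$ the proposition follows.

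The $\Hom$-space computation uses the standard trick from Lemmas \ref{bouquet-move-lemma} and \ref{edge-splitting-lemma}: by Lemma \ref{rewrite-hom-finite-gen} and Lemma \ref{bullet},
\[
\Hom(C(\Gamma_0),C(\Gamma_1)) \;\simeq\; C(\Gamma_1)\otimes C(\Gamma_0)_\bullet \;\simeq\; C(\Gamma)
\]
(up to an appropriate $\zed_2$- and quantum-shift), where $\Gamma$ is the closed MOY graph built from $\Gamma_1$ and $\Gamma_0$ by reversing the orientations of the latter and identifying boundary edges along $\mathbb{X},\mathbb{Y},\mathbb{A},\mathbb{B}$. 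I would then simplify $\Gamma$ by iterated application of Decompositions (I) and (II) (Theorems \ref{decomp-I}, \ref{decomp-II}), the edge-contraction Lemma \ref{edge-contraction}, and Corollary \ref{circle-dimension}, reducing $\Gamma$ to a disjoint union of colored circles while carefully tracking all grading shifts. Computing the homology of the result yields a Poincar\'e polynomial whose lowest quantum degree is $ml$ and whose coefficient in that degree is $1$, which establishes the desired one-dimensionality.

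Non-triviality of $\chi^0$ then follows from Proposition \ref{general-general-chi-maps}: if $\chi^0\simeq0$, then by Lemma \ref{morphism-sign} the composition $\chi^1\circ\chi^0\simeq0$, so multiplication by the polynomial $P=\sum_{\lambda\in\Lambda_{l,m}}(-1)^{|\lambda|}S_{\lambda'}(\mathbb{X})\,S_{\lambda^c}(\mathbb{B})$ is a null-homotopic endomorphism of $C(\Gamma_0)$, and hence $P$ acts as zero on $H(\Gamma_0)$. To rule this out, I would decompose $C(\Gamma_0)$ using Decomposition (II) at the bottom merge and Decomposition (I) at the middle $(n-l)$-edge into a matrix factorization whose homology is a tensor product of Grassmannian cohomology rings (via Proposition \ref{circle-module}), and check that the particular combination $P$ is nonzero in that module. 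Under the standing hypothesis $m+n\leq N$ (hence $l\leq n\leq N-m$) we have $\lambda'\in\Lambda_{m,l}\subseteq\Lambda_{m,N-m}$ and $\lambda^c\in\Lambda_{l,m}\subseteq\Lambda_{l,N-l}$, so Lemma \ref{schur-null-homotopic} imposes no forced vanishing on any summand of $P$; the $\lambda=\lambda_{l,m}$ term contributes $\pm S_{\lambda_{m,l}}(\mathbb{X})\cdot 1$, whose image is nonzero in the relevant quotient ring by Theorem \ref{grassmannian}, and the remaining summands cannot cancel it because they involve strictly different $S_{\lambda'}(\mathbb{X})\otimes S_{\lambda^c}(\mathbb{B})$ bi-types. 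Non-triviality of $\chi^1$ follows by the symmetric argument applied to $\chi^0\circ\chi^1 \simeq P\cdot\id_{C(\Gamma_1)}$.

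The main obstacle will be the bookkeeping of quantum and $\zed_2$-grading shifts through the MOY reduction of the closed graph $\Gamma$: the gluing of $\bar{\Gamma}_0$ (with its vertical edge of color $n-l$ between two trivalent vertices) to the I-shaped $\Gamma_1$ produces a graph that collapses only after several successive decomposition steps, and matching the resulting quantum-binomial factor against the predicted $q^{ml}+\cdots$ requires careful accounting. By contrast, the verification that $P$ acts non-trivially, once $H(\Gamma_0)$ is identified with a suitable tensor product of Grassmannian cohomology rings, reduces to an elementary check on Schur polynomials and does not present serious difficulty.
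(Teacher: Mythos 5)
Your computation of the $\Hom$-spaces matches the paper's first step: both use Lemma \ref{complex-computing-gamma-HMF-lemma}-type identifications to reduce to the homology of a closed MOY graph, simplify via Decompositions (I, II), and read off that the minimal non-vanishing quantum degree is $ml$ with a one-dimensional piece there. That part is fine, and the uniqueness claim follows from it exactly as you say.

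The non-triviality argument has a real gap. You argue: if $\chi^0\simeq 0$ then $\mathfrak{m}(P)\simeq 0$, "hence $P$ acts as zero on $H(\Gamma_0)$," and you propose to rule this out. But the conclusion you want to contradict is vacuously true. By Definition \ref{homology-MOY-def}, $H(\Gamma_0)$ is the homology of $C(\Gamma_0)/\mathfrak{I}\cdot C(\Gamma_0)$, where $\mathfrak{I}$ is the maximal homogeneous ideal of $R=\Sym(\mathbb{X}|\mathbb{Y}|\mathbb{A}|\mathbb{B})$; since $\mathbb{X}$ and $\mathbb{B}$ mark end points of $\Gamma_0$, every homogeneous element of positive degree in these alphabets lies in $\mathfrak{I}$ and hence already acts as zero on $H(\Gamma_0)$. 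In particular $P$ (which is homogeneous of degree $2ml>0$) always acts as zero on $H(\Gamma_0)$, regardless of whether $\chi^0\simeq 0$. So no contradiction is obtained.

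The question you actually need to answer is whether $P\cdot \id_{C(\Gamma_0)}=0$ in $\Hom_\HMF(C(\Gamma_0),C(\Gamma_0))$ — that is, whether $P$ lies in the kernel of the natural ring map $R\to\Hom_\HMF(C(\Gamma_0),C(\Gamma_0))$, $r\mapsto\mathfrak{m}(r)$. Your "bi-type" argument implicitly assumes this kernel is exactly the ideal produced by Lemma \ref{schur-null-homotopic}, but you never establish this, and it is not automatic: when the $\Hom$-space is identified with the homology of the closed-up graph, the edges of $\Gamma_0$ marked by $\mathbb{X}$ and $\mathbb{B}$ become tied together through digons into a single circle of color $m+n$, so the induced map from $\Sym(\mathbb{X})\otimes\Sym(\mathbb{B})$ has additional gluing relations that the pure bi-degree argument doesn't see. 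The paper sidesteps this entirely by an explicit composition argument (Figure \ref{general-general-chi-maps-non-vanishing}): it sandwiches $\chi^0\circ\chi^1$ between edge-split/merge morphisms $\phi_i,\overline{\phi}_i$ and multiplications by the Schur polynomials $S_{\lambda_{m,n}}(-\mathbb{Y})$ and $S_{\lambda_{l,n-l}}(-\mathbb{A})$, applies the $\chi^1\circ\chi^0\simeq\mathfrak{m}(P)$ identity and the Sylvester-duality relations of Lemma \ref{phibar-compose-phi}, and produces the identity as a witness. This is the missing mechanism in your proposal — you need to build a concrete nonzero composition through $\chi^0$ and $\chi^1$, not just argue about their homology action (which, as noted, is already zero for degree reasons).
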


\begin{figure}[ht]

\setlength{\unitlength}{1pt}

\begin{picture}(360,75)(-180,-15)

% left

\put(-130,10){\vector(1,1){10}}

\put(-120,20){\line(1,-1){10}}

\put(-120,40){\vector(0,-1){20}}

\put(-120,40){\line(1,1){10}}

\put(-130,50){\vector(1,-1){10}}

\put(-152,45){\tiny{$_{m+n-l}$}}

\put(-135,15){\tiny{$_l$}}

\put(-115,28){\tiny{$_{n-l}$}}

\qbezier(-110,10)(-100,0)(-90,10)

\qbezier(-110,50)(-100,60)(-90,50)

\qbezier(-130,10)(-140,0)(-100,0)

\qbezier(-30,10)(-20,0)(-100,0)

\qbezier(-130,50)(-140,60)(-100,60)

\qbezier(-30,50)(-30,60)(-100,60)

\put(-90,-15){$\Gamma$}

\put(-90,10){\vector(1,1){20}}

\put(-90,50){\vector(1,-1){20}}

\put(-70,30){\vector(1,0){20}}

\put(-50,30){\line(1,1){20}}

\put(-50,30){\line(1,-1){20}}

\put(-82,45){\tiny{$_{m}$}}

\put(-80,15){\tiny{$_{n}$}}

\put(-69,32){\tiny{$_{m+n}$}}

% right

\put(30,40){\vector(3,1){30}}

\put(30,40){\line(1,1){10}}

\put(20,50){\vector(1,-1){10}}

\put(120,45){\tiny{$_{m+n-l}$}}

\put(67,45){\tiny{$_{m+n-l}$}}

\put(33,50){\tiny{$_{m}$}}

\put(120,15){\tiny{$_l$}}

\put(40,40){\tiny{$_{n-l}$}}

\qbezier(40,50)(50,60)(60,50)

\qbezier(60,10)(50,0)(90,0)

\qbezier(120,10)(130,0)(90,0)

\qbezier(20,50)(10,60)(50,60)

\qbezier(120,50)(130,60)(50,60)

\put(60,-15){$\Gamma'$}

\put(60,10){\vector(1,1){20}}

\put(60,50){\vector(1,-1){20}}

\put(80,30){\vector(1,0){20}}

\put(100,30){\line(1,1){20}}

\put(100,30){\line(1,-1){20}}

\put(81,32){\tiny{$_{m+n}$}}

\end{picture}

\caption{}\label{general-general-chi-maps-HMF-figure}

\end{figure}

\begin{proof}
Using Lemmas \ref{gen-chi-Gamma-0-lemma} and \ref{gen-chi-Gamma-1-lemma}, one can check that, as graded vector spaces, 
\begin{eqnarray*}
\Hom_\HMF(C(\Gamma_1),C(\Gamma_0)) & \cong & H(\Gamma) \left\langle m+n\right\rangle  \{q^{(m+n)(N-m-n)+mn+ml+nl-l^2}\}, \\
\Hom_\HMF(C(\Gamma_0),C(\Gamma_1)) & \cong & H(\overline{\Gamma}) \left\langle m+n\right\rangle \{q^{(m+n)(N-m-n)+mn+ml+nl-l^2}\}, \\
\end{eqnarray*}
where $\Gamma$ is the MOY graph in Figure \ref{general-general-chi-maps-HMF-figure}, and $\overline{\Gamma}$ is $\Gamma$ with orientation reversed. By Corollary \ref{contract-expand}, we have $H(\Gamma) \cong H(\Gamma')$. Then, by Decomposition (II) (Theorem \ref{decomp-II}) and Corollary \ref{circle-dimension}, we have
\[
H(\Gamma) \cong H(\Gamma') \cong C(\emptyset) \left\langle m+n \right\rangle \{\qb{m+n-l}{m} \qb{m+n}{l}\qb{N}{m+n}\}.
\]
Similarly,
\[
H(\overline{\Gamma}) \cong C(\emptyset) \left\langle m+n \right\rangle \{\qb{m+n-l}{m} \qb{m+n}{l}\qb{N}{m+n}\}.
\]
Thus, as graded vector spaces,
\begin{eqnarray*}
& & \Hom_\HMF(C(\Gamma_1),C(\Gamma_0)) \\
& \cong & \Hom_\HMF(C(\Gamma_0),C(\Gamma_1)) \\
& \cong & C(\emptyset) \{\qb{m+n-l}{m} \qb{m+n}{l}\qb{N}{m+n} q^{(m+n)(N-m-n)+mn+ml+nl-l^2}\}. \\
\end{eqnarray*}
In particular, the lowest non-vanishing quantum grading of the above spaces is $ml$, and the subspaces of these spaces of homogeneous elements of quantum degree $ml$ are $1$-dimensional. So, to prove the proposition, we only need to show that $\chi^0$ and $\chi^1$ are homotopically non-trivial. To prove this, we use the diagram in Figure \ref{general-general-chi-maps-non-vanishing}.

\begin{figure}[ht]
$
\xymatrix{
\input{v-vector-m+n} \ar@<10ex>[rr]^{\phi_1\otimes\phi_2}  & &  \ar@<-8ex>[ll]^{\overline{\phi}_1\otimes\overline{\phi}_2} \input{v-vector-two-bubbles}  \ar@<10ex>[rr]^{\chi^1} & & \input{v-vector-theta-shape} \ar@<-8ex>[ll]^{\chi^0} \\
}
$
\caption{}\label{general-general-chi-maps-non-vanishing}

\end{figure}

Consider the morphisms in Figure \ref{general-general-chi-maps-non-vanishing}, where $\phi_1$ and $\overline{\phi}_1$ (resp. $\phi_2$ and $\overline{\phi}_2$) are induced by the edge splitting and merging of the upper (resp. lower ) bubble, and $\chi^0$ and $\chi^1$ are the morphisms from Proposition \ref{general-general-chi-maps}. Let us compute the composition 
\[
(\overline{\phi}_1\otimes\overline{\phi}_2) \circ \mathfrak{m}(S_{\lambda_{m,n}}(-\mathbb{Y}) \cdot S_{\lambda_{l,n-l}}(-\mathbb{A})) \circ \chi^0 \circ \chi^1 \circ (\phi_1\otimes\phi_2),
\]
where $\mathfrak{m}(S_{\lambda_{m,n}}(-\mathbb{Y}) \cdot S_{\lambda_{l,n-l}}(-\mathbb{A}))$ is the morphism induced by the multiplication by $S_{\lambda_{m,n}}(-\mathbb{Y}) \cdot S_{\lambda_{l,n-l}}(-\mathbb{A})$. By Proposition \ref{general-general-chi-maps}, we have
\begin{eqnarray*}
&& (\overline{\phi}_1\otimes\overline{\phi}_2) \circ \mathfrak{m}(S_{\lambda_{m,n}}(-\mathbb{Y}) \cdot S_{\lambda_{l,n-l}}(-\mathbb{A})) \circ \chi^0 \circ \chi^1 \circ (\phi_1\otimes\phi_2) \\
& \simeq & (\overline{\phi}_1\otimes\overline{\phi}_2) \circ \mathfrak{m}(S_{\lambda_{m,n}}(-\mathbb{Y}) \cdot S_{\lambda_{l,n-l}}(-\mathbb{A}) \cdot (\sum_{\lambda\in\Lambda_{l,m}} (-1)^{|\lambda|} S_{\lambda'}(\mathbb{X}) S_{\lambda^c}(\mathbb{B})))  \circ (\phi_1\otimes\phi_2) \\
& = & \sum_{\lambda\in\Lambda_{l,m}} (-1)^{|\lambda|} (\overline{\phi}_1 \circ \mathfrak{m}(S_{\lambda_{m,n}}(-\mathbb{Y}) \cdot S_{\lambda'}(\mathbb{X})) \circ \phi_1) \otimes (\overline{\phi}_2 \circ \mathfrak{m}(S_{\lambda_{l,n-l}}(-\mathbb{A}) \cdot S_{\lambda^c}(\mathbb{B})) \circ \phi_2).
\end{eqnarray*}
But, by Lemma \ref{phibar-compose-phi}, we have that, for $\lambda\in\Lambda_{l,m}$,
\begin{eqnarray*}
\overline{\phi}_1 \circ \mathfrak{m}(S_{\lambda_{m,n}}(-\mathbb{Y}) \cdot S_{\lambda'}(\mathbb{X})) \circ \phi_1 & \approx & \begin{cases}
\id & \text{if } \lambda=(0\geq\cdots\geq0), \\
0 & \text{if } \lambda\neq(0\geq\cdots\geq0),
\end{cases} \\
\overline{\phi}_2 \circ \mathfrak{m}(S_{\lambda_{l,n-l}}(-\mathbb{A}) \cdot S_{\lambda^c}(\mathbb{B})) \circ \phi_2 & \approx & \begin{cases}
\id & \text{if } \lambda=(0\geq\cdots\geq0), \\
0 & \text{if } \lambda\neq(0\geq\cdots\geq0).
\end{cases} \\
\end{eqnarray*}
So, 
\[
(\overline{\phi}_1\otimes\overline{\phi}_2) \circ \mathfrak{m}(S_{\lambda_{m,n}}(-\mathbb{Y}) \cdot S_{\lambda_{l,n-l}}(-\mathbb{A})) \circ \chi^0 \circ \chi^1 \circ (\phi_1\otimes\phi_2) \approx \id,
\]
which implies that $\chi^0$ and $\chi^1$ are not homotopic to $0$.
\end{proof}

\subsection{Adding and removing a loop}\label{subsec-loop-morph} Using the $\chi$-morphisms and the morphisms associated to circle creation and annihilation, one can construction morphisms associated to adding and removing loops.

\begin{figure}[ht]

\setlength{\unitlength}{1pt}

\begin{picture}(360,75)(-180,-90)

% left

\put(-57,-45){\tiny{$m$}}

\put(-60,-75){\vector(0,1){50}}

\put(-70,-30){\small{$\mathbb{X}$}}

\put(-65,-90){$\Gamma_0$}

% center

\put(-25,-50){\vector(1,0){50}}

\put(25,-60){\vector(-1,0){50}}

\put(-5,-47){\small{$\psi$}}

\put(-5,-70){\small{$\overline{\psi}$}}

% right 

\put(60,-75){\vector(0,1){10}}

\put(60,-65){\vector(0,1){20}}

\put(60,-50){\vector(0,1){25}}

\put(59,-52){\line(1,0){2}}

\qbezier(60,-60)(80,-75)(80,-55)

\qbezier(60,-40)(80,-25)(80,-45)

\put(80,-45){\vector(0,-1){10}}

\put(79,-47){\line(1,0){2}}

\put(63,-30){\tiny{$m$}}

\put(63,-70){\tiny{$m$}}

\put(83,-55){\tiny{$n$}}

\put(38,-45){\tiny{$m+n$}}

\put(50,-30){\small{$\mathbb{X}$}}

\put(83,-50){\small{$\mathbb{B}$}}

\put(50,-55){\small{$\mathbb{Y}$}}

\put(55,-90){$\Gamma_1$}

\end{picture}

\caption{}\label{loop-addition}

\end{figure}

\begin{lemma}\label{loop-addition-lemma}
Let $\Gamma_0$ and $\Gamma_1$ be the MOY graphs in Figure \ref{loop-addition}. Then, as $\zed_2\oplus\zed$-graded vector spaces over $\C$,
\[
\Hom_{HMF}(C(\Gamma_0),C(\Gamma_1)) \cong \Hom_{HMF}(C(\Gamma_1),C(\Gamma_0)) \cong C(\emptyset) \{\qb{N}{m}\qb{N-m}{n} q^{m(N-m)}\} \left\langle n \right\rangle.
\]
In particular, the subspaces of these spaces of homogeneous elements of quantum degree $-n(N-n)+mn$ are $1$-dimensional.
\end{lemma}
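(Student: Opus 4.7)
The plan is to reduce everything to the homology of a single colored circle by applying Direct Sum Decomposition I together with the Hom-tensor identification from Lemma \ref{rewrite-hom-finite-gen}.

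First I would invoke Theorem \ref{decomp-I}: with $\Gamma_0$ and $\Gamma_1$ as in Figure \ref{loop-addition}, the MOY graph $\Gamma_1$ is precisely the ``loop attached to an $m$-edge'' graph covered by Decomposition I, so
\[
C(\Gamma_1) \simeq C(\Gamma_0)\{\qb{N-m}{n}\}\langle n\rangle.
\]
Since this homotopy equivalence is induced by a pair of homogeneous morphisms preserving both gradings, it gives bigraded isomorphisms
\begin{align*}
\Hom_{HMF}(C(\Gamma_0),C(\Gamma_1)) &\cong \Hom_{HMF}(C(\Gamma_0),C(\Gamma_0))\{\qb{N-m}{n}\}\langle n\rangle, \\
\Hom_{HMF}(C(\Gamma_1),C(\Gamma_0)) &\cong \Hom_{HMF}(C(\Gamma_0),C(\Gamma_0))\{\qb{N-m}{n}\}\langle n\rangle.
\end{align*}
Thus the task reduces to computing $\Hom_{HMF}(C(\Gamma_0),C(\Gamma_0))$, where $\Gamma_0$ is a single oriented edge colored by $m$ (with the alphabets $\mathbb{X}$ and $\mathbb{Y}$ at its two endpoints).

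Next I would apply Lemma \ref{rewrite-hom-finite-gen} to write $\Hom_R(C(\Gamma_0),C(\Gamma_0)) \cong C(\Gamma_0)\otimes_R C(\Gamma_0)_\bullet$, where $R=\Sym(\mathbb{X}|\mathbb{Y})$. Then, exactly as in the proof of Lemma \ref{circle-rep-two-marks}, Lemma \ref{bullet} (together with lemmas \ref{row-reverse-signs} and \ref{column-reverse-signs}) identifies this tensor product, up to grading shift, with the matrix factorization of the circle $\bigcirc_m$ obtained by closing $\Gamma_0$ against itself. Tracking the shifts gives
\[
\Hom_R(C(\Gamma_0),C(\Gamma_0))\;\simeq\; C(\bigcirc_m)\{q^{m(N-m)}\}\langle m\rangle.
\]
Taking homology and applying Corollary \ref{circle-dimension}, which says $H(\bigcirc_m)\cong C(\emptyset)\{\qb{N}{m}\}\langle m\rangle$, yields
\[
\Hom_{HMF}(C(\Gamma_0),C(\Gamma_0)) \cong C(\emptyset)\{\qb{N}{m}\,q^{m(N-m)}\}
\]
(the $\langle 2m\rangle$ shift is trivial since the $\zed_2$-grading is mod $2$). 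Combining this with the first step gives the required isomorphism
\[
\Hom_{HMF}(C(\Gamma_0),C(\Gamma_1)) \cong \Hom_{HMF}(C(\Gamma_1),C(\Gamma_0)) \cong C(\emptyset)\{\qb{N}{m}\qb{N-m}{n}\,q^{m(N-m)}\}\langle n\rangle.
\]

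Finally, for the last sentence of the lemma I would read off the lowest quantum degree using formula \eqref{compute-quantum-binary}: $\qb{N}{m}$ has lowest term $q^{-m(N-m)}$ with coefficient $1$, and $\qb{N-m}{n}$ has lowest term $q^{-n(N-m-n)}$ with coefficient $1$. Multiplying these together with $q^{m(N-m)}$ gives leading term $q^{-n(N-m-n)} = q^{-n(N-n)+mn}$ with coefficient $1$, so the homogeneous subspace in quantum degree $-n(N-n)+mn$ is one-dimensional over $\C$. The only nontrivial piece of the argument is invoking Decomposition I correctly — the rest is bookkeeping of grading shifts and is essentially contained in the proofs of Corollary \ref{circle-dimension} and Lemma \ref{circle-rep-two-marks}, so I do not anticipate a genuine obstacle.
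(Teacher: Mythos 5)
Your proof is correct and follows exactly the paper's approach: apply Theorem \ref{decomp-I} to reduce to $\Hom_{\HMF}(C(\Gamma_0),C(\Gamma_0))$, identify this via $C(\Gamma_0)\otimes_R C(\Gamma_0)_\bullet$ with $C(\bigcirc_m)$ up to grading shift (the paper cites the proof of Lemma \ref{circle-rep-two-marks} for this, which is precisely the chain of Lemmas \ref{rewrite-hom-finite-gen}, \ref{bullet}, \ref{row-reverse-signs}, \ref{column-reverse-signs} you spell out), and finish with Corollary \ref{circle-dimension}. The grading bookkeeping, including the observation that $\langle 2m\rangle$ is trivial and the identity $-n(N-m-n)=-n(N-n)+mn$, all checks out.
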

\begin{proof}
By Theorem \ref{decomp-I}, we have that $C(\Gamma_1) \simeq C(\Gamma_0) \{\qb{N-m}{n}\}\left\langle n \right\rangle$. So 
\[
\Hom_\HMF(C(\Gamma_0),C(\Gamma_1)) \cong \Hom_\HMF(C(\Gamma_0),C(\Gamma_0))\{\qb{N-m}{n}\}\left\langle n \right\rangle \cong \Hom_\HMF(C(\Gamma_1),C(\Gamma_0)).
\]
Denote by $\bigcirc_m$ a circle colored by $m$. Then, from the proof of Lemma \ref{circle-rep-two-marks}, we have  
\[
\Hom_\HMF(C(\Gamma_0),C(\Gamma_0)) \cong H(\bigcirc_m) \{q^{m(N-m)}\} \left\langle m \right\rangle \cong C(\emptyset)\{\qb{N}{m} q^{m(N-m)}\}.
\]
This proves the lemma.
\end{proof}

\begin{definition}
Let $\Gamma_0$ and $\Gamma_1$ be the MOY graphs in Figure \ref{loop-addition}. Associate to the loop addition a homogeneous morphism 
\[
\psi: C(\Gamma_0)\rightarrow C(\Gamma_1)
\]
of quantum degree $-n(N-n)+mn$ not homotopic to $0$.

Associate to the loop removal a homogeneous morphism 
\[
\overline{\psi}: C(\Gamma_1)\rightarrow C(\Gamma_0)
\]
of quantum degree $-n(N-n)+mn$ not homotopic to $0$.

By Lemma \ref{loop-addition-lemma}, $\psi$ and $\overline{\psi}$ are well defined up to homotopy and scaling. Both of them have $\zed_2$-grading $n$.
\end{definition}

\begin{figure}[ht]
$
\xymatrix{
\input{v-vector-m} \ar@<14ex>[rr]^{\iota}  & &  \ar@<-12ex>[ll]^{\epsilon} \input{v-vector-m-circle-n}  \ar@<14ex>[rr]^{\chi^0} & & \input{v-vector-m-loop-n} \ar@<-12ex>[ll]^{\chi^1} \\
}
$
\caption{}\label{loop-addition-explicit}

\end{figure}

The above definitions of $\psi$ and $\overline{\psi}$ here are implicit. Next we give explicit constructions of $\psi$ and $\overline{\psi}$. Consider the diagram in Figure \ref{loop-addition-explicit}, where $\chi^0$, $\chi^1$ are the morphisms given by proposition \ref{general-general-chi-maps}, $\iota$, $\epsilon$ are the morphisms induced by the apparent circle creation and annihilation. Then $\chi^0 \circ \iota:C(\Gamma_0) \rightarrow C(\Gamma_1)$ and $\epsilon \circ\chi^1:C(\Gamma_1) \rightarrow C(\Gamma_0)$ are both homogeneous morphisms of $\zed_2$-degree $n$ and quantum degree $-n(N-n)+mn$. 

\begin{proposition}\label{decomposing-psi-bar}
$\psi \approx \chi^0 \circ \iota$, $\overline{\psi} \approx \epsilon \circ \chi^1$. Moreover, we have
\begin{eqnarray}
\label{psibar-comp-psi-1} \overline{\psi} \circ \mathfrak{m}(S_{\mu}(\mathbb{B})) \circ \psi & \approx & \left\{%
\begin{array}{ll}
    \id_{C(\Gamma_0)} & \text{if } \mu=\lambda_{n,N-m-n}, \\ 
    0 & \text{if } |\mu| < n(N-m-n),
\end{array}%
\right. \\
\label{psibar-comp-psi} \overline{\psi} \circ \mathfrak{m}(S_{\mu}(\mathbb{Y})) \circ \psi & \approx & \left\{%
\begin{array}{ll}
    \id_{C(\Gamma_0)} & \text{if } \mu=\lambda_{n,N-m-n}, \\ 
    0 & \text{if } |\mu| < n(N-m-n),
\end{array}%
\right.
\end{eqnarray}
where $\mathfrak{m}(\ast)$ is the morphism given by the multiplication by $\ast$, and $|\mu|=\sum_{j=1}^n \mu_j$ for $\mu =(\mu_1\geq\cdots\geq\mu_n)$.
\end{proposition}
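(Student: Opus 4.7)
The plan is to establish $\psi\approx\chi^0\circ\iota$ and $\overline{\psi}\approx\epsilon\circ\chi^1$ first, and then deduce both composition identities from Proposition \ref{general-general-chi-maps}(ii) combined with Corollary \ref{iota-epsilon-composition}. First I would verify that $\chi^0\circ\iota$ and $\epsilon\circ\chi^1$ have $\zed_2$-degree $n$ and quantum degree $-n(N-n)+mn$: this follows by summing the bidegrees $(n,-n(N-n))$ of $\iota,\epsilon$ with those of $\chi^0,\chi^1$ from Proposition \ref{general-general-chi-maps}(i) specialised at $l=n$, namely $(0,mn)$. Lemma \ref{loop-addition-lemma} then guarantees that the homogeneous part of $\Hom_\HMF$ in this bidegree is one-dimensional, so once $\chi^0\circ\iota$ and $\epsilon\circ\chi^1$ are shown non-null-homotopic, the two $\approx$-identities follow by uniqueness.

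The central computation that simultaneously gives \eqref{psibar-comp-psi-1} and the required non-null-homotopy is $\epsilon\circ\chi^1\circ\mathfrak{m}(S_\mu(\mathbb{B}))\circ\chi^0\circ\iota$ for arbitrary $\mu$. Since $\mathfrak{m}(S_\mu(\mathbb{B}))$ is linear over a base ring containing $\mathbb{B}$, it commutes with $\chi^0$ and $\chi^1$; then Proposition \ref{general-general-chi-maps}(ii) replaces $\chi^1\circ\chi^0$ by $\sum_{\lambda\in\Lambda_{n,m}}(-1)^{|\lambda|}S_{\lambda'}(\mathbb{X})S_{\lambda^c}(\mathbb{B})$, and the $\mathbb{X}$-factors pull outside $\epsilon\circ(\cdot)\circ\iota_{\bigcirc_n}$ since they do not involve the circle alphabet. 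The expression becomes
\[
\sum_{\lambda\in\Lambda_{n,m}}(-1)^{|\lambda|}\mathfrak{m}(S_{\lambda'}(\mathbb{X}))\cdot\bigl(\epsilon\circ\mathfrak{m}(S_\mu(\mathbb{B}))\circ\mathfrak{m}(S_{\lambda^c}(\mathbb{B}))\circ\iota_{\bigcirc_n}\bigr),
\]
and Corollary \ref{iota-epsilon-composition} makes the trailing factor equal to $\id$ iff $\mu_j+\lambda^c_{n+1-j}=N-n$ for all $j$, equivalently $\lambda_j=\mu_j-(N-m-n)$, and null-homotopic otherwise. For $\mu=\lambda_{n,N-m-n}$ the unique admissible $\lambda\in\Lambda_{n,m}$ is $\emptyset$, yielding $\id_{C(\Gamma_0)}$; for $|\mu|<n(N-m-n)$ the pointwise constraint $\mu_j\geq N-m-n$ for all $j$ would force $|\mu|\geq n(N-m-n)$, contradicting the hypothesis, so the sum vanishes. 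This establishes \eqref{psibar-comp-psi-1}, and the $\mu=\lambda_{n,N-m-n}$ case exhibits the non-null-homotopy giving $\psi\approx\chi^0\circ\iota$ and $\overline{\psi}\approx\epsilon\circ\chi^1$ via Lemma \ref{loop-addition-lemma}.

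The main obstacle is \eqref{psibar-comp-psi}, where the multiplier is $S_\mu(\mathbb{Y})$ rather than $S_\mu(\mathbb{B})$. The plan is to pass through the Koszul relations at the vertex of the bubble where $\mathbb{Y}$ arises as the merge of $\mathbb{A}$ and $\mathbb{B}$: the elements $e_k(\mathbb{Y})-\sum_{i+j=k}e_i(\mathbb{A})e_j(\mathbb{B})$ belong to the defining ideal of the vertex Koszul matrix factorization, so multiplication by them is null-homotopic on $C(\Gamma_1)$ by Lemma \ref{entries-null-homotopic}. Expressing $S_\mu$ through Jacobi-Trudi as a polynomial in the $e_k$'s then yields $\mathfrak{m}(S_\mu(\mathbb{Y}))\simeq\mathfrak{m}(S_\mu(\mathbb{A}\cup\mathbb{B}))$ as self-morphisms of $C(\Gamma_1)$, and the Littlewood-Richardson expansion $S_\mu(\mathbb{A}\cup\mathbb{B})=\sum_{\nu,\rho}c^\mu_{\nu,\rho}S_\nu(\mathbb{A})S_\rho(\mathbb{B})$ allows each $S_\nu(\mathbb{A})$ to be pulled through $\psi,\overline{\psi}$ (since $\mathbb{A}$ is external) and \eqref{psibar-comp-psi-1} to be applied term by term. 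Because $|\rho|\leq|\mu|$ in every nonzero Littlewood-Richardson coefficient, the expansion stays within the regime controlled by \eqref{psibar-comp-psi-1}: for $\mu=\lambda_{n,N-m-n}$ only the term $(\nu,\rho)=(\emptyset,\mu)$ with coefficient $c^\mu_{\emptyset,\mu}=1$ survives and contributes $\id$; for $|\mu|<n(N-m-n)$ every $\rho$ satisfies $|\rho|<n(N-m-n)$, so every term vanishes. This yields the claimed dichotomy and completes the proof.
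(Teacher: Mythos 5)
Your proposal is correct and follows essentially the paper's own route: verify the bidegree match, prove the composition formula for $\epsilon\circ\chi^1\circ\mathfrak{m}(S_\mu(\mathbb{B}))\circ\chi^0\circ\iota$ via Proposition~\ref{general-general-chi-maps}(ii) and Corollary~\ref{iota-epsilon-composition}, conclude $\psi\approx\chi^0\circ\iota$ and $\overline{\psi}\approx\epsilon\circ\chi^1$ by uniqueness, and then reduce \eqref{psibar-comp-psi} to \eqref{psibar-comp-psi-1} via the Koszul identification $\mathbb{Y}\sim\mathbb{X}\cup\mathbb{B}$ at the merge vertex. The single tactical divergence is the vanishing branch $|\mu|<n(N-m-n)$ of \eqref{psibar-comp-psi-1}: the paper dispatches it with a one-line quantum-degree count (the composition has strictly negative quantum degree while $\Hom_{\HMF}(C(\Gamma_0),C(\Gamma_0))$ is concentrated in quantum degrees $\geq 0$), which is cheaper than your partition-complement argument and automatically covers the stray case $\mu_1>N-n$, where Corollary~\ref{iota-epsilon-composition} is not directly applicable and your version needs a separate appeal to Lemma~\ref{schur-null-homotopic}.
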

\begin{proof}
To prove $\psi \approx \chi^0 \circ \iota$ and $\overline{\psi} \approx \epsilon \circ \chi^1$, we only need to show that $\chi^0 \circ \iota$ and $\epsilon \circ \chi^1$ are not homotopic to $0$. We prove this by showing that
\begin{equation}\label{chi-iota-epsilon-composition}
\epsilon \circ \chi^1 \circ \mathfrak{m}(S_{\mu}(\mathbb{B})) \circ \chi^0 \circ \iota  \approx  \left\{%
\begin{array}{ll}
    \id_{C(\Gamma_0)} & \text{if } \mu=\lambda_{n,N-m-n}, \\ 
    0 & \text{if } |\mu| < n(N-m-n),
\end{array}%
\right.
\end{equation}
which also implies \eqref{psibar-comp-psi-1}.

Note that the lowest non-vanishing quantum grading of $\Hom_\HMF (C(\Gamma_0),C(\Gamma_0))$ is $0$ and, if $|\mu| < n(N-m-n)$, then the quantum degree of $\epsilon \circ \chi^1 \circ \mathfrak{m}(S_{\mu}(\mathbb{B})) \circ \chi^0 \circ \iota$ is negative. This implies that $\epsilon \circ \chi^1 \circ \mathfrak{m}(S_{\mu}(\mathbb{B})) \circ \chi^0 \circ \iota  \simeq 0$ if $|\mu| < n(N-m-n)$. Now consider the case $\mu=\lambda_{n,N-m-n}$. By Proposition \ref{general-general-chi-maps}, We have 
\begin{eqnarray*}
&& \epsilon \circ \chi^1 \circ \mathfrak{m}(S_{\lambda_{n,N-m-n}}(\mathbb{B})) \circ \chi^0 \circ \iota \\
& = & \epsilon \circ \mathfrak{m}(S_{\lambda_{n,N-m-n}}(\mathbb{B})) \circ \chi^1  \circ \chi^0 \circ \iota \\ 
& = & \epsilon \circ \mathfrak{m}(S_{\lambda_{n,N-m-n}}(\mathbb{B})\cdot\sum_{\lambda\in\Lambda_{n,m}} (-1)^{|\lambda|} S_{\lambda'}(\mathbb{X}) S_{\lambda^c}(\mathbb{B})) \circ \iota \\
& = & \sum_{\lambda\in\Lambda_{n,m}} (-1)^{|\lambda|} S_{\lambda'}(\mathbb{X}) \cdot \epsilon \circ \mathfrak{m}(S_{\lambda_{n,N-m-n}}(\mathbb{B})\cdot S_{\lambda^c}(\mathbb{B})) \circ \iota
\end{eqnarray*}
where $\Lambda_{n,m}=\{\mu=(\mu_1\geq\cdots\geq\mu_n) ~|~ \mu_1 \leq m\}$, $\lambda'\in \Lambda_{m,n}$ is the conjugate of $\lambda$, and $\lambda^c$ is the complement of $\lambda$ in $\Lambda_{n,m}$. That is, if $\lambda=(\lambda_1\geq\cdots\geq\lambda_n)\in \Lambda_{n,m}$, then $\lambda^c = (m-\lambda_n\geq\cdots\geq m-\lambda_1)$. By Corollary \ref{iota-epsilon-composition}, we have, for $\lambda\in\Lambda_{n,m}$,
\[
\epsilon \circ \mathfrak{m}(S_{\lambda_{n,N-m-n}}(\mathbb{B})\cdot S_{\lambda^c}(\mathbb{B})) \circ \iota \approx \begin{cases}
\id_{C(\Gamma_0)} & \text{if } \lambda = (0\geq\cdots\geq0), \\
0 & \text{if } \lambda \neq (0\geq\cdots\geq0).
\end{cases}
\]
This completes the proof for \eqref{chi-iota-epsilon-composition}. Thus, we have proved $\psi \approx \chi^0 \circ \iota$, $\overline{\psi} \approx \epsilon \circ \chi^1$ and \eqref{psibar-comp-psi-1}. 

It remains to prove \eqref{psibar-comp-psi}. Note that, as endomorphisms of $C(\Gamma_1)$,
\[
\mathfrak{m}(S_{\mu}(\mathbb{Y})) \simeq \mathfrak{m}(S_{\mu}(\mathbb{B}\cup\mathbb{X})) = \mathfrak{m}(S_{\mu}(\mathbb{B})+F_\mu(\mathbb{B},\mathbb{X})),
\]
where $F_\mu(\mathbb{B},\mathbb{X}) \in \Sym(\mathbb{B}|\mathbb{X})$ and its total degree in $\mathbb{B}$ is strictly less than $2|\mu|$. Then, by \eqref{psibar-comp-psi-1}, we have that, for any partition $\mu$ with $|\mu| \leq n(N-m-n)$,
\[
\overline{\psi} \circ \mathfrak{m}(S_{\mu}(\mathbb{Y})) \circ \psi \simeq \overline{\psi} \circ \mathfrak{m}(S_{\mu}(\mathbb{B}\cup\mathbb{X})) \circ \psi \simeq \overline{\psi} \circ \mathfrak{m}(S_{\mu}(\mathbb{B}) + F_\mu(\mathbb{B},\mathbb{X})) \circ \psi \simeq \overline{\psi} \circ \mathfrak{m}(S_{\mu}(\mathbb{B})) \circ \psi.
\]
So \eqref{psibar-comp-psi} follows from \eqref{psibar-comp-psi-1}.
\end{proof}

\subsection{Saddle move}\label{subsec-saddle-move} Next we define the morphism $\eta$ induced by a saddle move. Unlike the morphisms in the previous subsections, we will not give an explicit formula for $\eta$. Instead, we prove two composition lemmas for $\eta$, which are all we need to know about $\eta$ in this paper.

\begin{figure}[ht]

\setlength{\unitlength}{1pt}

\begin{picture}(360,80)(-180,-15)

%center

\put(-5,35){$\eta$}

\put(-25,30){\vector(1,0){50}}

%left

\qbezier(-140,10)(-120,30)(-140,50)

\put(-140,50){\vector(-1,1){0}}

\qbezier(-100,10)(-120,30)(-100,50)

\put(-100,10){\vector(1,-1){0}}

\multiput(-130,30)(4.5,0){5}{\line(1,0){2}}

\put(-150,50){\small{$\mathbb{X}$}}

\put(-140,30){\tiny{$m$}}

\put(-150,10){\small{$\mathbb{A}$}}

\put(-105,30){\tiny{$m$}}

\put(-95,50){\small{$\mathbb{Y}$}}

\put(-95,10){\small{$\mathbb{B}$}}

\put(-125,-15){$\Gamma_0$}

%right

\qbezier(100,10)(120,30)(140,10)

\put(140,10){\vector(1,-1){0}}

\qbezier(100,50)(120,30)(140,50)

\put(100,50){\vector(-1,1){0}}

\put(90,50){\small{$\mathbb{X}$}}

\put(120,45){\tiny{$m$}}

\put(90,10){\small{$\mathbb{A}$}}

\put(145,50){\small{$\mathbb{Y}$}}

\put(120,13){\tiny{$m$}}

\put(145,10){\small{$\mathbb{B}$}}

\put(115,-15){$\Gamma_1$}

\end{picture}

\caption{}\label{saddle-move-figure}

\end{figure}

\begin{lemma}\label{saddle-hmf}
Let $\Gamma_0$ and $\Gamma_1$ be the MOY graphs in Figure \ref{saddle-move-figure}. Then, as $\zed_2\oplus\zed$-graded vector spaces over $\C$,
\[
\Hom_{HMF}(C(\Gamma_0),C(\Gamma_1)) \cong C(\emptyset) \{\qb{N}{m} q^{2m(N-m)}\} \left\langle m \right\rangle.
\]
In particular, the subspace of $\Hom_{HMF}(C(\Gamma_0),C(\Gamma_1))$ of homogeneous elements of quantum degree $m(N-m)$ is $1$-dimensional.
\end{lemma}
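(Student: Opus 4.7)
The plan is to reduce the computation to the homology of a colored circle, via the standard ``close up the diagram'' trick. First, I would apply Lemma \ref{rewrite-hom-finite-gen} to identify
\[
\Hom_R(C(\Gamma_0), C(\Gamma_1)) \;\cong\; C(\Gamma_1) \otimes_R C(\Gamma_0)_\bullet,
\]
where $R = \Sym(\mathbb{X}|\mathbb{Y}|\mathbb{A}|\mathbb{B})$, and then use Lemma \ref{bullet} to realize $C(\Gamma_0)_\bullet$ as the Koszul matrix factorization for $\Gamma_0$ with the orientations of its two arcs reversed, up to the sign flip and row reversal prescribed by that lemma. Since the two arcs of $\Gamma_0$ meet $R$ only through their endpoints, the $\bullet$-operation splits as a tensor product of the corresponding operations on each arc.

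Next, I would reorganize the rows and columns of the tensor product by repeatedly applying Lemmas \ref{row-reverse-signs} and \ref{column-reverse-signs} so that it takes the standard form of the Koszul matrix factorization associated to the closed MOY graph $\hat{\Gamma}$ obtained by gluing $\Gamma_1$ on top of the orientation-reversed $\Gamma_0$ along the four common endpoints $\mathbb{X},\mathbb{Y},\mathbb{A},\mathbb{B}$. Tracing $\hat{\Gamma}$ starting at $\mathbb{A}$ -- up the reversed left arc of $\Gamma_0$ to $\mathbb{X}$, across the top cup of $\Gamma_1$ to $\mathbb{Y}$, down the reversed right arc to $\mathbb{B}$, and back across the bottom cap of $\Gamma_1$ to $\mathbb{A}$ -- shows that $\hat{\Gamma}$ is a single circle $\bigcirc_m$ colored by $m$. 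Running exactly the same shift accounting as in the proof of Lemma \ref{circle-rep-two-marks}, but applied separately to each of the two arcs of $\Gamma_0$, each arc contributes a shift of $\{q^{m(N-m)}\}\left\langle m\right\rangle$, so that the two $\left\langle m\right\rangle$ shifts compose to $\left\langle 2m\right\rangle = \id$ and the quantum shifts combine to give
\[
C(\Gamma_1) \otimes_R C(\Gamma_0)_\bullet \;\simeq\; C(\bigcirc_m)\{q^{2m(N-m)}\}.
\]

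Taking homology and applying Corollary \ref{circle-dimension}, which gives $H(\bigcirc_m) \cong C(\emptyset)\{\qb{N}{m}\}\left\langle m\right\rangle$, I then obtain
\[
\Hom_{HMF}(C(\Gamma_0), C(\Gamma_1)) \;\cong\; C(\emptyset)\{\qb{N}{m}\,q^{2m(N-m)}\}\left\langle m\right\rangle,
\]
which is the desired isomorphism. The last sentence of the lemma follows from formula \eqref{compute-quantum-binary}: the lowest-degree term of $\qb{N}{m}$ is $q^{-m(N-m)}$ with coefficient $1$, so after the shift by $q^{2m(N-m)}$ the lowest quantum grading is exactly $m(N-m)$ and the corresponding subspace is one-dimensional.

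The main obstacle will be the grading bookkeeping: each invocation of Lemmas \ref{bullet}, \ref{row-reverse-signs}, and \ref{column-reverse-signs} shifts both the $\zed_2$- and quantum gradings, and the two arcs of $\Gamma_0$ must be tracked simultaneously while interleaving with the Koszul rows coming from the endpoints of $\Gamma_1$. Fortunately no genuinely new algebraic input is required beyond the one-arc case treated in the proof of Lemma \ref{circle-rep-two-marks}; the present situation is exactly two parallel copies of that computation, coupled through the shared base ring.
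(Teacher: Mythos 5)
Your proof follows the same route as the paper: rewrite $\Hom(C(\Gamma_0),C(\Gamma_1))$ as $C(\Gamma_1)\otimes_R C(\Gamma_0)_\bullet$, apply Lemmas \ref{bullet}, \ref{row-reverse-signs}, \ref{column-reverse-signs} to identify this with $C(\bigcirc_m)\{q^{2m(N-m)}\}$ for a circle colored by $m$ with four marked points, then take homology and invoke Corollary \ref{circle-dimension}. The shift accounting (two applications of Lemma \ref{column-reverse-signs}, each contributing $\{q^{m(N-m)}\}\left\langle m\right\rangle$, composing to $\{q^{2m(N-m)}\}\left\langle 2m\right\rangle = \{q^{2m(N-m)}\}$) and the final extraction of the one-dimensional degree-$m(N-m)$ subspace from $\qb{N}{m}q^{2m(N-m)}$ are both correct.
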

\begin{proof}
Let $\bigcirc_m$ be a circle colored by $m$ with $4$ marked points. By lemmas \ref{bullet}, \ref{row-reverse-signs} and \ref{column-reverse-signs}, one can see that $\Hom(C(\Gamma_0),C(\Gamma_1))\cong C(\bigcirc_m) \{q^{2m(N-m)}\}$. The lemma follows from this and Corollary \ref{circle-dimension}.
\end{proof}

\begin{definition}
Let $\Gamma_0$ and $\Gamma_1$ be the MOY graphs in Figure \ref{saddle-move-figure}. Associate to the saddle move $\Gamma_0 \leadsto \Gamma_1$ a homogeneous morphism
\[
\eta : C(\Gamma_0) \rightarrow C(\Gamma_1)
\]
of quantum degree $m(N-m)$ that is not homotopic to $0$. By Lemma \ref{saddle-hmf}, $\eta$ is well defined up to homotopy and scaling, and $\deg_{\zed_2} \eta=m$. 
\end{definition}

\subsection{The first composition formula}\label{subsec-1st-composition} In this subsection, we prove that the composition in Figure \ref{creation+saddle+figure} gives, up to homotopy and scaling, the identity map of the matrix factorization. Topologically, this means that a pair of canceling $0$- and $1$-handles induce the identity morphism.

\begin{figure}[ht]

\setlength{\unitlength}{1pt}

\begin{picture}(360,70)(-180,-10)
%left
\put(-110,30){\tiny{$m$}}

\put(-100,0){\vector(0,1){60}}

\put(-65,35){$\iota$}

\put(-80,30){\vector(1,0){40}}

\put(-103,-10){$\Gamma$}

%center

\put(-20,30){\tiny{$m$}}

\put(-10,0){\vector(0,1){60}}

\put(12,48){\tiny{$m$}}

\put(15,30){\oval(20,30)}

\put(25,35){\vector(0,1){0}}

\multiput(-10,30)(5,0){3}{\line(1,0){3}}

\put(0,-10){$\Gamma_1$}

\put(45,35){$\eta$}

\put(30,30){\vector(1,0){40}}

%right

\put(90,55){\vector(0,1){5}}

\qbezier(90,55)(90,45)(100,45)

\qbezier(100,45)(110,45)(110,30)

\qbezier(110,30)(110,15)(100,15)

\qbezier(100,15)(90,15)(90,5)

\put(90,0){\line(0,1){5}}

\put(115,30){\tiny{$m$}}

\put(90,-10){$\Gamma$}

\end{picture}

\caption{}\label{creation+saddle+figure}

\end{figure}

\begin{lemma}\label{1st-comp-lemma}
Let $\Gamma_0$ and $\Gamma_1$ be the MOY graphs in Figure \ref{saddle-move-figure}. Denote by $X_j$ the $j$-th elementary symmetric polynomial in $\mathbb{X}$ and so on. Then under the identification
\[
\Hom (C(\Gamma_0),C(\Gamma_1)) \cong C(\Gamma_1)\otimes_{\Sym(\mathbb{X}|\mathbb{Y}|\mathbb{A}|\mathbb{B})} C(\Gamma_0)_\bullet \cong 
\left(%
\begin{array}{cc}
  \ast & X_1-Y_1 \\
  \dots & \dots \\
  \ast & X_m-Y_m \\
  \ast & B_1-A_1 \\
  \dots & \dots \\
  \ast & B_m-A_m \\
  A_1-X_1 & \ast \\
  \dots & \dots \\
  A_m - X_m & \ast \\
  Y_1 - B_1 & \ast \\
  \dots & \dots \\
  Y_m - B_m & \ast
\end{array}%
\right)_{\Sym(\mathbb{X}|\mathbb{Y}|\mathbb{A}|\mathbb{B})},
\]
we have 
\[
\eta \approx \rho + (\sum_{\ve=(\ve_1,\dots\ve_m)\in I^m} (-1)^{\frac{|\ve|(|\ve|-1)}{2}+(m+1)|\ve|+\sum_{j=1}^{m-1}(m-j)\ve_j} 1_{\ve}\otimes 1_{\wbar{\ve}}) \otimes 1_{(\underbrace{1,1,\dots,1}_{2m})},
\]
where $I=\{0,1\}$ and $\rho$ is of the form 
\[
\rho = \sum_{\ve_1,\ve_2\in I^m, ~\ve_3\in I^{2m}, ~\ve_3\neq (1,1,\dots,1)} f_{(\ve_1,\ve_2,\ve_3)} 1_{\ve_1} \otimes 1_{\ve_2} \otimes1_{\ve_3}.
\]
\end{lemma}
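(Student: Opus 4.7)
The plan is as follows. By Lemma \ref{saddle-hmf}, the $\C$-subspace of homotopy classes in $\Hom_{\HMF}(C(\Gamma_0), C(\Gamma_1))$ of $\zed_2$-degree $m$ and quantum degree $m(N-m)$ is one-dimensional over $\C$, so it suffices to exhibit a single non-null-homotopic cycle $\eta_0$ in the displayed Koszul matrix factorization having the form $\rho + \theta \otimes 1_{(1,\dots,1)}$ required by the statement; by the uniqueness coming from Lemma \ref{saddle-hmf}, $\eta \approx \eta_0$.

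To build $\eta_0$, I would use the geometric observation that the closure of $\Gamma_1$ against $\Gamma_0$ (gluing the $\mathbb{X}, \mathbb{Y}, \mathbb{A}, \mathbb{B}$ endpoints) forms a single circle colored by $m$. Concretely, after applying the identifications of Lemmas \ref{bullet}, \ref{row-reverse-signs}, and \ref{column-reverse-signs} to pass from $\Hom(C(\Gamma_0), C(\Gamma_1))$ to $C(\Gamma_1) \otimes_R C(\Gamma_0)_\bullet$, the first $2m$ Koszul rows of the displayed matrix factorization (indexed by $\ve_1, \ve_2 \in I^m$) naturally form a model of $C(\bigcirc_m)$ for that closure circle, while the last $2m$ rows (indexed by $\ve_3 \in I^{2m}$) come from $C(\Gamma_0)_\bullet$ itself. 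I would then take $\eta_0$ to be the tensor product of a Lemma \ref{circle-rep-two-marks}-cycle representing the generating class $\mathfrak{G}$ on the first factor with the Koszul expansion of $\id_{C(\Gamma_0)} \in C(\Gamma_0) \otimes_R C(\Gamma_0)_\bullet$ on the second. By Lemma \ref{circle-rep-two-marks} together with \eqref{iota-def-computation}, the first factor is exactly the element $\theta$ named in the statement. Expanding $\id_{C(\Gamma_0)}$ in the $\ve_3$-basis yields $1_{(1,\dots,1)}$ as its top-index component together with a sum of terms indexed by $\ve_3 \neq (1,\dots,1)$; assembling the product via Lemma \ref{morphism-sign} then places $\eta_0$ in precisely the asserted form, with the $\ve_3 = (1,\dots,1)$ coefficient equal to a nonzero scalar multiple of $\theta$ and all other contributions collected into $\rho$.

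Checking that $\eta_0$ is a homogeneous cycle of $\zed_2$-degree $m$ and quantum degree $m(N-m)$ is then routine from the known bidegrees of $\theta$ (inherited from $\mathfrak{G}$) and of $1_{(1,\dots,1)} \in C(\Gamma_0)_\bullet$, the latter contributing a quantum shift of $+2m(N-m)$ and vanishing $\zed_2$-shift. For non-triviality I would compose $\eta_0$ on the right with the circle-creation morphism $\iota$ in the sense of Figure \ref{creation+saddle+figure}: the $\theta \otimes 1_{(1,\dots,1)}$ summand pairs with $\iota(1) \approx \mathfrak{G}$ via Lemma \ref{circle-rep-two-marks} and Corollary \ref{iota-epsilon-composition} to produce a nonzero scalar multiple of $\id_{C(\Gamma)}$, while $\rho$ contributes nothing, because each of its indices $\ve_3 \neq (1,\dots,1)$ forces a proper sub-rectangular Schur polynomial in $\mathbb{A} \cup \mathbb{B}$, which is killed in the pairing. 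Since $\id_{C(\Gamma)}$ is not null-homotopic, neither is $\eta_0$.

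The main obstacle is sign bookkeeping: each of Lemmas \ref{bullet}, \ref{row-reverse-signs}, \ref{column-reverse-signs}, and \ref{morphism-sign} contributes its own sign convention, and I need to confirm that the $\ve_3 = (1,\dots,1)$ coefficient in $\eta_0$ comes out proportional to the precise signed sum $\theta$ displayed in the statement, rather than some twisted variant that the ``$\approx$'' would hide only at the expense of also twisting $\rho$ in an uncontrolled way. This is essentially a mechanical exercise in composing the three basis-changing isomorphisms, but it is where the bulk of the actual work lies; all other steps reduce to direct invocation of the structural results already established.
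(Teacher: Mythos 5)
Your framing — produce one explicit cycle $\eta_0$ of the asserted form and then invoke the one-dimensionality from Lemma \ref{saddle-hmf} — is a sound strategy, but the construction of $\eta_0$ does not go through as described. The claim that the first $2m$ Koszul rows ``naturally form a model of $C(\bigcirc_m)$'' is false: those rows constitute $C(\Gamma_1)$, whose potential $p_{N+1}(\mathbb{X}) - p_{N+1}(\mathbb{Y}) + p_{N+1}(\mathbb{B}) - p_{N+1}(\mathbb{A})$ is nonzero over $\Sym(\mathbb{X}|\mathbb{Y}|\mathbb{A}|\mathbb{B})$, so they are not a chain complex, let alone a circle. The circle only emerges after eliminating alphabets by repeated contraction (Corollary \ref{a-contraction-weak}), which is a homotopy reduction, not a visible tensor factorization of the displayed matrix factorization. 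Likewise, ``the Koszul expansion of $\id_{C(\Gamma_0)}\in C(\Gamma_0)\otimes C(\Gamma_0)_\bullet$'' is a $4m$-row object and cannot be slotted into the last $2m$ rows, which represent $C(\Gamma_0)_\bullet$ alone; there is a dimensional mismatch. As written, the proposed $\eta_0$ is not a well-defined element of the displayed Koszul matrix factorization.

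The more serious gap is that you never verify $\eta_0$ is a cycle — and $\theta \otimes 1_{(1,\dots,1)}$ is certainly not a cycle by itself, since $d(1_{(1,\dots,1)})$ picks up the $\ast$-entries of the last $2m$ rows, which are nonzero, and the signed sum defining $\theta$ is a cycle only in the two-marking circle MF over $\Sym(\mathbb{X}|\mathbb{Y})$, not in the first $2m$ rows of the present MF. The entire content of the lemma is that the correction $\rho$ making $\eta_0$ into a cycle can be chosen supported on indices $\ve_3 \neq (1,\dots,1)$; your proposal postulates this rather than derives it, and the ``sign bookkeeping'' you flag as the main obstacle is actually a side issue compared to constructing $\rho$. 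The paper produces $\rho$ by lifting: reduce the Koszul MF row-by-row down to $M_{2m}\cong C(\Gamma)$, identify the image of $\eta$ there with the Lemma \ref{circle-rep-two-marks} cycle for $\mathfrak{G}$, then lift back through $M_{2m-1},\dots,M_0$ via Remark \ref{reverse-b-contraction}, checking inductively that the contracting-homotopy term $h\circ d(\widetilde{\eta_k})$ added at each step lands only in $\ve_3$-indices with a $0$ entry. That induction is the proof, and your proposal omits it entirely. Finally, the non-triviality check by pairing $\eta_0$ with $\iota$ is circular: Proposition \ref{creation+compose+saddle}, which gives $\eta_\ast\circ\iota_\ast(\mathfrak{G})\propto\mathfrak{G}$, is proved in the paper by invoking Lemmas \ref{bullet}, \ref{circle-rep-one-mark}, \ref{circle-rep-two-marks}, and the present lemma; you cannot use it to verify the present lemma without an independent argument.
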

\begin{proof}
Write $R_0=\Sym(\mathbb{X}|\mathbb{Y}|\mathbb{A}|\mathbb{B})$, and 
\[
R_k =
\left\{%
\begin{array}{ll}
    R/(A_1-X_1,\dots,A_k-X_k) & \text{if } 1\leq k \leq m, \\ 
    R/(A_1-X_1,\dots,A_m-X_m,Y_1-B_1,\dots,Y_{k-m}-B_{k-m}) & \text{if } m+1 \leq k \leq 2m.
\end{array}%
\right.
\]
Define
\[
M_k = \begin{cases}
\left(%
\begin{array}{cc}
  \ast & X_1-Y_1 \\
  \dots & \dots \\
  \ast & X_m-Y_m \\
  \ast & B_1-A_1 \\
  \dots & \dots \\
  \ast & B_m-A_m \\
  A_{k+1}-X_{k+1} & \ast \\
  \dots & \dots \\
  A_m - X_m & \ast \\
  Y_1 - B_1 & \ast \\
  \dots & \dots \\
  Y_m - B_m & \ast
\end{array}%
\right)_{R_k}  & \text{ if } 0 \leq k \leq m-1, \\
\left(%
\begin{array}{cc}
  \ast & X_1-Y_1 \\
  \dots & \dots \\
  \ast & X_m-Y_m \\
  \ast & B_1-A_1 \\
  \dots & \dots \\
  \ast & B_m-A_m \\
  Y_{k-m+1} - B_{k-m+1} & \ast \\
  \dots & \dots \\
  Y_m - B_m & \ast
\end{array}%
\right)_{R_k}  & \text{ if } m \leq k \leq 2m-, \\
\left(%
\begin{array}{cc}
  \ast & X_1-Y_1 \\
  \dots & \dots \\
  \ast & X_m-Y_m \\
  \ast & B_1-A_1 \\
  \dots & \dots \\
  \ast & B_m-A_m 
\end{array}%
\right)_{R_{2m}}
\cong C(\Gamma) & \text{ if } k=2m,
\end{cases}
\]
where $\Gamma$ is a circle colored by $m$ with two marked points shown in Figure \ref{circle-rep-two-marks-figure-2}. Then $\Hom_{HFM}(C(\Gamma_0),C(\Gamma_1))$ can be computed by the following homotopy
\begin{eqnarray*}
\Hom (C(\Gamma_0),C(\Gamma_1)) & \cong & M_0 \simeq \cdots \simeq M_k\{q^{n_k}\}\left\langle k \right\rangle \simeq \cdots \simeq M_{2m}\{q^{n_{2m}}\}\left\langle 2m \right\rangle \\
& \cong & C(\Gamma) \{q^{2m(N-m)}\} \simeq C(\emptyset) \{\qb{N}{m} q^{2m(N-m)}\} \left\langle m \right\rangle,
\end{eqnarray*}
where $n_k$ can be inductively determined using Corollary \ref{a-contraction-weak}. In particular, $n_{2m} = 2m(N-m)$. Let $\eta_k \in M_k$ be the image of $\eta$ under the above homotopy. Then $\eta_k$ is a cycle and represents, up to scaling, the unique homology class in $H(M_k)$ of quantum degree $m(N-m)- n_k$.

By Lemma \ref{circle-rep-two-marks}, 
\[
\eta_{2m} \approx \sum_{\ve\in I^m} (-1)^{\frac{|\ve|(|\ve|-1)}{2}+(m+1)|\ve|+s(\ve)} 1_{\ve}\otimes 1_{\wbar{\ve}} \in M_{2m},
\]
where $s(\ve) = \sum_{j=1}^{m-1}(m-j)\ve_j$ for $\ve=(\ve_1,\dots\ve_m)\in I^m$. Assume that 
\[
\eta_k \approx \rho_k + (\sum_{\ve\in I^m} (-1)^{\frac{|\ve|(|\ve|-1)}{2}+(m+1)|\ve|+s(\ve)} 1_{\ve}\otimes 1_{\wbar{\ve}}) \otimes 1_{(\underbrace{1,1,\dots,1}_{2m-k})} \in M_k,
\]
where $\rho_k$ is of the form
\[
\rho_k = \sum_{\ve_1,\ve_2\in I^m, ~\ve_3\in I^{2m-k}, ~\ve_3\neq (1,1,\dots,1)} f_{k,(\ve_1,\ve_2,\ve_3)} 1_{\ve_1} \otimes 1_{\ve_2} \otimes1_{\ve_3}.
\]
Note that 
\[
\widetilde{\eta_k} \approx \widetilde{\rho_k} + (\sum_{\ve\in I^m} (-1)^{\frac{|\ve|(|\ve|-1)}{2}+(m+1)|\ve|+s(\ve)} 1_{\ve}\otimes 1_{\wbar{\ve}}) \otimes 1_{(\underbrace{1,1,\dots,1}_{2m-k+1})} 
\]
is a chain in $M_{k-1}$ mapped to $\eta_k$ under the homotopy 
\[
M_{k-1}\{q^{n_{k-1}}\}\left\langle k-1 \right\rangle \xrightarrow{\simeq} M_k\{q^{n_{k}}\}\left\langle k \right\rangle,
\] 
where
\[
\widetilde{\rho_k} = \sum_{\ve_1,\ve_2\in I^m, ~\ve_3\in I^{2m-k}, ~\ve_3\neq (1,1,\dots,1)} f_{k,(\ve_1,\ve_2,\ve_3)} 1_{\ve_1} \otimes 1_{\ve_2} \otimes1_{(1,\ve_3)}.
\]
Then, by Corollary \ref{a-contraction-weak} and Remark \ref{reverse-b-contraction}, we have that
\begin{eqnarray*}
\eta_{k-1} & \approx & \widetilde{\eta_k} - h \circ d(\widetilde{\eta_k}) \\
& = & \widetilde{\rho_k}- h \circ d(\widetilde{\eta_k}) + (\sum_{\ve\in I^m} (-1)^{\frac{|\ve|(|\ve|-1)}{2}+(m+1)|\ve|+s(\ve)} 1_{\ve}\otimes 1_{\wbar{\ve}}) \otimes 1_{(\underbrace{1,1,\dots,1}_{2m-k+1})}.
\end{eqnarray*}
See the proof of Proposition \ref{b-contraction-weak} for the definition of $h$ and note the slightly different setup here.\footnote{We are eliminating a row here using its left entry rather than the right entry as in Proposition \ref{b-contraction-weak}.} By the definition of $h$, (again, note the difference in the setup,) one can check that $h \circ d(\widetilde{\eta_k})$ is of the form
\[
h \circ d(\widetilde{\eta_k}) = \sum_{\ve_1,\ve_2\in I^m, ~\ve_3\in I^{2m-k}} g_{k,(\ve_1,\ve_2,\ve_3)} 1_{\ve_1} \otimes 1_{\ve_2} \otimes1_{(0,\ve_3)}.
\]
Therefore, $\rho_{k-1}:=\widetilde{\rho_k}- h \circ d(\widetilde{\eta_k})$ is of the form
\[
\rho_{k-1} = \sum_{\ve_1,\ve_2\in I^m, ~\ve_3\in I^{2m-k+1}, ~\ve_3\neq (1,1,\dots,1)} f_{k-1,(\ve_1,\ve_2,\ve_3)} 1_{\ve_1} \otimes 1_{\ve_2} \otimes 1_{\ve_3}.
\]
Thus, we have inductively constructed a $\rho=\rho_0 \in M_0$ of the form 
\[
\rho = \sum_{\ve_1,\ve_2\in I^m, ~\ve_3\in I^{2m}, ~\ve_3\neq (1,1,\dots,1)} f_{(\ve_1,\ve_2,\ve_3)} 1_{\ve_1} \otimes 1_{\ve_2} \otimes1_{\ve_3}
\]
such that 
\[
\eta \approx \rho + (\sum_{\ve\in I^m} (-1)^{\frac{|\ve|(|\ve|-1)}{2}+(m+1)|\ve|+s(\ve)} 1_{\ve}\otimes 1_{\wbar{\ve}}) \otimes 1_{(\underbrace{1,1,\dots,1}_{2m})}.
\]
\end{proof}

\begin{proposition}\label{creation+compose+saddle}
Let $\Gamma$ and $\Gamma_1$ be the MOY graphs in Figure \ref{creation+saddle+figure}, $\iota:C(\Gamma)\rightarrow C(\Gamma_1)$ the morphism associated to the circle creation and $\eta:C(\Gamma_1)\rightarrow C(\Gamma)$ the morphism associated to the saddle move. Then $\eta\circ \iota \approx \id_{C(\Gamma)}$.
\end{proposition}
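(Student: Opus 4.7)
The plan is to reduce the claim to a one-dimensionality argument plus a chain-level non-vanishing check. First I would compute $\Hom_{\HMF}(C(\Gamma),C(\Gamma))$: using the dualization Lemmas \ref{bullet}, \ref{row-reverse-signs}, \ref{column-reverse-signs} to rewrite it as $C(\Gamma)\otimes C(\Gamma)_\bullet$, one recognizes the result (exactly as in the proofs of Lemma \ref{saddle-hmf} and Lemma \ref{circle-rep-two-marks}) as the matrix factorization of a circle colored $m$ with two marked points, whose homology is computed by Corollary \ref{circle-dimension}. This yields
\[
\Hom_{\HMF}(C(\Gamma),C(\Gamma))\cong C(\emptyset)\{\qb{N}{m}\,q^{m(N-m)}\}\langle m\rangle,
\]
so the subspace of $\Hom_{\hmf}(C(\Gamma),C(\Gamma))$ of homotopy classes of quantum degree $0$ is one-dimensional and spanned by $[\id_{C(\Gamma)}]$. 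Since $\iota$ has $\zed_2$-degree $m$ and quantum degree $-m(N-m)$, while $\eta$ has $\zed_2$-degree $m$ and quantum degree $m(N-m)$, the composition $\eta\circ\iota$ is homogeneous of $\zed_2$-degree $0$ and quantum degree $0$; hence $\eta\circ\iota\simeq c\cdot\id_{C(\Gamma)}$ for a unique $c\in\C$.

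The remaining task is to prove $c\neq 0$, for which the plan is a direct chain-level pairing. By \eqref{iota-def-computation}, $\iota$ is determined up to a non-zero scalar by sending $1\in C(\emptyset)$ to the generating class $\mathfrak{G}$ of the newly created circle, whose explicit cycle representative is the signed diagonal sum produced by Lemma \ref{circle-rep-two-marks}. Translated into the tensor product model of $\Hom(C(\Gamma),C(\Gamma_1))$ used in Lemma \ref{1st-comp-lemma}, this cycle is supported only on basis monomials whose last block of indices equals $(1,\dots,1)$. Pairing this against the explicit representative of $\eta$ given by Lemma \ref{1st-comp-lemma}, the ``tail'' term $\rho$ contributes zero because every one of its basis monomials has last index block different from $(1,\dots,1)$; only the ``leading'' diagonal part of $\eta$ survives, and the resulting sum is a signed diagonal pairing of two copies of a generating-class representative, which reduces to a non-zero scalar multiple of $\id_{C(\Gamma)}$.

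The hard part is the sign and index bookkeeping in this last step: both the formula for $\eta$ in Lemma \ref{1st-comp-lemma} and the representative of $\mathfrak{G}$ in Lemma \ref{circle-rep-two-marks} carry the intricate sign $(-1)^{|\ve|(|\ve|-1)/2+(m+1)|\ve|+\sum_{j}(m-j)\ve_j}$. These signs were recorded precisely so that the pairing is non-degenerate, but one still has to verify that the two conventions combine without accidental cancellations after the diagonal contraction. Once this is checked, $c\neq 0$, and the one-dimensionality established above forces $\eta\circ\iota\approx\id_{C(\Gamma)}$, completing the argument.
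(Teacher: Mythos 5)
Your first paragraph is exactly the paper's opening move: you compute $\Hom_{\HMF}(C(\Gamma),C(\Gamma))$ (the paper cites the proof of Lemma~\ref{circle-rep-two-marks} for this), conclude that the degree-$0$ part of $\Hom_\hmf$ is one-dimensional spanned by $\id$, observe $\eta\circ\iota$ is homogeneous of degree $0$, and reduce the problem to showing $\eta\circ\iota\not\simeq 0$. That part is correct and matches.

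The second half has a gap. You propose to ``pair'' a cycle representative of $\iota$ against the representative of $\eta$ produced by Lemma~\ref{1st-comp-lemma}, but these two objects do not live in a common tensor-product model as stated. Lemma~\ref{1st-comp-lemma} writes $\eta$ as an element of $\Hom(C(\Gamma_0),C(\Gamma_1))\cong C(\Gamma_1)\otimes C(\Gamma_0)_\bullet$ where $\Gamma_0,\Gamma_1$ are the \emph{four-endpoint} saddle configurations of Figure~\ref{saddle-move-figure}; the $\Gamma$, $\Gamma_1$ of Proposition~\ref{creation+compose+saddle} are a \emph{two-endpoint} arc and arc-plus-circle. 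A naive ``pairing'' of a cycle in $C(\bigcirc_m)$ (the image of $\iota_\bigcirc$) against an element of $C(\Gamma_1)\otimes C(\Gamma_0)_\bullet$ is not defined; the correct operation is composition of morphisms, which requires first matching the $\Hom$ spaces. The missing ingredient is exactly the paper's key move: \emph{close up the arc}, i.e.\ identify the two endpoints of $\Gamma$ to get a closed MOY graph $\widetilde{\Gamma}$ (a circle), and correspondingly $\widetilde{\Gamma}_1$ (two circles). After closing up, the $\Gamma_0$ of Figure~\ref{saddle-move-figure} closes to two circles, matching the closed domain of $\eta$, so the formula of Lemma~\ref{1st-comp-lemma} can be applied to compute $\eta_\ast$ on the honest homology of closed graphs over $\C$. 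Only then does the ``tail $\rho$ contributes zero'' argument make sense: the generating classes $\mathfrak{G}_{\mathbb{X}}$, $\mathfrak{G}_{\mathbb{Y}}$ are represented by $1_{(1,\dots,1)}$ (this is Lemma~\ref{circle-rep-one-mark}, not Lemma~\ref{circle-rep-two-marks} as you cite, since each closed circle carries one marked point), and evaluating $\eta$ on $1_{(1,\dots,1)}\otimes 1_{(1,\dots,1)}$ kills $\rho$ and extracts the diagonal leading term, which is exactly the two-marked-point representative of $\mathfrak{G}$ from Lemma~\ref{circle-rep-two-marks}. Without closing up, you do not have a space in which this evaluation is an honest homology pairing, and you have no mechanism by which ``the tail contributes zero.'' So: insert the close-up step before the final computation, correct the citation (Lemma~\ref{circle-rep-one-mark} for $\mathfrak{G}_{\mathbb{X}}$, $\mathfrak{G}_{\mathbb{Y}}$; Lemma~\ref{circle-rep-two-marks} for the target class), and then your outline becomes the paper's proof.
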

\begin{proof}
From the proof of Lemma \ref{circle-rep-two-marks}, we know that 
\[
\Hom_{HMF}(C(\Gamma),C(\Gamma)) \cong C(\emptyset) \{\qb{N}{m}q^{m(N-m)}\}.
\]
In particular, the subspace of $\Hom_{HMF}(C(\Gamma),C(\Gamma))$ of elements of quantum degree $0$ is $1$-dimensional and spanned by $\id_{C(\Gamma)}$. Note that the quantum degree of $\eta\circ \iota$ is $0$. So, to prove that $\eta\circ \iota \approx \id_{C(\Gamma)}$, we only need to show that $\eta\circ \iota$ is not homotopic to $0$. We do so by identifying the two ends of $\Gamma$ and showing that $\eta_\ast \circ \iota_\ast\neq 0$.

\begin{figure}[ht]

\setlength{\unitlength}{1pt}

\begin{picture}(360,70)(-180,-10)
%left
\put(-122,48){\tiny{$m$}}

\put(-120,30){\oval(20,30)}

\put(-110,35){\vector(0,1){0}}

\put(-131,30){\line(1,0){2}}

\put(-140,30){\small{$\mathbb{X}$}}

\put(-75,35){$\iota$}

\put(-90,30){\vector(1,0){40}}

\put(-123,-10){$\widetilde{\Gamma}$}

%center

\put(-23,48){\tiny{$m$}}

\put(-20,30){\oval(20,30)}

\put(-10,35){\vector(0,1){0}}

\put(12,48){\tiny{$m$}}

\put(-31,30){\line(1,0){2}}

\put(-40,30){\small{$\mathbb{X}$}}

\put(15,30){\oval(20,30)}

\put(5,25){\vector(0,-1){0}}

\put(24,30){\line(1,0){2}}

\put(30,30){\small{$\mathbb{Y}$}}

\multiput(-10,30)(5,0){3}{\line(1,0){3}}

\put(0,-10){$\widetilde{\Gamma}_1$}

\put(65,35){$\eta$}

\put(50,30){\vector(1,0){40}}

%right

\put(109,30){\line(1,0){2}}

\put(100,30){\small{$\mathbb{X}$}}

\put(117,48){\tiny{$m$}}

\put(120,30){\oval(20,30)}

\put(130,35){\vector(0,1){0}}

\put(129,30){\line(1,0){2}}

\put(135,30){\small{$\mathbb{Y}$}}

\put(115,-10){$\widetilde{\Gamma}$}

\end{picture}

\caption{}\label{creation+saddle+closeup+figure}

\end{figure}

Identify the two end points in each of the MOY graphs in Figure \ref{creation+saddle+figure} and put markings on them as in Figure \ref{creation+saddle+closeup+figure}. Denote by $\widetilde{\Gamma}$ and $\widetilde{\Gamma}_1$ the resulting MOY graphs. Denote by $\mathfrak{G}$ the generating class of $H(\widetilde{\Gamma})$ and by $\mathfrak{G}_{\mathbb{X}},\mathfrak{G}_{\mathbb{Y}}$ the generating classes of the homology of the two circles in $H(\widetilde{\Gamma}_1)$. Then $\iota_\ast (\mathfrak{G}) \propto \mathfrak{G}_{\mathbb{X}}\otimes\mathfrak{G}_{\mathbb{Y}}$. 

By Lemmas \ref{bullet} and \ref{1st-comp-lemma}, under the identification $\Hom_{\Sym(\mathbb{X}|\mathbb{Y})} (C(\widetilde{\Gamma}_1),C(\widetilde{\Gamma})) \cong C(\widetilde{\Gamma})\otimes_{\Sym(\mathbb{X}|\mathbb{Y})} \Hom_{\Sym(\mathbb{X}|\mathbb{Y})}(C(\widetilde{\Gamma}_1),\Sym(\mathbb{X}|\mathbb{Y}))$, we have
\[
\eta \approx \rho + (\sum_{\ve=(\ve_1,\dots\ve_m)\in I^m} (-1)^{\frac{|\ve|(|\ve|-1)}{2}+(m+1)|\ve|+\sum_{j=1}^{m-1}(m-j)\ve_j} 1_{\ve}\otimes 1_{\wbar{\ve}}) \otimes 1_{(\underbrace{1,1,\dots,1}_{2m})}^\ast,
\] 
where $\rho$ is of the form 
\[
\rho = \sum_{\ve_1,\ve_2\in I^m, ~\ve_3\in I^{2m}, ~\ve_3\neq (1,1,\dots,1)} f_{(\ve_1,\ve_2,\ve_3)} 1_{\ve_1} \otimes 1_{\ve_2} \otimes 1_{\ve_3}^\ast.
\]
Note that:
\begin{itemize}
	\item By Lemma \ref{circle-rep-one-mark}, $1_{(\underbrace{1,1,\dots,1}_{2m})}$ is a cycle in $C(\widetilde{\Gamma}_1)$ representing $\mathfrak{G}_{\mathbb{X}}\otimes\mathfrak{G}_{\mathbb{Y}}$.
	\item By Lemma \ref{circle-rep-two-marks}, $\sum_{\ve=(\ve_1,\dots\ve_m)\in I^m} (-1)^{\frac{|\ve|(|\ve|-1)}{2}+(m+1)|\ve|+\sum_{j=1}^{m-1}(m-j)\ve_j} 1_{\ve}\otimes 1_{\wbar{\ve}}$ is a cycle in $C(\widetilde{\Gamma})$ representing $\mathfrak{G}$.
	\item $\rho(1_{(\underbrace{1,1,\dots,1}_{2m})})=0$.
\end{itemize}
Putting these together, we get $\eta_\ast (\mathfrak{G}_{\mathbb{X}}\otimes\mathfrak{G}_{\mathbb{Y}}) \propto \mathfrak{G}$. Thus, $\eta_\ast \circ \iota_\ast (\mathfrak{G}) \propto \mathfrak{G}$. This shows that $\eta \circ \iota$ is not homotopic to $0$ and, therefore, $\eta\circ \iota \approx \id_{C(\Gamma)}$.
\end{proof}

\begin{remark}
From the proof of Proposition \ref{creation+compose+saddle}, we can see that $\eta$ gives $H(\widetilde{\Gamma})$ a ring structure and $H(\widetilde{\Gamma})\{q^{m(N-m)}\} \cong  H^\ast(G_{m,N};\C)$ as $\zed$-graded $\C$-algebras, where $G_{m,N}$ is the complex $(m,N)$-Grassmannian.
\end{remark}

\subsection{The second composition formula}\label{subsec-2nd-composition} In this subsection, we show that the composition in Figure \ref{saddle+annihilation+figure} also gives, up to homotopy and scaling, the identity map. Topologically, this means that a pair of canceling $1$- and $2$-handles induce the identity morphism. The key to the proof is a good choice of entries in the left columns of the matrix factorizations involved. Our choice is given in the following lemma.

\begin{figure}[ht]

\setlength{\unitlength}{1pt}

\begin{picture}(360,70)(-180,-10)

%left

\put(-110,55){\vector(0,1){5}}

\qbezier(-110,55)(-110,45)(-100,45)

\qbezier(-100,45)(-90,45)(-90,30)

\qbezier(-90,30)(-90,15)(-100,15)

\qbezier(-100,15)(-110,15)(-110,5)

\put(-110,0){\line(0,1){5}}

\multiput(-100,15)(0,5){6}{\line(0,1){3}}

\put(-85,30){\tiny{$m$}}

\put(-55,35){$\eta$}

\put(-70,30){\vector(1,0){40}}

\put(-103,-10){$\Gamma$}

%center

\put(-20,30){\tiny{$m$}}

\put(-10,0){\vector(0,1){60}}

\put(2,48){\tiny{$m$}}

\put(5,30){\oval(20,30)}

\put(45,35){$\varepsilon$}

\put(30,30){\vector(1,0){40}}

\put(-5,-10){$\Gamma_1$}

%right

\put(90,30){\tiny{$m$}}

\put(100,0){\vector(0,1){60}}

\put(97,-10){$\Gamma$}

\end{picture}

\caption{}\label{saddle+annihilation+figure}

\end{figure}

\begin{lemma}\label{fancy-U}
Let $\mathbb{X},\mathbb{Y}$ be disjoint alphabets, each having $m$ ($\leq N$) indeterminates. For $j=1,\dots,m$, define 
\begin{eqnarray*}
U_j(\mathbb{X},\mathbb{Y}) & = & (-1)^{j-1}p_{N+1-j}(\mathbb{Y}) + \sum_{k=1}^{m} (-1)^{k+j}j X_k h_{N+1-k-j}(\mathbb{Y}) \\
&& + \sum_{k=1}^{m}\sum_{l=1}^{m}(-1)^{k+l} l X_k X_l \xi_{N+1-k-l,j}(\mathbb{X},\mathbb{Y}),
\end{eqnarray*}
where $X_j$ and $Y_j$ are the $j$-th elementary symmetric polynomials in $\mathbb{X}$ and $\mathbb{Y}$, and 
\[
\xi_{n,j}(\mathbb{X},\mathbb{Y}) = \frac{h_{m,n}(Y_1,\dots,Y_{j-1},X_j,\dots,X_m)- h_{m,n}(Y_1,\dots,Y_j,X_{j+1},\dots,X_m)}{X_j-Y_j}.
\]
Then $U_j(\mathbb{X},\mathbb{Y})$ is homogeneous of degree $2(N+1-j)$ and 
\[
\sum_{j=1}^{m}(X_j-Y_j)U_j(\mathbb{X},\mathbb{Y}) = p_{N+1}(\mathbb{X}) - p_{N+1}(\mathbb{Y}).
\]
\end{lemma}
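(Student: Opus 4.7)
The plan is to dispatch the two assertions separately. The homogeneity claim is routine degree bookkeeping: the first term is obviously of degree $2(N{+}1{-}j)$; in the second, $\deg X_k = 2k$ and $\deg h_{N+1-k-j}(\mathbb{Y}) = 2(N{+}1{-}k{-}j)$ sum to $2(N{+}1{-}j)$; and in the third, the numerator defining $\xi_{n,j}(\mathbb{X},\mathbb{Y})$ has degree $2n$ while the denominator $X_j-Y_j$ has degree $2j$, so $\deg \xi_{n,j} = 2(n-j)$ and the total degree works out. So from here I focus on the potential identity.

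I will expand $\sum_j (X_j-Y_j)U_j = S_1 + S_2 + S_3$ according to the three summands of $U_j$, and simplify each using the identities of Subsection \ref{basic-sym-poly} together with the vanishing $X_k = Y_k = 0$ for $k>m$. For $S_1$, Newton's Identity \eqref{newton} at $l=N{+}1$ applied to $\mathbb{Y}$ gives $\sum_{k=1}^m (-1)^{k-1} Y_k\, p_{N+1-k}(\mathbb{Y}) = p_{N+1}(\mathbb{Y})$, so
\[
S_1 = \sum_{j=1}^m (-1)^{j-1} X_j\, p_{N+1-j}(\mathbb{Y}) - p_{N+1}(\mathbb{Y}).
\]
For $S_2$, I apply equation \eqref{complete-power} to $\mathbb{Y}$ at level $N{+}1{-}k$, namely $\sum_{j=1}^m (-1)^{j-1} j Y_j h_{N+1-k-j}(\mathbb{Y}) = p_{N+1-k}(\mathbb{Y})$, which shows that the $Y$-part of $S_2$ equals $-\sum_{k=1}^m (-1)^{k-1} X_k\, p_{N+1-k}(\mathbb{Y})$, exactly cancelling the $X$-part of $S_1$. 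For $S_3$, I use the telescoping identity built into the definition of $\xi$,
\[
\sum_{j=1}^m (X_j - Y_j)\,\xi_{n,j}(\mathbb{X},\mathbb{Y}) = h_n(\mathbb{X}) - h_n(\mathbb{Y}),
\]
so $S_3 = \sum_{k,l}(-1)^{k+l} l X_k X_l\bigl(h_{N+1-k-l}(\mathbb{X}) - h_{N+1-k-l}(\mathbb{Y})\bigr)$; its $\mathbb{Y}$-part is exactly the remaining $X$-portion of $S_2$ (with opposite sign), so it cancels.

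All that survives is $-p_{N+1}(\mathbb{Y}) + \sum_{k,l=1}^m (-1)^{k+l} l X_k X_l\, h_{N+1-k-l}(\mathbb{X})$. To identify the surviving double sum with $p_{N+1}(\mathbb{X})$, I fix $l$ and sum in $k$: since $X_k$ vanishes for $k>m$, equation \eqref{complete-recursion} gives $\sum_{k=1}^m (-1)^k X_k h_{(N+1-l)-k}(\mathbb{X}) = -h_{N+1-l}(\mathbb{X})$. Hence the double sum reduces to $\sum_{l=1}^m (-1)^{l-1} l X_l h_{N+1-l}(\mathbb{X}) = p_{N+1}(\mathbb{X})$, the last equality being \eqref{complete-power} applied to $\mathbb{X}$ at level $N{+}1$. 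Combining everything yields $\sum_j (X_j-Y_j)U_j = p_{N+1}(\mathbb{X}) - p_{N+1}(\mathbb{Y})$, as required.

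The only real obstacle is bookkeeping the signs and keeping track of the truncation $X_k = Y_k = 0$ for $k>m$, which is what lets Newton's Identity at level $N{+}1$ produce the clean expressions for $p_{N+1}(\mathbb{Y})$ and $p_{N+1}(\mathbb{X})$ from sums that look like they should include indices up to $N$; once one recognizes this, all three pieces interlock exactly as needed.
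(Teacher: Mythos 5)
Your argument is correct and uses exactly the same three algebraic ingredients as the paper's proof — Newton's identity \eqref{newton}, equation \eqref{complete-power}, and the telescoping sum built into the definition of $\xi$ — but runs them in reverse: the paper derives $U_j$ by successively decomposing $p_{N+1}(\mathbb{X})-p_{N+1}(\mathbb{Y})$ (first Newton, then \eqref{complete-power}, then the telescope), so the formula for $U_j$ simply falls out as the coefficient of $X_j-Y_j$, whereas you expand $\sum_j(X_j-Y_j)U_j$ and track the cancellations. The bookkeeping you perform (the $X$-part of $S_1$ against the $Y$-part of $S_2$, the $X$-part of $S_2$ against the $Y$-part of $S_3$, and the final re-assembly of $S_3$'s $\mathbb{X}$-piece into $p_{N+1}(\mathbb{X})$ via \eqref{complete-recursion} and \eqref{complete-power}) is precisely the paper's forward derivation read backward, so the two proofs are essentially the same.
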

\begin{proof}
The claims about the homogeneity and degree of $U_j(\mathbb{X},\mathbb{Y})$ are easy to verify and left to the reader. We only prove the last equation. Since $N\geq m$, by Newton's Identity \eqref{newton}, we have that
\begin{eqnarray*}
& & p_{N+1}(\mathbb{X}) - p_{N+1}(\mathbb{Y}) \\
& = & \sum_{k=1}^m (-1)^{k-1}(X_k p_{N+1-k}(\mathbb{X}) - Y_k p_{N+1-k}(\mathbb{Y})) \\
& = & \sum_{k=1}^m (-1)^{k-1}(X_k - Y_k) p_{N+1-k}(\mathbb{Y}) + \sum_{k=1}^m (-1)^{k-1}X_k (p_{N+1-k}(\mathbb{X}) - p_{N+1-k}(\mathbb{Y})).
\end{eqnarray*}
By \eqref{complete-power},
\begin{eqnarray*}
& & p_{N+1-k}(\mathbb{X}) - p_{N+1-k}(\mathbb{Y}) \\
& = & \sum_{l=1}^m (-1)^{l-1} l (X_lh_{N+1-k-l}(\mathbb{X}) - Y_lh_{N+1-k-l}(\mathbb{Y})) \\
& = & \sum_{l=1}^m (-1)^{l-1} l (X_l - Y_l) h_{N+1-k-l}(\mathbb{Y}) + \sum_{l=1}^m (-1)^{l-1} l X_l(h_{N+1-k-l}(\mathbb{X}) - h_{N+1-k-l}(\mathbb{Y})) \\
& = & \sum_{l=1}^m (-1)^{l-1} l (X_l - Y_l) h_{N+1-k-l}(\mathbb{Y}) + \sum_{l=1}^m (-1)^{l-1} l X_l \sum_{j=1}^m \xi_{N+1-k-l,j}(\mathbb{X},\mathbb{Y}) (X_j-Y_j).
\end{eqnarray*}
Substituting this back into the first equation, we get
\begin{eqnarray*}
& & p_{N+1}(\mathbb{X}) - p_{N+1}(\mathbb{Y}) \\
& = & \sum_{k=1}^m (-1)^{k-1}(X_k - Y_k) p_{N+1-k}(\mathbb{Y}) + \sum_{k=1}^m (-1)^{k-1}X_k \sum_{l=1}^m (-1)^{l-1} l (X_l - Y_l) h_{N+1-k-l}(\mathbb{Y}) \\
& & + \sum_{k=1}^m (-1)^{k-1}X_k \sum_{l=1}^m (-1)^{l-1} l X_l \sum_{j=1}^m \xi_{N+1-k-l,j}(\mathbb{X},\mathbb{Y})(X_j-Y_j) \\
& = & \sum_{j=1}^{m}(X_j-Y_j)U_j(\mathbb{X},\mathbb{Y}).
\end{eqnarray*}
\end{proof}

In the rest of this subsection, we use heavily the notations introduced in Definition \ref{ve-notation}. The next lemma is a special case of Remark \ref{reverse-b-contraction}.

\begin{lemma}\label{reverse-b-contraction-special}
Let $R$ be a graded commutative unital $\C$-algebra, and $X$ an homogeneous indeterminate over $R$. Assume that $f_{1,0}(X),f_{1,1}(X),\dots,f_{k,0}(X),f_{k,1}(X)$ are homogeneous elements in $R[X]$ such that 
\begin{eqnarray*}
\deg f_{j,0}(X) + \deg f_{j,1}(X) & = & 2N+2, \\
\sum_{j=1}^k f_{j,0}(X) f_{j,1}(X) & = & 0.
\end{eqnarray*}
Suppose that $f_{1,1}(X)=X-A$, where $A\in R$ is a homogeneous element of degree $\deg A= \deg X$.
Define
\[
M = \left(%
\begin{array}{cc}
  f_{1,0}(X) & f_{1,1}(X) \\
  f_{2,0}(X) & f_{2,1}(X) \\
  \dots & \dots \\
  f_{k,0}(X) & f_{k,1}(X) 
\end{array}%
\right)_{R[X]} 
\text{ and }
M' = \left(%
\begin{array}{cc}
  f_{2,0}(A) & f_{2,1}(A) \\
  f_{3,0}(A) & f_{3,1}(A) \\
  \dots & \dots \\
  f_{k,0}(A) & f_{k,1}(A) 
\end{array}%
\right)_{R}.
\] 
Then $M$ and $M'$ are homotopic graded chain complexes over $R$. 

Let $F:M \rightarrow M'$ be the quasi-isomorphism from the proof of Proposition \ref{b-contraction-weak}. If 
\[
\alpha = \sum_{\ve \in I^{k-1}} a_\ve 1_\ve 
\] 
is a cycle in $M'$, where $a_\ve \in R$, then 
\[
\widetilde{\alpha} = \sum_{\ve \in I^{k-1}} a_\ve 1_{(0,\ve)} - \sum_{\ve =(\ve_2,\dots,\ve_k)\in I^{k-1}} a_\ve (\sum_{j=2}^k (-1)^{|(0,\ve)|_j} g_{j,\ve_j}(X) 1_{(1,\ve_2,\dots,\ve_{j-1},\wbar{\ve_j},\ve_{j+1},\dots,\ve_k)})
\]
is a cycle in $M$ and $F(\widetilde{\alpha}) =\alpha$, where $|(0,\ve)|_j = \sum_{l=2}^{j-1}\ve_l$ and $g_{j,\ve_j}(X) = \frac{f_{j,\ve_j}(X) - f_{j,\ve_j}(A)}{X-A}$.
\end{lemma}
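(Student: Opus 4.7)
The plan is to apply the recipe from Remark \ref{reverse-b-contraction} explicitly, specialized to the Koszul setup at hand. Since $f_{1,1}(X)=X-A$, the element $X-A$ plays the role of $b_i$ in Proposition \ref{b-contraction-weak} (with $i=1$), the quotient ring is $R=R[X]/(X-A)$, and the homotopy $h$ constructed in that proof is given on $\ker F$ by
\[
h(1_{(1,\ve_2,\dots,\ve_k)})=0,\qquad h((X-A)\,1_{(0,\ve_2,\dots,\ve_k)})=1_{(1,\ve_2,\dots,\ve_k)}.
\]
(The sign $(-1)^{\sum_{l=1}^{0}\ve_l}$ there is $+1$ because $i=1$.)

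First I would choose the obvious lift $\beta:=\sum_{\ve\in I^{k-1}} a_\ve\,1_{(0,\ve)}\in M$; clearly $F(\beta)=\alpha$. Next I would compute $d\beta$ using the standard Koszul formula from Definition \ref{ve-notation}. The $j=1$ contribution produces only terms of the form $a_\ve f_{1,0}(X)\,1_{(1,\ve)}$, which already lie in $\ker F$ and are annihilated by $h$. The contributions from $j\ge 2$ have the form
\[
\sum_{\sigma\in I^{k-1}}\Bigl(\sum_{j=2}^{k} a_{(\sigma_2,\dots,\wbar{\sigma_j},\dots,\sigma_k)}(-1)^{|(0,\sigma)|_j}f_{j,\wbar{\sigma_j}}(X)\Bigr)1_{(0,\sigma)}.
\]

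The key step is to show that each coefficient in front of $1_{(0,\sigma)}$ lies in the ideal $(X-A)\subset R[X]$, so that $h$ can be applied. Writing $f_{j,\wbar{\sigma_j}}(X)=f_{j,\wbar{\sigma_j}}(A)+(X-A)g_{j,\wbar{\sigma_j}}(X)$ and using that $\alpha$ is a cycle in $M'$ (which, under the sign convention $|\sigma|_j^{M'}=\sum_{l=2}^{j-1}\sigma_l=|(0,\sigma)|_j$, is precisely the statement that the $f_{j,\wbar{\sigma_j}}(A)$-terms sum to $0$) kills the constant-in-$X$ part, leaving
\[
\text{coeff of }1_{(0,\sigma)}=(X-A)\sum_{j=2}^{k} a_{(\sigma_2,\dots,\wbar{\sigma_j},\dots,\sigma_k)}(-1)^{|(0,\sigma)|_j}g_{j,\wbar{\sigma_j}}(X).
\]
Applying $h$ then yields, after the reindexing $\ve=(\sigma_2,\dots,\wbar{\sigma_j},\dots,\sigma_k)$ (so that $\wbar{\sigma_j}$ becomes $\ve_j$ and $|(0,\sigma)|_j=|(0,\ve)|_j$),
\[
h(d\beta)=\sum_{\ve\in I^{k-1}} a_\ve\sum_{j=2}^{k}(-1)^{|(0,\ve)|_j}g_{j,\ve_j}(X)\,1_{(1,\ve_2,\dots,\wbar{\ve_j},\dots,\ve_k)}.
\]

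Finally I would set $\widetilde{\alpha}:=\beta-h(d\beta)$; by Remark \ref{reverse-b-contraction} this is automatically a cycle in $M$ and satisfies $F(\widetilde{\alpha})=\alpha$, and the explicit form above matches the formula in the lemma. The main obstacle is purely bookkeeping: tracking the sign $(-1)^{|(0,\ve)|_j}$ under the reindexing and verifying that the cocycle condition $d'\alpha=0$ in $M'$ really does produce the correct cancellation after specializing $X\mapsto A$. No harder ideas are required, but the computation must be executed with care.
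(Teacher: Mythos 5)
Your proposal is correct and follows essentially the same route as the paper's proof: lift $\alpha$ to $\beta=\sum a_\ve 1_{(0,\ve)}$, apply Remark \ref{reverse-b-contraction} with the explicit homotopy $h$ from Proposition \ref{b-contraction-weak} (with $i=1$ and $b_1=X-A$), and use $d'\alpha=0$ to peel off the constant-in-$X$ parts $f_{j,\ve_j}(A)$ so that each remaining coefficient is divisible by $X-A$. The only cosmetic difference is that you collect terms by the target index $\sigma$ before applying $h$ and then reindex back, whereas the paper works term-by-term in $(\ve,j)$ and subtracts the lifted equation $d'\alpha=0$ wholesale, which avoids the reindexing step but arrives at the same identity.
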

\begin{proof}
Let $\beta = \sum_{\ve \in I^{k-1}} a_\ve 1_{(0,\ve)}\in M$. Then $F(\beta)=\alpha$. By Remark \ref{reverse-b-contraction}, we know that $d(\beta) \in \ker F$, $\beta - h \circ d(\beta)$ is a cycle in $M$ and $F(\beta - h \circ d(\beta))=\alpha$, where $h:\ker F \rightarrow \ker F$ is defined in the proof of Proposition \ref{b-contraction-weak}. But
\begin{eqnarray*}
h \circ d(\beta) & = & h \circ d (\sum_{\ve \in I^{k-1}} a_\ve 1_{(0,\ve)}) \\
& = & h(\sum_{\ve=(\ve_2,\dots,\ve_k) \in I^{k-1}} a_\ve (f_{1,0}(X)1_{(1,\ve)}+ \sum_{j=2}^k (-1)^{|(0,\ve)|_j} f_{j,\ve_j}(X)1_{(0,\ve_2,\dots,\ve_{j-1},\wbar{\ve_j},\ve_{j+1},\dots,\ve_k)})).
\end{eqnarray*}
By the definition of $h$, we know that $h(1_{(1,\ve)})=0$. Moreover, since $\alpha$ is a cycle in $M'$, we have
\[
0=d\alpha = \sum_{\ve=(\ve_2,\dots,\ve_k) \in I^{k-1}} a_\ve (\sum_{j=2}^k (-1)^{|(0,\ve)|_j} f_{j,\ve_j}(A)1_{(\ve_2,\dots,\ve_{j-1},\wbar{\ve_j},\ve_{j+1},\dots,\ve_k)}).
\] 
So, in $M$, we have
\[
0= \sum_{\ve=(\ve_2,\dots,\ve_k) \in I^{k-1}} a_\ve (\sum_{j=2}^k (-1)^{|(0,\ve)|_j} f_{j,\ve_j}(A)1_{(0,\ve_2,\dots,\ve_{j-1},\wbar{\ve_j},\ve_{j+1},\dots,\ve_k)}).
\]
Thus,
\begin{eqnarray*}
h \circ d(\beta) & = & h(\sum_{\ve=(\ve_2,\dots,\ve_k) \in I^{k-1}} a_\ve (\sum_{j=2}^k (-1)^{|(0,\ve)|_j} (f_{j,\ve_j}(X)-f_{j,\ve_j}(A))1_{(0,\ve_2,\dots,\ve_{j-1},\wbar{\ve_j},\ve_{j+1},\dots,\ve_k)})) \\
& = & \sum_{\ve =(\ve_2,\dots,\ve_k)\in I^{k-1}} a_\ve (\sum_{j=2}^k (-1)^{|(0,\ve)|_j} g_{j,\ve_j}(X) 1_{(1,\ve_2,\dots,\ve_{j-1},\wbar{\ve_j},\ve_{j+1},\dots,\ve_k)}),
\end{eqnarray*}
where the last equation comes from the definition of $h$. This shows that $\beta - h \circ d(\beta) = \widetilde{\alpha}$ and proves the lemma.
\end{proof}

\begin{figure}[ht]

\setlength{\unitlength}{1pt}

\begin{picture}(360,80)(-180,-15)

%center

\put(-5,35){$\eta$}

\put(-25,30){\vector(1,0){50}}

%left

\qbezier(-140,10)(-120,30)(-100,10)

\put(-140,50){\vector(-1,1){0}}

\qbezier(-140,50)(-120,30)(-100,50)

\put(-100,10){\vector(1,-1){0}}

\multiput(-120,20)(0,4.5){5}{\line(0,1){2}}

\put(-150,50){\small{$\mathbb{X}$}}

\put(-150,10){\small{$\mathbb{A}$}}

\put(-95,50){\small{$\mathbb{Y}$}}

\put(-95,10){\small{$\mathbb{B}$}}

\put(-120,45){\tiny{$m$}}

\put(-120,13){\tiny{$m$}}

\put(-125,-15){$\Gamma_0$}

%right

\qbezier(100,10)(120,30)(100,50)

\put(140,10){\vector(1,-1){0}}

\qbezier(140,10)(120,30)(140,50)

\put(100,50){\vector(-1,1){0}}

\put(90,50){\small{$\mathbb{X}$}}

\put(90,10){\small{$\mathbb{A}$}}

\put(145,50){\small{$\mathbb{Y}$}}

\put(145,10){\small{$\mathbb{B}$}}

\put(135,30){\tiny{$m$}}

\put(100,30){\tiny{$m$}}

\put(115,-15){$\Gamma_1$}

\end{picture}

\caption{}\label{saddle-move-figure2}

\end{figure}

Let $\Gamma_0$ and $\Gamma_1$ be the MOY graphs in Figure \ref{saddle-move-figure2}. And $\eta:C(\Gamma_0)\rightarrow C(\Gamma_1)$ the morphism induced by the saddle move. We have
\[
\Hom(C(\Gamma_0),C(\Gamma_1)) \cong C(\Gamma) \otimes_{\Sym(\mathbb{X}|\mathbb{Y}|\mathbb{A}|\mathbb{B})} C(\Gamma_0)_\bullet \cong 
\left(%
\begin{array}{cc}
  V_1(\mathbb{A},\mathbb{X}) & X_1-A_1 \\
  \dots & \dots \\
  V_m(\mathbb{A},\mathbb{X}) & X_m-A_m \\
  V_1(\mathbb{B},\mathbb{Y}) & B_1-Y_1 \\
  \dots & \dots \\
  V_m(\mathbb{B},\mathbb{Y}) & B_m-Y_m \\
  A_1 - B_1 & U_1(\mathbb{A},\mathbb{B}) \\
  \dots & \dots \\
  A_m - B_m & U_m(\mathbb{A},\mathbb{B}) \\
  Y_1-X_1 & U_1(\mathbb{X},\mathbb{Y}) \\
  \dots & \dots \\
  Y_m - X_m & U_m(\mathbb{X},\mathbb{Y}) 
\end{array}%
\right)_{\Sym(\mathbb{X}|\mathbb{Y}|\mathbb{A}|\mathbb{B})},
\]
where $X_j$ is the $j$-th elementary symmetric polynomial in $\mathbb{X}$, $U_j$ is given by Lemma \ref{fancy-U}, and
\[
V_j(\mathbb{X},\mathbb{Y}) := \frac{p_{m,N+1}(Y_1,\dots,Y_{j-1},X_j,\dots,X_m)-p_{m,N+1}(Y_1,\dots,Y_j,X_{j+1},\dots,X_m)}{X_j-Y_j}.
\]
By definition, it is easy to see that
\begin{eqnarray}
\label{pd-triangle-1} \frac{\partial}{\partial X_k} V_j(\mathbb{X},\mathbb{Y}) & = &  0 \text { if } j>k, \\
\label{pd-triangle-2} \frac{\partial}{\partial Y_k} V_j(\mathbb{X},\mathbb{Y}) & = &  0 \text { if } j<k.
\end{eqnarray}

Set $R_0=\Sym(\mathbb{X}|\mathbb{Y}|\mathbb{A}|\mathbb{B}) = \C[X_1,\dots,X_m,Y_1,\dots,Y_m,A_1,\dots,A_m,B_1,\dots,B_m]$, and, for $1\leq k \leq m$,
\begin{eqnarray*}
R_k & = & R_0 / (X_1-A_1,\dots,X_k-A_k) \\
& \cong & \C[X_1,\dots,X_m,Y_1,\dots,Y_m,A_{k+1},\dots,A_m,B_1,\dots,B_m], \\
R_{m+k} & = & R_0 / (X_1-A_1,\dots,X_m-A_m,B_1-Y_1,\dots,B_k-Y_k) \\
& \cong & \C[X_1,\dots,X_m,Y_1,\dots,Y_m,B_{k+1},\dots,B_m].
\end{eqnarray*}
Define
\begin{eqnarray*}
M_k & = & \left(%
\begin{array}{cc}
  V_{k+1}(\mathbb{A},\mathbb{X}) & X_{k+1}-A_{k+1} \\
  \dots & \dots \\
  V_m(\mathbb{A},\mathbb{X}) & X_m-A_m \\
  V_1(\mathbb{B},\mathbb{Y}) & B_1-Y_1 \\
  \dots & \dots \\
  V_m(\mathbb{B},\mathbb{Y}) & B_m-Y_m \\
  A_1 - B_1 & U_1(\mathbb{A},\mathbb{B}) \\
  \dots & \dots \\
  A_m - B_m & U_m(\mathbb{A},\mathbb{B}) \\
  Y_1-X_1 & U_1(\mathbb{X},\mathbb{Y}) \\
  \dots & \dots \\
  Y_m - X_m & U_m(\mathbb{X},\mathbb{Y}) 
\end{array}%
\right)_{R_k} \text{ for } k=0,1,\dots,m-1, \\
M_{m+k} & = &  \left(%
\begin{array}{cc}
  V_{k+1}(\mathbb{B},\mathbb{Y}) & B_{k+1}-Y_{k+1} \\
  \dots & \dots \\
  V_m(\mathbb{B},\mathbb{Y}) & B_m-Y_m \\
  A_1 - B_1 & U_1(\mathbb{A},\mathbb{B}) \\
  \dots & \dots \\
  A_m - B_m & U_m(\mathbb{A},\mathbb{B}) \\
  Y_1-X_1 & U_1(\mathbb{X},\mathbb{Y}) \\
  \dots & \dots \\
  Y_m - X_m & U_m(\mathbb{X},\mathbb{Y}) 
\end{array}%
\right)_{R_{m+k}} \text{ for } k=0,1,\dots,m-1, \\
M_{2m} & = & 
\left(%
\begin{array}{cc}
  X_1 - Y_1 & U_1(\mathbb{X},\mathbb{Y}) \\
  \dots & \dots \\
  X_m - Y_m & U_m(\mathbb{X},\mathbb{Y}) \\
  Y_1-X_1 & U_1(\mathbb{X},\mathbb{Y}) \\
  \dots & \dots \\
  Y_m - X_m & U_m(\mathbb{X},\mathbb{Y})
\end{array}%
\right)_{\Sym(\mathbb{X}|\mathbb{Y})}.
\end{eqnarray*}
By Proposition \ref{b-contraction}, $M_0 \simeq M_1 \simeq \cdots \simeq M_{2m}$. Let $\eta_k$ be the image of $\eta$ in $M_{k}$. Then, use method in the proof of Lemma \ref{circle-rep-two-marks}, one can check that
\[
\eta_{2m} \approx \sum_{\ve\in I^m} (-1)^{\frac{|\wbar{\ve}|(|\wbar{\ve}|-1)}{2}+|\wbar{\ve}| +s(\wbar{\ve})} 1_{\ve}\otimes 1_{\wbar{\ve}},
\]
where $s(\ve):=\sum_{j=1}^{m-1}(m-j)\ve_j$ for $\ve=(\ve_1,\dots,\ve_m)\in I^m$. 

Next, we apply Lemma \ref{reverse-b-contraction-special} to find a cycle representing $\eta$ in $M_0$. 

Write
\begin{eqnarray*}
\theta_{j,0}(X_1,\dots,X_m,Y_1,\dots,Y_m) & = & X_j-Y_j, \\
\theta_{j,1}(X_1,\dots,X_m,Y_1,\dots,Y_m) & = & U_j(\mathbb{X},\mathbb{Y}).
\end{eqnarray*}
And define, for $k=1,\dots,m$, $\ve\in \zed_2$,
\begin{eqnarray*}
\Theta_{j,\ve}^k  & = & \frac{\theta_{j,\ve}(X_1,\dots,X_{k-1},A_{k}\dots,A_m,B_1,\dots,B_m) - \theta_{j,\ve}(X_1,\dots,X_{k},A_{k+1}\dots,A_m,B_1,\dots,B_m)}{X_k-A_k}, \\
\Theta_{j,\ve}^{m+k}  & = & \frac{\theta_{j,\ve}(X_1,\dots,X_m,Y_1\dots,Y_{k-1},B_{k},\dots,B_m) - \theta_{j,\ve}(X_1,\dots,X_m,Y_1\dots,Y_k,B_{k+1},\dots,B_m)}{B_k-Y_k}. \\
\end{eqnarray*}
It is easy to see that, for $1\leq k,j \leq m$, 
\begin{equation}\label{Theta-0}
\Theta_{j,0}^k=\Theta_{j,0}^{m+k} =
\left\{%
\begin{array}{ll}
    -1 & \text{if } j=k, \\ 
    0 & \text{if } j\neq k.
\end{array}%
\right.
\end{equation}

In the following computation, we shall call an element of $M_0$ an ``irrelevant term" if it is of the form $c\cdot 1_{\ve_1} \otimes 1_{\ve_2} \otimes 1_{\ve_3}$ where $c \in R_0$, $\ve_1 \in I^{2m}$ and $\ve_2,\ve_3 \in I^{m}$ such that either $\ve_1 \neq (1,1,\dots,1)$ or $\ve_2 \neq \wbar{\ve_3}$.

Define $\mathcal{F}$ to be the set of functions from $\{1,2,\dots,2m\}$ to $\{1,2,\dots,m\}$ and 
\begin{eqnarray*}
\mathcal{F}_{even} & = & \{f\in \mathcal{F} ~|~ \# f^{-1}(j) \text{ is even for } j=1,2,\dots,m\}, \\
\mathcal{F}_2 & = & \{f\in \mathcal{F} ~|~ \# f^{-1}(j)=2 \text{ for } j=1,2,\dots,m\}.
\end{eqnarray*}
For $f\in \mathcal{F}$, $k=1,2,\dots,2m$, define 
\begin{eqnarray*}
\nu_{f,k} & = &  \# \{k'~|~ k<k'\leq 2m,~f(k')<f(k)\}, \\
\nu_f & = & \sum_{k=1}^{2m} \nu_{f,k}, \\
\mu_{f,k} & = & \# \{k'~|~ k<k'\leq 2m,~f(k')=f(k)\}.
\end{eqnarray*}
For $f\in \mathcal{F}$, $\ve=(\ve_1,\dots,\ve_m) \in I^m$, define $\varphi_f(\ve) = (e_1,\dots,e_m)\in I^m$, where $e_j \in I$ satisfies
\[
e_j \equiv \ve_j + \# \{k ~|~ 1\leq k \leq 2m,~f(k)=j\} \mod 2.
\]

Applying Lemma \ref{reverse-b-contraction-special} repeatedly, we get that
\begin{eqnarray*}
& & \eta_0 \\
& \approx & \sum_{\ve\in I^m}(-1)^{\frac{|\wbar{\ve}|(|\wbar{\ve}|-1)}{2}+|\wbar{\ve}| +s(\wbar{\ve})+2m} \sum_{f \in \mathcal{F}} (\prod_{k=1}^{2m} (-1)^{|\ve|_{f(k)}+\nu_{f,k}} \Theta_{f(k),\ve_{f(k)}+\mu_{f,k}}^k) 1_{(1,\dots,1)}\otimes 1_{\varphi_f(\ve)}\otimes 1_{\wbar{\ve}} \\
& & + \text{ irrelevant terms},
\end{eqnarray*}
where $\ve_j$ is the $j$-the entry in $\ve$. Note that, if $f \notin \mathcal{F}_{even}$, then $\varphi_f(\ve) \neq \ve$ and the corresponding term in the above sum is also irrelevant. So we can simplify the above and get
\begin{eqnarray*}
\eta_0 & \approx & \sum_{\ve\in I^m}(-1)^{\frac{|\wbar{\ve}|(|\wbar{\ve}|-1)}{2}+|\wbar{\ve}| +s(\wbar{\ve})} \sum_{f \in \mathcal{F}_{even}} (-1)^{\nu_{f}} (\prod_{k=1}^{2m} \Theta_{f(k),\ve_{f(k)}+\mu_{f,k}}^k) 1_{(1,\dots,1)}\otimes 1_{\ve}\otimes 1_{\wbar{\ve}} \\
& & + \text{ irrelevant terms}.
\end{eqnarray*}

\begin{figure}[ht]

\setlength{\unitlength}{1pt}

\begin{picture}(360,70)(-180,-10)
%left
\put(-122,48){\tiny{$m$}}

\put(-120,30){\oval(20,30)}

\multiput(-120,15)(0,5){6}{\line(0,1){3}}

\put(-110,35){\vector(0,1){0}}

\put(-131,30){\line(1,0){2}}

\put(-140,30){\small{$\mathbb{X}$}}

\put(-111,30){\line(1,0){2}}

\put(-105,30){\small{$\mathbb{Y}$}}

\put(-75,35){$\eta$}

\put(-90,30){\vector(1,0){40}}

\put(-123,-10){$\widetilde{\Gamma}$}

%center

\put(-23,48){\tiny{$m$}}

\put(-20,30){\oval(20,30)}

\put(-10,35){\vector(0,1){0}}

\put(12,48){\tiny{$m$}}

\put(-31,30){\line(1,0){2}}

\put(-40,30){\small{$\mathbb{X}$}}

\put(15,30){\oval(20,30)}

\put(5,25){\vector(0,-1){0}}

\put(24,30){\line(1,0){2}}

\put(30,30){\small{$\mathbb{Y}$}}

\put(0,-10){$\widetilde{\Gamma}_1$}

\put(65,35){$\epsilon$}

\put(50,30){\vector(1,0){40}}

%right

\put(109,30){\line(1,0){2}}

\put(100,30){\small{$\mathbb{X}$}}

\put(117,48){\tiny{$m$}}

\put(120,30){\oval(20,30)}

\put(130,35){\vector(0,1){0}}

\put(115,-10){$\widetilde{\Gamma}$}

\end{picture}

\caption{}\label{saddle+annihilation+closeup+figure}

\end{figure}

In Figure \ref{saddle-move-figure2}, identify the two end points of $\Gamma_0$ marked by $\mathbb{X}$ and $\mathbb{A}$, and identify the two end points of $\Gamma_0$ marked by $\mathbb{Y}$ and $\mathbb{B}$. This changes $\Gamma_0$ into $\widetilde{\Gamma}$ in Figure \ref{saddle+annihilation+closeup+figure}. Similarly, by identifying the two end points of $\Gamma_1$ in Figure \ref{saddle-move-figure2} marked by $\mathbb{X}$ and $\mathbb{A}$ and identifying the two end points of $\Gamma_1$ marked by $\mathbb{Y}$ and $\mathbb{B}$, we change $\Gamma_1$ into $\widetilde{\Gamma}_1$ in Figure \ref{saddle+annihilation+closeup+figure}. Let $\mathfrak{G}$ be the generating class of $H(\widetilde{\Gamma})$, and $\mathfrak{G}_{\mathbb{X}}$, $\mathfrak{G}_{\mathbb{Y}}$ the generating classes of the homology of the two circles in $\widetilde{\Gamma}_1$. By Lemma \ref{circle-rep-two-marks}, $\mathfrak{G}$ is represented in 
\[
C(\widetilde{\Gamma}) =
\left(%
\begin{array}{cc}
  U_1(\mathbb{X},\mathbb{Y}) & Y_1-X_1 \\
  \dots & \dots \\
  U_m(\mathbb{X},\mathbb{Y}) & Y_m-X_m \\
  U_1(\mathbb{X},\mathbb{Y}) & X_1-Y_1 \\
  \dots & \dots \\
  U_m(\mathbb{X},\mathbb{Y}) & X_m-Y_m 
\end{array}%
\right)_{\Sym(\mathbb{X}|\mathbb{Y})}
\]
by the cycle
\[
G = \sum_{\ve\in I^m} (-1)^{\frac{|\ve|(|\ve|-1)}{2}+(m+1)|\ve|+s(\ve)} 1_{\ve}\otimes 1_{\wbar{\ve}}.
\] 

Define $\widetilde{\Theta}_{j,0}^k,~\widetilde{\Theta}_{j,0}^{m+k},~\widetilde{\Theta}_{j,1}^k,~\widetilde{\Theta}_{j,1}^{m+k}$ by substituting $A_1=X_1,\dots, ~A_m=X_m, ~B_1=Y_1,\dots,~B_m=Y_m$ into $\Theta_{j,0}^k,~\Theta_{j,0}^{m+k},~\Theta_{j,1}^k,~\Theta_{j,1}^{m+k}$. Then, for $1\leq k,j \leq m$,
\begin{eqnarray}
\label{Theta-pd-0}\widetilde{\Theta}_{j,0}^k & = & \widetilde{\Theta}_{j,0}^{m+k} =
\left\{%
\begin{array}{ll}
    -1 & \text{if } j=k, \\ 
    0 & \text{if } j\neq k.
\end{array}%
\right. \\
\label{Theta-pd-1} \widetilde{\Theta}_{j,1}^k & := & \Theta_{j,1}^k|_{A_1=X_1,\dots,A_m=X_m,B_1=Y_1,\dots,B_m=Y_m} = -\frac{\partial}{\partial X_k} U_j(\mathbb{X},\mathbb{Y}), \\
\label{Theta-pd-2} \widetilde{\Theta}_{j,1}^{m+k} & := & \Theta_{j,1}^{m+k}|_{A_1=X_1,\dots,A_m=X_m,B_1=Y_1,\dots,B_m=Y_m} = \frac{\partial}{\partial Y_k} U_j(\mathbb{X},\mathbb{Y}).
\end{eqnarray}

Using the formula for $\eta_0$ and lemmas \ref{bullet}, \ref{row-reverse-signs}, we get that $\eta(G)$ is represented in 
\[
C(\widetilde{\Gamma}_1) = 
\left(%
\begin{array}{cc}
  V_1(\mathbb{X},\mathbb{X}) & 0 \\
  \dots & \dots \\
  V_m(\mathbb{X},\mathbb{X}) & 0 \\
  V_1(\mathbb{Y},\mathbb{Y}) & 0 \\
  \dots & \dots \\
  V_m(\mathbb{Y},\mathbb{Y}) & 0 \\
\end{array}%
\right)_{\Sym(\mathbb{X}|\mathbb{Y})}
\]
by the cycle
\begin{eqnarray*}
&& \eta(G) \\
& \approx & \sum_{\ve\in I^m}(-1)^{\frac{|\wbar{\ve}|(|\wbar{\ve}|-1)}{2}+|\wbar{\ve}| +s(\wbar{\ve}) + \frac{|\ve|(|\ve|-1)}{2}+(m+1)|\ve|+s(\ve)+\frac{m(m-1)}{2}}\sum_{f \in \mathcal{F}_{even}} (-1)^{\nu_{f}} (\prod_{k=1}^{2m} \widetilde{\Theta}_{f(k),\ve_{f(k)}+\mu_{f,k}}^k) 1_{(1,\dots,1)} \\
& & + \text{ irrelevant terms},
\end{eqnarray*}
where the ``irrelevant terms" are terms not of the form $c \cdot 1_{(1,\dots,1)}$. By definition, it is easy to see that 
\begin{eqnarray*}
|\ve| + |\wbar{\ve}| & = & m, \\
s(\ve) + s(\wbar{\ve}) & = & \sum_{j=1}^{m-1}(m-j) =\frac{m(m-1)}{2}.
\end{eqnarray*}
Then one can check that
\begin{eqnarray*}
& & \frac{|\wbar{\ve}|(|\wbar{\ve}|-1)}{2}+|\wbar{\ve}| +s(\wbar{\ve}) + \frac{|\ve|(|\ve|-1)}{2}+(m+1)|\ve|+s(\ve)+\frac{m(m-1)}{2} \\
& \equiv & |\ve|^2 + \frac{m(m+1)}{2} \equiv |\ve| + \frac{m(m+1)}{2} \mod 2.
\end{eqnarray*}
Therefore,
\begin{eqnarray*}
\eta(G) & \approx & (-1)^{\frac{m(m+1)}{2}} \sum_{\ve\in I^m}\sum_{f \in \mathcal{F}_{even}} (-1)^{|\ve|+\nu_{f}} (\prod_{k=1}^{2m} \widetilde{\Theta}_{f(k),\ve_{f(k)}+\mu_{f,k}}^k) 1_{(1,\dots,1)} \\
& & + \text{ irrelevant terms}.
\end{eqnarray*}
This shows that 
\[
\eta_\ast(\mathfrak{G})  \propto  (-1)^{\frac{m(m+1)}{2}} \sum_{\ve\in I^m}\sum_{f \in \mathcal{F}_{even}} (-1)^{|\ve|+\nu_{f}} (\prod_{k=1}^{2m} \widetilde{\Theta}_{f(k),\ve_{f(k)}+\mu_{f,k}}^k) \cdot (\mathfrak{G}_{\mathbb{X}} \otimes \mathfrak{G}_{\mathbb{Y}}).
\]
Hence,
\[
\epsilon_\ast \circ \eta_\ast (\mathfrak{G}) \propto (-1)^{\frac{m(m+1)}{2}} \epsilon_\ast(\sum_{\ve\in I^m}\sum_{f \in \mathcal{F}_{even}} (-1)^{|\ve|+\nu_{f}} (\prod_{k=1}^{2m} \widetilde{\Theta}_{f(k),\ve_{f(k)}+\mu_{f,k}}^k) \cdot \mathfrak{G}_{\mathbb{Y}}) \cdot \mathfrak{G},
\]
where $\epsilon:C(\widetilde{\Gamma}_1)\rightarrow C(\widetilde{\Gamma})$ is the morphism associated to the annihilation of the circle marked by $\mathbb{Y}$. 

Since $\eta$ is homogeneous of degree $m(N-m)$, the polynomial 
\[
\widetilde{\Xi} = \sum_{\ve\in I^m}\sum_{f \in \mathcal{F}_{even}} (-1)^{|\ve|+\nu_{f}} (\prod_{k=1}^{2m} \widetilde{\Theta}_{f(k),\ve_{f(k)}+\mu_{f,k}}^k)
\]
is homogeneous of degree $2m(N-m)$. Let $\widetilde{\Xi}^+$ be the part of $\widetilde{\Xi}$ with positive total degree in $\mathbb{X}$. Then the total degree of $\widetilde{\Xi}^+$ in $\mathbb{Y}$ is less than $2m(N-m)$. By Corollary \ref{iota-epsilon-composition}, we know that $\epsilon_\ast(\widetilde{\Xi}^+ \cdot \mathfrak{G}_{\mathbb{Y}}) =0$. So 
\[
\epsilon_\ast(\widetilde{\Xi} \cdot \mathfrak{G}_{\mathbb{Y}}) = \epsilon_\ast((\widetilde{\Xi}-\widetilde{\Xi}^+) \cdot \mathfrak{G}_{\mathbb{Y}}) = \epsilon_\ast((\widetilde{\Xi}|_{X_1=X_2=\cdots=X_m=0}) \cdot \mathfrak{G}_{\mathbb{Y}}).
\]

Next, consider $\hat{\Xi}:=\widetilde{\Xi}|_{X_1=X_2=\cdots=X_m=0}$. Let $\hat{\Theta}_{j,\ve}^k = \widetilde{\Theta}_{j,\ve}^k|_{X_1=X_2=\cdots=X_m=0}$. Then
\[
\hat{\Xi} =  \sum_{\ve\in I^m}\sum_{f \in \mathcal{F}_{even}} (-1)^{|\ve|+\nu_{f}} (\prod_{k=1}^{2m} \hat{\Theta}_{f(k),\ve_{f(k)}+\mu_{f,k}}^k).
\]
Moreover, by equations \eqref{Theta-pd-0}, \eqref{Theta-pd-1}, \eqref{Theta-pd-2}, the definition of $U_j$ in Lemma \ref{fancy-U} and Lemma \ref{power-derive}, we have, for $1 \leq k,j \leq m$,
\begin{eqnarray*}
\hat{\Theta}_{j,0}^k & = & \hat{\Theta}_{j,0}^{m+k} =
\left\{%
\begin{array}{ll}
    -1 & \text{if } j=k, \\ 
    0 & \text{if } j\neq k,
\end{array}%
\right. \\
 \hat{\Theta}_{j,1}^k & = & -\frac{\partial}{\partial X_k} U_j(\mathbb{X},\mathbb{Y})|_{X_1=X_2=\cdots=X_m=0} = (-1)^{k+j+1}j \cdot h_{N+1-k-j}(\mathbb{Y}), \\
\hat{\Theta}_{j,1}^{m+k} & = & \frac{\partial}{\partial Y_k} U_j(\mathbb{X},\mathbb{Y})|_{X_1=X_2=\cdots=X_m=0} = (-1)^{k+j}(N+1-j) \cdot h_{N+1-k-j}(\mathbb{Y}).
\end{eqnarray*}
Now split $\hat{\Xi}$ into $\hat{\Xi} = \hat{\Xi}_1 + \hat{\Xi}_2$, where
\begin{eqnarray*}
\hat{\Xi}_1 & = & \sum_{\ve\in I^m}\sum_{f \in \mathcal{F}_2} (-1)^{|\ve|+\nu_{f}} (\prod_{k=1}^{2m} \hat{\Theta}_{f(k),\ve_{f(k)}+\mu_{f,k}}^k), \\
\hat{\Xi}_2 & = & \sum_{\ve\in I^m}\sum_{f \in \mathcal{F}_{even}\setminus \mathcal{F}_2} (-1)^{|\ve|+\nu_{f}} (\prod_{k=1}^{2m} \hat{\Theta}_{f(k),\ve_{f(k)}+\mu_{f,k}}^k).
\end{eqnarray*}

We compute $\hat{\Xi}_1$ first. For every pair of $f \in \mathcal{F}_2$ and $\ve=(\ve_1,\dots,\ve_m) \in I^m$, there is a bijection 
\[
f_\ve:\{1,2,\dots,2m\} \rightarrow \{1,2,\dots,m\} \times \zed_2
\] 
given by $f_\ve(k) = (f(k),\ve_{f(k)}+\mu_{f,k})$. Note that $(f,\ve)\mapsto f_\ve$ is a bijection from $\mathcal{F}_2 \times I^m$ to the set of bijections $\{1,2,\dots,2m\} \rightarrow \{1,2,\dots,m\} \times \zed_2$. Define an order on $\{1,2,\dots,m\} \times \zed_2$ by 
\[
(1,1)<(1,0)<(2,1)<(2,0)<\cdots<(m,1)<(m,0).
\]
Then, for $(f,\ve) \in \mathcal{F}_2\times I^m$, 
\[
|\ve|+\nu_{f} = \# \{ (k,k')~|~1\leq k < k' \leq 2m,~f_\ve(k)>f_\ve(k')\}.
\]
Thus,
\begin{eqnarray*}
\hat{\Xi}_1 & = & \sum_{\ve\in I^m}\sum_{f \in \mathcal{F}_2} (-1)^{|\ve|+\nu_{f}} (\prod_{k=1}^{2m} \hat{\Theta}_{f(k),\ve_{f(k)}+\mu_{f,k}}^k) \\
& = & \left|%
\begin{array}{lllllll}
  \hat{\Theta}_{1,1}^1 & \hat{\Theta}_{1,0}^1 & \hat{\Theta}_{2,1}^1 & \hat{\Theta}_{2,0}^1 & \dots & \hat{\Theta}_{m,1}^1 & \hat{\Theta}_{m,0}^1  \\
  \hat{\Theta}_{1,1}^2 & \hat{\Theta}_{1,0}^2 & \hat{\Theta}_{2,1}^2 & \hat{\Theta}_{2,0}^2 & \dots & \hat{\Theta}_{m,1}^2 & \hat{\Theta}_{m,0}^2  \\
  \dots & \dots & \dots & \dots & \dots & \dots & \dots  \\
  \hat{\Theta}_{1,1}^{2m-1} & \hat{\Theta}_{1,0}^{2m-1} & \hat{\Theta}_{2,1}^{2m-1} & \hat{\Theta}_{2,0}^{2m-1} & \dots & \hat{\Theta}_{m,1}^{2m-1} & \hat{\Theta}_{m,0}^{2m-1}  \\
  \hat{\Theta}_{1,1}^{2m} & \hat{\Theta}_{1,0}^{2m} & \hat{\Theta}_{2,1}^{2m} & \hat{\Theta}_{2,0}^{2m} & \dots & \hat{\Theta}_{m,1}^{2m} & \hat{\Theta}_{m,0}^{2m}  \\
\end{array}%
\right| \\
& = & (-1)^{\frac{m(m+1)}{2}} \left|%
\begin{array}{llllll}
  \hat{\Theta}_{1,0}^1 & \dots & \hat{\Theta}_{m,0}^1 & \hat{\Theta}_{1,1}^1 & \dots & \hat{\Theta}_{m,1}^1 \\
  \dots & \dots & \dots & \dots & \dots & \dots \\
  \hat{\Theta}_{1,0}^m & \dots & \hat{\Theta}_{m,0}^m & \hat{\Theta}_{1,1}^m & \dots & \hat{\Theta}_{m,1}^m \\
  \hat{\Theta}_{1,0}^{m+1} & \dots & \hat{\Theta}_{m,0}^{m+1} & \hat{\Theta}_{1,1}^{m+1} & \dots & \hat{\Theta}_{m,1}^{m+1} \\
  \dots & \dots & \dots & \dots & \dots & \dots \\
  \hat{\Theta}_{1,0}^{2m} & \dots & \hat{\Theta}_{m,0}^{2m} & \hat{\Theta}_{1,1}^{2m} & \dots & \hat{\Theta}_{m,1}^{2m} \\
\end{array}%
\right| 
\end{eqnarray*}
Note that both $m\times m$ blocks on the left are $-\mathbf{I}$, where $\mathbf{I}$ is the $m\times m$ unit matrix. So
\begin{eqnarray*}
\hat{\Xi}_1 & = &
(-1)^{\frac{m(m+1)}{2}}\cdot(-1)^m \left|%
\begin{array}{lll}
  \hat{\Theta}_{1,1}^{m+1}-\hat{\Theta}_{1,1}^{1} & \dots & \hat{\Theta}_{m,1}^{m+1}-\hat{\Theta}_{m,1}^{1} \\
  \dots & \dots & \dots \\
  \hat{\Theta}_{1,1}^{2m}-\hat{\Theta}_{1,1}^{m} & \dots & \hat{\Theta}_{m,1}^{2m}-\hat{\Theta}_{m,1}^{m} \\
\end{array}%
\right|  \\
& = & (-1)^{\frac{m(m-1)}{2}} \det ((-1)^{k+j}(N+1)h_{N+1-k-j}(\mathbb{Y}))_{1\leq k,j \leq m} \\
& = & (-1)^{\frac{m(m-1)}{2}} (N+1)^m \det (h_{N+1-k-j}(\mathbb{Y}))_{1\leq k,j \leq m} \\
& = & (N+1)^m \det(h_{N-m-k+j}(\mathbb{Y}))_{1\leq k,j \leq m} \\
& = & (N+1)^m S_{\lambda_{m,N-m}}(\mathbb{Y}),
\end{eqnarray*}
where $\lambda_{m,N-m}= (\underbrace{N-m \geq \cdots \geq N-m}_{m \text{ parts}})$.

The sum $\hat{\Xi}_2$ is harder to understand. But, to determine $\epsilon_\ast(\hat{\Xi}_2 \cdot \mathfrak{G}_{\mathbb{Y}})$, we only need to know the coefficient of $S_{\lambda_{m,N-m}}(\mathbb{Y})$ in the decomposition of $\hat{\Xi}_2$ into Schur polynomials, which is not very hard to do. First, we consider the decomposition of $\hat{\Xi}_2$ into complete symmetric polynomials. Since $\hat{\Xi}_2$ is homogeneous of degree $2m(N-m)$, we have 
\[
\hat{\Xi}_2 = \sum_{|\lambda| = m(N-m), ~l(\lambda)\leq m} c_{\lambda} \cdot h_{\lambda}(\mathbb{Y}), 
\]
where $c_{\lambda} \in \C$. Note that $\hat{\Xi}_2$ is defined by
\[
\hat{\Xi}_2 = \sum_{\ve\in I^m}\sum_{f \in \mathcal{F}_{even}\setminus \mathcal{F}_2} (-1)^{|\ve|+\nu_{f}} (\prod_{k=1}^{2m} \hat{\Theta}_{f(k),\ve_{f(k)}+\mu_{f,k}}^k),
\]
in which every term is a scalar multiple of a complete symmetric polynomial associated to a partition of length $\leq m$. If the term corresponding to $\ve\in I^m$ and $f \in \mathcal{F}_{even}\setminus \mathcal{F}_2$ makes a non-zero contribution to $c_{\lambda_{m,N-m}}$, then we know that, for every $k=1,\dots,m$, 
\[
f(k) = \left\{%
\begin{array}{ll}
    k & \text{if } \ve_{f(k)}+\mu_{f,k}=0, \\ 
    m+1-k & \text{if } \ve_{f(k)}+\mu_{f,k}=1,
\end{array}%
\right.
\] 
and
\[
f(m+k) = \left\{%
\begin{array}{ll}
    k & \text{if } \ve_{f(m+k)}+\mu_{f,m+k}=0, \\ 
    m+1-k & \text{if } \ve_{f(m+k)}+\mu_{f,m+k}=1.
\end{array}%
\right.
\] 
In particular,
\[
f \in \mathcal{F}^{\diamond}:= \{g \in \mathcal{F}_{even}\setminus \mathcal{F}_2 ~|~ g(k),g(m+k) \in \{k,m+1-k\},~\forall~k=1,\dots,m\}.
\]
Now, for an $f\in \mathcal{F}^{\diamond}$, we have $f\in \mathcal{F}_{even}\setminus \mathcal{F}_2$. So there is a $j \in \{1,\dots,m\}$ such that $\# f^{-1}(j)$ is an even number greater than $2$. From the above definition of $\mathcal{F}^{\diamond}$, we can see that $f^{-1}(j) \subset \{j, m+j, m+1-j, 2m+1-j\}$. Thus, $\# f^{-1}(j)=4$ and $f^{-1}(j) = \{j, m+j, m+1-j, 2m+1-j\}$, which implies that $f^{-1}(m+1-j)=\emptyset$. Let $\ve,\sigma \in I^m$ be such that $\ve_{m+1-j} \neq \sigma_{m+1-j}$ and $\ve_l = \sigma_l$ if $l\neq m+1-j$. Then 
\[
(-1)^{|\ve|+\nu_{f}} (\prod_{k=1}^{2m} \hat{\Theta}_{f(k),\ve_{f(k)}+\mu_{f,k}}^k) = -(-1)^{|\sigma|+\nu_{f}} (\prod_{k=1}^{2m} \hat{\Theta}_{f(k),\sigma_{f(k)}+\mu_{f,k}}^k).
\]
This implies that, for every $f \in \mathcal{F}^{\diamond}$,
\[
\sum_{\ve\in I^m} (-1)^{|\ve|+\nu_{f}} (\prod_{k=1}^{2m} \hat{\Theta}_{f(k),\ve_{f(k)}+\mu_{f,k}}^k) =0.
\]
Therefore, $c_{\lambda_{m,N-m}}=0$. By Lemma \ref{special-kostka}, one can see that the coefficient of $S_{\lambda_{m,N-m}}(\mathbb{Y})$ is also $0$ in the decomposition of $\hat{\Xi}_2$ into Schur polynomials. So $\epsilon_\ast(\hat{\Xi}_2 \cdot \mathfrak{G}_{\mathbb{Y}})=0$.

Altogether, we have shown that 
\begin{eqnarray*}
\epsilon_\ast \circ \eta_\ast (\mathfrak{G}) & \propto & (-1)^{\frac{m(m+1)}{2}} \epsilon_\ast(\hat{\Xi} \cdot \mathfrak{G}_{\mathbb{Y}}) \cdot \mathfrak{G} \\
& = & (-1)^{\frac{m(m+1)}{2}} \epsilon_\ast(\hat{\Xi}_1 \cdot \mathfrak{G}_{\mathbb{Y}}) \cdot \mathfrak{G} + (-1)^{\frac{m(m+1)}{2}} \epsilon_\ast(\hat{\Xi}_2 \cdot \mathfrak{G}_{\mathbb{Y}}) \cdot \mathfrak{G} \\
& = & (-1)^{\frac{m(m+1)}{2}} (N+1)^m \epsilon_\ast(S_{\lambda_{m,N-m}}(\mathbb{Y})\cdot\mathfrak{G}_{\mathbb{Y}}) \cdot \mathfrak{G} \\
& \propto & (-1)^{\frac{m(m+1)}{2}} (N+1)^m \mathfrak{G} \neq 0,
\end{eqnarray*}
which proves the following lemma.

\begin{lemma}\label{saddle+compose+annihilation+closeup}
Let $\widetilde{\Gamma}$ and $\widetilde{\Gamma}_1$ be the MOY graphs in Figure \ref{saddle+annihilation+closeup+figure}, $\eta:C(\widetilde{\Gamma})\rightarrow C(\widetilde{\Gamma}_1)$ the morphism associated to the saddle move and $\epsilon:C(\widetilde{\Gamma}_1)\rightarrow C(\widetilde{\Gamma})$ the morphism associated to the annihilation of the circle marked by $\mathbb{Y}$. Then $\epsilon_\ast \circ \eta_\ast (\mathfrak{G}) \propto \mathfrak{G}$, where $\mathfrak{G}$ is the generating class of $H(\widetilde{\Gamma})$. In particular, $\epsilon_\ast \circ \eta_\ast \neq 0$.
\end{lemma}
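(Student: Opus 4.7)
The strategy is to reduce the lemma to a concrete calculation in the bottom quantum degree of $\Hom_{\HMF}(C(\widetilde{\Gamma}),C(\widetilde{\Gamma}))$. Since $\eta$ has quantum degree $m(N-m)$ and $\epsilon$ has quantum degree $-m(N-m)$ (both of $\zed_2$-degree $m$), the composition $\epsilon \circ \eta$ is a homogeneous endomorphism of $C(\widetilde{\Gamma})$ of total bidegree zero. From the computation in the proof of Lemma \ref{circle-rep-two-marks} one has $\Hom_{\HMF}(C(\widetilde{\Gamma}),C(\widetilde{\Gamma})) \cong C(\emptyset)\{\qb{N}{m}q^{m(N-m)}\}$, so the subspace in quantum degree $0$ is one-dimensional, spanned by $\id$. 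Hence it suffices to show $\epsilon_{\ast}\circ\eta_{\ast}(\mathfrak{G})\neq 0$ for the generating class $\mathfrak{G} \in H(\widetilde{\Gamma})$.

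The first step is to represent $\mathfrak{G}$ by the explicit cycle $G=\sum_{\ve\in I^m}(-1)^{\frac{|\ve|(|\ve|-1)}{2}+(m+1)|\ve|+s(\ve)}1_\ve\otimes 1_{\overline\ve}$ of Lemma \ref{circle-rep-two-marks}, and then to track its image under $\eta$. To get a tractable formula I would replace the arbitrary $U_j$'s in the Koszul presentation by the explicit choice $U_j(\mathbb{X},\mathbb{Y})$ of Lemma \ref{fancy-U}, which is permissible by Remark \ref{MOY-freedom}. The chain map $\eta$ arises as an iterated application of the quasi-isomorphism of Proposition \ref{b-contraction-weak}; using Lemma \ref{reverse-b-contraction-special} repeatedly to lift cycles backwards through $2m$ contractions gives an explicit representative for $\eta(G)$ of the form $\sum_{\ve,f} \pm\prod_{k=1}^{2m}\widetilde\Theta_{f(k),\ve_{f(k)}+\mu_{f,k}}^k\cdot 1_{(1,\dots,1)}+\mathrm{(irrelevant)}$, where $f$ ranges over maps $\{1,\dots,2m\}\to\{1,\dots,m\}$ and $\widetilde\Theta_{j,\ve}^k$ are the partial difference quotients of the entries. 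One then specializes to the closed circle by setting $\mathbb{A}=\mathbb{X}$, $\mathbb{B}=\mathbb{Y}$; the vanishing identities \eqref{Theta-0}--\eqref{Theta-pd-2} kill all $f$ that do not factor through $\mathcal{F}_{even}$.

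The critical step is then to evaluate $\epsilon_\ast(\widetilde{\Xi}\cdot\mathfrak{G}_{\mathbb{Y}})$ where $\widetilde{\Xi}$ is the resulting polynomial. By degree considerations only the $\mathbb{X}$-constant part $\hat{\Xi}:=\widetilde{\Xi}|_{X_1=\dots=X_m=0}$ contributes, and Lemma \ref{power-derive} shows that the surviving $\hat{\Theta}_{j,1}^k,\hat{\Theta}_{j,1}^{m+k}$ are scalar multiples of $h_{N+1-k-j}(\mathbb{Y})$. I split $\hat{\Xi}=\hat{\Xi}_1+\hat{\Xi}_2$ according to whether $f$ is a ``perfect pairing'' ($f\in\mathcal{F}_2$) or not. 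The $\hat{\Xi}_1$ piece reorganizes exactly as a $2m\times 2m$ determinant which, after block row-reduction using that the ``$\ve=0$'' columns are $-\mathbf{I}$, collapses to $\pm(N+1)^m\det(h_{N-m-k+j}(\mathbb{Y}))=\pm(N+1)^m S_{\lambda_{m,N-m}}(\mathbb{Y})$ via the Jacobi--Trudi identity \eqref{schur-complete}. By the duality pairing \eqref{Tr-circ-P-property} and \eqref{epsilon-def-computation}, this produces a nonzero scalar after applying $\epsilon$.

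The main obstacle will be showing that $\hat{\Xi}_2$ contributes nothing to the top Schur class $S_{\lambda_{m,N-m}}(\mathbb{Y})$ — a direct evaluation seems hopeless because $\mathcal{F}_{even}\setminus\mathcal{F}_2$ is large. The plan is to exploit Lemma \ref{special-kostka}, which reduces the question to showing the coefficient of $h_{\lambda_{m,N-m}}(\mathbb{Y})$ in $\hat{\Xi}_2$ vanishes. For each $f\in\mathcal{F}_{even}\setminus\mathcal{F}_2$ that contributes to this coefficient one shows $f$ must take values in $\{k,m+1-k\}$ at positions $k$ and $m+k$; then some $j$ satisfies $\#f^{-1}(j)=4$ and $\#f^{-1}(m+1-j)=0$, so the entry $\ve_{m+1-j}$ is a free $I$-variable on which the summand depends through a sign. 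Pairing $\ve$ with the $\ve$ obtained by flipping $\ve_{m+1-j}$ gives an involution with opposite signs, killing the sum. Putting all of this together yields $\epsilon_\ast\circ\eta_\ast(\mathfrak{G})\propto (-1)^{\frac{m(m+1)}{2}}(N+1)^m\mathfrak{G}$.
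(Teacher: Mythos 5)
Your proposal reproduces the paper's proof of this lemma essentially step by step: representing $\mathfrak{G}$ by the cycle of Lemma \ref{circle-rep-two-marks}, lifting through the $2m$ contractions via Lemma \ref{reverse-b-contraction-special} using the explicit $U_j$'s of Lemma \ref{fancy-U}, specializing by the degree argument to $\hat{\Xi}$, and splitting into the determinantal piece $\hat{\Xi}_1=\pm(N+1)^m S_{\lambda_{m,N-m}}(\mathbb{Y})$ and the piece $\hat{\Xi}_2$ annihilated by the sign-reversing involution on $\ve_{m+1-j}$. The only slip is a misattribution — the terms with $f\notin\mathcal{F}_{\mathrm{even}}$ are discarded already at the level of $\eta_0$ because $\varphi_f(\ve)\neq\ve$ renders them irrelevant, not from the $\Theta$ vanishing identities you cite — but this does not affect the computation.
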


Now, using an argument similar to the proof of Proposition \ref{creation+compose+saddle}, we can easily prove the following main conclusion of this subsection.

\begin{proposition}\label{saddle+compose+annihilation}
Let $\Gamma$ and $\Gamma_1$ be the MOY graphs in Figure \ref{saddle+annihilation+figure}, $\eta:C(\Gamma)\rightarrow C(\Gamma_1)$ the morphism associated to the saddle move and $\epsilon:C(\Gamma_1)\rightarrow C(\Gamma)$ the morphism associated to circle annihilation. Then $\epsilon \circ \eta \approx \id_{C(\Gamma)}$.
\end{proposition}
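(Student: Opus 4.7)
The plan is to mirror the proof of Proposition \ref{creation+compose+saddle}: reduce the claim to showing that $\epsilon \circ \eta$ is not null-homotopic, then verify this by closing up $\Gamma$ and invoking Lemma \ref{saddle+compose+annihilation+closeup}, which does all the real work.

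First I would record the relevant dimension count. Since $\Gamma$ is (isotopic to) a single arc colored by $m$, the same computation used in the proof of Lemma \ref{circle-rep-two-marks} shows that
\[
\Hom_{HMF}(C(\Gamma),C(\Gamma)) \;\cong\; C(\emptyset)\{\qb{N}{m} q^{m(N-m)}\}
\]
as bigraded $\C$-vector spaces. In particular, the subspace of morphisms of $\zed_2$-degree $0$ and quantum degree $0$ is one-dimensional and spanned by the homotopy class of $\id_{C(\Gamma)}$. The composition $\epsilon\circ\eta$ is a morphism from $C(\Gamma)$ to $C(\Gamma)$ whose quantum degree is $m(N-m)+(-m(N-m))=0$ and whose $\zed_2$-degree is $m+m\equiv 0$, so it lies in the one-dimensional space above. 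It therefore suffices to show $\epsilon\circ\eta \not\simeq 0$.

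To prove non-vanishing I would identify the two end points of $\Gamma$ to produce the closed MOY graph $\widetilde{\Gamma}$ of Figure \ref{saddle+annihilation+closeup+figure}; under this identification $\Gamma_1$ becomes $\widetilde{\Gamma}_1$. The endpoint identification is compatible with the construction of the matrix factorizations and with the local morphisms $\eta$ and $\epsilon$, so the composition $\epsilon\circ\eta:C(\Gamma)\to C(\Gamma)$ descends to the composition $\epsilon\circ\eta:C(\widetilde{\Gamma})\to C(\widetilde{\Gamma})$ studied in Lemma \ref{saddle+compose+annihilation+closeup}. By that lemma, the induced map on homology satisfies $\epsilon_\ast\circ\eta_\ast(\mathfrak{G}) \propto \mathfrak{G} \neq 0$, where $\mathfrak{G}$ is the generating class of $H(\widetilde{\Gamma})$. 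In particular, the induced endomorphism of $H(\widetilde{\Gamma})$ is non-zero, so the closed-up morphism is not null-homotopic, and hence neither is $\epsilon\circ\eta:C(\Gamma)\to C(\Gamma)$. Combined with the one-dimensionality statement above, this forces $\epsilon\circ\eta \approx \id_{C(\Gamma)}$.

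The genuinely hard step has already been executed: it is Lemma \ref{saddle+compose+annihilation+closeup}, whose proof required the carefully chosen potentials $U_j(\mathbb{X},\mathbb{Y})$ of Lemma \ref{fancy-U}, the cycle representative $\eta_0$ built by iterated application of Lemma \ref{reverse-b-contraction-special}, and the splitting $\hat{\Xi}=\hat{\Xi}_1+\hat{\Xi}_2$ together with the symmetry argument showing that the coefficient of $S_{\lambda_{m,N-m}}(\mathbb{Y})$ in $\hat{\Xi}_2$ vanishes. Once that lemma is in hand, the present proposition is formal. The only delicate point to check in writing up the argument is that the endpoint identification really does commute with the choices of markings and with the morphisms $\eta$ and $\epsilon$, so that the endomorphism in Lemma \ref{saddle+compose+annihilation+closeup} is indeed induced by the one in the statement of the proposition; this is immediate from the locality of $\eta$ and $\epsilon$ and from Lemma \ref{marking-independence}.
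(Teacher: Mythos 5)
Your proposal is correct and follows essentially the same route as the paper: reduce to a one-dimensionality count in $\Hom_{HMF}(C(\Gamma),C(\Gamma))$ (degree zero piece spanned by the identity), then verify non-triviality of $\epsilon\circ\eta$ by closing up $\Gamma$ and invoking Lemma \ref{saddle+compose+annihilation+closeup}, exactly as in the proof of Proposition \ref{creation+compose+saddle}. The only difference is that you make the endpoint-identification step fully explicit, whereas the paper leaves it as an implicit appeal to the closeup lemma.
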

\begin{proof}
We know that the subspace of $\Hom_{HMF}(C(\Gamma),C(\Gamma))$ of elements of quantum degree $0$ is $1$-dimensional and spanned by $\id_{C(\Gamma)}$. Note that the quantum degree of $\epsilon \circ \eta$ is $0$. So, to prove that $\epsilon \circ \eta \approx \id_{C(\Gamma)}$, we only need the fact that $\epsilon \circ \eta$ is not homotopic to $0$, which follows from Lemma \ref{saddle+compose+annihilation+closeup}.
\end{proof}

\section{Direct Sum Decomposition (III)}\label{sec-MOY-III}

In this section, we prove Theorem \ref{decomp-III}, which categorifies \cite[Lemma 5.2]{MOY} and generalizes direct sum decomposition (III) in \cite{KR1}.

\begin{figure}[ht]

\setlength{\unitlength}{1pt}

\begin{picture}(360,75)(-180,-15)

% left
\put(-170,0){\vector(1,1){20}}

\put(-150,20){\vector(1,0){20}}

\put(-130,20){\vector(0,1){20}}

\put(-130,20){\vector(1,-1){20}}

\put(-130,40){\vector(-1,0){20}}

\put(-150,40){\vector(0,-1){20}}

\put(-150,40){\vector(-1,1){20}}

\put(-110,60){\vector(-1,-1){20}}

\put(-175,0){\tiny{$1$}}

\put(-175,55){\tiny{$1$}}

\put(-127,28){\tiny{$1$}}

\put(-108,0){\tiny{$m$}}

\put(-108,55){\tiny{$m$}}

\put(-157,28){\tiny{$m$}}

\put(-148.5,43){\tiny{$m+1$}}

\put(-148.5,13){\tiny{$m+1$}}

\put(-142,-15){$\Gamma$}

% center

\put(-20,0){\vector(0,1){60}}

\put(20,60){\vector(0,-1){60}}

\put(-25,28){\tiny{$1$}}

\put(22,28){\tiny{$m$}}

\put(-2,-15){$\Gamma_0$}

% right

\put(110,0){\vector(1,1){20}}

\put(130,20){\vector(1,-1){20}}

\put(130,40){\vector(0,-1){20}}

\put(130,40){\vector(-1,1){20}}

\put(150,60){\vector(-1,-1){20}}

\put(105,0){\tiny{$1$}}

\put(105,55){\tiny{$1$}}

\put(152,0){\tiny{$m$}}

\put(152,55){\tiny{$m$}}

\put(132,28){\tiny{$m-1$}}

\put(128,-15){$\Gamma_1$}

\end{picture}

\caption{}\label{decomposition-III-figure}

\end{figure}

\begin{theorem}\label{decomp-III}
Let $\Gamma$, $\Gamma_0$ and $\Gamma_1$ be the MOY graphs in Figure \ref{decomposition-III-figure}, where $m\leq N-1$. Then 
\[
C(\Gamma) \simeq C(\Gamma_0) \oplus C(\Gamma_1)\{[N-m-1]\} \left\langle 1 \right\rangle.
\]
\end{theorem}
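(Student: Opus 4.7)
The plan is to prove the decomposition by (a) producing explicit homogeneous morphisms realizing each summand as a direct factor of $C(\Gamma)$, (b) checking the relevant composition identities via the composition formulas established in Section \ref{sec-morph}, and (c) invoking the Krull-Schmidt property together with Yonezawa's lemma to conclude that no additional summand remains. Throughout, I would mark the colored edges of $\Gamma$ so that the four edges of the inner square carry alphabets $\mathbb{X}$ (color $1$, on the bottom-left side), $\mathbb{Y}$ (color $m$, on the bottom-right), and their top-counterparts, and the two vertical $(m+1)$-edges receive further alphabets suited to the arguments of Propositions \ref{general-general-chi-maps} and \ref{decomposing-psi-bar}.

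\textbf{Step 1: the $C(\Gamma_0)$ summand.} I would define $\alpha_0 : C(\Gamma_0) \to C(\Gamma)$ as the composition of two edge-splittings $\phi$ (Definition \ref{morphism-edge-splitting-merging-def}), one at each $(m{+}1)$-vertex, followed by the homotopy equivalences $h$ from the bouquet move (Remark \ref{bouquet-move-remark}) that rearrange the resulting graph into $\Gamma$. Dually, $\beta_0 : C(\Gamma)\to C(\Gamma_0)$ is obtained by applying two $\overline{\phi}$'s after reversing the bouquet moves. Lemma \ref{phibar-compose-phi} (with $\lambda=\mu=\emptyset$) then gives $\beta_0\circ\alpha_0\approx \id_{C(\Gamma_0)}$.

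\textbf{Step 2: the $C(\Gamma_1)\{[N{-}m{-}1]\}\langle 1\rangle$ summand.} Applying Proposition \ref{general-general-chi-maps} with $(l,m,n)=(m,1,m)$ at each of the two $(m{+}1)$-vertices of $\Gamma$ produces morphisms $\chi^0,\chi^1$ connecting $C(\Gamma)$ to the graph obtained by ``opening'' both vertices along an intermediate edge of color $m-1$; modulo bouquet moves, this latter graph is precisely $\Gamma_1$. Let $\mathbb{B}$ denote the alphabet assigned to the middle $(m{-}1)$-edge of $\Gamma_1$. For each $k = 0,1,\dots,N-m-2$ I would form
\[
\alpha_1^{(k)} = \chi^1\circ \mathfrak{m}(S_{(k)}(\mathbb{B})),\qquad \beta_1^{(k)} = \mathfrak{m}(S_{(N-m-2-k)}(\mathbb{B}))\circ\chi^0,
\]
and take $\alpha_1,\beta_1$ to be the sum over $k$, with appropriate degree shifts so that each pair lives in quantum degree zero; the shifts assemble into the factor $\{[N-m-1]\}\langle 1\rangle$. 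Using the composition formula in Proposition \ref{general-general-chi-maps} (which contributes $\sum_{\lambda}(-1)^{|\lambda|}S_{\lambda'}(\mathbb{X})S_{\lambda^c}(\mathbb{B})$) and the orthogonality of Schur polynomials on the Grassmannian (Theorem \ref{grassmannian} and Corollary \ref{iota-epsilon-composition}, combined with Proposition \ref{decomposing-psi-bar}), one verifies $\beta_1^{(k)}\circ\alpha_1^{(k')}\approx \delta_{kk'}\,\id$, while the cross-compositions $\beta_0\circ\alpha_1^{(k)}$ and $\beta_1^{(k)}\circ\alpha_0$ vanish in $\hmf_{R,w}$ for quantum-degree reasons (their quantum degree is strictly below the minimum quantum degree in $\Hom_{\HMF}$, computed as in Lemma \ref{edge-splitting-lemma}).

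\textbf{Step 3: Krull-Schmidt conclusion.} Steps 1 and 2 show that $C(\Gamma_0)\oplus C(\Gamma_1)\{[N{-}m{-}1]\}\langle 1\rangle$ is a direct summand of $C(\Gamma)$ in $\hmf_{R,w}$. To see that the complementary summand is zero, I would compute $\gdim$ of both sides. Iterated use of Theorem \ref{decomp-II}, Lemma \ref{edge-contraction}, and Corollary \ref{contract-expand} reduces each MOY graph to disjoint unions of colored circles, whose graded dimensions are given by Proposition \ref{circle-module}; the resulting polynomial identity is the MOY state sum identity of \cite[Lemma 5.2]{MOY}. Since the graded dimensions match and the summand constructed above already accounts for all of it, Proposition \ref{hmf-is-KS} combined with Lemma \ref{KS-oplus-cancel} forces the complement to be zero, yielding the desired homotopy equivalence.

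\textbf{Main obstacle.} The hardest step is Step 2: the output of $\chi^1\circ\chi^0$ is a nontrivial $\Sym$-linear combination of products of Schur polynomials, and extracting a clean $\delta_{kk'}\,\id$ for each of the $[N{-}m{-}1]$ graded copies requires carefully choosing the Schur multipliers $S_{(k)}(\mathbb{B})$ and $S_{(N-m-2-k)}(\mathbb{B})$ so that, after passing through the Sylvester pairing on $\Sym(\mathbb{B})$ modulo the Grassmannian ideal $(h_{N-m-1}(\mathbb{B}),\dots,h_N(\mathbb{B}))$, only the diagonal term survives. This bookkeeping is the technical heart of the argument.
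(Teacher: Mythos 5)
Your overall strategy (build inclusion/projection morphisms onto each summand, kill the cross terms by degree reasons, then use Krull--Schmidt plus a $\gdim$ count) matches the paper's, and your Step~3 is essentially the paper's Lemma~\ref{M=0-decomp-III}. However, Step~1 has a fatal flaw. Edge splittings $\phi$ and bouquet moves cannot relate $C(\Gamma_0)$ to $C(\Gamma)$: $\Gamma_0$ has no interior vertices at all, and, more importantly, the inner square of $\Gamma$ is a \emph{cycle} in the graph (the left column of color $m$ points downward while the right column of color $1$ points upward, closed off by the two $(m{+}1)$-rungs). The edge-splitting morphism of Definition~\ref{morphism-edge-splitting-merging-def} produces a \emph{parallel} pair of coherently oriented edges inside a single strand; it never connects two disjoint strands or creates a cycle. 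This is precisely why the paper introduces the saddle morphism $\eta$, the circle creation/annihilation $\iota,\epsilon$, and the auxiliary maps $\widehat{\bm\iota}=(\phi_1\otimes\phi_2)\circ\iota$ and $\widehat{\bm\epsilon}=\epsilon\circ(\overline\phi_1\otimes\overline\phi_2)$, and defines $F=(\eta_\Box\otimes\eta_\Diamond)\circ\widehat{\bm\iota}$ and $G=\widehat{\bm\epsilon}\circ(\eta_\dag\otimes\eta_\ddag)$ (Definition~\ref{maps-Gamma-Gamma-0}); the identity $G\circ F\approx\id$ (Proposition~\ref{Gamma-Gamma-0-proposition}) then hinges on the two nontrivial composition formulas $\eta\circ\iota\approx\id$ (Proposition~\ref{creation+compose+saddle}) and $\epsilon\circ\eta\approx\id$ (Proposition~\ref{saddle+compose+annihilation}), which have no analogue in a $\phi/\overline\phi$-only argument. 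Without the saddle, your $\alpha_0,\beta_0$ are simply not morphisms between the correct objects.

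Step~2 is closer in spirit but has two issues worth flagging. First, applying $\chi^1\otimes\chi^1$ at the two $(m{+}1)$-vertices of $\Gamma$ lands on $\Gamma_7$ (Figure~\ref{gamma-gamma-1-figure}), which carries an extra color-$1$ loop; getting to $\Gamma_1$ requires the loop removal $\overline\psi$ of Proposition~\ref{decomposing-psi-bar}, which is not a bouquet move. Second, the full orthogonality $\beta_1^{(k)}\circ\alpha_1^{(k')}\approx\delta_{kk'}\,\id$ you assert is stronger than what holds: with multipliers of this kind one generically only gets an upper- or lower-triangular relation $\beta_j\circ\alpha_i\approx\id$ for $i=j$ and $\approx 0$ for $i>j$ (the paper's Lemma~\ref{alpha-beta-compose}, using the multipliers $s^{N-m-2-j}$ in the loop variable and $h_j(\{r,s,t\})$, not Schur polynomials in the $(m{-}1)$-edge alphabet), and one must then invert a unitriangular matrix to build the actual projection $\vec\beta$ (Proposition~\ref{alpha-beta-vec-def}). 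Claiming $\delta_{kk'}$ outright skips the need for this correction, and it is not clear that your choice of Schur multipliers on $\mathbb{B}$ would deliver it; at minimum you would have to verify it via the Sylvester pairing as carefully as the paper verifies the triangular version.
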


\begin{remark}
Theorem \ref{decomp-III} is not directly used in the proof of the invariance of the colored $\mathfrak{sl}(N)$ homology.
\end{remark}

\subsection{Relating $\Gamma$ and $\Gamma_0$} In this subsection, we generalize the method in \cite[Subsection 3.3]{Wu7} to construct morphisms between $C(\Gamma)$ and $C(\Gamma_0)$. In fact, the result we get is slightly more general than what is needed to prove Theorem \ref{decomp-III}.

\begin{figure}[ht]

\setlength{\unitlength}{1pt}

\begin{picture}(360,75)(-180,-15)

\put(0,15){\vector(0,1){30}}

\qbezier(0,45)(50,60)(50,45)

\qbezier(0,45)(-50,60)(-50,45)

\qbezier(0,15)(50,0)(50,15)

\qbezier(0,15)(-50,0)(-50,15)

\put(50,45){\vector(0,-1){30}}

\put(-50,45){\vector(0,-1){30}}

\put(-1,30){\line(1,0){2}}

\put(-51,30){\line(1,0){2}}

\put(49,30){\line(1,0){2}}

\put(-60,47){\tiny{$m$}}

\put(55,47){\tiny{$n$}}

\put(5,30){\tiny{$m+n$}}

\put(-58,25){\small{$\mathbb{A}$}}

\put(53,25){\small{$\mathbb{B}$}}

\put(-8,25){\small{$\mathbb{X}$}}

\put(-2,-15){$\Gamma$}

\end{picture}

\caption{}\label{butterfly}

\end{figure}

\begin{lemma}\label{butterfly-structure}
Let $\Gamma$ be the MOY graph in Figure \ref{butterfly}. Then, as graded matrix factorizations over $\Sym(\mathbb{A}\cup\mathbb{B})$,
\[
C(\Gamma) \simeq C(\emptyset) \otimes_{\C} (\Sym(\mathbb{A}|\mathbb{B})/(h_N(\mathbb{A}\cup\mathbb{B}),\dots,h_{N-m-n+1}(\mathbb{A}\cup\mathbb{B}))) \{q^{-(m+n)(N-m-n)}\}\left\langle m+n \right\rangle,
\]
and, as graded $\C$-linear spaces,
\[
\Hom_{HMF}(C(\emptyset),C(\Gamma)) \cong \Hom_{HMF}(C(\Gamma),C(\emptyset)) \cong H(\Gamma) \cong C(\emptyset)\{\qb{N}{m+n} \qb{m+n}{n}\} \left\langle m+n \right\rangle.
\]
In particular, the subspaces of these spaces of homogeneous elements of quantum degree $-(m+n)(N-m-n)-mn$ are $1$-dimensional.
\end{lemma}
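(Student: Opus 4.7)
The plan is to combine direct sum decomposition~(II) with Proposition~\ref{circle-module}, which describes the homology of a colored circle.

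I will first observe that $\Gamma$ contains the configuration treated by Theorem~\ref{decomp-II} as a subgraph: adding an auxiliary second marked point on the $\mathbb{X}$ edge cuts it into two halves, and the resulting picture is exactly the digon of Figure~\ref{decomp-II-figure} --- two vertices joined by parallel edges of colors $m$ and $n$ (namely the $\mathbb{A}$ and $\mathbb{B}$ portions) with color-$(m+n)$ strands attached above and below. The construction of $C$ in Definition~\ref{MOY-mf-def} is functorial under such local replacements, so Theorem~\ref{decomp-II} yields a homotopy equivalence
\[
C(\Gamma)\simeq C(\bigcirc_{m+n})\{\qb{m+n}{n}\},
\]
where $\bigcirc_{m+n}$ is a circle colored by $m+n$ retaining the marked point $\mathbb{X}$.

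Next I will invoke Proposition~\ref{circle-module} to identify $C(\bigcirc_{m+n})$ up to homotopy with the module $\Sym(\mathbb{X})/(h_N(\mathbb{X}),\dots,h_{N-m-n+1}(\mathbb{X}))\{q^{-(m+n)(N-m-n)}\}\left\langle m+n\right\rangle$, viewed as a chain complex with zero differential. The vertex relations at the two trivalent vertices of $\Gamma$ identify $X_j$ with the $j$-th elementary symmetric polynomial of $\mathbb{A}\cup\mathbb{B}$, giving a canonical isomorphism $\Sym(\mathbb{X})\cong\Sym(\mathbb{A}\cup\mathbb{B})$; transporting the Jacobian quotient along this isomorphism, together with the graded-free decomposition $\Sym(\mathbb{A}|\mathbb{B})\cong\Sym(\mathbb{A}\cup\mathbb{B})\{q^{mn}\qb{m+n}{n}\}$ from Corollary~\ref{part-symm-grade} (which absorbs the extra $\qb{m+n}{n}$ factor from the previous step into the free-rank shift), yields the first displayed isomorphism of the lemma as graded matrix factorizations over $\Sym(\mathbb{A}\cup\mathbb{B})$.

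The second part of the lemma will then follow formally. The graded dimension of $H(\Gamma)$ is immediate from Corollary~\ref{grassmannian-grade} (giving $\dim_q\Sym(\mathbb{A}\cup\mathbb{B})/(h_N,\dots,h_{N-m-n+1}) = \qb{N}{m+n}\,q^{(m+n)(N-m-n)}$) combined with the $\qb{m+n}{n}$ factor from Decomposition~(II). For the Hom-space identifications, lemmas~\ref{hom-finite-gen} and~\ref{rewrite-hom-finite-gen} give $\Hom(C(\emptyset),C(\Gamma))\cong C(\Gamma)$ and $\Hom(C(\Gamma),C(\emptyset))\cong C(\Gamma)_\bullet$ as graded matrix factorizations; taking $\Sym(\mathbb{A}\cup\mathbb{B})$-homology and invoking the Poincar\'e-duality structure on $H^*(G_{m+n,N};\C)$ (Theorem~\ref{grassmannian}) identifies both with $H(\Gamma)$. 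Finally, the one-dimensional lowest-quantum-degree subspace claim follows since the Laurent polynomial $\qb{N}{m+n}\qb{m+n}{n}$ has a unique lowest-degree term, at $q^{-(m+n)(N-m-n)-mn}$, with coefficient $1$.

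The main technical obstacle is the rigorous local application of Theorem~\ref{decomp-II} to the subgraph of the closed graph $\Gamma$: the theorem is formulated for graphs with endpoints, but its Koszul-theoretic proof produces a $\Sym(\mathbb{X}|\mathbb{Y})$-linear homotopy equivalence that propagates through the tensor-product construction of $C(\Gamma)$, effectively by cutting $\mathbb{X}$ at an auxiliary marked point, applying the theorem, and regluing. Careful bookkeeping of the grading shifts when passing between the $\Sym(\mathbb{X})$-module and $\Sym(\mathbb{A}|\mathbb{B})$-module descriptions is also required.
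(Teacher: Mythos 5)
Your approach (Decomposition~(II) followed by Proposition~\ref{circle-module}, then passing between $\Sym(\mathbb{X})$, $\Sym(\mathbb{A}\cup\mathbb{B})$ and $\Sym(\mathbb{A}|\mathbb{B})$) is exactly the route the paper intends, so there is no difference in strategy. However, the grading bookkeeping in your last step does not work out as stated. From steps~1--2 you have $C(\Gamma)\simeq \Sym(\mathbb{A}\cup\mathbb{B})/(h_N,\dots,h_{N-m-n+1})\{q^{-(m+n)(N-m-n)}\}\{\qb{m+n}{n}\}\left\langle m+n\right\rangle$. By Corollary~\ref{part-symm-grade}, $\Sym(\mathbb{A}|\mathbb{B})\cong\Sym(\mathbb{A}\cup\mathbb{B})\{q^{mn}\qb{m+n}{n}\}$, which gives $\Sym(\mathbb{A}\cup\mathbb{B})/(\dots)\{\qb{m+n}{n}\}\cong\Sym(\mathbb{A}|\mathbb{B})/(\dots)\{q^{-mn}\}$, \emph{not} $\Sym(\mathbb{A}|\mathbb{B})/(\dots)$. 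So your ``absorption'' leaves a residual factor $q^{-mn}$, and the correct conclusion of your calculation is
\[
C(\Gamma)\simeq \Sym(\mathbb{A}|\mathbb{B})/(h_N,\dots,h_{N-m-n+1})\{q^{-(m+n)(N-m-n)-mn}\}\left\langle m+n\right\rangle.
\]
You can cross-check this against the ``In particular'' clause of the lemma: $\qb{N}{m+n}\qb{m+n}{n}$ has lowest power $q^{-(m+n)(N-m-n)-mn}$, and the quotient ring has lowest degree $0$, so the shift must be $q^{-(m+n)(N-m-n)-mn}$. (The exponent displayed in the lemma appears to drop this $q^{-mn}$; the graded dimension and the ``In particular'' clause only match the corrected shift.) Your proposal asserts it ``yields the first displayed isomorphism of the lemma'' without acknowledging this discrepancy, so you should redo the shift arithmetic explicitly.

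A second, smaller gap: you invoke Proposition~\ref{circle-module} to ``identify $C(\bigcirc_{m+n})$ up to homotopy with'' a module with zero differential, but that proposition only identifies the \emph{homology} $H(\bigcirc_{m+n})$ as a $\Sym(\mathbb{X})$-module. To pass from $C(\bigcirc_{m+n})$ to a zero-differential model, you need Proposition~\ref{contractible-essential-decomp} (over the ground ring $\C$, since the circle is a closed graph), which supplies the decomposition $C(\bigcirc_{m+n})\cong M_c\oplus M_{es}$ with $M_c\simeq 0$ and $M_{es}\cong H(\bigcirc_{m+n})$. Citing only Proposition~\ref{circle-module} here elides the distinction between a quasi-isomorphism and a homotopy equivalence, which is precisely the content you need for the first displayed line of the lemma.
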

\begin{proof}
The homotopy equivalence follows from Proposition \ref{circle-module} and the proof of Theorem \ref{decomp-II}. The rest of the lemma follows from this homotopy equivalence and Theorems \ref{part-symm-str}, \ref{grassmannian}.
\end{proof}

Denote by $\bigcirc_{m+n}$ a circle colored by $m+n$. Then there are morphisms $C(\bigcirc_{m+n}) \xrightarrow{\phi} C(\Gamma)$ and $C(\Gamma) \xrightarrow{\overline{\phi}} C(\bigcirc_{m+n})$ induced by the edge splitting and merging. Denote by $\iota$ and $\epsilon$ the morphisms associated to creating and annihilating $\bigcirc_{m+n}$. Then $C(\emptyset) \xrightarrow{\widetilde{\iota}:=\phi\circ \iota} C(\Gamma)$ and $C(\Gamma) \xrightarrow{\widetilde{\epsilon}:=\epsilon\circ \overline{\phi}} C(\emptyset)$ are homogeneous morphisms of quantum degree $-(m+n)(N-m-n)-mn$ and $\zed_2$-degree $m+n$.

\begin{lemma}\label{butterfly-morphisms}
$\widetilde{\iota}$ and $\widetilde{\epsilon}$ are not homotopic to $0$. Therefore, they span the $1$-dimensional subspaces of $\Hom_{HMF}(C(\emptyset),C(\Gamma))$ and $\Hom_{HMF}(C(\Gamma),C(\emptyset))$ of homogeneous elements of quantum degree $-(m+n)(N-m-n)-mn$.
\end{lemma}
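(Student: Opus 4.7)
The plan is to exhibit a homogeneous multiplication morphism $\mathfrak{m}(P):C(\Gamma)\to C(\Gamma)$ so that the composition $\widetilde{\epsilon}\circ\mathfrak{m}(P)\circ\widetilde{\iota}$ is homotopic, up to a non-zero scalar, to $\id_{C(\emptyset)}$. Since the subspace of $\Hom_{HMF}(C(\emptyset),C(\emptyset))$ of quantum degree $0$ is spanned by $\id_{C(\emptyset)}$, the non-vanishing of any such composition forces both $\widetilde{\iota}$ and $\widetilde{\epsilon}$ to be homotopically non-trivial. Combined with Lemma \ref{butterfly-structure}, which says the subspaces of $\Hom_{HMF}(C(\emptyset),C(\Gamma))$ and $\Hom_{HMF}(C(\Gamma),C(\emptyset))$ of quantum degree $-(m+n)(N-m-n)-mn$ are one-dimensional, this will give the conclusion.

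The natural candidate is $P = S_{\lambda_{m,n}}(\mathbb{A})\cdot S_{\lambda_{m+n,N-m-n}}(\mathbb{A}\cup\mathbb{B})$, which is homogeneous of the degree needed to make the composition live in quantum degree $0$. First I would use the fact that $S_{\lambda_{m+n,N-m-n}}(\mathbb{A}\cup\mathbb{B})\in\Sym(\mathbb{A}\cup\mathbb{B})$ to commute $\mathfrak{m}(S_{\lambda_{m+n,N-m-n}}(\mathbb{A}\cup\mathbb{B}))$ past $\phi$ and $\overline{\phi}$, obtaining
\[
\widetilde{\epsilon}\circ\mathfrak{m}(P)\circ\widetilde{\iota}
\;\simeq\;
\epsilon\circ\bigl(\overline{\phi}\circ\mathfrak{m}(S_{\lambda_{m,n}}(\mathbb{A}))\circ\phi\bigr)\circ\mathfrak{m}(S_{\lambda_{m+n,N-m-n}}(\mathbb{A}\cup\mathbb{B}))\circ\iota .
\]
Next I would invoke Lemma \ref{phibar-compose-phi} with $\lambda=\lambda_{m,n}$ and $\mu=(0,\dots,0)$, whose hypothesis $\lambda_j+\mu_{m+1-j}=n$ is exactly satisfied, to conclude $\overline{\phi}\circ\mathfrak{m}(S_{\lambda_{m,n}}(\mathbb{A}))\circ\phi\approx\id_{C(\bigcirc_{m+n})}$. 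Finally I would apply Corollary \ref{iota-epsilon-composition} with $\lambda=\lambda_{m+n,N-m-n}$ and $\mu=(0,\dots,0)$, using $\mathbb{X}=\mathbb{A}\cup\mathbb{B}$, to collapse the remaining $\epsilon\circ\mathfrak{m}(S_{\lambda_{m+n,N-m-n}}(\mathbb{A}\cup\mathbb{B}))\circ\iota$ to a non-zero scalar multiple of $\id_{C(\emptyset)}$.

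There is no serious obstacle: the argument is a bookkeeping exercise that closely mirrors the non-vanishing step in the proof of Proposition \ref{general-general-chi-maps-HMF}. The only point requiring a little care is verifying that the multiplication morphism $\mathfrak{m}(S_{\lambda_{m+n,N-m-n}}(\mathbb{A}\cup\mathbb{B}))$ genuinely commutes with $\phi$ and $\overline{\phi}$; this holds because both morphisms are $\Sym(\mathbb{A}\cup\mathbb{B})$-linear by construction, so multiplication by any element of this subring is central in the relevant $\Hom$ chain complexes up to homotopy. Once that is checked, the conclusion $\widetilde{\epsilon}\circ\mathfrak{m}(P)\circ\widetilde{\iota}\approx\id_{C(\emptyset)}$ is immediate, and thus $\widetilde{\iota}$ and $\widetilde{\epsilon}$ span the one-dimensional subspaces described in Lemma \ref{butterfly-structure}.
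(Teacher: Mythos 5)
Your proof is correct but takes a genuinely different route from the paper. The paper works at the level of induced maps on homology: it uses the explicit description of $H(\Gamma)$ from Lemma \ref{butterfly-structure} and the basis $\{S_{\lambda}(\mathbb{A}) \cdot S_{\mu}(\mathbb{A}\cup\mathbb{B})\}$ to show directly that $\widetilde{\iota}_\ast(1)\propto 1$ (from the explicit form of $\phi$ and $\iota$) and that $\widetilde{\epsilon}_\ast$ is non-zero by evaluating it on the top-degree basis element $S_{\lambda_{m,n}}(\mathbb{A})\cdot S_{\lambda_{m+n,N-m-n}}(\mathbb{A}\cup\mathbb{B})$. You instead stay at the level of morphisms and show that the single composition $\widetilde{\epsilon}\circ\mathfrak{m}(P)\circ\widetilde{\iota}\approx\id_{C(\emptyset)}$, with $P$ exactly that same top-degree basis polynomial, by commuting the $\Sym(\mathbb{A}\cup\mathbb{B})$-part of $P$ past the $\Sym(\mathbb{A}\cup\mathbb{B})$-linear maps $\phi,\overline{\phi}$, collapsing $\overline{\phi}\circ\mathfrak{m}(S_{\lambda_{m,n}}(\mathbb{A}))\circ\phi$ via Lemma \ref{phibar-compose-phi}, and finishing with Corollary \ref{iota-epsilon-composition}. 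Your version dispatches both non-vanishings in one blow and has the virtue of exactly mirroring the technique used in Propositions \ref{general-general-chi-maps-HMF} and \ref{decomposing-psi-bar}, so it keeps the section stylistically uniform; the paper's version is more direct in that it reads off the answer from the already-computed module structure of $H(\Gamma)$. One small point worth stating explicitly: when you invoke Corollary \ref{iota-epsilon-composition}, the circle $\bigcirc_{m+n}$ implicit in $\widetilde{\iota}$ and $\widetilde{\epsilon}$ carries the marking $\mathbb{A}\cup\mathbb{B}$ at the splitting point together with the marking $\mathbb{X}$ inherited from $\Gamma$, whereas the corollary is phrased for a circle with a single marked alphabet. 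This causes no trouble, since $\iota$, $\epsilon$ are unique up to homotopy and scaling and multiplication by $S_{\lambda}(\mathbb{A}\cup\mathbb{B})$ is homotopic to multiplication by $S_{\lambda}(\mathbb{X})$ on $C(\bigcirc_{m+n})$ (Lemma \ref{entries-null-homotopic}), but it deserves a sentence.
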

\begin{proof}
By Corollary \ref{iota-epsilon-composition} and Lemma \ref{phibar-compose-phi}, we have 
\begin{eqnarray*}
&& \widetilde{\epsilon} \circ \mathfrak{m}(S_{\lambda_{m,n}}(\mathbb{A}) \cdot S_{\lambda_{m+n,N-m-n}}(\mathbb{X})) \circ \widetilde{\iota} \\
& = & \epsilon\circ \overline{\phi} \circ \mathfrak{m}(S_{\lambda_{m,n}}(\mathbb{A}) \cdot S_{\lambda_{m+n,N-m-n}}(\mathbb{X})) \circ \phi\circ \iota \\
& \approx & \epsilon\circ \mathfrak{m}(S_{\lambda_{m+n,N-m-n}}(\mathbb{X})) \circ \overline{\phi} \circ \mathfrak{m}(S_{\lambda_{m,n}}(\mathbb{A})) \circ \phi\circ \iota \\
& \approx & \epsilon\circ \mathfrak{m}(S_{\lambda_{m+n,N-m-n}}(\mathbb{X})) \circ \iota \approx \id.
\end{eqnarray*}
This shows that $\widetilde{\iota}$ and $\widetilde{\epsilon}$ are not homotopic to $0$. The rest of the lemma follows from this and Lemma \ref{butterfly-structure}.
\end{proof}

\begin{figure}[ht]

\setlength{\unitlength}{1pt}

\begin{picture}(360,75)(-180,-15)

% center

\put(15,50){\vector(-1,0){30}}

\put(-15,50){\vector(0,-1){40}}

\put(-15,10){\vector(1,0){30}}

\put(15,10){\vector(0,1){40}}

\qbezier(-15,50)(-30,60)(-30,50)

\qbezier(-15,10)(-30,0)(-30,10)

\put(-30,50){\vector(0,-1){40}}

\qbezier(15,50)(30,60)(30,50)

\qbezier(15,10)(30,0)(30,10)

\put(30,10){\vector(0,1){40}}

\put(-13,28){\tiny{$n$}}

\put(24,28){\tiny{$n$}}

\put(8,28){\tiny{$m$}}

\put(-28,28){\tiny{$m$}}

\put(-10,52){\tiny{$m+n$}}

\put(-10,12){\tiny{$m+n$}}

\put(-2,-15){$\Gamma_2$}

\end{picture}

\caption{}\label{ears}

\end{figure}

\begin{lemma}\label{lemma-def-hat-iota}
Denote by $\Gamma_2$ the MOY graph in Figure \ref{ears} and by $\bigcirc_{m+n}$ a circle colored by $m+n$. As $\C$-linear spaces, 
\[
\Hom_{HMF}(C(\emptyset),C(\Gamma_2)) \cong \Hom_{HMF}(C(\Gamma_2),C(\emptyset)) \cong C(\emptyset)\{\qb{N}{m+n}\cdot\qb{m+n}{n}^2\}\left\langle m+n\right\rangle.
\] 

In particular, the subspaces of these spaces consisting of homogeneous elements of quantum degree $-(m+n)(N-m-n)-2mn$ are $1$-dimensional, and are spanned by the compositions
\[
C(\emptyset) \xrightarrow{\iota} C(\bigcirc_{m+n}) \xrightarrow{\phi_1\otimes \phi_2} C(\Gamma_2)
\hspace{.5cm} \text{ and } \hspace{.5cm}
C(\Gamma_2) \xrightarrow{\overline{\phi}_1\otimes \overline{\phi}_2} C(\bigcirc_{m+n}) \xrightarrow{\epsilon} C(\emptyset),
\]
where $\iota$ and $\epsilon$ are morphisms associated to creating and annihilating $\bigcirc_{m+n}$, and $\phi_1, ~\phi_2$ (resp. $\overline{\phi}_1,~ \overline{\phi}_2$) are morphisms associated to the two apparent edge splitting (resp. merging.)

In the rest of this section, we denote by $\widehat{\bm{\iota}}$ the composition $C(\emptyset) \xrightarrow{(\phi_1\otimes \phi_2)\circ\iota} C(\Gamma_2)$, and by $\widehat{\bm{\epsilon}}$ the composition $C(\Gamma_2) \xrightarrow{\epsilon\circ(\overline{\phi}_1\otimes \overline{\phi}_2)} C(\emptyset)$.
\end{lemma}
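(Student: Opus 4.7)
The plan has two parts: first determine the graded dimension of the Hom spaces, and then verify that the two specific compositions are homotopically non-trivial.

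For the dimension count, I would apply Theorem \ref{decomp-II} (Direct Sum Decomposition~II) to each of the two bigons in $\Gamma_2$ in turn. This yields $C(\Gamma_2) \simeq C(\bigcirc_{m+n})\{\qb{m+n}{n}^2\}$ in $\hmf_{\Sym(\emptyset),0}$. Combining this with Lemma \ref{circle-empty-hmf}, we obtain
\[
\Hom_{HMF}(C(\emptyset),C(\Gamma_2)) \cong \Hom_{HMF}(C(\emptyset),C(\bigcirc_{m+n}))\{\qb{m+n}{n}^2\} \cong C(\emptyset)\{\qb{N}{m+n}\cdot\qb{m+n}{n}^2\}\left\langle m+n\right\rangle,
\]
and similarly for $\Hom_{HMF}(C(\Gamma_2),C(\emptyset))$. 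From equation \eqref{compute-quantum-binary} the lowest non-vanishing quantum degree of $\qb{N}{m+n}\qb{m+n}{n}^2$ is $-(m+n)(N-m-n)-2mn$, and the corresponding homogeneous subspace is one-dimensional.

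It remains to show that $(\phi_1\otimes\phi_2)\circ\iota$ and $\epsilon\circ(\overline{\phi}_1\otimes\overline{\phi}_2)$ are homotopically non-trivial; they are automatically homogeneous of the correct quantum degree $-(m+n)(N-m-n)-2mn$ and $\zed_2$-degree $m+n$, since each of $\iota,\epsilon$ contributes $-(m+n)(N-m-n)$ in quantum degree (and $m+n$ in $\zed_2$-degree), while each $\phi_j,\overline{\phi}_j$ contributes $-mn$ in quantum degree (and $0$ in $\zed_2$-degree). To prove non-triviality I would mark the edge of color $m$ in each bigon by alphabets $\mathbb{A}_1,\mathbb{A}_2$, the edge of color $m{+}n$ of the central circle by $\mathbb{X}$, and consider the composition
\[
\epsilon \circ \mathfrak{m}(S_{\lambda_{m+n,N-m-n}}(\mathbb{X})) \circ (\overline{\phi}_1\otimes\overline{\phi}_2) \circ \mathfrak{m}(S_{\lambda_{m,n}}(\mathbb{A}_1)\cdot S_{\lambda_{m,n}}(\mathbb{A}_2)) \circ (\phi_1\otimes\phi_2) \circ \iota.
\]
Since multiplication by a symmetric polynomial in the alphabet of a given edge commutes with every morphism supported away from that edge, and since the tensor product of morphisms respects composition (Lemma \ref{morphism-sign}), I can factor this as
\[
\epsilon \circ \mathfrak{m}(S_{\lambda_{m+n,N-m-n}}(\mathbb{X})) \circ \bigl[(\overline{\phi}_1 \circ \mathfrak{m}(S_{\lambda_{m,n}}(\mathbb{A}_1)) \circ \phi_1)\otimes (\overline{\phi}_2 \circ \mathfrak{m}(S_{\lambda_{m,n}}(\mathbb{A}_2)) \circ \phi_2)\bigr] \circ \iota.
\]
Applying Lemma \ref{phibar-compose-phi} to each bigon (with $\mu$ the empty partition and $\lambda=\lambda_{m,n}$) gives $\overline{\phi}_j \circ \mathfrak{m}(S_{\lambda_{m,n}}(\mathbb{A}_j)) \circ \phi_j \approx \id_{C(\bigcirc_{m+n})}$, and then Corollary \ref{iota-epsilon-composition} (with $\lambda=\lambda_{m+n,N-m-n}$ and $\mu$ the empty partition) gives $\epsilon\circ\mathfrak{m}(S_{\lambda_{m+n,N-m-n}}(\mathbb{X}))\circ\iota \approx \id_{C(\emptyset)}$. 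Hence the whole composition is $\approx \id_{C(\emptyset)}$, which forces neither $(\phi_1\otimes\phi_2)\circ\iota$ nor $\epsilon\circ(\overline{\phi}_1\otimes\overline{\phi}_2)$ to be null-homotopic; by the one-dimensionality of the minimal quantum-degree subspace, each of them therefore spans it.

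The main obstacle I anticipate is purely bookkeeping: ensuring that the Schur-polynomial insertions live in the correct alphabets so that Lemma \ref{phibar-compose-phi} and Corollary \ref{iota-epsilon-composition} apply in the exact forms stated, and tracking signs through the tensor product $\otimes$ of morphisms via Lemma \ref{morphism-sign}. No new algebraic input beyond the already-established Decomposition II, the circle lemmas, and the compatibility identities for $\iota,\epsilon,\phi_j,\overline{\phi}_j$ should be required.
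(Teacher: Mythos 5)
Your dimension count and degree bookkeeping follow the same route as the paper (Decomposition II applied twice to collapse the bigons, then Corollary \ref{circle-dimension} / Lemma \ref{circle-empty-hmf}), and this part is correct. For the non-triviality of the two compositions, however, you take a genuinely different route. You insert Schur polynomials $\mathfrak{m}(S_{\lambda_{m,n}}(\mathbb{A}_j))$ on the bigon alphabets and $\mathfrak{m}(S_{\lambda_{m+n,N-m-n}}(\mathbb{X}))$ on the circle alphabet, factor the resulting chain through the tensor product, and reduce to Lemma \ref{phibar-compose-phi} and Corollary \ref{iota-epsilon-composition} to conclude that the ``twisted'' composition is $\approx\id_{C(\emptyset)}$. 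This is exactly the standard pairing technique used repeatedly in the surrounding sections (e.g.\ in the proofs of Lemma \ref{butterfly-morphisms}, Lemma \ref{comp-bigiota-etas-bigepsilon}, and Proposition \ref{decomposing-psi-bar}). The paper's own proof is terser: it simply observes that $\iota_\ast$ and $(\phi_1\otimes\phi_2)_\ast$ are both injective on homology (each factor being, up to homotopy, the inclusion of a direct summand) while $\epsilon_\ast$ and $(\overline{\phi}_1\otimes\overline{\phi}_2)_\ast$ are both surjective, so the two compositions induce non-zero maps on homology and hence are not null-homotopic. Your argument is more constructive and self-contained; the paper's is shorter but requires the reader to recall that $\phi$ and $\iota$ embed direct summands. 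Both are valid, and yours actually proves the slightly stronger assertion that, after the right Schur-polynomial twist, the round-trip is $\approx\id$.
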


\begin{proof}
By Theorem \ref{decomp-II} and Proposition \ref{circle-dimension}, we have 
\[
C(\Gamma_2) \simeq C(\emptyset)\{\qb{N}{m+n}\cdot\qb{m+n}{n}^2\}\left\langle m+n\right\rangle.
\]
The structures of $\Hom_{HMF}(C(\emptyset),C(\Gamma_2))$ and $\Hom_{HMF}(C(\Gamma_2),C(\emptyset))$ follow from this. It then follows that the subspaces of these spaces consisting of homogeneous elements of quantum degree $-(m+n)(N-m-n)-2mn$ are $1$-dimensional. 

It is easy to check that $\widehat{\bm{\iota}}$ and $\widehat{\bm{\epsilon}}$ are both homogeneous with quantum degree $-(m+n)(N-m-n)-2mn$ and $\zed_2$-degree $m+n$. Similar to the proof of Lemma \ref{butterfly-morphisms}, one can use Corollary \ref{iota-epsilon-composition} and Lemma \ref{phibar-compose-phi} to show that $\widehat{\bm{\iota}}$ and $\widehat{\bm{\epsilon}}$ are not homotopic to $0$. This completes the proof.
\end{proof}

\begin{figure}[ht]

\setlength{\unitlength}{1pt}

\begin{picture}(360,75)(-170,-15)

% far left

\put(-170,23){\Huge{$\emptyset$}}

\put(-150,30){\vector(1,0){30}}

\put(-138,32){\small{$\widehat{\bm{\iota}}$}}

% left

\put(-65,50){\vector(-1,0){30}}

\put(-95,50){\vector(0,-1){40}}

\put(-95,10){\vector(1,0){30}}

\put(-65,10){\vector(0,1){40}}

\qbezier(-95,50)(-110,60)(-110,50)

\qbezier(-95,10)(-110,0)(-110,10)

\put(-110,50){\vector(0,-1){40}}

\qbezier(-65,50)(-50,60)(-50,50)

\qbezier(-65,10)(-50,0)(-50,10)

\put(-50,10){\vector(0,1){40}}

\multiput(-100,7)(0,3){16}{\line(0,1){1}}

\multiput(-60,7)(0,3){16}{\line(0,1){1}}

\put(-59,28){\tiny{$\ddag$}}

\put(-104,28){\tiny{$\dag$}}

\put(-93,28){\tiny{$n$}}

\put(-48,50){\tiny{$n$}}

\put(-72,28){\tiny{$m$}}

\put(-117,50){\tiny{$m$}}

\put(-90,52){\tiny{$m+n$}}

\put(-90,12){\tiny{$m+n$}}

\put(-82,-15){$\Gamma_2$}

\put(-40,30){\vector(1,0){40}}

\put(-33,34){\small{$\eta_\dag \otimes \eta_\ddag$}}

% right

\put(15,30){\oval(10,50)}

\put(20,40){\vector(0,1){0}}

\put(75,50){\vector(-1,0){30}}

\put(45,50){\vector(0,-1){40}}

\put(45,10){\vector(1,0){30}}

\put(75,10){\vector(0,1){40}}

\qbezier(45,50)(30,60)(30,50)

\qbezier(45,10)(30,0)(30,10)

\put(30,50){\vector(0,-1){40}}

\qbezier(75,50)(90,60)(90,50)

\qbezier(75,10)(90,0)(90,10)

\put(90,10){\vector(0,1){40}}

\put(105,30){\oval(10,50)}

\put(100,20){\vector(0,-1){0}}

\put(47,28){\tiny{$n$}}

\put(85,30){\tiny{$n$}}

\put(96,50){\tiny{$n$}}

\put(68,28){\tiny{$m$}}

\put(21,50){\tiny{$m$}}

\put(32,30){\tiny{$m$}}

\put(50,52){\tiny{$m+n$}}

\put(50,12){\tiny{$m+n$}}

\put(30,-15){$\Gamma_2\sqcup \bigcirc_m \sqcup \bigcirc_n$}

\put(120,30){\vector(1,0){30}}

\put(133,34){\small{$\widehat{\bm{\epsilon}}$}}

% far right

\put(165,30){\oval(10,50)}

\put(170,40){\vector(0,1){0}}

\put(180,30){\oval(10,50)}

\put(175,20){\vector(0,-1){0}}

\put(161,40){\tiny{$m$}}

\put(181,40){\tiny{$n$}}

\put(150,-15){$\bigcirc_m \sqcup \bigcirc_n$}

\end{picture}

\caption{}\label{comp-bigiota-etas-bigepsilon-figure}

\end{figure}

\begin{lemma}\label{comp-bigiota-etas-bigepsilon}
Denote by $\bigcirc_m \sqcup \bigcirc_n$ the disjoint union of two circles colored by $m$ and $n$. Define the morphism $f:C(\emptyset)\rightarrow C(\bigcirc_m \sqcup \bigcirc_n)$ to be the composition in Figure \ref{comp-bigiota-etas-bigepsilon-figure}, that is, $f = \widehat{\bm{\epsilon}} \circ (\eta_\dag \otimes \eta_\ddag) \circ \widehat{\bm{\iota}}$. Then $f \approx \iota_m \otimes \iota_n$, where $\iota_m, \iota_n$ are the morphisms associated to creating the two circles in $\bigcirc_m \sqcup \bigcirc_n$.
\end{lemma}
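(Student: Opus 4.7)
The plan is to prove $f \approx \iota_m \otimes \iota_n$ via the uniqueness principle of Lemma \ref{circle-empty-hmf}. Applied (together with Lemma \ref{morphism-sign}) to $\bigcirc_m \sqcup \bigcirc_n$, that lemma says the subspace of $\Hom_{HMF}(C(\emptyset), C(\bigcirc_m \sqcup \bigcirc_n))$ consisting of homogeneous elements of quantum degree $-m(N-m)-n(N-n)$ and $\zed_2$-degree $m+n$ is one-dimensional and spanned by $\iota_m \otimes \iota_n$. So it suffices to verify (i) $f$ has that tri-degree, and (ii) $f$ is not null-homotopic.

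Step (i) is a direct tri-degree tally. The lemma immediately preceding Lemma \ref{comp-bigiota-etas-bigepsilon} gives that $\widehat{\bm{\iota}}$ and $\widehat{\bm{\epsilon}}$ each have quantum degree $-(m+n)(N-m-n)-2mn$ and $\zed_2$-degree $m+n$. From Lemma \ref{saddle-hmf}, $\eta_\dag$ and $\eta_\ddag$ contribute quantum degrees $m(N-m)$ and $n(N-n)$, with $\zed_2$-degrees $m$ and $n$. Summing gives
\begin{equation*}
\deg f = -2(m+n)(N-m-n)-4mn+m(N-m)+n(N-n) = -m(N-m)-n(N-n),
\end{equation*}
and $\zed_2$-degree $2(m+n)+m+n \equiv m+n \pmod 2$, matching the tri-degree of $\iota_m \otimes \iota_n$.

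For step (ii), the natural route is to track the generating class through $f$ on homology. By Lemma \ref{butterfly-morphisms}, $\widehat{\bm{\iota}}_\ast(1) \in H(\Gamma_2)$ is a non-zero scalar multiple of the generating class $\mathfrak{G}_{\Gamma_2}$. The saddles $(\eta_\dag \otimes \eta_\ddag)_\ast$ transport $\mathfrak{G}_{\Gamma_2}$ into $H(\Gamma_2 \sqcup \bigcirc_m \sqcup \bigcirc_n)$, and $\widehat{\bm{\epsilon}}_\ast$ then collapses the $\Gamma_2$-factor. The goal is to show that the resulting element of $H(\bigcirc_m \sqcup \bigcirc_n)$ is a non-zero scalar multiple of $\mathfrak{G}_m \otimes \mathfrak{G}_n$. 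The two composition formulas (Propositions \ref{creation+compose+saddle} and \ref{saddle+compose+annihilation}) apply in each of the two bigons of $\Gamma_2$: inside each bigon, the saddle $\eta$ together with the adjacent piece of $\widehat{\bm{\iota}}$ or $\widehat{\bm{\epsilon}}$ fits the canceling $0$--$1$ or $1$--$2$ handle pattern, and after all these local cancellations a non-zero scalar is left acting on $\mathfrak{G}_m \otimes \mathfrak{G}_n$.

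The main obstacle will be justifying the localization of the two composition formulas: a priori, $\widehat{\bm{\iota}}$ and $\widehat{\bm{\epsilon}}$ are global morphisms on $\Gamma_2$, and one must argue that after applying $\eta_\dag \otimes \eta_\ddag$ the composition genuinely factors into two independent local cancellations (one per bigon) plus a residual scalar on the $(m+n)$-structure. If a fully rigorous localization proves cumbersome, a backup strategy --- in the spirit of the explicit computations of Subsections \ref{subsec-1st-composition}--\ref{subsec-2nd-composition} --- is to write out a cycle representing $\widehat{\bm{\iota}}_\ast(1)$ in $C(\Gamma_2)$ using Lemma \ref{circle-rep-two-marks} and the explicit forms of $\phi$ and $\iota$, then push it step by step through $\eta_\dag \otimes \eta_\ddag$ and $\widehat{\bm{\epsilon}}$ and verify directly that the output cycle represents a non-zero multiple of $\mathfrak{G}_m \otimes \mathfrak{G}_n$; combined with (i), this forces $f \approx \iota_m \otimes \iota_n$.
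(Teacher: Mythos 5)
Your skeleton matches the paper's: use the one-dimensionality of the relevant graded piece of $\Hom_{HMF}(C(\emptyset),C(\bigcirc_m\sqcup\bigcirc_n))$, check the tri-degree of $f$ (your tally is correct), and reduce to showing $f$ is not null-homotopic. But the obstacle you flag in step (ii) is a genuine gap, and the picture you sketch for closing it --- two independent local handle cancellations, one per bigon, leaving a scalar on $\mathfrak{G}_m\otimes\mathfrak{G}_n$ --- does not match the actual mechanism, because the $\dag$ and $\ddag$ bigons are not on a symmetric footing and only one of them cancels as a handle pair.

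The asymmetry is as follows. After factoring $\widehat{\bm{\epsilon}}\approx\epsilon_{\mathbb A}\circ\overline{\psi}\circ\overline{\phi}$ via Proposition \ref{decomposing-psi-bar} and commuting disjointly supported morphisms, $\widehat{\bm{\epsilon}}\circ\eta_\ddag$ exposes the genuine cancelling pair $\epsilon_{\mathbb A}\circ\eta_\ddag\approx\id$ of Proposition \ref{saddle+compose+annihilation}, so $\widehat{\bm{\epsilon}}\circ\eta_\ddag\approx\overline{\psi}\circ\overline{\phi}$ cleanly. On the other side, after factoring $\widehat{\bm{\iota}}\approx\phi\circ\psi\circ\iota_m$, one is left with $\eta_\dag\circ\iota_m$; this is \emph{not} the cancelling $0$--$1$ pair of Proposition \ref{creation+compose+saddle}, because $\eta_\dag$ does not kill the circle created by $\iota_m$ --- it pinches it into the two circles $\bigcirc_{\mathbb X}$ and $\bigcirc_{\mathbb Y}$, both of which survive. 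The paper instead uses Proposition \ref{saddle+compose+annihilation} as a \emph{detector}: from $\epsilon_{\mathbb Y}\circ\eta_\dag\circ\iota_m\approx\iota_{\mathbb X}$ together with degree considerations, it deduces
\[
(\eta_\dag\circ\iota_m)_\ast(1)\propto(S_{\lambda_{m,N-m}}(\mathbb Y)+H)\cdot(\iota_{\mathbb X})_\ast(1)\otimes(\iota_{\mathbb Y})_\ast(1),
\]
where $H$ has strictly lower $\mathbb Y$-degree. This Schur-polynomial residue is then transported through $\overline{\phi}$ (which changes the alphabet and shrinks the top Schur factor) and is absorbed by the normalization \eqref{psibar-comp-psi} of Proposition \ref{decomposing-psi-bar} inside $\overline{\psi}$; what survives is $(\iota_m)_\ast(1)\otimes(\iota_n)_\ast(1)\neq0$. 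So the correct picture is one clean cancellation, one detector-plus-Schur-residue, and a residue absorption at the far end --- not two symmetric local cancellations. Your explicit-cycle backup is viable in principle but would have to reproduce exactly this bookkeeping; there is no way to avoid the Schur normalization encoded in $\epsilon$, $\overline{\phi}$ and $\overline{\psi}$.
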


\begin{proof}
It is easy to check that 
\[
\Hom_{HMF}(C(\emptyset),C(\bigcirc_m \sqcup \bigcirc_n)) \cong H(\bigcirc_m \sqcup \bigcirc_n) \cong C(\emptyset) \{\qb{N}{m}\cdot\qb{N}{n}\}\left\langle m+n \right\rangle.
\]
In particular, the subspace of $\Hom_{HMF}(C(\emptyset),C(\bigcirc_m \sqcup \bigcirc_n))$ of homogeneous elements of quantum degree $-m(N-m)-n(N-n)$ is $1$-dimensional and spanned by $\iota_m\otimes\iota_n$. One can see that $f$ is homogeneous of quantum degree $-m(N-m)-n(N-n)$. So, to prove the lemma, we only need to check that $f$ is not null homotopic. We do this by showing that $f_\ast(1)\neq 0$.

Note that $f = \widehat{\bm{\epsilon}} \circ (\eta_\dag \otimes \eta_\ddag) \circ \widehat{\bm{\iota}} = (\widehat{\bm{\epsilon}} \circ \eta_\ddag) \circ (\eta_\dag\circ \widehat{\bm{\iota}})$. 

\begin{figure}[ht]

\setlength{\unitlength}{1pt}

\begin{picture}(360,60)(-170,0)

% far left

\put(-170,23){\Huge{$\emptyset$}}

\put(-155,30){\vector(1,0){20}}

\put(-147,32){\small{$\iota_m$}}

% left

\put(-120,30){\oval(20,50)}

\put(-110,40){\vector(0,1){0}}

\put(-108,50){\tiny{$m$}}

\put(-105,30){\vector(1,0){20}}

\put(-97,32){\small{$\psi$}}

% center

\qbezier(-65,50)(-80,60)(-80,50)

\qbezier(-65,10)(-80,0)(-80,10)

\put(-80,50){\vector(0,-1){40}}

\qbezier(-65,50)(-50,60)(-50,50)

\qbezier(-65,10)(-50,0)(-50,10)

\put(-65,50){\vector(0,-1){40}}

\put(-50,10){\vector(0,1){40}}

\put(-63,28){\tiny{$n$}}

\put(-78,28){\tiny{$m$}}

\put(-49,50){\tiny{$m+n$}}

\put(-40,30){\vector(1,0){20}}

\put(-32,32){\small{$\phi$}}

% right

\put(35,50){\vector(-1,0){30}}

\put(5,50){\vector(0,-1){40}}

\put(5,10){\vector(1,0){30}}

\put(35,10){\vector(0,1){40}}

\qbezier(5,50)(-10,60)(-10,50)

\qbezier(5,10)(-10,0)(-10,10)

\put(-10,50){\vector(0,-1){40}}

\qbezier(35,50)(50,60)(50,50)

\qbezier(35,10)(50,0)(50,10)

\put(50,10){\vector(0,1){40}}

\multiput(0,7)(0,3){16}{\line(0,1){1}}

\put(-4,28){\tiny{$\dag$}}

\put(7,28){\tiny{$n$}}

\put(52,50){\tiny{$n$}}

\put(28,28){\tiny{$m$}}

\put(-17,50){\tiny{$m$}}

\put(10,52){\tiny{$m+n$}}

\put(10,12){\tiny{$m+n$}}

\put(60,30){\vector(1,0){20}}

\put(67,34){\small{$\eta_\dag$}}

% far right

\put(95,30){\oval(10,50)}

\put(100,40){\vector(0,1){0}}

\put(155,50){\vector(-1,0){30}}

\put(125,50){\vector(0,-1){40}}

\put(125,10){\vector(1,0){30}}

\put(155,10){\vector(0,1){40}}

\qbezier(125,50)(110,60)(110,50)

\qbezier(125,10)(110,0)(110,10)

\put(110,50){\vector(0,-1){40}}

\qbezier(155,50)(170,60)(170,50)

\qbezier(155,10)(170,0)(170,10)

\put(170,10){\vector(0,1){40}}

\put(127,28){\tiny{$n$}}

\put(165,30){\tiny{$n$}}

\put(148,28){\tiny{$m$}}

\put(101,50){\tiny{$m$}}

\put(112,30){\tiny{$m$}}

\put(130,52){\tiny{$m+n$}}

\put(130,12){\tiny{$m+n$}}

\end{picture}

\caption{}\label{comp-bigiota-etas-bigepsilon-figure-1}

\end{figure}

\begin{figure}[ht]

\setlength{\unitlength}{1pt}

\begin{picture}(360,60)(-170,0)

% far left

\put(-170,23){\Huge{$\emptyset$}}

\put(-155,30){\vector(1,0){20}}

\put(-147,32){\small{$\iota_m$}}

% left

\put(-120,30){\oval(20,50)}

\put(-110,40){\vector(0,1){0}}

\multiput(-120,5)(0,3){17}{\line(0,1){1}}

\put(-124,28){\tiny{$\dag$}}

\put(-108,50){\tiny{$m$}}

\put(-105,30){\vector(1,0){20}}

\put(-97,32){\small{$\eta_\dag$}}

% center

\put(-70,30){\oval(10,50)}

\put(-65,40){\vector(0,1){0}}

\put(-76,20){\line(1,0){2}}

\put(-55,30){\oval(10,50)}

\put(-50,40){\vector(0,1){0}}

\put(-51,20){\line(1,0){2}}

\put(-83,50){\tiny{$m$}}

\put(-48,50){\tiny{$m$}}

\put(-83,16){\small{$\mathbb{X}$}}

\put(-48,16){\small{$\mathbb{Y}$}}

\put(-40,30){\vector(1,0){20}}

\put(-32,32){\small{$\psi$}}

% right

\put(-5,30){\oval(10,50)}

\put(0,40){\vector(0,1){0}}

\put(-18,50){\tiny{$m$}}

\qbezier(25,50)(10,60)(10,50)

\qbezier(25,10)(10,0)(10,10)

\put(10,50){\vector(0,-1){40}}

\qbezier(25,50)(40,60)(40,50)

\qbezier(25,10)(40,0)(40,10)

\put(25,50){\vector(0,-1){40}}

\put(40,10){\vector(0,1){40}}

\put(27,28){\tiny{$n$}}

\put(12,28){\tiny{$m$}}

\put(41,50){\tiny{$m+n$}}

\put(-11,20){\line(1,0){2}}

\put(9,20){\line(1,0){2}}

\put(-18,16){\small{$\mathbb{X}$}}

\put(11,16){\small{$\mathbb{Y}$}}

\put(50,30){\vector(1,0){30}}

\put(63,34){\small{$\phi$}}

% far right

\put(95,30){\oval(10,50)}

\put(100,40){\vector(0,1){0}}

\put(155,50){\vector(-1,0){30}}

\put(125,50){\vector(0,-1){40}}

\put(125,10){\vector(1,0){30}}

\put(155,10){\vector(0,1){40}}

\qbezier(125,50)(110,60)(110,50)

\qbezier(125,10)(110,0)(110,10)

\put(110,50){\vector(0,-1){40}}

\qbezier(155,50)(170,60)(170,50)

\qbezier(155,10)(170,0)(170,10)

\put(170,10){\vector(0,1){40}}

\put(127,28){\tiny{$n$}}

\put(165,30){\tiny{$n$}}

\put(148,28){\tiny{$m$}}

\put(101,50){\tiny{$m$}}

\put(112,30){\tiny{$m$}}

\put(130,52){\tiny{$m+n$}}

\put(130,12){\tiny{$m+n$}}

\put(89,20){\line(1,0){2}}

\put(109,20){\line(1,0){2}}

\put(82,16){\small{$\mathbb{X}$}}

\put(111,16){\small{$\mathbb{Y}$}}

\end{picture}

\caption{}\label{comp-bigiota-etas-bigepsilon-figure-2}

\end{figure}

We consider $\eta_\dag\circ \widehat{\bm{\iota}}$ first. By Proposition \ref{decomposing-psi-bar}, one can see $\widehat{\bm{\iota}}\approx\phi\circ\psi\circ\iota_m$, where the morphisms on the right hand side are given in Figure \ref{comp-bigiota-etas-bigepsilon-figure-1}. So $\eta_\dag\circ \widehat{\bm{\iota}}$ is given by the composition in Figure \ref{comp-bigiota-etas-bigepsilon-figure-1}. If we choose marked points appropriately, then $\phi\circ\psi$ and $\eta_\dag$ act on different factors in the tensor product of matrix factorizations. So they commute. Thus $\eta_\dag\circ \widehat{\bm{\iota}} = (\phi\circ\psi)\circ(\eta_\dag\circ\iota_m)$, where the composition on the right hand side is given in Figure \ref{comp-bigiota-etas-bigepsilon-figure-2}. Denote by $\iota_{\mathbb{X}}$ and $\epsilon_{\mathbb{X}}$ (resp. $\iota_{\mathbb{Y}}$ and $\epsilon_{\mathbb{Y}}$) the morphisms associated to creating and annihilating the circle marked by $\mathbb{X}$ (resp. $\mathbb{Y}$.) Then $(\iota_{\mathbb{X}})_\ast(1)$ and $(\iota_{\mathbb{Y}})_\ast(1)$ are the generating classes of the homology of the circles marked by $\mathbb{X}$ and $\mathbb{Y}$. By Proposition \ref{saddle+compose+annihilation}, we have $\epsilon_{\mathbb{Y}} \circ \eta_\dag \circ \iota_m \approx \iota_{\mathbb{X}}$. So $(\epsilon_{\mathbb{Y}} \circ \eta_\dag \circ \iota_m)_\ast (1) \propto (\iota_{\mathbb{X}})_\ast(1)$. By Theorem \ref{grassmannian}, Proposition \ref{circle-module} and Corollary \ref{iota-epsilon-composition}, this implies that 
\[
(\eta_\dag \circ \iota_m)_\ast (1) \propto (S_{\lambda_{m,N-m}}(\mathbb{Y}) + H) \cdot (\iota_{\mathbb{X}})_\ast(1) \otimes (\iota_{\mathbb{Y}})_\ast(1),
\]
where $H$ is an element in $\Sym(\mathbb{X}|\mathbb{Y})$ whose total degree in $\mathbb{Y}$ is less than $2m(N-m)$. By Proposition \ref{decomposing-psi-bar} and the definition of $\widehat{\bm{\iota}}$, we have that 
\[
(\phi\circ\psi)_\ast((\iota_{\mathbb{Y}})_\ast(1)) \propto \widehat{\bm{\iota}}_\ast (1).
\]
Thus,
\[
(\eta_\dag \circ \widehat{\bm{\iota}})_\ast (1) \propto (S_{\lambda_{m,N-m}}(\mathbb{Y}) + H) \cdot (\iota_{\mathbb{X}})_\ast(1) \otimes \widehat{\bm{\iota}}_\ast(1).
\]

\begin{figure}[ht]

\setlength{\unitlength}{1pt}

\begin{picture}(360,60)(-170,0)

% far left

\put(-125,50){\vector(-1,0){30}}

\put(-155,50){\vector(0,-1){40}}

\put(-155,10){\vector(1,0){30}}

\put(-125,10){\vector(0,1){40}}

\qbezier(-155,50)(-170,60)(-170,50)

\qbezier(-155,10)(-170,0)(-170,10)

\put(-170,50){\vector(0,-1){40}}

\qbezier(-125,50)(-110,60)(-110,50)

\qbezier(-125,10)(-110,0)(-110,10)

\put(-110,10){\vector(0,1){40}}

\multiput(-120,7)(0,3){16}{\line(0,1){1}}

\put(-119,28){\tiny{$\ddag$}}

\put(-153,28){\tiny{$n$}}

\put(-108,28){\tiny{$n$}}

\put(-132,28){\tiny{$m$}}

\put(-168,28){\tiny{$m$}}

\put(-150,52){\tiny{$m+n$}}

\put(-150,12){\tiny{$m+n$}}

\put(-171,20){\line(1,0){2}}

\put(-169,16){\small{$\mathbb{Y}$}}

\put(-100,30){\vector(1,0){20}}

\put(-95,34){\small{$\eta_\ddag$}}

% left

\put(-30,50){\vector(-1,0){30}}

\put(-60,50){\vector(0,-1){40}}

\put(-60,10){\vector(1,0){30}}

\put(-30,10){\vector(0,1){40}}

\qbezier(-60,50)(-75,60)(-75,50)

\qbezier(-60,10)(-75,0)(-75,10)

\put(-75,50){\vector(0,-1){40}}

\qbezier(-30,50)(-15,60)(-15,50)

\qbezier(-30,10)(-15,0)(-15,10)

\put(-15,10){\vector(0,1){40}}

\put(-5,30){\oval(10,50)}

\put(-10,20){\vector(0,-1){0}}

\put(-58,28){\tiny{$n$}}

\put(-20,28){\tiny{$n$}}

\put(-9,28){\tiny{$n$}}

\put(-37,28){\tiny{$m$}}

\put(-73,30){\tiny{$m$}}

\put(-55,52){\tiny{$m+n$}}

\put(-55,12){\tiny{$m+n$}}

\put(-76,20){\line(1,0){2}}

\put(-74,16){\small{$\mathbb{Y}$}}

\put(5,30){\vector(1,0){20}}

\put(13,34){\small{$\overline{\phi}$}}

% center

\qbezier(45,50)(30,60)(30,50)

\qbezier(45,10)(30,0)(30,10)

\put(30,50){\vector(0,-1){40}}

\qbezier(45,50)(60,60)(60,50)

\qbezier(45,10)(60,0)(60,10)

\put(45,10){\vector(0,1){40}}

\put(60,10){\vector(0,1){40}}

\put(56,28){\tiny{$n$}}

\put(66,28){\tiny{$n$}}

\put(46,28){\tiny{$m$}}

\put(10,50){\tiny{$m+n$}}

\put(29,20){\line(1,0){2}}

\put(31,16){\small{$\mathbb{W}$}}

\put(59,20){\line(1,0){2}}

\put(53,16){\small{$\mathbb{A}$}}

\put(74,20){\line(1,0){2}}

\put(77,16){\small{$\mathbb{B}$}}

\put(70,30){\oval(10,50)}

\put(65,20){\vector(0,-1){0}}

\put(80,30){\vector(1,0){20}}

\put(88,34){\small{$\overline{\psi}$}}

% right

\put(110,30){\oval(10,50)}

\put(115,40){\vector(0,1){0}}

\put(125,30){\oval(10,50)}

\put(120,20){\vector(0,-1){0}}

\put(104,20){\line(1,0){2}}

\put(99,16){\small{$\mathbb{A}$}}

\put(129,20){\line(1,0){2}}

\put(132,16){\small{$\mathbb{B}$}}

\put(106,40){\tiny{$n$}}

\put(121,40){\tiny{$n$}}

\put(135,30){\vector(1,0){20}}

\put(141,34){\small{$\epsilon_{\mathbb{A}}$}}

% far right

\put(165,30){\oval(10,50)}

\put(160,20){\vector(0,-1){0}}

\put(169,20){\line(1,0){2}}

\put(172,16){\small{$\mathbb{B}$}}

\put(172,40){\tiny{$n$}}

\end{picture}

\caption{}\label{comp-bigiota-etas-bigepsilon-figure-3}

\end{figure}

\begin{figure}[ht]

\setlength{\unitlength}{1pt}

\begin{picture}(360,75)(-170,-15)

% far left

\put(-125,50){\vector(-1,0){30}}

\put(-155,50){\vector(0,-1){40}}

\put(-155,10){\vector(1,0){30}}

\put(-125,10){\vector(0,1){40}}

\qbezier(-155,50)(-170,60)(-170,50)

\qbezier(-155,10)(-170,0)(-170,10)

\put(-170,50){\vector(0,-1){40}}

\qbezier(-125,50)(-110,60)(-110,50)

\qbezier(-125,10)(-110,0)(-110,10)

\put(-110,10){\vector(0,1){40}}

\put(-153,28){\tiny{$n$}}

\put(-116,28){\tiny{$n$}}

\put(-132,28){\tiny{$m$}}

\put(-168,28){\tiny{$m$}}

\put(-150,52){\tiny{$m+n$}}

\put(-150,12){\tiny{$m+n$}}

\put(-171,20){\line(1,0){2}}

\put(-169,16){\small{$\mathbb{Y}$}}

\put(-105,30){\vector(1,0){20}}

\put(-98,34){\small{$\overline{\phi}$}}

% left

\qbezier(-65,50)(-80,60)(-80,50)

\qbezier(-65,10)(-80,0)(-80,10)

\put(-80,50){\vector(0,-1){40}}

\qbezier(-65,50)(-50,60)(-50,50)

\qbezier(-65,10)(-50,0)(-50,10)

\put(-65,10){\vector(0,1){40}}

\put(-50,10){\vector(0,1){40}}

\put(-64,28){\tiny{$n$}}

\put(-54,28){\tiny{$n$}}

\put(-64,28){\tiny{$m$}}

\put(-100,50){\tiny{$m+n$}}

\put(-81,20){\line(1,0){2}}

\put(-79,16){\small{$\mathbb{W}$}}

\put(-45,30){\vector(1,0){20}}

\put(-37,34){\small{$\overline{\psi}$}}

\put(-70,-15){$\Gamma_3$}

% center

\put(-5,30){\oval(20,50)}

\put(5,40){\vector(0,1){0}}

\multiput(-5,5)(0,3){17}{\line(0,1){1}}

\put(7,50){\tiny{$n$}}

\put(-3,28){\tiny{$\ddag$}}

\put(-16,20){\line(1,0){2}}

\put(-24,16){\small{$\mathbb{A}$}}

\put(4,20){\line(1,0){2}}

\put(7,16){\small{$\mathbb{B}$}}

\put(15,30){\vector(1,0){20}}

\put(21,34){\small{$\eta_\ddag$}}

% right

\put(50,30){\oval(10,50)}

\put(55,40){\vector(0,1){0}}

\put(65,30){\oval(10,50)}

\put(60,20){\vector(0,-1){0}}

\put(44,20){\line(1,0){2}}

\put(39,16){\small{$\mathbb{A}$}}

\put(69,20){\line(1,0){2}}

\put(72,16){\small{$\mathbb{B}$}}

\put(46,40){\tiny{$n$}}

\put(61,40){\tiny{$n$}}

\put(80,30){\vector(1,0){20}}

\put(86,34){\small{$\epsilon_{\mathbb{A}}$}}

% far right

\put(115,30){\oval(10,50)}

\put(110,20){\vector(0,-1){0}}

\put(119,20){\line(1,0){2}}

\put(122,16){\small{$\mathbb{B}$}}

\put(122,40){\tiny{$n$}}

\end{picture}

\caption{}\label{comp-bigiota-etas-bigepsilon-figure-4}

\end{figure}

Next we consider $\widehat{\bm{\epsilon}} \circ \eta_\ddag$. Since the circle marked by $\mathbb{X}$ is not affected by these morphisms, we temporarily drop that circle from our figures. By Proposition \ref{decomposing-psi-bar}, $\widehat{\bm{\epsilon}} \approx \epsilon_{\mathbb{A}} \circ \overline{\psi}\circ \overline{\phi}$, where the morphisms on the right hand side are given in Figure \ref{comp-bigiota-etas-bigepsilon-figure-3}. So $\widehat{\bm{\epsilon}} \circ \eta_\ddag$ is given by the composition in Figure \ref{comp-bigiota-etas-bigepsilon-figure-3}. If we choose marked points appropriately, then $\overline{\psi}\circ \overline{\phi}$ and $\eta_\ddag$ act on different factors in the tensor product of matrix factorizations. So they commute. Therefore, $\widehat{\bm{\epsilon}} \circ \eta_\ddag$ is also given by the composition in Figure \ref{comp-bigiota-etas-bigepsilon-figure-4}. By Proposition \ref{saddle+compose+annihilation}, $\epsilon_{\mathbb{A}} \circ \eta_\ddag \approx \id$. So $\widehat{\bm{\epsilon}} \circ \eta_\ddag \approx \overline{\psi}\circ \overline{\phi}$, where $\overline{\psi}$ and $\overline{\phi}$ are given in Figure \ref{comp-bigiota-etas-bigepsilon-figure-4}. Denote by $\widetilde{\iota}$ the morphism given in Lemma \ref{butterfly-morphisms} associated to creating $\Gamma_3$ in Figure \ref{comp-bigiota-etas-bigepsilon-figure-4}. Then, by \cite[Proposition Gr3]{Lascoux-notes} and the explicit description of $\overline{\phi}$ in Subsection \ref{subsec-edge-split-morph}, we have
\[
\overline{\phi}((S_{\lambda_{m,N-m}}(\mathbb{Y}) + H) \cdot (\iota_{\mathbb{X}})_\ast(1) \otimes \widehat{\bm{\iota}}_\ast(1)) \propto (S_{\lambda_{m,N-m-n}}(\mathbb{W}) + h) \cdot (\iota_{\mathbb{X}})_\ast(1) \otimes \widetilde{\iota}_\ast(1),
\]
where $h=\zeta(H)$ is an element of $\Sym(\mathbb{X}|\mathbb{W})$ with total degree in $\mathbb{W}$ less than $2m(N-m-n)$, and $\zeta$ is the Sylvester operator given in Theorem \ref{part-symm-str}. Let $\psi_n: C(\bigcirc_n) \rightarrow C(\Gamma_3)$ be the morphism associated to the loop addition. (Note that this morphism is not the $\psi$ in Figures \ref{comp-bigiota-etas-bigepsilon-figure-1}, \ref{comp-bigiota-etas-bigepsilon-figure-2}.) Then, by Proposition \ref{decomposing-psi-bar}, one can see that $\widetilde{\iota} \approx \psi_n\circ\iota_n$.

Altogether, we have
\begin{eqnarray*}
f_\ast(1) & \propto & (\widehat{\bm{\epsilon}} \circ \eta_\ddag)_\ast \circ (\eta_\dag\circ \widehat{\bm{\iota}})_\ast (1) \\
& \propto & (\widehat{\bm{\epsilon}} \circ \eta_\ddag)_\ast((S_{\lambda_{m,N-m}}(\mathbb{Y}) + H) \cdot (\iota_{\mathbb{X}})_\ast(1)\otimes \widehat{\bm{\iota}}_\ast(1)) \\
& \propto & (\overline{\psi}\circ \overline{\phi})_\ast((S_{\lambda_{m,N-m}}(\mathbb{Y}) + H) \cdot (\iota_{\mathbb{X}})_\ast(1)\otimes \widehat{\bm{\iota}}_\ast(1)) \\
& \propto & \overline{\psi}_\ast ((S_{\lambda_{m,N-m-n}}(\mathbb{W}) + h) \cdot (\iota_{\mathbb{X}})_\ast(1) \otimes \widetilde{\iota}_\ast(1)) \\
& \propto & (\iota_{\mathbb{X}})_\ast(1) \otimes (\overline{\psi}_\ast \circ \mathfrak{m}(S_{\lambda_{m,N-m-n}}(\mathbb{W}) + h) \circ (\psi_n)_\ast \circ (\iota_n)_\ast(1)) \\
& \propto & (\iota_{\mathbb{X}})_\ast(1) \otimes (\iota_n)_\ast(1),
\end{eqnarray*}
where the last step follows from equation \eqref{psibar-comp-psi} in Proposition \ref{decomposing-psi-bar}. It is clear that the circle marked by $\mathbb{X}$ is the ``$\bigcirc_m$" in $\bigcirc_m\sqcup\bigcirc_n$. So the above computation shows that $f_\ast(1) \propto (\iota_m)_\ast(1) \otimes (\iota_n)_\ast(1) \neq 0$. This proves the lemma.
\end{proof}

\begin{figure}[ht]

\setlength{\unitlength}{1pt}

\begin{picture}(360,75)(-180,-15)

% left
\put(-100,0){\vector(1,1){20}}

\put(-80,20){\vector(1,0){20}}

\put(-60,20){\vector(0,1){20}}

\put(-60,20){\vector(1,-1){20}}

\put(-60,40){\vector(-1,0){20}}

\put(-80,40){\vector(0,-1){20}}

\put(-80,40){\vector(-1,1){20}}

\put(-40,60){\vector(-1,-1){20}}

\put(-105,0){\tiny{$n$}}

\put(-105,55){\tiny{$n$}}

\put(-57,28){\tiny{$n$}}

\put(-38,0){\tiny{$m$}}

\put(-38,55){\tiny{$m$}}

\put(-87,28){\tiny{$m$}}

\put(-78.5,43){\tiny{$m+n$}}

\put(-78.5,13){\tiny{$m+n$}}

\put(-72,-15){$\Gamma$}

% right

\put(50,0){\vector(0,1){60}}

\put(90,60){\vector(0,-1){60}}

\put(45,28){\tiny{$n$}}

\put(92,28){\tiny{$m$}}

\put(68,-15){$\Gamma_0$}

\end{picture}

\caption{}\label{Gamma-Gamma-0-figure}

\end{figure}

\begin{definition}\label{maps-Gamma-Gamma-0}
Let $\Gamma$ and $\Gamma_0$ be the MOY graphs in Figure \ref{Gamma-Gamma-0-figure}. (They are slightly more general than those in Theorem \ref{decomp-III}.) Define the morphism 
\[
F:C(\Gamma_0)\rightarrow C(\Gamma)
\] 
to be the composition in Figure \ref{Gamma-Gamma-0-figure-1}, and the morphism 
\[
G:C(\Gamma)\rightarrow C(\Gamma_0)
\]
to be the composition in Figure \ref{Gamma-Gamma-0-figure-2}, where $\widehat{\bm{\iota}}$, $\widehat{\bm{\epsilon}}$ are defined in Lemma \ref{lemma-def-hat-iota}, and $\eta_{\Box}$, $\eta_\triangle$, $\eta_\dag$, $\eta_\ddag$ are the morphisms associated to the corresponding saddle moves. 
\end{definition}

\begin{figure}[ht]

\setlength{\unitlength}{1pt}

\begin{picture}(360,75)(-180,-15)

% left

\put(-170,0){\vector(0,1){60}}

\put(-130,60){\vector(0,-1){60}}

\put(-175,28){\tiny{$n$}}

\put(-128,28){\tiny{$m$}}

\put(-152,-15){$\Gamma_0$}

\put(-100,30){\vector(1,0){30}}

\put(-88,34){\small{$\widehat{\bm{\iota}}$}}

% center

\put(-40,0){\vector(0,1){60}}

\multiput(-39,30)(3,0){5}{\line(1,0){1}}

\put(-10,20){\vector(1,0){20}}

\put(10,20){\vector(0,1){20}}

\put(10,40){\vector(-1,0){20}}

\put(-10,40){\vector(0,-1){20}}

\qbezier(-10,40)(-25,60)(-25,40)

\qbezier(-10,20)(-25,0)(-25,20)

\put(-25,40){\vector(0,-1){20}}

\qbezier(10,40)(25,60)(25,40)

\qbezier(10,20)(25,0)(25,20)

\put(25,20){\vector(0,1){20}}

\multiput(26,30)(3,0){5}{\line(1,0){1}}

\put(40,60){\vector(0,-1){60}}

\put(-45,28){\tiny{$n$}}

\put(-23,28){\tiny{$n$}}

\put(5,28){\tiny{$n$}}

\put(42,28){\tiny{$m$}}

\put(18,28){\tiny{$m$}}

\put(-8,28){\tiny{$m$}}

\put(-9,42){\tiny{$m+n$}}

\put(-9,14){\tiny{$m+n$}}

\put(-35,32){\tiny{$\Box$}}

\put(30,32){\tiny{$\Diamond$}}

\put(60,30){\vector(1,0){30}}

\put(60,34){\small{$\eta_{\Box} \otimes \eta_{\Diamond}$}}

% right 

\put(110,0){\vector(1,1){20}}

\put(130,20){\vector(1,0){20}}

\put(150,20){\vector(0,1){20}}

\put(150,20){\vector(1,-1){20}}

\put(150,40){\vector(-1,0){20}}

\put(130,40){\vector(0,-1){20}}

\put(130,40){\vector(-1,1){20}}

\put(170,60){\vector(-1,-1){20}}

\put(105,0){\tiny{$n$}}

\put(105,55){\tiny{$n$}}

\put(153,28){\tiny{$n$}}

\put(172,0){\tiny{$m$}}

\put(157,55){\tiny{$m$}}

\put(123,28){\tiny{$m$}}

\put(131,43){\tiny{$m+n$}}

\put(131,13){\tiny{$m+n$}}

\put(138,-15){$\Gamma$}

\end{picture}

\caption{Definition of $F$}\label{Gamma-Gamma-0-figure-1}

\end{figure}
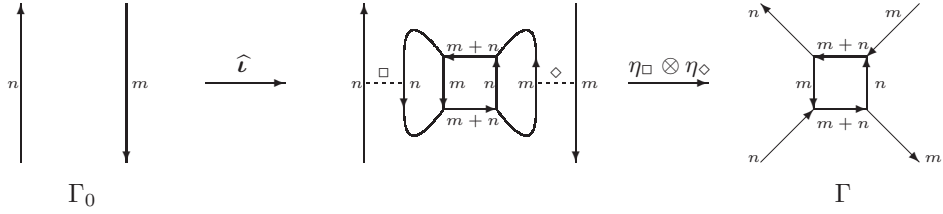

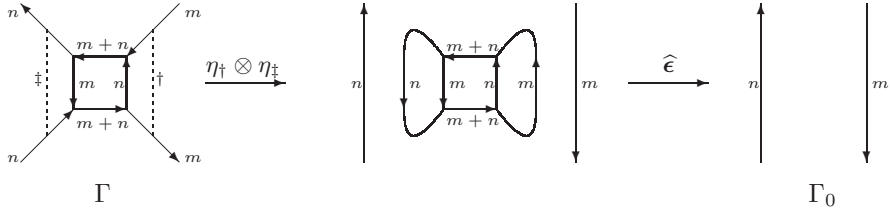
\begin{figure}[ht]

\setlength{\unitlength}{1pt}

\begin{picture}(360,75)(-180,-15)

% left

\put(-170,0){\vector(1,1){20}}

\put(-150,20){\vector(1,0){20}}

\put(-130,20){\vector(0,1){20}}

\put(-130,20){\vector(1,-1){20}}

\put(-130,40){\vector(-1,0){20}}

\put(-150,40){\vector(0,-1){20}}

\put(-150,40){\vector(-1,1){20}}

\put(-110,60){\vector(-1,-1){20}}

\multiput(-160,10)(0,3){14}{\line(0,1){1}}

\multiput(-120,10)(0,3){14}{\line(0,1){1}}

\put(-165,30){\tiny{$\ddag$}}

\put(-119,30){\tiny{$\dag$}}

\put(-175,0){\tiny{$n$}}

\put(-175,55){\tiny{$n$}}

\put(-135,28){\tiny{$n$}}

\put(-108,0){\tiny{$m$}}

\put(-108,55){\tiny{$m$}}

\put(-148,28){\tiny{$m$}}

\put(-149,43){\tiny{$m+n$}}

\put(-149,13){\tiny{$m+n$}}

\put(-142,-15){$\Gamma$}

\put(-100,30){\vector(1,0){30}}

\put(-100,34){\small{$\eta_\dag \otimes \eta_\ddag$}}

% center

\put(-40,0){\vector(0,1){60}}

\put(-10,20){\vector(1,0){20}}

\put(10,20){\vector(0,1){20}}

\put(10,40){\vector(-1,0){20}}

\put(-10,40){\vector(0,-1){20}}

\qbezier(-10,40)(-25,60)(-25,40)

\qbezier(-10,20)(-25,0)(-25,20)

\put(-25,40){\vector(0,-1){20}}

\qbezier(10,40)(25,60)(25,40)

\qbezier(10,20)(25,0)(25,20)

\put(25,20){\vector(0,1){20}}

\put(40,60){\vector(0,-1){60}}

\put(-45,28){\tiny{$n$}}

\put(-23,28){\tiny{$n$}}

\put(5,28){\tiny{$n$}}

\put(42,28){\tiny{$m$}}

\put(18,28){\tiny{$m$}}

\put(-8,28){\tiny{$m$}}

\put(-9,42){\tiny{$m+n$}}

\put(-9,14){\tiny{$m+n$}}

\put(60,30){\vector(1,0){30}}

\put(73,34){\small{$\widehat{\bm{\epsilon}}$}}

% right

\put(110,0){\vector(0,1){60}}

\put(150,60){\vector(0,-1){60}}

\put(105,28){\tiny{$n$}}

\put(152,28){\tiny{$m$}}

\put(128,-15){$\Gamma_0$}

\end{picture}

\caption{Definition of $G$}\label{Gamma-Gamma-0-figure-2}

\end{figure}

\begin{proposition}\label{Gamma-Gamma-0-proposition}
Let $F$ and $G$ be the morphisms given in Definition \ref{maps-Gamma-Gamma-0}. Then $F$ and $G$ are both homogeneous morphisms of quantum degree $0$ and $\zed_2$-degree $0$. Moreover, $G \circ F \approx \id_{C(\Gamma_0)}$.
\end{proposition}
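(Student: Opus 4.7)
The statement has two parts: the bidegrees of $F$ and $G$, and the relation $G\circ F\approx\id_{C(\Gamma_0)}$. For the bidegrees, recall from the unnumbered lemma preceding Lemma~\ref{comp-bigiota-etas-bigepsilon} that both $\widehat{\bm{\iota}}$ and $\widehat{\bm{\epsilon}}$ are homogeneous of quantum degree $-(m+n)(N-m-n)-2mn$ and $\zed_2$-degree $m+n$, while a saddle morphism on two arcs of color $k$ has quantum degree $k(N-k)$ and $\zed_2$-degree $k$ (Subsection~\ref{subsec-saddle-move}). Reading off the figures, $\eta_\Box$ and $\eta_\ddag$ operate on color-$n$ arcs, while $\eta_\Diamond$ and $\eta_\dag$ operate on color-$m$ arcs. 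Summing along $F$ gives
\[
\deg F=-(m+n)(N-m-n)-2mn+n(N-n)+m(N-m)=0,
\]
with $\deg_{\zed_2}F=(m+n)+n+m\equiv 0\pmod 2$; the same calculation applies verbatim to $G$.

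For $G\circ F\approx\id$, my plan is first to reduce to a non-vanishing statement. Closing $\Gamma_0$ into $\bigcirc_n\sqcup\bigcirc_m$ and applying Lemmas~\ref{bullet}--\ref{column-reverse-signs} together with Corollary~\ref{circle-dimension}, one obtains
\[
\Hom_{HMF}(C(\Gamma_0),C(\Gamma_0))\cong H(\bigcirc_n)\otimes H(\bigcirc_m)\{q^{n(N-n)+m(N-m)}\}\left\langle n+m\right\rangle,
\]
whose bidegree $(0,0)$ subspace is one-dimensional and spanned by $\id_{C(\Gamma_0)}$. Since $G\circ F$ is already known to have bidegree $(0,0)$, it remains only to show $G\circ F\not\simeq 0$. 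For this I will show, via the closure, that $(G\circ F)_\ast$ carries the generating class $\mathfrak{G}_n\otimes\mathfrak{G}_m$ to a non-zero scalar multiple of itself. The composition $G\circ F$ reads
\[
(\widehat{\bm{\epsilon}}\otimes\id)\circ(\eta_\dag\otimes\eta_\ddag)\circ(\eta_\Box\otimes\eta_\Diamond)\circ(\widehat{\bm{\iota}}\otimes\id),
\]
with intermediate diagrams $\Gamma_0\sqcup\Gamma_2\rightsquigarrow\Gamma\rightsquigarrow\Gamma_0\sqcup\Gamma_2$. The manipulation to perform is to pair the color-$n$ saddles $\eta_\Box,\eta_\ddag$ and the color-$m$ saddles $\eta_\Diamond,\eta_\dag$ into two canceling $1$--$2$ handle pairs; after an isotopy of the composite cobordism each pair collapses via Propositions~\ref{creation+compose+saddle} and \ref{saddle+compose+annihilation}, leaving exactly the map $\widehat{\bm{\epsilon}}\circ(\eta_\dag\otimes\eta_\ddag)\circ\widehat{\bm{\iota}}$ of Lemma~\ref{comp-bigiota-etas-bigepsilon}, which is $\approx\iota_m\otimes\iota_n$ and hence sends $1$ to a non-zero multiple of $\mathfrak{G}_n\otimes\mathfrak{G}_m$.

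The delicate step is the cancellation of the saddle pairs: the saddles are defined only up to homotopy and non-zero scaling (Lemma~\ref{saddle-hmf}), and Propositions~\ref{creation+compose+saddle}, \ref{saddle+compose+annihilation} apply in their cleanest form only when a saddle is paired directly with a matching circle creation or annihilation, whereas here all four saddles occur in the interior of the composite cobordism. I expect the cleanest route to be to choose explicit representatives of $\eta_\Box,\eta_\Diamond,\eta_\dag,\eta_\ddag$ (for instance by placing marked points so each pair acts on a disjoint pair of tensor factors) that make the geometric cancellation manifest, and then invoke the uniqueness-up-to-scaling half of Lemma~\ref{saddle-hmf} to identify the resulting morphism with $\id$ up to a non-zero scalar, rather than performing an explicit scalar computation.
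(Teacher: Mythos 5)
Your bidegree computation is correct. Your framing---reducing to $G\circ F\not\simeq 0$ via the one-dimensionality of the bidegree-$(0,0)$ part of $\Hom_\HMF(C(\Gamma_0),C(\Gamma_0))$---is a valid alternative, though the paper instead computes $G\circ F$ directly. The gap is in the cancellation step.

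Your claimed residue $\widehat{\bm{\epsilon}}\circ(\eta_\dag\otimes\eta_\ddag)\circ\widehat{\bm{\iota}}$ after cancelling the pairs $(\eta_\Box,\eta_\ddag)$ and $(\eta_\Diamond,\eta_\dag)$ is internally inconsistent: the very saddles $\eta_\dag,\eta_\ddag$ you claim to cancel still appear in the residue. Worse, if all four saddles truly cancelled the residue would be $\widehat{\bm{\epsilon}}\circ\widehat{\bm{\iota}}$, which as an endomorphism of $C(\Gamma_0)$ has strictly negative quantum degree $-2(m+n)(N-m-n)-4mn$ and is therefore null-homotopic---the opposite of what you need. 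So the saddles do not collapse pairwise in the naive Morse-theoretic sense, and Propositions~\ref{creation+compose+saddle} and \ref{saddle+compose+annihilation}, which cancel a saddle against an \emph{adjacent} circle birth or death, cannot be applied to two saddles against each other.

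Your closing paragraph does point at the right mechanism: place markings so that the maps act on disjoint tensor factors. That is precisely the paper's move, but it is used as a \emph{commutation}, not a cancellation. Because $\eta_\Box\otimes\eta_\Diamond$ and all of $G=\widehat{\bm{\epsilon}}\circ(\eta_\dag\otimes\eta_\ddag)$ act on disjoint factors, they commute, giving
\[
G\circ F \;=\; (\eta_\Box\otimes\eta_\Diamond)\circ\bigl(\widehat{\bm{\epsilon}}\circ(\eta_\dag\otimes\eta_\ddag)\circ\widehat{\bm{\iota}}\bigr),
\]
where the inner piece is now a map $C(\Gamma_0)\to C(\Gamma_0\sqcup\bigcirc_m\sqcup\bigcirc_n)$. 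Lemma~\ref{comp-bigiota-etas-bigepsilon} identifies it as $\approx\iota_n\otimes\iota_m$, and Proposition~\ref{creation+compose+saddle} then gives $(\eta_\Box\circ\iota_n)\otimes(\eta_\Diamond\circ\iota_m)\approx\id_{C(\Gamma_0)}$. The $\dag,\ddag$ saddles are absorbed inside Lemma~\ref{comp-bigiota-etas-bigepsilon}, not cancelled directly against $\Box,\Diamond$. This commutation is the move missing from your draft.
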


\begin{proof}
Recall that $\widehat{\bm{\iota}}$, $\widehat{\bm{\epsilon}}$ are homogeneous morphisms of quantum degree $-(m+n)(N-m-n)-2mn$ and $\zed_2$-degree $m+n$, and $\eta_{\Box} \otimes \eta_{\Diamond}$, $\eta_\dag \otimes \eta_\ddag$ are homogeneous morphisms of quantum degree $m(N-m)+n(N-n)$ and $\zed_2$-degree $m+n$. So $F$ and $G$ are homogeneous morphisms of quantum degree $0$ and $\zed_2$-degree $0$.

Next we consider the composition $G \circ F$. With appropriate markings of MOY graphs, $\eta_{\Box} \otimes \eta_{\Diamond}$ and $G$ act on different factors of a tensor product. So they commute. Hence, 
\[
G \circ F = (\eta_{\Box} \otimes \eta_{\Diamond}) \circ G \circ \widehat{\bm{\iota}} = (\eta_{\Box} \otimes \eta_{\Diamond}) \circ (\widehat{\bm{\epsilon}} \circ (\eta_\dag \otimes \eta_\ddag) \circ \widehat{\bm{\iota}}),
\]
where the right hand side is the composition in Figure \ref{Gamma-Gamma-0-figure-3}. By Lemma \ref{comp-bigiota-etas-bigepsilon}, 
\[
\widehat{\bm{\epsilon}} \circ (\eta_\dag \otimes \eta_\ddag) \circ \widehat{\bm{\iota}}\approx \iota_n \otimes \iota_m, 
\]
where $\iota_m$ and $\iota_n$ are the morphisms associated to creating $\bigcirc_m$ and $\bigcirc_n$. So, by Proposition \ref{creation+compose+saddle}, 
\[
G \circ F \approx (\eta_{\Box} \otimes \eta_{\Diamond}) \circ (\iota_n \otimes \iota_m) = (\eta_{\Box} \circ \iota_n) \otimes (\eta_{\Diamond} \circ \iota_m) \approx \id_{C(\Gamma_0)}.
\] 
\end{proof}

\begin{figure}[ht]

\setlength{\unitlength}{1pt}

\begin{picture}(360,75)(-180,-15)

% left

\put(-170,0){\vector(0,1){60}}

\put(-130,60){\vector(0,-1){60}}

\put(-175,28){\tiny{$n$}}

\put(-128,28){\tiny{$m$}}

\put(-152,-15){$\Gamma_0$}

\put(-115,30){\vector(1,0){60}}

\put(-115,34){\small{$\widehat{\bm{\epsilon}} \circ (\eta_\dag \otimes \eta_\ddag) \circ \widehat{\bm{\iota}}$}}

% center

\put(-40,0){\vector(0,1){60}}

\multiput(-39,30)(3,0){5}{\line(1,0){1}}

\put(-15,30){\oval(20,40)}

\put(-25,20){\vector(0,-1){0}}

\put(15,30){\oval(20,40)}

\put(25,40){\vector(0,1){0}}

\multiput(26,30)(3,0){5}{\line(1,0){1}}

\put(40,60){\vector(0,-1){60}}

\put(-45,28){\tiny{$n$}}

\put(-10,28){\tiny{$n$}}

\put(42,28){\tiny{$m$}}

\put(7,28){\tiny{$m$}}

\put(-35,32){\tiny{$\Box$}}

\put(30,32){\tiny{$\Diamond$}}

\put(-30,-15){$\Gamma_0\sqcup \bigcirc_m \sqcup \bigcirc_n$}

\put(60,30){\vector(1,0){30}}

\put(60,34){\small{$\eta_{\Box} \otimes \eta_{\Diamond}$}}

% right

\put(110,0){\vector(0,1){60}}

\put(150,60){\vector(0,-1){60}}

\put(105,28){\tiny{$n$}}

\put(152,28){\tiny{$m$}}

\put(128,-15){$\Gamma_0$}

\end{picture}

\caption{}\label{Gamma-Gamma-0-figure-3}

\end{figure}

\subsection{Relating $\Gamma$ and $\Gamma_1$} Let $\Gamma$ and $\Gamma_1$ be the MOY graphs in Figure \ref{decomposition-III-figure}. In this subsection, we generalize the method in \cite[Section 6]{KR1} to construct morphisms between $C(\Gamma)$ and $C(\Gamma_1)$. To do this, we need the following special case of Proposition \ref{general-general-chi-maps}.

\begin{figure}[ht]

\setlength{\unitlength}{1pt}

\begin{picture}(360,75)(-180,-15)

% left

\put(-120,0){\vector(1,1){20}}

\put(-100,20){\vector(1,-1){20}}

\put(-100,40){\vector(0,-1){20}}

\put(-100,40){\vector(1,1){20}}

\put(-120,60){\vector(1,-1){20}}

\put(-101,30){\line(1,0){2}}

\put(-115,45){\tiny{$m$}}

\put(-115,15){\tiny{$1$}}

\put(-90,45){\tiny{$1$}}

\put(-90,15){\tiny{$m$}}

\put(-95,28){\tiny{$m-1$}}

\put(-130,55){\small{$\mathbb{Y}$}}

\put(-75,0){\small{$\mathbb{X}$}}

\put(-113,27){\small{$\mathbb{W}$}}

\put(-135,0){\small{$\{t\}$}}

\put(-75,55){\small{$\{s\}$}}

\put(-102,-15){$\Gamma_4'$}

% center

\put(-30,35){\vector(1,0){60}}

\put(30,25){\vector(-1,0){60}}

\put(-3,40){\small{$\chi^0$}}

\put(-3,15){\small{$\chi^1$}}

% right

\put(60,10){\vector(1,1){20}}

\put(60,50){\vector(1,-1){20}}

\put(80,30){\vector(1,0){20}}

\put(100,30){\vector(1,1){20}}

\put(100,30){\vector(1,-1){20}}

\put(68,45){\tiny{$m$}}

\put(70,15){\tiny{$1$}}

\put(108,45){\tiny{$1$}}

\put(108,15){\tiny{$m$}}

\put(81,32){\tiny{$m+1$}}

\put(50,45){\small{$\mathbb{Y}$}}

\put(125,10){\small{$\mathbb{X}$}}

\put(45,10){\small{$\{t\}$}}

\put(122,45){\small{$\{s\}$}}

\put(88,-15){$\Gamma_5'$}

\end{picture}

\caption{}\label{chi-maps-figure}

\end{figure}

\begin{corollary}\label{chi-maps-def}
Let $\Gamma_4'$ and $\Gamma_5'$ be the MOY graphs in Figure \ref{chi-maps-figure}. Then there exist homogeneous morphisms 
\begin{eqnarray*}
\chi^0: C(\Gamma_4') \rightarrow C(\Gamma_5'), && \\
\chi^1: C(\Gamma_5') \rightarrow C(\Gamma_4'), &&
\end{eqnarray*}
such that
\begin{itemize}
	\item both $\chi^0$ and $\chi^1$ have quantum degree $1$ and $\zed_2$-degree $0$,
	\item $\chi^1 \circ \chi^0 \simeq (s-t) \cdot \id_{C(\Gamma_4')}$ and $\chi^0 \circ \chi^1 \simeq (s-t) \cdot \id_{C(\Gamma_5')}$.
\end{itemize}
\end{corollary}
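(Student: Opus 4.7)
The plan is to recognize $\Gamma_4'$ and $\Gamma_5'$ as a specialization of the MOY graphs $\Gamma_0$ and $\Gamma_1$ from Proposition \ref{general-general-chi-maps} and deduce everything by specializing that proposition. Comparing the colorings in Figure \ref{chi-maps-figure} with those in Figure \ref{general-general-chi-maps-figure}, one sees that $\Gamma_4'$ and $\Gamma_5'$ correspond to the case $m_{\text{Prop}}=1$, $n_{\text{Prop}}=m$, $l_{\text{Prop}}=1$, with the role of $\mathbb{X}$ played by $\{s\}$ and the role of $\mathbb{B}$ played by $\{t\}$. All the hypotheses $1\le l\le n< m+n\le N$ of Proposition \ref{general-general-chi-maps} translate into $1\le 1\le m<m+1\le N$, which holds whenever $m\le N-1$.

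Next I invoke Proposition \ref{general-general-chi-maps} to obtain homogeneous morphisms $\chi^0:C(\Gamma_4')\to C(\Gamma_5')$ and $\chi^1:C(\Gamma_5')\to C(\Gamma_4')$ with $\zed_2$-grading $0$ and quantum grading $m_{\text{Prop}}\cdot l_{\text{Prop}}=1\cdot 1=1$, which is the required degree. The composition formula from part (ii) of that proposition yields
\[
\chi^1\circ\chi^0 \;\simeq\; \Bigl(\sum_{\lambda\in\Lambda_{1,1}}(-1)^{|\lambda|}S_{\lambda'}(\{s\})\,S_{\lambda^c}(\{t\})\Bigr)\cdot\id_{C(\Gamma_4')},
\]
and similarly for $\chi^0\circ\chi^1$.

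Now I simplify: $\Lambda_{1,1}=\{(0),(1)\}$. For $\lambda=(0)$ we have $\lambda'=(0)$ and $\lambda^c=(1)$, contributing $S_{(0)}(\{s\})\,S_{(1)}(\{t\})=t$; for $\lambda=(1)$ we have $\lambda'=(1)$ and $\lambda^c=(0)$, contributing $-S_{(1)}(\{s\})\,S_{(0)}(\{t\})=-s$. Hence $\chi^1\circ\chi^0\simeq(t-s)\cdot\id_{C(\Gamma_4')}$ and $\chi^0\circ\chi^1\simeq(t-s)\cdot\id_{C(\Gamma_5')}$. Since the morphisms $\chi^0$ and $\chi^1$ in Proposition \ref{general-general-chi-maps} are only specified up to the construction in its proof, I absorb a sign by replacing $\chi^0$ with $-\chi^0$ (equivalently, rescaling either one of them by $-1$), which produces the desired sign $(s-t)$ in both compositions and does not affect the grading statements.

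There is essentially no obstacle beyond bookkeeping: the only mild subtlety is the sign convention, and it is handled by a trivial rescaling. No independent calculation or construction is needed, since Proposition \ref{general-general-chi-maps} was already proved via the general adjoint Koszul formalism of Proposition \ref{general-jumping-factor}.
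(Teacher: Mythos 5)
Your proposal is correct and follows the paper's (implicit) route: the paper presents Corollary \ref{chi-maps-def} with no separate proof, immediately after the sentence ``we need the following special case of Proposition \ref{general-general-chi-maps},'' so the intended argument is exactly the specialization $m_{\text{Prop}}=l_{\text{Prop}}=1$, $n_{\text{Prop}}=m$ that you carry out, and your degree bookkeeping and evaluation of the sum over $\Lambda_{1,1}$ are accurate. Your observation that the raw specialization yields $(t-s)$ rather than $(s-t)$ and can be fixed by rescaling one of $\chi^0,\chi^1$ by $-1$ is also correct and is the right way to handle the sign, since those morphisms are only ever used up to homotopy and scaling elsewhere in the paper.
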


\begin{figure}[ht]

\setlength{\unitlength}{1pt}

\begin{picture}(360,75)(-180,-15)

% left
\put(-170,0){\vector(1,1){20}}

\put(-150,20){\vector(1,0){20}}

\put(-130,20){\vector(0,1){20}}

\put(-130,20){\vector(1,-1){20}}

\put(-130,40){\vector(-1,0){20}}

\put(-150,40){\vector(0,-1){20}}

\put(-150,40){\vector(-1,1){20}}

\put(-110,60){\vector(-1,-1){20}}

\put(-131,30){\line(1,0){2}}

\put(-165,0){\tiny{$1$}}

\put(-165,55){\tiny{$1$}}

\put(-127,33){\tiny{$1$}}

\put(-108,0){\tiny{$m$}}

\put(-108,55){\tiny{$m$}}

\put(-157,33){\tiny{$m$}}

\put(-148.5,43){\tiny{$m+1$}}

\put(-148.5,13){\tiny{$m+1$}}

\put(-127,25){\small{$\{s\}$}}

\put(-180,50){\small{$\{r\}$}}

\put(-180,5){\small{$\{t\}$}}

\put(-142,-15){$\Gamma$}

\put(-85,35){\vector(1,0){40}}

\put(-45,25){\vector(-1,0){40}}

\put(-80,40){\small{$\chi^1\otimes\chi^1$}}

\put(-80,15){\small{$\chi^0\otimes\chi^0$}}

% center

\put(-20,0){\vector(2,1){20}}

\put(0,10){\vector(2,-1){20}}

\put(0,20){\vector(0,-1){10}}

\put(0,50){\vector(0,-1){10}}

\put(0,50){\vector(-2,1){20}}

\put(20,60){\vector(-2,-1){20}}

\qbezier(0,20)(-10,20)(-10,25)

\qbezier(0,20)(10,20)(10,25)

\qbezier(0,40)(-10,40)(-10,35)

\qbezier(0,40)(10,40)(10,35)

\put(-10,35){\vector(0,-1){10}}

\put(10,25){\vector(0,1){10}}

\put(9,28){\line(1,0){2}}

\put(-10,0){\tiny{$1$}}

\put(-10,56){\tiny{$1$}}

\put(12,33){\tiny{$1$}}

\put(-17,33){\tiny{$m$}}

\put(22,0){\tiny{$m$}}

\put(22,55){\tiny{$m$}}

\put(2,44){\tiny{$m-1$}}

\put(2,14){\tiny{$m-1$}}

\put(12,25){\small{$\{s\}$}}

\put(-30,50){\small{$\{r\}$}}

\put(-30,5){\small{$\{t\}$}}

\put(-2,-15){$\Gamma_7$}

\put(45,35){\vector(1,0){40}}

\put(85,25){\vector(-1,0){40}}

\put(63,40){\small{$\overline{\psi}$}}

\put(63,15){\small{$\psi$}}

% right

\put(110,0){\vector(1,1){20}}

\put(130,20){\vector(1,-1){20}}

\put(130,40){\vector(0,-1){20}}

\put(130,40){\vector(-1,1){20}}

\put(150,60){\vector(-1,-1){20}}

\put(115,0){\tiny{$1$}}

\put(115,56){\tiny{$1$}}

\put(152,0){\tiny{$m$}}

\put(152,55){\tiny{$m$}}

\put(110,28){\tiny{$m-1$}}

\put(100,50){\small{$\{r\}$}}

\put(100,5){\small{$\{t\}$}}

\put(128,-15){$\Gamma_1$}

\end{picture}

\caption{}\label{gamma-gamma-1-figure}

\end{figure}

If we cut $\Gamma$ horizontally in half, then we get two copies of $\Gamma_5'$ in Figure \ref{chi-maps-figure}. These correspond to two copies of $\Gamma_4'$ in Figure \ref{chi-maps-figure}. Now we glue these two copies of $\Gamma_4'$ together along the original cutting points. This gives us $\Gamma_7$ in Figure \ref{gamma-gamma-1-figure}. There are two $\chi^0$ morphisms and two $\chi^1$ morphisms corresponding to the two pairs of $\Gamma_4'$ and $\Gamma_5'$. The morphism $\chi^0 \otimes \chi^0$ (resp. $\chi^1 \otimes \chi^1$) is the tensor product of these two $\chi^0$ morphisms (resp. $\chi^1$ morphisms.) Denote by $\psi:C(\Gamma_1)\rightarrow C(\Gamma_7)$ (resp. $\overline{\psi}:C(\Gamma_7)\rightarrow C(\Gamma_1)$) the morphism associated to the apparent loop addition (resp. removal) as defined in Subsection \ref{subsec-loop-morph}.

\begin{definition}\label{alpha-beta-def}
Define morphisms
\begin{eqnarray*}
\alpha & : & C(\Gamma_1)\left\langle 1\right\rangle \rightarrow C(\Gamma),  \\
\beta & : & C(\Gamma)\rightarrow C(\Gamma_1) \left\langle 1\right\rangle
\end{eqnarray*} 
by $\alpha=(\chi^0 \otimes \chi^0)\circ \psi$ and $\beta = \overline{\psi} \circ(\chi^1 \otimes \chi^1)$.

Moreover, for $j=0,1,\dots,N-m-2$, define morphisms
\begin{eqnarray*}
\alpha_j & : & C(\Gamma_1) \{q^{N-m-2-2j}\} \left\langle 1\right\rangle \rightarrow C(\Gamma), \\
\beta_j & : & C(\Gamma)\rightarrow C(\Gamma_1) \{q^{N-m-2-2j}\} \left\langle 1\right\rangle
\end{eqnarray*} 
by $\alpha_j = \mathfrak{m}(s^{N-m-2-j}) \circ \alpha$ and $\beta_j = \beta \circ \mathfrak{m}(h_j)$, where $\mathfrak{m}(\bullet)$ is the morphism induced by the multiplication by $\bullet$, and $h_j=h_j(\{r,s,t\})$ is the $j$-th complete symmetric polynomial in $\{r,s,t\}$.
\end{definition}

\begin{lemma}\label{alpha-beta-compose}
$\alpha_j$ and $\beta_j$ are homogeneous morphisms that preserve the $\zed_2\oplus\zed$-grading. Moreover,
\[
\beta_j\circ\alpha_i \approx \left\{%
\begin{array}{ll}
    \id & \text{if } ~i=j, \\ 
    0 & \text{if } ~i>j.
\end{array}%
\right.
\]
\end{lemma}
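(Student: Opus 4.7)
The plan is to verify gradings by bookkeeping, then reduce $\beta_j \circ \alpha_i$ to an expression $\overline{\psi} \circ \mathfrak{m}(P(s)) \circ \psi$ on $C(\Gamma_1^{\text{th}})$, and evaluate it using Proposition \ref{decomposing-psi-bar} after applying a clean polynomial identity.

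First, I would check the degrees. Corollary \ref{chi-maps-def} gives $\deg \chi^0 = \deg \chi^1 = 1$. The loop attached in $\Gamma_7$ has color $n=1$ on an edge of color $m-1$, so by Lemma \ref{loop-addition-lemma}, $\deg \psi = \deg \overline{\psi} = -1(N-1) + (m-1)\cdot 1 = m - N$. Hence $\deg \alpha = \deg \beta = m - N + 2$, and
\[
\deg \alpha_j = 2(N-m-2-j) + (m-N+2) = N-m-2-2j, \quad \deg \beta_j = (m-N+2) + 2j = -(N-m-2-2j),
\]
which are precisely the degrees required for $\alpha_j \colon C(\Gamma_1)\{q^{N-m-2-2j}\}\langle 1 \rangle \to C(\Gamma)$ and $\beta_j \colon C(\Gamma) \to C(\Gamma_1)\{q^{N-m-2-2j}\}\langle 1 \rangle$ to preserve both gradings; all $\zed_2$-degrees are $0$.

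For the composition, multiplication by elements of the base ring commutes with $\chi^0,\chi^1,\psi,\overline{\psi}$, so
\[
\beta_j \circ \alpha_i \simeq \overline{\psi} \circ (\chi^1 \otimes \chi^1) \circ (\chi^0 \otimes \chi^0) \circ \mathfrak{m}\!\big(h_j(r,s,t)\cdot s^{N-m-2-i}\big) \circ \psi.
\]
Applying Corollary \ref{chi-maps-def} to the two local $\chi$-pairs (upper vertex involving $\{r,s\}$, lower vertex involving $\{s,t\}$) gives $(\chi^1 \otimes \chi^1) \circ (\chi^0 \otimes \chi^0) \approx \mathfrak{m}((s-r)(s-t))$ as an endomorphism of $C(\Gamma_7)$. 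The key technical step is then the polynomial identity
\[
(s-r)(s-t)\,h_j(r,s,t) \;=\; s^{j+2} \;-\; h_{j+1}(r,t)\,s \;+\; rt\,h_j(r,t),
\]
which I would prove by Lagrange interpolation applied to the generating function $\sum_{j\ge 0} h_j(r,s,t)\, y^j = [(1-ry)(1-sy)(1-ty)]^{-1}$: partial fractions yield $h_j(r,s,t) = \sum_{\xi\in\{r,s,t\}} \xi^{j+2}/\prod_{\zeta\neq\xi}(\xi-\zeta)$, and multiplication by $(s-r)(s-t)$ telescopes the $s$-term to $s^{j+2}$ and combines the remaining two contributions using $(r^{j+2}-t^{j+2})/(r-t) = h_{j+1}(r,t)$ and $rt(r^{j+1}-t^{j+1})/(r-t) = rt\,h_j(r,t)$.

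Combining the identity with $s^{N-m-2-i}$ and letting $E_k := \overline{\psi} \circ \mathfrak{m}(s^k) \circ \psi$ (an endomorphism of $C(\Gamma_1^{\text{th}})$), I would factor the coefficients $h_{j+1}(r,t),\, rt\,h_j(r,t) \in \Sym(\{r\}|\{t\})$ past $\overline{\psi}$ to get
\[
\beta_j \circ \alpha_i \;\approx\; E_{N-m+j-i} \;-\; h_{j+1}(r,t)\, E_{N-m-1-i} \;+\; rt\,h_j(r,t)\, E_{N-m-2-i}.
\]
By Proposition \ref{decomposing-psi-bar} (specialized to $n=1$, $m_{\text{la}} = m-1$, so $\lambda_{1,N-m}$ and $S_{\lambda_{1,N-m}}(\{s\}) = s^{N-m}$), we have $E_{N-m} \approx \id$ and $E_k \approx 0$ for $0 \le k < N-m$. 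When $i=j$ the first exponent is exactly $N-m$ while the other two are in the vanishing range, giving $\beta_j \circ \alpha_j \approx \id$; when $i>j$ all three exponents lie in $\{0,1,\dots,N-m-1\}$ (using $i,j \in \{0,\dots,N-m-2\}$), so every term vanishes and $\beta_j \circ \alpha_i \approx 0$. The main obstacle is the polynomial identity above; the partial-fraction proof is the cleanest route and avoids case-by-case induction on $k$ in $A_k$ coefficients.
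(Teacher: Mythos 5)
Your proof is correct and follows the same structure as the paper's: reduce $\beta_j\circ\alpha_i$ via Corollary \ref{chi-maps-def} to $\overline{\psi}\circ\mathfrak{m}(\pm(r-s)(s-t)h_j\,s^{N-m-2-i})\circ\psi$, rewrite the polynomial factor as a sum of three monomials-in-$s$ with $\C[r,t]$-coefficients, pull the coefficients past the $\C[r,t]$-linear $\overline{\psi}$, and apply Proposition \ref{decomposing-psi-bar}. The only variation is cosmetic: you establish the key identity $(s-r)(s-t)h_j(r,s,t)=s^{j+2}-h_{j+1}(r,t)\,s+rt\,h_j(r,t)$ by partial fractions / Lagrange interpolation, whereas the paper expands $h_j=\sum_l s^l\,h_{j-l}(r,t)$ and telescopes with the two-variable recursion $h_k(r,t)=(r+t)h_{k-1}(r,t)-rt\,h_{k-2}(r,t)$; the two derivations give the same identity.
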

\begin{proof}
It is easy to verify the homogeneity and gradings of $\alpha_j$ and $\beta_j$. We leave it to the reader. Note that $\chi^0 \otimes \chi^0$ and $\chi^1 \otimes  \chi^1$ are both $\C[r,s,t]$-linear. So, by Corollary \ref{chi-maps-def}, 
\begin{eqnarray*}
\beta_j\circ\alpha_i & = & \overline{\psi} \circ (\chi^1 \otimes \chi^1) \circ \mathfrak{m}(h_j) \circ \mathfrak{m}(s^{N-m-2-i}) \circ (\chi^0 \otimes \chi^0) \circ \psi \\
& = & \overline{\psi} \circ \mathfrak{m}(h_j) \circ (\chi^1 \otimes \chi^1) \circ (\chi^0 \otimes \chi^0) \circ \mathfrak{m}(s^{N-m-2-i}) \circ \psi \\
& \simeq & \overline{\psi} \circ \mathfrak{m}(h_j) \circ \mathfrak{m}((r-s)(s-t)) \circ \mathfrak{m}(s^{N-m-2-i}) \circ \psi.
\end{eqnarray*}
Denote by $\hat{h}_j$ the $j$-th complete symmetric polynomial in $\{r,t\}$. Then, for $j \geq 0$, $h_j = \sum_{l=0}^j s^l \hat{h}_{j-l}$ and $\hat{h}_{j+1}= (r+t)\hat{h}_{j}-rt\hat{h}_{j-1}$. So 
\begin{eqnarray*}
&& s^{N-m-2-i}(r-s)(s-t)h_j \\
& = & \sum_{l=0}^j s^{N-m-2-i+l} (-s^2+(r+t)s-rt) \hat{h}_{j-l} \\
& = & -\sum_{l=0}^j s^{N-m-i+l} \hat{h}_{j-l} + \sum_{l=-1}^{j-1} s^{N-m-i+l}(r+t) \hat{h}_{j-l-1} - \sum_{l=-2}^{j-2} s^{N-m-i+l} rt \hat{h}_{j-l-2} \\
& = & -s^{N-m-i+j} + s^{N-m-i-1}\hat{h}_{j+1} - s^{N-m-i-2}rt\hat{h}_j \\
&& + \sum_{l=0}^{j-2} s^{N-m-i+l}(-\hat{h}_{j-l}+(r+t) \hat{h}_{j-l-1}-rt \hat{h}_{j-l-2}) \\
& = & -s^{N-m-i+j} + s^{N-m-i-1}\hat{h}_{j+1} - s^{N-m-i-2}rt\hat{h}_j.
\end{eqnarray*}
Note that $\overline{\psi}$ is $\C[r,t]$-linear. Thus, by Proposition \ref{decomposing-psi-bar},
\begin{eqnarray*}
\beta_j\circ\alpha_i & \simeq & \overline{\psi} \circ \mathfrak{m}(h_j) \circ \mathfrak{m}((r-s)(s-t)) \circ \mathfrak{m}(s^{N-m-2-i}) \circ \psi \\
& = & -\overline{\psi} \circ \mathfrak{m}(s^{N-m-i+j})\circ \psi + \mathfrak{m}(\hat{h}_{j+1}) \circ \overline{\psi} \circ \mathfrak{m}(s^{N-m-i-1})\circ \psi \\
&& - \mathfrak{m}(rt\hat{h}_j) \circ \overline{\psi} \circ \mathfrak{m}(s^{N-m-i-2})\circ \psi \\
& \approx & \left\{%
\begin{array}{ll}
    \id & \text{if } ~i=j, \\ 
    0 & \text{if } ~i>j.
\end{array}%
\right.
\end{eqnarray*}

\end{proof}

\begin{proposition}\label{alpha-beta-vec-def}
Let $\Gamma$ and $\Gamma_1$ be as in Theorem \ref{decomp-III}. Then there exist homogeneous morphisms 
\begin{eqnarray*}
\vec{\alpha} & : & C(\Gamma_1)\{[N-m-1]\} \left\langle 1 \right\rangle \rightarrow C(\Gamma), \\
\vec{\beta} & : & C(\Gamma) \rightarrow C(\Gamma_1)\{[N-m-1]\} \left\langle 1 \right\rangle,
\end{eqnarray*}
that preserve the $\zed_2\oplus\zed$-grading and satisfy $\vec{\beta} \circ \vec{\alpha} \simeq \id.$
\end{proposition}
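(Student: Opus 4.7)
The plan is to assemble the family $\{\alpha_j\}_{j=0}^{N-m-2}$ (resp.\ $\{\beta_j\}_{j=0}^{N-m-2}$) from Definition \ref{alpha-beta-def} into a single morphism $\vec\alpha$ (resp.\ $\vec\beta$), and then correct $\vec\alpha$ by composing with an invertible endomorphism of the source so that the composition becomes homotopic to the identity.

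First, I would use the identity $[N-m-1] = \sum_{j=0}^{N-m-2} q^{N-m-2-2j}$ to write
\[
A \;:=\; C(\Gamma_1)\{[N-m-1]\}\langle 1\rangle \;\cong\; \bigoplus_{j=0}^{N-m-2} C(\Gamma_1)\{q^{N-m-2-2j}\}\langle 1\rangle,
\]
which matches the sources of the $\alpha_j$ (and the targets of the $\beta_j$). Using this decomposition, I would define preliminary morphisms $\vec\alpha_0:A\to C(\Gamma)$ whose restriction to the $j$-th summand equals $\alpha_j$, and $\vec\beta:C(\Gamma)\to A$ whose $j$-th component equals $\beta_j$. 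Since each $\alpha_j$ and each $\beta_j$ already preserves both gradings by Lemma \ref{alpha-beta-compose}, the assembled morphisms $\vec\alpha_0$ and $\vec\beta$ do as well.

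Next, I would analyze $M := \vec\beta\circ\vec\alpha_0 \in \Hom_{\HMF}(A,A)$, which, under the above direct sum decomposition, is represented by the $(N-m-1)\times(N-m-1)$ matrix whose $(j,i)$-entry is $\beta_j\circ\alpha_i$. By Lemma \ref{alpha-beta-compose}, this matrix is lower triangular in the homotopy category: the diagonal entries are non-zero scalar multiples of the identity, while the above-diagonal entries (those with $i>j$) are null-homotopic. Writing $M\simeq D+L$, with $D$ the diagonal part and $L$ strictly lower triangular, I note that $D$ is homotopy-invertible (each diagonal block is a non-zero scalar multiple of the identity on its summand) and $L$ is nilpotent, with $L^{N-m-1}=0$ simply because it is a strictly lower triangular $(N-m-1)\times(N-m-1)$ matrix. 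Hence $M = D(\id + D^{-1}L)$ is invertible in the homotopy category, with inverse
\[
M^{-1} \;\simeq\; \Bigl(\sum_{k=0}^{N-m-2}(-D^{-1}L)^k\Bigr)\circ D^{-1},
\]
a finite sum of compositions that still preserves both gradings.

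Finally, I would set $\vec\alpha := \vec\alpha_0\circ M^{-1}$, which inherits grading preservation from $\vec\alpha_0$ and $M^{-1}$, and satisfies $\vec\beta\circ\vec\alpha \simeq M\circ M^{-1}\simeq \id_A$ by construction. The main subtlety I anticipate is the bookkeeping of quantum-grading shifts across the summands: one must verify that each matrix entry $\beta_j\circ\alpha_i$ lies in the correct graded $\Hom$-space between shifted copies of $C(\Gamma_1)$, so that the formal matrix inverse $M^{-1}$ is a well-defined endomorphism of $A$ preserving the full $\zed_2\oplus\zed$-grading. The non-zero scalar ambiguities in Lemma \ref{alpha-beta-compose} are harmless, as they are all absorbed into the diagonal factor $D$ without affecting the nilpotence of $D^{-1}L$ or the finiteness of the Neumann series.
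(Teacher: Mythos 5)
Your proposal is correct and follows essentially the same route as the paper: both exploit the lower-triangularity of $M$ coming from Lemma \ref{alpha-beta-compose} and invert $M$ by a finite Neumann series (the paper writes out the entries of $M^{-1}$ explicitly as the morphisms $\tau_{j,i}$, which is exactly the path-sum expansion of $(\id+D^{-1}L)^{-1}$). The only cosmetic differences are that the paper first rescales the $\beta_j$ so that the diagonal of $M$ is the identity, and it absorbs $M^{-1}$ into $\vec\beta$ (defining $\hat\beta_j=\sum_k\tau_{j,k}\circ\beta_k$) rather than into $\vec\alpha$; both choices yield $\vec\beta\circ\vec\alpha\simeq\id$ for the same reason.
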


\begin{proof}
The $\beta_j$ in Definition \ref{alpha-beta-def} is defined up to homotopy and scaling. From Lemma \ref{alpha-beta-compose}, we know 
\[
\beta_j\circ\alpha_i \approx \left\{%
\begin{array}{ll}
    \id & \text{if } ~i=j, \\ 
    0 & \text{if } ~i>j.
\end{array}%
\right.
\]
So, by choosing an appropriate scalar for each $\beta_j$, we can make
\begin{equation}\label{beta-scalar-choice}
\beta_j\circ\alpha_i \simeq \left\{%
\begin{array}{ll}
    \id & \text{if } ~i=j, \\ 
    0 & \text{if } ~i>j.
\end{array}%
\right.
\end{equation}
We assume \eqref{beta-scalar-choice} is true in the rest of this proof.

Define $\tau_{j,i} : C(\Gamma_1) \{q^{N-m-2-2i}\} \left\langle 1\right\rangle \rightarrow C(\Gamma_1) \{q^{N-m-2-2j}\} \left\langle 1\right\rangle$ by
\[
\tau_{j,i} = \left\{%
\begin{array}{ll}
   \sum_{l \geq 1} \sum_{i<k_1<\cdots<k_{l-1}<j} (-1)^l (\beta_j\circ\alpha_{k_{l-1}}) \circ (\beta_{k_{l-1}}\circ\alpha_{k_{l-2}})\circ\cdots\circ (\beta_{k_1}\circ\alpha_i) & \text{if } ~i<j \\
    \id & \text{if } ~i=j, \\ 
    0 & \text{if } ~i>j.
\end{array}%
\right.
\]
Then define $\hat{\beta}_j: C(\Gamma) \rightarrow C(\Gamma_1) \{q^{N-m-2-2j}\} \left\langle 1\right\rangle$ by
\[
\hat{\beta}_j = \sum_{k=0}^{N-m-2} \tau_{j,k} \circ \beta_k.
\]
Note that 
\[
C(\Gamma_1)\{[N-m-1]\} \left\langle 1 \right\rangle \cong \bigoplus_{j=0}^{N-m-2} C(\Gamma_1) \{q^{N-m-2-2j}\} \left\langle 1\right\rangle.
\]
We define $\vec{\alpha} : C(\Gamma_1)\{[N-m-1]\} \left\langle 1 \right\rangle \rightarrow C(\Gamma)$ by
\[
\vec{\alpha} = (\alpha_0,\dots,\alpha_{N-m-2}),
\]
and define $\vec{\beta} : C(\Gamma) \rightarrow C(\Gamma_1)\{[N-m-1]\} \left\langle 1 \right\rangle$ by
\[
\vec{\beta} = \left(%
\begin{array}{c}
    \hat{\beta}_0 \\
    \dots \\ 
    \hat{\beta}_{N-m-2}
\end{array}%
\right)
\]
It is easy to check that $\alpha_i$ and $\hat{\beta}_j$ are homogeneous morphisms preserving the $\zed_2\oplus\zed$-grading. So are $\vec{\alpha}$ and $\vec{\beta}$. 

Next we prove that $\vec{\beta} \circ \vec{\alpha} \simeq \id$. Consider 
\[
\hat{\beta}_{j}\circ\alpha_i=\sum_{k=0}^{N-m-2} \tau_{j,k} \circ (\beta_k\circ\alpha_i).
\] 
By \eqref{beta-scalar-choice} and the definition of $\tau_{j,k}$, it is easy to see that
\[
\hat{\beta}_j\circ\alpha_i \simeq \left\{%
\begin{array}{ll}
    \id & \text{if } ~i=j, \\ 
    0 & \text{if } ~i>j.
\end{array}%
\right.
\]
Now assume $i<j$. Again, by \eqref{beta-scalar-choice} and the definition of $\tau_{j,k}$, we have
\begin{eqnarray*}
&& \hat{\beta}_j\circ\alpha_i \\
& = & \sum_{k=0}^{N-m-2} \tau_{j,k} \circ (\beta_k\circ\alpha_i) \\ 
& \simeq & \sum_{k=i}^{j} \tau_{j,k} \circ (\beta_k\circ\alpha_i) \\
& = & \tau_{j,i} \circ (\beta_i\circ\alpha_i) + \tau_{j,j} \circ (\beta_j\circ\alpha_i) + \sum_{i<k<j} \tau_{j,k} \circ (\beta_k\circ\alpha_i) \\
&\simeq & \tau_{j,i} + \beta_j\circ\alpha_i \\
&& + \sum_{l \geq 1}\sum_{i<k<k_1<\cdots<k_{l-1}<j} (-1)^l (\beta_j\circ\alpha_{k_{l-1}}) \circ (\beta_{k_{l-1}}\circ\alpha_{k_{l-2}})\circ\cdots\circ (\beta_{k_1}\circ\alpha_k) \circ (\beta_k\circ\alpha_i) \\
& = & \tau_{j,i} - \tau_{j,i} =0
\end{eqnarray*}
Altogether, we have $\vec{\beta} \circ \vec{\alpha} \simeq \id$.
\end{proof}

\subsection{Proof of Theorem \ref{decomp-III}} With the morphisms constructed in the previous two subsections, we are now ready to prove Theorem \ref{decomp-III}. Our method is a generalization of that in \cite{KR1} and \cite{Wu7}.

\begin{lemma}\label{beta-F-G-alpha=0}
Let $\Gamma$, $\Gamma_0$ and $\Gamma_1$ be the MOY graphs in Figure \ref{decomposition-III-figure}. Suppose that $F$ and $G$ are the morphisms defined in Definition \ref{maps-Gamma-Gamma-0} (for $n=1$), and $\vec{\alpha}$ and $\vec{\beta}$ are the morphisms given in Proposition \ref{alpha-beta-vec-def}. Then $\vec{\beta} \circ F \simeq 0$ and $G \circ \vec{\alpha} \simeq 0$.
\end{lemma}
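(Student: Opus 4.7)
The plan is to reduce the full statement to two isolated null-homotopies, then identify these as elements in a specific bigraded piece of a Hom-space, and finally show that piece vanishes via an MOY homology computation on a closed theta graph.

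For the reduction, since $\beta_k = \beta \circ \mathfrak{m}(h_k)$ and multiplication by $h_k \in \C[r,s,t]$ commutes with $F$ (both are morphisms of matrix factorizations over a base ring containing $\C[r,s,t]$), we have $\beta_k \circ F = \beta \circ F \circ \mathfrak{m}(h_k)$; the symmetric identity $G \circ \alpha_k = \mathfrak{m}(s^{N-m-2-k}) \circ G \circ \alpha$ holds on the other side. As each $\hat{\beta}_j$ is a $\C[r,s,t]$-linear combination of the $\beta_k$'s via the $\tau_{j,k}$'s, it suffices to prove $\beta \circ F \simeq 0$ and $G \circ \alpha \simeq 0$. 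Moreover, by Proposition \ref{Gamma-Gamma-0-proposition} both $F$ and $G$ preserve both gradings, and by construction so do $\alpha : C(\Gamma_1)\langle 1\rangle \to C(\Gamma)$ and $\beta : C(\Gamma) \to C(\Gamma_1)\langle 1\rangle$; hence $\beta \circ F$ and $G \circ \alpha$ are homotopy classes of morphisms between the unshifted $C(\Gamma_0)$ and $C(\Gamma_1)$ of $\zed_2$-degree $1$ and quantum degree $0$.

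The main computation identifies the relevant Hom-spaces. Using Lemmas \ref{rewrite-hom-finite-gen} and \ref{bullet}, the Hom-space $\Hom_\HMF(C(\Gamma_0), C(\Gamma_1))$ is isomorphic, up to an overall bigrading shift, to $H(\widehat{\Gamma})$, where $\widehat{\Gamma}$ is the closed MOY graph obtained by capping $\Gamma_1$ with a reversed $\Gamma_0$; topologically $\widehat{\Gamma}$ is a theta graph on two trivalent vertices joined by arcs of colors $1$, $m$, and $m-1$. Contracting the color-$(m-1)$ arc via Lemma \ref{edge-contraction} produces a single $4$-valent vertex carrying a color-$1$ self-loop and a color-$m$ self-loop; Corollary \ref{contract-expand} splits this into a theta on new trivalent vertices with arcs of colors $1$, $m$, $m+1$; and Decomposition (I) (Theorem \ref{decomp-I}) applied with the color-$m$ loop as the main edge and the color-$1$ loop as the side loop gives $C(\widehat{\Gamma}) \simeq C(\bigcirc_m)\{[N-m]\}\langle 1\rangle$. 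Corollary \ref{circle-dimension} then yields $H(\widehat{\Gamma}) \cong C(\emptyset)\{[N-m]\qb{N}{m}\}\langle m+1\rangle$, concentrated in $\zed_2$-degree $m+1$.

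A $\zed_2$-parity argument completes the proof: following the template of Proposition \ref{general-general-chi-maps-HMF}, the duality shift in the identification $\Hom_\HMF \cong H(\widehat{\Gamma})\langle \cdot \rangle\{\cdot\}$ should be $\langle m+1 \rangle$ (the half-sum of the boundary edge colors of $\Gamma_0$), so $\Hom_\HMF(C(\Gamma_0), C(\Gamma_1))$ is concentrated in $\zed_2$-degree $2(m+1) \equiv 0 \bmod 2$; hence its $\zed_2$-degree-$1$ subspace vanishes, forcing $\beta \circ F \simeq 0$, and the symmetric reasoning yields $G \circ \alpha \simeq 0$. The main obstacle is pinning down the exact $\zed_2$-degree shift in the duality identification by chasing through the Koszul description of $C(\Gamma_0)_\bullet \cong C(\overline{\Gamma_0})$ via Lemma \ref{bullet} and tracking the parity through the tensor product with $C(\Gamma_1)$. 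Should this parity argument fail in a boundary case, the fallback is a direct computation of $\beta \circ F = \overline{\psi} \circ (\chi^1 \otimes \chi^1) \circ (\eta_\Box \otimes \eta_\Diamond) \circ \widehat{\bm{\iota}}$ (and symmetrically $G \circ \alpha$) using the explicit forms of $\chi^1$, $\eta$, and $\widehat{\bm{\iota}}$ from Section \ref{sec-morph} together with the composition formulas of Propositions \ref{creation+compose+saddle} and \ref{saddle+compose+annihilation} and Lemma \ref{comp-bigiota-etas-bigepsilon}, where a cancellation in the signs produced by the saddle morphisms should collapse the composition.
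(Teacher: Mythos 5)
The parity argument at the heart of your proposal is wrong in a way that invalidates it. You claim that after the duality identification, $\Hom_\HMF(C(\Gamma_0),C(\Gamma_1))$ is concentrated in $\zed_2$-degree $2(m+1)\equiv 0$, so that its odd-degree part vanishes and forces $\beta\circ F\simeq 0$. But the actual computation (the paper carries it out with $\Gamma_8$ of Figure \ref{special-butterfly} and Theorem \ref{decomp-II}) gives
\[
\Hom_\HMF(C(\Gamma_0),C(\Gamma_1)) \cong C(\emptyset)\{\qb{N}{m}\cdot[m]\cdot q^{m(N-m)+N-1}\}\left\langle 1\right\rangle,
\]
concentrated in $\zed_2$-degree $1$, not $0$. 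The duality shift $\langle m+1\rangle$ composes with $H(\bigcirc_m)\cong C(\emptyset)\{\qb{N}{m}\}\langle m\rangle$ to give $\langle 2m+1\rangle=\langle 1\rangle$. Your version of the closed graph homology, $C(\emptyset)\{[N-m]\qb{N}{m}\}\langle m+1\rangle$, does not agree with this (note $[N-m]$ versus $[m]$, and $\langle m+1\rangle$ versus $\langle m\rangle$), and in fact the ``contraction'' of the color-$(m-1)$ arc you invoke is not a move that Lemma \ref{edge-contraction} or Corollary \ref{contract-expand} provides. Meanwhile, each $\hat{\beta}_j\circ F$ (as a map $C(\Gamma_0)\to C(\Gamma_1)$) has $\zed_2$-degree $1$ because of the $\langle 1\rangle$ in $C(\Gamma_1)\{[N-m-1]\}\langle 1\rangle$. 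So there is no parity mismatch, and the $\zed_2$-grading gives no information at all.

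The correct mechanism is quantum degree, not $\zed_2$-degree. From the displayed isomorphism, the lowest non-vanishing quantum grading of $\Hom_\HMF(C(\Gamma_0),C(\Gamma_1))$ is $N-m$, while $\hat{\beta}_j\circ F$ has quantum degree $-N+m+2+2j$ and $G\circ\alpha_j$ has quantum degree $N-m-2-2j$; both are strictly less than $N-m$ for all $j=0,\dots,N-m-2$, which is what forces them to be null-homotopic. Your reduction from $\vec{\beta}\circ F$ to the single composite $\beta\circ F$ also does not go through as stated: it rests on commuting $\mathfrak{m}(h_k)$ past $F$, but $h_k$ is a polynomial in $r,s,t$ where $s$ marks an interior point of $\Gamma$ that has no analogue on $\Gamma_0$ (the base ring of $C(\Gamma_0)$ is $\Sym(\mathbb{X}|\mathbb{Z}|\{r\}|\{t\})$), so $\mathfrak{m}(h_k)\circ F\neq F\circ\mathfrak{m}(h_k)$ in any sense. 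The quantum-degree bound has to be applied to each $\hat{\beta}_j\circ F$ and $G\circ\alpha_j$ individually, exactly as the paper does. The fallback direct computation you sketch is not developed enough to assess, but it would have to recover this quantum-degree content somehow, since the parity content is vacuous.
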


\begin{figure}[ht]

\setlength{\unitlength}{1pt}

\begin{picture}(360,75)(-180,-15)

\put(0,45){\vector(0,-1){30}}

\qbezier(0,45)(50,60)(50,45)

\qbezier(0,45)(-50,60)(-50,45)

\qbezier(0,15)(50,0)(50,15)

\qbezier(0,15)(-50,0)(-50,15)

\put(50,15){\vector(0,1){30}}

\put(-50,45){\vector(0,-1){30}}

\put(-60,47){\tiny{$1$}}

\put(55,47){\tiny{$m$}}

\put(5,30){\tiny{$m-1$}}

\put(-2,-15){$\Gamma_8$}

\end{picture}

\caption{}\label{special-butterfly}

\end{figure}

\begin{proof}
Let $\Gamma_8$ be the MOY graph in Figure \ref{special-butterfly}. Denote by $\overline{\Gamma}_0$ (resp. $\overline{\Gamma}_1$, $\overline{\Gamma}_8$) the MOY graph obtained by reversing the orientation of all edges of $\Gamma_0$ (resp. $\Gamma_1$, $\Gamma_8$.) Let $\bigcirc_m$ be a circle colored by $m$. Then 
\begin{eqnarray*}
\Hom_{HMF}(C(\Gamma_0),C(\Gamma_1)) & \cong & H(C(\Gamma_1)\otimes C(\overline{\Gamma}_0)) \{q^{m(N-m)+N-1}\} \left\langle m+1 \right\rangle  \\
& \cong & H(\Gamma_8) \{q^{m(N-m)+N-1}\} \left\langle m+1 \right\rangle \\
& \cong & H(\bigcirc_m) \{[m]\cdot q^{m(N-m)+N-1}\} \left\langle m+1 \right\rangle \\
& \cong & C(\emptyset) \{\qb{N}{m}\cdot[m]\cdot q^{m(N-m)+N-1}\} \left\langle 1 \right\rangle.
\end{eqnarray*}
In particular, the lowest non-vanishing quantum grading of $\Hom_{HMF}(C(\Gamma_0),C(\Gamma_1))$ is $N-m$.  But when viewed as a morphism $C(\Gamma_0) \rightarrow C(\Gamma_1)$, the quantum degree of $\hat{\beta}_j \circ F$ is $-N+m+2+2j$, which is less than $N-m$ for $j=0,1,\dots, N-m-2$. So $\hat{\beta}_j \circ F \simeq 0$ for $j=0,1,\dots, N-m-2$. That is, $\vec{\beta} \circ F \simeq 0$.

Clearly, the matrix factorization of $\Gamma_1$ is the same as that of $\Gamma_4'$ in Figure \ref{chi-maps-figure}. Similar to Lemma \ref{gen-chi-Gamma-0-lemma}, one can check that 
\[
C(\Gamma_1) \simeq M' := \left(%
\begin{array}{ll}
  \ast & (X_1-Y_1)+(s-t) \\
  \dots & \dots \\
  \ast & (X_k -Y_k) +(s-t) \sum_{l=0}^{k-1} (-t)^{k-1-l}X_l\\
  \dots & \dots \\
  \ast & (X_m -Y_m) +(s-t) \sum_{l=0}^{m-1} (-t)^{m-1-l}X_l\\
  \ast & \sum_{l=0}^{m} (-t)^{m-l}X_l
\end{array}%
\right)_{\Sym (\mathbb{X}|\mathbb{Y}|\{s\}|\{t\})}\{q^{-m+1}\},
\]
where $\mathbb{X},~\mathbb{Y},~\{s\},~\{t\}$ are markings of $\Gamma_4'$ in Figure \ref{chi-maps-figure}. Mark the corresponding end points of $\Gamma_0$ by the same alphabets. Then
\begin{eqnarray*}
\Hom_{HMF}(C(\Gamma_1),C(\Gamma_0)) & \cong & \Hom_{HMF}(M',C(\Gamma_0)) \\
& \cong & H(C(\Gamma_0)\otimes M'_\bullet) \\
& \cong & H(C(\Gamma_0)\otimes C(\overline{\Gamma}_1)) \{q^{m(N-m)+N-1}\}\left\langle m+1 \right\rangle \\
& \cong & H(\overline{\Gamma}_8) \{q^{(m(N-m)+N-1}\}\left\langle m+1 \right\rangle \\
& \cong & H(\Gamma_8) \{q^{(m(N-m)+N-1}\}\left\langle m+1 \right\rangle \\
& \cong & C(\emptyset) \{\qb{N}{m}\cdot[m]\cdot q^{m(N-m)+N-1}\} \left\langle 1 \right\rangle.
\end{eqnarray*}
In particular, the lowest non-vanishing quantum grading of $\Hom_{HMF}(C(\Gamma_1),C(\Gamma_0))$ is $N-m$.  But when viewed as a morphism $C(\Gamma_1) \rightarrow C(\Gamma_0)$, the quantum degree of $G \circ \alpha_j$ is $N-m-2-2j$, which is less than $N-m$ for $j=0,1,\dots, N-m-2$. So $G \circ \alpha_j \simeq 0$ for $j=0,1,\dots, N-m-2$. That is, $G \circ \vec{\alpha} \simeq 0$.
\end{proof}

Recall that the morphisms $F$ and $G$ are defined only up to scaling and homotopy, and, by Proposition \ref{Gamma-Gamma-0-proposition}, we have $G \circ F \approx \id_{C(\Gamma_0)}$. So, by choosing appropriate scalars, we can make
\begin{equation}\label{F-G-scaling}
G \circ F \simeq \id_{C(\Gamma_0)}.
\end{equation}
For minor technical convenience, we assume that \eqref{F-G-scaling} is true for the rest of this section.

\begin{lemma}\label{partial-decomp-III}
Let $\Gamma$, $\Gamma_0$ and $\Gamma_1$ be the MOY graphs in Figure \ref{decomposition-III-figure}. Then there exists a graded matrix factorization $M$, such that 
\[
C(\Gamma) \simeq C(\Gamma_0) \oplus C(\Gamma_1)\{[N-m-1]\} \left\langle 1 \right\rangle \oplus M.
\]
\end{lemma}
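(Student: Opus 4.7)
The plan is to realize $A := C(\Gamma_0) \oplus C(\Gamma_1)\{[N-m-1]\}\langle 1\rangle$ as a homotopy direct summand of $C(\Gamma)$ via a standard retract argument in the Krull--Schmidt fully additive category $\hmf_{R,w}$. The four morphisms $F, G, \vec\alpha, \vec\beta$ already produced, together with their composition identities, essentially hand us exactly the data needed.

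First I would package the morphisms into
\[
i := (F,\vec\alpha) : A \to C(\Gamma), \qquad r := \begin{pmatrix} G \\ \vec\beta \end{pmatrix} : C(\Gamma) \to A,
\]
both of which preserve the two gradings by Propositions \ref{Gamma-Gamma-0-proposition} and \ref{alpha-beta-vec-def}. The four matrix entries of $r\circ i$ are computed using equation \eqref{F-G-scaling} ($G\circ F \simeq \id$), Proposition \ref{alpha-beta-vec-def} ($\vec\beta\circ\vec\alpha \simeq \id$), and Lemma \ref{beta-F-G-alpha=0} ($\vec\beta\circ F \simeq 0$ and $G\circ\vec\alpha \simeq 0$); together these give $r\circ i \simeq \id_A$.

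Next, set $e := i\circ r = F\circ G + \vec\alpha\circ\vec\beta$, a homogeneous endomorphism of $C(\Gamma)$ preserving both gradings. From $r\circ i \simeq \id_A$ one gets
\[
e\circ e = i\circ(r\circ i)\circ r \simeq i\circ r = e,
\]
so $e$ is a homotopy idempotent. Because $C(\Gamma)$ is an object of $\hmf_{R,w}$ by Lemma \ref{MOY-object-of-hmf}, it is homotopy equivalent to a finitely generated graded matrix factorization, so Lemma \ref{idempotent-lifting} applies (on such a representative) and yields a genuine idempotent $g \simeq e$ preserving both gradings. Since $\hmf_{R,w}$ is fully additive by Lemma \ref{fully-additive-hmf}, the idempotent $g$ splits as $g = j\circ p$ with $p\circ j = \id$, giving
\[
C(\Gamma) \simeq g C(\Gamma) \oplus (\id - g)C(\Gamma).
\]

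Finally I would identify $gC(\Gamma)$ with $A$ by introducing $\phi := p\circ i : A \to gC(\Gamma)$ and $\psi := r\circ j : gC(\Gamma) \to A$. Then $\psi\circ\phi = r\circ g\circ i \simeq r\circ e\circ i = r\circ i\circ r\circ i \simeq \id_A$ and symmetrically $\phi\circ\psi = p\circ e\circ j \simeq p\circ g\circ j = (p\circ j)^2 = \id$, so $\phi$ and $\psi$ are mutually inverse homotopy equivalences. Setting $M := (\id - g)C(\Gamma)$ then yields the asserted decomposition. The only technically delicate point is the invocation of Lemma \ref{idempotent-lifting}, which is stated for finitely generated matrix factorizations; but this is dispatched by first replacing $C(\Gamma)$ by a finitely generated homotopy equivalent object, which exists precisely because $C(\Gamma)$ is homotopically finite. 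Everything else is bookkeeping with the four composition identities already in hand.
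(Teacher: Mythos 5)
Your proof is correct and follows essentially the same route as the paper: the paper defines $\widetilde F = (F,\vec\alpha)$ and $\widetilde G = (G,\vec\beta)^T$, verifies $\widetilde G\circ\widetilde F\simeq\id$ using the same three ingredients, notes $\widetilde F\circ\widetilde G$ is a grading-preserving homotopy idempotent, and cites Lemma \ref{fully-additive-hmf} to obtain the splitting. The only difference is that you unfold Lemma \ref{fully-additive-hmf} (first lifting the homotopy idempotent via Lemma \ref{idempotent-lifting}, then splitting, then explicitly identifying $gC(\Gamma)$ with $A$ via $\phi=p\circ i$ and $\psi=r\circ j$), which the paper leaves implicit in the citation; this is slightly redundant but harmless.
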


\begin{proof}
Define morphisms
\begin{eqnarray*}
\widetilde{F} & : & C(\Gamma_0) \oplus C(\Gamma_1)\{[N-m-1]\} \left\langle 1 \right\rangle \rightarrow C(\Gamma), \\
\widetilde{G} & : & C(\Gamma) \rightarrow C(\Gamma_0) \oplus C(\Gamma_1)\{[N-m-1]\} \left\langle 1 \right\rangle
\end{eqnarray*}
by
\[
\widetilde{F} = (F,\vec{\alpha}) \hspace{1cm} \text{and} \hspace{1cm} \widetilde{G} = \left(%
\begin{array}{c}
    G \\
    \vec{\beta}
\end{array}%
\right).
\]
Then, by Proposition \ref{Gamma-Gamma-0-proposition} (especially \eqref{F-G-scaling} above), Proposition \ref{alpha-beta-vec-def} and Lemma \ref{beta-F-G-alpha=0}, $\widetilde{F}$ and $\widetilde{G}$ are homogeneous morphisms preserving the $\zed_2\oplus\zed$-grading and satisfy 
\[
\widetilde{G} \circ \widetilde{F} \simeq \id_{C(\Gamma_0) \oplus C(\Gamma_1)\{[N-m-1]\} \left\langle 1 \right\rangle}. 
\]
Therefore, $\widetilde{F} \circ \widetilde{G} : C(\Gamma) \rightarrow C(\Gamma)$ preserves the $\zed_2\oplus\zed$-grading and satisfies
\[
(\widetilde{F} \circ \widetilde{G}) \circ (\widetilde{F} \circ \widetilde{G}) \simeq \widetilde{F} \circ \widetilde{G}.
\]
By Lemma \ref{fully-additive-hmf}, there exists a graded matrix factorization $M$ such that
\[
C(\Gamma) \simeq C(\Gamma_0) \oplus C(\Gamma_1)\{[N-m-1]\} \left\langle 1 \right\rangle \oplus M.
\]
\end{proof}

\begin{lemma}\label{M=0-decomp-III}
Let $M$ be as in Lemma \ref{partial-decomp-III}. Then $M \simeq 0$.
\end{lemma}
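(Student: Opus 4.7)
The plan is to deduce $M\simeq 0$ from a graded dimension count. By the additivity of $\gdim_R$ applied to Lemma \ref{partial-decomp-III},
\[
\gdim_R C(\Gamma) \;=\; \gdim_R C(\Gamma_0) \;+\; [N-m-1]\,\tau\,\gdim_R C(\Gamma_1) \;+\; \gdim_R M
\]
in $\zed[[q]][\tau]/(\tau^2-1)$, where the factor $\tau$ records the homological shift $\langle 1\rangle$. Since the coefficients of $\gdim_R M$ are non-negative integers, Corollary \ref{homology-detects-homotopy}(i) reduces the problem to verifying the identity
\[
\gdim_R C(\Gamma) \;=\; \gdim_R C(\Gamma_0) \;+\; [N-m-1]\,\tau\,\gdim_R C(\Gamma_1).
\]
This is the categorification at the level of graded dimensions of the MOY relation \cite[Lemma 5.2]{MOY}.

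To verify this identity I would compute the three graded dimensions by successively applying Direct Sum Decompositions (I) and (II) (Theorems \ref{decomp-I} and \ref{decomp-II}) together with Lemma \ref{edge-contraction} (bouquet move) to reduce each of $C(\Gamma), C(\Gamma_0), C(\Gamma_1)$ to collections of colored arcs and closed colored circles; the graded dimension of a closed colored circle is given by Corollary \ref{circle-dimension}. For $\Gamma_0$, which consists of two parallel arcs of colors $1$ and $m$, the computation is immediate. For $\Gamma_1$, the hexagonal middle can be reduced by applying Decomposition (I) along the internal edge colored $m-1$ followed by a bouquet move. For $\Gamma$, one first applies Decomposition (II) to one of the two horizontal $(m+1)$-colored edges of the central square; this replaces $\Gamma$ with a single bigon-like diagram weighted by $\qb{m+1}{1}$, which can then be simplified by Decomposition (I) and bouquet moves to expose a summand homotopy equivalent to $C(\Gamma_1)$ with the shift $\{[N-m-1]\}\langle 1\rangle$, leaving a residual contribution equal to $C(\Gamma_0)$.

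After these reductions the desired identity becomes a relation between products of quantum binomial coefficients which can be checked directly from \eqref{compute-quantum-binary} together with the standard identity $\qb{N-m}{1}=[N-m]$ and the decomposition $[N-m]=[N-m-1]+q^{-(N-m-1)}\cdot(\text{correction})$ needed to match the $1$-th piece with the shifted $\Gamma_1$-piece. The main obstacle is the bookkeeping: each application of Theorem \ref{decomp-I} introduces both a quantum binomial shift $\{\qb{N-m}{n}\}$ and a homological shift $\langle n\rangle$ whose parity must be tracked carefully through successive reductions, and one must confirm that the $\tau$-factors and $q$-factors balance to give precisely the quantum integer $[N-m-1]$ on the right-hand side. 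Once the graded dimension identity is established, $\gdim_R M=0$, hence $H_R(M)=0$, and Corollary \ref{homology-detects-homotopy}(i) yields $M\simeq 0$, completing the proof of Theorem \ref{decomp-III}.
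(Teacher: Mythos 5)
Your overall strategy—reduce $M\simeq 0$ to a graded-dimension identity via Corollary \ref{homology-detects-homotopy}(i)—is exactly the paper's, and that reduction is correct. The gap is in how you propose to compute $\gdim C(\Gamma)$. You suggest applying Decomposition (II) to one of the $(m+1)$-colored edges of the central square in $\Gamma$. But Decomposition (II) requires a genuine digon: an $(m+1)$-colored arc splitting at one vertex into parallel $1$- and $m$-colored strands that \emph{reconverge at a single vertex} into the same arc. In $\Gamma$ the $1$- and $m$-colored strands adjacent to an $(m+1)$-colored edge do not reconverge; at each end, one of them leaves the square through an external endpoint while the other runs along a side of the square to a \emph{different} vertex. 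There is no digon, so the hypothesis of Theorem \ref{decomp-II} is simply not met. The same objection applies to your plan for $\Gamma_1$: contracting its internal $(m-1)$-colored edge would produce a $4$-valent vertex whose inputs and outputs alternate around the vertex, violating the separating-line condition of Definition \ref{MOY-graph-def}, so Lemma \ref{edge-contraction} does not authorize that contraction either.

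This is not a bookkeeping problem but a conceptual one. If the square in $\Gamma$ could be dismantled by Decompositions (I), (II) and bouquet moves alone, you would thereby obtain $C(\Gamma)\simeq C(\Gamma_0)\oplus C(\Gamma_1)\{[N-m-1]\}\left\langle 1\right\rangle$ directly—that is, you would have proved Decomposition (III) outright, rendering Lemma \ref{partial-decomp-III}, the morphisms $F,G,\vec\alpha,\vec\beta$, and this very lemma superfluous. The reason Decomposition (III) is a separate theorem, unlike the circle, loop, and digon relations, is precisely that the square graph admits no such local simplification. The paper's proof accordingly avoids the decompositions entirely at this stage: it contracts the two $(m+1)$-colored edges by the bouquet move (a legitimate application of Lemma \ref{edge-contraction}, since it only merges adjacent vertices without re-routing strands), writes $C(\Gamma)$, $C(\Gamma_0)$, $C(\Gamma_1)$ as explicit Koszul matrix factorizations, passes to the quotient by the maximal homogeneous ideal $\mathfrak{I}$ of the end-point ring, and reduces the resulting cyclic complexes using Proposition \ref{b-contraction}, Corollary \ref{row-op}, Lemma \ref{jumping-factor}, and Corollary \ref{a-contraction-weak}. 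That direct computation with the matrix factorizations is the substance of the lemma, and any successful proof needs something of that kind; the MOY-style skein simplifications cannot substitute for it here.
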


\begin{figure}[ht]

\setlength{\unitlength}{1pt}

\begin{picture}(360,75)(-185,-15)

% left
\put(-170,0){\vector(1,1){20}}

\put(-150,20){\vector(1,0){20}}

\put(-130,20){\vector(0,1){20}}

\put(-130,20){\vector(1,-1){20}}

\put(-130,40){\vector(-1,0){20}}

\put(-150,40){\vector(0,-1){20}}

\put(-150,40){\vector(-1,1){20}}

\put(-110,60){\vector(-1,-1){20}}

\put(-164,0){\tiny{$1$}}

\put(-164,55){\tiny{$1$}}

\put(-134,28){\tiny{$1$}}

\put(-121,0){\tiny{$m$}}

\put(-121,55){\tiny{$m$}}

\put(-148,28){\tiny{$m$}}

\put(-148.5,43){\tiny{$m+1$}}

\put(-148.5,13){\tiny{$m+1$}}

\put(-185,0){\small{$\{r\}$}}

\put(-185,55){\small{$\{t\}$}}

\put(-108,0){\small{$\mathbb{X}$}}

\put(-108,55){\small{$\mathbb{Z}$}}

\put(-151,30){\line(1,0){2}}

\put(-160,28){\small{$\mathbb{Y}$}}

\put(-131,30){\line(1,0){2}}

\put(-128,28){\small{$\{s\}$}}

\put(-142,-15){$\Gamma$}

% center

\put(-20,0){\vector(0,1){60}}

\put(20,60){\vector(0,-1){60}}

\put(-25,28){\tiny{$1$}}

\put(22,28){\tiny{$m$}}

\put(-35,0){\small{$\{r\}$}}

\put(-35,55){\small{$\{t\}$}}

\put(24,0){\small{$\mathbb{X}$}}

\put(24,55){\small{$\mathbb{Z}$}}

\put(-2,-15){$\Gamma_0$}

% right

\put(110,0){\vector(1,1){20}}

\put(130,20){\vector(1,-1){20}}

\put(130,40){\vector(0,-1){20}}

\put(130,40){\vector(-1,1){20}}

\put(150,60){\vector(-1,-1){20}}

\put(118,0){\tiny{$1$}}

\put(118,55){\tiny{$1$}}

\put(138,0){\tiny{$m$}}

\put(138,55){\tiny{$m$}}

\put(134,28){\tiny{$m-1$}}

\put(95,0){\small{$\{r\}$}}

\put(95,55){\small{$\{t\}$}}

\put(152,0){\small{$\mathbb{X}$}}

\put(152,55){\small{$\mathbb{Z}$}}

\put(129,30){\line(1,0){2}}

\put(118,28){\small{$\mathbb{W}$}}

\put(128,-15){$\Gamma_1$}

\end{picture}

\caption{}\label{decomposition-III-figure-marked}

\end{figure}

\begin{proof}
Mark $\Gamma$, $\Gamma_0$ and $\Gamma_1$ as in Figure \ref{decomposition-III-figure-marked}. 

Consider homology of matrix factorizations with non-vanishing potentials defined in Definition \ref{homology-matrix-factorization-def}. By Corollary \ref{homology-detects-homotopy}, to show $M\simeq 0$, we only need to show that $H(M)=0$, or, equivalently, $\gdim (M)=0$. But, by Lemma \ref{partial-decomp-III}, we have 
\[
H(\Gamma) \cong H(\Gamma_0) \oplus H(\Gamma_1)\{[N-m-1]\} \left\langle 1 \right\rangle \oplus H(M).
\]
So,
\[
\gdim (C(\Gamma)) = \gdim(C(\Gamma_0)) + \tau\cdot[N-m-1]\cdot\gdim(C(\Gamma_1)) + \gdim(M).
\]
Therefore, to prove the lemma, we only need to show that
\begin{equation}\label{gdim-decomp-III}
\gdim (C(\Gamma)) = \gdim(C(\Gamma_0)) + \tau\cdot[N-m-1]\cdot\gdim(C(\Gamma_1))
\end{equation}
In the rest of this proof, we prove \eqref{gdim-decomp-III} by directly computing $\gdim (C(\Gamma))$, $\gdim(C(\Gamma_0))$ and $\gdim(C(\Gamma_1))$.

We compute $\gdim (C(\Gamma))$ first. Let $\mathbb{A}=\mathbb{X}\cup \{s\}$, $\mathbb{B}=\mathbb{Y}\cup \{r\}$, $\mathbb{D}=\mathbb{Y}\cup \{t\}$, $\mathbb{E}=\mathbb{Z}\cup \{s\}$. By Lemma \ref{edge-contraction}, we contract the two edges in $\Gamma$ of color $m+1$ and get
\[
C(\Gamma) \simeq \left(%
\begin{array}{cc}
  U_1 & A_1-B_1 \\
  \dots & \dots \\
  U_{m+1} & A_{m+1}-B_{m+1} \\
  V_1 & D_1-E_1 \\
  \dots & \dots \\
  V_{m+1} & D_{m+1}-E_{m+1} \\
\end{array}%
\right)_{\Sym(\mathbb{X}|\mathbb{Y}|\mathbb{Z}|\{r\}|\{s\}|\{t\})} \{q^{-2m}\},
\]
where $A_j$ is the $j$-th elementary symmetric function in $\mathbb{A}$ and so on, and
\begin{eqnarray*}
U_j & = & \frac{p_{m+1,N+1}(B_1,\dots,B_{j-1},A_j,\dots,A_{m+1}) - p_{m+1,N+1}(B_1,\dots,B_{j},A_{j+1},\dots,A_{m+1})}{A_j-B_j}, \\
V_j & = & \frac{p_{m+1,N+1}(E_1,\dots,E_{j-1},D_j,\dots,D_{m+1}) - p_{m+1,N+1}(E_1,\dots,E_{j},D_{j+1},\dots,D_{m+1})}{D_j-E_j}.
\end{eqnarray*}
Recall that $C(\Gamma)$ is viewed as a matrix factorization over $\Sym(\mathbb{X}|\mathbb{Z}|\{r\}|\{t\})$. So the corresponding maximal ideal for $C(\Gamma)$ is the ideal $\mathfrak{I}=(X_1,\dots,X_m,Z_1,\dots,Z_m,r,t)$ of $\Sym(\mathbb{X}|\mathbb{Z}|\{r\}|\{t\})$. Identify 
\[
\Sym(\mathbb{X}|\mathbb{Y}|\mathbb{Z}|\{r\}|\{s\}|\{t\})/\mathfrak{I}\cdot \Sym(\mathbb{X}|\mathbb{Y}|\mathbb{Z}|\{r\}|\{s\}|\{t\}) = \Sym (\mathbb{Y}|\{s\})
\]
by the relations
\begin{equation}\label{max-ideal-rel-gamma}
X_1=\cdots=X_m=Z_1=\cdots=Z_m=r=t=0.
\end{equation}
Then 
\begin{eqnarray*}
C(\Gamma)/\mathfrak{I}\cdot C(\Gamma) & \simeq & \left(%
\begin{array}{cc}
  U_1 & s-Y_1 \\
  U_2 & -Y_2 \\
  \dots & \dots \\
  U_m & -Y_m \\
  U_{m+1} & 0 \\
  V_1 & Y_1 - s \\
  V_2 & Y_2 \\
  \dots & \dots \\
  V_m & Y_m \\
  V_{m+1} & 0 \\\end{array}%
\right)_{\Sym(\mathbb{Y}|\{s\})} \{q^{-2m}\} \\
& \simeq & \left(%
\begin{array}{cc}
  U_{m+1} & 0 \\
  V_1 & 0 \\
  V_2 & 0 \\
  \dots & \dots \\
  V_m & 0 \\
  V_{m+1} & 0 \\\end{array}%
\right)_{\C[s]} \{q^{-2m}\},
\end{eqnarray*}
where we applied Proposition \ref{b-contraction} successively to the first $m$ rows. This gives the relations
\begin{equation}\label{max-ideal-rel-2-gamma}
Y_1-s=Y_2=\cdots=Y_m=0.
\end{equation}
Under relations \eqref{max-ideal-rel-gamma} and \eqref{max-ideal-rel-2-gamma}, we have 
\[
A_j=B_j=D_j=E_j= \left\{%
\begin{array}{ll}
    s & \text{if } ~j=1, \\ 
    0 & \text{if } ~i=2,\dots,m+1.
\end{array}%
\right.
\]
So, by Lemma \ref{power-derive}, we have
\begin{eqnarray*}
U_j=V_j & = & \frac{\partial p_{m+1,N+1}(A_1,\dots,A_{m+1})}{\partial A_j} |_{A_1=s,A_2=\cdots+A_{m+1}=0} \\ 
& = & (-1)^j(N+1) h_{m+1,N+1-j}(s,0,\dots,0) \\
& = & (-1)^j(N+1) s^{N+1-j}.
\end{eqnarray*}
Using Lemma \ref{jumping-factor} and Corollary \ref{a-contraction-weak}, we then have
\begin{eqnarray*}
H(\Gamma) & \cong & H( \left(%
\begin{array}{cc}
  s^{N-m} & 0 \\
  s^N & 0 \\
  s^{N-1} & 0 \\
  \dots & \dots \\
  s^{N-m} & 0 \\\end{array}%
\right)_{\C[s]}) \{q^{-2m}\} \\
& \cong & H( \left(%
\begin{array}{cc}
  0_N & 0 \\
  0_{N-1} & 0 \\
  \dots & \dots \\
  0_{N-m} & 0 \\\end{array}%
\right)_{\C[s]/(s^{N-m})}) \{q^{1-N}\} \left\langle 1 \right\rangle \\
& \cong & \left(%
\begin{array}{cc}
  0_N & 0 \\
  0_{N-1} & 0 \\
  \dots & \dots \\
  0_{N-m} & 0 \\\end{array}%
\right)_{\C[s]/(s^{N-m})} \{q^{1-N}\} \left\langle 1 \right\rangle,
\end{eqnarray*}
where $0_j$ is ``a $0$ that has degree $2j$". 
So
\begin{eqnarray*}
\gdim(C(\Gamma)) & = & \tau \cdot q^{1-N} \cdot (\sum_{k=0}^{N-m-1} q^{2k}) \cdot \prod_{j=1}^{m+1} (1+\tau q^{2j-N-1}) \\
& = & \tau \cdot q^{-m} \cdot [N-m] \cdot \prod_{j=1}^{m+1} (1+\tau q^{2j-N-1}).
\end{eqnarray*}

Next, we compute $\gdim(C(\Gamma_0))$. Let 
\begin{eqnarray*}
\hat{U} & = & \frac{t^{N+1} - r^{N+1}}{t-r} \\
\hat{U}_j & = & \frac{p_{m,N+1}(Z_1,\dots,Z_{j-1},X_j,\dots,X_{m}) - p_{m,N+1}(Z_1,\dots,Z_{j},X_{j+1},\dots,X_{m})}{X_j-Z_j}
\end{eqnarray*}
Then 
\[
C(\Gamma_0) = \left(%
\begin{array}{cc}
  \hat{U} & t-r \\
  \hat{U}_1 & X_1-Z_1 \\
  \dots & \dots \\
  \hat{U}_m & X_m-Z_m \\
\end{array}%
\right)_{\Sym(\mathbb{X}|\mathbb{Z}|\{r\}|\{t\})}
\]
So
\[
C(\Gamma_0)/\mathfrak{I} \cdot C(\Gamma_0) \cong \left(%
\begin{array}{cc}
  0_N & 0 \\
  0_N & 0 \\
  0_{N-1} & 0 \\
  \dots & \dots \\
  0_{N-m+1} & 0 \\
\end{array}%
\right)_{\C}
\]
and 
\[
\gdim(C(\Gamma_0)) = (1+\tau q^{1-N}) \cdot \prod_{j=1}^{m} (1+\tau q^{2j-N-1}).
\]

Now we compute $\gdim(C(\Gamma_1))$. Let $\mathbb{F}=\mathbb{W}\cup \{t\}$ and $\mathbb{G}=\mathbb{W} \cup \{r\}$. Define 
\begin{eqnarray*}
\bar{U}_j & = & \frac{p_{m,N+1}(Z_1,\dots,Z_{j-1},F_j,\dots,F_{m}) - p_{m,N+1}(Z_1,\dots,Z_{j},F_{j+1},\dots,F_{m})}{F_j-Z_j}, \\
\bar{V}_j & = & \frac{p_{m,N+1}(G_1,\dots,G_{j-1},X_j,\dots,X_{m}) - p_{m,N+1}(G_1,\dots,G_{j},X_{j+1},\dots,X_{m})}{X_j-G_j}.
\end{eqnarray*}
Then 
\[
C(\Gamma_1) = \left(%
\begin{array}{cc}
  \bar{U}_1 & F_1-Z_1 \\
  \dots & \dots \\
  \bar{U}_m & F_m-Z_m \\
  \bar{V}_1 & X_1-G_1 \\
  \dots & \dots \\
  \bar{V}_m & X_m-G_m \\
\end{array}%
\right)_{\Sym(\mathbb{X}|\mathbb{Z}|\mathbb{W}|\{r\}|\{t\})} \{q^{1-m}\}.
\]
Identify 
\[
\Sym(\mathbb{X}|\mathbb{Z}|\mathbb{W}|\{r\}|\{t\})/ \mathfrak{I}\cdot \Sym(\mathbb{X}|\mathbb{Z}|\mathbb{W}|\{r\}|\{t\}) = \Sym(\mathbb{W})
\]
by relations \eqref{max-ideal-rel-gamma}. Then, by Proposition \ref{b-contraction},
\begin{eqnarray*}
C(\Gamma_1)/\mathfrak{I}\cdot C(\Gamma_1) & \cong & \left(%
\begin{array}{cc}
  \bar{U}_1 & W_1 \\
  \dots & \dots \\
  \bar{U}_{m-1} & W_{m-1} \\
  \bar{U}_m & 0 \\
  \bar{V}_1 & -W_1 \\
  \dots & \dots \\
  \bar{V}_{m-1} & -W_{m-1} \\
  \bar{V}_m & 0 \\
\end{array}%
\right)_{\Sym(\mathbb{W})} \{q^{1-m}\} \\
& \simeq & \left(%
\begin{array}{cc}
  0_{N+1-m} & 0 \\
  0_N & 0 \\
  0_{N-1} & 0 \\
  \dots & \dots \\
  0_{N-m+1} & 0 \\
\end{array}%
\right)_{\C} \{q^{1-m}\}.
\end{eqnarray*}
So 
\[
\gdim(C(\Gamma_1)) = q^{1-m}\cdot(1+\tau q^{2m-N-1}) \cdot \prod_{j=1}^{m} (1+\tau q^{2j-N-1}).
\]

Write 
\[
P = \prod_{j=1}^{m} (1+\tau q^{2j-N-1}).
\]
Then 
\begin{eqnarray*}
\gdim(C(\Gamma)) & = & \tau \cdot q^{-m} \cdot [N-m] \cdot (1+\tau q^{2m-N+1}) \cdot P, \\
\gdim(C(\Gamma_0)) & = & (1+\tau q^{1-N}) \cdot P, \\
\gdim(C(\Gamma_1)) & = & q^{1-m}\cdot(1+\tau q^{2m-N-1}) \cdot P.
\end{eqnarray*}
Note that
\[
[N-m] = [N-m-1]\cdot q +q^{-(N-m-1)}.
\]
So,
\begin{eqnarray*}
&& \gdim (C(\Gamma)) - \gdim(C(\Gamma_0)) - \tau\cdot[N-m-1]\cdot\gdim(C(\Gamma_1)) \\
& = & ((\tau \cdot q^{-m} + q^{m-N+1})\cdot [N-m] -1-\tau \cdot q^{1-N} -(q^{1-m} +\tau \cdot q^{m-N})\cdot[N-m-1])\cdot P \\
& = & ((\tau \cdot q^{-m} + q^{m-N+1})\cdot ([N-m-1]\cdot q +q^{-(N-m-1)}) -1-\tau \cdot q^{1-N} \\
&& -(q^{1-m} +\tau \cdot q^{m-N})\cdot[N-m-1])\cdot P \\
& = & ([N-m-1]\cdot(q-q^{-1})\cdot q^{m-N+1} +q^{2(m-N+1)}-1)\cdot P \\
& = & 0.
\end{eqnarray*}
This shows that \eqref{gdim-decomp-III} is true.
\end{proof}

\begin{proof}[Proof of Theorem \ref{decomp-III}]
Lemmas \ref{partial-decomp-III} and \ref{M=0-decomp-III} imply Theorem \ref{decomp-III}.
\end{proof}

\section{Direct Sum Decomposition (IV)}\label{sec-MOY-IV}

The objective of this section is to prove Theorem \ref{decomp-IV}, which categorifies \cite[Lemma A.7]{MOY} and generalizes direct sum decomposition (IV) in \cite{KR1}.

\begin{figure}[ht]

\setlength{\unitlength}{1pt}

\begin{picture}(360,105)(-180,-15)

% left

\put(-160,0){\vector(0,1){25}}

\put(-160,65){\vector(0,1){25}}

\put(-160,25){\vector(0,1){40}}

\put(-160,65){\vector(1,0){40}}

\put(-120,25){\vector(0,1){40}}

\put(-120,25){\vector(-1,0){40}}

\put(-120,0){\vector(0,1){25}}

\put(-120,65){\vector(0,1){25}}

\put(-140,64){\line(0,1){2}}

\put(-140,24){\line(0,1){2}}

\put(-161,40){\line(1,0){2}}

\put(-121,40){\line(1,0){2}}

\put(-167,13){\tiny{$1$}}

\put(-167,72){\tiny{$l$}}

\put(-177,48){\tiny{$l+n$}}

\put(-117,13){\tiny{$m+l-1$}}

\put(-117,72){\tiny{$m$}}

\put(-117,48){\tiny{$m-n$}}

\put(-153,28){\tiny{$l+n-1$}}

\put(-142,58){\tiny{$n$}}

\put(-117,82){\small{$\mathbb{X}$}}

\put(-117,37){\small{$\mathbb{Y}$}}

\put(-117,0){\small{$\mathbb{W}$}}

\put(-170,82){\small{$\mathbb{T}$}}

\put(-170,37){\small{$\mathbb{S}$}}

\put(-175,0){\small{$\{r\}$}}

\put(-142,-15){$\Gamma$}

\put(-143,67){\small{$\mathbb{A}$}}

\put(-143,15){\small{$\mathbb{B}$}}

% center

\put(-20,0){\vector(0,1){45}}

\put(-20,45){\vector(0,1){45}}

\put(20,0){\vector(0,1){45}}

\put(20,45){\vector(0,1){45}}

\put(20,45){\vector(-1,0){40}}

\put(0,44){\line(0,1){2}}

\put(-27,20){\tiny{$1$}}

\put(23,20){\tiny{$m+l-1$}}

\put(-27,65){\tiny{$l$}}

\put(23,65){\tiny{$m$}}

\put(-5,38){\tiny{$l-1$}}

\put(23,82){\small{$\mathbb{X}$}}

\put(23,0){\small{$\mathbb{W}$}}

\put(-30,82){\small{$\mathbb{T}$}}

\put(-35,0){\small{$\{r\}$}}

\put(-3,48){\small{$\mathbb{D}$}}

\put(-2,-15){$\Gamma_0$}

% right

\put(110,0){\vector(2,3){20}}

\put(150,0){\vector(-2,3){20}}

\put(130,30){\vector(0,1){30}}

\put(130,60){\vector(-2,3){20}}

\put(130,60){\vector(2,3){20}}

\put(117,20){\tiny{$1$}}

\put(140,20){\tiny{$m+l-1$}}

\put(117,65){\tiny{$l$}}

\put(140,65){\tiny{$m$}}

\put(133,42){\tiny{$m+l$}}

\put(153,82){\small{$\mathbb{X}$}}

\put(153,0){\small{$\mathbb{W}$}}

\put(100,82){\small{$\mathbb{T}$}}

\put(95,0){\small{$\{r\}$}}

\put(128,-15){$\Gamma_1$}

\end{picture}

\caption{}\label{decomposition-IV-figure}

\end{figure}

\begin{theorem}\label{decomp-IV}
Let $\Gamma$, $\Gamma_0$ and $\Gamma_1$ be the MOY graphs in Figure \ref{decomposition-IV-figure}, where $l,m,n$ are integers satisfying $0\leq n \leq m \leq N$ and $0\leq l, m+l-1 \leq N$. Then
\begin{equation}\label{decomp-IV-main}
C(\Gamma) \simeq C(\Gamma_0)\{\qb{m-1}{n}\} \oplus C(\Gamma_1)\{\qb{m-1}{n-1}\}.
\end{equation}

Similarly, if $\overline{\Gamma}$, $\overline{\Gamma}_0$ and $\overline{\Gamma}_1$ are $\Gamma$, $\Gamma_0$, $\Gamma_1$ with the orientation of every edge reversed, then 
\begin{equation}\label{decomp-IV-opposite}
C(\overline{\Gamma}) \simeq C(\overline{\Gamma}_0)\{\qb{m-1}{n}\} \oplus C(\overline{\Gamma}_1)\{\qb{m-1}{n-1}\}.
\end{equation}
\end{theorem}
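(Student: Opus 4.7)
The plan is to prove \eqref{decomp-IV-main} by mimicking the strategy used for Theorem \ref{decomp-III}: construct explicit morphisms realizing each hypothetical summand, verify the required orthogonality and diagonal composition formulas, pass to a direct sum decomposition up to an unknown residual summand $M$ via Lemmas \ref{fully-additive-hmf} and \ref{idempotent-lifting}, and finally eliminate $M$ by matching graded dimensions. The statement \eqref{decomp-IV-opposite} for $\overline{\Gamma}$ will follow from \eqref{decomp-IV-main} applied to the dual matrix factorizations via Lemma \ref{bullet}, or equivalently by running the same argument after reversing orientations.

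First, I would build the morphisms. Index two collections of partitions: $\mu$ ranging over $\Lambda_{n,m-1-n}$ (of cardinality $\qb{m-1}{n}(1)$, corresponding to the shift $\qb{m-1}{n}$) and $\nu$ ranging over $\Lambda_{n-1,m-n}$ (of cardinality $\qb{m-1}{n-1}(1)$). For each $\mu$ I define a morphism $f_{0,\mu}\colon C(\Gamma_0) \to C(\Gamma)$ and its candidate left-inverse $g_{0,\mu}\colon C(\Gamma)\to C(\Gamma_0)$ as compositions built from: a bouquet move $h$ rearranging trivalent vertices, an edge splitting $\phi$ on the $\mathbb{T}$-side producing a bi-gon carrying the $\mathbb{A}$-edge, the general $\chi^0$-morphism of Proposition \ref{general-general-chi-maps} that inserts the horizontal $n$-colored edge, and multiplication $\mathfrak{m}(S_\mu(\mathbb{A}))$ by a Schur polynomial in $\mathbb{A}$. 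Similarly, for each $\nu$ I define $f_{1,\nu}\colon C(\Gamma_1)\to C(\Gamma)$ and $g_{1,\nu}\colon C(\Gamma)\to C(\Gamma_1)$ by first opening the middle $(m+l)$-edge via an edge splitting into the $(l+n)$- and $(m-n)$-strands, applying $\chi^0$ twice to generate the full square of $\Gamma$, and multiplying by a suitable Schur polynomial $\mathfrak{m}(S_\nu(\mathbb{B}))$ in the $\mathbb{B}$-alphabet. Assembling these into
\[
\vec{f} = \bigl(f_{0,\mu}\bigr)_\mu \oplus \bigl(f_{1,\nu}\bigr)_\nu, \qquad \vec{g} = \bigl(g_{0,\mu}\bigr)_\mu \oplus \bigl(g_{1,\nu}\bigr)_\nu,
\]
after the appropriate quantum-degree shifts, yields candidate morphisms between $C(\Gamma_0)\{\qb{m-1}{n}\}\oplus C(\Gamma_1)\{\qb{m-1}{n-1}\}$ and $C(\Gamma)$ of bidegree zero.

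The heart of the proof is verifying $\vec{g}\circ \vec{f}\simeq \id$. Using Proposition \ref{general-general-chi-maps}(ii), a composition $g_{i,\sigma}\circ f_{i,\tau}$ collapses, modulo scaling, to $\overline{\phi}\circ \mathfrak{m}(S_\sigma(\mathbb{A})\cdot S_\tau(-\mathbb{A}))\circ \phi$ (or the analogous expression in $\mathbb{B}$), which by Lemma \ref{phibar-compose-phi} and Theorem \ref{part-symm-str} (the Sylvester-operator identity) equals $\delta_{\sigma,\tau^c}\cdot \id$; after choosing the dual indexing of partitions one obtains $g_{i,\sigma}\circ f_{i,\tau}\approx \delta_{\sigma,\tau}\id$, which can be promoted to $\simeq$ by rescaling. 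The off-diagonal vanishing $g_{i,\sigma}\circ f_{j,\tau}\simeq 0$ for $i\neq j$ I expect to follow on degree grounds: these composites land in $\Hom_{\HMF}(C(\Gamma_j),C(\Gamma_i))$ in a quantum degree strictly below the lowest non-vanishing quantum grading, computed by applying Theorems \ref{decomp-I} and \ref{decomp-II} and Corollary \ref{circle-dimension} to reduce $\Gamma_i\sqcup \overline{\Gamma}_j$ to closed circles. This yields a decomposition
\[
C(\Gamma) \simeq C(\Gamma_0)\{\qb{m-1}{n}\} \oplus C(\Gamma_1)\{\qb{m-1}{n-1}\} \oplus M
\]
for some residual $M\in \hmf_{R,w}$. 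To prove $M\simeq 0$, I would compute the graded dimension $\gdim$ of all three standard pieces by marking the endpoints as in Figure \ref{decomposition-IV-figure}, contracting internal indeterminants via Proposition \ref{b-contraction} and Corollary \ref{a-contraction-weak}, and reducing to explicit Koszul complexes over the endpoint alphabets; the required identity
\[
\gdim C(\Gamma) = \qb{m-1}{n}\cdot \gdim C(\Gamma_0) + \qb{m-1}{n-1}\cdot \gdim C(\Gamma_1)
\]
is the categorified version of the MOY state-sum relation \cite[Lemma A.7]{MOY}. Once this holds, Corollary \ref{homology-detects-homotopy} forces $\gdim M =0$ and therefore $M\simeq 0$.

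The main obstacle will be the combinatorial bookkeeping in the diagonal composition $g_{i,\sigma}\circ f_{i,\tau}$: the composite involves several stacked $\chi^0$/$\chi^1$ pairs, each contributing a sum of Schur-polynomial multiplications indexed by partitions in $\Lambda_{l,m}$ (Proposition \ref{general-general-chi-maps}(ii)), and isolating the clean Kronecker-delta behaviour requires carefully choosing the indexing sets $\{\mu\}$ and $\{\nu\}$ so that all cross-terms land in the kernel of the Sylvester operator, and then rescaling each $f_{i,\sigma}$, $g_{i,\sigma}$ to convert the resulting $\approx$ into $\simeq$. A secondary difficulty is the graded-dimension computation for $C(\Gamma)$, which requires careful use of Lemma \ref{edge-contraction} to first reduce the square to a trivalent graph and Proposition \ref{complete-non-zero-divisor} to ensure the contractions produce the correct Koszul homology.
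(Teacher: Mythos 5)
The overall architecture of your plan matches the paper's: build families of morphisms $C(\Gamma_0) \rightleftarrows C(\Gamma)$ and $C(\Gamma_1) \rightleftarrows C(\Gamma)$ from bouquet moves, edge splitting/merging, the general $\chi$-morphisms and Schur-polynomial multiplications, organize the diagonal compositions into (a rescaled version of) the identity, then split off the two expected summands and kill the residual $M$ by matching graded dimensions via Corollary \ref{homology-detects-homotopy}. The graded-dimension identity you cite is indeed what the paper proves in Lemma \ref{decomp-IV-dimensions}, and your sketch of that computation is reasonable.

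However, there is a genuine gap at the step where you assert that the off-diagonal compositions $g_{i,\sigma}\circ f_{j,\tau}$ (for $i\neq j$) vanish ``on degree grounds'' because they land below the lowest non-vanishing quantum grading of $\Hom_{\HMF}(C(\Gamma_j),C(\Gamma_i))$. This is false in general. The paper computes (Lemma \ref{decomp-IV-nilopotency-hmf}) that the lowest non-vanishing quantum grading of $\Hom_{\HMF}(C(\Gamma_0),C(\Gamma_1))$ and $\Hom_{\HMF}(C(\Gamma_1),C(\Gamma_0))$ is $m$, and (Lemma \ref{decomp-IV-nilopotency-sum-difference}) that the component $\widetilde{g}_\mu\circ\alpha_\lambda$ has quantum degree $2(|\lambda|-|\mu|-n)+m$. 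Since $\lambda$ ranges over $\Lambda_{m-n,\,n-1}$ (so $|\lambda|$ can be as large as $(m-n)(n-1)$) while $\mu$ ranges over $\Lambda_{n,\,m-n-1}$ with $|\mu|\geq 0$, the inequality $|\lambda|-|\mu|<n$ fails for many pairs when $n\geq 2$, so the degree argument only kills \emph{some} off-diagonal terms. The paper's workaround is the homotopic-nilpotency argument of Lemma \ref{decomp-IV-nilopotency}: even though individual cross-terms survive, every length-$k$ chain of them forces a cumulative degree drop of at least $km$, which for $km>(n-1)(m-n)$ exceeds the available range, so $\vec{\beta}\circ F\circ G\circ\vec{\alpha}$ and $G\circ\vec{\alpha}\circ\vec{\beta}\circ F$ are homotopically nilpotent. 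Lemma \ref{decomp-IV-Psi-Phi} then uses this nilpotency to explicitly invert $\id$ minus these operators and conjugate the almost-triangular $\Psi_0\circ\Phi_0$ to the identity. Without this mechanism (or some replacement for it), your splitting argument does not close. A secondary point: even the ``diagonal'' compositions $g_{\lambda}\circ f_{\mu}$ in the paper are only upper-triangular in the partition order rather than exactly Kronecker (Lemma \ref{decomp-IV-f-g-lambda-compose} gives $\approx\id$ for $\lambda=\mu$ and $\approx 0$ for $\lambda<\mu$, with no claim for $\lambda>\mu$), so the paper also needs the triangular-inversion step in Lemmas \ref{decomp-IV-F-G} and \ref{decomp-IV-Alpha-Beta}; your plan should anticipate this rather than aiming for a clean Kronecker delta directly from the Sylvester operator.
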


The proofs of decompositions \eqref{decomp-IV-main} and \eqref{decomp-IV-opposite} are almost identical. So we only prove \eqref{decomp-IV-main} in this paper and leave \eqref{decomp-IV-opposite} to the reader. 
\begin{figure}[ht]

\setlength{\unitlength}{1pt}

\begin{picture}(360,105)(-180,-15)

% left

\put(-160,0){\vector(0,1){25}}

\put(-160,65){\vector(0,1){25}}

\put(-160,25){\vector(0,1){40}}

\put(-160,65){\vector(1,0){40}}

\put(-120,25){\vector(0,1){40}}

\put(-120,25){\vector(-1,0){40}}

\put(-120,0){\vector(0,1){25}}

\put(-120,65){\vector(0,1){25}}

\put(-167,13){\tiny{$1$}}

\put(-167,72){\tiny{$1$}}

\put(-167,48){\tiny{$2$}}

\put(-117,13){\tiny{$2$}}

\put(-117,72){\tiny{$2$}}

\put(-117,48){\tiny{$1$}}

\put(-142,28){\tiny{$1$}}

\put(-142,58){\tiny{$1$}}

\put(-142,-15){$\Gamma'$}

% center

\put(-20,0){\vector(0,1){90}}

\put(20,0){\vector(0,1){90}}

\put(-27,45){\tiny{$1$}}

\put(23,45){\tiny{$2$}}

\put(-2,-15){$\Gamma_0'$}

% right

\put(110,0){\vector(2,3){20}}

\put(150,0){\vector(-2,3){20}}

\put(130,30){\vector(0,1){30}}

\put(130,60){\vector(-2,3){20}}

\put(130,60){\vector(2,3){20}}

\put(117,20){\tiny{$1$}}

\put(140,20){\tiny{$2$}}

\put(117,65){\tiny{$1$}}

\put(140,65){\tiny{$2$}}

\put(133,42){\tiny{$3$}}

\put(128,-15){$\Gamma_1'$}

\end{picture}

\caption{}\label{decomposition-IV-special-figure}

\end{figure}

\begin{remark}
Although direct sum decomposition (IV) is formulated in a different form in \cite{KR1}, its proof there comes down to establishing the decomposition
\begin{equation}\label{decomp-IV-special}
C(\Gamma') \simeq C(\Gamma_0') \oplus C(\Gamma_1'),
\end{equation}
where $\Gamma'$, $\Gamma_0'$ and $\Gamma_1'$ are given in Figure \ref{decomposition-IV-special-figure}. This is also what is actually used in the proof of the invariance of the $\mathfrak{sl}(N)$ Khovanov-Rozansky homology under Reidemeister move III. Clearly, if we specify that $l=n=1, m=2$ in Theorem \ref{decomp-IV}, then we get decomposition \eqref{decomp-IV-special}.
\end{remark}

To prove Theorem \ref{decomp-IV}, we need the following special case of Proposition \ref{general-general-chi-maps}.

\begin{figure}[ht]

\setlength{\unitlength}{1pt}

\begin{picture}(360,75)(-180,-15)

% left

\put(-120,0){\vector(1,1){20}}

\put(-100,20){\vector(1,-1){20}}

\put(-100,40){\vector(0,-1){20}}

\put(-100,40){\vector(1,1){20}}

\put(-120,60){\vector(1,-1){20}}

\put(-101,30){\line(1,0){2}}

\put(-127,45){\tiny{$n+1$}}

\put(-127,15){\tiny{$m-n$}}

\put(-90,45){\tiny{$1$}}

\put(-90,15){\tiny{$m$}}

\put(-95,28){\tiny{$n$}}

\put(-130,55){\small{$\mathbb{S}$}}

\put(-75,0){\small{$\mathbb{X}$}}

\put(-113,27){\small{$\mathbb{A}$}}

\put(-130,0){\small{$\mathbb{Y}$}}

\put(-75,55){\small{$\{r\}$}}

\put(-102,-15){$\Gamma_4$}

% center

\put(-30,35){\vector(1,0){60}}

\put(30,25){\vector(-1,0){60}}

\put(-3,40){\small{$\chi^0$}}

\put(-3,15){\small{$\chi^1$}}

% right

\put(60,10){\vector(1,1){20}}

\put(60,50){\vector(1,-1){20}}

\put(80,30){\vector(1,0){20}}

\put(100,30){\vector(1,1){20}}

\put(100,30){\vector(1,-1){20}}

\put(68,45){\tiny{$n+1$}}

\put(70,15){\tiny{$m-n$}}

\put(108,45){\tiny{$1$}}

\put(106,15){\tiny{$m$}}

\put(81,32){\tiny{$m+1$}}

\put(50,45){\small{$\mathbb{S}$}}

\put(125,10){\small{$\mathbb{X}$}}

\put(50,10){\small{$\mathbb{Y}$}}

\put(122,45){\small{$\{r\}$}}

\put(88,-15){$\Gamma_5$}

\end{picture}

\caption{}\label{general-chi-maps-figure}

\end{figure}

\begin{corollary}\label{general-chi-maps-def}
Let $\Gamma_4$ and $\Gamma_5$ be the MOY graphs in Figure \ref{general-chi-maps-figure}, where $m,n$ are integers such that $0\leq n \leq m \leq N$. Then there exist homogeneous morphisms 
\begin{eqnarray*}
\chi^0: C(\Gamma_4) \rightarrow C(\Gamma_5), && \\
\chi^1: C(\Gamma_5) \rightarrow C(\Gamma_4), &&
\end{eqnarray*}
both of quantum degree $m-n$ and $\zed_2$-degree $0$ such that
\begin{eqnarray*}
\chi^1 \circ \chi^0 & \simeq & (\sum_{k=0}^{m-n} (-r)^{m-n-k} Y_k) \cdot \id_{C(\Gamma_4)}, \\
\chi^0 \circ \chi^1 & \simeq & (\sum_{k=0}^{m-n} (-r)^{m-n-k} Y_k) \cdot \id_{C(\Gamma_5)},
\end{eqnarray*}
where $Y_k$ is the $k$-th elementary symmetric polynomial in $\mathbb{Y}$.
\end{corollary}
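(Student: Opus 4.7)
The plan is to derive Corollary~\ref{general-chi-maps-def} directly from Proposition~\ref{general-general-chi-maps} by specializing the parameters and simplifying the resulting sum. First I would identify the MOY graphs $\Gamma_4$ and $\Gamma_5$ with the graphs $\Gamma_0$ and $\Gamma_1$ of Proposition~\ref{general-general-chi-maps} under the parameter substitution
\[
(m_{\text{gen}},\,n_{\text{gen}},\,l_{\text{gen}}) \;=\; (1,\, m,\, m-n),
\]
where the subscript indicates the symbols used in Proposition~\ref{general-general-chi-maps}. A quick check shows the colors around the middle edge match: the top-left edge has color $m_{\text{gen}}+n_{\text{gen}}-l_{\text{gen}}=n+1$, the bottom-left has color $l_{\text{gen}}=m-n$, the middle edge of $\Gamma_0$ has color $n_{\text{gen}}-l_{\text{gen}}=n$, and the middle edge of $\Gamma_1$ has color $m_{\text{gen}}+n_{\text{gen}}=m+1$. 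The alphabet assignments give $\mathbb{X}_{\text{gen}}=\{r\}$ (top-right, color $1$) and $\mathbb{B}_{\text{gen}}=\mathbb{Y}$ (bottom-left, color $m-n$). The hypotheses $1\le l_{\text{gen}}\le n_{\text{gen}}<m_{\text{gen}}+n_{\text{gen}}\le N$ of Proposition~\ref{general-general-chi-maps} translate to $1\le m-n\le m$ and $m+1\le N$, which hold in the range $0\le n\le m\le N$ after noting that the only nontrivial case is $m-n\ge 1$ (the case $m=n$ makes both MOY graphs reducible to simpler ones and can be treated separately via Lemma~\ref{edge-contraction} or handled as a boundary case).

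Next I would read off the quantum and $\zed_2$ degrees. Proposition~\ref{general-general-chi-maps}(i) gives both $\chi^0$ and $\chi^1$ with $\zed_2$-degree $0$ and quantum degree $m_{\text{gen}}\cdot l_{\text{gen}}=1\cdot(m-n)=m-n$, which is exactly what the corollary asserts. The composition formulas then follow from Proposition~\ref{general-general-chi-maps}(ii), which in the specialized notation reads
\[
\chi^1\circ\chi^0 \;\simeq\; \Bigl(\sum_{\lambda\in\Lambda_{m-n,1}} (-1)^{|\lambda|}\, S_{\lambda'}(\{r\})\, S_{\lambda^c}(\mathbb{Y})\Bigr)\cdot\id_{C(\Gamma_4)},
\]
and similarly for $\chi^0\circ\chi^1$.

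The remaining step is to verify that this sum equals $\sum_{k=0}^{m-n}(-r)^{m-n-k}Y_k$. This is a purely combinatorial calculation: every $\lambda\in\Lambda_{m-n,1}$ is determined by its number of nonzero parts $k\in\{0,1,\dots,m-n\}$, so $\lambda=(\underbrace{1,\dots,1}_{k},0,\dots,0)$ with $|\lambda|=k$, conjugate $\lambda'=(k)$, and complement $\lambda^c=(\underbrace{1,\dots,1}_{m-n-k},0,\dots,0)$. Using $S_{(k)}(\{r\})=h_k(\{r\})=r^k$ and $S_{(1^{m-n-k})}(\mathbb{Y})=Y_{m-n-k}$, the sum collapses to
\[
\sum_{k=0}^{m-n} (-1)^k r^k Y_{m-n-k} \;=\; \sum_{j=0}^{m-n} (-r)^{m-n-j} Y_j,
\]
after substituting $j=m-n-k$. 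This matches the formula in the corollary.

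I do not expect any real obstacle in this proof: the work was done in proving the general Proposition~\ref{general-general-chi-maps}, and the only genuine task here is the bookkeeping in the parameter identification and the elementary simplification of the sum over $\Lambda_{m-n,1}$ (whose structure is trivial because the second index is~$1$). The only mild care needed is to match the alphabets in $\Gamma_4,\Gamma_5$ with those in $\Gamma_0,\Gamma_1$ of the general proposition so that $\mathbb{X}_{\text{gen}}$ and $\mathbb{B}_{\text{gen}}$ in the formula correctly become $\{r\}$ and $\mathbb{Y}$ respectively.
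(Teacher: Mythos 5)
Your proof is correct and matches exactly what the paper leaves implicit: the corollary is obtained by specializing Proposition~\ref{general-general-chi-maps} with $(m_{\text{gen}},n_{\text{gen}},l_{\text{gen}})=(1,m,m-n)$, $\mathbb{X}_{\text{gen}}=\{r\}$, $\mathbb{B}_{\text{gen}}=\mathbb{Y}$, and then simplifying the sum over $\Lambda_{m-n,1}$ by noting each $\lambda=(1^k)$ gives $S_{\lambda'}(\{r\})=r^k$ and $S_{\lambda^c}(\mathbb{Y})=Y_{m-n-k}$, so the sum collapses to $\sum_{j=0}^{m-n}(-r)^{m-n-j}Y_j$ after reindexing. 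Two boundary cases you mention only in passing deserve a word: when $m=n$, $l_{\text{gen}}=0$ and the hypothesis $1\leq l_{\text{gen}}$ fails, but then $\Gamma_4$ and $\Gamma_5$ are related by a trivial vertex removal so the $\chi$'s are mutually inverse homotopy equivalences (consistent with the formula reducing to $\id$); and when $m=N$, $m_{\text{gen}}+n_{\text{gen}}=m+1>N$, but then both $C(\Gamma_4)$ and $C(\Gamma_5)$ are null-homotopic by Lemma~\ref{width-cap} (each has a vertex of width $m+1$), so every claim holds trivially. These are minor and do not affect the substance of the argument.
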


\subsection{Relating $\Gamma$ and $\Gamma_0$}\label{decomp-IV-subsection-Gamma-Gamma0}

\begin{figure}[ht]

\setlength{\unitlength}{1pt}

\begin{picture}(360,255)(-180,-15)

% upper left

\put(-110,150){\vector(0,1){25}}

\put(-110,215){\vector(0,1){25}}

\put(-110,175){\vector(0,1){40}}

\put(-110,215){\vector(1,0){40}}

\put(-70,175){\vector(0,1){40}}

\put(-70,175){\vector(-1,0){40}}

\put(-70,157){\vector(0,1){25}}

\put(-70,215){\vector(0,1){25}}

\put(-90,214){\line(0,1){2}}

\put(-90,174){\line(0,1){2}}

\put(-111,190){\line(1,0){2}}

\put(-71,190){\line(1,0){2}}

\put(-117,163){\tiny{$1$}}

\put(-117,222){\tiny{$l$}}

\put(-127,198){\tiny{$l+n$}}

\put(-67,163){\tiny{$m+l-1$}}

\put(-67,222){\tiny{$m$}}

\put(-67,198){\tiny{$m-n$}}

\put(-103,178){\tiny{$l+n-1$}}

\put(-92,208){\tiny{$n$}}

\put(-67,232){\small{$\mathbb{X}$}}

\put(-67,187){\small{$\mathbb{Y}$}}

\put(-67,150){\small{$\mathbb{W}$}}

\put(-120,232){\small{$\mathbb{T}$}}

\put(-120,187){\small{$\mathbb{S}$}}

\put(-125,150){\small{$\{r\}$}}

\put(-93,217){\small{$\mathbb{A}$}}

\put(-93,165){\small{$\mathbb{B}$}}

\put(-150,190){$\Gamma$}

\put(-100,140){\vector(-1,-1){30}}

\put(-125,125){\small{$\chi^1$}}

\put(-120,110){\vector(1,1){30}}

\put(-100,125){\small{$\chi^0$}}

\put(-30,190){\vector(1,0){60}}

\put(30,200){\vector(-1,0){60}}

\put(-3,204){\small{$f$}}

\put(-3,180){\small{$g$}}

% upper right

\put(70,150){\vector(0,1){45}}

\put(70,195){\vector(0,1){45}}

\put(110,150){\vector(0,1){45}}

\put(110,195){\vector(0,1){45}}

\put(110,195){\vector(-1,0){40}}

\put(63,170){\tiny{$1$}}

\put(113,170){\tiny{$m+l-1$}}

\put(63,215){\tiny{$l$}}

\put(113,215){\tiny{$m$}}

\put(85,188){\tiny{$l-1$}}

\put(113,232){\small{$\mathbb{X}$}}

\put(113,150){\small{$\mathbb{W}$}}

\put(60,232){\small{$\mathbb{T}$}}

\put(55,150){\small{$\{r\}$}}

\put(150,190){$\Gamma_0$}

\put(90,140){\vector(1,-1){30}}

\put(125,125){\small{$\overline{\phi}$}}

\put(130,110){\vector(-1,1){30}}

\put(90,125){\small{$\phi$}}

% lower left

\put(-160,0){\vector(0,1){45}}

\put(-160,45){\vector(0,1){45}}

\put(-130,45){\vector(-1,0){30}}

\put(-130,45){\vector(2,1){30}}

\put(-100,30){\vector(-2,1){30}}

\put(-100,0){\vector(0,1){30}}

\put(-100,30){\vector(0,1){30}}

\put(-100,60){\vector(0,1){30}}

\put(-101,45){\line(1,0){2}}

\put(-115,51.5){\line(0,1){2}}

\put(-115,36.5){\line(0,1){2}}

\put(-167,13){\tiny{$1$}}

\put(-167,72){\tiny{$l$}}

\put(-97,13){\tiny{$m+l-1$}}

\put(-97,72){\tiny{$m$}}

\put(-97,48){\tiny{$m-n$}}

\put(-137,30){\tiny{$l+n-1$}}

\put(-127,51){\tiny{$n$}}

\put(-150,38){\tiny{$l-1$}}

\put(-142,-15){$\Gamma_{10}$}

\put(-97,82){\small{$\mathbb{X}$}}

\put(-97,37){\small{$\mathbb{Y}$}}

\put(-97,0){\small{$\mathbb{W}$}}

\put(-170,82){\small{$\mathbb{T}$}}

\put(-175,0){\small{$\{r\}$}}

\put(-115,57){\small{$\mathbb{A}$}}

\put(-115,40){\small{$\mathbb{B}$}}

\put(-40,40){\vector(1,0){80}}

\put(40,50){\vector(-1,0){80}}

\put(-3,54){\small{$h_0$}}

\put(-3,30){\small{$h_1$}}

% lower right

\put(80,0){\vector(0,1){30}}

\put(80,30){\vector(0,1){60}}

\put(140,30){\vector(-1,0){60}}

\put(140,75){\vector(0,1){15}}

\put(140,30){\vector(0,1){15}}

\put(130,50){\vector(0,1){20}}

\put(150,50){\vector(0,1){20}}

\put(140,0){\vector(0,1){30}}

\qbezier(140,75)(130,75)(130,70)

\qbezier(140,75)(150,75)(150,70)

\qbezier(140,45)(130,45)(130,50)

\qbezier(140,45)(150,45)(150,50)

\put(129,58){\line(1,0){2}}

\put(149,58){\line(1,0){2}}

\put(73,13){\tiny{$1$}}

\put(73,72){\tiny{$l$}}

\put(153,62){\tiny{$m-n$}}

\put(120,62){\tiny{$n$}}

\put(143,13){\tiny{$m+l-1$}}

\put(130,80){\tiny{$m$}}

\put(143,35){\tiny{$m$}}

\put(105,23){\tiny{$l-1$}}

\put(108,-15){$\Gamma_{11}$}

\put(143,82){\small{$\mathbb{X}$}}

\put(143,0){\small{$\mathbb{W}$}}

\put(153,52){\small{$\mathbb{Y}$}}

\put(70,82){\small{$\mathbb{T}$}}

\put(65,0){\small{$\{r\}$}}

\put(120,52){\small{$\mathbb{A}$}}

\end{picture}

\caption{}\label{decomposition-IV-f-g-def}

\end{figure}

Consider the diagram in Figure \ref{decomposition-IV-f-g-def}, in which
\begin{itemize}
	\item $\phi$ and $\overline{\phi}$ are the morphisms associated to the apparent edge splitting and merging,
	\item $h_0$ and $h_1$ are the homotopy equivalences induced by the apparent bouquet moves and are inverses of each other,
	\item $\chi^0$ and $\chi^1$ are the morphisms from applying Corollary \ref{general-chi-maps-def} to the left half of $\Gamma$.
\end{itemize}
All these morphisms are $\Sym(\mathbb{X}|\mathbb{W}|\mathbb{T}|\{r\})$-linear. Moreover, $h_0$, $h_1$, $\chi^0$ and $\chi^1$ are also $\Sym(\mathbb{A}|\mathbb{Y})$-linear. By Corollary \ref{general-chi-maps-def}, we know that
\begin{equation}\label{Gamma-0-eq1}
\chi^1 \circ \chi^0 = (\sum_{k=0}^n (-r)^k A_{n-k}) \cdot \id_{C(\Gamma_{10})}.
\end{equation}

\begin{definition}\label{decomp-IV-f-g-def}
Define $f: C(\Gamma_0) \rightarrow C(\Gamma)$ by $f=\chi^0 \circ h_0 \circ \phi$ and $g: C(\Gamma) \rightarrow C(\Gamma_0)$ by $g=\overline{\phi} \circ h_1 \circ \chi^1$.

Note that $f$ and $g$ are both homogeneous morphisms of quantum degree $-n(m-n-1)$ and $\zed_2$-degree $0$.
\end{definition}

\begin{definition}\label{decomp-IV-f-g-lambda-def}
Let $\Lambda=\Lambda_{n,m-n-1} =\{\lambda~|~l(\lambda)\leq n,~\lambda_1\leq m-n-1\}$. For $\lambda=(\lambda_1\geq\cdots\geq\lambda_n) \in \Lambda$, define $\lambda^c=(\lambda^c_1\geq\cdots\geq\lambda^c_n) \in \Lambda$ by $\lambda^c_j =m-n-1-\lambda_{n+1-j}$ for $j=1,\dots,n$.

For $\lambda \in \Lambda$, define $f_{\lambda}: C(\Gamma_0) \rightarrow C(\Gamma)$ by $f_{\lambda} = \mathfrak{m}(S_{\lambda}(\mathbb{A})) \circ f$, where $S_{\lambda}(\mathbb{A})$ is the Schur polynomial in $\mathbb{A}$ associated to $\lambda$, and $\mathfrak{m}(S_{\lambda}(\mathbb{A}))$ is the morphism given by the multiplication by $S_{\lambda}(\mathbb{A})$. Then $f_{\lambda}$ is a homogeneous morphism of quantum degree $2|\lambda|-n(m-n-1)$ and $\zed_2$-degree $0$. 

Also, define $g_{\lambda}: C(\Gamma) \rightarrow C(\Gamma_0)$ by $g_{\lambda} = g \circ \mathfrak{m}(S_{\lambda^c}(-\mathbb{Y}))$, where $S_{\lambda^c}(-\mathbb{Y})$ is the Schur polynomial in $-\mathbb{Y}$ associated to $\lambda^c$. Then $g_{\lambda}$ is a homogeneous morphism of quantum degree $n(m-n-1)-2|\lambda|$ and $\zed_2$-degree $0$. 
\end{definition}

\begin{lemma}\label{multiply-schur-elementary}
Let $\mathbb{A}$ be an alphabet with $n$ indeterminates. Denote by $A_k$ the $k$-th elementary symmetric polynomial in $\mathbb{A}$. For any $k=1,\dots,n$ and any partition $\lambda=(\lambda_1\geq\dots\geq\lambda_n)$, there is an expansion
\[
A_k \cdot S_{\lambda}(\mathbb{A}) = \sum_{l(\mu)\leq n} c_{\mu} \cdot S_{\mu}(\mathbb{A}),
\]
where $c_\mu \in \zed_{\geq0}$. If $c_\mu\neq0$, then $|\mu|-|\lambda|=k$ and $\lambda_j \leq \mu_j \leq \lambda_j +1$ $\forall j=1,\dots,n$. In particular,
\[
A_n \cdot S_{\lambda}(\mathbb{A}) = S_{(\lambda_1+1\geq\lambda_2+1\geq\cdots\geq\lambda_n+1)}(\mathbb{A}).
\]
\end{lemma}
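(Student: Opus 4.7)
The plan is to derive this identity directly from the classical Pieri rule for elementary symmetric polynomials. Recall (see e.g.\ Macdonald's book, or \cite{Fulton-Harris,Lascoux-notes}) that in the ring of symmetric functions one has the identity
\[
e_k \cdot s_\lambda = \sum_{\mu} s_\mu,
\]
where the sum runs over partitions $\mu$ such that the skew shape $\mu/\lambda$ is a vertical $k$-strip, i.e.\ $\mu \supseteq \lambda$, $|\mu|-|\lambda|=k$, and no two boxes of $\mu/\lambda$ lie in the same row. Translated into the language of parts, a vertical $k$-strip is precisely the condition $\lambda_j \leq \mu_j \leq \lambda_j+1$ for every $j$, together with $\sum_j(\mu_j-\lambda_j)=k$. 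All coefficients appearing are $0$ or $1$, which already gives the nonnegativity of the $c_\mu$ and the description of their support.

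To specialize this universal identity to the alphabet $\mathbb{A}$ with $n$ indeterminants, I would invoke the fact that $S_\mu(\mathbb{A})=0$ whenever $l(\mu)>n$. The specialization $e_k \mapsto A_k$ thus produces the expansion
\[
A_k \cdot S_\lambda(\mathbb{A}) = \sum_{\substack{\mu:\, l(\mu)\leq n \\ \mu/\lambda \text{ vertical } k\text{-strip}}} S_\mu(\mathbb{A}),
\]
which is exactly the statement of the lemma.

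For the special case $k=n$: a vertical $n$-strip placed over a partition $\lambda$ with $l(\lambda)\leq n$ and producing $\mu$ with $l(\mu)\leq n$ must contribute one box to each of the $n$ rows, since $n$ boxes are to be distributed among at most $n$ rows with at most one per row. Hence $\mu_j=\lambda_j+1$ for every $j=1,\dots,n$, and the sum reduces to the single term $S_{(\lambda_1+1\geq\cdots\geq\lambda_n+1)}(\mathbb{A})$, as claimed.

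There is no real obstacle in this proof; the only care needed is the bookkeeping of the length restriction $l(\mu)\leq n$ under specialization, which is handled automatically by the vanishing of Schur polynomials in $n$ variables when the partition has more than $n$ parts. If a self-contained derivation were preferred over a citation, one could expand $A_k \cdot S_\lambda(\mathbb{A})$ using the dual Jacobi-Trudi formula \eqref{schur-elementary} combined with the Laplace expansion and a sign-reversing involution to match the determinantal expression on the right to a sum of Schur polynomials indexed by vertical strips; this is standard but lengthier.
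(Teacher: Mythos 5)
Your proof is correct and takes essentially the same route as the paper: the paper observes that $A_k$ is the Schur polynomial of a single column of length $k$ and then invokes the Littlewood--Richardson rule, while you invoke the Pieri rule for $e_k$, which is precisely the special case of Littlewood--Richardson being used. Your version is a bit more explicit (naming the vertical-strip condition and discussing the specialization to $n$ variables), but the underlying argument is identical.
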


\begin{proof}
Note that $A_k=S_{\lambda_{k,1}}(\mathbb{A})=S_{(\underbrace{1 \geq \cdots \geq 1}_{k \text{ parts}})}(\mathbb{A})$. This lemma is a special case of the Littlewood-Richardson rule (see for example \cite[Appendix A]{Fulton-Harris}.)
\end{proof}

\begin{lemma}\label{decomp-IV-f-g-lambda-compose}
For $\lambda,\mu\in\Lambda$, 
\[
g_{\mu} \circ f_\lambda \approx \left\{%
\begin{array}{ll}
    \id_{C(\Gamma_0)} & \text{if } \lambda=\mu, \\
    0 & \text{if } \lambda<\mu. \\
\end{array}%
\right.
\]
\end{lemma}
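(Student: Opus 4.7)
The plan is to reduce $g_\mu \circ f_\lambda$ to a standard composition of the form $\overline{\phi} \circ \mathfrak{m}(P) \circ \phi$ for an explicit polynomial $P$, expand $P$ via the Pieri rule, and then invoke Lemma~\ref{phibar-compose-phi}.

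First I would unravel the definitions to write
\[
g_\mu \circ f_\lambda = \overline{\phi} \circ h_1 \circ \chi^1 \circ \mathfrak{m}\bigl(S_{\mu^c}(-\mathbb{Y}) \cdot S_\lambda(\mathbb{A})\bigr) \circ \chi^0 \circ h_0 \circ \phi.
\]
Since $\chi^0$, $\chi^1$, $h_0$, $h_1$ are all $\Sym(\mathbb{A}|\mathbb{Y})$-linear, the multiplication operator commutes past them. Equation~\eqref{Gamma-0-eq1} lets me replace $\chi^1 \circ \chi^0$ by multiplication by $\sum_{k=0}^n (-r)^k A_{n-k} = \prod_{a\in\mathbb{A}}(a-r)$, and since the bouquet-move equivalences are unique up to a nonzero scalar by Remark~\ref{bouquet-move-remark}, I can rescale $h_0$ to arrange $h_1 \circ h_0 \simeq \id_{C(\Gamma_{11})}$. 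This reduces the computation to
\[
g_\mu \circ f_\lambda \simeq \overline{\phi} \circ \mathfrak{m}\!\left(S_{\mu^c}(-\mathbb{Y}) \cdot S_\lambda(\mathbb{A}) \cdot \prod_{a\in\mathbb{A}}(a-r)\right) \circ \phi.
\]

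Next I would expand $S_\lambda(\mathbb{A}) \cdot \prod_a(a-r) = \sum_{k=0}^n (-r)^k\, S_\lambda(\mathbb{A}) \cdot A_{n-k}$ and apply the Pieri rule (Lemma~\ref{multiply-schur-elementary}) to write each $S_\lambda(\mathbb{A}) \cdot A_{n-k} = \sum_\nu S_\nu(\mathbb{A})$, where $\nu$ runs over partitions obtained from $\lambda$ by adjoining a vertical strip of $n-k$ boxes (so $\nu_j \in \{\lambda_j, \lambda_j+1\}$ and $|\nu|-|\lambda| = n-k$). Since $\lambda_1 \leq m-n-1$, every such $\nu$ lies in $\Lambda_{n,m-n}$, so Lemma~\ref{phibar-compose-phi} applies term by term: only those $\nu$ with $\nu_j + \mu^c_{n+1-j} = m-n$ for every $j$ survive, i.e.\ $\nu = \mu + \mathbf{1} := (\mu_1+1, \dots, \mu_n+1)$, and each such term contributes $\id_{C(\Gamma_0)}$.

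Combining the vertical-strip condition $\nu_j \in \{\lambda_j, \lambda_j+1\}$ with $\nu = \mu+\mathbf{1}$ forces $\mu_j \leq \lambda_j \leq \mu_j+1$ for all $j$, and the size constraint then pins down $k = |\lambda|-|\mu|$. When $\lambda = \mu$ these are satisfied only by $k=0$, leaving the single surviving term $(-r)^0 \cdot 1 \cdot \id = \id_{C(\Gamma_0)}$, so $g_\mu \circ f_\lambda \approx \id_{C(\Gamma_0)}$. When $\lambda < \mu$, the first index $j_0$ with $\lambda_{j_0} < \mu_{j_0}$ violates the required $\mu_{j_0} \leq \lambda_{j_0}$, killing every term, so $g_\mu \circ f_\lambda \simeq 0$. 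The main wrinkle I anticipate is justifying the commutations and the consistent rescaling of the bouquet maps; the final combinatorial step is routine.
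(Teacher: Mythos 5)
Your proposal is correct and matches the paper's proof essentially step for step: reduce to $\overline{\phi} \circ \mathfrak{m}(\cdots) \circ \phi$ using $\Sym(\mathbb{A}|\mathbb{Y})$-linearity and the $\chi^1\circ\chi^0$ identity from Corollary~\ref{general-chi-maps-def}, expand via the Pieri rule (Lemma~\ref{multiply-schur-elementary}), and apply Lemma~\ref{phibar-compose-phi} term by term. You spell out the surviving term $\nu = \widetilde{\mu}$ and the incompatibility with $\lambda < \mu$ a bit more explicitly than the paper does, but the argument is the same.
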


\begin{proof}
For $\lambda,\mu\in\Lambda$, by \eqref{Gamma-0-eq1}, we have
\begin{eqnarray*}
g_{\mu} \circ f_\lambda & = & g \circ \mathfrak{m}(S_{\mu^c}(-\mathbb{Y})) \circ \mathfrak{m}(S_{\lambda}(\mathbb{A})) \circ f \\
& = & \overline{\phi} \circ h_1 \circ \chi^1 \circ \mathfrak{m}(S_{\mu^c}(-\mathbb{Y}) \cdot S_{\lambda}(\mathbb{A})) \circ \chi^0 \circ h_0 \circ \phi \\
& = & \overline{\phi} \circ h_1 \circ \chi^1 \circ \chi^0 \circ h_0 \circ \mathfrak{m}(S_{\mu^c}(-\mathbb{Y})\cdot S_{\lambda}(\mathbb{A})) \circ \phi \\
& \simeq & \overline{\phi} \circ \mathfrak{m}((\sum_{k=0}^n (-r)^k A_{n-k}) \cdot S_{\lambda}(\mathbb{A})\cdot S_{\mu^c}(-\mathbb{Y})) \circ \phi.
\end{eqnarray*}
Write $\lambda= (\lambda_1\geq\cdots\geq\lambda_n)$ and $\widetilde{\lambda}=(\lambda_1+1\geq\cdots\geq\lambda_n+1)$. By Lemma \ref{multiply-schur-elementary}, we know that
\[
(\sum_{k=0}^n (-r)^k A_{n-k}) \cdot S_{\lambda}(\mathbb{A}) = S_{\widetilde{\lambda}}(\mathbb{A}) + \sum_{\lambda \leq \nu < \widetilde{\lambda}} c_{\nu}(r) \cdot S_{\nu}(\mathbb{A}),
\]
where $c_{\nu}(r) \in \zed[r]$. So 
\[
g_{\mu} \circ f_\lambda \simeq \overline{\phi} \circ \mathfrak{m}(S_{\widetilde{\lambda}}(\mathbb{A}) \cdot S_{\mu^c}(-\mathbb{Y})) \circ \phi + \sum_{\lambda \leq \nu < \widetilde{\lambda}} c_{\nu}(r) \cdot \overline{\phi} \circ \mathfrak{m}(S_{\nu}(\mathbb{A})\cdot S_{\mu^c}(-\mathbb{Y})) \circ \phi.
\]
Now the lemma follows from Lemma \ref{phibar-compose-phi}.
\end{proof}

\begin{lemma}\label{decomp-IV-F-G}
There exist homogeneous morphisms $F:C(\Gamma_0)\{\qb{m-1}{n}\}\rightarrow C(\Gamma)$ and $G:C(\Gamma)\rightarrow C(\Gamma_0)\{\qb{m-1}{n}\}$ preserving the $\zed_2\oplus\zed$-grading such that $G \circ F \simeq \id_{C(\Gamma_0)\{\qb{m-1}{n}\}}$.
\end{lemma}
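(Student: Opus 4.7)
The plan is to reduce the problem to a triangular inversion using the morphisms $f_\lambda$ and $g_\mu$ from Definition \ref{decomp-IV-f-g-lambda-def}. By \eqref{compute-quantum-binary},
\[
\qb{m-1}{n}=q^{-n(m-n-1)}\sum_{\lambda\in\Lambda} q^{2|\lambda|},
\]
so
\[
C(\Gamma_0)\{\qb{m-1}{n}\}\cong\bigoplus_{\lambda\in\Lambda}C(\Gamma_0)\{q^{2|\lambda|-n(m-n-1)}\}.
\]
Since each $f_\lambda$ is homogeneous of quantum degree $2|\lambda|-n(m-n-1)$ and $\zed_2$-degree $0$, assembling them as a row vector $F:=(f_\lambda)_{\lambda\in\Lambda}$ gives a grading-preserving morphism $F:C(\Gamma_0)\{\qb{m-1}{n}\}\to C(\Gamma)$.

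Recall from Subsection \ref{subsec-partition-schur} that the partition ordering on $\Lambda$ is in fact a total (lexicographic) order. In this order, Lemma \ref{decomp-IV-f-g-lambda-compose} says that the matrix of endomorphisms $M_{\mu\lambda}:=g_\mu\circ f_\lambda$ is upper triangular, with each diagonal entry homotopic to a nonzero scalar multiple of $\id_{C(\Gamma_0)}$. Since each $g_\mu$ is only defined up to homotopy and scaling, we may rescale so that $g_\mu\circ f_\mu\simeq\id_{C(\Gamma_0)}$ for every $\mu\in\Lambda$.

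Imitating the triangular-inversion construction in the proof of Proposition \ref{alpha-beta-vec-def}, I would define, for $\mu,\nu\in\Lambda$,
\[
\tau_{\mu\nu}:C(\Gamma_0)\{q^{2|\nu|-n(m-n-1)}\}\to C(\Gamma_0)\{q^{2|\mu|-n(m-n-1)}\}
\]
by $\tau_{\mu\mu}=\id$, $\tau_{\mu\nu}=0$ for $\mu>\nu$, and, for $\mu<\nu$,
\[
\tau_{\mu\nu}=\sum_{l\geq 1}(-1)^l\sum_{\mu<\nu_1<\cdots<\nu_{l-1}<\nu}(g_\mu\circ f_{\nu_1})\circ(g_{\nu_1}\circ f_{\nu_2})\circ\cdots\circ(g_{\nu_{l-1}}\circ f_\nu).
\]
Each summand is an endomorphism of $C(\Gamma_0)$ of intrinsic quantum degree $2(|\nu|-|\mu|)$, which is precisely what is required for $\tau_{\mu\nu}$ to preserve the grading between the shifted summands. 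Setting $G_\mu:=\sum_{\nu\in\Lambda}\tau_{\mu\nu}\circ g_\nu$ (a finite sum) and assembling the $G_\mu$ as a column vector $G:=(G_\mu)_{\mu\in\Lambda}$ yields a grading-preserving morphism $G:C(\Gamma)\to C(\Gamma_0)\{\qb{m-1}{n}\}$.

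The final step is to verify $G_\mu\circ f_\lambda\simeq\delta_{\mu\lambda}\,\id_{C(\Gamma_0)}$ for all $\mu,\lambda\in\Lambda$, which then gives $G\circ F\simeq\id$. For $\lambda<\mu$ every nonzero combination of $\tau_{\mu\nu}$ and $g_\nu\circ f_\lambda$ forces $\nu\geq\mu>\lambda$, so every term vanishes by Lemma \ref{decomp-IV-f-g-lambda-compose}; for $\lambda=\mu$ only $\nu=\mu$ survives, giving $\id$. The only substantive point is the case $\lambda>\mu$: this amounts to the Möbius-type identity $(I-N+N^2-\cdots)(I+N)=I$ for the strictly upper-triangular part $N$ of $M$, and is verified by the same combinatorial chain cancellation carried out at the end of the proof of Proposition \ref{alpha-beta-vec-def}. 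This bookkeeping is the only real obstacle, and no new geometric or algebraic input is needed beyond what has already been established in Subsection \ref{decomp-IV-subsection-Gamma-Gamma0}.
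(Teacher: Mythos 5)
Your proof is correct and takes essentially the same route as the paper: decompose $C(\Gamma_0)\{\qb{m-1}{n}\}$ into summands indexed by $\Lambda$, assemble $F$ from the $f_\lambda$, rescale so the diagonal entries of $(g_\mu\circ f_\lambda)$ are identities, and then apply the triangular-inversion formula (your $\tau_{\mu\nu}$ is the paper's $H_{\mu\nu}$) to correct the $g_\mu$ before assembling $G$. The only surface difference is that you explicitly note that the partition ordering is a total order, which the paper uses implicitly in its case analysis.
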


\begin{proof}
Note that
\[
C(\Gamma_0)\{\qb{m-1}{n}\} = \bigoplus_{\lambda \in \Lambda} C(\Gamma_0)\{q^{2|\lambda|-n(m-n-1)}\}.
\]
We view $f_\lambda$ as a homogeneous morphism $f_\lambda:C(\Gamma_0)\{q^{2|\lambda|-n(m-n-1)}\} \rightarrow C(\Gamma)$ preserving the $\zed_2\oplus\zed$-grading and $g_{\lambda}$ as a homogeneous morphism $g_\lambda:C(\Gamma) \rightarrow C(\Gamma_0)\{q^{2|\lambda|-n(m-n-1)}\}$ preserving the $\zed_2\oplus\zed$-grading. Also, by choosing appropriate constants, we make
\begin{equation}\label{decomp-IV-f-g-lambda-compose-scaled}
g_{\mu} \circ f_\lambda \simeq \left\{%
\begin{array}{ll}
    \id_{C(\Gamma_0)} & \text{if } \lambda=\mu, \\
    0 & \text{if } \lambda<\mu. \\
\end{array}%
\right.
\end{equation}
Define $H_{\mu\lambda}: C(\Gamma_0)\{q^{2|\lambda|-n(m-n-1)}\} \rightarrow C(\Gamma_0)\{q^{2|\mu|-n(m-n-1)}\}$ by 
\[
H_{\mu\lambda} = \begin{cases}
    \id_{C(\Gamma_0)} & \text{if } \lambda=\mu, \\
    0 & \text{if } \lambda<\mu, \\
    \sum_{k\geq1} \sum_{\mu<\nu_1<\cdots<\nu_{k-1}<\lambda} (-1)^k (g_\mu \circ f_{\nu_1})\circ (g_{\nu_1} \circ f_{\nu_2}) \circ\cdots\circ (g_{\nu_{k-2}} \circ f_{\nu_{k-1}}) \circ (g_{\nu_{k-1}} \circ f_{\lambda}) & \text{if } \lambda>\mu.
\end{cases}
\]
Then define $\widetilde{g}_\mu: C(\Gamma) \rightarrow C(\Gamma_0)\{q^{2|\mu|-n(m-n-1)}\}$ by 
\[
\widetilde{g}_\mu = \sum_{\nu\geq\mu} H_{\mu\nu}\circ g_\nu.
\]
Note that $\widetilde{g}_\mu$ is a homogeneous morphism preserving the $\zed_2\oplus\zed$-grading. 

Next consider $\widetilde{g}_{\mu} \circ f_\lambda$.

(i) Suppose $\lambda < \mu$. Then, by \eqref{decomp-IV-f-g-lambda-compose-scaled},
\[
\widetilde{g}_{\mu} \circ f_\lambda = \sum_{\nu\geq\mu} H_{\mu\nu}\circ g_\nu \circ f_\lambda \simeq 0.
\]

(ii) Suppose $\lambda = \mu$. Then, by \eqref{decomp-IV-f-g-lambda-compose-scaled},
\[
\widetilde{g}_{\mu} \circ f_\lambda = \sum_{\nu\geq\mu} H_{\mu\nu}\circ g_\nu \circ f_\mu\simeq H_{\mu\mu} \circ g_\mu \circ f_\mu \simeq \id_{C(\Gamma_0)}.
\]

(iii) Suppose $\lambda > \mu$. Then
\begin{eqnarray*}
\widetilde{g}_{\mu} \circ f_\lambda & = & \sum_{\nu\geq\mu} H_{\mu\nu}\circ g_\nu \circ f_\lambda \\
& \simeq & H_{\mu\lambda}\circ g_\lambda \circ f_\lambda + H_{\mu\mu}\circ g_\mu \circ f_\lambda + \sum_{\mu<\nu<\lambda} H_{\mu\nu}\circ g_\nu \circ f_\lambda \\
& \simeq & H_{\mu\lambda} + g_\mu \circ f_\lambda \\
& & + \sum_{k\geq1} \sum_{\mu<\nu_1<\cdots<\nu_{k-1}<\nu<\lambda} (-1)^k (g_\mu \circ f_{\nu_1})\circ (g_{\nu_1} \circ f_{\nu_2}) \circ\cdots\circ (g_{\nu_{k-1}} \circ f_{\nu}) \circ (g_{\nu} \circ f_{\lambda}) \\
& = & H_{\mu\lambda} - H_{\mu\lambda} =0
\end{eqnarray*}

Now define 
\[
F:C(\Gamma_0)\{\qb{m-1}{n}\} (=\bigoplus_{\lambda \in \Lambda} C(\Gamma_0)\{q^{2|\lambda|-n(m-n-1)}\})\rightarrow C(\Gamma)
\]
by 
\[
F = \sum_{\lambda\in\Lambda} f_{\lambda},
\]
and
\[
G:C(\Gamma)\rightarrow C(\Gamma_0)\{\qb{m-1}{n}\} (=\bigoplus_{\lambda \in \Lambda} C(\Gamma_0)\{q^{2|\lambda|-n(m-n-1)}\})
\]
by
\[
G = \sum_{\lambda\in\Lambda} \widetilde{g}_{\lambda}.
\]
Then $F$ and $G$ are homogeneous morphisms preserving the $\zed_2\oplus\zed$-grading, and 
\[
G \circ F \simeq \id_{C(\Gamma_0)\{\qb{m-1}{n}\}}.
\]
\end{proof}

\subsection{Relating $\Gamma$ and $\Gamma_1$}\label{decomp-IV-subsection-Gamma-Gamma1}

\begin{figure}[ht]

\setlength{\unitlength}{1pt}

\begin{picture}(360,255)(-180,-15)

% upper left

\put(-110,150){\vector(0,1){25}}

\put(-110,215){\vector(0,1){25}}

\put(-110,175){\vector(0,1){40}}

\put(-110,215){\vector(1,0){40}}

\put(-70,175){\vector(0,1){40}}

\put(-70,175){\vector(-1,0){40}}

\put(-70,157){\vector(0,1){25}}

\put(-70,215){\vector(0,1){25}}

\put(-90,214){\line(0,1){2}}

\put(-90,174){\line(0,1){2}}

\put(-111,190){\line(1,0){2}}

\put(-71,190){\line(1,0){2}}

\put(-117,163){\tiny{$1$}}

\put(-117,222){\tiny{$l$}}

\put(-127,198){\tiny{$l+n$}}

\put(-67,163){\tiny{$m+l-1$}}

\put(-67,222){\tiny{$m$}}

\put(-67,198){\tiny{$m-n$}}

\put(-103,178){\tiny{$l+n-1$}}

\put(-92,208){\tiny{$n$}}

\put(-67,232){\small{$\mathbb{X}$}}

\put(-67,187){\small{$\mathbb{Y}$}}

\put(-67,150){\small{$\mathbb{W}$}}

\put(-120,232){\small{$\mathbb{T}$}}

\put(-120,187){\small{$\mathbb{S}$}}

\put(-125,150){\small{$\{r\}$}}

\put(-93,217){\small{$\mathbb{A}$}}

\put(-93,165){\small{$\mathbb{B}$}}

\put(-150,190){$\Gamma$}

\put(-100,140){\vector(-1,-1){30}}

\put(-125,125){\small{$\chi^0$}}

\put(-120,110){\vector(1,1){30}}

\put(-100,125){\small{$\chi^1$}}

\put(-30,190){\vector(1,0){60}}

\put(30,200){\vector(-1,0){60}}

\put(-3,204){\small{$\alpha$}}

\put(-3,180){\small{$\beta$}}

% upper right

\put(70,150){\vector(2,3){20}}

\put(110,150){\vector(-2,3){20}}

\put(90,210){\vector(-2,3){20}}

\put(90,210){\vector(2,3){20}}

\put(90,180){\vector(0,1){30}}

\put(73,170){\tiny{$1$}}

\put(103,170){\tiny{$m+l-1$}}

\put(73,215){\tiny{$l$}}

\put(103,215){\tiny{$m$}}

\put(95,188){\tiny{$m+l$}}

\put(113,232){\small{$\mathbb{X}$}}

\put(113,150){\small{$\mathbb{W}$}}

\put(60,232){\small{$\mathbb{T}$}}

\put(55,150){\small{$\{r\}$}}

\put(150,190){$\Gamma_1$}

\put(90,140){\vector(1,-1){30}}

\put(125,125){\small{$\overline{\phi}$}}

\put(130,110){\vector(-1,1){30}}

\put(90,125){\small{$\phi$}}

% lower left

\put(-160,0){\vector(3,2){30}}

\put(-160,60){\vector(0,1){30}}

\put(-160,60){\vector(1,0){50}}

\put(-110,60){\line(1,0){10}}

\put(-130,20){\vector(0,1){20}}

\put(-130,40){\vector(3,2){30}}

\put(-130,40){\vector(-3,2){30}}

\put(-100,0){\vector(-3,2){30}}

\put(-100,60){\vector(0,1){30}}

\put(-130,59){\line(0,1){2}}

\put(-115,49.5){\line(0,1){2}}

\put(-147,13){\tiny{$1$}}

\put(-167,72){\tiny{$l$}}

\put(-107,13){\tiny{$m+l-1$}}

\put(-97,72){\tiny{$m$}}

\put(-107,48){\tiny{$m-n$}}

\put(-127,30){\tiny{$m+l$}}

\put(-131,53){\tiny{$n$}}

\put(-165,48){\tiny{$n+l$}}

\put(-142,-15){$\Gamma_{12}$}

\put(-97,82){\small{$\mathbb{X}$}}

\put(-115,40){\small{$\mathbb{Y}$}}

\put(-97,0){\small{$\mathbb{W}$}}

\put(-170,82){\small{$\mathbb{T}$}}

\put(-175,0){\small{$\{r\}$}}

\put(-132,63){\small{$\mathbb{A}$}}

\put(-40,40){\vector(1,0){80}}

\put(40,50){\vector(-1,0){80}}

\put(-3,54){\small{$h_1$}}

\put(-3,30){\small{$h_0$}}

% lower right

\put(80,0){\vector(3,2){30}}

\put(80,55){\vector(0,1){35}}

\put(110,20){\vector(0,1){15}}

\put(110,35){\line(3,1){30}}

\put(140,45){\vector(0,1){10}}

\put(110,35){\line(-3,2){30}}

\put(140,75){\vector(0,1){15}}

\put(130,60){\vector(0,1){10}}

\put(150,60){\vector(0,1){10}}

\put(140,0){\vector(-3,2){30}}

\qbezier(140,75)(130,75)(130,70)

\qbezier(140,75)(150,75)(150,70)

\qbezier(140,55)(130,55)(130,60)

\qbezier(140,55)(150,55)(150,60)

\put(129,63){\line(1,0){2}}

\put(149,63){\line(1,0){2}}

\put(93,12){\tiny{$1$}}

\put(73,72){\tiny{$l$}}

\put(153,67){\tiny{$m-n$}}

\put(120,67){\tiny{$n$}}

\put(124,12){\tiny{$m+l-1$}}

\put(130,80){\tiny{$m$}}

\put(140,40){\tiny{$m$}}

\put(113,23){\tiny{$m+l$}}

\put(108,-15){$\Gamma_{13}$}

\put(143,82){\small{$\mathbb{X}$}}

\put(143,0){\small{$\mathbb{W}$}}

\put(153,57){\small{$\mathbb{Y}$}}

\put(70,82){\small{$\mathbb{T}$}}

\put(65,0){\small{$\{r\}$}}

\put(120,57){\small{$\mathbb{A}$}}

\end{picture}

\caption{}\label{decomposition-IV-alpha-beta-def}

\end{figure}

Consider the diagram in Figure \ref{decomposition-IV-alpha-beta-def}, in which
\begin{itemize}
	\item $\phi$ and $\overline{\phi}$ are the morphisms associated to the apparent edge splitting and merging,
	\item $h_0$ and $h_1$ are the homotopy equivalences induced by the apparent bouquet moves and are inverses of each other,
	\item $\chi^0$ and $\chi^1$ are the morphisms from applying Corollary \ref{general-chi-maps-def} to the lower half of $\Gamma$.
\end{itemize}
All these morphisms are $\Sym(\mathbb{X}|\mathbb{W}|\mathbb{T}|\{r\})$-linear. Moreover, $h_0$, $h_1$, $\chi^0$ and $\chi^1$ are also $\Sym(\mathbb{A}|\mathbb{Y})$-linear. By Corollary \ref{general-chi-maps-def}, we know
\begin{equation}\label{Gamma-0-eq2}
\chi^0 \circ \chi^1 = (\sum_{k=0}^{m-n} (-r)^k Y_{m-n-k}) \cdot \id_{C(\Gamma_{12})}.
\end{equation}

\begin{definition}\label{decomp-IV-alpha-beta-def}
Define $\alpha: C(\Gamma_1) \rightarrow C(\Gamma)$ by $\alpha=\chi^1 \circ h_1 \circ \phi$ and $\beta: C(\Gamma) \rightarrow C(\Gamma_1)$ by $\beta=\overline{\phi} \circ h_0 \circ \chi^0$.

Note that $\alpha$ and $\beta$ are both homogeneous morphisms with quantum degree  \linebreak $-(n-1)(m-n)$ and $\zed_2$-degree $0$.
\end{definition}

\begin{definition}\label{decomp-IV-alpha-beta-lambda-def}
Let $\Lambda' = \Lambda_{m-n,n-1} =\{\lambda~|~l(\lambda)\leq m-n,~\lambda_1\leq n-1\}$. For $\lambda=(\lambda_1\geq\cdots\geq\lambda_{m-n}) \in \Lambda'$, define $\lambda^\ast=(\lambda^\ast_1\geq\cdots\geq\lambda^\ast_{m-n}) \in \Lambda'$ by $\lambda^\ast_j =n-1-\lambda_{m-n+1-j}$ for $j=1,\dots,m-n$.

For $\lambda \in \Lambda'$, define $\alpha_{\lambda}: C(\Gamma_1) \rightarrow C(\Gamma)$ by $\alpha_{\lambda} = \mathfrak{m}(S_{\lambda}(\mathbb{Y})) \circ \alpha$, where $S_{\lambda}(\mathbb{Y})$ is the Schur polynomial in $\mathbb{Y}$ associated to $\lambda$. $\alpha_{\lambda}$ is a homogeneous morphism with quantum degree $2|\lambda|-(n-1)(m-n)$ and $\zed_2$-degree $0$. 

Also, define $\beta_{\lambda}: C(\Gamma) \rightarrow C(\Gamma_1)$ by $\beta_{\lambda} = \beta \circ \mathfrak{m}(S_{\lambda^\ast}(-\mathbb{A}))$, where $S_{\lambda^\ast}(-\mathbb{A})$ is the Schur polynomial in $-\mathbb{A}$ associated to $\lambda^\ast$. $\beta_{\lambda}$ is a homogeneous morphism with quantum degree $(n-1)(m-n)-2|\lambda|$ and $\zed_2$-degree $0$. 
\end{definition}

\begin{lemma}\label{decomp-IV-alpha-beta-lambda-compose}
For $\lambda,\mu\in\Lambda'$, 
\[
\beta_{\mu} \circ \alpha_\lambda \approx \left\{%
\begin{array}{ll}
    \id_{C(\Gamma_1)} & \text{if } \lambda=\mu, \\
    0 & \text{if } \lambda<\mu. \\
\end{array}%
\right.
\]
\end{lemma}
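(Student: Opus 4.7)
The proof will mirror that of Lemma \ref{decomp-IV-f-g-lambda-compose} almost verbatim, with the roles of $\chi^0$ and $\chi^1$ swapped and \eqref{Gamma-0-eq1} replaced by \eqref{Gamma-0-eq2}. First I would unfold the definitions of $\alpha_\lambda$ and $\beta_\mu$ from Definition \ref{decomp-IV-alpha-beta-lambda-def} and exploit the fact that $\chi^0$, $\chi^1$, $h_0$ and $h_1$ are $\Sym(\mathbb{A}|\mathbb{Y})$-linear (none of these morphisms touches the $\mathbb{A}$ or $\mathbb{Y}$ alphabets, since they act on the lower half of $\Gamma$ or come from bouquet moves above). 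Commuting the multiplication operator past them yields
\[
\beta_\mu \circ \alpha_\lambda \;\simeq\; \overline{\phi} \circ h_0 \circ (\chi^0 \circ \chi^1) \circ h_1 \circ \mathfrak{m}(S_\lambda(\mathbb{Y}) \cdot S_{\mu^\ast}(-\mathbb{A})) \circ \phi,
\]
and invoking \eqref{Gamma-0-eq2} together with $h_0 \circ h_1 \simeq \id$ gives
\[
\beta_\mu \circ \alpha_\lambda \;\simeq\; \overline{\phi} \circ \mathfrak{m}\!\Bigl(\bigl(\textstyle\sum_{k=0}^{m-n} (-r)^k Y_{m-n-k}\bigr) \cdot S_\lambda(\mathbb{Y}) \cdot S_{\mu^\ast}(-\mathbb{A})\Bigr) \circ \phi.
\]

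Next, writing $\widetilde{\lambda} = (\lambda_1+1 \geq \cdots \geq \lambda_{m-n}+1)$, Lemma \ref{multiply-schur-elementary} (applied with column length $m-n$, i.e. multiplication by the top elementary symmetric function of $\mathbb{Y}$ shifted by powers of $-r$) gives
\[
\bigl(\textstyle\sum_{k=0}^{m-n}(-r)^k Y_{m-n-k}\bigr)\cdot S_\lambda(\mathbb{Y}) \;=\; S_{\widetilde{\lambda}}(\mathbb{Y}) \;+\; \sum_{\lambda \leq \nu < \widetilde{\lambda}} c_\nu(r)\, S_\nu(\mathbb{Y})
\]
with $c_\nu(r) \in \zed[r]$. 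The conclusion then follows from Lemma \ref{phibar-compose-phi} applied to the edge splitting at the top of $\Gamma_1$, where the edge of color $m$ splits as a color-$(m-n)$ edge carrying $\mathbb{Y}$ together with a color-$n$ edge carrying $\mathbb{A}$: the only surviving terms are those with $\nu_j + \mu^\ast_{m-n+1-j} = n$ for all $j$, and since $\mu^\ast_{m-n+1-j} = n-1-\mu_j$, this forces $\nu_j = \mu_j + 1$ for every $j$. When $\lambda = \mu$ this picks out $\nu = \widetilde{\lambda}$, contributing $\id_{C(\Gamma_1)}$; when $\lambda < \mu$, at the first index $j_0$ where $\mu_{j_0} > \lambda_{j_0}$ one would need both $\nu_{j_0} = \mu_{j_0}+1$ and $\nu_{j_0} \leq \widetilde{\lambda}_{j_0} = \lambda_{j_0}+1$, which is impossible, so every term vanishes.

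The only delicate point in carrying this out is bookkeeping: correctly identifying which alphabet plays the role of $\mathbb{A}$ versus $\mathbb{B}$ in Lemma \ref{phibar-compose-phi} when the edge splitting is read upside-down relative to Figure \ref{edge-splitting}, and verifying that the complementarity condition in $\Lambda_{m-n,n}$ translates precisely into $\lambda = \mu$ rather than some off-by-one variant. This is a purely combinatorial check and presents no genuine obstacle.
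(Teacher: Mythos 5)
Your proposal is correct and follows essentially the same path as the paper's proof: commute the Schur multiplication past the $\Sym(\mathbb{A}|\mathbb{Y})$-linear morphisms, collapse $\chi^0 \circ \chi^1$ via equation \eqref{Gamma-0-eq2} and $h_0 \circ h_1 \simeq \id$, expand via Lemma \ref{multiply-schur-elementary}, and finish with Lemma \ref{phibar-compose-phi}. The paper simply cites Lemma \ref{phibar-compose-phi} for the final step; you carry out the index check (that $\nu_j = \mu_j + 1$ together with $\nu_j \leq \lambda_j + 1$ forces $\lambda = \mu$ and $\nu = \widetilde{\lambda}$) explicitly, which is a correct unpacking of that last citation.
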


\begin{proof}
For $\lambda,\mu\in\Lambda'$, by \eqref{Gamma-0-eq2}, we have
\begin{eqnarray*}
\beta_{\mu} \circ \alpha_\lambda & = & \beta \circ \mathfrak{m}(S_{\mu^\ast}(-\mathbb{A})) \circ \mathfrak{m}(S_{\lambda}(\mathbb{Y})) \circ \alpha \\
& = & \overline{\phi} \circ h_0 \circ \chi^0 \circ \mathfrak{m}(S_{\mu^\ast}(-\mathbb{A}) \cdot S_{\lambda}(\mathbb{Y})) \circ \chi^1 \circ h_1 \circ \phi \\
& = & \overline{\phi} \circ h_0 \circ \chi^0 \circ \chi^1 \circ h_1 \circ \mathfrak{m}(S_{\mu^\ast}(-\mathbb{A}) \cdot S_{\lambda}(\mathbb{Y})) \circ \phi \\
& \simeq & \overline{\phi} \circ \mathfrak{m}((\sum_{k=0}^{m-n} (-r)^k Y_{m-n-k}) \cdot S_{\lambda}(\mathbb{Y}) \cdot S_{\mu^\ast}(-\mathbb{A})) \circ \phi.
\end{eqnarray*}
Write $\lambda= (\lambda_1\geq\cdots\geq\lambda_{m-n})$ and $\widetilde{\lambda}=(\lambda_1+1\geq\cdots\geq\lambda_{m-n}+1)$. By Lemma \ref{multiply-schur-elementary}, we know that
\[
(\sum_{k=0}^{m-n} (-r)^k Y_{m-n-k}) \cdot S_{\lambda}(\mathbb{Y}) = S_{\widetilde{\lambda}}(\mathbb{Y}) + \sum_{\lambda \leq \nu < \widetilde{\lambda}} c_{\nu}(r) \cdot S_{\nu}(\mathbb{Y}),
\]
where $c_{\nu}(r) \in \zed[r]$. So 
\[
\beta_{\mu} \circ \alpha_\lambda \simeq \overline{\phi} \circ \mathfrak{m}(S_{\widetilde{\lambda}}(\mathbb{Y}) \cdot S_{\mu^\ast}(-\mathbb{A})) \circ \phi + \sum_{\lambda \leq \nu < \widetilde{\lambda}} c_{\nu}(r) \cdot \overline{\phi} \circ \mathfrak{m}(S_{\nu}(\mathbb{Y})\cdot S_{\mu^\ast}(-\mathbb{A})) \circ \phi.
\]
Now the lemma follows from Lemma \ref{phibar-compose-phi}.
\end{proof}

\begin{lemma}\label{decomp-IV-Alpha-Beta}
There exist homogeneous morphisms $\vec{\alpha}:C(\Gamma_1)\{\qb{m-1}{n-1}\}\rightarrow C(\Gamma)$ and $\vec{\beta}:C(\Gamma)\rightarrow C(\Gamma_1)\{\qb{m-1}{n-1}\}$ preserving the $\zed_2\oplus\zed$-grading such that $\vec{\beta} \circ \vec{\alpha} \simeq \id_{C(\Gamma_1)\{\qb{m-1}{n-1}\}}$.
\end{lemma}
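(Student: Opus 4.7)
The proof will follow exactly the same pattern as that of Lemma \ref{decomp-IV-F-G}, with the roles of $f_\lambda, g_\lambda, \Lambda$ replaced by $\alpha_\lambda, \beta_\lambda, \Lambda'$. The key ingredient is the upper-triangular composition formula in Lemma \ref{decomp-IV-alpha-beta-lambda-compose}, together with the decomposition
\[
C(\Gamma_1)\{\qb{m-1}{n-1}\} = \bigoplus_{\lambda \in \Lambda'} C(\Gamma_1)\{q^{2|\lambda|-(n-1)(m-n)}\},
\]
which follows from equation \eqref{compute-quantum-binary}. Note that $\alpha_\lambda$ and $\beta_\lambda$ have complementary quantum degrees $2|\lambda|-(n-1)(m-n)$ and $(n-1)(m-n)-2|\lambda|$ respectively, so when viewed as morphisms into/out of the $\lambda$-summand above they preserve both gradings.

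First I would rescale each $\beta_\mu$ by a non-zero scalar so that the $\approx$ in Lemma \ref{decomp-IV-alpha-beta-lambda-compose} is upgraded to $\simeq$, giving $\beta_\mu \circ \alpha_\lambda \simeq \id$ when $\lambda=\mu$ and $\simeq 0$ when $\lambda < \mu$. Then, exactly as in the proof of Lemma \ref{decomp-IV-F-G}, for each pair $\mu \leq \lambda$ in $\Lambda'$ I would define a correction term
\[
H_{\mu\lambda} = \sum_{k\geq 1} \sum_{\mu<\nu_1<\cdots<\nu_{k-1}<\lambda} (-1)^k (\beta_\mu \circ \alpha_{\nu_1})\circ\cdots\circ(\beta_{\nu_{k-1}}\circ\alpha_\lambda)
\]
(with $H_{\mu\mu}=\id$ and $H_{\mu\lambda}=0$ for $\lambda<\mu$), and set $\widetilde{\beta}_\mu = \sum_{\nu\geq\mu} H_{\mu\nu}\circ\beta_\nu$. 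A direct case analysis on $\lambda$ vs.\ $\mu$ (identical to that in Lemma \ref{decomp-IV-F-G}: trivial when $\lambda<\mu$, immediate when $\lambda=\mu$, and a telescoping cancellation when $\lambda>\mu$) shows $\widetilde{\beta}_\mu \circ \alpha_\lambda \simeq \delta_{\lambda\mu}\cdot\id$.

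Finally I would assemble $\vec{\alpha} = \sum_{\lambda\in\Lambda'} \alpha_\lambda$ and $\vec{\beta}=\sum_{\lambda\in\Lambda'}\widetilde{\beta}_\lambda$, viewed as morphisms to and from $\bigoplus_{\lambda\in\Lambda'}C(\Gamma_1)\{q^{2|\lambda|-(n-1)(m-n)}\} = C(\Gamma_1)\{\qb{m-1}{n-1}\}$. By construction both $\vec\alpha$ and $\vec\beta$ preserve the $\zed_2$-grading and quantum grading, and the computation above gives $\vec{\beta}\circ\vec{\alpha}\simeq\id$.

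There is essentially no new obstacle: every step is formally parallel to the proof of Lemma \ref{decomp-IV-F-G}, with the only substantive input being Lemma \ref{decomp-IV-alpha-beta-lambda-compose}, which plays the role of Lemma \ref{decomp-IV-f-g-lambda-compose}. The mildly delicate point is the verification that the degree of $\alpha_\lambda$ (respectively $\beta_\lambda$) matches the summand $C(\Gamma_1)\{q^{2|\lambda|-(n-1)(m-n)}\}$ coming from the expansion of $\qb{m-1}{n-1}$ via \eqref{compute-quantum-binary}; this is a direct bookkeeping check using Definition \ref{decomp-IV-alpha-beta-lambda-def}.
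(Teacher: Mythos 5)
Your proposal is correct and matches the paper's proof essentially line-for-line: the paper also rescales so that Lemma \ref{decomp-IV-alpha-beta-lambda-compose} holds on the nose, defines correction terms $\tau_{\mu\lambda}$ identical to your $H_{\mu\lambda}$, sets $\widetilde{\beta}_\mu = \sum_{\nu\geq\mu}\tau_{\mu\nu}\circ\beta_\nu$, verifies the three cases $\lambda<\mu$, $\lambda=\mu$, $\lambda>\mu$ by the same telescoping, and assembles $\vec{\alpha}$ and $\vec{\beta}$ exactly as you do. Nothing to add.
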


\begin{proof}
Note that
\[
C(\Gamma_1)\{\qb{m-1}{n-1}\} = \bigoplus_{\lambda \in \Lambda'} C(\Gamma_1)\{q^{2|\lambda|-(n-1)(m-n)}\}.
\]
We view $\alpha_\lambda$ as a homogeneous morphism $\alpha_\lambda:C(\Gamma_1)\{q^{2|\lambda|-(n-1)(m-n)}\} \rightarrow C(\Gamma)$ preserving the $\zed_2\oplus\zed$-grading and $\beta_{\lambda}$ as a homogeneous morphism $\beta_\lambda:C(\Gamma) \rightarrow C(\Gamma_1)\{q^{2|\lambda|-(n-1)(m-n)}\}$ preserving the $\zed_2\oplus\zed$-grading. Also, by choosing appropriate constants, we make
\begin{equation}\label{decomp-IV-alpha-beta-lambda-compose-scaled}
\beta_{\mu} \circ \alpha_\lambda \simeq \left\{%
\begin{array}{ll}
    \id_{C(\Gamma_1)} & \text{if } \lambda=\mu, \\
    0 & \text{if } \lambda<\mu. \\
\end{array}%
\right.
\end{equation}
Define $\tau_{\mu\lambda}: C(\Gamma_1)\{q^{2|\lambda|-(n-1)(m-n)}\} \rightarrow C(\Gamma_1)\{q^{2|\mu|-(n-1)(m-n)}\}$ by 
\[
\tau_{\mu\lambda} = \begin{cases}
    \id_{C(\Gamma_1)} & \text{if } \lambda=\mu, \\
    0 & \text{if } \lambda<\mu, \\
    \sum_{k\geq1} \sum_{\mu<\nu_1<\cdots<\nu_{k-1}<\lambda} (-1)^k (\beta_\mu \circ \alpha_{\nu_1})\circ (\beta_{\nu_1} \circ \alpha_{\nu_2}) \circ\cdots\circ (\beta_{\nu_{k-2}} \circ \alpha_{\nu_{k-1}}) \circ (\beta_{\nu_{k-1}} \circ \alpha_{\lambda}) & \text{if } \lambda>\mu.
\end{cases}
\]
Then define $\widetilde{\beta}_\mu: C(\Gamma) \rightarrow C(\Gamma_0)\{q^{2|\mu|-(n-1)(m-n)}\}$ by 
\[
\widetilde{\beta}_\mu = \sum_{\nu\geq\mu} \tau_{\mu\nu}\circ \beta_\nu.
\]
Note that $\widetilde{\beta}_\mu$ is a homogeneous morphism preserving the $\zed_2\oplus\zed$-grading. 

Next consider $\widetilde{\beta}_{\mu} \circ \alpha_\lambda$.

(i) Suppose $\lambda < \mu$. Then, by \eqref{decomp-IV-alpha-beta-lambda-compose-scaled},
\[
\widetilde{\beta}_{\mu} \circ \alpha_\lambda = \sum_{\nu\geq\mu} \tau_{\mu\nu}\circ \beta_\nu \circ \alpha_\lambda \simeq 0.
\]

(ii) Suppose $\lambda = \mu$. Then, by \eqref{decomp-IV-alpha-beta-lambda-compose-scaled},
\[
\widetilde{\beta}_{\mu} \circ \alpha_\lambda = \sum_{\nu\geq\mu} \tau_{\mu\nu}\circ \beta_\nu \circ \alpha_\mu \simeq \tau_{\mu\mu} \circ \beta_\mu \circ \alpha_\mu \simeq \id_{C(\Gamma_1)}.
\]

(iii) Suppose $\lambda > \mu$. Then
\begin{eqnarray*}
\widetilde{\beta}_{\mu} \circ \alpha_\lambda & = & \sum_{\nu\geq\mu} \tau_{\mu\nu}\circ \beta_\nu \circ \alpha_\lambda \\
& \simeq & \tau_{\mu\lambda}\circ \beta_\lambda \circ \alpha_\lambda + \tau_{\mu\mu}\circ \beta_\mu \circ \alpha_\lambda + \sum_{\mu<\nu<\lambda} \tau_{\mu\nu}\circ \beta_\nu \circ \alpha_\lambda \\
& \simeq & \tau_{\mu\lambda} + \beta_\mu \circ \alpha_\lambda \\
& & + \sum_{k\geq1} \sum_{\mu<\nu_1<\cdots<\nu_{k-1}<\nu<\lambda} (-1)^k (\beta_\mu \circ \alpha_{\nu_1})\circ (\beta_{\nu_1} \circ \alpha_{\nu_2}) \circ\cdots\circ (\beta_{\nu_{k-1}} \circ \alpha_{\nu}) \circ (\beta_{\nu} \circ \alpha_{\lambda}) \\
& = & \tau_{\mu\lambda} - \tau_{\mu\lambda} =0
\end{eqnarray*}

Now define 
\[
\vec{\alpha}:C(\Gamma_1)\{\qb{m-1}{n-1}\} (=\bigoplus_{\lambda \in \Lambda'} C(\Gamma_1)\{q^{2|\lambda|-(n-1)(m-n)}\})\rightarrow C(\Gamma)
\]
by 
\[
\vec{\alpha} = \sum_{\lambda\in\Lambda'} \alpha_{\lambda},
\]
and
\[
\vec{\beta}:C(\Gamma)\rightarrow C(\Gamma_1)\{\qb{m-1}{n-1}\} (=\bigoplus_{\lambda \in \Lambda'} C(\Gamma_1)\{q^{2|\lambda|-(n-1)(m-n)}\})
\]
by
\[
\vec{\beta} = \sum_{\lambda\in\Lambda'} \widetilde{\beta}_{\lambda}.
\]
Then $\vec{\alpha}$ and $\vec{\beta}$ are homogeneous morphisms preserving the $\zed_2\oplus\zed$-grading, and 
\[
\vec{\beta} \circ \vec{\alpha} \simeq \id_{C(\Gamma_1)\{\qb{m-1}{n-1}\}}.
\]
\end{proof}

\subsection{Homotopic nilpotency of $\vec{\beta} \circ F \circ G \circ \vec{\alpha}$ and $G \circ \vec{\alpha} \circ \vec{\beta} \circ F$}

\begin{lemma}\label{decomp-IV-nilopotency-hmf}
Let $\Gamma_0$ and $\Gamma_1$ be as in Figure \ref{decomposition-IV-figure}. Then
\begin{eqnarray*}
& & \Hom_{HMF}(C(\Gamma_0),C(\Gamma_1)) \cong \Hom_{HMF}(C(\Gamma_1),C(\Gamma_0)) \\
& \cong & C(\emptyset) \{ \qb{l+m-1}{m} \cdot \qb{l+m}{1} \cdot \qb{N}{l+m} \cdot q^{(l+m)(N+1-l-m)+ml-1}\},
\end{eqnarray*}
where $C(\emptyset)$ is $\C\rightarrow0\rightarrow\C$. In particular, the lowest non-vanishing quantum grading of these spaces is $m$.
\end{lemma}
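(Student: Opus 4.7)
The plan is to convert each Hom-space into the homology of a single closed MOY graph, and then simplify that graph using the direct sum decompositions already established in Sections \ref{mf-MOY}, \ref{sec-circles}, and \ref{sec-MOY-IV}. First I would observe that by Lemmas \ref{rewrite-hom-finite-gen} and \ref{bullet},
\[
\Hom_{HMF}(C(\Gamma_0), C(\Gamma_1)) \;\cong\; H\bigl(C(\Gamma_1) \otimes C(\Gamma_0)_\bullet\bigr) \;\cong\; H(\widehat{\Gamma}) \{q^{a}\} \langle b\rangle,
\]
where $\widehat{\Gamma}$ is the closed MOY graph obtained by gluing $\Gamma_1$ to the orientation-reversal of $\Gamma_0$ along their common endpoints $\mathbb{X}, \mathbb{W}, \mathbb{T}, \{r\}$, and $a,b$ are explicit shifts coming from Lemma \ref{bullet} and from the factors $\{q^{-\sum i_s i_t}\}$ in the definition of $C(v)$. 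The \emph{same} closed graph $\widehat{\Gamma}$ is obtained from $\Hom_{HMF}(C(\Gamma_1), C(\Gamma_0))$, which gives the first isomorphism of the lemma for free.

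Next I would reduce $\widehat{\Gamma}$ to an isolated circle of color $l+m$. The graph $\widehat{\Gamma}$ naturally carries two nested digon structures: the outer one has two parallel edges of colors $1$ and $m+l-1$ meeting at two trivalent vertices of width $l+m$, and the inner one (from the horizontal edge of color $l-1$ in $\Gamma_0$ combined with its reversed partner in $\overline{\Gamma}_0$, together with the edge of color $m$) can be exposed after applying a bouquet move of Corollary \ref{contract-expand}. Two applications of Decomposition II (Theorem \ref{decomp-II}) then collapse these digons, producing factors $\qb{l+m}{1}$ and $\qb{l+m-1}{m}$. What remains is a single circle of color $l+m$, which by Corollary \ref{circle-dimension} has $H \cong C(\emptyset)\{\qb{N}{l+m}\}\langle l+m\rangle$.

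Combining these three factors yields
\[
H(\widehat{\Gamma}) \;\cong\; C(\emptyset)\{\qb{N}{l+m}\cdot\qb{l+m}{1}\cdot\qb{l+m-1}{m}\}\langle l+m\rangle,
\]
and it remains to check that the accumulated quantum shift from the two Decomposition-II steps, from Corollary \ref{circle-dimension}, from the vertex shifts $\{q^{-\sum i_s i_t}\}$ in $C(\Gamma_0)$ and $C(\Gamma_1)$, and from Lemma \ref{bullet} assembles to exactly $(l+m)(N+1-l-m)+ml-1$, and that the $\zed_2$-shift from $\langle l+m\rangle$ cancels against the $\langle \cdot\rangle$-shift from Lemma \ref{bullet} so that the final answer lies in a single $\zed_2$-degree (as asserted by the absence of a $\langle\cdot\rangle$ in the statement).

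The main obstacle, as is typical for this style of computation, is the bookkeeping of grading shifts. One must identify the correct pair of digons in $\widehat{\Gamma}$, verify via a bouquet move (or iterated edge-contraction/expansion, Lemma \ref{edge-contraction}) that $\widehat{\Gamma}$ really does simplify to a single circle after exactly two applications of Theorem \ref{decomp-II} (rather than to a theta-like graph requiring Decomposition III or further steps), and then reconcile all the shifts to produce the asserted formula $(l+m)(N+1-l-m)+ml-1$. The claim that the lowest quantum grading is $m$ can then be read off directly by extracting the lowest-order term of $\qb{N}{l+m}\cdot\qb{l+m}{1}\cdot\qb{l+m-1}{m}\cdot q^{(l+m)(N+1-l-m)+ml-1}$, namely $q^{-m(N-m)} \cdot q^{-(l+m-1)} \cdot q^{-m(l-1)} \cdot q^{(l+m)(N+1-l-m)+ml-1} = q^m$, which serves as a useful sanity check on the shift calculation.
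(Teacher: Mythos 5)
Your plan matches the paper's proof in essence: the paper also rewrites $\Hom_{HMF}(C(\Gamma_0),C(\Gamma_1))$ as $H(C(\Gamma_1)\otimes C(\Gamma_0)_\bullet)$, identifies $C(\Gamma_0)_\bullet$ (via an explicit Koszul computation that makes the ``$\{q^a\}\langle b\rangle$'' you leave implicit precise, namely $C(\Gamma_0)_\bullet\simeq C(\overline{\Gamma}_0)\{q^{(l+m)(N+1-l-m)+lm-1}\}\langle l+m\rangle$), and reduces the glued closed graph ($\Gamma_{14}$ in the paper's notation) to a circle of color $l+m$ by a bouquet move (Corollary \ref{contract-expand}) followed by exactly two applications of Decomposition (II), yielding the factors $\qb{m+l-1}{m}$, $\qb{m+l}{1}$, and finally $\qb{N}{l+m}$ from the circle; the second Hom is handled by an analogous computation. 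So your concern that the graph might not collapse after two digon moves is resolved favorably. One small slip in your sanity check: the lowest-degree term of $\qb{N}{l+m}$ is $q^{-(l+m)(N-l-m)}$, not $q^{-m(N-m)}$; with the corrected factor the product does come out to $q^m$ (the sum is $(l+m)[(N+1-l-m)-(N-l-m)]-(l+m-1)-m(l-1)+ml-1=m$), whereas your written expression would give $q^{m+l(N-l-2m)}$.
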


\begin{figure}[ht]

\setlength{\unitlength}{1pt}

\begin{picture}(360,75)(-180,-15)

% far left

\put(-180,60){\vector(0,-1){40}}

\qbezier(-180,20)(-180,0)(-160,0)

\put(-160,0){\line(1,0){10}}

\put(-150,0){\vector(0,1){20}}

\put(-150,20){\vector(-3,4){30}}

\put(-150,20){\vector(3,4){30}}

\put(-180,60){\vector(1,0){60}}

\put(-120,60){\vector(0,-1){40}}

\qbezier(-120,20)(-120,0)(-140,0)

\put(-140,0){\line(-1,0){10}}

\put(-117,28){\tiny{$m+l-1$}}

\put(-177,28){\tiny{$1$}}

\put(-163,40){\tiny{$l$}}

\put(-143,40){\tiny{$m$}}

\put(-148,8){\tiny{$m+l$}}

\put(-155,53){\tiny{$l-1$}}

\put(-152,-15){$\Gamma_{14}$}

% left

\qbezier(-30,60)(-60,60)(-60,40)

\put(-60,40){\vector(0,-1){20}}

\qbezier(-60,20)(-60,0)(-30,0)

\put(-30,0){\vector(0,1){60}}

\put(-30,60){\vector(1,0){20}}

\qbezier(-10,60)(0,60)(0,40)

\qbezier(0,40)(-10,40)(-10,35)

\qbezier(0,40)(10,40)(10,35)

\qbezier(0,20)(-10,20)(-10,25)

\qbezier(0,20)(10,20)(10,25)

\put(-10,35){\vector(0,-1){10}}

\put(10,35){\vector(0,-1){10}}

\put(0,20){\vector(0,-1){10}}

\qbezier(0,10)(0,0)(-30,0)

\put(0,0){\tiny{$m+l-1$}}

\put(0,55){\tiny{$m+l-1$}}

\put(-57,28){\tiny{$1$}}

\put(-19,28){\tiny{$m$}}

\put(-28,8){\tiny{$m+l$}}

\put(13,28){\tiny{$l-1$}}

\put(-32,-15){$\Gamma_{15}$}

% right

\put(80,30){\oval(50,60)}

\put(80,0){\vector(0,1){60}}

\put(55,25){\vector(0,-1){0}}

\put(105,25){\vector(0,-1){0}}

\put(100,2){\tiny{$m+l-1$}}

\put(57,30){\tiny{$1$}}

\put(82,30){\tiny{$m+l$}}

\put(78,-15){$\Gamma_{16}$}

% far right

\put(160,30){\oval(40,60)}

\put(180,35){\vector(0,1){0}}

\put(142,30){\tiny{$m+l$}}

\put(158,-15){$\Gamma_{17}$}

\end{picture}

\caption{}\label{decomposition-IV-hmf-gamma-0-1-figure}

\end{figure}

\begin{proof}
Mark $\Gamma_0$ and $\Gamma_1$ as in Figure \ref{decomposition-IV-figure}. Then
\[
C(\Gamma_0) = \left(%
\begin{array}{ll}
  \ast & T_1-r-D_1 \\
  \dots & \dots \\
  \ast & T_k-rD_{k-1}-D_k\\
  \dots & \dots \\
  \ast & T_l-rD_{l-1}\\
  \ast & D_1+X_1-W_1 \\
  \dots & \dots \\
  \ast & \sum_{j=0}^kD_jX_{k-j}-W_k \\
  \dots & \dots \\
  \ast & D_{l-1}X_m-W_{m+l-1}
\end{array}%
\right)_{\Sym (\mathbb{X}|\mathbb{W}|\mathbb{D}|\mathbb{T}|\{r\})}\{q^{-(l-1)m}\},
\]
where $X_k$ is the $k$-th elementary symmetric polynomial in $\mathbb{X}$ and so on. By Proposition \ref{b-contraction}, we exclude $D_1,\dots,D_{l-1}$ from this matrix factorization using the right entries of the first $l-1$ rows. We get the relations 
\[
D_k = \begin{cases}
\sum_{j=0}^k (-r)^j T_{k-j} & \text{if } 0\leq k \leq l-1, \\
0 & \text{if } k<0 \text{ or } k>l-1,
\end{cases}
\]
and
\[
C(\Gamma_0) \simeq \left(%
\begin{array}{ll}
  \ast & \sum_{j=0}^l (-r)^j T_{l-j}\\
  \ast & T_1-r+X_1-W_1 \\
  \dots & \dots \\
  \ast & \sum_{j=0}^{l-1} \sum_{i=0}^j (-r)^i T_{j-i}X_{k-j}-W_k \\
  \dots & \dots \\
  \ast & \sum_{i=0}^{l-1}(-r)^i T_{l-1-i} X_m-W_{m+l-1}
\end{array}%
\right)_{\Sym (\mathbb{X}|\mathbb{W}|\mathbb{T}|\{r\})}\{q^{-(l-1)m}\}.
\]
So
\[
C(\Gamma_0)_\bullet  \simeq  \left(%
\begin{array}{ll}
  \ast & -\sum_{j=0}^l (-r)^j T_{l-j}\\
  \ast & -(T_1-r+X_1-W_1) \\
  \dots & \dots \\
  \ast & -(\sum_{j=0}^{l-1} \sum_{i=0}^j (-r)^i T_{j-i}X_{k-j}-W_k) \\
  \dots & \dots \\
  \ast & -(\sum_{i=0}^{l-1}(-r)^i T_{l-1-i} X_m-W_{m+l-1})
\end{array}%
\right)_{\Sym (\mathbb{X}|\mathbb{W}|\mathbb{T}|\{r\})}\{q^{(l+m)(N+1-l-m)+(l-1)m}\} \left\langle l+m \right\rangle.
\]
Let $\overline{\Gamma}_0$ be $\Gamma_0$ with the orientation reversed. Similar to above, we have
\[
C(\overline{\Gamma}_0) \simeq \left(%
\begin{array}{ll}
  \ast & -\sum_{j=0}^l (-r)^j T_{l-j}\\
  \ast & -(T_1-r+X_1-W_1) \\
  \dots & \dots \\
  \ast & -(\sum_{j=0}^{l-1} \sum_{i=0}^j (-r)^i T_{j-i}X_{k-j}-W_k) \\
  \dots & \dots \\
  \ast & -(\sum_{i=0}^{l-1}(-r)^i T_{l-1-i} X_m-W_{m+l-1})
\end{array}%
\right)_{\Sym (\mathbb{X}|\mathbb{W}|\mathbb{T}|\{r\})}\{q^{-l+1}\}.
\]
Thus, $C(\Gamma_0)_\bullet \simeq C(\overline{\Gamma}_0)\{q^{(l+m)(N+1-l-m)+lm-1}\} \left\langle l+m \right\rangle$ and, therefore,
\[
\Hom(C(\Gamma_0),C(\Gamma_1)) \cong C(\Gamma_1) \otimes C(\Gamma_0)_\bullet \simeq C(\Gamma_1) \otimes C(\overline{\Gamma}_0) \{q^{(l+m)(N+1-l-m)+lm-1}\} \left\langle l+m \right\rangle.
\]
Let $\Gamma_{14},\dots,\Gamma_{17}$ be the MOY graphs in Figure \ref{decomposition-IV-hmf-gamma-0-1-figure}. Then
\begin{eqnarray*}
C(\overline{\Gamma}_0) \otimes C(\Gamma_1) & \simeq & C(\Gamma_{14}) \simeq C(\Gamma_{15}) \simeq C(\Gamma_{16})\{\qb{m+l-1}{m}\} \\
& \simeq & C(\Gamma_{17})\{\qb{m+l}{1}\cdot\qb{m+l-1}{m}\} \\
& \simeq & C(\emptyset)\{\qb{N}{m+l}\cdot\qb{m+l}{1}\cdot\qb{m+l-1}{m}\}.
\end{eqnarray*}
So
\[
\Hom_{HMF}(C(\Gamma_0),C(\Gamma_1)) \cong C(\emptyset) \{ \qb{l+m-1}{m} \cdot \qb{l+m}{1} \cdot \qb{N}{l+m} \cdot q^{(l+m)(N+1-l-m)+ml-1}\}.
\]

The computation of $\Hom_{HMF}(C(\Gamma_1),C(\Gamma_0))$ is very similar. Using the fact that
\[
C(\Gamma_1) \simeq \left(%
\begin{array}{ll}
  \ast & T_1+X_1-r-W_1 \\
  \dots & \dots \\
  \ast & \sum_{j=0}^k T_{j}X_{k-j}- rW_{k-1}-W_k \\
  \dots & \dots \\
  \ast & T_lX_m-rW_{m+l-1}
\end{array}%
\right)_{\Sym (\mathbb{X}|\mathbb{W}|\mathbb{T}|\{r\})}\{q^{-lm}\},
\]
one gets $C(\Gamma_1)_\bullet \simeq C(\overline{\Gamma}_1)\{q^{(l+m)(N+1-l-m)+lm-1}\} \left\langle l+m \right\rangle$, where $\overline{\Gamma}_1$ is $\Gamma_1$ with the orientation reversed. So 
\begin{eqnarray*}
\Hom(C(\Gamma_1),C(\Gamma_0)) & \cong & C(\Gamma_0) \otimes C(\Gamma_1)_\bullet \\
& \simeq & C(\Gamma_0) \otimes C(\overline{\Gamma}_1) \{q^{(l+m)(N+1-l-m)+lm-1}\} \left\langle l+m \right\rangle \\
& \simeq & C(\overline{\Gamma}_{14})\{q^{(l+m)(N+1-l-m)+lm-1}\} \left\langle l+m \right\rangle \\
& \simeq & \cdots \simeq \\
& \simeq & C(\emptyset) \{ \qb{l+m-1}{m} \cdot \qb{l+m}{1} \cdot \qb{N}{l+m} \cdot q^{(l+m)(N+1-l-m)+ml-1}\},
\end{eqnarray*}
where $\overline{\Gamma}_{14}$ is $\Gamma_{14}$ with the orientation reversed.
\end{proof}

\begin{lemma}\label{decomp-IV-nilopotency-sum-difference}
For $\mu\in\Lambda$ and $\lambda\in\Lambda'$, let $\alpha_{\lambda}$, $\widetilde{\beta}_\lambda$, $f_\mu$ and $\widetilde{g}_\mu$ be the morphisms defined in the two preceding subsections. We have
\begin{itemize}
	\item If $|\lambda|-|\mu|<n$, then $\widetilde{g}_\mu \circ \alpha_{\lambda} \simeq 0$.
	\item If $|\mu|-|\lambda|<m-n$, then $\widetilde{\beta}_\lambda \circ f_\mu \simeq 0$.
\end{itemize}
\end{lemma}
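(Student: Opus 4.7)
The plan is to use a quantum grading obstruction together with the computation of the morphism space in Lemma \ref{decomp-IV-nilopotency-hmf}. The point is that each of the compositions $\widetilde{g}_\mu \circ \alpha_\lambda$ and $\widetilde{\beta}_\lambda \circ f_\mu$ is a homogeneous morphism between $C(\Gamma_0)$ and $C(\Gamma_1)$ of quantum degree determined explicitly by $|\mu|$ and $|\lambda|$; under the stated hypotheses this degree falls strictly below the lowest quantum degree of a non-zero class in $\Hom_{\HMF}$, forcing the composition to be null-homotopic.

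I would first record the relevant degrees from Definitions \ref{decomp-IV-f-g-lambda-def} and \ref{decomp-IV-alpha-beta-lambda-def}: namely $f_\mu$ has quantum degree $2|\mu| - n(m-n-1)$, the morphism $g_\mu$ (and hence $\widetilde{g}_\mu$, built from $g_\nu$'s by pre- and post-composition with the various $f_\nu \circ g_{\nu'}$'s whose composite degree cancels) has quantum degree $n(m-n-1) - 2|\mu|$, and analogously $\alpha_\lambda$ has quantum degree $2|\lambda| - (n-1)(m-n)$ while $\widetilde{\beta}_\lambda$ has quantum degree $(n-1)(m-n) - 2|\lambda|$. A direct calculation then gives
\[
\deg(\widetilde{g}_\mu \circ \alpha_\lambda) = 2(|\lambda|-|\mu|) + n(m-n-1) - (n-1)(m-n) = 2(|\lambda|-|\mu|) + m - 2n,
\]
and symmetrically
\[
\deg(\widetilde{\beta}_\lambda \circ f_\mu) = 2(|\mu|-|\lambda|) - (m-2n).
\]

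Next, I would invoke Lemma \ref{decomp-IV-nilopotency-hmf}, which identifies $\Hom_{\HMF}(C(\Gamma_0),C(\Gamma_1))$ and $\Hom_{\HMF}(C(\Gamma_1),C(\Gamma_0))$ with an explicit $\C(\emptyset)$-module whose lowest quantum degree is $m$. Comparing: the hypothesis $|\lambda|-|\mu|<n$ gives $\deg(\widetilde{g}_\mu \circ \alpha_\lambda) < 2n + m - 2n = m$, while $|\mu|-|\lambda|<m-n$ gives $\deg(\widetilde{\beta}_\lambda \circ f_\mu) < 2(m-n) - m + 2n = m$. In both cases the composition is a homogeneous morphism of quantum degree strictly less than $m$, hence represents the zero class in $\Hom_{\HMF}$, i.e. is null-homotopic.

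I do not expect any serious obstacle here; this is essentially a bookkeeping argument. The only point requiring care is that $\widetilde{g}_\mu$ and $\widetilde{\beta}_\lambda$ are defined as sums involving the compositions $\beta_\nu \circ \alpha_{\nu'}$ (resp.\ $g_\nu \circ f_{\nu'}$), so one should observe that each such composition is $\C$-linear and homogeneous of the correct total quantum degree, ensuring the entire sum lies in a single quantum-degree strand. Once this is checked, the two bullets follow immediately from the degree count above.
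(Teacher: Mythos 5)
Your proof is correct and follows exactly the same route as the paper's: both compute the quantum degrees of $\widetilde{g}_\mu\circ\alpha_\lambda$ and $\widetilde{\beta}_\lambda\circ f_\mu$ (your expressions $2(|\lambda|-|\mu|)+m-2n$ and $2(|\mu|-|\lambda|)-m+2n$ agree with the paper's $2(|\lambda|-|\mu|-n)+m$ and $2(|\mu|-|\lambda|-(m-n))+m$ after simplification) and then use Lemma \ref{decomp-IV-nilopotency-hmf}, which gives $m$ as the lowest non-vanishing quantum grading of the relevant $\Hom_{\HMF}$ spaces. Your extra remark about $\widetilde{g}_\mu$ and $\widetilde{\beta}_\lambda$ being built from homogeneous sums with matching degrees is a reasonable precaution, though it is already implicit in the construction of Lemma \ref{decomp-IV-F-G} and Lemma \ref{decomp-IV-Alpha-Beta}, where $\widetilde{g}_\mu$ and $\widetilde{\beta}_\mu$ are explicitly stated to be homogeneous morphisms preserving both gradings with respect to the shifted targets.
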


\begin{proof}
Note that $\widetilde{g}_\mu \circ \alpha_{\lambda}: C(\Gamma_1) \rightarrow C(\Gamma_0)$ is a homogeneous morphism of quantum degree 
\[
2|\lambda| - (n-1)(m-n) - 2 |\mu| + n(m-n-1) = 2(|\lambda|-|\mu|-n)+m,
\]
and $\widetilde{\beta}_\lambda \circ f_\mu : C(\Gamma_0) \rightarrow C(\Gamma_1)$ is a homogeneous morphism of quantum degree 
\[
-2|\lambda| + (n-1)(m-n) + 2 |\mu| - n(m-n-1) = 2(|\mu|-|\lambda|-(m-n))+m.
\]
Then the lemma follows from Lemma \ref{decomp-IV-nilopotency-hmf}.
\end{proof}

\begin{lemma}\label{decomp-IV-nilopotency}
Let $\vec{\alpha}$, $\vec{\beta}$, $F$ and $G$ be the morphisms defined in the two preceding subsections. Then $\vec{\beta} \circ F \circ G \circ \vec{\alpha}$ and $G \circ \vec{\alpha} \circ \vec{\beta} \circ F$ are both homotopically nilpotent.
\end{lemma}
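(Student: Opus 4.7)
My plan is to reduce the problem to a purely combinatorial counting argument based on Lemma~\ref{decomp-IV-nilopotency-sum-difference}. Let me write
\[
\Phi := \vec{\beta}\circ F\circ G\circ \vec{\alpha}:C(\Gamma_1)\{\qb{m-1}{n-1}\}\to C(\Gamma_1)\{\qb{m-1}{n-1}\}
\]
and
\[
\Psi := G\circ\vec{\alpha}\circ\vec{\beta}\circ F : C(\Gamma_0)\{\qb{m-1}{n}\}\to C(\Gamma_0)\{\qb{m-1}{n}\}.
\]
Using the direct sum decompositions $C(\Gamma_1)\{\qb{m-1}{n-1}\}=\bigoplus_{\lambda\in\Lambda'}C(\Gamma_1)\{q^{2|\lambda|-(n-1)(m-n)}\}$ and $C(\Gamma_0)\{\qb{m-1}{n}\}=\bigoplus_{\mu\in\Lambda}C(\Gamma_0)\{q^{2|\mu|-n(m-n-1)}\}$, together with the definitions $F=\sum_\mu f_\mu$, $G=\sum_\mu\widetilde{g}_\mu$, $\vec{\alpha}=\sum_\lambda\alpha_\lambda$, $\vec{\beta}=\sum_\lambda\widetilde{\beta}_\lambda$, the $(\lambda,\lambda')$-component of $\Phi$ expands to $\sum_{\mu\in\Lambda}\widetilde{\beta}_\lambda\circ f_\mu\circ\widetilde{g}_\mu\circ\alpha_{\lambda'}$, and the $(\mu,\mu')$-component of $\Psi$ expands to $\sum_{\lambda\in\Lambda'}\widetilde{g}_\mu\circ\alpha_\lambda\circ\widetilde{\beta}_\lambda\circ f_{\mu'}$.

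The key step is to track which components survive up to homotopy. By Lemma~\ref{decomp-IV-nilopotency-sum-difference}, a term $\widetilde{\beta}_\lambda\circ f_\mu\circ\widetilde{g}_\mu\circ\alpha_{\lambda'}$ can be non-null-homotopic only when both $|\mu|-|\lambda|\geq m-n$ and $|\lambda'|-|\mu|\geq n$ hold, which forces $|\lambda'|-|\lambda|\geq m$. Iterating this gradient argument, the $(\lambda,\lambda')$-component of $\Phi^k$ can be non-null-homotopic only if $|\lambda'|-|\lambda|\geq km$. The analogous analysis for $\Psi$ shows that the $(\mu,\mu')$-component of $\Psi^k$ can be non-null-homotopic only when $|\mu'|-|\mu|\geq km$.

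Now I invoke the boundedness of the index sets. Since $\lambda,\lambda'\in\Lambda_{m-n,n-1}$ we have $|\lambda'|-|\lambda|\leq(m-n)(n-1)$, and since $\mu,\mu'\in\Lambda_{n,m-n-1}$ we have $|\mu'|-|\mu|\leq n(m-n-1)$. Thus as soon as $k$ exceeds $(m-n)(n-1)/m$ (respectively $n(m-n-1)/m$), every component of $\Phi^k$ (respectively $\Psi^k$) is forced to vanish up to homotopy, so $\Phi^k\simeq 0$ and $\Psi^k\simeq 0$ for all sufficiently large $k$.

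I do not anticipate a real obstacle here; the work is entirely in organising the bookkeeping correctly. The one point requiring slight care is that $F\circ G$ acts on $C(\Gamma)$ through the ``diagonal'' formula $\sum_\mu f_\mu\circ\widetilde{g}_\mu$ (and similarly for $\vec{\alpha}\circ\vec{\beta}$), which must be used to express each iterated composition as a sum of four-fold compositions of the $f_\mu,\widetilde{g}_\mu,\alpha_\lambda,\widetilde{\beta}_\lambda$ before the degree inequalities of Lemma~\ref{decomp-IV-nilopotency-sum-difference} can be applied.
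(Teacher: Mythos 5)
Your proof is correct and follows essentially the same route as the paper's: expand into components indexed by $\Lambda$ and $\Lambda'$ via the diagonal formula $F\circ G=\sum_\mu f_\mu\circ\widetilde{g}_\mu$ (and $\vec{\alpha}\circ\vec{\beta}=\sum_\lambda\alpha_\lambda\circ\widetilde{\beta}_\lambda$), apply the degree inequalities of Lemma~\ref{decomp-IV-nilopotency-sum-difference} term by term, add them to force $|\lambda'|-|\lambda|\geq km$, and contradict the boundedness of $\Lambda'$. The only cosmetic difference is that the paper proves nilpotency of $\vec{\beta}\circ F\circ G\circ\vec{\alpha}$ alone and then obtains nilpotency of $G\circ\vec{\alpha}\circ\vec{\beta}\circ F$ from the conjugation identity $(G\circ\vec{\alpha}\circ\vec{\beta}\circ F)^{k+1}=G\circ\vec{\alpha}\circ(\vec{\beta}\circ F\circ G\circ\vec{\alpha})^k\circ\vec{\beta}\circ F$, whereas you rerun the direct counting argument a second time; both dispatch the second morphism in one line, so nothing is gained or lost.
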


\begin{proof}
For $\lambda,\mu \in \Lambda'$, the $(\mu,\lambda)$-component of $(\vec{\beta} \circ F \circ G \circ \vec{\alpha})^k$ is
\[
\sum_{\lambda_1,\dots,\lambda_{k-1} \in \Lambda', \nu_1,\dots,\nu_k\in\Lambda} (\widetilde{\beta}_\mu \circ f_{\nu_1} \circ \widetilde{g}_{\nu_1} \circ \alpha_{\lambda_1}) \circ (\widetilde{\beta}_{\lambda_1} \circ f_{\nu_2} \circ \widetilde{g}_{\nu_2} \circ \alpha_{\lambda_2}) \circ\cdots\circ (\widetilde{\beta}_{\lambda_{k-1}} \circ f_{\nu_k} \circ \widetilde{g}_{\nu_k} \circ \alpha_{\lambda}).
\]
By Lemma \ref{decomp-IV-nilopotency-sum-difference}, for the term corresponding to $\lambda_1,\dots,\lambda_{k-1} \in \Lambda', \nu_1,\dots,\nu_k\in\Lambda$ to be homotopically non-trivial, we must have
\begin{eqnarray*}
|\lambda|-|\nu_k| & \geq & n, \\
|\nu_1|-|\mu| & \geq & m-n, \\
|\lambda_j| - |\nu_j| & \geq & n, \text{ for } j=1,\dots,k-1,\\
|\nu_{j+1}| - |\lambda_j| & \geq & m-n, \text{ for } j=1,\dots,k-1.
\end{eqnarray*}
Adding all these inequalities together, we get $|\lambda|-|\mu| \geq km$. Note that $|\lambda|-|\mu| \leq (n-1)(m-n)$. This implies that $(\vec{\beta} \circ F \circ G \circ \vec{\alpha})^k \simeq 0$ if $km>(n-1)(m-n)$. Thus, $\vec{\beta} \circ F \circ G \circ \vec{\alpha}$ is homotopically nilpotent. Since 
\[
(G \circ \vec{\alpha} \circ \vec{\beta} \circ F)^{k+1} = G \circ \vec{\alpha} \circ (\vec{\beta} \circ F \circ G \circ \vec{\alpha})^k \circ \vec{\beta} \circ F,
\]
$G \circ \vec{\alpha} \circ \vec{\beta} \circ F$ is also homotopically nilpotent.
\end{proof}

\subsection{Graded dimensions of $C(\Gamma)$, $C(\Gamma_0)$ and $C(\Gamma_1)$}

\begin{lemma}\label{decomp-IV-dimensions}
Let $\Gamma$, $\Gamma_0$ and $\Gamma_1$ be the MOY graphs in Figure \ref{decomposition-IV-figure}, where $l,m,n$ are integers satisfying $0\leq n \leq m \leq N$ and $0\leq l, m+l-1 \leq N$. Then
\begin{eqnarray*}
\gdim C(\Gamma_0) & = & q^{-lm+m}(1+\tau q^{2l-N-1}) \prod_{j=1}^{m+l-1}(1+\tau q^{2j-N-1}), \\
\gdim C(\Gamma_1) & = & \begin{cases}
    q^{-lm}\prod_{j=1}^{m+l}(1+\tau q^{2j-N-1}) & \text{if } ~l+m\leq N, \\
    0 & \text{if } ~l+m = N+1, \\
\end{cases} \\
\gdim C(\Gamma) & = & \begin{cases}
    q^{-lm+m-n}\qb{m}{n}(1+\tau q^{2n+2l-N-1})\prod_{j=1}^{m+l-1}(1+\tau q^{2j-N-1}) & \text{if } ~l+m\leq N, \\
    q^{-lm+m}\qb{m-1}{n}(1+\tau q^{N+1-2m})\prod_{j=1}^{m+l-1}(1+\tau q^{2j-N-1}) & \text{if } ~l+m = N+1. \\
\end{cases}
\end{eqnarray*}
In particular, 
\begin{equation}\label{decomp-IV-dimensions-sum}
\gdim C(\Gamma) = \qb{m-1}{n} \cdot \gdim C(\Gamma_0) + \qb{m-1}{n-1} \cdot \gdim C(\Gamma_1).
\end{equation}
\end{lemma}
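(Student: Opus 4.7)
The plan is to compute each of $\gdim C(\Gamma_0)$, $\gdim C(\Gamma_1)$, $\gdim C(\Gamma)$ directly from the Koszul matrix factorization of the respective MOY graph, and then verify \eqref{decomp-IV-dimensions-sum} as a purely algebraic identity about quantum binomials. In each case I would write $C(\Gamma_\bullet)$ as a tensor product of vertex factorizations, quotient by the maximal homogeneous ideal $\mathfrak{I}$ of $\Sym(\mathbb{X}|\mathbb{W}|\mathbb{T}|\{r\})$, and then compute the homology of the resulting chain complex over a polynomial ring in the remaining interior alphabets. The graded dimensions of the vertex shifts $\{q^{-\sum i_si_t}\}$ account for the pure $q$-powers in the formulas, the factors $(1+\tau q^{2j-N-1})$ arise from Koszul rows whose right entry vanishes after mod $\mathfrak{I}$, and the quantum binomials $\qb{m}{n}$ enter through Corollary \ref{grassmannian-grade} applied to a Grassmannian-type quotient of a symmetric polynomial ring.

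For $\Gamma_1$ in the case $l+m=N+1$, Lemma \ref{width-cap} immediately gives $C(\Gamma_1)\simeq 0$. For $l+m\le N$, the two vertices of $\Gamma_1$ share the central edge (with no independent interior alphabet after mod $\mathfrak{I}$): Proposition \ref{b-contraction} eliminates the central alphabet, the remaining right-column entries become the differences $X_k-W_k$ which reduce to zero mod $\mathfrak{I}$, and Lemma \ref{power-derive} identifies the left-column entries (up to nonzero constants) with $h_{N+1-j}$ of a single alphabet. Proposition \ref{complete-non-zero-divisor} verifies these form a regular sequence, so Corollary \ref{a-contraction-weak} applies iteratively and the Poincar\'e polynomial of the resulting quotient is $\prod_{j=1}^{m+l}(1+\tau q^{2j-N-1})$. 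For $\Gamma_0$, the interior alphabet $\mathbb{D}$ is eliminated by Proposition \ref{b-contraction} using the left vertex (of width $l$), leaving one row of degree $2l$ and $m+l-1$ rows of $U$-type from the right vertex, and the same regularity argument gives the claimed product.

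The main obstacle is $\gdim C(\Gamma)$: $\Gamma$ has four vertices and four interior alphabets $\mathbb{S},\mathbb{Y},\mathbb{A},\mathbb{B}$, and we cannot simply excise all of them. After mod $\mathfrak{I}$, the vertex relations at $V_1$ and $V_3$ force $S_k\equiv B_k$ and $B_k\equiv A_k$ for $k\le n$ while $B_k\equiv 0$ for $k>n$; applying Proposition \ref{b-contraction} together with Corollaries \ref{twist} and \ref{row-op} eliminates $\mathbb{S}$, $\mathbb{Y}$, and $\mathbb{B}$ successively. The result is a Koszul matrix factorization over $\Sym(\mathbb{A})$ whose left-column entries, after Lemma \ref{power-derive}, are (scalar multiples of) $h_{N+1-j}(\mathbb{A})$ for an appropriate range of $j$. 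Proposition \ref{complete-non-zero-divisor} supplies the regularity, Corollary \ref{a-contraction-weak} produces the $(1+\tau q^{2j-N-1})$ factors, and what remains is $\Sym(\mathbb{A})$ modulo the top $N-m$ complete symmetric polynomials, whose Poincar\'e polynomial is (by Corollary \ref{grassmannian-grade}) exactly $q^{-n(N-m-n+\cdots)}\qb{m}{n}$ times the appropriate $\tau$-factor. The case $l+m=N+1$ again requires Lemma \ref{width-cap} to kill the would-be top row. The delicate point is ensuring the bookkeeping of signs, shifts, and the precise range of regularity at each elimination step, especially since the interior edge $\mathbb{A}$ of color $n$ survives the reduction and contributes the $\qb{m}{n}$ binomial.

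The identity \eqref{decomp-IV-dimensions-sum} then reduces, after cancelling the common factor $q^{-lm}\prod_{j=1}^{m+l-1}(1+\tau q^{2j-N-1})$ and splitting into $\tau^0$ and $\tau^1$ components, to the two Pascal-type recursions
\[
\qb{m}{n}=q^{n}\qb{m-1}{n}+q^{-(m-n)}\qb{m-1}{n-1}=q^{-n}\qb{m-1}{n}+q^{m-n}\qb{m-1}{n-1},
\]
both of which follow from a one-line manipulation of $[m]=q^{m-n}[n]+q^{-n}[m-n]$. When $l+m=N+1$ the identity collapses to $\gdim C(\Gamma)=\qb{m-1}{n}\gdim C(\Gamma_0)$, which is visible on the nose after noting that $2l-N-1=N+1-2m$ in this case.
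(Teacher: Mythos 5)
Your overall plan — compute each $\gdim$ directly from the Koszul form after reduction mod $\mathfrak{I}$, then verify \eqref{decomp-IV-dimensions-sum} as a quantum-binomial identity — does match the structure of the paper's proof. The calculations for $\Gamma_0$ and for $\Gamma_1$ (when $l+m\le N$), and the final Pascal-rule algebra, are carried out essentially as you outline.

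However, there is a genuine gap in your treatment of $\Gamma$ in the case $l+m=N+1$. You claim that Lemma \ref{width-cap} ``kills the would-be top row.'' This would give $C(\Gamma)\simeq 0$, which contradicts the stated formula $\gdim C(\Gamma)=q^{-lm+m}\qb{m-1}{n}(1+\tau q^{N+1-2m})\prod_{j=1}^{m+l-1}(1+\tau q^{2j-N-1})$, which is nonzero whenever $n\le m-1$. The reason width cap does not apply here is that the widths of $\Gamma$'s vertices are $l+n$, $m+l-1$, and $m$. Since $m+l-1\le N$ and $m\le N$ by hypothesis, the only way to exceed $N$ is $l+n>N$, which when $l+m=N+1$ happens only for $n=m$. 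For $n<m$ no vertex has width $>N$, so $C(\Gamma)\not\simeq 0$, and the $l+m=N+1$ case requires a separate algebraic manipulation: after reducing over $\Sym(\mathbb{A})$ one finds a row with left entry $h_{N+1-l-n}(\mathbb{A})=h_{m-n}(\mathbb{A})$, which (unlike the $l+m\le N$ case) is \emph{not} in the ideal $(h_{m-n+1}(\mathbb{A}),\dots,h_m(\mathbb{A}))$. One must reverse that column (Lemma \ref{column-reverse-signs}, introducing a grading shift and $\left\langle 1\right\rangle$), and then observe that $h_m(\mathbb{A})\in(h_{m-n}(\mathbb{A}),\dots,h_{m-1}(\mathbb{A}))$, so the quotient becomes $\Sym(\mathbb{A})/(h_{m-n}(\mathbb{A}),\dots,h_{m-1}(\mathbb{A}))$ of graded dimension $\qb{m-1}{n}q^{n(m-n-1)}$, and one extra row with both entries zero contributes the factor $(1+\tau q^{N+1-2m})$.

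A second, less fatal, issue: you attribute the factors $(1+\tau q^{2j-N-1})$ in $\gdim C(\Gamma_0)$ and $\gdim C(\Gamma_1)$ to ``iterative application of Corollary \ref{a-contraction-weak}.'' That corollary contracts a row in which exactly one entry is a non-zero-divisor; each such contraction \emph{removes} the row and produces only a single shift $\{q^{\cdot}\}\left\langle 1\right\rangle$ — it never introduces a factor of the form $(1+\tau q^{\cdot})$. The factors $(1+\tau q^{2j-N-1})$ in these two cases arise instead because, after excluding the interior alphabets by Proposition \ref{b-contraction}, the resulting Koszul complex is over $\C$ with \emph{both} entries of every row vanishing mod $\mathfrak{I}$: a row $(0,0_j)_\C$ has graded dimension $1+\tau q^{2j-N-1}$ by itself. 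Similarly, for $\Gamma$ in the $l+m\le N$ case the $\qb{m}{n}$ comes from $\Sym(\mathbb{A})/(h_{m-n+1}(\mathbb{A}),\dots,h_m(\mathbb{A}))$ — a quotient by $n$ consecutive complete symmetric polynomials where $\mathbb{A}$ has $n$ variables, not by ``the top $N-m$'' complete symmetric polynomials; and reaching this quotient requires first expressing $Y_j$ in terms of $\mathbb{A}$ via Proposition \ref{b-contraction} and then applying relation \eqref{complete-recursion} to rewrite the remaining right-column entries, a step you have not addressed.
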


\begin{proof}
We mark $\Gamma$, $\Gamma_0$ and $\Gamma_1$ as in Figure \ref{decomposition-IV-figure}. Then $C(\Gamma)$, $C(\Gamma_0)$ and $C(\Gamma_1)$ are matrix factorizations over $\Sym(\mathbb{X}|\mathbb{W}|\mathbb{T}|\{r\})$. The corresponding maximal ideal is
\[
\mathfrak{I}= (X_1,\dots,X_m,W_1,\dots,W_{l+m-1},T_1,\dots,T_l,r),
\]
where $X_j$ is the $j$-th elementary symmetric polynomial in $\mathbb{X}$ and so on.

We compute $\gdim C(\Gamma_0)$ first. 

From the proof of Lemma \ref{decomp-IV-nilopotency-hmf}, we know that
\[
C(\Gamma_0) \simeq \left(%
\begin{array}{ll}
  \ast & \sum_{j=0}^l (-r)^j T_{l-j}\\
  \ast & T_1-r+X_1-W_1 \\
  \dots & \dots \\
  \ast & \sum_{j=0}^k \sum_{i=0}^j (-r)^i T_{j-i}X_{k-j}-W_k \\
  \dots & \dots \\
  \ast & \sum_{i=0}^{l-1}(-r)^i T_{l-1-i} X_m-W_{m+l-1}
\end{array}%
\right)_{\Sym (\mathbb{X}|\mathbb{W}|\mathbb{T}|\{r\})}\{q^{-(l-1)m}\}.
\]
So
\[
C(\Gamma_0) / \mathfrak{I} \cdot C(\Gamma_0) \simeq \left(%
\begin{array}{ll}
  0 & 0_l \\
  0 & 0_1 \\
  \dots & \dots \\
  0 & 0_{m+l-1}
\end{array}%
\right)_{\C}\{q^{-(l-1)m}\},
\]
where $0_j$ means ``a $0$ of degree $2j$". then it follows easily that 
\[
\gdim C(\Gamma_0) = q^{-lm+m}(1+\tau q^{2l-N-1}) \prod_{j=1}^{m+l-1}(1+\tau q^{2j-N-1}).
\]

Next we compute $\gdim C(\Gamma_1)$. 

If $l+m = N+1$, then $C(\Gamma_1)\simeq 0$. So $\gdim C(\Gamma_1)=0$.

If $l+m \leq N$, then 
\[
C(\Gamma_1) \simeq \left(%
\begin{array}{ll}
  \ast & T_1+X_1-r-W_1 \\
  \dots & \dots \\
  \ast & \sum_{j=0}^k T_{j}X_{k-j}- rW_{k-1}-W_k \\
  \dots & \dots \\
  \ast & T_lX_m-rW_{m+l-1}
\end{array}%
\right)_{\Sym (\mathbb{X}|\mathbb{W}|\mathbb{T}|\{r\})}\{q^{-lm}\}
\]
and, therefore,
\[
C(\Gamma_1) / \mathfrak{I} \cdot C(\Gamma_1) \simeq \left(%
\begin{array}{ll}
  0 & 0_1 \\
  \dots & \dots \\
  0 & 0_{m+l}
\end{array}%
\right)_{\C}\{q^{-lm}\}.
\]
So
\[
\gdim C(\Gamma_1) = q^{-lm}\prod_{j=1}^{m+l}(1+\tau q^{2j-N-1}).
\]

Now we compute $C(\Gamma)$.

Let $\mathbb{D} = \mathbb{A} \cup \mathbb{T}$ and $\mathbb{E} = \{r\} \cup \mathbb{B}$. Denote by $D_j$ and $E_j$ the $j$-th elementary symmetric polynomials in $\mathbb{D}$ and $\mathbb{E}$. Define
\begin{eqnarray*}
U_j & = & \frac{p_{l+n,N+1}(E_1,\dots,E_{j-1},D_j,\dots,D_{l+n}) - p_{l+n,N+1}(E_1,\dots,E_{j},D_{j+1},\dots,D_{l+n})}{D_j-E_j}, \\
V_j & = & (-1)^{j-1} p_{N+1-j}(\mathbb{A}\cup\mathbb{Y}) + \sum_{k=1}^m (-1)^{k+j} j X_k h_{N+1-j-k}(\mathbb{A}\cup\mathbb{Y}) \\
    &   & + \sum_{k=1}^m \sum_{i=1}^m (-1)^{k+i} i X_k X_i \xi_{N+1-k-i,j}(\mathbb{X},\mathbb{A}\cup\mathbb{Y}), \\
\hat{V}_j & = & (-1)^{j-1} p_{N+1-j}(\mathbb{B}\cup\mathbb{Y}) + \sum_{k=1}^{m+l-1} (-1)^{k+j} j W_k h_{N+1-j-k}(\mathbb{B}\cup\mathbb{Y}) \\
    &   & + \sum_{k=1}^{m+l-1} \sum_{i=1}^{m+l-1} (-1)^{k+i} i W_k W_i \xi_{N+1-k-i,j}(\mathbb{W},\mathbb{B}\cup\mathbb{Y}),
\end{eqnarray*}
where $\xi_{k,j}$ is defined as in Lemma \ref{fancy-U}. Then, by Lemma \ref{fancy-U}, we have
\[
C(\Gamma) \cong \left(%
\begin{array}{ll}
  U_1 & D_1 -E_1 \\
  \dots & \dots \\
  U_{n+l} & D_{n+l} - E_{n+l} \\
  V_1 & X_1 -A_1 -Y_1 \\
  \dots & \dots \\
  V_{m} & X_{m} - A_{n}Y_{m-n} \\
  \hat{V}_1 & B_1 +Y_1-W_1 \\
  \dots & \dots \\
  \hat{V}_{m+l-1} & B_{n+l-1}Y_{m-n} - W_{m+l-1} 
\end{array}%
\right)_{\Sym(\mathbb{X}|\mathbb{Y}|\mathbb{W}|\mathbb{A}|\mathbb{B}|\mathbb{T}|\{r\})} \{q^{-ln-(l+n-1)(m-n)}\}.
\]
Note that
\begin{eqnarray*}
V_j|_{X_1=\cdots=X_m=0} & = & (-1)^{j-1} p_{N+1-j}(\mathbb{A}\cup\mathbb{Y}), \\
\hat{V}_j|_{W_1=\cdots=W_{m+l-1}=0} & = & (-1)^{j-1} p_{N+1-j}(\mathbb{B}\cup\mathbb{Y}), \\
D_j|_{T_1=\cdots=T_l=0} & = & A_j, \\
E_j|_{r=0} & = & B_j.
\end{eqnarray*}
So
\[
C(\Gamma) / \mathfrak{I} \cdot C(\Gamma) \cong \left(%
\begin{array}{ll}
  \tilde{U}_1 & A_1 -B_1 \\
  \dots & \dots \\
  \tilde{U}_n & A_n -B_n \\
  \tilde{U}_{n+1} & -B_{n+1} \\
  \dots & \dots \\
  \tilde{U}_{n+l-1} & - B_{n+l-1} \\
  \tilde{U}_{n+l} & 0 \\
  p_{N}(\mathbb{A}\cup\mathbb{Y}) & -A_1 -Y_1 \\
  \dots & \dots \\
  (-1)^{m-1} p_{N+1-m}(\mathbb{A}\cup\mathbb{Y}) & - A_{n}Y_{m-n} \\
  p_{N}(\mathbb{B}\cup\mathbb{Y}) & B_1 +Y_1 \\
  \dots & \dots \\
  (-1)^{m+l-2} p_{N+1-(m+l-1)}(\mathbb{B}\cup\mathbb{Y}) & B_{n+l-1}Y_{m-n} 
\end{array}%
\right)_{\Sym(\mathbb{Y}|\mathbb{A}|\mathbb{B})} \{q^{-ln-(l+n-1)(m-n)}\},
\]
where
\[
\tilde{U}_j = U_j|_{T_1=\cdots=T_l=r=0}.
\]
Next, we exclude $B_1,\dots,B_{n+l-1}$ by applying Proposition \ref{b-contraction} to the first $n+l-1$ rows of this matrix factorization. This gives the relations
\[
B_j = A_j=\begin{cases}
A_j & \text{if } ~1\leq j \leq n, \\
0 & \text{if } ~n+1\leq j \leq n+l-1, \\
\end{cases}
\]
and
\[
C(\Gamma) / \mathfrak{I} \cdot C(\Gamma) \simeq \left(%
\begin{array}{ll}
  \tilde{U}_{n+l}|_{B_j=A_j} & 0 \\
  p_{N}(\mathbb{A}\cup\mathbb{Y}) & -A_1 -Y_1 \\
  \dots & \dots \\
  (-1)^{m-1} p_{N+1-m}(\mathbb{A}\cup\mathbb{Y}) & - A_{n}Y_{m-n} \\
  p_{N}(\mathbb{A}\cup\mathbb{Y}) & A_1 +Y_1 \\
  \dots & \dots \\
  (-1)^{m-1} p_{N+1-m}(\mathbb{A}\cup\mathbb{Y}) & A_m Y_{m-n} \\
  (-1)^{m} p_{N+1-(m+1)}(\mathbb{A}\cup\mathbb{Y}) & 0_{m+1} \\
  \dots & \dots \\
  (-1)^{m+l-2} p_{N+1-(m+l-1)}(\mathbb{A}\cup\mathbb{Y}) & 0_{m+l-1}
\end{array}%
\right)_{\Sym(\mathbb{Y}|\mathbb{A})} \{q^{-ln-(l+n-1)(m-n)}\}.
\]
By Corollary \ref{row-op}, we have
\[
C(\Gamma) / \mathfrak{I} \cdot C(\Gamma) \simeq \left(%
\begin{array}{ll}
  \tilde{U}_{n+l}|_{B_j=A_j} & 0 \\
  0 & -A_1 -Y_1 \\
  \dots & \dots \\
  0 & - A_{n}Y_{m-n} \\
  p_{N}(\mathbb{A}\cup\mathbb{Y}) & 0_1 \\
  \dots & \dots \\
  (-1)^{m+l-2} p_{N+1-(m+l-1)}(\mathbb{A}\cup\mathbb{Y}) & 0_{m+l-1}
\end{array}%
\right)_{\Sym(\mathbb{Y}|\mathbb{A})} \{q^{-ln-(l+n-1)(m-n)}\}.
\]
Since $m+l-1 \leq N$, $p_{N+1-(m+l-1)}(\mathbb{A}\cup\mathbb{Y}),\dots,p_{N}(\mathbb{A}\cup\mathbb{Y})$ belong to the ideal generated by $A_1 +Y_1,\dots,\sum_{j=0}^k A_jY_{k-j},\dots,A_nY_{m-n}$. So, by Corollary \ref{twist}, we have
\[
C(\Gamma) / \mathfrak{I} \cdot C(\Gamma) \simeq \left(%
\begin{array}{ll}
  \tilde{U}_{n+l}|_{B_j=A_j} & 0 \\
  0 & -A_1 -Y_1 \\
  \dots & \dots \\
  0 & - A_{n}Y_{m-n} \\
  0 & 0_1 \\
  \dots & \dots \\
  0 & 0_{m+l-1}
\end{array}%
\right)_{\Sym(\mathbb{Y}|\mathbb{A})} \{q^{-ln-(l+n-1)(m-n)}\}.
\]
Note that, by Lemma \ref{power-derive},
\begin{eqnarray*}
\tilde{U}_{n+l}|_{B_j=A_j} & = & U_{n+l}|_{T_1=\cdots=T_l=r=0, B_j=A_j} \\
& = & \frac{\partial}{\partial D_{l+n}} p_{l+n,N+1}(D_1,\dots,D_{l+n})|_{D_j=A_j} \\
& = & (-1)^{l+n+1} (N+1) h_{l+n,N+1-l-n}(A_1,\dots,A_n,0\dots,0) \\
& = & (-1)^{l+n+1} (N+1) h_{N+1-l-n}(\mathbb{A}).
\end{eqnarray*}
Using Corollary \ref{jumping-factor}, it is easy to see that
\[
C(\Gamma) / \mathfrak{I} \cdot C(\Gamma) \simeq \left(%
\begin{array}{ll}
  h_{N+1-l-n}(\mathbb{A}) & 0 \\
  0 & -A_1 -Y_1 \\
  \dots & \dots \\
  0 & - A_{n}Y_{m-n} \\
  0 & 0_1 \\
  \dots & \dots \\
  0 & 0_{m+l-1}
\end{array}%
\right)_{\Sym(\mathbb{Y}|\mathbb{A})} \{q^{-ln-(l+n-1)(m-n)}\}.
\]
Next we exclude $Y_1,\dots,Y_{m-n}$ by applying Proposition \ref{b-contraction} to the second row through the $(m-n+1)$-th row. This gives the relations
\[
Y_j = (-1)^j h_j (\mathbb{A}) \text{ for } j=0,1,\dots,m-n,
\] 
and 
\[
C(\Gamma) / \mathfrak{I} \cdot C(\Gamma) \simeq \left(%
\begin{array}{ll}
  h_{N+1-l-n}(\mathbb{A}) & 0 \\
  0 & -\sum_{j=0}^{m-n} (-1)^j h_j (\mathbb{A}) A_{m-n+1-j} \\
  \dots & \dots \\
  0 & -\sum_{j=0}^{m-n} (-1)^j h_j (\mathbb{A}) A_{k-j} \\
  \dots & \dots \\
  0 & - (-1)^{m-n} h_{m-n} (\mathbb{A})A_{n} \\
  0 & 0_1 \\
  \dots & \dots \\
  0 & 0_{m+l-1}
\end{array}%
\right)_{\Sym(\mathbb{A})} \{q^{-ln-(l+n-1)(m-n)}\}.
\]
By equation \eqref{complete-recursion}, we have that, for $k=m-n+1,\dots m$
\[
\sum_{j=0}^{m-n} (-1)^j h_j (\mathbb{A}) A_{k-j} = - \sum_{j=m-n+1}^{k} (-1)^j h_j (\mathbb{A}) A_{k-j}.
\]
So, using Corollaries \ref{row-op} and \ref{jumping-factor}, we get
\[
C(\Gamma) / \mathfrak{I} \cdot C(\Gamma) \simeq \left(%
\begin{array}{ll}
  h_{N+1-l-n}(\mathbb{A}) & 0 \\
  0 & h_{m-n+1}(\mathbb{A}) \\
  \dots & \dots \\
  0 & h_{m}(\mathbb{A}) \\
  0 & 0_1 \\
  \dots & \dots \\
  0 & 0_{m+l-1}
\end{array}%
\right)_{\Sym(\mathbb{A})} \{q^{-ln-(l+n-1)(m-n)}\}.
\]

If $m+l\leq N$, then $N+1-l-n \geq m-n+1$ and, therefore, $h_{N+1-l-n}(\mathbb{A})$ is in the ideal $(h_{m-n+1}(\mathbb{A}),\dots,h_{m}(\mathbb{A}))$. So, by Corollary \ref{twist},
\[
C(\Gamma) / \mathfrak{I} \cdot C(\Gamma) \simeq \left(%
\begin{array}{ll}
  0 & 0_{n+l} \\
  0 & h_{m-n+1}(\mathbb{A}) \\
  \dots & \dots \\
  0 & h_{m}(\mathbb{A}) \\
  0 & 0_1 \\
  \dots & \dots \\
  0 & 0_{m+l-1}
\end{array}%
\right)_{\Sym(\mathbb{A})} \{q^{-ln-(l+n-1)(m-n)}\}.
\]
Thus, by Proposition \ref{b-contraction-weak},
\[
H(C(\Gamma) / \mathfrak{I} \cdot C(\Gamma)) \cong \left(%
\begin{array}{ll}
  0 & 0_{n+l} \\
  0 & 0_1 \\
  \dots & \dots \\
  0 & 0_{m+l-1}
\end{array}%
\right)_{\Sym(\mathbb{A})/(h_{m-n+1}(\mathbb{A}),\dots,h_{m}(\mathbb{A}))} \{q^{-ln-(l+n-1)(m-n)}\}.
\]
Since the graded dimension of $\Sym(\mathbb{A})/(h_{m-n+1}(\mathbb{A}),\dots,h_{m}(\mathbb{A}))$ is $\qb{m}{n}\cdot q^{n(m-n)}$, it follows that
\[
\gdim C(\Gamma) = q^{-lm+m-n}\qb{m}{n}(1+\tau q^{2n+2l-N-1})\prod_{j=1}^{m+l-1}(1+\tau q^{2j-N-1}).
\]

If $m+l=N+1$, then $N+1-l-n =m-n$ and $h_{m}(\mathbb{A})$ is in the ideal $(h_{m-n}(\mathbb{A}),\dots,h_{m-1}(\mathbb{A}))$. By Lemma \ref{column-reverse-signs} and Corollary \ref{row-op}, we have
\begin{eqnarray*}
C(\Gamma) / \mathfrak{I} \cdot C(\Gamma) & \simeq &  \left(%
\begin{array}{ll}
  0 & h_{m-n}(\mathbb{A}) \\
  0 & h_{m-n+1}(\mathbb{A}) \\
  \dots & \dots \\
  0 & h_{m}(\mathbb{A}) \\
  0 & 0_1 \\
  \dots & \dots \\
  0 & 0_{m+l-1}
\end{array}%
\right)_{\Sym(\mathbb{A})} \{q^{-ln-(l+n-1)(m-n)+N+1-2(m-n)}\} \left\langle 1 \right\rangle \\
& \simeq &  \left(%
\begin{array}{ll}
  0 & h_{m-n}(\mathbb{A}) \\
  \dots & \dots \\
  0 & h_{m-1}(\mathbb{A}) \\
  0 & 0_m \\
  0 & 0_1 \\
  \dots & \dots \\
  0 & 0_{m+l-1}
\end{array}%
\right)_{\Sym(\mathbb{A})} \{q^{-ln-(l+n+1)(m-n)+N+1}\}\left\langle 1 \right\rangle.
\end{eqnarray*}
Thus, by Proposition \ref{b-contraction-weak},
\[
H(C(\Gamma) / \mathfrak{I} \cdot C(\Gamma)) \simeq  \left(%
\begin{array}{ll}
  0 & 0_m \\
  0 & 0_1 \\
  \dots & \dots \\
  0 & 0_{m+l-1}
\end{array}%
\right)_{\Sym(\mathbb{A})/(h_{m-n}(\mathbb{A}),\dots,h_{m-1}(\mathbb{A}))} \{q^{-ln-(l+n+1)(m-n)+N+1}\}\left\langle 1 \right\rangle.
\]
Since the graded dimension of $\Sym(\mathbb{A})/(h_{m-n}(\mathbb{A}),\dots,h_{m-1}(\mathbb{A}))$ is $\qb{m-1}{n} \cdot q^{n(m-n-1)}$, it follows that 
\begin{eqnarray*}
&& \gdim C(\Gamma) \\
& = & \tau q^{-ln-(l+n+1)(m-n)+N+1+n(m-n-1)}\qb{m-1}{n} (1+\tau q^{2m-N-1}) \prod_{j=1}^{m+l-1}(1+\tau q^{2j-N-1}) \\
& = & q^{-lm+m}\qb{m-1}{n}(1+\tau q^{N+1-2m})\prod_{j=1}^{m+l-1}(1+\tau q^{2j-N-1}).
\end{eqnarray*}

Finally, let us consider equation \eqref{decomp-IV-dimensions-sum}.

Assume $m+l=N+1$. Then $\gdim C(\Gamma_1)=0$ and it is straightforward to see that $\gdim C(\Gamma) = \qb{m-1}{n} \gdim C(\Gamma_0)$. So \eqref{decomp-IV-dimensions-sum} is true.

Assume $m+l\leq N$. Note that
\[
\qb{m}{n} = q^{-n} \qb{m-1}{n} + q^{m-n} \qb{m-1}{n-1} = q^n \qb{m-1}{n} + q^{-m+n} \qb{m-1}{n-1}.
\]
So 
\begin{eqnarray*}
& & \qb{m}{n} (1+\tau q^{2n+2l-N-1}) \\
& = & (q^n \qb{m-1}{n} + q^{-m+n} \qb{m-1}{n-1}) + \tau q^{2n+2l-N-1} (q^{-n} \qb{m-1}{n} + q^{m-n} \qb{m-1}{n-1}) \\
& = & q^n \qb{m-1}{n}(1+\tau q^{2l-N-1}) + q^{-m+n} \qb{m-1}{n-1} (1+ \tau q^{2m+2l-N-1}).
\end{eqnarray*}
Multiplying $q^{-lm+m-n} \prod_{j=1}^{m+l-1}(1+\tau q^{2j-N-1})$ to this equation, we get \eqref{decomp-IV-dimensions-sum}.
\end{proof}

\subsection{Proof of Theorem \ref{decomp-IV}} With all the above preparations, we are now ready to prove Theorem \ref{decomp-IV}.

\begin{lemma}\label{decomp-IV-Psi-Phi}
Let $\Gamma$, $\Gamma_0$ and $\Gamma_1$ be the MOY graphs in Figure \ref{decomposition-IV-figure}, where $l,m,n$ are integers satisfying $0\leq n \leq m \leq N$ and $0\leq l, m+l-1 \leq N$. Then there exist homogeneous morphisms 
\begin{eqnarray*}
&& \Phi: C(\Gamma_0)\{\qb{m-1}{n}\} \oplus C(\Gamma_1)\{\qb{m-1}{n-1} \rightarrow C(\Gamma), \\ 
&& \Psi: C(\Gamma) \rightarrow C(\Gamma_0)\{\qb{m-1}{n}\} \oplus C(\Gamma_1)\{\qb{m-1}{n-1}\}
\end{eqnarray*} 
preserving the $\zed_2\oplus\zed$-grading such that
\[
\Psi \circ \Phi \simeq \id_{C(\Gamma_0)\{\qb{m-1}{n}\} \oplus C(\Gamma_1)\{\qb{m-1}{n-1}\}}.
\]
\end{lemma}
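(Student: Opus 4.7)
The plan is to assemble the four morphisms from Lemmas \ref{decomp-IV-F-G} and \ref{decomp-IV-Alpha-Beta} into block matrices. Define
\[
\Phi := (F,\vec{\alpha}) : C(\Gamma_0)\{\qb{m-1}{n}\} \oplus C(\Gamma_1)\{\qb{m-1}{n-1}\} \to C(\Gamma),
\]
and
\[
\widetilde{\Psi} := \left(\begin{array}{c} G \\ \vec{\beta} \end{array}\right) : C(\Gamma) \to C(\Gamma_0)\{\qb{m-1}{n}\} \oplus C(\Gamma_1)\{\qb{m-1}{n-1}\}.
\]
Both are homogeneous and preserve the $\zed_2$- and quantum gradings by construction. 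I would then correct $\widetilde{\Psi}$ on the left by a block-matrix factor to produce the desired $\Psi$.

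Expanding $\widetilde{\Psi} \circ \Phi$ as a $2\times 2$ matrix of compositions and using $G\circ F \simeq \id$ and $\vec{\beta}\circ \vec{\alpha} \simeq \id$ gives
\[
\widetilde{\Psi}\circ\Phi \simeq \left(\begin{array}{cc} \id & A \\ B & \id \end{array}\right), \qquad A:=G\circ \vec{\alpha}, \quad B:=\vec{\beta}\circ F.
\]
By Lemma \ref{decomp-IV-nilopotency}, the endomorphisms $AB = G\circ \vec{\alpha}\circ \vec{\beta}\circ F$ and $BA = \vec{\beta}\circ F\circ G\circ \vec{\alpha}$ are homotopically nilpotent, so there exists $K$ with $(AB)^K\simeq 0\simeq (BA)^K$. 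The truncated Neumann series $P := \sum_{k=0}^{K-1}(AB)^k$ and $Q := \sum_{k=0}^{K-1}(BA)^k$ are then honest morphisms in $\hmf_{R,w}$ satisfying $P\circ(\id-AB)\simeq \id$, $Q\circ(\id-BA)\simeq \id$, together with the intertwining relations $PA\simeq AQ$ and $BP\simeq QB$ (since $(AB)^k A = A (BA)^k$ and $B(AB)^k = (BA)^k B$). A direct block computation then gives
\[
\left(\begin{array}{cc} P & -PA \\ -QB & Q \end{array}\right) \circ \left(\begin{array}{cc} \id & A \\ B & \id \end{array}\right) \simeq \id,
\]
exhibiting $\widetilde{\Psi}\circ\Phi$ as a homotopy equivalence with the displayed left inverse.

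Setting
\[
\Psi := \left(\begin{array}{cc} P & -PA \\ -QB & Q \end{array}\right) \circ \widetilde{\Psi},
\]
one obtains $\Psi \circ \Phi \simeq \id$. Since $A$, $B$, and hence every power $(AB)^k$, $(BA)^k$, as well as $P$, $Q$, and each entry of the correction matrix, are compositions of morphisms preserving both gradings, so is $\Psi$.

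The main conceptual obstacle has already been absorbed into Lemma \ref{decomp-IV-nilopotency}: it is the finiteness of the Neumann series in the homotopy category that makes the block inversion legitimate. Once that nilpotency is in hand, producing $\Psi$ is a purely formal block-matrix manipulation, and the grading bookkeeping is automatic because all constituent morphisms preserve both gradings.
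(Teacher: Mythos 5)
Your proposal is correct and takes essentially the same route as the paper: both define $\Phi_0=(F,\vec{\alpha})$ and $\Psi_0=(G,\vec{\beta})^T$, observe $\Psi_0\circ\Phi_0\simeq\bigl(\begin{smallmatrix}\id & A\\ B & \id\end{smallmatrix}\bigr)$ with $A=G\circ\vec{\alpha}$, $B=\vec{\beta}\circ F$, and then left-multiply by the block correction that inverts this matrix via the (finite) Neumann series for $(\id-AB)^{-1}$ and $(\id-BA)^{-1}$; your correction matrix $\bigl(\begin{smallmatrix}P & -PA\\ -QB & Q\end{smallmatrix}\bigr)$ is precisely the paper's product $\bigl(\begin{smallmatrix}(\id-AB)^{-1} & 0\\ 0 & (\id-BA)^{-1}\end{smallmatrix}\bigr)\bigl(\begin{smallmatrix}\id & -A\\ -B & \id\end{smallmatrix}\bigr)$ written out. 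The intertwining relations $PA\simeq AQ$, $BP\simeq QB$ you mention are true but not needed for the final verification, which requires only $P(\id-AB)\simeq\id$ and $Q(\id-BA)\simeq\id$.
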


\begin{proof}
Let $F,G,\vec{\alpha},\vec{\beta}$ be the morphisms defined in Subsections \ref{decomp-IV-subsection-Gamma-Gamma0} and \ref{decomp-IV-subsection-Gamma-Gamma1}. Define 
\begin{eqnarray*}
&& \Phi_0: C(\Gamma_0)\{\qb{m-1}{n}\} \oplus C(\Gamma_1)\{\qb{m-1}{n-1} \rightarrow C(\Gamma), \\ 
&& \Psi_0: C(\Gamma) \rightarrow C(\Gamma_0)\{\qb{m-1}{n}\} \oplus C(\Gamma_1)\{\qb{m-1}{n-1}\}
\end{eqnarray*} 
by $\Phi_0 = (F,\vec{\alpha})$ and $\Psi_0 = (G,\vec{\beta})^T$. Then
\[
\Psi_0 \circ \Phi_0 \simeq \left(%
\begin{array}{ll}
  \id & G \circ \vec{\alpha} \\
  \vec{\beta} \circ F & \id 
\end{array}%
\right).
\]
By Lemma \ref{decomp-IV-nilopotency}, $\vec{\beta} \circ F \circ G \circ \vec{\alpha}$ and $G \circ \vec{\alpha} \circ \vec{\beta} \circ F$ are homotopically nilpotent. So $\id - \vec{\beta} \circ F \circ G \circ \vec{\alpha}$ and $\id - G \circ \vec{\alpha} \circ \vec{\beta} \circ F$ are homotopically invertible. In fact, their homotopic inverses are
\begin{eqnarray*}
(\id - \vec{\beta} \circ F \circ G \circ \vec{\alpha})^{-1} & \simeq &  \sum_{k=0}^{\infty} (\vec{\beta} \circ F \circ G \circ \vec{\alpha})^k, \\
(\id - G \circ \vec{\alpha} \circ \vec{\beta} \circ F)^{-1} & \simeq &  \sum_{k=0}^{\infty} (G \circ \vec{\alpha} \circ \vec{\beta} \circ F)^k.
\end{eqnarray*}
Note that the sums on the right hand side are finite sums in the $\Hom_\HMF$. Now define 
\begin{eqnarray*}
\Phi & = & \Phi_0, \\
\Psi & = & \left(%
\begin{array}{ll}
  (\id - G \circ \vec{\alpha} \circ \vec{\beta} \circ F)^{-1}& 0 \\
  0 & (\id - \vec{\beta} \circ F \circ G \circ \vec{\alpha})^{-1}
\end{array}%
\right) 
\circ
\left(%
\begin{array}{ll}
  \id & -G \circ \vec{\alpha} \\
  -\vec{\beta} \circ F & \id 
\end{array}%
\right)
\circ
\Psi_0.
\end{eqnarray*}
It is straightforward to check that $\Phi$ and $\Psi$ satisfy all the requirements in the lemma.
\end{proof}

\begin{proof}[Proof of Theorem \ref{decomp-IV}]
By Lemmas \ref{decomp-IV-Psi-Phi} and \ref{fully-additive-hmf}, we know that there exists a graded matrix factorization $M$ such that
\[
C(\Gamma) \simeq C(\Gamma_0)\{\qb{m-1}{n}\} \oplus C(\Gamma_1)\{\qb{m-1}{n-1}\} \oplus M.
\]
But, by Lemma \ref{decomp-IV-dimensions}, 
\[
\gdim M = \gdim C(\Gamma) - \qb{m-1}{n} \cdot \gdim C(\Gamma_0) - \qb{m-1}{n-1} \cdot \gdim C(\Gamma_1) =0.
\]
Thus, by Corollary \ref{homology-detects-homotopy}, $M\simeq 0$. So 
\[
C(\Gamma) \simeq C(\Gamma_0)\{\qb{m-1}{n}\} \oplus C(\Gamma_1)\{\qb{m-1}{n-1}\}.
\]
\end{proof}

\section{Direct Sum Decomposition (V)}\label{sec-MOY-V}

The objective of this section is to prove Theorem \ref{decomp-V}, which categorifies \cite[Proposition A.10]{MOY} and further generalizes direct sum decomposition (IV) (Theorem \ref{decomp-IV}.) The proof of Decomposition (V) is different from that of Decompositions (I-IV) in the sense that we do not explicitly construct the homotopy equivalences in Decomposition (V). Instead, we use the Krull-Schmidt property of the category $\hmf$ to prove this decomposition.

\begin{figure}[ht]
$
\xymatrix{
\input{square-m-n-l-right-k-low} && \input{square-m-n-l-right-j-high} \\
\input{square-m-n-l-left-k-high} && \input{square-m-n-l-left-j-low} \\
}
$
\caption{}\label{decomp-V-figure}

\end{figure}

\begin{theorem}\label{decomp-V}
Let $m,n,l$ be non-negative integers satisfying $n+l,m+l\leq N$. For $\max\{m-n,0\}\leq k \leq m+l$ and $\max\{m-n,0\}\leq j \leq m$, define $\Gamma_k^1$, $\Gamma_k^3$, $\Gamma_j^2$ and $\Gamma_j^4$ to be the MOY graphs in Figure \ref{decomp-V-figure}. Then, for $\max\{m-n,0\}\leq k \leq m+l$,
\begin{eqnarray}
\label{decomp-V-1} C(\Gamma_k^1) & \simeq & \bigoplus_{j=\max\{m-n,0\}}^m C(\Gamma_j^2) \{\qb{l}{k-j}\}, \\
\label{decomp-V-2} C(\Gamma_k^3) & \simeq & \bigoplus_{j=\max\{m-n,0\}}^m C(\Gamma_j^4) \{\qb{l}{k-j}\},
\end{eqnarray}
where we use the convention $\qb{a}{b}=0$ if $b<0$ or $b>a$.
\end{theorem}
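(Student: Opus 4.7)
The proof proceeds by induction on $l$, based on Decomposition (IV). Decomposition \eqref{decomp-V-2} follows from \eqref{decomp-V-1} by reversing all edge orientations, exactly as \eqref{decomp-IV-opposite} was deduced from \eqref{decomp-IV-main}, so I focus on \eqref{decomp-V-1}. The base case $l=0$ reduces to a single homotopy equivalence $C(\Gamma_k^1)\simeq C(\Gamma_k^2)$ (only the $j=k$ term survives since $\qb{0}{k-j}=\delta_{k,j}$); this degenerate case can be obtained by specializing Decomposition (IV) to a configuration in which one of its two summands has vanishing quantum-binomial multiplicity. The case $l=1$ is essentially Decomposition (IV) applied to a strand of color $1$, and provides the first genuine instance of the required identity.

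For the inductive step, assume the statement holds for $l-1$. The idea is to split off a thin strand of color $1$ from one of the $l$-colored edges using Decomposition (II), apply Decomposition (IV) at the resulting thin edge, and then invoke the induction hypothesis on the $(l-1)$-pieces that remain. Specifically, apply Theorem \ref{decomp-II} to the top-right edge of color $n+l$ in $\Gamma_k^1$ (splitting it as $1+(n+l-1)$) to obtain $C(\Gamma_k^1)\{\qb{n+l}{1}\}\simeq C(\widetilde{\Gamma}_k^1)$, where $\widetilde{\Gamma}_k^1$ is the enlarged MOY graph. Use bouquet moves (Lemma \ref{edge-contraction}) to slide the new thin strand into the central square so that Decomposition (IV) applies locally; this rewrites $C(\widetilde{\Gamma}_k^1)$ as a direct sum of matrix factorizations for MOY graphs of the form $\Gamma_{k'}^1$ with parameter $l-1$. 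Performing the same splitting on the right-hand side $\bigoplus_j C(\Gamma_j^2)\{\qb{l}{k-j}\}$ and applying Decomposition (IV) there as well produces a parallel expansion.

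Applying the induction hypothesis to each of the $(l-1)$-pieces appearing on the two sides, and using the quantum-binomial Pascal identity $\qb{l}{a}=q^{-a}\qb{l-1}{a}+q^{l-a}\qb{l-1}{a-1}$ to match coefficients, one finds that both sides become homotopic as objects of $\hmf_{R,w}$, after multiplication by the common factor $\qb{n+l}{1}\in\zed_{\geq0}[q,q^{-1}]$. Since $\hmf_{R,w}$ is Krull-Schmidt (Proposition \ref{hmf-is-KS}) and the quantum grading shift $\{q\}$ is strongly non-periodic on it, Yonezawa's lemma (Proposition \ref{yonezawa-lemma-hmf}) allows cancellation of the non-zero factor $\qb{n+l}{1}$, yielding \eqref{decomp-V-1} at level $l$.

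The main obstacle is combinatorial bookkeeping. After applying Decomposition (II) and Decomposition (IV) to both sides, one must show that the two resulting direct-sum expansions, indexed by pairs (summand of the IV-split, summand coming from the inductive hypothesis), match term by term as objects of the Krull-Schmidt category. This requires choosing the thin-strand splitting consistently and then verifying a family of $q$-binomial identities of Pascal type; the computation is delicate because the quantum gradings shift at each step. The saving grace is that, once the matching is established, no explicit morphism need be constructed: Yonezawa's lemma upgrades the equality of graded multiplicities to an honest isomorphism in $\hmf_{R,w}$, which is exactly why the paper remarks that explicit morphisms for Decomposition (V) are hard and only needed in special cases.
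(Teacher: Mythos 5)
Your induction runs on the wrong parameter, and the base case you declare to be easy is in fact the hardest content of the theorem. The paper inducts on $k$, not $l$: the $k=0$ case is genuinely vacuous (only the $j=0$ summand survives and the two graphs coincide), the $k=1$ case (Lemma \ref{decomp-V-induction-1}) is where Decomposition~(IV) enters, and the inductive step uses the $k=1$ lemma plus the hypothesis at level $k$ with \emph{varying} $l$. The thin strand of color $1$ that makes Decomposition~(IV) applicable lives in $\Gamma_1^1$ (one of the horizontal rungs has color $k=1$), not in any $\Gamma_k^1$ with $l=1$. So your claim that the $l=1$ case is ``essentially Decomposition~(IV) applied to a strand of color~$1$'' reverses the roles of $k$ and $l$: for general $k$ and $l=1$ the rung colors are $k$ and $n+k-m$, neither of which need be $1$.

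The more serious gap is the $l=0$ base case. For $l=0$ the statement reduces to $C(\Gamma_k^L)\simeq C(\Gamma_k^R)$ for all $k$ in range, which is Lemma \ref{decomp-V-special-1} --- and the paper obtains that lemma as a \emph{corollary} of Theorem \ref{decomp-V}, not the other way around. You cannot circularly invoke it as a base case. Your claimed derivation from ``Decomposition~(IV) with vanishing quantum-binomial multiplicity'' does not stand up: Decomposition~(IV) (Theorem \ref{decomp-IV}) has one of its outer vertical strands fixed to color $1$, so specializing $\qb{m-1}{n}$ or $\qb{m-1}{n-1}$ to zero there produces degenerate instances of that theorem, not the square-move equivalence $C(\Gamma_k^L)\simeq C(\Gamma_k^R)$ with two possibly thick strands. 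To make your $l$-induction work you would first need an independent proof of $C(\Gamma_k^L)\simeq C(\Gamma_k^R)$; the only route visible in this framework is itself a $k$-induction of exactly the paper's type, at which point the outer $l$-induction becomes superfluous. The rest of your sketch --- split a thin strand via Decomposition~(II), slide with bouquet moves, apply Decomposition~(IV) at the rung, match $q$-binomial coefficients via Pascal, and cancel the common factor with Proposition \ref{yonezawa-lemma-hmf} --- is the right template and matches the paper's inductive step, but the induction must be reorganized around $k$ to have correct base cases.
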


\subsection{The proof} The $n\geq m$ case and the $n<m$ case of Theorem \ref{decomp-V} may seem different. But, by flipping $\Gamma_k^1$, $\Gamma_k^3$, $\Gamma_j^2$ and $\Gamma_j^4$ horizontally and shifting the indicies $k,j$, one can easily check that the $n\geq m$ (resp. $m\geq n$) case of equation \eqref{decomp-V-1} is equivalent to the $m\geq n$ (resp. $n\geq m$) case of equation \eqref{decomp-V-2}. So, without loss of generality, we prove Theorem \ref{decomp-V} under the assumption $n\geq m$.

We prove Theorem \ref{decomp-V} by inducting on $k$. If $k=0$, then decompositions \eqref{decomp-V-1} and \eqref{decomp-V-2} are trivially true. We prove the $k=1$ case in the following lemma.

\begin{figure}[ht]
$
\xymatrix{
\input{square-m-n-l-right-1-low} && \input{square-m-n-l-right-1-high} & \input{square-m-n-l-right-0-high} \\
}
$
\caption{}\label{decomp-V-induction-1-figure}
\end{figure}

\begin{lemma}\label{decomp-V-induction-1}
Let $\Gamma_k^1$, $\Gamma_k^3$, $\Gamma_j^2$ and $\Gamma_j^4$ to be as in Theorem \ref{decomp-V}. Assume that $n\geq m$. Then
\begin{eqnarray}
\label{decomp-V-induction-1-1} C(\Gamma_1^1) & \simeq & C(\Gamma_1^2) \oplus C(\Gamma_0^2)\{[l]\}, \\
\label{decomp-V-induction-1-2} C(\Gamma_1^3) & \simeq & C(\Gamma_1^4) \oplus C(\Gamma_0^4)\{[l]\}.
\end{eqnarray}
\end{lemma}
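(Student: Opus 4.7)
\medskip

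\noindent\textbf{Proof proposal for Lemma \ref{decomp-V-induction-1}.}

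The plan is to mimic the proof architecture of Theorem \ref{decomp-IV}, since the graphs $\Gamma_1^1$, $\Gamma_1^2$, $\Gamma_0^2$ differ from $\Gamma$, $\Gamma_1$, $\Gamma_0$ of Decomposition (IV) only in how the unique color-$1$ edge is positioned inside the bigon. By symmetry (flipping horizontally and relabeling $k\mapsto m+l-k$, $j\mapsto m-j$), equivalence \eqref{decomp-V-induction-1-2} follows from \eqref{decomp-V-induction-1-1}, so I will focus on the latter.

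\emph{Step 1: morphisms to $\Gamma_1^2$.} Applying Corollary \ref{general-chi-maps-def} to the bottom half of $\Gamma_1^1$ (where the color-$1$ horizontal edge and the color-$(m+l-1)$ and color-$(n+1)$ vertical edges meet) produces $\chi^0,\chi^1$ morphisms. Combining these with the edge-splitting/merging morphisms $\phi,\overline{\phi}$ of Definition \ref{morphism-edge-splitting-merging-def} and the bouquet-move equivalences $h$ of Remark \ref{bouquet-move-remark}, I define $F\colon C(\Gamma_1^2)\to C(\Gamma_1^1)$ and $G\colon C(\Gamma_1^1)\to C(\Gamma_1^2)$, then rescale so that $G\circ F\simeq\mathrm{id}$, exactly as $f,g$ are defined in Subsection \ref{decomp-IV-subsection-Gamma-Gamma0}.

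\emph{Step 2: morphisms to $\Gamma_0^2$.} Next, I construct a family $\alpha_i\colon C(\Gamma_0^2)\{q^{l-1-2i}\}\to C(\Gamma_1^1)$ and $\beta_i\colon C(\Gamma_1^1)\to C(\Gamma_0^2)\{q^{l-1-2i}\}$ for $0\leq i\leq l-1$. These arise from $\chi^0,\chi^1$ applied with the orientation making $\Gamma_0^2$ the ``through'' diagram, combined with $\phi,\overline{\phi}$, and then precomposed/postcomposed with multiplication by $S_{(i)}(\mathbb{V})$ for a single indeterminant alphabet $\mathbb{V}$ assigned to a marked point inside the color-$l$ edge (using that the difference set for a color-$l$ vs. color-$0$ merger is naturally a single alphabet of size $l$, giving a factor of $\qb{l}{1}=[l]$).

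\emph{Step 3: composition identities.} Using Corollary \ref{general-chi-maps-def} together with Lemma \ref{phibar-compose-phi}, I verify, after rescaling, that
\[
\beta_j\circ \alpha_i\simeq \begin{cases} \mathrm{id}_{C(\Gamma_0^2)} & i=j,\\ 0 & i<j,\end{cases}\qquad G\circ F\simeq\mathrm{id}_{C(\Gamma_1^2)}.
\]
The ``cross'' compositions $G\circ\alpha_i$ and $\beta_i\circ F$ are shown to be null-homotopic by computing the lowest non-vanishing quantum grading of $\Hom_{\HMF}(C(\Gamma_0^2),C(\Gamma_1^2))$ and $\Hom_{\HMF}(C(\Gamma_1^2),C(\Gamma_0^2))$ using Decompositions (II) and (IV), then comparing with the quantum degrees of the morphisms (which are strictly below the threshold). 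The triangularization trick from Lemma \ref{decomp-IV-F-G} then promotes the family $\{\alpha_i,\beta_i\}$ into $\vec{\alpha},\vec{\beta}$ with $\vec{\beta}\circ\vec{\alpha}\simeq\mathrm{id}$.

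\emph{Step 4: splitting and dimension check.} Assembling $\Phi=(F,\vec{\alpha})$ and $\Psi=(G,\vec{\beta})^T$, the homotopic nilpotency of the off-diagonal blocks (again by degree reasons) lets me invert $\Psi\circ\Phi-\mathrm{id}$ as in Lemma \ref{decomp-IV-Psi-Phi}. By the Krull-Schmidt property (Lemma \ref{fully-additive-hmf}), this yields $C(\Gamma_1^1)\simeq C(\Gamma_1^2)\oplus C(\Gamma_0^2)\{[l]\}\oplus M$ for some object $M$ of $\hmf$. To conclude $M\simeq 0$, I compute $\gdim$ of both sides via the methods of Lemma \ref{decomp-IV-dimensions}, using in particular that $\Gamma_0^2$ is a standard bigon to which Decomposition (II) applies, and verify cancellation.

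\emph{Main obstacle.} The principal difficulty is Step 3: verifying $\beta_j\circ\alpha_i\simeq\delta_{ij}\mathrm{id}$ requires tracking Schur-polynomial multiplications through several $\chi$-morphisms together with the explicit Sylvester-operator formulas attached to $\phi,\overline{\phi}$. The graded-dimension computation in Step 4 is technically involved but routine once the morphisms are in place, since $\Gamma_1^1$, $\Gamma_1^2$, and $\Gamma_0^2$ all admit Koszul presentations amenable to the contraction lemmas of Section \ref{sec-mf}.
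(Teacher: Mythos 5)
Your proposal is correct in spirit but takes a genuinely different, and much more laborious, route than the paper. You attempt to re-run the entire Decomposition~(IV) machinery directly for $\Gamma_1^1$: constructing explicit morphisms $F,G$ to $\Gamma_1^2$ and a family $\vec{\alpha},\vec{\beta}$ to $\Gamma_0^2$, verifying a triangular composition identity, inverting the off-diagonal nilpotents, and then closing with a graded-dimension computation. The paper instead avoids all new morphism constructions: it introduces an auxiliary MOY graph $\Gamma$ (Figure~\ref{decomposition-IV-cor1-proof-figure1}) containing two adjacent squares, applies the already-proven Decomposition~(IV) once to the left square and once to the right square, and obtains two expressions for $C(\Gamma)$ in which $C(\Gamma_1^1)$ and $C(\Gamma_1^2)$ appear alongside common $C(\Gamma_0^2)$-summands. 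The quantum-integer identity $[m][m+l-1]-[m-1][m+l]=[l]$ combined with Krull-Schmidt cancellation (Lemma~\ref{KS-oplus-cancel}, Proposition~\ref{hmf-is-KS}) then gives \eqref{decomp-V-induction-1-1} with no graded-dimension computation and no new composition-formula bookkeeping. What the paper's route buys is efficiency: it reduces the lemma to a formal consequence of Decomposition~(IV) plus Krull-Schmidt. What your route would buy, if completed, is explicit inclusion/projection morphisms for the splitting, which can be useful when one needs to track maps rather than just isomorphism types.

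That said, a few details in your proposal would need care if you actually carried it out. In Step~2 you attach the alphabet $\mathbb{V}$ to the color-$l$ edge (which has $l$ indeterminants), but the factor $[l]$ should arise from a box of partitions inside some $\Lambda_{a,b}$, not merely from a count $\qb{l}{1}$; the correct setup is the analogue of $\Lambda'$ in Subsection~\ref{decomp-IV-subsection-Gamma-Gamma1}, and you would need to identify which alphabet and which box produce $[l]$ for general $l$. In Step~3, the composition formula you need comes from the general Proposition~\ref{general-general-chi-maps} rather than the $l'=m-n$ special case Corollary~\ref{general-chi-maps-def}; the resulting $\chi^1\circ\chi^0$ is a Schur-polynomial sum over $\Lambda_{l,m}$, so the triangularity argument after rescaling (your analogue of Lemmas~\ref{decomp-IV-f-g-lambda-compose} and \ref{decomp-IV-alpha-beta-lambda-compose}) would be more intricate than you indicate. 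Finally, the graded-dimension check in Step~4 would require an analogue of Lemma~\ref{decomp-IV-dimensions} for $\Gamma_1^1$, $\Gamma_1^2$, and $\Gamma_0^2$, which the paper's argument avoids entirely. None of these are fatal, but they are exactly the kind of technical work the paper's two-squares-and-cancel trick was designed to bypass.
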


\begin{proof}
The proofs of \eqref{decomp-V-induction-1-1} and \eqref{decomp-V-induction-1-2} are very similar. So we only prove \eqref{decomp-V-induction-1-1} here and leave \eqref{decomp-V-induction-1-2} to the reader.

\begin{figure}[ht]

\setlength{\unitlength}{1pt}

\begin{picture}(360,105)(-180,-15)

% center

\put(-40,0){\vector(0,1){25}}

\put(-40,65){\vector(0,1){25}}

\put(-40,25){\vector(0,1){40}}

\put(0,25){\vector(0,1){40}}

\put(0,65){\vector(-1,0){40}}

\put(0,65){\vector(1,0){40}}

\put(40,25){\vector(-1,0){40}}

\put(40,25){\vector(0,1){40}}

\put(-40,25){\vector(1,0){40}}

\put(40,0){\vector(0,1){25}}

\put(40,65){\vector(0,1){25}}

\put(-47,13){\tiny{$n$}}

\put(-47,72){\tiny{$m$}}

\put(-63,42){\tiny{$m-1$}}

\put(43,13){\tiny{$m+l$}}

\put(43,72){\tiny{$n+l$}}

\put(43,42){\tiny{$m+l-1$}}

\put(-35,28){\tiny{$n-m+1$}}

\put(-15,42){\tiny{$n-m+2$}}

\put(-22,58){\tiny{$1$}}

\put(18,28){\tiny{$1$}}

\put(5,58){\tiny{$n-m+1$}}

\put(-2,-15){$\Gamma$}

\end{picture}

\caption{}\label{decomposition-IV-cor1-proof-figure1}

\end{figure}

Consider the MOY graph $\Gamma$ in Figure \ref{decomposition-IV-cor1-proof-figure1}. Applying Decomposition (IV) (Theorem \ref{decomp-IV}) to the left square in $\Gamma$, we get $C(\Gamma) \simeq C(\Gamma_1^1) \oplus C(\Gamma')\{[m-1]\}$, where $\Gamma'$ is given in Figure \ref{decomposition-IV-cor1-proof-figure2}. By Corollary \ref{contract-expand} and Decomposition (II) (Theorem \ref{decomp-II}), we have $C(\Gamma') \simeq C(\Gamma'')\simeq C(\Gamma_0^2)\{[m+l]\}$. Thus,
\begin{equation}\label{decomp-V-induction-1-1-left}
C(\Gamma) \simeq C(\Gamma_1^1) \oplus C(\Gamma_0^2)\{[m-1][m+l]\}.
\end{equation}

\begin{figure}[ht]

\setlength{\unitlength}{1pt}

\begin{picture}(360,105)(-180,-15)

% left

\put(-160,0){\vector(0,1){45}}

\put(-160,45){\vector(0,1){45}}

\put(-160,45){\vector(1,0){30}}

\put(-130,45){\vector(2,1){30}}

\put(-100,30){\vector(-2,1){30}}

\put(-100,0){\vector(0,1){30}}

\put(-100,30){\vector(0,1){30}}

\put(-100,60){\vector(0,1){30}}

\put(-167,13){\tiny{$n$}}

\put(-167,72){\tiny{$m$}}

\put(-97,13){\tiny{$m+l$}}

\put(-97,72){\tiny{$n+l$}}

\put(-97,42){\tiny{$m+l-1$}}

\put(-115,30){\tiny{$1$}}

\put(-155,48){\tiny{$_{n-m}$}}

\put(-140,55){\tiny{$_{n-m+1}$}}

\put(-142,-15){$\Gamma'$}

% right

\put(80,0){\vector(0,1){60}}

\put(80,60){\vector(0,1){30}}

\put(80,60){\vector(1,0){60}}

\put(140,60){\vector(0,1){30}}

\put(140,45){\vector(0,1){15}}

\put(130,20){\vector(0,1){20}}

\put(150,20){\vector(0,1){20}}

\put(140,0){\vector(0,1){15}}

\qbezier(140,45)(130,45)(130,40)

\qbezier(140,45)(150,45)(150,40)

\qbezier(140,15)(130,15)(130,20)

\qbezier(140,15)(150,15)(150,20)

\put(73,13){\tiny{$n$}}

\put(73,72){\tiny{$m$}}

\put(100,62){\tiny{$n-m$}}

\put(143,5){\tiny{$m+l$}}

\put(143,50){\tiny{$m+l$}}

\put(143,72){\tiny{$n+l$}}

\put(153,28){\tiny{$m+l-1$}}

\put(123,28){\tiny{$1$}}

\put(108,-15){$\Gamma''$}

\end{picture}

\caption{}\label{decomposition-IV-cor1-proof-figure2}

\end{figure}

Now apply Decomposition (IV) (Theorem \ref{decomp-IV}) to the right square in $\Gamma$. This gives $C(\Gamma) \simeq C(\Gamma_1^2) \oplus C(\Gamma''')\{[m+l-1]\}$, where $\Gamma'''$ is given in Figure \ref{decomposition-IV-cor1-proof-figure3}. By Corollary \ref{contract-expand} and Decomposition (II) (Theorem \ref{decomp-II}), we have $C(\Gamma''') \simeq C(\Gamma'''')\simeq C(\Gamma_0^2)\{[m]\}$. Thus,
\begin{equation}\label{decomp-V-induction-1-1-right}
C(\Gamma) \simeq C(\Gamma_1^2) \oplus C(\Gamma_0^2)\{[m][m+l-1]\}.
\end{equation}

\begin{figure}[ht]

\setlength{\unitlength}{1pt}

\begin{picture}(360,105)(-180,-15)

% left

\put(-100,0){\vector(0,1){45}}

\put(-100,45){\vector(0,1){45}}

\put(-130,45){\vector(1,0){30}}

\put(-160,30){\vector(2,1){30}}

\put(-130,45){\vector(-2,1){30}}

\put(-160,0){\vector(0,1){30}}

\put(-160,30){\vector(0,1){30}}

\put(-160,60){\vector(0,1){30}}

\put(-167,13){\tiny{$n$}}

\put(-167,72){\tiny{$m$}}

\put(-180,43){\tiny{$m-1$}}

\put(-97,13){\tiny{$m+l$}}

\put(-97,72){\tiny{$m+l$}}

\put(-145,53){\tiny{$1$}}

\put(-125,48){\tiny{$n-m$}}

\put(-146,32){\tiny{$n-m+1$}}

\put(-142,-15){$\Gamma'''$}

% right

\put(80,0){\vector(0,1){30}}

\put(80,30){\vector(0,1){15}}

\put(80,75){\vector(0,1){15}}

\put(80,30){\vector(1,0){60}}

\put(140,30){\vector(0,1){60}}

\put(70,50){\vector(0,1){20}}

\put(90,50){\vector(0,1){20}}

\put(140,0){\vector(0,1){30}}

\qbezier(80,45)(70,45)(70,50)

\qbezier(80,45)(90,45)(90,50)

\qbezier(80,75)(70,75)(70,70)

\qbezier(80,75)(90,75)(90,70)

\put(73,5){\tiny{$n$}}

\put(73,80){\tiny{$m$}}

\put(73,35){\tiny{$m$}}

\put(93,58){\tiny{$1$}}

\put(50,58){\tiny{$m-1$}}

\put(100,32){\tiny{$n-m$}}

\put(143,5){\tiny{$m+l$}}

\put(143,72){\tiny{$n+l$}}

\put(108,-15){$\Gamma''''$}

\end{picture}

\caption{}\label{decomposition-IV-cor1-proof-figure3}

\end{figure}

Note that $[m]\cdot [m+l-1] - [m-1]\cdot [m+l] =[l]$. So, by the Krull-Schmidt property of the category $\hmf$ (Proposition \ref{hmf-is-KS} and Lemma \ref{KS-oplus-cancel}), we know that \eqref{decomp-V-induction-1-1-left} and \eqref{decomp-V-induction-1-1-right} imply \eqref{decomp-V-induction-1-1}.
\end{proof}

With the above initial case in hand, we are ready to prove Theorem \ref{decomp-V} in general.

\begin{proof}[Proof of Theorem \ref{decomp-V}]
From above, we know that \eqref{decomp-V-1} and \eqref{decomp-V-2} are true for $k=0,1$. Now assume \eqref{decomp-V-1} and \eqref{decomp-V-2} are true for a given $k\geq 1$ and all $m,n,l$ satisfying the conditions in Theorem \ref{decomp-V}. We claim that \eqref{decomp-V-1} and \eqref{decomp-V-2} are also true for $k+1$. The proofs for the $k+1$ cases of \eqref{decomp-V-1} and \eqref{decomp-V-2} are very similar. We only proof \eqref{decomp-V-1} for $k+1$ here and leave \eqref{decomp-V-2} to the reader.

\begin{figure}[ht]
$
\xymatrix{
\input{square-m-n-l-right-k+1-low} && \input{square-m-n-l-right-j+1-high} \\
\input{square-m-n-l-right-k+1-low-tilde} && \input{square-m-n-l-right-j+1-high-tilde} \\
}
$
\caption{}\label{decomp-V-proof-figure1}

\end{figure}

Recall that $\Gamma_{k+1}^1$ and $\Gamma_{j+1}^2$ are the MOY graphs in the first row of Figure \ref{decomp-V-proof-figure1}. We define $\widetilde{\Gamma}_{k+1}^1$ and $\widetilde{\Gamma}_{j+1}^2$ to be the MOY graphs in the second row in Figure \ref{decomp-V-proof-figure1}. By Corollary \ref{contract-expand} and Decomposition (II) (Theorem \ref{decomp-II}), we have
\begin{eqnarray*}
C(\widetilde{\Gamma}_{k+1}^1) & \simeq & C(\Gamma_{k+1}^1)\{[k+1]\}, \\
C(\widetilde{\Gamma}_{j+1}^2) & \simeq & C(\Gamma_{j+1}^2)\{[j+1]\}.
\end{eqnarray*}

\begin{figure}[ht]
$
\xymatrix{
\input{square-m-n-l-right-k-low-hat}
}
$
\caption{}\label{decomp-V-proof-figure2}

\end{figure}

\textit{Case 1.} $k \leq l$. Apply \eqref{decomp-V-induction-1-1} to the upper rectangle in $\widetilde{\Gamma}_{k+1}^1$. This gives
\[
C(\widetilde{\Gamma}_{k+1}^1) \simeq C(\widehat{\Gamma}_{k}^1) \oplus C(\Gamma_k^1)\{[l-k]\},
\]
where $\widehat{\Gamma}_{k}^1$ is the MOY graph in Figure \ref{decomp-V-proof-figure2} and $\Gamma_k^1$ is given in Figure \ref{decomp-V-figure}. Recall that we assume \eqref{decomp-V-1} is true for the given $k$ and \textbf{all} $m,n,l$ satisfying the conditions in Theorem \ref{decomp-V}. Thus, we can apply \eqref{decomp-V-1} to the lower rectangle in $\widehat{\Gamma}_{k}^1$ and get
\begin{eqnarray*}
C(\widehat{\Gamma}_{k}^1) & \simeq & \bigoplus_{j=0}^{m-1} C(\widetilde{\Gamma}_{j+1}^2)\{\qb{l+1}{k-j}\} \\
& \simeq & \bigoplus_{j=0}^{m-1} C(\Gamma_{j+1}^2)\{[j+1]\qb{l+1}{k-j}\} = \bigoplus_{j=0}^{m} C(\Gamma_{j}^2)\{[j]\qb{l+1}{k-j+1}\}.
\end{eqnarray*}
Again, recall that we assume \eqref{decomp-V-1} is true for $\Gamma_k^1$. That is,
\[
C(\Gamma_k^1) \simeq \bigoplus_{j=0}^m C(\Gamma_j^2) \{\qb{l}{k-j}\}.
\]
Note that $[j]\qb{l+1}{k-j+1} + [l-k]\qb{l}{k-j} = \qb{l}{k+1-j}[k+1]$. So, combining the above, we get 
\[
C(\Gamma_{k+1}^1)\{[k+1]\} \simeq C(\widetilde{\Gamma}_{k+1}^1) \simeq \bigoplus_{j=0}^{m} C(\Gamma_{j}^2) \{\qb{l}{k+1-j}[k+1]\}.
\]
By Proposition \ref{yonezawa-lemma-hmf}, this implies 
\[
C(\Gamma_{k+1}^1) \simeq \bigoplus_{j=0}^{m} C(\Gamma_{j}^2) \{\qb{l}{k+1-j}\}.
\]
So \eqref{decomp-V-1} is true for $k+1$ if $k\leq l$.

\textit{Case 2.} $k>l$. In this case, we apply \eqref{decomp-V-induction-1-2} to the upper rectangle of $\widehat{\Gamma}_{k}^1$. This gives
\[
C(\widehat{\Gamma}_{k}^1) \simeq C(\widetilde{\Gamma}_{k+1}^1) \oplus C(\Gamma_k^1)\{[k-l]\}.
\]
Note that, in this case, we also have 
\[
C(\widehat{\Gamma}_{k}^1) \simeq \bigoplus_{j=0}^{m} C(\Gamma_{j}^2)\{[j]\qb{l+1}{k-j+1}\}
\]
and 
\[
C(\Gamma_k^1) \simeq \bigoplus_{j=0}^m C(\Gamma_j^2) \{\qb{l}{k-j}\}.
\]
Note that $[j]\qb{l+1}{k-j+1} - [k-l]\qb{l}{k-j} = \qb{l}{k+1-j}[k+1]$. So, by Lemma \ref{KS-oplus-cancel}, we have 
\[
C(\Gamma_{k+1}^1)\{[k+1]\} \simeq C(\widetilde{\Gamma}_{k+1}^1) \simeq \bigoplus_{j=0}^{m} C(\Gamma_{j}^2) \{\qb{l}{k+1-j}[k+1]\}.
\]
By Proposition \ref{yonezawa-lemma-hmf}, this implies 
\[
C(\Gamma_{k+1}^1) \simeq \bigoplus_{j=0}^{m} C(\Gamma_{j}^2) \{\qb{l}{k+1-j}\}.
\]
So \eqref{decomp-V-1} is true for $k+1$ if $k>l$.
\end{proof}

\section{Chain Complexes Associated to Knotted MOY Graphs}\label{sec-chain-complex-def}

\begin{figure}[h]

\setlength{\unitlength}{1pt}

\begin{picture}(360,50)(-180,-30)

\linethickness{.5pt}

% left

\put(-100,-20){\vector(1,1){40}}

\put(-60,-20){\line(-1,1){15}}

\put(-85,5){\vector(-1,1){15}}

\put(-84,-30){$+$}

% right

\put(100,-20){\vector(-1,1){40}}

\put(60,-20){\line(1,1){15}}

\put(85,5){\vector(1,1){15}}

\put(76,-30){$-$}

\end{picture}

\caption{}\label{crossing-sign-figure}

\end{figure}

\begin{definition}\label{knotted-MOY-def}
A knotted MOY graph is an immersion of an abstract MOY graph into $\mathbb{R}^2$ such that
\begin{itemize}
	\item the only singularities are finitely many transversal double points in the interior of edges (that is, away from the vertices),
	\item we specify the upper edge and the lower edge at each of these transversal double points.
\end{itemize}
Each transversal double point in a knotted MOY graph is called a crossing. We follow the usual sign convention for crossings given in Figure \ref{crossing-sign-figure}.

If there are crossings in an edge, these crossing divide this edge into several parts. We call each part a segment of the edge.
\end{definition}

Note that colored oriented link/tangle diagrams and (embedded) MOY graphs are special cases of knotted MOY graphs.

\begin{definition}\label{knotted-MOY-marking-def}
A marking of a knotted MOY graph $D$ consists of the following:
\begin{enumerate}
	\item A finite collection of marked points on $D$ such that
	\begin{itemize}
	\item every segment of every edge of $D$ has at least one marked point;
	\item all the end points (vertices of valence $1$) are marked;
	\item none of the crossings and internal vertices (vertices of valence at least $2$) are marked.
  \end{itemize}
  \item An assignment of pairwise disjoint alphabets to the marked points such that the alphabet associated to a marked point on an edge of color $m$ has $m$ independent indeterminates. (Recall that an alphabet is a finite collection of homogeneous indeterminates of degree $2$.)
\end{enumerate}

Given a knotted MOY graph $D$ with a marking, we cut $D$ at the marked points. This produces a collection $\{D_1,\dots,D_m\}$ of simple knotted MOY graphs marked only at their end points. We call each $D_i$ a piece of $D$. It is easy to see that each $D_i$ is one of the following:
\begin{enumerate}[(i)]
	\item an oriented arc from one marked point to another,
	\item a star-shaped neighborhood of a vertex in an (embedded) MOY graph,
	\item a crossing with colored branches.
\end{enumerate}
\end{definition}

For a given $D_i$, let $\mathbb{X}_1,\dots,\mathbb{X}_{n_i}$ be the alphabets assigned to the end points of $D_i$, among which $\mathbb{X}_1,\dots,\mathbb{X}_{k_i}$ are assigned to exits and $\mathbb{X}_{k_i+1},\dots,\mathbb{X}_{n_i}$ are assigned to entrances. Let $R_i=\Sym(\mathbb{X}_1|\cdots|\mathbb{X}_{n_i})$ and $w_i= \sum_{j=1}^{k_i} p_{N+1}(\mathbb{X}_j) - \sum_{j=k_i+1}^{n_i} p_{N+1}(\mathbb{X}_j)$. Then the chain complex $C(D_i)$ associated to $D_i$ is an object of $\hch(\hmf_{R_i,w_i})$.

If $D_i$ is of type (i) or (ii), then it is an (embedded) MOY graph, and its matrix factorization $C(D_i)$ is an object of $\hmf_{R_i,w_i}$. We define both the unnormalized chain complex $\hat{C}(D_i)$ and the normalized chain complex $C(D_i)$ to be 
\begin{equation}\label{eq-def-complex-embedded}
\hat{C}(D_i) = C(D_i)= 0 \rightarrow C(D_i) \rightarrow 0,
\end{equation}
where $C(D_i)$  has homological grading $0$. (The abuse of notations here should not be confusing.)

If $D_i$ is of type (iii), that is, a colored crossing, then the definitions of $\hat{C}(D_i)$ and $C(D_i)$ are much more complex. The chain complexes associated to colored crossings will be defined in Definition \ref{complex-colored-crossing-def} below.

\begin{remark}
In the present paper, $\hat{C}(\ast)$ stands for the unnormalized chain complex of $\ast$ and $C(\ast)$ stands for the normalized chain complex of $\ast$. For pieces of types (i) and (ii), there is no difference between their normalized and unnormalized chain complexes. For a piece of type (iii), that is, a colored crossing, these two complexes differ by a shift in the $\zed_2\oplus \zed^{\oplus 2}$ grading. See Definition \ref{complex-colored-crossing-def} below for details.
\end{remark}

\begin{definition}\label{complex-knotted-MOY-def}
\begin{eqnarray*}
\hat{C}(D) & := & \bigotimes_{i=1}^m \hat{C}(D_i), \\
C(D) & := & \bigotimes_{i=1}^m C(D_i),
\end{eqnarray*}
where the tensor product is done over the common end points. For example, for two pieces $D_{i_1}$ and $D_{i_2}$ of $D$, let $\mathbb{W}_1,\dots,\mathbb{W}_l$ be the alphabets associated to their common end points. Then, in the above tensor product, 
\[
C(D_{i_1}) \otimes C(D_{i_2}) = C(D_{i_1}) \otimes_{\Sym(\mathbb{W}_1|\cdots|\mathbb{W}_l)} C(D_{i_2}).
\]

If $D$ is closed, that is, has no endpoints, then $\hat{C}(D)$ and $C(D)$ are objects of $\hch(\hmf_{\C,0})$. 

If $D$ has endpoints, denote by $\mathbb{E}_1,\dots,\mathbb{E}_n$ the alphabets assigned to all end points of $D$. Assume that $\mathbb{E}_1,\dots,\mathbb{E}_k$ are assigned to exits and $\mathbb{E}_{k+1},\dots,\mathbb{E}_n$ are assigned to entrances. Let $R=\Sym(\mathbb{E}_1|\cdots|\mathbb{E}_n)$ and $w= \sum_{i=1}^k p_{N+1}(\mathbb{E}_i) - \sum_{j=k+1}^n p_{N+1}(\mathbb{E}_j)$. Then $\hat{C}(D)$ and $C(D)$ are objects of $\hch(\hmf_{R,w})$.

As objects of $\hch(\hmf_{R,w})$, $\hat{C}(D)$ and $C(D)$ have a $\zed_2$-grading, a quantum grading and a homological grading.
\end{definition}

In the rest of this section, we define and study the chain complexes associated to colored crossings. For this purpose, we need to understand morphisms between matrix factorizations associated to MOY graphs of the type shown in Figure \ref{l-right-k-high}.

\begin{figure}[ht]

$
\xymatrix{
\input{square-m-n-l-right-k-high}
}
$

\caption{}\label{l-right-k-high}

\end{figure}

\subsection{Change of base ring} There is a change of base ring involved in the computation of $\Hom_\HMF (C(\Gamma_k^2),\ast)$, which is the subject of this subsection.

Let $\mathbb{A}=\{a_1,\dots,a_m\}$, $\mathbb{B}=\{b_1,\dots,b_n\}$ and $\mathbb{X}=\{x_1,\dots,x_{m+n}\}$ be alphabets. Denote by $A_k$, $B_k$ and $X_k$ the $k$-th elementary symmetric polynomials in $\mathbb{A}$, $\mathbb{B}$ and $\mathbb{X}$. Define
\begin{eqnarray}
\label{eq-def-E-k-base-ring} E_k & = & X_k - \sum_{j=0}^k A_j B_{k-j}, \\
\label{eq-def-H-k-base-ring} H_k & = & \sum_{j=0}^k (-1)^j h_j(\mathbb{A}) X_{k-j} -B_k \\
\nonumber & = & \begin{cases}
\sum_{j=0}^k (-1)^j h_j(\mathbb{A}) X_{k-j} -B_k & \text{if } k=0,1,\dots,n, \\
\sum_{j=0}^k (-1)^j h_j(\mathbb{A}) X_{k-j} &  \text{if } k=n+1,\dots,n+m.
\end{cases}
\end{eqnarray}
Define $I_1$ and $I_2$ to be the homogeneous ideals of $\Sym(\mathbb{A}|\mathbb{B}|\mathbb{X})$ given by 
\begin{eqnarray*}
I_1 & = & (E_1,\dots,E_{m+n}), \\
I_2 & = & (H_1,\dots,H_{m+n}).
\end{eqnarray*}

\begin{lemma}\label{complex-def-two-ideals}
$I_1=I_2$.
\end{lemma}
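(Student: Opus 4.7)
The plan is to prove the two containments $I_2 \subseteq I_1$ and $I_1 \subseteq I_2$ directly by exhibiting, for each $k = 1, \ldots, m+n$, explicit expressions of $H_k$ as an $R$-linear combination of $E_1, \ldots, E_k$ and of $E_k$ as an $R$-linear combination of $H_1, \ldots, H_k$. The one identity that drives everything is the standard relation between the elementary and complete symmetric polynomials of $\mathbb{A}$, namely equation \eqref{complete-recursion}:
\[
\sum_{r=0}^{l} (-1)^r h_r(\mathbb{A}) A_{l-r} = \delta_{l,0}.
\]

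First I would show the key identity
\[
H_k = \sum_{j=0}^{k} (-1)^j h_j(\mathbb{A}) E_{k-j} \qquad (k=1,\ldots,m+n).
\]
To see this, substitute $X_{k-j} = E_{k-j} + \sum_{i=0}^{k-j} A_i B_{k-j-i}$ into the defining formula for $\sum_{j=0}^{k} (-1)^j h_j(\mathbb{A}) X_{k-j}$. The part involving the $E$'s is exactly the right hand side above, while swapping the order of summation in the double sum and setting $l=i+j$ gives $\sum_{l=0}^{k} B_{k-l} \bigl(\sum_{j=0}^{l} (-1)^j h_j(\mathbb{A}) A_{l-j}\bigr) = B_k$ by the recursion above. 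Comparing with the definition of $H_k$ (and remembering $B_k=0$ for $k>n$), the identity falls out. Since $E_0 = 0$, this exhibits $H_k \in I_1$ for every $k$, so $I_2 \subseteq I_1$.

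For the reverse containment I would prove the companion identity
\[
E_k = \sum_{j=0}^{k} A_j H_{k-j} \qquad (k=1,\ldots,m+n).
\]
Substituting the formula for $H_{k-j}$ just established and interchanging summation order, the coefficient of $E_{k-l}$ becomes $\sum_{j=0}^{l} (-1)^{l-j} A_j h_{l-j}(\mathbb{A}) = \sum_{r=0}^{l} (-1)^r h_r(\mathbb{A}) A_{l-r} = \delta_{l,0}$, again by \eqref{complete-recursion}. Only the $l=0$ term survives, and it gives exactly $E_k$. Since $H_0=0$, this writes $E_k \in I_2$, giving $I_1 \subseteq I_2$.

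There is really no obstacle here: the lemma is a pure book-keeping computation, and the only tool needed beyond algebraic manipulation is the orthogonality of the elementary and complete symmetric polynomials in $\mathbb{A}$. The conceptually cleanest repackaging, which I might include as a remark, is to introduce the truncated generating functions $\hat{E}(t) = \sum_{k=0}^{m+n} E_k t^k$, $\hat{H}(t) = \sum_{k=0}^{m+n} H_k t^k$, $A(t) = \sum_k A_k t^k$, $\tilde{H}(t)=\sum_j (-1)^j h_j(\mathbb{A}) t^j$, and observe that the two identities above are the statements $\hat{H}(t) \equiv \tilde{H}(t)\hat{E}(t) \pmod{t^{m+n+1}}$ and $\hat{E}(t) \equiv A(t) \hat{H}(t) \pmod{t^{m+n+1}}$, which are equivalent modulo the exact equality $A(t)\tilde{H}(t)=1$.
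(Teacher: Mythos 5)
Your proof is correct and takes essentially the same route as the paper: both establish the two identities $H_k=\sum_{j\ge 0}(-1)^j h_j(\mathbb{A})E_{k-j}$ and $E_k=\sum_{j\ge 0}A_j H_{k-j}$ via the orthogonality relation \eqref{complete-recursion}, and both obtain the double containment from these. The only cosmetic difference is that you verify the second identity by substituting the first one and telescoping, whereas the paper expands directly from the definition of $H_{k-i}$; this is an equivalent bookkeeping choice and your generating-function remark captures exactly why.
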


\begin{proof}
First, note that
\begin{eqnarray*}
\sum_{i=0}^k (-1)^i h_i(\mathbb{A}) E_{k-i}
& = & \sum_{i=0}^k (-1)^i h_i(\mathbb{A}) X_{k-i} - \sum_{i=0}^k \sum_{j=0}^{k-i} (-1)^i h_i(\mathbb{A}) A_{k-i-j} B_j \\
& = & \sum_{i=0}^k (-1)^i h_i(\mathbb{A}) X_{k-i} - \sum_{j=0}^k B_j \sum_{i=0}^{k-j} (-1)^i h_i(\mathbb{A}) A_{k-i-j} \\
_{\text{(by equation \eqref{complete-recursion})}} & = & \sum_{i=0}^k (-1)^i h_i(\mathbb{A}) X_{k-i} - B_k = H_k.
\end{eqnarray*}
This shows that $I_2 \subset I_1$.

Next, we have
\begin{eqnarray*}
\sum_{i=0}^k A_i H_{k-i} & = & \sum_{i=0}^k A_i\sum_{j=0}^{k-i} (-1)^{k-i-j} h_{k-i-j}(\mathbb{A}) X_j - \sum_{i=0}^k A_i B_{k-i} \\
& = & \sum_{j=0}^k  X_j \sum_{i=0}^{k-j} (-1)^{k-i-j} h_{k-i-j}(\mathbb{A})A_i - \sum_{i=0}^k A_i B_{k-i} \\
_{\text{(by equation \eqref{complete-recursion})}} & = & X_k - \sum_{i=0}^k A_i B_{k-i} = E_k.
\end{eqnarray*}
So $I_1 \subset I_2$. Altogether, we have $I_1 =I_2$.
\end{proof}

Note that, for $k=n+1,\dots,n+m$, $H_k \in \Sym(\mathbb{A}|\mathbb{X})$. Define $I_3$ to be the homogeneous ideal of $\Sym(\mathbb{A}|\mathbb{X})$ given by $I_3 = (H_{n+1},\dots,H_{n+m})$.

\begin{lemma}\label{complex-def-computing-quotient-ring}
The quotient ring $\Sym(\mathbb{A}|\mathbb{X})/I_3$ is a finitely generated graded-free $\Sym(\mathbb{X})$-module of graded rank $\qb{m+n}{n}$. 

As graded $\Sym(\mathbb{A}|\mathbb{X})/I_3$-modules, we have 
\begin{equation}\label{complex-def-computing-quotient-ring-1}
\Hom_{\Sym(\mathbb{X})} (\Sym(\mathbb{A}|\mathbb{X})/I_3,\Sym(\mathbb{X})) \cong \Sym(\mathbb{A}|\mathbb{X})/I_3 ~\{q^{-2mn}\}.
\end{equation}
\end{lemma}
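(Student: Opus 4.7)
The plan is to identify $\Sym(\mathbb{A}|\mathbb{X})/I_3$ with $\Sym(\mathbb{A}|\mathbb{B})$ as a graded $\Sym(\mathbb{X})$-algebra---where $\mathbb{B}$ is a formal alphabet of $n$ indeterminants such that $\mathbb{A}\cup\mathbb{B}$ plays the role of $\mathbb{X}$---and then invoke the structure theorem for partially symmetric polynomials (Theorem \ref{part-symm-str} and Corollary \ref{part-symm-grade}).

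First, I would construct mutually inverse graded $\C$-algebra maps between the two sides. Define $\phi: \Sym(\mathbb{A}|\mathbb{X}) \to \Sym(\mathbb{A}|\mathbb{B})$ by $A_i \mapsto A_i$ and $X_k \mapsto \sum_{j=0}^{k} A_j B_{k-j}$; both sides of the second assignment are homogeneous of degree $2k$, so $\phi$ is degree-preserving. The calculation that already appears in the proof of Lemma \ref{complex-def-two-ideals} gives $\phi(H_k) = B_k$, so $\phi$ kills every $H_{n+k}$ (as $B_{n+k} = 0$ in $\Sym(\mathbb{A}|\mathbb{B})$ when $k \geq 1$) and therefore descends to a graded surjection $\bar\phi: \Sym(\mathbb{A}|\mathbb{X})/I_3 \twoheadrightarrow \Sym(\mathbb{A}|\mathbb{B})$. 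Going back, define $\psi: \Sym(\mathbb{A}|\mathbb{B}) \to \Sym(\mathbb{A}|\mathbb{X})/I_3$ by $A_i \mapsto A_i$ and $B_k \mapsto \overline{\sum_{j=0}^{k} (-1)^j h_j(\mathbb{A}) X_{k-j}}$ (the bar denotes reduction mod $I_3$); this is well defined since $\Sym(\mathbb{A}|\mathbb{B})$ is a polynomial ring in the $A_i$ and $B_k$. A direct check using equation \eqref{complete-recursion} shows $\bar\phi\circ\psi = \id$ and $\psi\circ\bar\phi = \id$, so $\bar\phi$ is a graded isomorphism. It is also $\Sym(\mathbb{X})$-linear once $\Sym(\mathbb{X})$ is identified with $\Sym(\mathbb{A}\cup\mathbb{B})$ via $X_k \leftrightarrow \sum_{j} A_j B_{k-j}$, and so the first assertion follows from Corollary \ref{part-symm-grade}, with the Schur polynomials $\{S_\lambda(\mathbb{A})\}_{\lambda \in \Lambda_{m,n}}$ forming a homogeneous $\Sym(\mathbb{X})$-basis.

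For the duality \eqref{complex-def-computing-quotient-ring-1}, I would transport the question through $\bar\phi$ and use the Sylvester operator $\zeta: \Sym(\mathbb{A}|\mathbb{B}) \to \Sym(\mathbb{A}\cup\mathbb{B})$ of Theorem \ref{part-symm-str}. The assignment
\[
\Phi(u)(v) := \zeta(uv), \qquad u, v \in \Sym(\mathbb{A}|\mathbb{B}),
\]
defines a $\Sym(\mathbb{A}|\mathbb{B})$-linear map $\Phi: \Sym(\mathbb{A}|\mathbb{B}) \to \Hom_{\Sym(\mathbb{A}\cup\mathbb{B})}(\Sym(\mathbb{A}|\mathbb{B}), \Sym(\mathbb{A}\cup\mathbb{B}))$; linearity uses that $\zeta$ is $\Sym(\mathbb{A}\cup\mathbb{B})$-linear and that multiplication in $\Sym(\mathbb{A}|\mathbb{B})$ is commutative. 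Because $\zeta(S_\lambda(\mathbb{A})\,S_\mu(-\mathbb{B}))$ equals $1$ when $\mu = \lambda^c$ and $0$ otherwise, the images $\{\Phi(S_\lambda(\mathbb{A}))\}_{\lambda \in \Lambda_{m,n}}$ form the basis dual to $\{S_\mu(-\mathbb{B})\}_{\mu \in \Lambda_{m,n}}$, so $\Phi$ is an isomorphism. Tracking degrees produces the shift $\{q^{-2mn}\}$, since the non-vanishing pairings force $|\lambda| + |\mu| = mn$ and hence $\zeta$ is homogeneous of degree $-2mn$.

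The only genuine obstacle is the degree bookkeeping: one has to verify that the shift produced by the Sylvester pairing, the shift built into Corollary \ref{part-symm-grade}, and the grading conventions of the paper all line up to yield exactly $\{q^{-2mn}\}$ in the final statement. Once that is confirmed, the proof reduces to direct appeals to Theorem \ref{part-symm-str} and Corollary \ref{part-symm-grade}.
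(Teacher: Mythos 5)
Your proof is correct and follows essentially the same route as the paper: both arguments reduce the claim to the structure theorem for partially symmetric polynomials (Theorem~\ref{part-symm-str} and Corollary~\ref{part-symm-grade}) by identifying $\Sym(\mathbb{A}|\mathbb{X})/I_3$ with the partially symmetric ring on $m+n$ variables, and then realize the duality through the Sylvester operator $\zeta$ with the degree-$(-2mn)$ shift read off from the complementary-partition condition. You make the intermediate isomorphism to $\Sym(\mathbb{A}|\mathbb{B})$ a little more explicit than the paper's chain $\Sym(\mathbb{A}|\mathbb{X})/I_3\cong\Sym(\mathbb{A}|\mathbb{B}|\mathbb{X})/I_2\cong\Sym(\mathbb{A}|\mathbb{B}|\mathbb{X})/I_1$, but the content is the same.
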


\begin{proof}
Note that
\[
\Sym(\mathbb{A}|\mathbb{X})/I_3 \cong \Sym(\mathbb{A}|\mathbb{B}|\mathbb{X})/I_2 \cong \Sym(\mathbb{A}|\mathbb{B}|\mathbb{X})/I_1,
\]
where the isomorphisms preserve both the graded ring structure and the graded $\Sym(\mathbb{X})$-module structure.

By Theorem \ref{part-symm-str}, $\Sym(\mathbb{A}|\mathbb{B}|\mathbb{X})/I_1$ is a finitely generated graded-free $\Sym(\mathbb{X})$-module of graded rank $\qb{m+n}{n}$. From the above isomorphism, so is $\Sym(\mathbb{A}|\mathbb{X})/I_3$. 

Note that $\Sym(\mathbb{A}|\mathbb{X})/I_3 \cong \Sym(\mathbb{A}|\mathbb{B}|\mathbb{X})/I_1 \cong \Sym(\mathbb{A}|\mathbb{B})$. By Theorem \ref{part-symm-str}, there are a Sylvester operator on $\Sym(\mathbb{A}|\mathbb{B})$ and a pair of homogeneous $\Sym(\mathbb{A}\cup\mathbb{B})$-bases for $\Sym(\mathbb{A}|\mathbb{B})$ that are duals of each other under the Sylvester operator. These induce a pair of homogeneous $\Sym(\mathbb{X})$-bases $\{S_\lambda|\lambda\in\Lambda_{m,n}\}$ and $\{S'_\lambda|\lambda\in\Lambda_{m,n}\}$ for $\Sym(\mathbb{A}|\mathbb{X})/I_3$ and a Sylvester operator 
\[
\zeta: \Sym(\mathbb{A}|\mathbb{X})/I_3 \rightarrow \Sym(\mathbb{X})
\] 
such that, for $\lambda,\mu\in\Lambda_{m,n}$, 
\[
\zeta(S_\lambda \cdot S'_\mu) = \left\{%
\begin{array}{ll}
    1 & \text{if } \mu=\lambda^c, \\
    0 & \text{if } \mu\neq\lambda^c. \\
\end{array}%
\right.
\]
(Recall that $\Lambda_{m,n} = \{\lambda=(\lambda_1\geq\cdots\geq\lambda_m)~|~\lambda_1\leq n\}$, and $\lambda^c = (n-\lambda_m\geq\cdots\geq n-\lambda_1)$.)

One can see from the above that $\{\zeta(S_\lambda \cdot \ast)|\lambda\in\Lambda_{m,n}\}$ is the $\Sym(\mathbb{X})$-basis of $\Hom_{\Sym(\mathbb{X})} (\Sym(\mathbb{A}|\mathbb{X})/I_3,\Sym(\mathbb{X}))$ dual to $\{S'_\lambda|\lambda\in\Lambda_{m,n}\}$. So the $\Sym(\mathbb{X})$-module map 
\[
\Sym(\mathbb{A}|\mathbb{X})/I_3 \rightarrow \Hom_{\Sym(\mathbb{X})} (\Sym(\mathbb{A}|\mathbb{X})/I_3,\Sym(\mathbb{X}))
\] 
given by $u \mapsto \zeta(u \cdot \ast)$ is a homogeneous isomorphism of $\Sym(\mathbb{X})$-modules of degree $-2mn$. It is easy to see that this map is also $\Sym(\mathbb{A}|\mathbb{X})/I_3$-linear. This proves \eqref{complex-def-computing-quotient-ring-1}.
\end{proof}

\begin{lemma}\label{complex-def-general-ring-change}
Let $\mathbb{A}=\{a_1,\dots,a_m\}$, $\mathbb{X}=\{x_1,\dots,x_{m+n}\}$, $\mathbb{Y}_1, \dots,\mathbb{Y}_k$ be alphabets. Define
\begin{eqnarray*}
R & = & \Sym(\mathbb{A}|\mathbb{X}|\mathbb{Y}_1|\cdots|\mathbb{Y}_k)/(H_{n+1},\dots,H_{n+m}), \\
\hat{R} & = & \Sym(\mathbb{X}|\mathbb{Y}_1|\cdots|\mathbb{Y}_k),
\end{eqnarray*}
where $H_{n+1},\dots,H_{n+m}$ are the polynomials given in \eqref{eq-def-H-k-base-ring}. Then $\hat{R}$ is a subring of $R$ through the composition of the standard inclusion and quotient maps $\hat{R} \hookrightarrow \Sym(\mathbb{A}|\mathbb{X}|\mathbb{Y}_1|\cdots|\mathbb{Y}_k) \rightarrow R$.

Suppose that $w$ is a homogeneous element of $\hat{R}$ of degree $2(N+1)$ and $M$ is a finitely generated graded matrix factorization over $R$ with potential $w$. Then, $\Hom_{\hat{R}} (M,\hat{R})$ and $\Hom_R (M, \Hom_{\hat{R}} (R,\hat{R}))$ are both graded matrix factorizations over $R$ of potential $-w$. Moreover, as graded matrix factorizations over $R$,
\begin{equation}\label{complex-def-general-ring-change-1}
\Hom_{\hat{R}} (M,\hat{R}) \cong \Hom_R (M, \Hom_{\hat{R}} (R,\hat{R})) \cong \Hom_R (M, R)\{q^{-2mn}\}.
\end{equation}
\end{lemma}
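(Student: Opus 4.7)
The plan is to deduce the lemma from Lemma \ref{complex-def-computing-quotient-ring} via the standard extension--restriction adjunction. The argument proceeds in three steps: (1) show that $R$ is graded-free over $\hat{R}$ and that $\Hom_{\hat{R}}(R,\hat{R}) \cong R\{q^{-2mn}\}$ as graded $R$-modules; (2) apply the adjunction between restriction of scalars and coinduction to obtain the first isomorphism in \eqref{complex-def-general-ring-change-1}; (3) verify that all the constructed maps are homogeneous and commute with the differentials, so that they are isomorphisms of graded matrix factorizations.

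For Step (1), I would observe that $R = (\Sym(\mathbb{A}|\mathbb{X})/I_3) \otimes_{\Sym(\mathbb{X})} \hat{R}$, since the ideal $(H_{n+1},\dots,H_{n+m})$ is generated by elements of $\Sym(\mathbb{A}|\mathbb{X})$. Lemma \ref{complex-def-computing-quotient-ring} tells us that $\Sym(\mathbb{A}|\mathbb{X})/I_3$ is a graded-free $\Sym(\mathbb{X})$-module of graded rank $\qb{m+n}{n}$, so base change shows $R$ is graded-free of the same rank over $\hat{R}$. Since $\Sym(\mathbb{A}|\mathbb{X})/I_3$ is finitely generated over $\Sym(\mathbb{X})$, the Hom--tensor interchange gives
\[
\Hom_{\hat{R}}(R,\hat{R}) \cong \Hom_{\Sym(\mathbb{X})}(\Sym(\mathbb{A}|\mathbb{X})/I_3,\Sym(\mathbb{X})) \otimes_{\Sym(\mathbb{X})} \hat{R},
\]
and Lemma \ref{complex-def-computing-quotient-ring} identifies the right-hand side with $R\{q^{-2mn}\}$ as a graded $R$-module. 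For Step (2), I would invoke the standard adjunction: for any $R$-module $M$ and $\hat{R}$-module $N$,
\[
\Hom_{\hat{R}}(M,N) \cong \Hom_R(M,\Hom_{\hat{R}}(R,N)),
\]
with the natural map $f \mapsto (m \mapsto (r \mapsto f(rm)))$. Taking $N=\hat{R}$ and combining with Step (1) yields both isomorphisms in \eqref{complex-def-general-ring-change-1} as graded $R$-modules.

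For Step (3), I need to check that all three spaces carry a matrix factorization structure of potential $-w$, and that the isomorphisms preserve the $\zed_2$-grading and the differentials. Since $w \in \hat{R}$, both $\Hom_{\hat{R}}(M,\hat{R})$ and $\Hom_R(M,R)$ are equipped with the standard differentials $f \mapsto \pm f \circ d_M$, and the adjunction isomorphism, being natural in $M$, intertwines them up to the usual sign from the $\zed_2$-grading. Freeness of $\Hom_{\hat{R}}(M,\hat{R})$ as an $R$-module is not a priori clear, but it follows a posteriori from the isomorphism with $\Hom_R(M,R)\{q^{-2mn}\}$, which is free since $M$ is finitely generated and free over $R$. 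I expect the main obstacle to be purely bookkeeping: carefully tracking the grading shift by $q^{-2mn}$ through the chain of natural isomorphisms and confirming that the signs in the Leibniz-type differentials agree on the nose. Conceptually, the result is a formal assembly of Lemma \ref{complex-def-computing-quotient-ring} with the extension--restriction adjunction.
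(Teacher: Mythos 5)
Your proposal is correct and follows essentially the same route as the paper: the paper establishes $\Hom_{\hat{R}}(R,\hat{R}) \cong R\{q^{-2mn}\}$ via Lemma \ref{complex-def-computing-quotient-ring} and then writes out the adjunction maps $\alpha(f)(m)(r) = f(r\cdot m)$ and $\beta(g)(m) = g(m)(1)$ explicitly and verifies they are mutually inverse, graded, and commute with the differentials. Your reframing of these maps as an instance of the restriction--coinduction adjunction, and your explicit base-change argument $R \cong (\Sym(\mathbb{A}|\mathbb{X})/I_3) \otimes_{\Sym(\mathbb{X})} \hat{R}$ in Step (1), are just cleaner packagings of the identical content.
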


\begin{proof}
Recall that the $R$-module structures on $\Hom_{\hat{R}} (M,\hat{R})$ and $\Hom_{\hat{R}} (R,\hat{R})$ are given by ``multiplication on the inside". From Lemma \ref{complex-def-computing-quotient-ring}, we know that, as graded $\hat{R}$-modules, $R\cong\hat{R}\{\qb{m+n}{n}\}$ and, as graded $R$-modules, $\Hom_{\hat{R}} (R,\hat{R}) \cong R\{q^{-2mn}\}$. So $\Hom_R (M, \Hom_{\hat{R}} (R,\hat{R})) \cong \Hom_R (M, R)\{q^{-2mn}\}$ is a graded matrix factorization over $R$ of potential $-w$.

Define $\alpha: \Hom_{\hat{R}} (M,\hat{R}) \rightarrow \Hom_R (M, \Hom_{\hat{R}} (R,\hat{R}))$ by $\alpha(f)(m)(r) = f(r\cdot m)$ $\forall ~f\in \Hom_{\hat{R}} (M,\hat{R}), ~m\in M, ~r\in R$. Define $\beta:\Hom_R (M, \Hom_{\hat{R}} (R,\hat{R})) \rightarrow \Hom_{\hat{R}} (M,\hat{R})$ by $\beta(g)(m)=g(m)(1)$ $\forall ~g\in \Hom_R (M, \Hom_{\hat{R}} (R,\hat{R})), ~m\in M$. It is straightforward to check that 
\begin{itemize}
	\item $\alpha$ and $\beta$ are $R$-module isomorphisms and are inverses of each other.
	\item $\alpha$ and $\beta$ preserve both the $\zed_2$-grading and the quantum grading.
\end{itemize}
This implies that $\Hom_{\hat{R}} (M,\hat{R})$ is a $\zed_2\oplus \zed$-graded-free $R$-module isomorphic to $\Hom_R (M, \Hom_{\hat{R}} (R,\hat{R})) \cong \Hom_R (M, R)\{q^{-2mn}\}$. The differential of $M$ induces on $\Hom_{\hat{R}} (M,\hat{R})$ an $R$-linear differential making it a graded matrix factorization over $R$ of potential $-w$.

To prove the lemma, it remains to check that $\alpha$ and $\beta$ commute with the differentials of $\Hom_{\hat{R}} (M,\hat{R})$ and $\Hom_R (M, \Hom_{\hat{R}} (R,\hat{R}))$. Since $\alpha$ and $\beta$ are inverses of each other, we only need to show that $\alpha$ commutes with the differentials. Recall that, if $f\in \Hom_{\hat{R}} (M,\hat{R})$ and $g\in \Hom_R (M, \Hom_{\hat{R}} (R,\hat{R}))$ have $\zed_2$-degree $\ve$, then $df = (-1)^{\ve+1} f \circ d_M$ and $dg = (-1)^{\ve+1} g \circ d_M$. So, for any $f\in \Hom_{\hat{R}} (M,\hat{R})$ with $\zed_2$-degree $\ve$ and $m \in M,~ r\in R$, we have 
\begin{eqnarray*}
\alpha(df)(m)(r) & = & (df)(r\cdot m) = (-1)^{\ve+1} f(d_M (r\cdot m)) = (-1)^{\ve+1} f(r\cdot d_M (m)) \\
& = & (-1)^{\ve+1} \alpha(f)(d_M (m)) (r) = d(\alpha(f)) (m) (r).
\end{eqnarray*}
This shows that $\alpha \circ d = d \circ \alpha$.
\end{proof}

\subsection{Computing $\Hom_\HMF (C(\Gamma_k^2),\ast)$} Let $\Gamma_k^2$ be the MOY graph in Figure \ref{l-right-k-high}. We mark it as in Figure \ref{l-right-k-high-marked}, where we omit the markings on the two horizontal edges since these are not explicitly used.

\begin{figure}[ht]

$
\xymatrix{
\input{square-m-n-l-right-k-high-marked} && \input{upper-gamma-2} && \input{lower-gamma-2}
}
$

\caption{}\label{l-right-k-high-marked}

\end{figure}

\begin{lemma}\label{complex-computing-gamma-lemma-1}
\[
C(\Gamma_k^2) \simeq \left(%
\begin{array}{ll}
 \ast & S_1 +Y_1-T_1-B_1 \\
  \dots & \dots \\
  \ast & \sum_{i=0}^j (S_iY_{j-i} - T_iB_{j-i}) \\
  \dots & \dots \\
  \ast & \sum_{i=0}^{n+l+k} (S_iY_{n+l+k-i} - T_iB_{n+l+k-i}) \\
  \ast & S_m \\
  \dots & \dots \\
  \ast & S_{k+1} \\
  \ast & T_n \\
  \dots & \dots \\
  \ast & T_{n-k+m+1}
\end{array}%
\right)_{\Sym(\mathbb{X}|\mathbb{Y}|\mathbb{A}|\mathbb{B}|\mathbb{D})} \{q^{-k(n+l)-(m-k)(n+k-m)}\},
\]
where 
\begin{eqnarray*}
S_j & = & \sum_{i=0}^j (-1)^i h_i(\mathbb{D}) X_{j-i}, \\
T_j & = & \sum_{i=0}^j (-1)^i h_i(\mathbb{D}) A_{j-i},
\end{eqnarray*}
and $X_j,~Y_j,~A_j,~B_j,~D_j,~E_j$ are the $j$-th elementary symmetric polynomials in the corresponding alphabets.
\end{lemma}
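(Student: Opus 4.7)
The plan is to compute $C(\Gamma_k^2)$ directly by introducing auxiliary markings on the two horizontal edges, writing the result as a four-fold tensor product of Koszul matrix factorizations, and then systematically contracting out the internal alphabets using Proposition \ref{b-contraction} and Corollary \ref{row-op}.

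First I would mark the lower and upper horizontal edges by alphabets $\mathbb{F}$ (of size $n+k-m$) and $\mathbb{G}$ (of size $k$); by Lemma \ref{marking-independence} the homotopy type is unchanged. With this marking, $\Gamma_k^2$ decomposes into four trivalent vertices, and $C(\Gamma_k^2)$ is the tensor product of the associated Koszul factorizations over the ring $\Sym(\mathbb{A}|\mathbb{B}|\mathbb{D}|\mathbb{E}|\mathbb{F}|\mathbb{G}|\mathbb{X}|\mathbb{Y})$. By Remark \ref{MOY-freedom} I may choose the left-column entries freely: what matters is that the right-column entries from the four vertices are, respectively, $X_j - \sum_i D_i G_{j-i}$ for $1\le j \le m$ (upper-left vertex); $\sum_i Y_i G_{j-i} - E_j$ for $1\le j \le n+l+k$ (upper-right); $\sum_i D_i F_{j-i} - A_j$ for $1\le j \le n$ (lower-left); and $E_j - \sum_i B_i F_{j-i}$ for $1\le j \le n+l+k$ (lower-right). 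The total grading shift, summed over the four vertices, is $q^{-k(n+l)-(m-k)(n+k-m)}$, matching the statement.

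Next I would eliminate the three internal alphabets $\mathbb{E}$, $\mathbb{G}$, $\mathbb{F}$ in sequence. The $n+l+k$ rows from the upper-right vertex allow us to contract $E_1,\dots,E_{n+l+k}$ via Proposition \ref{b-contraction}; after substitution, the lower-right rows become $\sum_{i=0}^j Y_i G_{j-i} - \sum_{i=0}^j B_i F_{j-i}$. Next, the first $k$ upper-left rows have right entries of the triangular form $X_j - G_j - \sum_{i\ge 1} D_i G_{j-i}$ for $j=1,\dots,k$, so Proposition \ref{b-contraction} contracts $G_1,\dots,G_k$, yielding $G_j = S_j$. Substitution into the remaining $m-k$ upper-left rows (for $j=k+1,\dots,m$) produces the relations $\sum_{i=0}^{j-k-1} D_i S_{j-i}=0$; using the identity $\sum_{i\ge 0} D_i S_{j-i}=X_j$ (a consequence of equation \eqref{complete-recursion}) together with Corollary \ref{row-op} and induction on $j$, these can be replaced by the clean relations $S_j=0$ for $j=k+1,\dots,m$. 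The identical argument applied to $\mathbb{F}$ via the lower-left vertex yields $F_j=T_j$ and then $T_j=0$ for $j=n+k-m+1,\dots,n$. After all three eliminations, the lower-right rows simplify to $\sum_{i=0}^j(S_i Y_{j-i}-T_i B_{j-i})$ for $j=1,\dots,n+l+k$, producing the matrix factorization in the statement.

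The main obstacle is purely one of bookkeeping: verifying that after eliminating $G_1,\dots,G_k$, the leftover $m-k$ relations from the upper-left vertex can indeed be reduced via elementary row operations to the simple vanishings $S_{k+1}=\cdots=S_m=0$, and similarly for the $T_j$. This is the part of the argument where the triangular structure of the substitution and the identity $\sum_i D_iS_{j-i}=X_j$ have to be used inductively; everything else is routine elimination via Proposition \ref{b-contraction} and the free choice of left-column entries guaranteed by Remark \ref{MOY-freedom}.
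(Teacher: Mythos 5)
Your proof is correct in outline and reaches the stated presentation by a genuinely different route. The paper cuts $\Gamma_k^2$ horizontally into $\Gamma_{upper}$ and $\Gamma_{lower}$, applies Lemma~\ref{gen-chi-Gamma-0-lemma} to each half (that lemma already performs both the horizontal-edge contraction and the passage from truncated to full generating series), tensors, and then excludes $\mathbb{E}$. You instead mark both horizontal edges by fresh alphabets, write $C(\Gamma_k^2)$ as a tensor of the four vertex factorizations, and contract $\mathbb{E},\mathbb{G},\mathbb{F}$ in sequence from scratch --- in effect redoing Lemma~\ref{gen-chi-Gamma-0-lemma} inline. The paper's route is shorter by citation; yours is more self-contained but must re-derive everything that lemma packages.

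There is one substantive step your sketch omits. After contraction you only have $G_j = S_j$ for $j\le k$ (and $G_j=0$ for $j>k$), and $F_j=T_j$ for $j\le n+k-m$ (and $F_j=0$ for $j>n+k-m$). Substituting these into the surviving rows coming from the lower-right vertex yields the \emph{truncated} sums $\sum_i Y_i G_{j-i} - \sum_i B_i F_{j-i}$, not the full $\sum_i(S_iY_{j-i}-T_iB_{j-i})$ appearing in the statement. Passing from the truncated to the full form requires showing that $S_\ell$ for $\ell>k$ and $T_\ell$ for $\ell>n+k-m$ lie in the ideals generated by the surviving rows $S_{k+1},\dots,S_m$ and $T_{n+k-m+1},\dots,T_n$ --- for $\ell>m$ this uses the recursion $S_\ell=-\sum_{i\ge1}D_iS_{\ell-i}$ coming from $\sum_i D_iS_{\ell-i}=X_\ell=0$, and likewise for $T$ --- and then applying Corollary~\ref{row-op}. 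This is precisely the analogue of Lemma~\ref{gen-chi-T-recursive} and the row operations in the proof of Lemma~\ref{gen-chi-Gamma-0-lemma}; it is not the ``routine elimination'' your last paragraph assigns to everything outside the $S$- and $T$-row reductions. Finally, your computation correctly gives the $T$-rows for $j=n+k-m+1,\dots,n$ ($m-k$ entries, matching $\#\mathbb{D}$); the subscript $T_{n-k+m+1}$ in the lemma statement (and in the paper's own proof) is a typo for $T_{n+k-m+1}$, since the range $\{n-k+m+1,\dots,n\}$ is empty when $k\le m$.
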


\begin{proof}
Cutting $\Gamma_k^2$ horizontally in the middle, we get the MOY graphs $\Gamma_{upper}$ and $\Gamma_{lower}$ in Figure \ref{l-right-k-high-marked}. Applying Lemma \ref{gen-chi-Gamma-0-lemma} to $\Gamma_{upper}$, we get 
\[
C(\Gamma_{upper}) \simeq \left(%
\begin{array}{ll}
 \ast & S_1 +Y_1-E_1 \\
  \dots & \dots \\
  \ast & (\sum_{i=0}^j S_iY_{j-i}) - E_j\\
  \dots & \dots \\
  \ast & (\sum_{i=0}^{n+l+k} S_iY_{n+l+k-i}) - E_{n+l+k} \\
  \ast & S_m \\
  \dots & \dots \\
  \ast & S_{k+1} \\
\end{array}%
\right)_{\Sym(\mathbb{X}|\mathbb{Y}|\mathbb{D}|\mathbb{E})} \{q^{-k(n+l)}\}.
\]
Applying Lemma \ref{gen-chi-Gamma-0-lemma} to $\Gamma_{lower}$, we get 
\[
C(\Gamma_{lower}) \simeq \left(%
\begin{array}{ll}
 \ast & E_1-T_1-B_1 \\
  \dots & \dots \\
  \ast & E_j - \sum_{i=0}^j  T_iB_{j-i} \\
  \dots & \dots \\
  \ast & E_{n+l+k} - \sum_{i=0}^{n+l+k} T_iB_{n+l+k-i} \\
  \ast & T_n \\
  \dots & \dots \\
  \ast & T_{n-k+m+1}
\end{array}%
\right)_{\Sym(\mathbb{A}|\mathbb{B}|\mathbb{D}|\mathbb{E})} \{q^{-(m-k)(n+k-m)}\},
\]
Thus,
\begin{eqnarray*}
C(\Gamma_k^2) & \simeq & C(\Gamma_{upper}) \otimes_{\Sym(\mathbb{D}|\mathbb{E})} C(\Gamma_{lower}) \\
& \simeq & \left(%
\begin{array}{ll}
 \ast & S_1 +Y_1-E_1 \\
  \dots & \dots \\
  \ast & (\sum_{i=0}^j S_iY_{j-i}) - E_j\\
  \dots & \dots \\
  \ast & (\sum_{i=0}^{n+l+k} S_iY_{n+l+k-i}) - E_{n+l+k} \\
  \ast & S_m \\
  \dots & \dots \\
  \ast & S_{k+1} \\
 \ast & E_1-T_1-B_1 \\
  \dots & \dots \\
  \ast & E_j - \sum_{i=0}^j  T_iB_{j-i} \\
  \dots & \dots \\
  \ast & E_{n+l+k} - \sum_{i=0}^{n+l+k} T_iB_{n+l+k-i} \\
  \ast & T_n \\
  \dots & \dots \\
  \ast & T_{n-k+m+1}
\end{array}%
\right)_{\Sym(\mathbb{X}|\mathbb{Y}|\mathbb{A}|\mathbb{B}|\mathbb{D}|\mathbb{E})} \{q^{-k(n+l)-(m-k)(n+k-m)}\},
\end{eqnarray*}
From here on, the lemma is obtained by excluding $E_1,\dots E_{n+l+k}$ from the base ring by applying Proposition \ref{b-contraction} to the rows 
\[
\left(%
\begin{array}{ll}
 \ast & E_1-T_1-B_1 \\
  \dots & \dots \\
  \ast & E_j - \sum_{i=0}^j  T_iB_{j-i} \\
  \dots & \dots \\
  \ast & E_{n+l+k} - \sum_{i=0}^{n+l+k} T_iB_{n+l+k-i} \\
\end{array}%
\right)
\] 
in the above Koszul matrix factorization.
\end{proof}

\begin{lemma}\label{complex-computing-gamma-HMF-lemma}
Let $\Gamma_k^2$ be the MOY graph in Figure \ref{l-right-k-high-marked}, and $\overline{\Gamma_k^2}$ the MOY graph obtained by reversing the orientation of $\Gamma_k^2$. Suppose that $M$ is a matrix factorization over $\hat{R}:=\Sym(\mathbb{X}|\mathbb{Y}|\mathbb{A}|\mathbb{B})$ with potential 
\[
w= p_{N+1}(\mathbb{X}) +p_{N+1}(\mathbb{Y}) -p_{N+1}(\mathbb{A}) -p_{N+1}(\mathbb{B}).
\]
Then, 
\[
\Hom_{\HMF,\hat{R}} (C(\Gamma_k^2), M) \cong H(M \otimes_{\hat{R}} C(\overline{\Gamma_k^2})) \left\langle m+n+l \right\rangle \{q^{(l+m+n)(N-l)-m^2-n^2}\},
\]
where $H(M \otimes_{\hat{R}} C(\overline{\Gamma_k^2}))$ is the usual homology of the chain complex $M \otimes_{\hat{R}} C(\overline{\Gamma_k^2})$.
\end{lemma}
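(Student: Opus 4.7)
The plan is to reduce the left-hand side, via a change-of-base-ring argument, to a form that manifestly involves $C(\overline{\Gamma_k^2})$, and then assemble the grading shifts.

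First, I would invoke Lemma~\ref{complex-computing-gamma-lemma-1} to realize $C(\Gamma_k^2)$ as an explicit Koszul matrix factorization over $R:=\Sym(\mathbb{X}|\mathbb{Y}|\mathbb{A}|\mathbb{B}|\mathbb{D})$ consisting of two kinds of rows: $n+l+k$ ``potential rows'' whose right entries $\sum_{i=0}^{j}(S_iY_{j-i}-T_iB_{j-i})$ jointly account for the full potential $w$, and $m-k$ ``relation rows'' whose right entries are the polynomials $S_{k+1},\dots,S_m$ (the $T$-type relation rows are empty because $\Gamma_k^2$ requires $k\le m$). Setting $R_0:=R/(S_{k+1},\dots,S_m)$ and observing that $S_j=H_j$ in the notation of Lemma~\ref{complex-def-computing-quotient-ring} under the identification $\mathbb{A}\leftrightarrow\mathbb{D}$, $m\leftrightarrow m-k$, $n\leftrightarrow k$, Lemma~\ref{complex-def-computing-quotient-ring} gives that $R_0$ is graded-free over $\hat R$ of rank $\qb{m}{k}$ and implies that $(S_{k+1},\dots,S_m)$ is $R$-regular (with each intermediate quotient still free over $\hat R$).

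Second, I would apply Corollary~\ref{b-contraction-dual} once per relation row to pass from $C(\Gamma_k^2)$ to the Koszul matrix factorization $M'$ over $R_0$ consisting only of the potential rows, yielding a quasi-isomorphism
\[
\Hom_{\hat R}(C(\Gamma_k^2),M)\;\simeq\;\Hom_{\hat R}(M',M)
\]
that preserves both gradings. Since $M'$ is finitely generated over $R_0$ and $R_0$ is finitely generated over $\hat R$, Lemma~\ref{rewrite-hom-finite-gen} applied over $\hat R$ together with Lemma~\ref{complex-def-general-ring-change} gives
\[
\Hom_{\hat R}(M',M)\;\cong\;M\otimes_{\hat R}\Hom_{R_0}(M',R_0)\{q^{-2k(m-k)}\},
\]
where $q^{-2k(m-k)}$ is the instance of the shift $q^{-2mn}$ in Lemma~\ref{complex-def-general-ring-change} under our parameter dictionary. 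Lemma~\ref{bullet} then identifies $\Hom_{R_0}(M',R_0)=M'_\bullet$ explicitly as another Koszul matrix factorization over $R_0$ whose rows come from swapping and negating the columns of $M'$.

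Third, I would establish that $M'_\bullet$ coincides, up to the predicted grading shifts, with the result of applying the same $\mathbb{D}$-elimination procedure to $C(\overline{\Gamma_k^2})$. Applying the analogue of Lemma~\ref{complex-computing-gamma-lemma-1} to the orientation-reversed graph $\overline{\Gamma_k^2}$ (which has potential $-w$, with the roles of the $\mathbb{X},\mathbb{Y}$ and $\mathbb{A},\mathbb{B}$ columns of each row interchanged) and then eliminating the corresponding relation rows produces a Koszul description whose rows match, column for column, those of the dualized factorization delivered by Lemma~\ref{bullet}. Taking homology of $\Hom_{\hat R}(C(\Gamma_k^2),M)$ and assembling all the shifts then yields the claimed isomorphism.

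The main obstacle will be the bookkeeping of grading shifts: combining the built-in shift $q^{-k(n+l)-(m-k)(n+k-m)}$ of $C(\Gamma_k^2)$, the analogous built-in shift of $C(\overline{\Gamma_k^2})$, the shift $q^{-2k(m-k)}$ supplied by Lemma~\ref{complex-def-general-ring-change}, and the $\zed_2$- and quantum-grading shifts introduced by dualizing a Koszul matrix factorization with $n+l+k$ rows via Lemma~\ref{bullet} (shifts depending on the degrees $2,4,\dots,2(n+l+k)$ of the right entries of $M'$ and hence bringing in the ambient $N$), must combine to give exactly $\left\langle m+n+l\right\rangle\{q^{(l+m+n)(N-l)-m^2-n^2}\}$; verifying this requires a careful term-by-term accounting.
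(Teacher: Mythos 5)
Your proposal follows essentially the same route as the paper's proof: start from Lemma~\ref{complex-computing-gamma-lemma-1}, pass to the quotient ring $R=\Sym(\mathbb{X}|\mathbb{Y}|\mathbb{A}|\mathbb{B}|\mathbb{D})/(S_{k+1},\dots,S_m)$ via Corollary~\ref{b-contraction-dual}, apply Lemmas~\ref{rewrite-hom-finite-gen} and~\ref{complex-def-general-ring-change} to convert the Hom into a tensor product with a dual, identify that dual with the $\mathbb{D}$-eliminated form of $C(\overline{\Gamma_k^2})$, and then track the grading shifts. Your observation that the $T$-rows are empty (since $k\le m$) is correct — the paper carries them formally as an empty index range — and the shift bookkeeping you defer is likewise the step the paper compresses into a single ``one can check'' identity $\varsigma+k(m-k)+(m+l)(n+k+m)=(l+m+n)(N-l)-m^2-n^2$.
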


\begin{proof}
Let $S_j$ and $T_j$ be as in Lemma \ref{complex-computing-gamma-lemma-1}. Define 
\[
R=\Sym(\mathbb{X}|\mathbb{Y}|\mathbb{A}|\mathbb{B}|\mathbb{D})/(S_{k+1},\dots,S_m).
\]
Let 
\begin{eqnarray*}
\mathcal{M} & = & \left(%
\begin{array}{ll}
 \ast & S_1 +Y_1-T_1-B_1 \\
  \dots & \dots \\
  \ast & \sum_{i=0}^j (S_iY_{j-i} - T_iB_{j-i}) \\
  \dots & \dots \\
  \ast & \sum_{i=0}^{n+l+k} (S_iY_{n+l+k-i} - T_iB_{n+l+k-i}) \\
  \ast & T_n \\
  \dots & \dots \\
  \ast & T_{n-k+m+1}
\end{array}%
\right)_R, \\
\overline{\mathcal{M}} & = & \left(%
\begin{array}{ll}
 \ast & -(S_1 +Y_1-T_1-B_1) \\
  \dots & \dots \\
  \ast & -\sum_{i=0}^j (S_iY_{j-i} - T_iB_{j-i}) \\
  \dots & \dots \\
  \ast & -\sum_{i=0}^{n+l+k} (S_iY_{n+l+k-i} - T_iB_{n+l+k-i}) \\
  \ast & -T_n \\
  \dots & \dots \\
  \ast & -T_{n-k+m+1}
\end{array}%
\right)_R.
\end{eqnarray*}
Then 
\[
\Hom_R (\mathcal{M},R) \cong \overline{\mathcal{M}} \left\langle m+n+l\right\rangle \{q^{(m+n+l)(N+1) - \sum_{i=1}^{n+l+k} 2i - \sum_{j=n-k+m+1}^n 2j} \}.
\]
By Lemma \ref{complex-computing-gamma-lemma-1} and Proposition \ref{b-contraction-dual}, we have
\begin{eqnarray*}
\Hom_{\HMF,\hat{R}}(C(\Gamma_k^2), M) & := & H (\Hom_{\hat{R}}(C(\Gamma_k^2), M)) \\
& = & H( \Hom_{\hat{R}} (\mathcal{M}\{q^{-k(n+l)-(m-k)(n+k-m)}\},M)) \\
& = & H( \Hom_{\hat{R}} (\mathcal{M},M)) \{q^{k(n+l)+(m-k)(n+k-m)}\}.
\end{eqnarray*}
Note that $\mathcal{M}$ is finitely generated over $R$ and over $\hat{R}$. By Lemma \ref{complex-def-general-ring-change}, we have
\begin{eqnarray*}
\Hom_{\hat{R}} (\mathcal{M},M) & \cong & M \otimes_{\hat{R}} \Hom_{\hat{R}} (\mathcal{M},\hat{R}) \\
& \cong & M \otimes_{\hat{R}} \Hom_{R} (\mathcal{M},R) \{q^{-2k(m-k)}\}.
\end{eqnarray*}
Altogether, we have
\begin{eqnarray*}
\Hom_{\HMF,\hat{R}}(C(\Gamma_k^2), M) & \cong & H( \Hom_{\hat{R}} (\mathcal{M},M)) \{q^{k(n+l)+(m-k)(n+k-m)}\} \\
& \cong & H( M \otimes_{\hat{R}} \Hom_{R} (\mathcal{M},R))  \{q^{k(n+l)+(m-k)(n+k-m)-2k(m-k)}\} \\
& \cong & H( M \otimes_{\hat{R}} \overline{\mathcal{M}}) \left\langle m+n+l\right\rangle \{ q^{\varsigma}\},
\end{eqnarray*}
where
\[
\varsigma =   k(n+l)+(m-k)(n+k-m)-2k(m-k) + (m+n+l)(N+1) - \sum_{i=1}^{n+l+k} 2i - \sum_{j=n-k+m+1}^n 2j.
\]
On the other hand, 
\[
H(M \otimes_{\hat{R}} C(\overline{\Gamma_k^2})) \cong H(M \otimes_{\hat{R}} \overline{\mathcal{M}}) \{q^{-k(m-k)-(m+l)(n+k-m)} \}.
\]
So
\[
\Hom_{\HMF,\hat{R}}(C(\Gamma_k^2), M) \cong H(M \otimes_{\hat{R}} C(\overline{\Gamma_k^2})) \left\langle m+n+l\right\rangle \{ q^{\varsigma+k(m-k)+(m+l)(n+k+m)}\}.
\]
One can check that 
\[
\varsigma+k(m-k)+(m+l)(n+k+m) = (l+m+n)(N-l)-m^2-n^2.
\]
This proves the lemma.
\end{proof}

\subsection{The chain complex associated to a colored crossing} Let $c_{m,n}^+$ and $c_{m,n}^-$ be the colored crossings with marked end points in Figure \ref{colored-crossing-sign-figure}. In this subsection, we define the chain complexes associated to them, which completes the definition of chain complexes associated to knotted MOY graphs.

\begin{figure}[h]

\setlength{\unitlength}{1pt}

\begin{picture}(360,50)(-180,-30)

\linethickness{.5pt}

% left

\put(-100,-20){\vector(1,1){40}}

\put(-60,-20){\line(-1,1){15}}

\put(-85,5){\vector(-1,1){15}}

\put(-84,-30){$c_{m,n}^+$}

\put(-92,16){\tiny{$_m$}}

\put(-70,16){\tiny{$_n$}}

\put(-105,-20){\tiny{$\mathbb{A}$}}
\put(-105,15){\tiny{$\mathbb{X}$}}

\put(-55,-20){\tiny{$\mathbb{B}$}}
\put(-55,15){\tiny{$\mathbb{Y}$}}

% right

\put(100,-20){\vector(-1,1){40}}

\put(60,-20){\line(1,1){15}}

\put(85,5){\vector(1,1){15}}

\put(76,-30){$c_{m,n}^-$}

\put(68,16){\tiny{$_m$}}

\put(90,16){\tiny{$_n$}}

\put(55,-20){\tiny{$\mathbb{A}$}}
\put(55,15){\tiny{$\mathbb{X}$}}

\put(105,-20){\tiny{$\mathbb{B}$}}
\put(105,15){\tiny{$\mathbb{Y}$}}

\end{picture}

\caption{}\label{colored-crossing-sign-figure}

\end{figure}

For $\max\{m-n,0\} \leq k \leq m$, we call $\Gamma_k^L$ and $\Gamma_k^R$ in Figure \ref{decomp-V-special-1-figure} the $k$-th left and right resolutions of $c_{m,n}^\pm$. The following lemma is a special case of Decomposition (V) (Theorem \ref{decomp-V}.)

\begin{figure}[ht]
$
\xymatrix{
\input{square-m-n-k-left} && \input{square-m-n-k-right}
}
$
\caption{}\label{decomp-V-special-1-figure}

\end{figure}

\begin{lemma}\label{decomp-V-special-1} 
Let $m,n$ be integers such that $0\leq m,n \leq N$. For $\max\{m-n,0\} \leq k \leq m$, define $\Gamma_k^L$ and $\Gamma_k^R$ to be the MOY graphs in Figure \ref{decomp-V-special-1-figure}. Then $C(\Gamma_k^L) \simeq C(\Gamma_k^R)$.
\end{lemma}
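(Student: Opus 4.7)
The plan is to derive this lemma as an immediate specialization of Direct Sum Decomposition (V) (Theorem \ref{decomp-V}) at $l=0$. No new constructions or homological machinery will be needed; the argument is almost entirely bookkeeping, since all the genuine content has been absorbed into the proof of Theorem \ref{decomp-V}.

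First, I would identify the resolutions in Figure \ref{decomp-V-special-1-figure} with the graphs appearing in Theorem \ref{decomp-V} by setting $l=0$. Comparing strand colors and horizontal-edge orientations line by line gives $\Gamma_k^L = \Gamma_k^1|_{l=0}$ and $\Gamma_k^R = \Gamma_k^3|_{l=0}$. Moreover, at $l=0$ the upper bound $m+l$ for the index range in Theorem \ref{decomp-V} collapses to $m$, matching the range assumed in the statement of Lemma \ref{decomp-V-special-1}, so no boundary effects intrude.

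Next, I would apply equations \eqref{decomp-V-1} and \eqref{decomp-V-2} with $l=0$, invoking the elementary identity $\qb{0}{k-j} = \delta_{k,j}$ to collapse each direct sum to a single nonzero summand. This gives
\[
C(\Gamma_k^L) \simeq C(\Gamma_k^2|_{l=0}) \quad \text{and} \quad C(\Gamma_k^R) \simeq C(\Gamma_k^4|_{l=0}).
\]
The real observation driving the lemma is that, at $l=0$, the right-strand colors of $\Gamma_k^2$, namely $(m+l,\, n+l+k,\, n+l)$, specialize to $(m,\, n+k,\, n)$, while the left-strand colors of $\Gamma_k^3$, namely $(n+l,\, m+l-k,\, m+l)$, specialize to $(n,\, m-k,\, m)$. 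Since the two graphs then share identical strand colors as well as identically labeled and oriented horizontal edges, they coincide as marked MOY graphs: $\Gamma_k^2|_{l=0} = \Gamma_k^3|_{l=0}$. An entirely analogous comparison shows $\Gamma_k^1|_{l=0} = \Gamma_k^4|_{l=0}$, although this identification is not logically needed for the conclusion.

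Chaining these equivalences yields $C(\Gamma_k^L) \simeq C(\Gamma_k^2|_{l=0}) = C(\Gamma_k^3|_{l=0}) \simeq C(\Gamma_k^R)$, which is the claim. The ``main obstacle'' in this proof, such as it is, is purely clerical: one must carefully verify that the figures match at $l=0$, especially the orientations of the horizontal edges, since the symmetry between $\Gamma_k^1$--$\Gamma_k^2$ and $\Gamma_k^3$--$\Gamma_k^4$ in Theorem \ref{decomp-V} is exactly what makes the collapse $l=0$ trade one side of the would-be crossing resolution for the other. All genuine difficulty was dispatched in Section \ref{sec-MOY-V} via the Krull--Schmidt inductive argument that established Theorem \ref{decomp-V}.
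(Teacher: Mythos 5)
Your proof is correct and takes essentially the same approach as the paper, which simply asserts that the lemma is a special case of Decomposition (V) without further elaboration; you have correctly identified this as the $l=0$ specialization of Theorem \ref{decomp-V} and verified that $\Gamma_k^L=\Gamma_k^1|_{l=0}$, $\Gamma_k^R=\Gamma_k^3|_{l=0}$, and $\Gamma_k^2|_{l=0}=\Gamma_k^3|_{l=0}$. One tiny wrinkle: since $\Gamma_k^3|_{l=0}$ is literally $\Gamma_k^R$, the last step of your chain $C(\Gamma_k^3|_{l=0})\simeq C(\Gamma_k^R)$ is an equality rather than a homotopy equivalence, so the appeal to \eqref{decomp-V-2} is redundant and a single application of \eqref{decomp-V-1} at $l=0$ already suffices.
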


\begin{lemma}\label{colored-crossing-res-HMF}
Let $m,n$ be integers such that $0\leq m,n \leq N$. For $\max\{m-n,0\} \leq j,k \leq m$, 
\begin{eqnarray*}
& & \Hom_\HMF (C(\Gamma_j^L), C(\Gamma_k^L)) \cong \Hom_\HMF (C(\Gamma_j^R), C(\Gamma_k^L)) \\
& \cong & \Hom_\HMF (C(\Gamma_j^L), C(\Gamma_k^R))  \cong \Hom_\HMF (C(\Gamma_j^R), C(\Gamma_k^R)) \\
& \cong & C(\emptyset) \{\qb{_{n+j+k-m}}{_k}\qb{_{n+j+k-m}}{_j} \qb{_{N+m-n-j-k}}{_{m-k}} \qb{_{N+m-n-j-k}}{_{m-j}} \qb{_N}{_{n+j+k-m}} q^{(m+n)N -n^2-m^2}\}.
\end{eqnarray*}

In particular, 
\begin{itemize}
	\item the lowest non-vanishing quantum grading of these spaces are all $(k-j)^2$,
	\item the subspaces of homogeneous elements of quantum degree $(k-j)^2$ of these spaces are $1$-dimensional and have $\zed_2$ grading $0$.
\end{itemize}
\end{lemma}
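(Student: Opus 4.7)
\medskip

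\noindent\textbf{Proof proposal.} My plan is to first reduce the four Hom-spaces to a single one, and then compute that one Hom-space by turning it into the homology of a closed MOY graph which can be evaluated via the decomposition theorems already established.

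First, Lemma \ref{decomp-V-special-1} (the $l=0$ case of Decomposition (V)) gives $C(\Gamma_i^L) \simeq C(\Gamma_i^R)$ for every admissible $i$. The homotopy equivalences induce isomorphisms among $\Hom_\HMF (C(\Gamma_j^\ast), C(\Gamma_k^\ast))$ for $\ast \in \{L, R\}$, so it suffices to compute any one of them, for instance $\Hom_\HMF (C(\Gamma_j^R), C(\Gamma_k^R))$. Note that $\Gamma_k^R$ is exactly $\Gamma_k^2$ from Figure \ref{l-right-k-high} specialized to $l=0$ (all edge colors match), so Lemma \ref{complex-computing-gamma-HMF-lemma} with $M = C(\Gamma_j^R)$ gives
\[
\Hom_\HMF (C(\Gamma_k^R), C(\Gamma_j^R)) \cong H(C(\Gamma_j^R) \otimes_{\hat R} C(\overline{\Gamma_k^R})) \left\langle m+n \right\rangle \{q^{(m+n)N - m^2 - n^2}\},
\]
where $\hat R = \Sym(\mathbb{X}|\mathbb{Y}|\mathbb{A}|\mathbb{B})$. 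This turns the Hom-computation into computing the homology of the closed MOY graph $\Xi_{j,k}$ obtained by gluing $\Gamma_j^R$ to the orientation-reversal of $\Gamma_k^R$ along their common endpoints.

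Next, I will simplify $\Xi_{j,k}$ using the established MOY decompositions. After identifying matching endpoints, $\Xi_{j,k}$ is a closed planar graph with two ``square'' regions stacked vertically. I will first apply Decomposition (V) (Theorem \ref{decomp-V}) to the pair of ``square'' regions to rewrite $C(\Xi_{j,k})$ as a direct sum, indexed by the common interface color, of tensor products of two simpler bigon-type MOY graphs. Each of these simpler pieces is then evaluated via Decomposition (II) (Theorem \ref{decomp-II}) and Corollary \ref{contract-expand}, which successively peel off quantum binomial factors $\qb{n+j+k-m}{k}$, $\qb{n+j+k-m}{j}$, $\qb{N+m-n-j-k}{m-k}$, $\qb{N+m-n-j-k}{m-j}$, leaving a single circle of color $n+j+k-m$. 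Applying Corollary \ref{circle-dimension} to this final circle produces $\qb{N}{n+j+k-m} \cdot \langle n+j+k-m\rangle$, and combining all shifts against the $\langle m+n\rangle\{q^{(m+n)N-m^2-n^2}\}$ factor from Lemma \ref{complex-computing-gamma-HMF-lemma} yields precisely the claimed formula, with total $\zed_2$-grading reducing to $0$ modulo $2$.

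Finally, the ``in particular'' statement is extracted directly from the explicit formula. A term-by-term degree count of the lowest power of $q$ in each quantum binomial factor gives the total lowest degree
\[
-k(n+j-m)-j(n+k-m)-(m-k)(N-n-j)-(m-j)(N-n-k)-(n+j+k-m)(N-n-j-k+m),
\]
which, combined with the shift $(m+n)N - n^2 - m^2$, telescopes to $(k-j)^2$; the coefficient at this degree is $1$. The main obstacle I anticipate is Step 2: choosing the correct sequence of MOY decompositions (and tracking all the quantum and $\zed_2$ degree shifts) so that the product of quantum binomials emerges in exactly the claimed factorized form rather than as an equivalent but unrecognizable polynomial. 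I expect this bookkeeping to follow the same strategy already used in the proofs of Lemma \ref{decomp-IV-nilopotency-hmf} and Lemma \ref{butterfly-structure}.
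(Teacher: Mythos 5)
Your Step~1 (reduce to a single Hom-space via Lemma~\ref{decomp-V-special-1} and convert it to the homology of a closed MOY graph via Lemma~\ref{complex-computing-gamma-HMF-lemma}) and your Step~3 degree count are both correct and match the paper's strategy. The gap is in Step~2: Decomposition (V) does not do what you want here. In the present setting the label $l$ in Decomposition (V) is $0$, and the $l=0$ case of Theorem~\ref{decomp-V} degenerates to the single homotopy equivalence $C(\Gamma_k^L)\simeq C(\Gamma_k^R)$ of Lemma~\ref{decomp-V-special-1} -- it decomposes nothing, so there is no ``direct sum indexed by the common interface color'' to work with. Even for $l>0$, Decomposition (V) would produce a \emph{sum} of terms whose binomial coefficients would then have to be recombined, by a nontrivial Vandermonde-type identity, into the claimed single product; you flag this worry yourself, and it is fatal to the approach as written. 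Moreover, a closed MOY graph is a single connected object, so the promised ``tensor products of two simpler bigon-type MOY graphs'' cannot appear.

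What actually works (and what the paper does) is to glue $\Gamma_k^L$ against $\overline{\Gamma_j^R}$, forming the closed graph $\Gamma$ of Figure~\ref{colored-crossing-res-HMF-figure}, and then simplify it without ever introducing a direct sum: first apply Corollary~\ref{contract-expand} and Decomposition (II) (Theorem~\ref{decomp-II}) to the two \emph{central horizontal bridges}, which contracts them and peels off the factor $\qb{n+j+k-m}{k}\qb{n+j+k-m}{j}$; then apply Decomposition (I) (Theorem~\ref{decomp-I}) to the two remaining outer side loops, peeling off $\qb{N+m-n-j-k}{m-k}\qb{N+m-n-j-k}{m-j}$ together with a $\zed_2$-shift $\langle j+k\rangle$; what remains is a single circle of color $n+j+k-m$, which Corollary~\ref{circle-dimension} evaluates as $C(\emptyset)\{\qb{N}{n+j+k-m}\}\langle n+j+k-m\rangle$. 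Combining the accumulated $\langle\cdot\rangle$ and $\{\cdot\}$ shifts against the $\langle m+n\rangle\{q^{(m+n)N-m^2-n^2}\}$ coming from Lemma~\ref{complex-computing-gamma-HMF-lemma} gives the stated answer directly as a product. So replace Decomposition (V) in your Step~2 by Decomposition (II) followed by Decomposition (I); the rest of your outline then goes through.
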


\begin{proof}
By Lemma \ref{decomp-V-special-1}, the above four $\Hom_\HMF$ spaces are isomorphic. So, to prove the lemma, we only need to compute one of these, say $\Hom_\HMF (C(\Gamma_j^R), C(\Gamma_k^L))$.

Let $\hat{R}=\Sym(\mathbb{X}|\mathbb{Y}|\mathbb{A}|\mathbb{B})$. By Lemma \ref{complex-computing-gamma-HMF-lemma},
\[
\Hom_\HMF (C(\Gamma_j^R), C(\Gamma_k^L)) \cong H(C(\Gamma_k^L) \otimes_{\hat{R}} C(\overline{\Gamma_j^R})) \left\langle m+n \right\rangle \{q^{(m+n)N-n^2-m^2} \},
\]
where $\overline{\Gamma_j^R}$ is $\Gamma_j^R$ with orientation reversed.

\begin{figure}[ht]
$
\xymatrix{
\input{colored-crossing-res-HMF-figure-1} & \input{colored-crossing-res-HMF-figure-2} & \input{colored-crossing-res-HMF-figure-3}
}
$
\caption{}\label{colored-crossing-res-HMF-figure}

\end{figure}

Let $\Gamma$, $\Gamma'$ and $\Gamma''$ be the MOY graphs in Figure \ref{colored-crossing-res-HMF-figure}. Then, by Corollary \ref{contract-expand} and Decompositions (I-II) (Theorems \ref{decomp-I} and \ref{decomp-II}), we have
\begin{eqnarray*}
& & C(\Gamma_k^L) \otimes_{\hat{R}} C(\overline{\Gamma_j^R}) = C(\Gamma) \\
& \simeq & C(\Gamma') \{\qb{_{n+j+k-m}}{_k}\qb{_{n+j+k-m}}{_j}\} \\
& \simeq & C(\Gamma'') \left\langle j+k \right\rangle \{\qb{_{n+j+k-m}}{_k}\qb{_{n+j+k-m}}{_j} \qb{_{N+m-n-j-k}}{_{m-k}} \qb{_{N+m-n-j-k}}{_{m-j}}\} \\
& \simeq & C(\emptyset)\left\langle m+n \right\rangle \{\qb{_{n+j+k-m}}{_k}\qb{_{n+j+k-m}}{_j} \qb{_{N+m-n-j-k}}{_{m-k}} \qb{_{N+m-n-j-k}}{_{m-j}} \qb{_N}{_{n+j+k-m}}\}.
\end{eqnarray*}
This shows that
\begin{eqnarray*}
& & \Hom_\HMF (C(\Gamma_j^R), C(\Gamma_k^L)) \\
& \cong & C(\emptyset) \{\qb{_{n+j+k-m}}{_k}\qb{_{n+j+k-m}}{_j} \qb{_{N+m-n-j-k}}{_{m-k}} \qb{_{N+m-n-j-k}}{_{m-j}} \qb{_N}{_{n+j+k-m}} q^{(m+n)N -n^2-m^2}\}.
\end{eqnarray*}
The rest of the lemma follows from the above isomorphism.
\end{proof}

\begin{corollary}\label{left-right-naturally-homotopic}
Let $m,n$ be integers such that $0\leq m,n \leq N$. For $\max\{m-n,0\} \leq k \leq m$, the matrix factorizations $C(\Gamma_k^L)$ and $C(\Gamma_k^R)$ are naturally homotopic in the sense that the homotopy equivalences $C(\Gamma_k^L) \xrightarrow{\simeq} C(\Gamma_k^R)$ and $C(\Gamma_k^R) \xrightarrow{\simeq} C(\Gamma_k^L)$ are unique up to homotopy and scaling.
\end{corollary}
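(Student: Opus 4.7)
The plan is to derive this corollary directly from Lemma \ref{colored-crossing-res-HMF} specialized to $j=k$. That lemma shows that the lowest non-vanishing quantum grading of $\Hom_\HMF(C(\Gamma_k^L), C(\Gamma_k^R))$ (and of $\Hom_\HMF(C(\Gamma_k^R), C(\Gamma_k^L))$) is $(k-k)^2 = 0$, and that the subspace consisting of homogeneous elements of $\zed_2$-degree $0$ and quantum degree $0$ is one-dimensional over $\C$ in each of these Hom spaces.

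First, I will invoke Lemma \ref{decomp-V-special-1} to guarantee the existence of at least one pair of homotopy equivalences $f : C(\Gamma_k^L) \to C(\Gamma_k^R)$ and $g : C(\Gamma_k^R) \to C(\Gamma_k^L)$ that are homogeneous and preserve both gradings, satisfying $g \circ f \simeq \id_{C(\Gamma_k^L)}$ and $f \circ g \simeq \id_{C(\Gamma_k^R)}$ per Definition \ref{cong-and-sim}(b). Since neither $C(\Gamma_k^L)$ nor $C(\Gamma_k^R)$ is homotopic to $0$ (as already implicitly used in the computation of Lemma \ref{colored-crossing-res-HMF}), the identity morphisms are not null-homotopic, and therefore neither $f$ nor $g$ can be null-homotopic. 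Consequently, each represents a nonzero class in the corresponding one-dimensional subspace singled out in Lemma \ref{colored-crossing-res-HMF}.

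Now I will conclude the uniqueness statement. If $f' : C(\Gamma_k^L) \to C(\Gamma_k^R)$ is any homotopy equivalence that is homogeneous and preserves both gradings, then $f'$ represents an element of the same one-dimensional subspace of $\Hom_\HMF(C(\Gamma_k^L), C(\Gamma_k^R))$, so $f' \simeq c \cdot f$ for some $c \in \C$; the fact that $f'$ is itself a homotopy equivalence forces $c \neq 0$. The parallel argument applies in the reverse direction to $g$. This shows that the homotopy equivalences between $C(\Gamma_k^L)$ and $C(\Gamma_k^R)$ are unique up to homotopy and scaling.

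There is no real obstacle here beyond reading off the relevant one-dimensional subspace from Lemma \ref{colored-crossing-res-HMF}; the computational work has already been carried out in that lemma and in Decomposition (V). The corollary is essentially the statement that, when combined with existence from Lemma \ref{decomp-V-special-1}, a one-dimensional $\Hom_\hmf$ space forces uniqueness up to scaling.
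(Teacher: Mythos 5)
Your proof is correct and follows exactly the paper's route: existence of a homotopy equivalence comes from Lemma \ref{decomp-V-special-1}, and uniqueness up to homotopy and scaling comes from the $j=k$ case of Lemma \ref{colored-crossing-res-HMF}, which shows the relevant $\Hom_\hmf$ space is one-dimensional. You have merely spelled out the (correct) reasoning that the paper leaves implicit.
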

\begin{proof}
The existence of the homotopy equivalences follows from Lemma \ref{decomp-V-special-1}. The uniqueness follows from the $j=k$ case of Lemma \ref{colored-crossing-res-HMF}.
\end{proof}

\begin{corollary}\label{left-right-naturally-homotopic-2}
Let $m,n$ be integers such that $0\leq m,n \leq N$. For $\max\{m-n,0\} \leq j,k \leq m$, up to homotopy and scaling, there exist unique homogeneous morphisms
\begin{eqnarray*}
d_{j,k}^{LL} & : & C(\Gamma_j^L) \rightarrow C(\Gamma_k^L), \\
d_{j,k}^{RL} & : & C(\Gamma_j^R) \rightarrow C(\Gamma_k^L), \\
d_{j,k}^{LR} & : & C(\Gamma_j^L) \rightarrow C(\Gamma_k^R), \\
d_{j,k}^{RR} & : & C(\Gamma_j^R) \rightarrow C(\Gamma_k^R) \\
\end{eqnarray*}
satisfying
\begin{itemize}
	\item $d_{j,k}^{LL}$, $d_{j,k}^{RL}$, $d_{j,k}^{LR}$ and $d_{j,k}^{RR}$ have quantum degree $(j-k)^2$ and $\zed_2$-degree $0$,
	\item $d_{j,k}^{LL}$, $d_{j,k}^{RL}$, $d_{j,k}^{LR}$ and $d_{j,k}^{RR}$ are homotopically non-trivial.
\end{itemize}
Moreover, up to homotopy and scaling, every square in the following diagram commutes, where the vertical morphisms are either identity or the natural homotopy equivalences from Corollary \ref{left-right-naturally-homotopic}.
\[
\xymatrix{
C(\Gamma_j^L) \ar[rr]^{d_{j,k}^{LL}}  \ar[d]^{\simeq} && C(\Gamma_k^L) \ar[d]^{=} \\
C(\Gamma_j^R) \ar[rr]^{d_{j,k}^{RL}}  \ar[d]^{\simeq} && C(\Gamma_k^L) \ar[d]^{\simeq} \\
C(\Gamma_j^L) \ar[rr]^{d_{j,k}^{LR}}  \ar[d]^{\simeq} && C(\Gamma_k^R) \ar[d]^{=} \\
C(\Gamma_j^R) \ar[rr]^{d_{j,k}^{RR}}   && C(\Gamma_k^R)
}
\]
\end{corollary}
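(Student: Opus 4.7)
The proposal is to derive both the existence/uniqueness statement and the commutativity statement directly from the dimension count in Lemma~\ref{colored-crossing-res-HMF} together with the naturality result in Corollary~\ref{left-right-naturally-homotopic}. The key observation is that all four Hom spaces in question share the same grading profile: for each pair $(j,k)$ with $\max\{m-n,0\}\leq j,k\leq m$, the subspace of homotopy classes of $\zed_2$-degree $0$ and quantum degree $(j-k)^2$ is exactly one-dimensional over $\C$, and every lower quantum degree vanishes. This is the one-line input that drives the whole argument.

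First, I would establish existence and uniqueness. By Lemma~\ref{colored-crossing-res-HMF}, each of $\Hom_\HMF(C(\Gamma_j^L),C(\Gamma_k^L))$, $\Hom_\HMF(C(\Gamma_j^R),C(\Gamma_k^L))$, $\Hom_\HMF(C(\Gamma_j^L),C(\Gamma_k^R))$ and $\Hom_\HMF(C(\Gamma_j^R),C(\Gamma_k^R))$ has a one-dimensional subspace of homogeneous elements of $\zed_2$-degree $0$ and quantum degree $(j-k)^2$. Picking any nonzero class in each gives morphisms $d_{j,k}^{LL},d_{j,k}^{RL},d_{j,k}^{LR},d_{j,k}^{RR}$ of the required bidegree that are homotopically non-trivial, and the one-dimensionality shows that such a choice is unique up to homotopy and a nonzero scalar.

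Next, I would verify that the three squares commute up to homotopy and scaling. Let $h_i^{LR}:C(\Gamma_i^L)\xrightarrow{\simeq}C(\Gamma_i^R)$ and $h_i^{RL}:C(\Gamma_i^R)\xrightarrow{\simeq}C(\Gamma_i^L)$ be the natural homotopy equivalences provided by Corollary~\ref{left-right-naturally-homotopic}, defined for $i=j,k$. For the top square, both $d_{j,k}^{LL}$ and $d_{j,k}^{RL}\circ h_j^{LR}$ are homogeneous morphisms $C(\Gamma_j^L)\rightarrow C(\Gamma_k^L)$ of $\zed_2$-degree $0$ and quantum degree $(j-k)^2$, hence represent classes in the one-dimensional space $\Hom_\HMF(C(\Gamma_j^L),C(\Gamma_k^L))^{0,(j-k)^2}$. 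Since $h_j^{LR}$ is a homotopy equivalence and $d_{j,k}^{RL}$ is not null-homotopic, their composition is not null-homotopic either; $d_{j,k}^{LL}$ is not null-homotopic by construction. So both are nonzero elements of a one-dimensional space, giving $d_{j,k}^{LL}\approx d_{j,k}^{RL}\circ h_j^{LR}$. The middle and bottom squares are handled by the identical argument, using in turn the equivalences $h_k^{LR}$ (for the middle square, comparing $h_k^{LR}\circ d_{j,k}^{RL}$ with $d_{j,k}^{RR}\circ h_j^{LR}\circ h_j^{RL}\circ\cdots$ — more cleanly, comparing $h_k^{LR}\circ d_{j,k}^{RL}$ with $d_{j,k}^{RR}$ inside $\Hom_\HMF(C(\Gamma_j^R),C(\Gamma_k^R))^{0,(j-k)^2}$), and for the bottom square comparing $h_k^{LR}\circ d_{j,k}^{LR}$ with $d_{j,k}^{RR}\circ h_j^{LR}$ in the same one-dimensional space.

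There is no real obstacle here beyond bookkeeping: every assertion in the corollary is forced by the one-dimensional bidegree count of Lemma~\ref{colored-crossing-res-HMF} together with the fact that pre- or post-composition with a homotopy equivalence carries non-null-homotopic morphisms to non-null-homotopic morphisms. The only subtle point is the verbal ambiguity of the phrase ``up to homotopy and scaling'' when several squares must be compatible simultaneously; this is harmless because the statement only asserts commutativity of each square individually, not the existence of a single global normalization of the $d_{j,k}^{\bullet\bullet}$ making all squares commute on the nose. If a unified normalization were required, one would first fix $d_{j,k}^{LL}$, then \emph{define} the remaining three morphisms by $d_{j,k}^{RL}:=d_{j,k}^{LL}\circ h_j^{RL}$, $d_{j,k}^{LR}:=h_k^{LR}\circ d_{j,k}^{LL}$, and $d_{j,k}^{RR}:=h_k^{LR}\circ d_{j,k}^{LL}\circ h_j^{RL}$, and invoke the one-dimensionality only to confirm these are the correct homotopy classes.
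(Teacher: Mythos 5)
Your proposal is correct and takes essentially the same approach as the paper: the paper's proof is precisely the one-line observation that everything follows from the one-dimensionality statement in Lemma~\ref{colored-crossing-res-HMF}, which you have simply spelled out. Your slight fumbling in the parenthetical about the middle square (comparing $h_k^{LR}\circ d_{j,k}^{RL}$ with $d_{j,k}^{RR}$ rather than with $d_{j,k}^{LR}\circ h_j^{RL}$) does not affect the argument, since as you note the identical one-dimensionality reasoning closes each square.
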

\begin{proof}
This corollary follows easily from Lemma \ref{colored-crossing-res-HMF}.
\end{proof}

From Corollary \ref{left-right-naturally-homotopic-2}, we know that, up to homotopy and scaling, the morphisms $d_{j,k}^{LL}$, $d_{j,k}^{RL}$, $d_{j,k}^{LR}$ and $d_{j,k}^{RR}$ are identified with each other under the natural homotopy equivalences $C(\Gamma_j^L) \simeq C(\Gamma_j^R)$ and $C(\Gamma_k^L) \simeq C(\Gamma_k^R)$. So, without creating any confusion, we drop the superscripts in the notations and simple denote these morphisms by $d_{j,k}$.

\begin{definition}\label{complex-colored-crossing-chain-maps-def}
Let $m,n$ be integers such that $0\leq m,n \leq N$. For $\max\{m-n,0\}+1 \leq k \leq m$, define $d_k^+ = d_{k,k-1}$. For $\max\{m-n,0\} \leq k \leq m-1$, define $d_k^- = d_{k,k+1}$. Note that these are homogeneous morphisms of quantum degree $1$ and $\zed_2$-degree $0$.
\end{definition}

\begin{theorem}\label{complex-colored-crossing-well-defined}
Let $m,n$ be integers such that $0\leq m,n \leq N$. 
\begin{itemize}
	\item For $\max\{m-n,0\}+2 \leq k \leq m$, $d_{k-1}^+ \circ d_k^+ \simeq 0$.
	\item For $\max\{m-n,0\} \leq k \leq m-2$, $d_{k+1}^- \circ d_k^- \simeq 0$.
\end{itemize}
\end{theorem}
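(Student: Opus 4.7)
The plan is to prove both vanishing statements by a pure degree count, leveraging the explicit computation of $\Hom_\HMF(C(\Gamma_j^L),C(\Gamma_k^L))$ given in Lemma \ref{colored-crossing-res-HMF}. Recall from Definition \ref{complex-colored-crossing-chain-maps-def} that $d_k^+ = d_{k,k-1}$ and $d_k^- = d_{k,k+1}$, and that each $d_{j,k}$ is a homogeneous morphism of quantum degree $(j-k)^2$ and $\zed_2$-degree $0$. Consequently the compositions $d_{k-1}^+\circ d_k^+ \colon C(\Gamma_k^L)\to C(\Gamma_{k-2}^L)$ and $d_{k+1}^-\circ d_k^- \colon C(\Gamma_k^L)\to C(\Gamma_{k+2}^L)$ are each homogeneous of quantum degree $1+1=2$ and $\zed_2$-degree $0$.

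First I would invoke Lemma \ref{colored-crossing-res-HMF} with the pairs of resolutions $(\Gamma_k^L,\Gamma_{k-2}^L)$ and $(\Gamma_k^L,\Gamma_{k+2}^L)$. In each case the lemma gives the lowest non-vanishing quantum grading of the $\Hom_\HMF$ space as the square of the index difference, namely $(k-(k-2))^2 = 4$ and $(k-(k+2))^2=4$ respectively. Thus every homogeneous element of $\Hom_\HMF$ of quantum degree strictly less than $4$ is automatically homotopic to $0$.

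Since $2<4$, both composed morphisms land in a quantum degree strictly below the minimal non-vanishing one, hence $d_{k-1}^+\circ d_k^+\simeq 0$ and $d_{k+1}^-\circ d_k^-\simeq 0$, which is the desired conclusion. The only small subtlety is that $d_{j,k}$ was defined with superscripts $LL, LR, RL, RR$ that are identified up to homotopy via Corollary \ref{left-right-naturally-homotopic-2}; so before composing I would fix, say, the $LL$ representatives so that the source of $d_{k-1,k-2}^{LL}$ literally equals the target of $d_{k,k-1}^{LL}$, and similarly for the $d^-$ case. Once this bookkeeping is settled, the degree argument above applies verbatim.

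The "main obstacle," such as it is, is essentially notational rather than substantive: one must be careful that the compositions are defined on the nose (not merely up to a further homotopy equivalence between $L$ and $R$ resolutions) so that "quantum degree $2$" really refers to an element of a single $\Hom_\HMF$ space whose lowest grading is $4$. Corollary \ref{left-right-naturally-homotopic-2} guarantees that this can be arranged without changing the homotopy class, after which the whole statement reduces to the inequality $2<4$. No further input is required.
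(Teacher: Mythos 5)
Your proposal is correct and matches the paper's proof: both vanishing statements follow from the degree count that $d_{k-1}^+ \circ d_k^+$ (resp.\ $d_{k+1}^- \circ d_k^-$) has quantum degree $2$, while Lemma \ref{colored-crossing-res-HMF} shows the lowest non-vanishing quantum grading of the relevant $\Hom_\HMF$ space is $4$. The remark about fixing $LL$ representatives via Corollary \ref{left-right-naturally-homotopic-2} is a sensible bit of bookkeeping, but the paper treats it as implicit; in substance the two arguments are the same.
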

\begin{proof}
For $\max\{m-n,0\}+2 \leq k \leq m$, $d_{k-1}^+ \circ d_k^+:C(\Gamma_k^L) \rightarrow C(\Gamma_{k-2}^L)$ is a homogeneous morphism of quantum degree $2$. But, by Lemma \ref{colored-crossing-res-HMF}, the lowest non-vanishing quantum grading of $\Hom_\HMF(C(\Gamma_k^L), C(\Gamma_{k-2}^L))$ is $2^2=4$. This implies that $d_{k-1}^+ \circ d_k^+ \simeq 0$. The proof of $d_{k+1}^- \circ d_k^- \simeq 0$ is very similar and left to the reader.
\end{proof}

\begin{definition}\label{complex-colored-crossing-def}
Let $c^\pm_{m,n}$ be the colored crossings in Figure \ref{colored-crossing-sign-figure}, $\hat{R}=\Sym(\mathbb{X}|\mathbb{Y}|\mathbb{A}|\mathbb{B})$ and 
\[
w= p_{N+1}(\mathbb{X}) +p_{N+1}(\mathbb{Y}) -p_{N+1}(\mathbb{A}) - p_{N+1}(\mathbb{B}).
\] We first define the unnormalized chain complexes $\hat{C}(c^\pm_{m,n})$. 

If $m \leq n$, then $\hat{C}(c^+_{m,n})$ is defined to be the object
\[
0\rightarrow C(\Gamma^L_m) \xrightarrow{d^{+}_m} C(\Gamma^L_{m-1})\{q^{-1}\} \xrightarrow{d^{+}_{m-1}} \cdots \xrightarrow{d^{+}_1} C(\Gamma^L_0)\{q^{-m}\} \rightarrow 0
\]
of $\hch(\hmf_{\hat{R},w})$, where the homological grading on $\hat{C}(c^+_{m,n})$ is defined so that the term $C(\Gamma^L_{k})\{q^{-(m-k)}\}$ have homological grading $m-k$.

If $m>n$ , then $\hat{C}(c^+_{m,n})$ is defined to be the object
\[
0\rightarrow C(\Gamma^L_m) \xrightarrow{d^{+}_m} C(\Gamma^L_{m-1})\{q^{-1}\} \xrightarrow{d^{+}_{m-1}} \cdots \xrightarrow{d^{+}_{m-n+1}} C(\Gamma^L_{m-n})\{q^{-n}\} \rightarrow 0
\]
of $\hch(\hmf_{\hat{R},w})$, where the homological grading on $\hat{C}(c^+_{m,n})$ is defined so that the term $C(\Gamma^L_{k})\{q^{-(m-k)}\}$ has homological grading $m-k$.

If $m \leq n$, then $\hat{C}(c^-_{m,n})$ is defined to be the object
\[
0\rightarrow C(\Gamma^L_0)\{q^{m}\} \xrightarrow{d^{-}_0} \cdots \xrightarrow{d^{-}_{m-2}} C(\Gamma^L_{m-1}) \{ q \} \xrightarrow{d^{-}_{m-1}} C(\Gamma^L_m) \rightarrow 0
\]
of $\hch(\hmf_{\hat{R},w})$, where the homological grading on $\hat{C}(c^-_{m,n})$ is defined so that the term $C(\Gamma^L_{k})\{q^{m-k}\}$ has homological grading $k-m$.

If $m>n$ , then $\hat{C}(c^-_{m,n})$ is defined to be the object
\[
0\rightarrow C(\Gamma_{m-n})\{q^{n}\} \xrightarrow{d^{-}_{m-n}} \cdots \xrightarrow{d^{-}_{m-2}} C(\Gamma^L_{m-1})\{q\} \xrightarrow{d^{-}_{m-1}} C(\Gamma^L_{m}) \rightarrow 0
\]
of $\hch(\hmf_{\hat{R},w})$, where the homological grading on $\hat{C}(c^-_{m,n})$ is defined so that the term $C(\Gamma^L_{k})\{q^{m-k}\}$ has homological grading $k-m$.

The normalized chain complex $C(c^\pm_{m,n})$ is defined to be
\begin{eqnarray*}
C(c^+_{m,n}) & = & \begin{cases}
\hat{C}(c^+_{m,m})\left\langle m \right\rangle\|-m\| \{q^{m(N+1-m)}\} & \text{if } m=n, \\
\hat{C}(c^+_{m,n}) & \text{if } m\neq n,
\end{cases} \\
C(c^-_{m,n}) & = & \begin{cases}
\hat{C}(c^-_{m,m})\left\langle m \right\rangle\|m\| \{q^{-m(N+1-m)}\} & \text{if } m=n, \\
\hat{C}(c^-_{m,n}) & \text{if } m\neq n.
\end{cases}
\end{eqnarray*}
(Recall that $\|m\|$ means shifting the homological grading by $m$. See Definition \ref{categories-of-complexes}.)
\end{definition}

\begin{corollary}\label{complex-colored-crossing-def-l-r-res-changeable}
Replacing the left resolutions $\Gamma_k^L$ in Definition \ref{complex-colored-crossing-def} by the right resolutions $\Gamma_k^R$ does not change the isomorphism types of $\hat{C}(c^\pm_{m,n})$ and $C(c^\pm_{m,n})$ as objects of $\ch(\hmf_{\hat{R},w})$.
\end{corollary}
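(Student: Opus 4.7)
The plan is to construct an explicit isomorphism in $\ch(\hmf_{\hat{R},w})$ between the complex $\hat{C}^L(c^\pm_{m,n})$ built from the left resolutions (as in Definition \ref{complex-colored-crossing-def}) and the analogous complex $\hat{C}^R(c^\pm_{m,n})$ built by substituting $\Gamma_k^R$ for $\Gamma_k^L$ throughout. Once this is done, the statement for the normalized complexes $C(c^\pm_{m,n})$ follows immediately because the shifts $\{q^{\pm m(N+1-m)}\}$, $\langle m\rangle$ and $\|\!\pm m\!\|$ that distinguish $C$ from $\hat{C}$ are applied identically on both sides.

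The chain map will have, as its $k$-th vertical component, the natural homotopy equivalence $\phi_k : C(\Gamma_k^L) \xrightarrow{\simeq} C(\Gamma_k^R)$ supplied by Corollary \ref{left-right-naturally-homotopic}. Each $\phi_k$ is by construction an isomorphism in $\hmf_{\hat{R},w}$ and preserves both the $\zed_2$-grading and the quantum grading, so it has the right gradings to serve as a component of a chain isomorphism between the shifted complexes.

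The key verification is that, for every consecutive pair, the square
\begin{equation*}
\xymatrix{
C(\Gamma_k^L) \ar[r]^{d_k^{\pm,LL}} \ar[d]_{\phi_k} & C(\Gamma_{k\mp 1}^L) \ar[d]^{\phi_{k\mp 1}} \\
C(\Gamma_k^R) \ar[r]^{d_k^{\pm,RR}} & C(\Gamma_{k\mp 1}^R)
}
\end{equation*}
commutes in $\hmf_{\hat{R},w}$. By the last assertion of Corollary \ref{left-right-naturally-homotopic-2}, both compositions $\phi_{k\mp 1}\circ d_k^{\pm,LL}$ and $d_k^{\pm,RR}\circ\phi_k$ are homotopically non-trivial homogeneous morphisms $C(\Gamma_k^L)\to C(\Gamma_{k\mp 1}^R)$ of quantum degree $1$ and $\zed_2$-degree $0$. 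By Lemma \ref{colored-crossing-res-HMF}, the subspace of $\Hom_\HMF(C(\Gamma_k^L),C(\Gamma_{k\mp 1}^R))$ of homogeneous elements of quantum degree $1$ is one-dimensional, so these two compositions differ only by a nonzero scalar, i.e.\ $\phi_{k\mp 1}\circ d_k^{\pm,LL}\approx d_k^{\pm,RR}\circ\phi_k$.

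To upgrade $\approx$ to $\simeq$ simultaneously in every square, I will use the freedom in choosing representatives of the $d_k^{\pm}$ granted by Corollary \ref{left-right-naturally-homotopic-2}: each $d_k^{\pm,RR}$ is defined only up to homotopy and a nonzero scalar, and appears as the bottom arrow of exactly one square. Rescaling $d_k^{\pm,RR}$ by the appropriate nonzero scalar (determined by the previous paragraph) makes that square commute up to homotopy; because different squares involve disjoint rescalings, this can be done consistently for all $k$. The relation $d_{k-1}^{\pm,RR}\circ d_k^{\pm,RR}\simeq 0$ (Theorem \ref{complex-colored-crossing-well-defined} applied to the right resolutions, which holds for the same quantum-degree reason) survives any rescaling, so the rescaled $d_k^{\pm,RR}$ still form a chain complex. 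The resulting ladder is then a morphism in $\ch(\hmf_{\hat{R},w})$ whose components $\phi_k$ are isomorphisms in $\hmf_{\hat{R},w}$, hence an isomorphism in $\ch(\hmf_{\hat{R},w})$. The only conceptual subtlety, and the place one has to be careful, is this bookkeeping of scalars across the whole complex, but it is purely formal because the rescaling of each $d_k^{\pm,RR}$ is independent.
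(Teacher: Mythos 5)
Your proposal is correct and is exactly the argument the paper is gesturing at: the paper's proof is a one-liner citing Lemma \ref{decomp-V-special-1} and Corollaries \ref{left-right-naturally-homotopic} and \ref{left-right-naturally-homotopic-2}, and your ladder of natural homotopy equivalences $\phi_k$, together with the commutativity of the squares up to homotopy and scaling supplied by Corollary \ref{left-right-naturally-homotopic-2} and the uniqueness from Lemma \ref{colored-crossing-res-HMF}, is precisely what those citations encode. The explicit rescaling discussion is harmless extra bookkeeping; it is in fact already implicit in the observation that the differentials $d_k^\pm$ are themselves only specified up to homotopy and a nonzero scalar, so the isomorphism type of the complex is insensitive to that choice.
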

\begin{proof}
This is an easy consequence of Lemma \ref{decomp-V-special-1} and Corollaries \ref{left-right-naturally-homotopic} and \ref{left-right-naturally-homotopic-2}.
\end{proof}

\begin{corollary}\label{complex-knotted-MOY-marking-independence}
The isomorphism type of the chain complexes $\hat{C}(D)$ and $C(D)$ associated to a knotted MOY graph $D$ (see Definition \ref{complex-knotted-MOY-def}) is independent of the choice of the marking of $D$.
\end{corollary}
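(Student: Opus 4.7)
My plan is to reduce the statement to checking invariance under two elementary moves on markings: (a) changing the alphabets at existing marked points to new disjoint alphabets, and (b) adding or deleting a single marked point in the interior of a segment of an edge (not at a crossing or vertex). Any two markings of $D$ can be connected by a finite sequence of such moves, possibly after first introducing common refinements, so it suffices to verify invariance of the isomorphism type of $\hat{C}(D)$ (the case of $C(D)$ follows because the normalization differs only by overall shifts depending on colors of the crossings, not on the marking). For move (a), the resulting matrix factorization is obtained by a homogeneous renaming of indeterminants, which defines an isomorphism of graded matrix factorizations commuting with all the differentials in the complex, so there is nothing to check beyond bookkeeping.

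The substantive move is (b). Adding a marked point in the interior of a segment replaces a single arc piece by two arc pieces joined at the new point. For pieces of types (i) and (ii) (arcs and star neighborhoods of vertices), the tensor product over the new alphabet is homotopy equivalent in $\hmf$ to the original matrix factorization; this is exactly Lemma \ref{marking-independence}, whose proof uses Proposition \ref{b-contraction} to eliminate the new alphabet. I would globalize this as follows: writing $D = D'\cup D''$ where $D'$ is the affected piece(s) and $D''$ the rest, the tensor product defining $\hat{C}(D)$ factors through $C(D')$, and the homotopy equivalence at the level of $D'$ induces a homotopy equivalence of the total complex because the chain complex construction is a tensor product functor and tensoring with a fixed object preserves homotopy equivalences (this uses Lemma \ref{morphism-sign} to keep track of signs).

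The only delicate case is when the new marked point lies on a segment adjacent to a crossing $c^\pm_{m,n}$. Here adding the marked point changes the resolutions $\Gamma^L_k$ and $\Gamma^R_k$ by inserting a marked point on one of the four external edges. By Lemma \ref{marking-independence} applied to each resolution individually, we obtain, for each $k$, a homotopy equivalence $\Phi_k\colon C(\Gamma^L_k) \xrightarrow{\simeq} C(\widetilde{\Gamma}^L_k)$ preserving both gradings, where $\widetilde{\Gamma}^L_k$ is the resolution with the extra marked point. The hard part will be showing that these $\Phi_k$ assemble into a chain map, i.e.\ that they intertwine the differentials $d^\pm_k$ up to homotopy. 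This is precisely where I would invoke Corollary \ref{left-right-naturally-homotopic-2} together with Lemma \ref{colored-crossing-res-HMF}: the differentials $d_{j,k}$ are characterized, up to homotopy and a nonzero scalar, as the unique homogeneous morphism of quantum degree $(j-k)^2$ and $\zed_2$-degree $0$ in the one-dimensional lowest-grading subspace of $\Hom_\HMF(C(\Gamma^L_j),C(\Gamma^L_k))$. Since $\Phi_k$ preserves both gradings and is a homotopy equivalence, conjugation by the $\Phi_k$ maps the differential of the original complex to a morphism of the same quantum degree in the corresponding one-dimensional space for the new marking, hence equals the new differential up to homotopy and a nonzero scalar. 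Rescaling the $\Phi_k$ by these scalars (which can be absorbed one by one, starting from the top homological degree, because the differentials form a chain complex) yields an honest chain map, which is therefore an isomorphism in $\ch(\hmf_{\hat R,w})$.

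Finally, to promote this to an invariance of $\hat{C}(D)$ itself, I would observe that the homotopy equivalence between the local chain complexes at the crossing, being a chain map of bounded complexes of objects of $\hmf$, tensors with the identity on the remaining pieces $D''$ to give a homotopy equivalence of the total complex in $\ch(\hmf_{R,w})$; once again Lemma \ref{morphism-sign} controls the signs. Combining moves (a) and (b) handles all marking changes, completing the proof.
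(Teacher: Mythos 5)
Your argument is correct and follows essentially the same route as the paper: reduce to adding or removing a single interior marked point, invoke Lemma \ref{marking-independence} for the matrix factorizations of the resolutions, and use the uniqueness (up to homotopy and scaling) of the differentials from Corollary \ref{left-right-naturally-homotopic-2} to identify the chain complexes. You spell out some bookkeeping the paper leaves implicit — the rescaling of the $\Phi_k$ to obtain an honest chain map, and the tensor-product globalization — but the key ideas and the key lemmas cited are the same.
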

\begin{proof}
We only need to show that adding or removing an extra marked point on a segment of $D$ does not change the isomorphism type. Note that adding or removing such an extra marked point is equivalent to adding or removing an internal marked point in a piece $D_j$ of $D$. (See Definition \ref{knotted-MOY-marking-def}.)

If $D_j$ is of type (i) or (ii), that is, an (embedded) MOY graph, then, by Lemma \ref{marking-independence}, adding or removing an internal marked point does not change the homotopy type of the matrix factorization of this piece. Moreover, it is easy to see that the differential map of this piece is $0$ with or without the extra internal marked point. So, in this case, the addition or removal of the extra marked point does not change the isomorphism type of $C(D)$.

If $D_j$ is of type (iii), that is, a colored crossing, then, by Lemma \ref{marking-independence}, adding or removing an internal marked point does not change the homotopy types of the matrix factorizations associated to the resolutions of this colored crossing. Moreover, by the uniqueness part of Corollary \ref{left-right-naturally-homotopic-2}, up to homotopy and scaling, the differential map is the same with or without the extra internal marked point. So, again, the addition or removal of the extra marked point does not change the isomorphism type of $C(D)$.
\end{proof}

\subsection{A null-homotopic chain complex} In this subsection, we construct a null-homotopic chain complex that will be useful in Section \ref{sec-inv-fork} below. The construction of this chain complex is similar to the chain complex of a colored crossing. 

The following lemma is a special case of Decomposition (V) (Theorem \ref{decomp-V}.)

\begin{figure}[ht]
$
\xymatrix{
\input{square-m-n-1-right-k-low} && \input{square-m-n-1-right-j-high}
}
$
\caption{}\label{decomp-V-special-2-figure}

\end{figure}

\begin{lemma}\label{decomp-V-special-2} 
Let $m,n$ be integers such that $0\leq m,n \leq N-1$. For $\max\{m-n,0\} \leq k \leq m+1$ and $\max\{m-n,0\} \leq j \leq m$, define $\Gamma_k$ and $\Gamma_j'$ to be the MOY graphs in Figure \ref{decomp-V-special-2-figure}. Then, for $\max\{m-n,0\} \leq k \leq m+1$, 
\[
C(\Gamma_k) \simeq \begin{cases}
C(\Gamma_m') & \text{if } k=m+1, \\
C(\Gamma_k') \oplus C(\Gamma_{k-1}') & \text{if } \max\{m-n,0\}+1 \leq k \leq m, \\
C(\Gamma_{\max\{m-n,0\}}') & \text{if } k = \max\{m-n,0\}.
\end{cases}
\]
\end{lemma}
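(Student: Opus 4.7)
The plan is to deduce Lemma \ref{decomp-V-special-2} directly from Theorem \ref{decomp-V} (Decomposition V), equation \eqref{decomp-V-1}, by specializing to $l=1$. Comparing the figures, the MOY graphs $\Gamma_k$ and $\Gamma_j'$ in Lemma \ref{decomp-V-special-2} are precisely the graphs $\Gamma_k^1$ and $\Gamma_j^2$ of Theorem \ref{decomp-V} in the case $l=1$ (the right column colors $m+l$ and $n+l$ and the middle edges $m+l-k$, $n+l+j$ all specialize correctly). So the first step is just to verify this identification of diagrams and apply Theorem \ref{decomp-V} to obtain
\[
C(\Gamma_k) \simeq \bigoplus_{j=\max\{m-n,0\}}^{m} C(\Gamma_j')\{\qb{1}{k-j}\}
\]
for $\max\{m-n,0\} \leq k \leq m+1$.

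The second step is to evaluate the quantum binomial coefficient. Since $\qb{1}{0} = \qb{1}{1} = 1$ and $\qb{1}{r} = 0$ for $r \notin \{0,1\}$ (using the convention stated in Theorem \ref{decomp-V}), only the terms with $j \in \{k, k-1\}$ survive in the direct sum. Splitting into cases by whether $k$, $k-1$, or both lie in the index range $[\max\{m-n,0\}, m]$ then yields precisely the three branches in the statement: when $k=m+1$ only $j=m$ survives; when $\max\{m-n,0\}+1 \leq k \leq m$ both $j=k$ and $j=k-1$ survive; and when $k=\max\{m-n,0\}$ only $j=k$ survives.

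The main (and only) obstacle is that the lemma is a direct specialization, so there is essentially nothing to prove beyond checking the graph identification and the values of $\qb{1}{r}$. No independent argument, morphism construction, or Krull-Schmidt input is needed here, because all of that work has already been absorbed into Theorem \ref{decomp-V}. In particular, one does not need to revisit the inductive argument based on Decomposition (IV) that established Theorem \ref{decomp-V} itself; the present lemma is essentially a corollary.
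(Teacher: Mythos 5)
Your proposal is correct, and it is essentially identical to what the paper does: the paper states before Lemma \ref{decomp-V-special-2} that it "is a special case of Decomposition (V) (Theorem \ref{decomp-V})," and gives no further argument, exactly as you observe. Your verification that $\Gamma_k = \Gamma_k^1|_{l=1}$, $\Gamma_j' = \Gamma_j^2|_{l=1}$, together with the evaluation $\qb{1}{r}=1$ for $r\in\{0,1\}$ and $0$ otherwise (so only $j=k$ and $j=k-1$ can survive, subject to the range $\max\{m-n,0\}\leq j\leq m$), is the complete content of the deduction.
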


\begin{lemma}\label{trivial-complex-lemma-1}
Let $\Gamma_k$ and $\Gamma_j'$ be as in Lemma \ref{decomp-V-special-2}. Then
\begin{eqnarray*}
&& \Hom_\HMF (C(\Gamma_j'), C(\Gamma_k)) \\
& \cong & C(\emptyset) \{\qb{_{n+k+j-m}}{_k} \qb{_{n+k+j-m}}{_j} \qb{_{N+m-n-k-j}}{_{m-j}} \qb{_{N+m-n-k-j}}{_{m+1-k}} \qb{_N}{_{n+k+j-m}} q^{(m+n+1)(N-1)-m^2-n^2} \}.
\end{eqnarray*}
In particular, 
\begin{itemize}
	\item $\Hom_\HMF (C(\Gamma_j'), C(\Gamma_k))$ is supported on $\zed_2$-degree $0$,
	\item the lowest non-vanishing quantum grading of $\Hom_\HMF (C(\Gamma_j'), C(\Gamma_k))$ is $(j-k)(j-k+1)$,
	\item the subspace of homogeneous elements of $\Hom_\HMF (C(\Gamma_j'), C(\Gamma_k))$ of quantum degree $(j-k)(j-k+1)$ is $1$-dimensional.
\end{itemize}
\end{lemma}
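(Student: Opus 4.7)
The plan is to follow the same strategy as the proof of Lemma \ref{colored-crossing-res-HMF}, specialized to the case of the graphs in Figure \ref{decomp-V-special-2-figure}. Observe that $\Gamma_j'$ is precisely the graph $\Gamma_j^2$ from Figure \ref{l-right-k-high-marked} with $l=1$, so Lemma \ref{complex-computing-gamma-HMF-lemma} applies directly and gives
\[
\Hom_\HMF (C(\Gamma_j'), C(\Gamma_k)) \cong H(C(\Gamma_k) \otimes_{\hat R} C(\overline{\Gamma_j'})) \left\langle m+n+1 \right\rangle \{q^{(m+n+1)(N-1)-m^2-n^2}\},
\]
where $\hat R = \Sym(\mathbb{X}|\mathbb{Y}|\mathbb{A}|\mathbb{B})$. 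Thus the computation reduces to determining the homotopy type of the closed MOY graph obtained by gluing $\Gamma_k$ to $\overline{\Gamma_j'}$ along the four boundary alphabets.

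First I would draw the resulting glued graph and note that it consists of two long vertical edges (colored $m+1$ and $n+1$) with two internal rectangles, and that the external pieces form caps/cups. Using Corollary \ref{contract-expand} I would contract the two external vertical edges and collapse the top and bottom caps, arriving at a closed MOY graph built from one thick vertical column of color $n+k+j-m$ surrounded by appropriate auxiliary edges. Next I would apply Decomposition (II) (Theorem \ref{decomp-II}) to the two rectangular ``bigon'' pieces; this produces the grading shifts $\qb{_{n+k+j-m}}{_k} \qb{_{n+k+j-m}}{_j}$. Then Decomposition (I) (Theorem \ref{decomp-I}) applied to the two residual ``loop'' pieces produces the shifts $\qb{_{N+m-n-k-j}}{_{m-j}} \qb{_{N+m-n-k-j}}{_{m+1-k}}$ together with a $\left\langle(m-j)+(m+1-k)\right\rangle$ shift that combines with the $\left\langle m+n+1\right\rangle$ above to cancel the homological degree entirely. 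Finally Corollary \ref{circle-dimension} reduces the remaining circle of color $n+k+j-m$ to $C(\emptyset)$ with the factor $\qb{_N}{_{n+k+j-m}}$ and another $\zed_2$ shift that cancels the previous ones. Assembling the shifts yields the claimed formula.

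For the subsequent claims: the $\zed_2$-grading is $0$ because all contributions from $\left\langle\cdot\right\rangle$ cancel (the total parity is $(m+n+1)+(m-j)+(m+1-k)+(n+k+j-m) = 2m+2n+2 \equiv 0\pmod 2$). The lowest quantum grading and the one-dimensionality of its bottom piece are then a purely arithmetic check: using $\min\gdim \qb{a}{b} = -b(a-b)$ and substituting into the five factors, the sum
\[
-k(n+j-m)-j(n+k-m)-(m-j)(N-n-k)-(m+1-k)(N-n-j-1)-(n+k+j-m)(N-n-k-j+m)
\]
added to $(m+n+1)(N-1)-m^2-n^2$ simplifies (after several cancellations) to $(j-k)(j-k+1)$, and the bottom coefficient of each factor is $1$ so their product is also $1$.

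The only real obstacle is ensuring correct bookkeeping for the $\zed_2$-shifts and quantum shifts when repeatedly applying Decompositions (I)--(II) and Corollary \ref{contract-expand}; the arithmetic for the lowest grading is mechanical but needs to be done carefully. No genuinely new idea beyond Lemma \ref{colored-crossing-res-HMF} is required.
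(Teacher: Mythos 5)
Your proposal is correct and follows essentially the same approach as the paper's proof: both reduce via Lemma \ref{complex-computing-gamma-HMF-lemma} to the homology of the glued closed graph, then successively apply Corollary \ref{contract-expand} and Decompositions (I)--(II) to peel off the two quantum-binomial pairs, and finish with Corollary \ref{circle-dimension} to close the remaining circle. Your explicit arithmetic verification that the minimum quantum degrees sum to $(j-k)(j-k+1)$ is a detail the paper leaves implicit, and your mod-2 bookkeeping of the $\left\langle\cdot\right\rangle$ shifts is equivalent to the paper's (your $\left\langle(m-j)+(m+1-k)\right\rangle$ agrees with the paper's $\left\langle j+k+1\right\rangle$ modulo 2).
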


\begin{proof}
By Lemma \ref{complex-computing-gamma-HMF-lemma}, we have
\[
\Hom_\HMF (C(\Gamma_j'), C(\Gamma_k)) \cong H(C(\Gamma_k) \otimes_{\hat{R}} C(\overline{\Gamma_j'}))\left\langle m+n+1 \right\rangle \{q^{(m+n+1)(N-1)-m^2-n^2} \},
\]
where $\hat{R}=\Sym(\mathbb{X}|\mathbb{Y}|\mathbb{A}|\mathbb{B})$ and $\overline{\Gamma_j'}$ is $\Gamma_j'$ with its orientation reversed.

\begin{figure}[ht]
$
\xymatrix{
\input{trivial-complex-lemma-1-figure1} & \input{trivial-complex-lemma-1-figure2} & \input{colored-crossing-res-HMF-figure-3}
}
$
\caption{}\label{trivial-complex-lemma-1-figure}

\end{figure}

Let $\Gamma$, $\Gamma'$ and $\Gamma''$ be the MOY graphs in Figure \ref{trivial-complex-lemma-1-figure}. Then, by Corollary \ref{contract-expand} and Decompositions (I-II) (Theorems \ref{decomp-I} and \ref{decomp-II}), we have 
\begin{eqnarray*}
&& C(\Gamma_k) \otimes_{\hat{R}} C(\overline{\Gamma_j'}) = C(\Gamma) \\
& \simeq & C(\Gamma') \{\qb{_{n+k+j-m}}{_k} \qb{_{n+k+j-m}}{_j} \} \\
& \simeq & C(\Gamma'') \left\langle j+k+1 \right\rangle\{\qb{_{n+k+j-m}}{_k} \qb{_{n+k+j-m}}{_j} \qb{_{N+m-n-k-j}}{_{m-j}} \qb{_{N+m-n-k-j}}{_{m+1-k}}\} \\
& \simeq & C(\emptyset) \left\langle m+n+1 \right\rangle \{\qb{_{n+k+j-m}}{_k} \qb{_{n+k+j-m}}{_j} \qb{_{N+m-n-k-j}}{_{m-j}} \qb{_{N+m-n-k-j}}{_{m+1-k}} \qb{_N}{_{n+k+j-m}}\}.
\end{eqnarray*}
Thus,
\begin{eqnarray*}
&& \Hom_\HMF (C(\Gamma_j'), C(\Gamma_k)) \\
& \cong & C(\emptyset) \{\qb{_{n+k+j-m}}{_k} \qb{_{n+k+j-m}}{_j} \qb{_{N+m-n-k-j}}{_{m-j}} \qb{_{N+m-n-k-j}}{_{m+1-k}} \qb{_N}{_{n+k+j-m}} q^{(m+n+1)(N-1)-m^2-n^2} \}.
\end{eqnarray*}
The rest of the lemma follows from this isomorphism.
\end{proof}

\begin{lemma}\label{trivial-complex-lemma-2}
For $\max\{m-n,0\} \leq i,j \leq m$,
\[
\Hom_\hmf( C(\Gamma_i'),C(\Gamma_j')) \cong \begin{cases}
\C & \text{if } i=j, \\
0 & \text{if } i \neq j.
\end{cases}
\]
In the case $i=j$, $\Hom_\hmf( C(\Gamma_i'),C(\Gamma_i'))$ is spanned by $\id_{C(\Gamma_i')}$.
\end{lemma}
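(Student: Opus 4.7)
The plan is to compute $\gdim \Hom_{\HMF}(C(\Gamma_i'), C(\Gamma_j'))$ by setting up a recursion based on the direct sum decomposition in Lemma \ref{decomp-V-special-2} and the closed form in Lemma \ref{trivial-complex-lemma-1}, and then extract the degree-zero piece. Let $s := \max\{m-n,0\}$, and for $s \leq i,j \leq m$ and $s \leq k \leq m+1$ define
\[
A_{i,k}(q) := \gdim \Hom_{\HMF}(C(\Gamma_i'), C(\Gamma_k)), \qquad B_{i,j}(q) := \gdim \Hom_{\HMF}(C(\Gamma_i'), C(\Gamma_j')),
\]
with $B_{i,j}$ extended by zero outside the allowed range. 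Applying $\Hom_{\HMF}(C(\Gamma_i'), -)$ to the decomposition in Lemma \ref{decomp-V-special-2} and using additivity of $\Hom_{\HMF}$ on direct sums yields the three-term recursion $A_{i,k}(q) = B_{i,k}(q) + B_{i,k-1}(q)$ for $s+1 \leq k \leq m$, together with the boundary identifications $A_{i,s} = B_{i,s}$ and $A_{i,m+1} = B_{i,m}$. Inducting on $k$ upward from $s$ then gives the closed form $B_{i,j}(q) = \sum_{l=s}^{j}(-1)^{j-l} A_{i,l}(q)$.

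Next, I would read off the coefficient of $q^{0}$. By Lemma \ref{trivial-complex-lemma-1} the minimal non-vanishing quantum grading of $A_{i,l}(q)$ is $(i-l)(i-l+1)$, and the subspace there is one-dimensional. Hence the $q^{0}$ coefficient of $A_{i,l}$ is $1$ exactly when $l \in \{i, i+1\}$ and $0$ otherwise. The two relevant terms in the alternating sum carry opposite signs $(-1)^{j-i}$ and $(-1)^{j-i-1}$, so they cancel whenever both fall inside the summation range $[s,j]$. A short case analysis shows that the only way for exactly one to survive is $j = i$ (so $l = i+1$ falls outside the range while $l = i$ contributes with coefficient $+1$); in all other cases either both terms lie in range and cancel, or both lie out of range. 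Hence the $q^{0}$ coefficient of $B_{i,j}(q)$ equals $\delta_{i,j}$.

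Since each $A_{i,k}$ is concentrated in $\zed_2$-degree $0$ by Lemma \ref{trivial-complex-lemma-1}, the same additivity argument forces every $B_{i,j}$ to be concentrated in $\zed_2$-degree $0$, so the $q^{0}$ coefficient computation above is precisely $\dim_\C \Hom_\hmf(C(\Gamma_i'), C(\Gamma_j'))$. This gives the claimed dimensions. Finally, in the diagonal case the identity morphism $\id_{C(\Gamma_i')}$ lies in $\Hom_\hmf(C(\Gamma_i'), C(\Gamma_i'))$ and induces the identity on the non-zero homology $H(\Gamma_i')$, hence is homotopically non-trivial and spans the one-dimensional space.

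The main obstacle is purely bookkeeping: one must verify the sign cancellation near the two ends of the summation range, especially at the corner cases $i \in \{s, m\}$ and $j \in \{s, m\}$, where one of $l = i$ or $l = i+1$ may automatically be excluded from $[s,j]$. None of this should present any real difficulty once the closed form for $B_{i,j}$ is in hand.
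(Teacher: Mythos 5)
Your proof is correct. It uses exactly the same two ingredients as the paper's argument — the decomposition $C(\Gamma_k)\simeq C(\Gamma_k')\oplus C(\Gamma_{k-1}')$ from Lemma \ref{decomp-V-special-2} (with degenerate endpoints) and the quantum-grading lower bound $(i-l)(i-l+1)$ with one-dimensional bottom from Lemma \ref{trivial-complex-lemma-1} — but packages them differently: you solve the full three-term recursion for $\gdim\Hom_\HMF$ globally and then specialize at $q^0$, whereas the paper argues case by case directly at the level of $\Hom_\hmf$. Concretely, the paper observes that for $i>j$ one has $\Hom_\hmf(C(\Gamma_i'),C(\Gamma_j))=0$ and $C(\Gamma_j')$ is a direct summand of $C(\Gamma_j)$; for $i<j$ it uses the same trick one step higher, with $C(\Gamma_{j+1})$; and for $i=j$ it pins down the one-dimensional target by subtracting the already-known vanishing summand. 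Your approach front-loads the bookkeeping into solving the recursion $A_{i,k}=B_{i,k}+B_{i,k-1}$ to get $B_{i,j}=\sum_{l=s}^{j}(-1)^{j-l}A_{i,l}$, while the paper front-loads it into a short case split; the net effort is comparable, but the paper avoids the corner-case sign cancellations you flag as ``the main obstacle,'' since in its argument the relevant $\Hom_\hmf$ spaces either vanish wholesale or are sandwiched directly without any alternating sum. Your closing argument that $\id_{C(\Gamma_i')}$ spans the one-dimensional space is essentially identical to the paper's. One small remark: your recursion is for the full bigraded dimension valued in $\zed[[q]][\tau]/(\tau^2-1)$, and you correctly note that non-negativity of coefficients forces all $B_{i,j}$ to be $\tau$-free once the $A_{i,k}$ are; alternatively one could run the same recursion directly on $\Hom_\hmf$ itself, which would sidestep that extra remark.
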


\begin{proof}
If $i>j$, then by Lemma \ref{trivial-complex-lemma-1}, $\Hom_\hmf( C(\Gamma_i'),C(\Gamma_j)) =0$. But, by Lemma \ref{decomp-V-special-2}, $C(\Gamma_j)=C(\Gamma_j')\oplus C(\Gamma_{j-1}')$. This implies that $\Hom_\hmf( C(\Gamma_i'),C(\Gamma_j')) \cong 0$. 

If $i<j$, then by Lemma \ref{trivial-complex-lemma-1}, $\Hom_\hmf( C(\Gamma_i'),C(\Gamma_{j+1})) =0$. But, by Lemma \ref{decomp-V-special-2}, $C(\Gamma_{j+1})=C(\Gamma_{j+1}')\oplus C(\Gamma_{j}')$. This implies that $\Hom_\hmf( C(\Gamma_i'),C(\Gamma_j')) \cong 0$.

If $i=j$, then by Lemma \ref{trivial-complex-lemma-1}, $\Hom_\hmf( C(\Gamma_i'),C(\Gamma_{i+1})) \cong \C$. But, by Lemma \ref{decomp-V-special-2}, $C(\Gamma_{i+1})=C(\Gamma_{i+1}')\oplus C(\Gamma_{i}')$ and, from above, $\Hom_\hmf( C(\Gamma_i'),C(\Gamma_{i+1}')) =0$. This implies that $\Hom_\hmf( C(\Gamma_i'),C(\Gamma_{i}')) \cong \C$. It follows that $C(\Gamma_{i}')$ is not null-homotopic and, therefore, $\id_{C(\Gamma_i')}$ is not null-homotopic. So $\id_{C(\Gamma_i')}$ spans the $1$-dimensional space $\Hom_\hmf( C(\Gamma_i'),C(\Gamma_{i}'))$.

\end{proof}

\begin{lemma}\label{trivial-complex-lemma-3}
For $\max\{m-n,0\} \leq j,k \leq m+1$,
\[
\Hom_\hmf( C(\Gamma_j),C(\Gamma_k)) \cong \begin{cases}
\C \oplus \C & \text{if } \max\{m-n,0\}+1 \leq j=k \leq m, \\
\C & \text{if } j=k=\max\{m-n,0\} \text{ or } m+1, \\
\C & \text{if } |j-k|=1,\\
0 & \text{if } |j-k|>1.
\end{cases}
\]
\end{lemma}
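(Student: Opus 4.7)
The plan is to reduce the computation to Lemma \ref{trivial-complex-lemma-2} by decomposing $C(\Gamma_j)$ and $C(\Gamma_k)$ via Lemma \ref{decomp-V-special-2} and using bilinearity of $\Hom_\hmf$ on direct sums. Recall that Lemma \ref{decomp-V-special-2} gives
\[
C(\Gamma_i) \simeq \begin{cases} C(\Gamma_m') & \text{if } i=m+1, \\ C(\Gamma_i') \oplus C(\Gamma_{i-1}') & \text{if } \max\{m-n,0\}+1 \leq i \leq m, \\ C(\Gamma_{\max\{m-n,0\}}') & \text{if } i = \max\{m-n,0\}, \end{cases}
\]
and Lemma \ref{trivial-complex-lemma-2} gives $\Hom_\hmf(C(\Gamma_i'),C(\Gamma_j')) \cong \C$ if $i=j$ and $0$ otherwise.

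The heart of the argument is therefore the following bookkeeping. For each $i \in \{\max\{m-n,0\},\dots,m+1\}$, let $S(i) \subset \{\max\{m-n,0\},\dots,m\}$ denote the set of indices $p$ such that $C(\Gamma_p')$ appears as a summand in the decomposition of $C(\Gamma_i)$ above. Explicitly, $S(\max\{m-n,0\}) = \{\max\{m-n,0\}\}$, $S(m+1) = \{m\}$, and $S(i) = \{i-1,i\}$ for $\max\{m-n,0\}+1 \leq i \leq m$. Then
\[
\Hom_\hmf(C(\Gamma_j),C(\Gamma_k)) \cong \bigoplus_{p \in S(j)} \bigoplus_{q \in S(k)} \Hom_\hmf(C(\Gamma_p'),C(\Gamma_q')) \cong \C^{\#(S(j) \cap S(k))}.
\]

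A direct case check on the cardinality $\#(S(j) \cap S(k))$ yields the four cases of the lemma: $\#(S(j) \cap S(k)) = 2$ precisely when $\max\{m-n,0\}+1 \leq j = k \leq m$ (since both $\{j-1,j\}$ are shared), $\#(S(j) \cap S(k)) = 1$ when $j=k=\max\{m-n,0\}$, when $j=k=m+1$, or when $|j-k|=1$ (one overlap from the shared boundary index), and $\#(S(j) \cap S(k)) = 0$ when $|j-k|>1$ (the intervals $S(j)$ and $S(k)$ become disjoint). There is no real obstacle here; the lemma is essentially a combinatorial consequence of the two preceding lemmas, and the only thing to be careful about is the boundary behavior at $i = \max\{m-n,0\}$ and $i = m+1$, where $S(i)$ degenerates to a single element rather than a pair.
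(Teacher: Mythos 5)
Your proposal is correct and is exactly the argument the paper has in mind; the paper's proof is a one-line citation of Lemmas \ref{decomp-V-special-2} and \ref{trivial-complex-lemma-2}, and your $S(i)$-bookkeeping fills in precisely the routine details that citation elides. The boundary cases at $i = \max\{m-n,0\}$ and $i = m+1$ are handled correctly, and the additivity of $\Hom_\hmf$ over finite direct sums in both slots is the only implicit fact being used.
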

\begin{proof}
This follows easily from Lemmas \ref{decomp-V-special-2} and \ref{trivial-complex-lemma-2}.
\end{proof}

\begin{definition}\label{trivial-complex-differential-def}
Denote by 
\begin{eqnarray*}
J_{k,k} & : & C(\Gamma_k') \rightarrow C(\Gamma_k) \\
J_{k,k-1} & : & C(\Gamma_{k-1}') \rightarrow C(\Gamma_k) \\
P_{k,k} & : & C(\Gamma_k) \rightarrow C(\Gamma_k') \\
P_{k,k-1} & : & C(\Gamma_k) \rightarrow C(\Gamma_{k-1}') 
\end{eqnarray*}
the inclusion and projection morphisms in the decomposition 
\[
C(\Gamma_k) \simeq C(\Gamma_k') \oplus C(\Gamma_{k-1}').
\]
Define 
\begin{eqnarray*}
\delta_k^+ & = & J_{k-1,k-1}\circ P_{k,k-1} : C(\Gamma_k) \rightarrow C(\Gamma_{k-1}), \\
\delta_k^- & = & J_{k+1,k} \circ P_{k,k} : C(\Gamma_k) \rightarrow C(\Gamma_{k+1}).
\end{eqnarray*}
Then $\delta_k^+$ and $\delta_k^-$ are both homotopically non-trivial homogeneous morphisms preserving both the $\zed_2$-grading and the quantum grading. 
By Lemma \ref{trivial-complex-lemma-3}, up to homotopy and scaling, $\delta_k^+$ and $\delta_k^-$ are the unique morphisms with such properties.
\end{definition}

\begin{lemma}\label{trivial-complex-lemma-4}
$\delta_{k-1}^+\circ\delta_k^+ \simeq 0$, $\delta_{k+1}^-\circ\delta_k^- \simeq 0$.
\end{lemma}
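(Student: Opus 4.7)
The plan is to unwind the compositions $\delta_{k-1}^+\circ\delta_k^+$ and $\delta_{k+1}^-\circ\delta_k^-$ and reduce the null-homotopy claim to Lemma \ref{trivial-complex-lemma-2}, which already tells us that $\Hom_\hmf(C(\Gamma_i'),C(\Gamma_j'))=0$ whenever $i\neq j$.

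Starting from the definitions, I would write
\[
\delta_{k-1}^+\circ\delta_k^+ = J_{k-2,k-2}\circ P_{k-1,k-2}\circ J_{k-1,k-1}\circ P_{k,k-1},
\]
so that the composition factors through the ``inner'' morphism $P_{k-1,k-2}\circ J_{k-1,k-1}:C(\Gamma_{k-1}')\to C(\Gamma_{k-2}')$. Because $J$'s and $P$'s are the inclusion and projection of a direct-sum decomposition in $\hmf_{R,w}$, they are homogeneous morphisms preserving both the $\zed_2$-grading and the quantum grading; hence $P_{k-1,k-2}\circ J_{k-1,k-1}$ represents an element of $\Hom_\hmf(C(\Gamma_{k-1}'),C(\Gamma_{k-2}'))$. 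Lemma \ref{trivial-complex-lemma-2} says this group is $0$, so $P_{k-1,k-2}\circ J_{k-1,k-1}\simeq 0$, and therefore so is the full composition $\delta_{k-1}^+\circ\delta_k^+$. The proof of $\delta_{k+1}^-\circ\delta_k^-\simeq 0$ is completely parallel: the inner factor is $P_{k+1,k+1}\circ J_{k+1,k}:C(\Gamma_k')\to C(\Gamma_{k+1}')$, which again lies in a trivial $\Hom_\hmf$ space by Lemma \ref{trivial-complex-lemma-2}.

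There is essentially no obstacle here; the only thing to double-check is that all indices stay in the admissible range $\max\{m-n,0\}\leq\cdot\leq m+1$ so that the decompositions $C(\Gamma_j)\simeq C(\Gamma_j')\oplus C(\Gamma_{j-1}')$ used to define $J_{\ast,\ast}$ and $P_{\ast,\ast}$ are really available, and that the ``inner'' pair of indices $(k-1,k-2)$ (respectively $(k,k+1)$) are distinct, so Lemma \ref{trivial-complex-lemma-2} does give zero rather than $\C$. Both are immediate from the setup in Definition \ref{trivial-complex-differential-def}. The content of the lemma is thus really a bookkeeping consequence of the Krull--Schmidt-type computation in Lemma \ref{trivial-complex-lemma-2}, analogous to how Theorem \ref{complex-colored-crossing-well-defined} for $d^\pm_k\circ d^\pm_{k\mp 1}$ followed from the $\Hom_\HMF$ computation in Lemma \ref{colored-crossing-res-HMF}.
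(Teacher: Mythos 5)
Your proof is correct and essentially equivalent to the paper's, but there is a small difference of route. The paper invokes Lemma \ref{trivial-complex-lemma-3} directly: since $\delta_{k-1}^+\circ\delta_k^+$ is a morphism $C(\Gamma_k)\to C(\Gamma_{k-2})$ and $\Hom_\hmf(C(\Gamma_k),C(\Gamma_{k-2}))\cong 0$, the composition must be null-homotopic, with no need to unwind the definitions of $J_{\ast,\ast}$ and $P_{\ast,\ast}$. You instead factor the composition as $J_{k-2,k-2}\circ (P_{k-1,k-2}\circ J_{k-1,k-1})\circ P_{k,k-1}$ and kill the inner factor using Lemma \ref{trivial-complex-lemma-2}. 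Since Lemma \ref{trivial-complex-lemma-3} is itself deduced from Lemma \ref{trivial-complex-lemma-2} together with the decomposition $C(\Gamma_j)\simeq C(\Gamma_j')\oplus C(\Gamma_{j-1}')$, the two arguments rest on the same underlying $\Hom_\hmf$ computation. Your version is a touch more explicit — it names exactly which inner composite vanishes — while the paper's is shorter because it reuses the already-packaged Lemma \ref{trivial-complex-lemma-3}. Both are fine; there is no gap in what you wrote.
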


\begin{proof}
From Lemma \ref{trivial-complex-lemma-3}, we have that 
\[
\Hom_\hmf( C(\Gamma_k),C(\Gamma_{k-2})) \cong \Hom_\hmf( C(\Gamma_k),C(\Gamma_{k+2})) \cong0.
\] 
The lemma follows from this.
\end{proof}

Let $\hat{R}=\Sym(\mathbb{X}|\mathbb{Y}|\mathbb{A}|\mathbb{B})$ and $w= p_{N+1}(\mathbb{X}) +p_{N+1}(\mathbb{Y}) -p_{N+1}(\mathbb{A}) - p_{N+1}(\mathbb{B})$. The above discussion implies the following.

\begin{proposition}\label{trivial-complex-prop}
Let $k_1$ and $k_2$ be integers such that $\max\{m-n,0\}+1 \leq k_1\leq k_2 \leq m$. Then 
\[
\xymatrix{
0 \ar[r] & C(\Gamma_{k_2}')\ar[r]^{J_{k_2,k_2}} & C(\Gamma_{k_2}) \ar[r]^{\delta_{k_2}^+} & \cdots \ar[r]^{\delta_{k_1+1}^+} & C(\Gamma_{k_1}) \ar[r]^{P_{k_1,k_1-1}} & C(\Gamma_{k_1-1}') \ar[r] & 0, \\
0 \ar[r] & C(\Gamma_{k_1-1}')\ar[r]^{J_{k_1,k_1-1}} & C(\Gamma_{k_1}) \ar[r]^{\delta_{k_1}^-} & \cdots \ar[r]^{\delta_{k_2-1}^-} & C(\Gamma_{k_2}) \ar[r]^{P_{k_2,k_2}} & C(\Gamma_{k_2}') \ar[r] & 0
}
\]
are both chain complexes over $\hmf_{\hat{R},w}$ and are isomorphic in $\ch(\hmf_{\hat{R},w})$ to
\[
\bigoplus_{j=k_1-1}^{k_2} (\xymatrix{
0 \ar[r] & C(\Gamma_j') \ar[r]^{\simeq} & C(\Gamma_j') \ar[r] & 0,
})
\]
which is homotopic to $0$. (That is, isomorphic in $\hch(\hmf_{\hat{R},w})$ to $0$.)
\end{proposition}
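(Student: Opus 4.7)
The plan is to first check that the two displayed sequences really are chain complexes in $\ch(\hmf_{\hat{R},w})$, and then, using Lemma~\ref{decomp-V-special-2} to split each middle term, rewrite the differentials in a block-shift form that makes the splitting off of contractible two-term pieces completely transparent.

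For the chain-complex property of the first sequence, the interior compositions $\delta_{k-1}^{+}\circ\delta_{k}^{+}\simeq 0$ are supplied directly by Lemma~\ref{trivial-complex-lemma-4}. For the left end, unfolding $\delta_{k_2}^{+}=J_{k_2-1,k_2-1}\circ P_{k_2,k_2-1}$ gives $\delta_{k_2}^{+}\circ J_{k_2,k_2}=J_{k_2-1,k_2-1}\circ(P_{k_2,k_2-1}\circ J_{k_2,k_2})\simeq 0$, since $P_{k_2,k_2-1}\circ J_{k_2,k_2}$ is the projection onto the second summand composed with the inclusion of the first summand of $C(\Gamma_{k_2})\simeq C(\Gamma_{k_2}')\oplus C(\Gamma_{k_2-1}')$. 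The right end $P_{k_1,k_1-1}\circ\delta_{k_1+1}^{+}=(P_{k_1,k_1-1}\circ J_{k_1,k_1})\circ P_{k_1+1,k_1}$ is null-homotopic for the same reason. The $\delta^{-}$ sequence is treated identically, swapping the roles of $J$ and $P$.

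Now expand each $C(\Gamma_{k})$ with $k_1\leq k\leq k_2$ via Lemma~\ref{decomp-V-special-2} as $C(\Gamma_{k}')\oplus C(\Gamma_{k-1}')$. The definition $\delta_{k}^{+}=J_{k-1,k-1}\circ P_{k,k-1}$ makes $\delta_{k}^{+}$ literally the block-shift
\[
\delta_{k}^{+}\ \simeq\ \begin{pmatrix} 0 & \id_{C(\Gamma_{k-1}')} \\ 0 & 0 \end{pmatrix},
\]
sending the second summand of $C(\Gamma_{k})$ identically to the first summand of $C(\Gamma_{k-1})$ and annihilating everything else. The boundary maps $J_{k_2,k_2}$ and $P_{k_1,k_1-1}$ similarly become the column $(\id,0)^{t}$ and the row $(0,\id)$. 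Grouping each copy of $C(\Gamma_{j}')$ (for $j=k_1-1,\dots,k_2$) with the copy sitting one homological degree to its right, connected by the identity arrow just exhibited, realises the whole sequence as the direct sum in $\ch(\hmf_{\hat{R},w})$ of two-term complexes $0\to C(\Gamma_{j}')\xrightarrow{\id}C(\Gamma_{j}')\to 0$, each of which is plainly null-homotopic. The $\delta^{-}$ sequence is handled by the same procedure, with $\delta_{k}^{-}$ taking the transposed block form $\begin{pmatrix} 0 & 0 \\ \id & 0\end{pmatrix}$.

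The only subtle point, which is the main obstacle, is coherence: the splittings $C(\Gamma_{k})\simeq C(\Gamma_{k}')\oplus C(\Gamma_{k-1}')$ at different $k$ must be chosen so that the diagonal entries of $\delta_{k}^{+}$ really are $\id$, not just invertible scalars. Lemma~\ref{trivial-complex-lemma-2} provides exactly what is needed: $\Hom_{\hmf}(C(\Gamma_{j}'),C(\Gamma_{j}'))\cong\C\cdot\id$, so any scalar arising from the composition $P_{k,k-1}\circ J_{k,k-1}$ or from non-canonical choices of the direct-sum inclusions can be absorbed into an automorphism of the summand $C(\Gamma_{j}')$. After this normalisation the block-shift form is exact, the direct-sum decomposition of the total complex is strict, and null-homotopy of each two-term piece concludes the proof.
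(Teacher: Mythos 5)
Your argument is correct and is essentially the one the paper leaves implicit after ``The above discussion implies the following'': split each middle term via Lemma~\ref{decomp-V-special-2}, observe that $\delta_k^\pm=J\circ P$ is by definition a block shift of summands, check the endpoints by unfolding the same definitions, and invoke Lemma~\ref{trivial-complex-lemma-4} for the interior $\delta^2\simeq 0$. The coherence caveat in your last paragraph is actually automatic — since $P_{k,k-1}\circ J_{k,k-1}\simeq\id$ and $P_{k-1,k-1}\circ J_{k-1,k-1}\simeq\id$ by definition of the inclusions and projections, the off-diagonal entry of $\delta_k^+$ is already the identity on the nose — so Lemma~\ref{trivial-complex-lemma-2} is not strictly needed there, though invoking it does no harm.
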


\subsection{Explicit forms of the differential maps} In the proof of the invariance of the $\mathfrak{sl}(N)$ homology, we need to use explicit forms of the differential maps in the chain complexes defined in the previous two subsections. In this subsection, we give one construction of such explicit forms. (There are more than one explicit constructions of the same differential maps. See for example \cite[Figure 17]{Mackaay-Stosic-Vaz2}.)

\begin{figure}[ht]
$
\xymatrix{
\input{square-m-n-l-right-k-low} \ar@<11ex>[rr]^{d_k^+} \ar@<1ex>[d]^<<<<<{\phi_{k,1}} & & \input{square-m-n-l-right-k-1-low} \ar@<-9ex>[ll]^{d_{k-1}^-} \ar@<1ex>[d]^<<<<<{\phi_{k,2}} \\
\input{square-m-n-l-right-k-low-bubble} \ar@<8ex>[r]^{\chi^1\otimes\chi^1} \ar@<1ex>[u]^>>>>>{\overline{\phi}_{k,1}} & \input{double-square-m-n-l-right-k-low} \ar@<8ex>[r]^{h_k} \ar@<-6ex>[l]^{\chi^0\otimes\chi^0} & \input{square-m-n-l-right-k-1-low-bubble} \ar@<-6ex>[l]^{\overline{h}_k} \ar@<1ex>[u]^>>>>>{\overline{\phi}_{k,2}}
}
$
\caption{}\label{explicit-differential-general-figure1}

\end{figure}

Consider the MOY graphs and morphisms in Figure \ref{explicit-differential-general-figure1}. 
\begin{itemize}
	\item $\phi_{k,1}, ~ \overline{\phi}_{k,1}, ~\phi_{k,2}, ~ \overline{\phi}_{k,2}$ are the morphisms associated to the apparent edge splittings and mergings. (See Definition \ref{morphism-edge-splitting-merging-def}.)
	\item $\chi^0$ and $\chi^1$ are the morphisms from Proposition \ref{general-general-chi-maps} (more precisely, Corollary \ref{general-chi-maps-def}.)
	\item $h_k, ~\overline{h}_k$ are the morphisms induced by the bouquet moves. (See Corollary \ref{contract-expand}, Lemma \ref{bouquet-move-lemma} and Remark \ref{bouquet-move-remark}.)
\end{itemize}
We define $d_k^+$ and $d_{k-1}^-$ to be
\begin{eqnarray*}
d_k^+ & = & \overline{\phi}_{k,2} \circ h_k \circ (\chi^1\otimes\chi^1) \circ \phi_{k,1}. \\
d_{k-1}^- & = & \overline{\phi}_{k,1} \circ (\chi^0\otimes\chi^0) \circ \overline{h}_k \circ \phi_{k,2}.
\end{eqnarray*}

\begin{theorem}\label{explicit-differential-general}
$d_k^+$ and $d_{k-1}^-$ are homotopically non-trivial homogeneous morphisms of $\zed_2$-degree $0$ and quantum degree $1-l$. 

When $l=0$, $d_k^+$ and $d_{k-1}^-$ are explicit forms of the differential maps of the chain complexes associated to colored crossings defined in Definition \ref{complex-colored-crossing-def}. 

When $l=1$, $d_k^+$ and $d_{k-1}^-$ are explicit forms of the differential maps $\delta_k^+$ and $\delta_{k-1}^-$  of the null-homotopic chain complexes in Proposition \ref{trivial-complex-prop}.
\end{theorem}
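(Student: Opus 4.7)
The plan is to split the theorem into three separate tasks: the grading computation, the identification with the required differentials (assuming non-triviality), and the non-triviality verification itself. The first two tasks are essentially bookkeeping given the results already developed in Sections~\ref{sec-morph} and~\ref{sec-chain-complex-def}; the real content is the third.

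First, I will compute the $\zed_2$-degree and quantum degree of $d_k^+$ by summing the gradings of its four constituent morphisms: the edge-splitting $\phi_{k,1}$ (Definition~\ref{morphism-edge-splitting-merging-def}), the tensor product $\chi^1 \otimes \chi^1$ (Corollary~\ref{general-chi-maps-def}, together with Lemma~\ref{morphism-sign}), the bouquet-move homotopy equivalence $h_k$ (which is grading-preserving by Lemma~\ref{bouquet-move-lemma} and Remark~\ref{bouquet-move-remark}), and the edge-merging $\overline{\phi}_{k,2}$. Each constituent has $\zed_2$-degree $0$, so $d_k^+$ does too. The quantum contributions of the two splitting/merging pairs and the two $\chi^1$'s should telescope: $\phi_{k,1}$ splits an edge of color $n+k$ into colors $n+k-1$ and $1$ (at two locations), and the remaining bookkeeping involves $\chi^1$'s between the $\Gamma_4$- and $\Gamma_5$-shapes of Corollary~\ref{general-chi-maps-def}; the net total should come out to $1-l$. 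The computation for $d_{k-1}^-$ is symmetric, with $\chi^0$ replacing $\chi^1$ and the roles of $\phi, \overline{\phi}$ swapped.

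Next, once non-triviality is in hand, the identification with the required differential maps is immediate from the uniqueness results already proved. In the $l=0$ case, Lemma~\ref{colored-crossing-res-HMF} shows that the subspace of $\Hom_\HMF(C(\Gamma_k^L), C(\Gamma_{k-1}^L))$ of quantum degree $1$ is one-dimensional, so any homotopically non-trivial morphism of the correct degree must coincide with $d_{k,k-1} = d_k^+$ of Definition~\ref{complex-colored-crossing-chain-maps-def} up to a nonzero scalar. In the $l=1$ case, Lemma~\ref{trivial-complex-lemma-3} together with the characterization in Definition~\ref{trivial-complex-differential-def} forces the same conclusion for $\delta_k^\pm$. (The scalar indeterminacy is allowed because all morphisms in the paper are defined only up to homotopy and scaling.)

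The main obstacle, and the step requiring the most care, is showing that $d_k^+$ and $d_{k-1}^-$ are not null-homotopic. My plan is to sandwich $d_k^+$ between known non-trivial morphisms and verify that the resulting composition is nonzero. Concretely, I will compose $d_k^+$ on the left with $\overline{\phi}_{k,2}$-type merging and on the right with $\phi_{k,1}$-type splitting (and insert multiplication by an appropriate Schur polynomial in the middle of the composition), then use Lemma~\ref{phibar-compose-phi} to collapse the $\overline{\phi}\circ\phi$ pairs, Proposition~\ref{general-general-chi-maps} to reduce $\chi^1\circ \chi^0$ to multiplication by an explicit Schur polynomial expression, and the fact that bouquet-move equivalences are invertible. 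After these reductions, the resulting endomorphism should be, up to homotopy, multiplication by a specific nonzero polynomial on a matrix factorization associated to an MOY subgraph; its non-triviality can then be checked either by examining its action on the generating class of a colored circle (via Lemmas~\ref{circle-rep-one-mark} and~\ref{circle-rep-two-marks}) or by a direct graded-dimension count using Corollary~\ref{circle-dimension} and Proposition~\ref{circle-module}. The analogous sandwich argument, with $\chi^0, \chi^1$ interchanged, handles $d_{k-1}^-$.
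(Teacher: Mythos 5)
Your proposal is correct and follows essentially the same route as the paper: grading bookkeeping for the constituent morphisms, reduction to non-triviality via the uniqueness statements in Lemma~\ref{colored-crossing-res-HMF} and Lemma~\ref{trivial-complex-lemma-3}, and a sandwich argument inserting Schur polynomial multiplications and collapsing $\overline{\phi}\circ\phi$ pairs via Lemma~\ref{phibar-compose-phi} and $\chi$-map compositions via Proposition~\ref{general-general-chi-maps}. The paper packages the $\chi$-map simplification step as a separate commutativity lemma (Lemma~\ref{explicit-differential-lemma}) before running the sandwich, but the underlying machinery is exactly what you outline.
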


\begin{figure}[ht]
$
\xymatrix{
\input{v-vector-n+k} \ar@<12ex>[r]^{\phi_3'} \ar@<1ex>[d]^<<<<<{\phi_1 \otimes \phi_2} & \input{v-vector-n+k-bubble} \ar@<12ex>[r]^{\phi_1' \otimes \phi_2'} \ar@<-10ex>[l]^{\overline{\phi_3'}} & \input{v-vector-n+k-bubbles-in-bubble} \ar@<-10ex>[l]^{\overline{\phi_1'} \otimes \overline{\phi_2'} } \ar@<1ex>[d]^<<<<<{\overline{h}}\\
\input{v-vector-n+k-2-bubbles} \ar@<1ex>[u]^>>>>>{\overline{\phi}_1 \otimes \overline{\phi}_2} \ar@<10ex>[r]^{\phi_3} & \input{v-vector-n+k-3-bubbles} \ar@<-8ex>[l]^{\overline{\phi}_3} \ar@<10ex>[r]^{\chi^1 \otimes \chi^1} & \input{v-vector-n+k-divided-bubble} \ar@<1ex>[u]^>>>>>{h} \ar@<-8ex>[l]^{\chi^0 \otimes \chi^0}
}
$
\caption{}\label{explicit-differential-lemma-figure1}

\end{figure}

Consider the diagram in Figure \ref{explicit-differential-lemma-figure1}, where the morphisms are induced by the apparent local changes of MOY graphs. To prove Theorem \ref{explicit-differential-general}, we need the following lemma.

\begin{lemma}\label{explicit-differential-lemma}
\begin{eqnarray*}
(\phi_1' \otimes \phi_2') \circ \phi_3' & \approx & h \circ (\chi^1 \otimes \chi^1) \circ \phi_3 \circ (\phi_1 \otimes \phi_2), \\
\overline{\phi_3'} \circ(\overline{\phi_1'} \otimes \overline{\phi_2'}) & \approx & (\overline{\phi}_1 \otimes \overline{\phi}_2) \circ \overline{\phi}_3 \circ (\chi^0 \otimes \chi^0) \circ \overline{h}.
\end{eqnarray*}
That is, the diagram in Figure \ref{explicit-differential-lemma-figure1} commutes up to homotopy and scaling in both directions.
\end{lemma}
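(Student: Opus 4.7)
\medskip

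My plan is to exploit the fact that both sides of each equation are homogeneous morphisms between the same pair of matrix factorizations with identical $\zed_2$- and quantum gradings, and that the relevant slice of the $\Hom_\HMF$-space is one-dimensional. Given such a uniqueness statement, proving the two equations reduces to checking that neither composition is null-homotopic.

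First, I would compute the gradings. The edge-splitting morphisms $\phi_i, \phi_i'$ and their duals carry the grading shifts recorded in Definition~\ref{morphism-edge-splitting-merging-def}, $\chi^0$ and $\chi^1$ carry the shifts recorded in Corollary~\ref{general-chi-maps-def}, and the bouquet-move maps $h, \overline{h}$ preserve both gradings (Lemma~\ref{bouquet-move-lemma} and Remark~\ref{bouquet-move-remark}). A direct bookkeeping then shows that the two compositions in the first equation are both homogeneous of the same $\zed_2$- and quantum degree (and the same for the two compositions in the second equation).

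Next I would compute $\Hom_\HMF(C(\Gamma), C(\Gamma'))$ using the strategy already employed in Lemmas~\ref{edge-splitting-lemma}, \ref{bouquet-move-lemma} and Proposition~\ref{general-general-chi-maps-HMF}: replace $\Hom_\HMF(C(\Gamma), C(\Gamma'))$ by $H(C(\Gamma') \otimes C(\overline{\Gamma}))\{\cdots\}$, simplify the closed MOY graph by successive applications of Decompositions~(I) and (II) and the edge-contraction Lemma~\ref{edge-contraction}, and thereby identify it with a shifted copy of $C(\emptyset)$ tensored by an explicit product of quantum binomials. The key output of this computation is that the subspace of $\Hom_\HMF(C(\Gamma), C(\Gamma'))$ sitting in the quantum degree and $\zed_2$-degree of our two compositions is one-dimensional over $\C$. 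Granting this, any two morphisms $C(\Gamma) \rightarrow C(\Gamma')$ of the correct bidegree are homotopic up to a scalar, and the first equation reduces to the statement that both sides are homotopically nontrivial. The second equation follows analogously once the corresponding Hom-space for the reverse direction is computed by the same method.

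The main obstacle, and the step I expect to require real care, is establishing homotopic nontriviality of the two composite morphisms. For this I would follow the template used in Proposition~\ref{decomposing-psi-bar} and the proof of Proposition~\ref{general-general-chi-maps-HMF}: compose each candidate map on the left and right with the dual family of morphisms (the analogous morphisms $\overline{\phi_i'}, \overline{\phi}_i, \overline{\phi_3'}, \overline{\phi}_3, \overline{h}, h, \chi^0, \chi^1$) together with multiplication by a well-chosen Schur polynomial $S_\lambda$ in the alphabet $\mathbb{A}$ or $\mathbb{B}$, and show that the resulting endomorphism of $C(\Gamma)$ is, up to a nonzero scalar, the identity. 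Concretely, the composition formulas $\overline{\phi} \circ \mathfrak{m}(S_\lambda(\mathbb{A})\cdot S_\mu(-\mathbb{B})) \circ \phi \approx \id$ (Lemma~\ref{phibar-compose-phi}), together with $\chi^1 \circ \chi^0 \simeq \mathfrak{m}(\sum_{\lambda} \pm S_{\lambda'}(\mathbb{X})S_{\lambda^c}(\mathbb{B})) \cdot \id$ (Proposition~\ref{general-general-chi-maps}) and the analogous multiplicative identity from Corollary~\ref{general-chi-maps-def}, combine to produce the identity in the endomorphism ring of $C(\Gamma)$, which, being homotopically nontrivial, witnesses the nontriviality of the full composition. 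Once both sides are shown to be nontrivial in a one-dimensional Hom-space, the conclusion $\approx$ follows.

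Finally, to obtain the second equation I would apply the same argument with all arrows and morphisms replaced by their duals: the $\phi$'s become $\overline{\phi}$'s, $\chi^1$ is swapped with $\chi^0$, $h$ is swapped with $\overline{h}$, and the Hom computation is carried out from $C(\Gamma')$ to $C(\Gamma)$. Since all the auxiliary lemmas used (edge splitting/merging, bouquet move, and the $\chi$-morphism composition formulas) are symmetric under reversing orientations, the argument transfers verbatim.
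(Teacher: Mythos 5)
Your proposal is correct and follows essentially the same route as the paper's proof: identify the common bidegree, show the corresponding slice of $\Hom_\HMF$ is one-dimensional (via Decomposition (II) and the circle homology), and then verify homotopic non-triviality of all four compositions by sandwiching with dual morphisms and multiplications by suitable Schur polynomials, using Lemma~\ref{phibar-compose-phi} together with the $\chi^0\circ\chi^1$ composition formula from Corollary~\ref{general-chi-maps-def}. This matches the paper's argument step for step.
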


\begin{proof}
Let
\[
\left.%
\begin{array}{ll}
f=(\phi_1' \otimes \phi_2') \circ \phi_3', & \overline{f} = \overline{\phi_3'} \circ(\overline{\phi_1'} \otimes \overline{\phi_2'}), \\
g= h \circ (\chi^1 \otimes \chi^1) \circ \phi_3 \circ (\phi_1 \otimes \phi_2), & \overline{g} = (\overline{\phi}_1 \otimes \overline{\phi}_2) \circ \overline{\phi}_3 \circ (\chi^0 \otimes \chi^0) \circ \overline{h}.
\end{array}%
\right.
\]
Then $f,~\overline{f}, ~g, ~\overline{g}$ are homogeneous morphisms of $\zed_2$-degree $0$ and quantum degree $\tau:= m-k+1-m(n+k-m)-nk$. 

Using Decomposition {II} (Theorem \ref{decomp-II}), we have 
\[
C(\Gamma') \simeq C(\Gamma)\{[n+k]\qb{n+k-1}{m}\qb{n+k-1}{n}\}.
\]
Denote by $\bigcirc_{n+k}$ an oriented circle colored by $n+k$. It is easy to check that
\begin{eqnarray*}
&& \Hom_\HMF (C(\Gamma),C(\Gamma')) \cong \Hom_\HMF (C(\Gamma'),C(\Gamma)) \\
& \cong & H(\bigcirc_{n+k}) \left\langle n+k\right\rangle \{[n+k]\qb{n+k-1}{m}\qb{n+k-1}{n}q^{(n+k)(N-n-k)}\} \\
& \cong & C(\emptyset) \{\qb{N}{n+k}[n+k]\qb{n+k-1}{m}\qb{n+k-1}{n}q^{(n+k)(N-n-k)}\}.
\end{eqnarray*}
Observe that:
\begin{itemize}
	\item These spaces are supported on $\zed_2$-degree $0$.
	\item The lowest non-vanishing quantum grading of these spaces is $\tau$.
	\item The subspaces of these spaces of homogeneous elements of quantum grading $\tau$ are $1$-dimensional.
\end{itemize}
Thus, to prove that $f\approx g$ and $ \overline{f} \approx \overline{g}$, we only need to show that $f,~\overline{f}, ~g, ~\overline{g}$ are all homotopically non-trivial.

By Lemma \ref{phibar-compose-phi}, we have
\[
\overline{f} \circ \mathfrak{m} (S_{\lambda_{m,n+k-1-m}}(\mathbb{A}) \cdot S_{\lambda_{n,k-1}}(\mathbb{B}) \cdot (-r)^{n+k-1}) \circ f \approx \id_{C(\Gamma)}.
\]
This implies that $f,~\overline{f}$ are not homotopic to $0$. 

By Corollary \ref{general-chi-maps-def}, we have

{\tiny
\begin{eqnarray*}
&& \overline{g} \circ \mathfrak{m} (S_{\lambda_{m,n+k-1-m}}(-\mathbb{X}) \cdot S_{\lambda_{n,k-1}}(-\mathbb{Y}) \cdot (-r)^{n+k-1}) \circ g \\
& \approx & (\overline{\phi}_1 \otimes \overline{\phi}_2) \circ \overline{\phi}_3 \circ \mathfrak{m} (S_{\lambda_{m,n+k-1-m}}(-\mathbb{X}) \cdot S_{\lambda_{n,k-1}}(-\mathbb{Y}) \cdot (-r)^{n+k-1}) \circ (\chi^0 \otimes \chi^0) \circ (\chi^1 \otimes \chi^1) \circ \phi_3 \circ (\phi_1 \otimes \phi_2) \\
& \approx & (\overline{\phi}_1 \otimes \overline{\phi}_2) \circ \overline{\phi}_3 \circ \mathfrak{m} (S_{\lambda_{m,n+k-1-m}}(-\mathbb{X}) \cdot S_{\lambda_{n,k-1}}(-\mathbb{Y}) \cdot (-r)^{n+k-1}\cdot(\sum_{j=0}^m (-r)^{m-j}A_j) \cdot(\sum_{i=0}^n (-r)^{n-i}B_i) )  \circ \phi_3 \circ (\phi_1 \otimes \phi_2) \\
& = & \sum_{j=0}^m \sum_{i=0}^n(\overline{\phi}_1 \otimes \overline{\phi}_2) \circ \overline{\phi}_3 \circ \mathfrak{m} (S_{\lambda_{m,n+k-1-m}}(-\mathbb{X})\cdot A_j  \cdot S_{\lambda_{n,k-1}}(-\mathbb{Y})\cdot B_i \cdot (-r)^{2n+m+k-1-i-j} )  \circ \phi_3 \circ (\phi_1 \otimes \phi_2),
\end{eqnarray*}
}

\noindent where $A_j$, $B_j$ are the $j$-th elementary symmetric polynomials in $\mathbb{A}$ and $\mathbb{B}$. But, by Lemma \ref{phibar-compose-phi}, the only homotopically non-trivial term on the right hand side is the one with $j=m, ~i=n$. So 
\[
\overline{g} \circ \mathfrak{m} (S_{\lambda_{m,n+k-1-m}}(-\mathbb{X}) \cdot S_{\lambda_{n,k-1}}(-\mathbb{Y}) \cdot (-r)^{n+k-1}) \circ g \approx \overline{\phi}_3 \circ \mathfrak{m} ( (-r)^{n+k-1} )  \circ \phi_3 \approx \id_{C(\Gamma)}.
\]
Thus, $g,~\overline{g}$ are not homotopic to $0$. 
\end{proof}

\begin{figure}[ht]
$
\xymatrix@R=4pc{
\input{v-vector-n+m+l} \ar@<12ex>[r]^{\phi_0} & \input{v-vector-n+m+l-bubble}  \ar@<-10ex>[l]^{\overline{\phi}_0}  \ar@<12ex>[r]^{\phi_1 \otimes \phi_2} & \input{v-vector-n+m+l-bubbles-in-bubble} \ar@<-10ex>[l]^{\overline{\phi}_1 \otimes \overline{\phi}_2} \ar@<-10ex>[lld]_{h_1} \\ 
\input{v-vector-n+m+l-divided-bubble-k}  \ar@<9ex>[rru]_{\overline{h}_1} \ar@<12ex>[r]^{\phi_3} & \input{v-vector-n+m+l-bubble-in-divided-bubble-k} \ar@<-10ex>[l]^{\overline{\phi}_3} \ar@<12ex>[r]^{\chi^1 \otimes \chi^1} & \input{v-vector-n+m+l-double-bubble} \ar@<-10ex>[l]^{\chi^0 \otimes \chi^0} \ar@<-10ex>[lld]_{h_2}  \\ 
\input{v-vector-n+m+l-bubble-in-divided-bubble-k-1} \ar@<9ex>[rru]_{\overline{h}_2} \ar@<12ex>[r]^{\overline{\phi}_4} & \input{v-vector-n+m+l-divided-bubble-k-1} \ar@<-10ex>[l]^{\phi_4} \ar@<12ex>[r]^{h_3} & \input{v-vector-n+m+l-bubbles-in-bubble-1} \ar@<-10ex>[l]^{\overline{h}_3} \ar@<-10ex>[lld]_{\overline{\phi}_5 \otimes \overline{\phi}_6} \\
\input{v-vector-n+m+l-bubble-1} \ar@<9ex>[rru]_{\phi_5 \otimes \phi_6} \ar@<12ex>[rr]^{\overline{\phi}_7} && \input{v-vector-n+m+l} \ar@<-10ex>[ll]^{\phi_7}
}
$
\caption{}\label{explicit-differential-proof-figure1}

\end{figure}

\begin{proof}[Proof of Theorem \ref{explicit-differential-general}]
It is easy to check that $d_k^+$ and $d_{k-1}^-$ are homogeneous morphisms of $\zed_2$-degree $0$ and quantum degree $1-l$. Recall that the differential maps of the complexes in Definition \ref{complex-colored-crossing-def} and Proposition \ref{trivial-complex-prop} are homotopically non-trivial homogeneous morphisms uniquely determined up to homotopy and scaling by their quantum degrees. So, to prove Theorem \ref{explicit-differential-general}, we only need to show that, as morphisms of matrix factorizations, $d_k^+$ and $d_{k-1}^-$ are not null-homotopic.

Consider the MOY graphs in Figure \ref{explicit-differential-proof-figure1}, where the morphisms are induced by the apparent local changes of the MOY graphs. Note that, as morphisms between $C(\hat{\Gamma}_k)$ and $C(\hat{\Gamma}_{k-1})$,
\begin{eqnarray*}
d_k^+ & = & \overline{\phi}_4 \circ h_2 \circ (\chi^1\otimes\chi^1) \circ \phi_3, \\
d_{k-1}^- & = & \overline{\phi}_3 \circ (\chi^0 \otimes \chi^0) \circ \overline{h}_2 \circ \phi_4.
\end{eqnarray*}
So, by Lemma \ref{explicit-differential-lemma}, we have 
\begin{eqnarray*}
h_3 \circ d_k^+ \circ h_1 \circ (\phi_1 \otimes \phi_2) & \approx & \overline{\phi}_4 \circ (\phi_5 \otimes \phi_6) \circ h_4 \circ \phi_3, \\
(\overline{\phi}_1 \otimes \overline{\phi}_2)\circ \overline{h} _1 \circ d_{k-1}^- \overline{h}_3 & \approx & \overline{\phi}_3 \circ \overline{h}_4 \circ (\overline{\phi}_5 \otimes \overline{\phi}_6) \circ \phi_4 \approx \overline{\phi}_3 \circ \overline{h}_4 \circ \phi_4 \circ (\overline{\phi}_5 \otimes \overline{\phi}_6),
\end{eqnarray*}
where the morphisms on the right hand side are depicted in Figure \ref{explicit-differential-proof-figure2}. Note that some morphisms in Figures \ref{explicit-differential-proof-figure1} and \ref{explicit-differential-proof-figure2} are given the same notations. This is because they are induced by the same local changes of MOY graphs.

\begin{figure}[ht]
$
\xymatrix{
\input{v-vector-n+m+l-bubble} \ar@<12ex>[r]^{\phi_3} & \input{v-vector-n+m+l-bubble-in-bubble-left} \ar@<-10ex>[l]^{\overline{\phi}_3} \ar@<12ex>[r]^{h_4} & \input{v-vector-n+m+l-bubble-in-bubble-right} \ar@<-10ex>[l]^{\overline{h}_4} \ar@<-10ex>[lld]_{\phi_5 \otimes \phi_6} \\
\input{v-vector-n+m+l-bubbles-in-bubble-2} \ar@<9ex>[rru]_{\overline{\phi}_5 \otimes \overline{\phi}_6} \ar@<12ex>[rr]^{\overline{\phi}_4} & & \input{v-vector-n+m+l-bubbles-in-bubble-1} \ar@<-10ex>[ll]^{\phi_4} 
}
$
\caption{}\label{explicit-differential-proof-figure2}

\end{figure}

Note that $\overline{\phi}_7 \circ (\overline{\phi}_5 \otimes \overline{\phi}_6) \circ \overline{\phi}_4 \approx \overline{\phi}_0 \circ \overline{\phi}_3 \circ \overline{h}_4 \circ (\overline{\phi}_5 \otimes \overline{\phi}_6)$. So, by Lemma \ref{phibar-compose-phi}, we have

{\tiny
\begin{eqnarray*}
&& \overline{\phi}_7 \circ (\overline{\phi}_5 \otimes \overline{\phi}_6) \circ \mathfrak{m}(S_{\lambda_{m,n+k-1-m}}(\mathbb{D}) \cdot  S_{\lambda_{n,k-1}}(\mathbb{E}) \cdot  B_{n+k-1}) \circ  h_3 \circ d_k^+ \circ h_1 \circ (\phi_1 \otimes \phi_2) \circ \mathfrak{m}(S_{\lambda_{n+k,m+l-k}}(-\mathbb{X})) \circ \phi_0 \\
& \approx & \overline{\phi}_7 \circ (\overline{\phi}_5 \otimes \overline{\phi}_6) \circ \mathfrak{m}(S_{\lambda_{m,n+k-1-m}}(\mathbb{D}) \cdot  S_{\lambda_{n,k-1}}(\mathbb{E}) \cdot B_{n+k-1}) \circ \overline{\phi}_4 \circ  (\phi_5 \otimes \phi_6) \circ h_4 \circ \phi_3 \circ \mathfrak{m}(S_{\lambda_{n+k,m+l-k}}(-\mathbb{X})) \circ \phi_0 \\
& \approx & \overline{\phi}_7 \circ (\overline{\phi}_5 \otimes \overline{\phi}_6) \circ \overline{\phi}_4 \circ \mathfrak{m}(S_{\lambda_{m,n+k-1-m}}(\mathbb{D}) \cdot  S_{\lambda_{n,k-1}}(\mathbb{E}) \cdot B_{n+k-1}) \circ  (\phi_5 \otimes \phi_6) \circ h_4 \circ \phi_3 \circ \mathfrak{m}(S_{\lambda_{n+k,m+l-k}}(-\mathbb{X})) \circ \phi_0 \\
& \approx & \overline{\phi}_0 \circ \overline{\phi}_3 \circ \overline{h}_4 \circ (\overline{\phi}_5 \otimes \overline{\phi}_6) \circ \mathfrak{m}(S_{\lambda_{m,n+k-1-m}}(\mathbb{D}) \cdot  S_{\lambda_{n,k-1}}(\mathbb{E}) \cdot B_{n+k-1}) \circ  (\phi_5 \otimes \phi_6) \circ h_4 \circ \phi_3 \circ \mathfrak{m}(S_{\lambda_{n+k,m+l-k}}(-\mathbb{X})) \circ \phi_0 \\
& \approx & \overline{\phi}_0 \circ \mathfrak{m}(S_{\lambda_{n+k,m+l-k}}(-\mathbb{X})) \circ \phi_0 \approx \id_{C(\Gamma)},
\end{eqnarray*}
}

\noindent where $B_{n+k-1}$ is the $(n+k-1)$-th elementary symmetric polynomial in $\mathbb{B}$. This implies that $d_k^+$ is not null-homotopic.

Similarly, note that $\overline{\phi}_0 \circ \overline{\phi}_3 \approx \overline{\phi}_7 \circ \overline{\phi}_4 \circ h_4$. So, by Lemma \ref{phibar-compose-phi}, we have

{\tiny
\begin{eqnarray*}
&& \overline{\phi}_0 \circ \mathfrak{m}(X_{m+l-k}) \circ (\overline{\phi}_1 \otimes \overline{\phi}_2)\circ \overline{h} _1 \circ d_{k-1}^- \overline{h}_3 \circ \mathfrak{m}( S_{\lambda_{m,n+k-1-m}}(\mathbb{D}) \cdot  S_{\lambda_{n,k-1}}(\mathbb{E})) \circ (\phi_5 \otimes \phi_6) \circ \mathfrak{m}(S_{\lambda_{n+k-1,m+l+1-k}}(\mathbb{B})) \circ \phi_7 \\
& \approx & \overline{\phi}_0 \circ \mathfrak{m}(X_{m+l-k}) \circ \overline{\phi}_3 \circ \overline{h}_4 \circ \phi_4 \circ (\overline{\phi}_5 \otimes \overline{\phi}_6) \circ \mathfrak{m}( S_{\lambda_{m,n+k-1-m}}(\mathbb{D}) \cdot  S_{\lambda_{n,k-1}}(\mathbb{E})) \circ (\phi_5 \otimes \phi_6) \circ \mathfrak{m}(S_{\lambda_{n+k-1,m+l+1-k}}(\mathbb{B})) \circ \phi_7 \\
& \approx & \overline{\phi}_0 \circ \overline{\phi}_3 \circ \overline{h}_4 \circ \mathfrak{m}(X_{m+l-k}) \circ \phi_4 \circ (\overline{\phi}_5 \otimes \overline{\phi}_6) \circ \mathfrak{m}( S_{\lambda_{m,n+k-1-m}}(\mathbb{D}) \cdot  S_{\lambda_{n,k-1}}(\mathbb{E})) \circ (\phi_5 \otimes \phi_6) \circ \mathfrak{m}(S_{\lambda_{n+k-1,m+l+1-k}}(\mathbb{B})) \circ \phi_7 \\
& \approx & \overline{\phi}_7 \circ \overline{\phi}_4 \circ \mathfrak{m}(X_{m+l-k}) \circ \phi_4 \circ (\overline{\phi}_5 \otimes \overline{\phi}_6) \circ \mathfrak{m}( S_{\lambda_{m,n+k-1-m}}(\mathbb{D}) \cdot  S_{\lambda_{n,k-1}}(\mathbb{E})) \circ (\phi_5 \otimes \phi_6) \circ \mathfrak{m}(S_{\lambda_{n+k-1,m+l+1-k}}(\mathbb{B})) \circ \phi_7 \\
& \approx & \overline{\phi}_7 \circ \mathfrak{m}(S_{\lambda_{n+k-1,m+l+1-k}}(\mathbb{B})) \circ \phi_7 \approx \id_{C(\Gamma)},
\end{eqnarray*}
}

\noindent where $X_j$ is the $j$-th elementary symmetric polynomial in $\mathbb{X}$. This implies that $d_{k-1}^-$ is not null-homotopic.
\end{proof}

If, in a colored crossing, one of the two branches is colored by $1$, then we have a simpler explicit description of the chain complex associated to this crossing. 

\begin{figure}[ht]
$
\xymatrix{
\input{crossing-1-n-+} && \input{crossing-1-n--}
}
$
\caption{}\label{explicit-differential-1-n-crossings-fig}

\end{figure}

Consider the colored crossings $c_{1,n}^+$ and $c_{1,n}^-$ in Figure \ref{explicit-differential-1-n-crossings-fig}. Their MOY resolutions are given in Figure \ref{explicit-differential-1-n-crossings--res-fig}.

\begin{figure}[ht]
$
\xymatrix{
\input{crossing-1-n-res-0} && \input{crossing-1-n-res-1}
}
$
\caption{}\label{explicit-differential-1-n-crossings--res-fig}

\end{figure}

Recall that Proposition \ref{general-general-chi-maps} (or, more precisely, Corollary \ref{chi-maps-def}) gives homogeneous morphisms $\chi^0:C(\Gamma_0) \rightarrow C(\Gamma_1)$ and $\chi^1:C(\Gamma_1) \rightarrow C(\Gamma_0)$ that have $\zed_2$-degree $0$ and quantum degree $1$. By Proposition \ref{general-general-chi-maps-HMF}, up to homotopy and scaling, $\chi^0$ and $\chi^1$ are the unique homotopically non-trivial homogeneous morphisms with such degrees. Thus, we have the following corollary.

\begin{corollary}\label{explicit-differential-1-n-crossings--res}
The unnormalized chain complexes of $c_{1,n}^+$ and $c_{1,n}^-$ are
\begin{eqnarray*}
\hat{C}(c_{1,n}^+) & = & ``0 \rightarrow \underbrace{C(\Gamma_1)}_{0} \xrightarrow{\chi^1} \underbrace{C(\Gamma_0)\{q^{-1}\}}_{1} \rightarrow 0", \\
\hat{C}(c_{1,n}^-) & = & ``0 \rightarrow \underbrace{C(\Gamma_0)\{q\}}_{-1} \xrightarrow{\chi^0} \underbrace{C(\Gamma_1)}_{0} \rightarrow 0",
\end{eqnarray*}
where the numbers in the under-braces are the homological gradings.

The differential maps in the chain complexes of $c_{m,1}^{\pm}$ can also be similarly expressed as the corresponding $\chi^0$ and $\chi^1$. The details are left to the reader. 
\end{corollary}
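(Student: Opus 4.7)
The statement amounts to unpacking the general definition (Definition~\ref{complex-colored-crossing-def}) in the case $m=1$ and invoking uniqueness of the differential. First I would identify the resolutions: since $m=1$ and $\max\{m-n,0\}=0$, only the two values $k=0,1$ arise, and one checks directly from Figure~\ref{decomp-V-special-1-figure} that $\Gamma_k^L$ for $m=1$ reduces, after contracting the vacuous segments (the right column is colored by $m-k=1$ throughout for $k=0$, or by $0$ across the middle for $k=1$, and the lower horizontal for $k=0$ has color $0$) to precisely the two MOY graphs $\Gamma_0$ (H-shape, middle edge $n-1$) and $\Gamma_1$ (split--merge, middle edge $n+1$) of Figure~\ref{explicit-differential-1-n-crossings--res-fig}. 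Equivalently, $C(\Gamma_k^L) \simeq C(\Gamma_k)$ via the bouquet-move homotopy equivalences of Corollary~\ref{contract-expand} and Remark~\ref{bouquet-move-remark}.

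Next I would identify the differentials. By Definition~\ref{complex-colored-crossing-chain-maps-def} together with Corollary~\ref{left-right-naturally-homotopic-2}, the map $d_1^+ = d_{1,0}: C(\Gamma_1^L) \to C(\Gamma_0^L)$ is, up to homotopy and scaling, the unique homotopically non-trivial homogeneous morphism of $\zed_2$-degree $0$ and quantum degree $(1-0)^2 = 1$; analogously $d_0^- = d_{0,1}$ is unique in the opposite direction. On the other hand, Corollary~\ref{chi-maps-def} (the specialization of Proposition~\ref{general-general-chi-maps} with one leg colored $1$, applied with that corollary's parameter $m$ taken to be our $n$) produces morphisms $\chi^0: C(\Gamma_0) \to C(\Gamma_1)$ and $\chi^1: C(\Gamma_1) \to C(\Gamma_0)$ of $\zed_2$-degree $0$ and quantum degree $1$, and Proposition~\ref{general-general-chi-maps-HMF} asserts they too are unique up to homotopy and scaling. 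Comparing these two uniqueness statements gives $d_1^+ \approx \chi^1$ and $d_0^- \approx \chi^0$.

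Substituting these identifications into the general definition of $\hat{C}(c_{1,n}^\pm)$, and reading off the homological shifts from Definition~\ref{complex-colored-crossing-def} in the regime $m=1\le n$ (so $C(\Gamma_1^L)$ sits in homological degree $0$ and $C(\Gamma_0^L)\{q^{-1}\}$ in degree $1$ for $c^+$, with the dual arrangement for $c^-$), yields the displayed two-term complexes. The corresponding statement for $c_{m,1}^\pm$ follows by the entirely parallel argument with the roles of the two strands swapped. There is no genuine obstacle here: the corollary is a bookkeeping consequence of two uniqueness statements already in hand. The only small care needed is keeping straight the parameter $m$ of Proposition~\ref{general-general-chi-maps} (the width of the ``high'' leg at the crossing) versus the parameter $m$ of Corollary~\ref{chi-maps-def} (which in our situation must be taken to be $n$) so that the quantum degree of the chi-morphisms matches $(j-k)^2=1$.
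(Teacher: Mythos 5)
Your proposal is correct and follows essentially the same route the paper takes: identify the two resolutions $\Gamma_0^L,\Gamma_1^L$ for $m=1$ with the graphs $\Gamma_0,\Gamma_1$ of Figure~\ref{explicit-differential-1-n-crossings--res-fig} (after discarding color-$0$ edges), then observe that both the differential $d_{j,k}$ of Corollary~\ref{left-right-naturally-homotopic-2} and the chi-morphisms of Corollary~\ref{chi-maps-def} are homotopically non-trivial homogeneous morphisms of $\zed_2$-degree $0$ and quantum degree $1$, which Proposition~\ref{general-general-chi-maps-HMF} and Corollary~\ref{left-right-naturally-homotopic-2} assert are unique up to homotopy and scaling. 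Your extra detail on the degenerate graph identification and the matching of parameters in Proposition~\ref{general-general-chi-maps} is a welcome explicit check that the paper leaves implicit.
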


\begin{remark}\label{generalizing-KR-homology}
Corollary \ref{explicit-differential-1-n-crossings--res} shows that, for $c_{1,n}^{\pm}$ and $c_{m,1}^{\pm}$, the chain complexes defined in Definition \ref{complex-colored-crossing-def} specialize to the corresponding chain complexes defined in \cite{Yonezawa3}. In particular, for $c_{1,1}^\pm$, the chain complexes defined in Definition \ref{complex-colored-crossing-def} specialize to the corresponding complexes in \cite{KR1}. So our construction is a generalization of the $\mathfrak{sl}(N)$ Khovanov-Rozansky homology.
\end{remark}

\subsection{The graded Euler characteristic and the $\zed_2$-grading} We now prove Theorem \ref{euler-char-main}. First, we introduce the colored rotation number of a closed trivalent MOY graph.

\begin{figure}[ht]
$
\xymatrix{
\input{tri-vertex-s-split} & \text{or} & \input{tri-vertex-m-split}
} 
$
\caption{}\label{tri-vertex-split} 

\end{figure}

Let $\Gamma$ be a closed trivalent MOY graph. Replace each edge of $\Gamma$ of color $m$ by $m$ parallel edges colored by $1$ and replace each vertex of $\Gamma$, as depicted in Figure \ref{tri-vertex}, to the corresponding configuration in Figure \ref{tri-vertex-split}, in which each strand is an edge colored by $1$. This changes $\Gamma$ into a collection of disjoint embedded circles in the plane. 

\begin{definition}
The colored rotation number $\mathrm{cr}(\Gamma)$ of $\Gamma$ is defined to be the sum of the usual rotation numbers of these circles. (See equation \eqref{eq-def-rot-usual}.)
\end{definition}

Recall that the homology $H(\Gamma)$ of a MOY graph $\Gamma$ is defined in Definition \ref{homology-MOY-def}, and the graded dimension $\gdim(C(\Gamma))$ is defined to be
\[
\gdim(C(\Gamma)) = \sum_{\ve, i} \tau^\ve q^i H^{\ve,i}(\Gamma) \in \C[\tau,q]/(\tau^2),
\]
where $H^{\ve,i}(\Gamma)$ is the subspace of $H(\Gamma)$ of homogeneous elements of $\zed_2$-degree $\ve$ and quantum degree $i$. 

Theorem \ref{euler-char-main} follows from the next lemma.

\begin{lemma}\label{MOY-gdim-rt}
Let $\Gamma$ be a closed trivalent MOY graph. Then
\begin{enumerate}
	\item $\gdim(C(\Gamma))|_{\tau=1} = \left\langle \Gamma \right\rangle_N$,
	\item $H^{\ve,i}(\Gamma)=0$ if $\ve-\mathrm{cr}(\Gamma)=1$.
\end{enumerate}
\end{lemma}

\begin{proof}
Recall that by Theorem \ref{MOY-poly-skein-unique}, the $\mathfrak{sl}(N)$ MOY polynomial $\left\langle \Gamma \right\rangle_N$ is uniquely determined by the equations in Theorem \ref{MOY-poly-skein}. But these equations have been categorified in Corollaries \ref{contract-expand}, \ref{circle-dimension} and Theorems \ref{decomp-II}, \ref{decomp-I}, \ref{decomp-III}, \ref{decomp-IV}, \ref{decomp-V}. Thus, $\gdim(C(\Gamma))|_{\tau=1}$ satisfies all the equations in Theorem \ref{MOY-poly-skein}. So $\gdim(C(\Gamma))|_{\tau=1} = \left\langle \Gamma \right\rangle_N$ by Theorem \ref{MOY-poly-skein-unique}. 

Part (2) of the lemma can be proved by a double induction on the highest color of edges of $\Gamma$ and on the number of edges of $\Gamma$ with the highest color. The argument is extremely similar to that in the proof of Theorem \ref{MOY-poly-skein-unique}. We leave the details to the reader.
\end{proof}

\begin{proof}[Proof of Theorem \ref{euler-char-main}]
First, by comparing Definitions \ref{MOY-poly-def} and \ref{complex-colored-crossing-def}, we can see that the equation $\mathrm{P}_L (1, q, -1) = \mathrm{RT}_L(q)$ follows easily from part (1) of Lemma \ref{MOY-gdim-rt}. 

Next, we consider the $\zed_2$-grading. Let $D$ be a diagram of $L$ and $\Gamma$ be any complete resolution of $D$. Note that the number $\mathrm{cr}(\Gamma)$ does not depend on the choice of $\Gamma$. We define $\mathrm{cr}(D)=\mathrm{cr}(\Gamma)$. At each crossing $c$ of $D$, define an adjustment term $\mathsf{a}(c)$ by 
\[
\mathsf{a}\left(\right) = \mathsf{a}\left(\right) =
\begin{cases}
m & \text{if } m=n,\\
0 & \text{if } m \neq n,
\end{cases}
\]
Define $\hat{\tc}(D) = \mathrm{cr}(D) + \sum_c \mathsf{a}(c)$, where $c$ runs through all crossings of $D$. Then, by Definition \ref{complex-colored-crossing-def} and part (2) of Lemma \ref{MOY-gdim-rt}, 
\[
H^{\ve,i,j}(L) =0 \text{ if } \ve-\hat{\tc}(D) = 1 \in \zed_2,
\] 
Note that the parity of $\hat{\tc}(D)$ is invariant under Reidemeister moves and unknotting\footnote{``Unknotting" means switching the top- and bottom- strands at a crossing.}. Using these moves, we can change $D$ into a link diagram $U$ without crossings. That is, a collection of disjoint colored circles. It is clear that $\hat{\tc}(U) = \tc(L)$. So, as elements of $\zed_2$, $\hat{\tc}(D) = \hat{\tc}(U) = \tc(L)$. This completes the proof.
\end{proof}

\section{Invariance under Fork Sliding}\label{sec-inv-fork}

In this section, we prove the invariance of the homotopy type of the unnormalized chain complex associated to a knotted MOY graph under fork sliding. This is the most complex part of the proof of the invariance of the colored $\mathfrak{sl}(N)$ link homology. Once we have the invariance under fork sliding, the invariance of the colored $\mathfrak{sl}(N)$ link homology reduces to an easy induction based on the highest color of the link. Theorem \ref{fork-sliding-invariance-general} below is the main result of this section.

\begin{figure}[ht]
$
\xymatrix{
\input{fork-sliding-general-10} & \input{fork-sliding-general-11} && \input{fork-sliding-general-12} & \input{fork-sliding-general-13} \\
\input{fork-sliding-general-20} & \input{fork-sliding-general-21} && \input{fork-sliding-general-22} & \input{fork-sliding-general-23} \\
\input{fork-sliding-general-30} & \input{fork-sliding-general-31} && \input{fork-sliding-general-32} & \input{fork-sliding-general-33} \\
\input{fork-sliding-general-40} & \input{fork-sliding-general-41} && \input{fork-sliding-general-42} & \input{fork-sliding-general-43}
}
$
\caption{}\label{fork-sliding-invariance-general-fig}

\end{figure}

\begin{theorem}\label{fork-sliding-invariance-general}
Let $D_{i,j}^\pm$ be the knotted MOY graphs in Figure \ref{fork-sliding-invariance-general-fig}. Then $\hat{C}(D_{i,0}^+) \simeq \hat{C}(D_{i,1}^+)$ and $\hat{C}(D_{i,0}^-) \simeq \hat{C}(D_{i,1}^-)$. That is, $\hat{C}(D_{i,0}^+)$ (resp. $\hat{C}(D_{i,0}^-)$) is isomorphic in $\hch(\hmf)$ to $\hat{C}(D_{i,1}^+)$ (resp. $\hat{C}(D_{i,1}^-)$).
\end{theorem}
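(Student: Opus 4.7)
\medskip

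\noindent\textbf{Proof proposal.} The plan is to treat each diagram $D_{i,j}^{\pm}$ as a chain complex of matrix factorizations of MOY graphs obtained by fully resolving its crossings via Definition \ref{complex-colored-crossing-def}, and to compare the two sides by means of Decomposition (V) together with the null-homotopic complexes of Proposition \ref{trivial-complex-prop}. By the obvious horizontal/vertical symmetries and orientation reversals (which are compatible with Corollary \ref{complex-colored-crossing-def-l-r-res-changeable}), it suffices to handle one representative case, say $\hat{C}(D_{1,0}^+) \simeq \hat{C}(D_{1,1}^+)$; the other seven equivalences will follow by the same argument with indices and arrows relabelled.

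First I would resolve all crossings. In $D_{1,0}^+$ a single crossing of colors $m+l$ and $n$ is resolved into MOY graphs $\Gamma_k$ indexed by $\max\{m+l-n,0\}\le k\le m+l$, each containing a horizontal rung of color $k$ together with the trivalent vertex that splits the $(m+l)$-strand into an $m$-strand and an $l$-strand. In $D_{1,1}^+$ two crossings are resolved independently, producing a total complex indexed by pairs $(j_1,j_2)$ with $\max\{m-n,0\}\le j_1\le m$ and $\max\{l-n,0\}\le j_2\le l$, where the $m$-strand carries a rung of color $j_1$ and the $l$-strand a rung of color $j_2$. Applying Decomposition (V) (Theorem \ref{decomp-V}) to the rung of color $k$ that meets the trivalent vertex in $\Gamma_k$, one obtains
\[
C(\Gamma_k) \simeq \bigoplus_{j_1+j_2 = k} C(\Gamma_{j_1,j_2})\{[\text{some quantum binomial}]\},
\]
where $\Gamma_{j_1,j_2}$ is exactly the MOY graph appearing in the $(j_1,j_2)$-summand of the resolution of $D_{1,1}^+$. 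This decomposition matches the terms of the two total complexes, term by term (with the correct grading shifts), up to extra summands of the form $C(\Gamma_{j_1,j_2})$ with $j_1+j_2\ne k$, which need to be shown to cancel in the homotopy category.

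The next step is to identify the differentials. By Theorem \ref{complex-colored-crossing-well-defined} and Corollary \ref{left-right-naturally-homotopic-2}, the differentials between resolutions of a single crossing are, up to homotopy and scaling, the unique homogeneous chain maps of quantum degree $1$. The explicit construction in Theorem \ref{explicit-differential-general} shows that the chain map induced between two resolutions differing by $k\leadsto k\pm1$ can be realized as a composition involving $\chi^0,\chi^1$ and the edge splitting/merging morphisms $\phi,\overline\phi$. Transporting these formulae across the Decomposition (V) isomorphisms above, the ``diagonal'' (matching) components of the differential in $\hat{C}(D_{1,0}^+)$ correspond to the differentials of $\hat{C}(D_{1,1}^+)$, while the ``off-diagonal'' components, combined with the extra summands produced by Decomposition (V), assemble into exactly the null-homotopic complexes described in Proposition \ref{trivial-complex-prop}: each such block is a chain of $C(\Gamma_k)$'s connected by the morphisms $\delta_k^{\pm}$, beginning and ending at an inclusion or projection into/out of $C(\Gamma_j')$. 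Because these blocks are contractible in $\hch(\hmf_{R,w})$, they can be split off as direct summands, leaving precisely $\hat{C}(D_{1,1}^+)$.

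The hard part will be the bookkeeping in this last step: one must verify that after the Decomposition (V) splitting, the off-diagonal differential components coincide (up to homotopy and non-zero scalar) with the morphisms $J_{\bullet,\bullet}$, $P_{\bullet,\bullet}$ and $\delta_k^{\pm}$ of Proposition \ref{trivial-complex-prop}, so that the null-homotopy of those blocks is applicable. For this I would rely on the uniqueness statements in Lemma \ref{colored-crossing-res-HMF} and Lemma \ref{trivial-complex-lemma-3}: the relevant $\Hom_{\hmf}$-spaces are one-dimensional in the pertinent quantum degrees, so any homogeneous non-zero morphism of the right bidegree is forced, up to scalar, to be the one we want. The quantum-grading computations underlying Decomposition (V) and Lemma \ref{trivial-complex-lemma-1} will show that all morphisms we construct land in these one-dimensional components, and Proposition \ref{yonezawa-lemma-hmf} (Yonezawa's lemma) will be used whenever we need to cancel a $\qb{m+l}{k}$-indexed factor to conclude the final homotopy equivalence $\hat{C}(D_{1,0}^+)\simeq \hat{C}(D_{1,1}^+)$. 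Once this representative case is done, all the remaining cases $D_{i,0}^{\pm}\simeq D_{i,1}^{\pm}$ follow either directly from the same argument (with $\chi^0$ and $\chi^1$ interchanged for negative crossings, and with the roles of left/right and up/down resolutions interchanged for the other three fork configurations) or by taking bullet-duals using Lemma \ref{bullet}.
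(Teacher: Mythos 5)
Your proposal correctly identifies the main tools---Decomposition (V), the null-homotopic complexes of Proposition \ref{trivial-complex-prop}, the one-dimensionality of the relevant $\Hom_{\hmf}$ spaces, and Yonezawa's lemma (Proposition \ref{yonezawa-lemma-hmf})---but the structure of your argument has a genuine gap, and it differs substantively from the paper's.

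The claimed decomposition $C(\Gamma_k)\simeq\bigoplus_{j_1+j_2=k}C(\Gamma_{j_1,j_2})\{\cdot\}$ is not what Theorem \ref{decomp-V} gives. Decomposition (V) decomposes $C(\Gamma_k^1)$ into a direct sum over a \emph{single} index $j$, and each summand $C(\Gamma_j^2)$ is an MOY graph in which the pair of rungs is slid past the fork \emph{as a unit}; each $\Gamma_j^2$ still carries a single pair of rungs. By contrast, each resolution $\Gamma_{j_1,j_2}$ of $D_{1,1}^+$ carries \emph{two} pairs of rungs, one from each crossing, so it is not of the form $\Gamma_j^2$. Your proposed term-by-term matching therefore does not follow from Theorem \ref{decomp-V}. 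Relatedly, the null-homotopic complexes of Proposition \ref{trivial-complex-prop} are built for the $l=1$ geometry of Figure \ref{decomp-V-special-2-figure}; invoking them for general $l$ requires a generalization you do not supply.

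The paper avoids both problems by structuring the argument as an induction on $l$. It first establishes the theorem only when $l=1$ (resp.\ $m=1$)---Proposition \ref{fork-sliding-invariance-special}---where the $l$-colored crossing has only two resolutions and $\hat{C}(D_{1,1}^\pm)$ is a two-column total complex. In that special case, Decomposition (IV) (not V) is applied to each $C(\Gamma_{k,0})$ to split off a $C(\widetilde{\Gamma}_k)$ summand plus a $C(\Gamma_k')\{[m-k]\}$ summand; Decomposition (II) handles $C(\Gamma_{k,1})$; and repeated Gaussian elimination cancels the extraneous summands, with Decomposition (V) and Proposition \ref{trivial-complex-prop} appearing only at a later cleanup stage of that special-case argument. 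The general case then follows by a short induction on $l$: tensoring with a bubble and using bouquet moves and Decomposition (II) produces an overcounting factor of $[k+1]$, after which the $l=1$ case and the inductive $l=k$ case are composed and Yonezawa's lemma removes the factor. You omit this induction entirely and instead aim for a direct general-$l$ argument; as sketched, that route does not go through.
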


We prove Theorem \ref{fork-sliding-invariance-general} by induction. The hardest part of the proof is to show that Theorem \ref{fork-sliding-invariance-general} is true for certain special cases in which either $m=1$ or $l=1$. Once these special cases are proved, the rest of the induction is quite easy. Next, we state these special cases of Theorem \ref{fork-sliding-invariance-general} separately as Proposition \ref{fork-sliding-invariance-special} and then use this proposition to prove Theorem \ref{fork-sliding-invariance-general}. After that, we devote the rest of this section to proving Proposition \ref{fork-sliding-invariance-special}.

\begin{proposition}\label{fork-sliding-invariance-special}
Let $D_{i,j}^\pm$ be the knotted MOY graphs in Figure \ref{fork-sliding-invariance-general-fig}. 
\begin{enumerate}[(i)]
	\item If $l=1$, then $\hat{C}(D_{i,0}^+) \simeq \hat{C}(D_{i,1}^+)$ and $\hat{C}(D_{i,0}^-) \simeq \hat{C}(D_{i,1}^-)$ for $i=1,4$.
	\item If $m=1$, then $\hat{C}(D_{i,0}^+) \simeq \hat{C}(D_{i,1}^+)$ and $\hat{C}(D_{i,0}^-) \simeq \hat{C}(D_{i,1}^-)$ for $i=2,3$.
\end{enumerate}
\end{proposition}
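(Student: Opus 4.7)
The plan is to prove Proposition \ref{fork-sliding-invariance-special} by resolving the colored crossing in each of $D_{i,0}^\pm$ and $D_{i,1}^\pm$ using Definition \ref{complex-colored-crossing-def}, thereby expanding $\hat{C}(D_{i,j}^\pm)$ as a chain complex of matrix factorizations of (embedded) MOY graphs, and then identifying the two expanded chain complexes up to null-homotopic summands. I focus on the case $l=1$, $i=1$, with a positive crossing; the other subcases in (i) are parallel, and the cases in (ii) are obtained by swapping the roles of the two branches of the fork and applying the same argument.

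In $D_{1,0}^+$, the crossing is an $(n,m+1)$-crossing below the fork, so $\hat{C}(D_{1,0}^+)$ has terms indexed by $k$ with $\max\{m+1-n,0\}\leq k\leq m+1$; in each term, after sliding the fork past the $k$-th resolution $\Gamma_k^R$ via the bouquet moves of Corollary \ref{contract-expand} and Remark \ref{bouquet-move-remark}, one obtains an MOY graph containing a square identical in shape to those in Lemma \ref{decomp-V-special-2}, and that lemma decomposes each such matrix factorization into a direct sum of two MOY graphs of the type appearing in $\hat{C}(D_{1,1}^+)$. In $D_{1,1}^+$, the crossing is an $(n,m)$-crossing above the fork, over the $l=1$ branch, so $\hat{C}(D_{1,1}^+)$ is a chain complex of similar MOY graphs indexed by $k'$ with $\max\{m-n,0\}\leq k'\leq m$. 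Matching the summands from the two expansions term by term pairs all but a family of MOY graphs that assembles, level by level, into the null-homotopic chain complex of Proposition \ref{trivial-complex-prop}.

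To verify that the differentials agree under this identification, I would use Theorem \ref{explicit-differential-general} together with Corollary \ref{explicit-differential-1-n-crossings--res}: on both sides the differentials are realized as compositions of edge splittings and mergings, bouquet-move homotopy equivalences, and the $\chi^0, \chi^1$ morphisms from Corollary \ref{general-chi-maps-def}. These morphisms are $\Sym$-linear and commute with the bouquet moves used to rearrange the graphs, and by Lemma \ref{colored-crossing-res-HMF} together with Lemma \ref{trivial-complex-lemma-3} the relevant $\Hom_{\hmf}$ spaces between paired summands are at most one-dimensional in the correct quantum grading. Consequently the differentials on matched summands coincide up to a nonzero scalar, the summands corresponding to the null-homotopic complex of Proposition \ref{trivial-complex-prop} drop out, and after absorbing the scalars into the chosen isomorphisms one obtains $\hat{C}(D_{1,0}^+)\simeq \hat{C}(D_{1,1}^+)$ in $\hch(\hmf)$.

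The principal obstacle will be bookkeeping: aligning the quantum- and homological-grading shifts of the two expansions and verifying that the extra summands produced by Lemma \ref{decomp-V-special-2} at each level splice together via the canonical maps $\delta_k^{\pm}$ of Definition \ref{trivial-complex-differential-def} into precisely the shape of Proposition \ref{trivial-complex-prop}. The uniqueness results do the heavy lifting for each individual differential, but they apply only once the gradings are matched, so a careful initial choice of grading shifts in the resolution expansions, and a careful identification of the fork edge, will be needed. The negative-crossing cases $D_{i,0}^-, D_{i,1}^-$ follow from the same argument after reversing the homological direction, and the $m=1$ cases in (ii) follow by reflecting the diagrams left-to-right.
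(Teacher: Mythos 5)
You have the right broad strategy---expand both sides, decompose the matrix factorizations, match summands, show the excess assembles into the null-homotopic complex of Proposition \ref{trivial-complex-prop}, and pin down the differentials using one-dimensionality of the relevant $\Hom_{\hmf}$ spaces. That is the skeleton of the paper's proof. But the specific step where you propose to apply Lemma \ref{decomp-V-special-2} to the resolutions $\widetilde{\Gamma}_k$ of $D_{1,0}^+$ after bouquet moves is wrong on two counts. First, the square of Lemma \ref{decomp-V-special-2} carries an $(n+1)$-colored edge on its right side; the resolutions of $D_{1,0}^+$ never contain an $(n+1)$-colored edge because $D_{1,0}^+$ has only the single $(m+1,n)$-crossing. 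Edges of color $n+1$ arise only from resolving the $(1,n)$-crossing of $D_{1,1}^+$. Second, the decomposition has to run in the opposite direction: the terms $C(\Gamma_{k,0})$ of $\hat{C}(D_{1,1}^+)$ are the larger graphs that decompose---via Decomposition (IV), not (V)---as $C(\widetilde{\Gamma}_k)\oplus C(\Gamma_k')\{[m-k]\}$; see \eqref{fork-special-decomp-iv-eq} and Corollary \ref{fork-special-decomp-iv-corollary}. So the reduction goes from $\hat{C}(D_{1,1}^+)$ down to $\hat{C}(D_{1,0}^+)$, whereas you propose to split the pieces of $\hat{C}(D_{1,0}^+)$ into pieces of $\hat{C}(D_{1,1}^+)$, which cannot work since the latter are bigger.

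There is also a structural gap: ``matching summands and peeling off the null-homotopic complement'' requires decomposing the chain complex, not merely the chain groups, and the paper's tool for this is Gaussian Elimination (Lemma \ref{gaussian-elimination}), applied in two rounds. The first round cancels the $C(\Gamma_k')\{[m-k]\}$ pieces coming from Decomposition (IV) against the Decomposition (II) pieces of $C(\Gamma_{k,1})$; the second round---after further splitting $C(\Gamma_k')\simeq C(\Gamma_k'')\oplus C(\Gamma_{k-1}'')$ via Lemma \ref{decomp-V-special-2}---cancels all the $\Gamma_{k-1}''$ terms, leaving exactly $\hat{C}(D_{1,0}^+)$. The null-homotopic complex of Proposition \ref{trivial-complex-prop} is not a direct summand of $\hat{C}(D_{1,1}^+)$ sitting cleanly alongside $\hat{C}(D_{1,0}^+)$; it has to be peeled off iteratively, and your sketch never invokes the elimination step that does this. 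Finally, the $\Hom_{\hmf}$ one-dimensionality you cite determines each differential only up to scale \emph{and only if it is nonzero}; establishing that the relevant matrix entries are homotopically nontrivial requires the explicit composition computations of Lemmas \ref{chi-commute-chi-chi}, \ref{varphis-commute}, \ref{relating-C-D-11-lemma}, and \ref{relating-D-0-1-lemma}, which your proposal treats as automatic but which constitute most of the actual work.
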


\begin{proof}[Proof of Theorem \ref{fork-sliding-invariance-general} (assuming Proposition \ref{fork-sliding-invariance-special} is true)]
Each homotopy equivalence in Theorem \ref{fork-sliding-invariance-general} can be proved by an induction on $m$ or $l$. We only give details for the proof of 
\begin{equation}\label{fork-sliding-invariance-general-induction-1-+}
\hat{C}(D_{1,0}^+) \simeq \hat{C}(D_{1,1}^+).
\end{equation} 
The proof of the rest of Theorem \ref{fork-sliding-invariance-general} is very similar and left to the reader.

We prove \eqref{fork-sliding-invariance-general-induction-1-+} by an induction on $l$. The $l=1$ case is covered by Part (i) of Proposition \ref{fork-sliding-invariance-special}. Assume that \eqref{fork-sliding-invariance-general-induction-1-+} is true for some $l=k\geq 1$. Consider $l=k+1$. 

\begin{figure}[ht]
$
\xymatrix{
\input{fork-sliding-induction-10} \ar@<10ex>[rr]^{h}_{\cong} && \input{fork-sliding-induction-morph1} \ar@<10ex>[rr]^{\alpha}_{\simeq} && \input{fork-sliding-induction-morph2} \ar@<-10ex>[lllld]_{\beta}^{\simeq} \\
\input{fork-sliding-induction-morph3} \ar@<10ex>[rr]^{\xi}_{\simeq} && \input{fork-sliding-induction-morph4} \ar@<10ex>[rr]^{\overline{h}}_{\cong} && \input{fork-sliding-induction-11}
}
$
\caption{}\label{fork-sliding-invariance-induction-fig2}

\end{figure}

Let $\widetilde{D}_{10}^+$ and $\widetilde{D}_{11}^+$ be the first and last knotted MOY graphs in Figure \ref{fork-sliding-invariance-induction-fig2}. By Decomposition (II) (Theorem \ref{decomp-II}), we have  $\hat{C}(\widetilde{D}_{10}^+) \cong \hat{C}(D_{10}^+)\{[k+1]\}$ and $\hat{C}(\widetilde{D}_{11}^+) \cong \hat{C}(D_{11}^+)\{[k+1]\}$ in $\ch(\hmf)$. Consider the diagram in Figure \ref{fork-sliding-invariance-induction-fig2}, where
\begin{itemize}
	\item $h$ and $\overline{h}$ are the isomorphisms in $\ch(\hmf)$ induced by the apparent bouquet moves,
	\item $\alpha$ is the isomorphism in $\hch(\hmf)$ given by induction hypothesis,
	\item $\beta$ is the isomorphism in $\hch(\hmf)$ given by Part (i) of Proposition \ref{fork-sliding-invariance-special},
	\item $\xi$ is again the isomorphism in $\hch(\hmf)$ given by Part (i) of Proposition \ref{fork-sliding-invariance-special}.
\end{itemize}
Altogether, we have
\[
\hat{C}(D_{10}^+)\{[k+1]\} \cong \hat{C}(\widetilde{D}_{10}^+) \simeq \hat{C}(\widetilde{D}_{11}^+) \cong \hat{C}(D_{11}^+)\{[k+1]\}.
\]
So, by Proposition \ref{yonezawa-lemma-hmf}, $\hat{C}(D_{10}^+) \simeq \hat{C}(D_{11}^+)$ when $l=k+1$.
\end{proof}

In the remainder of this section, we concentrate on proving Proposition \ref{fork-sliding-invariance-special}. We only give the detailed proofs of $\hat{C}(D_{1,0}^\pm) \simeq \hat{C}(D_{1,1}^\pm)$ when $l=1$. The proof of the rest of Proposition \ref{fork-sliding-invariance-special} is very similar and left to the reader.

\subsection{Notations used in the proof}\label{fork-sliding-complexes-involved} In the rest of this section, we fix $l=1$. Then $D_{10}^\pm$ and $D_{11}^\pm$ are the knotted MOY graphs in Figure \ref{fork-sliding-invariance-special-fig}. Keep in mind that we are trying to prove 
\begin{equation}\label{fork-sliding-invariance-special-eq}
\hat{C}(D_{10}^\pm) \simeq \hat{C}(D_{11}^\pm) \text{ if } l=1.
\end{equation} 
Several chain complexes appear in the proof of \eqref{fork-sliding-invariance-special-eq}. We list these chain complexes in this subsection. In particular, we give names to the MOY graphs and morphisms of matrix factorizations appearing in these chain complexes. These names will be used throughout the rest of this section.

\begin{figure}[ht]
$
\xymatrix{
\input{fork-sliding-special-10} & \input{fork-sliding-special-11} && \input{fork-sliding-special-12} & \input{fork-sliding-special-13} 
}
$
\caption{}\label{fork-sliding-invariance-special-fig}

\end{figure}

Note that there is only one crossing in $D_{10}^\pm$, which is of the type $c_{m+1,n}^\pm$. We denote by $\tilde{d}_k^\pm$ the differential map of $\hat{C}(c_{m+1,n}^\pm)$. 

\begin{figure}[ht]
$
\xymatrix{
\input{tilde-gamma-k} 
}
$
\caption{}\label{fork-sliding-special-complex1-fig}

\end{figure}

Denote by $\widetilde{\Gamma}_k$ the MOY graph in Figure \ref{fork-sliding-special-complex1-fig}. Then $\hat{C}(D_{10}^+)$ is 
\begin{equation}\label{complex-D-10-+}
0 \rightarrow C(\widetilde{\Gamma}_{m+1}) \xrightarrow{\tilde{d}_{m+1}^+} C(\widetilde{\Gamma}_{m})\{q^{-1}\} \xrightarrow{\tilde{d}_{m}^+} \cdots \xrightarrow{\tilde{d}_{\tilde{k}_0+1}^+} C(\widetilde{\Gamma}_{\tilde{k}_0})\{q^{\tilde{k}_0-m-1}\} \rightarrow 0,
\end{equation}
where $\tilde{k}_0 := \max\{0,m+1-n\}$. Similarly, $\hat{C}(D_{10}^-)$ is 
\begin{equation}\label{complex-D-10--}
0 \rightarrow C(\widetilde{\Gamma}_{\tilde{k}_0})\{q^{m+1-\tilde{k}_0}\} \xrightarrow{\tilde{d}_{\tilde{k}_0}^-} \cdots \xrightarrow{\tilde{d}_{m-1}^-} C(\widetilde{\Gamma}_{m})\{q\} \xrightarrow{\tilde{d}_{m}^-} C(\widetilde{\Gamma}_{m+1}) \rightarrow 0.
\end{equation}

\begin{figure}[ht]
$
\xymatrix{
\input{gamma-k-prime} &&& \input{gamma-k-double-prime} 
}
$
\caption{}\label{fork-sliding-special-complex2-fig}

\end{figure}

Let $\Gamma_k'$ and $\Gamma_k''$ be the MOY graphs in Figure \ref{fork-sliding-special-complex2-fig}. Let $\delta_k^\pm: C(\Gamma_k') \rightarrow C(\Gamma_{k\mp 1}')$ be the morphisms defined in Definition \ref{trivial-complex-differential-def} with explicit form given in Theorem \ref{explicit-differential-general}. Let $C^+$ be the chain complex
\begin{equation}\label{complex-D-contractible-+}
0 \rightarrow C(\Gamma_{m-1}'') \xrightarrow{J_{m-1,m-1}} C(\Gamma_{m-1}') \xrightarrow{\delta_{m-1}^+} \cdots \xrightarrow{\delta_{k_0+1}^+} C(\Gamma_{k_0}') \rightarrow 0,
\end{equation}
and $C^-$ the chain complex 
\begin{equation}\label{complex-D-contractible--}
0 \rightarrow C(\Gamma_{k_0}') \xrightarrow{\delta_{k_0}^-} \cdots \xrightarrow{\delta_{m-2}^-} C(\Gamma_{m-1}') \xrightarrow{P_{m-1,m-1}}  C(\Gamma_{m-1}'') \rightarrow 0,
\end{equation}
where $k_0 = \max \{m-n,0\}$ and $J_{m-1,m-1}$, $P_{m-1,m-1}$ are defined in Definition \ref{trivial-complex-differential-def}. Then, by Lemma \ref{decomp-V-special-2} and Proposition \ref{trivial-complex-prop}, both $C^+$ and $C^-$ are isomorphic in $\ch(\hmf)$ to
\[
\bigoplus_{j=k_0}^{m-1} (\xymatrix{
0 \ar[r] & C(\Gamma_j'') \ar[r]^{\simeq} & C(\Gamma_j'') \ar[r] & 0
}),
\]
which means they are homotopic to $0$.

\begin{figure}[ht]
$
\xymatrix{
\input{gamma-k0} && \input{gamma-k1} 
}
$
\caption{}\label{fork-sliding-special-complex3-fig}

\end{figure}

Now consider $\hat{C}(D_{1,1}^\pm)$. Note that $D_{1,1}^\pm$ has two crossings -- one $c_{m,n}^\pm$ and one $c_{1,n}^\pm$. Denote by $d_k^\pm$ the differential map of the $c_{m,n}^\pm$ crossing. From Corollary \ref{explicit-differential-1-n-crossings--res}, the differential map $c_{1,n}^+$ (resp. $c_{1,n}^-$) is $\chi^1$ (resp. $\chi^0$.) Let $\Gamma_{k,0}$ and $\Gamma_{k,1}$ be the MOY graphs in Figure \ref{fork-sliding-special-complex3-fig}. Then $d_k^\pm$ acts on the left square in $\Gamma_{k,0}$ and $\Gamma_{k,1}$, and $\chi^0$, $\chi^1$ act on the upper right corners of $\Gamma_{k,0}$ and $\Gamma_{k,1}$. The chain complex $\hat{C}(D_{1,1}^+)$ is 
{\tiny
\begin{equation}\label{complex-D-11-+}
0 \rightarrow C(\Gamma_{m,1}) \xrightarrow{\mathfrak{d}_m^+} \left.%
\begin{array}{c}
C(\Gamma_{m,0}) \{q^{-1}\}\\
\oplus \\
C(\Gamma_{m-1,1})\{q^{-1}\}
\end{array}%
\right. 
\xrightarrow{\mathfrak{d}_{m-1}^+} \cdots \xrightarrow{\mathfrak{d}_{k+1}^+} \left.%
\begin{array}{c}
C(\Gamma_{k+1,0}) \{q^{k-m}\}\\
\oplus \\
C(\Gamma_{k,1})\{q^{k-m}\}
\end{array}%
\right. 
\xrightarrow{\mathfrak{d}_{k}^+} \cdots\xrightarrow{\mathfrak{d}_{k_0}^+} C(\Gamma_{k_0,0}) \{q^{k_0-1-m}\} \rightarrow 0,
\end{equation}
}
where $k_0 = \max \{m-n,0\}$ as above and
\begin{eqnarray*}
\mathfrak{d}_m^+ & = & \left(%
\begin{array}{c}
\chi^1\\
-d_m^+
\end{array}%
\right),\\
\mathfrak{d}_k^+ & = & \left(%
\begin{array}{cc}
d_{k+1}^+ & \chi^1\\
0 & -d_k^+
\end{array}%
\right) ~\text{ for } k_0<k<m, \\
\mathfrak{d}_{k_0}^+ & = & \left(%
\begin{array}{cc}
d_{k_0 +1}^+, & \chi^1\\
\end{array}%
\right).
\end{eqnarray*}
Similarly, The chain complex $\hat{C}(D_{1,1}^-)$ is 
{\tiny
\begin{equation}\label{complex-D-11--}
0 \rightarrow C(\Gamma_{k_0,0}) \{q^{m+1-k_0}\} \xrightarrow{\mathfrak{d}_{k_0}^-} \cdots \xrightarrow{\mathfrak{d}_{k-1}^-} \left.%
\begin{array}{c}
C(\Gamma_{k,0}) \{q^{m+1-k}\}\\
\oplus \\
C(\Gamma_{k-1,1})\{q^{m+1-k}\}
\end{array}%
\right. 
\xrightarrow{\mathfrak{d}_{k}^-} \cdots  \xrightarrow{\mathfrak{d}_{m-1}^-} \left.%
\begin{array}{c}
C(\Gamma_{m,0}) \{q\}\\
\oplus \\
C(\Gamma_{m-1,1})\{q\}
\end{array}%
\right. 
\xrightarrow{\mathfrak{d}_m^-} C(\Gamma_{m,1}) \rightarrow 0,
\end{equation}
}
where $k_0 = \max \{m-n,0\}$ as above and
\begin{eqnarray*}
\mathfrak{d}_{k_0}^- & = & \left(%
\begin{array}{c}
d_{k_0}^-\\ 
\chi^0
\end{array}%
\right),\\
\mathfrak{d}_k^- & = & \left(%
\begin{array}{cc}
d_{k}^- & 0\\
\chi^0 & -d_{k-1}^-
\end{array}%
\right) ~\text{ for } k_0<k<m, \\
\mathfrak{d}_{m}^- & = & \left(%
\begin{array}{cc}
\chi^0, & -d_{m-1}^-\\
\end{array}%
\right).
\end{eqnarray*}

Next, we study relations between the chain complexes $\hat{C}(D_{1,1}^\pm)$, $\hat{C}(D_{1,0}^\pm)$ and $C^\pm$.

\subsection{Commutativity lemmas} To prove \eqref{fork-sliding-invariance-special-eq}, we will frequently use the fact that certain morphisms of matrix factorizations of MOY graphs commute with each other. We establish two basic commutativity lemmas in this subsection.

\begin{figure}[ht]
$
\xymatrix{
\input{chi-commute-chi-chi-fig2} \ar@<12ex>[rrrr]^{\chi^1_\triangle} \ar@<1ex>[d]^<<<<<<{h_1} &&&& \input{chi-commute-chi-chi-fig0} \ar@<-10ex>[llll]^{\chi^0_\triangle} \ar@<1ex>[d]^<<<<<<{\overline{h}_2} \\
\input{chi-commute-chi-chi-fig1} \ar@<1ex>[u]^>>>>>>{\overline{h}_1} \ar@<10ex>[rr]^{\chi^1_\Box} && \input{chi-commute-chi-chi-fig3} \ar@<-8ex>[ll]^{\chi^0_\Box} \ar@<10ex>[rr]^{\chi^1_\dag} &&\input{chi-commute-chi-chi-fig4} \ar@<-8ex>[ll]^{\chi^0_\dag} \ar@<1ex>[u]^>>>>>>{h_2}
}
$
\caption{}\label{chi-commute-chi-chi-figure}

\end{figure}

\begin{lemma}\label{chi-commute-chi-chi}
Consider the diagram in Figure \ref{chi-commute-chi-chi-figure}, where the morphisms are induced by the apparent local changes of MOY graphs. Then $\chi^1_\triangle \approx h_2 \circ \chi^1_\dag \circ \chi^1_\Box \circ h_1$ and $\chi^0_\triangle \approx  \overline{h}_1 \circ \chi^0_\Box \circ \chi^0_\dag \circ \overline{h}_2$. That is, up to homotopy and scaling, the diagram in Figure \ref{chi-commute-chi-chi-figure} commutes in both directions.
\end{lemma}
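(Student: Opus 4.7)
The plan is to show that both sides of the asserted identity represent, up to scaling, the unique nontrivial homotopy class in a one-dimensional subspace of $\Hom_\hmf(C(\Gamma_1),C(\Gamma_0))$ (respectively, of $\Hom_\hmf(C(\Gamma_0),C(\Gamma_1))$). Once this is established, the assertions $\chi^1_\triangle \approx h_2\circ\chi^1_\dag\circ\chi^1_\Box\circ h_1$ and $\chi^0_\triangle \approx \overline{h}_1\circ\chi^0_\Box\circ\chi^0_\dag\circ\overline{h}_2$ follow. I will only write out the $\chi^1$ case; the $\chi^0$ case is completely analogous.

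First I would verify that the two sides of the $\chi^1$-identity carry the same grading data. By Proposition \ref{general-general-chi-maps}, each individual $\chi^1$ in the diagram has $\zed_2$-degree $0$ and a prescribed quantum degree computable from the colors entering the local crossing; the bouquet moves $h_1,h_2$ preserve both gradings (Lemma \ref{bouquet-move-lemma} and Remark \ref{bouquet-move-remark}). A direct color bookkeeping shows that both $\chi^1_\triangle$ and $h_2\circ\chi^1_\dag\circ\chi^1_\Box\circ h_1$ are homogeneous morphisms $C(\Gamma_1)\to C(\Gamma_0)$ of $\zed_2$-degree $0$ and quantum degree $n$.

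Next I would compute $\Hom_\HMF(C(\Gamma_1),C(\Gamma_0))$ and show that its subspace of homogeneous elements of quantum degree $n$ is one-dimensional. The idea is to reduce via Lemma \ref{complex-computing-gamma-HMF-lemma}-style manipulations: by Corollary \ref{contract-expand} and Decomposition (II) (Theorem \ref{decomp-II}) I would expand $C(\Gamma_1)\otimes_{\hat R} C(\overline{\Gamma_0})$ into a graded-shifted direct sum of matrix factorizations of closed MOY graphs, and then use Corollary \ref{circle-dimension} to compute each graded rank. The outcome should be
\[
\Hom_\HMF(C(\Gamma_1),C(\Gamma_0)) \;\cong\; C(\emptyset)\{P(q,\tau)\}
\]
for some explicit Laurent polynomial $P(q,\tau)$ whose lowest non-vanishing quantum grading equals $n$ and whose $q^n$-coefficient is $1$ and sits in $\zed_2$-grading $0$. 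This is the key step and is the main computational obstacle; however, it is purely an application of Decompositions (I)--(II) and Proposition \ref{circle-dimension}, parallel to the computation carried out in the proof of Proposition \ref{general-general-chi-maps-HMF}.

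Finally I would argue that neither side of the claimed identity is null-homotopic. For $\chi^1_\triangle$ this is built into Proposition \ref{general-general-chi-maps}/Proposition \ref{general-general-chi-maps-HMF}. For the composition $h_2\circ\chi^1_\dag\circ\chi^1_\Box\circ h_1$, the bouquet morphisms $h_1,h_2$ are homotopy equivalences, so it suffices to show that $\chi^1_\dag\circ\chi^1_\Box$ is not null-homotopic. For this I would post-compose with $\chi^0_\Box\circ\chi^0_\dag$ and apply the composition formula of Proposition \ref{general-general-chi-maps}(ii) twice: each application produces multiplication by a sum of Schur polynomials of the form $\sum_{\lambda} (-1)^{|\lambda|} S_{\lambda'}(\cdot)\,S_{\lambda^c}(\cdot)$ in the alphabets attached to the $\Box$ and $\dag$ branches. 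Composing further on the outside with appropriate edge-merge morphisms $\overline{\phi}$ and using Lemma \ref{phibar-compose-phi}, one isolates a single term and reduces the total composition to $\id$ up to a nonzero scalar; this forces $\chi^1_\dag\circ\chi^1_\Box$ to be non-null-homotopic. Combined with the one-dimensionality established above, this yields $\chi^1_\triangle \approx h_2\circ\chi^1_\dag\circ\chi^1_\Box\circ h_1$, and the symmetric argument in the opposite direction gives the $\chi^0$ identity.
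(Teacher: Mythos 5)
Your overall strategy is the same as the paper's: identify a one-dimensional $\Hom_\hmf$-slot, then show both sides land there and are non-null-homotopic. However, there are two genuine problems with the proposal as written.

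First, the quantum degree bookkeeping is wrong. Matching the local picture in Figure \ref{chi-commute-chi-chi-figure} to Proposition \ref{general-general-chi-maps}, one finds $m_{\mathrm{Prop}}=m+1$, $n_{\mathrm{Prop}}=n$, $l_{\mathrm{Prop}}=1$, so $\chi^1_\triangle$ has quantum degree $m_{\mathrm{Prop}}\cdot l_{\mathrm{Prop}}=m+1$, not $n$ as you claim. Accordingly, the lowest non-vanishing quantum grading of $\Hom_\HMF(C(\Gamma_1),C(\Gamma_0))$ comes out to $m+1$ (the graded rank is $[m+1]\qb{m+n}{m+1}[m+n+1]\qb{N}{m+n+1}q^{(m+n+1)(N-m-n-1)+2m+2n+mn}$), and your whole plan hinges on this number matching the degree of the morphisms, so the error is load-bearing rather than cosmetic.

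Second, and more seriously, you claim that non-null-homotopicity of $\chi^1_\triangle$ is ``built into'' Proposition \ref{general-general-chi-maps-HMF}. That proposition only shows that $\chi^1$ is non-null-homotopic as a morphism between the \emph{local} matrix factorizations of the small $\Gamma_0,\Gamma_1$ pictured in Figure \ref{general-general-chi-maps-figure}. The morphism $\chi^1_\triangle$ in the present lemma is $\chi^1\otimes\id$ applied inside a larger MOY graph, and tensoring a non-null-homotopic morphism with an identity can very well produce a null-homotopic one. So non-triviality must be re-established in the larger context. The paper does this by constructing an explicit ``sandwich'' $\overline{\phi}_1\circ\mathfrak{m}(u)\circ\overline{\phi}_2\circ\mathfrak{m}(v)\circ\overline{h}_3\circ\overline{\phi}_3\circ\mathfrak{m}(w)\circ\chi^1_\triangle\circ\chi^0_\triangle\circ\phi_3\circ h_3\circ\phi_2\circ\phi_1\approx\id$, and then observing — via the composition formula in Corollary \ref{general-chi-maps-def} — that the composition $g\circ\overline{g}$ of the other two sides produces exactly the same multiplication operator, so the same sandwich handles all four morphisms at once. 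Your plan to directly show $\chi^1_\dag\circ\chi^1_\Box$ is non-null via two applications of the composition formula plus $\overline{\phi}$-insertions is workable in principle but would require the same level of explicit verification as the $\chi^1_\triangle$ case; you cannot skip that case.
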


\begin{figure}[ht]
$
\xymatrix{
\input{chi-commute-chi-chi-fig5} & \input{chi-commute-chi-chi-fig6}
}
$
\caption{}\label{chi-commute-chi-chi-figure1}

\end{figure}

\begin{proof}
Denote by $\bigcirc_{m+n+1}$ an oriented circle colored by $m+n+1$, and by $\Gamma$, $\Gamma'$ and MOY graphs in Figure \ref{chi-commute-chi-chi-figure1}. Let $\overline{\Gamma}$ be $\Gamma$ with its orientation reversed. Then, by Corollary \ref{contract-expand}, Theorem \ref{decomp-II} and Corollary \ref{circle-dimension}, 
\begin{eqnarray*}
&& \Hom_\HMF (C(\Gamma_1), C(\Gamma_0)) \cong H(\Gamma) \left\langle m+n+1 \right\rangle\{q^{(m+n+1)(N-m-n-1)+2m+2n+mn}\} \\
& \cong & H(\Gamma') \left\langle m+n+1 \right\rangle\{q^{(m+n+1)(N-m-n-1)+2m+2n+mn}\} \\
& \cong & H(\bigcirc_{m+n+1}) \left\langle m+n+1 \right\rangle\{[m+1]\qb{m+n}{m+1} [m+n+1]  q^{(m+n+1)(N-m-n-1)+2m+2n+mn}\} \\
& \cong & C(\emptyset) \{[m+1]\qb{m+n}{m+1} [m+n+1] \qb{N}{m+n+1} q^{(m+n+1)(N-m-n-1)+2m+2n+mn}\}.
\end{eqnarray*}
Similarly,
\begin{eqnarray*}
&& \Hom_\HMF (C(\Gamma_0), C(\Gamma_1)) \cong H(\overline{\Gamma}) \left\langle m+n+1 \right\rangle\{q^{(m+n+1)(N-m-n-1)+2m+2n+mn}\} \\
& \cong & C(\emptyset) \{[m+1]\qb{m+n}{m+1} [m+n+1] \qb{N}{m+n+1} q^{(m+n+1)(N-m-n-1)+2m+2n+mn}\}.
\end{eqnarray*}
So $\Hom_\HMF (C(\Gamma_1), C(\Gamma_0))$ and $\Hom_\HMF (C(\Gamma_0), C(\Gamma_1))$ are supported on $\zed_2$-degree $0$, have lowest non-vanishing quantum grading $m+1$. And the subspaces of homogeneous elements of quantum degree $m+1$ of these spaces are $1$-dimensional.

\begin{figure}[ht]
$
\xymatrix@R=4pc{
\input{v-vector-n+m+1} \ar@<10ex>[r]^{\phi_1} & \input{v-vector-n+m+1-bubble} \ar@<-8ex>[l]^{\overline{\phi}_1} \ar@<10ex>[r]^{\phi_2}& \input{v-vector-n+m+1-bubble-in-bubble-left}\ar@<-8ex>[l]^{\overline{\phi}_2} \ar@<-10ex>[lld]_{h_3}\\
\input{v-vector-n+m+1-theta-shape} \ar@<9ex>[rru]_{\overline{h}_3} \ar@<10ex>[r]^{\phi_3} & \input{v-vector-n+m+1-bubble-in-theta} \ar@<-8ex>[l]^{\overline{\phi}_3} \ar@<10ex>[r]^{\chi^0_\triangle,~\overline{g}} & \input{v-vector-n+m+1-bubble-in-bubbles} \ar@<-8ex>[l]^{\chi^1_\triangle,~g}
}
$
\caption{}\label{chi-commute-chi-chi-figure2}

\end{figure}

Let $g=h_2 \circ \chi^1_\dag \circ \chi^1_\Box \circ h_1$ and $\overline{g} = \overline{h}_1 \circ \chi^0_\Box \circ \chi^0_\dag \circ \overline{h}_2$. Note that $\chi^1_\triangle, ~\chi^0_\triangle, ~g$ and $\overline{g}$ are all homogeneous of quantum degree $m+1$. To show that $\chi^1_\triangle \approx g$ and $\chi^0_\triangle \approx \overline{g}$, we only need to show that none of these morphisms are null-homotopic. For this purpose, consider the diagram in Figure \ref{chi-commute-chi-chi-figure2}, where $\phi_i$, $\overline{\phi}_i$, $h_3$ and $\overline{h}_3$ are induced by the apparent local changes of MOY graphs. 

Let $u=(-r)^{m+n}$, $v=S_{\lambda_{m+1,n-2}}(-\mathbb{Y})$ and $w=X_m$. Here $X_j$ is the $j$-th elementary symmetric polynomial in $\mathbb{X}$. Then, by Corollary \ref{general-chi-maps-def} and Lemma \ref{phibar-compose-phi},
\begin{eqnarray*}
&& \overline{\phi}_1 \circ \mathfrak{m}(u) \circ \overline{\phi}_2 \circ \mathfrak{m}(v) \circ \overline{h}_3 \circ \overline{\phi}_3 \circ \mathfrak{m}(w) \circ \chi^1_\triangle \circ \chi^0_\triangle \circ \phi_3 \circ h_3 \circ \phi_2 \circ \phi_1 \\
& \approx & \overline{\phi}_1 \circ \mathfrak{m}(u) \circ \overline{\phi}_2 \circ \mathfrak{m}(v) \circ \overline{h}_3 \circ \overline{\phi}_3 \circ \mathfrak{m}(w\sum_{k=0}^{m+1}(-r)^k A_{m+1-k}) \circ \phi_3 \circ h_3 \circ \phi_2 \circ \phi_1 \\
& \approx & \overline{\phi}_1 \circ \mathfrak{m}(u) \circ \overline{\phi}_2 \circ \mathfrak{m}(v) \circ \mathfrak{m}(\sum_{k=0}^{m+1}(-r)^k A_{m+1-k})  \circ \overline{h}_3 \circ \overline{\phi}_3 \circ \mathfrak{m}(w) \circ \phi_3 \circ h_3 \circ \phi_2 \circ \phi_1 \\
& \approx & \overline{\phi}_1 \circ \mathfrak{m}(u) \circ \overline{\phi}_2 \circ \mathfrak{m}(v\sum_{k=0}^{m+1}(-r)^k A_{m+1-k})  \circ \phi_2 \circ \phi_1 \\
& \approx & \overline{\phi}_1 \circ \mathfrak{m}(u) \circ (\mathfrak{m}(\sum_{k=0}^{m+1}(-r)^k) \circ
\overline{\phi}_2 \circ \mathfrak{m}(v  A_{m+1-k})  \circ \phi_2) \circ \phi_1 \\
& \approx & \overline{\phi}_1 \circ \mathfrak{m}(u) \circ \phi_1 \approx \id_{C(\uparrow_{m+n+1})},
\end{eqnarray*}
where $A_j$ is the $j$-th elementary symmetric polynomial in $\mathbb{A}$. This shows that $\chi^1_\triangle$ and $\chi^0_\triangle$ are both homotopically non-trivial.

Note that, by Corollary \ref{general-chi-maps-def},
\begin{eqnarray*}
g \circ \overline{g} & = & h_2 \circ \chi^1_\dag \circ \chi^1_\Box \circ h_1 \circ \overline{h}_1 \circ \chi^0_\Box \circ \chi^0_\dag \circ \overline{h}_2 \\
& \approx & h_2 \circ \chi^1_\dag \circ \chi^1_\Box \circ \chi^0_\Box \circ \chi^0_\dag \circ \overline{h}_2 \\
& \approx & h_2 \circ \mathfrak{m}((s-r)\sum_{k=0}^m (-r)^k X_{m-k}) \circ \overline{h}_2 \\
& \approx & h_2 \circ \mathfrak{m}(\sum_{k=0}^{m+1} (-r)^k (X_{m+1-k}+ sX_{m-k})) \circ \overline{h}_2 \\
& \approx & \mathfrak{m}(\sum_{k=0}^{m+1} (-r)^k A_{m+1-k}) \approx \chi^1_\triangle \circ \chi^0_\triangle.
\end{eqnarray*}
So, the above argument also implies that
\[
\overline{\phi}_1 \circ \mathfrak{m}(u) \circ \overline{\phi}_2 \circ \mathfrak{m}(v) \circ \overline{h}_3 \circ \overline{\phi}_3 \circ \mathfrak{m}(w) \circ g \circ \overline{g} \circ \phi_3 \circ h_3 \circ \phi_2 \circ \phi_1 \approx \id_{C(\uparrow_{m+n+1})}.
\]
This shows that $g$ and $\overline{g}$ are both homotopically non-trivial.
\end{proof}

Before stating the second commutativity lemma, we introduce a shorthand notation, which will be used throughout the rest of this section.

\begin{figure}[ht]
$
\xymatrix{
\input{varphi-def-figure1} \ar@<10ex>[rr]^{\varphi} \ar@<1ex>[rd]^<<<<<<<<<<{\phi}&& \input{varphi-def-figure2} \ar@<-8ex>[ll]^{\overline{\varphi}} \ar@<1ex>[ld]^<<<<<<<<<<{\overline{h}} \\
& \input{varphi-def-figure3} \ar@<1ex>[lu]^>>>>>>>>>>{\overline{\phi}} \ar@<1ex>[ru]^>>>>>>>>>>{h} &
}
$
\caption{}\label{varphi-def-figure}

\end{figure}

\begin{definition}\label{varphi-def}
Consider the morphisms in Figure \ref{varphi-def-figure}, where $\phi$ and $\overline{\phi}$ are the morphisms induced by the apparent edge splitting and merging, $h$ and $\overline{h}$ are induced by the apparent bouquet moves. Define $\varphi:= h \circ \phi$ and $\overline{\varphi}:= \overline{\phi} \circ \overline{h}$.
\end{definition}

By Corollary \ref{contract-expand}, Lemmas \ref{edge-splitting-lemma} and \ref{phibar-compose-phi}, it is easy to check that, up to homotopy and scaling, $\varphi$ and $\overline{\varphi}$ are the unique homotopically non-trivial homogeneous morphisms between $C(\Gamma)$ and $C(\widetilde{\Gamma})$ of $\zed_2$-degree $0$ and quantum degree $-mn$. Moreover, they satisfy that, for $\lambda,\mu \in \Lambda_{m,n}=\{(\lambda_1\geq\cdots\geq\lambda_m)~|~ \lambda_1\leq n\}$,
\begin{equation}\label{varphi-compose}
\overline{\varphi} \circ \mathfrak{m}(S_{\lambda}(\mathbb{X})\cdot S_{\mu}(-\mathbb{Y})) \circ \varphi \approx \left\{%
\begin{array}{ll}
    \id_{C(\Gamma)} & \text{if } \lambda_i + \mu_{m+1-i} = n ~\forall i=1,\dots,m, \\ 
    0 & \text{otherwise.}
\end{array}%
\right. 
\end{equation}

\begin{lemma}\label{varphis-commute}
Consider the diagram in Figure \ref{varphis-commute-figure}, where $\varphi_i$ and $\overline{\varphi}_i$ are the morphisms defined in Definition \ref{varphi-def} associated to the apparent local changes of the MOY graphs. Then $\varphi_2\circ\varphi_1 \approx \varphi_4\circ\varphi_3$ and $\overline{\varphi}_1 \circ \overline{\varphi}_2 \approx \overline{\varphi}_3 \circ \overline{\varphi}_4$. That is, the diagram in Figure \ref{varphis-commute-figure} commutes up to homotopy and scaling in both directions.
\end{lemma}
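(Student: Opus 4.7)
The plan is to mirror the strategy used in the proof of Lemma~\ref{chi-commute-chi-chi}: compute the relevant $\Hom_{HMF}$ space between the two MOY graphs at diagonally opposite corners of the commutative square, show that its subspace of homogeneous elements of a specific (and lowest non-vanishing) quantum degree is $1$-dimensional, and then verify that each of the two compositions $\varphi_2\circ\varphi_1$ and $\varphi_4\circ\varphi_3$ (resp.\ $\overline{\varphi}_1\circ\overline{\varphi}_2$ and $\overline{\varphi}_3\circ\overline{\varphi}_4$) is a homogeneous morphism of that quantum degree and $\zed_2$-degree $0$ that is \emph{not} null-homotopic. Once both compositions land in a one-dimensional subspace and are shown non-trivial, they must be proportional, which gives the desired homotopy.

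First I would read off from the figure the source $\Gamma_1$ and target $\Gamma_2$ of both composite morphisms. Since each $\varphi_i$ is a splitting morphism of quantum degree $-mn$ (or the appropriate $-m'n'$ for its local colors) as recorded in Definition~\ref{varphi-def}, both composite morphisms are automatically homogeneous of the same $\zed_2$-degree $0$ and the same quantum degree. Next I would compute $\Hom_{HMF}(C(\Gamma_1),C(\Gamma_2))$ using the standard toolkit: Corollary~\ref{contract-expand} to contract auxiliary edges, Decomposition~(II) (Theorem~\ref{decomp-II}) to split merged edges into tensor factors, Corollary~\ref{circle-dimension} to evaluate the remaining closed colored circle, and lemmas \ref{bullet}, \ref{row-reverse-signs}, \ref{column-reverse-signs} to identify $\Hom$-spaces with $H$ of a suitable closed MOY graph. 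The outcome will be an isomorphism
\[
\Hom_{HMF}(C(\Gamma_1),C(\Gamma_2)) \cong C(\emptyset)\{F(q)\}
\]
for an explicit $F(q)\in\zed_{\geq 0}[q,q^{-1}]$ whose lowest exponent matches the common quantum degree of the two composite morphisms, with coefficient $1$ at that lowest power. The same computation for $\Hom_{HMF}(C(\Gamma_2),C(\Gamma_1))$ handles the reverse direction.

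To establish that each composition is homotopically non-trivial, I would use the composition formula \eqref{varphi-compose}. Precompose $\varphi_2\circ\varphi_1$ with a suitable $\overline{\varphi}_2\circ\overline{\varphi}_1$ sandwiching a multiplication by $S_\lambda(\mathbb{X})\cdot S_\mu(-\mathbb{Y})$ for complementary partitions $\lambda$, $\mu\in\Lambda_{m,n}$, and likewise for the other alphabet pair. Since the two splittings in $\varphi_2\circ\varphi_1$ act on disjoint tensor factors (once the marked points are chosen to separate them), the multiplication morphisms commute with one of the two splittings, and each application of \eqref{varphi-compose} reduces the sandwich to an identity on one factor. After both reductions one obtains $\id_{C(\Gamma_1)}$, establishing non-triviality of $\varphi_2\circ\varphi_1$, and the same argument applies to $\varphi_4\circ\varphi_3$, $\overline{\varphi}_1\circ\overline{\varphi}_2$, and $\overline{\varphi}_3\circ\overline{\varphi}_4$.

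The main obstacle will be bookkeeping rather than conceptual: keeping track of exactly which edge-splittings and bouquet moves are packaged inside each $\varphi_i$, verifying that the marked points can be chosen so that the two splittings in a composition act on independent factors (so that their tensor structure commutes with the Schur-polynomial multiplications on the appropriate alphabets), and confirming that the complementary partitions needed to trigger \eqref{varphi-compose} on the inner pair are still complementary after the outer splitting is performed. Once the disjointness is verified, the independence of the two splittings makes both the direct commutativity and the composition computation routine, exactly as in Lemma~\ref{chi-commute-chi-chi}.
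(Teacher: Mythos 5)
Your proposal is correct and follows essentially the same route the paper takes: compute $\Hom_\HMF(C(\Gamma_0),C(\Gamma_1))$ via Corollary~\ref{contract-expand}, Decomposition~(II), and the circle evaluation, observe that it is concentrated in $\zed_2$-degree $0$ with a one-dimensional bottom piece in quantum degree $-mn-ml-nl$, and then show each of the four composites is nonzero by sandwiching Schur multiplications and applying~\eqref{varphi-compose} twice. One minor imprecision in your write-up: the two splittings in $\varphi_2\circ\varphi_1$ are nested rather than acting on disjoint tensor factors; what actually needs to (and does) commute is the Schur multiplication $\mathfrak{m}(S_{\lambda_{l,m+n}}(\mathbb{Y}))$ with $\varphi_2$, because $\mathbb{Y}$ marks the $l$-colored edge which $\varphi_2$ does not touch — the identity at the end should also be $\id_{C(\Gamma_0)}$, not $\id_{C(\Gamma_1)}$.
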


\begin{figure}[ht]
$
\xymatrix{
\input{varphis-commute-figure1} \ar@<10ex>[rr]^{\varphi_1} \ar@<1ex>[d]^<<<<<{\varphi_3} && \input{varphis-commute-figure3} \ar@<-8ex>[ll]^{\overline{\varphi}_1} \ar@<1ex>[d]^<<<<<{\varphi_2} \\
\input{varphis-commute-figure2} \ar@<1ex>[u]^>>>>>{\overline{\varphi}_3} \ar@<10ex>[rr]^{\varphi_4} && \input{varphis-commute-figure4} \ar@<1ex>[u]^>>>>>{\overline{\varphi}_2}  \ar@<-8ex>[ll]^{\overline{\varphi}_4}
}
$
\caption{}\label{varphis-commute-figure}

\end{figure}

\begin{proof}
By Corollary \ref{contract-expand} and Decomposition (II) (Theorem \ref{decomp-II}), we have 
\[
C(\Gamma_1) \simeq C(\Gamma_0) \{\qb{m+n+l}{l}\qb{m+n}{n}\}.
\]
So
\begin{eqnarray*}
&& \Hom_\HMF (C(\Gamma_1), C(\Gamma_0)) \cong \Hom_\HMF (C(\Gamma_0), C(\Gamma_1)) \\ 
& \cong & \Hom_\HMF (C(\Gamma_0), C(\Gamma_0))\{\qb{m+n+l}{l}\qb{m+n}{n}\}.
\end{eqnarray*}

\begin{figure}[ht]
$
\xymatrix{
\input{varphis-commute-proof-figure1} 
}
$
\caption{}\label{varphis-commute-figure2}

\end{figure}

Let $\Gamma$ be the MOY graphs in Figure \ref{varphis-commute-figure2}. Then
\begin{eqnarray*}
&& \Hom_\HMF (C(\Gamma_0), C(\Gamma_0)) \\
& \cong & H(\Gamma) \left\langle m+n+l+j+k \right\rangle \{q^{(m+n+l+j+k)(N-m-n-l)-j^2-k^2}\} \\
& \cong & C(\emptyset) \{\qb{N-m-n-l}{j}\qb{N-m-n-l}{k}\qb{N}{m+n+l}q^{(m+n+l+j+k)(N-m-n-l)-j^2-k^2}\}.
\end{eqnarray*}
So, 

{\tiny
\begin{eqnarray*}
&& \Hom_\HMF (C(\Gamma_1), C(\Gamma_0)) \cong \Hom_\HMF (C(\Gamma_0), C(\Gamma_1)) \\ 
& \cong & C(\emptyset) \{\qb{N-m-n-l}{j}\qb{N-m-n-l}{k}\qb{N}{m+n+l} \qb{m+n+l}{l}\qb{m+n}{n}q^{(m+n+l+j+k)(N-m-n-l)-j^2-k^2}\}.
\end{eqnarray*}
}

\noindent Thus, $\Hom_\HMF (C(\Gamma_1), C(\Gamma_0))$ and $\Hom_\HMF (C(\Gamma_0), C(\Gamma_1))$ are supported on $\zed_2$-degree $0$ and have lowest non-vanishing quantum grading $-mn-ml-nl$. And the subspaces of $\Hom_\HMF (C(\Gamma_1), C(\Gamma_0))$ and $\Hom_\HMF (C(\Gamma_0), C(\Gamma_1))$ of homogeneous elements of quantum grading $-mn-ml-nl$ are $1$-dimensional.

Note that $\varphi_2\circ\varphi_1$, $\varphi_4\circ\varphi_3$, $\overline{\varphi}_1 \circ \overline{\varphi}_2$ and $\overline{\varphi}_3 \circ \overline{\varphi}_4$ are all homogeneous of quantum degree $-mn-ml-nl$. So, to prove that $\varphi_2\circ\varphi_1 \approx \varphi_4\circ\varphi_3$ and $\overline{\varphi}_1 \circ \overline{\varphi}_2 \approx \overline{\varphi}_3 \circ \overline{\varphi}_4$, we only need to show that $\varphi_2\circ\varphi_1$, $\varphi_4\circ\varphi_3$, $\overline{\varphi}_1 \circ \overline{\varphi}_2$ and $\overline{\varphi}_3 \circ \overline{\varphi}_4$ are homotopically non-trivial. For this purpose, consider equation \eqref{varphi-compose} above. We get 
\begin{eqnarray*}
& & \overline{\varphi}_1 \circ \overline{\varphi}_2 \circ \mathfrak{m}(S_{\lambda_{n,m}}(\mathbb{X}) \cdot S_{\lambda_{l,m+n}}(\mathbb{Y})) \circ \varphi_2\circ\varphi_1 \\
& \simeq & \overline{\varphi}_1 \circ \overline{\varphi}_2 \circ \mathfrak{m}(S_{\lambda_{n,m}}(\mathbb{X})) \circ \varphi_2 \circ \mathfrak{m}(S_{\lambda_{l,m+n}}(\mathbb{Y})) \circ \varphi_1 \\
& \approx & \id_{C(\Gamma_0)}.
\end{eqnarray*}
This shows that $\varphi_2\circ\varphi_1$ and $\overline{\varphi}_1 \circ \overline{\varphi}_2$ are homotopically non-trivial. Similarly,
\begin{eqnarray*}
& & \overline{\varphi}_3 \circ \overline{\varphi}_4 \circ \mathfrak{m}(S_{\lambda_{n,l}}(\mathbb{X}) \cdot S_{\lambda_{m,n+l}}(\mathbb{W})) \circ \varphi_4\circ\varphi_3 \\
& \simeq & \overline{\varphi}_3 \circ \overline{\varphi}_4 \circ \mathfrak{m}(S_{\lambda_{n,l}}(\mathbb{X})) \circ \varphi_4 \circ \mathfrak{m}(S_{\lambda_{m,n+l}}(\mathbb{W})) \circ \varphi_3 \\
& \approx & \id_{C(\Gamma_0)}.
\end{eqnarray*}
This shows that $\varphi_4\circ\varphi_3$ and $\overline{\varphi}_3 \circ \overline{\varphi}_4$ are homotopically non-trivial.
\end{proof}

\subsection{Another look at Decomposition (IV)} Decomposition (IV) (Theorem \ref{decomp-IV}) plays an important role in relating $\hat{C}(D_{1,1}^\pm)$ to $\hat{C}(D_{1,0}^\pm)$ and $C^\pm$. In this subsection, we review a special case of Decomposition (IV), including the construction of all the morphisms involved.

\begin{figure}[ht]
$
\xymatrix{
\input{fork-special-decomp-iv-1}  & \input{fork-special-decomp-iv-2} & \input{fork-special-decomp-iv-3}
}
$
\caption{}\label{fork-special-decomp-iv-fig}

\end{figure}

Consider the MOY graphs in Figure \ref{fork-special-decomp-iv-fig}. By Decomposition (IV) (Theorem \ref{decomp-IV}), we have
\begin{equation}\label{fork-special-decomp-iv}
C(\Gamma) \simeq C(\Gamma') \oplus C(\Gamma'')\{[m-k]\}.
\end{equation}

\begin{figure}[ht]
$
\xymatrix{
\input{fork-special-decomp-iv-2} \ar@<1ex>[d]^<<<<<<{\phi_1} \ar@<10ex>[rr]^{f} && \input{fork-special-decomp-iv-1} \ar@<-8ex>[ll]^{g} \ar@<1ex>[d]^<<<<<<{\chi^1} \\
\input{fork-special-decomp-iv-4} \ar@<1ex>[u]^>>>>>>{\overline{\phi}_1} \ar@<10ex>[rr]^{h_1} && \input{fork-special-decomp-iv-5} \ar@<-8ex>[ll]^{\overline{h}_1} \ar@<1ex>[u]^>>>>>>{\chi^0}
}
$
\caption{}\label{fork-special-decomp-iv-fig2}

\end{figure}

By the construction in Subsection \ref{decomp-IV-subsection-Gamma-Gamma0}, especially Lemma \ref{decomp-IV-f-g-lambda-compose}, we know that the inclusion and projection morphisms of the component $C(\Gamma')$ in decomposition \eqref{fork-special-decomp-iv} are given by the compositions in Figure \ref{fork-special-decomp-iv-fig2}. That is, if
\begin{eqnarray*}
f & = & \chi^0 \circ h_1 \circ \phi_1, \\
g & = & \overline{\phi}_1 \circ \overline{h}_1 \circ \chi^1,
\end{eqnarray*}
where the morphisms on the right hand side are induced by the apparent local changes of MOY graphs, then $f$ and $g$ are homogeneous morphisms preserving the $\zed_2\oplus\zed$-grading and, after possibly a scaling, $g\circ f \simeq \id_{C(\Gamma')}$.

\begin{figure}[ht]
$
\xymatrix{
\input{fork-special-decomp-iv-3}  \ar@<1ex>[d]^<<<<<<{\phi_2} \ar@<10ex>[rr]^{\alpha} && \input{fork-special-decomp-iv-1} \ar@<-8ex>[ll]^{\beta} \ar@<1ex>[d]^<<<<<<{\chi^0} \\
\input{fork-special-decomp-iv-6} \ar@<1ex>[u]^>>>>>>{\overline{\phi}_2} \ar@<10ex>[rr]^{h_2}&& \input{fork-special-decomp-iv-7} \ar@<-8ex>[ll]^{\overline{h}_2} \ar@<1ex>[u]^>>>>>>{\chi^1}
}
$
\caption{}\label{fork-special-decomp-iv-fig3}

\end{figure}

Similarly, consider the diagram in Figure \ref{fork-special-decomp-iv-fig3}, where 
\begin{eqnarray*}
\alpha & = & \chi^1 \circ h_2 \circ \phi_2, \\
\beta & = & \overline{\phi}_2 \circ \overline{h}_2 \circ \chi^0,
\end{eqnarray*}
and the morphisms on the right hand side are induced by the apparent local changes of MOY graphs. Recall that, from Subsection \ref{decomp-IV-subsection-Gamma-Gamma1}, especially the proof of Lemma \ref{decomp-IV-Alpha-Beta}, if we define
\begin{eqnarray*}
\vec{\alpha} & = & \sum_{j=0}^{m-k-1} \mathfrak{m}(r^j) \circ\alpha = (\alpha, ~\mathfrak{m}(r) \circ\alpha,~\dots, ~\mathfrak{m}(r^{m-k-1}) \circ\alpha),  \\
\vec{\beta} & = & \bigoplus_{j=0}^{m-k-1} \beta\circ\mathfrak{m}((-1)^{m-k-1-j}A_{m-k-1-j}) = \left(%
\begin{array}{c}
\beta\circ\mathfrak{m}((-1)^{m-k-1}A_{m-k-1})\\
\cdots \\
\beta\circ\mathfrak{m}(-A_1)\\
\beta
\end{array}%
\right),
\end{eqnarray*}
where $A_j$ is the $j$-th elementary symmetric polynomial in $\mathbb{A}$, then there is a homogeneous morphism $\tau:C(\Gamma'')\{[m-k]\} \rightarrow C(\Gamma'')\{[m-k]\}$ preserving the $\zed_2\oplus\zed$-grading such that $\tau \circ \vec{\beta}\circ \vec{\alpha} \simeq \vec{\beta}\circ \vec{\alpha} \circ \tau \simeq \id_{C(\Gamma'')\{[m-k]\}}$.

Now consider the morphisms
\begin{equation}\label{fork-special-decomp-iv-lemma-1}
\xymatrix{
C(\Gamma) \ar@<1ex>[rrr]^{\left(%
\begin{array}{c}
g\\
\tau\circ\vec{\beta}
\end{array}%
\right)} &&& {\left.%
\begin{array}{c}
C(\Gamma') \\
\oplus \\
C(\Gamma'')\{[m-k]\}\end{array}%
\right.}
\ar@<1ex>[lll]^{\left(%
\begin{array}{cc}
f, & \vec{\alpha}
\end{array}%
\right)}
}
\end{equation}
\begin{equation}\label{fork-special-decomp-iv-lemma-2}
\xymatrix{
C(\Gamma) \ar@<1ex>[rrr]^{\left(%
\begin{array}{c}
g\\
\vec{\beta}
\end{array}%
\right)} &&& {\left.%
\begin{array}{c}
C(\Gamma') \\
\oplus \\
C(\Gamma'')\{[m-k]\}\end{array}%
\right.}
\ar@<1ex>[lll]^{\left(%
\begin{array}{cc}
f, & \vec{\alpha}\circ \tau
\end{array}%
\right)}
}
\end{equation}

\begin{lemma}\label{fork-special-decomp-iv-lemma}
Each of diagrams \eqref{fork-special-decomp-iv-lemma-1} and \eqref{fork-special-decomp-iv-lemma-2} gives a pair of homogeneous homotopy equivalences preserving the $\zed_2\oplus\zed$-grading that are inverses of each other.
\end{lemma}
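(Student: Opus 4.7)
The argument for \eqref{fork-special-decomp-iv-lemma-1} and \eqref{fork-special-decomp-iv-lemma-2} is essentially the same, so I focus on \eqref{fork-special-decomp-iv-lemma-1}. Set $\Phi = (f,\vec{\alpha})$ and $\Psi = (g,\tau\circ\vec{\beta})^{T}$, both viewed as morphisms between $C(\Gamma')\oplus C(\Gamma'')\{[m-k]\}$ and $C(\Gamma)$; both preserve the $\zed_2$-grading and quantum grading by construction. The statement thus becomes $\Psi\circ\Phi\simeq\id$ and $\Phi\circ\Psi\simeq\id$. Expanded as a $2\times 2$ matrix of morphisms, the $(1,1)$-entry of $\Psi\circ\Phi$ is $g\circ f\simeq\id_{C(\Gamma')}$ by the scaling convention of Subsection \ref{decomp-IV-subsection-Gamma-Gamma0}, and the $(2,2)$-entry is $\tau\circ\vec{\beta}\circ\vec{\alpha}\simeq\id$ by the definition of $\tau$ in the proof of Lemma \ref{decomp-IV-Alpha-Beta}. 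The heart of the argument is therefore to show that the off-diagonal entries $g\circ\vec{\alpha}$ and $\tau\circ\vec{\beta}\circ f$ are null-homotopic.

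To prove this vanishing, I expand $[m-k]=\sum_{j=0}^{m-k-1} q^{m-k-1-2j}$, so that each off-diagonal entry decomposes into components which, forgetting the shift, are morphisms $C(\Gamma'')\to C(\Gamma')$ or $C(\Gamma')\to C(\Gamma'')$ of quantum degrees lying in the symmetric range $\{-(m-k-1),\dots,m-k-1\}$. I then propose to compute $\Hom_\HMF(C(\Gamma''),C(\Gamma'))$ and $\Hom_\HMF(C(\Gamma'),C(\Gamma''))$ as graded vector spaces, in the spirit of Lemma \ref{decomp-IV-nilopotency-hmf}: apply Lemma \ref{complex-computing-gamma-HMF-lemma} to reduce the Hom space to the homology of a closed MOY graph, then simplify that graph via Corollary \ref{contract-expand}, Decomposition (II), and Corollary \ref{circle-dimension}. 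The expected outcome is that the lowest non-vanishing quantum grading of each of these $\Hom_\HMF$ spaces strictly exceeds $m-k-1$, which forces every component of $g\circ\vec{\alpha}$ and $\tau\circ\vec{\beta}\circ f$ to be null-homotopic and yields $\Psi\circ\Phi\simeq\id$.

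For the reverse composition $\Phi\circ\Psi$, the Krull-Schmidt property of $\hmf$ takes over. Since $\Psi\circ\Phi\simeq\id$, the endomorphism $\Phi\circ\Psi$ of $C(\Gamma)$ is a homotopy idempotent. By Lemmas \ref{idempotent-lifting} and \ref{fully-additive-hmf} it lifts to a strict idempotent, splitting $C(\Gamma)$ as a direct sum of graded matrix factorizations $eC(\Gamma)\oplus(\id-e)C(\Gamma)$ in which $\Phi$ restricts to a homotopy equivalence $C(\Gamma')\oplus C(\Gamma'')\{[m-k]\}\xrightarrow{\simeq}eC(\Gamma)$ (with inverse $\Psi|_{eC(\Gamma)}$). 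Equation \eqref{fork-special-decomp-iv}, i.e. Decomposition (IV), independently supplies $C(\Gamma)\simeq C(\Gamma')\oplus C(\Gamma'')\{[m-k]\}$, and the cancellation Lemma \ref{KS-oplus-cancel} (valid by Proposition \ref{hmf-is-KS}) forces $(\id-e)C(\Gamma)\simeq 0$, hence $\Phi\circ\Psi\simeq\id$.

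The diagram \eqref{fork-special-decomp-iv-lemma-2} is handled by the symmetric argument: the $(2,2)$-entry of the new $\Psi\circ\Phi$ is $\vec{\beta}\circ\vec{\alpha}\circ\tau\simeq\id$, and the two off-diagonal entries $g\circ\vec{\alpha}\circ\tau$ and $\vec{\beta}\circ f$ are null-homotopic for the same degree reasons as above (the insertion of $\tau$ does not alter the relevant quantum gradings). I expect the main technical obstacle to be the $\Hom_\HMF$ degree calculation in the second paragraph, which will require careful tracking of the graded binomial coefficients produced by the successive applications of Decompositions (I) and (II) to the closed MOY graphs $\Gamma'\cup\overline{\Gamma''}$ and $\Gamma''\cup\overline{\Gamma'}$; the rest of the argument is then formal and short.
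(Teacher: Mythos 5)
Your proposal is correct and follows essentially the same route as the paper: reduce to $\Psi\circ\Phi\simeq\id$, compute the $2\times 2$ matrix of $\Psi\circ\Phi$ with identity on the diagonal (from $g\circ f\simeq\id$ and $\tau\circ\vec{\beta}\circ\vec{\alpha}\simeq\id$), kill the off-diagonal entries by a quantum-degree argument, and pass to $\Phi\circ\Psi\simeq\id$ via the Krull--Schmidt structure. The only cosmetic differences are that the paper obtains the off-diagonal vanishing by citing the already-established Lemma~\ref{decomp-IV-nilopotency-sum-difference} (rather than recomputing $\Hom_\HMF(C(\Gamma''),C(\Gamma'))$ and $\Hom_\HMF(C(\Gamma'),C(\Gamma''))$ from scratch), and it leaves the ``one-sided inverse plus abstract isomorphism implies two-sided inverse'' step implicit, whereas you spell it out via idempotent lifting and cancellation.
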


\begin{proof}
We know that $C(\Gamma) \simeq C(\Gamma') \oplus C(\Gamma'')\{[m-k]\}$. So, to prove the lemma, we only need to show that
\[
\left(%
\begin{array}{c}
g\\
\tau\circ\vec{\beta}
\end{array}%
\right)
\left(%
\begin{array}{cc}
f, & \vec{\alpha}
\end{array}%
\right)
\simeq 
\left(%
\begin{array}{c}
g\\
\vec{\beta}
\end{array}%
\right)
\left(%
\begin{array}{cc}
f, & \vec{\alpha}\circ \tau
\end{array}%
\right)
\simeq \id_{C(\Gamma') \oplus C(\Gamma'')\{[m-k]\}}.
\]

Consider $g\circ \vec{\alpha}$ and $\vec{\beta}\circ f$. By Lemma \ref{decomp-IV-nilopotency-sum-difference}, we know that
\[
\begin{cases}
g\circ\mathfrak{m}(r^j) \circ\alpha \simeq 0 & \text{if } j\leq m-k-1, \\
\beta\circ\mathfrak{m}((-1)^{m-k-1-j}A_{m-k-1-j}) \circ f \simeq 0 & \text{if } j\geq 0.
\end{cases}
\]
This shows that 
\begin{equation}\label{fork-special-decomp-iv-alpha-f}
g\circ \vec{\alpha}\simeq 0 \text{ and } \vec{\beta}\circ f \simeq 0.
\end{equation}
So, 
\begin{eqnarray*}
\left(%
\begin{array}{c}
g\\
\tau\circ\vec{\beta}
\end{array}%
\right)
\left(%
\begin{array}{cc}
f, & \vec{\alpha}
\end{array}%
\right)
& \simeq &
\left(%
\begin{array}{cc}
\id_{C(\Gamma')} & g\circ \vec{\alpha}\\
\tau\circ\vec{\beta}\circ f & \id_{C(\Gamma'')\{[m-k]\}}
\end{array}%
\right) \\
& \simeq &
\left(%
\begin{array}{cc}
\id_{C(\Gamma')} & 0\\
0 & \id_{C(\Gamma'')\{[m-k]\}}
\end{array}%
\right) \\
& = &
\id_{C(\Gamma') \oplus C(\Gamma'')\{[m-k]\}}.
\end{eqnarray*}
Similarly,
\[
\left(%
\begin{array}{c}
g\\
\vec{\beta}
\end{array}%
\right)
\left(%
\begin{array}{cc}
f, & \vec{\alpha}\circ \tau
\end{array}%
\right)
\simeq \id_{C(\Gamma') \oplus C(\Gamma'')\{[m-k]\}}.
\]
\end{proof}

Next, we apply the above discussion to the MOY graphs that appear in the chain complexes in Subsection \ref{fork-sliding-complexes-involved}.

\begin{figure}[ht]
$
\xymatrix{
\input{gamma-k0} & \input{tilde-gamma-k} & \input{gamma-k-prime} \\
\input{gamma-k0-bouquet} & \input{gamma-k3} 
}
$
\caption{}\label{fork-special-decomp-iv-fig4}

\end{figure}

Consider the MOY graphs in Figure \ref{fork-special-decomp-iv-fig4}. By Corollary \ref{contract-expand}, we have $C(\Gamma_{k,0}) \simeq C(\Gamma_{k,2})$ and $C(\widetilde{\Gamma}_k) \simeq C(\Gamma_{k,3})$. By \eqref{fork-special-decomp-iv}, $C(\Gamma_{k,2}) \simeq C(\Gamma_{k,3}) \oplus C(\Gamma_k')\{[m-k]\}$. Altogether, we have
\begin{equation}\label{fork-special-decomp-iv-eq}
C(\Gamma_{k,0}) \simeq C(\widetilde{\Gamma}_k) \oplus C(\Gamma_k')\{[m-k]\}.
\end{equation}

\begin{figure}[ht]
$
\xymatrix@R=4pc{
\input{tilde-gamma-k} \ar@<10ex>[rr]^{f_k} \ar@<1ex>[d]^<<<<<<<<{h} && \input{gamma-k0} \ar@<-8ex>[ll]^{g_k} \ar@<1ex>[d]^<<<<<<<<{\chi^1} \ar@<-5ex>[dll]_{\hat{g}_k} \\
\input{gamma-k3} \ar@<1ex>[u]^>>>>>>>>{\overline{h}} \ar@<10ex>[rr]^{\varphi_1} \ar@<4ex>[urr]_{\hat{f}_k} && \input{gamma-k3-3-squares} \ar@<-8ex>[ll]^{\overline{\varphi}_1} \ar@<1ex>[u]^>>>>>>>>{\chi^0}
}
$
\caption{}\label{fork-special-decomp-iv-fig5}

\end{figure}

In Figure \ref{fork-special-decomp-iv-fig5}, the morphism $f_k$, $g_k$, $\hat{f}_k$ and $\hat{g}_k$ are defined by
\begin{eqnarray*}
f_k & = & \chi^0 \circ \varphi_1 \circ h, \\
g_k & = & \overline{h} \circ \overline{\varphi}_1 \circ \chi^1, \\
\hat{f}_k & = & \chi^0 \circ \varphi_1, \\
\hat{g}_k & = & \overline{\varphi}_1 \circ \chi^1,
\end{eqnarray*}
where the morphisms on the right hand side are induced by the apparent local changes of MOY graphs. Then, after possibly a scaling,
\begin{equation}\label{fork-special-decomp-iv-f-g-k}
g_k \circ f_k \simeq \id_{C(\widetilde{\Gamma}_k)}.
\end{equation}

\begin{figure}[ht]
$
\xymatrix{
\input{gamma-k-prime} \ar@<11ex>[rr]^{\alpha_k} \ar@<1ex>[dr]^>>>>>>>>>>>>>>>{\varphi_2} && \input{gamma-k0-marked} \ar@<-9ex>[ll]^{\beta_k} \ar@<1ex>[dl]^>>>>>>>>>>>>>>>{\chi^0} \\
& \input{gamma-k-prime-2-squares} \ar@<1ex>[ul]^<<<<<<<<<<<<<<<{\overline{\varphi}_2} \ar@<1ex>[ur]^<<<<<<<<<<<<<<<{\chi^1} &
}
$
\caption{}\label{fork-special-decomp-iv-fig6}

\end{figure}

In Figure \ref{fork-special-decomp-iv-fig6}, the morphisms $\alpha_k$ and $\beta_k$ are defined by 
\begin{eqnarray*}
\alpha_k & = & \chi^1 \circ \varphi_2, \\
\beta_k & = & \overline{\varphi}_2 \circ \chi^0,
\end{eqnarray*}
where the morphisms on the right hand side are induced by the apparent local changes of MOY graphs. Define
\begin{eqnarray*}
\vec{\alpha}_k & = & \sum_{j=0}^{m-k-1} \mathfrak{m}(r^j) \circ\alpha_k = (\alpha_k, ~\mathfrak{m}(r) \circ\alpha_k,~\dots, ~\mathfrak{m}(r^{m-k-1}) \circ\alpha_k),  \\
\vec{\beta}_k & = & \bigoplus_{j=0}^{m-k-1} \beta_k \circ\mathfrak{m}((-1)^{m-k-1-j}A_{m-k-1-j}) = \left(%
\begin{array}{c}
\beta_k \circ\mathfrak{m}((-1)^{m-k-1}A_{m-k-1})\\
\cdots \\
\beta_k \circ\mathfrak{m}(-A_1)\\
\beta_k
\end{array}%
\right).
\end{eqnarray*}
Then there is a homogeneous morphism  $\tau_k:C(\Gamma_k')\{[m-k]\} \rightarrow C(\Gamma_k')\{[m-k]\}$ preserving the $\zed_2\oplus\zed$-grading such that
\begin{equation}\label{fork-special-decomp-iv-alpha-beta-k}
\tau_k \circ \vec{\beta}_k \circ \vec{\alpha}_k \simeq \vec{\beta}_k \circ \vec{\alpha}_k \circ \tau_k \simeq \id_{C(\Gamma_k')\{[m-k]\}}.
\end{equation}
We also have
\begin{equation}\label{fork-special-decomp-iv-alpha-f-k}
g_k\circ \vec{\alpha}_k\simeq 0 \text{ and } \vec{\beta}_k\circ f_k \simeq 0.
\end{equation}

From Lemma \ref{fork-special-decomp-iv-lemma}, we get the following corollary.

\begin{corollary}\label{fork-special-decomp-iv-corollary}
\begin{equation}\label{fork-special-decomp-iv-cor-1}
\xymatrix{
C(\Gamma_{k,0}) \ar@<1ex>[rrr]^{\left(%
\begin{array}{c}
g_k\\
\tau_k\circ\vec{\beta}_k
\end{array}%
\right)} &&& {\left.%
\begin{array}{c}
C(\widetilde{\Gamma}_k) \\
\oplus \\
C(\Gamma_k')\{[m-k]\}\end{array}%
\right.}
\ar@<1ex>[lll]^{\left(%
\begin{array}{cc}
f_k, & \vec{\alpha}_k
\end{array}%
\right)}
}
\end{equation}
\begin{equation}\label{fork-special-decomp-iv-cor-2}
\xymatrix{
C(\Gamma_k) \ar@<1ex>[rrr]^{\left(%
\begin{array}{c}
g_k\\
\vec{\beta}_k
\end{array}%
\right)} &&& {\left.%
\begin{array}{c}
C(\widetilde{\Gamma}_k) \\
\oplus \\
C(\Gamma_k')\{[m-k]\}\end{array}%
\right.}
\ar@<1ex>[lll]^{\left(%
\begin{array}{cc}
f_k, & \vec{\alpha}_k\circ \tau_k
\end{array}%
\right)}
}
\end{equation}
are two ways to explicitly write down the inclusion and projection morphisms in decomposition \eqref{fork-special-decomp-iv-eq}.
\end{corollary}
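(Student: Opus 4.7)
The plan is to reduce Corollary \ref{fork-special-decomp-iv-corollary} directly to Lemma \ref{fork-special-decomp-iv-lemma} by identifying the current MOY graphs with the template graphs $\Gamma, \Gamma', \Gamma''$ of that lemma, and then reusing the same matrix computation that finished the proof of the lemma. Specifically, I will note that the bouquet-move equivalences $h:C(\Gamma_{k,3})\xrightarrow{\simeq}C(\widetilde{\Gamma}_k)$ and the pair $C(\Gamma_{k,0})\simeq C(\Gamma_{k,2})$ (both given by Corollary \ref{contract-expand}) intertwine the factorization \eqref{fork-special-decomp-iv} with the decomposition \eqref{fork-special-decomp-iv-eq}, and that under this identification the morphisms $\hat{f}_k,\hat{g}_k,\alpha_k,\beta_k$ play precisely the roles of $f,g,\alpha,\beta$ from Lemma \ref{fork-special-decomp-iv-lemma}.

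First I would collect the pieces already in place: the compositions $g_k\circ f_k\simeq \id_{C(\widetilde{\Gamma}_k)}$ of \eqref{fork-special-decomp-iv-f-g-k}, the identities \eqref{fork-special-decomp-iv-alpha-beta-k} involving $\tau_k$, and the vanishings $g_k\circ\vec{\alpha}_k\simeq 0$ and $\vec{\beta}_k\circ f_k\simeq 0$ of \eqref{fork-special-decomp-iv-alpha-f-k}. These four facts are exactly the analogues, in our setup, of the ingredients $g\circ f\simeq \id$, $\tau\circ\vec{\beta}\circ\vec{\alpha}\simeq\vec{\beta}\circ\vec{\alpha}\circ\tau\simeq\id$, and \eqref{fork-special-decomp-iv-alpha-f} that fueled the proof of Lemma \ref{fork-special-decomp-iv-lemma}.

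Granted these, the remainder is a formal block-matrix computation: for \eqref{fork-special-decomp-iv-cor-1}, compose
\[
\left(
\begin{array}{c} g_k \\ \tau_k\circ\vec{\beta}_k \end{array}
\right)
\left(
\begin{array}{cc} f_k, & \vec{\alpha}_k \end{array}
\right)
\simeq
\left(
\begin{array}{cc} g_k\circ f_k & g_k\circ\vec{\alpha}_k \\ \tau_k\circ\vec{\beta}_k\circ f_k & \tau_k\circ\vec{\beta}_k\circ\vec{\alpha}_k \end{array}
\right)
\simeq
\left(
\begin{array}{cc} \id & 0 \\ 0 & \id \end{array}
\right),
\]
using the four facts above, and conclude via the Krull-Schmidt property of $\hmf$ (Proposition \ref{hmf-is-KS}) together with \eqref{fork-special-decomp-iv-eq} that the companion composition is also homotopic to the identity. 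An identical computation handles \eqref{fork-special-decomp-iv-cor-2} with $\tau_k$ placed on the other side. No genuinely new estimate is needed; the only point that requires a brief verification is that after the bouquet-move identifications, the morphisms $f_k,g_k,\alpha_k,\beta_k$ defined in figures \ref{fork-special-decomp-iv-fig5}--\ref{fork-special-decomp-iv-fig6} really do correspond, up to the scalar normalizations already permitted, to the morphisms of Lemma \ref{fork-special-decomp-iv-lemma}.

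The mildly subtle step—and the only place where one has to be careful—is bookkeeping the scalars hidden behind each ``$\approx$''. Since $f_k,g_k,\alpha_k,\beta_k$ are defined only up to homotopy and scaling, and Lemma \ref{fork-special-decomp-iv-lemma} was itself stated after an implicit rescaling to promote $g\circ f\approx\id$ to $g\circ f\simeq\id$, one should absorb the bouquet-move scalars into $f_k$ (or $g_k$) so that \eqref{fork-special-decomp-iv-f-g-k} holds on the nose, and then check that the morphism $\tau_k$ furnished by the proof of Lemma \ref{decomp-IV-Alpha-Beta} still satisfies \eqref{fork-special-decomp-iv-alpha-beta-k} with the current normalizations of $\alpha_k,\beta_k$. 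Once this normalization is fixed, the block-matrix identity above goes through verbatim, establishing both \eqref{fork-special-decomp-iv-cor-1} and \eqref{fork-special-decomp-iv-cor-2}.
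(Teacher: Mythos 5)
Your proposal is correct and is essentially the paper's argument made explicit. The paper's own proof of the corollary is just the single sentence ``From Lemma \ref{fork-special-decomp-iv-lemma}, we get the following corollary'': the block-matrix computation you spell out is exactly what is hidden in the proof of Lemma \ref{fork-special-decomp-iv-lemma}, and the paper's ``we only need to show that [one composition] $\simeq\id$'' is the same implicit appeal to Krull--Schmidt that you make explicit. Your caution about scalar normalizations is sound, but note that \eqref{fork-special-decomp-iv-alpha-beta-k} is already stated in the paper as a fact about $\tau_k$ for the present $\alpha_k,\beta_k$ (inherited from the proof of Lemma \ref{decomp-IV-Alpha-Beta}), so no additional verification is required there; the only rescaling is the one that promotes $g_k\circ f_k\approx\id$ to $g_k\circ f_k\simeq\id$ in \eqref{fork-special-decomp-iv-f-g-k}, which the paper flags with ``after possibly a scaling.''
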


\subsection{Relating the differential maps of $C^\pm$ and $\hat{C}(D_{1,1}^\pm)$} In this subsection, we proved the following lemma, which relates the differential map of $C^\pm$ to that of $\hat{C}(D_{1,1}^\pm)$.

\begin{figure}[ht]
$
\xymatrix{
\input{square-m-n-1-right-k-low-new-mark} \ar@<9ex>[rr]^{\delta_k^+} \ar@<1ex>[d]^<<<<<{\mathfrak{m}(r^{m-k})\circ \varphi_1} && \input{square-m-n-1-right-k-1-low-new-mark} \ar@<-7ex>[ll]^{\delta_{k-1}^-} \ar@<1ex>[d]^<<<<<{\varphi_2} \\
\input{double-square-m-n-1-right-k-m-low} \ar@<1ex>[u]^>>>>>{\overline{\varphi}_1 \circ \mathfrak{m}(r^{m-k})} \ar@<9ex>[rr]^{d_k^+} && \input{double-square-m-n-1-right-k-1-m-low} \ar@<1ex>[u]^>>>>>{\overline{\varphi}_2} \ar@<-7ex>[ll]^{d_{k-1}^-}
}
$
\caption{}\label{relating-C-D-11-fig}

\end{figure}

\begin{lemma}\label{relating-C-D-11-lemma}
Consider the diagram in Figure \ref{relating-C-D-11-fig}, where $\delta_k^+$, $\delta_{k-1}^-$ are defined in Definition \ref{trivial-complex-differential-def}, $d_k^+$, $d_{k-1}^-$ act on the left square, and $\varphi_i$, $\overline{\varphi}_i$ are induced by the apparent local changes of MOY graphs.

Then $\delta_k^+ \approx \overline{\varphi}_2 \circ d_k^+ \circ \mathfrak{m}(r^{m-k})\circ \varphi_1$ and $\delta_{k-1}^- \approx \overline{\varphi}_1 \circ \mathfrak{m}(r^{m-k}) \circ d_{k-1}^- \circ \varphi_2$. That is, the diagram in Figure \ref{relating-C-D-11-fig} commutes up to homotopy and scaling in both directions.
\end{lemma}

\begin{proof}
\begin{figure}[ht]
$
\xymatrix{
\input{square-m-n-1-right-k-low-new-mark} \ar@<7ex>[rr]^{\mathfrak{m}(r^{m-k})\circ \varphi_1} \ar[d]^<<<<<{\phi_3} && \input{double-square-m-n-1-right-k-m-low} \ar[d]^<<<<<{\phi_3}\\
\input{square-m-n-1-right-k-low-bubble} \ar[d]^<<<<<{\chi^1\otimes\chi^1} && \input{double-square-m-n-1-right-k-bubble} \ar[d]^<<<<<{\chi^1\otimes\chi^1} \\
\input{double-square-m-n-1-right-k-1-k-low} \ar[d]^<<<<<{\overline{\varphi}_4} \ar@<8ex>[rr]^{\mathfrak{m}(r^{m-k})\circ \varphi_1} &&  \input{triple-square-m-n-1-right-k-m-low} \ar[d]^<<<<<{\overline{\varphi}_5} \ar@<-6ex>[ll]^{\overline{\varphi}_1}\\
\input{square-m-n-1-right-k-1-low-new-mark}  &&   \input{double-square-m-n-1-right-k-1-m-low} \ar@<-7ex>[ll]_{\overline{\varphi}_2}
}
$
\caption{}\label{relating-C-D-11-fig2}

\end{figure}

Consider the diagram in Figure \ref{relating-C-D-11-fig2}, where the morphisms are induces by the apparent local changes of MOY graphs. By Theorem \ref{explicit-differential-general}, the composition of the morphisms in the left column is $\delta_k^+$ and the composition of the morphisms in the right column is $d_k^+$. That is, 
\begin{eqnarray*}
\delta_k^+ & \approx & \overline{\varphi}_4 \circ (\chi^1\otimes\chi^1)\circ\phi_3, \\
d_k^+ & \approx & \overline{\varphi}_5 \circ (\chi^1\otimes\chi^1)\circ\phi_3.
\end{eqnarray*}
Since $(\chi^1\otimes\chi^1)\circ\phi_3$ and $\mathfrak{m}(r^{m-k})\circ \varphi_1$ act on different parts of the MOY graphs, they commute with each other. So 
\[
\mathfrak{m}(r^{m-k})\circ \varphi_1 \circ (\chi^1\otimes\chi^1)\circ\phi_3 \approx (\chi^1\otimes\chi^1)\circ\phi_3 \circ \mathfrak{m}(r^{m-k})\circ \varphi_1.
\]
That is, the upper rectangle in Figure \ref{relating-C-D-11-fig2} commutes up to homotopy and scaling. By Lemma \ref{varphis-commute}, the lower square in Figure \ref{relating-C-D-11-fig2} commutes up to homotopy and scaling. That is, $\overline{\varphi}_4 \circ \overline{\varphi}_1 \approx \overline{\varphi}_2 \circ \overline{\varphi}_5$. Recall that, by Lemma \ref{phibar-compose-phi}, we have $\overline{\varphi}_1 \circ \mathfrak{m}(r^{m-k})\circ \varphi_1 \approx \id$. Altogether, we have
\begin{eqnarray*}
\delta_k^+ & \approx & \overline{\varphi}_4 \circ (\chi^1\otimes\chi^1)\circ\phi_3 \\
& \approx & \overline{\varphi}_4 \circ \overline{\varphi}_1 \circ \mathfrak{m}(r^{m-k})\circ \varphi_1 \circ (\chi^1\otimes\chi^1)\circ\phi_3 \\
& \approx & \overline{\varphi}_2 \circ \overline{\varphi}_5 \circ (\chi^1\otimes\chi^1)\circ\phi_3 \circ \mathfrak{m}(r^{m-k})\circ \varphi_1 \\
& \approx & \overline{\varphi}_2 \circ d_k^+ \circ \mathfrak{m}(r^{m-k})\circ \varphi_1.
\end{eqnarray*}

\begin{figure}[ht]
$
\xymatrix{
\input{square-m-n-1-right-k-1-low-new-mark}  \ar@<7ex>[rr]^{{\varphi}_2} \ar[d]^<<<<<{{\varphi}_4} &&   \input{double-square-m-n-1-right-k-1-m-low} \ar[d]^<<<<<{{\varphi}_5} \\
\input{double-square-m-n-1-right-k-1-k-low} \ar[d]^<<<<<{\chi^0\otimes\chi^0} \ar@<8ex>[rr]^{\varphi_1} &&  \input{triple-square-m-n-1-right-k-m-low}  \ar@<-6ex>[ll]^{\overline{\varphi}_1 \circ \mathfrak{m}(r^{m-k})} \ar[d]^<<<<<{\chi^0\otimes\chi^0}\\
\input{square-m-n-1-right-k-low-bubble} \ar[d]^<<<<<{\overline{\phi}_3} && \input{double-square-m-n-1-right-k-bubble} \ar[d]^<<<<<{\overline{\phi}_3}\\
\input{square-m-n-1-right-k-low-new-mark}   && \input{double-square-m-n-1-right-k-m-low} \ar@<-7ex>[ll]_{\overline{\varphi}_1 \circ\mathfrak{m}(r^{m-k})}
}
$
\caption{}\label{relating-C-D-11-fig3}

\end{figure}

Similarly, consider the diagram in Figure \ref{relating-C-D-11-fig3}, where the morphisms are induces by the apparent local changes of MOY graphs. By Theorem \ref{explicit-differential-general}, the composition of the morphisms in the left column is $\delta_{k-1}^-$ and the composition of the morphisms in the right column is $d_{k-1}^-$. That is,
\begin{eqnarray*}
\delta_{k-1}^- & \approx &  \overline{\phi}_3\circ(\chi^0\otimes\chi^0) \circ \varphi_4,\\
d_{k-1}^- & \approx &  \overline{\phi}_3\circ(\chi^0\otimes\chi^0) \circ \varphi_5.
\end{eqnarray*}
Since $\overline{\phi}_3\circ(\chi^0\otimes\chi^0)$ and $\overline{\varphi}_1 \circ\mathfrak{m}(r^{m-k})$ act on different parts of the MOY graphs, they commute with each other. So 
\[
\overline{\varphi}_1 \circ\mathfrak{m}(r^{m-k}) \circ  \overline{\phi}_3\circ(\chi^0\otimes\chi^0) \approx \overline{\phi}_3\circ(\chi^0\otimes\chi^0) \circ \overline{\varphi}_1 \circ\mathfrak{m}(r^{m-k}).
\]
That is, the lower square in Figure \ref{relating-C-D-11-fig3} commutes up to homotopy and scaling. By Lemma \ref{varphis-commute}, the upper square in Figure \ref{relating-C-D-11-fig3} commutes up to homotopy and scaling. That is, $\varphi_1 \circ \varphi_4 \approx \varphi_5 \circ \varphi_2$. Again, we have $\overline{\varphi}_1 \circ \mathfrak{m}(r^{m-k})\circ \varphi_1 \approx \id$. Altogether, we have
\begin{eqnarray*}
\delta_{k-1}^- & \approx &  \overline{\phi}_3\circ(\chi^0\otimes\chi^0) \circ \varphi_4\\
& \approx &  \overline{\phi}_3\circ(\chi^0\otimes\chi^0)\circ \overline{\varphi}_1 \circ \mathfrak{m}(r^{m-k})\circ \varphi_1 \circ \varphi_4\\
& \approx & \overline{\varphi}_1 \circ\mathfrak{m}(r^{m-k}) \circ  \overline{\phi}_3\circ(\chi^0\otimes\chi^0) \circ \varphi_5 \circ \varphi_2 \\
& \approx & \overline{\varphi}_1 \circ\mathfrak{m}(r^{m-k}) \circ d_{k-1}^- \circ \varphi_2.
\end{eqnarray*}
\end{proof}

\subsection{Relating the differential maps of $\hat{C}(D_{1,0}^\pm)$ and $\hat{C}(D_{1,1}^\pm)$}

\begin{figure}[ht]
$
\xymatrix{
\input{gamma-k3} \ar@<10ex>[rr]^{\hat{f}_k} \ar@<1ex>[d]^<<<<<{\phi_1} && \input{gamma-k0} \ar@<-8ex>[ll]^{\hat{g}_k} \ar@<1ex>[d]^<<<<<{\phi_1} \\
\input{gamma-k3-bubble} \ar@<10ex>[rr]^{\hat{f}_k} \ar@<1ex>[u]^>>>>>{\overline{\phi}_1} \ar@<1ex>[d]^<<<<<{\chi^1\otimes \chi^1}&& \input{gamma-k0-bubble} \ar@<-8ex>[ll]^{\hat{g}_k} \ar@<1ex>[u]^>>>>>{\overline{\phi}_1} \ar@<1ex>[d]^<<<<<{\chi^1\otimes \chi^1} \\
\input{gamma-k3-left-square} \ar@<10ex>[rr]^{\hat{f}_k} \ar@<1ex>[u]^>>>>>{\chi^0\otimes \chi^0}&& \input{gamma-k0-left-square} \ar@<-8ex>[ll]^{\hat{g}_k} \ar@<1ex>[u]^>>>>>{\chi^0\otimes \chi^0}
}
$
\caption{}\label{relating-D-0-1-fig1}

\end{figure}

First, consider the diagram in Figure \ref{relating-D-0-1-fig1}, where $\hat{f}_k$ and $\hat{g}_k$ are the diagonal morphisms in Figure \ref{fork-special-decomp-iv-fig5} and the vertical morphisms are induced by the apparent local changes of MOY graphs. Note that $\hat{f}_k$ and $\hat{g}_k$ act on the right side of the MOY graphs only, and the vertical morphisms act on the left side only. So each square in the Figure \ref{relating-D-0-1-fig1} commutes in both directions up to homotopy and scaling. Thus, we have the following lemma.

\begin{lemma}\label{relating-D-0-1-claim1}
In Figure \ref{relating-D-0-1-fig1}, we have $\hat{f}_k \circ (\chi^1\otimes \chi^1) \circ \phi_1 \approx (\chi^1\otimes \chi^1) \circ \phi_1 \circ \hat{f}_k$ and $\hat{g}_k \circ \overline{\phi}_1 \circ (\chi^0\otimes \chi^0) \approx \overline{\phi}_1 \circ (\chi^0\otimes \chi^0) \circ \hat{g}_k$.
\end{lemma}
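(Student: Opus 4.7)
The plan is to exhibit all morphisms appearing in Figure \ref{relating-D-0-1-fig1} as tensor products of local morphisms supported on disjoint regions of the underlying MOY graphs, and then invoke Lemma \ref{morphism-sign} to conclude commutativity.

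First I would choose markings of the eight MOY graphs in Figure \ref{relating-D-0-1-fig1} so that a ``separating set'' of marked points cuts each graph into a \emph{left half}, containing the vertices where $\phi_1$, $\overline{\phi}_1$, $\chi^0 \otimes \chi^0$, and $\chi^1 \otimes \chi^1$ act, and a \emph{right half}, containing the bigon where $\hat{f}_k$ and $\hat{g}_k$ act. I would further arrange that the interior alphabets on the two halves are disjoint. Corollary \ref{complex-knotted-MOY-marking-independence} guarantees that this refinement of the marking does not affect any of the isomorphism types involved.

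With such a marking, each matrix factorization $C(\Gamma)$ appearing in Figure \ref{relating-D-0-1-fig1} factors as a tensor product $C_L(\Gamma) \otimes_S C_R(\Gamma)$ over the ring $S$ of symmetric polynomials in the interface alphabets, where $C_L$ and $C_R$ are the matrix factorizations of the left and right halves. Since $\hat{f}_k = \chi^0 \circ \varphi_1$ and $\hat{g}_k = \overline{\varphi}_1 \circ \chi^1$ (see Figure \ref{fork-special-decomp-iv-fig5}) are built entirely from local data on the right half, I would write each as $\id_{C_L} \otimes_S f_R$ for a suitable $f_R$; similarly, each of $\phi_1, \overline{\phi}_1, \chi^0 \otimes \chi^0, \chi^1 \otimes \chi^1$ is built entirely from data on the left half and may be written as $f_L \otimes_S \id_{C_R}$.

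With both squares in this form, Lemma \ref{morphism-sign} immediately yields
\[
(\id_{C_L} \otimes f_R) \circ (f_L \otimes \id_{C_R}) = (-1)^{j_L j_R}(f_L \otimes \id_{C_R}) \circ (\id_{C_L} \otimes f_R),
\]
where $j_L, j_R$ are the respective $\zed_2$-degrees. Since every morphism appearing in the statement has $\zed_2$-degree $0$, the sign is $+1$ and the two claimed homotopies follow on the nose. The only subtlety I foresee is verifying that disjointness of supports survives the bouquet-move equivalences and edge contractions implicit in the definitions of $\hat{f}_k, \hat{g}_k$ (via $\varphi_1, \overline{\varphi}_1$) and of $\chi^0 \otimes \chi^0$, $\chi^1 \otimes \chi^1$; but each such auxiliary move is itself local and supported within one of the two halves, so the tensor decomposition is preserved throughout.
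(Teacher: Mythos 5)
Your proof is correct and takes essentially the same approach as the paper, which simply observes that $\hat{f}_k,\hat{g}_k$ act on the right side of the MOY graphs while $\phi_1,\overline{\phi}_1,\chi^0\otimes\chi^0,\chi^1\otimes\chi^1$ act on the left side, hence they commute. You have merely spelled out the underlying mechanism — the tensor factorization $C(\Gamma)\cong C_L\otimes_S C_R$ and the sign bookkeeping from Lemma \ref{morphism-sign} (though the cleaner citation for marking-independence at the matrix-factorization level is Lemma \ref{marking-independence} rather than Corollary \ref{complex-knotted-MOY-marking-independence}).
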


\begin{figure}[ht]
$
\xymatrix@R=4pc{
\input{gamma-k0-left-square-box} \ar@<8ex>[r]^{\chi^1_\Box} \ar@<1ex>[dd]^{\overline{h}_3}& \ar@<-6ex>[l]^{\chi^0_\Box} \input{triple-squares-dag} \ar@<1ex>[d]^<<<<<<<<{\chi^1_\dag} \ar@<8ex>[r]^{\overline{\varphi}_5} & \input{triple-squares-dag-merged} \ar@<1ex>[d]^<<<<<<<<{\chi^1_\dag} \ar@<-6ex>[l]^{{\varphi}_5} \\
\ar@{}[r]|<<<<<<<<<<<<<<<{\mathbf{(A)}} &  \input{triple-squares-after-dag} \ar@{}[ur]|>>>>>>>>>>>>>>>>>>>>{\mathbf{(C)}}  \ar@<1ex>[u]^>>>>>>>>{\chi^0_\dag} \ar@<1ex>[d]^<<<<<<<<{\overline{h}_4} \ar@<8ex>[r]^{\overline{\varphi}_5} &  \input{triple-squares-after-dag-merged} \ar@<1ex>[u]^>>>>>>>>{\chi^0_\dag} \ar@<-6ex>[l]^{{\varphi}_5} \ar@<1ex>[dd]^{\overline{\varphi}_7} \\
\input{gamma-k-1-0-bubble}  \ar@{}[dr]|>>>>>>>>>>>>>>>{\mathbf{(B)}} \ar@<8ex>[r]^{\chi^1_\triangle} \ar@<1ex>[uu]^{{h}_3} \ar@<1ex>[d]^<<<<<<<<{\overline{\phi}_3} & \input{gamma-k-1-0-bubble-chi} \ar@<1ex>[d]^<<<<<<<<{\overline{\phi}_3} \ar@<-6ex>[l]^{\chi^0_\triangle} \ar@<1ex>[u]^>>>>>>>>{{h}_4} \ar@{}[r]|>>>>>>>>>>>>>>>>>{\mathbf{(D)}} & \\
\input{gamma-k-1-0} \ar@<8ex>[r]^{\chi^1_\triangle} \ar@<1ex>[u]^>>>>>>>>{{\phi}_3} & \input{gamma-k-1-0-chi} \ar@<-6ex>[l]^{\chi^0_\triangle} \ar@<1ex>[u]^>>>>>>>>{{\phi}_3} \ar@<8ex>[r]^{\overline{\varphi}_6} & \input{gamma-k-1-3} \ar@<1ex>[uu]^{{\varphi}_7} \ar@<-6ex>[l]^{{\varphi}_6}
} 
$
\caption{}\label{relating-D-0-1-fig2}

\end{figure}

Next, consider the diagram in Figure \ref{relating-D-0-1-fig2}, where all morphisms are induced by the apparent local changes of the MOY graphs. We have the following lemma.

\begin{lemma}\label{relating-D-0-1-claim2}
The four squares $\mathbf{(A)}$, $\mathbf{(B)}$, $\mathbf{(C)}$ and $\mathbf{(D)}$ in Figure \ref{relating-D-0-1-fig2} all commute up to homotopy and scaling in both directions. More precisely, we have
\begin{enumerate}[(A)]
	\item $\chi^1_\triangle \circ \overline{h}_3 \approx \overline{h}_4 \circ \chi^1_\dag \circ \chi^1_\Box$, $h_3 \circ \chi^0_\triangle \approx \chi^0_\Box \circ \chi^0_\dag \circ h_4$,
	\item $\chi^1_\triangle \circ \overline{\phi}_3 \approx \overline{\phi}_3 \circ \chi^1_\triangle$, $\phi_3 \circ \chi^0_\triangle \approx \chi^0_\triangle \circ \phi_3$,
	\item $\overline{\varphi}_5 \circ \chi^1_\dag \approx \overline{\varphi}_5 \circ \chi^1_\dag$, $ \chi^0_\dag \circ \varphi_5 \approx \varphi_5 \circ \chi^0_\dag$,
	\item $\overline{\varphi}_7 \circ \overline{\varphi}_5 \approx \overline{\varphi}_6 \circ \overline{\phi}_3 \circ \overline{h}_4$, $\varphi_5 \circ \varphi_7 \approx h_4 \circ \phi_3 \circ \varphi_6$.
\end{enumerate}

Altogether, we have 
\begin{eqnarray*}
\overline{\varphi}_6 \circ \chi^1_\triangle \circ \overline{\phi}_3 \circ \overline{h}_3 & \approx & \overline{\varphi}_7 \circ \chi^1_\dag \circ \overline{\varphi}_5 \circ \chi^1_\Box, \\
h_3 \circ \phi_3\circ \chi^0_\triangle \circ \varphi_6 & \approx & \chi^0_\Box \circ \varphi_5 \circ \chi^0_\dag \circ \varphi_7.
\end{eqnarray*}
\end{lemma}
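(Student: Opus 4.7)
The plan is to treat each of the four squares separately, then chain them together to obtain the two concluding equations. Three of the squares commute for the routine reason that the morphisms involved act on disjoint portions of the underlying MOY graphs, while the remaining square is precisely the content of Lemma~\ref{chi-commute-chi-chi}.

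First, for square $(\mathbf{A})$ I would observe that the local picture around the change from the top-left MOY graph of Figure~\ref{relating-D-0-1-fig2} to the middle graph (and similarly for its mirror) is exactly the configuration of Figure~\ref{chi-commute-chi-chi-figure}. The morphisms $\overline{h}_3$ and $\overline{h}_4$ are bouquet moves between the outer configuration $\triangle$ and the intermediate configuration $\Box$, while $\chi^1_\Box$, $\chi^1_\dag$, and $\chi^1_\triangle$ are the $\chi^1$-maps of the three resolutions. Thus Lemma~\ref{chi-commute-chi-chi} gives $\chi^1_\triangle\circ\overline{h}_3\approx\overline{h}_4\circ\chi^1_\dag\circ\chi^1_\Box$ and, for the reverse, $h_3\circ\chi^0_\triangle\approx\chi^0_\Box\circ\chi^0_\dag\circ h_4$.

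For squares $(\mathbf{B})$ and $(\mathbf{C})$ the key is that the supports of the two morphisms involved are disjoint sub-MOY graphs. In $(\mathbf{B})$, the morphism $\chi^1_\triangle$ lives on the lower-right bigon (the $\triangle$-region), while $\overline{\phi}_3$ is an edge-merging happening entirely on the left strand; by choosing a marking so that each acts on a distinct tensor factor and invoking Lemma~\ref{morphism-sign}, they strictly commute, and in particular commute up to homotopy and scaling. The same reasoning covers $(\mathbf{C})$: $\chi^1_\dag$ affects only the central $\dag$-region and $\overline{\varphi}_5$ affects only the $(n+k-1)$-labeled strand on the far left. (I note the statement of $(\mathbf{C})$ appears to contain a typographical slip—the intended identity is $\overline{\varphi}_5\circ\chi^1_\dag\approx\chi^1_\dag\circ\overline{\varphi}_5$, which follows from disjoint support.) Square $(\mathbf{D})$ is the one genuine appeal to the more delicate commutativity result: the composition $\overline{\varphi}_7\circ\overline{\varphi}_5$ and the composition $\overline{\varphi}_6\circ\overline{\phi}_3\circ\overline{h}_4$ each describe a way of successively merging two pairs of parallel edges, and Lemma~\ref{varphis-commute} asserts precisely that these two orders agree up to homotopy and scaling (after identifying $\overline{\phi}_3\circ\overline{h}_4$ with an appropriate $\overline{\varphi}$ via Definition~\ref{varphi-def} and Corollary~\ref{contract-expand}).

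Having established $(\mathbf{A})$--$(\mathbf{D})$, the two summary identities follow by stacking squares. For the first identity I would trace along the outer perimeter: starting from the upper-left graph of Figure~\ref{relating-D-0-1-fig2} and applying $\overline{h}_3$, then $\overline{\phi}_3$, then $\chi^1_\triangle$, then $\overline{\varphi}_6$, I can route through the top row instead by first using $\chi^1_\Box$, then $\overline{\varphi}_5$, then $\chi^1_\dag$, then $\overline{\varphi}_7$, with each rerouting justified by one of the four commuting squares together with Lemma~\ref{morphism-sign} for tensoring identities. The second identity is obtained by reversing all arrows and using the $\chi^0$-versions of $(\mathbf{A})$--$(\mathbf{D})$ together with the identities $\varphi_5\circ\varphi_7\approx h_4\circ\phi_3\circ\varphi_6$ and the analogue $\chi^0_\Box\circ\chi^0_\dag\circ h_4\approx h_3\circ\chi^0_\triangle$. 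The main obstacle I anticipate is purely bookkeeping: tracking which tensor factors each local morphism acts on across the seven different MOY graphs of Figure~\ref{relating-D-0-1-fig2} so that the ``disjoint support'' argument in $(\mathbf{B})$ and $(\mathbf{C})$ can be made rigorous, and verifying that the implicit bouquet-move identifications underlying $(\mathbf{D})$ match the $\varphi$-morphisms of Lemma~\ref{varphis-commute} on the nose.
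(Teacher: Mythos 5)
Your proposal is correct and takes essentially the same approach as the paper: (A) follows from Lemma~\ref{chi-commute-chi-chi}, (B) and (C) from disjoint support of the horizontal and vertical morphisms, and (D) from Lemma~\ref{varphis-commute}, with the concluding identities obtained by composing the four squares. One small slip: your opening sentence claims that \emph{three} of the squares commute for the routine disjoint-support reason and the remaining one is Lemma~\ref{chi-commute-chi-chi}, but only (B) and (C) are routine --- both (A) and (D) each need one of the substantive commutativity lemmas --- a fact your detailed treatment does get right.
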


\begin{proof}
Part (A) follows from Lemma \ref{chi-commute-chi-chi}. (B) and (C) are true because the horizontal and vertical morphisms act on different parts of the MOY graphs. Part (D) follows from Lemma \ref{varphis-commute}.
\end{proof}

\begin{figure}[ht]
$
\xymatrix{
\input{tilde-gamma-k} \ar@<11ex>[rr]^{\tilde{d}_k^+} \ar@<1ex>[d]^<<<<<{f_k} && \input{tilde-gamma-k-1} \ar@<-9ex>[ll]^{\tilde{d}_{k-1}^-} \ar@<1ex>[d]^<<<<<{f_{k-1}} \\
\input{gamma-k0} \ar@<11ex>[rr]^{d_k^+} \ar@<1ex>[u]^>>>>>{g_k} && \input{gamma-k-1-0-no-tri} \ar@<-9ex>[ll]^{d_{k-1}^-} \ar@<1ex>[u]^>>>>>{g_{k-1}}
} 
$
\caption{}\label{relating-D-0-1-fig3}

\end{figure}

We are now ready to relate the differential map of $\hat{C}(D_{1,0}^\pm)$ to that of $\hat{C}(D_{1,1}^\pm)$. Consider the diagram in Figure \ref{relating-D-0-1-fig3}, where $d_\ast^\pm$ and $\tilde{d}_\ast^\pm$ are defined in Subsection \ref{fork-sliding-complexes-involved}, and $f_k$, $g_k$ are defined in Figure \ref{fork-special-decomp-iv-fig5}. We have the following lemma.

\begin{lemma}\label{relating-D-0-1-lemma}
In Figure \ref{relating-D-0-1-fig3}, $\tilde{d}_k^+ \approx g_{k-1} \circ d_k^+\circ f_k$ and $\tilde{d}_{k-1}^- \approx g_k \circ d_{k-1}^- \circ f_{k-1}$. That is, the diagram in Figure \ref{relating-D-0-1-fig3} commutes in both directions up to homotopy and scaling.
\end{lemma}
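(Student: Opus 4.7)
The strategy is to prove $\tilde{d}_k^+ \approx g_{k-1} \circ d_k^+ \circ f_k$ by expanding both sides using the explicit formulae of Theorem \ref{explicit-differential-general} together with the definitions of $f_k, g_{k-1}$ in Figure \ref{fork-special-decomp-iv-fig5}, and then using the commutation relations of Lemmas \ref{relating-D-0-1-claim1}, \ref{relating-D-0-1-claim2} and \ref{chi-commute-chi-chi} to transform the right-hand composition into the explicit form of the left-hand differential. The identity $\tilde{d}_{k-1}^- \approx g_k \circ d_{k-1}^- \circ f_{k-1}$ is then established by reversing arrows in exactly the same argument.

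First I would write out $\tilde{d}_k^+$ explicitly by applying Theorem \ref{explicit-differential-general} to the $c_{m+1,n}^+$ crossing of $D_{10}^+$: it is a composition of an edge splitting, two $\chi^1_\triangle$'s acting on the merged $(n+k)$-colored horizontal edge of $\widetilde{\Gamma}_k$, a bouquet move, and an edge merging. Simultaneously I would unpack $g_{k-1} \circ d_k^+ \circ f_k$: by the definitions of Figure \ref{fork-special-decomp-iv-fig5} one has $f_k = \chi^0 \circ \varphi_1 \circ h$ and $g_{k-1} = \overline{h} \circ \overline{\varphi}_1 \circ \chi^1$, while $d_k^+$ is the explicit differential of Theorem \ref{explicit-differential-general} applied to the $c_{m,n}^+$ crossing of $\Gamma_{k,0}$.

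The heart of the proof is a diagram chase in Figure \ref{relating-D-0-1-fig2}, which is organized precisely to carry out this simplification. The $\hat{f}_k/\hat{g}_{k-1}$ insertions act on a region geometrically disjoint from the $\chi^{0/1} \otimes \chi^{0/1}$ and $\phi_1/\overline{\phi}_1$ constituents of $d_k^+$, so Lemma \ref{relating-D-0-1-claim1} permits me to slide them past those pieces. The remaining reconciliation, in which the two separate $\chi^1$-morphisms $\chi^1_\Box$ and $\chi^1_\dag$ coming from $d_k^+$ and from $g_{k-1}$ collapse (via Lemma \ref{chi-commute-chi-chi}) to the single $\chi^1_\triangle$ of $\tilde{d}_k^+$ acting on the merged $(n+k)$-colored edge, and the remaining bouquet moves and edge splittings/mergings reorganize to match the explicit form of $\tilde{d}_k^+$, is exactly the content of the four squares $\mathbf{(A)}$--$\mathbf{(D)}$ in Lemma \ref{relating-D-0-1-claim2}. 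Chaining these commutations one square at a time collapses the expanded composition into the explicit $\tilde{d}_k^+$ up to a nonzero scalar.

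The main obstacle will be the bookkeeping: there are many local pieces to keep track of, and at each step one must verify that the colors of the intermediate edges are as claimed and that the morphisms really act on disjoint regions, so that the commutations of Lemma \ref{relating-D-0-1-claim1} legitimately apply. As a check, and as an alternative route to pin down the final scalar, I would invoke uniqueness: by Lemma \ref{colored-crossing-res-HMF} applied to $c_{m+1,n}^+$, the subspace of homotopy classes of homogeneous morphisms $C(\widetilde{\Gamma}_k) \to C(\widetilde{\Gamma}_{k-1})$ of quantum degree $1$ and $\zed_2$-degree $0$ is one-dimensional. Since $f_k$ and $g_{k-1}$ preserve both gradings and $d_k^+$ has quantum degree $1$, the composition $g_{k-1} \circ d_k^+ \circ f_k$ has the correct degree automatically, so it suffices to show it is not null-homotopic. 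Non-triviality follows from the diagram chase above; alternatively, by Corollary \ref{fork-special-decomp-iv-corollary} the composition $g_{k-1} \circ d_k^+ \circ f_k$ is the $(\widetilde{\Gamma}_k,\widetilde{\Gamma}_{k-1})$-component of $d_k^+$ under the decomposition of Lemma \ref{fork-special-decomp-iv-lemma}, and the same non-triviality argument used in the proof of Theorem \ref{explicit-differential-general} (post- and pre-composing with suitable Schur polynomial multiplications together with the relation $g_k \circ f_k \simeq \id$ from \eqref{fork-special-decomp-iv-f-g-k}) certifies that this component does not vanish.
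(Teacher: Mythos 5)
Your main argument --- unpack $f_k$, $g_{k-1}$, $d_k^+$ into their explicit forms via Theorem \ref{explicit-differential-general} and Figure \ref{fork-special-decomp-iv-fig5}, then chase through Figure \ref{relating-D-0-1-fig2} using Lemmas \ref{relating-D-0-1-claim1}, \ref{relating-D-0-1-claim2} and \ref{chi-commute-chi-chi} to reduce $g_{k-1}\circ d_k^+\circ f_k$ to $\tilde{d}_k^+$ --- is exactly the paper's proof. The paper writes $f_k\approx\hat f_k\circ h^{(k)}$ and $g_{k-1}\approx\overline h^{(k-1)}\circ\hat g_{k-1}$, applies the square \textbf{(A)}--\textbf{(D)} commutations of Lemma \ref{relating-D-0-1-claim2} to re-express $\hat g_{k-1}\circ d_k^+$, slides $\hat f_k$ past $(\chi^1\otimes\chi^1)\circ\phi_1$ by Lemma \ref{relating-D-0-1-claim1}, cancels the resulting $\hat g_k\circ\hat f_k\approx\id$, and finishes with Lemma \ref{chi-commute-chi-chi}; this is what you describe.

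One caution on the fallback ``uniqueness'' route you offer at the end: Lemma \ref{colored-crossing-res-HMF} computes $\Hom_\HMF$-spaces between the bare crossing resolutions $\Gamma_j^L$, $\Gamma_k^L$, whereas $\widetilde{\Gamma}_k$ carries an additional trivalent vertex splitting the top $(m+1)$-colored edge into edges of colors $m$ and $1$. The space $\Hom_\HMF(C(\widetilde{\Gamma}_k),C(\widetilde{\Gamma}_{k-1}))$ is therefore not the one computed in that lemma; the extra fork contributes an additional multiplicative factor (on the order of $[m+1]$) to the graded dimension, so the degree-$1$ piece need not be one-dimensional and the shortcut does not apply verbatim. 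This is presumably why the paper carries out the full chase instead of appealing to uniqueness. Since you present this only as a supplementary check, it does not undermine the main argument.
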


\begin{proof}
Denote by $h^{(k)}$, $\overline{h}^{(k)}$, $(\chi^1\otimes\chi^1)^{(k)}$ and $(\chi^0\otimes\chi^0)^{(k)}$ the morphisms induced by the local changes  of MOY graphs in Figures \ref{relating-D-0-1-fig4} and \ref{relating-D-0-1-fig5}. By the definitions of $f_k$, $g_k$, $\hat{f}_k$ and $\hat{g}_k$ in Figure \ref{fork-special-decomp-iv-fig5}, we know that $f_k \approx \hat{f}_k \circ h^{(k)}$ and $g_k \approx \overline{h}^{(k)} \circ \hat{g}_k$.

\begin{figure}[ht]
$
\xymatrix{
\input{tilde-gamma-k} \ar@<11ex>[rr]^{h^{(k)}} && \input{gamma-k3} \ar@<-9ex>[ll]^{\overline{h}^{(k)}}
} 
$
\caption{}\label{relating-D-0-1-fig4} 

\end{figure}

\begin{figure}[ht]
$
\xymatrix{
\input{tilde-gamma-k-bubble} \ar@<8ex>[rr]^{(\chi^1\otimes\chi^1)^{(k)}} && \input{tilde-gamma-k-divided} \ar@<-6ex>[ll]^{(\chi^0\otimes\chi^0)^{(k)}}
} 
$
\caption{}\label{relating-D-0-1-fig5} 

\end{figure}

By the definitions of $\hat{f}_k$, $\hat{g}_k$ and $d_k^+$, using the morphisms in Figures \ref{relating-D-0-1-fig1} and \ref{relating-D-0-1-fig2}, we have 
\begin{eqnarray*}
g_{k-1} \circ d_k^+\circ f_k & \approx & \overline{h}^{(k-1)} \circ \hat{g}_{k-1} \circ d_k^+ \circ \hat{f}_k \circ h^{(k)} \\
& \approx & \overline{h}^{(k-1)} \circ (\overline{\varphi}_6 \circ \chi^1_\triangle) \circ (\overline{\phi}_3 \circ \overline{h}_3 \circ (\chi^1\otimes \chi^1) \circ \phi_1) \circ \hat{f}_k \circ h^{(k)} \\
& \approx & \overline{h}^{(k-1)} \circ (\overline{\varphi}_6 \circ \chi^1_\triangle \circ \overline{\phi}_3 \circ \overline{h}_3) \circ ((\chi^1\otimes \chi^1) \circ \phi_1) \circ \hat{f}_k \circ h^{(k)} \\
_{(\text{by Lemma \ref{relating-D-0-1-claim2}})}& \approx & \overline{h}^{(k-1)} \circ (\overline{\varphi}_7 \circ \chi^1_\dag \circ \overline{\varphi}_5 \circ \chi^1_\Box) \circ ((\chi^1\otimes \chi^1) \circ \phi_1) \circ \hat{f}_k \circ h^{(k)} \\
_{(\text{by Lemma \ref{relating-D-0-1-claim1}})}& \approx & \overline{h}^{(k-1)} \circ (\overline{\varphi}_7 \circ \chi^1_\dag \circ \overline{\varphi}_5 \circ \chi^1_\Box) \circ \hat{f}_k \circ ((\chi^1\otimes \chi^1) \circ \phi_1) \circ h^{(k)}. \\
\end{eqnarray*}
Note that $\overline{\varphi}_5 \circ \chi^1_\Box \approx \hat{g}_k$, $\hat{g}_k \circ \hat{f}_k \approx \id$ and $\phi_1 \circ h^{(k)} \approx h^{(k)} \circ \phi_1$. Putting these together, we get 
\begin{eqnarray*}
g_{k-1} \circ d_k^+\circ f_k & \approx & \overline{h}^{(k-1)} \circ (\overline{\varphi}_7 \circ \chi^1_\dag) \circ (\overline{\varphi}_5 \circ \chi^1_\Box) \circ \hat{f}_k \circ (\chi^1\otimes \chi^1) \circ \phi_1 \circ h^{(k)} \\
& \approx & \overline{h}^{(k-1)} \circ \overline{\varphi}_7 \circ \chi^1_\dag \circ (\chi^1\otimes \chi^1) \circ h^{(k)} \circ \phi_1
\end{eqnarray*}
By Lemma \ref{chi-commute-chi-chi}, we know that
\[
\chi^1_\dag \circ (\chi^1\otimes \chi^1) \circ h^{(k)} \approx h^{(k-1)} \circ (\chi^1\otimes\chi^1)^{(k)}.
\]
Also, it is easy to see that $\overline{h}^{(k-1)} \circ \overline{\varphi}_7 \approx \overline{\varphi}_7 \circ\overline{h}^{(k-1)}$. So 
\begin{eqnarray*}
g_{k-1} \circ d_k^+\circ f_k & \approx & \overline{h}^{(k-1)} \circ \overline{\varphi}_7 \circ \chi^1_\dag \circ (\chi^1\otimes \chi^1) \circ h^{(k)} \circ \phi_1 \\
& \approx & \overline{\varphi}_7 \circ \overline{h}^{(k-1)} \circ h^{(k-1)} \circ (\chi^1\otimes\chi^1)^{(k)} \circ \phi_1 \\
& \approx & \overline{\varphi}_7 \circ (\chi^1\otimes\chi^1)^{(k)} \circ \phi_1  \\
& \approx & \tilde{d}_k^+.
\end{eqnarray*}

Similarly, using $\overline{\phi}_1 \circ \overline{h}^{(k)} \approx \overline{h}^{(k)} \circ \overline{\phi}_1$ and $\varphi_7 \circ h^{(k-1)} \approx h^{(k-1)} \circ \varphi_7$, we get
\begin{eqnarray*}
g_k \circ d_{k-1}^- \circ f_{k-1} & \approx & \overline{h}^{(k)} \circ \hat{g}_k \circ d_{k-1}^- \circ \hat{f}_{k-1} \circ h^{(k-1)} \\
& \approx & \overline{h}^{(k)} \circ \hat{g}_k \circ (\overline{\phi}_1 \circ (\chi^0\otimes\chi^0) \circ h_3 \circ \phi_3)\circ (\chi^0_\triangle \circ \varphi_6) \circ h^{(k-1)} \\
_{(\text{by Lemma \ref{relating-D-0-1-claim1}})} & \approx & \overline{h}^{(k)} \circ (\overline{\phi}_1 \circ (\chi^0\otimes\chi^0)) \circ \hat{g}_k \circ (h_3 \circ \phi_3\circ \chi^0_\triangle \circ \varphi_6) \circ h^{(k-1)} \\
_{(\text{by Lemma \ref{relating-D-0-1-claim2}})} & \approx & \overline{h}^{(k)} \circ (\overline{\phi}_1 \circ (\chi^0\otimes\chi^0)) \circ \hat{g}_k \circ (\chi^0_\Box \circ \varphi_5 \circ \chi^0_\dag \circ \varphi_7) \circ h^{(k-1)} \\
_{(\text{since } \hat{g}_k \circ (\chi^0_\Box \circ \varphi_5) \approx \hat{g}_k \circ \hat{f}_k \approx \id )} & \approx & \overline{\phi}_1 \circ \overline{h}^{(k)} \circ (\chi^0\otimes\chi^0) \circ \chi^0_\dag \circ \varphi_7 \circ h^{(k-1)} \\
& \approx & \overline{\phi}_1 \circ \overline{h}^{(k)} \circ (\chi^0\otimes\chi^0) \circ \chi^0_\dag \circ h^{(k-1)} \circ \varphi_7 \\
_{(\text{by Lemma \ref{chi-commute-chi-chi}})} & \approx & \overline{\phi}_1 \circ \overline{h}^{(k)} \circ h^{(k)} \circ (\chi^0\otimes\chi^0)^{(k)} \circ \varphi_7 \\
& \approx & \overline{\phi}_1 \circ (\chi^0\otimes\chi^0)^{(k)} \circ \varphi_7 \\
& \approx & \tilde{d}_{k-1}^-.
\end{eqnarray*}
\end{proof}

\subsection{Decomposing $C(\Gamma_{m,1})=C(\Gamma_m')$}

\begin{figure}[ht]
$
\xymatrix{
\input{gamma-m-prime} & \input{gamma-m-1-double-prime} & \input{tilde-gamma-m+1-prime} & \input{tilde-gamma-m+1}  
} 
$
\caption{}\label{gamma-m-1-fig} 

\end{figure}

Note that the MOY graphs $\Gamma_{m,1}$ and $\Gamma_m'$ are identical. Consider the MOY graphs in Figure \ref{gamma-m-1-fig}. By Corollary \ref{contract-expand}, $C(\Gamma_{m}'') \simeq C(\widetilde{\Gamma}_{m+1})$. By Decomposition (V) (Theorem \ref{decomp-V}), $C(\Gamma_m') \simeq C(\Gamma_{m-1}'') \oplus C(\Gamma_{m}'')$. So
\begin{equation}\label{gamma-m-1-decomp-V}
C(\Gamma_{m,1}) \simeq C(\Gamma_{m-1}'') \oplus C(\widetilde{\Gamma}_{m+1}).
\end{equation}

\begin{figure}[ht]
$
\xymatrix{
\input{gamma-m-1-prime}
} 
$
\caption{}\label{gamma-m-1-prime-fig} 

\end{figure}

Recall that $\Gamma_{m-1}'$ is the MOY graph in Figure \ref{gamma-m-1-prime-fig}. We have the following lemma.

\begin{lemma}\label{hmf-tilde-gamma-prime-m}
\[
\Hom_\hmf(C(\widetilde{\Gamma}_{m+1}),C(\Gamma_{m-1}')) \cong \Hom_\hmf(C(\Gamma_{m-1}'), C(\widetilde{\Gamma}_{m+1})) \cong 0.
\]
\end{lemma}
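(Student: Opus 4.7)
The plan is to deduce this vanishing directly from the decomposition $C(\Gamma_{m,1})=C(\Gamma_m')\simeq C(\Gamma_{m-1}'')\oplus C(\widetilde{\Gamma}_{m+1})$ recorded in \eqref{gamma-m-1-decomp-V}, combined with Lemma~\ref{trivial-complex-lemma-2}. The point is that we have already computed $\Hom_\hmf$ between adjacent $\Gamma_i'$'s, and $\Gamma_{m,1}$ itself is one of these $\Gamma_i'$; splitting along the decomposition will extract the summand we care about.

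First I would use bi-additivity of $\Hom_\hmf$ over direct sum decompositions in $\hmf$ to obtain
\begin{align*}
\Hom_\hmf(C(\Gamma_{m-1}'),C(\Gamma_m')) &\cong \Hom_\hmf(C(\Gamma_{m-1}'),C(\Gamma_{m-1}''))\oplus \Hom_\hmf(C(\Gamma_{m-1}'),C(\widetilde{\Gamma}_{m+1})),\\
\Hom_\hmf(C(\Gamma_m'),C(\Gamma_{m-1}')) &\cong \Hom_\hmf(C(\Gamma_{m-1}''),C(\Gamma_{m-1}'))\oplus \Hom_\hmf(C(\widetilde{\Gamma}_{m+1}),C(\Gamma_{m-1}')).
\end{align*}
Next, I would invoke Lemma~\ref{trivial-complex-lemma-2} with indices $i=m-1$ and $j=m$, which both lie in the admissible range $[\max\{m-n,0\},m]$ provided $n\geq 1$ (the only case of interest; otherwise the fork-sliding picture degenerates). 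Since $i\neq j$, both left-hand sides vanish, so every summand on the right vanishes as well. In particular, $\Hom_\hmf(C(\Gamma_{m-1}'),C(\widetilde{\Gamma}_{m+1}))=0$ and $\Hom_\hmf(C(\widetilde{\Gamma}_{m+1}),C(\Gamma_{m-1}'))=0$, which is the claim.

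I do not anticipate a serious obstacle: the argument is a one-step consequence of the structural result \eqref{gamma-m-1-decomp-V} (which was itself obtained by combining Corollary~\ref{contract-expand} with Decomposition~(V)) and the previously established Krull--Schmidt-type calculation in Lemma~\ref{trivial-complex-lemma-2}. The only point requiring care is that \eqref{gamma-m-1-decomp-V} is a genuine splitting in $\hmf$, so that $\Hom_\hmf$ splits as a direct sum in each argument; this is immediate from the way the decomposition was produced.
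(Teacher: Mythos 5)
Your approach has a fatal notational confusion. Lemma \ref{trivial-complex-lemma-2} is stated for the MOY graphs $\Gamma_j'$ introduced in Lemma \ref{decomp-V-special-2} and depicted in Figure \ref{decomp-V-special-2-figure}, but the $\Gamma_k'$ appearing in Lemma \ref{hmf-tilde-gamma-prime-m} (and in the splitting \eqref{gamma-m-1-decomp-V} you are using) are the distinct MOY graphs from Figure \ref{fork-sliding-special-complex2-fig}. These share a symbol but are not the same graphs: the latter carry an additional trivalent vertex at the top right where the edge of color $n+1$ forks into edges of colors $1$ and $n$. Under the natural correspondence (attach that fork to the graphs of Figure \ref{decomp-V-special-2-figure}), the present $\Gamma_k'$ matches the earlier \emph{unprimed} $\Gamma_k$, while the present $\Gamma_k''$ matches the earlier $\Gamma_k'$. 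So the correct counterpart of Lemma \ref{trivial-complex-lemma-2} in the setting of Lemma \ref{hmf-tilde-gamma-prime-m} is the vanishing of $\Hom_\hmf(C(\Gamma_i''),C(\Gamma_j''))$ for $i\neq j$, not of $\Hom_\hmf(C(\Gamma_i'),C(\Gamma_j'))$.

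In fact the vanishing you assert, $\Hom_\hmf(C(\Gamma_{m-1}'),C(\Gamma_m'))\cong 0$, is false. You can already see this from your own first displayed decomposition: the summand $\Hom_\hmf(C(\Gamma_{m-1}'),C(\Gamma_{m-1}''))$ contains the projection $P_{m-1,m-1}$ coming from the splitting $C(\Gamma_{m-1}')\simeq C(\Gamma_{m-1}'')\oplus C(\Gamma_{m-2}'')$, and this is not nullhomotopic since $P_{m-1,m-1}\circ J_{m-1,m-1}\simeq\id_{C(\Gamma_{m-1}'')}$. This is also recorded in the proof of Lemma \ref{hmf-gamma-prime-k-k-1}: the lowest non-vanishing quantum grading of $\Hom_\HMF(C(\Gamma_{k-1}'),C(\Gamma_k'))$ is $0$, so the degree-zero $\Hom$ space is non-zero; the non-trivial degree-zero morphism is exactly the differential $\delta_{m-1}^-$ of the contractible complex $C^-$. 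The paper's proof avoids this entirely: it identifies $C(\widetilde{\Gamma}_{m+1})$ with $C(\Gamma_m'')$, realizes the relevant $\Hom_\HMF$ space as the homology of a closed MOY graph, and checks directly that its lowest quantum grading is $2>0$. Your idea could be salvaged by replacing primes with double primes throughout and establishing the analogue of Lemma \ref{trivial-complex-lemma-2} for the graphs of Figure \ref{fork-sliding-special-complex2-fig}, but that analogue rests on a quantum-grading computation parallel to Lemma \ref{trivial-complex-lemma-1}, so in practice you would not avoid the calculation the paper performs.
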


\begin{figure}[ht]
$
\xymatrix{
\input{tilde-gamma-prime-m}
} 
$
\caption{}\label{hmf-tilde-gamma-prime-m-fig}

\end{figure}

\begin{proof}
Let $\Gamma$ be the MOY graph in Figure \ref{hmf-tilde-gamma-prime-m-fig}. Recall that $C(\widetilde{\Gamma}_{m+1}) \simeq C(\Gamma_{m}'')$. So
\begin{eqnarray*}
&& \Hom_\HMF(C(\widetilde{\Gamma}_{m+1}),C(\Gamma_{m-1}')) \cong  \Hom_\HMF(C(\Gamma_{m}''),C(\Gamma_{m-1}')), \\
& \cong & H(\Gamma)\left\langle m+n+1\right\rangle \{ q^{(m+n+1)(N-1)-m^2-n^2+n} \} \\
& \cong & C(\emptyset) \{ [n+1] \qb{n+m-1}{m} \qb{m+1}{2} \qb{m+n+1}{n} \qb{N}{m+n+1}  q^{(m+n+1)(N-1)-m^2-n^2+n} \}.
\end{eqnarray*}
One can check that the lowest non-vanishing quantum grading of the above space is $2$. So $\Hom_\hmf(C(\widetilde{\Gamma}_{m+1}),C(\Gamma_{m-1}'')) \cong 0$.

Denote by $\overline{\Gamma}$ the MOY graph obtained by reversing the orientation of $\Gamma$. By Decomposition (V) (Theorem \ref{decomp-V}), we have $C(\Gamma_{m-1}') \simeq C(\Gamma_{m-1}'') \oplus C(\Gamma_{m-2}'')$. By Lemma \ref{complex-computing-gamma-HMF-lemma}, we have that 
\[
\Hom_\HMF(C(\Gamma_{k}''), C(\Gamma_{m}'')) \cong H(C(\Gamma_{m}'') \otimes C(\overline{\Gamma}_{k}'')) \left\langle m+n+1\right\rangle \{ q^{(m+n+1)(N-1)-m^2-n^2+n} \},
\]
where $\overline{\Gamma}_{k}''$ is $\Gamma_{k}''$ with reverse orientation, and the tensor is over the ring of partial symmetric polynomials in the alphabets marking the end points. Therefore,
\begin{eqnarray*}
&& \Hom_\HMF(C(\Gamma_{m-1}'), C(\widetilde{\Gamma}_{m+1})) \cong \Hom_\HMF(C(\Gamma_{m-1}'), C(\Gamma_{m}''))\\
&\cong & \Hom_\HMF(C(\Gamma_{m-1}''), C(\Gamma_{m}'')) \oplus \Hom_\HMF(C(\Gamma_{m-2}''), C(\Gamma_{m}'')) \\
& \cong & (H(C(\Gamma_{m}'') \otimes C(\overline{\Gamma}_{m-1}'')) \oplus H(C(\Gamma_{m}'') \otimes C(\overline{\Gamma}_{m-2}''))) \left\langle m+n+1\right\rangle \{ q^{(m+n+1)(N-1)-m^2-n^2+n} \} \\
& \cong & H(C(\Gamma_{m}'') \otimes C(\overline{\Gamma}_{m-1}')) \left\langle m+n+1\right\rangle \{ q^{(m+n+1)(N-1)-m^2-n^2+n} \} \\
& \cong & H(\overline{\Gamma}) \left\langle m+n+1\right\rangle \{ q^{(m+n+1)(N-1)-m^2-n^2+n} \} \\
& \cong & C(\emptyset) \{ [n+1] \qb{n+m-1}{m} \qb{m+1}{2} \qb{m+n+1}{n} \qb{N}{m+n+1}  q^{(m+n+1)(N-1)-m^2-n^2+n} \}.
\end{eqnarray*}
where $\overline{\Gamma}_{m-1}'$ is $\Gamma_{m-1}'$ with orientation reversed, and the tensor is over the ring of partial symmetric polynomials in the alphabets marking the end points. So the lowest non-vanishing quantum grading of $\Hom_\HMF(C(\Gamma_{m-1}'), C(\widetilde{\Gamma}_{m+1}))$ is also $2$. Thus, $\Hom_\hmf(C(\Gamma_{m-1}'), C(\widetilde{\Gamma}_{m+1})) \cong 0$.
\end{proof}

\begin{corollary}\label{hmf-tilde-gamma-double-prime-m}
\[
\Hom_\hmf(C(\widetilde{\Gamma}_{m+1}),C(\Gamma_{m-1}'')) \cong \Hom_\hmf(C(\Gamma_{m-1}''), C(\widetilde{\Gamma}_{m+1})) \cong 0.
\]
\end{corollary}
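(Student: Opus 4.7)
The plan is to deduce this corollary directly from Lemma \ref{hmf-tilde-gamma-prime-m} by combining it with the direct sum decomposition
\[
C(\Gamma_{m-1}') \simeq C(\Gamma_{m-1}'') \oplus C(\Gamma_{m-2}''),
\]
which is the relevant instance of Decomposition (V) (Theorem \ref{decomp-V}). This same splitting was already invoked inside the proof of Lemma \ref{hmf-tilde-gamma-prime-m} when computing $\Hom_\HMF(C(\Gamma_{m-1}'), C(\widetilde{\Gamma}_{m+1}))$, so no new structural input is needed.

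Concretely, since $\Hom_\hmf$ is additive in each variable and the above $\simeq$ holds in $\hmf$, applying $\Hom_\hmf(C(\widetilde{\Gamma}_{m+1}), -)$ and $\Hom_\hmf(-, C(\widetilde{\Gamma}_{m+1}))$ to the splitting yields
\[
\Hom_\hmf(C(\widetilde{\Gamma}_{m+1}), C(\Gamma_{m-1}')) \;\cong\; \Hom_\hmf(C(\widetilde{\Gamma}_{m+1}), C(\Gamma_{m-1}'')) \;\oplus\; \Hom_\hmf(C(\widetilde{\Gamma}_{m+1}), C(\Gamma_{m-2}''))
\]
and
\[
\Hom_\hmf(C(\Gamma_{m-1}'), C(\widetilde{\Gamma}_{m+1})) \;\cong\; \Hom_\hmf(C(\Gamma_{m-1}''), C(\widetilde{\Gamma}_{m+1})) \;\oplus\; \Hom_\hmf(C(\Gamma_{m-2}''), C(\widetilde{\Gamma}_{m+1})).
\]

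By Lemma \ref{hmf-tilde-gamma-prime-m}, both left-hand sides vanish, so each direct summand on the right-hand side must be zero. In particular the two summands involving $C(\Gamma_{m-1}'')$ vanish, which is exactly the statement of the corollary. There is no real obstacle: the only thing worth noting is that the corollary is a strictly weaker statement than Lemma \ref{hmf-tilde-gamma-prime-m} (it concerns a direct summand of $C(\Gamma_{m-1}')$), so the argument is formal once Decomposition (V) is available.
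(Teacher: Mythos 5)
Your proposal is correct and follows essentially the same route as the paper's proof: both invoke the splitting $C(\Gamma_{m-1}') \simeq C(\Gamma_{m-1}'') \oplus C(\Gamma_{m-2}'')$ from Decomposition (V) and then deduce the vanishing from Lemma \ref{hmf-tilde-gamma-prime-m}. The only cosmetic difference is that you spell out the resulting direct-sum decomposition of the $\Hom_\hmf$ spaces, whereas the paper just notes the summand sits inside the larger space as a subspace; the logic is identical.
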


\begin{proof}
By Decomposition (V) (Theorem \ref{decomp-V}), $C(\Gamma_{m-1}') \simeq C(\Gamma_{m-1}'') \oplus C(\Gamma_{m-2}'')$. So $\Hom_\hmf(C(\widetilde{\Gamma}_{m+1}),C(\Gamma_{m-1}''))$ (resp. $\Hom_\hmf(C(\Gamma_{m-1}''), C(\widetilde{\Gamma}_{m+1}))$) is a subspace of $\Hom_\hmf(C(\widetilde{\Gamma}_{m+1}),C(\Gamma_{m-1}'))$ (resp. $\Hom_\hmf(C(\Gamma_{m-1}'), C(\widetilde{\Gamma}_{m+1}))$.) Then the corollary follows from Lemma \ref{hmf-tilde-gamma-prime-m}.
\end{proof}

\begin{lemma}\label{hmf-tilde-gamma-m} 
$\Hom_\hmf(C(\widetilde{\Gamma}_{m+1}),C(\widetilde{\Gamma}_{m+1})) \cong \C$.
\end{lemma}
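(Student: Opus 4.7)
The plan is to deduce this lemma from the direct sum decomposition \eqref{gamma-m-1-decomp-V}
\[
C(\Gamma_{m,1}) \simeq C(\Gamma_{m-1}'') \oplus C(\widetilde{\Gamma}_{m+1}),
\]
combined with Corollary \ref{hmf-tilde-gamma-double-prime-m} (which kills the cross terms) and a computation of $\Hom_\hmf(C(\Gamma_{m,1}), C(\Gamma_{m,1}))$. First I will identify the fork-sliding graph $\Gamma_{m,1}=\Gamma_m'$ (Figure \ref{fork-sliding-special-complex2-fig}) with the graph $\Gamma_m$ of Figure \ref{decomp-V-special-2-figure}: they differ only by the trivalent vertex at the top that splits the $(n+1)$-colored edge into edges of colors $1$ and $n$, which is a bouquet move. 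By Corollary \ref{contract-expand} this yields a homotopy equivalence $C(\Gamma_{m,1}) \simeq C(\Gamma_m)$ over the same base ring. Lemma \ref{trivial-complex-lemma-3} applied to the middle-range case $j=k=m$ then gives
\[
\Hom_\hmf(C(\Gamma_{m,1}), C(\Gamma_{m,1})) \cong \C \oplus \C.
\]

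Now I plug \eqref{gamma-m-1-decomp-V} into both slots of this $\Hom_\hmf$, getting a splitting
\[
\C^{2} \cong \bigoplus_{A,B \in \{C(\Gamma_{m-1}''),\, C(\widetilde{\Gamma}_{m+1})\}} \Hom_\hmf(A, B).
\]
By Corollary \ref{hmf-tilde-gamma-double-prime-m} the two off-diagonal summands vanish, so the two diagonal endomorphism spaces $\Hom_\hmf(C(\Gamma_{m-1}''), C(\Gamma_{m-1}''))$ and $\Hom_\hmf(C(\widetilde{\Gamma}_{m+1}), C(\widetilde{\Gamma}_{m+1}))$ have total dimension $2$.

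To conclude, I need each of these two diagonal summands to have dimension at least one, which will then force each to equal $\C$ and in particular give the desired isomorphism. The cleanest way is via the Krull-Schmidt property (Proposition \ref{hmf-is-KS}): since $C(\Gamma_{m,1})$ already splits as the two-term direct sum above, neither summand can be zero in $\hmf$ (otherwise the remaining summand would carry all of $\Hom_\hmf(C(\Gamma_{m,1}), C(\Gamma_{m,1})) \cong \C^{2}$, forcing a further splitting and contradicting the Krull-Schmidt uniqueness combined with the existence of two orthogonal idempotents). Equivalently, a direct graded-dimension computation using Theorem \ref{decomp-II}, Corollary \ref{circle-dimension}, and Corollary \ref{homology-detects-homotopy} shows that neither $C(\Gamma_{m-1}'')$ nor $C(\widetilde{\Gamma}_{m+1})$ is null-homotopic, so on each the identity endomorphism is homotopically non-trivial.

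The main obstacle in this plan is the notational bookkeeping between the current fork-sliding context and Lemma \ref{decomp-V-special-2}'s context, and in particular the careful verification that the bouquet-move identification $C(\Gamma_{m,1})\simeq C(\Gamma_m)$ yields exactly the middle-range case of Lemma \ref{trivial-complex-lemma-3} (rather than one of the boundary cases $j=k=\max\{m-n,0\}$ or $m+1$, which would give only $\C$ and would not suffice for the dimension count). Once that matching is pinned down, the remainder is routine.
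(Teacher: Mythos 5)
Your approach is a genuinely different route from the paper's — the paper directly computes $\Hom_\HMF(C(\widetilde{\Gamma}_{m+1}),C(\widetilde{\Gamma}_{m+1}))$ as the homology of a closed MOY graph (Figure \ref{hmf-tilde-gamma-m-fig}) via Lemma \ref{complex-computing-gamma-HMF-lemma} and the decomposition theorems, then reads off that the degree-zero part is one-dimensional. Your plan instead tries to deduce the answer from the decomposition \eqref{gamma-m-1-decomp-V} plus a dimension count. That could work in principle, but there is a genuine gap at the step where you feed $C(\Gamma_{m,1})$ into Lemma \ref{trivial-complex-lemma-3}.

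The problem is the claimed identification of $\Gamma_{m,1}=\Gamma_m'$ (Figure \ref{fork-sliding-special-complex2-fig}) with $\Gamma_m$ of Figure \ref{decomp-V-special-2-figure} ``via a bouquet move, over the same base ring.'' This is not a bouquet move: Corollary \ref{contract-expand} rearranges \emph{internal} trivalent vertices while keeping every boundary edge (and hence the boundary alphabet and base ring) fixed, as one sees from Figure \ref{contract-expand-figure}. The graph $\Gamma_m$ of Figure \ref{decomp-V-special-2-figure} has four boundary edges (colors $n,\ m+1,\ m,\ n+1$), whereas $\Gamma_{m,1}=\Gamma_m'$ of Figure \ref{fork-sliding-special-complex2-fig} has five (colors $n,\ m+1,\ m,\ 1,\ n$): the $(n{+}1)$-edge on the right is not free, it runs into a trivalent vertex whose two outgoing legs $1$ and $n$ are boundary. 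Consequently $C(\Gamma_{m,1})$ and $C(\Gamma_m)$ are matrix factorizations over \emph{different} base rings, and $\Hom_\hmf$ is base-ring-dependent. You therefore cannot invoke Lemma \ref{trivial-complex-lemma-3} directly to get $\Hom_\hmf(C(\Gamma_{m,1}), C(\Gamma_{m,1})) \cong \C^2$. The paper is careful about this: whenever it needs a $\Hom_\hmf$ statement in the fork-sliding (five-endpoint) base ring, it recomputes it there rather than quoting the four-endpoint Lemmas \ref{trivial-complex-lemma-1}--\ref{trivial-complex-lemma-3} --- see for instance the phrase ``Similar to Lemma \ref{trivial-complex-lemma-1}, one can check that...'' in the proof of Lemma \ref{hmf-gamma-prime-k-k-1}, and the recomputation inside the proof of Lemma \ref{gamma-m-1-p-j-prime-dif}.

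The claim you want ($\Hom_\hmf(C(\Gamma_{m,1}),C(\Gamma_{m,1}))\cong\C^2$ over the five-endpoint ring) is in fact true, but establishing it requires either a flat base-change argument (using that $C(\Gamma_{m,1})$ is obtained from $C(\Gamma_m)$ by extending scalars through $\Sym(\mathbb{Y})\hookrightarrow\Sym(\mathbb{Y}_1|\mathbb{Y}_2)$, together with the knowledge that $\Hom_\HMF$ over the small ring lives in nonnegative quantum degrees) or a fresh tensor-closure computation as in Lemma \ref{complex-computing-gamma-HMF-lemma}. Either is comparable in length to the paper's direct computation of $\Hom_\HMF(C(\widetilde{\Gamma}_{m+1}),C(\widetilde{\Gamma}_{m+1}))$, so the decomposition route saves nothing and adds a step that, as written, is not justified. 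The remainder of your argument --- killing the cross-terms via Corollary \ref{hmf-tilde-gamma-double-prime-m}, observing neither summand is null-homotopic, and concluding by the dimension count --- is fine once that first fact is secured.
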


\begin{figure}[ht]
$
\xymatrix{
\input{tilde-gamma-m+1-self-close}  
} 
$
\caption{}\label{hmf-tilde-gamma-m-fig} 

\end{figure}

\begin{proof}
Let $\Gamma$ be the MOY graph in Figure \ref{hmf-tilde-gamma-m-fig}. Then
\begin{eqnarray*}
&& \Hom_\HMF(C(\widetilde{\Gamma}_{m+1}),C(\widetilde{\Gamma}_{m+1})) \\
& \cong & H(\Gamma)\left\langle m+n+1 \right\rangle \{ q^{(m+n+1)(N-1)-m^2-n^2+n} \} \\
& \cong & C(\emptyset) \{ [m+1] \qb{m+n+1}{n} \qb{m+n+1}{n} \qb{N}{m+n+1} q^{(m+n+1)(N-1)-m^2-n^2+n} \}.
\end{eqnarray*}
It is easy to check that the above space is supported on $\zed_2$-degree $0$. Its lowest non-vanishing quantum grading is $0$. And its subspace of homogeneous elements of quantum degree $0$ is $1$-dimensional. Thus, $\Hom_\hmf(C(\widetilde{\Gamma}_{m+1}),C(\widetilde{\Gamma}_{m+1})) \cong \C$.
\end{proof}

\begin{corollary}\label{hmf-tilde-gamma-gamma-m1} 
\[
\Hom_\hmf(C(\widetilde{\Gamma}_{m+1}),C(\Gamma_{m,1})) \cong \Hom_\hmf(C(\Gamma_{m,1}),C(\widetilde{\Gamma}_{m+1})) \cong \C.
\]
\end{corollary}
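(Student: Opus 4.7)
The plan is to combine the decomposition \eqref{gamma-m-1-decomp-V} with the vanishing results from Corollary \ref{hmf-tilde-gamma-double-prime-m} and the computation in Lemma \ref{hmf-tilde-gamma-m}. Recall that $\eqref{gamma-m-1-decomp-V}$ gives
\[
C(\Gamma_{m,1}) \simeq C(\Gamma_{m-1}'') \oplus C(\widetilde{\Gamma}_{m+1})
\]
in $\hmf$, so both $\Hom$-spaces in the statement split as direct sums along this decomposition.

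Concretely, applying the additivity of $\Hom_\hmf$ in each variable, I would write
\[
\Hom_\hmf(C(\widetilde{\Gamma}_{m+1}),C(\Gamma_{m,1})) \cong \Hom_\hmf(C(\widetilde{\Gamma}_{m+1}),C(\Gamma_{m-1}'')) \oplus \Hom_\hmf(C(\widetilde{\Gamma}_{m+1}),C(\widetilde{\Gamma}_{m+1})),
\]
and symmetrically for the opposite direction. By Corollary \ref{hmf-tilde-gamma-double-prime-m}, the summands involving $C(\Gamma_{m-1}'')$ vanish, and by Lemma \ref{hmf-tilde-gamma-m}, the summand $\Hom_\hmf(C(\widetilde{\Gamma}_{m+1}),C(\widetilde{\Gamma}_{m+1}))$ is $\C$. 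The same argument applied to $\Hom_\hmf(C(\Gamma_{m,1}),C(\widetilde{\Gamma}_{m+1}))$ yields the other isomorphism.

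There is no real obstacle here: all the substantive work was already done in establishing the decomposition and the two preceding vanishing/computation lemmas. The proof is essentially a one-line bookkeeping argument, so the proposal is simply to cite those three results and read off the direct-sum decomposition.
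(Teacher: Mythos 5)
Your proposal is correct and matches the paper's proof exactly: the paper also cites \eqref{gamma-m-1-decomp-V}, Corollary \ref{hmf-tilde-gamma-double-prime-m}, and Lemma \ref{hmf-tilde-gamma-m}, and reads off the direct-sum decomposition. The only difference is that you have written out the additivity step that the paper leaves implicit.
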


\begin{proof}
This follows easily from decomposition \eqref{gamma-m-1-decomp-V}, Corollary \ref{hmf-tilde-gamma-double-prime-m} and Lemma \ref{hmf-tilde-gamma-m}.
\end{proof}

\begin{figure}[ht]
$
\xymatrix{
\input{gamma-m-prime-marked} \ar@<10ex>[rr]^{\tilde{p}} \ar@<1ex>[d]^<<<<<{\chi^0 \otimes \chi^0} & & \input{tilde-gamma-m+1} \ar@<-8ex>[ll]^{\tilde{\jmath}} \ar@<1ex>[d]^<<<<<{h} \\
\input{tilde-gamma-m+1-bubble} \ar@<1ex>[u]^>>>>>{\chi^1 \otimes \chi^1} \ar@<10ex>[rr]^{\overline{\phi}} && \input{tilde-gamma-m+1-prime} \ar@<-8ex>[ll]^{\phi} \ar@<1ex>[u]^>>>>>{\overline{h}}
} 
$
\caption{}\label{gamma-m-1-p-j-def} 

\end{figure}

Consider the diagram in Figure \ref{gamma-m-1-p-j-def}, where 
\begin{eqnarray*}
\tilde{p} & := & \overline{h} \circ \overline{\phi} \circ (\chi^0 \otimes \chi^0), \\
\tilde{\jmath} & := & (\chi^1 \otimes \chi^1) \circ \phi \circ h,
\end{eqnarray*}
and morphisms on the right hand side are induced by the apparent local changes of the MOY graphs. 

\begin{lemma}\label{gamma-m-1-p-j-tilde}
Up to homotopy and scaling, $\tilde{\jmath}$ is the inclusion of $C(\widetilde{\Gamma}_{m+1})$ into $C(\Gamma_{m,1})$ in decomposition \eqref{gamma-m-1-decomp-V}, and $\tilde{p}$ is the projection of $C(\Gamma_{m,1})$ onto $C(\widetilde{\Gamma}_{m+1})$ in decomposition \eqref{gamma-m-1-decomp-V}.
\end{lemma}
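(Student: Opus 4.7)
The plan is to prove Lemma~\ref{gamma-m-1-p-j-tilde} by a dimension count combined with a single explicit composition. By Corollary~\ref{hmf-tilde-gamma-gamma-m1}, both
\[
\Hom_\hmf(C(\widetilde{\Gamma}_{m+1}),C(\Gamma_{m,1})) \quad\text{and}\quad \Hom_\hmf(C(\Gamma_{m,1}),C(\widetilde{\Gamma}_{m+1}))
\]
are one-dimensional over $\C$. The canonical inclusion and projection in the decomposition \eqref{gamma-m-1-decomp-V} are homogeneous degree-preserving and homotopically non-trivial (their composition is the identity on $C(\widetilde{\Gamma}_{m+1})$), so each of them spans the corresponding one-dimensional space. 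Thus it suffices to verify that (i) $\tilde{\jmath}$ and $\tilde{p}$ are homogeneous of $\zed_2$-degree $0$ and quantum degree $0$, and (ii) they are not null-homotopic.

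Item (i) is a straightforward bookkeeping of the known degrees of the constituent morphisms: $h,\overline{h}$ preserve both gradings, while the quantum degree shifts produced by $\phi,\overline{\phi}$ (given by Definition~\ref{morphism-edge-splitting-merging-def}) and by $\chi^0,\chi^1$ (given by Corollary~\ref{general-chi-maps-def} applied to the two pairs of $\chi$-morphisms used in $(\chi^0\otimes\chi^0)$ and $(\chi^1\otimes\chi^1)$) cancel against one another. So both $\tilde{\jmath}$ and $\tilde{p}$ lie in the one-dimensional $\Hom_\hmf$ spaces above.

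For item (ii), the cleanest route is to compute the composition
\[
\tilde{p}\circ \tilde{\jmath} \;=\; \overline{h}\circ \overline{\phi}\circ (\chi^0\otimes\chi^0)\circ (\chi^1\otimes\chi^1)\circ \phi\circ h \colon C(\widetilde{\Gamma}_{m+1})\to C(\widetilde{\Gamma}_{m+1}),
\]
and show it is not null-homotopic; by Lemma~\ref{hmf-tilde-gamma-m} this already forces $\tilde{p}\circ\tilde{\jmath}\approx \id_{C(\widetilde{\Gamma}_{m+1})}$ and therefore neither $\tilde{p}$ nor $\tilde{\jmath}$ is null-homotopic. The middle part $(\chi^0\otimes\chi^0)\circ(\chi^1\otimes\chi^1)$ factors as the tensor of the two compositions $\chi^0\circ\chi^1$, each of which is homotopic by Corollary~\ref{general-chi-maps-def} to multiplication by a polynomial of the form $\sum_{k}(-r)^{\,\ast-k}Y_k$ in the relevant alphabets. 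Sandwiching this multiplication between $\phi$ and $\overline{\phi}$, Lemma~\ref{phibar-compose-phi} picks out the top-degree term as a non-zero scalar multiple of the identity on the intermediate MOY graph, while the remaining factor $\overline{h}\circ h$ is homotopic to the identity on $C(\widetilde{\Gamma}_{m+1})$ (since $h,\overline{h}$ are the natural homotopy equivalences induced by mutually inverse bouquet moves, cf.\ Remark~\ref{bouquet-move-remark}).

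The expected obstacle is not conceptual but bookkeeping: one must track how the two independent $\chi^0\circ\chi^1$ factors combine with $\overline{\phi}\circ\mathfrak{m}(\cdot)\circ\phi$ to guarantee that at least one summand produces a Schur-monomial pair $(S_\lambda,S_{\lambda^c})$ satisfying the complementarity condition in Lemma~\ref{phibar-compose-phi}, so that the whole composition contains a non-zero multiple of $\id$. Because we only need $\tilde{p}\circ\tilde{\jmath}\not\simeq 0$ rather than the precise scalar, it is enough to isolate the single monomial of highest total degree in the relevant alphabet and check its coefficient is non-zero, which avoids the combinatorial explosion of tracking every term. Once this is established, the one-dimensionality from Corollary~\ref{hmf-tilde-gamma-gamma-m1} closes the argument.
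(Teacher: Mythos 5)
Your proposal is correct and takes essentially the same route as the paper's proof: cite Corollary~\ref{hmf-tilde-gamma-gamma-m1} for the one-dimensionality of the two $\Hom_\hmf$ spaces, check that $\tilde{\jmath}$ and $\tilde{p}$ are homogeneous of $\zed_2$-degree~$0$ and quantum degree~$0$, and then establish non-triviality by computing $\tilde{p}\circ\tilde{\jmath}\approx\id$ via Corollary~\ref{general-chi-maps-def} and Lemma~\ref{phibar-compose-phi}. The paper's version of the final step is slightly tighter than what you describe: rather than merely ``isolating the single monomial of highest total degree,'' it observes directly that the full product $(\sum_{i=0}^n (-r)^iY_{n-i})\cdot(\sum_{i=0}^m (-r)^iX_{m-i})$ acts, after conjugation by $\phi$, $\overline{\phi}$, in the same way as the scalar monomial $(-r)^{m+n}$, so there is no potential for the cancellation you flag as a worry. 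That said, your framing --- show the composition is homotopically non-zero, then invoke one-dimensionality of $\Hom_\hmf(C(\widetilde{\Gamma}_{m+1}),C(\widetilde{\Gamma}_{m+1}))$ from Lemma~\ref{hmf-tilde-gamma-m} to upgrade to $\approx\id$ --- is exactly the ``$\approx$'' logic the paper uses, so the two arguments are substantively identical.
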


\begin{proof}
From Corollary \ref{hmf-tilde-gamma-gamma-m1}, one can see that $\Hom_\hmf(C(\widetilde{\Gamma}_{m+1}),C(\Gamma_{m,1}))$ (resp. $\Hom_\hmf(C(\Gamma_{m,1}),C(\widetilde{\Gamma}_{m+1}))$) is $1$-dimensional and spanned by the inclusion $C(\widetilde{\Gamma}_{m+1})\rightarrow C(\Gamma_{m,1})$ (resp. the projection $C(\Gamma_{m,1})\rightarrow C(\widetilde{\Gamma}_{m+1})$) in decomposition \eqref{gamma-m-1-decomp-V}. Note that $\tilde{\jmath}$ and $\tilde{p}$ are both homogeneous morphisms of $\zed_2$-degree $0$ and quantum degree $0$. To prove the lemma, we only need to show that $\tilde{\jmath}$ and $\tilde{p}$ are not homotopic to $0$. But, by Corollary \ref{general-chi-maps-def} and Lemma \ref{phibar-compose-phi}, 
\begin{eqnarray*}
\tilde{p} \circ \tilde{\jmath} & \approx & \overline{h} \circ \overline{\phi} \circ (\chi^0 \otimes \chi^0) \circ (\chi^1 \otimes \chi^1) \circ \phi \circ h \\
& \approx & \overline{h} \circ \overline{\phi} \circ \mathfrak{m}((\sum_{i=0}^n (-r)^iY_{n-i}) \cdot (\sum_{i=0}^m (-r)^iX_{m-i})) \circ \phi \circ h \\
& \approx & \overline{h} \circ \overline{\phi} \circ \mathfrak{m}((-r)^{m+n}) \circ \phi \circ h \approx \id_{C(\widetilde{\Gamma}_{m+1})}.
\end{eqnarray*}
This shows that $\tilde{\jmath}$ and $\tilde{p}$ are not homotopic to $0$ and completes the proof.
\end{proof}

\begin{figure}[ht]
$
\xymatrix{
\input{tilde-gamma-m+1} \ar@<1ex>[d]^<<<<<{\tilde{\jmath}} \ar@<10ex>[rr]^{\tilde{d}_{m+1}^+} && \input{tilde-gamma-m} \ar@<-8ex>[ll]^{\tilde{d}_{m}^-} \ar@<1ex>[d]^<<<<<{h^{(m)}} \\
\input{gamma-m-prime} \ar@<1ex>[u]^>>>>>{\tilde{p}} \ar@<10ex>[rr]^{\chi^1} && \input{gamma-m3} \ar@<-8ex>[ll]^{\chi^0} \ar@<1ex>[u]^>>>>>{\overline{h}^{(m)}}
} 
$
\caption{}\label{gamma-m-1-p-j-differential-fig} 

\end{figure}

Consider the diagram in Figure \ref{gamma-m-1-p-j-differential-fig}, where $\tilde{d}_{m+1}^+$ (resp. $\tilde{d}_{m}^-$) is the differential map of the chain complex $\hat{C}(D_{10}^+)$ (resp. $\hat{C}(D_{10}^-)$) at homological degree $0$ (resp. $-1$)\footnote{See Subsection \ref{fork-sliding-complexes-involved}, especially the chain complexes $\hat{C}(D_{10}^\pm)$ in \eqref{complex-D-10-+} and \eqref{complex-D-10-+}.}, and $\chi^0$, $\chi^1$, $h^{(m)}$, $\overline{h}^{(m)}$ are induced by the apparent local changes of MOY graphs. We have the following lemma.

\begin{lemma}\label{gamma-m-1-p-j-differential} 
$\tilde{d}_{m+1}^+ \approx \overline{h}^{(m)} \circ \chi^1 \circ \tilde{\jmath}$ and $\tilde{d}_{m}^- \approx \tilde{p} \circ \chi^0 \circ h^{(m)}$. That is, the diagram in Figure \ref{gamma-m-1-p-j-differential-fig} commutes in both directions up to homotopy and scaling.
\end{lemma}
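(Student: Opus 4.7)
The plan is to combine a uniqueness argument with a direct computation of homotopic non-triviality, following the general strategy used throughout Section~\ref{sec-inv-fork}.

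\textbf{Step 1 (Gradings).} I would first verify that both $\overline{h}^{(m)}\circ\chi^1\circ\tilde{\jmath}$ and $\tilde{p}\circ\chi^0\circ h^{(m)}$ are homogeneous morphisms of $\zed_2$-degree $0$ and quantum degree $1$ between the correct matrix factorizations. This is immediate: by Lemma~\ref{gamma-m-1-p-j-tilde}, $\tilde{\jmath}$ and $\tilde{p}$ preserve both gradings; by Corollary~\ref{general-chi-maps-def} (the $l=1$, $m=m$, $n=n$ case), $\chi^0$ and $\chi^1$ have $\zed_2$-degree $0$ and quantum degree $1$; and $h^{(m)}, \overline{h}^{(m)}$ are bouquet-move homotopy equivalences preserving both gradings.

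\textbf{Step 2 (Uniqueness).} Next, I would show that the space of homotopy classes of homogeneous morphisms $C(\widetilde{\Gamma}_{m+1})\to C(\widetilde{\Gamma}_m)$ (resp.\ $C(\widetilde{\Gamma}_m)\to C(\widetilde{\Gamma}_{m+1})$) of $\zed_2$-degree $0$ and quantum degree $1$ is one-dimensional. Using Decomposition~(II) (Theorem~\ref{decomp-II}) to absorb the extra top-left fork present in each $\widetilde{\Gamma}_k$, one has $C(\widetilde{\Gamma}_k)\simeq C(\Gamma_k^L)\{[m+1]\}$ for an appropriate standard resolution $\Gamma_k^L$ of $c_{m+1,n}^\pm$. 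Substituting into the Hom-computation and invoking the $|j-k|=1$ case of Lemma~\ref{colored-crossing-res-HMF} (with $(m,n)$ replaced by $(m+1,n)$), a direct tally of quantum degrees shows that exactly the degree-$1$ subspace is one-dimensional. By Definition~\ref{complex-colored-crossing-chain-maps-def} and Corollary~\ref{left-right-naturally-homotopic-2}, $\tilde d_{m+1}^+$ and $\tilde d_m^-$ are homotopically non-trivial elements of these spaces, hence span.

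\textbf{Step 3 (Non-triviality of the right-hand sides).} It remains to show $\overline{h}^{(m)}\circ\chi^1\circ\tilde{\jmath}$ and $\tilde{p}\circ\chi^0\circ h^{(m)}$ are not null-homotopic. I would exhibit this by composing them and reducing to the computation already carried out in Lemma~\ref{gamma-m-1-p-j-tilde}. Concretely, for a suitable homogeneous polynomial $u$, consider
\[
(\tilde{p}\circ\chi^0\circ h^{(m)})\circ\mathfrak{m}(u)\circ(\overline{h}^{(m)}\circ\chi^1\circ\tilde{\jmath}).
\]
Since $\mathfrak{m}(u)$ commutes with the bouquet-move equivalences $h^{(m)},\overline{h}^{(m)}$ (up to relabelling of $u$), and since $h^{(m)}\circ\overline{h}^{(m)}\approx\id$, this reduces to $\tilde{p}\circ\chi^0\circ\mathfrak{m}(u')\circ\chi^1\circ\tilde{\jmath}$ for some polynomial $u'$. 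Applying the $\chi^0\circ\chi^1$ formula of Corollary~\ref{general-chi-maps-def} and then the identity $\tilde{p}\circ\mathfrak{m}((-r)^{m+n})\circ\tilde{\jmath}\approx\id_{C(\widetilde{\Gamma}_{m+1})}$ established inside the proof of Lemma~\ref{gamma-m-1-p-j-tilde}, one finds a choice of $u$ for which the composition is homotopic to $\id$ up to a non-zero scalar. This forces both factors to be homotopically non-trivial, so each must equal a non-zero scalar multiple (modulo homotopy) of the unique generator singled out in Step~2.

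\textbf{Main obstacle.} The routine calculations (degree bookkeeping and the $\chi^0\circ\chi^1$ identity) pose no difficulty. The main subtle point is Step~2: one must carefully verify that Decomposition~(II) reduces the Hom-computation for $\widetilde{\Gamma}_k$ to Lemma~\ref{colored-crossing-res-HMF} without introducing extra summands in the degree-$1$ part. This requires tracking how the $\qb{\,}{\,}$-factor from the fork interacts with the $\zed_2$-graded polynomial of Lemma~\ref{colored-crossing-res-HMF}; once this is done, the uniqueness statement is immediate and the rest of the argument is a short computation.
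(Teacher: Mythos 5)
Your Step 2 is the crux and unfortunately it is false. The MOY graph $\widetilde{\Gamma}_k$ is $\Gamma_k^L$ for the $(m+1,n)$-crossing with the top-left leg colored $m+1$ split into $m$ and $1$, so by Decomposition (II) indeed $C(\widetilde{\Gamma}_k) \simeq C(\Gamma_k^L)\{[m+1]\}$. But this fork factor appears on \emph{both} the source and the target, so
\[
\Hom_{\HMF}\bigl(C(\widetilde{\Gamma}_{m+1}), C(\widetilde{\Gamma}_m)\bigr) \cong \Hom_{\HMF}\bigl(C(\Gamma_{m+1}^L), C(\Gamma_m^L)\bigr)\{[m+1]^2\}.
\]
Lemma \ref{colored-crossing-res-HMF} says the graded Poincar\'e series of $\Hom_{\HMF}(C(\Gamma_{m+1}^L), C(\Gamma_m^L))$ has lowest term $q^1$ with coefficient $1$, but the multiplier $[m+1]^2$ has coefficient $m+1$ at $q^0$ (and more in negative degrees). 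Hence the coefficient of $q^1$ in the product is at least $m+1$, i.e.\ $\Hom_{\hmf}(C(\widetilde{\Gamma}_{m+1}), C(\widetilde{\Gamma}_m)\{q^{-1}\})$ has dimension at least $m+1$, not $1$. Your uniqueness argument therefore collapses: knowing that $\tilde{d}_{m+1}^+$ and $\overline{h}^{(m)}\circ\chi^1\circ\tilde{\jmath}$ are both homogeneous of the same degree and both homotopically non-trivial does not make them proportional. (You flag this as ``the main subtle point,'' but the tally does not come out the way you hope.) There is also a secondary issue: $\tilde{d}_{m+1}^+$ is by definition $d_{m+1}^+\otimes\id_{\text{fork}}$, a specific element of this larger Hom-space, not ``the'' morphism of that degree, so even a corrected dimension count would have to be combined with additional structure to pin it down.

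The paper instead proves the lemma by a direct computation. Using the explicit formula for the crossing differentials from Theorem \ref{explicit-differential-general} (written as a composition of edge-splitting/merging, bouquet moves and $\chi^1\otimes\chi^1$ or $\chi^0\otimes\chi^0$), plugging in the definitions $\tilde{\jmath} = (\chi^1\otimes\chi^1)\circ\phi\circ h$ and $\tilde{p} = \overline{h}\circ\overline{\phi}\circ(\chi^0\otimes\chi^0)$ from Figure \ref{gamma-m-1-p-j-def}, and then applying the commutativity Lemma \ref{chi-commute-chi-chi} (which factors a ``merged'' $\chi$-morphism through a pair of ``split'' ones via bouquet moves), one matches the two sides term-by-term without ever needing a dimension count. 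If you want to salvage a more conceptual argument, you would need to restrict the Hom-computation to morphisms respecting the tensor-product structure over the fork, which is essentially what the explicit computation is doing under the hood.
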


\begin{proof}
This follows easily from the definitions of $\tilde{d}_{m+1}^+$, $\tilde{d}_{m}^-$, $\tilde{\jmath}$, $\tilde{p}$ and Lemma \ref{chi-commute-chi-chi}.
\end{proof}

\begin{figure}[ht]
$
\xymatrix{
\input{gamma-m-1-double-prime} \ar@<1ex>[rd]^>>>>>>>>>>>>>>>{\jmath''} \ar@<10ex>[rr]^{J_{m-1,m-1}} &  & \input{gamma-m-1-prime} \ar@<1ex>[ld]^>>>>>>>>>>>>>>>{\delta_{m-1}^-} \ar@<-8ex>[ll]^{P_{m-1,m-1}} \\
& \input{gamma-m-prime} \ar@<1ex>[lu]^<<<<<<<<<<<<<<<{p''} \ar@<1ex>[ru]^<<<<<<<<<<<<<<<{\delta_m^+} &
} 
$
\caption{}\label{gamma-m-1-p-j-prime-dif-fig} 

\end{figure}

Denote by $\jmath'':C(\Gamma_{m-1}'') \rightarrow C(\Gamma_{m,1})$ and $p'':C(\Gamma_{m,1}) \rightarrow C(\Gamma_{m-1}'')$ the inclusion and projection morphisms of the component $C(\Gamma_{m-1}'')$ in decomposition \eqref{gamma-m-1-decomp-V}. Consider the diagram in Figure \ref{gamma-m-1-p-j-prime-dif-fig}, where $\delta_m^+$, $\delta_{m-1}^-$, $J_{m-1,m-1}$ and $P_{m-1,m-1}$ are defined in Definition \ref{trivial-complex-differential-def}. We have the following lemma.

\begin{lemma}\label{gamma-m-1-p-j-prime-dif}
$\delta_m^+ \circ \jmath'' \approx J_{m-1,m-1}$ and $p'' \circ \delta_{m-1}^- \approx P_{m-1,m-1}$. That is, the diagram in Figure \ref{gamma-m-1-p-j-prime-dif-fig} commutes in both directions up to homotopy and scaling.
\end{lemma}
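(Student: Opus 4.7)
The plan is to prove this lemma essentially by unfolding definitions, once the morphisms $\jmath''$ and $p''$ are correctly identified with the inclusion/projection morphisms of Definition \ref{trivial-complex-differential-def}. In the notational convention of this section, the local decomposition $C(\Gamma_k') \simeq C(\Gamma_{k-1}'') \oplus C(\Gamma_k'')$ of Lemma \ref{decomp-V-special-2} plays the role of $C(\Gamma_k) \simeq C(\Gamma_k') \oplus C(\Gamma_{k-1}')$ in Definition \ref{trivial-complex-differential-def} (after an index shift by a prime). With this identification, Definition \ref{trivial-complex-differential-def} unfolds as
\[
\delta_m^+ = J_{m-1,m-1} \circ P_{m,m-1}, \qquad \delta_{m-1}^- = J_{m,m-1} \circ P_{m-1,m-1},
\]
where $J_{m,m-1} : C(\Gamma_{m-1}'') \to C(\Gamma_m')$ and $P_{m,m-1} : C(\Gamma_m') \to C(\Gamma_{m-1}'')$ are the inclusion and projection for the first summand of $C(\Gamma_m') \simeq C(\Gamma_{m-1}'') \oplus C(\Gamma_m'')$.

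Next, I will identify $\jmath''$ with $J_{m,m-1}$ and $p''$ with $P_{m,m-1}$, up to homotopy and scaling. Both decompositions of $C(\Gamma_m') = C(\Gamma_{m,1})$ have the same first summand $C(\Gamma_{m-1}'')$ (since $C(\Gamma_m'') \simeq C(\widetilde{\Gamma}_{m+1})$ by Corollary \ref{contract-expand}), so by the Krull-Schmidt property (Proposition \ref{hmf-is-KS}) and the vanishing $\Hom_{\hmf}(C(\widetilde{\Gamma}_{m+1}), C(\Gamma_{m-1}'')) = \Hom_{\hmf}(C(\Gamma_{m-1}''), C(\widetilde{\Gamma}_{m+1})) = 0$ from Corollary \ref{hmf-tilde-gamma-double-prime-m}, any homogeneous morphism of the appropriate bidegree between $C(\Gamma_{m-1}'')$ and $C(\Gamma_m')$ factors through the $C(\Gamma_{m-1}'')$ summand. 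This forces $\jmath'' \approx J_{m,m-1}$ and $p'' \approx P_{m,m-1}$, with the defining property $p'' \circ \jmath'' \simeq \id_{C(\Gamma_{m-1}'')}$ translating to $P_{m,m-1} \circ J_{m,m-1} \simeq \id_{C(\Gamma_{m-1}'')}$ after appropriate rescaling.

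Combining these identifications with the formulas above yields
\begin{eqnarray*}
\delta_m^+ \circ \jmath'' & \approx & J_{m-1,m-1} \circ P_{m,m-1} \circ J_{m,m-1} \simeq J_{m-1,m-1}, \\
p'' \circ \delta_{m-1}^- & \approx & P_{m,m-1} \circ J_{m,m-1} \circ P_{m-1,m-1} \simeq P_{m-1,m-1},
\end{eqnarray*}
as desired. The main technical point is the consistency of the two direct-sum decompositions of $C(\Gamma_m')$ (the one implicit in Definition \ref{trivial-complex-differential-def} via Lemma \ref{decomp-V-special-2} and the one in \eqref{gamma-m-1-decomp-V}), which is resolved by the Krull-Schmidt uniqueness together with the vanishing of cross Hom-spaces established in the preceding lemmas. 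There is no substantial computational obstacle; in contrast to Lemma \ref{gamma-m-1-p-j-differential}, no explicit chain of $\chi$-morphisms and bouquet moves needs to be verified, because the statement is a formal consequence of direct-sum calculus once the summand identifications are in place.
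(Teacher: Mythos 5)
Your proof is correct and takes essentially the same approach as the paper, resting on the one-dimensionality of the relevant $\Hom_\hmf$ spaces and on the fact that \eqref{gamma-m-1-decomp-V} is the same Decomposition (V) splitting that underlies Definition \ref{trivial-complex-differential-def}. The paper packages it slightly differently by noting that $\delta_m^+\circ\jmath''$ and $p''\circ\delta_{m-1}^-$ are homogeneous of degree zero, lie in the one-dimensional spaces $\Hom_\hmf(C(\Gamma_{m-1}''),C(\Gamma_{m-1}'))$ and $\Hom_\hmf(C(\Gamma_{m-1}'),C(\Gamma_{m-1}''))$ spanned by $J_{m-1,m-1}$ and $P_{m-1,m-1}$, and are nonzero because $p''\circ\delta_{m-1}^-\circ\delta_m^+\circ\jmath''\approx\id_{C(\Gamma_{m-1}'')}$, rather than first establishing $\jmath''\approx J_{m,m-1}$ and $p''\approx P_{m,m-1}$ as you do.
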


\begin{proof}
Using Lemmas \ref{complex-computing-gamma-HMF-lemma} and \ref{decomp-V-special-2}, one can check that 
\[
\Hom_\hmf(C(\Gamma_{m-1}''),C(\Gamma_{m-1}')) \cong \Hom_\hmf(C(\Gamma_{m-1}'),C(\Gamma_{m-1}'')) \cong \C.
\]
Recall that $J_{m-1,m-1}$ and  $P_{m-1,m-1}$ are both homogeneous morphisms of $\zed_2$-degree $0$ and quantum degree $0$, and $P_{m-1,m-1} \circ J_{m-1,m-1} \approx \id_{C(\Gamma_{m-1}'')}$. So $J_{m-1,m-1}$ and $P_{m-1,m-1}$ span these $1$-dimensional spaces. Note that $\delta_m^+ \circ \jmath''$ and  $p'' \circ \delta_{m-1}^-$ are also homogeneous morphisms of $\zed_2$-degree $0$ and quantum degree $0$. To prove the lemma, we only need to show that $\delta_m^+ \circ \jmath''$ and  $p'' \circ \delta_{m-1}^-$ are not homotopic to $0$. But, by their definitions, we know that $p''\circ \delta_{m-1}^-\circ\delta_m^+ \circ \jmath'' \approx \id_{C(\Gamma_{m-1}'')}$. So $\delta_m^+ \circ \jmath''$ and  $p'' \circ \delta_{m-1}^-$ are homotopically non-trivial.
\end{proof}

\subsection{Proof of Proposition \ref{fork-sliding-invariance-special}} In this subsection, we prove \eqref{fork-sliding-invariance-special-eq}, that is, 
\[
\hat{C}(D_{10}^\pm) \simeq \hat{C}(D_{11}^\pm) \text{ if } l=1.
\] 
The proof of the rest of Proposition \ref{fork-sliding-invariance-special} is very similar and left to the reader. We prove \eqref{fork-sliding-invariance-special-eq} by simplifying $\hat{C}(D_{11}^\pm)$ and reducing it to $\hat{C}(D_{10}^\pm)$. To do this, we need to use the Gaussian Elimination Lemma \cite[Lemma 4.2]{Bar-fast}.

\begin{lemma}\cite[Lemma 4.2]{Bar-fast}\label{gaussian-elimination}
Let $\mathcal{C}$ be an additive category, and
\[
\mathtt{I}=``\cdots\rightarrow C\xrightarrow{\left(%
\begin{array}{c}
  \alpha\\
  \beta \\
\end{array}% 
\right)}
\left.%
\begin{array}{c}
  A\\
  \oplus \\
  D
\end{array}% 
\right.
\xrightarrow{
\left(%
\begin{array}{cc}
  \phi & \delta\\
  \gamma & \varepsilon \\
\end{array}% 
\right)}
\left.%
\begin{array}{c}
  B\\
  \oplus \\
  E
\end{array}% 
\right.
\xrightarrow{
\left(%
\begin{array}{cc}
  \mu & \nu\\
\end{array}% 
\right)} F \rightarrow \cdots"
\]
an object of $\ch(\mathcal{C})$, that is, a bounded chain complex over $\mathcal{C}$. Assume that $A\xrightarrow{\phi} B$ is an isomorphism in $\mathcal{C}$ with inverse $\phi^{-1}$. Then $\mathtt{I}$ is homotopic to (that is, isomorphic in $\hch(\mathcal{C})$ to)
\[
\mathtt{II}=
``\cdots\rightarrow C \xrightarrow{\beta} D
\xrightarrow{\varepsilon-\gamma\phi^{-1}\delta} E\xrightarrow{\nu} F \rightarrow \cdots".
\]
In particular, if $\delta$ or $\gamma$ is $0$, then $\mathtt{I}$ is homotopic to 
\[
\mathtt{II}=
``\cdots\rightarrow C \xrightarrow{\beta} D
\xrightarrow{\varepsilon} E\xrightarrow{\nu} F \rightarrow \cdots".
\]
\end{lemma}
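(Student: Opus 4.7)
The plan is to exhibit an explicit pair of mutually inverse homotopy equivalences between $\mathtt{I}$ and $\mathtt{II}$, performing the Gaussian elimination in a single row-and-column-reduction style move that is permitted in an arbitrary additive category. Since the statement is local at the degrees of $A\oplus D$ and $B\oplus E$, I will take both maps to be the identity on $C$ and $F$ (and on any further terms of $\mathtt{I}$ away from the relevant spot), so the only work is at the three transitions $C\to A\oplus D\to B\oplus E\to F$.

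First I would define the chain map $F\colon \mathtt{I}\to \mathtt{II}$ by
\[
F|_{A\oplus D}(a,d)=d,\qquad F|_{B\oplus E}(b,e)=e-\gamma\phi^{-1}(b),
\]
and the chain map $G\colon \mathtt{II}\to \mathtt{I}$ by
\[
G|_{D}(d)=(-\phi^{-1}\delta d,\,d),\qquad G|_{E}(e)=(0,e).
\]
Checking that $F$ and $G$ are chain maps reduces to four algebraic identities at the incoming end, the middle, and the outgoing end. Three of them are immediate matrix multiplications, but the two equations at the outer edges, namely $\nu\circ F|_{B\oplus E}=(\mu,\nu)$ and $G|_{D}\circ\beta=(\alpha,\beta)^{t}$, force $\mu=-\nu\gamma\phi^{-1}$ and $\alpha=-\phi^{-1}\delta\beta$ respectively. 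These identities are precisely the consequences of $d^{2}=0$ in $\mathtt{I}$ read off from the $(A\to F)$ and $(C\to B)$ compositions, so they are automatic. This is the only subtle point of the verification, though it is not really an obstacle once one writes out the matrix products explicitly.

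Next, the composition $F\circ G$ is the identity on $\mathtt{II}$ by a direct computation: $F|_{D}\circ G|_{D}(d)=F|_{D}(-\phi^{-1}\delta d,d)=d$ and $F|_{E}\circ G|_{E}(e)=F|_{E}(0,e)=e$. For the other composition I would define a homotopy $H\colon \mathtt{I}\to \mathtt{I}[-1]$ supported on the single component $B\oplus E\to A\oplus D$ given by
\[
H(b,e)=(\phi^{-1}(b),\,0),
\]
and zero elsewhere. A direct block-matrix calculation then shows $dH+Hd$ equals $\mathrm{id}-G\circ F$ degree-by-degree: at $A\oplus D$ one gets $(a,d)\mapsto (a+\phi^{-1}\delta d,0)$, at $B\oplus E$ one gets $(b,e)\mapsto (b,\gamma\phi^{-1}b)$, and at $C$ and $F$ both sides vanish, which matches the explicit formula for $\mathrm{id}-G\circ F$.

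The final statement of the lemma, covering the cases $\delta=0$ or $\gamma=0$, is then immediate since the formula $\varepsilon-\gamma\phi^{-1}\delta$ collapses to $\varepsilon$. The main obstacle is not really an obstacle: it is just the bookkeeping needed to verify that $F$ and $G$ commute with the differential at the boundary of the cancelled block, and as noted above, this bookkeeping is forced by the $d^{2}=0$ relations already present in $\mathtt{I}$. Everything in the argument takes place in the additive category $\mathcal{C}$ (we only ever use direct sums and the given inverse $\phi^{-1}$), so the proof applies to $\mathcal{C}=\hmf_{R,w}$ as needed in later sections.
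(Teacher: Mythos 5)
Your proof is correct; all the block-matrix verifications check out, and the two constraint identities $\mu=-\nu\gamma\phi^{-1}$ and $\alpha=-\phi^{-1}\delta\beta$ are indeed forced by $d^2=0$ as you observe, so $F$ and $G$ are honest chain maps, $F\circ G=\mathrm{id}_{\mathtt{II}}$ on the nose, and $H$ witnesses $\mathrm{id}_{\mathtt{I}}-G\circ F=dH+Hd$.

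Your route differs from the paper's in presentation. The paper first performs a change of basis \emph{inside} $\ch(\mathcal{C})$: it produces an explicit isomorphism of chain complexes $\mathtt{I}\cong \mathtt{I}'$, where $\mathtt{I}'$ has the same objects but a block-diagonal middle differential $\left(\begin{smallmatrix}\phi&0\\0&\varepsilon-\gamma\phi^{-1}\delta\end{smallmatrix}\right)$ and trivialized off-diagonal incoming/outgoing maps, using the invertible upper/lower-triangular matrices $\left(\begin{smallmatrix}\mathrm{id}&\phi^{-1}\delta\\0&\mathrm{id}\end{smallmatrix}\right)$ and $\left(\begin{smallmatrix}\mathrm{id}&0\\-\gamma\phi^{-1}&\mathrm{id}\end{smallmatrix}\right)$; then it observes $\mathtt{I}'\cong\mathtt{II}\oplus(0\to A\xrightarrow{\phi}B\to 0)$ with the second summand contractible. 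You instead go straight to a deformation-retraction picture, writing down $F$, $G$, and the homotopy $H$ directly, with $F\circ G=\mathrm{id}$ strictly. Both arguments are purely additive-category manipulations. The paper's version makes the contractible summand visible as a genuine direct summand already in $\ch(\mathcal{C})$, which can be convenient for later bookkeeping; your version is leaner and makes the strong-deformation-retract structure explicit, which is sometimes what one actually wants to invoke. Either is a valid proof of the lemma.
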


\begin{proof}
Consider the chain complex
\[
\mathtt{I}'=
``\cdots\rightarrow C\xrightarrow{\left(%
\begin{array}{c}
  0\\
  \beta \\
\end{array}% 
\right)}
\left.%
\begin{array}{c}
  A\\
  \oplus \\
  D
\end{array}% 
\right.
\xrightarrow{
\left(%
\begin{array}{cc}
  \phi & 0\\
  0 & \varepsilon-\gamma\phi^{-1}\delta \\
\end{array}% 
\right)}
\left.%
\begin{array}{c}
  B\\
  \oplus \\
  E
\end{array}% 
\right.
\xrightarrow{
\left(%
\begin{array}{cc}
  0 & \nu\\
\end{array}% 
\right)} F \rightarrow \cdots".
\]
Define $f:\mathtt{I}\rightarrow\mathtt{I}'$ and $g:\mathtt{I}'\rightarrow\mathtt{I}$ by
\[
\begin{CD}
\cdots @>>> C @>{\left(%
\begin{array}{c}
  \alpha\\
  \beta \\
\end{array}% 
\right)}>> \left.%
\begin{array}{c}
  A\\
  \oplus \\
  D
\end{array}% 
\right.  
@>{\left(%
\begin{array}{cc}
  \phi & \delta\\
  \gamma & \varepsilon \\
\end{array}% 
\right)}>> 
\left.%
\begin{array}{c}
  B\\
  \oplus \\
  E
\end{array}% 
\right.
@>{
\left(%
\begin{array}{cc}
  \mu & \nu\\
\end{array}% 
\right)}>> F @>>> \cdots \\
@VV{\id}V @VV{\id}V @VV{\left(%
\begin{array}{cc}
  \id & \phi^{-1}\delta\\
  0 & \id \\
\end{array}% 
\right)}V @VV{\left(%
\begin{array}{cc}
  \id & 0\\
  -\gamma\phi^{-1} & \id \\
\end{array}% 
\right)}V @VV{\id}V @VV{\id}V\\
\cdots @>>> C @>{\left(%
\begin{array}{c}
  0\\
  \beta \\
\end{array}% 
\right)}>> \left.%
\begin{array}{c}
  A\\
  \oplus \\
  D
\end{array}% 
\right. @>{\left(%
\begin{array}{cc}
  \phi & 0\\
  0 & \varepsilon-\gamma\phi^{-1}\delta \\
\end{array}% 
\right)}>> \left.%
\begin{array}{c}
  B\\
  \oplus \\
  E
\end{array}% 
\right.
@>{
\left(%
\begin{array}{cc}
  0 & \nu\\
\end{array}% 
\right)}>> F @>>> \cdots \\
@VV{\id}V @VV{\id}V @VV{\left(%
\begin{array}{cc}
  \id & -\phi^{-1}\delta\\
  0 & \id \\
\end{array}% 
\right)}V @VV{\left(%
\begin{array}{cc}
  \id & 0\\
  \gamma\phi^{-1} & \id \\
\end{array}% 
\right)}V @VV{\id}V @VV{\id}V \\
\cdots @>>> C @>{\left(%
\begin{array}{c}
  \alpha\\
  \beta \\
\end{array}% 
\right)}>> \left.%
\begin{array}{c}
  A\\
  \oplus \\
  D
\end{array}% 
\right.  
@>{\left(%
\begin{array}{cc}
  \phi & \delta\\
  \gamma & \varepsilon \\
\end{array}% 
\right)}>> 
\left.%
\begin{array}{c}
  B\\
  \oplus \\
  E
\end{array}% 
\right.
@>{
\left(%
\begin{array}{cc}
  \mu & \nu\\
\end{array}% 
\right)}>> F @>>> \cdots \\
\end{CD}
\]
It is easy to check that $f$ and $g$ are isomorphisms in $\ch(\mathcal{C})$. Thus,
\[
\mathtt{I} \cong \mathtt{I}' \cong \mathtt{II} \oplus ``0\rightarrow A \xrightarrow{\phi} B \rightarrow 0". 
\]
But $0\rightarrow A \xrightarrow{\phi} B \rightarrow 0$ is homotopic to $0$ since $\phi$ is an isomorphism in $\mathcal{C}$. So $\mathtt{I} \simeq \mathtt{II}$.
\end{proof}

\begin{figure}[ht]
$
\xymatrix{
\input{gamma-k-prime} && \input{gamma-k-1-prime}
} 
$
\caption{}\label{hmf-gamma-prime-k-k-1-fig} 

\end{figure}

\begin{lemma}\label{hmf-gamma-prime-k-k-1}
\begin{eqnarray*}
\Hom_\hmf (C(\Gamma_k')\{[m-k]q^{k-1-m}\}, C(\Gamma_{k-1}')) & \cong & 0, \\
\Hom_\hmf (C(\Gamma_{k-1}'), C(\Gamma_k')\{[m-k]q^{m+1-k}\}) & \cong & 0.
\end{eqnarray*}
\end{lemma}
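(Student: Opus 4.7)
The plan is to compute the Poincar\'e polynomials of $\Hom_\HMF(C(\Gamma_k'), C(\Gamma_{k-1}'))$ and $\Hom_\HMF(C(\Gamma_{k-1}'), C(\Gamma_k'))$ and show each is supported in non-negative quantum gradings. Translating $\Hom_\hmf$ with grading shifts into coefficients of these Poincar\'e polynomials, the first vanishing becomes the statement that the coefficients at $q^s$ for $s \in \{-2(m-k), -2(m-k)+2, \ldots, -2\}$ (the support of $[m-k]q^{k-1-m}$) are all zero, and the second becomes the statement that the coefficients of $\Hom_\HMF(C(\Gamma_{k-1}'), C(\Gamma_k'))$ at $q^t$ for $t \in \{-2, -4, \ldots, -2(m-k)\}$ are all zero. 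Both are immediate once the Poincar\'e polynomials are known to be supported in non-negative quantum gradings.

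To compute each Poincar\'e polynomial, I would first apply Decomposition (V) (Theorem \ref{decomp-V}) with $l=1$ to the outer rectangle structure of $\Gamma_k'$ and $\Gamma_{k-1}'$, giving
\[
C(\Gamma_k') \simeq C(\Gamma_k'') \oplus C(\Gamma_{k-1}''), \qquad C(\Gamma_{k-1}') \simeq C(\Gamma_{k-1}'') \oplus C(\Gamma_{k-2}''),
\]
subject to the usual boundary truncation at $k_0 = \max\{m-n, 0\}$. The target Homs then split as direct sums of $\Hom_\HMF(C(\Gamma_i''), C(\Gamma_j''))$ with $i, j \in \{k, k-1, k-2\}$ and $|i-j| \leq 2$. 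For each such pair, Lemma \ref{complex-computing-gamma-HMF-lemma} expresses the Hom as the homology of the closed matrix factorization $C(\Gamma_j'') \otimes_{\hat R} C(\overline{\Gamma_i''})$, which is the matrix factorization of a closed colored MOY graph. Systematic reduction via bouquet moves (Corollary \ref{contract-expand}) and Decompositions (I)--(II) (Theorems \ref{decomp-I} and \ref{decomp-II}) collapses this closed graph to a disjoint union of colored circles, whose graded dimensions are computed by Corollary \ref{circle-dimension}, producing a positive Laurent polynomial $Q_{ij}(q)$ times $C(\emptyset)$.

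The final step, parallel to the ``$(k-j)^2$'' calculation in Lemma \ref{colored-crossing-res-HMF}, is to verify that each $Q_{ij}(q)$ is supported in non-negative quantum gradings. Since $|i-j| \leq 2$ in all relevant cases, the expected lowest non-vanishing grading is of the order $(i-j)^2 \geq 0$, up to common shifts coming from the top fork (shared by $\Gamma_i''$ and $\Gamma_j''$, hence contributing equally on both sides). The hardest part will be the combinatorial bookkeeping of the MOY-graph simplifications and the quantum binomial arithmetic, especially tracking how the common top fork produces identical $[n+1]$-type factors that do not affect the support. The calculation is structurally parallel to the ones in Lemmas \ref{trivial-complex-lemma-1} and \ref{hmf-tilde-gamma-prime-m} and should proceed by the same collection of MOY moves.
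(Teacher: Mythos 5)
Your plan is essentially the paper's: decompose via Decomposition~(V), bound the lowest quantum grading of the resulting $\Hom_\HMF$ spaces via closed-graph computations, and then apply the shift by $[m-k]q^{\pm(m+1-k)}$. The paper is slightly leaner, however: it decomposes only $C(\Gamma_k')\simeq C(\Gamma_k'')\oplus C(\Gamma_{k-1}'')$ and invokes the bound that the lowest quantum grading of $\Hom_\HMF(C(\Gamma_j''),C(\Gamma_k'))$ is $(j-k)(j-k+1)$ (established parallel to Lemma~\ref{trivial-complex-lemma-1}), which already gives lowest grading $0$ for both $\Hom_\HMF(C(\Gamma_k'),C(\Gamma_{k-1}'))$ and $\Hom_\HMF(C(\Gamma_{k-1}'),C(\Gamma_k'))$ without computing each $\Hom_\HMF(C(\Gamma_i''),C(\Gamma_j''))$ separately.

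One caution on your heuristic: the guess ``of the order $(i-j)^2$'' for the lowest grading of $\Hom_\HMF(C(\Gamma_i''),C(\Gamma_j''))$ is inconsistent with the paper's bound. If it were correct, decomposing the target $C(\Gamma_k')\simeq C(\Gamma_k'')\oplus C(\Gamma_{k-1}'')$ in $\Hom_\HMF(C(\Gamma_j''),C(\Gamma_k'))$ would give lowest grading $\min\bigl((j-k)^2,(j-k+1)^2\bigr)$, which is $1$ when $j-k=1$, whereas $(j-k)(j-k+1)$ gives $2$. Consistency instead forces the lowest grading of $\Hom_\HMF(C(\Gamma_i''),C(\Gamma_j''))$ to be $|i-j|(|i-j|+1)$. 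The $(j-k)^2$ pattern of Lemma~\ref{colored-crossing-res-HMF} belongs to the crossing-resolution graphs $\Gamma_j^L,\Gamma_k^L$; the extra top fork on $\Gamma_i''$ and $\Gamma_j''$, once closed up into an $[n+1]$-type factor, shifts that pattern to $|i-j|(|i-j|+1)$. Since this quantity is still $\geq 0$ and vanishes exactly at $i=j$, your final conclusion is unaffected, but you would want to track this carefully if you actually carry out the MOY reductions you outline.
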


\begin{proof}
By Decomposition (V) (more precisely, Lemma \ref{decomp-V-special-2}), we have that 
\[
C(\Gamma_k') \simeq C(\Gamma_k'') \oplus C(\Gamma_{k-1}'').
\]
Similar to Lemma \ref{trivial-complex-lemma-1}, one can check that the lowest non-vanishing quantum grading of $\Hom_\HMF (C(\Gamma_j''), C(\Gamma_k'))$ is $(j-k)(j-k+1)$. So the lowest non-vanishing quantum grading of $\Hom_\HMF (C(\Gamma_k'), C(\Gamma_{k-1}'))$ and $\Hom_\HMF(C(\Gamma_{k-1}'), C(\Gamma_k'))$ is $0$. Note that 
\begin{eqnarray*}
&& \Hom_\HMF (C(\Gamma_k')\{[m-k]q^{k-1-m}\}, C(\Gamma_{k-1}')) \\
& \cong & \Hom_\HMF (C(\Gamma_k'), C(\Gamma_{k-1}'))\{[m-k]q^{m+1-k}\} \\
& \cong & \bigoplus_{j=0}^{m-1-k} \Hom_\HMF (C(\Gamma_k'), C(\Gamma_{k-1}')), \{q^{2+2j}\}
\end{eqnarray*}
\begin{eqnarray*}
&& \Hom_\HMF (C(\Gamma_{k-1}'), C(\Gamma_k')\{[m-k]q^{m+1-k}\}) \\
& \cong & \Hom_\HMF (C(\Gamma_{k-1}'), C(\Gamma_k'))\{[m-k]q^{m+1-k}\} \\
& \cong & \bigoplus_{j=0}^{m-1-k} \Hom_\HMF (C(\Gamma_{k-1}'), C(\Gamma_k')) \{q^{2+2j}\},
\end{eqnarray*}
and the lowest non-vanishing quantum grading of the right hand side is $2$ in both cases. So
\begin{eqnarray*}
\Hom_\hmf (C(\Gamma_k')\{[m-k]q^{k-1-m}\}, C(\Gamma_{k-1}')) & \cong & 0, \\
\Hom_\hmf (C(\Gamma_{k-1}'), C(\Gamma_k')\{[m-k]q^{m+1-k}\}) & \cong & 0.
\end{eqnarray*}
\end{proof}

We are now ready to prove \eqref{fork-sliding-invariance-special-eq}. We prove $\hat{C}(D_{10}^+) \simeq \hat{C}(D_{11}^+)$ first and then $\hat{C}(D_{10}^-) \simeq \hat{C}(D_{11}^-)$. 

\begin{proof}[Proof of $\hat{C}(D_{10}^+) \simeq \hat{C}(D_{11}^+)$ when $l=1$]
Recall that the chain complex $\hat{C}(D_{1,1}^+)$ is 

{\tiny
\[
0 \rightarrow C(\Gamma_{m,1}) \xrightarrow{\mathfrak{d}_m^+} \left.%
\begin{array}{c}
C(\Gamma_{m,0}) \{q^{-1}\}\\
\oplus \\
C(\Gamma_{m-1,1})\{q^{-1}\}
\end{array}%
\right. 
\xrightarrow{\mathfrak{d}_{m-1}^+} \cdots \xrightarrow{\mathfrak{d}_{k+1}^+} \left.%
\begin{array}{c}
C(\Gamma_{k+1,0}) \{q^{k-m}\}\\
\oplus \\
C(\Gamma_{k,1})\{q^{k-m}\}
\end{array}%
\right. 
\xrightarrow{\mathfrak{d}_{k}^+} \cdots\xrightarrow{\mathfrak{d}_{k_0}^+} C(\Gamma_{k_0,0}) \{q^{k_0-1-m}\} \rightarrow 0,
\]
}

\noindent where $k_0 = \max \{m-n,0\}$ as above and
\begin{eqnarray*}
\mathfrak{d}_m^+ & = & \left(%
\begin{array}{c}
\chi^1\\
-d_m^+
\end{array}%
\right),\\
\mathfrak{d}_k^+ & = & \left(%
\begin{array}{cc}
d_{k+1}^+ & \chi^1\\
0 & -d_k^+
\end{array}%
\right) ~\text{ for } k_0<k<m, \\
\mathfrak{d}_{k_0}^+ & = & \left(%
\begin{array}{cc}
d_{k_0 +1}^+, & \chi^1\\
\end{array}%
\right).
\end{eqnarray*}

From Decomposition (IV) (more precisely, \eqref{fork-special-decomp-iv-eq}), we have 
\[
C(\Gamma_{k,0}) \simeq C(\widetilde{\Gamma}_k) \oplus C(\Gamma_k')\{[m-k]\}.
\] 
By Corollary \ref{contract-expand} and Decomposition (II) (Theorem \ref{decomp-II}), we have 
\[
C(\Gamma_{k,1}) \simeq C(\Gamma_k')\{[m+1-k]\} \cong C(\Gamma_k')\{q^{m-k}\} \oplus C(\Gamma_k')\{[m-k]q^{-1}\}.
\]
Therefore,
\[
\left.%
\begin{array}{c}
C(\Gamma_{k+1,0}) \{q^{k-m}\}\\
\oplus \\
C(\Gamma_{k,1})\{q^{k-m}\}
\end{array}%
\right. \simeq 
\left.%
\begin{array}{c}
C(\widetilde{\Gamma}_{k+1}) \{q^{k-m}\}\\
\oplus \\
C(\Gamma_{k+1}')\{[m-k-1]q^{k-m}\} \\
\oplus \\
C(\Gamma_k') \\
\oplus \\
C(\Gamma_k')\{[m-k]q^{k-m-1}\}
\end{array}%
\right. \text{ for } k_0<k<m,
\]
and
\[
C(\Gamma_{k_0,0}) \{q^{k_0-1-m}\} \simeq 
\left.%
\begin{array}{c}
C(\widetilde{\Gamma}_{k_0}) \{q^{k_0-1-m}\}\\
\oplus \\
C(\Gamma_{k_0}')\{[m-k_0]q^{k_0-1-m}\} \\
\end{array}%
\right..
\]
So, $\hat{C}(D_{1,1}^+)$ is isomorphic to 

{\tiny
\[
_{0 \rightarrow C(\Gamma_{m,1}) \xrightarrow{\mathfrak{d}_m^+} \left.%
\begin{array}{c}
C(\widetilde{\Gamma}_{m}) \{q^{-1}\}\\
\oplus \\
C(\Gamma_{m-1}') \\
\oplus \\
C(\Gamma_{m-1}')\{q^{-2}\}
\end{array}%
\right.
\xrightarrow{\mathfrak{d}_{m-1}^+} \cdots \xrightarrow{\mathfrak{d}_{k+1}^+} \left.%
\begin{array}{c}
C(\widetilde{\Gamma}_{k+1}) \{q^{k-m}\}\\
\oplus \\
C(\Gamma_{k+1}')\{[m-k-1]q^{k-m}\} \\
\oplus \\
C(\Gamma_k') \\
\oplus \\
C(\Gamma_k')\{[m-k]q^{k-m-1}\}
\end{array}%
\right.
\xrightarrow{\mathfrak{d}_{k}^+} \cdots\xrightarrow{\mathfrak{d}_{k_0}^+} \left.%
\begin{array}{c}
C(\widetilde{\Gamma}_{k_0}) \{q^{k_0-1-m}\}\\
\oplus \\
C(\Gamma_{k_0}')\{[m-k_0]q^{k_0-1-m}\} \\
\end{array}%
\right. \rightarrow 0.}
\]
}

\noindent In this form, $\mathfrak{d}_k^+$ is given by a $4\times 4$ matrix $(\mathfrak{d}_{k;i,j}^+)_{4\times4}$ for $k_0<k<m-1$. Clearly, 
\[
\mathfrak{d}_{k;i,j}^+=0 \text{ for } (i,j)=(3,1),~(3,2),~(4,1),~(4,2). 
\]
By Lemma \ref{relating-D-0-1-lemma}, 
\[
\mathfrak{d}_{k;1,1}^+ \approx \tilde{d}_{k+1}^+.
\]
By Lemma \ref{relating-C-D-11-lemma},
\[
\mathfrak{d}_{k;3,3}^+ \approx \delta_{k}^+.
\]
By \eqref{fork-special-decomp-iv-cor-1} in Corollary \ref{fork-special-decomp-iv-corollary}, we know that
\begin{eqnarray*}
\mathfrak{d}_{k;1,4}^+ & \simeq & 0,\\
\mathfrak{d}_{k;2,4}^+ & \approx & \id_{C(\Gamma_k')\{[m-k]q^{k-m-1}\}}.
\end{eqnarray*}
By Lemma \ref{hmf-gamma-prime-k-k-1}, we have
\[
\mathfrak{d}_{k;3,4}^+ \simeq 0.
\]
Altogether, we have that, for $k_0<k<m-1$,
\[
\mathfrak{d}_k^+ \simeq \left(%
\begin{array}{cccc}
c_k \tilde{d}_{k+1}^+ & \ast & \ast & 0 \\
\ast & \ast & \ast & c_k''\id_{C(\Gamma_k')\{[m-k]q^{k-m-1}\}}\\
0 & 0 & c_k' \delta_{k}^+ & 0 \\
0 &  0 & \ast & \ast
\end{array}%
\right),
\]
where $c_k$, $c_k'$ and $c_k''$ are non-zero scalars and $\ast$'s stand for morphisms we have not identified. Similarly,
\begin{eqnarray*}
\mathfrak{d}_{k_0}^+ & \simeq & \left(%
\begin{array}{cccc}
c_{k_0} \tilde{d}_{k_0+1}^+ & \ast & \ast & 0 \\
\ast & \ast & \ast & c_{k_0}''\id_{C(\Gamma_{k_0}')\{[m-k+0]q^{k_0-1-m}\}}
\end{array}%
\right), \\
\mathfrak{d}_{m-1}^+ & \simeq & \left(%
\begin{array}{ccc}
c_{m-1} \tilde{d}_{m}^+ & \ast & 0 \\
\ast  & \ast & c_{m-1}''\id_{C(\Gamma_{m-1}')\{q^{-2}\}}\\
0  & c_{m-1}' \delta_{m-1}^+ & 0 \\
0  & \ast & \ast
\end{array}%
\right),
\end{eqnarray*}
where $c_{k_0}$, $c_{k_0}''$, $c_{m-1}$, $c_{m-1}'$ and $c_{m-1}''$ are non-zero scalars.

Now apply Gaussian Elimination (Lemma \ref{gaussian-elimination}) to $c_k''\id_{C(\Gamma_k')\{[m-k]q^{k-m-1}\}}$ in $\mathfrak{d}_{k}^+$ for $k=k_0,k_0+1,\dots,m-1$ in that order. We get that $\hat{C}(D_{11}^+)$ is homotopic to 

{\tiny
\[
0 \rightarrow C(\Gamma_{m,1}) \xrightarrow{\hat{\mathfrak{d}}_m^+} \left.%
\begin{array}{c}
C(\widetilde{\Gamma}_{m}) \{q^{-1}\}\\
\oplus \\
C(\Gamma_{m-1}') \\
\end{array}%
\right. 
\xrightarrow{\hat{\mathfrak{d}}_{m-1}^+} \cdots \xrightarrow{\hat{\mathfrak{d}}_{k+1}^+} \left.%
\begin{array}{c}
C(\widetilde{\Gamma}_{k+1}) \{q^{k-m}\}\\
\oplus \\
C(\Gamma_k') \\
\end{array}%
\right. 
\xrightarrow{\hat{\mathfrak{d}}_{k}^+} \cdots\xrightarrow{\hat{\mathfrak{d}}_{k_0}^+} C(\widetilde{\Gamma}_{k_0}) \{q^{k_0-1-m}\} \rightarrow 0,
\]
}

\noindent where
\begin{eqnarray}
\label{hat-d-k}\hat{\mathfrak{d}}_k^+ & \simeq & \left(%
\begin{array}{cc}
c_k \tilde{d}_{k+1}^+ & \ast\\
0 & c_k' \delta_{k}^+
\end{array}%
\right) ~\text{ for } k_0<k<m, \\
\label{hat-d-k0}\hat{\mathfrak{d}}_{k_0}^+ & \simeq & \left(%
\begin{array}{cc}
c_{k_0} \tilde{d}_{k_0+1}^+, & \ast
\end{array}%
\right).
\end{eqnarray}
Next we determine $\hat{\mathfrak{d}}_m^+$. By Decomposition (V) (more precisely, \eqref{gamma-m-1-decomp-V}), we have
\[
C(\Gamma_{m,1}) \simeq \left.%
\begin{array}{c}
C(\widetilde{\Gamma}_{m+1})\\
\oplus \\
C(\Gamma_{m-1}'')\\
\end{array}%
\right. .
\]
Under this decomposition, $\hat{\mathfrak{d}}_m^+$ is represented by a $2\times2$ matrix. By Lemmas \ref{hmf-tilde-gamma-prime-m}, \ref{gamma-m-1-p-j-differential} and \ref{gamma-m-1-p-j-prime-dif}, we know that
\begin{equation}\label{hat-d-m}
\hat{\mathfrak{d}}_m^+ \simeq \left(%
\begin{array}{cc}
c_{m} \tilde{d}_{m+1}^+ & \ast \\
0 & c_{m}' J_{m-1,m-1}
\end{array}%
\right),
\end{equation}
where $c_m$ and $c_m'$ are non-zero scalars. So $\hat{C}(D_{11}^+)$ is homotopic to 

{\tiny
\[
0 \rightarrow \left.%
\begin{array}{c}
C(\widetilde{\Gamma}_{m+1})\\
\oplus \\
C(\Gamma_{m-1}'')\\
\end{array}%
\right. \xrightarrow{\hat{\mathfrak{d}}_m^+} \left.%
\begin{array}{c}
C(\widetilde{\Gamma}_{m}) \{q^{-1}\}\\
\oplus \\
C(\Gamma_{m-1}') \\
\end{array}%
\right. 
\xrightarrow{\hat{\mathfrak{d}}_{m-1}^+} \cdots \xrightarrow{\hat{\mathfrak{d}}_{k+1}^+} \left.%
\begin{array}{c}
C(\widetilde{\Gamma}_{k+1}) \{q^{k-m}\}\\
\oplus \\
C(\Gamma_k') \\
\end{array}%
\right. 
\xrightarrow{\hat{\mathfrak{d}}_{k}^+} \cdots\xrightarrow{\hat{\mathfrak{d}}_{k_0}^+} C(\widetilde{\Gamma}_{k_0}) \{q^{k_0-1-m}\} \rightarrow 0,
\]
}

\noindent where $\hat{\mathfrak{d}}_m^+,\dots,\hat{\mathfrak{d}}_{k_0}^+$ are given in \eqref{hat-d-k}, \eqref{hat-d-k0} and \eqref{hat-d-m}.

Recall that, by Decomposition (V) (more precisely, Lemma \ref{decomp-V-special-2}), 
\[
C(\Gamma_k') \simeq \begin{cases}
C(\Gamma_k'') \oplus C(\Gamma_{k-1}'') & \text{if } k_0+1 \leq l \leq m-1 ,\\
C(\Gamma_k'') & \text{if } k=k_0.
\end{cases}
\]
By Proposition \ref{trivial-complex-prop}, under the decomposition
\[
\left.%
\begin{array}{c}
C(\widetilde{\Gamma}_{k+1}) \{q^{k-m}\}\\
\oplus \\
C(\Gamma_k') \\
\end{array}%
\right. \simeq 
\left.%
\begin{array}{c}
C(\widetilde{\Gamma}_{k+1}) \{q^{k-m}\}\\
\oplus \\
C(\Gamma_k'') \\
\oplus \\
C(\Gamma_{k-1}'')
\end{array}%
\right. ,
\]
we have
\begin{eqnarray}
\label{hat-d-m-1}\hat{\mathfrak{d}}_m^+ & \simeq &
\left(%
\begin{array}{cc}
c_{m} \tilde{d}_{m+1}^+ & \ast \\
0 & c_{m}''' \id_{C(\Gamma_{m-1}'')} \\
0 & 0
\end{array}%
\right), \\
\label{hat-d-k-1}\hat{\mathfrak{d}}_k^+ & \simeq & \left(%
\begin{array}{ccc}
c_k \tilde{d}_{k+1}^+ & \ast& \ast \\
0 & 0& c_k''' \id_{C(\Gamma_{k-1}'')} \\
0 & 0& 0
\end{array}%
\right) ~\text{ for } k_0+1<k<m,
\end{eqnarray}
where $c_k'''$ is a non-zero scalar for $k_0+1<k\leq m$. Since $C(\Gamma_{k_0}') \simeq C(\Gamma_{k_0}'')$, we have 
\[
\left.%
\begin{array}{c}
C(\widetilde{\Gamma}_{k_0+1}) \{q^{k_0-m}\}\\
\oplus \\
C(\Gamma_{k_0}') \\
\end{array}%
\right. \simeq 
\left.%
\begin{array}{c}
C(\widetilde{\Gamma}_{k_0+1}) \{q^{k_0-m}\}\\
\oplus \\
C(\Gamma_{k_0}'') \\
\end{array}%
\right. 
\]
and 
\begin{eqnarray}
\label{hat-d-k0+1-1} \hat{\mathfrak{d}}_{k_0+1}^+ & \simeq & \left(%
\begin{array}{ccc}
c_{k_0+1} \tilde{d}_{k_0+2}^+ & \ast& \ast \\
0 & 0 & c_{k_0+1}''' \id_{C(\Gamma_{k_0}'')}
\end{array}%
\right), \\
\label{hat-d-k0-1} \hat{\mathfrak{d}}_{k_0}^+ & \simeq & \left(%
\begin{array}{cc}
c_{k_0} \tilde{d}_{k_0+1}^+, & \ast
\end{array}%
\right),
\end{eqnarray}
where $c_{k_0+1}'''$ is a non-zero scalar. Putting these together, we know that $\hat{C}(D_{11}^+)$ is homotopic to 

{\tiny
\[_{
0 \rightarrow \left.%
\begin{array}{c}
C(\widetilde{\Gamma}_{m+1})\\
\oplus \\
C(\Gamma_{m-1}'')\\
\end{array}%
\right. \xrightarrow{\hat{\mathfrak{d}}_m^+} \left.%
\begin{array}{c}
C(\widetilde{\Gamma}_{m}) \{q^{-1}\}\\
\oplus \\
C(\Gamma_{m-1}'') \\
\oplus \\
C(\Gamma_{m-2}'')
\end{array}%
\right. 
\xrightarrow{\hat{\mathfrak{d}}_{m-1}^+} \cdots \xrightarrow{\hat{\mathfrak{d}}_{k+1}^+} \left.%
\begin{array}{c}
C(\widetilde{\Gamma}_{k+1}) \{q^{k-m}\}\\
\oplus \\
C(\Gamma_k'') \\
\oplus \\
C(\Gamma_{k-1}'')
\end{array}%
\right.
\xrightarrow{\hat{\mathfrak{d}}_{k}^+} \cdots \xrightarrow{\hat{\mathfrak{d}}_{k_0+1}^+} \left.%
\begin{array}{c}
C(\widetilde{\Gamma}_{k_0+1}) \{q^{k_0-m}\}\\
\oplus \\
C(\Gamma_{k_0}'') \\
\end{array}%
\right. \xrightarrow{\hat{\mathfrak{d}}_{k_0}^+} C(\widetilde{\Gamma}_{k_0}) \{q^{k_0-1-m}\} \rightarrow 0,}
\]
}

\noindent where $\hat{\mathfrak{d}}_m^+,\dots,\hat{\mathfrak{d}}_{k_0}^+$ are given in \eqref{hat-d-m-1},\eqref{hat-d-k-1}, \eqref{hat-d-k0+1-1} and \eqref{hat-d-k0-1}.

Applying Gaussian Elimination (Lemma \ref{gaussian-elimination}) to $c_k''' \id_{C(\Gamma_{k-1}'')}$ in $\hat{\mathfrak{d}}_k^+$ for $k=m,m-1,\dots,k_0+1$, we get that $\hat{C}(D_{11}^+)$ is homotopic to

{\tiny
\[
0 \rightarrow 
C(\widetilde{\Gamma}_{m+1}) \xrightarrow{\check{\mathfrak{d}}_m^+} 
C(\widetilde{\Gamma}_{m}) \{q^{-1}\}
\xrightarrow{\check{\mathfrak{d}}_{m-1}^+} \cdots \xrightarrow{\check{\mathfrak{d}}_{k+1}^+} 
C(\widetilde{\Gamma}_{k+1}) \{q^{k-m}\}
\xrightarrow{\check{\mathfrak{d}}_{k}^+} \cdots\xrightarrow{\check{\mathfrak{d}}_{k_0}^+} C(\widetilde{\Gamma}_{k_0}) \{q^{k_0-1-m}\} \rightarrow 0,
\]
}

\noindent where $\check{\mathfrak{d}}_{k}^+ \simeq c_k \tilde{d}_{k+1}^+$ for $k=m,m-1,\dots,k_0$. Recall that $c_k\neq 0$ for $k=m,\dots,k_0$. So this last chain complex is isomorphic to $\hat{C}(D_{10}^+)$ in $\ch(\hmf)$. Therefore, $\hat{C}(D_{11}^+) \simeq \hat{C}(D_{10}^+)$.
\end{proof}

\begin{proof}[Proof of $\hat{C}(D_{10}^-) \simeq \hat{C}(D_{11}^-)$ when $l=1$]
Recall that the chain complex $\hat{C}(D_{1,1}^-)$ is

{\tiny
\[
0 \rightarrow C(\Gamma_{k_0,0}) \{q^{m+1-k_0}\} \xrightarrow{\mathfrak{d}_{k_0}^-} \cdots \xrightarrow{\mathfrak{d}_{k-1}^-} \left.%
\begin{array}{c}
C(\Gamma_{k,0}) \{q^{m+1-k}\}\\
\oplus \\
C(\Gamma_{k-1,1})\{q^{m+1-k}\}
\end{array}%
\right. 
\xrightarrow{\mathfrak{d}_{k}^-} \cdots  \xrightarrow{\mathfrak{d}_{m-1}^-} \left.%
\begin{array}{c}
C(\Gamma_{m,0}) \{q\}\\
\oplus \\
C(\Gamma_{m-1,1})\{q\}
\end{array}%
\right. 
\xrightarrow{\mathfrak{d}_m^-} C(\Gamma_{m,1}) \rightarrow 0,
\]
}

\noindent where $k_0 = \max \{m-n,0\}$ as above and
\begin{eqnarray*}
\mathfrak{d}_{k_0}^- & = & \left(%
\begin{array}{c}
d_{k_0}^-\\ 
\chi^0
\end{array}%
\right),\\
\mathfrak{d}_k^- & = & \left(%
\begin{array}{cc}
d_{k}^- & 0\\
\chi^0 & -d_{k-1}^-
\end{array}%
\right) ~\text{ for } k_0<k<m, \\
\mathfrak{d}_{m}^- & = & \left(%
\begin{array}{cc}
\chi^0 & -d_{m-1}^-\\
\end{array}%
\right).
\end{eqnarray*}

From Decomposition (IV) (more precisely, \eqref{fork-special-decomp-iv-eq}), we have 
\[
C(\Gamma_{k,0}) \simeq C(\widetilde{\Gamma}_k) \oplus C(\Gamma_k')\{[m-k]\}.
\] 
By Corollary \ref{contract-expand} and Decomposition (II) (Theorem \ref{decomp-II}), we have 
\[
C(\Gamma_{k,1}) \simeq C(\Gamma_k')\{[m+1-k]\} \cong C(\Gamma_k')\{q^{k-m}\} \oplus C(\Gamma_k')\{[m-k]\cdot q\}.
\]
Therefore,
\[
\left.%
\begin{array}{c}
C(\Gamma_{k,0}) \{q^{m+1-k}\}\\
\oplus \\
C(\Gamma_{k-1,1})\{q^{m+1-k}\}
\end{array}%
\right. \simeq 
\left.%
\begin{array}{c}
C(\widetilde{\Gamma}_{k}) \{q^{m+1-k}\}\\
\oplus \\
C(\Gamma_{k}')\{[m-k]q^{m+1-k}\} \\
\oplus \\
C(\Gamma_{k-1}') \\
\oplus \\
C(\Gamma_{k-1}')\{[m+1-k]q^{m+2-k}\}
\end{array}%
\right. \text{ for } k_0<k<m,
\]
and 
\[
C(\Gamma_{k_0,0}) \{q^{m+1-k_0}\} \simeq 
\left.%
\begin{array}{c}
C(\widetilde{\Gamma}_{k_0}) \{q^{m+1-k_0}\}\\
\oplus \\
C(\Gamma_{k_0}')\{[m-k_0]q^{m+1-k_0}\} \\
\end{array}%
\right..
\]
So, $\hat{C}(D_{1,1}^-)$ is isomorphic to 

{\tiny
\[
_{0 \rightarrow \left.%
\begin{array}{c}
C(\widetilde{\Gamma}_{k_0}) \{q^{m+1-k_0}\}\\
\oplus \\
C(\Gamma_{k_0}')\{[m-k_0]q^{m+1-k_0}\} \\
\end{array}%
\right. \xrightarrow{\mathfrak{d}_{k_0}^-} \cdots \xrightarrow{\mathfrak{d}_{k-1}^-} \left.%
\begin{array}{c}
C(\widetilde{\Gamma}_{k}) \{q^{m+1-k}\}\\
\oplus \\
C(\Gamma_{k}')\{[m-k]q^{m+1-k}\} \\
\oplus \\
C(\Gamma_{k-1}') \\
\oplus \\
C(\Gamma_{k-1}')\{[m+1-k]q^{m+2-k}\}
\end{array}%
\right.
\xrightarrow{\mathfrak{d}_{k}^-} \cdots  \xrightarrow{\mathfrak{d}_{m-1}^-} \left.%
\begin{array}{c}
C(\widetilde{\Gamma}_{m}) \{q\}\\
\oplus \\
C(\Gamma_{m-1}') \\
\oplus \\
C(\Gamma_{m-1}')\{q^{2}\}
\end{array}%
\right.
\xrightarrow{\mathfrak{d}_m^-} C(\Gamma_{m,1}) \rightarrow 0.}
\]
}

\noindent In this form, $\mathfrak{d}_k^-$ is given by a $4\times 4$ matrix $(\mathfrak{d}_{k;i,j}^-)_{4\times4}$ for $k_0<k<m-1$. Clearly, 
\[
\mathfrak{d}_{k;i,j}^-=0 \text{ for } (i,j)=(1,3),~(1,4),~(2,3),~(2,4). 
\]
By Lemma \ref{relating-D-0-1-lemma}, 
\[
\mathfrak{d}_{k;1,1}^- \approx \tilde{d}_{k}^-.
\]
By Lemma \ref{relating-C-D-11-lemma},
\[
\mathfrak{d}_{k;3,3}^- \approx \delta_{k-1}^-.
\]
By \eqref{fork-special-decomp-iv-cor-2} in Corollary \ref{fork-special-decomp-iv-corollary}, we know that
\begin{eqnarray*}
\mathfrak{d}_{k;4,1}^- & \simeq & 0,\\
\mathfrak{d}_{k;4,2}^- & \approx & \id_{C(\Gamma_{k}')\{[m-k]q^{m+1-k}\}}.
\end{eqnarray*}
By Lemma \ref{hmf-gamma-prime-k-k-1}, we have
\[
\mathfrak{d}_{k;4,3}^- \simeq 0.
\]
Altogether, we have that, for $k_0<k<m-1$,
\[
\mathfrak{d}_k^- \simeq \left(%
\begin{array}{cccc}
c_k \tilde{d}_{k}^- & \ast & 0 & 0 \\
\ast & \ast & 0 & 0\\
\ast & \ast & c_k'\delta_{k-1}^- & \ast \\
0 &  c_k''\id_{C(\Gamma_{k}')\{[m-k]q^{m+1-k}\}} & 0 & \ast
\end{array}%
\right),
\]
where $c_k$, $c_k'$ and $c_k''$ are non-zero scalars and $\ast$'s stand for morphisms we have not identified. Similarly,
\begin{eqnarray*}
\mathfrak{d}_{k_0}^- & \simeq & \left(%
\begin{array}{cc}
c_{k_0} \tilde{d}_{k_0}^- & \ast  \\
\ast & \ast \\
\ast & \ast  \\
0 &  c_{k_0}''\id_{C(\Gamma_{k_0}')\{[m-k_0]q^{m+1-k_0}\}} 
\end{array}%
\right), \\
\mathfrak{d}_{m-1}^+ & \simeq & \left(%
\begin{array}{cccc}
c_{m-1} \tilde{d}_{m-1}^- & \ast & 0 & 0 \\
\ast & \ast & c_{m-1}'\delta_{m-2}^- & \ast \\
0 &  c_{m-1}''\id_{C(\Gamma_{m-1}')\{q^{2}\}} & 0 & \ast
\end{array}%
\right),
\end{eqnarray*}
where $c_{k_0}$, $c_{k_0}''$, $c_{m-1}$, $c_{m-1}'$ and $c_{m-1}''$ are non-zero scalars.

Now apply Gaussian Elimination (Lemma \ref{gaussian-elimination}) to $c_{k}''\id_{C(\Gamma_{k}')\{[m-k]q^{m+1-k}\}}$ in $\mathfrak{d}_{k}^-$ for $k=k_0,k_0+1,\dots,m-1$ in that order. We get that $\hat{C}(D_{11}^-)$ is homotopic to

{\tiny
\[
0 \rightarrow 
C(\widetilde{\Gamma}_{k_0}) \{q^{m+1-k_0}\} \xrightarrow{\hat{\mathfrak{d}}_{k_0}^-} \cdots \xrightarrow{\hat{\mathfrak{d}}_{k-1}^-} \left.%
\begin{array}{c}
C(\widetilde{\Gamma}_{k}) \{q^{m+1-k}\}\\
\oplus \\
C(\Gamma_{k-1}') \\
\end{array}%
\right.
\xrightarrow{\hat{\mathfrak{d}}_{k}^-} \cdots  \xrightarrow{\hat{\mathfrak{d}}_{m-1}^-} \left.%
\begin{array}{c}
C(\widetilde{\Gamma}_{m}) \{q\}\\
\oplus \\
C(\Gamma_{m-1}') \\
\end{array}%
\right.
\xrightarrow{\hat{\mathfrak{d}}_m^-} C(\Gamma_{m,1}) \rightarrow 0,
\]
}

\noindent where
\begin{eqnarray}
\label{hat-d-k-}\hat{\mathfrak{d}}_k^- & \simeq & \left(%
\begin{array}{cc}
c_k \tilde{d}_{k}^-  & 0 \\
\ast & c_k'\delta_{k-1}^- 
\end{array}%
\right) ~\text{ for } k_0<k<m, \\
\label{hat-d-k0-}\hat{\mathfrak{d}}_{k_0}^- & \simeq & \left(%
\begin{array}{c}
c_{k_0} \tilde{d}_{k_0}^-  \\
\ast  \\
\ast  
\end{array}%
\right).
\end{eqnarray}
Next we determine $\hat{\mathfrak{d}}_m^-$. By Decomposition (V) (more precisely, \eqref{gamma-m-1-decomp-V}), we have
\[
C(\Gamma_{m,1}) \simeq \left.%
\begin{array}{c}
C(\widetilde{\Gamma}_{m+1})\\
\oplus \\
C(\Gamma_{m-1}'')
\end{array}%
\right. .
\]
Under this decomposition, $\hat{\mathfrak{d}}_m^-$ is represented by a $2\times2$ matrix. By Lemmas \ref{hmf-tilde-gamma-prime-m}, \ref{gamma-m-1-p-j-differential} and \ref{gamma-m-1-p-j-prime-dif}, we know that
\begin{equation}\label{hat-d-m-}
\hat{\mathfrak{d}}_m^- \simeq \left(%
\begin{array}{cc}
c_{m} \tilde{d}_{m}^- & 0 \\
\ast & c_{m}' P_{m-1,m-1}
\end{array}%
\right),
\end{equation}
where $c_m$ and $c_m'$ are non-zero scalars. So $\hat{C}(D_{11}^-)$ is homotopic to 

{\tiny
\[
0 \rightarrow 
C(\widetilde{\Gamma}_{k_0}) \{q^{m+1-k_0}\} \xrightarrow{\hat{\mathfrak{d}}_{k_0}^-} \cdots \xrightarrow{\hat{\mathfrak{d}}_{k-1}^-} \left.%
\begin{array}{c}
C(\widetilde{\Gamma}_{k}) \{q^{m+1-k}\}\\
\oplus \\
C(\Gamma_{k-1}') \\
\end{array}%
\right.
\xrightarrow{\hat{\mathfrak{d}}_{k}^-} \cdots  \xrightarrow{\hat{\mathfrak{d}}_{m-1}^-} \left.%
\begin{array}{c}
C(\widetilde{\Gamma}_{m}) \{q\}\\
\oplus \\
C(\Gamma_{m-1}') \\
\end{array}%
\right.
\xrightarrow{\hat{\mathfrak{d}}_m^-} \left.%
\begin{array}{c}
C(\widetilde{\Gamma}_{m+1})\\
\oplus \\
C(\Gamma_{m-1}'')
\end{array}%
\right. \rightarrow 0,
\]
}

\noindent where $\hat{\mathfrak{d}}_m^-,\dots,\hat{\mathfrak{d}}_{k_0}^-$ are given in \eqref{hat-d-k-}, \eqref{hat-d-k0-} and \eqref{hat-d-m-}.

Recall that, by Decomposition (V) (more precisely, Lemma \ref{decomp-V-special-2}), 
\[
C(\Gamma_k') \simeq \begin{cases}
C(\Gamma_k'') \oplus C(\Gamma_{k-1}'') & \text{if } k_0+1 \leq l \leq m-1 ,\\
C(\Gamma_k'') & \text{if } k=k_0.
\end{cases}
\]
By Proposition \ref{trivial-complex-prop}, under the decomposition
\[
\left.%
\begin{array}{c}
C(\widetilde{\Gamma}_{k}) \{q^{m+1-k}\}\\
\oplus \\
C(\Gamma_{k-1}') \\
\end{array}%
\right. \simeq 
\left.%
\begin{array}{c}
C(\widetilde{\Gamma}_{k}) \{q^{m+1-k}\}\\
\oplus \\
C(\Gamma_{k-1}'') \\
\oplus \\
C(\Gamma_{k-2}'')
\end{array}%
\right. ,
\]
we have
\begin{eqnarray}
\label{hat-d-m-1-}\hat{\mathfrak{d}}_m^- & \simeq & \left(%
\begin{array}{ccc}
c_{m} \tilde{d}_{m}^- & 0 & 0\\
\ast & c_{m}''' \id_{C(\Gamma_{m-1}'')} & 0
\end{array}%
\right), \\
\label{hat-d-k-1-}\hat{\mathfrak{d}}_k^- & \simeq & \left(%
\begin{array}{ccc}
c_k \tilde{d}_{k}^-  & 0 & 0\\
\ast & 0 & 0 \\
\ast & c_k'''\id_{C(\Gamma_{k-1}'')} & 0
\end{array}%
\right) ~\text{ for } k_0+1<k<m,
\end{eqnarray}
where $c_k'''$ is a non-zero scalar for $k_0+1<k\leq m$.
Since $C(\Gamma_{k_0}') \simeq C(\Gamma_{k_0}'')$, we have 
\[
\left.%
\begin{array}{c}
C(\widetilde{\Gamma}_{k_0+1}) \{q^{m-k_0}\}\\
\oplus \\
C(\Gamma_{k_0}') \\
\end{array}%
\right. \simeq 
\left.%
\begin{array}{c}
C(\widetilde{\Gamma}_{k_0+1}) \{q^{m-k_0}\}\\
\oplus \\
C(\Gamma_{k_0}'') \\
\end{array}%
\right. 
\]
and 
\begin{eqnarray}
\label{hat-d-k0+1-1-} \hat{\mathfrak{d}}_{k_0+1}^- & \simeq & \left(%
\begin{array}{cc}
c_{k_0+1} \tilde{d}_{k_0+1}^-  & 0 \\
\ast & 0 \\
\ast & c_{k_0+1}'''\id_{C(\Gamma_{k_0}'')}
\end{array}%
\right), \\
\label{hat-d-k0-1-} \hat{\mathfrak{d}}_{k_0}^- & \simeq & \left(%
\begin{array}{c}
c_{k_0} \tilde{d}_{k_0+1}^+ \\ 
\ast
\end{array}%
\right),
\end{eqnarray}
where $c_{k_0+1}'''$ is a non-zero scalar. Putting these together, we know that $\hat{C}(D_{11}^-)$ is homotopic to 

{\tiny
\[
_{0 \rightarrow 
C(\widetilde{\Gamma}_{k_0}) \{q^{m+1-k_0}\} \xrightarrow{\hat{\mathfrak{d}}_{k_0}^-} \left.%
\begin{array}{c}
C(\widetilde{\Gamma}_{k_0+1}) \{q^{m-k_0}\}\\
\oplus \\
C(\Gamma_{k_0}'') \\
\end{array}%
\right. \xrightarrow{\hat{\mathfrak{d}}_{k_0+1}^-} \cdots \xrightarrow{\hat{\mathfrak{d}}_{k-1}^-} \left.%
\begin{array}{c}
C(\widetilde{\Gamma}_{k}) \{q^{m+1-k}\}\\
\oplus \\
C(\Gamma_{k-1}'') \\
\oplus \\
C(\Gamma_{k-2}'')
\end{array}%
\right.
\xrightarrow{\hat{\mathfrak{d}}_{k}^-} \cdots  \xrightarrow{\hat{\mathfrak{d}}_{m-1}^-} \left.%
\begin{array}{c}
C(\widetilde{\Gamma}_{m}) \{q\}\\
\oplus \\
C(\Gamma_{m-1}'') \\
\oplus \\
C(\Gamma_{m-2}'')
\end{array}%
\right.
\xrightarrow{\hat{\mathfrak{d}}_m^-} \left.%
\begin{array}{c}
C(\widetilde{\Gamma}_{m+1})\\
\oplus \\
C(\Gamma_{m-1}'')
\end{array}%
\right. \rightarrow 0,}
\]
}

\noindent where $\hat{\mathfrak{d}}_m^-,\dots,\hat{\mathfrak{d}}_{k_0}^-$ are given in \eqref{hat-d-m-1-},\eqref{hat-d-k-1-}, \eqref{hat-d-k0+1-1-} and \eqref{hat-d-k0-1-}.

Applying Gaussian Elimination (Lemma \ref{gaussian-elimination}) to $c_k''' \id_{C(\Gamma_{k-1}'')}$ in $\hat{\mathfrak{d}}_k^-$ for $k=m,m-1,\dots,k_0+1$, we get that $\hat{C}(D_{11}^-)$ is homotopic to
\[
0 \rightarrow 
C(\widetilde{\Gamma}_{k_0}) \{q^{m+1-k_0}\} \xrightarrow{\check{\mathfrak{d}}_{k_0}^-}  \cdots \xrightarrow{\check{\mathfrak{d}}_{k-1}^-} 
C(\widetilde{\Gamma}_{k}) \{q^{m+1-k}\}
\xrightarrow{\check{\mathfrak{d}}_{k}^-} \cdots  
\xrightarrow{\check{\mathfrak{d}}_m^-} 
C(\widetilde{\Gamma}_{m+1})\rightarrow 0,
\]
where $\check{\mathfrak{d}}_{k}^- \simeq c_k \tilde{d}_{k}^-$ for $k=m,\dots,k_0$. Recall that $c_k\neq 0$ for $k=m,\dots,k_0$. So this last chain complex is isomorphic to $\hat{C}(D_{10}^-)$ in $\ch(\hmf)$. Therefore, $\hat{C}(D_{11}^-) \simeq \hat{C}(D_{10}^-)$.
\end{proof}

So we have completed the proof of \eqref{fork-sliding-invariance-special-eq}, that is, $\hat{C}(D_{10}^\pm) \simeq \hat{C}(D_{11}^\pm)$ if $l=1$. The proof of the rest of Proposition \ref{fork-sliding-invariance-special} is very similar and left to the reader. This completes the proof of Theorem \ref{fork-sliding-invariance-general}.

\section{Invariance under Reidemeister Moves}\label{sec-inv-reidemeister}

In this section, we prove that the homotopy type of the normalized chain complex associated to a knotted MOY graph is invariant under Reidemeister moves. The main result of this section is Theorem \ref{invariance-reidemeister-all} below. Note that Theorem \ref{main} is a spacial case of Theorem \ref{invariance-reidemeister-all}.

\begin{theorem}\label{invariance-reidemeister-all}
Let $D_0$ and $D_1$ be two knotted MOY graphs. Assume that there is a finite sequence of Reidemeister moves that changes $D_0$ into $D_1$. Then $C(D_0) \simeq C(D_1)$, that is, they are isomorphic as objects of $\hch(\hmf)$.
\end{theorem}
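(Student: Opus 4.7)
The plan is to induct on the maximum color appearing at the crossings in the Reidemeister move, with the base case---in which all strands at the crossings are colored by $1$---being Khovanov and Rozansky's invariance theorem in the uncolored situation \cite[Section 8]{KR1}. Since Reidemeister moves are local, it suffices to show that for each colored Reidemeister move R1, R2, R3, the chain complexes on the two sides are homotopy equivalent, assuming that invariance has been established for every Reidemeister move of strictly smaller maximum color. A separate, easier induction handles the case of general knotted MOY graphs (with trivalent and higher-valent vertices) from the case of colored link/tangle diagrams, again using fork sliding to move vertices away from the Reidemeister move region.

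For the inductive step, given a colored Reidemeister move involving a strand of color $k\geq 2$, I would split that strand via Decomposition~(II) (Theorem~\ref{decomp-II}) into two parallel strands of colors $k-1$ and $1$ joined by two trivalent vertices, producing a ``bi-gon'' along the chosen strand. If $\Gamma'$ denotes the split diagram and $\Gamma$ the original, then $C(\Gamma')\simeq C(\Gamma)\{\qb{k}{1}\} = C(\Gamma)\{[k]\}$, so by Proposition~\ref{yonezawa-lemma-hmf} (strong non-periodicity of $\{q\}$ combined with the Krull-Schmidt property of $\hch(\hmf_{R,w})$) it is equivalent to prove invariance for the split configuration. Theorem~\ref{fork-sliding-invariance-general} (fork sliding) then allows the two trivalent vertices of the bi-gon to be slid across each of the crossings participating in the Reidemeister move. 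After sliding, the Reidemeister move has been replaced by a move in which the color-$k$ strand has been exchanged for one color-$(k-1)$ strand and one color-$1$ strand running parallel through the entire move region, so the maximum color at the crossings has strictly decreased and the inductive hypothesis applies. Composing the fork-sliding homotopy equivalences on both sides of the move with the inductive invariance gives the desired $C(D_0)\simeq C(D_1)$.

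The main obstacle is arranging the fork sliding so that the induction closes cleanly on each of the three Reidemeister moves. For R1, one splits the strand forming the kink and slides the forks completely around it, reducing to R1 performed on a color-$(k-1)$ (and separately a color-$1$) strand. For R2, both strands at each of the two crossings must be tracked, but sliding the bi-gon through both crossings simultaneously reduces to two independent lower-color R2 moves. The delicate case is R3, where three strands meet at three crossings; after splitting one strand and applying Theorem~\ref{fork-sliding-invariance-general} to move the bi-gon through all three crossings, the resulting configuration is a superposition of R3 moves each of strictly smaller maximum color, exactly as carried out for the colored HOMFLY-PT homology in \cite{MOY, Mackaay-Stosic-Vaz2, Webster-Williamson}. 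In every case the two essential inputs are Theorem~\ref{fork-sliding-invariance-general} (to relocate forks without changing the homotopy type of $C(D)$) and Proposition~\ref{yonezawa-lemma-hmf} (to cancel the factor $[k]$ and pass from invariance of the split configuration back to invariance of the original).
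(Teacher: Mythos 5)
Your high-level strategy---induction on the maximum color, splitting via Decomposition (II), fork sliding (Theorem \ref{fork-sliding-invariance-general}), and cancellation by Proposition \ref{yonezawa-lemma-hmf}---matches the paper's, but there are two substantive gaps. First, you split only one strand (of color $k$) into colors $k-1$ and $1$ and assert that after sliding the bi-gon ``the maximum color at the crossings has strictly decreased.'' This is false when another strand at one of the crossings also has color $k$: for R2 with $m=n=k$, splitting one strand leaves crossings colored $(k-1,k)$ and $(1,k)$, still of maximum color $k$, so the inductive hypothesis does not apply. The paper's Lemma \ref{invariance-reidemeister-II-III} avoids this by splitting \emph{all} strands entering the local picture; the resulting crossings then have colors at most $\max(m,n)-1$, and the overall shift is $[m][n]$ rather than $[k]$.

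Second, and more seriously, your R1 sketch omits a needed ingredient. After you split the kinked strand into $(m,1)$ and slide the forks through, the two sub-strands each acquire a kink \emph{and} also cross each other; even after R1 is applied to each sub-strand, the forks are left in a \emph{twisted} configuration (a fork with a crossing built into it). Untwisting requires Lemma \ref{twisted-forks} (Yonezawa's \cite[Proposition 6.1]{Yonezawa3}), applied twice to contribute a factor $q^{2m}$; without it the quantum shifts from the two sub-strand R1 moves ($q^{-N}$ and $q^{-m(N+1-m)}$) fail to combine to the desired $q^{-(m+1)(N-m)}$, and this extra step is precisely the content of Lemma \ref{invariance-reidemeister-I-unnormal}. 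A smaller point: the ``separate, easier induction'' you propose for passing from links to knotted MOY graphs is unnecessary, since Reidemeister moves are local and the tangle-diagram argument applies verbatim when trivalent or higher-valent vertices occur elsewhere in the diagram.
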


Theorem \ref{invariance-reidemeister-all} follows from Lemmas \ref{invariance-reidemeister-II-III} and \ref{invariance-reidemeister-I} below, in which we establish the invariance of the homotopy type under Reidemeister moves I, II$_a$, II$_b$ and III given in Figures \ref{reidemeisterI}, \ref{reidemeisterII-a}, \ref{reidemeisterII-b} and \ref{reidemeisterIII}. The proofs of these lemmas are based on an induction on the highest color of the edges involved in the Reidemeister move. The starting point of our induction is the following theorem by Khovanov and Rozansky \cite{KR1}.

\begin{theorem}\cite[Theorem 2]{KR1}\label{invariance-reidemeister-all-color=1}
Let $D_0$ and $D_1$ be two knotted MOY graphs. Assume that there is a Reidemeister move changing $D_0$ into $D_1$ that involves only edges colored by $1$. Then $C(D_0) \simeq C(D_1)$, that is, they are isomorphic as objects of $\hch(\hmf)$. 
\end{theorem}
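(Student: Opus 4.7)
The plan is to reduce this theorem directly to the invariance theorem proved by Khovanov and Rozansky in \cite[Section 8]{KR1}. The Reidemeister move involves only edges colored by $1$, so it takes place inside a disk $B$ on which $D_0$ and $D_1$ agree with the two sides of one of the standard local Reidemeister pictures; outside $B$, the two diagrams coincide. Because the construction in Definition \ref{complex-knotted-MOY-def} builds $C(D)$ as a tensor product of local pieces over the common end-point alphabets, it suffices to establish the homotopy equivalence locally on $B$. Inside $B$ every crossing is of type $c_{1,1}^\pm$, every MOY resolution uses only edges colored by $1$ and $2$, and we can then tensor with the identity on $C(D_0 \setminus B) = C(D_1 \setminus B)$ (see e.g.\ Lemma~\ref{morphism-sign}) to extend any local homotopy equivalence to a global one.

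First I would verify that on these local diagrams our complex $C(D|_B)$ agrees with the Khovanov--Rozansky complex of \cite{KR1}. There are two ingredients to check: (a) the matrix factorization assigned to an MOY graph with only color-$1$ and color-$2$ edges agrees with the Khovanov--Rozansky factorization. Alphabets of size one are single indeterminants, the potential $p_{N+1}(\mathbb{X})$ for such an alphabet is $x^{N+1}$, and the trivalent vertex factorization in Definition~\ref{MOY-mf-def} becomes the standard ``wide edge'' factorization used in \cite{KR1} (see Remark~\ref{MOY-freedom}). (b) The differential at each crossing $c_{1,1}^\pm$ agrees with the differential in \cite{KR1}. By Corollary~\ref{explicit-differential-1-n-crossings--res} (specialized to $n=1$), our differential is $\chi^0$ or $\chi^1$ from Corollary~\ref{chi-maps-def}, and by Proposition~\ref{general-general-chi-maps-HMF} this morphism is the unique homotopically non-trivial map between the two $1$-dimensional resolutions in the relevant quantum degree, so it coincides up to homotopy and non-zero scaling with the corresponding map in \cite{KR1}. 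This identification is exactly the content of Remark~\ref{generalizing-KR-homology}.

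With (a) and (b) established, the statement $C(D_0) \simeq C(D_1)$ on the local pictures of Reidemeister I, II$_a$, II$_b$ and III becomes, term by term and differential by differential (up to the unavoidable homotopy-and-scaling ambiguity in the definitions of our morphisms), the corresponding chain-level statement in \cite[Section 8, Propositions~30--32]{KR1}. Invoking those propositions yields the local homotopy equivalence, which, tensored with the identity on the complement of $B$, produces $C(D_0) \simeq C(D_1)$.

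The main obstacle, and essentially the only work in the proof, is the bookkeeping needed to make (a) and (b) into precise identifications of objects of $\hch(\hmf)$: one must reconcile the quantum and homological grading shifts built into the normalization of Definition~\ref{complex-colored-crossing-def} (namely the extra $\left\langle m\right\rangle\|\mp m\|\{q^{\pm m(N+1-m)}\}$ when $m=n$, here with $m=n=1$) with the normalization of crossings in \cite{KR1}, and check that the signs arising from the $\zed_2$-grading and from the Leibniz rule of Lemma~\ref{morphism-sign} match those in \cite{KR1}. Once this is done, no new homotopy-theoretic input is required, and Khovanov and Rozansky's argument applies verbatim.
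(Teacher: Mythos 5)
Your proposal is correct and takes essentially the same approach as the paper, which simply cites \cite[Theorem 2]{KR1} and notes in the accompanying remark that Khovanov and Rozansky's proof is local (a simplification of the chain complex in a disk around the move) and therefore extends to knotted MOY graphs that carry higher-colored edges outside that disk. You have merely spelled out the implicit checks behind that remark — identification of the local matrix factorizations and differentials with those of \cite{KR1} via Remark~\ref{generalizing-KR-homology} and Corollary~\ref{explicit-differential-1-n-crossings--res}, plus reconciling the $m=n=1$ normalization shift of Definition~\ref{complex-colored-crossing-def} — which are exactly what make the citation legitimate.
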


\begin{remark}
The original statement of \cite[Theorem 2]{KR1} covers only link diagrams colored entirely by $1$. But its proof in \cite[Section 8]{KR1} is local in the sense that it is based on homotopy equivalences of the chain complex associated to the part of the link diagram involved in the Reidemeister move. So the slightly more general statement of Theorem \ref{invariance-reidemeister-all-color=1} also follows from the proof in \cite{KR1}.
\end{remark}

\begin{figure}[ht]
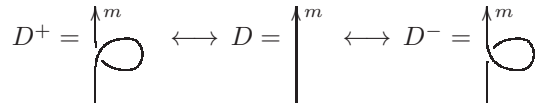

$
D^+= \xygraph{
!{0;/r1pc/:}
[u]
!{\xcapv@(0)=>>{m}}
!{\hover}
!{\hcap}
[ld]!{\xcapv@(0)}
}\, \longleftrightarrow \,
D=\xygraph{
!{0;/r1pc/:}
[u]
!{\xcapv@(0)=>>{m}}
!{\xcapv@(0)}
!{\xcapv@(0)}
}\, \longleftrightarrow \,
D^-=\xygraph{
!{0;/r1pc/:}
[u]
!{\xcapv@(0)=>>{m}}
!{\hunder}
!{\hcap}
[ld]!{\xcapv@(0)}
}
$
\caption{Reidemeister Move I}\label{reidemeisterI}

\end{figure}

\begin{figure}[ht]
$
\xygraph{
!{0;/r2pc/:}
[u]
!{\vtwist=<>{m}|{n}}
!{\vtwistneg}
} \, \longleftrightarrow \,
\xygraph{
!{0;/r2pc/:}
[u]
!{\xcapv[-1]@(0)=<>{m}}
!{\xcapv@(0)}
[ruu]!{\xcapv@(0)=><{n}}
!{\xcapv@(0)}
} \, \longleftrightarrow \,
\xygraph{
!{0;/r2pc/:}
[u]
!{\vtwistneg=<>{n}|{m}}
!{\vtwist}
}
$
\caption{Reidemeister Move II$_a$}\label{reidemeisterII-a}

\end{figure}

\begin{figure}[ht]
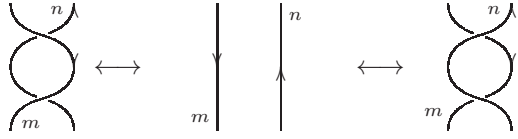

$
\xygraph{
!{0;/r2pc/:}
[u]
!{\vcross=>>{n}}
!{\vcrossneg[-1]|{m}}
} \, \longleftrightarrow \,
\xygraph{
!{0;/r2pc/:}
[u]
!{\xcapv[-1]@(0)=>}
!{\xcapv[-1]@(0)>{m}}
[ruu]!{\xcapv@(0)>{n}}
!{\xcapv@(0)=>}
} \, \longleftrightarrow \,
\xygraph{
!{0;/r2pc/:}
[u]
!{\vcrossneg=>|{n}}
!{\vcross<{m}}}
$
\caption{Reidemeister Move II$_b$}\label{reidemeisterII-b}

\end{figure}

\begin{figure}[ht]
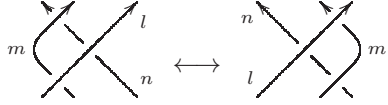

$
\xygraph{
!{0;/r1pc/:}
[uu]
!{\xoverv=<}
!{\xcapv[-1]|{m}}
!{\xoverv}
[uur]!{\xoverv}
[r]!{\zbendh@(0)>{n}}
[uul]!{\sbendh@(0)=>>{l}}
}\, \longleftrightarrow \,
\xygraph{
!{0;/r-1pc/:}
[u]
!{\xoverv}
!{\xcapv[-1]|{m}}
!{\xoverv=>}
[uur]!{\xoverv}
[r]!{\zbendh@(0)=>>{n}}
[uul]!{\sbendh@(0)>{l}}
}
$
\caption{Reidemeister Move III}\label{reidemeisterIII}

\end{figure}

\subsection{Invariance under Reidemeister moves II$_a$, II$_b$ and III} With the invariance under fork sliding (Theorem \ref{fork-sliding-invariance-general}) in hand, we can easily prove the invariance of the homotopy type under Reidemeister moves II$_a$, II$_b$, III by an induction using the ``sliding bi-gon" method introduced in \cite{MOY} (and used in \cite{Mackaay-Stosic-Vaz2}.) The proof of the invariance under Reidemeister move I is somewhat different and is postponed to the next subsection.

\begin{lemma}\label{invariance-reidemeister-II-III}
Let $D_0$ and $D_1$ be two knotted MOY graphs. Assume that there is a Reidemeister move of type II$_a$, II$_b$ or III that changes $D_0$ into $D_1$. Then $C(D_0) \simeq C(D_1)$, that is, they are isomorphic as objects of $\hch(\hmf)$.
\end{lemma}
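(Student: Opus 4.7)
The plan is to reduce each Reidemeister move involving higher-colored edges to Reidemeister moves involving only edges colored by $1$, where Khovanov and Rozansky's invariance (Theorem \ref{invariance-reidemeister-all-color=1}) applies. The primary tool is the fork-sliding invariance (Theorem \ref{fork-sliding-invariance-general}); the argument proceeds by induction on the total color of the edges participating in each Reidemeister move.

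For Reidemeister II (both types II$_a$ and II$_b$) with edge colors $(m,n)$, I would induct on $m+n$. The base case $m=n=1$ is Theorem \ref{invariance-reidemeister-all-color=1}. For the inductive step, assume without loss of generality $m\geq 2$. Insert a trivial fork on the $m$-colored strand outside the bigon formed by the two crossings, splitting $m\leadsto(m-1)+1\leadsto m$. By Theorem \ref{fork-sliding-invariance-general} applied twice (once for each crossing of the bigon), slide the splitting vertex through both crossings of the bigon. The result is a configuration in which the original $(m,n)$-bigon is replaced by two nested bigons, one of colors $(m-1,n)$ and one of colors $(1,n)$. Since $(m-1)+n$ and $1+n$ are both strictly smaller than $m+n$, the induction hypothesis removes both inner bigons, leaving the $(m-1)$- and $1$-strands running parallel to the $n$-strand and re-merging above; that residual split-and-merge on the $m$-edge collapses via Lemma \ref{edge-contraction}. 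The same argument applies verbatim to both II$_a$ and II$_b$, since fork sliding is insensitive to crossing signs.

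For Reidemeister III with edge colors $(m,n,l)$, I would induct on $m+n+l$. The base case $m=n=l=1$ is again Theorem \ref{invariance-reidemeister-all-color=1}. For the inductive step, without loss of generality assume $m\geq 2$, insert a trivial fork on the $m$-colored strand outside the triangle of three crossings, and use Theorem \ref{fork-sliding-invariance-general} three times to slide the splitting vertex through all three crossings involving the $m$-strand. The resulting diagram contains two nested RIII-type sub-configurations --- one with colors $(m-1,n,l)$ and one with colors $(1,n,l)$ --- together with an RII bigon between the $(m-1)$- and $1$-strands. The induction hypothesis for RIII with smaller total color, combined with the already-established invariance of RII from the preceding paragraph, allows each of these pieces to be removed, after which the trivial fork on the $m$-edge collapses by Lemma \ref{edge-contraction} to yield the target diagram.

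The main obstacle is the combinatorial bookkeeping of the intermediate configurations produced by successive fork slides --- in particular, verifying for RIII that the diagram obtained after sliding the fork through all three crossings is indeed built out of the two smaller RIII-configurations plus an RII bigon, as claimed, rather than some further exotic tangle requiring separate treatment. This is where the generality of Theorem \ref{fork-sliding-invariance-general} (covering all eight fork-slide configurations $D_{i,j}^{\pm}$, $i=1,\dots,4$, $j=0,1$) is essential: whenever the sliding fork encounters a crossing, whatever the relative strand orientations and over/under data, the relevant case of Theorem \ref{fork-sliding-invariance-general} produces the required local homotopy equivalence, so no separate case analysis on crossing signs or strand orientations is needed.
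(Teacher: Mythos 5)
Your overall strategy --- insert a trivial fork, slide it through the tangle via Theorem~\ref{fork-sliding-invariance-general}, apply the induction hypothesis to the lower-colored sub-configurations, and unwind --- is the same ``sliding bi-gon'' argument the paper uses, though the paper inserts bubbles on both strands and inducts on $\max\{m,n\}$ rather than $m+n$, and those differences are immaterial. The gap is at the final step. After the inner bigons are removed by the induction hypothesis, you are left not with $D_0$ itself but with $D_0$ carrying a bubble on the $m$-edge: a split into colors $(m-1)$ and $1$ followed by a merge. By Decomposition (II) (Theorem~\ref{decomp-II}) this bubble contributes a factor of $\qb{m}{1}=[m]$, so what your argument actually establishes is $\hat{C}(D_0)\{[m]\}\simeq\hat{C}(D_1)\{[m]\}$. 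Lemma~\ref{edge-contraction} does not remove the bubble: it is the bouquet-contraction lemma, which contracts an interior edge joining a vertex to an adjacent split or merge vertex; a split immediately followed by a merge of the same two strands is a loop, not such a chain, and its matrix factorization is genuinely larger by the factor $[m]$.

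Since $\hch(\hmf)$ is not abelian, the homotopy equivalence $\hat{C}(D_0)\{[m]\}\simeq\hat{C}(D_1)\{[m]\}$ does not by itself yield $\hat{C}(D_0)\simeq\hat{C}(D_1)$; one must cancel the nonzero factor $[m]\in\zed_{\geq0}[q,q^{-1}]$. That cancellation is exactly the content of Proposition~\ref{yonezawa-lemma-hmf}, which rests on the Krull-Schmidt property of $\hch(\hmf_{R,w})$ and the strong non-periodicity of the grading shift functor $\{q\}$. This is the one non-trivial ingredient missing from your proposal, and it is precisely what the paper invokes at the end of its inductive step. The same omission reappears, mutatis mutandis, in your Reidemeister~III argument. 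Replacing the appeal to Lemma~\ref{edge-contraction} by an appeal to Proposition~\ref{yonezawa-lemma-hmf} repairs the argument.
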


\begin{proof}
The proofs for Reidemeister moves II$_a$, II$_b$ and III are quite similar. We only give details for Reidemeister move II$_a$ here and leave the other two moves to the reader.

\begin{figure}[ht]
$
D_0=\xygraph{
!{0;/r2pc/:}
[u]
!{\xcapv[-1]@(0)=<>{m}}
!{\xcapv@(0)}
[ruu]!{\xcapv@(0)=><{n}}
!{\xcapv@(0)}
}  \hspace{2cm}
D_1=\xygraph{
!{0;/r2pc/:}
[u]
!{\vtwist=<>{m}|{n}}
!{\vtwistneg}
}
$
\caption{}\label{reidemeisterII-a-proof}

\end{figure}

Let $D_0$ and $D_1$ be the knotted MOY graphs in Figure \ref{reidemeisterII-a-proof}. We prove by an induction on $k$ that $C(D_0) \simeq C(D_1)$ if $1 \leq m,n \leq k$. When $k=1$, this statement is a special case of Theorem \ref{invariance-reidemeister-all-color=1}. Assume that this statement is true for some $k\geq 1$.

\begin{figure}[ht]
$\Gamma_0 =\xygraph{
!{0;/r3pc/:}
[u(1.25)]
!{\xcapv[-0.75]@(0)=<<{m}}
[u(0.25)]!{\xcapv=>|{1}}
[u]!{\xcapv[-1]=<|{m-1}}
!{\xcapv[-0.75]@(0)=<>{m}}
[r][u(2.75)]!{\xcapv[-0.75]@(0)=<<{n}}
[u(0.25)]!{\xcapv=>|{n-1}}
[u]!{\xcapv[-1]=<|{1}}
!{\xcapv[-0.75]@(0)=<>{n}}
}
\Gamma_2 = \xygraph{
!{0;/r1pc/:}
[uuuu]
!{\xcapv[-1]@(0)=<<{m}}
[ur]!{\xcapv@(0)=>>{n}}
[dll]!{\zbendv<{m-1}}
!{\vtwist}
[ru]!{\sbendv[-1]<{n-1}}
[lll]!{\vtwist}
[urr]!{\vtwist}
[l]!{\vtwist=<}
[lu]!{\xcapv@(0)=>}
!{\xcapv[-1]@(0)<{1}}
[rrruu]!{\xcapv@(0)=>}
!{\xcapv@(0)>{1}}
[llu]!{\vtwistneg}
[l]!{\vtwistneg}
[urr]!{\vtwistneg}
[l]!{\vtwistneg}
[lu]!{\sbendv}
[r]!{\zbendv}
[dl]!{\xcapv@(0)=><{n}}
[lu]!{\xcapv[-1]@(0)=<>{m}}
}
\Gamma_1=
\xygraph{
!{0;/r2pc/:}
[uu]
!{\xcapv[-0.5]@(0)=<<{m}}
[u(0.5)]!{\xcapv=>>{1}}
[u]!{\xcapv[-1]=<|{m-1}}
[r][u(1.5)]!{\xcapv[-0.5]@(0)=<<{n}}
[u(0.5)]!{\xcapv=>|{n-1}}
[u]!{\xcapv[-1]=<>{1}}
[l]!{\vtwist}
!{\vtwistneg}
!{\xcapv[-0.5]@(0)=<>{m}}
[ru]!{\xcapv[0.5]@(0)=><{n}}
}
$
\caption{}\label{reidemeisterII-a-bubbles}

\end{figure}

Now consider $k+1$. Assume that $1\leq m,n \leq k+1$ in $D_0$ and $D_1$. Let $\Gamma_0$, $\Gamma_1$ and $\Gamma_2$ be in the knotted MOY graphs in Figure \ref{reidemeisterII-a-bubbles}. Here, in case $m$ or $n=1$, we use the convention that an edge colored by $0$ is an edge that does not exist. By Decomposition (II) (Theorem \ref{decomp-II}), we know that $\hat{C}(\Gamma_0) \simeq \hat{C}(D_0)\{[m][n]\}$ and $\hat{C}(\Gamma_1) \simeq \hat{C}(D_1)\{[m][n]\}$. Note that $m-1,n-1\leq k$. By induction hypothesis and the normalization in Definition \ref{complex-colored-crossing-def}, we know that $\hat{C}(\Gamma_0) \simeq \hat{C}(\Gamma_2)$. By the invariance under fork sliding (Theorem \ref{fork-sliding-invariance-general}), we know that $\hat{C}(\Gamma_1) \simeq \hat{C}(\Gamma_2)$. Thus, $\hat{C}(\Gamma_0) \simeq \hat{C}(\Gamma_1)$. By Proposition \ref{yonezawa-lemma-hmf}, it follows that $\hat{C}(D_0) \simeq \hat{C}(D_1)$, which, by the normalization in Definition \ref{complex-colored-crossing-def}, is equivalent to $C(D_0) \simeq C(D_1)$. This completes the induction.
\end{proof}

\subsection{Invariance under Reidemeister move I} The proof of invariance under Reidemeister move I is somewhat different from that under Reidemeister moves II and III. The basic idea is still the ``sliding bi-gon". But we also need to do some ``untwisting" to get the invariance.

\begin{figure}[ht]
$
\xymatrix{
\input{fork-m-n} 
} 
$
\caption{}\label{fork-hmf-fig} 

\end{figure}

\begin{lemma}\label{fork-hmf}
Let $\Gamma_{m,n}$ be the MOY graph in Figure \ref{fork-hmf-fig}. Then
\begin{eqnarray*}
&& \Hom_\HMF(C(\Gamma_{m,n}),C(\Gamma_{m,n})) \cong \Hom_\HMF(C(\overline{\Gamma}_{m,n}),C(\overline{\Gamma}_{m,n})) \\
& \cong & C(\emptyset)\{\qb{N}{m+n} \qb{m+n}{n}q^{(m+n)(N-m-n)+mn}\},
\end{eqnarray*}
where $\overline{\Gamma}_{m,n}$ is $\Gamma_{m,n}$ with orientation reversed. In particular, the lowest non-vanishing quantum degree of these spaces is $0$. Therefore, for $k<l$, 
\[
\Hom_\hmf(C(\Gamma_{m,n})\{q^k\},C(\Gamma_{m,n})\{q^l\}) \cong \Hom_\hmf(C(\overline{\Gamma}_{m,n})\{q^k\},C(\overline{\Gamma}_{m,n})\{q^l\}) \cong 0.
\]
\end{lemma}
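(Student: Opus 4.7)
The plan is to reduce both $\Hom$-spaces to the homology of a closed MOY graph, identify that graph, and simplify it using decompositions already established. I will treat only $\Hom_{HMF}(C(\Gamma_{m,n}),C(\Gamma_{m,n}))$; the computation for $\overline{\Gamma}_{m,n}$ is identical, reversing all arrows.

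First I would let $R=\Sym(\mathbb{A}|\mathbb{B}|\mathbb{X})$ where $\mathbb{A},\mathbb{B},\mathbb{X}$ are the alphabets marking the three endpoints of $\Gamma_{m,n}$ (of colors $m$, $n$, $m+n$). Since $C(\Gamma_{m,n})$ is finitely generated over $R$, Lemma \ref{rewrite-hom-finite-gen} gives the identification $\Hom_R(C(\Gamma_{m,n}),C(\Gamma_{m,n})) \cong C(\Gamma_{m,n})\otimes_R C(\Gamma_{m,n})_\bullet$ as graded matrix factorizations. Applying Lemmas \ref{bullet}, \ref{row-reverse-signs}, \ref{column-reverse-signs} to the Koszul presentation of $C(\Gamma_{m,n})$ yields an isomorphism $C(\Gamma_{m,n})_\bullet \cong C(\overline{\Gamma}_{m,n})\{q^{\sigma_0}\}\langle\epsilon_0\rangle$ for explicit shifts $\sigma_0\in\zed$, $\epsilon_0\in\zed_2$ read off from the quantum degrees of the Koszul generators.

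Second, by Definition \ref{MOY-mf-def}, the tensor product $C(\Gamma_{m,n})\otimes_R C(\overline{\Gamma}_{m,n})$ is precisely the matrix factorization $C(\Theta_{m,n})$ of the closed MOY graph $\Theta_{m,n}$ obtained by identifying each endpoint of $\Gamma_{m,n}$ with the corresponding endpoint of $\overline{\Gamma}_{m,n}$; geometrically $\Theta_{m,n}$ is a theta graph with two trivalent vertices joined by three edges of colors $m$, $n$, $m+n$. I would view the $(m+n)$-edge as a loop carrying an $(m,n)$-bigon in the middle; then Decomposition (II) (Theorem \ref{decomp-II}) collapses the bigon to give $C(\Theta_{m,n}) \simeq C(\bigcirc_{m+n})\{\qb{m+n}{n}\}$, and Corollary \ref{circle-dimension} then gives
\[
H(\Theta_{m,n}) \cong C(\emptyset)\{\tqb{N}{m+n}\tqb{m+n}{n}\}\langle m+n\rangle.
\]

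Third, combining the preceding steps,
\[
\Hom_{HMF}(C(\Gamma_{m,n}),C(\Gamma_{m,n})) \cong C(\emptyset)\{\tqb{N}{m+n}\tqb{m+n}{n}q^{\sigma_0}\}\langle m+n+\epsilon_0\rangle,
\]
and a direct computation with the Koszul data (taking into account the $\{q^{-mn}\}$ shift built into $C(\Gamma_{m,n})$ at the trivalent vertex) shows that $\sigma_0=(m+n)(N-m-n)+mn$ and $\epsilon_0\equiv m+n\pmod 2$, so the $\zed_2$-shift cancels and the quantum shift becomes the desired $q^{(m+n)(N-m-n)+mn}$. The final assertion then follows at once: $\Hom_{hmf}(C(\Gamma_{m,n})\{q^k\},C(\Gamma_{m,n})\{q^l\})$ is the homogeneous subspace of quantum degree $k-l$ and $\zed_2$-degree $0$ in $\Hom_{HMF}(C(\Gamma_{m,n}),C(\Gamma_{m,n}))$, whose lowest non-vanishing quantum grading is $0$, so this subspace vanishes whenever $k<l$.

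The main obstacle is the bookkeeping of the quantum and $\zed_2$ shifts under duality in Step 1 and their combination with the shifts in Steps 2--3; the topological simplification of $\Theta_{m,n}$ and the circle computation are then entirely mechanical.
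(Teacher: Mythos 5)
Your proof is correct and follows essentially the same route as the paper's: reduce the $\Hom$-space to the homology of the closed theta graph obtained by gluing $\Gamma_{m,n}$ to its orientation reversal, then evaluate that homology by collapsing the $(m,n)$-bigon via Decomposition (II) and applying the circle computation. The only cosmetic difference is that you unpack the theta-graph homology from scratch, whereas the paper cites Lemma \ref{butterfly-structure} (which bundles exactly the same bigon-collapse plus circle computation); both amount to the same calculation, and your handling of the grading shifts and the final vanishing argument for $k<l$ is accurate.
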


\begin{proof}
Consider the MOY graph $\Gamma$ in Figure \ref{butterfly}. It is easy to check that 
\begin{eqnarray*}
&& \Hom_\HMF(C(\Gamma_{m,n}),C(\Gamma_{m,n})) \cong \Hom_\HMF(C(\overline{\Gamma}_{m,n}),C(\overline{\Gamma}_{m,n})) \\
& \cong & H(\Gamma)\left\langle m+n \right\rangle \{q^{(m+n)(N-m-n)+mn}\}.
\end{eqnarray*}
By Lemma \ref{butterfly-structure}, 
\[
H(\Gamma) \cong C(\emptyset)  \left\langle m+n \right\rangle \{\qb{N}{m+n} \qb{m+n}{n}\},
\]
whose lowest non-vanishing quantum grading is $-(m+n)(N-m-n)-mn$. This implies the lemma.
\end{proof}

The next lemma is \cite[Proposition 6.1]{Yonezawa3}. The special case when $m=n=1$ of this lemma has appeared in \cite{Ras-two-bridge}. For the convenience of the reader, we prove this lemma here instead of just citing \cite{Yonezawa3}.

\begin{figure}[ht]
$
\xymatrix{
\input{twisted-fork-1-n} & \input{twisted-fork-1-n-neg} & \input{fork-1-n} \\
\input{twisted-fork-m-1} & \input{twisted-fork-m-1-neg} & \input{fork-m-1}
} 
$
\caption{}\label{twisted-forks-fig} 

\end{figure}

\begin{lemma}\cite[Proposition 6.1]{Yonezawa3}\label{twisted-forks}
Let $\Gamma_{1,n}^\pm$ and $\Gamma_{m,1}^\pm$ be the knotted MOY graphs in Figure \ref{twisted-forks-fig}. Then
\begin{eqnarray*}
\hat{C}(\Gamma_{1,n}^+) \simeq \hat{C}(\Gamma_{1,n})\{q^{n}\}, && \hat{C}(\Gamma_{1,n}^-) \simeq \hat{C}(\Gamma_{1,n})\{q^{-n}\}, \\
\hat{C}(\Gamma_{m,1}^+) \simeq \hat{C}(\Gamma_{m,1})\{q^{m}\}, && \hat{C}(\Gamma_{m,1}^-) \simeq \hat{C}(\Gamma_{m,1})\{q^{-m}\},
\end{eqnarray*}
where ``$\simeq$" is the isomorphism in $\hch(\hmf)$.
\end{lemma}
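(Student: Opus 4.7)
The plan is to establish $\hat{C}(\Gamma_{1,n}^+)\simeq \hat{C}(\Gamma_{1,n})\{q^n\}$ in detail; the remaining three isomorphisms follow by analogous arguments (swapping $+$ with $-$ reverses the role of the two resolutions of the crossing, and reflecting the picture horizontally swaps $\Gamma_{1,n}$ with $\Gamma_{n,1}$ while interchanging crossing signs). I would begin by writing $\hat{C}(\Gamma_{1,n}^+)$ as a two-term chain complex using the resolutions of the crossing $c_{1,n}^+$ from Corollary~\ref{explicit-differential-1-n-crossings--res}:
\[
\hat{C}(\Gamma_{1,n}^+) \;=\; \bigl[\,0\to C(\widehat{\Gamma}_1)\xrightarrow{\tilde{\chi}^1} C(\widehat{\Gamma}_0)\{q^{-1}\}\to 0\,\bigr],
\]
where $\widehat{\Gamma}_i$ is $\Gamma_i$ of Figure~\ref{explicit-differential-1-n-crossings--res-fig} capped on top by the $(1,n)\to(n+1)$ merge vertex of $\Gamma_{1,n}^+$, and $\tilde{\chi}^1$ is the morphism induced by the $\chi^1$ of Corollary~\ref{explicit-differential-1-n-crossings--res}.

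The first step is to simplify each of the two terms by pure MOY-graph manipulations. In $\widehat{\Gamma}_1$, the vertical edge of color $n+1$ meets the merge vertex along a matching color, so repeated use of Lemma~\ref{edge-contraction} and the bouquet move of Corollary~\ref{contract-expand} yields $C(\widehat{\Gamma}_1)\simeq C(\Gamma_{1,n})$. In $\widehat{\Gamma}_0$, the horizontal edge of color $n-1$ together with the top merge vertex forms a bigon-fork configuration; successive bouquet moves and Decomposition~(II) (Theorem~\ref{decomp-II}) decompose $C(\widehat{\Gamma}_0)$ into a direct sum of grading-shifted copies of $C(\Gamma_{1,n})$.

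The second step is to analyze $\tilde{\chi}^1$ under these decompositions. Since both source and target are now direct sums of shifts of the same indecomposable object, $\tilde{\chi}^1$ becomes a matrix whose entries are morphisms between shifts of $C(\Gamma_{1,n})$. Lemma~\ref{fork-hmf} forces every entry whose target has strictly larger $q$-grading than its source to be null-homotopic, so the only potentially nontrivial entries are (up to non-zero scalar) identity morphisms between matching summands. Applying Gaussian Elimination (Lemma~\ref{gaussian-elimination}) repeatedly cancels these identity components in pairs, leaving a single surviving summand of the form $C(\Gamma_{1,n})\{q^k\}$. The exponent $k=n$ is then pinned down by computing the graded Euler characteristic of both sides — which reduces, via the MOY skein relation of \cite{MOY}, to the identity $\gdim\hat{C}(\Gamma_{1,n}^+)=q^n\cdot\gdim\hat{C}(\Gamma_{1,n})$ — and then invoking Proposition~\ref{yonezawa-lemma-hmf} to rule out any other shift.

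I expect the main obstacle to be the explicit identification, in the second step, of which summands of $C(\widehat{\Gamma}_0)\{q^{-1}\}$ pair off under $\tilde{\chi}^1$ with summands of $C(\widehat{\Gamma}_1)\simeq C(\Gamma_{1,n})$ via identity morphisms. Verifying this directly requires tracking the edge-splitting and edge-merging morphisms $\phi,\bar\phi$ from Definition~\ref{morphism-edge-splitting-merging-def} through the bigon simplification and combining them with the explicit formulas for the $\chi$-morphisms of Proposition~\ref{general-general-chi-maps} used to build $\tilde{\chi}^1$. An alternative route, which bypasses this bookkeeping, is to decompose both terms of the two-step complex into indecomposables using the Krull--Schmidt property (Proposition~\ref{hmf-is-KS}), use Lemma~\ref{fork-hmf} to rule out any higher-grading entries of the differential, and then match against the graded Euler characteristic directly, concluding via Proposition~\ref{yonezawa-lemma-hmf} that the unique minimal complex with this graded dimension is $\hat{C}(\Gamma_{1,n})\{q^n\}$.
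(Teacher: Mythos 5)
Your overall strategy — resolve the crossing, decompose both terms via Decomposition (II), use Lemma~\ref{fork-hmf} to kill the degree-raising matrix entries of the differential, and cancel with Gaussian Elimination — is exactly the paper's route, but the proposal has a genuine error and a genuine gap. The error is in the first simplification: you claim $C(\widehat\Gamma_1)\simeq C(\Gamma_{1,n})$, but $\widehat\Gamma_1$ (the wide resolution capped by the fork's merge vertex) is the paper's $\Gamma_{1,n}'$ and contains a bigon — the colour-$(n+1)$ strand splits into $1$ and $n$ and immediately re-merges — which Lemma~\ref{edge-contraction} and bouquet moves cannot collapse. Decomposition (II) gives $C(\widehat\Gamma_1)=C(\Gamma_{1,n}')\simeq C(\Gamma_{1,n})\{[n+1]\}$, a sum of $n+1$ shifts, not one; with $C(\widehat\Gamma_0)\simeq C(\Gamma_{1,n})\{[n]\}$, the differential is an $n\times(n+1)$ matrix. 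Your later sentence ``both source and target are now direct sums'' is inconsistent with your own step one, so some confusion crept in there.

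The gap is more serious: nothing in the proposal shows that the diagonal entries $\chi^1_{j,j}$ of the matrix are isomorphisms rather than zero, and without that Gaussian Elimination is inapplicable. Lemma~\ref{fork-hmf} tells you a degree-$0$ endomorphism of $C(\Gamma_{1,n})$ lives in a one-dimensional space spanned by $\id$, but the entry might still be the zero element. The Euler-characteristic fallback cannot rescue this: the zero-differential complex and the fully-cancelling complex have the same graded Euler characteristic but different homotopy types, and Proposition~\ref{yonezawa-lemma-hmf} is only a tool for cancelling a common shift factor $\{f(q)\}$ in an already-established homotopy equivalence, not a device for upgrading an Euler-characteristic identity into one. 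This is the ``bookkeeping'' you hoped to bypass, and it is the crux of the proof. The paper closes the gap by writing the inclusions and projections of Decomposition~(II) explicitly as $\alpha_j=\mathfrak{m}(r^j)\circ\phi$ and $\beta_j=\overline\phi\circ\mathfrak{m}(X_{n-j})$, using $\chi^0\circ\chi^1\approx\mathfrak{m}(r-s)$ from Corollary~\ref{chi-maps-def}, and computing $\beta_{j+1}\circ\chi^0\circ\chi^1\circ\alpha_j\approx\overline\phi\circ\mathfrak{m}\bigl(X_{n-j-1}r^j(r-s)\bigr)\circ\phi\approx\id_{C(\Gamma_{1,n})}$ via Lemma~\ref{phibar-compose-phi}; combined with the vanishing of the off-diagonal entries furnished by Lemma~\ref{fork-hmf}, this pins down $\chi^0_{j+1,j}\circ\chi^1_{j,j}\approx\id$, so all diagonal entries are isomorphisms and Gaussian Elimination goes through. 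That explicit computation cannot be avoided.
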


\begin{figure}[ht]
$
\xymatrix{
\input{twisted-fork-1-n-res1} & \input{twisted-fork-1-n-res0}
} 
$
\caption{}\label{twisted-forks-res-fig} 

\end{figure}

\begin{proof}
We only give details for $\hat{C}(\Gamma_{1,n}^\pm) \simeq \hat{C}(\Gamma_{1,n})\{q^{\pm n}\}$ here. The proof for $\hat{C}(\Gamma_{m,1}^\pm) \simeq \hat{C}(\Gamma_{1,m})\{q^{\pm m}\}$ is very similar and left to the reader. Recall that
\[
\hat{C}(\Gamma_{1,n}) = `` 0 \rightarrow C(\Gamma_{1,n}) \rightarrow 0.
\]
Let $\Gamma_{1,n}'$ and $\Gamma_{1,n}''$ be the MOY graphs in Figure \ref{twisted-forks-res-fig}. Then, by Corollary \ref{explicit-differential-1-n-crossings--res},
\begin{eqnarray*}
\hat{C}(\Gamma_{1,n}^+) & = & `` 0 \rightarrow C(\Gamma_{1,n}') \xrightarrow{\chi^1} C(\Gamma_{1,n}'')\{q^{-1}\} \rightarrow 0", \\
\hat{C}(\Gamma_{1,n}^-) & = & `` 0 \rightarrow C(\Gamma_{1,n}'')\{q\} \xrightarrow{\chi^0} C(\Gamma_{1,n}') \rightarrow 0",
\end{eqnarray*}
where $\chi^0$ and $\chi^1$ are induced by the apparent local changes in MOY graphs. 

Note that $\Gamma_{1,n}'$ is obtained from $\Gamma_{1,n}$ by an edge splitting. Denote by $C(\Gamma_{1,n})\xrightarrow{\phi}C(\Gamma_{1,n}')$ and $C(\Gamma_{1,n}')\xrightarrow{\overline{\phi}}C(\Gamma_{1,n})$ the morphisms induced by this edge splitting and its reverse edge merging. By Decomposition (II) (Theorem \ref{decomp-II}), we know that 
\begin{equation}\label{twisted-forks-res-decomp1}
C(\Gamma_{1,n}')\simeq C(\Gamma_{1,n})\{[n+1]\} = \bigoplus_{j=0}^n C(\Gamma_{1,n})\{q^{-n+2j}\}.
\end{equation}
It is not hard to explicitly write down the inclusion and projection morphisms in this decomposition. For $j=0,\dots,n$, define $\alpha_j = \mathfrak{m}(r^j)\circ \phi$ and $\beta_j = \overline{\phi}\circ \mathfrak{m}(X_{n-j})$, where $X_{k}$ is the $k$-th elementary symmetric polynomial in $\mathbb{X}$. Then $C(\Gamma_{1,n})\{q^{-n+2j}\} \xrightarrow{\alpha_j} C(\Gamma_{1,n}')$ and $C(\Gamma_{1,n}') \xrightarrow{\beta_j} C(\Gamma_{1,n})\{q^{-n+2j}\}$ are homogeneous morphisms preserving the $\zed_2\oplus\zed$-grading. And, by Lemma \ref{phibar-compose-phi},
\[
\beta_j \circ \alpha_i \approx \begin{cases}
\id_{C(\Gamma_{1,n})\{q^{-n+2j}\}} & \text{if } i=j, \\
0 & \text{otherwise}.
\end{cases}
\]
Clearly, $\alpha_i$ and $\beta_j$ are the inclusion and projection morphisms in decomposition \eqref{twisted-forks-res-decomp1}.

By Corollary \ref{contract-expand} and Decomposition (II) (Theorem \ref{decomp-II}), we have 
\begin{equation}\label{twisted-forks-res-decomp2}
C(\Gamma_{1,n}'')\simeq C(\Gamma_{1,n})\{[n]\} = \bigoplus_{j=0}^{n-1} C(\Gamma_{1,n})\{q^{-n+1+2j}\}.
\end{equation}
By decompositions \eqref{twisted-forks-res-decomp1} and \eqref{twisted-forks-res-decomp2}, $\hat{C}(\Gamma_{1,n}^+)$ is isomorphic to 
\[
0\rightarrow \left.%
\begin{array}{c}
C(\Gamma_{1,n})\{q^{-n}\}\\
\oplus \\
C(\Gamma_{1,n})\{q^{-n+2}\} \\
\oplus \\
\vdots \\
\oplus \\
C(\Gamma_{1,n})\{q^{n}\}
\end{array}%
\right. \xrightarrow{\chi^1} \left.%
\begin{array}{c}
C(\Gamma_{1,n})\{q^{-n}\}\\
\oplus \\
C(\Gamma_{1,n})\{q^{-n+2}\} \\
\oplus \\
\vdots \\
\oplus \\
C(\Gamma_{1,n})\{q^{n-2}\}
\end{array}%
\right. \rightarrow 0,
\]
where $\chi^1$ is represented by a $n\times (n+1)$ matrix $(\chi^1_{i,j})_{n\times (n+1)}$. By Lemma \ref{fork-hmf}, we have that 
\begin{equation}\label{twisted-fork-entry-vanish1}
\chi^1_{i,j} \simeq 0 \text{ if } i>j. 
\end{equation}
Similarly, $\hat{C}(\Gamma_{1,n}^-)$ is isomorphic to 
\[
0\rightarrow \left.%
\begin{array}{c}
C(\Gamma_{1,n})\{q^{-n+2}\}\\
\oplus \\
C(\Gamma_{1,n})\{q^{-n+4}\} \\
\oplus \\
\vdots \\
\oplus \\
C(\Gamma_{1,n})\{q^{n}\}
\end{array}%
\right. \xrightarrow{\chi^0} \left.%
\begin{array}{c}
C(\Gamma_{1,n})\{q^{-n}\}\\
\oplus \\
C(\Gamma_{1,n})\{q^{-n+2}\} \\
\oplus \\
\vdots \\
\oplus \\
C(\Gamma_{1,n})\{q^{n}\}
\end{array}%
\right.  \rightarrow 0,
\]
where $\chi^0$ is represented by a $(n+1) \times n$ matrix $(\chi^0_{i,j})_{(n+1)\times n}$. By Lemma \ref{fork-hmf}, we have that 
\begin{equation}\label{twisted-fork-entry-vanish2}
\chi^0_{i,j} \simeq 0 \text{ if }i>j+1.
\end{equation}

Consider the composition $\beta_{j+1} \circ \chi^0 \circ \chi^1 \circ \alpha_j$. On the one hand, by Lemma \ref{phibar-compose-phi} and Corollary \ref{chi-maps-def}, we have
\[
\beta_{j+1} \circ \chi^0 \circ \chi^1 \circ \alpha_j \approx \beta_{j+1} \circ \mathfrak{m}(r-s) \circ \alpha_j \approx \overline{\phi} \circ \mathfrak{m}(X_{n-j-1}r^j(r-s)) \circ \phi \approx \id_{C(\Gamma_{1,n})}.
\]
On the other hand, by \eqref{twisted-fork-entry-vanish1} and \eqref{twisted-fork-entry-vanish2}, we have
\[
\beta_{j+1} \circ \chi^0 \circ \chi^1 \circ \alpha_j \approx \sum_{k=1}^{n-1} \chi^0_{j+1,k} \circ \chi^1_{k,j} \simeq \chi^0_{j+1,j} \circ \chi^1_{j,j}.
\]
So, $\chi^0_{j+1,j} \circ \chi^1_{j,j} \approx \id_{C(\Gamma_{1,n})}$. This shows that $\chi^0_{j+1,j}$ and $\chi^1_{j,j}$ are both isomorphisms in $\hmf$. 

Using \eqref{twisted-fork-entry-vanish1}, we apply Gaussian Elimination (Lemma \ref{gaussian-elimination}) to $\chi^1_{j,j}$ in $\hat{C}(\Gamma_{1,n}^+)$ for $j=1,2,\dots,n$ in that order. This reduces $\hat{C}(\Gamma_{1,n}^+)$ to
\[
0 \rightarrow C(\Gamma_{1,n})\{q^{n}\} \rightarrow 0.
\]
So $\hat{C}(\Gamma_{1,n}^+) \simeq \hat{C}(\Gamma_{1,n})\{q^{n}\}$. Similarly, using \eqref{twisted-fork-entry-vanish2}, we apply Gaussian Elimination (Lemma \ref{gaussian-elimination}) to $\chi^0_{j+1,j}$ $\hat{C}(\Gamma_{1,n}^-)$ for $j=n,n-1,\dots,1$ in that order. This reduces $\hat{C}(\Gamma_{1,n}^-)$ to
\[
0 \rightarrow C(\Gamma_{1,n})\{q^{-n}\} \rightarrow 0.
\]
So $\hat{C}(\Gamma_{1,n}^-) \simeq \hat{C}(\Gamma_{1,n})\{q^{-n}\}$.
\end{proof}

\begin{lemma}\label{invariance-reidemeister-I-unnormal}
Let $D^+$, $D^-$ and $D$ be the knotted MOY graphs in Figure \ref{reidemeisterI}. Then
\begin{eqnarray}
\label{invariance-reidemeister-I-unnormal+}\hat{C}(D^+) & \simeq & \hat{C}(D)\left\langle m \right\rangle\|m\| \{q^{-m(N+1-m)}\}, \\
\label{invariance-reidemeister-I-unnormal-}\hat{C}(D^-) & \simeq & \hat{C}(D)\left\langle m \right\rangle\|-m\| \{q^{m(N+1-m)}\},
\end{eqnarray}
where $\|\ast\|$ means shifting the homological grading by $\ast$. (See Definition \ref{categories-of-complexes}.)
\end{lemma}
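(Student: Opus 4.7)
The plan is to prove Lemma \ref{invariance-reidemeister-I-unnormal} by induction on $m$, using a combination of bi-gon insertion (via Decomposition (II), Theorem \ref{decomp-II}), fork-sliding invariance (Theorem \ref{fork-sliding-invariance-general}), the twisted-fork identity (Lemma \ref{twisted-forks}), and the already-established invariance under Reidemeister moves II and III (Lemma \ref{invariance-reidemeister-II-III}). The base case $m=1$ follows directly from Theorem \ref{invariance-reidemeister-all-color=1}: Khovanov-Rozansky's color-$1$ invariance for the unnormalized complex, combined with the shifts built into Definition \ref{complex-colored-crossing-def}, produces exactly the shifts asserted in \eqref{invariance-reidemeister-I-unnormal+} and \eqref{invariance-reidemeister-I-unnormal-}.

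For the inductive step, assume the lemma for all colors strictly less than $m$, and consider $D^+$ with an $m$-colored kink. First, use Decomposition (II) to insert a small bi-gon on the $m$-strand just below the kink, splitting it into a $1$-edge and an $(m-1)$-edge and merging back; this produces a diagram $\widetilde{D}^+$ with $\hat{C}(\widetilde{D}^+) \simeq \hat{C}(D^+)\{[m]\}$. Next, apply fork-sliding invariance to transport both trivalent vertices of this bi-gon through the crossing of the kink. After sliding, an auxiliary bouquet move (Corollary \ref{contract-expand}) together with an application of Reidemeister II invariance brings the diagram into a standard form in which the $1$-edge and the $(m-1)$-edge each carry an independent Reidemeister I kink, joined at two twisted-fork junctions of the type $\Gamma_{1,m-1}^+$ appearing in Lemma \ref{twisted-forks}.

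Apply Lemma \ref{twisted-forks} at the two junctions to absorb the twists into a total quantum shift of $\{q^{2(m-1)}\}$. Then use the base case to unkink the $1$-strand (contributing $\langle 1\rangle\,\|1\|\,\{q^{-N}\}$) and the inductive hypothesis to unkink the $(m-1)$-strand (contributing $\langle m-1\rangle\,\|m-1\|\,\{q^{-(m-1)(N+2-m)}\}$). What remains is the original bi-gon on $D$, which by Decomposition (II) equals $\hat{C}(D)\{[m]\}$. Combining these contributions and simplifying the exponent via the elementary identity $2(m-1)-N-(m-1)(N+2-m) = -m(N+1-m)$, we find
$$\hat{C}(D^+)\,\{[m]\} \;\simeq\; \hat{C}(D)\,\langle m\rangle\,\|m\|\,\{[m]\cdot q^{-m(N+1-m)}\}.$$
Proposition \ref{yonezawa-lemma-hmf} then allows us to cancel the common factor $\{[m]\}$, yielding \eqref{invariance-reidemeister-I-unnormal+}. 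The proof of \eqref{invariance-reidemeister-I-unnormal-} is entirely symmetric, replacing the positive twisted-fork shift $q^{m-1}$ by $q^{-(m-1)}$ and using the $-$ case of the base case and inductive hypothesis.

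The main obstacle will be the diagrammatic verification that after inserting the bi-gon and performing the fork sliding (together with auxiliary bouquet and Reidemeister II moves), the resulting configuration really does decompose as claimed into two independent sub-kinks joined by two twisted-fork junctions. Once this diagrammatic reduction is established, the bookkeeping of the three gradings (quantum, $\zed_2$, homological) is forced by the stated formulas and is routine, with all exponents matching via the algebraic identity above.
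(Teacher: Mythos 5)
Your proposal follows essentially the same route as the paper's proof: induction on $m$, inserting a bi-gon via Decomposition (II) to produce a $\{[m]\}$ factor, sliding the two new trivalent vertices through the crossing by Theorem \ref{fork-sliding-invariance-general}, applying the base case and inductive hypothesis to unkink the $1$- and $(m-1)$-strands (with an intervening Reidemeister II move from Lemma \ref{invariance-reidemeister-II-III}), absorbing the two twisted-fork junctions via Lemma \ref{twisted-forks}, and canceling the common $\{[m]\}$ factor by Proposition \ref{yonezawa-lemma-hmf}. The paper phrases its induction as ``$1$ and $m$ give $m+1$'' rather than ``$1$ and $m-1$ give $m$,'' but this is just an index shift, and your algebraic verification of the exponent matches the paper's computation.
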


\begin{figure}[ht]
\[
\Gamma_1=\xygraph{
!{0;/r2pc/:}
[uu]
!{\xcapv@(0)=>>{m+1}}
!{\hover}
!{\hcap}
[ld]!{\xcapv=><{1}}
[u]!{\xcapv[-1]=<>{m}}
!{\xcapv@(0)=><{m+1}}
}\hspace{.5cm}
\Gamma_2=\xygraph{
!{0;/r1pc/:}
[uuuu]
!{\xcapv@(0)=>>{m+1}}
!{\zbendh}
!{\hcross}
[d]!{\hcross}
!{\hcap=<}
[lld]!{\hcross}
[llu]!{\hover}
[uul]!{\xcapv[2]@(0)}
[dd]!{\xcapv[-2]@(0)|{m}}
[d]!{\sbendh|{1}}
[dl]!{\xcapv@(0)=><{m+1}}
[uuuuurr]!{\hloop[3]=<}
}\hspace{.5cm}
\Gamma_3=\xygraph{
!{0;/r1pc/:}
[uuuu]
!{\xcapv@(0)=>>{m+1}}
!{\zbendh}
!{\hcross}
[d]!{\hcap=>}
[ld]!{\hcross}
[llu]!{\hover}
[uul]!{\xcapv[2]@(0)}
[dd]!{\xcapv[-2]@(0)<{m}}
[d]!{\sbendh|{1}}
[dl]!{\xcapv@(0)=><{m+1}}
[uuuuurr]!{\hcap[3]=<}
}
\]
\[
\Gamma_4=\xygraph{
!{0;/r1.5pc/:}
[uu]
!{\xcapv[0.5]@(0)=>>{m+1}}
[u(0.5)]!{\xcapv@(0)}
[u]!{\sbendv}
[l]!{\vtwist}
!{\xcapv[-1]@(0)=<<{1}}
[ur]!{\hover}
!{\hcap}
[lld]!{\vtwist}
!{\xcapv[-1]@(0)=<<{m}}
!{\zbendv}
[dl]!{\xcapv[0.5]@(0)=>>{m+1}}
}\hspace{.5cm}
\Gamma_5=\xygraph{
!{0;/r2pc/:}
[uu]
!{\xcapv[0.5]@(0)=>>{m+1}}
[u(0.5)]!{\xcapv@(0)}
[u]!{\sbendv}
[l]!{\vtwist}
!{\vtwist}
!{\xcapv@(0)=>>{m}}
!{\zbendv[-1]=<<{1}}
[dl]!{\xcapv[0.5]@(0)=>>{m+1}}
}\hspace{.5cm}
\Gamma_6=\xygraph{
!{0;/r2pc/:}
[uu]
!{\xcapv[0.5]@(0)=>>{m+1}}
[u(0.5)]!{\xcapv@(0)}
[u]!{\sbendv}
[l]!{\vtwist}
!{\xcapv@(0)=>>{m}}
!{\zbendv[-1]=<<{1}}
[dl]!{\xcapv[0.5]@(0)=>>{m+1}}
}\hspace{.5cm}
\Gamma_7=\xygraph{
!{0;/r2pc/:}
[uu]
!{\xcapv[0.5]@(0)=>>{m+1}}
[u(0.5)]!{\xcapv@(0)}
[u]!{\sbendv}
[l]!{\xcapv@(0)=>>{m}}
!{\zbendv[-1]=<<{1}}
[dl]!{\xcapv[0.5]@(0)=>>{m+1}}
}~
\]
\caption{}\label{reidemeisterI-bubble}

\end{figure}

\begin{proof}
We prove \eqref{invariance-reidemeister-I-unnormal+} by an induction on $m$. The proof of \eqref{invariance-reidemeister-I-unnormal-} is similar and left to the reader. 

If $m=1$, then \eqref{invariance-reidemeister-I-unnormal+} follows from \cite[Theorem 2]{KR1}. (See Theorem \ref{invariance-reidemeister-all-color=1} above.) Assume that \eqref{invariance-reidemeister-I-unnormal+} is true for some $m\geq 1$. Let us prove \eqref{invariance-reidemeister-I-unnormal+} for $m+1$. 

Consider the knotted MOY graphs $\Gamma_1,\dots,\Gamma_7$ in Figure \ref{reidemeisterI-bubble}. By Decomposition (II) (Theorem \ref{decomp-II}), we have 
\begin{eqnarray*}
\hat{C}(\Gamma_1) \simeq \hat{C}(D^+)\{[m+1]\} & \text{ and } & \hat{C}(\Gamma_7) \simeq \hat{C}(D)\{[m+1]\}.
\end{eqnarray*}
By Theorem \ref{fork-sliding-invariance-general}, we have $\hat{C}(\Gamma_1) \simeq \hat{C}(\Gamma_2)$. Since \eqref{invariance-reidemeister-I-unnormal+} is true for $1$, we know that $\hat{C}(\Gamma_2) \simeq \hat{C}(\Gamma_3)\left\langle 1 \right\rangle\|1\| \{q^{-N}\}$. From Lemma \ref{invariance-reidemeister-II-III}, one can see that $\hat{C}(\Gamma_3) \simeq \hat{C}(\Gamma_4)$. Since \eqref{invariance-reidemeister-I-unnormal+} is true for $m$, we know that $\hat{C}(\Gamma_4) \simeq \hat{C}(\Gamma_5)\left\langle m \right\rangle\|m\| \{q^{-m(N+1-m)}\}$. By Lemma \ref{twisted-forks}, we know that $\hat{C}(\Gamma_5) \simeq \hat{C}(\Gamma_6)\{q^{m}\}$ and $\hat{C}(\Gamma_6) \simeq \hat{C}(\Gamma_7)\{q^{m}\}$. Putting these together, we get that
\[
\hat{C}(\Gamma_1) \simeq \hat{C}(\Gamma_7)\left\langle m+1 \right\rangle\|m+1\| \{q^{-(m+1)(N-m)}\}.
\]
From Proposition \ref{yonezawa-lemma-hmf}, it follows that \eqref{invariance-reidemeister-I-unnormal+} is true for $m+1$. This completes the induction.
\end{proof}

\begin{lemma}\label{invariance-reidemeister-I}
Let $D_0$ and $D_1$ be two knotted MOY graphs. Assume that there is a Reidemeister move of type I that changes $D_0$ into $D_1$. Then $C(D_0) \simeq C(D_1)$, that is, they are isomorphic as objects of $\hch(\hmf)$.
\end{lemma}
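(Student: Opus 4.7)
The plan is to reduce the statement to the three standard local diagrams shown in Figure \ref{reidemeisterI} and then simply compare the normalization factors. Since every Reidemeister I move on a knotted MOY graph replaces a local piece of the form $D^+$ or $D^-$ with $D$ (or vice versa), and the chain complex $C(\cdot)$ is built as a tensor product of local pieces over common alphabets (Definition \ref{complex-knotted-MOY-def}), it suffices to establish the two local homotopy equivalences $C(D^+) \simeq C(D)$ and $C(D^-) \simeq C(D)$. Everything outside the local disk where the move takes place tensors through unchanged.

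First I would read off the relevant normalization. The crossings appearing in $D^\pm$ are of type $c^\pm_{m,m}$ because a strand loops back onto itself, so both branches at the single crossing have color $m$. Thus the $m = n$ case of Definition \ref{complex-colored-crossing-def} applies, giving
\[
C(c^+_{m,m}) = \hat{C}(c^+_{m,m})\langle m\rangle\|-m\|\{q^{m(N+1-m)}\}, \quad C(c^-_{m,m}) = \hat{C}(c^-_{m,m})\langle m\rangle\|m\|\{q^{-m(N+1-m)}\},
\]
while $D$ has no crossings and hence $C(D) = \hat{C}(D)$. Combining these shifts with the unnormalized statements proved in Lemma \ref{invariance-reidemeister-I-unnormal}, we compute
\[
C(D^+) \;\simeq\; \hat{C}(D^+)\,\langle m\rangle\|-m\|\{q^{m(N+1-m)}\} \;\simeq\; \hat{C}(D)\,\langle 2m\rangle\|0\|\{q^{0}\} \;=\; \hat{C}(D) \;=\; C(D),
\]
and analogously
\[
C(D^-) \;\simeq\; \hat{C}(D^-)\,\langle m\rangle\|m\|\{q^{-m(N+1-m)}\} \;\simeq\; \hat{C}(D)\,\langle 2m\rangle\|0\|\{q^{0}\} \;=\; C(D),
\]
where we have used that $\langle 2m\rangle$ is the identity shift on the $\zed_2$-grading since $2m \equiv 0 \pmod 2$.

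There is no substantial obstacle remaining; the real work has already been done in Lemma \ref{invariance-reidemeister-I-unnormal}, where fork sliding (Theorem \ref{fork-sliding-invariance-general}), the $m=1$ case from \cite{KR1} (Theorem \ref{invariance-reidemeister-all-color=1}), the ``twisted fork'' identifications of Lemma \ref{twisted-forks}, and the cancellation property of $\{q\}$ (Proposition \ref{yonezawa-lemma-hmf}) are combined inductively. The present lemma is essentially the bookkeeping step that confirms the normalization constants in Definition \ref{complex-colored-crossing-def} are precisely those needed to absorb the framing-dependent shifts $\langle m\rangle\|\pm m\|\{q^{\mp m(N+1-m)}\}$ produced by the Reidemeister I move. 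Together with Lemma \ref{invariance-reidemeister-II-III}, this completes the proof of Theorem \ref{invariance-reidemeister-all}, and hence of the main Theorem \ref{main}.
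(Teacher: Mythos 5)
Your proposal is correct and takes essentially the same approach as the paper: the paper's proof is the single sentence that the lemma follows from Lemma \ref{invariance-reidemeister-I-unnormal} and the normalization in Definition \ref{complex-colored-crossing-def}, and your write-up simply carries out that bookkeeping explicitly, checking that the shifts $\left\langle m\right\rangle\|\pm m\|\{q^{\mp m(N+1-m)}\}$ introduced by the $c^{\pm}_{m,m}$ normalization exactly cancel the shifts appearing in \eqref{invariance-reidemeister-I-unnormal+} and \eqref{invariance-reidemeister-I-unnormal-}.
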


\begin{proof}
The lemma follows easily from Lemma \ref{invariance-reidemeister-I-unnormal} and the normalization in Definition \ref{complex-colored-crossing-def}.
\end{proof}

\end{document}